\bf \contentspush{\thecontentslabel\ }}
\newtheoremstyle{CTstyleI}{\topsep}{\topsep}{\itshape}{}{}{\bf.}{.5em}{%
   \thmnumber{{\bfseries #2}\bf .}\thmname{ \bfseries #1}\normalfont\thmnote{ \bf (#3)}} 
\theoremstyle{CTstyleI}
\newtheorem{thmCQCT}{}[subsection]
\newtheorem{prop}[thmCQCT]{Proposition}
\newtheorem{fact}[thmCQCT]{Fait}
\newtheorem{theo}[thmCQCT]{Théorème}
\newtheorem{coro}[thmCQCT]{Corollaire}
\newtheorem{defn}[thmCQCT]{Définition}
\newtheorem{defns}[thmCQCT]{Définitions}
\newtheorem{lem}[thmCQCT]{Lemme}
\newtheorem{conj}[thmCQCT]{\'Enoncé conjectural}
\newtheorem {scholie}[thmCQCT]{Scholie} 
\newtheoremstyle{CTstyleII}{\topsep}{\topsep}{}{}{}{\bf.}{.5em}{%
                     \thmnumber{{\bfseries #2}\bf .}\thmname{ \bfseries #1}\normalfont\thmnote{ \bf (#3)}} 
\theoremstyle{CTstyleII}
\newtheorem{rmq}[thmCQCT]{Remarque}
\newtheorem{rmqs}[thmCQCT]{Remarques}  
\newtheorem{exemple}[thmCQCT]{Exemple}
\renewcommand \preceq {\preccurlyeq} 
\renewcommand \succeq {\succcurlyeq}
\renewcommand \ge {\geqslant} 
\renewcommand \le {\leqslant}
\newcommand \mouton {{\includegraphics[height=.6em, trim=0 6 0 6]{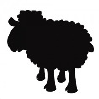}}}
\newcommand \pointu{\begin{tikzpicture}\draw [->] (0,0) -- (0.2,0.2) -- (0.4,0) ;\end{tikzpicture}}
\newcommand \cercle[1]{\tikz[baseline=(char.base)]{
            \node[shape=circle,draw,inner sep = 0.4pt] (char){#1};}}
\newcommand \Heti[1] {\omit \quad \mbox{\footnotesize$#1$} \hfil}
\newcommand \Veti[1] {{\rotatebox{90}{\mbox{\footnotesize$#1$}}}}
\newcommand \VR {\kern-\arraycolsep \strut\vrule &\kern-\arraycolsep}
\newcommand \HR[1] {\noalign{\vskip-8pt}\hrulefill\span\multispan{#1}\cr \noalign{\vskip-4pt}}
\newcommand \VRepaisse {\kern-\arraycolsep \strut\vrule\vrule &\kern-\arraycolsep}
\def\hligne#1{\leaders\hrule height#1\hfill}
\newcommand \HRepaisse[1] {\noalign{\vskip-8pt} \hligne{1pt}\span\multispan{#1}\cr \noalign{\vskip-4pt}}
\newcommand \bfA {\mathbf A}
\newcommand \bfB {\mathbf B}
\newcommand \bfc {\mathbf c}
\newcommand \bfe {\mathbf e}
\newcommand \bff {\mathbf f}
\newcommand \bfh {\mathbf h}
\newcommand \bfk {\mathbf k}
\newcommand \bfK {\mathbf K}
\newcommand \bfl {\mathbf l}
\newcommand \bfm {\mathbf m}
\newcommand \bfn {\mathbf n}
\newcommand \bfR {\mathbf R}
\newcommand \bfS {\mathbf S}
\newcommand \bfu {\mathbf u}
\newcommand \bfv {\mathbf v}
\newcommand \bfw {\mathbf w}
\newcommand \bfx {\mathbf x}
\newcommand \bfy {\mathbf y}
\newcommand \bfz {\mathbf z}
\newcommand \bsnu {\boldsymbol\nu}
\newcommand \bsmu {\boldsymbol\mu}
\newcommand \bsp{\boldsymbol{p}}
\newcommand \bsq{\boldsymbol{q}}
\newcommand \bsr{\boldsymbol{r}}
\newcommand \bss{\boldsymbol{s}}
\newcommand \pXD {\bsp\uX^D}
\newcommand \bsnabla{\boldsymbol\nabla}
\newcommand \bsOmega{\boldsymbol\Omega}
\newcommand \bsomega{\boldsymbol\omega}
\newcommand \bbA {\mathbb A}
\newcommand \bbB {\mathbb B}
\newcommand \bbC {\mathbb C} 
\newcommand \bbM {\mathbb M}
\newcommand \bbN {\mathbb N}
\newcommand \bbP {\mathbb P}
\newcommand \bbQ {\mathbb Q}
\newcommand \bbR {\mathbb R}
\newcommand \bbS {\mathbb S} 
\newcommand \bbZ {\mathbb Z}
\newcommand \Jex {\mathfrak J}
\newcommand \Jid {\Jex_{1\setminus2,d}^{(i)}}
\newcommand \fa {\mathfrak a}
\newcommand \fb {\mathfrak b}
\newcommand \fB {\mathfrak B}
\newcommand \fC {\mathfrak C}
\newcommand \fD {\mathfrak D}
\newcommand \fm {\mathfrak m}
\newcommand \fp {\mathfrak p}
\newcommand \fS {\mathfrak S}
\newcommand \brick {\mathfrak B} 
\newcommand \ua {{\underline a}}
\newcommand \ub {{\underline b}}
\newcommand \uc {{\underline c}}
\newcommand \uF {{\underline F}}
\newcommand \uG {{\underline G}}
\newcommand \uL {{\underline L}}
\newcommand \um {{\underline m}}
\newcommand \uP {{\underline P}} 
\newcommand \uQ {{\underline Q}}
\newcommand \uq {{\underline q}} 
\newcommand \uR {{\underline R}}
\newcommand \us {{\underline s}}
\newcommand \uT {{\underline T}}
\newcommand \uu {{\underline u}}
\newcommand \uU {{\underline U}}
\newcommand \uv {{\underline v}}
\newcommand \uw {{\underline w}}
\newcommand \ux {{\underline x}}
\newcommand \uy {{\underline y}}
\newcommand \uX {{\underline X}}
\newcommand \uxi {{\underline\xi}}
\newcommand \uY {{\underline Y}}
\newcommand \uz {{\underline z}}
\newcommand \uzero {{\underline 0}}
\newcommand \calB {\mathcal B}
\newcommand \calC {\mathcal C}
\newcommand \calD {\mathcal D}
\newcommand \calE {\mathcal E}
\newcommand \calF {\mathcal F}
\newcommand \calG {\mathcal G}
\newcommand \calH {\mathcal H}
\newcommand \calL {\mathcal L}
\newcommand \calM {\mathcal M}
\newcommand \calN {\mathcal N}
\newcommand \calO {\mathcal O}
\newcommand \calP {\mathcal P}
\newcommand \calQ {\mathcal Q}
\newcommand \calR {\mathcal R}
\newcommand \calS {\mathcal S}
\newcommand \calT {\mathcal T}
\newcommand \rmc {\mathrm c} 
\newcommand \rmC {\mathrm C} 
\newcommand \rmH {\mathrm H}
\newcommand \rmK {\mathrm K}
\newcommand \Mmac {\mathrm M}
\newcommand \Smac {\mathrm S} 
\newcommand \Cech {\v Cech}
\newcommand \vH {\check{\rmH}}
\newcommand \vC {\check{\rmC}}
\newcommand \longleftmapsto{\longleftarrow\!\shortmid}
\newcommand \BW {\bigwedge\nolimits}
\newcommand \Hom {\mathop{\mathrm{Hom}}\nolimits}
\newcommand \idest {\textit{i.e.}}
\newcommand \scp[2] {\langle#1\mid#2\rangle}
\DeclareRobustCommand \Im {\mathop{\mathrm{Im}}}
\newcommand \Ker {\mathop{\mathrm{Ker}}}
\newcommand \Coker {\mathop{\mathrm{Coker}}}
\newcommand \transpose[1]{{\,^{\rm t}#1}}
\newcommand \Ann {{\mathrm{Ann}}}
\newcommand \diag {\mathrm{diag}}
\newcommand \eval {\text{\rm éval}}
\newcommand \Gr {\mathop{\mathrm{Gr}}}
\newcommand \Tr {\mathop{\mathrm{Tr}}}
\newcommand \Id {\mathrm{Id}}
\newcommand \id {\mathrm{id}}
\newcommand \pgcd {\mathop{\mathrm{pgcd}}}
\newcommand \poids {\mathrm{poids}}
\newcommand \Discr {\mathop{\mathrm{Disc}}\nolimits_{\rm r}} 
\newcommand \Disc {\mathop{\mathrm{Disc}}} 
\newcommand \Res {\mathrm{Res}}
\newcommand \res {\mathrm{res}}
\newcommand \omegares{\omega_\res}
\newcommand \omegaRes[1]{\omega_{\res,#1}}
\newcommand \omegaresP {\omega_{\res,\uP}}
\newcommand \omegarespXD {\omega_{\res,\pXD}}
\newcommand \omegaresPred {\omega_{\res,\uPred}}
\newcommand \uomegares{\overline\omega_\res}
\newcommand \sat {\mathrm{sat}}
\newcommand \ElimIdealL {\langle\uL\rangle^\sat\cap\bfA}
\newcommand \evalxi {\eval_{\uxi}}
\newcommand \coeff {\mathrm{coeff}}
\newcommand \DivSeq {\mathrm{Div}}
\newcommand \minDiv {\mathrm{minDiv}}
\newcommand \smin {\sigma\rule[0.7mm]{0.1cm}{0.3pt}\mathrm{min}{}}
\newcommand \rhomin {\rho\rule[0.7mm]{0.1cm}{0.3pt}\mathrm{min}{}} 
\newcommand \sminDiv {\sigma\rule[0.7mm]{0.1cm}{0.3pt}\minDiv{}}
\newcommand \uPdelta {\langle\uP\rangle_\delta}
\newcommand \uPred {\uP^{\rm red}}
\newcommand \uPsat {\langle\uP\rangle^\sat}
\newcommand \ElimIdeal {\uPsat\cap\bfA}
\newcommand \indetsPi {\text{indets pour les $P_i$}}
\newcommand \emouton {{D-\mathds 1}}
\newcommand \MoutonNoir {\textit{mouton-noir}}
\newcommand \vertex[1]{{\substack{#1\\\bullet}}} 
\newcommand \exposant {\mathrm{exposant}} 
\newcommand \Un {{\mathds 1}} 
\newcommand \Jac {\mathrm{Jac}}
\newcommand \Syl {\mathscr{S}} 
\newcommand \dsV {\mathds{V}}
\newcommand \dsL {\mathds{L}}
\newcommand \dsU {\mathds{U}} 
\newcommand \Cramer {\mathrm{Cramer}}
\newcommand \CayleyVect {\fC^\star}
\newcommand \DVect {\boldsymbol{\mathcal D}^\star}
\newcommand \FittVect {\boldsymbol{\mathcal F}^\star}
\newcommand \MacRae {\mathfrak R}
\newcommand \MacRaeVect {\boldsymbol{\MacRae^\star}}
\newcommand \Det {\mathop{\mathrm {Det}}\nolimits}
\newcommand \oriented[1] {\boldsymbol{[} #1 \boldsymbol{]}} 
\newcommand \kappastar {\kappa^\star}
\newcommand \gen {\mathrm{gen}}
\newcommand \Last {\mathrm{Last}}
\newcommand \crochet[1] {^{(#1)}}
\newcommand \Ph {\uP^{(h)}}
\newcommand \Xh {\uX^{(h)}}
\newcommand \calME {{\calM\double\calE}} 
\newcommand \double[1] {\llbracket #1 \rrbracket}
\newcommand \maxDiv {\mathrm{maxDiv}}
\newcommand \maxDivSh {\mathrm{maxDivSh}}
\newcommand \End {\mathrm{End}}
\newcommand \EndSh {\mathrm{EndSh}}
\newcommand \Shift {\mathrm{Shift}}
\newcommand \sh {\mathrm{sh}}
\newcommand \ssup {\triangleright}
\newcommand \eq {\vDash}
\newcommand \scrE {\mathscr E} 
\newcommand \scrV {\mathscr V} 
\newcommand \scrW {\mathscr W} 
\newcommand \scrN {\mathscr N} 
\newcommand \fSwap[2]{f^{#1\to#2}}
\newcommand \Serie {\mathrm{S\acute{e}rie}}
\newcommand \Niun {\mathscr N_{i,1}}
\newcommand \Nideux {\mathscr N_{i,2}}
\newcommand \Nndeux {\mathscr N_{n,2}} 
\newcommand \Aik {A_{i,k}}
\newcommand\sw {{\rm sw}}
\newcommand\AXdSod{\bfA[\uX]_d^{\calS_{0,d}}}
\newcommand\AXdSodprime{\bfA[\uX]_d^{\calS_{0,d'}}}
\newcommand\BdSod{\bfB_d^{\calS_{0,d}}}
\newcommand\BdSodprime{\bfB_d^{\calS_{0,d'}}}
\newcommand\swDet{\Det^\sw}
\newcommand\swbsOmega{\bsOmega^\sw}
\newcommand \Ind {\mathrm{Ind}}
\newcommand \Elt {\mathrm{Elt}}
\newcommand \DivG {\overleftarrow{\mathrm{Div}}}
\newcommand \XbJ {X^\beta X^{D(J)}}
\newcommand \JexRJ {\Jex^R[J]}
\newcommand \Rev[1] {\overleftarrow{#1}}
\newcommand \RevE {\overleftarrow{E}}
\newcommand \Jdecomposition {$J$-décomposition}
\newcommand \Rdecomposition {$R$-décomposition}
\newcommand \moins {\kern-1.5pt - \kern-1.5pt} 
\newcommand \hderniers {\mathbf{h}} 
\newcommand \kderniers {\mathbf{k}}
\newcommand \uPX {\uP(\uX)}
\newcommand \uPY {\uP(\uY)}
\newcommand \PXminusY {\uP(\uX) - \uP(\uY)}
\newcommand \Bez {\mathscr{B}} 
\newcommand \varpidres {\varpi^\res_d}
\newcommand \varpidRes[1] {\varpi^\res_{d,#1}}
\newcommand \bez {\mathscr{A}} 
\newcommand \bezendo {\mathrm{Bez}} 
\newcommand \matrice {\mathrm{Mat}} 
\newcommand \intd {\ \llcorner\ } 
\newcommand \Hd {\mathbf H} 
\newcommand \sbullet {{\scriptscriptstyle\bullet}}
\newcommand \idealx{\langle\ux\rangle}
\newcommand \ideala{\langle\ua\rangle}
\newcommand \idealap{\langle\ua'\rangle}
\newcommand \gotimes{\sideset{^g}{}{\mathop\otimes}} 
\def\includefilename{\jobname}
\begin{document}
\renewcommand{\proofname}{Preuve.}


\pagenumbering{Roman}   


\title{\bf Une approche combinatoire pour le fondement \\ du résultant multivarié \\
      {\normalsize Le résultant multivarié pour les enfants motivés}}

\author{Claude Quitté, Claire Tête\\[2mm]
claude.quitte@orange.fr,  
claire.tete@bginette.fr}

\begin{titlepage}

\maketitle

\begin{center}
\vspace{3cm}
\includegraphics[height=6cm]{0-MoutonNoir}
\vspace{1cm}
{\Large
$$
\bfA[\uX]_\delta \ = \  \langle X_1^{d_1}, \dots, X_n^{d_n} \rangle_\delta   \ \oplus \, 
                 \bfA {\includegraphics[height=.7em, trim=0 4 0 5]{0-MoutonNoir}}
$$
}
\end{center}

\thispagestyle{empty}
\end{titlepage}


\setcounter{tocdepth}{2} 

L'illustration en page de couverture est due à Claire. 
Essayons d'expliquer pourquoi, dans cette page, c'est \og la fête à la
\fbox {dimension 1}\fg.

\smallskip
Il intervient des $d_i$ qui sont des entiers $\ge 1$. On dit que
$D := (d_1, \ldots, d_n)$ est un \emph {format de degrés}. Il y a
également un indice $\delta$ nommé \emph{degré critique de $D$}:
$$
\delta := \sum_i (d_i-1)
$$
Pour ce degré critique, il y a \fbox {un} et \fbox {un seul} monôme de $\bfA[\uX] = \bfA[X_1, \dots, X_n]$,
de degré $\delta$, qui n'est divisible par aucun des~$X_i^{d_i}$. Etant le seul,
c'est un \MoutonNoir. Le voici
$$
X^\emouton = X_1^{d_1-1} \cdots X_n^{d_n-1} = \mouton
$$
D'où évidemment un quotient de dimension \fbox{1}
$$
\bfA[\uX]_\delta\ /\ \langle X_1^{d_1}, \ldots, X_n^{d_n}\rangle_\delta = \bfA X^\emouton
\qquad\qquad
\dim \bfA[\uX]_\delta\ /\ \langle X_1^{d_1}, \ldots, X_n^{d_n}\rangle_\delta =
\fbox {1}
$$
Ce qui précède relève d'une combinatoire \emph {monomiale} facile.

\bigskip

Introduisons maintenant un système $\uP = (P_1, \cdots, P_n)$
de $\bfA[\uX]$ de format $D$, ce qui signifie que $P_i$ est un
polynôme homogène de degré $d_i$. Et posons
$$
\bfB_\delta = 
\bfB_\delta(\uP) = \bfA[\uX]_\delta\ /\ \langle P_1, \ldots, P_n\rangle_\delta 
$$
Alors, modulo une hypothèse\footnote{$\uP$ est une suite régulière.}
sur $\uP$, ce quotient $\bfB_\delta$ est un $\bfA$-module de rang
\fbox {1}.  Ceci ne signifie pas qu'il est libre de rang 1, mais que
l'on dispose\footnote{Que l'on peut expliciter si on en a la volonté.}
d'une famille finie \emph {fidèle} $\us = (s_\ell)$ de scalaires de~$\bfA$
telle qu'en localisant en chaque $s_\ell$, le $\bfA[1/{s_\ell}]$-module
$\bfB_\delta[1/s_\ell]$ est libre de rang \fbox {1}.
Fidèle signifie que l'annulateur de $\us$ est réduit à 0: $\Ann(\us) = 0$.

\bigskip

Mais $\bfB_\delta$ est bien plus qu'un $\bfA$-module de rang 1: c'est un
$\bfA$-module \fbox{de MacRae} de rang \fbox{1}. Ce n'est pas le lieu
d'expliquer ici ce que cela signifie, c'est l'objet de certains des
chapitres qui suivent.  On peut juste dire que $\bfB_\delta$
porte une forme linéaire (la dimension 1 est présente car il faut penser à une
forme $c$-linéaire alternée avec $\boxed{c=1}$):
$$
\uomegares = {\uomegares}_\uP : \bfB_\delta \to \bfA
$$
que l'on peut, si l'on veut, remonter  au niveau $\bfA[\uX]_\delta$
$$
\omegares = \omegaRes{\uP} : \bfA[\uX]_\delta \to \bfA
$$
Cette forme linéaire bien précise (et pas définie à un inversible
près!), polynomiale en les coefficients des~$P_i$, est calculable. Et
elle \emph {contient en son sein} le résultant $\Res(\uP)$ au sens où
$\Res(\uP)$ est une évaluation précise de $\omegares$: on verra que
$\Res(\uP) = \omegares(\nabla)$ où $\nabla \in \bfA[\uX]_\delta$ est
un déterminant bezoutien de $\uP$.  Précisons qu'un déterminant
bezoutien de $\uP$ est un déterminant $\det(\dsV)$ où
$\dsV \in \bbM_n(\bfA[\uX])$ est une matrice homogène telle que $\uP
= \uX.\dsV$.  En fait, le ``on verra'' n'est pas tout-à-fait approprié
puisque l'on prendra $\omegares(\nabla)$ comme définition du résultant
de~$\uP$.

\bigskip

Est ce que l'on aurait pu réaliser quelque chose d'analogue en degré
$d \ge \delta+1$? Oui et non. Oui au sens où $\bfB_d = \bfB_d(\uP)
= \bfA[\uX]_d\ /\ \langle P_1, \ldots, P_n\rangle_d$ est de MacRae,
mais de MacRae de rang 0. En tant que tel, il porte une forme
$0$-linéaire alternée, en clair un scalaire. Impossible, à partir d'un
tel $\bfB_d$, de retrouver l'information contenue dans $\omegares$,
objet bien plus riche qu'un scalaire. Donc non. Quid du
scalaire porté par~$\bfB_d$?  C'est le résultant de~$\uP$. Et en ce
qui concerne $\bfB_d$ pour $d < \delta$?  Cela sera raconté par
la suite.

\newpage
\tableofcontents

\newcommand \cleardoublepage\include{[}1]{\cleardoublepage\include{#1}}



\setlength{\footnotesep}{11pt}

\section*{Introduction et guide de lecture}
\addcontentsline{toc}{section}{\bf Introduction et guide de lecture}
\pagenumbering{roman} 

L'histoire de cette étude a commencé en 2012 pour Claude Quitté, 2014
pour Claire Tête.  Après une dizaine d'années, le moment tant
attendu est venu d'écrire une introduction.

Inutile de faire croire qu'au départ tout était prévu comme il en est
d'usage dans les projets planifiés.  C'est tout le contraire: notre
parcours a été parfois, et même souvent, chaotique.

\medskip

Le titre mentionne l'adjectif ``combinatoire''.
Il s'agit là d'un aspect partiel\footnote{%
Aspect certes partiel mais si récréatif pour les auteurs.

\begin{minipage}{0.76\linewidth}
Comment ne pas évoquer les moments ludiques comme la chasse aux entiers
(strictement) positifs où l'on peut faire joujou avec les proches?
Une première solution (partielle) avait été trouvée par
Mireille et Patrice le 14 Janvier 2015. Il s'agit du problème suivant:
sur un cercle orienté, on dispose de $n$ entiers $x_0, \dots, x_{n-1}$
ce qui permet pour tout $i$, de considérer le successeur $x_{i+1}$ de $x_i$.
On suppose $\sum_i x_i = 0$.
S'il y a un $x_i > 0$, on le diminue d'une unité en augmentant d'une
unité son successeur, ce qui ne change pas la nullité de la somme.
De manière formelle: $x_i \leftarrow x_i- 1$, $x_{i+1} \leftarrow x_{i+1} + 1$. Il s'agit
de démontrer que ce processus se termine i.e. à un moment donné, qu'il n'y a plus
de $x_i > 0$; ce qui signifie que tous les $x_i$ sont~$\le 0$ et, puisque
leur somme est nulle, qu'ils sont tous nuls.

Ce résultat de terminaison s'intègre dans une famille de résultats
analogues développés dans le chapitre~\ref{ChapJeuCirculaire}.
 On déduit de ces résultats
un certain nombre de propriétés importantes de régularité de $\bfB := \bfA[\uX]/\langle\uP\rangle$,
qui font l'objet du chapitre~\ref{ChapBgenerique}.

\end{minipage}
\begin{minipage}{0.2\linewidth}
\begin {tikzpicture}
\def \a {20}
\def \r {0.70}
\draw[thick, >=latex, ->, blue] (\r*cos{(\a)}, \r*sin{(\a)}) arc (\a : 360+\a : \r) ;
\draw (0:\r) node[right] {$x_0$} ;
\draw (60:\r) node[above right] {$x_1$} ;
\draw (120:\r) node[above left] {$x_2$} ;
\draw (180:\r) node[left] {$x_3$} ;
\draw (240:\r) node[below left] {$x_4$} ;
\draw (300:\r) node[below right] {$x_5$} ;
\end {tikzpicture}
\end{minipage}
}
de notre travail. Comme ce qualificatif est inhabituel dans la
thématique du résultant, nous fournirons dans cette introduction
plusieurs exemples relevant de la combinatoire, exemples qui nous
paraissent significatifs. C'est le cas par exemple de la
chasse aux entiers positifs figurant en note de bas de page.

\medskip

Le sous-titre, ``Le résultant multivarié pour les enfants motivés'',
témoigne de la volonté de présenter des approches relativement
élémentaires, bien qu'il ne soit pas toujours facile pour les auteurs d'en
juger.\footnote{Le statut de l'adjectif ``élémentaire'' n'est
absolument pas clair: élémentaire pour qui?  En tout cas, ce dont nous
sommes sûrs, c'est qu'il n'y a point de suites spectrales dans notre
affaire.}

\medskip

Mentionnons d'emblée, quitte à décevoir un certain nombre de
collègues, que toute considération géométrique est absente de notre
traitement. Notre propos est purement algébrique et nous
parlerons beaucoup de l'idéal d'élimination sans parler vraiment
d'élimination.

\medskip

D'autre part, point important que nous tenons à
souligner: nous avons voulu que notre étude soit effective, \emph
{réellement} effective. Nous ne cachons pas qu'un certain nombre
de points ont été implémentés. Pour cette raison, dans notre étude, il
n'y a pas de raisonnements par l'absurde, non pas par principe,
mais parce qu'ils sont totalement incompatibles avec la
programmation. En fait, il n'y a pratiquement pas de négations,
d'énoncés négatifs (pour la même raison).
Dans le même ordre d'idées, nous apprécions tout particulièrement les propos de Demazure à la fin
de l'introduction de son article~\cite{Demazure2} \og rédaction~538
pour Bourbaki sortie du frigidaire\fg:

{\sl
Le développement de l'Algèbre moderne, commencé il y a près d'un
siècle, a renvoyé les anneaux de polynômes au statut de cas
particulier et les méthodes propres aux polynômes, comme la
{\it Théorie de l'élimination}, au conservatoire.
Mais \og les objets sont têtus\fg {} et les méthodes explicites ne
cessent de ressurgir. Un calcul est toujours plus général que le
cadre théorique dans lequel on l'enferme à une période donnée.
}


\medskip

La théorie du résultant que nous exposons ici est sans doute
échafaudée de manière non classique, en tout cas \emph {non} basée sur
la monogénéité de l'idéal d'élimination en terrain générique; certains
jugeront peut-être non orthodoxes les procédés utilisés par endroits.
Nous verrons, pourquoi et comment, nous en sommes venus là.

\medskip

Malgré le flou des souvenirs, on peut considérer que
le volet ci-dessous a constitué notre première
``réussite'' et nous a encouragés à poursuivre.

\subsection*{\og Relations binomiales déterminantales\fg{} (premier succès)}
\label {IntroBinomialBW}
\addcontentsline{toc}{subsection}{Relations binomiales déterminantales (premier succès)}

D'une manière radicalement différente des approches habituelles, nous
avons mis en place la formule de Macaulay, celle qui donne le
résultant d'un $n$-système $\uP$ en degré $d \ge \delta+1$ où $\delta
:= \sum_i (d_i-1)$ est le degré critique de~$\uP$ avec
$d_i = \deg(P_i)$. Nous notons $D := (d_1, \dots, d_n)$ le format
de degrés de $\uP = (P_1, \dots, P_n)$.

\smallskip

Mais plus que cela! En réalité, nous avons obtenu d'une part des égalités
analogues \emph {en tout degré $d$} dans lesquelles interviennent
certains \og dénominateurs\fg{} explicités comme déterminants.
D'autre part, dans notre étude, un certain nombre d'objets font
intervenir la relation d'ordre habituelle sur $\{1..n\}$.
Nous \og tordons\fg{} cette relation d'ordre à l'aide de
n'importe quelle permutation $\sigma \in \fS_n$,
en définissant une nouvelle relation d'ordre $<_\sigma$:
$$
\sigma(1) <_\sigma \sigma(2) <_\sigma \cdots <_\sigma \sigma(n)
$$

Si $O$ est un objet élaboré à partir de la relation d'ordre standard sur $\{1..n\}$,
on peut le déformer en un objet $O^\sigma$ défini à l'aide de~$<_\sigma$.
Ce procédé conduit à une famille de formules indexée par le groupe
symétrique $\fS_n$. Ces formules englobent celles pour le résultant
que nous écrivons avec nos notations\footnote{
$W_{1,d}(\uP)$ n'est pas une matrice mais un \emph{endomorphisme} d'un
$\bfA$-module libre de rang fini. Il n'y a donc pas de base à ordonner
convenablement ni de souci de signe $\pm$. En conséquence, les égalités
que nous mettons au point sont de vraies égalités $a=b$ et non des
pseudo-égalités $a=\pm b$. Le $\bfA$-module libre
en question est la composante homogène~$\Jex_{1,d}$ de degré~$d$
de l'idéal monomial $\Jex_1 := \langle X_1^{d_1}, \dots, X_n^{d_n}\rangle$.
Idem pour $W_{2,d}(\uP)$ relativement à un idéal monomial $\Jex_2$.
Les endomorphismes~$W^\sigma_{1,d}(\uP)$, $W^\sigma_{2,d}(\uP)$ sont les versions
tordues par~$\sigma\in\fS_n$. Tous ces endomorphismes s'obtiennent à partir
de l'application de Sylvester $\Syl_d(\uP)$:
$$
\Syl_d(\uP) : \bfA[\uX]_{d-d_1} \times \cdots \times \bfA[\uX]_{d-d_n} \to \bfA[\uX]_d,
\qquad
(U_1,\dots,U_n) \mapsto \sum_i U_iP_i
$$
}
$$
\Res(\uP) = \dfrac {\det W_{1,d}(\uP)} {\det W_{2,d}(\uP)} =
\dfrac {\det W^\sigma_{1,d}(\uP)} {\det W^\sigma_{2,d}(\uP)}
\qquad\qquad
d \ge \delta+1,
\qquad
\sigma \in \fS_n
\leqno (\deg\ge\delta+1)
$$
et fournissent également une expression de la
forme linéaire fondamentale $\omegaRes{\uP}
: \bfA[\uX]_\delta \to \bfA$:
$$
\omegaRes{\uP}(\sbullet) = \dfrac {\det\Omega_\uP(\sbullet)} {\det W_{2,\delta}(\uP)} =
\dfrac {\det\Omega^\sigma_\uP(\sbullet)} {\det W^\sigma_{2,\delta}(\uP)}
\qquad\qquad  \sigma \in \fS_n
\leqno (\deg=\delta)
$$
Cette forme linéaire \og porte\fg{} le résultant au sens où $\Res(\uP)$
est une évaluation explicite de $\omegaRes{\uP}$, cf.~page~\pageref{IntroDefPrivilegieeResultant}.

Cerise sur le gâteau: pour $d$ fixé, lorsque $\sigma$ varie dans
$\fS_n$ (ou seulement dans une partie bien ciblée de cardinal $n$ de
$\fS_n$), la famille des dénominateurs est de profondeur $\ge 2$
pourvu que $\uP$ soit en adéquation avec le jeu étalon $\uX^D =
(X_1^{d_1}, \dots, X_n^{d_n})$. 

\medskip

Pour établir ces égalités, nous avons fait intervenir le complexe
composante homogène de degré $d$ du complexe de Koszul de $\uX^D$, que
nous notons $\rmK_{\sbullet,d}(\uX^D)$, et mis au point une
combinatoire basique, du type positionnement dans un ensemble
totalement ordonné, qui s'est révélée fructueuse.

Voici quelques aspects de cette combinatoire qui prend sa
source dans le format de degrés $D = (d_1,\dots,d_n)$.

\begin {enumerate}[a)]
\item
A la base, il y a la notion \emph{d'indice de divisibilité} d'un monôme $X^\alpha$ i.e.
tout indice $i\in \llbracket 1,n\rrbracket$ tel que $X_i^{d_i} \mid X^\alpha$.
Et nous définissons son ensemble de divisibilité
$\DivSeq(X^\alpha) := \{i\in \llbracket 1,n\rrbracket \mid \alpha_i \ge d_i\}$.
Cette notion
fait naître une famille d'idéaux monomiaux $(\Jex_h)_{1\le h\le n}$,
dits excédentaires (notion introduite par nous-mêmes). Par exemple:
$$
\Jex_1 = \langle X_1^{d_1}, \dots, X_n^{d_n}\rangle, \qquad\qquad
\Jex_2 = \langle X_i^{d_i}X_j^{d_j} \mid 1 \le i < j\le n \rangle
$$
La plupart du temps, nous utilisons un
\emph {mécanisme de sélection}, procédé qui consiste à
choisir dans l'ensemble de divisibilité de $X^\alpha$, lorsqu'il est
non vide, un indice de divisibilité privilégié, par exemple le plus
petit, que nous notons $\minDiv(X^\alpha)$.

\item
Dans notre approche, le caractère monomial des modules, provenant de
la définition même des sous-modules de Macaulay $\Mmac_{k,d}$, joue un
rôle important. Au départ, pas de système $\uP$, uniquement le jeu
étalon $\uX^D = (X_1^{d_1}, \dots, X_n^{d_n})$, son complexe de Koszul
$\rmK_\sbullet(\uX^D)$ et ses composantes homogènes~$\rmK_{\sbullet,d}(\uX^D)$.
Les sous-modules de Macaulay
$\Mmac_{k,d}$ sont des sous-modules monomiaux (monôme ici a le sens
de monôme extérieur $X^\alpha e_{i_1} \wedge \cdots \wedge e_{i_k}$) de
$\rmK_{k,d}(\uX^D)$ \emph {associés au mécanisme de sélection $\minDiv$}
(contrairement aux idéaux $\Jex_h$ indépendants de tout mécanisme de
sélection).

Un sous-module monomial possède un supplémentaire canonique, à savoir
son supplémentaire monomial.  Cette propriété en apparence banale est
fondamentale car elle permet de mettre en place l'induit-projeté d'un
endomorphisme, notion qui interviendra à de nombreuses reprises.

\item
Le supplémentaire monomial de $\Mmac_k$ est noté $\Smac_k$; il est
canoniquement monomialement isomorphe à~$\Mmac_{k-1}$ (lorsque $k \ge 1$)
via l'induit-projeté $\beta_k(\uX^D)$ de $\partial_k(\uX^D)$ défini ci-dessous:
$$
\partial_k(\uX^D) \ =\ 
\NorthEastBordermatrix{ 
\scriptstyle\Mmac_k  &\scriptstyle\Smac_k& \\
\star & \cercle{$\beta_k$} &\Heti{\scriptstyle\Mmac_{k-1}} \\
\noalign{\vskip4pt}
\star & \star           &\Heti{\scriptstyle\Smac_{k-1}} \\
}
\qquad\qquad
\beta_k(\uX^D) : \Smac_k \overset{\simeq}{\longrightarrow} \Mmac_{k-1}
$$
En conséquence, nous disposons d'une décomposition du complexe $\rmK_\sbullet(\uX^D)$
dite de Macaulay:
$$
\def \egal{\rotatebox{90}{=}}
\xymatrix @H=0pt @R=2pt @C=4pc {
\rmK_n & \rmK_{n-1} &  \cdots  & \rmK_2 & \rmK_1  & \rmK_0  \\
\egal & \egal & & \egal & \egal & \egal \\
\Mmac_n=0 & \Mmac_{n-1} &  \cdots  & \Mmac_2 & \Mmac_1  & \Mmac_0  \\
\oplus & \oplus & & \oplus & \oplus & \oplus \\
\Smac_n \ar[uur]^{\beta_n(\uX^D)}_\simeq & \Smac_{n-1} & \cdots & \Smac_2 \ar[uur]^{\beta_2(\uX^D)}_\simeq
& \Smac_1 \ar[uur]^{\beta_1(\uX^D)}_\simeq & \Smac_0 \\
}
$$

\item
Une étude poussée de la décomposition de Macaulay de
$\rmK_{\sbullet,d}(\uX^D)$ nous a permis de décomposer chaque
sous-module de Macaulay $\Mmac_{k-1,d}$, pour $k \ge 1$,  à l'aide
des idéaux monomiaux excédentaires $(\Jex_{h,d})_{h\ge k}$,
cf. la proposition \ref{DecompositionMkminus1ParJh}.
\end {enumerate}

\bigskip

Lorsqu'on considère ensuite un système $\uP$ de format $D$, un certain
miracle se produit concernant les différentielles
$\partial_{k,d}(\uP)$ du complexe~$\rmK_{\sbullet,d}(\uP)$, complexe
dont les termes sont ceux de $\rmK_{\sbullet,d}(\uX^D)$.  Sans entrer
dans les détails, la \emph {première} différentielle
$\partial_{1,d}(\uP)$ donne naissance, via un procédé d'induit-projeté
couplé avec le mécanisme de sélection $\minDiv$, à des
endomorphismes~$W_{h,d}(\uP)$ de~$\Jex_{h,d}$, avec~$h \ge 1$.
Quant aux différentielles $\partial_{k,d}(\uP)$, chacune 
donne naissance à un induit-projeté $\beta_{k,d}(\uP) : \Smac_{k,d} \to \Mmac_{k-1,d}$
que l'on normalise en un endomorphisme $B_{k,d}(\uP)$ de $\Mmac_{k-1,d}$ via
$$
B_{k,d}(\uP) = \beta_{k,d}(\uP) \circ \beta_{k,d}(\uX^D)^{-1}
$$
Le miracle en question conduit à des liens très étroits entre trois
familles de scalaires toutes pilotées par $\minDiv$: les déterminants
de Cayley $\det B_{k,d}(\uP)$, les $\Delta_{k,d}(\uP)$ et \og nos\fg{}
déterminants excédentaires $\det W_{h,d}(\uP)$.  Les deux premières
familles sont reliées par l'égalité\footnote{%
Il faut ajouter l'information $\Delta_{n+1,d}(\uP) = 1$.  
Ainsi, en première approximation, on pourrait convenir que
l'égalité~$(\Delta_{k,d})$ est une définition de $\Delta_{k,d}(\uP)$
à l'aide de $\det B_{k,d}(\uP)$ 
par récurrence descendante sur~$k$. Encore faut-il montrer
que~$\Delta_{k+1,d}(\uP)$ divise $\det B_{k,d}(\uP)$! C'est la notion
de \emph{structure multiplicative}, abordée ultérieurement dans cette
introduction, qui permet de définir convenablement ces $\Delta_{k,d}(\uP)$.
}
$$
\det B_{k,d}(\uP) = \Delta_{k,d}(\uP)\,\Delta_{k+1,d}(\uP)
\leqno (\Delta_{k,d})
$$
Cette relation \og s'inverse\fg{} et permet d'exprimer
$\Delta_{1,d}(\uP)$ comme un quotient alterné des $\det B_{k,d}(\uP)$.
Ceci reste assez banal. Ce qui l'est moins, c'est d'une part que
$\Delta_{1,d}(\uP)$ est \emph {intrinsèque}, indépendant de tout
mécanisme de sélection (ce qui n'est pas le cas des
$\Delta_{k,d}(\uP)$ avec $k \ge 2$).  Et d'autre part que l'on a
l'égalité $\Delta_{1,d}(\uP) =\Res(\uP)$ pour $d\ge \delta+1$, égalité
permettant d'exprimer le résultant comme un quotient alterné de
mineurs issus des différentielles~$\partial_{k,d}(\uP)$.

Le caractère innovant réside dans la découverte d'autres relations,
cette fois absolument non banales, que nous avons qualifiées de \og
relations binomiales\fg{}, avec comme conséquence, deux égalités
fondamentales:
$$
\forall\ d : \qquad
\Delta_{2,d}(\uP) = \det W_{2,d}(\uP), \qquad\qquad
\det W_{1,d}(\uP) = \det W_{2,d}(\uP) \Delta_{1,d}(\uP)
$$
Les relations obtenues coiffent largement la formule de
Macaulay, cette dernière ne devenant qu'un cas particulier de ces
relations binomiales. Ces dernières permettent, entre
autres, d'exprimer le résultant comme un quotient de deux mineurs
issus de la \emph {première} différentielle.

\smallskip

Plus généralement, pour un degré~$d$ quelconque, notons $\chi_d$ la
caractéristique d'Euler-Poincaré du complexe $\rmK_{\sbullet,d}(\uX^D)$, égale
\footnote{%
Nous utiliserons parfois la notation $r_{0,d} = r_{0,d}(D)$ comme
synonyme de $\chi_d=\chi_d(D)$. Ceci est lié au fait que $\chi_d$ est
égal à $r_{0,d}$, cardinal de la base monomiale $\calS_{0,d}$ du
supplémentaire monomial $\Smac_{0,d}$ de $\Jex_{1,d}$.  Ces choix
terminologiques se justifieront dans la suite de l'étude.
}
à:
$$
\chi_d = \dim \bfA[\uX]_d / \Jex_{1,d} =
\# \{ \alpha\in \bbN^n \mid  |\alpha| = d \text{ et } \forall i\ \alpha_i<d_i\}
$$
Entre en jeu une forme $\chi_d$-linéaire alternée sur laquelle nous
reviendrons dans notre approche (voir en page~\pageref{IntroAproposDetd}, le point
\ref{IntroAproposDetd}) en particulier pour corriger un petit
mensonge:
$$
\Det_{d,\uP} : \BW^{\chi_d}(\bfA[\uX]_d) \to \bfA
\qquad
\begin {array}{c}
\text{qui passe au quotient} \\
\text{sans changer de nom} \\
\end {array}
\qquad
\Det_{d,\uP} : \BW^{\chi_d}(\bfB_d) \to \bfA
$$
Pour $d=\delta$, on a $\chi_\delta = 1$ et la forme
$\Det_{\delta,\uP}$ est la forme linéaire fondamentale $\omegaRes{\uP}
: \bfA[\uX]_\delta\to\bfA$.  Pour $d \ge \delta+1$, on a $\chi_d = 0$
et $\Det_{d,\uP}$ s'identifie au scalaire~$\Delta_{1,d}(\uP)$, donc
à~$\Res(\uP)$.  Pour exprimer cette forme $\chi_d$-linéaire alternée
$\Det_{d,\uP}$ sur~$\bfA[\uX]_d$ attachée à~$\uP$, on dispose d'un
quotient analogue à celui figurant dans la formule $(\deg = \delta)$
de la page précédente.

Références: chapitres~\ref{ComplexeDecompose}, \ref{ChapBW}.

\medskip

Terminons cette section en signalant le fait suivant.  Dans
l'article \cite{J7} de Jean-Pierre Jouanolou, on voit apparaître, en
section 3.10 (Formes de Sylvester et applications), pour $0 \le \nu
< \min(D)$, un scalaire~$c_\nu(\uP)$, très exactement dans le point
(c) de la proposition 3.10.14: ``Il existe un élément
$c_\nu(\uP) \in \bfA$ et un seul tel que~etc.''. Dans cet article, ce
scalaire, qui intervient comme dénominateur dans une expression de
$\Res(\uP)$, n'est pas explicité par une formule algébrique
directe. Il intervient dans notre exposé en degré $d$ où $(\nu,d)$ sont
complémentaires au degré critique~$\delta$. Avec nos notations, il s'agit~de:
$$
c_\nu(\uP) = \det W_{2,d}(\uP) \overset{\rm aussi}{=} \Delta_{2,d}(\uP)
\qquad \text{où} \qquad d=\delta-\nu
$$
Insistons sur le fait que, dans notre étude, le scalaire $\det
W_{2,d}(\uP)$ est défini pour tout $d$, indépendamment de~$\Res(\uP)$,
et que $W_{2,d}(\uP)$ est un endomorphisme explicite de $\Jex_{2,d}$,
qui plus est, simple à déterminer. Quant à~$\Delta_{2,d}(\uP)$, c'est une expression
de $\det W_{2,d}(\uP)$, expression liée à la
décomposition de Macaulay $\rmK_{\sbullet,d}(\uP)
= \Mmac_{\sbullet,d} \oplus \Smac_{\sbullet,d}$,
et se présentant comme un quotient alterné des $\big(\det B_{k,d}(\uP)\big)_{k\ge 2}$.
Tous ces objets, qui
sont pilotés par $\minDiv$, possèdent une $\sigma$-version. Enfin, les
formes de Sylvester dont il est question dans Jouanolou sont l'objet
du chapitre~\ref{ChapSylvesterHybride} et font intervenir la forme
$\chi_d$-alternée $\Det_{d,\uP}$.


\bigskip
Passons maintenant à un deuxième volet: le \emph{fondement} du résultant.

\subsection*{Le fondement du résultant}
\addcontentsline{toc}{subsection}{Le fondement du résultant}

A un moment donné de notre étude, difficile à
situer dans le temps, nous avons souhaité comprendre de manière approfondie
les notions sur lesquelles était assise la théorie du
résultant. Tâche complexe car la théorie du résultant ne représente
qu'un aspect particulier de la vaste théorie de l'élimination qui
remonte à plusieurs siècles.  A cette occasion, la lecture, partielle
et ardue, de l'\oe uvre de Jean-Pierre Jouanolou nous a été très utile.
Et de fil en aiguille, nous est venue l'idée, a priori saugrenue, d'un projet
ambitieux:
\og reprendre\fg{} les bases algébriques permettant de fonder le résultant
à l'aide de méthodes se voulant d'une part les plus effectives possible
et d'autre part les moins sophistiquées possible.

\bigskip

Bien entendu, la section précédente n'y contribue absolument pas
puisqu'elle suppose que bon nombre d'objets sont déjà en place, dont le
résultant.  Qu'est-ce que le résultant $\Res(\uP)$ de $n$ polynômes
homogènes $\uP = (P_1, \dots, P_n)$ en $n$ variables $\uX =
(X_1, \dots, X_n)$?  Quelle définition en prendre?  Quelle
caractérisation? Nous avons attendu 2018 pour choisir une définition
que nous jugeons structurelle et opérationnelle.
Trouver une architecture et une organisation cohérente s'est révélé relativement complexe.

\smallskip

Rappelons brièvement le contexte géométrique classique attaché au
théorème de l'élimination.

\medskip


\subsubsection*{Le théorème de l'élimination (approche \og standard\fg)}

Ce dernier peut se décliner sous plusieurs formes.  La lectrice pourra
par exemple consulter l'article de Demazure en \cite[Section 1,
proposition~2 intitulée ``théorème de l'élimination'']{Demazure2}.
Nous empruntons à Demazure la scholie (après la définition~2 de sa
section~3) qu'il tire de cette proposition~2.

\smallskip
Soit $\uP$ un système $(P_1, \dots, P_r)$ de $r$ polynômes
\emph {homogènes} à $n$ variables $\uX = (X_1, \dots, X_n)$, à coefficients dans un anneau~$\bfA$.
Définissons l'idéal saturé $\uPsat$ de $\langle\uP\rangle$ comme suit:
$$
\uPsat = \big\{F \in \bfA[\uX] \mid \exists e \ \forall i \in \{1,\dots,n\}
\ X_i^e F \in \langle\uP\rangle \big\}
$$
L'idéal d'élimination $\ElimIdeal$ possède la propriété suivante.
Pour tout morphisme $\rho : \bfA \to \bfk$ à valeurs dans un corps
$\bfk$, il y a équivalence:
$$
\rho\big(\ElimIdeal\big) = 0  \qquad \iff\qquad
\begin {minipage}{0.6\linewidth}
les ${^\rho}P_i$ possèdent un zéro commun non trivial au-dessus
d'une extension de $\bfk$, extension que l'on peut supposer de degré fini.
\end {minipage}
\leqno (\star)
$$
Le lecteur pourra également consulter le court
article~\cite{CartierTate} ``A simple proof of the main theorem of
elimination theory in algebraic geometry'' de Cartier \& Tate.

\bigskip

Ce qui nous concerne ici est le cas $r=n$, dit cas principal de
l'élimination. Dans ce contexte, le théorème principal de
l'élimination affirme qu'en \emph {terrain générique}, l'idéal
d'élimination est monogène.  Notons~$\uP^\gen$ le système générique de
format $D = (d_1, \dots, d_n)$.  Il y a un moyen de spécifier un
générateur précis $\Res(\uP^\gen) \in \bfA^\gen$ de l'idéal
d'élimination: c'est celui qui se spécialise en 1 lorsque l'on
spécialise~$\uP^\gen$ en le jeu étalon $\uX^D = (X_1^{d_1}, \dots,
X_n^{d_n})$.  Pour n'importe quel système~$\uP$ de format~$D$ à
coefficients dans~$\bfA$, on peut alors définir $\Res(\uP) \in \bfA$
comme étant la spécialisation de~$\Res(\uP^\gen)$ correspondant
à celle de~$\uP^\gen$ en $\uP$.  Ainsi par définition:
$$
\Res(X_1^{d_1}, \dots, X_n^{d_n}) = 1
$$
Il y a deux exemples élémentaires pour lesquels il est aisé d'exprimer
le résultant. Il y a celui des \og petites classes\fg{}: le résultant
de 2 polynômes en une variable\footnote{%
Il convient d'homogénéiser chacun des 2 polynômes de façon à être
en adéquation avec le contexte retenu ici.}
donné par le déterminant de la matrice de Sylvester.  Il y a
également celui d'un système $\uL$ de $n$ polynômes linéaires en $n$
indéterminées. On a alors $\Res(\uL) = \det(\dsL)$ où $\dsL$ est la
matrice $n \times n$ qui exprime les $L_j$ sur $(X_1, \dots,
X_n)$. Bien qu'élémentaires, montrer, sur ces deux exemples, qu'en
terrain générique le résultant est un générateur de l'idéal
d'élimination nécessite un certain effort.

\medskip

Par définition également, on a $\Res(\uP) \in \uPsat \cap\bfA$ mais en
général $\Res(\uP)$ n'est pas un générateur de l'idéal
d'élimination. On a cependant l'inclusion ci-dessous à droite:
$$
\Res(\uP)\bfA\  \subset\ \uPsat\cap\bfA\ \subset\  \sqrt{\Res(\uP)\bfA}
$$
Par exemple, en deux variables, on a $\langle aX,aY\rangle^\sat =
\langle a\rangle$ et 
$\Res(aX,aY)=a^2$, qui, sauf cas très particuliers,
n'engendre pas l'idéal d'élimination.

\medskip

Au dessus d'un corps $\bfk$, on peut reformuler le résultat~$(\star)$.
Pour un système $\uP = (P_1, \dots, P_n)$ de polynômes homogènes à
$n$ variables $\uX$ (autant de variables que d'indéterminées), à
coefficients dans un corps $\bfk$, il y a équivalence:
$$
\Res(\uP) = 0  \qquad \iff\qquad
\begin {minipage}{0.6\linewidth}
les $P_i$ possèdent un zéro commun non trivial au-dessus
d'une extension de $\bfk$, extension que l'on peut supposer de degré fini.
\end {minipage}
$$

\bigskip

Montrer en terrain générique la monogénéité de l'idéal d'élimination
demande un travail notable (voir par exemple
l'article \cite{Demazure2} de Demazure) et doit absolument être accompagné de
propriétés permettant de cerner le résultant telles que son degré
d'homogénéité en les $P_i$, le théorème de multiplicativité
etc.\footnote{Jean-Pierre Serre, dans la rédaction ``Résultant,
Discriminant'' pour Bourbaki, n$^\circ$ 627, juin 1976, dit lui-même
en commentaires, page 19, \og L'expérience prouve qu'il ne sert à rien
de connaître le résultant si on ne possède pas suffisamment de règles
de calcul, du genre de celles données dans les propositions etc.\fg.}

\medskip

Cette manière de procéder ne nous a pas convenu pour au moins deux
raisons.  D'une part, l'obtention de la monogénéité, qui se réalise en
terrain générique, ne nous semble pas très effective.  D'autre part,
comme nous l'avons déjà signalé, cette propriété de monogénéité ne se
spécialise pas.  Nous avons opté pour une toute autre stratégie, mise
en place au fur et à mesure de notre étude: elle permet d'associer à
une suite~$\uP$ régulière, un certain nombre d'ingrédients, dont le
résultant, avec possibilité de spécialisation.  Il n'y a plus de
géométrie mais des théories algébriques incluant l'algèbre
commutative, l'algèbre homologique et la combinatoire.  La section qui
vient explicite une partie de cette stratégie au caractère a priori
anti-intuitif.


\subsubsection*{Notre approche: résolutions libres finies et structure multiplicative}
\label {IntroGrandeOrganisatrice}

Nous utilisons la théorie des résolutions libres finies en
l'appliquant aux complexes libres que sont les composantes homogènes
$\rmK_{\sbullet,d}(\uP)$ du complexe de Koszul de $\uP$, pour
n'importe quel $d$, sous la seule clause ``$\uP$ régulière''.  Note:
l'utilisation du complexe $\rmK_{\sbullet,d}(\uP)$ n'est pas nouvelle
\footnote{%
Chez les deux auteurs cités, l'anneau de base $\bfA$ est supposé noethérien
et intégralement clos (voire factoriel chez Chardin), de manière à pouvoir
appliquer la théorie des diviseurs, en se référant au chapitre VII (Diviseurs)
d'Algèbre Commutative de Bourbaki. De notre côté, il n'y a pas
d'hypothèse particulière sur l'anneau de base $\bfA$, nous faisons porter les contraintes
sur le système $\uP$ et nous opérons avec les décompositions de Macaulay
de $\rmK_{\sbullet,d}(\uP)$ indexées par $\sigma\in\fS_n$. Cela s'est révélé commode
de dégager, dans la première section du chapitre \ref{ComplexeDecompose}, la
notion de complexe décomposé.}
cf. par exemple Chardin dans~\cite{Chardin2} ou Demazure
dans~\cite{Demazure1}.  Mais qu'en est-il de l'utilisation de
la \emph{structure multiplicative}?
Faut-il penser, dans ce
domaine, que cet outil n'a pas retenu l'attention des spécialistes?

Nous énumérons ci-dessous quelques points qui nous semblent
essentiels. Précisons que, dans notre étude, nous avons tenu à exposer
d'une part les rudiments de la théorie de la profondeur
(cf. le chapitre~\ref{ChapAC1}) et d'autre part un certain nombre de résultats
fondamentaux de la théorie des résolutions libres finies: c'est
l'objet du chapitre~\ref{ChapStructureMultiplicative}.

\begin {enumerate}[a)]
\item
\label {IntroStructureMultiplicative}
Exploitation de la structure multiplicative.  Rappelons de quoi il est
question pour un complexe exact libre $(F_\sbullet, u_\sbullet)$ où
$F_\sbullet = (F_k)_{0\le k \le n+1}$ avec $F_{n+1} = 0$, $u_\sbullet
= (u_k)_{1\le k \le n+1}$ et $u_k : F_k\to F_{k-1}$.  Nous utilisons
les notations du chapitre~\ref{ChapStructureMultiplicative} pour les
rangs attendus $(r_k)_{0 \le k \le n+1}$ définis de la manière
suivante:
$$
r_{n+1} = 0, \qquad r_{k+1} + r_k = \dim F_k
$$
On dit alors que $r_k$ est le rang
attendu de la différentielle $u_k$ pour $1 \le k \le n+1$.
De plus $r_0 = \chi$, caractéristique d'Euler-Poincaré de~$F_\sbullet$.

La terminologie ``structure multiplicative'' est liée à l'existence
d'une factorisation~$\BW^{r_1}(u_1) = \Theta\,\nu$, unique à un
inversible près, incluant des précisions sur la profondeur des
contenus de $\nu$ et $\Theta$:
$$
\vcenter{
\xymatrix @R=0.5cm{
\BW^{r_1}(F_1) \ar[dr]_{\nu}\ar[rr]^{\BW^{r_1}(u_1)} && \BW^{r_1}(F_0) \\
             & \bfA\ar[ur]_{\times \Theta} \\
}}
\qquad\qquad
\begin {array} {l}
\Gr(\rmc(\nu)) \overset{\rm def.}{=} \Gr(\Im\nu)\ge 2 \\[2mm]
\Theta \in \BW^{r_1}(F_0) \text{ sans torsion} \\
\end {array}
$$
Les initiateurs en sont Buchsbaum \& Eisenbud en~\cite{BE2}, Eagon \&
Northcott en~\cite {EN}.  Northcott a complété l'étude dans son
ouvrage~\cite{NorthcottFFR} et une approche élémentaire et constructive a été
accomplie par Coquand~\& Quitté en~\cite{CoquandQuitte}.

\smallskip

Il est essentiel de faire intervenir le module $M$ résolu par le
complexe, i.e. le conoyau $M := \Coker(u_1)$ de la première
différentielle et de dégager les objets intrinsèques à~$M$.  Par
exemple, l'entier $\chi$ ne dépend que du module~$M$; en effet, ce module
$M$ est de \emph{rang}~$\chi$ au sens où il existe un système (fini)
fidèle de localisations $\us = (s_\ell)$ de scalaires de~$\bfA$ tel
qu'en localisant en chaque $s_\ell$, le $\bfA[1/{s_\ell}]$-module
$M[1/s_\ell]$ est libre de rang $\chi$.  Fidèle signifie que
l'annulateur de $\us$ est réduit à 0, ce que l'on écrit~$\Ann(\us) = 0$.

\medskip

Grâce au fait que $(r_1,\chi)$ sont complémentaires à $\dim F_0$ i.e.
$r_1+\chi=\dim F_0$, il est préférable de \og remplacer\fg{} $\Theta$
par la forme $\chi$-linéaire alternée $\Theta^\sharp$ sur
$F_0$:
$$
\Theta^\sharp : \BW^\chi(F_0) \to \bfA,
\qquad
\Theta^\sharp = [\Theta\wedge\sbullet]_{\bfe_0}
\qquad
\text{où $\bfe_0$ est une orientation de $F_0$}
$$
La forme $\Theta^\sharp$ a le bon goût de passer au quotient pour
fournir une forme linéaire $\BW^\chi(M) \to \bfA$.

\medskip

Ce que l'on peut retenir: tout complexe exact libre $F_\sbullet$ de caractéristique
d'Euler-Poincaré $\chi$, de module résolu~$M$, donne naissance, à un
inversible près, à une forme $\chi$-linéaire alternée sur $M$ à
valeurs dans~$\bfA$. Note: on a le loisir de voir cette forme comme
une forme $\chi$-linéaire alternée sur le premier terme~$F_0$ du
complexe; on parle alors du déterminant de Cayley ``vectoriel'' du
complexe exact~$F_\sbullet$, cf. la section \ref{subsectionCayleyMacRae}.

\item
Pour nous, la théorie des résolutions libres ne peut pas être
dissociée de la théorie de la profondeur. A la base, il y a bien sûr
le fait que l'inégalité $\Gr(\ua) \ge k$, pour une suite~$\ua$ de
scalaires, peut être définie à l'aide du complexe de Koszul montant de~$\ua$
de la manière suivante:
$$
\text{$\Gr(\ua) \ge k$ signifie
que les $k$ premiers groupes de cohomologie 
$\rmH^0(\ua), \rmH^1(\ua),\dots, \rmH^{k-1}(\ua)$ sont nuls}
$$
D'autre part, l'exactitude de tout complexe libre est gouvernée par le
fameux théorème ``What makes a complex exact?''. Celui-ci fait
intervenir la profondeur des idéaux déterminantiels des différentielles
(ceux de rangs attendus).

\item
Le cas particulier $\Gr(\ua) \ge 2$, central dans notre étude,
possède la caractérisation élémentaire suivante.
Pour toute suite $\ub$ de scalaires de même longueur que $\ua$
vérifiant:
$$
a_i b_j = a_j b_i  \quad \forall i,\forall j
$$
il existe un seul scalaire $q$ tel que $b_i = qa_i$ pour tout $i$.

\medskip

Ceci nous a permis de définir une notion robuste de pgcd à savoir la
notion de \og pgcd fort\fg: un scalaire régulier $g$ est le pgcd fort
d'une suite $\ua$ si $g \mid a_i$ pour chaque $a_i$ et si
$\Gr(\ua/g) \ge 2$. On peut également définir cette notion au niveau d'un
module $M$: un vecteur sans torsion $w \in M$ est le pgcd fort d'une famille finie $\uv$
de $M$ si l'on a $v_i = q_iw$ avec $\Gr(\uq) \ge 2$.

\item
\label {IntroAproposDetd}
Notons $\bfB = \bfA[\uX]/\langle\uP\rangle$ et $\bfB_d$ sa composante
homogène de degré $d$.  En appliquant au
complexe~$\rmK_{\sbullet,d}(\uP)$ le point~a), 
on obtient via un mécanisme de normalisation, en degré $d=\delta$, une forme linéaire précise
$\omegaRes{\uP} \in (\bfB_\delta)^\star$, et, en degré
$d \ge \delta+1$, un scalaire~$\calR_d(\uP)$ qui n'est autre que le
résultant de $\uP$.

De manière générale, quelque soit $d$, on obtient une forme
$\chi_d$-alternée \emph{intrinsèque} \mbox{$\Det_{d,\uP}:\BdSod\to\bfA$} où
$\calS_{0,d}$ est la base monomiale de $\Smac_{0,d}$, supplémentaire monomial de $\Jex_{1,d}$:
$$
\calS_{0,d} =
\{X^\alpha  \mid  \alpha \in \bbN^n,\  |\alpha| = d,\ \forall i\ \alpha_i<d_i\}
$$
Quelques explications nous semblent nécessaires en rappelant que
$\chi_d = r_{0,d} \overset{\rm def.}{=}\#\calS_{0,d}$.
Définir une forme linéaire sur $\BW^{\chi_d}(\bfB_d)$ est équivalent à
définir une forme $\chi_d$-linéaire alternée sur
$\bfB_d^{\chi_d} = \bfB_d^{r_{0,d}}$. Certes $\bfB_d^{\chi_d} \simeq
\BdSod$, mais non de manière canonique. Pour parvenir à nos fins en vue
d'obtenir une forme $\Det_d$ intrinsèque, nous avons dû considérer $\BdSod$
\og comme le vrai visage\fg{} de $\bfB_d^{\chi_d}$.
Ce faisant, on retrouve~$\Delta_{1,d}$ via
$$
\Delta_{1,d} = \Det_d(\iota)
\qquad
\begin {array}{c}
\text{où $\iota$ est la composée de l'injection}\\
\text{et de la projection canoniques}\\
\end {array}
\qquad
\calS_{0,d} \hookrightarrow \bfA[\uX]_d \twoheadrightarrow \bfB_d
$$
Références: section~\ref{CanonicalCayleyDetSection}.

\item
La considération des sous-modules de Macaulay (pilotés par $\minDiv$) fournit
un moyen de calcul de $\Det_{d,\uP}$ de la manière suivante.
Il y a un constructeur explicite (piloté par $\minDiv$):
$$
\bsOmega_{d,\uP} : \bfB_d^{\calS_{0,d}} \to \End_\bfA(\bfA[\uX]_d)
$$
ainsi qu'une famille analogue, indexée par $\sigma\in\fS_n$, obtenue
en tordant par $\sigma$ les sous-modules de Macaulay et $\bsOmega_{d,\uP}$.
On dispose alors des formules:
$$
\Det_{d,\uP}(\sbullet) = \frac{\det\bsOmega_{d,\uP}(\sbullet)}{\det W_{2,d}(\uP)} =
\frac{\det\bsOmega^\sigma_{d,\uP}(\sbullet)}{\det W^\sigma_{2,d}(\uP)}
\qquad \forall\ \sigma \in \fS_n
$$
Il convient de noter que l'apparition du dénominateur
$\det W_{2,d}(\uP)$ est conséquence des \og relations binomiales\fg{}
évoquées dans la section~page~\pageref{IntroBinomialBW}.

\item
Nouvelle cerise sur le gâteau. Non seulement, nous n'avons pas à supposer $\uP$
générique, mais cette manière de procéder permet de répondre à la
question: ``Lorsque $\uP$ est régulière, à quelle condition le
résultant engendre-t-il l'idéal d'élimination?''.  La réponse est la
suivante: si et seulement si $\Gr(\omegaRes{\uP}) \ge 2$; de plus,
comme cette inégalité de profondeur est vérifiée lorsque $\uP$ est
générique, cela fournit une preuve de la monogénéité de l'idéal
d'élimination dans ce cas.
Références: théorème~\ref{4pointsPsuperreguliere},
lire également le commentaire après la proposition~\ref{WiebeCech}
ainsi que la section~\ref{HCech0B}.

\item
Etant donné un module librement résoluble~$M$, nous souhaitons lui
associer une notion ``d'invariant de MacRae''  en utilisant seulement la première
différentielle.  Ceci nous a conduit à revisiter cette notion.
Notre définition de module de MacRae s'éloigne de la
définition classique car elle ne se limite pas aux modules
de présentation finie de rang~0.  Elle est calquée sur celle des
modules librement résolubles en s'appuyant sur la notion de
pgcd fort.  Dans notre étude, l'invariant de MacRae n'est plus réduit
à un scalaire: nos modules de MacRae sont des modules~$M$ de
présentation finie, de rang constant $c$, portant de ce fait un \og
sous-module de Fitting\fg{} $\FittVect(M) \subset \BW^c(M)^\star$, à
qui nous demandons de posséder un pgcd fort, forme linéaire de
$\BW^c(M)^\star$ définie à un inversible près.
Références: chapitres~\ref{ChapFittingVectoriel} et \ref{ChapStructureMultiplicative}.
\end {enumerate}

\subsubsection*{Plusieurs visions du résultant, en privilégiant cependant
             $\Res(\protect\uP)=\omegaRes{\protect\uP}(\nabla)$}
\label {IntroDefPrivilegieeResultant}

On précise ici comment on va définir le résultant de $\uP$ en tant
qu'invariant de MacRae de certains $\bfA$-modules librement résolubles
de rang $0$ parmi les composantes homogènes des $\bfA[\uX]$-modules
gradués suivants:
$$
\bfB = \bfA[\uX] / \langle\uP\rangle
\qquad\qquad
\bfB' := \bfA[\uX] / \langle\nabla,\uP\rangle
$$
Par définition, $\nabla$ est \emph{un} déterminant bezoutien de $\uP$
i.e. $\nabla = \det(\dsV)$ où $\dsV \in \bbM_n(\bfA[\uX])$ est une
matrice \emph {homogène} réalisant $[P_1,\dots,P_n] =
[X_1,\dots, X_n]\dsV$, homogène signifiant que la colonne $j$ est
homogène de degré $d_j-1$. Ainsi $\nabla \in \bfA[\uX]_\delta$
et $X_i\nabla \in \langle\uP\rangle$ pour tout $i$, a fortiori $\nabla\in\uPsat_\delta$.

Deux déterminants bezoutiens sont égaux modulo
$\langle\uP\rangle_\delta$, cela résulte d'une propriété des suites
1-sécantes (cf. la proposition~\ref{IndependanceNabla}) appliquée à la
suite 1-sécante~$\uX$; on dispose ainsi d'un habitant bien précis
de~$\bfB_\delta$, que l'on note $\overline\nabla$, parfois $\nabla$
selon les jours de la semaine.

Malgré les apparences, il ne faudrait pas croire que nous allons nous
limiter au rang~$0$: le $\bfA$-module~$\bfB_\delta$, de rang 1, va jouer un rôle
capital. Désormais, on suppose $\uP$ régulière, ce qui fait que les
$(\bfB'_d)_{d\ge \delta}$ sont librement résolubles de rang 0 et
$\bfB_\delta$ est librement résoluble de rang 1.
Note: pour $d\ge\delta+1$, puisque $\nabla$ est homogène de degré $\delta$,
on a $\bfB'_d = \bfB_d$.

\medskip

Le résultat essentiel est que les $\bfA$-modules
$(\bfB'_d)_{d\ge \delta}$ ont même invariant de MacRae normalisé:
c'est ce scalaire commun qui sera déclaré être le résultant de $\uP$.
Voilà comment nous avons procédé au niveau des chapitres~\ref{ChapMacRaeForP}
et~\ref{ChapMacRaeDefResultant}:

\begin {enumerate} [a)]
\item
Normalisation et contrôle du poids en les $P_i$, objet du début du
chapitre \ref{ChapMacRaeForP}.  L'invariant de MacRae de~$\bfB_\delta$
est un sous-module libre de rang 1 de $(\bfB_\delta)^\star$ avec un
générateur privilégié $\omegaRes{\uP}$.  Il n'est pas difficile de
voir que le scalaire $\calR_\delta(\uP) := \omegaRes{\uP}(\nabla)$ est
un générateur de MacRae de
$\bfB'_\delta=\bfA[\uX]_\delta/\langle\nabla,\uP\rangle_\delta$.

\smallskip
De la même manière, pour $d \ge \delta+1$, l'idéal de MacRae de $\bfB'_d = \bfB_d$
possède un générateur privilégié noté~$\calR_d(\uP)$.

\item
Le début du chapitre \ref{ChapMacRaeDefResultant} montre l'égalité des $(\calR_d(\uP))_{d\ge\delta}$
en fournissant plusieurs preuves.
\end {enumerate}

Bilan: on prend comme définition
$$
\fbox{$\Res(\uP) \overset{\rm def.}{=} \calR_d(\uP)$ pour n'importe quel $d\ge\delta$}
$$
On dispose ainsi de plusieurs visions de $\Res(\uP)$: les scalaires
$\calR_d(\uP)$ attachés à $\bfB_d$ et le scalaire $\calR_\delta(P)
= \omegaRes{\uP}(\nabla)$ attaché à~$\bfB'_\delta$.

\smallskip
Pour corser le tout, au chapitre~\ref{ComplexeDecompose}, pour
$d\ge\delta+1$, interviendra un autre visage de $\calR_d(\uP)$, à
savoir $\calR_d(\uP) = \Delta_{1,d}(\uP)$. La glace est mince entre
ces deux scalaires: $\Delta_{1,d}(\uP)$, déterminant de Cayley
de~$\rmK_{\sbullet,d}(\uP)$ ou invariant de MacRae de $\bfB_d$, peut être
vu comme une expression de $\calR_d(\uP)$ en tant que quotient alterné
des $\big(\det B_{k,d}(\uP)\big)_{k\ge 1}$.
Références: la proposition \ref{InterpretationStructurelleDeltak}  et la
section \ref{SectionDeltakdMacaulay}, premier item du paragraphe ``Une
interprétation structurelle des~$\Delta_{k,d}(\uP)$'',
page \pageref{Delta1dMacRaeBd}.

\medskip

Insistons sur le fait que les points a) b) sont réalisés 
aux chapitres~\ref{ChapMacRaeForP}, \ref{ChapMacRaeDefResultant}
tandis que les $\Delta_{k,d}(\uP)$ n'interviennent
qu'à partir du chapitre~\ref{ComplexeDecompose} (on ne peut pas tout exposer d'un coup).

\medskip

{\footnotesize
En ce qui concerne b), des alternatives sont possibles en degré $d \ge \delta+1$ mais il est
impératif de veiller aux phénomènes oeuf/poule et à l'état d'avancement
de l'échafaudage. Au chapitre~\ref{ComplexeDecompose},
grâce aux décompositions de Macaulay tordues par $\sigma\in\fS_n$,
on définit les $\Delta^\sigma_{k,d}(\uP)$ pour n'importe quel $d$.
Pour $d\ge \delta+1$, 
on montre que $\Delta_{1,d}(\uP) = \Delta_{1,d+1}(\uP)$ par un argument
de profondeur~$\ge 2$ des~$\Delta^\sigma_{2,d}(\uP)$,
cf. le théorème~\ref{StabiliteDelta1d} et son corollaire.

Autre possibilité une fois acquise l'égalité $\Delta_{2,d}(\uP)=\det
W_{2,d}(\uP)$ donc $\det W_{1,d}(\uP) = \Delta_{1,d}(\uP)\det
W_{2,d}(\uP)$: utiliser l'égalité de stabilité de l'invariant de
MacRae en passant par l'égalité $\det W^\eq_{2,d+1}(\uP) = \det
W^\circ_{1,d+1}(\uP)$ pour $d \ge \delta+1$, cf. la combinatoire de la
section~\ref{sectionEgaliteStabiliteMacRae}.
}

\bigskip

Comme définition du résultant, on privilégie $\Res(\uP) =
\omegaRes{\uP}(\nabla)$, c'est-à-dire $\Res(\uP) = \calR_\delta(\uP)$,
pour plusieurs raisons. D'abord cette égalité
fait intervenir la forme linéaire $\omegaRes{\uP} : \bfA[\uX]_\delta
\to \bfA$, objet plus riche que son évaluation en~$\nabla$ et on va
ainsi bénéficier des propriétés de cette forme linéaire (en
particulier la propriété Cramer).  D'autre part, $\Res(\uP)$ et
$\omegaRes{\uP}$ sont étroitement liés via:
$$
\tau_0 :
\begin{array}[t]{rcl}
(\bfB_\delta)^\star & \longmapsto & \ElimIdeal \\[0.2cm]
\mu & \longmapsto & \mu(\nabla)
\end{array}
$$
Cette application linéaire $\tau_0$ est un isomorphisme lorsque $\uP$ est régulière; auquel cas
$\omegaRes{\uP}$ et $\Res(\uP)$ se correspondent dans cet isomorphisme.


\subsection*{Propriétés de la forme $\omegaRes{\protect\uP}:\bfA[\protect\uX]_\delta\to\bfA$}
\addcontentsline{toc}{subsection}{Propriétés de la forme $\omegaRes{\protect\uP}:\bfA[\protect\uX]_\delta\to\bfA$}


\noindent
\begin {quote}\raggedleft
{\sl La complexité est inhérente à l'existence puisqu'elle découle de la simplicité.}\par
Le peuple des connecteurs (2006), Thierry Crouzet.

\smallskip
{\sl Votre problème, c'est que vous envisagez la complexité comme le problème et non comme la solution.}\par
Les disparus (2007), Daniel Mendelsohn.

\smallskip
{\sl En supposant systématiquement $\uP$ générique, tout deviendrait plus simple. Mais moins intéressant.}\par
Le résultant (2023), Claude Quitté.
\end {quote}

\medskip

Plus sérieusement, il s'agit d'énoncer un certain nombre de propriétés
de~$\omegares$ (on se permet d'omettre~$\uP$ des notations).  La tâche
présente une certaine complexité à cause du statut du système $\uP$
(générique ou pas), de l'imbrication et inter-dépendance des
propriétés en question, du fait que certaines se spécialisent et
d'autres pas et enfin de la répartition des résultats sur plusieurs
chapitres.

Sont principalement concernés: les chapitres \ref{ChapMacRaeForP} (en
particulier les sections \ref{SectionomegaresProperties} et \ref{SectSystemesCreuxGr2})
et \ref{ChapElimination}. On y utilise les résultats du
chapitre \ref{ChapBgenerique}, qui lui-même utilise ceux du chapitre
\ref{ChapJeuCirculaire}.

\begin {enumerate} [a)]
\item
Propriété Cramer\footnote{%
Cette terminologie étrange provient du résultat suivant. Soient
$n-1$ vecteurs dans $\bfA^n$, $\uv = (v_2, \dots, v_n)$
et $\Delta_\uv$ la forme linéaire déterminantale sur $\bfA^n$ définie par
$\Delta_\uv := \Det(\sbullet,v_2, \dots, v_n)$.
Alors $\Delta_\uv(x)y - \Delta_\uv(y)x  \in \sum_i \bfA v_i$
pour $x,y \in \bfA^n$, 
conséquence directe de Cramer sur les systèmes linéaires carrés, cf.
la proposition~\ref{CramerSymetrie}.
}
de la forme $\omegares : \bfA[\uX]_\delta \to \bfA$
ou si l'on veut de $\uomegares : \bfB_\delta \to \bfA$.
Il s'agit de:
$$
\omegares(G)F - \omegares(F)G \in \langle\uP\rangle_\delta
\quad \forall\, F,G \in \langle\uP\rangle_\delta
\qquad\quad\text{ou encore}\qquad\quad
\uomegares(b)b' = \uomegares(b')b
\quad \forall\, b,b' \in \bfB_\delta
$$
C'est une propriété universelle i.e. vérifiée par tout système $\uP$,
et qui se spécialise. C'est l'objet du théorème~\ref{omegaresCramer}.

Impact direct: $X^\alpha \omegares(G)  \in \bfA G + \langle\uP\rangle_\delta$
pour $|\alpha| = \delta$ d'où l'implication
$$
G \in \uPsat_\delta \quad\Longrightarrow\quad \omegares(G) \in \ElimIdeal
$$
En particulier $\omegares(\nabla) \in \ElimIdeal$: la bonne nouvelle est que
le résultant est dans l'idéal d'élimination!

\medskip
De manière plus générale, pour un module quelconque $M$, on dit
qu'une forme linéaire $\mu : M \to \bfA$ a la propriété Cramer
si $\mu(m)m' = \mu(m')m$ pour tous $m,m' \in M$. Elle
a la propriété Cramer en $m_0 \in M$ si $\mu(m)m_0 = \mu(m_0)m$
pour tout $m\in M$.
Comme on a toujours $\mu(m)m' - \mu(m')m \in \Ker\mu$,
une forme injective a la propriété Cramer.

\item
Il y a deux propriétés de $\omegares$ que nous avons mis du temps à séparer:
$$
\text{injectivité de ${\uomegares}_{,\uP}$ \qquad versus\qquad $\Gr(\omegaRes{\uP}) \ge 2$}
$$
Sous le couvert de $\uP$ régulière, celle de gauche implique celle de droite,
point i) du théorème~\ref{omegaresGr2}.

Il n'y a pas de réciproque, cf la section~\ref{ExempleProfondeurNonInjective}.

\medskip

En générique, ${\uomegares}_{,\uP^\gen}$ est injective, cf le
théorème \ref{omegaresInjectiveDoncCramer}.  D'après ce qui précède
$\Gr(\omegaRes{\uP^\gen}) \ge 2$. On peut renforcer cette inégalité de
la manière suivante: l'image de $\omegaRes{\uP^\gen}$ contient une
suite régulière de longueur~2. De manière encore plus précise: le
point ii) de \ref{ResGenElimIdeal} affirme que pour tout $|\alpha|
= \delta$
$$
\text{la suite $\big(\omegaRes{\uP^\gen}(\nabla), \omegaRes{\uP^\gen}(X^\alpha)\big)$ est régulière}
$$
Nous avons fait figurer ce dernier résultat dans le
chapitre \ref{ChapElimination} car nous utilisons qu'en générique le
résultant engendre l'idéal d'élimination (cf le point suivant).

\smallskip

Par ailleurs, le théorème \ref{omegaresPgenGr2} fournit une autre
preuve de $\Gr(\omegaRes{\uP^\gen}) \ge 2$ sans utiliser l'injectivité
de~${\uomegares}_{,\uP^\gen}$.  Des résultats plus généraux concernant
$\Gr(\omegares) \ge 2$ figurent en section \ref{SectSystemesCreuxGr2},
section consacrée à l'étude de systèmes creux $\uP$ vérifiant
$\Gr(\omegaRes{\uP}) \ge 2$ et à des applications.

\item
En générique, le résultant engendre l'idéal d'élimination.
En fait, ce n'est pas la généricité qui gouverne mais $\Gr(\omegares) \ge 2$,
sous couvert de $\uP$ régulière. Pour comprendre cette phrase, voir le
théorème~\ref{4pointsPsuperreguliere}. Voir aussi le
théorème~\ref{ResGenElimIdeal}.

\item
La forme $\chi_d$-alternée $\Det_d$ possède la propriété Cramer, cf le
théorème \ref{DetdIsCramer}.

La propriété Cramer pour une forme $r$-linéaire alternée $\mu : M^r \to \bfA$
est la suivante, par exemple pour $r=3$:
$$
\mu(m_1,m_2,m_3)m = \mu(m,m_2,m_3)m_1 + \mu(m_1,m,m_3)m_2 + \mu(m_1,m_2,m)m_3
$$

\item
``Cramer'' versus ``Cramer en $\nabla$''.
Il ne faut surtout pas confondre ces deux notions.
\emph {Toute} forme $\mu \in (\bfB_\delta)^\star$ est de Cramer en $\nabla$ i.e.
$$
\mu(\nabla)b = \mu(b)\nabla \qquad \forall b \in \bfB_\delta
$$
C'est cette propriété qui entraîne $\mu(\nabla) \in \ElimIdeal$ pour $\mu \in (\bfB_\delta)^\star$
et légitime la définition de $\tau_0 : (\bfB_\delta)^\star \to \ElimIdeal$ par
$\mu\mapsto\mu(\nabla)$.
A cette occasion, le lecteur pourra consulter la section~\ref{HCech0B}.
Mais il existe des formes $\mu \in (\bfB_\delta)^\star$ ne vérifiant pas Cramer,
voir un exemple ci-dessous. Le caractère ``être de Cramer'', vérifié par $\uomegares$, n'est donc
pas banal.

On peut énoncer cette propriété en remontant au niveau
$\bfA[\uX]_\delta$: toute forme linéaire $\mu: \bfA[\uX]_\delta \to \bfA$
\emph {nulle} sur $\langle\uP\rangle_\delta$ vérifie
$\mu(\nabla)F - \mu(F)\nabla \in \langle\uP\rangle_\delta$ pour $F \in \bfA[\uX]_\delta$.

\medskip

Considérons par exemple le jeu étalon généralisé $\uP = (p_1X_1^{d_1},
\dots, p_nX_n^{d_n})$ où $p_1, \dots, p_n$ sont $n$ indéterminées sur
$\bbZ$, l'anneau de base $\bfA = \bbZ[p_1,\dots,p_n]$ et la forme
linéaire $\mu : \bfA[\uX]_\delta \to \bfA$ coordonnée sur le mouton
noir $\uX^\emouton$ (coordonnée relativement à la base monomiale de
$\bfA[\uX]_\delta$). Il est clair que~$\mu$ passe au quotient
modulo $\langle \uX^D\rangle_\delta$, a fortiori sur
$\bfB_\delta = \bfA[\uX]_\delta/\langle\uP\rangle_\delta$.
Et on a:
$$
\mu(X^\emouton)X^\alpha - \mu(X^\alpha)X^\emouton =
\begin {cases}
X^\alpha  &\text{si $X^\alpha \ne X^\emouton$} \\
0        &\text{si $X^\alpha = X^\emouton$} \\
\end {cases}
$$
Supposons $D \ne (1,\dots,1)$ de sorte qu'il existe un monôme
$X^\alpha$ de degré $\delta$ autre que $X^\emouton$.  Alors~$\mu$
n'est pas de Cramer car, dans la branche du haut,
$X^\alpha \notin \langle\uP\rangle_\delta$.  On évite cette dernière
négation à l'aide d'un énoncé plus précis où l'on remplace $\bfA
= \bbZ[p_1,\dots,p_n]$ par la donnée de $n$ éléments réguliers
$p_1, \dots, p_n$ d'un anneau $\bfA$. Alors, dans le cas où il y a
deux indices~$k$ tels que $d_k \ge 2$, la forme $\mu =
(X^\emouton)^\star$ est de Cramer si et seulement si chaque $p_i$ est
inversible; dans le cas d'un seul $k$ tel que $d_k \ge 2$, $\mu$ est
de Cramer et seulement si chaque $p_i$ pour $i\ne k$ est inversible.

\end {enumerate}

\subsection*{Jeu circulaire et variantes. Applications à la régularité
(concernés: \ref{ChapJeuCirculaire}, \ref{ChapBgenerique})}
\addcontentsline{toc}{subsection}{Jeu circulaire et variantes. Applications à la régularité}


On rappelle que $\bfB$ est le $\bfA[\uX]$-module gradué
$\bfA[\uX]/\langle\uP\rangle$ et que $\uP^\gen$ est le système
générique de format $D = (d_1, \dots, d_n)$. Un des objectifs est
d'exhiber certains scalaires qui sont réguliers sur~$\bfA[\uX]/\uPsat$
(ou bien sur $\bfB_\delta$, voire sur certains $\bfB_d$ ou sur $\bfB$
tout entier), du moins en terrain générique, et d'en montrer plus tard
l'impact sur l'injectivité de ${\uomegares}_{,\uP^\gen}
: \bfB_\delta\to\bfA$.

L'encadré {\bf But et conséquence} en deuxième page
du \ref{ChapJeuCirculaire} et la première page du \ref{ChapBgenerique}
contiennent un certain nombre d'explications sur les objectifs,
contenus et stratégies de ces chapitres.  Ici nous essayons
d'apporter, peut-être avec un certain recul, et sans vouloir nous
répéter, un complément d'informations.

\medskip

La forme fondamentale $\omegaRes{\uP} : \bfA[\uX]_\delta \to \bfA$
n'est pas encore en place aux
chapitres \ref{ChapJeuCirculaire}, \ref{ChapBgenerique}: elle devra
attendre le chapitre~\ref{ChapMacRaeForP}.
Si bien qu'en guise de substitut, nous utilisons la forme
linéaire déterminantale $\omega_\uP : \bfA[\uX]_\delta \to \bfA$
associée à l'application de Sylvester $\Syl_\delta(\uP) :
\rmK_{1,\delta} \to \bfA[\uX]_\delta$.  Cette forme $\omega_\uP$,
pilotée par le mécanisme de sélection $\minDiv$, a été étudiée au
chapitre précédent, en section \ref{sectionFormeLineaireOmega}. On
disposera plus tard de la relation de divisibilité $\omegaRes{\uP}
\mid \omega_\uP$: en fait, $\omegaRes{\uP}$ est le pgcd fort de toutes
les formes déterminantales $\bfA[\uX]_\delta \to \bfA$ associées
à~$\Syl_\delta(\uP)$, du moins lorsque $\uP$ est régulière.

On montrera (théorème \ref{omegaInjective}) que la forme
$\overline\omega_{\uP^\gen}$ est injective; par conséquent, il en sera
de même de~${\uomegares}_{,\uP^\gen}$ qui la divise.

\medskip

On se concentre désormais sur $\omega_\uP$ et son évaluation $\det
W_{1,\delta}(\uP) = \omega_\uP(X^\emouton)$ où $X^\emouton$ est le
mouton-noir $X^\emouton = X_1^{d_1-1} \cdots X_n^{d_n-1}$.

Nous allons exhiber un système $\uQ$, à coefficients
$0,\pm1$, de format $D$, tel que $\Un = (1,\dots,1)$ soit un zéro
commun des $Q_i$ et vérifie la condition capitale \boxed{\det
W_{1,\delta}(\uQ)=1}.  Il en résultera que $\det
W_{1,\delta}(\uP^\gen)$ est régulier sur
$\bfA[\uX]/\langle\uP^\gen\rangle$ (théorème \ref{GenPsatRegScalars}),
conséquence du résultat général~\ref{PsatReg}. On sera alors
en mesure de prouver l'injectivité de $\overline\omega_{\uP^\gen}$
ainsi que la régularité sur $\bfA[\uX]/\langle\uP^\gen\rangle$
de certains déterminants (une liste figure dans le théorème \ref{GenPsatRegScalars}).

\subsubsection*{Le jeu circulaire $\uQ$, retenu pour  vérifier $\det W_{1,\delta}(\uQ)=1$}

Notre choix initial s'est porté sur le système \emph {binomial} suivant, dit circulaire:
$$
Q_i = X_i^{d_i-1}(X_i - X_{i+1}) = X_i^{d_i} - X_i^{d_i-1} X_{i+1}
\qquad\qquad
\text{avec $X_{n+1} = X_1$}
$$
Nous qualifions de \emph {combinatoire binomiale en degré critique} la
panoplie d'outils qui va nous permettre de prouver que $\det
W_{1,\delta}(\uQ)=1$. Mieux: dans une base monomiale adéquate de
$\Jex_{1,\delta}$, l'endomorphisme~$W_{1,\delta}(\uQ)$ est
triangulaire à diagonale unité, voir par exemple l'illustration
en page~\pageref{NOTA06-wMiterateur}, à la fin
de la section~\ref{IntroJeuCirculaire}. 
Et le jeu $\uQ$ possède bien d'autres propriétés.

\bigskip

Une question se pose: d'où sort ce système binomial $\uQ$?
Initialement, nous l'avons deviné en cherchant un jeu binomial ayant
comme seul zéro projectif $\Un = (1:\cdots:1)$, cf. la première page
du chapitre~\ref{ChapJeuCirculaire} ainsi que la définition d'un jeu
simple en~\ref{DefJeuSimple}.  En notant $\uR = (R_1, \dots, R_n)$ le
système monomial de format $D$ défini par $R_i=X_i^{d_i-1} X_{i+1}$,
on peut déjà remarquer que $\uQ$ est de la forme $\uQ = \uX^D - \uR$
et que la définition pointée porte sur $\uR$ plutôt que sur $\uQ$.

\medskip

Au fur et à mesure, notre démarche va connaître des évolutions. D'une part,
toujours avec le jeu circulaire, nous allons génériser $\uX^D - \uR$
en $\bsp\uX^D + \bsq\uR = (p_iX_i^{d_i} + q_iX_i^{d_i-1}X_{i+1})_{1\le i \le n}$ où $p_i, q_i$
sont $2n$ indéterminées sur~$\bbZ$, de manière à \og mieux y voir\fg
\footnote {Pour ce système $\bsp\uX^D + \bsq\uR$, nous avons utilisé la terminologie
``jeu circulaire généralisé''. Peut-être que ``jeu circulaire générisé''
aurait été préférable?  
}
.  
D'autre part, nous allons montrer que $\omega_\uQ(X^\alpha) \overset{\rm def.}{=}
\det\Omega_\uQ(X^\alpha)$ est égal à 1 pour n'importe
quel $X^\alpha$ de degré $\delta$ et pas seulement pour $X^\alpha=X^\emouton$.

Notre travail va consister à exhiber des bases monomiales adéquates de
$\Jex_{1,\delta}$, ou bien de $\bfA[\uX]_\delta$, de façon à contrôler
le déterminant des endomorphismes \og en degré $\delta$\fg:
$$
W_{1,\delta}(\uQ),\quad W_{1,\delta}(\bsp\uX^D + \bsq\uR)
\qquad\qquad
\Omega_\uQ(X^\alpha),\quad \Omega_{\bsp\uX^D + \bsq\uR}(X^\alpha)
\qquad
(\deg X^\alpha = \delta)
\leqno(\star)
$$
Cette tâche va être assurée en introduisant un itérateur monomial
$w_{1,\delta}(\uR)$ (défini en section~\ref{NOTA06-wMiterateur})
piloté par $\minDiv$ ainsi que son graphe, et en montrant un \og
théorème du puits\fg{} en \ref{TheoPuits} (des exemples précédant
l'énoncé en fournissent une illustration).  Il est remarquable que la lecture du
graphe de l'itérateur~$w_{1,\delta}(\uR)$ permette la détermination de
chaque $\omega_{\bsp\uX^D + \bsq\uR}(X^\alpha)$, voir l'exemple
\ref{Graphe122}.

Si on regarde de près la preuve de terminaison initialement prévue
pour $\minDiv$, on constate une propriété indépendante de tout
mécanisme de sélection, ce qui permet de prouver par exemple que $\det
W^\sigma_{1,\delta}(\uQ) = 1$ pour tout $\sigma\in \fS_n$.  Ceci est
dû au fait que la terminaison intervenant dans ce théorème du puits
est certifiée par une fonction hauteur $h$, définie sur les monômes de degré
$\delta$, à valeurs dans $\bbN$, ne dépendant que de $\uR$ et vérifiant:
$$
h(X^\emouton) = 0,
\qquad
h\big((X^\alpha/X_i^{d_i}) R_i\big) = h(X^\alpha)-1
\qquad
\begin {array}{c}
\text {pour tout $X^\alpha\ne X^\emouton$}
\\
\text{ et n'importe quel $i$ tel que $X_i^{d_i} \mid X^\alpha$}
\end {array}
\leqno(\heartsuit)
$$
Cette fonction hauteur $h$ est obtenue à partir d'une autre fonction $\calH_0
: \bbZ^n_0 \to \bbN$ (objet du théorème~\ref{FonctionHauteur}) où l'on
note $\bbZ^n_0$ le sous-module de $\bbZ^n$ constitué des $x$ tels que
$\sum_i x_i = 0$.  En désignant par $(\varepsilon_1, \dots,
\varepsilon_n)$ la base canonique de $\bbZ^n$ et en convenant de
$\varepsilon_{n+1} = \varepsilon_1$, cette fonction $\calH_0$ vérifie:
$$
\calH_0({\bf 0}) = 0,
\qquad
\calH_0\big(x - (\varepsilon_i - \varepsilon_{i+1})\big) =  \calH_0(x) - 1
\qquad
\begin {array}{c}
\text{pour tout $x\ne{\bf 0}$}
\\
\text{et n'importe quel $i$ tel que $x_i > 0$}
\end {array}
$$
La provenance de $\varepsilon_i - \varepsilon_{i+1}$ s'explique par l'égalité:
$$
\varepsilon_i - \varepsilon_{i+1} = \exposant(X_i^{d_i}) - \exposant(R_i)
$$
En réalisant la translation entre $\bbZ^n_0$ et $\bbZ^n_\delta$
définie par $\alpha = x + (\emouton) \leftrightarrow
x=\alpha-(\emouton)$, on peut faire de $\calH_0$ une fonction sur
$\bbZ^n_\delta$ puis la restreindre à $\bbN^n_\delta$, pour obtenir,
modulo la correspondance $\alpha \leftrightarrow X^\alpha$, la fonction~$h$.

\medskip

En guise de résumé: nous avons la main sur les endomorphismes en
$(\star)$, et sur bien d'autres, par exemple sur les versions tordues
par $\sigma\in\fS_n$.  Le contrat initial encadré $\det
W_{1,\delta}(\uQ) = 1$ est largement rempli.

\subsubsection*{Le glissement ou l'ouverture $\protect\uQ := \protect\uX^D-\protect\uR$
               où $\protect\uR$ est monomial de format $D$}


Nous nous sommes posés la question de  l'existence de \og bons\fg{}
systèmes monomiaux $\uR$ de format~$D$ tels que, de manière informelle,
$\uX^D-\uR$ possède, en degré $\delta$, des qualités analogues à celui
du jeu circulaire.  C'est l'objet de la dernière
section~\ref{SectionJeuxSimples}.

Au début de cette section~\ref{SectionJeuxSimples}, nous avons choisi,
comme formalisation de \og bon\fg, l'existence d'une fonction hauteur
$h : \bbN^n_\delta \to \bbN$ vérifiant, modulo la correspondance
$\alpha\leftrightarrow X^\alpha$, les contraintes $(\heartsuit)$.

Pourquoi ne pas demander à $\uR$ d'être simple au sens de la
définition~\ref{DefJeuSimple} i.e.  au système binomial homogène
$\uX^D - \uR$ de possèder $\Un = (1:\cdots:1)$ comme unique zéro
projectif? En fait, nous soupçonnons que ces deux notions (existence
d'une fonction hauteur sur $\bbN^n_\delta$ et simplicité) sont équivalentes
mais nous n'avons pas su le prouver, cf. l'énoncé
conjectural~\ref{ConjectureJeuSimple}. L'existence d'une fonction
hauteur pour $\uR$ nous semble un choix plus opérationnel et entraîne
aisément la simplicité de~$\uR$; c'est la réciproque qui est problématique.

\smallskip

Nous avons également attaché à $\uR$ (cf. page \pageref {GraphOfR}) le
graphe orienté dont les sommets sont les monômes de degré~$\delta$,
avec des arcs étiquetés: un arc $X^\alpha \overset{i}{\to} X^\beta$
pour chaque~$i$ vérifiant
$$
X_i^{d_i} \mid X^\alpha \quad\text{et}\quad X^\beta = \frac {X^\alpha}{X_i^{d_i}} R_i
\leqno (\text {règle de réécriture numéro $i$})
$$
L'existence d'une telle fonction hauteur $h$ (qui est unique
lorsqu'elle existe) est équivalente au fait que ce graphe est sans
circuit (une des équivalences du
théorème~\ref{EquivalencesExistenceHauteur}).  Dans ce graphe,
$h(X^\alpha)$ représente la distance entre $X^\alpha$ et $X^\emouton$
au sens de la longueur minimum d'un chemin (orienté) de $X^\alpha$ à
$X^\emouton$ (en fait, tous les chemins de $X^\alpha$ à $X^\emouton$
ont même longueur).

\bigskip

La matrice jacobienne $\Jac_\Un(\uX^D-\uR) \in \bbM_n(\bbZ)$ et ses
$n$ mineurs principaux notés $(r_1, \dots, r_n)$, qui sont toujours
$\ge 0$, vont jouer un rôle important, voir par exemple le
corollaire~\ref{CorollaireExistenceHauteur} et la
proposition~\ref{ExistenceSuperHauteur}. Cette dernière affirme que
les conditions $\pgcd(r_1, \dots, r_n) = 1$ et $r_i \ge 1$ pour tout
$i$ entraînent l'existence d'une fonction hauteur $h_0$ sur
$\bbZ^n_0$, a fortiori, via la translation par $\emouton$, celle d'une fonction hauteur $h$ sur
$\bbN^n_\delta$.  On retrouve ainsi les résultats correspondant au
jeu circulaire pour lequel $r_i=1$ pour tout $i$.


Notons $\uQ = \uX^D - \uR$.
La matrice jacobienne $\Jac_1(\uQ)$ se détermine facilement via
ses lignes $(\ell_1, \dots, \ell_n)$ puisque $\ell_i$
est définie par $\ell_i = \exposant(X_i^{d_i}) - \exposant(R_i)$.
Et comme $\deg(R_i)= d_i$, on a $\ell_i \in \bbZ^n_0$.
On a donc une matrice à coefficients entiers dont la somme des colonnes est nulle:
$$
\Jac_\Un(\uQ) =
\begin {bmatrix}
d_1    &0 & \cdots & 0  \\
0      &d_2 &      & 0  \\
\vdots &    &\ddots     \\
0      &    &0     &d_n \\
\end {bmatrix}
-
\begin {bmatrix}
\exposant(R_1) \\
\exposant(R_2) \\
\vdots \\
\exposant(R_n) \\
\end {bmatrix}
$$
Sa ligne $\ell_i$ vérifie $\ell_i(\uX) := Q_i(\uX + \Un)_1$ où l'indice 1 désigne
la composante homogène de degré 1.  En caractéristique~0, le
point~$\Un$ est toujours un zéro simple de la sous-variété projective $\{\uQ=0\}
\subset \bbP^{n-1}$ vu qu'en ce point le cône tangent, égal à
l'espace tangent, est défini par la nullité des $\ell_i$:
$$
\calC_\Un(\uQ = 0) = \calT_\Un(\uQ = 0)  = \{\ell_1 = \cdots = \ell_n = 0\} \subset \bbP^{n-1}
$$

\medskip

A ce niveau, il nous semble difficile de donner plus de détails et
nous invitons la lectrice à consulter cette
section~\ref{SectionJeuxSimples}.  L'étude qui y est réalisée va nous
permettre d'exhiber de bons systèmes monomiaux autres que le jeu
circulaire, indispensables pour la construction de systèmes creux
$\uP$ vérifiant l'inégalité $\Gr(\omegaRes{\uP}) \ge 2$ (cf. la section ultérieure
\ref{SectSystemesCreuxGr2}).

\bigskip

Pour conclure, signalons, concernant les systèmes binomiaux $\uX^D -
\uR$ où $\uR$ est de format $D$, que de nombreuses propriétés n'ont
pas été abordées dans notre étude et qu'un certain nombre de mystères
subsistent.  C'est le cas par exemple de l'adéquation
conjecturale~\ref{ConjectureJeuSimple} entre la simplicité de $\uR$ et
l'existence d'une fonction hauteur sur $\bbN^n_\delta$, résultat que
nous trouvons surprenant (voir l'exemple après l'énoncé~\ref{ConjectureJeuSimple}).

\subsection*{Autour du théorème de Wiebe}
\addcontentsline{toc}{subsection}{Autour du théorème de Wiebe}

Sont concernés: l'énoncé général \ref{WiebeTheorem},
sa preuve en annexe \ref{WiebeProof} ainsi que les applications
au contexte polynomial données en section \ref{SectCompHmgSature}.

\bigskip

Nous avons déjà fait intervenir $\nabla \in \bfA[\uX]_\delta$,
déterminant bezoutien de $\uP$, en
page \pageref{IntroDefPrivilegieeResultant}. Il vérifie $X_i\nabla \in \langle\uP\rangle$,
en particulier $\nabla\in\uPsat_\delta$. Lorsque $\uP$ est régulière, on dispose du
résultat plus précis suivant (c'est un point de la proposition~\ref{MiniWiebe}):
$$
\uPsat_\delta = \bfA\nabla \oplus \langle\uP\rangle_\delta
$$
En rappelant que $\bfB=\bfA[\uX]/\langle\uP\rangle$, il est d'usage de
noter $\vH^0_{\uX}(\bfB)_d = \uPsat_d/\langle\uP\rangle_d$ pour tout
$d$, en particulier $\vH^0_{\uX}(\bfB)_0 \overset{\rm def.}{=}
\ElimIdeal$, et d'introduire un morphisme noté $\tau_\delta$, à mettre
sur le même plan\footnote{%
La définition de $\tau_\delta$ repose sur
l'appartenance $\nabla\in\ElimIdeal$, appartenance résultant aisément
de la définition de $\nabla$. Par contre, celle de $\tau_0$ nécessite la propriété
``Cramer en $\nabla$'' des formes de $(\bfB_\delta)^\star$, ce qui n'est pas une
mince affaire.}
que $\tau_0$:
$$
\tau_0:
\begin {array}[t]{rcl}
(\bfB_\delta)^\star & \longrightarrow &
 \vH^0_{\uX}(\bfB)_0
\\[2mm]
\mu & \longmapsto & \mu(\overline\nabla)
\\
\end {array}
\qquad\qquad
\tau_\delta:
\begin {array}[t]{rcl}
(\bfB_0)^\star = \bfA & \longrightarrow &
\vH^0_{\uX}(\bfB)_\delta
\\[2mm]
1 & \longmapsto & \overline\nabla
\\
\end {array}
$$
La détermination ci-dessus de $\uPsat_\delta$ lorsque $\uP$ est
régulière peut être énoncée en disant que $\tau_\delta$ est un
isomorphisme.  Ce n'est pas uniquement pour faire savant que l'on
procède ainsi. On se doute en effet
que $\vH^0_{\uX}(\bfB)$ n'est pas qu'une notation: c'est un $\bfB$-module gradué,
qui est, en degré cohomologique~$0$, le ``groupe'' de cohomologie de {\Cech} de
la suite~$\uX$ sur $\bfB=\bfA[\uX]/\langle\uP\rangle$. Et
il y a un traitement \emph
{uniforme} dans lequel intervient un morphisme $\tau_d :
(\bfB_{\delta-d})^\star \to \vH^0_{\uX}(\bfB)_d$, composante homogène
de degré $d$ d'un $\bfB$-morphisme de transgression~$\tau$.
Nous invitons la lectrice à consulter la section~\ref{HCech0B};
nous avons tenté d'y réaliser 
une description de la transgression $\tau$ du bicomplexe $\rmK^{n-\sbullet}(\uP\,;\bfA[\uX])
  \otimes\vC_\uX^{n-\sbullet}(\bfA[\uX])$, sans toutefois rentrer dans tous les détails
et sans fournir les preuves, ce qui aurait
nécessité un trop grand investissement en cohomologie de {\Cech}.

\medskip

Au niveau du chapitre~\ref{ObjetsSuiteP}, nous avons choisi d'utiliser
le théorème de Wiebe dont l'énoncé général en~\ref{WiebeTheorem} est
élémentaire (il n'y a pas de cohomologie), en déportant la preuve en
annexe~\ref{WiebeProof}; dans cette annexe, nous avons opté pour une
preuve homologique du théorème de Wiebe.

\smallskip

L'énoncé général de Wiebe, au dessus d'un anneau commutatif
de base quelconque, fait intervenir:

\begin {enumerate}
\item  
deux suites scalaires $\ux$, $\ua$ de même longueur
complètement sécantes, vérifiant l'inclusion $\langle\ua\rangle \subseteq
\langle\ux\rangle$
\item
le déterminant de n'importe quelle matrice carrée certifiant cette inclusion
\end {enumerate}
Ce que nous appelons le
contexte polynomial est celui où l'anneau de base est $\bfA[\uX]$,
avec les deux suites~$\uX$ et $\uP$, la suite $\uX$ correspondant
à~$\ux$ et la suite $\uP$ à~$\ua$, le déterminant étant un déterminant
bezoutien~$\nabla$ de $\uP$.


\smallskip

Dans la section \ref{SectCompHmgSature}, à partir du théorème de Wiebe
appliqué à $(\uX,\uP)$, qui nécessite que $\uP$ soit régulière, nous en tirons
des conséquences sur les composantes homogènes de degré $d$; en
particulier pour $d\ge \delta$:
$$
\uPsat_\delta = \bfA\nabla \oplus \langle\uP\rangle_\delta,
\qquad\qquad
\uPsat_d = \langle\uP\rangle_\delta
\qquad  d \ge \delta+1
$$
On obtient également $\Ann(\bfB'_\delta) = \Ann(\bfB_d) = \ElimIdeal$ pour $d\ge\delta+1$
et certains renseignements en dessous du degré critique~$\delta$.

\subsection*{En guise d'épilogue}
\addcontentsline{toc}{subsection}{En guise d'épilogue}

Les pages précédentes constituent un aperçu partiel de notre étude.
Ajoutons deux points que nous jugeons importants (le premier
est utilisé à de nombreuses reprises):

\begin{enumerate}
\item
Contrôle de la profondeur par les composantes homogènes dominantes:
théorème \ref{ControleProfondeur}
\item
Contrôle de la profondeur par la propriété de couverture:
théorème \ref{GrViaPcouvreQ}
\end{enumerate}

Par ailleurs, nous avons tenu à traiter assez rapidement
(chapitre \ref{DeuxCasEcole}) deux cas d'école, d'une part le cas
$n=2$, et d'autre part celui du format $D = (1,\dots,1,d_n)$, pour
lesquels on dispose d'une détermination simple du résultant
et de la forme $\omegaRes{\uP}$. 

\medskip

Il ne nous a pas semblé opportun de réaliser une description de notre
travail chapitre par chapitre via une énumération fastidieuse. Nous
pensons que la table des matières en donne un avant-goût.  Nous avons
affublé trois chapitres généraux de l'acronyme AC$_i$ (Algèbre
Commutative).  Signalons que le chapitre~\ref{ChapSeries} (Séries de
Hilbert-Poincaré), qui vient en fin d'étude, nous a été très utile
pendant tout le développement de notre travail. Dès le départ (2012),
nous avons mis au point la détermination sous forme de fractions
rationnelles de certaines séries dimensionnelles ou de poids en $P_i$; cela
nous a permis de contrôler ou de montrer certains résultats et parfois
de les deviner.

\medskip

Nous avons eu besoin de nombreux exemples pour comprendre. Nous en
avons inclus certains. Nous reconnaissons avoir pris parfois beaucoup
de place en voulant traiter des thèmes qui nous tenaient à coeur même
s'ils contribuaient modérément à l'étude du résultant. Nous pensons par exemple à la
section \ref{SectomegaresJeuxSimplex} dans laquelle nous avons tenu à
étudier la forme $\omegares$ de systèmes $\bsp\uX^D + \bsq\uR$ où
$\uR$ est un jeu monomial simple.  Pourquoi inclure un tel traitement?
Tout simplement pour laisser des traces d'une étude incomplète concernant
la combinatoire binomiale de $\uX^D - \uR$ en degré critique.

\medskip

En guise de conclusion: nous nous sommes efforcés de rendre compte
d'une dizaine d'années de travail, et nous avons bien conscience qu'il
reste encore des choses à comprendre!

\medskip

\begin{quote}\raggedleft
{\sl Ecrire pour comprendre. Ne pas encaisser silencieux.}\par
Minimal minibomme (1984), Marc Gendron.

\smallskip
{\sl Le danger, ce n'est pas ce qu'on ignore, c'est ce que l'on tient pour certain et qui ne l'est pas.}\par
Mark Twain.
\end{quote}

\vskip 1cm

\centerline {Claude Quitté, Claire Tête, Juin 2023}
\pagenumbering{arabic} 

\section{AC$_1$: dualité en algèbre extérieure, profondeur, théorème de Wiebe}
\label{ChapAC1}

\subsection{Un peu d'algèbre linéaire : la formule de Cramer matricielle}

Pour une matrice carrée $A \in \bbM_n(\bfA)$ dont on note $A_j$ la colonne $j$ et 
un vecteur $y \in \bfA^n$, on a la formule dite de Cramer :
$$
\det(A) \, y 
\ = \ 
\sum_{j=1}^n 
\det(A_1, \dots, A_{j-1}, y, A_{j+1}, \dots, A_n) \, A_j
$$
qui traduit l'égalité matricielle bien connue $A \widetilde A = \det(A) \Id_n$
où $\widetilde A$ est la matrice cotransposée de $A$.

\begin{prop}[Cramer] \label{CramerSymetrie}
A $n-1$ vecteurs $v_1, \dots, v_{n-1}$ de $\bfA^n$, on associe la forme
linéaire déterminantale $\Delta_{\uv} : \bfA^n \rightarrow \bfA$
définie par $\Delta_{\uv} = \det (v_1, \dots, v_{n-1}, \sbullet)$ (la
place de l'argument $\sbullet$ dans la définition de $\Delta_{\uv}$
est sans importance).

\begin{enumerate}[\rm i)]
\item 
Pour $x, y \in \bfA^n$, on dispose de l'appartenance:
$$
\Delta_{\uv}(x)\, y  \, - \, \Delta_{\uv}(y)\, x 
\ \in \ 
\sum_{j=1}^{n-1} \bfA v_j
$$

\item 
Soit $M \in \mathbb M_{n,m}(\bfA)$ ayant ses mineurs d'ordre $n$ nuls.
Pour n'importe quelle famille $\uv = (v_1, \dots, v_{n-1})$ de $n-1$ vecteurs colonnes de $M$, 
la forme linéaire $\Delta_{\uv}$ est nulle sur $\Im M$, ou encore $\Delta_{\uv} \in \Ker\transpose M$.
\end{enumerate}
\end{prop}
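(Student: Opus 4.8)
Le plan est d'appliquer directement la formule de Cramer matricielle rappel�e ci-dessus � une matrice carr�e bien choisie, puis d'en d�duire le point ii) par �valuation sur les colonnes.

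Pour le point i), je formerais la matrice carr�e $A = (v_1, \dots, v_{n-1}, x) \in \bbM_n(\bfA)$ dont les $n-1$ premi�res colonnes sont les $v_j$ et la derni�re est $x$, de sorte que $\det(A) = \Delta_{\uv}(x)$. En appliquant la formule de Cramer � cette matrice $A$ et au vecteur $y$, j'obtiendrais
$$
\Delta_{\uv}(x)\, y = \sum_{j=1}^{n} \det(A_1, \dots, A_{j-1}, y, A_{j+1}, \dots, A_n)\, A_j.
$$
Le terme d'indice $j=n$ vaut $\det(v_1, \dots, v_{n-1}, y)\, x = \Delta_{\uv}(y)\, x$, tandis que, pour $j \le n-1$, on a $A_j = v_j$. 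En isolant le terme $j=n$, il vient
$$
\Delta_{\uv}(x)\, y - \Delta_{\uv}(y)\, x = \sum_{j=1}^{n-1} \det(A_1, \dots, A_{j-1}, y, A_{j+1}, \dots, A_n)\, v_j \ \in \ \sum_{j=1}^{n-1} \bfA v_j,
$$
ce qui est exactement l'appartenance annonc�e.

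Pour le point ii), il suffit de tester $\Delta_{\uv}$ sur un syst�me g�n�rateur de $\Im M$, � savoir les colonnes $M_1, \dots, M_m$ de $M$ (puisque $\Im M = \sum_k \bfA M_k$). Pour chaque colonne $M_k$, la quantit� $\Delta_{\uv}(M_k) = \det(v_1, \dots, v_{n-1}, M_k)$ est le d�terminant d'une matrice $n \times n$ dont toutes les colonnes sont des colonnes de $M$. Si $M_k$ co�ncide avec l'un des $v_j$, ce d�terminant est nul (deux colonnes �gales); sinon, c'est au signe pr�s un mineur d'ordre $n$ de $M$, donc nul par hypoth�se. Ainsi $\Delta_{\uv}$ s'annule sur chaque g�n�rateur de $\Im M$, donc sur $\Im M$ tout entier. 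En identifiant $\Delta_{\uv}$ � son vecteur ligne $w$ via $\Delta_{\uv}(\sbullet) = \transpose w\, \sbullet$, la nullit� de $\transpose w\, M_k$ pour tout $k$ se r��crit $\transpose w\, M = 0$, c'est-�-dire $\transpose M\, w = 0$, soit $\Delta_{\uv} \in \Ker \transpose M$.

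Je ne vois pas d'obstacle s�rieux ici: le point i) est une lecture attentive de la formule de Cramer, et le point ii) en d�coule d�s que l'on reconna�t $\Delta_{\uv}(M_k)$ comme un mineur d'ordre $n$ de $M$. Le seul point de vigilance est le cas o� la colonne test�e co�ncide avec l'un des $v_j$, que l'on traite � part par l'argument des colonnes �gales plut�t que par l'hypoth�se sur les mineurs.
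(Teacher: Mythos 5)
Votre preuve est correcte et suit exactement la même démarche que celle du papier : pour i), appliquer la formule de Cramer matricielle à la matrice $A = (v_1, \dots, v_{n-1}, x)$ et au vecteur $y$, puis isoler le dernier terme de la somme. Pour ii), le papier se contente de dire que cela résulte de la définition de $\Delta_{\uv}$; vous explicitez simplement ce que le papier laisse implicite (évaluation sur les colonnes de $M$, chaque $\Delta_{\uv}(M_k)$ étant nul soit par répétition de colonnes, soit comme mineur d'ordre $n$ de $M$), ce qui est le même argument.
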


\begin{proof} \leavevmode

i)
On utilise la formule de Cramer avec $A_j = v_j$ pour $j \leqslant n-1$ et $A_n = x$.
Le membre gauche vaut $\det(A) y = \det (v_1, \dots, v_{n-1}, x) y = \Delta_{\uv}(x) y$.
Le dernier terme de la somme vaut $\det(A_1, \dots, A_{n-1}, y) A_n = \Delta_{\uv}(y)x$.

\smallskip
ii)
Résulte de la définition de $\Delta_{\uv}$.
\end{proof}

\begin {rmq}
L'appartenance dans le point i) de la proposition précédente est un cas particulier
d'un résultat plus général qui fait l'objet du corollaire du théorème~\ref{ThProportionnalite}
(relations de Plücker-Sylvester).
Il peut être énoncé ainsi: soient $p,q$ deux entiers complémentaires à $n$, et des vecteurs
$u_1, \cdots, u_p,\ x_1, \cdots, x_q,\ y_1, \cdots, y_q \in \bfA^n$.
Alors le $q$-vecteur $\bfz$ défini comme la différence
$$
\bfz = \det(u_1,\cdots,u_p, x_1,\cdots,x_q)\,y_1\wedge\cdots\wedge y_q   -
\det(u_1,\cdots,u_p, y_1,\cdots,y_q)\,x_1\wedge\cdots\wedge x_q
$$
a la propriété suivante:
$$
\bfz \ \in\ \sum_{j=1}^p u_j\,\wedge\,\BW^{q-1}(\bfA^n)
$$
Le point i) de la proposition, à la notation $v \leftrightarrow u$ près, est le cas particulier
$p = n-1$, $q=1$. L'appartenance ci-dessus est en lien avec la notion d'isomorphisme
déterminantal et le théorème de proportionnalité.
\end {rmq}

\subsection{Dualité en algèbre extérieure et isomorphismes déterminantaux}
\label{ExteriorAlgebraDuality}

Ici et ailleurs, nous allons devoir régler des histoires de gauche/droite,
en particulier faire le choix d'isomorphismes déterminantaux
$\sharp : \BW^d(\bfA^n) \simeq \BW^{n-d}\big((\bfA^n)^\star\big)$ et de différentielles
des complexes de Koszul, ces deux notions
n'étant pas indépendantes.
Nous ne tenons pas à suivre la stratégie d'Eisenbud (Commutative Algebra), prop. 17.15
(auto-duality of the Koszul-complex) dont la preuve est à la charge
du lecteur avec la mention \og The identification of 
$\BW^d(\bfA^n)$ and $\BW^{n-d}\big(\bfA^n\big)^\star$ require some signs\fg.

\medskip

Nous souhaitons donner un descriptif le plus précis possible mais
certains résultats seront fournis sans preuve.  Précis car la notion
d'isomorphisme déterminantal va intervenir à de nombreuses reprises:
auto-dualité du complexe de Koszul, théorème de proportionnalité,
définition de la forme linéaire associée à un module librement
résoluble de rang 1, etc. Le théorème de proportionnalité met
directement en jeu la notion d'isomorphisme déterminantal, en voici un
énoncé simple.  Pour deux entiers $p,q$ complémentaires à~$n$, $p+q =
n$, identifions provisoirement $\bfA^n$ et son dual et désignons par
$\sharp : \BW^p(\bfA^n) \simeq \BW^{q}(\bfA^n)$ un isomorphisme
déterminantal. Par exemple:
$$
\sharp : \quad 
x_1\wedge\cdots\wedge x_p \longmapsto
\sum_{1\le j_1 < \cdots < j_q \le n} \kern -5pt
\det(x_1, \cdots, x_p,\ e_{j_1}, \cdots, e_{j_q})\, e_{j_1}\wedge\cdots\wedge e_{j_q}
$$
Etant donnés des vecteurs $u_1, \cdots, u_p, v_1,
\cdots, v_q$ de $\bfA^n$ vérifiant $\scp{u_i}{v_j} = 0$ pour
tous~$i,j$, le théorème de proportionnalité affirme que
les deux vecteurs de $\BW^q(\bfA^n)$
$$
(u_1 \wedge \cdots \wedge u_p)^\sharp  \quad \text{et} \quad
v_1 \wedge \cdots \wedge v_q
\qquad \text{sont proportionnels}
$$
Il s'agit d'un résultat très important car il participe à la mise en place de la
structure multiplicative des résolutions libres finies (confer le chapitre~\ref{ChapStructureMultiplicative}
et l'annexe~\ref{sectionAnnexeStructureMultiplicative}).
Nous fournirons en remarque~\ref{rmqProportionnalite} une preuve dans
le cas particulier $(p,q) = (n-1,1)$, preuve qui illustre le rôle joué par la formule
de Cramer de la section précédente.

\medskip
Opérons pour l'instant de manière informelle en considérant un $\bfA$-module quelconque $L$
et une forme linéaire $\mu \in L^\star$. A partir de ces données, nous pouvons
fabriquer une anti-dérivation $\partial = \partial^\mu : \BW^\sbullet(L) \to
\BW^{\sbullet-1}(L)$ de la manière suivante où les $x_i$ sont dans $L$:
$$
\begin {array}{lll}
\partial(x_1) &=& \mu(x_1) \\
\partial(x_1 \wedge x_2) &=& \mu(x_1)x_2 - \mu(x_2)x_1 \\
\partial(x_1\wedge x_2\wedge x_3) &=&
\mu(x_1)\,x_2\wedge x_3 - \mu(x_2)\,x_1\wedge x_3 + \mu(x_3)\,x_1\wedge x_2
\end {array}
$$
Et ainsi de suite. La règle est claire: on procède de la gauche vers la droite en
alternant les signes, de gauche vers la droite étant le choix que nous
avons retenu. On pourrait bien entendu procèder de manière analogue de
la droite vers la gauche. On laisse le soin au lecteur de vérifier que
$\partial$ est bien définie et vérifie, pour deux éléments homogènes
$\bfx, \bfy$ de~$\BW^\sbullet(L)$:
$$
\partial(\bfx \wedge\bfy) = \partial(\bfx)\wedge\bfy + J(\bfx)\wedge\partial(\bfy)
\qquad  \text{avec}\quad J(\bfx) = (-1)^{\deg \bfx}\,\bfx
$$
On dit alors que $\partial$ est une anti-dérivation à gauche de degré $-1$, disons que c'est la
terminologie retenue par C.~Chevalley dans 
Algèbre, réédition du chapitre~III, Observations de Chevalley, rédaction \no186, nbr~089
(archives de Bourbaki). Le choix de la droite vers la gauche aurait conduit à
une anti-dérivation à droite (C.~Chevalley encore):
$$
\partial(\bfx \wedge\bfy) = \partial(\bfx)\wedge J(\bfy) + \bfx\wedge\partial(\bfy)
\qquad  \text{avec}\quad J(\bfy) = (-1)^{\deg \bfy}\,\bfy
$$
Précisons que $\partial$ est l'\emph{unique} anti-dérivation
à gauche de degré $-1$ prolongeant $\mu : \BW^1(L) = L \to \BW^0(L) = \bfA$.
Nous devons lui trouver un nom en fonction de $\mu$ mais ce nom est tout trouvé
car il existe déjà; il s'agit du produit intérieur droit $\sbullet \intd \mu : 
\BW^\sbullet(L) \to \BW^{\sbullet-1}(L)$. On a donc \og par définition\fg:
$$
(x_1 \wedge \cdots \wedge x_d) \intd \mu \ = \
\sum_{i=1}^d \,(-1)^{i-1} \, \mu(x_i) \, 
x_1 \wedge \cdots \wedge \backslash \kern -6pt{x_i} \wedge \cdots \wedge x_d
$$
Dans la suite, il n'est nul besoin d'être un expert en produits intérieurs droit/gauche
ni même de connaître ces notions. Signalons que
$\BW^1(L^\star) = L^\star$ opère à droite sur $\BW(L)$ via $\intd$
$$
\BW^\sbullet (L) \times \BW^1(L^\star)  \to \BW^{\sbullet-1}(L), \qquad
(\bfx, \mu)  \to \bfx \intd \mu
$$
Règle en ce qui concerne $\intd$: l'élément qui opère sur l'autre
est celui placé à l'extrémité \emph {libre} du trait horizontal. Ainsi
dans $\text{truc}\intd\text{machin}$, c'est 
$\text{machin}$ (une forme linéaire sur $L$) qui opère sur $\text{truc}$
(un habitant de l'algèbre extérieure de $L$) pour donner un résultat de
même nature que $\text{truc}$ (un habitant de l'algèbre extérieure de~$L$).

\label{NOTA01-muProduitInterieur}
\index{anti-dérivation à gauche}%
\index{produit intérieur droit}%

\begin {lem} \label{AntiDerivationCarreNul}
Soit $g$ une anti-dérivation à gauche de degré $-1$ de $\BW^\sbullet(L)$. Alors $g \circ g = 0$.
\end {lem}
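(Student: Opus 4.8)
The plan is to show that the composite $g\circ g$ is an ordinary (sign-free) derivation of degree $-2$, and then that it must vanish because it already kills the algebra generators of $\BW^\sbullet(L)$. This is the standard mechanism by which the square of an odd derivation turns out to be an even derivation; here it collapses all the way to $0$.

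First I would establish the derivation property of $g^2 := g\circ g$. Taking $\bfx$ homogeneous of degree $p$ and $\bfy$ homogeneous, I apply the defining left anti-derivation rule twice. Starting from $g(\bfx\wedge\bfy)=g(\bfx)\wedge\bfy+(-1)^p\,\bfx\wedge g(\bfy)$, applying $g$ once more and using that $g(\bfx)$ has degree $p-1$ yields
$$
g^2(\bfx\wedge\bfy)=g^2(\bfx)\wedge\bfy+(-1)^{p-1}g(\bfx)\wedge g(\bfy)+(-1)^p\big(g(\bfx)\wedge g(\bfy)+(-1)^p\,\bfx\wedge g^2(\bfy)\big).
$$
The two middle terms carry the opposite signs $(-1)^{p-1}$ and $(-1)^p$ and cancel, while $(-1)^{2p}=1$, leaving $g^2(\bfx\wedge\bfy)=g^2(\bfx)\wedge\bfy+\bfx\wedge g^2(\bfy)$. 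So $g^2$ is an even derivation of degree $-2$. This sign cancellation is the crux of the argument and the only place where any real care is needed; everything else is formal.

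Next I would evaluate $g^2$ on a set of algebra generators, namely the scalars $\BW^0(L)=\bfA$ and the degree-one elements $L=\BW^1(L)$. On $\bfA$ the map $g$ already lands in $\BW^{-1}(L)=0$, so $g^2$ is trivially zero there; and for $x\in L$ we have $g(x)\in\BW^0(L)=\bfA$, whence $g^2(x)=g\big(g(x)\big)\in\BW^{-1}(L)=0$. Thus $g^2$ vanishes on both generating sets.

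Finally I conclude by a routine induction on degree. Every homogeneous element of $\BW^\sbullet(L)$ is a sum of products $x_1\wedge\cdots\wedge x_k$ with $x_i\in L$, and since $g^2$ is a derivation annihilating each factor, the Leibniz rule gives $g^2(x_1\wedge\cdots\wedge x_k)=0$ by induction on $k$. Therefore $g\circ g=0$ on all of $\BW^\sbullet(L)$.
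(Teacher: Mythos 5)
Your proof is correct and follows essentially the same route as the paper's: the key point is the sign cancellation of the cross terms when $g$ is applied twice, followed by an induction that propagates the vanishing from degree $\le 1$ (where it holds for degree reasons, $\BW^{-1}(L)=0$) to all of $\BW^\sbullet(L)$. The only cosmetic difference is that you establish the full Leibniz identity $g^2(\bfx\wedge\bfy)=g^2(\bfx)\wedge\bfy+\bfx\wedge g^2(\bfy)$ for arbitrary homogeneous $\bfx$, whereas the paper takes $\bfx=x$ of degree $1$ — so that $g(x)$ is a scalar and $g^2(x)=0$ automatically — and obtains the reduced identity $(g\circ g)(x\wedge\bfy)=x\wedge(g\circ g)(\bfy)$ directly before running the same induction on exterior degree.
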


\begin {proof}
Soit $x \in L$ et $\bfy \in \BW^d(L)$.
$$
g(x\wedge\bfy) = g(x)\wedge\bfy - x\wedge g(\bfy) = g(x)\bfy - x\wedge g(\bfy) 
$$  
si bien que
$$
(g\circ g)(x\wedge\bfy) = g(x)g(\bfy) - \big(g(x)g(\bfy) - x\wedge (g\circ g)(\bfy)\big) =
x\wedge (g\circ g)(\bfy)
$$
Donc si $g\circ g$ est nulle sur $\BW^d(L)$, alors elle est nulle sur $\BW^{d+1}(L)$.
Or $g$ étant de degré $-1$, elle est nulle sur $\BW^0(L)$; à fortiori $g\circ g$
est nulle sur $\BW^0(L)$ donc nulle partout.
\end {proof}
  
\smallskip

La dualité mise en place ci-après va nous permettre de voir que  les
endomorphismes $\sbullet\intd\mu$ et $\mu\wedge\sbullet$ sont adjoints
l'un de l'autre. De plus, lorsque $L$ est libre de rang fini,
ils sont conjugués par un isomorphisme déterminantal bien choisi:
c'est ce que l'on appelle l'auto-dualité du complexe de Koszul.

\subsubsection{Dualité canonique $\BW^d(L) \times \BW^d(L^\star) \to \bfA$ pour un module quelconque $L$}

\label{NOTA01-DualiteCanonique}%

Pour tout $\bfA$-module $L$, la dualité canonique entre $L$ et $L^\star$ 
définie par $\langle x \, | \, \mu \rangle = \mu(x)$
se prolonge en une dualité naturelle entre les puissances extérieures via :
$$
\BW^d(L) \times \BW^d\big(L^\star \big) 
\ \longrightarrow \ 
\bfA
\qquad  \qquad 
(x_1 \wedge \cdots \wedge x_d, \ \mu_1 \wedge \cdots \wedge \mu_d)
\ \longmapsto \ 
\det \mu_i(x_j)
$$
Le fait que $L^\star$ soit le dual de $L$ est de peu d'importance: pour
deux modules quelconques $M, M'$, n'importe quelle dualité $M \times M' \to \bfA$,
notée $(m,m') \to \scp{m}{m'}$, sans aucune autre propriété que la bilinéarité,
se prolongerait de la même manière en une application bilinéaire
$\BW^d(M) \times \BW^d(M') \to \bfA$.

\smallskip
Comme toute application bilinéaire $E \times F \to \bfA$ donne naturellement naissance
à deux applications linéaires $E \to F^\star$ et $F \to E^\star$,
on dispose en particulier d'une application canonique:
$$
\textstyle
\begin{array}[t]{rcl}
\BW^d (L^\star) & \longrightarrow & \big(\BW^d L\big)^\star  \\ [0.5em]
\mu_1 \wedge \cdots \wedge \mu_d & \longmapsto & 
\langle\sbullet \mid \mu_1\wedge\cdots\wedge\mu_d\ \rangle
\end{array}
$$
Les deux applications linéaires $\sbullet\intd \mu : \BW^\sbullet(L) \to \BW^{\sbullet-1}(L)$ et
$\mu\wedge \sbullet : \BW^\sbullet(L^\star) \to \BW^{\sbullet+1}(L^\star)$ sont adjointes
l'une de l'autre pour la dualité ci-dessus:
$$
\framebox [1.1\width][c]{$\scp {\bfx\intd\mu}{\bsnu} = \scp {\bfx}{\mu\wedge \bsnu}$}
\qquad
\bfx \in \BW^{d+1}(L), \quad \bsnu \in \BW^{d}(L^\star)
$$
Il est important de remarquer que dans l'égalité encadrée, les symboles $\bfx, \mu, \bsnu$
ne changent pas de place: on remplace simplement $\intd\mu\mid$ par $\mid\,\mu\;\wedge$.
Insistons sur le fait que cette égalité encadrée ne nécessite aucune hypothèse sur $L$.
Une manière de montrer cette formule consiste à prendre
$\bfx = x_1 \wedge \cdots \wedge x_{d+1}$ et $\bsnu = \mu_1 \wedge \cdots \wedge \mu_d$ puis à développer
le déterminant de droite selon la première ligne. Par exemple, pour $d = 2$,
$\bfx = x_1 \wedge x_2 \wedge x_3$ et $\bsnu = \mu_1 \wedge \mu_2$:
$$
\scp {\bfx}{\mu \wedge \bsnu} =
\begin {vmatrix}
\mu(x_1) & \mu(x_2) & \mu(x_3) \\
\mu_1(x_1) & \mu_1(x_2) & \mu_1(x_3) \\
\mu_2(x_1) & \mu_2(x_2) & \mu_2(x_3) \\
\end {vmatrix}
$$
tandis que:
$$
\bfx \intd \mu =
\mu(x_1)\,x_2\wedge x_3 - \mu(x_2)\,x_1\wedge x_3 + \mu(x_3)\,x_1\wedge x_2
$$
conduisant au même résultat
$$
\scp{\bfx\intd \mu}{\bsnu} =
\mu(x_1) \begin {vmatrix} \mu_1(x_2) & \mu_1(x_3) \\ \mu_2(x_2) & \mu_2(x_3) \end {vmatrix}
-\mu(x_2) \begin {vmatrix} \mu_1(x_1) & \mu_1(x_3) \\ \mu_2(x_2) & \mu_2(x_3) \end {vmatrix}
+ \mu(x_3) \begin {vmatrix} \mu_1(x_1) & \mu_1(x_2) \\ \mu_2(x_1) & \mu_2(x_2) \end {vmatrix}
$$
Le caractère adjoint des deux applications linéaires $\mu\wedge\sbullet$ et $\sbullet\intd \mu$ 
se manifeste par la commutativité du diagramme dans lequel les verticales sont les morphismes canoniques:
$$
\xymatrix @R=1cm @C=2cm@M=0.4pc{
\BW^{d} \big(L^\star \big) \ar[r]^-{\textstyle \mu\wedge\sbullet}\ar[d]|-{\rm can.}&
    \BW^{d+1} \big(L^\star\big) \ar[d]|-{\rm can.}
\\
\big(\BW^{d} L\big)^\star \ar[r]^-{\textstyle\transpose{(\sbullet\intd\mu)}} &
  \big(\BW^{d+1} L\big)^\star
\\
}
$$

\subsubsection{L'isomorphisme canonique $\BW^d(L^\star) \overset{\mathrm {can.}}{\simeq} \big(\BW^d L\big)^\star$
lorsque $L$ est libre de rang fini}

Lorsque $L$ est libre de rang fini, l'application canonique
$\BW^d(L^\star) \to \big(\BW^d L\big)^\star$ est un isomorphisme. Pour le voir,
on fait le choix d'une base $(e_1, \cdots, e_n)$ de $L$, 
ce qui  fournit une base $(e_I)_I$ de $\BW^\sbullet(L)$ indexée par les parties $I$ de $\{1..n\}$:
$$
e_I = e_{i_1} \wedge \cdots \wedge e_{i_d}, \qquad   I = \{ i_1 < \cdots < i_d\}
$$
Notons $(e^\star_1, \cdots, e^\star_n)$ la base duale et $e^\star_I \in \BW(L^\star)$ défini par
$$
e^\star_I = e^\star_{i_1} \wedge \cdots \wedge e^\star_{i_d}
$$
Alors, au sens de la \og dualité déterminantale\fg:
$$
\scp{e_I}{e^\star_J} = \begin {cases} 0 &\text{si } I \ne J \\ 1&\text{ sinon} \\ \end{cases}
$$
si bien que $(e^\star_I)_I$ s'identifie à la base duale de $(e_I)_I$.

\label{NOTA01-eI}%

\medskip

Pour $\mu\in L^\star$, dans le diagramme commutatif, les verticales sont les isomorphismes canoniques:
$$
\xymatrix @R=1cm @C=2cm@M=0.4pc{
\BW^{d} \big(L^\star \big) \ar[r]^-{\textstyle \mu\wedge\sbullet}\ar[d]|-{\textstyle\overset{\rm can.}{\simeq}}&
    \BW^{d+1} \big(L^\star\big) \ar[d]|-{\textstyle\overset{\rm can.}{\simeq}}
\\
\big(\BW^{d} L\big)^\star \ar[r]^-{\textstyle\transpose{(\sbullet\intd\mu)}} &
  \big(\BW^{d+1} L\big)^\star
\\
}
$$

\subsubsection{L'isomorphisme déterminantal $\sharp_\bfe$ (Hodge droit) associé à une orientation $\bfe$ de $L$}

On désigne par $L$ un $\bfA$-module libre de rang $n$ orienté par
$\bfe\in \BW^n(L)$.  Cette orientation $\bfe$ sur $L$ induit une
orientation sur $L^\star$ de la manière suivante.  On note
$\bfe^\star\in \big(\BW^n L\big)^\star$ l'unique forme linéaire telle que
$\bfe^\star(\bfe) = 1$ que l'on identifie, via l'isomorphisme
canonique, à un élément de $\BW^n(L^\star)$. On note du même nom
$\bfe^\star$ l'orientation ainsi obtenue sur $L^\star$.  Si $(e_1,
\cdots, e_n)$ est une base de $L$ telle que $\bfe = e_1 \wedge \cdots
\wedge e_n$, alors en notant $(e_1^\star, \cdots, e_n^\star)$ la base
duale de $L^\star$, on a $\bfe^\star = e_1^\star \wedge \cdots \wedge
e_n^\star$.  Si $\bfe'$ est une autre orientation de $L$, on a $\bfe'
= \lambda\,\bfe$ avec $\lambda \in \bfA$ inversible et $\bfe'^\star =
\lambda^{-1}\,\bfe^\star$.

\label{NOTA01-OrientationDuale}%

\begin {defns} [Isomorphisme déterminantal]
\label{IsoDetOrientation}  
\leavevmode
\begin {enumerate}[\rm i)]
\item  
On désigne par $\sharp_e : \BW^\sbullet(L) \to \big(\BW^{n-\sbullet}L\big)^\star$ l'isomorphisme
défini par:
$$
\sharp_{\bfe} : 
\begin{array}[t]{rcl}
\BW^d(L) & \longrightarrow & \big(\BW^{n-d}L\big)^\star \\ [0.4em]
\bfx & \longmapsto & \oriented{\bfx\wedge\sbullet\,}_\bfe 
\end{array}
\qquad \qquad
0 \le d \le n
$$
\item
Un isomorphisme déterminantal entre $\BW^d(L)$ et $\big(\BW^{n-d}L\big)^\star \overset{\rm can.}{\simeq}
\BW^{n-d}(L^\star)$ est un isomorphisme de la forme $\lambda\,\sharp_\bfe$ pour un $\lambda$
inversible et $\bfe$ une orientation sur $L$. Par exemple, en changeant la position
du joker $\sbullet$, l'application $\bfx \mapsto \oriented{\sbullet\wedge\bfx\,}_\bfe$ est un isomorphisme
déterminantal.
\end{enumerate}  
\end{defns}

\label{NOTA01-sharp}%
\index{isomorphisme déterminantal}%

\medskip

Le lemme suivant apporte des précisions sur le fait que $\sharp_\bfe$ est un isomorphisme.
On rappelle la notion de \emph {signe étendu} défini de la manière suivante pour $i,j \in \bbZ$
et $I,J$ deux parties finies de $\bbZ$ (ou plus généralement d'un ensemble
totalement ordonné quelconque):
$$
\varepsilon(i,j) = 
\begin {cases} 0 &\text{si } i=j\\ 1 &\text{si } i<j\\ -1 &\text{si } i>j\\ \end {cases}
\qquad\qquad
\varepsilon(I,J) = \prod_{\substack{i\in I\\ j \in J}} \varepsilon(i,j)
$$
\label{NOTA01-epsilon}%
\index{signe étendu}%
On fait le choix d'une base $e = (e_1, \cdots, e_n)$ de $L$ telle que $\bfe=e_1 \wedge\cdots\wedge e_n$.
Ceci fournit une base $(e_I)_I$ de $\BW^\sbullet(L)$ indexée par les parties $I$ de $\{1..n\}$,
base qui vérifie:
$$
e_I\wedge e_J = \varepsilon(I,J)\, e_{I \cup J}
$$
On notera $(e^\star_I)_I$ la base duale.

\begin {lem}
Dans le contexte ci-dessus, en notant $\bfx^\sharp$ au lieu de $\sharp_\bfe(\bfx)$.

\begin {enumerate} [\rm i)]
\item
Pour $\bfx \in \BW^d(L)$:
$$
\bfx^\sharp = \sum_{\#J=n-d} \oriented{\bfx\wedge e_J}_\bfe\, e_J^\star
$$  
\item
L'application $\sharp_\bfe$ réalise $e_I \mapsto \varepsilon(I,\overline I)\,e_{\overline I}^\star$ où $\overline I$
est le complémentaire de $I$ dans $\{1..n\}$.  C'est donc bien un isomorphisme.

\item
En utilisant les identifications canoniques $\BW^{n-d}(L)^\star \overset{\rm can.}{\simeq} \BW^{n-d}(L^\star)$
et $\BW^{d}(L^\star)^\star \overset{\rm can.}{\simeq} \BW^d(L)$, l'isomorphisme inverse de $\sharp_\bfe$ est donné
par:
$$  
\bsnu \longmapsto \oriented{\sbullet\wedge \bsnu}_{\bfe^\star},\qquad
\bsnu \in \BW^{n-d}(L^\star)  
$$
Note: le joker $\sbullet$ a changé de place en passant de la position droite à la position gauche.
\end {enumerate}  
\end {lem}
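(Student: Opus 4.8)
The plan is to treat the three parts in order, entirely by direct computation in the basis $(e_I)_I$, since the content is purely a matter of bookkeeping signs. For i), I would use that $\bfx^\sharp = \sharp_\bfe(\bfx)$ lives in $\big(\BW^{n-d}L\big)^\star$, whose basis dual to $(e_J)_{\#J = n-d}$ is $(e_J^\star)_{\#J = n-d}$. Any linear form is the sum over basis vectors of its value times the matching dual basis vector, so it suffices to evaluate $\bfx^\sharp$ on each $e_J$. By the very definition $\sharp_\bfe(\bfx) = \oriented{\bfx\wedge\sbullet}_\bfe$, one gets $\bfx^\sharp(e_J) = \oriented{\bfx\wedge e_J}_\bfe$, which yields the claimed formula immediately.

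For ii), I would specialize i) to $\bfx = e_I$ with $\#I = d$ and invoke the multiplication rule $e_I\wedge e_J = \varepsilon(I,J)\,e_{I\cup J}$ (valid when $I\cap J = \emptyset$, the product being $0$ otherwise). The coordinate $\oriented{e_I\wedge e_J}_\bfe$ is nonzero precisely when $I$ and $J$ are disjoint with $I\cup J = \{1..n\}$, i.e. $J = \overline I$, in which case it equals $\varepsilon(I,\overline I)$. Hence a single term of the sum survives and $e_I^\sharp = \varepsilon(I,\overline I)\,e_{\overline I}^\star$. Since $\varepsilon(I,\overline I)\in\{\pm 1\}$ is invertible and $I\mapsto\overline I$ is a bijection between the $d$-subsets and the $(n-d)$-subsets of $\{1..n\}$, the map $\sharp_\bfe$ carries the basis $(e_I)_{\#I=d}$ onto the basis $(e_{\overline I}^\star)_{\#I=d}$ up to signs, hence is an isomorphism.

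For iii), let $\Psi:\BW^{n-d}(L^\star)\to\BW^d(L)$ denote the proposed inverse $\bsnu\mapsto\oriented{\sbullet\wedge\bsnu}_{\bfe^\star}$, read through the canonical identification $\big(\BW^d(L^\star)\big)^\star\simeq\BW^d(L)$. As ii) already gives that $\sharp_\bfe$ is an isomorphism, it is enough to verify $\Psi\circ\sharp_\bfe = \id$ on the basis $(e_I)_{\#I=d}$. Unwinding the identification, $\Psi(\bsnu)$ corresponds to the vector $\sum_{\#K=d}\oriented{e_K^\star\wedge\bsnu}_{\bfe^\star}\,e_K$, because $\scp{e_K}{e_L^\star}=\delta_{K,L}$. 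Taking $\bsnu = e_{\overline I}^\star$ and arguing exactly as in ii) but now inside $\BW(L^\star)$, only $K=I$ contributes and gives $\varepsilon(I,\overline I)\,e_I$. Combined with ii), this yields $\Psi\big(\sharp_\bfe(e_I)\big) = \varepsilon(I,\overline I)^2\,e_I = e_I$, using $\varepsilon(I,\overline I)^2 = 1$, so $\Psi = \sharp_\bfe^{-1}$.

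The sign computations are routine; the only genuine subtlety, and the step I would be most careful with, is iii), where one must keep straight the two canonical identifications $\big(\BW^{n-d}L\big)^\star\simeq\BW^{n-d}(L^\star)$ and $\big(\BW^d(L^\star)\big)^\star\simeq\BW^d(L)$, and recognize that moving the joker $\sbullet$ from the right of $\bsnu$ to its left is precisely what produces the second factor $\varepsilon(I,\overline I)$ needed for $\varepsilon(I,\overline I)^2 = 1$ to close the argument.
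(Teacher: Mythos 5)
Your proposal is correct and follows essentially the same route as the paper: expand $\bfx^\sharp$ in the dual basis $(e_J^\star)$ to get i), specialize to $\bfx = e_I$ and use $e_I\wedge e_J = \varepsilon(I,J)\,e_{I\cup J}$ for ii), and for iii) perform the same sign computation $\oriented{e_K^\star\wedge e_{\overline I}^\star}_{\bfe^\star} = \varepsilon(I,\overline I)\,\delta_{K,I}$ that the paper uses. The only (cosmetic) difference is that you conclude iii) by checking $\Psi\circ\sharp_\bfe = \id$ on the basis, whereas the paper compares the action of $\Psi$ and of $\sharp_\bfe^{-1}$ on the $e_J^\star$ directly; both hinge on the same identity $\varepsilon(I,\overline I)^2 = 1$.
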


\begin {proof} \leavevmode

i) Par définition de la base duale
$$
\bfx^\sharp = \sum_{\#J=n-d} \scp{\bfx^\sharp}{e_J}\, e_J^\star
$$  
Et par définition de $\sharp_\bfe$, on a $\scp{\bfx^\sharp}{e_J} = [\bfx\wedge e_J]_\bfe$.

\medskip
ii)
Soit $d = \#I$. Pour $\#J = n-d$, on a $\scp{e_I^\sharp}{e_J} = \oriented{e_I\wedge e_J}_\bfe =
\varepsilon(I,J)$ qui est nul sauf pour $J=\overline I$.

\medskip
iii) Puisque $\sharp_\bfe$ réalise $e_I \mapsto \varepsilon(I,\overline I)\,e_{\overline I}^\star$,
son inverse, en utilisant la correspondance $J \leftrightarrow \overline I$, réalise
$e_J^\star \mapsto \varepsilon(\overline J, J)\,e_{\overline J}$.
En écrivant:
$$
\oriented{\sbullet\wedge \bsnu}_{\bfe^\star} = \sum_{\#I=d}\oriented{e_I^\star\wedge\bsnu}_{\bfe^\star}\, e_I
$$
on voit que $\bsnu \mapsto \oriented{\sbullet\wedge \bsnu}_{\bfe^\star}$ réalise également
$e_J^\star \mapsto \varepsilon(\overline J, J)\,e_{\overline J}$.
\end {proof}

\begin {rmq}[Cas particulier du théorème de proportionnalité]
\label{rmqProportionnalite}
Soit $L = \bfA^n$ muni de sa base canonique $(e_i)_{1\leqslant i\leqslant n}$,
$\bfe = e_1 \wedge \cdots  \wedge e_n$
et $n-1$ vecteurs $u_1, \cdots, u_{n-1}$ de $\bfA^n$.
Alors la forme linéaire $\sharp_{\bfe}(u_1 \wedge \cdots \wedge u_{n-1})$ n'est autre
que la forme notée~$\Delta_{\uu}$ dans la proposition \ref{CramerSymetrie}.
On dit parfois que $\Delta_{\uu}$ est la forme linéaire des mineurs signés de
la matrice $U : \bfA^{n-1} \to \bfA^n$ dont les $n-1$ colonnes
sont les $u_i$.

\smallskip
En identifiant $\bfA^n$ et son dual, 
nous allons en déduire un cas particulier du théorème de proportionnalité.
Nous voyons donc la forme $\Delta_{\uu} = \sharp_{\bfe}(u_1 \wedge \cdots \wedge u_{n-1})$
comme le vecteur $w \in \bfA^n$ de composantes:
$$
w_i = \Delta_{\uu}(e_i) \overset{\rm def.}{=} \det(u_1,\cdots,u_{n-1}, e_i) \qquad 1 \le i \le n
$$
Le résultat à prouver est le suivant: si $v \in \bfA^n$ vérifie
$\scp{u_j}{v} = 0$ pour $1\le j\le n-1$, alors $w$ et $v$ sont
proportionnels i.e. $w_k v_\ell = w_\ell v_k$ pour tous $k,\ell$. Nous
allons montrer beaucoup mieux sous la forme d'une identité algébrique
sans aucune hypothèse sur $v$: chaque $w_k v_\ell - w_\ell v_k$ est
une combinaison linéaire (explicite) des produits scalaires $\scp{u_j}{v}$. 

\smallskip
Introduisons $w_ke_\ell - w_\ell e_k \overset{\rm def.}{=} \Delta_{\uu}(e_k)e_\ell -
\Delta_{\uu}(e_\ell)e_k$. D'après Cramer (cf \ref{CramerSymetrie}), il
y a des $\lambda_j \in \bfA$ tels que 
$w_ke_\ell - w_\ell e_k = \sum_{j=1}^{n-1} \lambda_j u_j$.
D'où, par produit scalaire avec $v$, l'obtention de la combinaison convoitée:
$$
\scp{w_ke_\ell - w_\ell e_k}{v} = w_kv_\ell - w_\ell v_k = \sum_{j=1}^{n-1} \lambda_j \scp{u_j}{v}
$$
A noter le rôle mineur joué par le vecteur $v$ et a contrario l'importance de la
propriété de Cramer $\Delta_{\uu}(x)y - \Delta_{\uu}(y)x \in \sum_j \bfA u_j$
quels que soient $x,y \in\bfA^n$.
\end {rmq}

\subsubsection{Conjugaison entre $\sbullet\intd\mu$ et $\mu\wedge\sbullet$
via un isomorphisme déterminantal bien choisi}
\label {AutoDuality}

Ici $L$ est un module libre de rang $n$ orienté par $\bfe$,
$\mu \in L^\star$ et $d,d'$ deux
entiers complémentaires à~$n$, $d+d' = n$. On veut conjuguer
$\sbullet\intd\mu$ et $\mu\wedge\sbullet$ via des isomorphismes déterminantaux
notés ? dans le diagramme de gauche. Dans la suite, ce qui est équivalent, on
travaille avec le \emph {diagramme de droite}:
$$
\xymatrix @C= 1.5cm@M=0.6pc{
\BW^d L \ar[r]^-{\sbullet\intd\mu} \ar[d]_? & \BW^{d-1} L \ar[d]^? \\
\BW^{d'}(L^\star) \ar[r]^-{\mu\wedge\sbullet} & 
\BW^{d'+1}(L^\star) \\
}
\qquad\qquad
\xymatrix @C= 1.5cm@M=0.6pc{
\BW^d L \ar[r]^-{\sbullet\intd\mu} \ar[d] & \BW^{d-1} L \ar[d] \\
\big(\BW^{d'} L \big)^\star \ar[r]^-{\kern-2pt \transpose{(\sbullet\intd \mu)}} & 
\big(\BW^{d'+1} L \big)^\star \\
}
$$
Pour les verticales, \emph {essayons} $\sharp_\bfe : \bfx \longmapsto
\oriented {\bfx\wedge \sbullet}_\bfe$ en notant $\partial$ pour
$\sbullet\intd \mu$.  Soit $\bfx \in \BW^d L$ en haut à gauche. On
descend avec l'application $\oriented{\bfx\wedge\sbullet}_\bfe$ et on
poursuit à l'horizontale pour tomber sur la forme linéaire
$\oriented{\bfx\wedge\partial(\sbullet)}_\bfe$.  Si maintenant on
parcourt le chemin dans l'autre sens (l'horizontale puis la
verticale), on tombe sur
$\oriented{\partial(\bfx)\wedge\sbullet}_\bfe$.

Pour que cela commute, on \emph{aimerait} donc avoir 
$\bfx\wedge\partial(\bfy)  = \partial(\bfx) \wedge \bfy$
pour tout $\bfy \in \BW^{d' +1}L$ mais ce n'est pas le cas. Il s'introduit
un signe comme on le voit en utilisant le fait que $d+(d'+1) = n+1$ donc
$\bfx \wedge \bfy = 0$ et le caractère anti-dérivation à gauche de degré $-1$
de $\partial$:
$$
0 = \partial(\bfx\wedge\bfy) = \partial(\bfx)\wedge\bfy + (-1)^{\deg \bfx}\,\bfx\wedge\partial(\bfy) =
\partial(\bfx) \wedge \bfy + (-1)^d\, \bfx \wedge\partial(\bfy)
$$
de sorte que
$$
\partial(\bfx) \wedge \bfy = (-1)^{d-1}\,\bfx\wedge \partial(\bfy)
$$
Introduisons alors un signe $\varepsilon_d$ dans le morphisme vertical 
$\BW^d L \ \longrightarrow \ \big( \BW^{d'} L \big)^\star,\ 
\bfx\ \longmapsto \ \varepsilon_d \,\oriented{\bfx\wedge\sbullet}_\bfe$ 
où 
$\varepsilon_d$ est à ajuster pour avoir
$\varepsilon_d \,\oriented{\bfx \wedge \partial(\bfy)}_\bfe =
\varepsilon_{d-1} \,\oriented{\partial(\bfx) \wedge\bfy}_\bfe$.
Ainsi il suffit d'imposer la relation $\varepsilon_d  = (-1)^{d-1}\varepsilon_{d-1}$. 
Le signe $\varepsilon_d = (-1)^{1+2 + \cdots + d-1} = (-1)^{\frac{d(d-1)}2}$ convient.

\subsubsection{Retour (informel) sur le produit intérieur droit}

\index{produit intérieur droit}%


Ici $L$ est un module quelconque. On a vu que l'on pouvait faire
opérer $\BW^1(L^\star)$ à droite sur $\BW(L)$ via~$\intd$ en
définissant $\sbullet \intd \mu$ lorsque $\mu$ est de degré 1
i.e. $\mu \in L^\star$.  Bien entendu, on aurait pu faire opérer de
manière analogue (à droite) $\BW^1(L)$ sur $\BW(L^\star)$. Bien que
cela ne soit pas directement utile pour la suite, nous tenons à
préciser que l'on peut prolonger ces opérations de produit intérieur
droit à toute l'algèbre extérieure car il nous semble que cela apporte
un certain éclairage intrinsèque vu que le module $L$ est quelconque.

\medskip
Nous allons juste évoquer comment il faut s'y prendre.
Pour $\mu \in \BW^1(L^\star)$, notons $g_\mu$ l'anti-dérivation à gauche  $\sbullet\intd\mu$.
On a vu que $g_\mu \circ g_\mu = 0$, cf. le lemme \ref{AntiDerivationCarreNul}.
On en déduit, pour $\mu, \mu' \in L^\star$, en développant $g_{\mu + \mu'} \circ g_{\mu + \mu'} = 0$
que $g_\mu$ et $g_{\mu'}$ anti-commutent:
$$
g_\mu \circ g_{\mu'} + g_{\mu'} \circ g_\mu = 0
$$
Ceci permet de faire $\intd$-agir $\BW^2(L^\star)$ à droite sur $\BW(L)$
$$
\BW^\sbullet(L) \times \BW^2(L^\star) \to \BW^{\sbullet - 2}(L)
\qquad \text{via} \qquad
\bfx \intd (\mu \wedge \mu') := (\bfx \intd \mu) \intd \mu' 
$$
Pour $\bsmu \in \BW^2(L^\star)$, le caractère adjoint entre $\sbullet\intd\bsmu$ et $\bsmu\wedge\sbullet$
demeure:
$$
\framebox [1.05\width][c]{$\scp{\bfx\intd\bsmu}{\bsnu} = \scp{\bfx}{\bsmu\wedge\bsnu}$},
\qquad\quad
\bfx \in \BW^d(L), \quad \bsnu \in \BW^{d-2}(L^\star)
$$
comme on le voit en prenant $\bsmu = \mu\wedge\mu'$ et en écrivant
$$
\scp {\bfx\intd (\mu\wedge\mu')}{\bsnu} \overset{\rm def.}{=}
\scp {(\bfx\intd\mu)\intd\mu')}{\bsnu} =
\scp {\bfx\intd\mu}{\mu'\wedge\bsnu} =
\scp{\bfx}{\mu\wedge\mu'\wedge\bsnu}
$$
De manière plus générale, on peut faire $\intd$-agir à droite $\BW(L^\star)$ sur $\BW(L)$
de sorte que $\bfx\intd\bsnu \in \BW^{q-p}(L)$ pour $\bsnu \in \BW^p(L^\star)$,
$\bfx \in \BW^q(L)$, avec les propriétés
$$
\bfx \intd (\bsnu \wedge \bsnu') = (\bfx \intd \bsnu) \intd \bsnu'
\qquad \qquad
\scp {\bfx \intd \bsnu}{\bsnu'} = \scp{\bfx}{\bsnu \wedge \bsnu'}
$$
Et de manière analogue, on peut faire $\intd$-agir à droite $\BW(L)$ sur $\BW(L^\star)$.

\label{NOTA01-ProduitInterieurDroit}%
%
%

\bigskip

Nous sommes maintenant en mesure de préciser, pour un module libre $L$
de rang $n$ orienté par $\bfe$, pourquoi l'isomorphisme déterminantal
$\sharp_\bfe$ que nous avons retenu s'identifie à l'isomorphisme Hodge
droit $\Hd_{\bfe^\star}^{\,\llcorner} = \bfe^\star \intd \sbullet$
défini par l'orientation $\bfe^\star \in \BW^n(L^\star)$
déduite de $\bfe$.

Cet isomorphisme de Hodge $\Hd_{\bfe^\star}^{\,\llcorner} : \BW^dL \to \BW^{n-d}(L^\star)$
est défini à partir du produit intérieur droit par $\bfx \mapsto \bfe^\star \intd \bfx$.
Le fait qu'il coïncide avec $\sharp_\bfe :  \BW^d L \to \BW^{n-d}(L)^\star$ 
résulte du calcul suivant pour $\bfx\in \BW^d L$, \ $\bfy \in \BW^{n-d}L$ :
$$
\scp {\bfe^\star\intd\bfx}{\bfy} \ = \ \scp {\bfe^\star}{\bfx \wedge \bfy} 
\ =\ 
\oriented{\bfx \wedge \bfy}_\bfe  
$$

\bigskip

Nous pouvons également revenir sur la conjugaison entre $\mu\wedge\sbullet$ et
$\sbullet\intd\mu$ vue à la page~\pageref{AutoDuality} et de nouveau
expliquer la correction de signe à apporter dans la mise au point
des isomorphismes verticaux. Ici, nous procédons de manière légèrement différente par rapport au diagramme
de la page~\pageref{AutoDuality} en changeant d'une part le sens des isomorphismes
déterminantaux verticaux et d'autre part en corrigeant l'isomorphisme
déterminantal vertical $\bsnu \mapsto \bfe\intd\bsnu$ 
(attention également à l'utilisation de $d+1$ versus~$d$).

Nous allons voir pourquoi il y a \emph {commutation au signe prés} dans le diagramme de gauche.
On part du coin haut-gauche avec $\bsnu \in \BW^d(L^\star)$.
En allant à droite puis en descendant, on tombe sur $\bfe\intd(\mu\wedge\bsnu)$
tandis qu'en descendant puis en allant à droite on tombe sur
$(\bfe\intd\bsnu)\intd\mu$. Or
$$
(\bfe\intd\bsnu)\intd\mu = \bfe \intd(\bsnu \wedge \mu) = (-1)^d\,\bfe\intd(\mu\wedge\bsnu)  
$$
On corrige donc (à droite) les isomorphismes verticaux par un signe $\varepsilon_d$ de manière
à avoir $\varepsilon_{d+1} = (-1)^d \varepsilon_d$:
$$
\xymatrix @R=1.5cm @C=2cm@M=0.4pc{
\BW^{d} \big(L^\star \big) \ar[r]^-{\textstyle \mu\wedge\sbullet}\ar[d]^\simeq_{\bfe\intd\sbullet}&
    \BW^{d+1} \big(L^\star\big) \ar[d]_\simeq^{\bfe\intd\sbullet}
\\
\BW^{d'}(L) \ar[r]^{\textstyle\sbullet\intd\mu} & \BW^{d'-1}(L)
\\
}
\qquad\qquad
\xymatrix @R=1.5cm @C=2cm@M=0.4pc{
\BW^{d} \big(L^\star \big) \ar[r]^-{\textstyle \mu\wedge\sbullet}\ar[d]^\simeq_{\varepsilon_d(\bfe\intd\sbullet)}&
    \BW^{d+1} \big(L^\star\big) \ar[d]_\simeq^{\varepsilon_{d+1} (\bfe\intd\sbullet)}
\\
\BW^{d'}(L) \ar[r]^{\textstyle\sbullet\intd\mu} & \BW^{d'-1}(L)
\\
}
$$
Avec $\varepsilon_0=1$, on tombe sur:
$$
\varepsilon_d = (-1)^{1 + \cdots + d-1} = (-1)^{\frac{d(d-1)}{2}}
$$

\subsection{Suites régulières et suites $1$-sécantes}

Un certain nombre de notions de profondeur sont indispensables à l'étude du résultant, du moins
à l'approche retenue ici. Citons par exemple le théorème de Wiebe, la notion
de profondeur~2, le contrôle de la profondeur par les composantes homogènes dominantes,
le théorème de Hilbert-Burch etc.

Les auteurs ont été confrontés au dilemme suivant: impossible de tout admettre mais
également impossible de tout refaire. Nous avons opté pour le compromis
qui vient en essayant de présenter un exposé se voulant cohérent: donner les définitions
indispensables ainsi que les énoncés des résultats utilisés, en fournissant parfois
les preuves mais pas toujours. Procédant ainsi, cela permet à notre avis de faire
\og tourner les notions\fg{}. Nous avons choisi de faire une présentation sommaire
du complexe de Koszul tout en privilégiant parfois l'approche
élémentaire: par exemple, nous avons montré à la main qu'une suite $\ua$
régulière est 1-sécante alors qu'une autre voie plus rapide consiste
à utiliser l'acyclicité du complexe de Koszul descendant de $\ua$, fournissant
en particulier $\rmH_1(\ua) = 0$ i.e. le caractère 1-sécant de $\ua$.

\medskip

En cas d'absence de preuves, nous fournirons comme références
la thèse de C.~Tête \cite {Tete}. Le lecteur pourra compléter son information
en en consultant les premiers chapitres.
Il y a également le chapitre 1 de l'ouvrage de Bruns \& Herzog \cite {BrunsHerzog}
qui contient un traitement du complexe de Koszul.  Il
est cependant impératif de ne pas confondre la notion de profondeur
utilisée ici, qui est la même que celle présentée par  Bourbaki en
\cite{BourbakiACX} ou celle figurant dans l'ouvrage de Northcott~\cite{NorthcottFFR},
avec la notion de \og depth\fg{} spécifique au contexte local n\oe{}thérien.

\subsubsection*{Des éléments réguliers \og à vie\fg{} : les polynômes primitifs par valeur}

Il y a une classe importante d'éléments réguliers qui va intervenir dans la suite:
il s'agit de celle constituée des polynômes à plusieurs variables, à coefficients dans un
anneau commutatif quelconque~$\bfk$, et qui sont primitifs par valeur. Ce sont des polynômes
primitifs très particuliers.

On rappelle qu'un polynôme $f$ est \emph {primitif} si $1 \in \rmc(f)$
où  $\rmc(f)$, idéal contenu de $f$, est l'idéal de $\bfk$ engendré par les coefficients
de~$f$.  Un polynôme $f$ est dit \emph {primitif par valeurs} (au
pluriel) si $1$ est dans l'idéal engendré par les valeurs de $f$
prises sur $\bfk$ \idest{} lorsque l'on spécialise les indéterminées en toutes les valeurs
possibles des éléments de $\bfk$. Enfin, $f$ est dit \emph {primitif par
valeur} (au singulier) s'il y a une valeur de $f$ sur
$\bfk$ qui est inversible. Un polynôme primitif par valeurs est
primitif et évidemment un polynôme primitif par valeur l'est par
valeurs.  Il n'y a pas de réciproque. Soient les polynômes de~$\bbZ[a]$:
$$
f = a^2 + a = a(a+1), \qquad\qquad g = 5a - 2
$$
Alors $f$ est primitif mais pas par valeurs puisque $f(a)$ est pair pour tout $a \in \bbZ$.
Le polynôme $g$ est primitif par valeurs puisque $g(0) = -2$ et $g(1) = 3$ sont premiers
entre eux; mais il n'est pas primitif par valeur car l'équation $g(a) = \pm 1$ n'a pas de
solution $a$ dans $\bbZ$.

\index{polynôme!primitif}%
\index{polynôme!primitif par valeurs, par valeur}%

Ces propriétés se conservent par morphisme $\bfk \to \bfk'$.
Si $f$, à coefficients dans $\bfk$, est primitif, il en est de même du polynôme
à coefficients dans $\bfk'$, image de $f$ par le morphisme. Quand on est primitif,
c'est pour la vie. Idem en ce qui concerne les autres propriétés.

\medskip

Dans la suite, \og primitif par valeur\fg{} interviendra fréquemment dans le
contexte de la spécialisation d'un système en le jeu étalon (pour
comprendre les termes utilisés, cf. le chapitre~\ref{ObjetsSuiteP}).
Ici, on peut en donner une idée via l'exemple suivant dans lequel $a,b,c,d$ sont des
indéterminées sur un anneau commutatif. Considérons le polynôme
déterminant $2 \times 2$:
$$
f = \det(M),  \qquad   M = \begin {bmatrix}  a & b\\ c& d\\ \end {bmatrix}
$$
Puisque $\det(\Id_2) = 1$, la spécialisation $M \to \Id_2$ induit le fait que
$f$ est primitif par valeur.

\medskip

Le fait qu'un polynôme primitif soit régulier est un cas particulier d'un lemme dit
de McCoy dont on trouvera une preuve en \cite[Th. 7, section 5.3]{NorthcottFFR} ou bien 
en \cite[chap. III, corollaire 2.3]{LombardiQuitte}.

\begin{lem}[de McCoy]\label{McCoyPolyLemma}

Soit $f \in \bfk[\uT]$ un polynôme à plusieurs variables.  
Si son contenu $\rmc(f)$ est un idéal fidèle i.e. $\Ann_\bfk(\rmc(f)) = 0$,
alors le polynôme $f$ est un élément régulier de $\bfk[\uT]$.

A fortiori, un polynôme primitif est régulier et à fortiori,
un polynôme primitif par valeur est régulier.
\end{lem}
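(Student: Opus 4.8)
Le plan est de se ramener au cas d'une seule variable. D'abord, supposant $f$ non r\'egulier, je me donnerais un t\'emoin $g \neq 0$ v\'erifiant $fg = 0$, puis je fixerais un entier $N$ strictement sup\'erieur \`a tous les exposants (en chaque $T_i$) apparaissant dans $f$ et dans $g$, et je consid\'ererais la substitution de Kronecker $\phi : \bfk[\uT] \to \bfk[T]$ envoyant $T_i$ sur $T^{N^{i-1}}$. Pour un tel $N$, deux mon\^omes distincts de $f$ (resp.\ de $g$) ont pour images des puissances distinctes de $T$, de sorte que $\phi$ pr\'eserve exactement la famille des coefficients: on a $\rmc(\phi(f)) = \rmc(f)$ et $\phi(g) \neq 0$. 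Comme $\phi$ est un morphisme d'anneaux, $\phi(f)\phi(g) = 0$, si bien que $\phi(f)$ est diviseur de z\'ero dans $\bfk[T]$. Il suffit donc d'\'etablir le cas univari\'e (th\'eor\`eme de McCoy): \emph{si $h \in \bfk[T]$ est diviseur de z\'ero, un scalaire non nul annule tous ses coefficients}.

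Pour ce cas d'une variable, soit $h = \sum_{i=0}^n a_iT^i$ diviseur de z\'ero. Je choisirais $g = \sum_{j=0}^m b_jT^j$ non nul de degr\'e $m$ \emph{minimal} parmi les polyn\^omes annul\'es par $h$, avec $b_m \neq 0$. Le coefficient dominant de $hg$ \'etant $a_nb_m = 0$, le polyn\^ome $a_ng$ annule encore $h$ et est de degr\'e $<m$; par minimalit\'e, $a_ng = 0$. Je proc\'ederais ensuite par r\'ecurrence descendante: en supposant $a_{k+1}g = \cdots = a_ng = 0$, le coefficient de $T^{m+k}$ dans $hg = 0$ se r\'eduit \`a $a_kb_m$ (les termes $a_ib_{m+k-i}$ avec $i>k$ sont nuls puisque $a_ig = 0$, et ceux avec $i<k$ font intervenir un $b_j$ d'indice $j>m$, donc nul). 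On obtient $a_kb_m = 0$, d'o\`u $a_kg$ de degr\'e $<m$ annulant $h$, donc $a_kg = 0$. Finalement $a_ig = 0$ pour tout $i$, en particulier $a_ib_m = 0$: le scalaire non nul $c := b_m$ v\'erifie $c\,a_i = 0$ pour tout $i$.

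En recollant les deux \'etapes, tout t\'emoin $g \neq 0$ d'une relation $fg = 0$ produirait un scalaire non nul annulant $\rmc(f)$, en contradiction avec l'hypoth\`ese $\Ann_\bfk(\rmc(f)) = 0$; donc $f$ est r\'egulier. Les deux assertions \og \`a fortiori\fg{} en d\'ecoulent aussit\^ot: un polyn\^ome primitif v\'erifie $\rmc(f) = \bfk$, donc $\Ann_\bfk(\rmc(f)) = 0$; et un polyn\^ome primitif par valeur est primitif, car une valeur inversible $f(t)$ appartient \`a $\rmc(f)$ et y force $1$. L'\'etape la plus d\'elicate me semble \^etre la v\'erification soigneuse que la substitution de Kronecker pr\'eserve \emph{exactement} les coefficients (choix de $N$ et absence de collision entre mon\^omes), le reste relevant d'une comptabilit\'e de routine. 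Une alternative uniforme, \'evitant Kronecker, consisterait \`a invoquer la formule de contenu de Dedekind--Mertens $\rmc(f)^{m+1}\rmc(g) = \rmc(f)^m\rmc(fg)$ avec $m = \deg g$: de $fg = 0$ on tire $\rmc(f)^{m+1}\rmc(g) = 0$, et en choisissant le plus petit $k \ge 1$ tel que $\rmc(f)^k\rmc(g) = 0$ (un tel $k$ existe et v\'erifie $k \ge 1$ car $\rmc(g) \neq 0$), tout \'el\'ement non nul de $\rmc(f)^{k-1}\rmc(g)$ annule $\rmc(f)$, d'o\`u la m\^eme contradiction.
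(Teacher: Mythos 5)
Votre d\'emonstration est correcte: la substitution de Kronecker est bien men\'ee (le choix de $N$ garantit l'absence de collision entre mon\^omes, donc $\rmc(\phi(f)) = \rmc(f)$ et $\phi(g) \neq 0$), et l'argument univari\'e par t\'emoin de degr\'e minimal est le th\'eor\`eme de McCoy classique, avec une r\'ecurrence descendante correctement v\'erifi\'ee. Notez cependant que le texte ne d\'emontre pas ce lemme: il renvoie explicitement \`a Northcott [Th.~7, section~5.3] et \`a Lombardi--Quitt\'e [chap.~III, corollaire~2.3], si bien que votre proposition comble un vide plut\^ot qu'elle ne double une preuve existante. Deux remarques de comparaison. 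D'une part, votre voie principale proc\`ede par l'absurde et par choix d'un contre-exemple de degr\'e minimal, ce qui heurte la ligne constructive affich\'ee par les auteurs d\`es l'introduction (\og pas de raisonnements par l'absurde \fg); c'est pr\'ecis\'ement pour cette raison que la r\'ef\'erence Lombardi--Quitt\'e d\'eduit McCoy du lemme de Dedekind--Mertens. D'autre part, votre alternative finale est essentiellement la preuve de cette r\'ef\'erence, et elle peut \^etre rendue directe, sans choix d'un $k$ minimal ni contradiction: de $fg = 0$ et de $\rmc(f)^{m+1}\rmc(g) = \rmc(f)^m\rmc(fg) = 0$, la fid\'elit\'e de $\rmc(f)$ permet de descendre d'un cran \`a la fois (tout $x \in \rmc(f)^{k-1}\rmc(g)$ v\'erifie $x\,\rmc(f) = 0$, donc $x = 0$), d'o\`u $\rmc(g) = 0$ puis $g = 0$. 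Petit b\'emol sur cette variante: la formule de Dedekind--Mertens avec l'exposant $m = \deg g$ est l'\'enonc\'e univari\'e; en plusieurs variables, il faut soit la combiner avec votre substitution de Kronecker, soit prendre un exposant convenable (par exemple le nombre de mon\^omes de $g$), ce qui ne change rien \`a l'argument.
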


\label{NOTA01-contenu}%
\label{NOTA01-Ann}%
%
%

\subsubsection*{Suites régulières}

\begin{defn} 
Soit $\ua = (a_1, \cdots, a_n)$ une suite de scalaires et $E$ un module.
On dit que $\ua$ est $E$-régulière si pour tout $1 \le i \le n$, le scalaire $a_i$ est régulier modulo
$a_1E + \cdots + a_{i-1}E$.
Autrement dit, en notant $E_i = \sum_{1\le j < i}a_jE$, 
la multiplication $\xymatrix{E/E_i \ar[r]^-{\times a_i} & E/E_i}$  est injective.
En particulier, dans une suite $\ua$ $E$-régulière, le premier terme $a_1$ est $E$-régulier.

\smallskip
\noindent
Lorsque $E$ est l'anneau de base, on parle de suite régulière.

\smallskip
\noindent
La suite $\ua$ est dite commutativement régulière si la suite $(a_{\sigma(1)}, \cdots, a_{\sigma(n)})$
est régulière pour tout $\sigma \in \fS_n$.
\end{defn}

\index{suite!régulière}%
\index{suite!commutativement régulière}%

L'exemple le plus simple est probablement celui d'un anneau de polynômes en $n$
indéterminées $(X_1, \cdots, X_n)$ sur un anneau commutatif quelconque:
la suite des indéterminées $(X_1, \cdots, X_n)$ est une suite commutativement régulière.

\medskip
Le lemme suivant immédiat ne nécessite pas de preuve.

\begin{lem} \label{RegTrivialFacts} 
\leavevmode

\begin{enumerate}[\rm i)]
\item 
Un préfixe d'une suite régulière est encore une suite régulière
\item 
Une suite régulière sur $\bfA$ est régulière sur tout $\bfA$-module libre.
\end{enumerate}
\end{lem}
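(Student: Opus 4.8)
Le point i) ne demande aucun calcul: la r\'egularit\'e de $\ua = (a_1, \dots, a_n)$ se lit comme une conjonction de $n$ conditions, une pour chaque indice $i$, \`a savoir que $a_i$ soit r\'egulier modulo $E_i = \sum_{j<i}a_j\bfA$. Pour un pr\'efixe $(a_1, \dots, a_k)$ avec $k \le n$, les modules $E_i$ (pour $1 \le i \le k$) sont inchang\'es et les conditions requises ne sont autres que les $k$ premi\`eres des $n$ conditions d\'ej\`a acquises. Il suffit donc de ne retenir que ces $k$ conditions; il n'y a rien \`a prouver.

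Pour le point ii), je poserais $E = \bfA^{(I)}$ un $\bfA$-module libre de base index\'ee par un ensemble $I$ quelconque et je noterais $\fa_i = \langle a_1, \dots, a_{i-1}\rangle$ l'id\'eal engendr\'e par les $i-1$ premiers termes. La premi\`ere \'etape consiste \`a identifier le sous-module $E_i = \sum_{j<i}a_jE$ \`a $\fa_iE$, puis le quotient $E/E_i$ \`a $(\bfA/\fa_i)^{(I)}$: pour un module libre, la formation du quotient commute \`a la somme directe, chaque coordonn\'ee d'un \'el\'ement de $E_i$ \'etant un \'el\'ement de $\fa_i$. La seconde \'etape consiste \`a remarquer que la multiplication par $a_i$ sur $(\bfA/\fa_i)^{(I)}$ agit coordonn\'ee par coordonn\'ee.

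L'argument cl\'e est alors \'el\'ementaire: un \'el\'ement de la somme directe $(\bfA/\fa_i)^{(I)}$ est \`a support fini, et la multiplication par $a_i$ l'envoie sur $0$ si et seulement si chacune de ses composantes est annul\'ee par $a_i$ dans $\bfA/\fa_i$. Ainsi $\times a_i$ est injective sur $(\bfA/\fa_i)^{(I)}$ si et seulement si elle l'est sur $\bfA/\fa_i$, c'est-\`a-dire si et seulement si $a_i$ est r\'egulier modulo $\fa_i$ --- ce qui est pr\'ecis\'ement la $i$-\`eme condition de $\bfA$-r\'egularit\'e de $\ua$. En parcourant les indices de $1$ \`a $n$, on obtient la $E$-r\'egularit\'e de $\ua$.

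Le seul point m\'eritant un peu de soin, et qui constitue l'unique obstacle, est l'identification $E/E_i \simeq (\bfA/\fa_i)^{(I)}$ dans le cas d'un rang infini: il faut s'assurer que $\sum_{j<i}a_jE = \fa_iE$ est bien une \'egalit\'e de sous-modules et que le passage au quotient d'une somme directe se fait terme \`a terme. Une fois cette identification en place, l'injectivit\'e se teste coordonn\'ee par coordonn\'ee et tout se ram\`ene \`a la r\'egularit\'e sur $\bfA$.
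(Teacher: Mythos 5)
Votre preuve est correcte et correspond exactement \`a ce que le papier laisse implicite : le lemme y est d\'eclar\'e \og imm\'ediat \fg{} et n'y re\c{c}oit aucune preuve. Votre r\'edaction --- troncature des conditions pour le point i), identification $E/\fa_i E \simeq (\bfA/\fa_i)^{(I)}$ puis r\'eduction coordonn\'ee par coordonn\'ee pour le point ii) --- est pr\'ecis\'ement la v\'erification de routine que les auteurs ont jug\'e inutile d'\'ecrire.
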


\subsubsection*{Permutabilité et suite régulière}

Une suite régulière ne reste pas forcément régulière après permutation 
de ses termes. 
Il y a le célèbre exemple de $\bfk[X,Y,Z]$ :
la suite $\bigl(X(Y-1),\, Y,\, Z(Y-1)\bigr)$ est régulière 
(et engendre l'idéal $\langle X,Y,Z \rangle$)
mais elle ne l'est plus si on permute les deux derniers termes. 
C'est cependant le cas si la suite est dans l'idéal maximal d'un anneau local noethérien
ou encore lorsque l'anneau est gradué 
et la suite constituée d'éléments homogènes de degré $> 0$.
Trouver dans la littérature cet énoncé en terrain gradué tel quel n'est pas si simple.
Il est souvent mis en parallèle avec son analogue en terrain local. 
Certains auteurs énoncent alors ce résultat de permutabilité en local sans
préciser d'hypothèse supplémentaire noethérienne, et ceci malgré le
contre-exemple célèbre de Dieudonné~\cite{Dieudonne} qui date de~1966 ! 
D'autres au contraire spécifient en terrain gradué l'hypothèse 
noethérienne alors qu'elle est totalement inutile.  
Enfin, dans certains ouvrages, la permutabilité (dans le contexte gradué) se déduit du
résultat suivant~: 
\begin{quote}
une suite constituée d'éléments homogènes de degré~$> 0$ 
est régulière si et seulement si elle est 1-sécante ou encore si
et seulement si elle est complètement sécante (ces deux dernières
notions, définies plus loin, sont invariantes par permutation).
\end{quote}
Le résultat ci-dessus est certes important mais la preuve est plus complexe que la 
permutabilité dont nous aurons besoin par exemple en~\ref{SuiteGeneriqueReguliere}
(pour prouver qu'une suite de polynômes homogènes génériques est régulière).
Voici un premier résultat de permutabilité dans un contexte assez courant.


\begin{prop} [Permutabilité d'une suite régulière en terrain homogène]
\label{PermutabiliteSuiteReguliereHomogene}
Soit $\bfA$ un anneau $\bbN$-gradué, $E$ un $\bfA$-module $\bbN$-gradué, et 
$(a_1,\ldots, a_n)$ une suite $E$-régulière 
{\rm constituée d'éléments homogènes de degré~$> 0$}. 
Alors $(a_1, \ldots, a_n)$ est une suite commutativement $E$-régulière.
\end{prop}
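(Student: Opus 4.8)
The plan is to reduce the statement about an arbitrary permutation to the case of a single adjacent transposition, and then to reduce that case to a two-element swap lemma whose proof is the real content. Since every $\sigma \in \fS_n$ is a product of adjacent transpositions, and since reordering the sequence does not change the elements themselves (they remain homogeneous of degree $>0$), it suffices to prove the following claim: if $(b_1, \dots, b_n)$ is $E$-regular with the $b_i$ homogeneous of degree $>0$, then interchanging two consecutive terms $b_i, b_{i+1}$ again yields an $E$-regular sequence. Iterating this claim along a decomposition of $\sigma$ into adjacent transpositions gives the commutative regularity, each intermediate reordering being a permutation of the original sequence and hence still homogeneous of positive degree.

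To establish the adjacent-swap claim I would pass to the quotient $F := E/(b_1 E + \cdots + b_{i-1}E)$. The prefix $(b_1, \dots, b_{i-1})$ is untouched by the swap, and by definition of $E$-regularity the regularity of the permuted sequence amounts to: $(b_{i+1}, b_i)$ is $F$-regular, together with regularity of the tail $(b_{i+2}, \dots, b_n)$ on $F/(b_{i+1}F + b_i F)$. This last tail condition is identical before and after the swap, since $b_i F + b_{i+1}F = b_{i+1}F + b_i F$. Thus everything collapses to a two-element statement: writing $a := b_i$ and $b := b_{i+1}$, both homogeneous of degree $>0$, if $(a,b)$ is $F$-regular then so is $(b,a)$, i.e. $b$ is $F$-regular and $a$ is regular on $F/bF$.

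For the two-element lemma, since $a$ and $b$ are homogeneous and $F$ is $\bbN$-graded, every annihilation relation splits into homogeneous components, so I may test regularity on homogeneous elements. To show $b$ is $F$-regular I argue by induction on $\deg x$: if $x$ is homogeneous with $bx = 0$, then $\bar b \bar x = 0$ in $F/aF$, so $F/aF$-regularity of $b$ gives $x = a x_1$ with $x_1$ homogeneous of degree $\deg x - \deg a$; from $a b x_1 = b x = 0$ and $F$-regularity of $a$ we obtain $b x_1 = 0$. Because $\deg a \ge 1$ the degree strictly decreases, so either $\deg x_1 < 0$ forcing $x_1 = 0$ outright, or the induction hypothesis applies and yields $x_1 = 0$; in both cases $x = a x_1 = 0$. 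To show $a$ is regular on $F/bF$, suppose $a y = b z$ with $y,z$ homogeneous; then $\bar b \bar z = 0$ in $F/aF$ gives $z = a z_1$, whence $a(y - b z_1) = 0$ and, by $F$-regularity of $a$, $y = b z_1 \in bF$.

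The main obstacle is the first half of the two-element lemma, the $F$-regularity of $b$: this is precisely where the homogeneity hypotheses (degrees $>0$ and an $\bbN$-grading bounded below) are indispensable, since the descent on degree has no analogue in the ungraded setting, which is exactly why a naive transposition of a regular sequence can fail, as in the classical counterexample recalled just above. Note moreover that the induction on degree keeps the argument constructive, with no appeal to a proof by contradiction. The second half, by contrast, is a purely formal computation requiring no induction.
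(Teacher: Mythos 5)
Your proof is correct and follows essentially the same route as the paper: reduction to adjacent transpositions, passage to the quotient $F = E/(b_1E+\cdots+b_{i-1}E)$, and a degree-descent induction (relying on $\deg > 0$ and the $\bbN$-grading) for the key half of the two-element swap, namely the $F$-regularity of $b$. The only cosmetic difference is organizational: the paper isolates the purely formal half --- your claim that $a$ is regular on $F/bF$ --- as a separate ungraded lemma (its lemme~\ref{Permutabilite2elements}, stating that this condition holds automatically for any regular sequence), whereas you fold that same computation directly into the swap argument.
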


La preuve est relativement simple une fois le lemme suivant mis en place.

\begin{lem} \label{Permutabilite2elements}
Soit $\ua = (a_1, \ldots, a_n)$ une suite $E$-régulière.
La suite $\ua'$ obtenue en permutant $a_i, a_{i+1}$ est $E$-régulière
si et seulement si $a_{i+1}$ est $E/(a_1E + \cdots + a_{i-1}E)$-régulier.
\end{lem}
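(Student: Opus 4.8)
Le plan est d'observer que la permutation de $a_i$ et $a_{i+1}$ n'affecte que deux des conditions de r\'egularit\'e d\'efinissant le caract\`ere $E$-r\'egulier de $\ua'$, \`a savoir celles attach\'ees aux positions $i$ et $i+1$, toutes les autres \'etant automatiquement h\'erit\'ees de $\ua$. En effet, les $i-1$ premiers termes de $\ua'$ co\"incident avec ceux de $\ua$, donc $E_i = \sum_{1\le j<i}a_jE$ est inchang\'e et les conditions aux positions $1,\dots,i-1$ sont identiques. De m\^eme, pour tout $j\ge i+2$, l'ensemble $\{a_1,\dots,a_{j-1}\}$ est le m\^eme pour $\ua$ et pour $\ua'$, de sorte que le sous-module $\sum_{\ell<j}a_\ell E$ ne change pas et que les conditions aux positions $\ge i+2$ sont elles aussi identiques. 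Comme $\ua$ est $E$-r\'eguli\`ere, toutes ces conditions sont satisfaites; il ne reste donc \`a contr\^oler que les positions $i$ et $i+1$ de $\ua'$.

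Pour $\ua'$, la condition \`a la position $i$ porte sur le terme $a_{i+1}$ relativement au sous-module encore \'egal \`a $E_i$, et s'\'enonce exactement: $a_{i+1}$ est r\'egulier sur $E/E_i$, c'est-\`a-dire l'hypoth\`ese de l'\'enonc\'e. Le sens direct est donc imm\'ediat. La condition \`a la position $i+1$ porte sur $a_i$ relativement au sous-module $E_i + a_{i+1}E$, et s'\'enonce: $a_i$ est r\'egulier sur $E/(E_i + a_{i+1}E)$. Tout le contenu du lemme tient ainsi dans le fait que cette derni\`ere condition d\'ecoule \emph{gratuitement} de la r\'egularit\'e de $\ua$, sans recours \`a l'hypoth\`ese.

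Pour l'\'etablir, je quotienterais d'abord par $E_i$: en posant $\overline E = E/E_i$, $a = a_i$ et $b = a_{i+1}$, la r\'egularit\'e de $\ua$ fournit (i) $a$ r\'egulier sur $\overline E$ et (ii) $b$ r\'egulier sur $\overline E/a\overline E$, et le but est (iv) $a$ r\'egulier sur $\overline E/b\overline E$. L'\'etape cl\'e est une chasse aux \'el\'ements: si $\overline x \in \overline E$ v\'erifie $a\overline x \in b\overline E$, on \'ecrit $a\overline x = b\overline y$; modulo $a\overline E$ on obtient $b\overline y = 0$, donc (ii) donne $\overline y = a\overline z$ pour un $\overline z$; alors $a\overline x = ba\overline z = a(b\overline z)$, et (i) donne $\overline x = b\overline z \in b\overline E$, d'o\`u (iv).

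Le point \`a souligner, qui constitue la petite subtilit\'e plut\^ot qu'une v\'eritable difficult\'e, est que cet argument n'emploie \emph{pas} l'hypoth\`ese ``$a_{i+1}$ r\'egulier sur $E/E_i$''. Celle-ci sert uniquement \`a fournir le premier maillon de la cha\^ine de r\'egularit\'e de $\ua'$: pour que $(a_{i+1},a_i)$ soit $\overline E$-r\'eguli\`ere il faut \`a la fois que $a_{i+1}$ soit r\'egulier sur $\overline E$ (c'est l'hypoth\`ese, non automatique) et que $a_i$ soit r\'egulier sur $\overline E/a_{i+1}\overline E$ (c'est (iv), automatique). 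La conjonction de ces deux faits ach\`eve le sens r\'eciproque, et donc le lemme.
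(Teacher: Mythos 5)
Votre preuve est correcte et suit essentiellement la m\^eme d\'emarche que celle du papier : r\'eduction aux deux seules positions $i$ et $i+1$, puis v\'erification que la condition en position $i+1$ de $\ua'$ (r\'egularit\'e de $a_i$ modulo $E_i + a_{i+1}E$) est automatique pour toute suite $E$-r\'eguli\`ere, l'hypoth\`ese ne servant qu'\`a la position $i$. La seule diff\'erence est cosm\'etique : vous menez la chasse aux \'el\'ements dans le quotient $\overline E = E/E_i$, l\`a o\`u le papier travaille directement dans $E$ avec des appartenances modulo $E' = a_1E + \cdots + a_{i-1}E$, en utilisant les deux m\^emes faits de r\'egularit\'e dans le m\^eme ordre.
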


\begin{proof}
Dans la définition d'une suite régulière, 
les seuls changements vis-à-vis de $\ua$ qui interviennent pour 
$\ua' = (a_1, a_2, \dots, a_{i-1}, a_{i+1}, a_i, a_{i+2}, \dots, a_n)$
sont :

\begin{enumerate}[\rm (1)]
\item $a_{i+1}$ est $E/(a_1E + \cdots + a_{i-1}E)$-régulier ;
\item $a_i$ est $E/(a_1E + \cdots + a_{i-1}E + a_{i+1}E)$-régulier.
\end{enumerate}
La suite $\ua'$ est donc régulière si et seulement si les points (1) et (2) sont
vérifiés. Montrons qu'une suite régulière~$\ua$ vérifie toujours (2),
il en résultera que $\ua'$ est régulière si et seulement si le point (1)
est vérifié.

\smallskip

Pour montrer que $\ua$ vérifie le point (2), introduisons $E' = a_1E + \cdots + a_{i-1}E$. 
Soit $x \in E$ tel que $a_i x \in E' + a_{i+1}E$ ; 
nous devons montrer que $x \in E' + a_{i+1}E$.
Ecrivons l'hypothèse $a_i x \in E' + a_{i+1}E$ sous la forme $a_i x + a_{i+1}y \in E'$, 
que l'on relâche un instant en $a_{i+1} y \in E' + a_i E$.
Comme $(a_1, \dots, a_i, a_{i+1})$ est $E$-régulière, 
on a $y \in E' + a_i E$ que l'on écrit $y = y' + a_i z$ avec $y' \in E'$ et $z \in E$.
En reportant cela dans $a_i x + a_{i+1} y \in E'$, on obtient 
$a_i(x + a_{i+1}z) \in E'$.
Comme $(a_1, \ldots, a_i)$ est $E$-régulière, cela entraîne $x + a_{i+1} z \in E'$
donc $x \in E' + a_{i+1}E$.
\end{proof}

\begin{proof}[Preuve de~\ref{PermutabiliteSuiteReguliereHomogene}]
\leavevmode

$\rhd$
Il suffit de montrer qu'en permutant $a_i$ et $a_{i+1}$, la suite reste
$E$-régulière donc, d'après le lemme~\ref{Permutabilite2elements}  ci-dessus, que 
$a_{i+1}$ est $E_i$-régulier où l'on a posé $E_i = E/(a_1E + \cdots + a_{i-1}E)$.
On va travailler au niveau de~$E_i$. Le fait que $(a_1, \cdots, a_{i-1}, a_i, a_{i+1})$
soit $E$-régulière a comme conséquence que $(a_i, a_{i+1})$ est $E_i$-régulière
et, à partir de cette information, 
il s'agit de montrer que $a_{i+1}$ est $E_i$-régulier, 
ce qui nous ramène au point suivant en renommant $(E_i, a_i, a_{i+1})$ en $(F, a_1, a_2)$.

$\rhd$
Montrons maintenant sous l'hypothèse $(a_1, a_2)$ est une suite $F$-régulière homogène,
que $a_2$ est un élément $F$-régulier. Soit $x \in F$ tel que $a_2 x = 0$ 
et montrons que $x = 0$. Comme $a_2$ est homogène, on peut supposer $x$ homogène,
disons de degré $d$. 
A fortiori, on a $a_2 x = 0$ dans $F/a_1F$; 
comme $(a_1, a_2)$ est $F$-régulière, 
on a $x = 0$ dans $F/a_1F$ c'est-à-dire $x \in a_1F$ disons $x = a_1y$
où $y$ est homogène de degré $< d$ (car $a_1$ est homogène de degré $> 0$). 
On écrit $a_1a_2y = 0$, on simplifie par $a_1$ (qui est $F$-régulier) pour obtenir $a_2y = 0$. 
Une récurrence sur le degré des éléments du module permet de conclure.
\end{proof}

\subsubsection{Suites 1-sécantes}

\begin{defn} [Suite 1-sécante] \label {DefSuite1Secante}
On désigne par $(\varepsilon_i)_{1\le i\le n}$ la base canonique de $\bfA^n$.
On dit qu'une suite de scalaires $\ua = (a_1,\dots,a_n)$ est
1-sécante si pour toute relation $\sum_i u_i a_i = 0$ où les $u_i$ sont dans
$\bfA$, le relateur $\sum_i u_i\varepsilon_i \in \bfA^n$ est combinaison
linéaire des relateurs dits triviaux $a_i\varepsilon_j - a_j\varepsilon_i$.
\end {defn}

\index{suite!1-sécante}%

Cette notion est indépendante de l'ordre des termes contrairement à celle de suite régulière.
Une manière équivalente de raconter qu'une suite $\ua$ est 1-sécante consiste à introduire
les deux premières différentielles du complexe de Koszul descendant $\rmK_\sbullet(\ua)$ de $\ua$:
$$
\xymatrix {
\BW^2(\bfA^n) \ar[r]^-{\partial_2} & \BW^1(\bfA^n) = \bfA^n
}
\qquad
\partial_2(\varepsilon_i \wedge \varepsilon_j) = a_i\varepsilon_j - a_j\varepsilon_i
\qquad\qquad
\xymatrix @C = 1.5cm {
\partial_1 : \bfA^n \ar[r]^-{[a_1, \cdots, a_n]} & \bfA
}  
$$
de sorte que l'on a toujours $\partial_1 \circ \partial_2 = 0$. La suite est 1-sécante
si et seulement si $\Ker\partial_1 = \Im \partial_2$ \idest{} $\rmH_1(\ua) = 0$
lorsque l'homologie Koszul de $\ua$ sera en place. On peut voir $\BW^2(\bfA^n)$ comme
le module des matrices alternées $n \times n$ sur $\bfA$ via $\varepsilon_i \wedge \varepsilon_j
\leftrightarrow M_{ij}$ où $M_{ij} \in \bbM_n(\bfA)$ a tous ses coefficients nuls sauf celui
en position $(i,j)$ qui vaut $1$ et celui en position $(j,i)$ qui vaut $-1$. Avec cette
identification, $\partial_2$ n'est autre que $M \mapsto \ua\,M$ comme on le voit
en vérifiant que $\ua\,M_{ij} = a_i\varepsilon_j - a_j\varepsilon_i$.
Par exemple, pour $(i,j) = (3,1)$, on a
$$
\ua\, M_{3,1} = [a_1, a_2, a_3]
\begin {bmatrix}
0 & 0 & -1\\  
0 & 0 & 0\\
1 & 0 & 0\\
\end {bmatrix}
= [a_3, 0, -a_1] =
a_3 \varepsilon_1 - a_1\varepsilon_3 = \partial_2(\varepsilon_3 \wedge \varepsilon_1)
$$
Dire que $\ua$ est 1-sécante signifie donc que pour toute relation
$u_1 a_1 + \cdots + u_na_n=0$, il existe une matrice alternée $M$
de taille $n$ telle que $[u_1, \cdots, u_n] = [a_1, \cdots, a_n] M$.
Une suite 1-sécante de longueur 1 n'est autre qu'un élément régulier.
Chaque terme $a_i$ d'une suite 1-sécante est régulier modulo $\sum_{j \neq i} a_j\bfA$,
ceci s'expliquant par le fait que la diagonale d'une matrice alternée est nulle.

\medskip

On définit de manière analogue la notion de suite 1-sécante sur un $\bfA$-module $E$.

\begin{prop}[Le caractère 1-sécant d'une suite régulière] \label{regImplique1secante}
Soient $\ua = (a_1, \ldots, a_n)$ une suite de scalaires et $E$ un $\bfA$-module
tels que $\ua$ est $E$-régulière. Alors $\ua$ est 1-sécante sur $E$.

En particulier, chaque $a_i$ est régulier modulo $\sum_{j \neq i} a_j E$.
\end{prop}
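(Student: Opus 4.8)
Le plan est de prouver le caractère 1-sécant par récurrence sur la longueur $n$ de la suite, en utilisant la caractérisation via les relateurs triviaux (de façon équivalente $\rmH_1(\ua) = 0$ au niveau de $E$). Pour $n = 1$, il n'y a rien à démontrer au-delà de la définition : une relation $a_1 u_1 = 0$ avec $u_1 \in E$ force $u_1 = 0$ puisque $a_1$ est $E$-régulier, et la famille vide de relateurs triviaux convient ; c'est exactement la remarque selon laquelle une suite 1-sécante de longueur $1$ n'est autre qu'un élément régulier.

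Pour l'hérédité, je suppose l'énoncé acquis pour les suites de longueur $n-1$ et je considère une suite $E$-régulière $\ua = (a_1, \ldots, a_n)$ ainsi qu'une relation $\sum_{i=1}^n a_i u_i = 0$ avec $u_i \in E$. L'idée est d'annuler la dernière coordonnée du relateur $\sum_i u_i \varepsilon_i$ puis d'invoquer l'hypothèse de récurrence sur le préfixe $(a_1, \ldots, a_{n-1})$, lui-même $E$-régulier d'après le lemme~\ref{RegTrivialFacts}. D'abord, en réécrivant la relation sous la forme $a_n u_n = -\sum_{i<n} a_i u_i \in a_1 E + \cdots + a_{n-1} E$ et en utilisant que $a_n$ est régulier sur $E/(a_1 E + \cdots + a_{n-1} E)$, j'obtiens une écriture $u_n = \sum_{i<n} a_i v_i$ avec $v_i \in E$.

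Vient ensuite la comptabilité des relateurs : en retranchant de $\sum_i u_i \varepsilon_i$ la combinaison $\sum_{i<n} v_i (a_i \varepsilon_n - a_n \varepsilon_i)$ de relateurs triviaux, la composante en $\varepsilon_n$ s'annule et les autres composantes deviennent $w_i := u_i + a_n v_i$. L'élément $\sum_{i<n} w_i \varepsilon_i$ reste un relateur de $\ua$, si bien que $\sum_{i<n} a_i w_i = 0$ est une relation du préfixe $(a_1, \ldots, a_{n-1})$. Par hypothèse de récurrence, ce relateur est une $E$-combinaison des relateurs triviaux $a_i \varepsilon_j - a_j \varepsilon_i$ avec $i, j < n$, lesquels figurent parmi les relateurs triviaux de $\ua$. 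En rajoutant les relateurs préalablement retranchés, on conclut que $\sum_i u_i \varepsilon_i$ appartient au sous-module engendré par les relateurs triviaux, ce qui est précisément le caractère 1-sécant.

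Quant à la clause \og en particulier\fg, elle est immédiate une fois la 1-sécance établie et se déduit comme dans la discussion précédant l'énoncé : si $a_i x \in \sum_{j\neq i} a_j E$, alors $x \varepsilon_i - \sum_{j\neq i} u_j \varepsilon_j$ est un relateur, donc de la forme $[a_1, \ldots, a_n] M$ pour une matrice alternée $M$ à coefficients dans $E$ ; en lisant la $i$-ème coordonnée et en utilisant $M_{ii} = 0$, on obtient $x \in \sum_{j\neq i} a_j E$, c'est-à-dire la régularité de $a_i$ modulo $\sum_{j\neq i} a_j E$. Le seul point véritablement délicat est l'étape de réduction : c'est là, et là uniquement, qu'intervient l'hypothèse de régularité (pour passer de $a_n u_n \in (a_1, \ldots, a_{n-1})E$ à $u_n \in (a_1, \ldots, a_{n-1})E$), tout le reste relevant du formalisme de la récurrence et de la description matricielle alternée des relateurs triviaux.
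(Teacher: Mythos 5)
Votre preuve est correcte et suit essentiellement la même démarche que celle du texte : récurrence sur $n$, utilisation de la régularité de $a_n$ modulo $a_1E + \cdots + a_{n-1}E$ pour exprimer $u_n$, puis réduction au préfixe $(a_1,\ldots,a_{n-1})$ — votre soustraction des relateurs triviaux $v_i(a_i\varepsilon_n - a_n\varepsilon_i)$ est exactement ce que le texte encode par la dernière ligne et la dernière colonne de sa matrice alternée $n \times n$. Les seules différences sont cosmétiques (initialisation en $n=1$ au lieu de $n=2$, langage des relateurs plutôt qu'écriture matricielle explicite), et votre traitement de la clause \og en particulier\fg{} via la nullité de la diagonale d'une matrice alternée est celui-là même qu'évoque le texte avant l'énoncé.
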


\begin{proof}
Soit  une relation $a_1 x_1 + \cdots + a_n x_n = 0$ où les
$x_i \in E$;  nous devons montrer qu'il existe une matrice alternée
$M$ à coefficients dans~$E$ telle que $\ux = \ua\, M$.
On va le prouver par récurrence sur $n$.

$\rhd$ $n=2$ : $a_1 x_1 + a_2 x_2 = 0$. L'appartenance $a_2x_2 \in a_1E$ et la
$(E/a_1E)$-régularité de $a_2$ entraînent $x_2 \in a_1E$, disons $x_2 =
-a_1y$. Il vient $a_1 x_1 - a_2a_1y = a_1(x_1 - a_2y) = 0$ et puisque $a_1$
est $E$-régulier, on a $x_1 = a_2 y$, ce qui s'écrit~:
$$
\begin{bmatrix} x_1 & x_2 \end{bmatrix} 
\ = \
\begin{bmatrix} a_1 &  a_2 \end{bmatrix} 
\begin {bmatrix} 0 & -y\cr y& 0\cr \end {bmatrix}
$$

$\rhd$ On part de la relation $a_1x_1 + \cdots + a_n x_n = 0$.
En utilisant la $E / (a_1E + \cdots + a_{n-1}E)$-r\'{e}gularit\'{e} de~$a_n$, 
on en déduit que 
$x_n \in a_1E + \cdots + a_{n-1}E$, 
ce que l'on \'{e}crit $a_1 y_1 + \cdots + a_{n-1}y_{n-1} + x_n = 0$. 
En combinant les relations pour \'{e}liminer $x_n$, on
obtient~:
$$
a_1 (x_1 - a_n y_1) \ +\  \cdots \ +\ a_{n-1} (x_{n-1} - a_n y_{n-1}) 
\ = \ 0
$$
On utilise maintenant le fait que $(a_1, \ldots, a_{n-1})$ est $E$-régulière
et l'hypothèse de récurrence fournit une matrice $M$ alternée $(n-1) \times (n-1)$, \`{a}
coefficients dans~$E$, telle que~:
$$
\begin{bmatrix}
x_1 - a_ny_1 &\cdots & x_{n-1} - a_n y_{n-1}
\end{bmatrix}
 \ = \ 
\begin{bmatrix} a_1 & \cdots & a_{n-1} \end{bmatrix} 
M
$$
ou encore~:
$$
\begin{bmatrix} x_1 & \cdots & x_{n-1} \end{bmatrix} 
\ = \
\begin{bmatrix} a_1 & \cdots & a_{n-1} \end{bmatrix}  M 
\ +\ 
a_n 
\begin{bmatrix} y_1 & \cdots & y_{n-1} \end{bmatrix} 
$$
On peut alors \'{e}crire~:
$$
\begin{bmatrix} x_1 & \cdots & x_{n-1} & x_n \end{bmatrix} 
\ = \
\begin{bmatrix} a_1 & \cdots & a_{n-1} & a_n \end{bmatrix} 
\begin{bmatrix}
           &        &                & -y_1 \\
          &  M     &                & \vdots   \\ 
           &        &                & -y_{n-1} \\
 y_1       & \cdots & y_{n-1}        & 0
\end{bmatrix}
$$
et on a bien obtenu une matrice alternée $n \times n$ comme voulu.
\end{proof}

\subsubsection*{Suites régulières (reprise)}

\begin{prop}[Le $(a,b,ab)$-trick pour les suites régulières, utilisé en~\ref{MultiplicativiteResultant}] 
\label{ababRegTrick} 
Soient $E$ un $\bfA$-module, $a,b \in \bfA$ et $\uc = (c_1, \ldots, c_n)$ une suite de scalaires. 
Si les suites $(a,\uc)$ et $(b,\uc)$ sont $E$-régulières, il en est de même de la suite $(ab,\uc)$.
\end{prop}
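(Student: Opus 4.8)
The plan is to reduce everything to one structural lemma about how a regular sequence behaves along a short exact sequence, and then feed that lemma a short exact sequence manufactured from the single scalar $a$. First I would record the elementary reformulation of the definition: for a scalar $x$ and a module $E$, the sequence $(x,\uc)$ is $E$-r\'eguli\`ere if and only if $x$ is regular on $E$ \emph{and} $\uc$ is r\'eguli\`ere on $E/xE$ (this is merely unwinding the definition, since ``$c_j$ r\'egulier modulo $xE + c_1E + \cdots + c_{j-1}E$'' is the same as ``$c_j$ r\'egulier sur $E/xE$ modulo $c_1,\dots,c_{j-1}$''). Applied to the three hypotheses this says: $a$ and $b$ are regular on $E$, $\uc$ is $(E/aE)$-r\'eguli\`ere and $\uc$ is $(E/bE)$-r\'eguli\`ere; and I must establish that $ab$ is regular on $E$ together with $\uc$ being $(E/abE)$-r\'eguli\`ere. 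The first part is immediate: from $abx=0$ we get $a(bx)=0$, hence $bx=0$ since $a$ is regular, hence $x=0$ since $b$ is regular.

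The heart of the matter is then the $(E/abE)$-regularity of $\uc$, for which I would exhibit the short exact sequence
$$
0 \longrightarrow E/bE \xrightarrow{\ \times a\ } E/abE \longrightarrow E/aE \longrightarrow 0,
$$
where the first arrow sends $e + bE$ to $ae + abE$ and the second is the canonical surjection. Well-definedness and surjectivity are clear; injectivity of the first map uses precisely that $a$ is regular on $E$ (if $ae\in abE$ then $a(e-be')=0$ for some $e'$, whence $e\in bE$), and middle exactness holds because $\mathrm{Im} = aE/abE$ is exactly the kernel of $E/abE \to E/aE$. Thus $E/abE$ is caught between the two modules $E/bE$ and $E/aE$, on each of which $\uc$ is already regular by hypothesis.

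It remains to prove the general lemma: \emph{if $0 \to M' \to M \to M'' \to 0$ is exact and $\uc$ is both $M'$-r\'eguli\`ere and $M''$-r\'eguli\`ere, then $\uc$ is $M$-r\'eguli\`ere.} I would argue by induction on the length of $\uc$. For a single scalar $c$, a direct chase does it: from $cm=0$ one gets $m\in M'$ by regularity of $c$ on $M''$, then $cm=0$ and injectivity force $m=0$ by regularity on $M'$. The only genuinely delicate point of the whole argument is the inductive step, namely checking that the sequence survives reduction modulo the first scalar $c_1$, i.e. that
$$
0 \longrightarrow M'/c_1M' \longrightarrow M/c_1M \longrightarrow M''/c_1M'' \longrightarrow 0
$$
is still exact. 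Right and middle exactness are formal; the content is left-exactness (injectivity of $M'/c_1M'\to M/c_1M$), which follows from $c_1$ being regular on $M''$ (equivalently $\mathrm{Tor}_1(\bfA/c_1,M'')=0$) by a short element chase using no argument by contradiction. Granting this, $(c_2,\dots,c_n)$ is regular on $M'/c_1M'$ and on $M''/c_1M''$ by the reformulation above, the induction hypothesis makes it regular on $M/c_1M$, and combined with $c_1$ regular on $M$ this gives $\uc$ regular on $M$. Applying the lemma to the displayed squeeze of $E/abE$ yields that $\uc$ is $(E/abE)$-r\'eguli\`ere, and with $ab$ regular on $E$ we conclude that $(ab,\uc)$ is $E$-r\'eguli\`ere.
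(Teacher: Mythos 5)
Your proof is correct, but it follows a genuinely different route from the paper's. The paper argues entirely by hand inside the definition: starting from a relation $c_i x = ab\,x_1 + c_2x_2 + \cdots + c_{i-1}x_{i-1}$, it first uses the $E$-regularity of $(a,\uc)$ to write $x = ay_1 + c_2y_2 + \cdots + c_{i-1}y_{i-1}$, multiplies this by $c_i$, compares with the initial relation, and then invokes the fact (proposition~\ref{regImplique1secante}) that a regular sequence is $1$-s\'ecante --- so that $a$ is regular modulo $c_2E + \cdots + c_{i-1}E$ --- to deduce $c_iy_1 \in bE + c_2E + \cdots + c_{i-1}E$; the $E$-regularity of $(b,\uc)$ then gives $y_1 \in bE + \cdots + c_{i-1}E$, and substituting back yields $x \in abE + c_2E + \cdots + c_{i-1}E$. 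You instead organize the proof structurally: you reformulate regularity of $(x,\uc)$ as ``$x$ regular on $E$ and $\uc$ regular on $E/xE$'', exhibit the short exact sequence $0 \to E/bE \xrightarrow{\times a} E/abE \to E/aE \to 0$ (whose left-exactness is exactly the $E$-regularity of $a$), and prove a general d\'evissage lemma --- regularity of $\uc$ on both ends of a short exact sequence implies regularity on the middle --- by induction, the key point being that reduction modulo $c_1$ preserves exactness because $c_1$ is regular on the cokernel (a Tor-vanishing in disguise, which you rightly prove by a direct element chase). Your approach buys a reusable general lemma and completely bypasses the $1$-s\'ecance machinery that the paper relies on; the paper's approach is shorter given that proposition~\ref{regImplique1secante} is already in place, and stays at the level of explicit identities between elements, in keeping with the effective, computation-oriented style of the rest of the text. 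Both arguments are constructive and avoid any reasoning by contradiction, so yours is fully compatible with the paper's stated methodological constraints.
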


\begin{proof}
Le produit de deux scalaires $E$-réguliers est $E$-régulier. 
Il reste à montrer qu'une égalité $c_i x = ab x_1 + c_2 x_2 + \cdots + c_{i-1} x_{i-1}$ 
avec $x, x_1, \ldots, x_{i-1} \in E$ entraîne $x \in ab E + \cdots + c_{i-1} E$. 
La suite $(a, c_2, \ldots, c_i)$ étant $E$-régulière, on dispose
d'une égalité :
$$
x = a y_1 + c_2 y_2 + \cdots + c_{i-1} y_{i-1} \qquad \hbox {avec des $y_j \in E$}
\leqno (\star)
$$
En multipliant par $c_i$ cette dernière égalité, et en comparant avec 
l'égalité initiale donnant $c_i x$, on en déduit :
$$
c_i(a y_1 + c_2 y_2 + \cdots + c_{i-1} y_{i-1}) 
\ =\  
ab x_1 + c_2 x_2 + \cdots + c_{i-1} x_{i-1}
$$
D'o\`u:
$$
a (c_i y_1 - bx_1) + c_2(c_iy_1 - x_2)  + \cdots + c_{i-1}(c_iy_{i-1} - x_{i-1}) = 0
$$
Puisque la suite $(a, c_2, \ldots, c_i)$ est $E$-régulière, elle est $1$-sécante ;
en particulier, le scalaire $a$
est régulier modulo $c_2E + \cdots + c_{i-1}E$ ; d'où
$$
c_i y_1 - bx_1 \, \in\, c_2E + \cdots + c_{i-1}E 
\qquad \hbox {et donc} \qquad
c_i y_1 \, \in\, bE + c_2E + \cdots + c_{i-1}E
$$
La suite $(b, c_2, \ldots, c_i)$ étant $E$-régulière, on obtient 
$y_1 \in bE + \cdots + c_{i-1}E$. 
En reportant cela dans $(\star)$, on obtient 
$$
x \,\in\, ab E + c_2 E + \cdots + c_{i-1} E
$$
ce qu'il fallait prouver.
\end{proof}

\medskip

Le résultat suivant sera utilisé dans le contrôle de la profondeur par les composantes
homogènes dominantes (cf. le théorème~\ref{ControleProfondeur}). Précisons qu'ici suite
extraite de $\ua$ désigne une suite de la forme $(a_{\sigma(1)}, \cdots, a_{\sigma(m)})$
où $m \le n$ et $\sigma \in \fS_n$. L'implication i) $\Rightarrow$ ii)  résulte de~\ref{regImplique1secante}.
En ce qui concerne la réciproque, nous renvoyons à la proposition V.7.2 de la
thèse de C.~Tête~\cite[Section V-7]{Tete}.

\begin{prop}[Suite commutativement régulière]
\label{ReguliereSuiteExtraite1Secante} 

Pour une suite $\ua = (a_1, \dots, a_n)$, les deux propriétés sont équivalentes:
\begin{enumerate}[\rm i)]
\item La suite $\ua$ est commutativement régulière.
\item Toute suite extraite de $\ua$ est $1$-sécante.
\end{enumerate}
\end{prop}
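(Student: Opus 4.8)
Le plan est de s\'eparer les deux implications, la seconde \'etant le c\oe ur de l'affaire et se prouvant par r\'ecurrence sur~$n$. Pour i) $\Rightarrow$ ii), je suppose $\ua$ commutativement r\'eguli\`ere et me donne une suite extraite $(a_{\sigma(1)}, \dots, a_{\sigma(m)})$ avec $m \le n$ et $\sigma \in \fS_n$. Par hypoth\`ese la suite $(a_{\sigma(1)}, \dots, a_{\sigma(n)})$ est r\'eguli\`ere, donc son pr\'efixe $(a_{\sigma(1)}, \dots, a_{\sigma(m)})$ l'est aussi d'apr\`es le lemme~\ref{RegTrivialFacts}, et la proposition~\ref{regImplique1secante} garantit alors que cette suite extraite est 1-s\'ecante.

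Pour ii) $\Rightarrow$ i), la premi\`ere observation est que l'hypoth\`ese \og toute suite extraite de $\ua$ est 1-s\'ecante\fg{} est invariante par permutation: une suite extraite d'un r\'eordonnancement $\ua^\sigma$ est encore une suite extraite de $\ua$. Il suffit donc d'\'etablir le fait suivant: \emph{si toute suite extraite de $\ua$ est 1-s\'ecante, alors $\ua$ est r\'eguli\`ere}. Appliqu\'e \`a chaque $\ua^\sigma$, ce fait donne la r\'egularit\'e de tous les r\'eordonnancements, soit la r\'egularit\'e commutative de~$\ua$.

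Je prouverais ce fait par r\'ecurrence sur~$n$. Le cas $n=1$ est clair puisqu'une suite 1-s\'ecante de longueur~$1$ est exactement un \'el\'ement r\'egulier. Pour le pas de r\'ecurrence, les suites extraites du pr\'efixe $(a_1, \dots, a_{n-1})$ figurent parmi celles de $\ua$, donc sont 1-s\'ecantes, et l'hypoth\`ese de r\'ecurrence assure que $(a_1, \dots, a_{n-1})$ est r\'eguli\`ere. Il ne reste qu'\`a montrer que $a_n$ est r\'egulier modulo $I := \langle a_1, \dots, a_{n-1}\rangle$, et c'est exactement ici qu'intervient la 1-s\'ecance de $\ua$ tout entier: d'une \'egalit\'e $a_n x = \sum_{j<n} a_j y_j$ on tire la relation $a_1 y_1 + \cdots + a_{n-1} y_{n-1} + a_n(-x) = 0$, la 1-s\'ecance fournit une matrice altern\'ee $M$ avec $[y_1, \dots, y_{n-1}, -x] = [a_1, \dots, a_n]\,M$, et la lecture de la derni\`ere coordonn\'ee donne $-x = \sum_{j=1}^n a_j M_{jn} = \sum_{j<n} a_j M_{jn}$ puisque $M_{nn} = 0$, d'o\`u $x \in I$. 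C'est la traduction, pour le terme $a_n$, du fait d\'ej\`a signal\'e selon lequel chaque terme d'une suite 1-s\'ecante est r\'egulier modulo l'id\'eal engendr\'e par les autres.

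L'obstacle principal n'est pas calculatoire mais tient \`a l'organisation de l'argument: il faut rep\'erer que la 1-s\'ecance du syst\`eme complet contr\^ole \`a elle seule la r\'egularit\'e du dernier terme modulo les pr\'ec\'edents (c'est la nullit\'e de la diagonale $M_{nn}=0$ qui emp\^eche $a_n$ de r\'eappara\^itre dans l'expression de $x$), tandis que la r\'egularit\'e des termes ant\'erieurs rel\`eve de la seule hypoth\`ese de r\'ecurrence. Le passage de la r\'egularit\'e \`a la r\'egularit\'e commutative se fait ensuite gratuitement gr\^ace \`a l'invariance par permutation de l'hypoth\`ese, ce qui \'evite toute manipulation directe des suites~$\ua^\sigma$.
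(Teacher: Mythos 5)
Votre preuve est correcte, et elle va au-del\`a de ce que le papier d\'emontre effectivement. Pour i) $\Rightarrow$ ii), vous suivez exactement la voie du papier: une suite extraite est un pr\'efixe d'un r\'eordonnancement de $\ua$, donc r\'eguli\`ere (r\'egularit\'e commutative plus le lemme~\ref{RegTrivialFacts}), donc 1-s\'ecante par la proposition~\ref{regImplique1secante}. Pour ii) $\Rightarrow$ i), en revanche, le papier ne fournit \emph{aucune} d\'emonstration: il renvoie \`a la proposition V.7.2 de la th\`ese de C.~T\^ete~\cite{Tete}. Votre argument comble donc cette omission de fa\c con autonome, et il est juste: la r\'eduction, par invariance de l'hypoth\`ese sous permutation, au seul \'enonc\'e \og toute suite extraite 1-s\'ecante $\Rightarrow$ $\ua$ r\'eguli\`ere\fg{}, puis la r\'ecurrence sur $n$ o\`u la r\'egularit\'e du pr\'efixe $(a_1,\dots,a_{n-1})$ vient de l'hypoth\`ese de r\'ecurrence (ses suites extraites figurant parmi celles de $\ua$) et celle de $a_n$ modulo $\langle a_1, \dots, a_{n-1}\rangle$ vient de la 1-s\'ecance de $\ua$ tout entier, en lisant la derni\`ere coordonn\'ee de la relation matricielle et en exploitant la nullit\'e de la diagonale ($M_{nn}=0$). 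C'est pr\'ecis\'ement la propri\'et\'e que le papier signale sans preuve apr\`es la d\'efinition~\ref{DefSuite1Secante} (chaque terme d'une suite 1-s\'ecante est r\'egulier modulo l'id\'eal engendr\'e par les autres). Ce que votre approche apporte: une preuve compl\`ete, \'el\'ementaire et interne au texte, l\`a o\`u le papier externalise la direction non triviale vers la litt\'erature.
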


\subsection{Profondeur et complexe de Koszul}
\label{ProfondeurViaKoszul}


Etant donnée une suite de scalaires $\ua = (a_1, \cdots, a_n)$, on
veut définir la profondeur $\Gr(\ua)$,
qui est un entier de $\overline \bbN = \bbN \cup \{\infty\}$, ainsi que
sa version $\Gr(\ua\,;M)$ pour un module~$M$, et en fournir quelques
propriétés élémentaires.
La terminologie officielle pour $\Gr(\fa\,; M)$ est \og
profondeur du module $M$ relativement à l'idéal de type fini~$\fa$\fg{}.
Nous ne nous y conformerons pas et nous utiliserons plutôt, même
si c'est impropre, \og profondeur de $\ua$ sur $M$\fg{}.
Nous nous en excusons auprès des lecteurs.\footnote{Après vérification,
nous constatons que nous sommes en adéquation avec la terminologie de Northcott.
Celui-ci, en \cite[section 5.5, p. 149]{NorthcottFFR}, pour un idéal $\fb$
et un module $E$, utilise \emph {true grade or polynomial grade of $\fb$ on $E$.}}

\smallskip
En fait, nous allons nous concentrer essentiellement sur la signification d'inégalités
$\Gr(\ua) \ge k$ et $\Gr(\ua\,;M) \ge k$. Par exemple, si $\ua$ est unimodulaire
\idest{} $1 \in \langle\ua\rangle$, on a $\Gr(\ua) \ge k$ pour tout $k$ et
réciproquement. L'inégalité $\Gr(\ua) \ge \Gr(\ub)$ aura la
signification $\Gr(\ub) \ge k \Rightarrow \Gr(\ua) \ge k$ et l'égalité
$\Gr(\ua) = \Gr(\ub)$ la
signification $\Gr(\ub) \ge k \Leftrightarrow \Gr(\ua) \ge k$.

\medskip
Parmi les propriétés élémentaires, il est indispensable de disposer du fait que $\Gr(\ua)$
ne dépend que de l'idéal engendré par $\ua$, ce qui permet de définir $\Gr(\fa)$
pour un idéal $\fa$ de type fini. Nous aurons également besoin du comportement suivant:
$\fb \supseteq \fa \Rightarrow \Gr(\fb) \ge \Gr(\fa)$. Et de l'invariance par
exponentation: $\Gr(\fa) = \Gr(\fa^e)$ pour $e \ge 1$ dont une variante réside
dans l'implication $\Gr(\ua) \ge k \Rightarrow \Gr(a_1^{e_1}, \cdots, a_n^{e_n}) \ge k$.
Enfin, pour une idée intuitive de la profondeur, on verra que l'on a
$\Gr(\fa) \ge d$ lorsque $\fa$ contient une suite régulière de longueur~$d$.
La réciproque est fausse mais presque vraie (vérifiée après une extension polynomiale).
Ainsi, $\Gr(\fa) \ge 1$ ne signifie pas que $\fa$ contient un élément régulier mais
que $\Ann(\fa) = 0$. 
C'est équivalent à dire qu'il y a un système de générateurs $(a_0, \cdots, a_m)$ 
de $\fa$ tel que le polynôme $a_0 + a_1t + \cdots + a_mt^m \in \bfA[t]$ soit
régulier et c'est alors vérifié pour tout système générateur fini de $\fa$.

\medskip
Nous avons choisi de traiter cette notion de profondeur via le complexe de Koszul d'une
suite $\ua$. Nous suivrons de près le premier chapitre de la thèse de C.~Tête \cite{Tete}.
Mais le complexe de Koszul n'est pas le seul moyen d'étudier la profondeur, voir par exemple
les auteurs suivants dans lequel ce complexe n'apparaît pas: D.G. Northcott,
sections 5 et 6 de son ouvrage \cite{NorthcottFFR},
ou le mémoire ``Résolutions libres finies. Méthodes constructives'' \cite{CoqLomQuiTete}.
Les sections 2 à 6 de T. Coquand \& C. Quitté \cite {CoquandQuitte}
contiennent également une étude de la profondeur dans laquelle figure un
traitement mixte (complexe de Koszul \& suites régulières latentes).

\subsubsection{Les complexes de Koszul descendant et montant d'une
               forme linéaire $\mu : L \rightarrow \bfA$}

Soit $\ua = (a_1, \cdots, a_n)$ une suite de scalaires. L'objectif est de définir
les complexes descendant et montant de $\ua$. Il y a plusieurs manières
de procéder, chacune présentant un avantage. On peut par exemple définir
le descendant $\rmK_\sbullet(\ua)$ comme le produit tensoriel de complexes élémentaires
$$
\rmK_\sbullet(\ua) = \rmK_\sbullet(a_1) \otimes \cdots \otimes \rmK_\sbullet(a_n)
\overset{\rm can.}{\simeq}
\rmK_\sbullet(a_n) \otimes \rmK_\sbullet(\ua')
\qquad \text{avec} \quad
\ua' = (a_1, \cdots, a_{n-1})
$$
ce qui permet de relier les groupes d'homologie de $\rmK_\sbullet(\ua)$ et ceux
de $\rmK_\sbullet(\ua')$ à travers une longue suite exacte dans laquelle intervient la
multiplication par $a_n$.

\smallskip
Une autre manière de procéder consiste en une définition globale directe
dans laquelle interviennent le produit intérieur droit $\sbullet\intd\ua$ et le produit
extérieur $\ua \wedge \sbullet$.

C'est par cette dernière définition que nous commençons en considérant
une forme linéaire $\mu : L \to \bfA$ et en définissant le complexe
de Koszul descendant de $\mu$, le montant étant défini comme son dual.
Notre passerons ensuite rapidement au cas $\mu = [a_1, \cdots, a_n] : \bfA^n
\to \bfA$.

\subsubsection{Le descendant $\rmK_\sbullet(\mu)$}

\index{complexe de Koszul!descendant}%

Soit $\mu : L \rightarrow \bfA$ une forme linéaire.
Le complexe descendant  $\rmK_\sbullet(\mu)$ est le  complexe
$$
\xymatrix@M=0.4pc{
\cdots \ar[r] & 
\bigwedge^2 L \ar[r] & 
\bigwedge^1 L \simeq L \ar[r]^\mu & 
\bigwedge^0 L \simeq \bfA \ar[r] &
0}
$$
où  la différentielle est définie à partir du produit intérieur droit 
$\BW^\sbullet(L) \times \BW^{1}(L^\star) \rightarrow \BW^{\sbullet-1}(L)$
via $\partial \, = \,\sbullet \intd \mu$:
$$
\xymatrix @M=0.4pc{ 
\partial :\  \BW^d L  \ar[r] & 
\BW^{d-1} L 
}
\qquad \qquad 
\partial(x_1 \wedge \cdots \wedge x_d) \ = \
\sum_{i=1}^d \,(-1)^{i-1} \, \mu(x_i) \, 
x_1 \wedge \cdots \wedge \backslash \kern -6pt{x_i} \wedge \cdots \wedge x_d
$$
C'est l'unique anti-dérivation à gauche qui prolonge $\mu$.
Plus précisément, $\partial : \BW^{} L \rightarrow \BW^{} L$ est une application linéaire, 
de degré $-1$, caractérisée par le fait que $\partial(x) = \mu(x)$ pour $x$ de degré $1$ 
et par l'égalité suivante où $x$ et $y$ sont homogènes 
$$
\partial(x \wedge y) \ = \ 
\partial(x) \wedge y \ + \ (-1)^{\deg x}\, x \wedge \partial(y)
$$
On peut vérifier que $\partial \circ \partial = 0$ soit directement 
soit par récurrence en utilisant cette égalité, cf. le lemme~\ref{AntiDerivationCarreNul}.

\medskip
On étend les scalaires à un module $M$ en définissant 
le complexe $\rmK_\sbullet(\mu \,; M) = \rmK_\sbullet(\mu) \otimes_\bfA M$.
On a $\rmH_0(\mu) = \bfA/\Im \mu$ et $\rmH_0(\mu\,;M) = M/(\Im\mu\, M)$. 

\subsubsection{Le montant $\rmK^\sbullet(\mu)$}

\index{complexe de Koszul!montant}%

On définit le complexe de Koszul montant 
comme étant le dual du complexe descendant c'est-à-dire via  
$\rmK^\sbullet(\mu) = \Hom\big(\rmK_\sbullet(\mu), \, \bfA\big)$.
$$
\xymatrix@M=0.4pc{
0 \ar[r] & 
\bfA \simeq \left(\bigwedge^0 L \right)^\star \ar[r]^-{1\mapsto \mu} &
L^\star \simeq \left(\bigwedge^1 L \right)^\star \ar[r] & 
\left(\bigwedge^2 L \right)^\star \ar[r] & 
\cdots &
}
$$
\label{KoszulMontantExtScalaires}%
A coefficients dans un module $M$, il est défini
par $\rmK^\sbullet(\mu \, ; M) = \Hom\big(\rmK_\sbullet(\mu), \, M \big)$.
Signalons au passage que ce dernier complexe n'est pas égal à $\rmK^\sbullet(\mu) \otimes M$.
C'est cependant le cas si $L$ est libre de rang fini, car le morphisme canonique
$\Hom(K, \bfA) \otimes M \rightarrow \Hom(K, M)$
est un isomorphisme lorsque $K$ est libre de rang fini (ce que l'on peut
appliquer à $K = \BW^d L$).

\smallskip
Le module $\rmH^0(\mu)$ est l'idéal $\Ann\,\mu$.

\begin{prop}
Soit $\mu : L \rightarrow \bfA$ une forme linéaire et $\partial_\mu = \sbullet \intd \mu$ la différentielle
de son complexe de Koszul descendant.
En notant $\delta_x = x \wedge \sbullet$ pour $x \in L$, on a l'égalité 
$$
\partial_\mu \circ \delta_x \ + \ \delta_x \circ \partial_\mu 
\ = \ 
\mu(x)\, \Id
\qquad\qquad
\vcenter {
\xymatrix @R=1cm @C=1.2cm{
\rmK_{i+2} \ar[r]^{\partial_\mu} &\rmK_{i+1} \ar[r]^{\partial_\mu} & \rmK_{i}
\\ 
\rmK_{i+2} \ar[r]_{\partial_\mu} &\rmK_{i+1} \ar[lu]|{\textstyle\delta_x} 
\ar[u]|{\textstyle \times \mu(x)} \ar[r]_{\partial_\mu} & \ar[lu]|{\textstyle \delta_x} \rmK_{i}
\\
}
}
$$
En particulier, l'idéal $\Im \mu$ annule les modules de (co)homologie des complexes 
$\rmK_\sbullet(\mu \,; M)$ et $\rmK^\sbullet(\mu \,; M)$.
\end{prop}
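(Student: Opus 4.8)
Le plan est d'\'etablir d'abord l'identit\'e encadr\'ee, puis d'en tirer l'annulation de la (co)homologie comme cons\'equence formelle du fait que $\delta_x$ fournit une homotopie. Pour l'\'egalit\'e $\partial_\mu\circ\delta_x + \delta_x\circ\partial_\mu = \mu(x)\,\Id$, je remarquerais que les deux membres sont des endomorphismes $\bfA$-lin\'eaires de degr\'e $0$ de $\BW^\sbullet(L)$; il suffit donc de les comparer sur un syst\`eme de g\'en\'erateurs. En degr\'e $0$, pour $1\in\BW^0(L)=\bfA$, on a $\partial_\mu(1)=0$ (car $\partial_\mu$ est de degr\'e $-1$) et $\delta_x(1)=x$, de sorte que le membre de gauche vaut $\partial_\mu(x)=\mu(x)$. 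En degr\'e $\ge 1$, tout \'el\'ement de $\BW^d(L)$ \'etant combinaison d'\'el\'ements $y\wedge\bfw$ avec $y\in L$ et $\bfw\in\BW^{d-1}(L)$ (la multiplication $L\otimes\BW^{d-1}L\to\BW^dL$ \'etant surjective), il suffit par $\bfA$-lin\'earit\'e de v\'erifier l'\'egalit\'e sur ces derniers.

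Le point clef est le calcul sur $y\wedge\bfw$, qui n'utilise que le caract\`ere anti-d\'erivation \`a gauche de $\partial_\mu$ (avec $\partial_\mu(y)=\mu(y)$ et $\partial_\mu(x)=\mu(x)$) et le fait que $\delta_x$ est la multiplication \`a gauche par $x$. On a d'une part
$$\partial_\mu(y\wedge\bfw) = \mu(y)\,\bfw - y\wedge\partial_\mu(\bfw),$$
d'o\`u $\delta_x\partial_\mu(y\wedge\bfw) = \mu(y)\,x\wedge\bfw - x\wedge y\wedge\partial_\mu(\bfw)$; d'autre part, en regroupant $x\wedge(y\wedge\bfw)$,
$$\partial_\mu\big(x\wedge y\wedge\bfw\big) = \mu(x)\,y\wedge\bfw - \mu(y)\,x\wedge\bfw + x\wedge y\wedge\partial_\mu(\bfw).$$
En sommant, les deux termes $\mp\mu(y)\,x\wedge\bfw$ et $\pm\,x\wedge y\wedge\partial_\mu(\bfw)$ se compensent exactement, et il ne subsiste que $\mu(x)\,y\wedge\bfw$, ce qui est l'\'egalit\'e voulue. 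La difficult\'e principale, purement calculatoire, tient au fait que $\delta_x$ n'est pas elle-m\^eme une anti-d\'erivation: toute la compensation repose sur le signe $(-1)^{\deg}$ port\'e par l'anti-d\'erivation $\partial_\mu$, et c'est l\`a le seul endroit o\`u il faut \^etre soigneux.

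Pour la cons\'equence, l'identit\'e encadr\'ee signifie exactement que la multiplication par $\mu(x)$ sur le complexe $\rmK_\sbullet(\mu)$ est homotope \`a z\'ero, l'op\'erateur d'homotopie de degr\'e $+1$ \'etant $\delta_x$. Or un tel morphisme induit l'application nulle en homologie: si $\bfz$ est un cycle, alors $\mu(x)\,\bfz = \partial_\mu(\delta_x\bfz) + \delta_x(\partial_\mu\bfz) = \partial_\mu(\delta_x\bfz)$ est un bord. Comme l'id\'eal $\Im\mu$ est engendr\'e par les valeurs $\mu(x)$ pour $x\in L$, il annule $\rmH_\sbullet(\mu)$.

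Il reste \`a \'etendre ce raisonnement. Pour les coefficients dans un module $M$ et le complexe descendant $\rmK_\sbullet(\mu\,;M)=\rmK_\sbullet(\mu)\otimes_\bfA M$, on tensorise l'homotopie par $\Id_M$: les op\'erateurs $\partial_\mu\otimes\Id_M$ et $\delta_x\otimes\Id_M$ v\'erifient la m\^eme relation d'anticommutation, donc $\mu(x)$ reste homotope \`a z\'ero. Pour le complexe montant $\rmK^\sbullet(\mu\,;M)=\Hom\big(\rmK_\sbullet(\mu),M\big)$, on applique le foncteur $\Hom(\sbullet,M)$ \`a l'identit\'e d'homotopie: la fonctorialit\'e transpose les fl\`eches et inverse l'ordre des compos\'ees, mais la relation subsiste sous la forme $\transpose{\delta_x}\,\transpose{\partial_\mu} + \transpose{\partial_\mu}\,\transpose{\delta_x} = \mu(x)\,\Id$, o\`u $\transpose{\partial_\mu}$ est la diff\'erentielle du montant et $\transpose{\delta_x}$ l'homotopie; ainsi $\mu(x)$ y est encore homotope \`a z\'ero. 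Dans tous les cas $\Im\mu$ annule la (co)homologie, ce qu'il fallait d\'emontrer.
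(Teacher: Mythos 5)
Your proof is correct and takes essentially the same route as the paper's: both rest on the left anti-derivation property of $\partial_\mu$ to produce the homotopy identity, then conclude that $\Im\mu$ kills the (co)homology by the homotopy argument, tensoring by $\Id_M$ for coefficients and transposing for the ascending complex. The only remark is that your reduction to generators $y\wedge\bfw$ is superfluous: the anti-derivation identity applied directly to $x\wedge z$ for an \emph{arbitrary} homogeneous $z$ --- which is exactly the step you perform when expanding $\partial_\mu(x\wedge y\wedge\bfw)$ --- already gives $\partial_\mu(x\wedge z)=\mu(x)\,z-x\wedge\partial_\mu(z)$ in one line, and this is precisely the paper's proof.
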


\begin{proof}
Pour l'égalité, il suffit d'utiliser que $\partial_\mu$ est une anti-dérivation à gauche
qui prolongue $\mu$.
Autrement dit, pour $y \in \BW^{d} L$, on a 
$$
( \partial_\mu \circ \delta_x )(y) \ =\  \partial_\mu(x \wedge y) \ = \ 
\mu(x) \wedge y + (-1)^1 x \wedge \partial_\mu(y) \ = \ 
\mu(x) y -  (\delta_x \circ \partial_\mu)(y)
$$
Ainsi, pour $y \in \Ker \partial_\mu$, on a $\mu(x) y \in \Im \partial_\mu$.
Ceci montre que $\Im \mu$ annule $\rmH_\sbullet(\mu)$.
Il suffit ensuite de tensoriser par $\id_M$ ou bien de transposer l'égalité pour 
obtenir que $\Im \mu$ annule $\rmH_\sbullet(\mu \,; M)$ et $\rmH^\sbullet(\mu \,; M)$.
\end{proof}

\subsubsection{Les complexes de Koszul descendant et montant d'une suite $\ua = (a_1,\dots, a_n)$}

Ici on suppose que $L$ est un module libre de rang $n$ 
muni d'une base $\underline e = (e_1, \dots, e_n)$ de sorte que l'on peut confondre $L$ et $\bfA^n$.
La puissance $d$-ème extérieure de $\bfA^n$ est un $\bfA$-module libre que l'on munit 
de la base $(e_I)$ :
$$
\BW^d(\bfA^n)
\ = \
\bigoplus_{\# I = d} \ \bfA \, e_I
\qquad \qquad
\text{où \ $e_I = e_{i_1} \wedge \cdots \wedge e_{i_d}$ \ pour\  $I = (i_1 < \cdots < i_d)$}
$$

\subsubsection{Le descendant}

Pour une suite $\ua = (a_1,\dots, a_n)$ de $\bfA$, on définit son complexe de Koszul 
descendant $\rmK_\sbullet(\ua)$ comme étant $\rmK_\sbullet(\mu)$ où $\mu : \bfA^n \rightarrow \bfA$ 
est la forme linéaire dont la matrice dans la base $\underline e$ est $[a_1 \ \cdots \ a_n]$.
C'est donc un complexe de modules libres, de longueur $n+1$,
$$
0 \longrightarrow
\xymatrix@M=0.4pc @C=1.4cm{
\bigwedge^n (\bfA^n) \ar[r] & 
\quad \cdots\quad \ar[r] & 
\bigwedge^1 (\bfA^n) \simeq \bfA^n \ar[r]^{[a_1, \cdots, a_n]} & 
\bigwedge^0 (\bfA^n) \simeq \bfA 
}
\longrightarrow 0
$$
et dont la différentielle est $\partial = \sbullet \intd a^\star$ où $a^\star = \sum a_i e^\star_i$. 
En voici une description sur la base $\underline e$ :
$$
\partial(e_I) \ = \  e_I \intd a^\star \ = \
\bigoplus_{i \in I} \ 
(-1)^{\varepsilon_i(I)} \, a_i \, e_{I \setminus i}
$$
où $\varepsilon_i(I)$ désigne le nombre d'éléments de $I$ strictement inférieurs à $i$,
en accord avec l'égalité $e_I \intd e^\star_i = (-1)^{\varepsilon_i(I)}\,e_{I \setminus i}$ pour $i \in I$ (pour
$i\notin I$, le produit intérieur est nul).

\label{NOTA01-KoszulDown}%
\label{NOTA01-varepsiloniI}%

\smallskip
Le complexe $\rmK_\sbullet(\ua \,; M)$ est par définition $\rmK_\sbullet(\ua) \otimes_\bfA M$.
On a $\rmH_0(\ua\,;M) = M/(\ua\,M)$, en particulier $\rmH_0(\ua) = \bfA/\langle\ua\rangle$.

\label{NOTA01-HKoszulDown}%

\medskip

Voyons $\BW^d(\bfA^n)$ comme un $\bfA$-module libre de rang $\binom{n}{d}$ 
muni de la base $(e_I)_{\# I = d}$  indexée par les parties de $\{1,\dots, n\}$ ordonnée lexicographiquement.
Pour $n=4$, le complexe de Koszul descendant a l'allure suivante:
$$
\xymatrix@M=0.5pc{
0 \ar[r]
& \bfA^{1} \ar[r]^-{\partial_4}
& \bfA^{4} \ar[r]^-{\partial_3}
& \bfA^{6} \ar[r]^-{\partial_2}
& \bfA^{4} \ar[r]^-{\partial_1}
& \bfA^1 \ar[r]
& 0 }
$$
avec les différentielles 
$
\partial_1 =
\left[
\begin{array}{*{4}{c}}
a_1 & a_2 & a_3 & a_4 \\
\end{array}
\right]
$
et en notant $\overline a_i$ pour $-a_i$:
$$
\def\ov{\overline} 
\partial_4 =
\left[
\begin{array}{*{1}{c}}
\ov a_4 \\
a_3 \\
\ov a_2 \\
a_1 \\
\end{array}
\right]
\qquad
\partial_3 =
\left[
\begin{array}{*{4}{c}}
a_3 & a_4 & . & . \\
\ov a_2 & . & a_4 & . \\
. & \ov a_2 & \ov a_3 & . \\
a_1 & . & . & a_4 \\
. & a_1 & . & \ov a_3 \\
. & . & a_1 & a_2 \\
\end{array}
\right]
 \qquad
\partial_2 =
\left[
\begin{array}{*{6}{c}}
\ov a_2 & \ov a_3 & \ov a_4 & . & . & . \\
a_1 & . & . & \ov a_3 & \ov a_4 & . \\
. & a_1 & . & a_2 & . & \ov a_4 \\
. & . & a_1 & . & a_2 & a_3 \\
\end{array}
\right]
$$

\subsubsection{Le montant}

Pour une suite $\ua = (a_1,\dots, a_n)$ de $\bfA$, on définit son
complexe de Koszul montant $\rmK^\sbullet(\ua)$ comme étant
$\rmK^\sbullet(\mu)$ où $\mu : \bfA^n \rightarrow \bfA$ est la forme
linéaire (notée $a^\star$ ci-après) dont la matrice dans la base
canonique est $[a_1 \ \cdots \ a_n]$.  Grâce à
l'isomorphisme \emph{canonique} $\big(\BW^d
(\bfA^n) \big)^\star \simeq \BW^d \big( (\bfA^n)^\star \big)$ et
au fait que $a^\star \wedge \sbullet$ et $\sbullet\intd a^\star$ sont adjoints
(cf la section \ref{ExteriorAlgebraDuality}), le complexe de Koszul
prend la forme suivante :
$$
\xymatrix@M=0.4pc{
0 \ar[r] &
\bigwedge^0 \big((\bfA^n)^\star \big) \simeq \bfA \ar[r]^-{1\mapsto a^\star}  &
\bigwedge^1  \big((\bfA^n)^\star \big) \simeq (\bfA^n)^\star \ar[r] &  
\quad \cdots\quad \ar[r] & 
\bigwedge^n  \big((\bfA^n)^\star \big) \ar[r] &
0
}
$$
avec la différentielle $\delta = a^\star \wedge \sbullet$\, qui
s'explicite sur la base duale $(e_I^\star)$, où $e_I^\star =
e_{i_1}^\star \wedge \cdots \wedge e_{i_d}^\star$ pour $I = \{i_1 <
\cdots < i_d\}$, de la façon suivante
$$
\delta(e_I^\star)  = a^\star \wedge e_I^\star =
\ = \ 
\bigoplus_{j \notin I}\ 
(-1)^{\varepsilon_j(I)} \, a_j \, e_{j\vee I}^\star
$$
On rappelle que $\varepsilon_j(I)$ désigne le nombre d'éléments de $I$ strictement inférieurs à $j$,
en accord avec $e^\star_j \wedge e^\star_I  = (-1)^{\varepsilon_j(I)}\,e^\star_{\{j\}\vee I}$ pour $j \notin I$
(pour $j \in I$, le produit extérieur est nul).

\smallskip
Le complexe $\rmK^\sbullet(\ua \,; M)$ est par définition $\Hom(\rmK_\sbullet(\ua),\, M)$.
Comme $\rmK_\sbullet(\ua)$ est un complexe de modules libres, 
on a $\rmK^\sbullet(\ua \,; M)  = \rmK^\sbullet(\ua) \otimes_\bfA M$ 
(confer ce qui a été dit à la page~\pageref{KoszulMontantExtScalaires}).
On a $\rmH^0(\ua) = \Ann(\ua)$,  $\rmH^0(\ua\,;M) = \{m \in M \mid \ua\,m = 0\}$
et  $\rmH^n(\ua\,;M) = M/(\langle\ua\rangle\,M)$.

\label{NOTA01-KoszulUp}%
\label{NOTA01-HKoszulUp}%
%
%

\subsubsection{Auto-dualité du complexe de Koszul}

\index{auto-dualité du complexe de Koszul}%

La conjugaison entre $\sbullet \intd a^\star$ et $a^\star \wedge\sbullet$ 
étudiée à la page~\pageref{AutoDuality}  permet d'énoncer:

\begin{prop} \label{KoszulAutoDualite}
Les complexes de Koszul $\rmK_\sbullet(\ua)$ et $\rmK^\sbullet(\ua)$ 
sont isomorphes. Autrement dit, le complexe~\mbox{$\rmK_\sbullet(\ua)$} est isomorphe 
à son dual.
Plus généralement, les complexes de Koszul $\rmK_\sbullet(\ua\,; M)$ et $\rmK^\sbullet(\ua \,; M)$ 
sont isomorphes.

\noindent
En conséquence, pour tout entier $i$,
$$
\rmH_i(\ua \,; M) \ \simeq \ \rmH^{n-i}(\ua \,; M)
$$
\end{prop}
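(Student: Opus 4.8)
The plan is to assemble an explicit isomorphism of complexes out of the determinantal isomorphism $\sharp_\bfe$, exploiting the conjugation between $\sbullet\intd a^\star$ and $a^\star\wedge\sbullet$ already worked out on page~\pageref{AutoDuality}. Working in the setting $L=\bfA^n$ free of rank $n$, I first fix an orientation $\bfe$ of $\bfA^n$ and, for each degree $d$, consider
$$
\phi_d = \varepsilon_d\,\sharp_\bfe : \BW^d(\bfA^n) \longrightarrow \big(\BW^{n-d}(\bfA^n)\big)^\star \overset{\rm can.}{\simeq} \BW^{n-d}\big((\bfA^n)^\star\big), \qquad \varepsilon_d = (-1)^{\frac{d(d-1)}2}
$$
where $\sharp_\bfe(\bfx) = \oriented{\bfx\wedge\sbullet}_\bfe$. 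Each $\phi_d$ is bijective, since $\sharp_\bfe$ is an isomorphism and $\varepsilon_d=\pm 1$. Note that $\phi_\sbullet$ carries the degree-$d$ term of the descending complex $\rmK_\sbullet(\ua)$ to the degree-$(n-d)$ term of the ascending complex $\rmK^\sbullet(\ua)$, so it automatically reverses the grading; this is the source of the shift $i\mapsto n-i$ in the stated consequence.

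Next I would verify that $(\phi_d)_d$ is a morphism of complexes, i.e. that $\delta\circ\phi_d = \phi_{d-1}\circ\partial$ with $\partial = \sbullet\intd a^\star$ and $\delta = a^\star\wedge\sbullet$. Both composites are well-typed, landing in $\BW^{n-d+1}\big((\bfA^n)^\star\big)$, and this commutation is exactly the square displayed on page~\pageref{AutoDuality}: the sign $\varepsilon_d$ was normalized there precisely by the recurrence $\varepsilon_d=(-1)^{d-1}\varepsilon_{d-1}$ so that each square commutes. Since every vertical arrow $\phi_d$ is an isomorphism, $\phi_\sbullet$ is an isomorphism of complexes $\rmK_\sbullet(\ua)\xrightarrow{\sim}\rmK^\sbullet(\ua)$, which settles the first assertion.

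For coefficients in $M$, I would tensor this isomorphism of complexes of finite free modules with $M$ over $\bfA$. On the left this gives $\rmK_\sbullet(\ua)\otimes_\bfA M = \rmK_\sbullet(\ua;M)$ by definition. On the right, since each $\BW^d(\bfA^n)$ is free of finite rank, the canonical map $\Hom(\rmK_\sbullet(\ua),\bfA)\otimes_\bfA M \to \Hom(\rmK_\sbullet(\ua),M)$ is an isomorphism (as recalled on page~\pageref{KoszulMontantExtScalaires}), whence $\rmK^\sbullet(\ua)\otimes_\bfA M \simeq \rmK^\sbullet(\ua;M)$. Therefore $\rmK_\sbullet(\ua;M)\simeq\rmK^\sbullet(\ua;M)$, and passing to homology of isomorphic complexes, together with the grading reversal $d\mapsto n-d$, yields $\rmH_i(\ua;M)\simeq\rmH^{n-i}(\ua;M)$.

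I expect no serious obstacle: the one delicate point, namely the degree-dependent sign needed to upgrade the pointwise conjugation of $\sbullet\intd a^\star$ and $a^\star\wedge\sbullet$ into a genuine chain map, is already resolved on page~\pageref{AutoDuality}. What remains is pure bookkeeping — keeping track that $\sharp_\bfe$ reverses the grading, and noticing that the finite freeness of the Koszul terms is exactly what licenses the identification $\rmK^\sbullet(\ua)\otimes_\bfA M \simeq \rmK^\sbullet(\ua;M)$ in the coefficient version.
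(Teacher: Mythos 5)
Your proof is correct and takes essentially the same route as the paper: the paper states this proposition as a direct consequence of the conjugation between $\sbullet\intd a^\star$ et $a^\star\wedge\sbullet$ worked out on page~\pageref{AutoDuality}, using exactly your sign normalization $\varepsilon_d=(-1)^{\frac{d(d-1)}{2}}$ to turn the pointwise adjunction into a grading-reversing isomorphism of complexes. The coefficient case is likewise handled by the identification $\Hom(\rmK_\sbullet(\ua),M)\simeq\rmK^\sbullet(\ua)\otimes_\bfA M$ valid by finite freeness, as recalled on page~\pageref{KoszulMontantExtScalaires}.
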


Ci-dessous sont superposés le complexe de Koszul descendant et le complexe 
de Koszul montant pour $n=4$ dans lesquels on note $\overline a_i$ pour $-a_i$.
$$
\def\ov{\overline} 
\BW^{4} \xrightarrow{%
\begin{bmatrix}
\ov a_4 \\
a_3 \\
\ov a_2 \\
a_1 \\
\end{bmatrix}
}
\BW^{3} \xrightarrow{%
\begin{bmatrix}
a_3 & a_4 & . & . \\
\ov a_2 & . & a_4 & . \\
. & \ov a_2 & \ov a_3 & . \\
a_1 & . & . & a_4 \\
. & a_1 & . & \ov a_3 \\
. & . & a_1 & a_2 \\
\end{bmatrix}
}
\BW^{2} \xrightarrow{%
\begin{bmatrix}
\ov a_2 & \ov a_3 & \ov a_4 & . & . & . \\
a_1 & . & . & \ov a_3 & \ov a_4 & . \\
. & a_1 & . & a_2 & . & \ov a_4 \\
. & . & a_1 & . & a_2 & a_3 \\
\end{bmatrix}
}
\BW^{1} \xrightarrow{%
\begin{bmatrix}
a_1 & a_2 & a_3 & a_4 \\
\end{bmatrix}
}
\BW^{0}
$$
$$
\def\ov{\overline} 
\BW^{0} \xrightarrow{%
\begin{bmatrix}
a_1 \\ a_2 \\ a_3 \\ a_4 
\end{bmatrix}
}
\BW^{1} \xrightarrow{%
\begin{bmatrix}
\ov a_2 & a_1 & . & . \\
\ov a_3 & . & a_1 & . \\
\ov a_4 & . & . & a_1\\
. & \ov a_3 & a_2 & . \\
. & \ov a_4 & . & a_2 \\
. & . & \ov a_4 & a_3 \\
\end{bmatrix}
}
\BW^{2} \xrightarrow{%
\begin{bmatrix}
a_3 & \ov a_2 & . & a_1 & .& . \\
a_4 & . & \ov a_2 & . & a_1 & . \\
. & a_4 & \ov a_3 & . & . &  a_1 \\
. & . & . & a_4 & \ov a_3 &  a_2 \\
\end{bmatrix}
}
\BW^{3} \xrightarrow{%
\begin{bmatrix}
\ov a_4 &
a_3 &
\ov a_2 &
a_1 &
\end{bmatrix}
}
\BW^{4}
$$
\subsubsection{Le complexe de Koszul comme produit tensoriel de complexes élémentaires}

\noindent
Les complexes qui interviennent ci-dessous sont montants.
Le produit tensoriel $C \otimes C'$ de deux complexes
$(C, \delta)$ et $(C', \delta')$ est le complexe $(C'',\delta'')$ d\'efini
par
$$
C''_i = \bigoplus_{j+k=i} C_j \otimes C'_k, \qquad
\delta'' : x \otimes x' \mapsto \delta x \otimes x' + (-1)^{\deg x} x \otimes \delta'x'
$$
Pour $t \in \bbZ$, le complexe d\'ecal\'e $C[t]$ de $C$ est le complexe d\'efini par
la venerable formulae $C[t]_n = C_{t+n}$, sa diff\'erentielle
\'etant $(-1)^t \delta_C$.

\index{produit tensoriel!de complexes}%

\noindent
Pour $b \in \bfA$, le complexe de Koszul montant $\rmK^\sbullet(b)$ de
la suite r\'eduite \`a $b$ est parfois qualifié d'élémentaire car
concentré en degrés $0$ et $1$:
$$
\rmK^\sbullet(b) : \bfA \xrightarrow{\times b} \bfA \to 0 \to 0 \to \cdots
\qquad\qquad
\rmK^\sbullet(b\,; M) : M \xrightarrow {\times b} M \to 0 \to 0 \to \cdots
$$

\`A l'aide de la construction mapping-cone et de
l'identification $\rmK(\ua, b \,;
M) \simeq \rmK(b \,; \bfA) \otimes_\bfA \rmK(\ua \,; M)$, on obtient
facilement les longues suites exactes en homologie et cohomologie de
Koszul qui relient l'homologie de $(\ua;b)$ et la multiplication par
$b$ sur l'homologie de $\ua = (a_1, \ldots, a_{n})$.  Nous énonçons
le théorème suivant sans preuve: pour les détails, confer
l'appendice~A de \cite{Tete}.

\index{produit tensoriel!par un complexe élémentaire}%

\begin {theo} [Tensorisation par un complexe élémentaire]
\leavevmode

Soit $(C,\delta)$ un complexe (montant) et~$b \in \bfA$

\begin {enumerate} [\rm i)]
\item
Le produit tensoriel $\rmK^\sbullet(b) \otimes C$ est isomorphe au 
c\^one de $C[-1] \xrightarrow {\times b} C[-1]$, celui ci \'etant
le complexe $C \oplus C[-1]$ avec 
bord $x_i \oplus x_{i-1} \mapsto \delta x_i \oplus (bx_i - \delta x_{i-1})$.

\item
On dispose d'une suite exacte courte:
$$
0 \to C[-1] \xrightarrow {x' \mapsto 0\oplus x'} C \oplus C[-1]
\xrightarrow {x \oplus x' \mapsto x} C \to 0
$$

\item
Cette suite exacte courte donne naissance à la longue suite exacte en cohomologie:
$$
\cdots \to \rmH^{i-1}(C) \to \rmH^{i}(\rmK^\sbullet(b) \otimes C) \to 
\rmH^{i}(C) \xrightarrow {\times b} \rmH^{i}(C) \to 
\rmH^{i+1}(\rmK^\sbullet(b) \otimes C) \to \cdots
$$

\item
On dispose d'une identification canonique $\rmK^\sbullet(\ua, b\,; M) \simeq 
\rmK^\sbullet(b \,; \bfA) \otimes_\bfA \rmK^\sbullet(\ua \,; M)$.
\end {enumerate}
\end {theo}

\subsubsection{La longue suite exacte en (co)homologie Koszul}

Ces deux longues suites exactent découlent du résultat précédent.

\begin{prop}[Longue suite en (co)-homologie Koszul]
Pour  une suite $\ua$ d'éléments de $\bfA$ et $b \in \bfA$, on dispose
de deux longues suites exactes:

\noindent
En homologie:
$$
\xymatrix{
\cdots \ar[r] & 
\rmH_{i+1}(\ua \,; M) \ar[r] & \rmH_{i+1}(\ua, b \,; M) \ar[r] & \rmH_i(\ua \,; M) 
\ar `r[d] `[l] `[llld]_{\times b} `[dll] [dll] \\
& \rmH_i(\ua \,; M) \ar[r] & \rmH_{i}(\ua, b \,; M) \ar[r] & \rmH_{i-1}(\ua \,; M) \ar[r] & \cdots
}
$$
En cohomologie:
$$
\xymatrix{
\cdots \ar[r] & \rmH^{i-1}(\ua \,; M) \ar[r] & \rmH^{i}(\ua, b \,; M) \ar[r] & \rmH^i(\ua \,; M) 
\ar `r[d] `[l] `[llld]_{\times b} `[dll] [dll] \\
& \rmH^i(\ua \,; M) \ar[r] & \rmH^{i+1}(\ua, b \,; M) \ar[r] & \rmH^{i+1}(\ua \,; M) \ar[r] & \cdots}
$$
\end{prop}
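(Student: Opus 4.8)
Le plan est de déduire les deux suites exactes longues du théorème de tensorisation par le complexe élémentaire $\rmK^\sbullet(b)$ qui précède, la version homologique s'obtenant de la version cohomologique via l'auto-dualité du complexe de Koszul (proposition \ref{KoszulAutoDualite}).

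Pour la suite en cohomologie, j'appliquerais d'abord le point iii) du théorème au complexe montant $C = \rmK^\sbullet(\ua \,; M)$, ce qui fournit la suite exacte longue
$$
\cdots \to \rmH^{i-1}(C) \to \rmH^i(\rmK^\sbullet(b) \otimes C) \to \rmH^i(C) \xrightarrow{\times b} \rmH^i(C) \to \rmH^{i+1}(\rmK^\sbullet(b) \otimes C) \to \cdots
$$
Il ne reste alors qu'à identifier les termes. Le point iv) donne l'identification canonique $\rmK^\sbullet(b) \otimes C \simeq \rmK^\sbullet(\ua, b \,; M)$, d'où $\rmH^i(\rmK^\sbullet(b) \otimes C) = \rmH^i(\ua, b \,; M)$, tandis que $\rmH^i(C) = \rmH^i(\ua \,; M)$ par définition. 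La substitution produit exactement la suite en cohomologie annoncée, le morphisme de liaison étant la multiplication par $b$ héritée de la construction du cône.

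Pour la suite en homologie, plutôt que de refaire la construction du cône pour des complexes descendants, j'exploiterais l'auto-dualité. En notant $n$ la longueur de $\ua$, on dispose des isomorphismes $\rmH_j(\ua \,; M) \simeq \rmH^{n-j}(\ua \,; M)$ et, la suite $(\ua, b)$ étant de longueur $n+1$, $\rmH_j(\ua, b \,; M) \simeq \rmH^{n+1-j}(\ua, b \,; M)$. En reportant ces identifications dans la suite en cohomologie déjà obtenue et en posant $i = n-j$, chaque terme se transforme à la bonne place: ainsi $\rmH^{n-j-1}(\ua \,; M) \simeq \rmH_{j+1}(\ua \,; M)$, $\rmH^{n-j}(\ua, b \,; M) \simeq \rmH_{j+1}(\ua, b \,; M)$, $\rmH^{n-j}(\ua \,; M) \simeq \rmH_j(\ua \,; M)$, $\rmH^{n+1-j}(\ua, b \,; M) \simeq \rmH_j(\ua, b \,; M)$ et $\rmH^{n-j+1}(\ua \,; M) \simeq \rmH_{j-1}(\ua \,; M)$, et l'on retrouve terme à terme la suite en homologie de l'énoncé.

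L'unique point délicat est de s'assurer que le morphisme de liaison reste la multiplication par $b$ après passage par l'auto-dualité: cela résulte de la naturalité de l'isomorphisme d'auto-dualité et du fait que la multiplication par le scalaire central $b$ est un morphisme de complexes commutant à cet isomorphisme. Tout le reste n'est qu'une comptabilité d'indices. Une alternative, que je signalerais en passant, consisterait à établir directement la version homologique via un analogue descendant du théorème de tensorisation — le cône de $C[-1] \xrightarrow{\times b} C[-1]$ pour $C = \rmK_\sbullet(\ua \,; M)$ — au prix de reprendre la construction du cône, ce qui rend la voie par auto-dualité nettement plus économique.
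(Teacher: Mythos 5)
Votre preuve est correcte. Pour la suite en cohomologie, vous suivez exactement la voie du texte : appliquer le point iii) du th�or�me de tensorisation � $C = \rmK^\sbullet(\ua\,;M)$, puis identifier $\rmK^\sbullet(b)\otimes C$ � $\rmK^\sbullet(\ua,b\,;M)$ par le point iv). La diff�rence porte sur la suite en homologie : le texte entend la d�duire elle aussi de la construction mapping-cone (c'est le sens du pr�ambule du th�or�me de tensorisation, les d�tails de la version descendante �tant renvoy�s � l'appendice A de \cite{Tete}), alors que le th�or�me n'est �nonc� que pour les complexes montants ; vous la d�duisez au contraire de la suite cohomologique d�j� acquise via l'auto-dualit� (proposition \ref{KoszulAutoDualite}), en posant $i=n-j$. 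Votre comptabilit� d'indices est exacte, y compris le d�calage $\rmH_j(\ua,b\,;M)\simeq\rmH^{(n+1)-j}(\ua,b\,;M)$ d� � la longueur $n+1$ de la suite $(\ua,b)$. Ce d�tour est plus �conomique dans le cadre du texte � aucune reprise de la construction du c�ne pour les complexes descendants, et il comble de fa�on autonome ce que la preuve d'une ligne du texte laisse implicite � et le seul point d�licat est trait� correctement chez vous : comme l'unique fl�che nomm�e dans l'�nonc� est $\times b$, il suffit d'observer que la conjugaison d'une multiplication scalaire par un isomorphisme $\bfA$-lin�aire reste la multiplication par le m�me scalaire (la $\bfA$-lin�arit� de l'isomorphisme d'auto-dualit� suffit, nul besoin d'invoquer sa naturalit�) ; les autres fl�ches de la suite homologique sont alors d�finies par transport, ce qui ne change rien � l'exactitude.
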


La proposition suivante va nous permettre de démontrer l'invariance par radical de la profondeur.

\begin{prop}\label{ConsequenceLongueSuiteCohomologieKoszul}
Soit $\ua$ une suite et $b \in \bfA$. 
Soit $i$ un entier.
\begin{enumerate} [\rm i)]
\item 
Si $\rmH^{i-1}(\ua \,; M) = \rmH^i(\ua \,; M) = 0$ alors $\rmH^i(\ua, b \,; M) = 0$.
\item 
Soit $e$ un entier tel que $b^e \in \langle \ua \rangle$.
Si $\rmH^i(\ua, b \,; M) = 0$ alors $\rmH^i(\ua \,; M) = 0$.
\end{enumerate}
On obtient les mêmes résultats en remplaçant l'élément $b$ par une suite $\ub$.
\end{prop}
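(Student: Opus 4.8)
Le plan est d'exploiter directement la longue suite exacte en cohomologie Koszul de la proposition pr\'ec\'edente, celle qui relie la cohomologie de $(\ua,b)$, celle de $\ua$ et la multiplication par $b$. Seule la portion suivante interviendra:
$$
\cdots \to \rmH^{i-1}(\ua\,;M) \to \rmH^i(\ua,b\,;M) \to \rmH^i(\ua\,;M) \xrightarrow{\times b} \rmH^i(\ua\,;M) \to \cdots
$$
Tout le travail consiste \`a lire cette suite aux bons endroits et \`a lui adjoindre, pour le second point, le fait d\'ej\`a \'etabli que l'id\'eal $\Im\mu = \langle\ua\rangle$ annule les modules de cohomologie $\rmH^\sbullet(\ua\,;M)$.

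Pour le point i), je supposerais $\rmH^{i-1}(\ua\,;M) = \rmH^i(\ua\,;M) = 0$ et j'examinerais le segment $\rmH^{i-1}(\ua\,;M) \to \rmH^i(\ua,b\,;M) \to \rmH^i(\ua\,;M)$. Le terme central est alors coinc\'e entre deux modules nuls: par exactitude \`a sa droite, tout $\rmH^i(\ua,b\,;M)$ est dans le noyau du morphisme vers $\rmH^i(\ua\,;M)=0$, et par exactitude en ce point ce noyau co\"incide avec l'image de $\rmH^{i-1}(\ua\,;M)=0$. D'o\`u $\rmH^i(\ua,b\,;M) = 0$. Cette partie est purement formelle.

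Pour le point ii) se concentre l'essentiel. Je partirais de $\rmH^i(\ua,b\,;M) = 0$ et lirais le morceau $\rmH^i(\ua,b\,;M) \to \rmH^i(\ua\,;M) \xrightarrow{\times b} \rmH^i(\ua\,;M)$: la nullit\'e du premier terme force, par exactitude, la multiplication $\times b$ sur $\rmH^i(\ua\,;M)$ \`a \^etre injective. J'invoquerais ensuite l'annulation par $\langle\ua\rangle$: comme $b^e \in \langle\ua\rangle$, la multiplication $\times b^e$ est \emph{nulle} sur $\rmH^i(\ua\,;M)$. Or $\times b^e = (\times b)^e$ est injective comme compos\'ee d'applications injectives, et une application \`a la fois injective et nulle n'existe que sur le module nul; donc $\rmH^i(\ua\,;M) = 0$. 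La seule vraie subtilit\'e est l\`a: c'est l'hypoth\`ese $b^e \in \langle\ua\rangle$ qui fait le pont entre l'injectivit\'e de $\times b$ (cons\'equence formelle de la suite exacte) et la nullit\'e de $\times b^e$ (cons\'equence de l'annulation), les deux \'etant contradictoires sauf sur le module trivial.

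Enfin, pour la g\'en\'eralisation o\`u l'\'el\'ement $b$ est remplac\'e par une suite $\ub = (b_1,\dots,b_m)$, je proc\'ederais par r\'ecurrence sur $m$ en \'epluchant un \'el\'ement \`a la fois. Le point d\'elicat \`a v\'erifier est que le cas \`a un \'el\'ement s'applique bien \`a chaque \'etape: pour le point ii), il suffit d'observer que $\langle\ua\rangle \subseteq \langle\ua,b_1,\dots,b_{j-1}\rangle$, si bien que $b_j^{e_j} \in \langle\ua\rangle$ entra\^ine $b_j^{e_j} \in \langle\ua,b_1,\dots,b_{j-1}\rangle$; on peut donc faire passer $\rmH^i(\ua,b_1,\dots,b_j\,;M) = 0$ en $\rmH^i(\ua,b_1,\dots,b_{j-1}\,;M) = 0$, puis it\'erer jusqu'\`a $\rmH^i(\ua\,;M) = 0$.
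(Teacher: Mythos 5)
Pour l'\'enonc\'e principal, celui o\`u l'on adjoint un seul \'el\'ement $b$, votre preuve est correcte et suit exactement la m\^eme voie que celle du texte: le point i) par simple lecture de la longue suite exacte, le point ii) en combinant l'injectivit\'e de $\times b$ sur $\rmH^i(\ua\,;M)$ (donc de $\times b^e$) avec le fait que $\langle\ua\rangle$ annule la cohomologie de Koszul et que $b^e \in \langle\ua\rangle$.

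La lacune se trouve dans votre derni\`ere \'etape, le passage \`a une suite $\ub = (b_1,\dots,b_m)$, que le texte \'enonce d'ailleurs sans preuve. Votre \'epluchage un \'el\'ement \`a la fois convient pour le point ii) (l'inclusion $\langle\ua\rangle \subseteq \langle\ua,b_1,\dots,b_{j-1}\rangle$ suffit bien), mais il \'echoue pour le point i): pour traiter $b_2$ apr\`es $b_1$, il vous faudrait $\rmH^{i-1}(\ua,b_1\,;M) = 0$ \emph{en plus de} $\rmH^i(\ua,b_1\,;M) = 0$, et cette annulation en degr\'e $i-1$ ne d\'ecoule pas de vos hypoth\`eses: quand $\rmH^{i-1}(\ua\,;M) = 0$, la longue suite exacte donne seulement $\rmH^{i-1}(\ua,b_1\,;M) \simeq \rmH^{i-2}(\ua\,;M)/b_1\rmH^{i-2}(\ua\,;M)$, et rien ne contr\^ole $\rmH^{i-2}(\ua\,;M)$. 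Ce n'est pas qu'une difficult\'e de r\'edaction: l'\'enonc\'e i) pris degr\'e par degr\'e est faux pour une suite. Contre-exemple: $\bfA = M = \bfk[x,y]$, $\ua = (x)$, $\ub = (y,y)$, $i = 3$; on a $\rmH^2(x\,;M) = \rmH^3(x\,;M) = 0$ (le complexe de Koszul de $(x)$ est concentr\'e en degr\'es $0$ et $1$), mais $\rmH^3(x,y,y\,;M) \simeq \rmH^2(x,y\,;M)/y\,\rmH^2(x,y\,;M) = \bfk \ne 0$ puisque $\rmH^2(x,y\,;M) = \bfA/\langle x,y\rangle$. La g\'en\'eralisation correcte de i) demande une fen\^etre d'annulation: si $\rmH^j(\ua\,;M) = 0$ pour tout $j$ tel que $i-m \le j \le i$, alors $\rmH^i(\ua,\ub\,;M) = 0$ --- et l\`a votre r\'ecurrence passe, chaque adjonction consommant un degr\'e. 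C'est du reste sous cette forme (annulation pour tout $j < k$) que la proposition est utilis\'ee dans la preuve de~\ref{InvarianceRadicalGrade}.
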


\begin{proof}
Le point i) est immédiat en utilisant la longue suite exacte en cohomologie Koszul.
Pour le point~ii), on reprend également cette longue suite.
Comme $\rmH^i(\ua, b \,; M) = 0$, 
la flèche \mbox{$\rmH^i(\ua \,; M) \xrightarrow{\times b} \rmH^i(\ua \,; M)$}
est injective. Il en est de même de la flèche de multiplication par $b^e$.
En utilisant que $b^e \in \langle \ua \rangle$ et le fait que la cohomologie 
Koszul est tuée par $\langle \ua \rangle$, on obtient $\rmH^i(\ua \,; M) = 0$.
\end{proof}

\begin{prop}\label{InvarianceRadicalGrade}
Soient $\ua$ et $\ub$ deux suites d'éléments de $\bfA$.
On suppose qu'il existe un entier $e$ tel que $\ub^e \in \langle \ua \rangle$
(ce qui est équivalent à $\sqrt{\mathstrut{\ub}} \subset \sqrt{\mathstrut{\ua}}$).
Pour tout entier $k$, on a 
$$
\rmH^i(\ub \,; M) = 0, \quad \forall\, i < k 
\qquad \Longrightarrow \qquad
\rmH^i(\ua \,; M) = 0, \quad \forall\, i < k 
$$
En particulier, si $\sqrt{\mathstrut{\ua}} = \sqrt{\mathstrut{\ub}}$ alors 
$$
\rmH^i(\ua \,; M) = 0, \quad \forall\, i < k 
\qquad \Longleftrightarrow \qquad
\rmH^i(\ub \,; M) = 0, \quad \forall\, i < k 
$$
\end{prop}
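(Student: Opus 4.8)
The plan is to route the comparison between $\ua$ and $\ub$ through the concatenated sequence $(\ua,\ub)$, using the two halves of Proposition~\ref{ConsequenceLongueSuiteCohomologieKoszul} in succession, each in its sequence version (granted by the closing remark of that proposition, or by an immediate induction peeling off one element at a time). First I would establish that adjoining scalars never destroys an initial stretch of vanishing cohomology. Indeed, part i) applied to a sequence $\uc$ and a scalar $b$ reads: $\rmH^{i-1}(\uc\,;M)=\rmH^i(\uc\,;M)=0$ forces $\rmH^i(\uc,b\,;M)=0$. Hence if $\rmH^j(\uc\,;M)=0$ for all $j<k$, then for each $i<k$ both hypotheses hold (the case $i=0$ relying on $\rmH^{-1}=0$), so $\rmH^j(\uc,b\,;M)=0$ for all $j<k$. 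Adjoining the elements of $\ua$ to $\ub$ one at a time, I obtain the implication $\rmH^j(\ub\,;M)=0\ \forall j<k \ \Rightarrow\ \rmH^j(\ub,\ua\,;M)=0\ \forall j<k$.

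Next I would pass to the favourable ordering. Since the Koszul complex is a tensor product of elementary complexes, via the identification $\rmK^\sbullet(\ua,b\,;M)\simeq\rmK^\sbullet(b\,;\bfA)\otimes_\bfA\rmK^\sbullet(\ua\,;M)$ and the commutativity of the tensor product of complexes up to canonical isomorphism, reordering the sequence yields an isomorphic complex; in particular $\rmH^j(\ub,\ua\,;M)\simeq\rmH^j(\ua,\ub\,;M)$ for every $j$. Thus $\rmH^j(\ua,\ub\,;M)=0$ for all $j<k$. Finally I would remove $\ub$ using part ii): writing $\ub=(b_1,\dots,b_m)$, the hypothesis supplies an exponent $e$ with $b_\ell^e\in\langle\ua\rangle$ for every $\ell$, whence $b_\ell^e\in\langle\ua\rangle\subseteq\langle\ua,b_1,\dots,b_{\ell-1}\rangle$. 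Peeling off $b_m,b_{m-1},\dots,b_1$ in turn and invoking ii) at each stage (the retained tail always containing $\langle\ua\rangle$, so the same $e$ works throughout) gives $\rmH^j(\ua\,;M)=0$ for all $j<k$, which is the desired implication. The equivalence under $\sqrt{\langle\ua\rangle}=\sqrt{\langle\ub\rangle}$ then follows by applying this implication symmetrically in both directions.

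This argument is essentially bookkeeping once Proposition~\ref{ConsequenceLongueSuiteCohomologieKoszul} is available, so I do not expect a genuine obstacle; the only points demanding attention are the boundary case $i=0$ in the first induction (dispatched by $\rmH^{-1}=0$) and the clean justification that Koszul cohomology is insensitive to the order of the sequence, which I would ground in the tensor-product description rather than re-proving it by hand.
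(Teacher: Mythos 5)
Your proof is correct and follows essentially the same route as the paper: route through the concatenated sequence, use part i) of Proposition~\ref{ConsequenceLongueSuiteCohomologieKoszul} to extend the vanishing from $\ub$ to $(\ub,\ua)$, then part ii) to strip off $\ub$ and conclude for $\ua$. The only difference is that you spell out details the paper leaves implicit — the element-by-element induction behind the sequence versions, the boundary case $i=0$, and the reordering isomorphism $\rmH^j(\ub,\ua\,;M)\simeq\rmH^j(\ua,\ub\,;M)$ needed to match the hypotheses of part ii) — which is sound bookkeeping rather than a different argument.
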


\label{NOTA01-Sqrtua}%

\begin{proof}
Supposons que $\rmH^i(\ub \,; M) = 0$ pour tout $i < k$.
Le premier point de la proposition~\ref{ConsequenceLongueSuiteCohomologieKoszul}
permet d'obtenir (en échangeant les rôles de $\ua$ et $\ub$ dans la proposition)
$\rmH^i(\ub, \ua \,; M) = 0$ pour tout $i<k$.
Et le deuxième point fournit $\rmH^i(\ua \,; M) = 0$ pour tout $i<k$.
\end{proof}

\begin{defns}
\leavevmode  

\begin {enumerate}  [\rm i)]
\item 
Soit $k$ un entier.
On définit pour une suite $\ua = (a_1,\dots, a_n)$ 
et un module $M$ l'assertion $\Gr(\ua \,; M) \geqslant k$
par l'annulation des $k$ premiers modules de \emph{cohomologie} Koszul.

\setlength{\fboxsep}{2mm}
\begin{center}
\fbox{$
\Gr(\ua \,; M) \geqslant k 
\quad \Longleftrightarrow \quad 
\forall \, i < k, \quad \rmH^i(\ua \,; M) = 0$
}
\end{center}

\item
Une suite $\ua = (a_1, \dots, a_n)$ est dite $M$-complètement sécante si $\Gr(\ua \,; M) \geqslant n$
 \idest{} $\rmH^i(\ua\,;M) = 0$ pour $0 \le i < n$. 
\end{enumerate}
\end{defns}

\label{NOTA01-GrSuite}
\index{suite!complètement sécante}%

\noindent
Les propriétés suivantes de la profondeur se déduisent des propositions précédentes.

\begin{coro}\label{GrCompatibleInclusionRacine}
Soit $k$ un entier.
\begin{enumerate} [\rm i)]

\item 
Soit $b \in \bfA$. Alors $\Gr(\ua \,; M) \geqslant k \ \Rightarrow \ \Gr(\ua, b \,; M)  \geqslant k$

\item 
Si $\sqrt{\mathstrut \ub} \, \subset \sqrt{\mathstrut \ua}$ alors 
$\Gr(\ub \,; M) \geqslant k \ \Rightarrow \ \Gr(\ua \,; M) \geqslant k$.

\item
Si $\sqrt{\mathstrut \ua} \, =  \sqrt{\mathstrut \ub}$ alors $\Gr(\ua \,; M) \, = \, \Gr(\ub \,; M)$.

\item
Pour un idéal $\fa$ de type fini, on peut définir $\Gr(\fa\,;M)$ comme étant $\Gr(\ua\,;M)$ pour
n'importe quel système générateur fini $\ua$ de $\fa$.
\end{enumerate}
\end{coro}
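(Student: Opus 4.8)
The plan is to derive all four points directly from the two propositions just established, namely Proposition~\ref{ConsequenceLongueSuiteCohomologieKoszul} (the consequences of the long exact sequence in Koszul cohomology) and Proposition~\ref{InvarianceRadicalGrade} (radical invariance), unwinding in each case the definition of $\Gr(\sbullet\,;M) \ge k$ as the vanishing of the modules $\rmH^i$ in all degrees $i < k$.

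For point i) I would start from the hypothesis $\Gr(\ua\,;M) \ge k$, that is $\rmH^i(\ua\,;M) = 0$ for every $i < k$, and aim at $\rmH^i(\ua,b\,;M) = 0$ for every $i < k$. Fixing such an $i$, since $i < k$ one has $\rmH^i(\ua\,;M) = 0$, and since $i-1 < k$ as well (the boundary case $i=0$ being covered by the convention $\rmH^{-1} = 0$) one has $\rmH^{i-1}(\ua\,;M) = 0$. Point i) of Proposition~\ref{ConsequenceLongueSuiteCohomologieKoszul} then delivers $\rmH^i(\ua,b\,;M) = 0$, which is exactly what is wanted, so $\Gr(\ua,b\,;M) \ge k$.

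Point ii) is a direct translation of the first implication of Proposition~\ref{InvarianceRadicalGrade}: the hypothesis $\sqrt{\mathstrut\ub} \subset \sqrt{\mathstrut\ua}$ is precisely its standing assumption, and $\Gr(\ub\,;M) \ge k$ reads $\rmH^i(\ub\,;M) = 0$ for $i < k$, whence the proposition yields $\rmH^i(\ua\,;M) = 0$ for $i < k$, i.e. $\Gr(\ua\,;M) \ge k$. For point iii), I recall that the equality $\Gr(\ua\,;M) = \Gr(\ub\,;M)$ means, by the conventions fixed at the beginning of the section, the equivalence $\Gr(\ua\,;M) \ge k \Leftrightarrow \Gr(\ub\,;M) \ge k$ for all $k$. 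Under $\sqrt{\mathstrut\ua} = \sqrt{\mathstrut\ub}$ both inclusions hold, so applying point ii) in each direction gives both implications, hence the equality.

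Finally, point iv) is a well-definedness statement obtained from iii). If $\ua$ and $\ub$ are two finite generating systems of the same finitely generated ideal $\fa$, then $\langle\ua\rangle = \langle\ub\rangle = \fa$, so in particular $\sqrt{\mathstrut\ua} = \sqrt{\mathstrut\ub}$; point iii) then yields $\Gr(\ua\,;M) = \Gr(\ub\,;M)$, and the common value may legitimately be baptized $\Gr(\fa\,;M)$. I do not expect any genuine obstacle: everything reduces to bookkeeping with the vanishing ranges $i < k$ and to quoting the two preceding propositions, the only point deserving a moment of care being the boundary degree $i=0$ in point i), dispatched by the convention $\rmH^{-1} = 0$.
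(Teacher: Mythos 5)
Your proof is correct and follows exactly the route the paper intends: the paper gives no detailed argument, merely stating that the corollary "se déduit des propositions précédentes", and your write-up supplies precisely that bookkeeping (point i) from Proposition~\ref{ConsequenceLongueSuiteCohomologieKoszul}, points ii)--iv) from Proposition~\ref{InvarianceRadicalGrade} and the definitional conventions for $\Gr$). The only delicate spot, the boundary degree $i=0$ handled via $\rmH^{-1}=0$, is correctly dispatched since the Koszul complex is concentrated in degrees $\ge 0$.
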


\label{NOTA01-GrIdeal}
%
%

\index{profondeur!d'une suite de scalaires}%
\index{profondeur!d'un idéal de type fini}%

\begin {coro} \label{ReguliereImpliqueCompletementSecant}
\leavevmode
  
\begin {enumerate} [\rm i)]
\item
Si $\ua = (a_1, \cdots, a_n)$ est $M$-régulière, alors elle est $M$-complètement sécante.
\item
Si un idéal de type fini $\fa$ contient une suite $M$-régulière de longueur $n$, alors $\Gr(\fa\,;M) \ge n$.
\end {enumerate}  
\end {coro}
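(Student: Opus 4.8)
The plan is to establish i) by induction on the length $n$ of the sequence, the engine being the long exact sequence in Koszul cohomology of the proposition preceding~\ref{ConsequenceLongueSuiteCohomologieKoszul}; part ii) will then fall out of i) together with Corollaire~\ref{GrCompatibleInclusionRacine}.

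For i), the case $n=1$ is immediate: $(a_1)$ being $M$-r�guli�re says $a_1$ is $M$-r�gulier, i.e. $\rmH^0(a_1;M) = \{m \mid a_1 m = 0\} = 0$, whence $\Gr(a_1;M) \ge 1$. For the inductive step I would write $\ua' = (a_1,\dots,a_{n-1})$ and $b = a_n$. A prefix of an $M$-r�guli�re sequence is $M$-r�guli�re (Lemme~\ref{RegTrivialFacts}), so the induction hypothesis gives $\Gr(\ua';M) \ge n-1$. Appending one element preserves this bound by Corollaire~\ref{GrCompatibleInclusionRacine}~i), hence $\Gr(\ua;M) \ge n-1$, that is $\rmH^i(\ua;M) = 0$ for every $i \le n-2$. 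The whole argument is therefore concentrated in killing the single remaining group $\rmH^{n-1}(\ua;M)$.

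For that I would read off the long exact sequence attached to $(\ua',b)$ in cohomological degree $n-1$. Since $\rmH^{n-2}(\ua';M) = 0$ by the induction hypothesis, it degenerates into $0 \to \rmH^{n-1}(\ua;M) \to \rmH^{n-1}(\ua';M) \xrightarrow{\times b} \rmH^{n-1}(\ua';M)$, so that $\rmH^{n-1}(\ua;M) = \Ker\big(\rmH^{n-1}(\ua';M) \xrightarrow{\times b} \rmH^{n-1}(\ua';M)\big)$. Now $\rmH^{n-1}(\ua';M)$ is the top Koszul cohomology of a sequence of length $n-1$, hence canonically $M/\langle\ua'\rangle M$, and the displayed map is multiplication by $a_n$. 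Thus the kernel in question is exactly the set of classes annihilated by $a_n$ in $M/(a_1M + \dots + a_{n-1}M)$, which is $0$ precisely because $a_n$ is r�gulier modulo $a_1M + \dots + a_{n-1}M$ --- this being the final clause in the definition of $M$-r�gularit� of $\ua$. Hence $\rmH^{n-1}(\ua;M) = 0$ and $\Gr(\ua;M) \ge n$, closing the induction and showing that $\ua$ is $M$-compl�tement s�cante.

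Part ii) is then a one-line consequence. Let $\ua$ be an $M$-r�guli�re sequence of length $n$ contained in $\fa$, and let $\uc$ be any finite generating family of $\fa$. By i), $\Gr(\ua;M) \ge n$. From $\langle\ua\rangle \subseteq \langle\uc\rangle$ I get $\sqrt{\ua} \subseteq \sqrt{\uc}$, so Corollaire~\ref{GrCompatibleInclusionRacine}~ii) upgrades $\Gr(\ua;M) \ge n$ to $\Gr(\uc;M) \ge n$, i.e. $\Gr(\fa;M) \ge n$ by the well-definedness iv) of the grade of an id�al de type fini. The only genuinely non-formal step in the whole argument is the identification in degree $n-1$: matching the kernel produced by the long exact sequence with the regularity condition on the last term. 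Everything else is bookkeeping over the vanishing ranges of the cohomology.
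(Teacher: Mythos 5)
Your proof is correct and follows essentially the same route as the paper's: induction on $n$, with the long exact sequence in Koszul cohomology identifying $\rmH^{n-1}(\ua\,;M)$ as the kernel of $\times a_n$ on $\rmH^{n-1}(\ua'\,;M) = M/\ua'M$, which vanishes by the last clause of $M$-r\'egularit\'e. The only cosmetic difference is that you dispatch the degrees $i \le n-2$ by citing Corollaire~\ref{GrCompatibleInclusionRacine}~i) (and part ii) via radicals), whereas the paper sandwiches those groups directly in the long exact sequence and states ii) as a one-line chain of inequalities --- the underlying mechanism is identical.
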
  

\begin {proof} \leavevmode

i)   
On raisonne par récurrence sur $n$. La suite $\ua' = (a_1, \ldots, a_{n-1})$
est $M$-régulière donc $\rmH^i(\ua'\,; M) = 0$ pour $i < n-1$. Pour un tel $i$,
$\rmH^i(\ua\,;M)$ est encadré dans la suite exacte longue par
$\rmH^{i-1}(\ua'\,; M)$ et $\rmH^{i}(\ua'\,;M)$ qui sont nuls, donc
$\rmH^i(\ua,M) = 0$. Reste \`a traiter $\rmH^{n-1}(\ua, M)$; on
dispose d'une suite exacte:
$$
0 = \rmH^{n-2}(\ua'\,;M) \longrightarrow \rmH^{n-1}(\ua\,;M) \longrightarrow 
\rmH^{n-1}(\ua'\,;M) \xrightarrow {\times a_n} \rmH^{n-1}(\ua'\,; M) 
$$
De plus $\rmH^{n-1}(\ua'\,; M) = M/\ua'M$ donc $\rmH^{n-1}(\ua\,; M)$ 
est isomorphe au noyau de la multiplication par $a_n$ sur
$M/\ua'M$; par définition d'une suite régulière, ce noyau est nul.

\medskip
ii) Soit $\ub$ un système fini de générateurs de $\fa$ et $\ua$ une suite régulière contenue
dans $\fa$ de longueur $n$. Alors $\Gr(\fa\,;M) = \Gr(\ub\,;M) \ge \Gr(\ua\,;M) \ge n$.
\end {proof}

\medskip

Nous citons sans preuve le résultat important suivant qui n'est pas aisé à établir via
le complexe de Koszul. Il fait l'objet de la proposition 6.6 de~\cite{CoquandQuitte}.
Nous le prendrons en charge pour $k=2$ dans la section suivante.

\begin {theo}
Soient $\ua, \ub$ deux suites finies de scalaires. Si $\Gr(\ua\,;M) \ge k$ et $\Gr(\ub\,;M) \ge k$,
alors $\Gr(\ua\,\ub\,;M) \ge k$.
\end {theo}

\subsection{Profondeur $\geqslant 2$ et pgcd fort}

Vu l'importance dans notre étude de la notion de profondeur (supérieure ou égale à)~2,
il nous a paru important d'en assurer les fondements de la manière la plus
élémentaire possible, en évitant de dire \og $\Gr \geqslant 2$, c'est $\Gr \geqslant k$ pour
$k = 2$\fg{}. De ce fait, nous reprenons en charge certains résultats liés à la profondeur~2.

\medskip
\noindent
Soit $\ua = (a_1, \ldots, a_m)$ une suite de scalaires.  La
définition homologique de la propriété 
\og $\ua$ est de profondeur $\geqslant 2$\fg{}, consiste à considérer le complexe de Koszul montant
$\rmK^\sbullet(\ua)$, et à dire que les 2 premiers modules de
cohomologie Koszul $\rmH^0(\ua)$ et $\rmH^1(\ua)$ sont nuls. 
Rappelons les deux premières différentielles~:
$$
\xymatrix @C=2cm{
\rmK^0(\ua) = \bfA \ar[r]^-{\begin {bmatrix} a_1\cr \vdots\cr a_m\end{bmatrix}} &
\rmK^1(\ua) = \bigoplus_{j=1}^m \bfA e_j \ar[r]^-{e_j \mapsto \sum_i a_i\, e_i\wedge e_j} &
\rmK^2(\ua) = \bigoplus_{i<j}\bfA e_i\wedge e_j\ \mapsto \cdots 
}
$$
Pour $y = \sum_j y_j e_j \in \rmK^1(\ua)$, la différentielle $y' := \delta_1(y) = \big(\sum_i a_ie_i\big) \wedge y$
est donc donnée par:
$$
y' = \sum_{i < j} y'_{i,j}\, e_i\wedge e_j \quad\hbox{avec}\quad y'_{i,j} = a_i y_j - a_j y_i
$$
Ceci légitime la définition directe suivante, sans allusion franche au complexe de Koszul.

\begin{defn} [Profondeur $\geqslant 2$]
Une famille finie de scalaires $\ua = (a_i)$ est dite de profondeur~{$\geqslant 2$}
si d'une part $\Ann(\ua) = 0$ et si pour toute suite
de scalaires $(y_i)$ vérifiant $a_iy_j = a_jy_i$, il existe un $q$
tel que $a_i = qy_i$ pour tout~$i$. Un tel $q$ est unique puisque
$\Ann(\ua) = 0$.
\end{defn}

\label{NOTA01-Gr2}%

On remarque que cette définition ne fait pas intervenir l'ordre des éléments.
Il est donc impératif d'introduire rapidement l'idéal engendré. 

\begin {prop} \label{Gr2ProprietesElementaires}
\leavevmode

\begin{enumerate}[\rm i)]
\item 
Si $\langle\ua\rangle \subset \langle\ua'\rangle$, alors
$\Gr(\ua) \geqslant  2 \Rightarrow \Gr(\ua') \geqslant  2$.

\item 
La propriété $\Gr(\ua) \geqslant  2$ ne dépend que de l'idéal engendré par $\ua$.

\item 
Une suite régulière de longueur $2$ est de profondeur $\geqslant 2$ ; 
en conséquence, si $\langle\ua\rangle$ contient une suite régulière de
longueur $2$, alors $\Gr(\ua) \geqslant 2$.
\end{enumerate}
\end{prop}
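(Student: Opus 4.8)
The plan is to reduce all three items to a single non-trivial verification inside i). First observe that ii) is a formal consequence of i): when $\langle\ua\rangle=\langle\ua'\rangle$ both inclusions hold, so applying i) in each direction gives the equivalence $\Gr(\ua)\geqslant 2\Leftrightarrow\Gr(\ua')\geqslant 2$. Likewise the ``en conséquence'' clause of iii) follows from its first assertion together with i): if a regular sequence $(a,b)$ is contained in $\langle\ua\rangle$ then $\langle a,b\rangle\subseteq\langle\ua\rangle$, and $\Gr(a,b)\geqslant 2$ propagates to $\Gr(\ua)\geqslant 2$ by i). Hence the real work is (A) the implication of i), and (B) the fact that a length-$2$ regular sequence has profondeur $\geqslant 2$.

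For (A), write $\ua=(a_1,\dots,a_m)$, $\ua'=(a'_1,\dots,a'_{m'})$ with $\langle\ua\rangle\subseteq\langle\ua'\rangle$ and $\Gr(\ua)\geqslant 2$. The annihilator half is immediate, $\Ann(\ua')\subseteq\Ann(\ua)=0$. For the syzygy half, let $(y'_k)$ be weakly proportional to $\ua'$, i.e. $a'_ky'_l=a'_ly'_k$ for all $k,l$; I must produce $q$ with $y'_k=qa'_k$. First I would \emph{guess} $q$ by compressing to $\ua$: fix $(c_{ik})$ with $a_i=\sum_k c_{ik}a'_k$ and set $y_i=\sum_k c_{ik}y'_k$. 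A one-line expansion using $a'_ky'_l=a'_ly'_k$ gives $a_iy_j=a_jy_i$, so $\Gr(\ua)\geqslant 2$ yields $q$ with $y_i=qa_i$ for all $i$.

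The main obstacle is then to upgrade this compressed equality to the full family, and this is exactly where $\Ann(\ua)=0$ does the decisive work. Introduce $z'_k:=y'_k-qa'_k$. On one hand $z'$ inherits weak proportionality, $a'_lz'_k=a'_kz'_l$ (subtract $q\,a'_ka'_l$); on the other hand $\sum_k c_{ik}z'_k=y_i-qa_i=0$ for every $i$. Combining the two,
\[
a_i z'_k=\Big(\sum_l c_{il}a'_l\Big)z'_k=\sum_l c_{il}\,a'_l z'_k=\sum_l c_{il}\,a'_k z'_l=a'_k\sum_l c_{il}z'_l=0 ,
\]
so each $z'_k$ annihilates every $a_i$, i.e. $z'_k\in\Ann(\ua)=0$. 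Thus $y'_k=qa'_k$, which proves i) (uniqueness of $q$ being free from $\Ann=0$).

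For (B), let $(a,b)$ be a regular sequence. Then $a$ is regular, so $\Ann(a,b)\subseteq\Ann(a)=0$. A syzygy reduces to the single relation $ay_2=by_1$; reading it modulo $a$ gives $by_1\equiv 0$, and regularity of $b$ in $\bfA/a\bfA$ forces $y_1=qa$, after which $a(y_2-qb)=0$ and regularity of $a$ give $y_2=qb$, i.e. $y_i=qa_i$ — this is precisely the $n=2$ base case already met in the proof of~\ref{regImplique1secante}. Together with the two reductions of the first paragraph, this settles all three items.
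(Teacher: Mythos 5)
Your proof is correct, and for items ii) and iii) it coincides with the paper's own: ii) is deduced formally from i), and the length-$2$ case of iii) is settled by the same reduction-modulo-$a_1$ argument. The genuine difference is in i). The paper proceeds by elementary moves: it first isolates (leaving the verification to the reader) the two properties that one may adjoin an element, $\Gr(\ua)\geqslant 2 \Rightarrow \Gr(\ua,b')\geqslant 2$, and delete a redundant one, $\big(\Gr(c,\ua')\geqslant 2$ et $c\in\langle\ua'\rangle\big) \Rightarrow \Gr(\ua')\geqslant 2$; it then passes from $\ua$ to $(\ua,\ua')$ and strips off the $a_i$ one at a time. You instead give a single direct computation: compressing a syzygy $(y'_k)$ of $\ua'$ through a coefficient matrix $a_i=\sum_k c_{ik}a'_k$ produces a syzygy of $\ua$, whence a candidate $q$; the error terms $z'_k=y'_k-qa'_k$ are then shown to lie in $\Ann(\ua)=0$ by combining their inherited weak proportionality with the relations $\sum_k c_{ik}z'_k=0$. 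Your route is self-contained --- it proves exactly what the paper delegates to the reader --- and is arguably more economical; the paper's route is more modular, since its two moves $(\star)$ are reusable elementary operations on generating systems (and its deletion move is essentially your computation specialized to the removal of one redundant element). Both arguments are valid.
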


\begin {proof} 
L'idée est de faire reposer i) sur les deux propriétés suivantes
où $b',c$ sont des scalaires:  
$$
\Gr(\ua) \geqslant 2 
\ \Rightarrow\ 
\Gr(\ua, b') \geqslant  2 
\qquad \qquad \text{et }\qquad \qquad 
\big(\Gr(c,\ua') \geqslant  2 \hbox { et } c \in \langle\ua'\rangle\big)
\ \Rightarrow\  \Gr(\ua') \geqslant  2
\leqno (\star)
$$
Nous en laissons la vérification au lecteur.

Pour en déduire le point i), on écrit $\ua = (a_1, \ldots, a_m)$. 
On a alors $\Gr(a_1,\ldots,a_m,\ua') \geqslant  2$, d'après la propriété de gauche de $(\star)$. On
peut ensuite, en utilisant la propriété de droite de manière itérée, 
supprimer un à un chaque~$a_i$.

ii) Si on suppose $\langle\ua\rangle = \langle\ua'\rangle$, alors d'après le point i), on a 
$\Gr(\ua) \geqslant  2 \iff \Gr(\ua') \geqslant 2$.

iii) 
Soit $\ua = (a_1, a_2)$ régulière. Comme $a_1$ est régulier, on a $\Ann(\ua) = 0$.
Concernant la nullité du~$\rmH^1(\ua)$, considérons 
$(x_1,x_2)$ vérifiant $a_1x_2 = a_2x_1$. 
On a alors $a_2x_1 \equiv 0 \bmod a_1$ 
d'où $x_1 \equiv 0 \bmod a_1$, 
donc il existe~$q$ tel que $x_1 = qa_1$. 
En reportant cela dans l'égalité initiale, 
on obtient $a_1x_2 = a_2qa_1$. 
En simplifiant par~$a_1$, on a $x_2 = qa_2$. 
Bilan : $(x_1,x_2) = q(a_1,a_2)$.
\end {proof}

\medskip

Les notions \og $\Gr(\ua) \geqslant 2$ \fg{} et 
\og $\langle\ua\rangle$ contient une suite régulière de longueur~2 \fg{} ne
doivent pas être confondues : la première est stable par permutation
des termes de la suite, mais pas la seconde en général.

\medskip

On rappelle que la notion de pgcd de monômes est une notion uniquement
liée au monoïde ordonné des exposants, de ce fait indépendante de
l'anneau des coefficients. Par définition, pour deux monômes $m,m'$:
$$
m = T^\alpha, \quad m' = T^{\alpha'}, \qquad
\pgcd(m,m') = T^{\inf(\alpha,\alpha')}
$$
de sorte que:
\begin{center}
\it
Pour $m, m_1, m_2,T^\alpha$ quatre monômes 
de $\bfk[\uT]$, si $m \mid m_1 T^\alpha$ et $m \mid m_2 T^\alpha$, 
alors~{$m \mid \pgcd(m_1,m_2)T^\alpha$.}
\end{center}

\noindent
On en déduit la propriété

\begin{quote}
\it
Soit $F \in \bfk[\uT]$ et $m, m_1, m_2$ trois monômes
de $\bfk[\uT]$ vérifiant $m \mid m_1F$ et $m \mid m_2F$. 

Alors $m \mid \pgcd(m_1,m_2)F$.

En particulier si $m \mid m_1F$ et $\pgcd(m,m_1) = 1$, alors $m \mid F$ (prendre $m_2 = m$).
\end{quote}

\noindent
\'Ecrivons $F = \sum_\alpha a_\alpha T^\alpha$ en nous limitant aux indices $\alpha$ tels que
$a_\alpha \ne 0$. 
Puisque $m \mid m_1F = \sum_\alpha a_\alpha m_1 T^\alpha$, 
on a $m \mid m_1 T^\alpha$ pour chaque $\alpha$. 
De la même façon $m \mid m_2 T^\alpha$. 
Donc $m \mid \pgcd(m_1,m_2) T^\alpha$. 
En définitive, $m \mid \pgcd(m_1,m_2)F$.

\begin {prop}[Monômes premiers entre eux]
\label{MonomesPremiersEntreEux}
Dans un anneau de polynômes $\bfk[\uT]$ sur un anneau commutatif $\bfk$,
considérons $s$ monômes $m_1, \dots, m_s$. On a alors l'équivalence
$$
\hbox {$m_1, \dots, m_s$ premiers dans leur ensemble}  \quad \Longleftrightarrow \quad
\Gr(m_1, \dots, m_s) \geqslant 2
$$
\end {prop}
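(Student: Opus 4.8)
Le plan est de s\'eparer les deux clauses de la d\'efinition de $\Gr(m_1,\dots,m_s)\ge 2$: la fid\'elit\'e $\Ann(m_1,\dots,m_s)=0$ d'une part, et la clause de quotient commun d'autre part. La premi\`ere est acquise sans aucune hypoth\`ese de coprimalit\'e: chaque mon\^ome $m_i$ a pour contenu $\rmc(m_i)=\langle 1\rangle$, il est donc primitif, donc r\'egulier par le lemme de McCoy~\ref{McCoyPolyLemma}; comme tout annulateur de l'id\'eal $\langle m_1,\dots,m_s\rangle$ annule en particulier $m_1$, on obtient $\Ann(m_1,\dots,m_s)=0$, quels que soient les exposants. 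Toute la substance de l'\'equivalence se concentre donc dans la clause de quotient commun.

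Pour le sens direct, j'\'ecris $m_i=T^{\alpha_i}$ avec $\pgcd(m_1,\dots,m_s)=1$, \idest{} $\inf_i\alpha_i=0$, et je consid\`ere une suite $(y_i)$ v\'erifiant $m_iy_j=m_jy_i$ pour tous $i,j$. \`A $i$ fix\'e, la relation $m_jy_i=m_iy_j$ montre que $m_i\mid m_jy_i$ pour chaque $j$; la version it\'er\'ee du crit\`ere de divisibilit\'e des mon\^omes \'enonc\'e juste avant la proposition (si $m\mid m_1F$ et $m\mid m_2F$ alors $m\mid\pgcd(m_1,m_2)F$) fournit alors $m_i\mid\pgcd(m_1,\dots,m_s)\,y_i=y_i$. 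J'\'ecris donc $y_i=q_im_i$. En reportant dans la relation, il vient $m_im_jq_j=m_im_jq_i$, et la r\'egularit\'e du mon\^ome $m_im_j$ force $q_i=q_j$; le quotient commun $q$ v\'erifie $y_i=qm_i$ pour tout $i$, ce qui est exactement la clause voulue.

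Pour la r\'eciproque, je raisonne par contrapos\'ee: je suppose $g:=\pgcd(m_1,\dots,m_s)\ne 1$, donc de degr\'e $>0$, et j'exhibe un t\'emoin de l'\'echec de la clause de quotient commun. Puisque $g\mid m_i$, je pose $y_i:=m_i/g$, qui est un mon\^ome l\'egitime, et la relation $m_iy_j=m_jy_i$ est satisfaite car ses deux membres valent $m_im_j/g$. S'il existait $q$ tel que $y_i=qm_i$, alors, comme $m_i=g\,y_i$, on aurait $y_i=qg\,y_i$, d'o\`u $1=qg$ apr\`es simplification par le mon\^ome r\'egulier $y_i=m_i/g$; $g$ serait ainsi inversible dans $\bfk[\uT]$, or un mon\^ome de degr\'e $>0$ n'est jamais inversible (sa valeur en $\uT=0$ est nulle, et $\bfk\ne 0$). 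Ainsi $\Gr(m_1,\dots,m_s)\ge 2$ est en d\'efaut, ce qui ach\`eve la contrapos\'ee.

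L'obstacle n'est pas calculatoire mais tient \`a l'identification du bon outil: c'est le crit\`ere de divisibilit\'e $m\mid\pgcd(m_1,m_2)F$, sp\'ecifique aux mon\^omes, qui fait tout le travail du sens direct en transformant les relations crois\'ees $m_iy_j=m_jy_i$ en la divisibilit\'e $m_i\mid y_i$. Une fois ce point acquis, la r\'egularit\'e des mon\^omes (via McCoy) rend les \'etapes restantes purement m\'ecaniques.
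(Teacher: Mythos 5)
Votre preuve est correcte et suit pour l'essentiel la m�me voie que celle du texte : le sens direct repose, comme dans le texte, sur l'it�ration du crit�re de divisibilit� $m \mid \pgcd(m_1,m_2)F$ pour obtenir $m_i \mid y_i$, puis sur la r�gularit� du mon�me $m_im_j$ pour identifier les quotients $q_i$, et le sens r�ciproque consiste dans les deux cas � diviser par le pgcd $g$ et � tirer $qg=1$ de la proportionnalit� entre $(m_i)$ et $(m_i/g)$. Les seules diff�rences sont de forme : vous traitez explicitement la clause $\Ann(\um)=0$ (via McCoy) que le texte laisse implicite, et vous �noncez la r�ciproque par contrapos�e (en supposant $\bfk \ne 0$) l� o� le texte conclut positivement $q=1$ � partir de $qg=1$, ce qui est plus conforme au parti pris du texte d'�viter les �nonc�s n�gatifs.
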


\begin {proof} \leavevmode

\noindent 
Implication $\Rightarrow$.
Soient $F_1, \dots, F_s \in \bfk[\uT]$ vérifiant $F_i m_j = F_j
m_i$. Il nous faut montrer qu'il existe un (unique) $G$ tel que $F_i =
Gm_i$. Le monôme $m_1$ divise chaque $m_2F_1, \dots, m_sF_1$ donc,
par itération de la propriété figurant en préambule, en posant $m'_1 = \pgcd(m_2, \dots,
m_s) F_1$, on a $m_1 \mid m'_1 F_1$. Et comme $\pgcd(m_1,m'_1) = 1$, on
en déduit $m_1 \mid F_1$.

Ce que l'on a fait avec l'indice 1, on peut le faire pour n'importe quel indice $i$ pour obtenir
$m_i \mid F_i$, disons $F_i = G_im_i$.  En considérant de nouveau $F_i m_j = F_j m_i$, on
obtient $G_im_im_j = G_jm_jm_i$ donc $G_i = G_j$. Et l'unique $G$ qui convient est la valeur
commune des $G_i$.

\medskip
\noindent
Implication $\Leftarrow$. Soit $q$ le monôme $\pgcd(m_1, \dots, m_s)$, donc $m_i = qm'_i$
avec $m'_i$ monôme. On a $m'_im_j = m'_jm_i$. Comme $\pgcd(\um') = 1$, on 
a $\Gr(\um') \geqslant 2$,  donc il existe $G$ tel que $m'_i = Gm_i$, ce qui conduit à $Gq = 1$
puis à $q= 1$.
\end {proof}


\medskip
\noindent
La propriété $\Gr(\ua) \geqslant 2$ peut être intuitivement perçue
comme le fait que les éléments de $\ua$ sont premiers dans leur
ensemble dans un sens très fort.  Par exemple, comme nous venons de le
voir, une famille finie de monômes sur un anneau quelconque est de
profondeur $\geqslant 2$ si et seulement si ils sont premiers dans leur
ensemble.  Dans un anneau intègre à pgcd (par exemple un anneau
factoriel), une famille d'éléments est de profondeur~$\geqslant 2$ si
et seulement s'ils sont premiers dans leur ensemble. Le lemme qui suit
formalise cette terminologie ``premiers entre eux dans un sens fort''.

\begin{lem}[Neutralisation des idéaux de profondeur $\geqslant 2$] 
\label{Gr2StrongGcd} 

\leavevmode

Soit $\ub$ une suite de scalaires de profondeur $\geqslant  2$.

\begin{enumerate}[\rm i)]
\item 
Soient $h,g \in \bfA$ avec $h$ régulier vérifiant $h \mid gb_i$ pour tout $i$;
alors $h \mid g$. 

\item 
Soit $h \in \bfA$ tel que $h \mid b_i$ pour tout $i$; alors
$h$ est inversible.
\end{enumerate}
\end {lem}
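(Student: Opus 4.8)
The plan is to prove i) first and then obtain ii) as a one-line consequence. For i), since $h \mid gb_i$ for every $i$, I would introduce the auxiliary sequence $(c_i)$ defined by $gb_i = hc_i$, and then try to feed $(c_i)$ into the profondeur $\ge 2$ hypothesis on $\ub$. Recall that this hypothesis says: any sequence $(y_i)$ satisfying $b_iy_j = b_jy_i$ for all $i,j$ is necessarily of the form $y_i = qb_i$ for a single scalar $q$, and moreover $\Ann(\ub) = 0$.

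The key step is therefore to check that $(c_i)$ is ``proportional'' to $\ub$, i.e. that $b_ic_j = b_jc_i$ for all $i,j$. I would verify this by multiplying through by $h$: one has $h\,b_ic_j = b_i(hc_j) = b_i(gb_j) = gb_ib_j$, and symmetrically $h\,b_jc_i = gb_ib_j$, so $h(b_ic_j - b_jc_i) = 0$; since $h$ is regular, this forces $b_ic_j = b_jc_i$. The profondeur $\ge 2$ property then supplies a scalar $q$ with $c_i = qb_i$ for every $i$. Substituting back into $gb_i = hc_i$ gives $(g - hq)b_i = 0$ for all $i$, whence $g - hq \in \Ann(\ub) = 0$; thus $g = hq$ and $h \mid g$, which is i).

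For ii), I would first observe that $h$ is regular: writing $b_i = hd_i$, if $hx = 0$ then $b_ix = d_i(hx) = 0$ for all $i$, so $x \in \Ann(\ub) = 0$. With $h$ regular and $h \mid b_i = 1\cdot b_i$ for all $i$, part i) applied with $g = 1$ yields $h \mid 1$, i.e. $h$ is invertible, which is ii).

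The only genuine subtlety is getting the direction of the profondeur $\ge 2$ relation right — it produces $c_i = qb_i$, not $b_i = qc_i$ — and invoking the regularity of $h$ at precisely the one place it is needed, namely to cancel $h$ in $h(b_ic_j - b_jc_i) = 0$ so that the proportionality hypothesis becomes applicable. Once that relation is in hand, everything else is substitution together with the vanishing of $\Ann(\ub)$.
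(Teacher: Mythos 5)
Your proof is correct and follows essentially the same route as the paper's: introduce $c_i$ with $gb_i = hc_i$, use the regularity of $h$ to cancel it in $h(b_ic_j - b_jc_i) = 0$, invoke profondeur $\geqslant 2$ to get $c_i = qb_i$, and conclude via $\Ann(\ub) = 0$; point ii) is likewise reduced to i) with $g=1$ after noting $h$ is regular. The only difference is cosmetic: you spell out why $h$ is regular in ii), a detail the paper merely asserts.
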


\begin {proof}\leavevmode

i) Par hypothèse, il existe $q_i \in \bfA$ tel que $gb_i = q_ih$; on en déduit
$q_ib_j h = q_jb_i h \ (= gb_ib_j)$ et puisque $h$ est régulier,
$q_ib_j = q_jb_i$. Comme $\Gr(\ub) \geqslant  2$, il existe $q \in \bfA$
tel que $q_i = qb_i$. On reporte cette valeur de $q_i$ dans l'égalité
initiale $gb_i = q_ih$ pour obtenir $gb_i = qb_ih$ \idest{} $(g-qh) b_i = 0$
pour tout $i$.
Comme $\Ann(\ub) = 0$, on~a $g = qh$ et on a bien montré que $h \mid g$.

ii) Puisque $h \mid b_i$ pour tout $i$ et $\Ann(\ub) = 0$, 
l'élément $h$ est régulier et on peut appliquer le point i) avec $g=1$.
\end{proof}  

\begin{rmq} \label{NeutralisationGr2}
La propriété i) du lemme précédent peut être vue comme une
\og neutralisation des idéaux de profondeur $\geqslant  2$\fg{} et être écrite de la
manière suivante : 
\begin{center}
\textit{Soit $\fb$ un idéal de type fini vérifiant $\Gr(\fb) \geqslant  2$;
alors pour $h$ régulier :} 
$$
\forall\, g \in \bfA, \quad 
\langle g\rangle\,\fb \subset \langle h\rangle   \quad\Rightarrow\quad
\langle g\rangle \subset \langle h\rangle
$$
\end{center}
\end{rmq}

\medskip
Dans un anneau commutatif quelconque, il est possible de décréter
qu'un élément régulier $d$ est le pgcd d'une famille finie d'éléments
(d'annulateur réduit à 0) si d'une part $d$ est un diviseur commun et
si d'autre part tout diviseur commun est un diviseur de~$d$.  C'est
une définition ponctuelle faible, d'un maigre intérêt. En effet, 
si $d$ est un pgcd de $\ua$ en ce sens et $c$ un élément
régulier, il n'y a aucune raison pour que le pgcd de $c\ua$ soit $cd$
(cf l'exemple~\ref{ContreExemplePgcdFort} à venir) ;
une telle propriété de divisibilité est
pourtant le ``minimum vital requis''.

\begin{exemple} \label{ContreExemplePgcdFort}
Considérons la suite $\ua = (t^2,t^3)$ dans l'anneau $\bfA = \bbZ[t^2,t^3]$.

$\rhd$ 
En utilisant le fait que $t \notin \bfA$, on voit que les éléments
$t^2, t^3$ sont premiers entre eux (au sens où les seuls
diviseurs communs sont les inversibles).  

$\rhd$
Cependant, on n'a pas
$\Gr(\ua) \geqslant  2$; en effet, les suites $\ua = (t^2,t^3)$ et $\ub = (t^3, t^4)$
sont proportionnelles (au sens où $a_1b_2 = a_2b_1$) mais $\ub$ n'est
pas multiple de $\ua$.

$\rhd$
Comme $1$ est un pgcd faible de $\ua$, on pourrait s'attendre à ce que 
$t^3$ soit un pgcd de $t^3\ua$.
Mais il n'en est rien
car $t^3\ua = (t^5,t^6)$ admet $t^2$ comme diviseur commun, et $t^2$ ne divise pas $t^3$
dans $\bfA$.

$\rhd$
Introduisons $\ub = (t^3, t^4)$. Il est clair que $\ub$ a pour pgcd~1. 
Quid de $\ua\,\ub$ ? On a~:
$$
\langle \ua\,\ub\rangle = \langle t^5, t^6, t^6, t^7\rangle =
\langle t^5, t^6\rangle
$$
Mais la suite $(t^5, t^6)$ n'a pas pour pgcd~1 car $t^2$ et $t^3$ sont
des diviseurs communs des termes de cette suite.
Pire: cette suite ne possède pas de pgcd!
\end{exemple}

Au regard du contre-exemple précédent, nous souhaitons mettre place
une notion très robuste pour palier ce type de désagrément.  Cela
conduit à la définition suivante :

\begin{defn}[Pgcd fort d'un idéal de type fini] 
On dit qu'un scalaire $g$ est un pgcd fort d'un idéal de type fini $\fa$
si $g$ est régulier, s'il divise tout élément de $\fa$ et
vérifie $\Gr(\fa/g) \geqslant 2$.  
Ce qui revient à demander l'existence d'une factorisation du type 
$$
\fa = \langle g \rangle \fb 
\qquad 
\text{avec } \qquad 
\left\{
\begin{array}{l}
g \text{ régulier} \\ [1.5mm]
\Gr(\fb) \geqslant 2
\end{array}
\right.
$$
Un pgcd fort, s'il existe, est unique à un inversible près. 
L'idéal $\fb$ est unique.
\end{defn}

\index{pgcd fort}%

\begin{proof}[Preuve de l'unicité]
Si $\fa = \langle g\rangle \fb = \langle g' \rangle \fb'$, 
alors $\langle g\rangle \fb \subset \langle g' \rangle$
et d'après le principe de neutralisation des idéaux de profondeur $\geqslant 2$, 
on obtient 
$\langle g\rangle \subset \langle g'\rangle$. 
Idem en échangeant les rôles de $g$ et $g'$.
On en déduit que $g$ et $g'$ sont associés puis que $\fb = \fb'$.
\end{proof}

\begin{prop}\label{InclusionPGCD}
Soit $\fa$ et $\fa'$ deux idéaux de type fini admettant un pgcd fort, disons $g$ et $g'$.

Si $\fa \subset \fa'$, alors $\langle g \rangle \subset \langle g' \rangle$.
\end{prop}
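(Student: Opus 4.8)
Le plan est d'exploiter directement le principe de neutralisation des id�aux de profondeur $\geqslant 2$ (remarque~\ref{NeutralisationGr2}), dans le m�me esprit que la preuve d'unicit� du pgcd fort d�j� donn�e ci-dessus. Par d�finition du pgcd fort, on dispose de deux factorisations
$$
\fa = \langle g\rangle\,\fb, \qquad \fa' = \langle g'\rangle\,\fb'
$$
avec $g,g'$ r�guliers et $\Gr(\fb) \geqslant 2$, $\Gr(\fb') \geqslant 2$. C'est cette �criture factoris�e, plut�t que la divisibilit� terme � terme, que l'on va mettre � profit.

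D'abord, j'observerais que l'inclusion $\fa \subset \fa'$ fournit, puisque $\fb'$ est un id�al (d'o� $\langle g'\rangle\,\fb' \subset \langle g'\rangle$), la cha�ne d'inclusions
$$
\langle g\rangle\,\fb \ =\ \fa \ \subset\ \fa' \ =\ \langle g'\rangle\,\fb' \ \subset\ \langle g'\rangle,
$$
c'est-�-dire $\langle g\rangle\,\fb \subset \langle g'\rangle$. Il ne reste alors qu'� neutraliser le facteur $\fb$: comme $\Gr(\fb) \geqslant 2$ et que $g'$ est r�gulier, la remarque~\ref{NeutralisationGr2} appliqu�e avec $h = g'$ donne exactement $\langle g\rangle \subset \langle g'\rangle$, qui est la conclusion recherch�e.

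Il n'y a ici pas v�ritablement d'obstacle, tout le travail ayant �t� effectu� en amont dans le lemme~\ref{Gr2StrongGcd} et sa reformulation en remarque~\ref{NeutralisationGr2}. Le seul point � garder en t�te est l'asym�trie des r�les: on n'utilise que la profondeur $\geqslant 2$ du facteur $\fb$ associ� au \emph{plus petit} id�al $\fa$, ainsi que la r�gularit� du pgcd fort $g'$ du \emph{plus grand} id�al $\fa'$. Le facteur $\fb'$ n'intervient que pour assurer l'inclusion $\fa' \subset \langle g'\rangle$, et sa propre profondeur ne joue aucun r�le; de m�me, la r�gularit� de $g$ est superflue pour cette implication.
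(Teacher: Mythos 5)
Votre preuve est correcte et suit exactement la m�me d�marche que celle du papier: on �crit $\fa = \langle g\rangle\,\fb$ avec $\Gr(\fb)\geqslant 2$, on d�duit de $\fa \subset \fa' \subset \langle g'\rangle$ l'inclusion $\langle g\rangle\,\fb \subset \langle g'\rangle$, puis on conclut par le principe de neutralisation~\ref{NeutralisationGr2} avec $h = g'$ r�gulier. Vos observations sur l'asym�trie des r�les (seule la profondeur de $\fb$ et la r�gularit� de $g'$ servent) sont justes, mais n'ajoutent rien d'essentiel � l'argument.
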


\begin{proof}
On a $\fa = \langle g \rangle \fb$ avec $\Gr(\fb) \geqslant 2$. 
L'hypothèse $\fa \subset \fa' \subset \langle g' \rangle$ 
implique $\langle g \rangle \fb \subset \langle g' \rangle$.
D'après le principe de neutralisation des idéaux de profondeur $\geqslant 2$, 
on obtient $\langle g\rangle \subset \langle g'\rangle$. 
\end{proof}

\begin{defn}[Pgcd fort d'un sous-module de type fini dans le module ambiant] \label{PgcdFortModule}
Soit $F$ un sous-module de type fini d'un module $E$ et $\vartheta \in E$.
On dit que $\vartheta$ est un pgcd fort de $F$ dans $E$ si $\vartheta$ 
est sans torsion, s'il divise tout élément de~$F$ et
vérifie $\Gr(F/\vartheta) \geqslant 2$.  
Ce qui revient à demander l'existence d'une factorisation du type 
$$
F = \vartheta \fb 
\qquad 
\text{avec } \qquad 
\left\{
\begin{array}{l}
\vartheta \in E \text{ sans torsion} \\ [1.5mm]
\fb \text{ idéal de type fini de $\bfA$ tel que } \Gr(\fb) \geqslant 2
\end{array}
\right.
$$
Un pgcd fort, s'il existe, est unique à un inversible près.
\end{defn}

\medskip

\begin{prop} [Exponentiation et local-global]  
\label{PropProfondeur2}
Soit $\ua$ et $\ub$ deux suites et $e \in \bbN$.

\begin{enumerate}[\rm i)]
\item 
On a l'implication 
\quad $\Gr(a_1, \dots, a_m) \geqslant 2 \implies \Gr(a_1^e, \ldots, a_m^e) \geqslant  2$.

\item 
Pour tout idéal $\fa$ de type fini, on a 
\quad $\Gr\fa \geqslant  2 \implies \Gr\fa^e \geqslant  2$.

\item 
{\rm Local-global pour la divisibilité.} 
Soient $g,h \in \bfA$. On suppose $h$ régulier et $\Gr(\ub) \geqslant 2$.
On a l'implication
$$
h \mid g
\quad \text{dans chaque localisé $\bfA[1/b_i]$}
\quad \implies \quad 
h \mid g
$$
En particulier, pour un idéal $\fa$ vérifiant 
$\fa \subset \langle h \rangle$ dans chaque localisé, alors 
$\fa \subset \langle h \rangle$ dans $\bfA$.
\item 
{\rm Local-global pour la profondeur $\geqslant 2$.} 
Soit $\uc$ une suite. On suppose $\Gr(\ub) \geqslant 2$. On a l'implication
$$
\Gr(\uc) \geqslant 2 
\quad \text{sur chaque localisé $\bfA[1/b_i]$}
\quad \implies \quad 
\Gr(\uc) \geqslant 2 
$$
\end{enumerate}
\end{prop}

\begin {proof}

Dans la preuve, l'écriture $\uu \wedge \uv = 0$ pour deux suites de mêmes longueurs 
a la signification $\uu$ et~$\uv$ sont proportionnelles \idest{} $u_i v_j = u_jv_i$ pour tous $i,j$.

\smallskip
i)
Pour montrer 
$$
\Gr(a_1, \dots, a_m) \geqslant 2  \quad\Longrightarrow\quad
\Gr(a_1^e, \dots, a_m^e) \geqslant 2
$$
il suffit de montrer que pour $a, b \in \bfA$ et $\uc$ une suite de $m-1$ scalaires, 
on a :
$$
\Gr(a, \uc) \geqslant 2 
\quad \text{ et }\quad
\Gr(b, \uc) \geqslant 2 
\quad \implies \quad 
\Gr(ab, \uc) \geqslant 2 
$$
Le côté \og annulateur \fg{} \idest{} $\rmH^0(ab,\uc) = 0$, facile, est laissé au lecteur.
Occupons-nous à présent du $\rmH^1$ :
$$
\rmH^1(a,\uc) = 0  \hbox { et } \rmH^1(b,\uc) = 0   \quad\Longrightarrow\quad \rmH^1(ab,\uc) = 0 
$$
Soit $y \in \bfA^m$ proportionnel à $(ab,\uc)$. 
On veut montrer que $y$ est multiple de $(ab,\uc)$.
L'hypothèse
$$
\begin{bmatrix} y_1\\ y_2 \\\vdots\\ y_m \end {bmatrix} \wedge
\begin{bmatrix} ab\\ c_2 \\\vdots\\ c_m \end {bmatrix} \overset {(\star)}{=} 0
\quad \hbox{implique} \quad
\begin{bmatrix} y_1\\ by_2 \\\vdots\\ by_m \end {bmatrix} \wedge
\begin{bmatrix} a\\ c_2 \\\vdots\\ c_m \end {bmatrix} = 0 \, ;
\quad \hbox{et donc, grâce à $\rmH^1(a,\uc) = 0$, } \quad
\begin{bmatrix} y_1\\ by_2 \\\vdots\\ by_m \end {bmatrix} =
q \begin{bmatrix} a\\ c_2 \\\vdots\\ c_m \end {bmatrix} 
$$
pour un certain $q \in \bfA$.
En particulier $y_1 = qa$. 
En reprenant les $m-1$ dernières égalités de $(\star)$, 
et les $m-1$ dernières lignes de l'égalité précédente, on obtient :
$$
\begin{bmatrix} q\\ y_2 \\\vdots\\ y_m \end {bmatrix} \wedge
\begin{bmatrix} b\\ c_2 \\\vdots\\ c_m \end {bmatrix} = 0\,; 
\quad \hbox{comme $\rmH^1(b,\uc) = 0$, il existe $q'$ tel que} 
\ 
\begin{bmatrix} q\\ y_2 \\\vdots\\ y_m \end {bmatrix} 
= q' \begin{bmatrix} b\\ c_2 \\\vdots\\ c_m \end {bmatrix}
\quad \hbox{\idest{}} \quad
\begin{bmatrix} y_1\\ y_2 \\\vdots\\ y_m \end {bmatrix} 
= q' \begin{bmatrix} ab\\ c_2 \\\vdots\\ c_m \end {bmatrix}.
$$

\smallskip
ii) résulte de i).

\smallskip
iii) 
L'hypothèse se traduit par l'existence d'un exposant $e$ tel que
pour tout $i$, on ait $h \mid b_i^e g$ dans $\bfA$. 
D'après i), on a $\Gr(b_1^e, \ldots, b_m^e) \geqslant 2$.
Avec~\ref{Gr2StrongGcd}-i), on obtient $h \mid g$.

\smallskip
iv) Le côté annulateur \idest{} $\rmH^0(\uc) = 0$ est facile. 
Pour la nullité de $\rmH^1(\uc)$, considérons $\ux$ tel que $\ux \wedge \uc = 0$ sur $\bfA$.
On passe dans chaque 
localisé en $b_i$ et on obtient, d'après l'hypothèse, l'existence d'un exposant $e$ 
et d'une suite $\uq = (q_i)$ tels que
$$
\forall\, i,\ b_i^e \ux \ = \ q_i \uc 
\qquad \qquad \text{ ou encore } \qquad \qquad 
\forall\, j,\ \ub^e x_j \ = \ \uq c_j
$$
On a $\ub^e \wedge \uq = 0$ (en effet, pour tout $j$, on a 
$c_j (\ub^e \wedge \uq) = 0$ et $\Ann(\uc) = 0$).
Comme $\Gr(\ub^e) \geqslant 2$ (cf. i), il existe $\lambda \in \bfA$ tel que $\uq = \lambda 
\ub^e$. 
On a donc, pour tout $j$, $x_j = \lambda c_j$
(car $\ub^e x_j = \lambda \ub^e c_j$ et $\Ann(\ub^e) = 0$).
D'où $\ux = \lambda \uc$. 
\end {proof}

\label{NOTA01-ExpSuite}%
%
%

Dans ce contexte, tout a été fait pour que l'idéal $c\fa$ admette $cg$ comme pgcd fort
lorsque $c$ est régulier. 
Mais nous avons bien mieux comme le montre le point ii) suivant.

\begin{prop} [Multiplicativité de la propriété $\Gr \geqslant 2$ et du pgcd fort] 
\label{Gr2Multiplicativity} 
\leavevmode
\begin{enumerate}[\rm i)]
\item 
Etant données deux familles finies $\ua$ et $\ub$, en désignant par $\ua\,\ub$ la famille
de termes les $a_i b_j$:
$$
\big(
\Gr(\ua) \geqslant  2  \hbox { et } \Gr(\ub) \geqslant  2 
\big) 
\quad \implies \quad 
\Gr(\ua\,\ub) \geqslant  2
$$

\item 
Soient deux idéaux de type fini $\fa, \fb$ ayant respectivement des pgcds forts $g,h$.
Alors l'idéal produit $\fa\fb$ admet $gh$ comme pgcd fort.
\end{enumerate}
\end{prop}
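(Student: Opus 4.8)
The plan is to deduce (ii) from (i), so the real content is part (i). For (i), I would invoke the local-global principle for profondeur $\geq 2$ (point iv of Proposition~\ref{PropProfondeur2}) with the covering family $\ub$, whose use is licensed exactly by the hypothesis $\Gr(\ub)\geq 2$. Taking $\uc = \ua\,\ub$, it then suffices to establish $\Gr(\ua\,\ub)\geq 2$ after inverting each $b_j$, i.e. over each localized ring $\bfA[1/b_j]$. Note that point iv gives the full conclusion (including the annihilator clause $\Ann(\ua\,\ub)=0$), so no separate treatment of $\rmH^0$ is needed globally.

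Over $\bfA[1/b_j]$ the element $b_j$ becomes invertible; since each $a_i = b_j^{-1}(a_ib_j)$ then lies in $\langle\ua\,\ub\rangle$, while conversely $\langle\ua\,\ub\rangle = \langle\ua\rangle\langle\ub\rangle\subseteq\langle\ua\rangle$, the two ideals $\langle\ua\,\ub\rangle$ and $\langle\ua\rangle$ coincide in $\bfA[1/b_j]$. Because the property $\Gr\geq 2$ depends only on the generated ideal (point ii of Proposition~\ref{Gr2ProprietesElementaires}), I am reduced to checking $\Gr(\ua)\geq 2$ over $\bfA[1/b_j]$.

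The one point demanding care — and the main obstacle of the whole argument — is therefore the invariance of $\Gr\geq 2$ under localization, which I would verify by hand from the robust definition rather than appeal to flatness. For the annihilator: if $x/b_j^e$ kills every $a_i$, then $b_j^M x\,a_i = 0$ in $\bfA$ for some $M$, so $b_j^M x\in\Ann(\ua)=0$ and $x/b_j^e = 0$. For the proportionality clause: given a family $(y_i)$ over $\bfA[1/b_j]$ with $a_iy_k = a_ky_i$, clear denominators as $y_i = c_i/b_j^e$; then $b_j^N(a_ic_k - a_kc_i)=0$ in $\bfA$ for a suitable $N$, so the family $(b_j^N c_i)_i$ is proportional to $\ua$ in $\bfA$. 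Since $\Gr(\ua)\geq 2$ over $\bfA$, there is $q$ with $b_j^N c_i = q\,a_i$, whence $y_i = (q/b_j^{N+e})\,a_i$ over $\bfA[1/b_j]$. This establishes $\Gr(\ua)\geq 2$ over $\bfA[1/b_j]$ and, via the local-global principle, completes the proof of (i).

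For (ii), I would use the strong-gcd factorizations $\fa = \langle g\rangle\fa'$ and $\fb = \langle h\rangle\fb'$ with $g,h$ regular and $\Gr(\fa')\geq 2$, $\Gr(\fb')\geq 2$. Then $\fa\fb = \langle gh\rangle\,\fa'\fb'$, the scalar $gh$ is regular as a product of regular elements, and $\fa'\fb'$ is the finitely generated ideal spanned by the products of generators of $\fa'$ and $\fb'$, so $\Gr(\fa'\fb')\geq 2$ by part (i). By the very definition of a strong gcd this exhibits $gh$ as a strong gcd of $\fa\fb$, unique up to a unit by the uniqueness already established. Everything in (ii) beyond the invocation of (i) is thus purely formal.
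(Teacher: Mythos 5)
Your proof is correct and follows essentially the same route as the paper: part (i) is the local-global principle of Proposition~\ref{PropProfondeur2}~iv) applied with the covering family $\ub$, after observing that over each localized ring $\bfA[1/b_j]$ the ideal $\langle\ua\,\ub\rangle$ contains (indeed equals) $\langle\ua\rangle$, and part (ii) is the same formal deduction from (i) via the factorizations $\fa=\langle g\rangle\fa'$, $\fb=\langle h\rangle\fb'$, regularity of $gh$ and uniqueness of the strong gcd. The only difference is that you verify explicitly, by hand, that $\Gr(\ua)\geqslant 2$ survives localization at $b_j$ --- a point the paper's proof uses tacitly when it invokes Proposition~\ref{Gr2ProprietesElementaires} over the localized ring --- so your write-up is, if anything, slightly more complete.
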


\begin{proof} 
i) Posons $\uc = \ua\, \ub$. 
Comme $\Gr(\ua) \geqslant 2$, on a $\Gr(\uc) \geqslant 2$ sur chaque localisé $\bfA[1/b_i]$
(en effet, sur un tel localisé, l'idéal $\langle \uc \rangle$ contient la suite $\ua$ 
et on applique~\ref{Gr2ProprietesElementaires}).
Comme $\Gr(\ub) \geqslant 2$, le principe local-global~\ref{PropProfondeur2} s'applique 
et fournit $\Gr(\uc) \geqslant 2$.

ii) Par hypothèse, on a $\fa = g\langle\ua\rangle$ et $\fb = h\langle\ub\rangle$
avec $\Gr(\ua) \geqslant 2$ et $\Gr(\ub) \geqslant 2$. 
L'idéal produit vérifie donc $\fa \fb = gh \langle \ua \, \ub \rangle$.
On a $gh$ régulier et, d'après i), $\Gr(\ua \, \ub) \geqslant 2$.
Ainsi, par unicité du pgcd fort, l'idéal $\fa \fb$ admet $gh$ comme pgcd fort.
\end{proof}

\begin{prop}[Aspect \og local\fg{} du pgcd fort] \label{IdealPrincipalLocalGlobal}
Soit $\fa$ un idéal de type fini et un scalaire régulier $g \in \bfA$. 
On suppose que $g$ est un générateur de $\fa$ dans chaque localisé $\bfA[1/b_i]$ 
où $\ub$ est une suite de profondeur $\geqslant 2$.
Alors $g$ est un pgcd fort de $\fa$.
\end{prop}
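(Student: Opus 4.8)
Le plan est de produire la factorisation $\fa = \langle g\rangle\,\fb$ exig\'ee par la d\'efinition du pgcd fort, avec $g$ r\'egulier (c'est l'hypoth\`ese) et $\Gr(\fb) \ge 2$. La premi\`ere \'etape consiste \`a montrer que $g$ divise tout \'el\'ement de $\fa$. En effet, le fait que $g$ engendre $\fa$ dans chaque localis\'e $\bfA[1/b_i]$ fournit en particulier l'inclusion $\fa \subset \langle g\rangle$ dans chaque $\bfA[1/b_i]$. Comme $g$ est r\'egulier et $\Gr(\ub) \ge 2$, le principe local-global pour la divisibilit\'e (point iii de la proposition~\ref{PropProfondeur2}, dans sa version \og id\'eal\fg{}) donne l'inclusion $\fa \subset \langle g\rangle$ dans $\bfA$.

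On peut alors \'ecrire $\fa = \langle g\rangle\,\fb$ en posant $\fb = (\fa : g) = \{x \in \bfA \mid gx \in \fa\}$; si $\fa = \langle a_1, \dots, a_m\rangle$ avec $a_j = g c_j$ (les $c_j$ \'etant uniquement d\'etermin\'es car $g$ est r\'egulier), alors $\fb = \langle c_1, \dots, c_m\rangle$ est de type fini. Il reste \`a \'etablir $\Gr(\fb) \ge 2$, et l'id\'ee est de le faire localement puis de recoller. Fixons un indice $i$. Dans $\bfA[1/b_i]$, l'hypoth\`ese s'\'ecrit $\langle g\rangle\,\fb = \fa = \langle g\rangle$; comme $g$ reste r\'egulier apr\`es localisation (la localisation est plate), l'appartenance $g \in \langle g\rangle\,\fb$ se traduit par $g = g\varphi$ avec $\varphi \in \fb$, d'o\`u $\varphi = 1$ puis $1 \in \fb$. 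Ainsi $\fb$ est unimodulaire dans chaque $\bfA[1/b_i]$, ce qui entra\^{\i}ne $\Gr(\fb) \ge 2$ dans $\bfA[1/b_i]$ (un id\'eal unimodulaire est de profondeur $\ge k$ pour tout $k$).

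Il suffit alors d'invoquer le principe local-global pour la profondeur $\ge 2$ (point iv de la proposition~\ref{PropProfondeur2}), applicable puisque $\Gr(\ub) \ge 2$: de $\Gr(\fb) \ge 2$ sur chaque localis\'e $\bfA[1/b_i]$, on d\'eduit $\Gr(\fb) \ge 2$ sur $\bfA$. On dispose donc de la factorisation $\fa = \langle g\rangle\,\fb$ avec $g$ r\'egulier et $\Gr(\fb) \ge 2$, c'est-\`a-dire que $g$ est un pgcd fort de $\fa$.

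Il n'y a pas ici d'obstacle technique majeur: toute la substance est d\'ej\`a encapsul\'ee dans les deux volets local-global de la proposition~\ref{PropProfondeur2}, dont l'emploi conjoint --- la divisibilit\'e pour faire \og sortir\fg{} le facteur $g$, puis la profondeur pour recoller le $\Gr \ge 2$ du cofacteur $\fb$ --- constitue le c\oe ur de l'argument. Le seul point \`a surveiller est l'annulation de $g$ dans les localis\'es, l\'egitim\'ee par sa r\'egularit\'e conserv\'ee par localisation.
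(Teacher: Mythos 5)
Votre preuve est correcte et suit pour l'essentiel la m\^eme route que celle du texte~: principe local-global pour la divisibilit\'e afin d'obtenir $\fa \subset \langle g\rangle$ dans $\bfA$, factorisation $\fa = g\,\fb$ avec $\fb$ de type fini, puis globalisation de $\Gr(\fb) \geqslant 2$. La seule diff\'erence, inessentielle, r\'eside dans la derni\`ere \'etape~: l\`a o\`u vous invoquez le principe local-global pour la profondeur $\geqslant 2$ (point iv de~\ref{PropProfondeur2}), le texte proc\`ede plus directement en traduisant \og $1 \in \fb$ dans chaque $\bfA[1/b_i]$\fg{} par l'existence d'un exposant $e$ tel que $\langle \ub^e\rangle \subset \fb$ dans $\bfA$, puis conclut par exponentiation ($\Gr(\ub^e)\geqslant 2$) et croissance de $\Gr$ pour l'inclusion (cf.~\ref{Gr2ProprietesElementaires}).
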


\begin{proof}
On a $\fa \subset \langle g \rangle$ dans chaque localisé, donc dans $\bfA$
par principe local-global~\ref{PropProfondeur2}.
Donc il existe un idéal de type fini~$\fb$ tel que $\fa = g \fb$ dans $\bfA$.
Dans chaque localisé, on a $g \in \fa$ donc $1 \in \fb$.
Dans $\bfA$, cela se traduit par l'existence d'un $e$ tel que $\langle \ub^e \rangle \subset \fb$.
D'après le résultat d'exponentiation~\ref{PropProfondeur2}, on a $\Gr(\ub^e) \geqslant 2$, 
puis $\Gr(\fb) \geqslant 2$ 
(croissance de $\Gr$ pour l'inclusion,~cf.~\ref{Gr2ProprietesElementaires}).
\end{proof}

\begin{prop}[Unicité factorisation]
\label{uniciteFactorisation}
Soit $\ua = (a_1, \dots, a_n)$ une suite vérifiant $\Gr(\ua) \geqslant 1$ et 
$\ub = (b_1, \dots, b_m)$ une suite d'éléments de $\bfA$ vérifiant $\Gr(\ub) \geqslant 2$.
Même chose pour $\ua' = (a'_1, \dots, a'_n)$ et $\ub' = (b'_1, \dots, b'_m)$. 
On suppose que $\ua \, \ub  = \ua' \, \ub'$ \idest{} $a_i b_j = a'_i b'_j$.
Alors il existe un inversible $\varepsilon \in \bfA$ tel que 
$\ua = \varepsilon \ua'$ et $\ub = \varepsilon^{-1} \ub'$.
\end{prop}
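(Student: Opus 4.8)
Le plan est de faire reposer l'unicit\'e sur la seule propri\'et\'e $\Gr \geqslant 2$ des suites $\ub$ et $\ub'$, en l'appliquant \`a la relation de proportionnalit\'e qui lie ces deux suites. Le point de d\'epart est d'observer que $\ub$ et $\ub'$ sont proportionnelles, ce que je tirerais de la seule hypoth\`ese $\Ann(\ua) = 0$. Partant des \'egalit\'es $a_i b_j = a'_i b'_j$, je fixerais $i$ et multiplierais de fa\c{c}on crois\'ee l'\'egalit\'e pour l'indice $j$ par $b'_{j'}$ et celle pour l'indice $j'$ par $b'_j$, puis je soustrairais pour obtenir
$$
a_i\,(b_j b'_{j'} - b_{j'} b'_j) = 0, \qquad \forall\, i .
$$
Comme $\Gr(\ua) \geqslant 1$ signifie pr\'ecis\'ement $\Ann(\ua) = 0$, j'en d\'eduirais $b_j b'_{j'} = b_{j'} b'_j$ pour tous $j, j'$.

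J'exploiterais alors \emph{sym\'etriquement} les deux hypoth\`eses $\Gr(\ub) \geqslant 2$ et $\Gr(\ub') \geqslant 2$. Cette proportionnalit\'e, vue du c\^ot\'e de $\ub$, fournit (par d\'efinition de la profondeur $\geqslant 2$) un scalaire $q$ avec $b_j = q\,b'_j$ pour tout $j$; vue du c\^ot\'e de $\ub'$, elle fournit un scalaire $q'$ avec $b'_j = q'\,b_j$ pour tout $j$. En recollant, $b_j = q q'\,b_j$, donc $(1 - qq')\,b_j = 0$ pour tout $j$; puisque $\Ann(\ub) = 0$, cela force $qq' = 1$. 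Ainsi $\varepsilon := q'$ est inversible, d'inverse $q$, et l'on a aussit\^ot $\ub = \varepsilon^{-1}\,\ub'$.

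Il ne resterait qu'\`a r\'ecup\'erer la relation sur les $a$. En r\'einjectant $b_j = \varepsilon^{-1} b'_j$ dans $a_i b_j = a'_i b'_j$, j'obtiendrais $(\varepsilon^{-1} a_i - a'_i)\,b'_j = 0$ pour tout $j$, d'o\`u $a'_i = \varepsilon^{-1} a_i$ gr\^ace \`a $\Ann(\ub') = 0$, c'est-\`a-dire $\ua = \varepsilon\,\ua'$: la preuve serait compl\`ete. L'\'etape que je juge la plus d\'elicate --- et qui justifie la pr\'esence de l'hypoth\`ese de profondeur $\geqslant 2$ sur \emph{les deux} suites $\ub$ et $\ub'$ --- est l'inversibilit\'e de $\varepsilon$: la proportionnalit\'e de $\ub, \ub'$ ne livre \`a elle seule qu'un facteur $q$ a priori non inversible, et c'est le rel\`evement dans les deux sens, autoris\'e par $\Gr \geqslant 2$ de part et d'autre, qui impose $qq' = 1$. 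Je noterais au passage que l'hypoth\`ese $\Gr(\ua') \geqslant 1$ ne sert pas dans cet argument, et que la proportionnalit\'e des suites $\ua, \ua'$ (que l'on obtiendrait de fa\c{c}on analogue \`a partir de $\Ann(\ub) = 0$) est ici superflue.
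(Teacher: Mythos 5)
Votre preuve est correcte et suit pour l'essentiel la m\^eme route que celle du texte : d'abord la proportionnalit\'e $b_j b'_{j'} = b_{j'} b'_j$, puis l'utilisation sym\'etrique de $\Gr(\ub) \geqslant 2$ et $\Gr(\ub') \geqslant 2$ pour obtenir des multiplicateurs dans les deux sens, et enfin $qq' = 1$ gr\^ace \`a $\Ann(\ub) = 0$. La seule diff\'erence r\'eelle se situe en amont : le texte multiplie les mineurs $b_j b'_{j'} - b_{j'} b'_j$ par $a_i a'_j$ et invoque \emph{les deux} hypoth\`eses $\Gr(\ua) \geqslant 1$ et $\Gr(\ua') \geqslant 1$ pour conclure \`a leur nullit\'e, alors que votre multiplication crois\'ee par $b'_{j'}$ et $b'_j$ n'utilise que $\Ann(\ua) = 0$ ; votre remarque finale --- l'hypoth\`ese $\Gr(\ua') \geqslant 1$ est superflue --- est donc exacte et raffine l\'eg\`erement l'\'enonc\'e. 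Deux points mineurs : dans votre r\'edaction, les attributions sont invers\'ees (c'est $\Gr(\ub) \geqslant 2$, appliqu\'e \`a la suite $\ub'$ proportionnelle \`a $\ub$, qui fournit $\ub' = q'\,\ub$, et $\Gr(\ub') \geqslant 2$ qui fournit $\ub = q\,\ub'$), mais comme vous utilisez bien les deux hypoth\`eses, l'argument tient ; enfin, vous explicitez la r\'ecup\'eration de $\ua = \varepsilon\,\ua'$ via $\Ann(\ub') = 0$, \'etape que le texte laisse au lecteur.
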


\begin{proof}
Notons $\ub\wedge\ub'$ la famille des mineurs d'ordre 2 de la matrice
$2\times m$ de lignes $\ub$ et $\ub'$.  En remarquant que $a_j\ub =
a'_j\ub'$, on constate que l'on~a $a_ia'_j\,\ub\wedge\ub' = 0$ pour
tous $i, j$.  Comme $\Gr(\ua) \geqslant 1$ et $\Gr(\ua') \geqslant 1$,
on en tire $\ub \wedge \ub' =0$.  Maintenant, on utilise
successivement $\Gr(\ub) \geqslant 2$ et $\Gr(\ub') \geqslant 2$ : il
existe $\varepsilon, \varepsilon' \in \bfA$ tels que $\ub' =
\varepsilon\ub$ et $\ub = \varepsilon'\ub'$.  D'où $\ub = \varepsilon
\varepsilon' \ub$ puis $ \varepsilon \varepsilon' = 1$.
\end{proof}

\subsection{Le théorème de Wiebe}

Soit $\ua = (a_1, \dots, a_n)$ et $\ux = (x_1, \dots, x_n)$ deux suites de même longueur 
d'un anneau~$\bbA$ telles que $\langle \ua \rangle \subset \langle \ux \rangle$.
Alors, sans aucune hypothèse, on a la suite exacte suivante 
$$
\xymatrix @M=0.4pc @C=3.4pc{
\big(\bbA / \langle \ua \rangle\big)^n
\ar[r]^-{\left[
\begin{smallmatrix}
x_1 & \cdots & x_n \\
\end{smallmatrix}
\right]} 
&
\bbA/\langle \ua \rangle \ar[r]^-{\times 1} & 
\bbA/\langle \ux \rangle \ar[r] & 
0
}
$$
Considérons à présent une matrice 
$V \in \bbM_n(\bbA)$ certifiant l'inclusion $\langle \ua \rangle \subset \langle \ux \rangle$
au sens où 
$\begin{bmatrix}
a_1 & \cdots & a_n 
\end{bmatrix}
 =
\begin{bmatrix}
x_1 & \cdots & x_n 
\end{bmatrix} \, 
V$. 
On a alors une inclusion \og réciproque\fg{}, 
à savoir $(\det V) \cdot \langle \ux \rangle \subset \langle \ua \rangle$.
De manière imagée, le théorème de Wiebe assure l'exactitude
d'une suite analogue à celle ci-dessus pour cette inclusion réciproque, 
pourvu que $\ua$ et $\ux$ soient complètement sécantes.

\begin{theo}[Wiebe] \label{WiebeTheorem}
Soit $\ua = (a_1, \dots, a_n)$ et $\ux = (x_1, \dots, x_n)$ deux suites de même longueur 
d'un anneau~$\bbA$ telles que $\langle \ua \rangle \subset \langle \ux \rangle$ 
et $V$ une matrice telle que 
$\begin{bmatrix}
a_1 & \cdots & a_n 
\end{bmatrix}
 =
\begin{bmatrix}
x_1 & \cdots & x_n 
\end{bmatrix} \, 
V$.
Si $\ua$ est complètement sécante (alors $\ux$ l'est ipso facto), 
la suite ci-dessous est exacte
$$
\xymatrix @M=0.4pc @C=3pc{
0 \ar[r] & 
\bbA/\langle \ux \rangle \ar[r]^-{\times \det V} & 
\bbA/\langle \ua \rangle \ar[r]^-{\left[
\begin{smallmatrix}
x_1 \\ \vdots \\ x_n \\
\end{smallmatrix}
\right]} 
&
\big(\bbA / \langle \ua \rangle\big)^n
}
$$
Autrement dit, on a les égalités de transporteurs (idéaux de $\bbA$):
$$
(\ua : \det V) \ = \ \langle \ux \rangle
\qquad \qquad
\text{et}
\qquad \qquad
(\ua : \ux) \ = \ \langle \det V, \, \ua \rangle
$$
En posant $\bbB = \bbA / \langle \ua \rangle$, on peut encore
écrire en termes d'annulateurs (idéaux de l'anneau $\bbB$):
$$
\Ann(\det V \,; \bbB) \ = \ \ux \, \bbB
\qquad \qquad 
\text{et}
\qquad \qquad
\Ann(\ux \,; \bbB) \ = \ \det (V) . \,\bbB
$$
\end{theo}

\index{théorème!de Wiebe}%


\subsubsection*{Dans la littérature}

Il existe dans la littérature diverses preuves du théorème de Wiebe, à
commencer par celle de Wiebe lui-même \cite{Wiebe}.
Celle qui figure dans \cite[Appendice~E, corollary E.21, p.~356]{Kunz}
est limitée au cas noethérien et utilise l'arsenal de l'algèbre commutative noethérienne non
constructive (spectre premier, passage en local, évitement des idéaux
premiers); mais en l'analysant soigneusement, on peut l'adapter de manière à
obtenir une version non noethérienne avec \og utilisation du true-grade \fg{}
(suites régulières dans une extension polynomiale).
Une preuve générale figure dans Jouanolou \cite[section 3.8, Jacobien tordu]{J4} 
mais elle n'est pas destinée aux enfants car interviennent cohomologie locale 
(en fait de \Cech{}) et suites spectrales.

Signalons une preuve homologique relativement
élémentaire que l'on trouve dans l'article~\cite{SimonStrooker}.

Un point de vue totalement différent fait l'objet du court article
``An elementary proof of Wiebe's theorem'' \cite{CT-TC}.
Les principes d'une théorie de la profondeur, basés
sur la notion de ``True grade à la Northcott'', y sont abordés à l'aide de concepts
élémentaires, sans cohomologie ni suite exacte. Simplicité mais efficacité
puisque le théorème de Wiebe y est prouvé.

Dans l'annexe~\ref{WiebeProof}, 
nous proposons une preuve homologique nouvelle (à notre
connaissance), qui repose sur l'analyse d'un bicomplexe 
$\hbox{Koszul} \otimes \hbox{Koszul}$, 
ce qui nous semble aussi adapté que la technique de
\mbox{Jean-Pierre} Jouanolou utilisant la suite spectrale issue d'un bicomplexe de type 
$\hbox{Koszul} \otimes \hbox{\Cech}$. 
Il convient d'apporter quelques précisions sur ce point de vue et d'en
nuancer les propos. 
Le théorème de Wiebe peut se reformuler uniquement à l'aide d'annulateurs ;
ainsi utiliser Koszul semble naturel, puisque par définition $\rmH^0(\ux \, ; M)$ est le
sous-module des éléments de $M$ annulés par $\ux$ 
(tandis que le module de cohomologie de \Cech{} $\vH^0(\ux \, ; M)$ 
est le sous-module des éléments de $M$ annulés par une \textit{puissance} de $\ux$).
Cependant, il est important de signaler que
l'utilisation d'un complexe de type $\hbox {Kozzul} \otimes \hbox
{\Cech}$ conduit à un mécanisme portant des informations structurelles
beaucoup plus riches (la \textit{transgression}); nous y ferons allusion
dans la section~\ref{HCech0B}.

\medskip

En passant, une curiosité : on trouve chez Bourbaki 
\cite[\S4, exercice 6]{BourbakiACX}
un exercice consacré au théorème de Wiebe ; mais
l'anneau est local noethérien régulier, ce qui n'a pas grand chose à voir
avec ce que nous estimons être le véritable contexte du théorème de Wiebe. 
Et il faut noter l'usage du foncteur Ext: pourquoi faire simple quand on
peut faire compliqué~?

\subsubsection*{Indépendance de $\det V$ modulo $\langle\protect\ua\rangle$ vis-à-vis du choix de $V$}

\begin{prop} \label{IndependanceNabla}
Soient $V, V' \in \bbM_n(\bbA)$ telles que 
$$
\begin{bmatrix} a_1 &\cdots &a_n \end{bmatrix}
 = \begin{bmatrix} x_1 & \cdots & x_n \end{bmatrix} \, V
 = \begin{bmatrix} x_1 & \cdots & x_n  \end{bmatrix} \, V'
$$
Si la suite $\ux$ est $1$-sécante, alors:
$$
\det V -\det V' \ \in \ \langle \ua \rangle
$$
\end{prop}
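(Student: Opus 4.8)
Le plan est de travailler dans l'alg\`ebre ext\'erieure de $\bbA^n$ et d'exploiter l'annulation de $\BW^{n+1}(\bbA^n)$ jointe au caract\`ere $1$-s\'ecant de $\ux$. Notons $V_1, \dots, V_n$ (resp. $V'_1, \dots, V'_n$) les colonnes de $V$ (resp. $V'$), vues comme vecteurs de $\bbA^n$, et posons $W_j = V_j - V'_j$. En identifiant $\BW^n(\bbA^n)$ \`a $\bbA$ via l'orientation canonique, on a $\det V = V_1 \wedge \cdots \wedge V_n$ et l'identit\'e t\'elescopique
$$
\det V - \det V' \ = \ \sum_{j=1}^n V'_1\wedge\cdots\wedge V'_{j-1}\wedge W_j \wedge V_{j+1}\wedge\cdots\wedge V_n .
$$
Il suffira donc de montrer que chaque terme $S_j := \eta_j \wedge W_j \wedge \zeta_j$ appartient \`a $\langle\ua\rangle$, o\`u l'on pose $\eta_j = V'_1\wedge\cdots\wedge V'_{j-1}$ et $\zeta_j = V_{j+1}\wedge\cdots\wedge V_n$.

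L'outil central est la diff\'erentielle $\partial = \sbullet\intd\mu$ du complexe de Koszul descendant de $\ux$, unique anti-d\'erivation \`a gauche de degr\'e $-1$ prolongeant $\mu = [x_1,\dots,x_n] : \bbA^n \to \bbA$. D'abord, comme $V$ et $V'$ certifient $[\ua] = [\ux]V = [\ux]V'$, on a $\partial(V_\ell) = \mu(V_\ell) = a_\ell$ et de m\^eme $\partial(V'_\ell) = a_\ell$, d'o\`u $\partial(W_j) = 0$, c'est-\`a-dire $W_j \in \Ker\partial_1$. Le caract\`ere $1$-s\'ecant de $\ux$ (\idest{} $\rmH_1(\ux) = 0$, soit $\Ker\partial_1 = \Im\partial_2$) fournira alors un $\omega_j \in \BW^2(\bbA^n)$ tel que $\partial(\omega_j) = W_j$.

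J'appliquerais ensuite la r\`egle d'anti-d\'erivation \`a l'\'el\'ement $\eta_j\wedge\omega_j\wedge\zeta_j$, qui vit dans $\BW^{(j-1)+2+(n-j)}(\bbA^n) = \BW^{n+1}(\bbA^n) = 0$ et est donc annul\'e par $\partial$. En d\'eveloppant $0 = \partial(\eta_j\wedge\omega_j\wedge\zeta_j)$ par la r\`egle de Leibniz (avec le signe $(-1)^{\deg}$) et en utilisant $\partial(\omega_j) = W_j$, on obtient
$$
(-1)^{j-1} S_j \ = \ -\,\partial(\eta_j)\wedge\omega_j\wedge\zeta_j \ -\ (-1)^{j-1}\,\eta_j\wedge\omega_j\wedge\partial(\zeta_j).
$$
Comme $\partial$ prolonge $\mu$, les \'el\'ements $\partial(\eta_j) = \sum_\ell (-1)^{\ell-1} a_\ell\, (\cdots)$ et $\partial(\zeta_j)$ sont des sommes dont les coefficients sont les scalaires $a_\ell$ ; chaque terme du membre de droite appartient donc \`a $\langle\ua\rangle\,\BW^n(\bbA^n) = \langle\ua\rangle$. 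Ainsi $S_j \in \langle\ua\rangle$ et, en sommant sur $j$, $\det V - \det V' \in \langle\ua\rangle$.

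Le seul point vraiment d\'elicat sera le rel\`evement $W_j = \partial(\omega_j)$ : c'est exactement l\`a qu'intervient l'hypoth\`ese, et aucune condition plus faible que $\rmH_1(\ux) = 0$ ne convient. Tout le reste est un calcul formel dans $\BW^\sbullet(\bbA^n)$, le levier d\'ecisif \'etant l'annulation $\BW^{n+1}(\bbA^n) = 0$, qui transforme le facteur central $W_j$ en termes de bord portant les $a_\ell$.
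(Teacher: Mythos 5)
Votre preuve est correcte. Elle partage avec celle du texte ses deux premiers ingr\'edients : la r\'eduction par diff\'erences de colonnes (votre identit\'e t\'elescopique est l'analogue, trait\'e globalement, des matrices \og mixtes\fg{} $V^{(i)}$ du texte) et le rel\`evement par $1$-s\'ecance --- votre $\omega_j \in \BW^2(\bbA^n)$ est exactement la matrice altern\'ee $N$ du texte, via l'identification de $\BW^2(\bbA^n)$ avec les matrices altern\'ees rappel\'ee apr\`es la d\'efinition~\ref{DefSuite1Secante}. La vraie divergence est dans la conclusion : le texte d\'eveloppe le d\'eterminant le long de la colonne modifi\'ee puis invoque la proposition de Cramer~\ref{CramerSymetrie} pour obtenir $x_i\Delta_j - x_j\Delta_i \in \langle a_2,\dots,a_n\rangle$, alors que vous remplacez ce couple (d\'eveloppement selon une colonne, puis Cramer) par un unique calcul de d\'erivation : l'annulation $\BW^{n+1}(\bbA^n)=0$ jointe \`a la r\`egle de Leibniz pour l'anti-d\'erivation $\partial = \sbullet\intd\mu$ exprime chaque $S_j$ comme somme de termes de bord dont les coefficients sont les $a_\ell$. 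Votre route est plus uniforme et conceptuelle : un seul calcul, aucun recours \`a~\ref{CramerSymetrie}, et l'usage de l'hypoth\`ese est isol\'e en un point pr\'ecis ($W_j \in \Im\partial_2$). Celle du texte est plus \'el\'ementaire (rien que des d\'eveloppements de d\'eterminants et la sym\'etrie de Cramer) et donne au passage l'appartenance plus fine \`a $\langle a_2,\dots,a_n\rangle$ lorsqu'une seule colonne est modifi\'ee. Seule r\'eserve mineure : votre derni\`ere phrase, affirmant qu'\og aucune condition plus faible que $\rmH_1(\ux)=0$ ne convient\fg, est une assertion non d\'emontr\'ee et inutile \`a l'argument ; mieux vaut la supprimer ou la pr\'esenter comme une simple indication de l'endroit o\`u l'hypoth\`ese intervient.
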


\begin{proof}
Notons $V_1, \cdots, V_n$ les colonnes de $V$,  $V'_1, \cdots, V'_n$ celles de $V'$ et,
pour $0 \le i \le n$, définissons la matrice \og mixte\fg{} $V^{(i)} \in \bbM_n(\bbA)$
ayant pour colonnes
$$
V'_1, \cdots, V'_i,\ V_{i+1}, \cdots, V_n
$$
On a d'une part $\ua.V^{(i)} = \ux$ et d'autre part $V^{(0)} = V$,
$V^{(n)} = V'$. Il suffit donc de démontrer que $\det V^{(i)} - \det
V^{(i+1)} \in \langle\ua\rangle$ puisqu'en utilisant cette
appartenance de proche en proche, on obtiendra celle convoitée dans
l'énoncé.

On peut ainsi se ramener au cas où $V$ et $V'$ ne diffèrent que d'une colonne. 
Supposons par exemple que ce soit la première colonne et notons $h = V_1 - V'_1 \in \bbA^n$. 
Dans ce cas, on va montrer que $\det V - \det V' \in \langle a_2, \dots, a_n \rangle$ ce qui 
terminera la preuve.

Le développement de $\det V$ et $\det V'$ par rapport à la première colonne fournit
$\det V - \det V' = \sum h_j \Delta_j$  où $\Delta_j = \det (\varepsilon_j, V_2, \dots, V_n)$.
Par ailleurs, comme $h = V_1-V'_1$ et $\scp{V_1}{x} = \scp{V'_1}{x} = a_1$,
on a $\sum x_i h_i= 0$. Puisque la suite $\ux$ est $1$-sécante, il y a
une matrice alternée $N$  telle que $h_j = \sum_i x_i N_{ij}$.
En utilisant que $N$ est alternée, on obtient
$$
\begin{array}{rcl}
\det V - \det V' \ = \ 
\sum\limits_{j=1}^n h_j \Delta_j 
& = & 
\sum\limits_j \Big(\sum\limits_{i} x_i N_{ij} \Big) \Delta_j 
\ \, = \ \, \sum\limits_{i<j} x_i N_{ij} \Delta_j + \sum\limits_{i>j} x_i N_{ij} \Delta_j \\ [0.6cm]
& = & \sum\limits_{i<j} x_i N_{ij} \Delta_j +  \sum\limits_{i>j} x_i (-N_{ji}) \Delta_j  
\ \, = \ \, \sum\limits_{i<j} (x_i \Delta_j  - x_j  \Delta_i)N_{ij}
\end{array}
$$
En appliquant Cramer (cf la proposition~\ref{CramerSymetrie}) aux $n-1$ vecteurs $V_2, \cdots, V_n$
et aux deux vecteurs de la base canonique  $\varepsilon_i, \varepsilon_j$, 
on obtient $\Delta_i \varepsilon_j - \Delta_j \varepsilon_i \in \bbA V_2 + \cdots + \bbA V_n$.
Effectuons le produit scalaire avec $x$ en remarquant que, par définition,  
$a_i = \langle x \mid V_i \rangle$:
$$
\Delta_i \, x_j - \Delta_j \, x_i \ \in \ 
\langle a_2,\dots, a_n \rangle
$$
En reprenant l'expression de $\det V - \det V'$ ci-dessus, 
on obtient $\det V - \det V' \in \langle a_2, \dots, a_n \rangle$.
\end{proof}

\subsection{Contrôle de la profondeur par les composantes homogènes dominantes}

On commence par un exemple simple, celui de deux polynômes $F,G$, pour
montrer comment la propriété \og $(F,G)$ est une suite régulière\fg{} peut se déduire
de la même propriété sur les composantes homogènes dominantes. Voici l'énoncé précis.

\begin{prop}[Suite régulière de longueur 2]
\label{ControleProfCompHmgDom-nequal2}
Soit $\bfR[\uU]$ un anneau de polynômes sur un anneau quelconque
$\bfR$ et $F, G \in \bfR[\uU]$ avec $\deg F \leqslant f$ et $\deg G \leqslant g$.
On note $F'$ la composante homogène de degré $f$ de $F$ et $G'$ celle de degré $g$ de~$G$.
Si la suite $(F',G')$ est régulière alors il en est de même de la suite $(F,G)$.
\end{prop}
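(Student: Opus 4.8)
The plan is to reduce the statement about $(F,G)$ to the homogeneous hypothesis on $(F',G')$ by filtering everything by degree and controlling the top-degree behaviour. First I would set up the standard grading on $\bfR[\uU]$ and observe that $F = F' + (\text{lower degree})$, $G = G' + (\text{lower degree})$, so that $F'$, $G'$ are exactly the images of $F$, $G$ in the associated graded pieces $\bfR[\uU]_f$, $\bfR[\uU]_g$. Recall that $(F,G)$ being regular means two things: $F$ is regular in $\bfR[\uU]$, and $G$ is regular modulo $F\bfR[\uU]$. I would handle these two conditions separately, each time using that the corresponding property of the homogeneous component propagates downward through the degree filtration.

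For the regularity of $F$, I would argue: if $PF = 0$ for some $P \in \bfR[\uU]$, look at the top-degree component of the product $PF$. Writing $P = P' + (\text{lower})$ with $P'$ the leading form of degree $p = \deg P$, the component of $PF$ in degree $p+f$ is exactly $P'F'$. Since $(F',G')$ is regular, in particular $F'$ is regular (the first term of a regular sequence is regular), so $P'F' = 0$ forces $P' = 0$, contradicting that $P'$ is the leading form of $P \neq 0$. Hence $P = 0$ and $F$ is regular. The key mechanism here is that leading forms multiply: the leading form of a product is the product of the leading forms, precisely because $F'$ is regular so no cancellation can occur in top degree.

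For the second condition, suppose $GQ \in F\bfR[\uU]$, say $GQ = FR$; I must show $Q \in F\bfR[\uU]$. Again I would pass to leading forms. If $Q \not\in F\bfR[\uU]$, choose $Q$ of minimal degree witnessing a failure, or equivalently argue by descending induction on degree as in the homogeneous permutability proof (Proposition~\ref{PermutabiliteSuiteReguliereHomogene}): comparing top-degree components of $GQ = FR$ gives $G'Q' = F'R'$ in the graded ring, where $Q'$, $R'$ are the leading forms. Since $(F',G')$ is regular, $G'$ is regular modulo $F'\bfR[\uU]$, so $Q' \in F'\bfR[\uU]$, say $Q' = F'S'$ with $S'$ homogeneous. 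Then $Q - FS'$ (with $S'$ lifted) has strictly smaller degree than $Q$ while still satisfying the same divisibility hypothesis modulo $F$, and I would iterate/induct on $\deg Q$ to conclude $Q \in F\bfR[\uU]$. The main obstacle — and the step requiring genuine care — is this descent argument: one must check that at each stage the hypothesis $(G \cdot (\text{new } Q)) \in F\bfR[\uU]$ is preserved and that the leading-form computation $G'Q' = F'R'$ is valid, i.e. that no degree collapse invalidates the identification of top-degree components. This is exactly the homogeneous regularity of $(F',G')$ doing the work, together with the fact that $F'$ is regular guaranteeing the leading form of $FS'$ is $F'S'$. The argument is a two-variable special case of the general result (Theorem~\ref{ControleProfondeur}) announced for arbitrary length, so I would keep the proof self-contained here as a clean template, deferring the general $n$-term statement and its combinatorial bookkeeping to the later theorem.
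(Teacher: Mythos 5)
Your overall strategy coincides with the paper's: prove first that $F$ is regular by extracting a top homogeneous component, then prove that $G$ is regular modulo $F$ by a descent that kills the top component at each step. The first half is correct (the paper phrases it as a descent on a degree bound, $\deg H\leqslant h\Rightarrow\deg H\leqslant h-1$, rather than your minimal-counterexample contradiction, but this is only a stylistic difference). The pivotal identity of your second half, however --- ``comparing top-degree components of $GQ=FR$ gives $G'Q'=F'R'$'' --- is not valid as stated, and you flag it yourself as the main obstacle without closing it. The two products are equal, but the degrees $g+\deg Q$ and $f+\deg R$ in which you read off the leading forms need not coincide: $G'$ is only assumed regular \emph{modulo} $F'$, so $G'Q'$ may vanish and $\deg(GQ)$ may be strictly smaller than $g+\deg Q$; in that case the component of $FR$ in degree $g+\deg Q$ is $0$, not $F'R'$, and your equation is false. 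The conclusion you need ($Q'\in\langle F'\rangle$) does survive, but only via an argument you did not give: since $F'$ is regular, $\deg(FR)=f+\deg R$ exactly (when $R\neq 0$), hence $f+\deg R\leqslant g+\deg Q$, hence the component of $FR$ in degree $g+\deg Q$ is either $F'R'$ or $0$ (and if $R=0$ then $G'Q'=0$ directly); in every case it lies in $\langle F'\rangle$, and regularity of $G'$ modulo $F'$ then yields $Q'\in\langle F'\rangle$. This degree bookkeeping is the entire content of the proposition, so it cannot be delegated to the sentence ``this is exactly the homogeneous regularity of $(F',G')$ doing the work''.

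The paper avoids the case analysis altogether by a device you may want to adopt: instead of exact degrees, it runs the induction on a single integer $d$ subject to the \emph{bounds} $\deg U\leqslant d+g$ and $\deg V\leqslant d+f$ (for the relation $UF=VG$), and compares homogeneous components in degree $d+f+g$. These components $U_{d+g}$ and $V_{d+f}$ are allowed to be zero, so no degree collapse can invalidate the identity $U_{d+g}F'=V_{d+f}G'$; regularity of $(F',G')$ produces a single polynomial $Q_d$ with $V_{d+f}=Q_dF'$ and $U_{d+g}=Q_dG'$, and replacing $(U,V)$ by $(U-Q_dG,\,V-Q_dF)$ drops $d$ by one. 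Note that the paper thereby proves the stronger statement $U=QG$ and $V=QF$ (the Koszul syzygy is the only one), and it is precisely this strengthening, with the same $Q_d$ serving both $U$ and $V$, that makes the simultaneous and unconditional descent possible.
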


\begin{proof} 
Montrons tout d'abord que $F$ est régulier. Soit $H \in \bfR[\uU]$ tel que $FH = 0$.
Pour montrer que $H = 0$, il suffit de montrer que $\deg H \leqslant h \Rightarrow
\deg H \leqslant h-1$. En prenant la composante homogène de degré $f+h$ dans
l'égalité $FH = 0$, on obtient $F'H_h = 0$ et comme $F'$ est régulier,
on obtient $H_h = 0$, ce qu'il fallait démontrer.

\smallskip

Montrons qu'une égalité $UF = VG$ entraîne l'existence d'un polynôme $Q$
tel que $\begin{bmatrix} U\\ V\\\end{bmatrix} = Q\, \begin{bmatrix} G\\ F\\\end{bmatrix}$.
On procède par récurrence sur $d$ où l'entier $d$ vérifie :
$$
\deg U \leqslant d+g
\qquad \text{et} \qquad
\deg V \leqslant d+f
$$
La considération de la composante homogène de degré $d+f+g$ dans l'égalité $UF = VG$
fournit
$$
U_{d+g} F' \,=\, V_{d+f} G'
$$
Puisque la suite $(F',G')$ est régulière, il existe un polynôme $Q_d$
tel que $V_{d+f} = Q_dF'$, donc $U_{d+g} = Q_dG'$ ;
on peut supposer que le polynôme $Q_d$ est homogène de degré $d$,
quitte à considérer sa composante homogène de degré $d$.
L'idée est de remplacer $U$ par $U - Q_dG$ et $V$ par $V - Q_dF$.
On a 
$$
(U-Q_dG)F \,=\, (V-Q_dF)G
$$
De plus, $(U-Q_dG)_{d+g} = U_{d+g} - Q_dG' = 0$ et $(V-Q_dF)_{d+f} = V_{d+f} - Q_dF' = 0$
donc $\deg (U-Q_dG) < d+g$ et $\deg (V-Q_dF) < d+f$. La récurrence s'applique et
fournit un polynôme $Q$ tel que
$$
U-Q_dG = QG, \qquad V-Q_dF = QF   \qquad \hbox {d'où} \qquad
U = (Q_d + Q)G, \qquad V = (Q_d + Q)F
$$
ce qui termine la preuve.
\end {proof}

Le résultat suivant, qui expose comment la profondeur d'une famille finie
de polynômes peut être contrôlée par celle de ses composantes homogènes
dominantes, sera utilisé à de nombreuses reprises.
Dans le dernier point, nous nous permettons d'utiliser le mot ``suite''
au lieu de ``famille'' car les suites qui interviennent sont
commutativement régulières.

\begin{theo}[Contrôle de la profondeur par les composantes homogènes dominantes]
\label{ControleProfondeur}
Soit $\bfR[\uU]$ un anneau de polynômes sur un anneau quelconque
$\bfR$ et une famille finie de polynômes $\uF = (F_i)$ de~$\bfR[\uU]$. 
On suppose que $F_i$ est de degré $\leqslant f_i$. On note $F'_i$ sa
composante homogène de degré~$f_i$ et $\uF'$ la famille~$(F'_i)$.

\medskip

\begin{enumerate}[\rm i)]
\item
Pour un $p$ donné, 
on a l'implication $\rmH^p(\uF' \,; \bfR[\uU]) = 0 \ \Rightarrow \ \rmH^p(\uF \,; \bfR[\uU]) = 0$.

\item
Pour un $m$ donné, 
on a l'implication $\rmH_m(\uF' \,; \bfR[\uU]) = 0 \ \Rightarrow \ \rmH_m(\uF \,; \bfR[\uU]) = 0$.
En particulier pour $m=1$, si $\uF'$ est 1-sécante, il en est de même de $\uF$.

\item 
Si $\Gr(\uF' \,; \bfR[\uU]) \geqslant q$, alors $\Gr(\uF \,; \bfR[\uU]) \geqslant q$.

\item 
Si la suite $\uF'$ est régulière et les $f_i$ strictement positifs, alors
la suite $\uF$ est commutativement régulière.
\end{enumerate}
\end {theo}

\index{théorème!de contrôle de la profondeur par!1@les composantes homogènes dominantes}%

\begin {proof}[Preuve]
Via l'auto-dualité du complexe de Koszul (proposition~\ref{KoszulAutoDualite}),
les points i) et ii) sont équivalents.
Par définition de la profondeur, le point iii) est une conséquence du point~i).  
Le point iv) résulte du point~ii) en vertu des implications suivantes :
$$
\begin{array}{rcl}
\text{$\uF'$ est régulière avec $f_i > 0$} 
& \overset{\scriptstyle \ref{PermutabiliteSuiteReguliereHomogene}}{\implies} & 
\text{$\uF'$ est commutativement régulière} \\
& \overset{\ref{ReguliereSuiteExtraite1Secante}}{\implies} & 
\text{les suites extraites de $\uF'$ sont 1-sécantes} \\
& \overset{\rm ii)}{\implies} & 
\text{les suites extraites de $\uF$ sont 1-sécantes} \\
& \overset{\ref{ReguliereSuiteExtraite1Secante}}{\implies} & 
\text{$\uF$ est commutativement régulière} \\
\end{array}
$$
Il reste donc à montrer le point i). Afin de simplifier les notations, 
nous avons choisi d'en fournir la preuve uniquement dans les cas $p = 0,1,2$.  
Celle-ci repose essentiellement sur la remarque (banale)
suivante concernant la composante homogène d'un produit de deux
polynômes :
$$
(FG)_{f+g} \, =\,  F_f\,G_g  \qquad\quad
\hbox {lorsque $\deg F \leqslant f$ et $\deg G \leqslant g$} 
$$
où l'on note $F_d$ la composante homogène de degré $d$ de $F$.
\end{proof}

\begin{proof}[\bfseries Preuve du point {\rm i)} dans le cas $p=0$]

\leavevmode

\noindent
Il s'agit de montrer que $\Ann(\uF') = 0 \Rightarrow \Ann(\uF) = 0$.
Soit $G$ tel que $GF_i = 0$ pour tout $i$. 
Pour prouver que $G = 0$,
il suffit de prouver que $\deg G \leqslant d \Rightarrow \deg G \leqslant d-1$.
Dans l'égalité $GF_i = 0$, prenons la composante homogène de degré $d+f_i$.
On obtient $G_d F'_i = 0$ pour tout $i$. Comme $\Ann(\uF') = 0$, on obtient $G_d = 0$ \idest{} 
$\deg G\leqslant d-1$.
\end {proof}

\begin {proof}[\bfseries Preuve du point {\rm i)} dans le cas $p=1$]\leavevmode

\noindent
Soit $d \in \mathbb Z$.
On considère l'hypothèse de récurrence $\calH_d$ suivante :

\begin{quote}
\textit{
Pour toute famille $\uG = (G_i)$ de $\bfR[\uU]$ vérifiant
$$
\forall\, i,\  \deg G_i \leqslant d + f_i 
\qquad \text{et} \qquad 
\forall\, i,j, \ G_jF_i = G_iF_j
$$
il existe $Q \in \bfR[\uU]$ tel que $G_i = QF_i$ pour tout $i$.
}
\end{quote}

\noindent
$\rhd$ On initialise la récurrence avec $d = \min (-f_i)-1$ de sorte que $d + f_i < 0$ pour tout $i$.
Ainsi chaque polynôme~$G_i$ est nul et on peut prendre $Q= 0$.

\noindent
$\rhd$ Soit $d$ un entier et $\uG$ une famille vérifiant $G_j F_i = G_i F_j$ 
avec $\deg G_i \leqslant d + f_i$.
Puisque $\deg G_j \leqslant d + f_j$ et $\deg F_i \leqslant f_i$, la composante
homogène de degré $d + f_j + f_i$ de $G_j F_i$ est:
$$
(G_jF_i)_{d + f_j + f_i} 
\ =\ 
(G_j)_{d+f_j} (F_i)_{f_i}
\ \overset{\rm def}{=} \ 
G'_jF'_i
\qquad \hbox {où $G'_j = (G_j)_{d + f_j}$}
$$
En prenant la composante homogène de degré $d + f_j + f_i$ dans 
$G_jF_i = G_i F_j$, on obtient donc :
$$
G'_j\,F'_i \, =\,  G'_i\,F'_j
$$
Puisque $\rmH^1(\uF' \, ; \bfR[\uU]) = 0$, il existe un polynôme $T$ tel que
$G'_i = T\,F'_i$.
Le polynôme $G'_i$ étant homogène de degré $d+f_i$ et $F'_i$ homogène de degré $f_i$, 
on peut supposer
que $T$ est homogène de degré $d$ 
(quitte à le remplacer par sa composante homogène de degré $d$ comme on le voit
en considérant la composante homogène de degré~$d+f_i$ dans l'égalité $G'_i = T\,F'_i$).

On introduit alors $R_i = G_i - TF_i$; il est clair que
$\deg R_i \leqslant d+f_i$ et l'inégalité est stricte puisque:
$$
(G_i - TF_i)_{d+f_i}  \ =\  G'_i - TF'_i \ =\  0
$$
On a $\deg R_i \leqslant d-1 + f_i$ et $R_j F_i = R_i F_j$. 
D'après $\calH_{d-1}$, on en déduit qu'il existe un polynôme~$\widetilde Q$ tel que $R_i = \widetilde QF_i$.
On obtient donc $G_i = (T + \widetilde Q) F_i$ c'est-à-dire $G_i = Q F_i$ en posant $Q = T + \widetilde Q$.
\end{proof}

\begin {proof}[\bfseries Preuve du point {\rm i)} dans le cas $p=2$]\leavevmode

\noindent
Soit $d \in \mathbb Z$.
On considère l'hypothèse de récurrence $\calH_d$ suivante :

\begin{quote}
\textit{
Pour toute famille $\uG = (G_{ij})_{i<j}$ de $\bfR[\uU]$ vérifiant
$$
\forall\, i < j,\  \deg G_{ij} \leqslant d + f_i + f_j
\qquad \text{et} \qquad 
\forall\, i < j < k, \ 
G_{jk}\,F_i - G_{ik}\,F_j + G_{ij}\,F_k = 0
$$
il existe une famille $\uQ = (Q_i) \in \bfR[\uU]$ telle que $G_{ij} = Q_jF_i - Q_i F_j$ pour $i < j$.
}
\end{quote}

\noindent
$\rhd$ On initialise la récurrence avec $d = \min \big(-(f_i+f_j)\big)-1$ de sorte que 
$d + f_i + f_j < 0$ pour tous $i, j$.
Ainsi chaque polynôme~$G_{ij}$ est nul et on peut prendre $Q_i = 0$.

\noindent
$\rhd$ Soit $d$ un entier et $\uG = (G_{ij})$ 
une famille vérifiant $G_{jk}\,F_i - G_{ik}\,F_j + G_{ij}\,F_k = 0$
avec $\deg G_{ij} \leqslant d + f_i + f_j$.
Puisque $\deg G_{ij} \leqslant d + f_i + f_j$ et $\deg F_k \leqslant f_k$, la composante
homogène de degré $d + f_i + f_j + f_k$ de $G_{ij} F_k$ est :
$$
(G_{ij}F_k)_{d + f_i+f_j+f_k} 
\ =\  
G'_{ij}\,(F_k)_{f_k}
\ \overset{\rm def}{=} \ 
G'_{ij}\,F'_k
\qquad
\hbox {où $G'_{ij} = (G_{ij})_{d+f_i+f_j}$}
$$
En prenant la composante homogène de degré $d + f_i + f_j + f_k$ dans 
$G_{jk}\,F_i - G_{ik}\,F_j + G_{ij}\,F_k = 0$,
on obtient donc :
$$
G'_{jk}\,F'_i - G'_{ik}\,F'_j + G'_{ij}\,F'_k \ =\  0
$$
Puisque $\rmH^2(\uF' \, ; \bfR[\uU]) = 0$, il existe une famille $(T_i)$ vérifiant
$G'_{ij} = T_j\,F'_i - T_i\,F'_j$.
Le polynôme $G'_{ij}$ étant homogène de degré $d+f_i+f_j$ et $F'_j$ homogène de degré $f_j$, 
on peut supposer que $T_i$ est homogène de degré $d+f_i$, quitte à le remplacer par
sa composante homogène de degré~$d+f_i$.

On introduit alors $R_{ij} = G_{ij} - (T_jF_i- T_i F_j)$ ; il est clair que
$\deg R_{ij} \leqslant d+f_i + f_j$ et l'inégalité est stricte puisque :
$$
\big(G_{ij} - (T_jF_i - T_i F_j)\big)_{d+f_i+f_j} 
\ = \ 
G'_{ij} - (T_jF'_i- T_i F'_j)
\ = \ 
0
$$
On a $\deg R_{ij} \leqslant d-1 + f_i + f_j$ et $R_{jk} F_i - R_{ik} F_j + R_{ij} F_k = 0$. 
D'après $\calH_{d-1}$, on en déduit qu'il existe une famille~$(\widetilde Q_{i})$ telle que 
$R_{ij} = \widetilde Q_j F_i- \widetilde Q_i F_j$.
En posant $Q_i = T_i + \widetilde Q_i$, on obtient $G_{ij} = Q_j F_i-Q_i F_j$.
\end{proof}

\subsection{Le théorème d'Hilbert-Burch}
\label{sectionHB}

Nous allons utiliser un cas particulier du résultat général suivant dont on trouvera une preuve en
\cite [II.5.22, point 2.]{LombardiQuitte} ou bien, dans la seconde édition, en XV.8.7.

\begin {theo} [de McCoy]
Soit $u : \bfA^m \to \bfA^n$ une application linéaire. Alors elle est injective
si et seulement si l'idéal déterminantiel $\calD_n(u)$ engendré par les mineurs
d'ordre $n$ de $u$ est un idéal fidèle \idest{} $\Ann\,\calD_n(u) = 0$.
\end {theo}

\index{théorème!de McCoy}%

\bigskip

Le contexte est celui d'une application linéaire $u : \bfA^{n-1} \to \bfA^n$, de matrice $U$,
dont on note $\Delta = (\Delta_1, \dots, \Delta_n) : \bfA^n \to \bfA$ la forme linéaire
des mineurs signés, où les $\Delta_i$ sont définis par:
$$
\begin {vmatrix}
u_{1,1} &\cdots  &u_{1,n-1} & X_1\cr
u_{2,1} &\cdots  &u_{2,n-1} & X_2\cr
\vdots &\cdots  &\vdots \cr
u_{n,1} &\cdots  &u_{n,n-1} & X_n\cr
\end {vmatrix}
= \Delta_1X_1 + \cdots + \Delta_nX_n
$$
En désignant par $U_1, \cdots, U_{n-1} \in \bfA^n$ les $n-1$ colonnes de $U$, on peut donc voir
$\Delta$ de la manière suivante.
$$
\Delta(y) = \sum_i y_i\Delta_i = \det(U_1, \dots, U_{n-1},y)    \qquad  y \in \bfA^n
$$
On remarquera que l'idéal $\langle \Delta_1, \dots, \Delta_n\rangle$ est à la fois
l'idéal déterminantiel $\calD_1(\Delta)$ d'ordre 1 de $\Delta$ et l'idéal déterminantiel
$\calD_{n-1}(u)$ d'ordre $n-1$ de $u$.
On a $\Delta \circ u = 0$ d'où un petit complexe:
$$
0 \to \xymatrix {\bfA^{n-1} \ar[r]^u & \bfA^n \ar[r]^{\Delta} & \bfA}
\leqno (\star)
$$
On désigne par $(e_1, \dots, e_n)$ la base canonique de $\bfA^n$.
Notons que l'on a toujours $\Delta_k \Ker\Delta \subset \Im u$. En effet,
pour $y,y' \in \bfA^n$, on a $\Delta(y')y - \Delta(y)y' \in \Im u$ d'après
Cramer (cf. la proposition~\ref{CramerSymetrie}).
En prenant $y \in \Ker\Delta$ et $y' = e_k$, on obtient $\Delta_k y \in \Im u$.

\label{NOTA01-calD}%

\medskip
\noindent
Le théorème d'Hilbert-Burch étudie entre autres l'exactitude du
complexe~$(\star)$.  On peut le déduire de la théorie générale des
résolutions libres finies mais nous avons tenu ici à en fournir une
preuve directe indépendante.  Certains ingrédients qui interviennent
dans les preuves sont classiques en théorie des résolutions libres,
d'autres sont spécifiques au cas particulier étudié.  Bien
qu'élémentaire, ce résultat permet de dégager le lien entre
résolutions de longueur 2 et profondeur $\geqslant 2$.

\begin {theo} [d'Hilbert-Burch] \label{HilbertBurch} \leavevmode

\begin{enumerate}[\rm i)]
\item 
Si le complexe $(\star)$ est exact, alors $\Gr(\Delta) \geqslant 2$.

\item 
Réciproquement, si $\Gr(\Delta) \geqslant 2$, alors le complexe $(\star)$ est exact.

\item  
Sous le couvert de {\rm ii)} (ou {\rm i)} qui lui est équivalent), le sous
$\bfA$-module de $(\bfA^n)^\star$ constitué des formes linéaires $\mu$
nulles sur $\Im u$ est libre de rang 1 de base $\Delta$.

\item 
Soit $\mu : \bfA^n \to \bfA$ une forme linéaire telle que la suite ci-dessous
soit un complexe exact:
$$
\xymatrix {0 \ar[r]  & 
\bfA^{n-1} \ar[r]^u & \bfA^n \ar[r]^{\mu} & \bfA}
$$
Alors $\Gr(\Delta) \geqslant 2$ et $\mu$ est multiple de $\Delta$. 
\end{enumerate}
\end {theo}

\index{théorème!d'Hilbert-Burch}%

\begin {lem} \leavevmode

\noindent
Soit une suite exacte dans laquelle $E,F,G$ sont des $\bfB$-modules quelconques
sur un anneau commutatif~$\bfB$:
$$
0 \to \xymatrix {E \ar[r]^u &F \ar[r]^v & G}
$$
Alors pour tout élément \textrm{$G$-régulier} $b \in \bfB$, l'application
linéaire $\overline u : E/bE \to F/bF$ reste injective.
\end {lem}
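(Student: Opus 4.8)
Étant donné une suite exacte $0\to E\overset{u}{\to}F\overset{v}{\to}G$ de $\bfB$-modules et un élément $b\in\bfB$ qui est $G$-régulier, l'application induite $\overline u:E/bE\to F/bF$ reste injective.

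Voyons comment j'aborderais cela. L'ingrédient central est le lemme du serpent, mais ici un argument direct par chasse aux éléments suffit et reste plus élémentaire. Le plan est de partir d'un élément $\overline x\in E/bE$ tel que $\overline u(\overline x)=0$, c'est-à-dire d'un relèvement $x\in E$ vérifiant $u(x)\in bF$, disons $u(x)=bf$ avec $f\in F$. Je veux montrer $x\in bE$. L'idée est de faire descendre l'information jusqu'à $G$ où l'hypothèse de $b$-régularité va mordre, puis de remonter grâce à l'injectivité de $u$.

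Premièrement, j'applique $v$ à l'égalité $u(x)=bf$. Comme $v\circ u=0$ (c'est un complexe), le membre de gauche s'annule, d'où $b\,v(f)=0$ dans $G$. Or $b$ est $G$-régulier, donc $v(f)=0$, c'est-à-dire $f\in\Ker v=\Im u$ par exactitude en $F$. Il existe alors $x'\in E$ tel que $f=u(x')$. En reportant, on obtient $u(x)=b\,u(x')=u(bx')$. L'injectivité de $u$ (exactitude en $E$, puisque la suite commence par $0\to E$) fournit $x=bx'$, donc $x\in bE$ et par conséquent $\overline x=0$ dans $E/bE$. Ceci établit l'injectivité de $\overline u$.

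Il n'y a à vrai dire pas d'obstacle majeur : le seul point délicat est de bien identifier \emph{où} intervient chaque hypothèse. La $G$-régularité de $b$ sert une unique fois, précisément pour passer de $b\,v(f)=0$ à $v(f)=0$ ; l'injectivité de $u$ sert à conclure $x=bx'$ à partir de $u(x)=u(bx')$ ; et l'exactitude au milieu (égalité $\Ker v=\Im u$) sert à produire l'antécédent $x'$. Remarquons qu'aucune hypothèse n'est requise sur $E$ ou $F$ vis-à-vis de $b$, ce qui est cohérent avec l'énoncé. Ce lemme servira ensuite, par récurrence descendante sur une suite complètement sécante, à propager l'exactitude réduite modulo les éléments successifs, exactement dans l'esprit de la tensorisation par un complexe élémentaire utilisée plus haut.
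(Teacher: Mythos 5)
Votre preuve est correcte et suit exactement la même démarche que celle du texte : on relève $\overline x$ en $x$ avec $u(x)=bf$, on applique $v$ pour obtenir $b\,v(f)=0$, la $G$-régularité de $b$ donne $v(f)=0$, l'exactitude en $F$ fournit $f=u(x')$, et l'injectivité de $u$ permet de conclure $x=bx'\in bE$. Rien à redire, y compris sur l'identification précise du rôle de chaque hypothèse.
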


\begin {proof} \leavevmode
Il faut prouver que pour $x \in E$ vérifiant $u(x) \in bF$, on a $x \in bE$. On
écrit $u(x) = by$ avec $y \in F$. Et on applique $v$, ce qui donne $bv(y) = 0$
donc $v(y) = 0$ car $b$ est $G$-régulier. Comme la suite est
exacte, il existe $x' \in E$ tel que $u(x') = y$. Il vient $u(bx') = by = u(x)$
donc $bx' = x$ puisque $u$ est injective.
\end {proof}

\begin {proof} [\bf Preuve du théorème d'Hilbert-Burch]
\leavevmode

\noindent
i) D'après le théorème de McCoy, on a $\Gr(\Delta) \geqslant 1$ puisque $u$ est injectif.
Soient $a_1, \cdots, a_n$ des scalaires vérifiant $\Delta_j a_i
= \Delta_i a_j$; on doit montrer l'existence de $q \in \bfA$ vérifiant
$a_i = q\Delta_i$ pour tout $i$. Ou encore, en définissant les
polynômes $f = \sum_i \Delta_i X_i$ et $g = \sum_i a_i X_i$, que $g$
est multiple de $f$.

\noindent
Le complexe obtenu par extension des scalaires de $\bfA$ à $\bfA[\uX]$ 
$$
0 \to \xymatrix {\bfA[\uX]^{n-1} \ar[r]^u & \bfA[\uX]^n \ar[r]^{\Delta} & \bfA[\uX]}
\qquad \hbox {reste exact.}
$$
Nous sommes dans les conditions d'appliquer le lemme en
prenant $\bfB = \bfA[\uX]$ et $b = f$, qui est bien un élément
régulier de $\bfA[\uX]$ (lemme de McCoy~\ref{McCoyPolyLemma}).
On obtient que
$\overline u : \bfB^{n-1}/f\bfB^{n-1} \to \bfB^n/f\bfB^n$ est
injectif. Et donc l'idéal déterminantiel $\langle \overline
{\Delta_1}, \dots, \overline {\Delta_n}\rangle$, idéal de
$\bfB/f\bfB$, est fidèle.

\noindent
Mais on a $\Delta_i g = a_i f$, a fortiori $\langle \overline
{\Delta_1}, \dots, \overline {\Delta_n}\rangle \overline g = 0$ donc,
par fidélité de $\langle \overline {\Delta_1}, \dots, \overline
{\Delta_n}\rangle$, on a $\overline g = 0$. On vient de montrer que
$g$ est multiple de $f$.

\medskip\noindent
ii) L'utilisation de $\Gr(\Delta) \geqslant 1$ fournit déjà, via le théorème de McCoy,
le fait que $u$ est injective.

\noindent
Soit $y \in \Ker\Delta$; on veut trouver $x \in \bfA^{n-1}$ tel que $u(x) = y$. Dans le préambule
avant le théorème, on a vu que pour $j = 1, \dots, n$, on a $\Delta_j y \in \Im u$ d'où
un $x^{(j)} \in \bfA^{n-1}$ tel que $\Delta_j y = u(x^{(j)})$.  On en déduit:
$$
u(\Delta_i x^{(j)}) = \Delta_i \Delta_j y = \Delta_j \Delta_i y = u(\Delta_j x^{(i)})
\qquad \hbox {et comme $u$ est injective} \qquad \Delta_i x^{(j)} = \Delta_j x^{(i)}
$$
On utilise maintenant l'hypothèse $\Gr(\Delta) \geqslant 2$, que l'on applique aux égalités
obtenues en prenant la composante d'indice 1 de l'égalité vectorielle de droite.
Ceci fournit un scalaire $x_1$ tel que $x^{(i)}_1 = x_1\Delta_i$. Idem avec les
composantes d'indices $2, 3, \cdots, n$ si bien que l'on obtient un vecteur
$x \in \bfA^n$ vérifiant $x^{(i)}_k = x_k\Delta_i$ pour tous $i,k$. Ceci s'écrit
vectoriellement $x^{(i)} = \Delta_i x$. Il vient alors
$$
\Delta_i y = u(x^{(i)}) = \Delta_i u(x) \quad \hbox {pour tout $i$}
$$
donc $y = u(x)$. On vient d'obtenir $y \in \Im u$, ce qui termine la preuve
de ce point.

\medskip\noindent
iii) D'après Cramer, on a $\Delta_j e_i - \Delta_i e_j \in \Im u$ (et
ceci sans aucune hypothèse).  Comme $\mu$ est nulle sur $\Im u$, on a
$\Delta_j\mu(e_i) = \Delta_i\mu(e_j)$. Puisque $\Gr(\Delta) \geqslant 2$, il
y a $q \in \bfA$ tel que $\mu(e_i) = q\Delta_i$; comme $\Delta_i
= \Delta(e_i)$, on obtient $\mu = q\Delta$.

\medskip\noindent
iv) Il suffit de montrer que $\Ker\mu =\Ker\Delta$, ce qui fera que la suite $(\star)$
est exacte et on pourra alors appliquer les points précédents.

\noindent
Posons $a_i = \mu(e_i)$. On a $a_i\Delta_j = a_j\Delta_i$ comme on le
voit en utilisant que $\Delta_j e_i - \Delta_i e_j \in \Im u
= \Ker \mu$.  On a toujours $\Im u \subset \Ker\Delta$ donc
$\Ker\mu \subset \Ker\Delta$.  Pour l'inclusion opposée, soit
$x \in \Ker\Delta$  \idest{} 
$\sum_i x_i \Delta_i = 0$.  En multipliant par
$a_1$ et en utilisant $a_1\Delta_i = a_i\Delta_1$, il vient
$\Delta_1 \sum_i x_i a_i = 0$. Mais ceci vaut pour tout $\Delta_j$ à
la place de $\Delta_1$. Comme l'idéal déterminantiel $\calD_{n-1}(u)
= \langle \Delta_1, \dots, \Delta_n\rangle$ est fidèle (car $u$ est
injectif), il vient $\sum_i x_i a_i = 0$ \idest{}  $x \in \ker\mu$,
ce qu'il fallait montrer.
\end {proof}

\subsection{Fonctions symétriques élémentaires et profondeur de l'idéal de Veronese}

Le résultat que nous allons établir concernant la profondeur de l'idéal de Veronese
sera utilisé à la fin du chapitre~\ref{ChapObjetsSylvester} pour y montrer
le résultat suivant (cf le théorème~\ref{ProfondeurWh}):
$$
\Gr\big(\det W^\sigma_{h,d}(\uP), \sigma \in \fS_n \big) \ge h
$$
Le résultat analogue pour les matrices $B^\sigma_{k,d}(\uP)$ fera l'objet du théorème~\ref{ProfondeurBk}.

\bigskip

Soit $\bfR [\uY] := \bfR[Y_1,\ldots,Y_n]$ un anneau de polynômes sur
un anneau quelconque $\bfR$.  Pour $1 \leqslant r \leqslant n$,
l'idéal de Veronese $\scrV_r$ de degré $r$ est l'idéal monomial de
$\bfR[Y_1,\ldots,Y_n]$ engendré par les monômes sans facteur carré de
degré~$r$ :
$$
\scrV_r
\ =\  
\Bigl \langle 
\, \prod_{i \in I} Y_i, \  \#I = r \, 
\Bigr \rangle
$$
On peut prolonger la définition pour tout $r \in \bbN$ de sorte que :
$$
\scrV_0 = \bfR[\uY] 
\ \supset \ 
\scrV_1 = \langle \uY \rangle 
\ \supset \quad \cdots \quad \supset \ 
\scrV_n = \langle Y_1 \cdots Y_n \rangle 
\ \supset \ 
\scrV_{n+1} = 0
$$
Notons $\sigma_1, \dots, \sigma_n$ les $n$ fonctions 
symétriques élémentaires en les $n$ variables $Y_1, \dots, Y_n$:
$$
\sigma_k = \sum\limits_{\#I=k} \prod\limits_{i \in I} Y_i
$$
On a donc $\sigma_k \in \scrV_r$ pour tout $k \geqslant r$.
On peut prolonger la définition des fonctions symétriques élémentaires pour tout $k \in \bbN$ :
$$
\sigma_0 = 1 \quad \text{et}\quad  \forall\, k > n, \ \sigma_k = 0 
\qquad \hbox {en cohérence avec} \qquad
\prod_{k=1}^n (1 + tY_k) = \sum_{k\geqslant0} \sigma_k\,t^k
$$

\label{NOTA01-VeroneseIdeal}%
\index{idéal!de Veronese}%

Voici l'énoncé que nous voulons démontrer :

\begin{theo} [Profondeur de l'idéal de Veronese $\scrV_r$] 
\label{ProfondeurVeronese}
L'idéal $\scrV_r$ de $\bfR[Y_1, \ldots, Y_n]$ contient une suite régulière homogène de longueur $n-(r-1)$. 
De manière précise, en notant $\sigma_1, \ldots, \sigma_n$
les $n$ fonctions symétriques élémentaires de $Y_1, \ldots, Y_n$ 
et $f(T)$ le polynôme :
$$
f(T) = T^n - \sigma_1 T^{n-1} + \cdots + (-1)^{r-1} \sigma_{r-1} T^{n-(r-1)}
$$
alors la suite $\big(f(Y_r), f(Y_{r+1}), \ldots, f(Y_n)\big)$ 
est une suite régulière homogène de longueur $n-(r-1)$ contenue dans l'idéal $\scrV_r$.
En particulier
$$
\Gr(\scrV_r) \, \geqslant \, n-(r-1)
$$
\end{theo}

Pour la preuve, nous aurons besoin de quelques propriétés des $\sigma_k$.
En voici quelques-unes pour commencer.
Les deux premières sont très classiques, 
cf.~\cite[section \S6 (Fonctions symétriques), th~1]{BourbakiAlgIV}.
Les deux autres le sont moins (disons, avec un anneau $\bfR$ général à la base).

\begin{enumerate}
\item 
Les éléments $\sigma_1, \ldots, \sigma_n$ sont algébriquement indépendants
sur~$\bfR$. 

\item 
Le module 
$\bfR[\uY]$ est libre sur $\bfR[\sigma_1, \ldots, \sigma_n]$ de rang $n!$ (de base les
$Y_1^{\alpha_1} \cdots Y_n^{\alpha_n}$ avec $0 \leqslant \alpha_i < i$).

\item 
La suite $(\sigma_1, \ldots, \sigma_n)$ est une suite régulière de $\bfR[\uY]$.

{\footnotesize
D'après 1, cette suite est régulière dans $\bfR'= \bfR[\sigma_1, \dots, \sigma_n]$.
Or, d'après 2, $\bfR[\uY]$ est libre sur $\bfR'$, donc 
d'après~\ref{RegTrivialFacts}-ii, la suite est régulière sur $\bfR[\uY]$.
}

\item 
En notant $\sigma'_i$ les fonctions symétriques élémentaires en les variables $Y_1, 
\dots, Y_{r-1}$, on a
$$
\bfR[\sigma_1, \ldots, \sigma_{r-1},Y_r, \dots, Y_n] 
= \bfR[\sigma'_1, \ldots, \sigma'_{r-1},Y_r, \dots, Y_n]
$$

{\footnotesize 
Notons $(\sigma''_j)$ les fonctions symétriques élémentaires en les variables $Y_r, \dots, Y_n$.
En considérant les coefficients de l'égalité :
$$
\prod_{k=1}^n (1 + tY_k) \ =\ 
\prod_{i=1}^{r-1}(1 + tY_i)\ \prod_{j=r}^n (1 + tY_j)
$$
on en déduit les égalités $\sigma_k = \sum_{i+j=k} \sigma'_i \sigma''_j$.
Ces égalités font apparaître une matrice carrée de taille $r$
à coefficients dans $\bfR[Y_r, \dots, Y_n]$, triangulaire inférieure à diagonale unité :
$$
\begin{bmatrix}
1 & \\ 
\sigma''_1 & 1 & \\ 
\sigma''_2 & \sigma''_1 & 1 & \\ 
\vdots & & & \ddots & \\
\sigma''_{r-2} & \sigma''_{r-3} & \cdots & \sigma''_1 & 1 &  \\
\sigma''_{r-1} & \sigma''_{r-2} & \cdots & \sigma''_2 & \sigma''_1 & 1 \\
\end{bmatrix} 
\begin{bmatrix}
\sigma'_0 \\ 
\sigma'_1 \\ 
\sigma'_2 \\ 
\vdots \\
\sigma'_{r-2} \\ 
\sigma'_{r-1} \\ 
\end{bmatrix}
\ = \ 
\begin{bmatrix}
\sigma_0 \\ 
\sigma_1 \\ 
\sigma_2 \\ 
\vdots \\
\sigma_{r-2} \\ 
\sigma_{r-1} \\ 
\end{bmatrix}
$$
de sorte que, pour tout $k \in \llbracket 1, r-1 \rrbracket$, on a 
$$
\sigma_k \in \bfR[\sigma'_1,\dots,\sigma'_{r-1},Y_r, \dots, Y_n]
\qquad \text{ et } \qquad 
\sigma'_k \in \bfR[\sigma_1,\dots,\sigma_{r-1},Y_r, \dots, Y_n]
$$
}
\end{enumerate}

\begin{lem} \label{LemmeFonctionsSymElem}
Soit $r \in \llbracket 1, n+1 \rrbracket$.
\leavevmode

\begin{enumerate}[\rm i)]

\item 
$(\sigma_1, \ldots, \sigma_{r-1}, Y_r, \ldots, Y_n)$ est une suite 
régulière de $\bfR[\uY]$.

\item 
Les éléments $\sigma_1, \ldots, \sigma_{r-1}, Y_r, \ldots, Y_n$ sont algébriquement 
indépendants sur $\bfR$. 

\item 
L'anneau $\bfR[\uY]$ est
un $\bfR[\sigma_1, \ldots, \sigma_{r-1}, Y_r, \ldots, Y_n]$-module libre.
\end{enumerate}
\end{lem}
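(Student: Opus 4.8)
Le plan est d'\'etablir d'abord le point iii), puis le point ii), et d'en d\'eduire le point i) \`a l'aide du lemme~\ref{RegTrivialFacts}, en reprenant le sch\'ema d\'ej\`a utilis\'e pour la propri\'et\'e~3. Pour le point iii), nous introduisons $S' = \bfR[\sigma'_1, \dots, \sigma'_{r-1}]$. D'apr\`es la propri\'et\'e~1 appliqu\'ee aux $r-1$ variables $Y_1, \dots, Y_{r-1}$, les $\sigma'_i$ sont alg\'ebriquement ind\'ependants, donc $S'$ est un anneau de polyn\^omes; et d'apr\`es la propri\'et\'e~2 (toujours en $r-1$ variables), $\bfR[Y_1, \dots, Y_{r-1}]$ est un $S'$-module libre de base les mon\^omes $m_\beta = Y_1^{\beta_1} \cdots Y_{r-1}^{\beta_{r-1}}$ avec $0 \le \beta_i < i$. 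En posant $S = S'[Y_r, \dots, Y_n]$, la propri\'et\'e~4 fournit $S = \bfR[\sigma_1, \dots, \sigma_{r-1}, Y_r, \dots, Y_n]$, qui est pr\'ecis\'ement l'anneau de base vis\'e. L'observation cl\'e est l'isomorphisme de $S$-alg\`ebres $\bfR[\uY] \simeq \bfR[Y_1, \dots, Y_{r-1}] \otimes_{S'} S$ (extension des scalaires le long de $S' \hookrightarrow S$, qui ne fait qu'adjoindre les variables $Y_r, \dots, Y_n$). Par changement de base d'un module libre, $\bfR[\uY]$ est alors $S$-libre de base $(m_\beta \otimes 1)_\beta$, ce qui \'etablit iii) (noter que ce point ne d\'epend pas de ii)).

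Pour le point ii), il s'agit de voir que les $n$ \'el\'ements $\sigma_1, \dots, \sigma_{r-1}, Y_r, \dots, Y_n$, dont la propri\'et\'e~4 garantit qu'ils engendrent $S$, sont alg\'ebriquement ind\'ependants sur $\bfR$. On sait d\'ej\`a que $S$ est l'anneau de polyn\^omes $\bfR[\sigma'_1, \dots, \sigma'_{r-1}, Y_r, \dots, Y_n]$. Nous exploitons la relation triangulaire $\sigma_k = \sum_{i+j=k} \sigma'_i \sigma''_j$ de la propri\'et\'e~4: puisque $\sigma'_0 = \sigma''_0 = 1$, elle s'\'ecrit $\sigma_k = \sigma'_k + P_k$ o\`u $P_k$ ne d\'epend que de $\sigma'_1, \dots, \sigma'_{k-1}$ et de $Y_r, \dots, Y_n$. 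La substitution $\sigma'_k \mapsto \sigma'_k + P_k$ (laissant fixes les $Y_j$) est donc unitriangulaire, et ainsi un automorphisme de l'anneau de polyn\^omes $S$; elle transforme le syst\`eme de variables $(\sigma'_1, \dots, \sigma'_{r-1}, Y_r, \dots, Y_n)$ en $(\sigma_1, \dots, \sigma_{r-1}, Y_r, \dots, Y_n)$, qui est par cons\'equent lui aussi un syst\`eme de variables de $S$. D'o\`u ii).

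Le point i) en r\'esulte sans effort: d'apr\`es ii), $S$ est un anneau de polyn\^omes dont $(\sigma_1, \dots, \sigma_{r-1}, Y_r, \dots, Y_n)$ est la suite des ind\'etermin\'ees, laquelle est (commutativement) r\'eguli\`ere. Comme $\bfR[\uY]$ est un $S$-module libre (point iii), le lemme~\ref{RegTrivialFacts}-ii) assure que cette suite demeure $\bfR[\uY]$-r\'eguli\`ere, ce qui donne i). Le point d\'elicat de l'ensemble est le point ii): il faut convertir la simple \'egalit\'e de sous-anneaux fournie par la propri\'et\'e~4 en une v\'eritable ind\'ependance alg\'ebrique, et c'est le caract\`ere \emph{unitriangulaire}, donc inversible, du changement de g\'en\'erateurs faisant passer des $\sigma'_i$ aux $\sigma_k$ qui le permet. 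Les cas extr\^emes $r=1$ et $r=n+1$ se lisent directement sur l'\'enonc\'e (respectivement la suite $(Y_1, \dots, Y_n)$ et les propri\'et\'es~1--3).
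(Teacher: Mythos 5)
Votre preuve est correcte, mais elle emprunte un chemin r\'eellement diff\'erent de celui du texte, en inversant l'ordre logique des trois points. Le texte d\'emontre d'abord i) via la permutabilit\'e des suites r\'eguli\`eres homog\`enes (proposition~\ref{PermutabiliteSuiteReguliereHomogene}): pour la suite permut\'ee $(Y_r, \dots, Y_n, \sigma_1, \dots, \sigma_{r-1})$, modulo $\langle Y_r, \dots, Y_n\rangle$ les $\sigma_k$ deviennent les fonctions sym\'etriques \'el\'ementaires de $Y_1, \dots, Y_{r-1}$, r\'eguli\`eres par la propri\'et\'e~3; puis ii) se d\'eduit de i) par le lemme~\ref{RegulariteIndependanceAlgebrique} (r\'egularit\'e et $\bfR \cap \langle \ua \rangle = 0$ entra\^inent l'ind\'ependance alg\'ebrique), et iii) est trait\'e s\'epar\'ement par les propri\'et\'es 2 et 4, comme chez vous mais sans le langage tensoriel. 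Vous proc\'edez \`a rebours: iii) d'abord (changement de base le long de $S' \hookrightarrow S$ d'un module libre), puis ii) \emph{directement}, en observant que la relation $\sigma_k = \sigma'_k + P_k$ issue de la propri\'et\'e~4 d\'efinit une substitution unitriangulaire, donc un automorphisme de l'anneau de polyn\^omes $S$, lequel envoie un syst\`eme de variables sur un syst\`eme de variables; enfin i) par le sch\'ema \og suite des variables d'un anneau de polyn\^omes + module libre \fg{} (\ref{RegTrivialFacts}-ii), qui est exactement celui que le texte emploie pour \'etablir la propri\'et\'e~3. Votre route \'evite ainsi compl\`etement la proposition~\ref{PermutabiliteSuiteReguliereHomogene} et le lemme~\ref{RegulariteIndependanceAlgebrique}, n'exploite pas l'homog\'en\'eit\'e des \'el\'ements, et livre en prime la r\'egularit\'e commutative de la suite (toute permutation d'un syst\`eme de variables en est un); la route du texte est plus \'economique une fois ces deux outils install\'es --- ils resservent ailleurs dans l'\'etude --- et fait de ii) un corollaire imm\'ediat de i). Un seul micro-point m\'eriterait d'\^etre explicit\'e chez vous: l'ind\'ependance alg\'ebrique \emph{jointe} de $\sigma'_1, \dots, \sigma'_{r-1}, Y_r, \dots, Y_n$ sur $\bfR$, n\'ecessaire pour affirmer que $S$ est bien un anneau de polyn\^omes en ces g\'en\'erateurs; elle r\'esulte de la propri\'et\'e~1 et du fait que $Y_r, \dots, Y_n$ sont des ind\'etermin\'ees sur $\bfR[Y_1, \dots, Y_{r-1}] \supseteq S'$, ce que vous utilisez implicitement.
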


\begin{proof}
i) 
Les polynômes de la suite sont homogènes de degré $>0$.
D'après~\ref{PermutabiliteSuiteReguliereHomogene},
il suffit d'examiner la suite permutée $(Y_r, \dots, Y_n, \sigma_1, \dots, \sigma_{r-1})$.
Il est clair que $(Y_r, \dots, Y_n)$ est régulière.
Modulo $\langle Y_r, \dots, Y_n \rangle$, la suite $\sigma_1,\dots, \sigma_{r-1}$ 
l'est également car il s'agit de la suite 
des $r-1$ fonctions symétriques élémentaires de l'anneau $\bfR[Y_1, \dots, Y_{r-1}]$
et on utilise le point 3 précédent.

ii) 
D'après le lemme~\ref{RegulariteIndependanceAlgebrique} qui suit et le point i),
on en déduit que les éléments 
$\sigma_1, \ldots, \sigma_{r-1}, Y_r, \ldots, Y_n$ sont algébriquement indépendants sur $\bfR$. 

iii) 
Posons $\bfS = \bfR[Y_r, \dots, Y_n]$.
D'après le point 4, on a 
$\bfS[\sigma_1, \ldots, \sigma_{r-1}] = \bfS[\sigma'_1, \ldots, \sigma'_{r-1}]$.
Et d'après le point 2, $\bfS[Y_1, \dots, Y_{r-1}]$ est un 
$\bfS[\sigma'_1, \ldots, \sigma'_{r-1}]$-module libre.
\end{proof}


\begin{lem}[Régularité et indépendance algébrique]
\label{RegulariteIndependanceAlgebrique}
Soit $\ua = (a_1, \dots, a_n)$ une suite régulière d'un anneau $\bfS$ et $\bfR \subset \bfS$ 
un sous-anneau tel que $\bfR \cap \langle \ua \rangle = \{ 0 \}$.
Alors $a_1, \dots, a_n$ sont algébriquement indépendants sur $\bfR$.
\end{lem}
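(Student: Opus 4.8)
The plan is to show that the $\bfR$-algebra morphism
$$
\phi : \bfR[\uT] \longrightarrow \bfS, \qquad T_i \longmapsto a_i
$$
is injective, which is exactly the asserted algebraic independence. Write $\fm = \langle T_1, \dots, T_n\rangle$ for the irrelevant ideal of $\bfR[\uT]$ and keep $\langle\ua\rangle$ for the ideal of $\bfS$, so that $\phi(\fm^m) \subseteq \langle\ua\rangle^m$ for every $m$. Since a nonzero polynomial always has a homogeneous component of least degree, one has $\bigcap_m \fm^m = \{0\}$ in $\bfR[\uT]$ (no hypothesis on $\bfR$ is needed for this). Hence it suffices to prove, by induction on $m$, that $\Ker\phi \subseteq \fm^m$ for all $m$.

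The inductive step is the core of the reduction. Assume $\Ker\phi \subseteq \fm^m$ and take $P \in \Ker\phi$; then $P \in \fm^m$, and I write $P = P_m + R$ where $P_m = \sum_{|\alpha| = m} r_\alpha T^\alpha$ is the homogeneous part of degree $m$ and $R \in \fm^{m+1}$. Applying $\phi$ gives $\phi(P_m) = -\phi(R) \in \langle\ua\rangle^{m+1}$, \idest{} a degree-$m$ form with coefficients $r_\alpha \in \bfR$ whose value at $\ua$ lies in $\langle\ua\rangle^{m+1}$. Granting the key claim below, each $r_\alpha$ then lies in $\langle\ua\rangle$, hence in $\bfR \cap \langle\ua\rangle = \{0\}$ by hypothesis; so $P_m = 0$ and $P = R \in \fm^{m+1}$, closing the induction (the initial step $m=0\to 1$ is included, $P_0$ being a constant). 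This is the only place where the hypothesis $\bfR \cap \langle\ua\rangle = 0$ is used.

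The key claim is a statement about the regular sequence alone, with coefficients in $\bfS$: if $H = \sum_{|\alpha| = m} c_\alpha T^\alpha$ with $c_\alpha \in \bfS$ satisfies $H(\ua) \in \langle\ua\rangle^{m+1}$, then every $c_\alpha$ lies in $\langle\ua\rangle$. Equivalently, the associated graded ring $\bigoplus_{m\ge 0}\langle\ua\rangle^m/\langle\ua\rangle^{m+1}$ is the polynomial ring over $\bfS/\langle\ua\rangle$ on the classes of the $a_i$; this is the standard fact that the associated graded of a regular sequence is polynomial (see Bruns--Herzog~\cite{BrunsHerzog}, chapter~1). One proves it by induction on $n$: the case $n = 1$ is immediate, since $c\,a_1^m \in \langle a_1^{m+1}\rangle$ forces $c \in \langle a_1\rangle$ by regularity of $a_1^m$, and the passage from $n-1$ to $n$ rests on the regularity of $a_n$ modulo $\langle a_1, \dots, a_{n-1}\rangle$ together with the fact that the prefix $(a_1, \dots, a_{n-1})$ is again regular (\ref{RegTrivialFacts}).

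I expect this key claim to be the main obstacle. The natural temptation is to invoke only that $\ua$ is $1$-secant (\ref{regImplique1secante}) in order to resolve the relation $\sum_i a_i\,b_i = 0$ obtained after stripping the constant term; but $1$-secance yields syzygy coefficients in $\bfS$ rather than in $\bfR[\ua]$, so it does not by itself control the polynomial $P_m$. Genuinely controlling the homogeneous pieces requires the freeness of each graded component $\langle\ua\rangle^m/\langle\ua\rangle^{m+1}$ over $\bfS/\langle\ua\rangle$, \idest{} the full strength of the associated-graded computation, and it is precisely there that the regularity of $\ua$ (and not merely its $1$-secance) enters in an essential way.
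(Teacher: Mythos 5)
Your proof is correct, but it takes a genuinely different route from the paper's. You reformulate algebraic independence as the injectivity of $\phi : \bfR[\uT] \to \bfS$, $T_i \mapsto a_i$, and prove $\Ker\phi \subseteq \fm^m$ for every $m$ by induction, concluding via $\bigcap_m \fm^m = \{0\}$; the whole weight of the argument is then carried by the quasi-regularity theorem (the associated graded ring $\bigoplus_m \langle\ua\rangle^m/\langle\ua\rangle^{m+1}$ is a polynomial ring over $\bfS/\langle\ua\rangle$ in the classes of the $a_i$), which you cite from \cite{BrunsHerzog}. The paper instead gives a direct, self-contained double induction: on $n$, by passing to $\bfS' = \bfS/\langle a_1\rangle$ --- where $\bfR$ still embeds and the hypothesis $\bfR \cap \langle\ua'\rangle = \{0\}$ transfers, precisely because the preimage of $\langle\ua'\rangle$ in $\bfS$ is $\langle\ua\rangle$ --- and on the degree $d$ of the polynomial, via the splitting $f(\uX) = X_1 q(\uX) + r(X_2,\dots,X_n)$ followed by cancellation of the regular element $a_1$. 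Your approach buys conceptual clarity: it isolates exactly what is needed (quasi-regularity, so the lemma holds verbatim for any quasi-regular sequence), and your closing remark correctly identifies why $1$-secance alone cannot suffice. What it costs is self-containedness: the theorem you import is itself proved by a double induction (on $n$ and on the degree) of essentially the same nature and length as the paper's direct argument, so nothing is saved --- and your one-line sketch of its inductive step (``rests on the regularity of $a_n$ modulo the prefix'') understates it noticeably; as a standalone proof that sketch would be a gap, but since you invoke a precisely identified classical theorem with a correct reference, it is a legitimate delegation rather than a hole. The reduction itself --- splitting off the lowest homogeneous component $P_m$, getting $\phi(P_m) \in \langle\ua\rangle^{m+1}$, applying the key claim coefficientwise and then $\bfR \cap \langle\ua\rangle = \{0\}$ --- is clean and correct, including the initial step $m=0$.
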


\begin{proof}
On procède par récurrence sur $n$. Pour $n=1$, la suite est réduite à un élément $a$.
Considérons une relation $\lambda_m a^m + \cdots + \lambda_1 a + \lambda_0 = 0$
avec $\lambda_i \in \bfR$.
On a $\lambda_0 \in \bfR \cap \langle a \rangle = \{ 0\}$ donc 
$(\lambda_m a^{m-1} + \cdots + \lambda_1) a = 0$ ;  on peut simplifier 
par $a$ qui est régulier d'où $\lambda_m a^{m-1} + \cdots + \lambda_1 = 0$ ;
puis on recommence pour obtenir finalement $\lambda_1 = \cdots = \lambda_m = 0$.
Pour $n \geqslant 2$, on considère $\bfS' = \bfS / \langle a_1 \rangle$.
On a $\bfR \hookrightarrow \bfS'$ car $\bfR \cap \langle a_1 \rangle = \{ 0 \}$.
La suite $\ua' = (\overline{a_2}, \dots, \overline{a_n})$ de $\bfS'$ est régulière et 
$\bfR \cap \langle \ua' \rangle = \{ 0 \}$.
Soit $f \in \bfR[\uX]$ de degré $\leqslant d$ tel que $f(\ua) = 0$.
On écrit $f(\uX) = X_1 q(\uX) + r(\uX')$ avec $q \in \bfR[\uX]$ de degré $\leqslant d-1$ et 
$r \in \bfR[X_2, \dots, X_n]$. 
Dans~$\bfS'$, on a $r(\ua') = 0$ donc par récurrence sur $n$, on obtient $r=0$.
On simplifie par $a_1$ régulier d'où $q(\ua) = 0$.
Par récurrence sur $d$, on obtient $q = 0$ puis $f=0$.
\end{proof}

\begin{proof}[\bf Preuve du théorème~\ref{ProfondeurVeronese}]
Prouvons d'abord que $f(Y_i) \in \scrV_r$ pour tout $i \geqslant r$. 
Pour cela on considère le polynôme $g(T)$ :
$$
g(T) \ =\  
\prod_{i=1}^n (T-Y_i)  
\ =\  T^n - \sigma_1 T^{n-1} + \cdots + (-1)^{n} \sigma_{n}
\ =\  f(T) + h(T)
$$
avec $h(T) = (-1)^r \sigma_r T^{n-r} + \cdots + (-1)^n \sigma_n$. 
Pour $j \geqslant r$, l'élément $\sigma_j$ appartient à $\scrV_r$, 
donc il en est de même de $h(Y_i)$.
Comme $g(Y_i) = 0$, on obtient $f(Y_i) = -h(Y_i)$, d'où le résultat.

Introduisons l'anneau intermédiaire $\bfS$ :
$$
\bfS = \bfR[\sigma_1, \ldots, \sigma_{r-1}, Y_r, \ldots, Y_n] \, \subset\, \bfR[\uY]
$$
Les $f(Y_i)$, pour $r \leqslant i \leqslant n$, appartiennent à cet anneau intermédiaire $\bfS$. 

Montrons que la suite  $\big(f(Y_r), \ldots, f(Y_n)\big)$ est une suite régulière de $\bfS$.
Chaque $f(Y_i)$ est un polynôme unitaire en $Y_i$ à coefficients 
dans $\bfR' = \bfR[\sigma_1, \ldots, \sigma_{r-1}]$.
Ainsi $\big(f(Y_r), \ldots, f(Y_n)\big)$ est une suite régulière de 
$\bfR'[Y_r,\dots, Y_n] = \bfS$ (car $Y_r,\dots, Y_n$ sont des indéterminées sur $\bfR'$
d'après~\ref{LemmeFonctionsSymElem}-ii).

Comme $\bfR[\uY]$ est un $\bfS$-module libre
d'après~\ref{LemmeFonctionsSymElem}-iii, la suite $\big(f(Y_r),
\ldots, f(Y_n)\big)$ est une suite régulière de $\bfR[\uY]$
(d'après~\ref{RegTrivialFacts}-ii).
\end{proof}

\cleardoublepage

\section{Les objets associés au système polynomial $\protect\uP = (P_1,\dots,P_n)$}
\label{ObjetsSuiteP}

Fixons le cadre général de notre étude: un format de degrés 
$D = (d_1,\dots, d_n)$ où les $d_i$ sont des entiers~$\geqslant 1$
et un système polynomial $\uP = (P_1, \dots, P_n)$ où chaque $P_i \in \bfA[\uX] =
\bfA[X_1, \dots, X_n]$ est \emph {homogène} de degré $d_i$.
Le statut de \emph{l'anneau des coefficients} $\bfA$,
variable avec le contexte,
sera précisé à l'occasion. Selon les circonstances,
il arrivera parfois que $\bfA$ soit un anneau de polynômes au dessus
d'un \og petit anneau de base $\bfk$ \fg{}, les indéterminées de
$\bfA/\bfk$ étant allouées aux coefficients des $P_i$; en cas
d'absence de précision, $\bfA$ désignera un anneau commutatif
quelconque. De la même manière, nous préciserons à l'occasion la
nature du système $\uP$ (suite régulière, générique etc.).

\label{NOTA02-D}%
\label{NOTA02-uP}%
\label{NOTA02-bfAX}%
\index{format de degrés}%
\index{anneau des coefficients d'un système}%

\medskip
A la base, il y a les deux systèmes ci-dessous. Celui de gauche est qualifié de
\emph{jeu étalon}. 
Celui de droite, où les $p_i$ sont des indéterminées
sur $\bbZ$, est dit \emph{jeu étalon généralisé}:
$$
\uX^D = (X_1^{d_1}, \dots, X_n^{d_n}), \qquad\qquad
\pXD = (p_1 X_1^{d_1},\dots, p_n X_n^{d_n}) 
$$
Dans la suite, nous allons associer au système $\uP$ un certain nombre
d'objets: son complexe de Koszul, l'idéal $\langle\uP\rangle$, ainsi
que des idéaux monomiaux de $\bfA[\uX]$, divers $\bfA$-modules etc.
Il nous faudra introduire un certain nombre de constructions
et étudier les notions qui y sont attachées (régularité,
propriété de MacRae etc.).  Lorsque les notions introduites ne
dépendent que du format $D$ et pas de $\uP$, 
nous essaierons d'en attirer l'attention du lecteur.

\index{jeu!e@étalon}%
\index{jeu!e@étalon généralisé}%
\label{NOTA02-uXD}%
\label{NOTA02-pXD}%
%
%

\subsection{Système homogène générique, système couvrant le jeu étalon généralisé}

Dans la définition de \emph{système générique} ci-dessous, nous n'imposons pas
l'égalité entre le nombre de polynômes et le nombre d'indéterminées.
Il est à noter que chaque polynôme qui intervient est homogène de
degré spécifié.

\begin{defn}[Système homogène générique]
Soient $n,d$ deux entiers. 


\noindent
Un polynôme $F$ en $n$ indéterminées sur un anneau $\bfk$
est dit générique (homogène) de degré $d$ s'il s'écrit:
$$
F = 
\sum_{\substack{\alpha \in \bbN^n \\ |\alpha| = d}} 
c_\alpha X^\alpha \qquad
\hbox {où les $c_\alpha$ sont des indéterminées distinctes sur $\bfk$}
$$
Une famille de polynômes $(P_1, \dots, P_s)$ est générique sur
$\bfk$ si chaque $P_i$ est générique d'un degré spécifié et si la
famille des indéterminées de $P_i$ (c'est-à-dire des coefficients de $P_i$), est disjointe de celle de $P_j$ pour
$i \ne j$.
\end{defn}

\medskip
En présence d'un format $D = (d_1, \dots, d_n)$, nous considérerons le système générique
$\uP = (P_1, \dots, P_n)$ au-dessus de~$\bfk$ défini par
$$
P_i 
\ = \ 
\sum_{\substack{\alpha \in \bbN^n \\ |\alpha| = d_i}} 
c_{i,\alpha} X^\alpha 
\quad \in \ \bfA[X_1, \dots, X_n]
$$
système qui fait intervenir
$$
\sum_{i=1}^n  \binom {d_i+n-1}{n-1}  \quad
\hbox {indéterminées (distinctes) $c_{i,\alpha}$ sur $\bfk$}
$$
Dans ce contexte, nous écrirons l'anneau des coefficients $\bfA$ sous la
forme $\bfA = \bfk[\indetsPi]$, étant sous-entendu qu'un format $D$
est fixé.

\index{système!générique}%
\label{NOTA02-bfkindetsPi}%

\medskip
Une autre notion, moins classique, est celle de système couvrant un autre système.

\begin{defn}[Système couvrant un autre système à coefficients indéterminées]
\label{JeuCouvrant}
On se donne un système $\uQ$ de format $D$ dont les coefficients sont des
indéterminées sur un anneau $\bfR$.

On dit qu'un système $\uP$ couvre $\uQ$ lorsque, pour tout $i$,
le polynôme $P_i$ a pour coefficient en $X^\alpha$ :
$$
\coeff_{X^\alpha}(P_i) \ = \ 
\left\{
\begin{array}{ll}
\text{un élément de $\bfR$} & \text{si $\coeff_{X^\alpha}(Q_i) = 0$} \\
\coeff_{X^\alpha}(Q_i) & \text{sinon} \\
\end{array}
\right.
$$de sorte que $P_i$ a ses coefficients dans 
$\bfR[\text{\rm indets pour les $Q_i$}]$.
\end{defn}

\index{système!couvrant un autre système}%
\index{couvrir un système}%

\medskip
Considérons par exemple $2n$ indéterminées sur $\bfR$ notées $p_1, \dots,
p_n, q_1, \dots, q_n$ et le système $\uQ$ de format $(d_1, \dots, d_n)$ 
défini par :
$$
Q_i = p_iX_i^{d_i} + q_i X_1^{d_i-1} X_{i+1}  \qquad \qquad \text{en convenant que $X_{n+1} = X_1$}
$$
Par exemple, pour $n= 3$, on a :
$$
\left\{
\begin {array} {ccl}
Q_1 &=& p_1X_1^{d_1} + q_1X_1^{d_1-1} X_2 \\ [0.1cm]
Q_2 &=& p_2X_2^{d_2} + q_2X_1^{d_2-1} X_3 \\ [0.1cm]
Q_3 &=& p_3X_3^{d_3} + q_3X_1^{d_3-1} X_1\\
\end {array}
\right.
$$
Pour fixer les idées, prenons $d_1=2$ de sorte que 
$Q_1 = p_1X_1^2 + q_1X_1X_2$. 
Le polynôme $P_1$ 
d'un système $\uP = (P_1,P_2,P_3)$ couvrant $\uQ$ 
est de la forme :
$$
P_1 = p_1X_1^2 + q_1X_1X_2 + a X_1X_3 + b X_2^2 + c X_2X_3 + d X_3^2, \qquad
\text{avec $a,b,c,d \in \bfR$}
$$
Pour $n$ quelconque, ce système $\uQ$ est de la forme $\pXD +
\bsq\uR^{(1)}$ où $\uR^{(1)}$ est un jeu monomial de format~$D$
étudié dans la section \ref{SectionJeuxSimples} (cf l'exemple
après la proposition \ref{ExistenceSuperHauteur}).  On verra
l'importance de la propriété de couverture, cf. la section
\ref{SectSystemesCreuxGr2} et le théorème \ref{GrViaPcouvreQ}
(contrôle de la profondeur par la propriété de couverture).

\medskip

Vu l'importance qu'aura le jeu étalon généralisé dans notre étude, 
nous reformulons
la \og propriété de couverture\fg{} dans ce cadre.

\begin{defn}[Système couvrant le jeu étalon généralisé]
\label{JeuCouvrantEtalon}
Soit $\bfR$ un anneau quelconque et $p_1, \dots, p_n$ des indéterminées sur $\bfR$. 
On pose $\bfA = \bfR[p_1,\dots, p_n]$. On dit qu'un système $\uP = (P_1, \dots, P_n)$
de polynômes de~$\bfA[\uX]$ couvre le jeu étalon généralisé
$(p_1X_1^{d_1}, \dots, p_n X_n^{d_n})$ lorsque
$$
P_i \ = \ \  
p_i X_i^{d_i} 
\ + \ 
\sum_{\alpha \neq (0, \dots, d_i, \dots, 0)} \kern-8pt
a_{i,\alpha}\, X^\alpha 
\quad 
\text{où $a_{i,\alpha} \in \bfR$}
$$
\end{defn}

Nous abordons maintenant un résultat archi-classique: une suite $\uP$ de $n$
polynômes génériques en $n$ indéterminées est une suite régulière.
C'est encore vrai si on prend $s$ polynômes avec $s \leqslant n$ 
(en revanche, c'est faux si $s > n$, comme on le voit avec $n=1$, $s=2$,
$P_1 = aX$, $P_2 = bX$).
Curieusement, Demazure dit dans~\cite[p.~19]{Demazure1} :
\begin{quote}
\textit{Je n'ai pas de démonstration sympathique à proposer pour cette
proposition du ``folklore''}
\end{quote}
La preuve qui suit, tirée de~\cite[lemme 3.3]{Buse}, utilise
la propriété de permutabilité d'une suite régulière homogène en terrain gradué
appliquée à une suite de polynômes
homogènes de degré $> 0$, cf.~\ref{PermutabiliteSuiteReguliereHomogene}.

\begin {theo} [Une suite de polynômes génériques est une suite régulière] 
\label{SuiteGeneriqueReguliere}

On considère~$s$ polynômes génériques $P_1, \ldots, P_s \in \bfA[\uX]$, homogènes de
degré $d_1, \ldots, d_s \geqslant 1$, 
en $n$ variables $\uX = (X_1, \ldots, X_n)$.
Alors la suite $(P_1, \ldots, P_s)$ est régulière lorsque $s \leqslant n$.
\end {theo}

\begin {proof}

Traitons d'abord le cas particulier $n=s=2$, $d_1=2$, $d_2=3$:
$$
P_1 = aX^2 + bXY + cY^2, \qquad
P_2 = a'X^3 + b'X^2Y + c'XY^2 + d'Y^3
$$
Montrons que la suite:
$$
b,\ c,\ a',\ b',\ c',\ X-a,\ Y-d',\ P_1,\ P_2
$$
est régulière.  
Il est clair que la sous-suite $\uw$ des 7 premiers termes est régulière.  
En ce qui concerne la suite $(\uw, P_1, P_2)$, on peut, 
par définition d'une suite régulière, remplacer $P_i$ par $Q_i$ avec
$Q_i \equiv P_i \bmod \langle\uw\rangle$.
Comme $P_1 \equiv X^3 \bmod \langle \uw\rangle$ et $P_2 \equiv Y^4 \bmod \langle\uw\rangle$, 
il s'agit donc de vérifier que la suite $(\uw, X^3, Y^4)$ est régulière ;
elle l'est, essentiellement car $a$ et $d'$ sont des \textit{indéterminées}.
Ainsi, la suite $(\uw, P_1, P_2)$ est régulière ; de plus, elle est constituée de
polynômes homogènes et reste donc régulière par permutation de ses termes
d'après~\ref{PermutabiliteSuiteReguliereHomogene}.
En conséquence, la suite $(P_1,P_2)$ est régulière.

\smallskip
\noindent
Passons au cas général qui est en fait analogue au cas particulier
ci-dessus. Puisque $s \leqslant n$, on peut isoler dans chaque $P_i$ le
terme $p_i X_i^{d_i}$. 
On considère alors la suite $\uw$ constituée d'une part des coefficients des $P_i$ autres 
que $p_i$ et d'autre part des binômes $X_i - p_i$. 
Il est clair que $\uw$ est une suite régulière et que 
$P_i \equiv X_i^{d_i+1} \bmod \langle\uw\rangle$.  
Puisque la suite $(\uw, X_1^{d_1+1}, \ldots, X_s^{d_s+1})$ est régulière, 
il en est de même de $(\uw, P_1, \ldots, P_s)$ et donc par permutation, 
de $(P_1, \ldots, P_s, \uw)$, a fortiori de $(P_1, \ldots, P_s)$.
\end{proof}

Avec le travail réalisé en~\ref{ControleProfondeur} (plus profond
que la simple permutabilité d'une suite régulière en homogène, cf.~\ref{PermutabiliteSuiteReguliereHomogene}),
on obtient le résultat suivant
(plus général que le précédent dans le sens où la suite générique 
$(P_1, \dots, P_n)$ couvre le jeu étalon généralisé).

\begin{theo} 
\label{JeuCouvrantEtalonRegularite}
Soit $\uP$ un système couvrant le jeu étalon généralisé (cf.~\ref{JeuCouvrantEtalon}).
Alors la suite $\uP$ est régulière.
\end{theo}

\begin{proof}
Voyons $\bfA[\uX]$ comme $\bfR[p_1, \ldots, p_n,X_1,\ldots, X_n]$. 
Dans cet anneau de polynômes, $P_i$ est de degré $1+d_i$ et sa
composante homogène dominante (de degré $1+d_i$) est $p_iX_i^{d_i}$. 
La suite $(p_1X_1^{d_1}, \ldots, p_nX_n^{d_n})$ est régulière, donc
d'après la propriété~\ref{ControleProfondeur}-iii), 
la suite $\uP$ est régulière.
\end{proof}

\subsection{Matrice bezoutienne d'un jeu $\protect\uP$ et son déterminant $\nabla$}

\centerline{
\fbox{\parbox{0.95\linewidth}{
Dorénavant $\uP = (P_1, \dots, P_n)$ désigne un système homogène de $\bfA[\uX] = \bfA[X_1, \dots, X_n]$ 
de format de degrés $D = (d_1, \dots, d_n)$}
}}

\medskip
\begin {defn} [Degré critique]
Le degré critique du format $D$ ou du système $\uP$ est l'entier $\delta \geqslant 0$ défini par
$\delta = \sum_{i=1}^n (d_i-1)$.
\end{defn}

Ce degré critique doit être perçu comme la différence entre les degrés des deux systèmes $\uP$ et $\uX$
au sens où $\delta = \sum_i\deg P_i - \sum_i\deg X_i$. Un monôme $X^\alpha$ de degré $\delta$
est divisible par au moins un $X_i^{d_i}$ sauf si $\alpha = (d_1-1, \dots, d_n-1)$. En ce degré $\delta$,
on a donc l'égalité 
$$
\bfA[\uX]_\delta \ = \ 
\langle X_1^{d_1}, \dots, X_n^{d_n} \rangle_\delta \ \oplus \ \bfA X^{\emouton} 
$$
où le monôme $X^{\emouton} = X_1^{d_1 - 1}\cdots X_n^{d_n - 1}$ est baptisé mouton-noir \mouton.

\label{NOTA02-delta}%
\label{NOTA02-Xemouton}%
\index{degré critique d'un système}%
\index{MoutonNoir@{\textit{mouton-noir}}}%
\index{format de degrés}%

\bigskip 
Le fait que chaque polynôme $P_j$ soit homogène de degré $d_j$ permet
d'écrire de manière non unique:
$$
P_j = \sum_{i=1}^n X_i \dsV_{ij}
\qquad 
\text{où $\dsV_{ij}$ est homogène de degré $d_j-1$}
$$
Cette égalité est une façon de \og passer \fg{} de la suite $\uX$ à la suite $\uP$, 
au sens matriciel suivant $\uP = \uX \dsV$. 
Cette écriture conduit à la notion de
matrice bezoutienne et déterminant bezoutien de $\uP$.
Le fait que la suite $\uX$ soit régulière, a fortiori
1-sécante, va jouer un rôle important. 
On rappelle, cf.~\ref{DefSuite1Secante}, 
que le caractère 1-sécant de $\uX$ signifie simplement qu'en
combinant $\bfA[\uX]$-linéairement les relations $X_i\times X_j - X_j
\times X_i = 0$, on obtient toutes les relations linéaires entre 
les~$X_i$.  De manière plus formelle, tout relateur $\bfA[\uX]$-linéaire
entre les~$X_i$ est combinaison $\bfA[\uX]$-linéaire des
$X_ie_j - X_je_i$.

\begin{defn}\label{DefNabla}
Soit $\dsV$ une matrice homogène transformant $\uX$ en $\uP$ au sens suivant:
$$
\begin{bmatrix}
P_1 & \cdots &  P_n
\end{bmatrix} 
\ = \ 
\begin{bmatrix}
X_1 & \cdots &  X_n
\end{bmatrix} \dsV
$$
Ici, matrice homogène signifie que $\dsV_{ij}$ est un polynôme homogène de degré $d_j-1$ en $\uX$.
Une telle matrice est appelée \textit{matrice bezoutienne du jeu $\uP$}.

Le déterminant de $\dsV$, noté $\nabla = \nabla(\uX)$, est un polynôme
homogène de degré $\delta$.  Ce polynôme $\nabla \in \bfA[\uX]_\delta$
sera nommé \og déterminant bezoutien de $\uP$ \fg{}.
\end{defn}

\label{NOTA02-nabla}%
%
\index{matrice!bezoutienne1@$\uX$-bezoutienne}%
\index{de@déterminant bezoutien}%

En cas de conflit de terminologie,
nous utiliserons \og $\uX$-bezoutien \fg{} en lieu et place de \og bezoutien\fg{}. 
On retiendra dans cette définition que la colonne $j$ de $\dsV$ doit être homogène de degré $d_j-1$. 
N'importe quelle matrice $\dsV$ vérifiant $\uP = \uX \, \dsV$ n'est pas 
homogène au sens ci-dessus (bien que les suites $\uP$ et $\uX$ soient homogènes au sens habituel). 
Ainsi, à partir d'une matrice homogène $\dsV$, 
on peut très bien ajouter à sa première colonne le vecteur 
${}^{\rm t}(-X_2^e,\,X_1X_2^{e-1},\, 0, \dots, 0)$, 
conduisant à une matrice $\dsV'$ vérifiant $\uP = \uX \, \dsV'$ dont la première colonne 
n'est pas homogène de degré $d_1 - 1$.


\medskip
Un exemple simple: la matrice diagonale $\diag(p_1X_1^{d_1-1},
\dots, p_nX_n^{d_n-1})$ est une matrice bezoutienne du jeu étalon
généralisé $\pXD$ et son déterminant~$\nabla$ est $p_1 \cdots p_n
X^\emouton$, multiple du monôme \MoutonNoir.

\medskip

\index{matrice!bezoutienne1@$\uX$-bezoutienne!rigidifiée}%

Il existe de nombreuses manières de construire des matrices
bezoutiennes d'un système $\uP$.  En voici une que nous qualifions
parfois de version \og rigidifiée\fg{} (qui suit la version rigidifiée
en 2 jeux de variables, cf. définition
ultérieure~\ref{DefBezoutien})
$$
\dsV_{ij} =
\frac{P_j(0,\dots,0,\cercle{$X_i$},X_{i+1}, \dots, X_n) -
P_j(0,\dots,0 ,\cercle{$0$},X_{i+1}, \dots, X_n)} {X_i}
$$
Avec cette définition, le polynôme $\dsV_{ij}$ appartient à $\bfA[X_i,
  \ldots, X_n]$.  Ainsi, la dernière ligne de $\dsV$ ne dépend que de
$X_n$, l'avant-dernière ligne de $X_{n-1}, X_n$ et ainsi de suite.  Si
l'on impose cette contrainte, à savoir $\dsV_{i,j} \in \bfA[X_i, \ldots, X_n]$,
alors la matrice $\dsV$ ci-dessus est unique à vérifier $\uP = \uX
\,\dsV$.  Ceci vient du fait que pour $F \in \langle X_1, \ldots,
X_n\rangle$, il n'y a qu'une seule manière de l'écrire $F = U_1 X_1 +
\cdots + U_n X_n$ avec $U_i \in \bfA[X_i, \ldots, X_n]$.  Pour assurer
cette unicité (\idest{} $U_1 X_1 + \cdots + U_n X_n = 0$ avec $U_i \in
\bfA[X_i, \ldots,X_n]$ entraîne $U_1 = \dots = U_n = 0$), il suffit de
réaliser la spécialisation $X_1, \ldots, X_{n-1} := 0$, ce qui fournit
$U_n = 0$ (car $U_n$ ne dépend pas de $X_1, \dots, X_{n-1}$), puis
$X_1, \ldots, X_{n-2} := 0$, ce qui fournit $U_{n-1} = 0$
(car $U_{n-1}$ ne dépend pas de $X_1, \dots, X_{n-2}$) et ainsi de suite.  
On notera que l'existence et l'unicité de l'écriture
de $F$ repose essentiellement sur une application répétée
de la décomposition $\bfR[T] = T\bfR[T] \oplus \bfR$, 
où $T$ est une indéterminée sur un anneau~$\bfR$.

\bigskip
Voici un exemple pour le jeu $P_i = S_iX_i^{d_i-1}$ avec 
$S_i = X_i + X_{i+1}$ (la notation est circulaire \idest{} $X_{n+1}$ désigne $X_1$). 
Ci-dessous, deux matrices bezoutiennes de $\uP$, schématisées pour $n = 4$ ;
une matrice $\dsV'$ et une matrice $\dsV''$ qui, elle, fonctionne uniquement pour $d_i \geqslant 2$
$$
\dsV' \ = \
\begin {bmatrix}
X_1^{d_1-1} &.           &.         &X_4^{d_4-1} \\
\noalign {\medskip}
X_1^{d_1-1} &X_2^{d_2-1}  &.          &. \\
\noalign {\medskip}
.          &X_2^{d_2-1}  &X_3^{d_3-1} &. \\
\noalign {\medskip}
.          &.           &X_3^{d_3-1} &X_4^{d_4-1} \\
\end {bmatrix}
\qquad\qquad
\dsV'' \ = \
\begin {bmatrix}
S_1X_1^{d_1-2} &.           &.         &. \\
\noalign {\medskip}
.                   &S_2X_2^{d_2-2}  &.          &. \\
\noalign {\medskip}
.                   &.            &S_3X_3^{d_3-2} &. \\
\noalign {\medskip}
.                   &.           &.                 &S_4X_4^{d_4-2} \\
\end {bmatrix}
$$
Que donne la version \og rigidifiée \fg{} $\dsV$ ?
La ligne $i$ de $\dsV$ ne comporte que des indéterminées d'indice supérieur à $i$.
Ainsi, la colonne $j$ de $\dsV$, notée $\dsV_j$, vérifie 
$$
\forall\, j \leqslant n-1,\quad 
\dsV_j \ = \ 
\left \{
\begin{array}{ll}
\dsV''_j & \text{si $d_j \geqslant 2$} \\[0.2cm]
\dsV'_j & \text{si $d_j = 1$} \\
\end{array}
\right .
\qquad \text{et } \qquad 
\dsV_n = \dsV'_n
$$
Par exemple, pour les formats 
$(2,4,3)$, $(2,4,1)$ et $(2,1,3)$, voici 
la matrice bezoutienne rigidifiée $\dsV$ :
$$
\left[
\begin{array}{*{3}{c}}
X_{1}+X_{2}&  . & X_{3}^{2} \\ 
 . & (X_{2} + X_{3})X_2^2 &  .  \\ 
 . &  . & X_{3}^{2} \\ 
\end{array}
\right]
\quad
\left[
\begin{array}{*{3}{c}}
X_{1}+X_{2}&  . & 1 \\ 
 . & (X_{2} + X_{3})X_2^2 &  .  \\ 
 . &  . & 1 \\ 
\end{array}
\right]
\quad
\left[
\begin{array}{*{3}{c}}
X_{1}+X_{2}&  . & X_{3}^{2} \\ 
 . & 1 &  .  \\ 
 . &  1 & X_{3}^{2} \\ 
\end{array}
\right]
$$

\begin {defn} [L'idéal saturé de $\langle\uP\rangle$ et l'idéal d'élimination] \leavevmode

Au système $\uP$ sont associés deux idéaux: un idéal de $\bfA[\uX]$ dit idéal saturé de $\langle\uP\rangle$
$$
\uPsat \ = \ 
\Big \{
F \in \bfA[\uX] \mid \exists\, e \in \bbN, \ 
\forall\, i,\ X_i^e \,F  \in \langle \uP \rangle
\Big\}
$$
ainsi qu'un idéal de $\bfA$,  dit idéal d'élimination de $\uP$, défini par $\ElimIdeal$.
\end {defn}

\index{idéal!saturé de}%
\index{idéal!d'élimination}%
\label{NOTA02-uPsat}%
\label{NOTA02-ElimIdeal}%

Pour $\ElimIdeal$, la terminologie utilisée par Demazure dans
\cite{Demazure2} est ``idéal éliminant de $\uP$''.  
L'étude de cet idéal de $\bfA$ 
(composante homogène de degré $0$ de $\uPsat$) passe par l'étude 
de toutes les composantes homogènes de $\uPsat$ ; par exemple, 
celle de degré $\delta$ contient tout déterminant bezoutien $\nabla$ comme on le voit
dans la proposition qui suit.

\medskip

Précisons que notre point de vue est essentiellement algébrique et que
notre travail ne contient pratiquement pas de considérations
géométriques.  Cela signifie, pour un système homogène $\uP$ à
coefficients dans un anneau $\bfA$, que l'étude algébrique de l'idéal saturé
$\uPsat$ ou de l'idéal d'élimination $\uPsat \cap \bfA$ est un
objectif en soi.


\noindent
Au dessus d'un corps $\bfk$, on peut juste signaler le fait banal suivant, en liaison avec l'idéal
d'élimination, en nous plaçant dans le cadre que nous avons retenu
(autant d'indéterminées que de polynômes).  Supposons disposer de $n$
polynômes homogènes $P_1, \ldots, P_n \in \bfk[X_1, \ldots, X_n]$,
ayant un zéro commun $\uxi \in \bbP^{n-1}(\bfk')$ sur une extension $\bfk'$ de $\bfk$.
Alors l'idéal
d'élimination de $\uP$ est réduit à $0$; en effet, dire que
$a\in \uPsat\cap\bfA$ équivaut à dire que pour un $e$ assez grand, on
a $aX_i^e \in \langle\uP\rangle$ pour tout $i$ et il suffit alors de
spécialiser $\uX$ en~$\uxi$, pour obtenir $a=0$.

\begin{prop}[Propriétés du déterminant bezoutien] 
\label{NablaDansLeSature}
Soit $\uP$ un jeu quelconque.

\begin{enumerate}[\rm i)]
\item 
Pour tout $1 \leqslant i \leqslant n$, 
on a $X_i \nabla \in \langle \uP \rangle_{\delta+1}$. En conséquence, $\nabla \in \uPsat_\delta$.

\item 
Pour deux déterminants bezoutiens $\nabla$, $\nabla'$ de $\uP$, on a
$\nabla - \nabla' \in \uPdelta$.

\item 
Pour tout $d \ne \delta$, on a 
$\langle \uP, \, \nabla \rangle_d = \langle \uP \rangle_d$.
\end{enumerate}
\end{prop}

\begin{proof}\leavevmode

\noindent  
i) Il suffit de multiplier à droite l'égalité $\uP = \uX\,\dsV$ par la
cotransposée de~$\dsV$.

\noindent
ii) Cela résulte du fait que la suite $\uX$ est 1-sécante et de~\ref{IndependanceNabla}.

\noindent
iii) Par définition, on a  $\langle \uP, \, \nabla \rangle_d = 
\langle \uP \rangle_d + \bfA[\uX]_{d-\delta} \nabla$.
Si $d < \delta$, on a $\bfA[\uX]_{d-\delta} = 0$ d'où l'égalité annoncée.
Pour $d - \delta \geqslant 1$, puisque $X_i \nabla \in \langle \uP \rangle$,
on a l'inclusion $\bfA[\uX]_{d-\delta} \nabla \subset \langle \uP \rangle_d$
d'où encore l'égalité annoncée.
\end{proof}

Dans les chapitres à venir, cf.~\ref{MacRaeJeuEtalonDelta}, 
\ref{sectionFormeLineaireOmega}, \ref{PoidsNormalisationMacRae}, 
nous associerons à tout jeu $\uP$ de degré critique $\delta$ plusieurs formes linéaires
$\bfA[\uX]_\delta \to \bfA$, dont deux bien particulières notées
$$
\omega = \omega_\uP
\qquad \text{ et } \qquad  
\omegares = \omegaresP
$$
Les évaluations de ces formes en un déterminant bezoutien $\nabla$ de $\uP$ jouent
un rôle capital dans notre étude; par exemple $\omegaresP(\nabla)$ n'est autre que
le résultant de $\uP$!

\medskip
Ici, pour les deux jeux particuliers 
$\uX^D = (X_1^{d_1}, \dots, X_n^{d_n})$ et 
$\pXD = (p_1X_1^{d_1}, \dots, p_nX_n^{d_n})$, nous
définissons ces formes linéaires de manière artificielle, de façon
à pouvoir considérer rapidement leur action sur les déterminants bezoutiens
respectifs.
\`A cet
effet, nous introduisons la forme coordonnée sur le \MoutonNoir{}
relativement à la base monomiale de $\bfA[\uX]_\delta$, forme linéaire que nous
notons $(X^{\emouton})^\star : \bfA[\uX]_\delta \to \bfA$.
Et nous posons:
$$
\omega_{\uX^D} = (X^{\emouton})^\star, \qquad\qquad
\omegarespXD 
= p_1^{\widehat d_1-1}\cdots p_n^{\widehat d_n-1}(X^{\emouton})^\star
\qquad \hbox {avec} \quad
\widehat d_i = \prod_{j \ne i} d_j
$$

\label{NOTA02-hatdi}%

\begin{prop}[Matrice bezoutienne du jeu étalon et jeu étalon généralisé]
\label{NablaEtalon} \leavevmode

\begin{enumerate}[\rm i)]
\item 
La matrice diagonale $\diag(X_1^{d_1-1}, \dots, X_n^{d_n-1})$ 
est une matrice bezoutienne du jeu étalon $\uX^D$ 
et son déterminant~$\nabla$ est le monôme {\rm mouton-noir} $X^\emouton$.
Ainsi $\omega_{\uX^D}(\nabla) = 1$.

\item 
La matrice diagonale $\diag(p_1X_1^{d_1-1}, \dots, p_nX_n^{d_n-1})$ 
est une matrice bezoutienne du jeu étalon généralisé $\pXD$,  de
déterminant~$\nabla$ égal à $p_1 \cdots p_n X^\emouton$.
Et on a 
$\omegarespXD(\nabla) = p_1^{\widehat d_1} \cdots p_n^{\widehat d_n}$.
 
\item 
On a $\Ker \omega_{\uX^D} = \Ker\omegarespXD = \langle \uX^D \rangle_\delta$.

Par conséquent, pour tout déterminant bezoutien $\nabla'$ de $\uX^D$, on a 
$\omega_{\uX^D}(\nabla') = 1$. Et pour tout déterminant bezoutien $\nabla'$ de $\pXD$, on a 
$\omegarespXD(\nabla') = p_1^{\widehat d_1} \cdots p_n^{\widehat d_n}$.
\end{enumerate}
\end{prop}

\begin{proof} \leavevmode

\noindent
Les points i) et ii) sont immédiats.

\noindent
iii) Par définition, le supplémentaire monomial de
$\bfA X^{\emouton}$ dans $\bfA[\uX]_\delta$ est 
$\langle \uX^D \rangle_\delta = \langle X_1^{d_1}, \dots, X_n^{d_n} \rangle_\delta$, 
d'où 
$$
\bfA[\uX]_\delta = \langle\uX^D\rangle_\delta \oplus \bfA X^{\emouton}
\qquad
\text{ et } 
\qquad
\Ker (X^{\emouton})^\star = \langle\uX^D\rangle_\delta
$$
Les formes linéaires  $\omega_{\uX^D}$ et $\omegarespXD$ sont multiples
de $(X^{\emouton})^\star$ (via un scalaire régulier) et ont donc même noyau.

Pour le \og par conséquent \fg{}, on remarque que chacun des deux
systèmes $\uP$ vérifie
$\uPdelta \subset \langle \uX^D \rangle_\delta$. Par ailleurs, le 
point ii) de la proposition précédente fournit $\nabla
- \nabla' \in \langle \uX^D \rangle_\delta$. 
Chacune des deux formes
linéaires étant nulle sur $\langle \uX^D \rangle_\delta$ a même
valeur en $\nabla$ et $\nabla'$.
\end{proof}

\subsection{Les quotients $\bfB=\bfA[\protect\uX]/\langle\protect\uP\rangle$ et
            $\bfB'=\bfA[\protect\uX]/\langle\protect\uP,\nabla\rangle$} 
\label{DefQuotientsP}

Comme $\uP$ est une suite de polynômes \textit{homogènes}, le quotient
$\bfB = \bfA[\uX]/\langle \uP \rangle$ est un $\bfA[\uX]$-module
gradué et ses composantes homogènes vont jouer un rôle important dans la
suite.  
Le contexte sera assez clair pour nous permettre d'utiliser une notation pour 
ce quotient ne faisant pas référence explicitement à $\uP$. 
Bien entendu, dans les passages qui le nécessitent,
nous rappellerons qui est $\uP$, précisant ainsi $\bfB$.

\label{NOTA02-bfB}%

\medskip
Ce $\bfA[\uX]$-module gradué $\bfB$ est par définition de présentation
finie, présenté par l'application de Sylvester, 
forme linéaire sur $\bfA[\uX]^n$ canoniquement associée à 
$(P_1, \dots, P_n)$:
$$
\Syl = \Syl(\uP) : \, 
\xymatrix @C=1.5cm @M=0.4pc{
\bfA[\uX]^n \ar[r]^-{
\left [
\begin{smallmatrix}
P_1 & \cdots & P_n
\end{smallmatrix}
\right]
} & \bfA[\uX] 
}
$$
Pour chaque $d$, la composante homogène $\bfB_d$ de degré $d$ est
un $\bfA$-module de présentation finie, présenté par l'application de
Sylvester de degré $d$ :
$$
\Syl_d : 
\begin{array}[t]{rcl}
\displaystyle \bigoplus_{i=1}^n \bfA[\uX]_{d - d_i}\,e_i & \longrightarrow & \bfA[\uX]_d \vspace{0.1cm} \\
U_1 e_1 + \cdots + U_n e_n & \longmapsto & \displaystyle U_1P_1 + \cdots +  U_nP_n
\end{array}
$$

\label{NOTA02-Syl}%
\label{NOTA02-Syld}%
\index{application de Sylvester}%

\bigskip
Un second $\bfA[\uX]$-module interviendra également: il s'agit de
$\bfB' = {\bfB}/{\langle \overline \nabla \rangle} = \bfA[\uX]/\langle \uP, \nabla \rangle$.
Il ne dépend que de $\uP$ et pas de $\nabla$ puisque
pour deux déterminants bezoutiens $\nabla$, $\nabla'$ 
de~$\uP$, on a $\nabla - \nabla' \in \langle\uP\rangle$, 
cf.~\ref{NablaDansLeSature}-ii).
C'est un module de présentation finie, présenté par :
$$
\xymatrix @C=2.5cm @M=0.4pc{
\bfA[\uX]^n \oplus \bfA[\uX] \ar[r]^-{
\left [
\begin{smallmatrix}
P_1 & \cdots & P_n & \nabla
\end{smallmatrix}
\right]
} & \bfA[\uX] 
}
$$

\label{NOTA02-bfBprime}%

En ce qui concerne les composantes homogènes de degré $d$,
la seule diffèrence entre $\bfB$ et $\bfB'$ réside dans la composante de degré $\delta$ :
$$
\bfB'_\delta \ = \ 
\frac{\bfB_\delta}{\bfA \overline \nabla}
\ = \ 
\frac{\bfA[\uX]_\delta}{\uPdelta + \bfA \nabla}
\qquad \text{ et } \qquad 
\forall\, d \ne \delta, \quad 
\bfB'_d = \bfB_d
$$
Le $\bfA$-module $\bfB'_\delta$ est présenté par $\Syl_\delta \oplus \rm{mult}_\nabla$ 
$$
\Syl_{\delta} \oplus \rm{mult}_\nabla : \  
\begin{array}[t]{rcl}
\displaystyle \bigoplus_{i=1}^n \bfA[\uX]_{\delta - d_i}\,e_i  \oplus \bfA 
& \longrightarrow & \bfA[\uX]_\delta \vspace{0.1cm} \\
U_1 e_1 + \cdots + U_n e_n \oplus u
& \longmapsto & 
\displaystyle U_1P_1 + \cdots +  U_nP_n \ + \ u \nabla
\end{array}
$$

\subsection{Composantes homogènes du saturé~$\langle\protect\uP \rangle^\sat$ et
            annulateur de $\bfB_d = \bfA[\protect\uX]_d/\langle\protect\uP\rangle_d$}
\label{SectCompHmgSature}

Pour un déterminant bezoutien $\nabla$ de $\uP$, nous avons vu en~\ref{NablaDansLeSature} que
$\langle\uX\rangle \nabla \subset \langle\uP\rangle$.  Peut-on \og
transporter\fg{} $\langle\uX\rangle$ dans $\langle\uP\rangle$, ou bien
$\nabla$ dans $\langle\uP\rangle$ autrement?  Si $\uP$ est régulière,
le théorème de Wiebe (cf.~\ref{WiebeTheorem}) affirme que non. Précisément :
\begin{quote}
\textbf{Théorème de Wiebe dans le contexte polynomial.} 
\it Pour $\uP$ régulière, on a l'égalité des idéaux transporteurs de $\bfA[\uX]$ 
$$
\big(\langle\uP\rangle : \nabla\big) = \langle \uX \rangle 
\qquad \text{et}\qquad 
\big(\langle\uP\rangle : \uX\big) = \langle \uP,\, \nabla\rangle
$$
\end{quote}
De ces deux égalités ayant lieu dans $\bfA[\uX]$, on va extraire d'autres égalités au niveau de $\bfA$ 
en prenant les composantes homogènes,
notamment pour expliciter certaines composantes homogènes 
du saturé de~$\langle\uP\rangle$ :
$$
\uPsat \ = \ 
\Big \{
F \in \bfA[\uX] \mid \exists\, m \in \bbN, \ 
\forall\, i,\ X_i^m \,F  \in \langle \uP \rangle
\Big\}
$$
On a vu par exemple que tout déterminant bezoutien $\nabla$ appartient à $\uPsat_\delta$.

\index{idéal!saturé de}%

\begin{prop}[Composante homogène $\uPsat_d$ en degré $d \geqslant \delta$] \label{MiniWiebe}
Supposons $\uP$ régulière. 

\begin{enumerate}[\rm i)]
\item
Pour $a \in \bfA$, on a l'implication $a\nabla \in \uPdelta
\ \Rightarrow\ a = 0$, traduisant la nullité du $\bfA$-module
$\big(\langle\uP\rangle : \nabla\big)_0$, ce dernier $\bfA$-module étant encore égal à 
$\Ann(\overline\nabla)$ où $\overline\nabla$ est la classe de $\nabla$
dans $\bfB_\delta$.

\item 
Pour tout $d \geqslant \delta$, on a 
$\uPsat_d \, = \, \langle \uP,\, \nabla \rangle_d$, égalité qui se scinde en:
$$
\uPsat_\delta \, = \,\uPdelta \, \oplus \, \bfA \nabla
\qquad \text{ et } \qquad 
\forall\, d \geqslant \delta+1,\  
\uPsat_d \, = \, \langle \uP \rangle_d
$$
\end{enumerate}
\end{prop}

\begin{proof}
i) 
Considérons l'égalité des idéaux homogènes
$\big(\langle\uP\rangle : \nabla\big) = \langle \uX \rangle$.
Prenons-en la composante homogène de degré $0$.
Comme $\langle \uX \rangle_0$ est réduit à 0, on obtient $\big(\langle\uP\rangle : \nabla\big)_0 = 0$.

\medskip
ii) 
%
L'inclusion $\uPsat_d \supset \langle \uP,\, \nabla \rangle_d$ est évidente 
(en effet, les polynômes $P_i$ et $\nabla$ sont dans $\uPsat$).
Pour l'autre inclusion, on procède par récurrence.
Afin d'alléger les notations et de simplifier la lecture, 
nous reportons au dernier moment, autant que faire se peut, 
la considération du degré (ou des composantes homogènes).
Pour $e\in\bbN$, montrons par récurrence
la propriété suivante que nous notons $\calH_e$:
$$
\calH_e \ : \qquad \qquad 
\forall\, F \in \bfA[\uX]_{\geqslant \delta},\quad 
\langle \uX \rangle^e F \subset \langle \uP \rangle 
\ \implies \ 
F \in \langle \uP,\, \nabla\rangle
$$
Pour $e = 0$, c'est évident.
Supposons $\calH_e$ vraie.
Considérons $F \in \bfA[\uX]_{\geqslant \delta}$ tel que 
${\langle \uX \rangle^{e+1} F \subset \langle \uP \rangle}$.
En écrivant $\langle \uX \rangle^{e+1} = \langle \uX \rangle^{e} \langle \uX \rangle$ 
et en utilisant l'hypothèse $\calH_e$ à tous les polynômes $X_i F$ 
(qui sont bien dans $\bfA[\uX]_{\geqslant \delta}$), 
on obtient que $X_i F \in  \langle \uP,\, \nabla\rangle$.
En considérant les composantes homogènes de degré $\geqslant \delta+1$ 
et en utilisant que $\langle \uP,\, \nabla\rangle_{\geqslant \delta+1} = 
\langle \uP \rangle_{\geqslant \delta+1}$ (cf.~\ref{NablaDansLeSature}-iii), 
on obtient $X_i F \in \langle \uP \rangle$ pour tout $i$.
Donc~$F$ appartient à $\big(\langle\uP\rangle : \uX\big)$ qui vaut $\langle \uP,\, \nabla\rangle$ 
d'après le théorème de Wiebe. D'où $\calH_{e+1}$.

\medskip
On a donc démontré que pour tout $d \geqslant \delta$, 
on a $\uPsat_d \, = \, \langle \uP,\, \nabla \rangle_d$.
Or 
$$
\langle \uP,\, \nabla \rangle_d
\ = \ 
\left\{
\begin{array}{ll}
\uPdelta \, + \, \bfA \nabla & \text{car $\nabla$ est de degré $\delta$} \\[0.2cm]  
\langle \uP \rangle_d & \text{pour $d \geqslant \delta+1$, 
d'après \ref{NablaDansLeSature}-iii)} \\
\end{array}
\right.
$$
La somme en degré $\delta$ est directe d'après le point i).
D'où le résultat.
\end{proof}

\medskip
En conséquence, pour un polynôme de degré $\delta$ dans le saturé $\uPsat$,
on maîtrise un exposant $m$ intervenant dans la définition 
(précédant la proposition) du saturé, puisque l'on peut prendre $m=1$.
En effet, un polynôme $F \in \uPsat_\delta$ est, d'après la proposition précédente, 
une combinaison $\bfA$-linéaire de polynômes de $\uPdelta$ et de $\nabla$.
Et comme $X_i \nabla \in \langle\uP\rangle$, on en déduit que 
$X_i F \in \langle\uP\rangle$ pour tout $i$, d'où l'exposant $m=1$.

\begin{prop}[Idéal d'élimination $\ElimIdeal$ et divers annulateurs]
\label{AnnEqualities}
\leavevmode

\begin{enumerate}[\rm i)]
\item 
Pour une suite $\uP$ quelconque, on dispose des inclusions suivantes,
la réunion en $d$ étant croissante:
$$
\left\{
\begin {array} {ll}
\forall\ e \geqslant 1, &\quad
\displaystyle {\Ann(\bfB_e) \ \subset \ \ElimIdeal \ \subset\ \bigcup_{d\geqslant 1} \Ann(\bfB_d)}
\\ [0.5cm]
\forall\ d \geqslant \delta + 1, &\quad
  \Ann(\bfB'_\delta)  \ \subset\ \Ann(\bfB_d) \ \subset\  \ElimIdeal
\end {array}
\right.
$$

\item 
Si de plus $\uP$ est régulière, on a les égalités:
$$
\Ann(\bfB'_{\delta}) = \ElimIdeal
\qquad \text{ et } \qquad 
\forall\, d \geqslant \delta+1, \quad 
\Ann(\bfB_d) = \ElimIdeal
$$
Ce qui se résume en : pour $d \geqslant \delta$, tous les idéaux
annulateurs $\Ann(\bfB'_d)$ sont égaux à $\ElimIdeal$.
\end{enumerate}
\end{prop}

\begin{proof}\leavevmode

\noindent  
 i) Croissance en $d$. Soit $a \in \Ann(\bfB_d)$. On a alors  
$aX^\alpha \in \langle\uP\rangle$ pour tout $|\alpha| = d$ ; a fortiori,
$aX_iX^\alpha \in \langle\uP\rangle$, pour $1\leqslant i \leqslant n$. Or tout
$X^\beta$ de degré $d+1$ s'écrit $X^\beta = X_i X^\alpha$ pour un
certain $i$ et $|\alpha| = d$, ce qui prouve $a \in \Ann(\bfB_{d+1})$.

\smallskip
\noindent
Montrons que $\Ann(\bfB'_\delta) \subset \Ann(\bfB_{\delta+1})$.
Soit $a \in \Ann(\bfB'_\delta)$. Par définition, pour $|\alpha| = \delta$, on a 
$a X^\alpha \in \langle \uP, \nabla \rangle_\delta$; donc
$aX_iX^\alpha \in \langle\uP\rangle$, pour $1\leqslant i \leqslant n$. Or tout
$X^\beta$ de degré $\delta+1$ s'écrit $X^\beta = X_i X^\alpha$ pour un
certain $i$ et $|\alpha| = \delta$, ce qui prouve $a \in \Ann(\bfB_{\delta+1})$.

\smallskip
\noindent
Les autres inclusions, ne présentant pas plus de difficulté, sont laissées au lecteur.

\medskip
\noindent
ii) Il suffit de voir que $\ElimIdeal \subset \Ann(\bfB'_\delta)$.  Prenons $a \in \ElimIdeal$.
Pour tout $|\alpha| = \delta$, on a $aX^\alpha \in \uPsat_\delta$. Or $\uPsat_\delta$
est égal à $\langle \uP,\, \nabla \rangle_\delta$ d'après~\ref{MiniWiebe}.
Ainsi, $aX^\alpha \in \langle \uP,\, \nabla \rangle_\delta$, prouvant que $a \in \Ann(\bfB'_\delta)$.
\end{proof}

\subsubsection*{En dessous du degré critique $\delta$}

\begin {prop} [L'aspect \og non-dégénéré\fg{} de la multiplication lorsque $\uP$ est régulière]
\leavevmode
\label {SemiPairingProperty}

Soit $\uP$ une suite \emph{régulière} et $d,d'$ vérifiant $d+d' = \delta$.

\begin {enumerate} [\rm i)]
\item
Pour $F \in \bfA[\uX]_d$, on a l'implication:
$$
F\bfA[\uX]_{d'} \subset \uPdelta   \quad\Longrightarrow\quad  F \in \langle\uP\rangle_d
$$
\item
La multiplication $\bfB_d \times \bfB_{d'} \to \bfB_\delta$ est \og non-dégénérée\fg{} au sens où, pour $b \in \bfB_d$:
$$
b\bfB_{d'} = 0   \quad\Longrightarrow\quad  b = 0
$$
Ou encore, l'application $\bfA$-linéaire canonique $\bfB_d \to \Hom_\bfA(\bfB_{d'}, \bfB_\delta)$ est injective.
\end {enumerate}
\end {prop}

\begin {proof} \leavevmode

Le second point est une redite du premier, premier que nous montrons
par récurrence sur $d'$. Pour $d'=0$, c'est évident. Supposons $d' \geqslant 
1$. On a $X_iF \in \bfA[\uX]_{d+1}$ et $X_iF\bfA[\uX]_{d'-1} \subset
\uPdelta$, donc par récurrence, $X_iF \in \langle\uP\rangle_{d+1}$.
Ainsi $F$ appartient au transporteur $(\langle\uP\rangle : \uX)$.
D'après le théorème de Wiebe (formulation citée en début de section), on a $F \in \langle\uP,\nabla\rangle$
donc $F \in \langle\uP,\nabla\rangle_d$, égal à $\langle\uP\rangle_d$ puisque $d < \delta$.
\end {proof}

\subsubsection*{Reformulation en cohomologie de \Cech}

Par définition, pour un $\bfA[\uX]$-module $M$, son module de
cohomologie de \Cech{} (relativement à la suite~$\uX$) de degré
homologique $0$ est défini par:
$$
\vH^0_{\uX}(M) = \{ m \in M \hbox { annulé par une puissance de l'idéal $\langle\uX\rangle$} \}
$$
En particulier, pour $M = \bfA[\uX]/\langle\uP\rangle$ que nous avons noté $\bfB$, on a :
$$
\vH^0_{\uX}(\bfB)  = \uPsat /\langle\uP\rangle
$$

\index{cohomologie de \Cech}%
\label{NOTA02-H0Cech}%

\begin{prop} \label{WiebeCech}
Supposons $\uP$ régulière. 

\begin{enumerate}[\rm i)]

\item
 Le déterminant bezoutien $\overline\nabla$ de $\uP$  est un élément sans
 torsion du $\bfA$-module $\bfB_\delta$.

\item 
Le module de cohomologie de \Cech{}, relativement à la suite~$\uX$, de degré homologique $0$,
du $\bfA[\uX]$-module $\bfB$ 
a pour composante homogène :
$$
\vH^0_{\uX}(\bfB)_d 
\ \overset{\rm def}{=} \ 
\bigg(
\dfrac{\uPsat}{\langle\uP\rangle}
\bigg)_d
\ = \ 
\left \{
\begin{array}{lr}
\bfA \overline \nabla & \text{si $d = \delta$} \\[0.3em]
0 & \text{si $d > \delta$} \\
\end{array}
\right.
$$
En particulier, l'élément $\overline \nabla$ est une 
$\bfA$-base de $\vH^0_{\uX}(\bfB)_\delta$.
\end{enumerate}
\end{prop}

\label{NOTA02-H0CechB}%
%
%

\subsubsection*{Avec le module $\bfB'$}

Quant au module $\bfB' = \bfA[\uX]/\langle \uP, \nabla \rangle$, l'analogue du
point ii) se reformule de façon relativement cryptique: $\forall\, d
\geqslant \delta$, $\vH^0_{\uX}(\bfB')_d = 0$ ; ou bien, de manière plus
terre à terre
$$
\forall\, d \geqslant \delta, \qquad 
\bfB'_d  \ = \  \big(\bfA[\uX]/\uPsat\big)_d
$$
Au lecteur de s'en assurer en constatant que~\ref{MiniWiebe}
s'énonce aussi sous la forme: $\langle\uP,\,\nabla\rangle^\sat_d =
\langle\uP,\,\nabla\rangle_d$ pour tout $d \geqslant \delta$.

\medskip

Quel est l'intérêt de cette formulation en cohomologie de \Cech{} dans
notre travail?  Hormis la notation~$\vH^0_{\uX}(\bfB)$ qui peut se
révéler commode (nous l'utiliserons librement dans le
chapitre~\ref{ChapSylvesterHybride}), le complexe de \Cech{}
$\vC^\sbullet_\uX(\bfA[\uX])$ de la suite régulière $\uX$ permet la mise en
place du bicomplexe défini par $C_{p,q} = \rmK^{n-p}(\uP;\bfA[\uX])
\otimes \vC^{n-q}_\uX(\bfA[\uX])$. L'étude de ce bicomplexe conduit à
l'élaboration d'un morphisme gradué~$\tau$ nommé \emph {transgression}.
\index{transgression}%
Nous n'étudierons pas ici la cohomologie de \Cech{}; nous évoquerons cependant
la transgression en section~\ref{HCech0B}.  Une retombée concrète réside dans
le fait que $\tau_0$, composante homogène de~$\tau$ de degré~$0$,
établit une relation entre le $\bfA$-module $(\bfB_\delta)^\star = \Ker\transpose
{\Syl_\delta}$ et l'idéal d'élimination~$\ElimIdeal$:
\begin{quote}
\it
Pour une suite $\uP$ quelconque, toute forme linéaire 
$\mu : \bfA[\uX]_\delta \to \bf A$ nulle sur $\uPdelta$
a la propriété suivante: $\mu(\nabla)F - \mu(F)\nabla
\in \uPdelta$ pour tout $F\in\bfA[\uX]_\delta$.
En conséquence, $\mu(\nabla) \in \ElimIdeal$.
On dispose ainsi d'une application $\bfA$-linéaire notée $\tau_0$:
$$
\tau_0 :
\begin{array}{ccl}
(\bfB_\delta)^\star = \Ker\transpose{\Syl_\delta}  & \longrightarrow & \ElimIdeal
\\  
\mu & \longmapsto & \mu(\nabla)
\end{array}
$$
Lorsque $\uP$ est régulière, cette application
$\tau_0: \mu \mapsto \mu(\nabla)$, où $\mu : \bfA[\uX]_\delta \to \bf A$
est une forme linéaire nulle sur $\uPdelta$, établit
un isomorphisme $(\bfB_\delta)^\star = \Ker\transpose{\Syl_\delta} \simeq \ElimIdeal$.
\end{quote}

\label{NOTA02-tau0}%

Nous obtiendrons le second résultat en \ref{4pointsPsuperreguliere} à
l'aide d'une preuve autonome nécessitant cependant une hypothèse
additionnelle de profondeur $\geqslant 2$ sur $\uP$ (vérifiée en
terrain générique).

\subsection{Idéaux et supplémentaires monomiaux excédentaires attachés à un format~$D$}
\label{DefIdealModuleExcedentaire}

Les définitions qui suivent sont relatives à un format de degrés $D =
(d_1, \dots, d_n)$ et indépendantes de tout système $\uP$ de
ce format. On introduit l'ensemble des indices de divisibilité d'un
monôme $X^\alpha$ (sous-entendu relativement au format $D$ que l'on ne
fait pas figurer dans la notation):
$$
\DivSeq(X^\alpha) = \big\{j \in \{1..n\} \mid X_j^{d_j} \text{ divise } X^\alpha\big\}
$$

\index{ensemble de divisibilité (d'un monôme)}%
\label{NOTA02-Div}%

\begin {defn}  \leavevmode

\noindent
Les idéaux monomiaux $1$-excédentaire et $2$-excédentaire $\Jex_2
\subset \Jex_1$ de $\bfA[\uX]$ sont définis de la manière suivante
$$
\Jex_1 \ = \ \langle X_1^{d_1}, \dots, X_n^{d_n} \rangle,
\qquad\qquad
\Jex_2 \ = \ \langle X_i^{d_i}X_j^{d_j} \mid i \neq j \rangle
$$
Le premier est le $\bfA$-module de base les monômes
divisibles par au moins un $X_i^{d_i}$ et le second celui de
base les monômes divisibles par au moins deux
$X_i^{d_i}$, $X_j^{d_j}$. 

\noindent
On note $\Jex_{1,d}$ et $\Jex_{2,d}$ leur composante homogène de degré $d$.
\end {defn}  

\index{idéal!monomial excédentaire!$\Jex_1$, $\Jex_2$}%
\label{NOTA02-Jex}%

\medskip
Grâce à la $\bfA$-base monomiale de $\bfA[\uX]$, tout idéal monomial de $\bfA[\uX]$
possède un supplémentaire canonique monomial, à savoir le $\bfA$-module de base
les monômes n'appartenant pas à l'idéal.

\index{supplémentaire!canonique monomial}%
%
%

\begin {defn} \leavevmode

\noindent  
On désigne par $\Jex_{1\setminus2}$ le supplémentaire monomial de $\Jex_2$ dans $\Jex_1$ de
sorte que $\Jex_1 = \Jex_2 \oplus \Jex_{1\setminus2}$. Il admet comme base
les monômes divisibles par un seul $X_i^{d_i}$.

\noindent
On a $\Jex_{1\setminus2} = \bigoplus_{i=1}^n \Jex_{1\setminus2}^{(i)}$ où
$\Jex_{1\setminus2}^{(i)}$ est le $\bfA$-module de base
les $X^\alpha$ vérifiant $\DivSeq(X^\alpha) = \{i\}$ \idest
\begin{center}
$\alpha_i \geqslant d_i$ \qquad \text{ et } \qquad  $\alpha_j < d_j$ pour tout $j \neq i$ 
\end{center}

\noindent
Les composantes homogènes de degré $d$ sont désignées par $\Jex_{1 \setminus 2, d}$
et $\Jid$.
\end {defn}  

\index{supplémentaire!monomial excédentaire}%
\label{NOTA02-Jex12}%
\label{NOTA02-Jex12i}%
%
%

\begin{prop} \label{LemmeDenombrementJ1moins2id}
\leavevmode
Soit $1 \leqslant i \leqslant n$. Pour $d \geqslant \delta$, 
le nombre de monômes de $\Jid$ est 
$$
\left\{
\begin{array}{ll}
\widehat d_i := \prod\limits_{j \neq i} d_j & 
\text{pour $d \geqslant \delta + 1$} \\ [0.5cm]
\widehat d_i -1 & \text{pour $d = \delta$}
\end{array}
\right.
$$
\end{prop}

\begin {proof} 
Supposons d'abord $d \geqslant \delta+1$.
L'application suivante est bijective :
$$
\begin{array}{ccl}
\big\{\text{monômes de } \Jid\big\}
         & \longrightarrow & \prod\limits_{j \neq i} \llbracket 0,\, d_j -1 \rrbracket
\\
X^\alpha & \longmapsto & (\alpha_1, \dots, \alpha_{i-1}, \alpha_{i+1}, \dots, \alpha_n)
\end{array}
$$
de bijection réciproque 
$$
\begin{array}{rcl}
  \prod\limits_{j \neq i} \llbracket 0,\, d_j -1 \rrbracket & \longrightarrow &
     \big\{\text{monômes de } \Jid\big\}  
\\
(\alpha_1, \dots, \alpha_{i-1}, \alpha_{i+1}, \dots, \alpha_n) & \longmapsto & X^\alpha 
\text{ où $\alpha_i = d-\sum\limits_{j \neq i} \alpha_j$}
\end{array}
$$
Cette dernière application est bien définie. En effet, soit 
$(\alpha_1, \dots, \alpha_{i-1}, \alpha_{i+1}, \dots, \alpha_n)$ dans l'ensemble de 
gauche (de sorte que $\alpha_j \leqslant d_j-1$) ; 
il s'agit de montrer que $\alpha_i =  d-\sum_{j \neq i} \alpha_j$ vérifie $\alpha_i \geqslant d_i$.
Pour cela, il suffit de contempler les deux extrémités de :
$$
\underbrace{\delta + 1}_{d_i + \sum\limits_{j\neq i} (d_j-1)}
\ \leqslant \ 
\underbrace{d = |\alpha|}_{\alpha_i +  \sum\limits_{j\neq i} \alpha_j}
\ \leqslant \ 
\alpha_i +  \sum\limits_{j\neq i} (d_j-1)
$$
On en déduit  que le cardinal de la base monomiale de $\Jid$ est $\widehat d_i$.

\medskip
\noindent
Pour $d = \delta$, il faut tenir compte du \MoutonNoir{} qui n'est pas dans
${\Jex_{1\setminus 2,\delta}^{(i)}}$ et supprimer du produit
$\prod\limits_{j \neq i} \llbracket 0,\, d_j -1 \rrbracket$ le $(n-1)$-uplet
$(d_1-1, \dots, d_{i-1}-1,d_{i+1}-1, \dots, d_n-1)$.
\end {proof}

\medskip

Nous verrons ultérieurement (cf. section~\ref{sousSectionMacaulayRecurrence})
une formule, dite de Macaulay en degré $d \geqslant \delta+1$, qui exprime
le résultant $\Res(\uP)$ comme quotient de deux déterminants.
Celui figurant au numérateur est indexé par les
monômes de $\Jex_{1,d}$ et celui au dénominateur par les monômes de
$\Jex_{2,d}$. Ceci explique pourquoi nous nous intéressons aux idéaux
monomiaux $\Jex_h$. En particulier, il n'est pas
inutile de connaître les quelques rares cas où le dénominateur est 1,
\idest{} les cas où $\Jex_{2,d}$ est réduit à~$0$.

\begin{prop}[Nullité de $\Jex_{2,d}$]
\label{J2deltaNul}

\leavevmode
\begin{enumerate}[\rm i)]
\item 
On a $\Jex_{2,\delta} = 0$ si et seulement si on est dans l'un des trois cas suivants :

\noindent
$(n= 2)$
ou $(n = 3$ et $d_1, d_2, d_3 \leqslant 2)$
ou ($n$ quelconque et les $d_i = 1$ sauf éventuellement un $d_j = 2$).

\item 
Pour $n = 2$, on dispose de l'équivalence $\Jex_{2,d} = 0 \Leftrightarrow d \leqslant \delta + 1$.

\item 
On a $\Jex_{2,\delta+1} = 0$ 
si et seulement si $n=2$ ou ($n \geqslant 3$ et $d_i = 1$ pour tout $i$).

\end{enumerate}
\end{prop}

\begin{proof}

Avant de commencer la preuve, remarquons que l'on a :
$$
\Jex_{2,d} = 0
\qquad \iff \qquad 
d < \min_{i\neq j}(d_i + d_j)
$$
En effet, s'il existe $i \ne j$ tels que $d \geqslant d_i+d_j$, alors 
le monôme $X_i^{d_i}X_j^{d_j}X^\beta$, où $\beta$ 
vérifie $d_i + d_j + |\beta| = d$, est dans $\Jex_{2,d}$.
Et réciproquement.

\medskip
i) 
Sans perte de généralités, on peut supposer $d_1 \leqslant d_2 \leqslant \cdots \leqslant d_n$.
On a alors, d'après la remarque initiale :
$$
\Jex_{2,\delta} = 0 
\quad \Leftrightarrow \quad 
(d_1-1) + (d_2-1) + \cdots + (d_n-1) < d_1 + d_2
\quad \Leftrightarrow \quad 
(d_3-1) + \cdots + (d_n-1) < 2
$$
Quelles sont les possibilités pour que $s = (d_3-1) + \cdots + (d_n-1)$ soit égal à $0$ ou $1$ ?
Elles sont au nombre de trois :
$n=2$ ou ($n \geqslant 3$ et $s=0$) ou ($n \geqslant 3$ et $s=1$).
Rien à discuter en ce qui concerne $n=2$.

\begin{itemize}
\item $n \geqslant 3$ et $s=0$ 
alors $d_i = 1$ pour tout $i \geqslant 3$, donc pour tout $i$, 
puisque $d_1 \leqslant d_2 \leqslant d_3$.

\item $n \geqslant 3$ et $s=1$ alors 
$d_i = 1$ pour $3 \leqslant i \leqslant n-1$ et $d_n=2$.
Si $n \geqslant 4$, cela donne $d_n = 2$ et les autres $d_i=1$;
si $n=3$, cela donne $d_3=2$ et $d_1, d_2 \leqslant 2$.
\end{itemize}

\smallskip
ii) 
Comme $n=2$, on a $d_1 + d_2 = \delta + 2$ et 
la remarque initiale s'écrit :
$\Jex_{2,d} = 0
 \iff 
d < d_1 + d_2
$, 
d'où l'équivalence annoncée.

\smallskip
iii) 
Sans perte de généralités, on peut supposer $d_1 \leqslant d_2 \leqslant \cdots \leqslant d_n$.
On a alors, d'après la remarque initiale :
$$
\Jex_{2,\delta+1} = 0 
\quad \iff \quad 
d_1 + \cdots + d_n - n+1 \leqslant d_1 +d_2-1
$$
La condition de droite est toujours vraie pour $n=2$.
Pour $n\geqslant 3$, elle s'écrit $d_3+\cdots + d_n \leqslant n-2$ : 
il~y~a~$n-2$ entiers $\geqslant 1$, donc cela équivaut à $d_i = 1$ pour tout $i \geqslant 3$, 
c'est-à-dire $d_i = 1$ pour tout $i$.
\end{proof}

\subsection{Le complexe $\rmK_{\sbullet,d}(\protect\uP)$ 
            composante homogène de degré $d$ de $\rmK_\sbullet(\protect\uP)$} 

L'algèbre extérieure $\bigwedge\big(\bfA[\uX]^n\big)$ est un
$\bfA[\uX]$-module libre de dimension $2^n$.  En notant $(e_i)_{1
  \leqslant i \leqslant n}$ la base canonique de $\bfA[\uX]^n$, il
possède une base naturelle $(e_I)_I$ indexée par les $2^n$ parties $I$
de $\{1..n\}$ où pour $I = (i_1 < \dots < i_k)$, on note
$e_I = e_{i_1} \wedge \cdots \wedge e_{i_k}$.
Les termes du complexe de Koszul de $\uP$ ne dépendent que de $n$~:
ce sont les $\rmK_k =
\bigwedge^k\big(\bfA[\uX]^n\big)$ pour $0 \leqslant k \leqslant n$
et l'on a : 
$$
\bigwedge\big(\bfA[\uX]^n\big) \ = \ \bigoplus_{k=0}^n \rmK_k,
\qquad\qquad  
\rmK_{k} \ = \ \bigoplus_{\#I = k} \bfA[\uX] e_I \ \simeq \ \bfA[\uX]^{\binom{n}{k}}
$$
On a bien entendu $\rmK_0 = \bfA[\uX]$ et $\rmK_n = \bfA[\uX]\,
e_1\wedge\cdots\wedge e_n$. Pour $k < 0$ ou
$k > n$, on pose $\rmK_k=0$.

\index{complexe composante homogène $\rmK_{\sbullet,d}(\uP)$}%
\index{degré d'homogénéité, $\bbN$-graduation de $\rmK_\sbullet(\uP)$}%
\index{graduation de $\rmK_\sbullet(\uP)$}%
\label{NOTA02-rmKP}%

\medskip
On fixe désormais un format de degrés $D = (d_1, \dots, d_n)$. Il
s'agit de graduer les termes $\rmK_k$ de manière à ce que, pour
n'importe quel système $\uP = (P_1, \dots, P_n)$ de format $D$, la
différentielle de Kozsul de $\uP$
$$
\partial_k(\uP) : \ \rmK_k \longrightarrow \rmK_{k-1} \quad
\hbox {soit homogène de degré $0$ pour n'importe quel $k$}
$$
\label{NOTA02-partialkP}%
étant entendu que l'on attribue comme poids à $P_i$ son degré $d_i$. 
Rappelons que $\partial_k = \partial_k(\uP)$ est définie par :
$$
\partial_k : \ 
e_I \ \longmapsto \ 
\sum_{i \in I} (-1)^{\varepsilon_i(I)} P_i\, e_{I \setminus \{i\}} \qquad
\begin {array}{l}
\hbox {où $\varepsilon_i(I)$ est le nombre d'éléments de $I$} \\
\hbox {strictement plus petits que $i$} \\
\end{array}    
$$
On est donc conduit à prendre $\deg (e_I) = d_I := \sum_{i \in I} d_i$ donc
$\deg(X^\alpha e_I) = |\alpha| + d_I$.
De cette manière, $\partial_k$ transforme un élément homogène de $\rmK_k$
en un élément de $\rmK_{k-1}$ homogène et de même degré.  Ainsi, en ce
qui concerne ses différentielles, le complexe de Koszul du jeu étalon
$\uX^D = (X_1^{d_1}, \dots, X_n^{d_n})$ est convenablement
$\bfA[\uX]$-gradué et il en est de même pour tout système $\uP$ de
format $D$.

\index{différentielle (de Koszul) descendante de $\uP$}%
\label{NOTA02-eI}%
\label{NOTA02-dI}%

\medskip

On peut alors considérer la composante homogène de degré $d$ de
l'algèbre extérieure ; c'est un $\bfA$-module libre possédant une base
monomiale canonique constituée des monômes \emph {extérieurs}
$X^\alpha e_I$ tels que $|\alpha| + d_I = d$. On note~$\rmK_{\sbullet,
  d}$ cette composante homogène, qui ne dépend que de $D$ (et pas de
$\uP$) ; ainsi, on~a~:
$$
\rmK_{k,d} \ = \ \bigoplus_{\#I = k} \bfA[\uX]_{d-d_I} e_I
\ = \ 
\bigoplus_{\substack {\#I = k \\ |\alpha| = d-d_I}} \bfA X^\alpha e_I
$$
\label{NOTA02-rmKdP}%
En revanche, les différentielles dépendent bien sûr de $\uP$. Lorsque
l'on voudra insister sur cet aspect, on notera $\rmK_\sbullet(\uP)$ au
lieu de $\rmK_\sbullet$. 

\index{monôme extérieur}%
%
%

\begin{theo} \label{ExactitudeKd}
Soit $d$ un entier.
Lorsque la suite $\uP$ est régulière, 
le complexe de $\bfA$-modules libres 
$\rmK_{\sbullet, d}(\uP)$ est exact.
\end{theo}

\begin{proof}
Le complexe de Koszul $\rmK_\sbullet(\uP)$ est exact 
car $\uP$ est régulière (cf.~\ref{ReguliereImpliqueCompletementSecant}).
Ainsi, pour tout $k$, le noyau de $\partial_k$ est égal à l'image de 
$\partial_{k+1}$.
En particulier, leur composante homogène de degré $d$ sont égales.
\end{proof}

\bigskip
En ce qui concerne le coefficient binomial $\binom{a}{m}$, nous
adoptons la convention de~\cite[Chap. 5, p.~154]{GKP} ; l'indice bas $m$ doit
être entier positif ou négatif tandis que l'indice haut $a$
est élément d'une $\bbQ$-algèbre commutative quelconque:
$$
\binom{a}{m} = 
\left\{
\begin{array}{ll}
\dfrac {a(a-1) \cdots (a-m+1)}{m!} & \text{si $m \geqslant 0$} \\
0 & \text{sinon} \\
\end{array}
\right.
$$
Ceci permet d'écrire pour tout $d \in \bbZ$ positif ou négatif:
$$
\dim \bfA[X_1, \dots, X_n]_d = \binom {d+n-1}{d}
$$
L'utilisation de $\binom {d+n-1}{n-1}$ au lieu de
$\binom {d+n-1}{d}$ n'est correcte que si $d >  -n$.
La plupart du temps, cette distinction est sans importance
car $d \geqslant 0$. Mais il n'en est pas de même dans la
proposition qui suit, proposition très utile pour
la gestion d'exemples.

\index{coefficient binomial}%

Pour $k = 1$ par exemple, qu'en est-il de la formule de gauche, correcte, par rapport à
celle de droite (qui ne l'est pas toujours) ?
$$
\sum_{i=1}^n \binom {n + d - d_i - 1}{d-d_i}  \qquad \text{versus} \qquad
\sum_{i=1}^n \binom {n + d - d_i - 1}{n-1}
$$
Elles coïncident si $d - d_i > -n$ pour tout $i$. 
En revanche, prenons par exemple, $n=3$, $d = 3$ et $D = (3,4,5)$ ; 
alors celle de gauche fournit 0 tandis que celle de droite 
donne le résultat incorrect $0 + 0 + 1 = 1$.

\begin{prop}\label{dminKk} \leavevmode

\begin{enumerate}[\rm i)]
\item La dimension du $\bfA$-module libre $\rmK_{k,d}$ est donnée par
$$
\dim \rmK_{k,d} = \sum_{\#I=k} \binom {n + d - d_I - 1}{d - d_I}
\qquad \text{où}\quad  d_I = \sum_{i \in I} d_i  
$$

\item 
Posons $d_{\min}(\rmK_k) = \min\limits_{\#I = k} d_I$ en convenant que $d_{\min}(\rmK_k) = +\infty$
si $k < 0$ ou $k > n$ (cas pour lesquels il n'y a pas de partie de cardinal $k$ de $\{1..n\}$). Alors:
$$
\rmK_{k,d} = 0 
\quad \iff \quad
d < d_{\min}(\rmK_k)
$$

\item
On a $d_{\min}(\rmK_k) \leqslant  d_{\min}(\rmK_{k+1})$.
En conséquence, si $\rmK_{k,d} = 0$ alors $\rmK_{k+1,d} = 0$.

\item
En degré $\delta$, le dernier terme du complexe $\rmK_{\sbullet,\delta}$ est nul, 
\idest{} 
$\rmK_{n,\delta} = 0$. 
\end{enumerate}
\end{prop}

\begin{proof} \leavevmode

\noindent i)
Découle du fait que $\rmK_{k,d}$ est la somme directe des
$\bfA[\uX]_{d-d_I}$  sur les parties $I$ de cardinal $k$.

\medskip
\noindent ii)  
La famille $(X^\alpha e_I)$ où $I$ parcourt les parties de $\{1..n\}$ de cardinal 
$k$ et $\alpha$ les $n$-uplets vérifiant $|\alpha| = d-d_I$
est une $\bfA$-base de $\rmK_{k,d}$.
Ainsi, $\rmK_{k,d} = 0$ si et seulement si pour tout $I$, on a $d - d_I < 0$.
On trouve bien l'équivalence annoncée.

\medskip
\noindent iii)
Soit $J$ une partie de cardinal $k+1$ de $\{1..n\}$ et $I \subset J$
quelconque de cardinal $k$. Alors $d_I \leqslant  d_J$, a
fortiori $d_{\min}(\rmK_k) \leqslant  d_J$. Cette dernière inégalité ayant
lieu pour toute partie $J$ de cardinal~$k+1$, on en déduit
$d_{\min}(\rmK_k) \leqslant  d_{\min}(\rmK_{k+1})$.

\smallskip
Le ``En conséquence'' s'en déduit aussitôt mais on peut également remarquer
que la présence d'un monôme $X^\alpha e_I \in \rmK_{k+1,d}$
entraîne $X^\alpha X_i^{d_i} e_{I \setminus i} \in \rmK_{k,d}$ 
pour n'importe quel $i$ de la partie non vide~$I$.

\medskip
\noindent iv) 
Utilisons le point i) :
la dimension de $\rmK_{n, \delta}$ est un coefficient binomial 
(la somme ne porte que sur $I = \{1,\dots,n\}$)
dont l'indice bas 
est $\delta - (d_1+\cdots+d_n)$. Cet indice est strictement négatif 
par définition du degré critique $\delta$, et par conséquent, 
le coefficient binomial est nul.
\end{proof}

\bigskip
Pour $k=1$ et $k=0$, la formule donnée en i) permet de
donner le format de la matrice de 
l'application de Sylvester 
$\Syl_\delta : \rmK_{1,\delta} \to \rmK_{0,\delta} = \bfA[\uX]_\delta$
en degré $\delta$, qui va jouer un rôle important dans la suite.
En général, cette matrice est \og plus large que haute\fg{} \idest{}
$\dim \rmK_{1,\delta} > \dim \rmK_{0,\delta}$. Voici quelques \emph{exceptions}:
$$
\begin {array}{c|c|c|c|c|c|c|c|c|c|c|c|c|c|c}
D                   &(1,1,2)  &(1,1,3) &(1,2,2) &(1,2,3) &(2,2,2) &(2,2,3)\\[1mm]
\hline
\vrule height12pt depth3pt width0pt
\dim \rmK_{1,\delta} &2        &6       &5       &10      &9       &15\\[1mm]
\hline
\vrule height12pt depth3pt width0pt
\dim \rmK_{0,\delta} &3        &6       &6       &10      &10      &15\\
\end {array}
$$


\subsubsection*{Caractéristique d'Euler-Poincaré $\chi_d = \chi_d(D)$ de $\rmK_{\sbullet,d}(X^D)$}

\index{caractéristique d'Euler-Poincaré}%
\label{NOTA02-chid}%

La caractéristique d'Euler-Poincaré $\chi_d$ de $\rmK_{\sbullet,d}(\uP)$ 
est par définition l'entier égal à la somme alternée des dimensions des termes de ce complexe,
en commençant par le terme de degré (homologique) 0:
$$
\chi_d \ =\ \dim \rmK_{0,d} - \dim \rmK_{1,d} + \cdots 
\qquad \ =\ 
\sum_{k=0}^n (-1)^k \dim \rmK_{k,d}
$$
Comme $\rmK_{k,d}$ ne dépend que du format $D$ (et pas de la suite $\uP$), 
il en est de même de $\chi_d$.
Dans la suite, on rappellera parfois la dépendance en $D$ via la notation $\chi_d(D)$.

\begin{prop}[Caractéristique d'Euler-Poincaré attachée à $D$ en degré $d$] 
\label{chiPropriete}
\leavevmode
\begin{enumerate}[\rm i)]
\item 
On a 
$\chi_d = \dim \big(\bfA[\uX]/ \langle X_1^{d_1}, \dots, X_n^{d_n} \rangle \big)_d
= \# \big\{ \alpha \in \bbN^n \mid |\alpha | = d \text{ et } \alpha \preccurlyeq \emouton \big\}$.

\item 
L'entier $\chi_d$ est le coefficient de degré $d$ du \emph{polynôme}
$\displaystyle\dfrac{(1-t^{d_1}) \cdots (1-t^{d_n})}{(1-t)^n}$, unitaire de degré $d$.

En particulier, 
$$
\chi_d
\ = \ 
\left\{
\begin{array}{ll}
0 & \text{pour $d \geqslant \delta+1$} \\
[0.2cm]
1 & \text{pour $d = \delta$} 
\end{array}
\right.
$$
\item
On dispose de l'égalité:
$$
\sum_{d=0}^\delta \chi_d  = d_1d_2 \cdots d_n
$$  
\end{enumerate}
\end{prop}

\begin{proof}
i) 
Pour le jeu étalon $\uX^D$, les $X^\alpha$ avec $|\alpha | = d$ et
$\alpha \preccurlyeq \emouton$ forment une base du module
libre $\bfB_d = \bfA[\uX]_d/\langle\uX^D\rangle_d$, d'où la deuxième égalité.

Pour montrer la première égalité, 
prenons la suite régulière $\uP = \uX^D$. Le complexe 
de modules libres $\rmK_{\sbullet,d}(\uX^D)$ 
est exact (cf.~\ref{ExactitudeKd}), et il résout le module \textit{libre} $\bfB_d$.  
Comme la dimension des termes de cette résolution 
est indépendante de l'anneau $\bfA$, 
on peut prendre comme anneau celui qui nous arrange: par exemple un corps ou
bien $\bfA = \bbZ$. Dans ce cas, pour une application linéaire $u : E
\to F$ entre deux modules libres, $\Im u$ et $\Ker u$ sont libres et
$\dim E = \dim \Ker u + \dim \Im u$.  En appliquant cette égalité
à la dimension de chaque terme intervenant dans la somme alternée~$\chi_d$
et en utilisant l'exactitude du complexe, on obtient $\chi_d = \dim \bfB_d$.

\smallskip
ii) Le nombre de $\alpha = (\alpha_1,\dots,\alpha_n) \preccurlyeq\emouton$ 
tels que $\sum_{i=1}^n \alpha_i = d$ est le coefficient de degré $d$ du polynôme 
$$
\sum_{0 \leqslant \alpha_1 < d_1} t^{\alpha_1}
\times 
\cdots 
\times 
\sum_{0 \leqslant \alpha_n < d_n} t^{\alpha_n}
\ = \ 
\dfrac{1-t^{d_1}}{1-t} \times \cdots \times \dfrac{1-t^{d_n}}{1-t}
$$
Ce polynôme est unitaire de degré $\delta$, donc $\chi_\delta = 1$ 
et $\chi_d = 0$ pour $d \geqslant \delta+1$.

\smallskip
iii) D'après le point précédent:
$$
\sum_{d=0}^\delta \chi_d\,t^d = \prod_{i=1}^n \dfrac {1 - t^{d_i}} {1-t}
$$
Il suffit d'y faire $t := 1$ en ayant remarqué que $\big((1-t^{d_i})/(1-t)\big)_{t := 1} = d_i$.

On peut également utiliser
$$
\sum_{d=0}^\delta \chi_d = \dim \bfA[\uX]/ \langle X_1^{d_1}, \dots, X_n^{d_n} \rangle
$$
et le fait que la famille $(x^\alpha)_{0 \le \alpha_i < d_i}$, de
cardinal $d_1 d_2\cdots d_n$, est une $\bfA$-base du quotient de
droite.
\end{proof}

\medskip

La preuve fait apparaître la famille $X^\alpha$ avec $|\alpha | = d$
et $\alpha \preccurlyeq \emouton$, comme base monomiale d'un
supplémentaire de $\langle\uX^D\rangle_d$ dans $\bfA[\uX]_d$, et donc
également comme base de $\bfB_d$.
Nous notons $\Jex_{0\setminus1}$ le supplémentaire monomial de
$\Jex_1 \buildrel {\rm def} \over = \langle\uX^D\rangle$ dans
$\Jex_0 \buildrel {\rm def} \over = \bfA[\uX]$. 
La notation $\Jex_0$ se veut uniforme avec la notation 
$\Jex_h$ qui sera introduite ultérieurement pour tout $h \geqslant 0$
pour désigner une collection d'idéaux monomiaux attachée à $D$. 
En résumé, pour la caractéristique d'Euler-Poincaré 
$\chi_d$ de $\rmK_{\sbullet,d}(\uX^D)$:
$$
\chi_d = \dim \big(\bfA[\uX]_d / \Jex_{1,d} \big) =
\dim \Jex_{0\setminus1,d} =
\# \big\{ \alpha \in \bbN^n \mid 
|\alpha| = d \text{ et } \alpha \preccurlyeq \emouton \big\}
$$

Nous regroupons ci-dessous un certain nombre d'informations concernant
la suite $(\chi_d)_{d\le 0\le \delta}$ qui vérifie des propriétés
intervenant dans plusieurs domaines: combinatoire, algèbre commutative
combinatoire, topologie simpliciale, théorie des nombres, théorie des
probabilités, etc. Parmi ces propriétés, il y a celle assez banale de
symétrie: une suite $u = (u_0, \dots, u_p)$ est dite symétrique si
$u_{p-i} = u_i$ pour $0 \le i \le p$.  Une autre propriété peut-être
moins connue est l'unimodalité: $u$ est dite unimodale si elle est
croissante puis décroissante i.e.  s'il existe un indice $k_0$ tel que
$u_k \le u_{k+1}$ pour $k < k_0$ et $u_k \ge u_{k+1}$ pour $k \ge
k_0$. Uniquement pour les définitions en langue française, la lectrice
pourra consulter l'article de Michel Balazard \cite{Balazard}; le
domaine de la théorie des nombres ne nous concerne pas ici mais il est
bon de remarquer dans la bibliographie de cet article les références à
Richard Stanley, champion (entre autres) de la combinatoire algébrique
et topologie simpliciale.

\smallskip

La vérification en est laissée au lecteur, à l'exception du dernier point
qui nécessite un peu de travail.

\label{NOTA02-Jex01}%
%
%

\begin{prop}[Propriétés de $\chi_d$] 
\label{chiProprieteComplements} \leavevmode
Dans le contexte d'un format $D$ de degrés:

\begin{enumerate}[\rm i)]
\item
{\bf Symétrie}.  
Pour $d + d' = \delta$, on dispose de la bijection suivante ({\rm mouton-swap})
$$
\begin{array}[t]{rcl}
\Jex_{0\setminus1,d} & \longrightarrow & \Jex_{0\setminus1,d'} \\ [0.1cm]
X^\alpha & \longmapsto & X^{\emouton-\alpha}
\end{array}
$$
On a donc $\chi_d = \chi_{d'}$, entier que l'on note parfois $\chi_{d,d'}$.

\item 
\begin{itemize}
\item 
Pour $0 \leqslant d \leqslant \delta$, 
on a une inclusion stricte $\Jex_{1,d} \subsetneq \bfA[\uX]_d$ de sorte que $\chi_d > 0$. 

\smallskip

De plus $\chi_0 = \chi_\delta = 1$: cela traduit le fait que $1$ (resp. $X^\emouton$)
est le seul monôme de degré $0$ (resp.~$\delta$) non divisible par un $X_i^{d_i}$.

\smallskip

\item 
Pour $d \geqslant \delta+1$, on a $\Jex_{1,d} = \bfA[\uX]_d$ de sorte que 
$\chi_d = 0$.
\end{itemize}

\item 
En notant $s_d$ le rang de l'application de Sylvester de $\uX^D$ en degré $d$,
on dispose des égalités 
$$
s_d = \dim \Jex_{1,d}
\qquad \text{et} \qquad 
\dim \bfA[\uX]_d \, = \, s_d + \chi_d
$$

\item
{\bf Unimodalité}.
La fonction $d \mapsto \chi_d$ est croissante sur 
$\big\llbracket 0, \lfloor \frac{\delta}{2} \rfloor \big\rrbracket$ 
et décroissante sur 
$\big\llbracket \lfloor \frac{\delta}{2} \rfloor, \delta \big\rrbracket$.
Elle admet donc un maximum en $\lfloor \frac{\delta}{2} \rfloor$
(si $\delta$ est impair, ce maximum est atteint en 
$\lfloor \frac{\delta}{2} \rfloor$ et en $\lfloor \frac{\delta}{2} \rfloor + 1$).
\end{enumerate}
\end{prop}

\label{NOTA02-sd}%
%
%

\begin{proof}

iv) D'après~\ref{chiPropriete}, on a 
$$
\displaystyle \sum\limits_{d=0}^\delta \chi_d \,t^d 
\ = \ 
\sum_{0 \leqslant \alpha_1 < d_1} t^{\alpha_1}
\times 
\cdots 
\times 
\sum_{0 \leqslant \alpha_n < d_n} t^{\alpha_n}
$$
Cette égalité traduit le fait que la suite $\chi = (\chi_0, \chi_1,
\dots, \chi_\delta)$ est le produit de convolution des $n$ suites $(1,
\dots, 1)$ de longueurs respectives $d_1$, $d_2$, \dots, $d_n$.  On
invoque alors le lemme~\ref{SuitePointue} qui suit.
\end{proof}

Convenons de dire\footnote{%
Autrement dit, une suite symétrique et unimodale.
Dans une première rédaction, dont nous voulons garder des traces pour souvenir, nous disions:
``Il s'agit d'une définition purement locale; pour les personnes
désirant disposer d'un nom au lieu de \pointu{}, nous leur proposons \og suite de type pic\fg''.
Ce discours est devenu obsolète depuis l'introduction de l'unimodalité.
}
qu'une suite $\ua = (a_0, \ldots, a_p)$ d'entiers naturels est de type \pointu{} si~:
\begin{center}
$\ua$ est symétrique (\idest{} $a_{p-i} = a_i$ pour tout $i$), \quad
la sous-suite $(a_0, \dots, a_{p'})$ est croissante où 
$p' = \lfloor \frac{p}{2} \rfloor$
\end{center}

\begin{lem} \label{SuitePointue}
\leavevmode
\begin{enumerate}[\rm i)]
\item
La somme de deux suites de type \pointu{} de {\rm même longueur} est encore de type \pointu.

\item
Soit deux suites  $\ua = (a_0, \ldots, a_p)$ et $\ub = (b_0, \ldots, b_q)$ de type \pointu{} 
non nécessairement de même longueur.

Alors la suite $\uc = (c_0, \ldots, c_{p+q}) = \ua \star \ub$, 
produit de convolution de $\ua$ et $\ub$, est encore de type \pointu.
\end{enumerate}
\end{lem}

\begin{proof}
Le point i) est facile.
Pour le deuxième point, introduisons $F = \sum_{i=0}^p a_it^i$ et $G = \sum_{j=0}^q b_jt^j$
de sorte que $\sum_{k=0}^{p+q} c_k t^k = FG$.
La seule chose non immédiate à vérifier est l'inégalité
$$
c_{k-1} \leqslant c_k  \quad \hbox {pour $2k \leqslant p+q$}
$$
Posons $q' = \lfloor \frac{q}{2} \rfloor$ et introduisons les suites 
$\ub^{(0)}, \ub^{(1)}, \ldots, \ub^{(q')}$, qui sont de type \pointu{} et de longueur~$q+1$:
$$
\ub^{(0)} = (1,1, \ldots, 1,1), \qquad
\ub^{(1)} = (0, 1,\ldots,1, 0), \qquad
\ub^{(2)} = (0,0,1 \ldots,1,0,0), \qquad
\dots
$$
On écrit :
$$
\ub \ =\ 
\beta_0\,\ub^{(0)} + \beta_1\,\ub^{(1)} + \beta_2\,\ub^{(2)} + \cdots + \beta_{q'}\,\ub^{(q')}
\qquad \hbox {avec} \quad
\beta_0 = b_0, \quad \beta_i = b_i-b_{i-1}\hbox { pour $1 \leqslant i \leqslant q'$} 
$$
On a $\beta_i \in \bbN$ pour tout $i$. L'égalité
$$
\ua\star\ub 
\ =\  
\beta_0\big(\ua\star\ub^{(0)}\big) \, + \, 
\beta_1\big(\ua\star\ub^{(1)}\big) \, + \, 
\beta_2\big(\ua\star\ub^{(2)}\big) \, + \, 
\cdots \, + \,  
\beta_{q'}\big(\ua\star\ub^{(q')}\big)
$$
et le point i) permettent de se ramener au cas où $\ub$ est une des suites
$\ub^{(j)}$. Traitons par exemple $\uc = \ua\star\ub^{(0)}$; on a donc
$\sum_{k=0}^{p+q} c_kt^k = FG$ avec $G = \sum_{j=0}^q t^j$.  On
prolonge $\ua$ par $a_i = 0$ pour $i< 0$ ou $i> p$ et l'on a encore
$a_i = a_{p-i}$ et $\ua$ croissante sur $\bbZ_{\leqslant p'}$.  En utilisant
$FG - tFG = (1-t^{q+1})F$, il vient :
$$
c_k - c_{k-1} 
\ \overset{\rm (1)}{=} \ 
a_k - a_{k-(q+1)} 
\ \overset{\rm (2)}{=} \ 
a_{p-k} - a_{k-(q+1)} 
$$
On a ou bien $2k \leqslant p$ (\idest{} $k \leqslant p'$) 
ou bien $2(p-k) \leqslant p$ (\idest{}  $p-k \leqslant p'$). 
Dans le premier cas, on conclut avec l'égalité $(1)$.
Dans le second, on conclut avec l'égalité $(2)$ en 
ayant remarqué que $p-k \geqslant k-(q+1)$ (puisque $2k \leqslant p+q$).
\end{proof}

\subsubsection*{Rangs attendus des différentielles d'un complexe de modules libres}
\label{RangAttendu}

Dans notre étude, vont intervenir plusieurs notions de rang: le rang
d'une application linéaire entre deux modules libres de dimension
finie, le rang d'un module librement résoluble, 
ou encore celui d'un module de présentation finie (lorsque ce rang
existe). 
Nous commençons à en parler de manière élémentaire ici.
Mais, en ce qui concerne les résultats non
triviaux dont nous avons besoin, c'est principalement la théorie des résolutions
libres finies et la structure multiplicative qui fourniront le
bon cadre permettant d'énoncer les résultats et de les prouver,
cf. le chapitre~\ref{ChapStructureMultiplicative}.

\medskip

Pour une application linéaire $u : E \to F$ entre deux modules libres
de dimension finie, la notion la plus simple à décrire est \og $u$ est
de rang $< s$\fg{} qui signifie que tout mineur d'ordre $s$ de $u$ est
nul.  En notant $\calD_s(u)$ \emph{l'idéal déterminantiel d'ordre $s$}
de $u$ (\idest{} l'idéal engendré par les mineurs d'ordre $s$), cela
s'écrit donc $\calD_s(u) = 0$. Ceci ne précise pas pour autant ce qu'est le rang
de $u$.

\index{idéal!déterminantiel}%
\label{NOTA02-calD}%

\bigskip
Pour comprendre la définition qui vient, prenons provisoirement
comme anneau de base un corps. Si $u_n : F_n \to F_{n-1}$
est injective, alors le rang de $u_n$ (au sens habituel
sur les corps) est $\dim F_n$. Si la suite ci-dessous est
exacte:
$$
0 \to \xymatrix @M=0.4pc{F_n \ar[r]^-{u_n} & F_{n-1} \ar[r]^-{u_{n-1}} & F_{n-2}}
$$
alors le rang de $u_{n-1}$ est $\dim F_{n-1} - \dim F_n$. Et ainsi de suite.

\medskip
Ceci conduit, pour un anneau de base quelconque, à associer
à tout  complexe libre $F_\sbullet$, \emph {non nécessairement exact}, 
de longueur $\leqslant n$
$$
0 \to \xymatrix @M=0.4pc{
F_n \ar[r]^-{u_n} & F_{n-1} \ar[r]^-{u_{n-1}} & \quad \cdots \quad \ar[r] &
F_2 \ar[r]^-{u_2} & F_1 \ar[r]^-{u_1} & F_0 
}
$$
la suite des \og rangs attendus de ses différentielles\fg.
Elle est définie de la manière suivante:
$$
r_{n+1} = 0, \qquad  r_{k+1} + r_k = \dim F_k 
\quad \text{pour $k \in \{n,\, n-1, \dots, 1,\, 0\}$}
$$
On a donc:
$$
r_k \ = \ 
\dim F_k - \dim F_{k+1} + \cdots + (-1)^{n-k} \dim F_n
$$
et on dit que $r_k$ est le rang attendu de $u_k$. Quant à $r_0$,
ce n'est autre que la caractéristique d'Euler-Poincaré du complexe $F_\sbullet$,
à voir également comme $r_0 = \dim F_0 - r_1$.

Quand le complexe est exact, ce rang attendu $r_k$ est le rang
de $u_k$ au sens suivant: l'idéal déterminantiel $\calD_{r_k+1}(u_k)$
est nul tandis que $\calD_{r_k}(u_k)$ est un idéal fidèle,
cf. le chapitre \ref{ChapStructureMultiplicative}.

\index{rang!attendu (d'une différentielle dans un complexe)}%
\label{NOTA02-rk}%
%
%


\subsubsection*{Famille des rangs attendus $r_{k,d} = r_{k,d}(D)$ attachée à $D$}

On peut ainsi attacher à $D = (d_1, \dots, d_n)$ la famille des rangs
attendus du complexe $\rmK_{\sbullet,d}(\uX^D)$, composante homogène de
degré $d$ du complexe de Koszul $\rmK_\sbullet(\uX^D)$:
$$
r_{k,d} = r_{k,d}(D) = \dim \rmK_{k,d} - \dim \rmK_{k+1,d} + \cdots + (-1)^{n-k} \dim \rmK_{n,d}
$$
Deux rangs sont particulièrement importants: $r_{0,d} = \chi_d$ et
$s_d := r_{1,d}$.  Choisi pour évoquer Sylvester, l'entier $s_d$, rang
attendu de l'application de Sylvester $\Syl_d : \rmK_{1,d} \to
\rmK_{0,d} = \bfA[\uX]_d$, est donc défini par
$$
s_d \ = \ 
\dim \bfA[\uX]_d - \chi_d
$$
Comme $\chi_\delta = 1$, on a $s_\delta = \dim \bfA[\uX]_\delta - 1$. Ne pas
oublier dans ces notations l'absence du format implicite $D = (d_1, \dots, d_n)$.

\label{NOTA02-rkd}%
\label{NOTA02-sdelta}%

\medskip
D'après le résultat général relatif aux différentielles
d'un complexe exact (paragraphe précédent), les idéaux déterminantiels 
d'ordre $r_{k,d}+1$ et $r_{k,d}$ de la  différentielle $\partial_{k,d}(\uP)$ du complexe 
$\rmK_{\sbullet, d}(\uP)$ possèdent des propriétés antinomiques
de régularité (nullité versus fidélité) dans le contexte suivant.

\begin{prop}[Idéaux déterminantiels de $\rmK_{\sbullet, d}(\uP)$]
\label{IdeauxDeterminantielsKP}

Soient $1 \leqslant k \leqslant n$ et $d$ un entier.

\begin{enumerate}[\rm i)]

\item 
Pour $\uP$ régulière, l'application linéaire $\partial_{k,d}$ est de rang $r_{k,d}$ :
$$
\calD_{r_{k,d}\,+\,1}(\partial_{k,d}) = 0 
\qquad \text{tandis que } \qquad 
\calD_{r_{k,d}}(\partial_{k,d}) \text{ est fidèle.}
$$

\item 
Pour toute suite $\uP$, l'application linéaire $\partial_{k,d}$ est de rang 
$\leqslant r_{k,d}$ :
$$
\calD_{r_{k,d}\,+\,1}(\partial_{k,d}) = 0
$$
\end{enumerate}
\end{prop}

\begin{proof}
D'après~\ref{ExactitudeKd}, le complexe de modules libres 
$\rmK_{\sbullet, d}(\uP)$ est exact. 
La théorie des résolutions libres finies permet de conclure, cf le
chapitre \ref{ChapStructureMultiplicative}.

L'égalité de gauche est vérifiée pour la suite générique qui est régulière.
Puisque cette égalité se spécialise, on a 
$\calD_{r_{k,d}+1}(\partial_{k,d}) = 0$ pour une suite $\uP$ \textit{quelconque}.
\end{proof}

\subsubsection*{Majoration du rang de l'application de Sylvester en degré $\delta$ : preuve directe}
\label{rankSyldeltaDirectProof}

En prenant $k=1$ dans le résultat précédent, 
on obtient que l'application de Sylvester $\Syl_d = \Syl_d(\uP)$ 
en degré $d$ d'une suite régulière $\uP$ vérifie 
$$
\calD_{s_d+1}(\Syl_d) = 0 
\qquad \text{tandis que } \qquad 
\calD_{s_d}(\Syl_d) \text{ est fidèle}
$$
L'égalité de gauche est même vérifiée pour une suite $\uP$ \textit{quelconque}.
On va en fournir une preuve \og directe\fg{} dans le cas
particulier $d = \delta$ \idest{} 
l'application de Sylvester $\Syl_\delta :
\rmK_{1,\delta} \to \bfA[\uX]_\delta$ est de rang $< \dim
\bfA[\uX]_\delta$. Précisons le sens de \og direct\fg{}: la preuve se
passe de la théorie des
résolutions libres mais utilise de manière essentielle le théorème de
Wiebe, ce qui montre le caractère non trivial du résultat.

\begin {prop}[Majoration générale du rang de $\Syl_\delta$] \label{rangSyldelta}
Pour toute suite $\uP = (P_1, \ldots, P_n)$, l'application de Sylvester de $\uP$
en degré $\delta$ est de rang strictement plus petit que 
$\dim \bfA[\uX]_\delta$.
\end {prop}

\begin {proof}
Il s'agit de montrer que tous les mineurs d'ordre $\dim
\bfA[\uX]_\delta$ de $\Syl_\delta$ sont nuls.  Ce résultat se
spécialise, donc on peut se permettre de supposer la suite $\uP$
générique. En particulier, $\uP$ est régulière et on peut appliquer~\ref{MiniWiebe}-i) :
$$
\forall\, a \in \bfA, \qquad 
a\nabla \in \uPdelta = \Im\Syl_\delta
\quad \implies \quad 
a = 0
$$
où $\nabla$ un déterminant bezoutien de $\uP$.
On conclut en utilisant le lemme suivant avec $v = \nabla$.

\begin{quote}
\it
Soit $u : E \to F$ une application linéaire entre deux modules
libres. On suppose qu'il existe $v \in F$ vérifiant:
$$
\forall\, a \in \bfA, \qquad 
a v  \in \Im u 
\quad \implies \quad 
a = 0
$$
Alors, $u$ est de rang $< f := \dim F$ \idest{} tous les
mineurs d'ordre $f$ de $u$ sont nuls.
\end{quote}

Soit $M \in \bbM_f(\bfA)$ une sous-matrice carrée de $u$ d'ordre $f$.
On a $\det(M)\,v \in \Im u$ puisque $\det(M)\,v = M\, w$ avec
$w = \widetilde {M}\, v$. En conséquence, $\det(M)\,v \in \Im u$ 
et l'hypothèse implique $\det(M) = 0$.
\end{proof}

\medskip
Il y a cependant quelques rares cas exceptionnels pour lesquels on
peut procéder vraiment directement pour montrer le résultat précédent.
Bien que ne reflétant absolument pas le cas général, 
illustrons un de ces cas.
On prend $D = (d_1, d_2, d_3)$ et on suppose $\Syl_\delta$ \textit{carrée}, 
de sorte qu'il s'agit de montrer que le déterminant de $\Syl_\delta$ est nul.
Comme $\rmK_{n,\delta} = 0$
(cf.~\ref{dminKk}-iv) et $\chi_\delta = 1$, on a :
$$
\dim\rmK_{3,\delta} = 0, \qquad\qquad
\dim\rmK_{2,\delta} = 1, \qquad\qquad
\dim\rmK_{1,\delta} = \dim\rmK_{0,\delta}
$$
et c'est l'application $\partial_{2,\delta} : \rmK_{2,\delta} \to
\rmK_{1,\delta}$ qui fournit un vecteur \emph {non nul}
du noyau de~$\Syl_\delta$ prouvant que $\det(\Syl_\delta) = 0$.
Par exemple, pour $D = (1,2,3)$, on a $\dim \rmK_{1,\delta} = \dim
\rmK_{0,\delta} = 10$ et pour 
{\small
$$
\setlength{\arraycolsep}{0.5\arraycolsep}
\begin {array}{ccl}
P_1 &=&
a_{1}X_{1} + a_{2}X_{2} + a_{3}X_{3}
\\
P_2 &=&
b_{1}X_{1}^{2} + b_{2}X_{1}X_{2} + b_{3}X_{1}X_{3} + b_{4}X_{2}^{2} + b_{5}X_{2}X_{3} + b_{6}X_{3}^{2}
\\
P_3 &=&
c_{1}X_{1}^{3} + c_{2}X_{1}^{2}X_{2} + c_{3}X_{1}^{2}X_{3} + c_{4}X_{1}X_{2}^{2} + c_{5}X_{1}X_{2}X_{3} + 
c_{6}X_{1}X_{3}^{2} + c_{7}X_{2}^{3} + c_{8}X_{2}^{2}X_{3} + c_{9}X_{2}X_{3}^{2} + c_{10}X_{3}^{3}
\\
\end {array}
$$
}
les différentielles de Koszul sont :
$$
\Syl_\delta = \partial_{1,\delta} \ = \ 
\NorthEastBordermatrix{
\Veti{X_{1}^{2}\,e_{1}} &\Veti{X_{1}X_{2}\,e_{1}} &\Veti{X_{1}X_{3}\,e_{1}} &\Veti{X_{2}^{2}\,e_{1}} &\Veti{X_{2}X_{3}\,e_{1}} &\Veti{X_{3}^{2}\,e_{1}} &\Veti{X_{1}\,e_{2}} &\Veti{X_{2}\,e_{2}} &\Veti{X_{3}\,e_{2}} &\Veti{e_{3}} & \\
a_{1} & . & . & . & . & . & b_{1} & . & . & c_{1} & \Heti{X_{1}^{3}} \\
a_{2} & a_{1} & . & . & . & . & b_{2} & b_{1} & . & c_{2} & \Heti{X_{1}^{2}X_{2}} \\
a_{3} & . & a_{1} & . & . & . & b_{3} & . & b_{1} & c_{3} & \Heti{X_{1}^{2}X_{3}} \\
. & a_{2} & . & a_{1} & . & . & b_{4} & b_{2} & . & c_{4} & \Heti{X_{1}X_{2}^{2}} \\
. & a_{3} & a_{2} & . & a_{1} & . & b_{5} & b_{3} & b_{2} & c_{5} & \Heti{X_{1}X_{2}X_{3}} \\
. & . & a_{3} & . & . & a_{1} & b_{6} & . & b_{3} & c_{6} & \Heti{X_{1}X_{3}^{2}} \\
. & . & . & a_{2} & . & . & . & b_{4} & . & c_{7} & \Heti{X_{2}^{3}} \\
. & . & . & a_{3} & a_{2} & . & . & b_{5} & b_{4} & c_{8} & \Heti{X_{2}^{2}X_{3}} \\
. & . & . & . & a_{3} & a_{2} & . & b_{6} & b_{5} & c_{9} & \Heti{X_{2}X_{3}^{2}} \\
. & . & . & . & . & a_{3} & . & . & b_{6} & c_{10} & \Heti{X_{3}^{3}} \\
}
\qquad\qquad
\partial_{2,\delta} \ = \ 
\NorthEastBordermatrix{
\Veti{e_{12}} & \\
-b_{1} & \Heti{X_{1}^{2}\,e_{1}} \\
-b_{2} & \Heti{X_{1}X_{2}\,e_{1}} \\
-b_{3} & \Heti{X_{1}X_{3}\,e_{1}} \\
-b_{4} & \Heti{X_{2}^{2}\,e_{1}} \\
-b_{5} & \Heti{X_{2}X_{3}\,e_{1}} \\
-b_{6} & \Heti{X_{3}^{2}\,e_{1}} \\
a_{1} & \Heti{X_{1}\,e_{2}} \\
a_{2} & \Heti{X_{2}\,e_{2}} \\
a_{3} & \Heti{X_{3}\,e_{2}} \\
. & \Heti{e_{3}} \\
}
$$
Les matrices sont positionnées de façon à pouvoir vérifier la nullité de $\partial_{1,\delta}\circ\partial_{2,\delta}$!

\begin{prop}[Régularité d'un déterminant bezoutien]
\label{BezoutienRegulier}
Tout déterminant bezoutien d'une suite régulière est un élément régulier.
\end{prop}

\begin{proof}
D'après le lemme de McCoy~\ref{McCoyPolyLemma}, il suffit de montrer que
l'idéal contenu $\rmc(\nabla)$, idéal de $\bfA$ engendré par les
coefficients de~$\nabla$, est d'annulateur réduit à~$0$.  Soit donc $a
\in \bfA$ tel que $a\nabla = 0$.  
Comme $\big(\uP : \nabla\big)_0 = 0$ (d'après~\ref{MiniWiebe}-i), 
on en déduit $a=0$.

\smallskip
\noindent  
Remarquons que nous
n'avons utilisé ici qu'une faible conséquence de $\big(\uP :
  \nabla\big)_0 = 0$ : en effet, l'égalité $a\nabla = 0$ est plus
forte que la simple appartenance $a\nabla\in\uPdelta$.
\end{proof}

\subsubsection*{Retour sur le module $\bfB_d$}

Un module est librement résoluble s'il admet une résolution libre finie.
Lorsque $\uP$ est régulière, c'est le cas du module $\bfB_d$  qui est résolu par le
complexe $\rmK_{\sbullet, d}(\uP)$ composante homogène de degré $d$ du Koszul de $\uP$.

\index{module!librement résoluble}%

\begin{prop} \label{ResolutionQuotients}
Soit $\uP$ une suite régulière.

\begin{enumerate}[\rm i)]
\item 
Pour tout $d$, le $\bfA$-module $\bfB_d$ est librement résoluble 
de rang $\chi_d = 
\left\{
\begin{array}{ll}
0 & \text{pour $d \geqslant \delta+1$} \\
[0.2cm]
1 & \text{pour $d = \delta$} 
\end{array}
\right.
$

\item 
Le $\bfA$-module $\bfB'_\delta$ est librement résoluble de rang $0$.
\end{enumerate}
\end{prop}

\begin{proof}
i) 
Puisque $\uP$ est régulière, 
le complexe de $\bfA$-modules libres $\rmK_{\sbullet,d}(\uP)$ est exact 
(cf.~\ref{ExactitudeKd}) et, par définition de $\bfB$, il résout $\bfB_d$.
Le rang de $\bfB_d$ est alors donné par la caractéristique d'Euler-Poincaré 
de ce complexe qui vaut $\chi_d$ (cf.~\ref{chiPropriete} pour les valeurs annoncées).

ii) 
Considérons la composante homogène en degré $\delta$ du complexe de Koszul,
modifié en degré homologique $1$ par 
$\widetilde {\rmK_{1,\delta}} = \rmK_{1,\delta} \oplus \bfA$ 
et 
$\widetilde{\partial_{1,\delta}} = \partial_{1,\delta} \oplus \rm{mult}_\nabla$:
$$
\widetilde {\partial_{1,\delta}} :\  
\widetilde{\rmK_{1,\delta}} \ \longrightarrow \ \rmK_{0,\delta}, \quad 
\qquad
U_1e_1 + \cdots + U_ne_n \, \oplus\, u 
\ \longmapsto\ 
\sum_{i=1}^n U_i P_i + u\nabla
$$
La deuxième différentielle 
$\widetilde {\partial_{2,\delta}} : 
\rmK_{2,\delta} \longrightarrow \widetilde{\rmK_{1,\delta}}$ 
est naturellement définie par 
$\widetilde{\partial_{2,\delta}}(x) = \partial_{2,\delta}(x) \oplus 0$.
Ce nouveau complexe 
$$
0 \to
\xymatrix @M=0.4pc @C=1cm{
\rmK_{n,\delta} = 0 \ar[r]^-{\partial_{n,\delta}} &
\rmK_{n-1,\delta} \ar[r]^-{\partial_{n-1,\delta}} & \quad \cdots \quad \ar[r] &
\rmK_{2,\delta} \ar[r]^-{\widetilde{\partial_{2,\delta}}} & 
\widetilde {\rmK_{1,\delta}} 
\ar[r]^-{\widetilde{\partial_{1,\delta}}} & \rmK_{0,\delta}
}
$$
est clairement de caractéristique d'Euler-Poincaré $\chi_\delta - 1$, 
donc nulle. 
Il est exact en degré homologique $k \geqslant 2$ car la suite $\uP$ est régulière.
Examinons l'exactitude en degré homologique $1$.
Soit $(\underline U, u) \in \widetilde{\rmK_{1,\delta}}$ tel que 
$\widetilde{\partial_{1,\delta}}(\underline U \oplus u) = 0$.
Par définition, on obtient alors 
$\partial_{1,\delta}(\underline U) + u\nabla = 0$, 
d'où $u\nabla \in \uPdelta$.
Or, d'après~\ref{MiniWiebe}-i), 
on a $\big(\uP : \nabla\big)_0 = 0$, donc $u = 0$. 
Puis $\partial_{1,\delta}(\underline U) = 0$.
Par exactitude du complexe de Koszul en degré homologique $1$, on obtient 
$\underline U \in \Im \partial_{2,\delta}$. Finalement, on obtient 
$(\underline U,u) \in \Im \widetilde{\partial_{2,\delta}}$.

Bilan : ce complexe est exact.  
De plus, il résout $\bfB'_\delta$, car 
$\Im \widetilde {\partial_{1,\delta}} = \uPdelta + \bfA \nabla$ 
(la régularité de $\uP$ n'intervient pas ici).
\end{proof}

\medskip

Pour terminer, signalons que nous définirons (cf.~\ref{sous-sectionIdeauxFitting}) 
la notion  de rang \textit{d'un module de présentation finie} 
par l'intermédiaire de ses idéaux
de Fitting, ceux-ci étant les idéaux déterminantiels
(convenablement indexés) d'une présentation du module.  
Ainsi, lorsque
la suite $\uP$ est régulière, le rang du $\bfA$-module~$\bfB_d$ associé à
$\uP$ est l'entier $\chi_d$ caractérisé par:
$$
\calF_{\chi_d +1}(\bfB_d) = 0, \qquad
\calF_{\chi_d}(\bfB_d) \text{ est fidèle.}
$$

\cleardoublepage

\section{Deux cas d'école : l'idéal d'élimination est explicitement monogène}
\label{DeuxCasEcole}

Dans ce chapitre, d'une part pour le format $D = (1, \dots, 1, e)$ et d'autre part dans le cas
$n=2$, nous montrons, pour un système \emph {générique} $\uP$, la monogénéité de
l'idéal d'élimination:
$$
\ElimIdeal = \langle\calR\rangle
$$
Le scalaire $\calR$ exhibé est un générateur privilégié, rendu unique
par une propriété de normalisation: c'est le résultant $\calR = \Res(\uP)$
du système $\uP$.

\index{résultant}%

Pour ces cas particuliers, nous n'utilisons pas les théorèmes de
structure des résolutions libres finies; nous aurons
essentiellement besoin du théorème de Wiebe et de la notion de
profondeur~$\geqslant 2$.

\subsubsection*{Le cas de $n$ formes linéaires : leur résultant est leur déterminant}

Bien que le cas de $n$ formes linéaires $L_i$ soit pris en charge par le cas
d'école $D = (1, \dots, 1, e)$ avec $e=1$, nous souhaitons illustrer
la puissance du résultat de Wiebe (cf.~\ref{MiniWiebe}) sur cet
exemple.  Contrairement à ce que l'on pourrait croire, une preuve
directe de la monogénéité de l'idéal d'élimination dans ce cas n'est
pas si aisée. 
Dans cet exemple, le degré critique $\delta$ vaut $0$, 
la matrice bezoutienne du système $(L_1, \dots, L_n)$ est la matrice $\dsL \in \bbM_n(\bfA)$
dont la $j$\up{ème} colonne est la suite des $n$ coefficients de~$L_j$ ; 
et son déterminant $\nabla = \det \dsL$ est un élément de $\bfA$
(en accord avec le fait que $\nabla$ soit un polynôme homogène de degré $\delta$).

\begin{prop}[idéal d'élimination pour $n$ formes linéaires] \label{FormesLineairesFromWiebe}
Soit $\uL = (L_1, \ldots, L_n)$ une suite {\rm régulière} constituée de $n$ formes linéaires
de $\bfA[\uX]_1$ dont nous notons $\dsL \in \bbM_n(\bfA)$ la matrice.  

On a l'égalité d'idéaux de $\bfA[\uX]$ :
$$
\langle \uL \rangle^\sat \ = \ 
\langle \uL,\, \det \dsL \rangle
$$
En particulier, l'idéal d'élimination est engendré par $\det\dsL$:
$$
\ElimIdealL \ =\  \langle \det \dsL \rangle 
$$
\end{prop}

\index{idéal!d'élimination}

\begin{proof}
Nous allons appliquer~\ref{MiniWiebe} à la suite $\uL$ et 
au déterminant bezoutien $\det \dsL$.

\medskip

Pour montrer l'égalité d'idéaux de $\bfA[\uX]$, 
il suffit de montrer l'égalité des composantes homogènes 
en degré $d$ et ceci \textit{pour tout $d \in \bbN$}.
Or, d'après~\ref{MiniWiebe}, on a exactement cette égalité de composantes 
homogènes mais \textit{seulement pour $d \geqslant \delta$}.
Mais ici, $\delta = 0$, donc c'est gagné.

\medskip
Pour prouver le \og En particulier \fg{}, prenons la composante homogène de degré $0$ de 
$\langle \uL \rangle^\sat = \langle \uL,\, \det \dsL \rangle$.
Comme $\dsL$ est une suite de polynômes homogènes de degré $1$, on en déduit que 
$\langle \uL \rangle_0^\sat$ est le $\bfA$-module engendré par $\det \dsL$.
\end{proof}

\bigskip

Anticipons un peu et donnons une nouvelle preuve de l'égalité 
$\ElimIdealL = \langle \det \dsL \rangle$.
La forme linéaire identité $\mu = \id_\bfA$ sur $\bfA[\uX]_{\delta} = \bfA$ 
vérifie toutes les hypothèses du résultat à venir~\ref{MonogeneiteElimIdeal}.
Quant au déterminant bezoutien de $\uL$, c'est $\nabla = \det \dsL$;
on en déduit que l'idéal d'élimination est monogène engendré par $\mu(\nabla)$, 
c'est-à-dire par $\det \dsL$.

\subsection{Forme linéaire sur $\bfA[\protect\uX]_\delta$ induisant la monogénéité de l'idéal d'élimination} 

La proposition suivante, sous le couvert d'un système $\uP$ de déterminant bezoutien $\nabla$,
prend en prémisse une forme linéaire
$\mu : \bfA[\uX]_\delta \to \bfA$ ayant des propriétés mirifiques
et fournit comme conclusion la monogénéité explicite de l'idéal d'élimination.
Expliquons-nous sur le statut d'une telle forme linéaire.
Afin de pouvoir traiter le plus rapidement possible 
quelques exemples, nous avons choisi d'axiomatiser ou plutôt de dégager
les propriétés primordiales de la forme linéaire, qui prendra plus tard le nom de 
$\omegares : \bfA[\uX]_\delta \to \bfA$.
Pour un système $\uP = (P_1, \dots, P_n)$, cette forme linéaire sera
définie (cf.~\ref{PoidsNormalisationMacRae}) comme étant le pgcd fort
d'une famille (finie) de formes linéaires sur $\bfA[\uX]_\delta$ dites
déterminantales. Une telle forme déterminantale s'obtient 
à partir de la matrice de Sylvester $\Syl_\delta(\uP)$
en y sélectionnant $s_\delta = \dim \bfA[\uX]_\delta - 1$ colonnes $C_1, \dots, C_{s_\delta}$.
Chaque colonne s'identifie à un polynôme de $\bfA[\uX]_\delta$, ce qui permet de définir
la forme déterminantale en question par l'intermédiaire d'un déterminant d'ordre $\dim\bfA[\uX]_\delta$:
$$
\det(C_1, \dots, C_{s_\delta}, \sbullet) : \bfA[\uX]_\delta \to \bfA
$$
Parmi ces formes déterminantales, l'une d'entre elles est la forme
linéaire notée $\omega$ définie en~\ref{sectionFormeLineaireOmega}.

\medskip

Dans les deux cas d'école que nous allons traiter, la stratégie est
légèrement différente de la stratégie générale : nous proposons une
(la) forme linéaire bien particulière ayant les propriétés
mystérieuses de la proposition qui suit.  
Dans le cas du format $D = (1,\dots, 1,e)$ \idest{} d'un système $(L_1, \dots, L_{n-1}, P_n)$ où les $L_i$ sont linéaires, 
cette forme linéaire est l'évaluation en un
point $\xi := (\xi_1, \dots, \xi_n)$ déterminé à partir des $L_i$:
$$
\xi_i = \det(L_1, \dots, L_{n-1}, X_i)
$$
et porte le nom de $\evalxi$ 
(il n'y a donc pas de pgcd ici). 
Pour $n=2$, la forme linéaire en question est déterminantale, 
nous l'avons notée~$\omega$.
Dans ce cas, on a l'égalité remarquable $\omegares = \omega$.

\label{NOTA03-omegares}%
%
%

\begin{prop}[Forme linéaire mirifique] 
\label{MonogeneiteElimIdeal}
Soit $\mu \in \bfA[\uX]_\delta^\star$ une forme linéaire  ayant la propriété
$$
\forall\, F \in \bfA[\uX]_\delta, \qquad 
\mu(\nabla)F - \mu(F) \nabla \in \uPdelta 
$$
que nous abrégerons en disant \og $\mu$ est de Cramer en $\nabla$\fg{} ou bien 
\og $\mu$ a la propriété de Cramer en $\nabla$\fg.

\begin{enumerate}[\rm i)]
\item Le scalaire $\calR = \mu(\nabla)$ appartient à $\ElimIdeal$.

\item Si $\uP$ est régulière et $\Gr(\mu) \geqslant 1$, alors $\calR$ est régulier.

\item Si $\uP$ est régulière et $\Gr(\mu) \geqslant 2$, alors
$\calR$ est un générateur de l'idéal d'élimination:  
$\ElimIdeal = \langle \calR \rangle$.
\end{enumerate}
\end{prop}

\index{forme linéaire!mirifique (abstraction de $\omegares$)}%
\index{idéal!d'élimination}%
\index{propriété Cramer!1@en $\nabla$ d'une forme linéaire}%

\begin{proof}
i) La propriété de Cramer en $\nabla$ fournit en particulier :
$$
\forall\, F \in \bfA[\uX]_\delta, \qquad 
\calR\, F \in \langle \uP,\, \nabla \rangle_\delta 
$$
Or $X_i \nabla \in \langle \uP\rangle_{\delta+1}$ pour tout $i$.
Par conséquent, $X^\alpha \,\calR \in \langle \uP \rangle_{\delta+1}$ pour tout $|\alpha | = \delta+1$.

\smallskip

\noindent
ii) Soit $a \in \bfA$ tel que $a \mu(\nabla) = 0$.
Pour tout $F \in \bfA[\uX]_\delta$, on a 
$$
a \, \mu(\nabla) F \ - \ a \, \mu(F) \nabla \ \in \ \uPdelta
$$
d'où $a \mu(F) \nabla \in \uPdelta$.  La suite $\uP$ étant régulière,
on peut appliquer le point i) du théorème de Wiebe~\ref{MiniWiebe}, à
savoir $\big(\uP : \nabla\big)_0 = 0$.  On obtient $a \mu(F) = 0$
dans~$\bfA$, et ceci pour tout $F$ ; d'où $a \mu = 0$.  Comme
$\Gr(\mu) \geqslant 1$, on en déduit $a = 0$.

\smallskip

\noindent
iii) Pour alléger les notations, on raisonne dans le quotient $\bfB = \bfA[\uX]/\langle\uP\rangle$
en notant $\overline F$ la classe de~$F$.
Puisque $\uP$ est régulière, on a d'après~\ref{WiebeCech}-ii) :
$$
\uPsat_\delta / \langle\uP\rangle_\delta \ = \  \bfA \,\overline\nabla
$$
Pour $G \in \uPsat_\delta$, désignons par $[G]\strut_\nabla$ la coordonnée de $\overline G$ sur 
la base $\overline \nabla$.

\smallskip

Soit $a \in \ElimIdeal$.
Pour $F \in \bfA[\uX]_\delta$, on a dans~$\bfB_\delta$ (à gauche par hypothèse, à droite par définition) 
$$
\calR \, \overline F \ =\ \mu(F) \overline \nabla 
\qquad \text{et} \qquad 
a\, \overline F \ =\  [aF]\strut_\nabla \overline \nabla
$$
En multipliant la première égalité par $a$ et la deuxième par $\calR$, on obtient en identifiant sur 
la $\bfA$-base~$\overline \nabla$, l'égalité dans $\bfA$ :
$$
a \mu(F) \ =\  \calR [a F]\strut_\nabla
\qquad 
\text{a fortiori} 
\qquad 
\calR \mid a\mu(F)
$$
On a donc $\calR \mid a \mu(X^\alpha)$ pour tout $|\alpha | = \delta$.
Or $\Gr(\mu) \geqslant 2$ et $\calR$ est régulier d'après le point ii). 
D'après~\ref{NeutralisationGr2}, 
on en déduit que $\calR \mid a$, ce qui termine la preuve de $a \in \langle \calR \rangle$.
\end{proof}

\label{NOTA03-mu}
%
%

Lorsque la suite $\uP$ est régulière, une forme linéaire $\mu$ comme
dans l'énoncé précédent possède un caractère exceptionnel comme en
témoignent les propriétés suivantes ; de plus, sous le couvert d'une
hypothèse adéquate, elle est essentiellement unique.

\begin{prop}
Supposons $\uP$ régulière et $\mu : \bfA[\uX]_\delta \rightarrow \bfA$ 
vérifiant $\mu(\nabla) F - \mu(F) \nabla \in \uPdelta$.

\begin{enumerate}[\rm i)]
\item 
La forme linéaire $\mu$ est nulle sur $\uPdelta$, ce qui s'écrit encore 
$\mu \in \Ker \transpose{\Syl_\delta}$.

En particulier, pour deux déterminants bezoutiens $\nabla$ et $\nabla'$ de $\uP$, on a 
$\mu(\nabla) =\mu(\nabla')$.

\item 
Si $\mu$ et $\mu'$ sont deux telles formes linéaires
et $\nabla$ un déterminant bezoutien de $\uP$, on a l'implication:
$$
\mu(\nabla) = \mu'(\nabla) 
\quad \Rightarrow \quad
\mu = \mu'
$$

\item Si $\mu$ et $\mu'$ vérifient $\Gr(\mu) \geqslant 2$ et $\Gr(\mu') \geqslant 2$, 
alors $\mu' = \varepsilon \mu$ pour un certain inversible $\varepsilon \in \bfA$.
\end{enumerate}
\end{prop}

\begin{proof}
  
i) Soit $F \in \uPdelta$. Une conséquence de l'hypothèse est $\mu(F) \nabla \in \uPdelta$.
Puisque $\uP$ est régulière, on peut appliquer le théorème de Wiebe~\ref{MiniWiebe} dont
le point i) affirme que $(\uP : \nabla)_0 = 0$ donc $\mu(F) = 0$.
Le \og En particulier \fg{} résulte de~\ref{NablaDansLeSature} 
et du fait que $\mu$ s'annule sur $\uPdelta$.

\medskip
\noindent
ii) On raisonne modulo $\uPdelta$. Pour $F \in \bfA[\uX]_\delta$, 
on a $\mu(\nabla) F \equiv \mu(F) \nabla$, idem pour $\mu'$.
L'hypothèse $\mu(\nabla) = \mu'(\nabla)$ fournit 
$\mu(F)\nabla \equiv \mu'(F) \nabla$. 
On utilise de nouveau \ref{MiniWiebe}-i) \idest{} 
$(\uP : \nabla)_0 = 0$, donc $\mu(F) = \mu'(F)$.

\medskip
\noindent
iii) 
Comme $\Gr(\mu) \geqslant 2$ et $\Gr(\mu') \geqslant 2$, 
les scalaires $\mu(\nabla)$ et $\mu'(\nabla)$ sont deux générateurs 
réguliers de l'idéal monogène $\ElimIdeal$ d'après~\ref{MonogeneiteElimIdeal}.
Il existe donc un inversible $\varepsilon$ tel que $\mu'(\nabla) = \varepsilon  \mu(\nabla)$.
Ainsi, $\mu'$ et $\varepsilon \mu$ sont deux formes linéaires 
qui coïncident en un déterminant bezoutien, donc elles sont égales d'après~{ii).}
\end{proof}

Juste pour le plaisir, donnons une preuve très légèrement différente du point iii) 
de~\ref{MonogeneiteElimIdeal}, en utilisant le fait qu'une telle forme linéaire $\mu$
est nulle sur $\uPdelta$.
Prenons $a \in \ElimIdeal$. Alors pour tout $X^\alpha \in \bfA[\uX]_{\delta}$, 
on a $aX^\alpha \in \uPsat_\delta$.
De plus, $\uPsat_\delta = \uPdelta + \bfA \nabla$ (d'après~\ref{MiniWiebe}-ii), 
donc on a 
$$
\forall\, |\alpha| = \delta,\qquad 
aX^\alpha \ \in\ \uPdelta + \bfA \nabla
$$
En évaluant en $\mu$, qui est nulle sur $\uPdelta$,
on obtient $a\mu(X^\alpha) \in \bfA \mu(\nabla)$ \idest{} $\calR \mid a \mu(X^\alpha)$.
On termine de la même manière qu'en~\ref{MonogeneiteElimIdeal} pour obtenir $\calR \mid a$.

\subsection{Premier cas d'école : le cas $D=(1,\dots, 1,e)$}
\label{soussectionPremierCasEcole}

Ici $\uP = (L_1, \dots, L_{n-1},  G)$ avec $L_1, \dots, L_{n-1}$ 
des formes linéaires et $G$ homogène de degré $e \geqslant 1$.
On~a $\delta = e-1$; le \MoutonNoir{} est $X_n^{\delta}$ mais il n'interviendra
pratiquement pas.
Ici, puisque $\delta = \deg G -1$, on a:
$$
\langle\uP\rangle_\delta \ =\  \langle L_1, \ldots, L_{n-1}\rangle_\delta
$$ 
Soit $L \in \bbM_{n, n-1}(\bfA)$ la matrice des coefficients des $L_i$, c'est-à-dire 
la matrice (elle est unique) vérifiant :
$$
\begin{bmatrix}
L_1 &
\cdots &
L_{n-1} 
\end{bmatrix}
\ = \ 
\begin{bmatrix}
X_1 &
\cdots &
X_n
\end{bmatrix}
L
$$
On définit les scalaires $\xi_i \in \bfA$ comme étant les mineurs maximaux signés 
de $L$ par la formule suivante où les~$T_i$ sont des indéterminées :
$$
\det 
\begin{bmatrix}
 &  &  & T_1 \\
 & L &  & \vdots \\ 
 &  &  & T_n \\
\end{bmatrix}
\ = \ 
\xi_1 T_1 + \cdots + \xi_n T_n
$$
On a $L_j(\uxi) = 0$ comme on le voit en remplaçant dans le déterminant la
dernière colonne par la colonne~$j$.

\noindent
En écrivant $G(\uX) = \displaystyle \sum_{i=1}^n X_i G_i(\uX)$
avec chaque $G_i$ homogène de degré $\delta-1$, 
on fait apparaître une matrice bezoutienne $\dsV$  ci-dessous :
$$
\begin{bmatrix}
L_1 &
\cdots &
L_{n-1} &
G
\end{bmatrix}
\ = \ 
\begin{bmatrix}
X_1 &
\cdots &
X_n
\end{bmatrix}
\dsV
\qquad \text{où} \qquad
\dsV = \begin{bmatrix}
 &  &  & G_1 \\
 & L &  & \vdots \\ 
 &  &  & G_n \\
\end{bmatrix}
\label{EgaliteTrefle}
$$
Le déterminant $\nabla(\uX)$ de cette matrice carrée $\dsV$ est 
le polynôme $\sum_{i=1}^n \xi_i G_i(\uX)$. 
\label{NablaD11eDef}

Introduisons la forme linéaire 
$\evalxi : \bfA[\uX]_\delta \to \bfA$, définie par $F \mapsto F(\uxi)$. 
On dispose alors de l'égalité $\evalxi(\nabla) = \sum_{i=1}^n \xi_i G_i(\uxi) = G(\uxi)$. 

\label{NOTA03-uxi}%
\label{NOTA03-evalxi}%

\medskip
Pour chaque forme linéaire $L_j$, nous allons distinguer deux de ses coefficients.
Les voici pour $n=4$ (les autres sont masqués par {\footnotesize $*$} ) :
$$
L \ = \
\begin{bmatrix}
p_1 & {}_{*} & {}_{*}\\
q_1 & p_2 & {}_{*} \\
{}_{*} & q_2 & p_3 \\
{}_{*}  & {}_{*} & q_3 \\
\end{bmatrix}
$$

\begin{prop}[Propriétés fondamentales de la forme linéaire $\evalxi$]  
\label{evalxiProperties} 
\leavevmode
\begin{enumerate}[\rm i)]
\item
La forme linéaire $\evalxi$ est nulle sur $\langle\uP\rangle_\delta = \langle L_1,\dots,L_{n-1}\rangle_\delta$.
  
\item 
Pour tous $F, H \in \bfA[\uX]_\delta$, on a :
$$
\evalxi(H)F \, -\,  \evalxi(F)H \ \in \ \langle\uP\rangle_\delta
$$
En particulier, pour tout $F\in \bfA[\uX]_\delta$,
$$
\evalxi(\nabla)F \,-\, \evalxi(F)\nabla \ \in\  \langle\uP\rangle_\delta
$$

\item
Soit $\uP$ suffisamment générique au sens suivant : les $2(n-1)$ coefficients des deux \og grandes
diagonales\fg{} de la matrice $L$ sont des indéterminées sur un anneau de base $\bfR$. Il s'agit de
$$
p_i := \coeff_{X_i}(L_i) \quad \text{et} \quad
q_i := \coeff_{X_{i+1}}(L_i)  
\qquad \text{pour \ $1\leqslant i \leqslant n-1$}
$$
Alors la suite $(\xi_1^\delta, \xi_n^\delta)$ est régulière.
En particulier, $\Gr(\evalxi) \geqslant 2$.

\item 
Pour le jeu étalon, $\evalxi$ est la forme linéaire $(X_n^\delta)^\star$,
coordonnée sur le {\rm mouton-noir}.

\item 
Les coefficients de $\evalxi$ sont homogènes de poids $\delta = e-1$ en $L_i$ 
et de poids $0$ en $G$.
\end{enumerate}
\end{prop}

\index{poids (en $P_i$)}%
%
%

\begin{proof} \leavevmode

\noindent
i) Découle du fait que $L_j(\uxi) = 0$.

\noindent
ii) Appliquons le lemme de Cramer (cf.~\ref{CramerSymetrie}) au $\bfA$-module libre
$\bfA X_1 \oplus \cdots \oplus \bfA X_n$ de base $(X_1, \ldots, X_n)$
en posant $\Delta_{\uL}(F) = \det(L_1, \dots, L_{n-1}, F)$ pour $F$ linéaire.
On a $\Delta_{\uL}(X_i) X_j - \Delta_{\uL}(X_j) X_i \in \langle L_1, \ldots, L_{n-1}\rangle$ 
\idest{} 
$$
\xi_iX_j \equiv \xi_jX_i \bmod \langle L_1,\ldots, L_{n-1}\rangle
$$
On en déduit aussitôt:
$$
\xi_{i_1}\cdots \xi_{i_m} X_{j_1}\cdots X_{j_m} \equiv
\xi_{j_1}\cdots \xi_{j_m} X_{i_1}\cdots X_{i_m} \bmod \langle L_1,\ldots, L_{n-1}\rangle
$$
Par combinaison linéaire, pour tous $F,H \in \bfA[\uX]_\delta$:
$$
F(\uxi)\,H(\uX) - H(\uxi)\,F(\uX) \in \langle L_1, \ldots, L_{n-1}\rangle_\delta
$$

\noindent
iii) 
On va montrer que la suite
$\big(\evalxi(X_1^\delta),\ \evalxi(X_n^\delta)\big) = (\xi_1^\delta,\, \xi_n^\delta)$ est régulière ; 
il s'en suivra que $\Gr(\evalxi) \geqslant 2$. 
On va examiner les composantes homogènes dominantes de $\xi_1^\delta$ 
et~$\xi_n^\delta$ en tant qu'éléments de $\bfR[p_1, \dots, p_{n-1}, q_1, \dots, q_{n-1}]$.
On en fait un peu plus en étudiant $\xi_i^\delta$ pour tout $i$.

\medskip

On commence par déterminer $\xi_i = \evalxi(X_i^\delta)$ 
qui est le déterminant de la matrice dont les $n-1$ premières colonnes 
sont occupées par les coefficients de $L_1, \dots, L_{n-1}$ et la dernière 
colonne contient un~$1$ en $i$\up{ème} ligne et des $0$ ailleurs. 
Autrement dit :
$$
\xi_i \ = \ 
\det 
\begin{bmatrix}
&  &  &  & & 0 \\
&  &  &  & & \vdots \\
&  & L &  & & 1 \\ 
&  &  &  & & \vdots \\
&  &  &  & & 0 \\
\end{bmatrix}
\ = \ 
(-1)^{i+n} \det A_i
$$
où $A_i$ est la matrice carrée de taille $n-1$ ci-dessous
(la diagonale du bloc nord-ouest est constituée de $p_1, \dots, p_{i-1}$ 
et la sous-diagonale de $q_1, \dots, q_{i-2}$ ;
la diagonale du bloc sud-est est constituée de $q_i, \dots, q_{n-1}$ 
et la sur-diagonale de $p_{i+1}, \dots, p_{n-1}$ ;
tous les autres coefficients sont dans~$\bfR$) :
$$
A_i \ = \ 
\EastBordermatrix{
p_1 &            &            &          & \VR \\ 
q_1 & \ddots &            &          & \VR  \\ 
       & \ddots & \ddots  &          & \VR  \\ 
       &            & q_{i-2} & p_{i-1} & \VR  \\ 
 \HR{8}
& & & & \VR q_i & p_{i+1} & & & \\ 
 & & & & \VR & \ddots & \ddots & & \\ 
 & & & & \VR & & \ddots & p_{n-1} & \\ 
 & & & & \VR &  & & q_{n-1} & \\ 
}
$$
La composante homogène de degré $n-1$ de $\det A_i$ 
est réduite au terme $p_1 \cdots p_{i-1} q_i \cdots q_{n-1}$.
Les composantes homogènes de degré supérieur sont nulles.
Ainsi la composante homogène dominante de $\xi_i^\delta$ 
est le produit $(-1)^{(i+n)\delta} (p_1 \cdots p_{i-1} q_i \cdots q_{n-1})^\delta$.

En particulier, la composante homogène dominante de $\xi_1^\delta$ est 
$(-1)^{(1+n)\delta} (q_1 \cdots q_{n-1})^\delta$ 
et celle de $\xi_n^\delta$ est $(p_1 \cdots p_{n-1})^\delta$.
Ce sont deux polynômes qui forment une suite régulière.
D'après le contrôle de la profondeur par les composantes homogènes dominantes
(cf.~\ref{ControleProfCompHmgDom-nequal2}),  
on en déduit que la suite $(\xi_1^\delta, \xi_n^\delta)$ est régulière.

\smallskip

iv) 
Pour le jeu étalon, on a $\xi_n = 1$ et $\xi_i = 0$ pour $i < n$.
Ainsi $\evalxi$ est exactement la forme linéaire coordonnée $(X_n^\delta)^\star$.

\smallskip

v)
On voit aisément que $\xi_j$ est de poids $1$ en $L_i$ et $0$ en $G$.
Ainsi pour $|\alpha|=\delta$, l'élément $\evalxi(X^\alpha) = \xi^\alpha$ est de poids $\delta = e-1$ 
en les $L_i$ et de poids $0$ en $G$.
\end{proof}

\subsubsection{Monogénéité de l'idéal d'élimination pour $D = (1, \dots, 1,e)$}

\begin{theo}\label{Resultant11d}
Soit $\uP = (L_1, \dots, L_{n-1}, G)$ la suite générique de format 
$D = (1, \dots, 1,e)$ et $\nabla$ un déterminant bezoutien de $\uP$.

L'idéal d'élimination est monogène
$$
\ElimIdeal \ = \ \langle \calR \rangle
\qquad 
\text{avec\ $\calR = \evalxi(\nabla) = G(\uxi)$}
$$
Ce générateur $\calR$ est normalisé, dans le sens où il vaut $1$ pour le jeu étalon 
$(X_1, \dots, X_{n-1}, X_n^e)$.
De plus, il est homogène en $P_i$ : 
son poids en $P_n = G$ est $1$ et son poids en $L_i$ vaut $e$.
De manière uniforme, en notant $(d_1, \dots, d_n) = (1, \dots, 1, e)$,
le générateur $\calR$ est homogène en $P_i$ de poids $\widehat {d_i} := d_1 \cdots d_n/d_i$.
\end{theo}

\index{idéal!d'élimination}%
\index{poids (en $P_i$)}%
%
%

\begin{proof}
D'après~\ref{evalxiProperties}-ii, la forme linéaire $\mu = \evalxi$
possède en particulier la propriété \og de Cramer en~$\nabla$ \fg 
(lire la remarque~\ref{CramerTotalRmq} en ce qui concerne \og en particulier\fg).
Comme la suite $\uP$ est générique, elle est régulière et $\Gr(\mu) \geqslant 2$ 
en vertu de~\ref{evalxiProperties}-iii.
Ainsi, en appliquant~\ref{MonogeneiteElimIdeal}, on obtient que l'idéal 
d'élimination est monogène engendré par $\calR = \evalxi(\nabla)$.
Comme $\nabla(\uX) = \sum \xi_i G_i(\uX)$
(cf. la définition de $\nabla$ page~\pageref{NablaD11eDef}), 
on obtient $\evalxi(\nabla) = G(\uxi)$.

Ce générateur est normalisé. 
En effet, pour le jeu étalon, on a $\xi_n = 1$ et $G = X_n^e$, d'où 
$\calR = G(\uxi) = 1$.

\noindent
Concernant le poids, 
chaque $\xi_j$ est homogène de poids $1$ en les coefficients des $L_i$ et de poids $0$ en $G$.
Donc $\calR = G(\uxi)$ est de poids annoncé.

Pour la normalisation et le poids, 
on peut également s'appuyer sur $\calR = \evalxi(\nabla)$.
Pour la normalisation, on peut invoquer~\ref{NablaEtalon} 
accompagné de~\ref{evalxiProperties}-iv).
Quant au poids, on utilise le fait que le poids de $\nabla$ en $P_i$ est $1$
(c'est vrai pour tout déterminant bezoutien $\nabla$)
et on invoque~\ref{evalxiProperties}-v) pour la contribution de $\evalxi$.
\end{proof}

\begin{rmq}
Pour $D = (1,\dots, 1,e)$, 
la monogénéité de l'idéal d'élimination a lieu dès lors que le coefficient en~$X_i^{d_i}$ de~$P_i$
est une indéterminée pour tout~$i$ (ce qui assure que la suite~$\uP$ est régulière
d'après~\ref{JeuCouvrantEtalonRegularite}),
et que le coefficient en~$X_i$ et en~$X_{i+1}$ de~$L_i$ est une indéterminée 
pour tout ${i \leqslant n-1}$ 
(ce qui permet d'obtenir la profondeur~${\geqslant 2}$ d'après~\ref{evalxiProperties}).
\end{rmq}

\begin{rmq}[Le cas particulier de $n$ formes linéaires]
Dans le cas où $e =1$, c'est-à-dire $D = (1,\dots, 1)$ et $\uP = (L_1, \dots, L_{n-1}, L_n)$,
on~a $\delta = 0$ et la matrice carrée bezoutienne $\dsV$ qui intervient 
page~\pageref{EgaliteTrefle} est la matrice qui donne les coefficients 
des formes linéaires $L_i$.
Le déterminant~$\nabla$ de cette matrice est un élément de $\bfA$, donc évaluer 
ce polynôme en $\uxi$ n'est pas difficile (il n'y a rien besoin de faire, puisque 
le polynôme est un élément de $\bfA$). Ainsi $\evalxi(\nabla) = \nabla$. 
Le résultant est donc $\nabla$ et on retrouve~\ref{FormesLineairesFromWiebe}.
\end{rmq}

\begin{rmq}
Pour ce cas d'école $(L_1, \dots, L_{n-1}, P_n)$ où les $L_i$ sont linéaires,
voici une manière d'expliquer \textit{a posteriori} l'égalité
$$
\Res(L_1, \dots, L_{n-1}, P_n) 
\ = \ 
P_n(\uxi)
$$
Cela suppose évidemment que le résultant soit en place et que l'on en
connaisse quelques propriétés.  Parmi ces propriétés, il y a le fait
que $\Res(P_1, \dots, P_n)$ est homogène en $P_i$, de poids $\widehat
{d_i} = \prod_{j \ne i} d_j$. Ici, on a $\widehat d_n = 1$ de sorte
que $\Res(L_1, \dots, L_{n-1}, P_n)$ est \textit{linéaire} en $P_n$.
On a également besoin de la propriété de multiplicativité du résultant
(cf.~\ref{MultiplicativiteResultant}),
que l'on utilise sous la forme suivante où $X^\alpha = X_1^{\alpha_1} \cdots X_n^{\alpha_n}$:
$$
\Res(L_1, \dots, L_{n-1}, X^\alpha) 
\ = \ 
\prod_{i=1}^n
\Res(L_1, \dots, L_{n-1}, X_i)^{\alpha_i} 
$$
Comme les $n$ polynômes $L_1, \dots, L_{n-1}, X_i$  sont linéaires,
on a (cf.~\ref{FormesLineairesFromWiebe}) :
$$
\Res(L_1, \dots, L_{n-1}, X_i) 
\ = \ 
\det(L_1, \dots, L_{n-1}, X_i)
$$
Mais le déterminant à droite de cette égalité, c'est notre définition de $\xi_i$. Il vient alors en combinant les deux dernières égalités :
$$
\Res(L_1, \dots, L_{n-1}, X^\alpha)
\ = \ 
\prod_{i=1}^n \xi_i^{\alpha_i}
\ = \ 
\uxi^\alpha
$$
Et enfin, en utilisant qu'ici, dans ce cas d'école, le résultant est linéaire 
relativement au dernier polynôme, on obtient :
$$
\Res(L_1, \dots, L_{n-1}, P_n) 
\ = \ 
P_n(\uxi)
$$
et on retrouve bien la formule annoncée en~\ref{Resultant11d}.
\end{rmq}

\subsection{Second cas d'école : le cas $n=2$}

On note ici $X$ et $Y$ les deux indéterminées ainsi que $P$ et $Q$ les deux polynômes homogènes 
respectivement de degré $p$ et~$q$.
On a donc $\delta = (p-1) + (q-1)$.
Nous allons expliciter une forme linéaire $\omega : \bfA[X,Y]_\delta \rightarrow \bfA$ 
ayant les propriétés mirifiques de la proposition~\ref{MonogeneiteElimIdeal}.
Si on relit le commentaire venant avant cette proposition, 
cette forme linéaire devrait être notée $\omegares$, mais 
quand tout sera en place, on comprendra que dans le cas $n=2$, on a $\omegares = \omega$.
De manière approximative, sans se soucier de signe, $\omega$ est la forme linéaire constituée des 
mineurs signés de la matrice de Sylvester en degré $\delta$ 
de format $(\delta +1) \times \delta$. 
Cependant notre tâche, ici et plus loin, doit être accomplie sans trop abuser 
des matrices, afin de bien contrôler la normalisation en le jeu étalon.

\subsubsection{Le complexe de Koszul et ses composantes homogènes}

\index{complexe composante homogène $\rmK_{\sbullet,d}(\uP)$}%
\index{matrice!de Sylvester}%

Voici un petit complexe (c'est le complexe de Koszul de la suite $(P,Q)$ de l'anneau $\bfA[X,Y]$) :
$$
\xymatrix @M=0.4pc @C=4pc{
\bfA[X,Y] \ar[r]^-{ 
\left [
\begin{smallmatrix}
-Q \\ P
\end{smallmatrix}
\right]
}
& \bfA[X,Y] \times \bfA[X,Y] \ar[r]^-{%
\left [
\begin{smallmatrix}
P & Q
\end{smallmatrix}
\right]
}
& \bfA[X,Y] 
}
$$
La première différentielle est l'application de Sylvester 
$$
\Syl : \bfA[X,Y] \times \bfA[X,Y] \longrightarrow \bfA[X,Y], 
\qquad (U,V) \longmapsto UP + VQ
$$
Prenons la composante homogène de degré $d$ du complexe de Koszul, qui est un 
complexe de $\bfA$-modules
$$
\xymatrix @M=0.4pc @C=2pc{
\bfA[X,Y]_{d-(p+q)} \ar[r]
& \bfA[X,Y]_{d-p} \times \bfA[X,Y]_{d-q} \ar[r]^-{\Syl_d}
& \bfA[X,Y]_d
}
$$
On constate que pour $d < \delta + 2$, 
\idest{} pour $d \leqslant \delta+1$, 
ce complexe ne comporte qu'une seule différentielle, à savoir $\Syl_d$. 
Deux valeurs de $d$ nous intéressent plus particulièrement, il s'agit de $d =\delta$ et $d=\delta+1$~:
$$
\Syl_\delta : 
\bfA[X,Y]_{q-2} \times \bfA[X,Y]_{p-2} \rightarrow \bfA[X,Y]_{p+q-2}
\quad \text{et} \quad
\Syl_{\delta+1} : 
\bfA[X,Y]_{q-1} \times \bfA[X,Y]_{p-1} \rightarrow \bfA[X,Y]_{p+q-1}
$$
de sorte que $\Syl_\delta$ est représentée par une matrice de type Hilbert-Burch 
(une colonne de moins que le nombre de lignes, qui vaut d'ailleurs $p+q-1 = \delta+1$, 
confer~\ref{HilbertBurch}) 
et $\Syl_{\delta+1}$ est représentée par une matrice \textit{carrée} de taille $p+q$.

\`A partir de $\Syl_\delta$, on va construire une forme linéaire $\omega : \bfA[X,Y]_\delta \rightarrow \bfA$
et à partir de $\Syl_{\delta+1}$, on va définir un endomorphisme 
$W_{1, \delta+1} : \bfA[X,Y]_{\delta+1} \rightarrow \bfA[X,Y]_{\delta+1}$. 

\smallskip
Par endroits, nous aurons besoin de nommer la base canonique de 
$\bfA[X,Y]^2$ : 
nous avons choisi $(e_P, e_Q)$.

\subsubsection{Le degré $\delta$ pour $n=2$}

Pour $i+j=\delta = p+q-2$, 
on a $i \geqslant p$ ou $j \geqslant q$ sauf dans un cas, le cas où $i=p-1$ et $j=q-1$.
Cette remarque a valu au monôme $X^{p-1}Y^{q-1}$ l'appellation de \MoutonNoir.
De plus, le \og ou \fg{} est exclusif, car l'inégalité $i \geqslant p$ implique 
l'inégalité $j \leqslant q-1$.
En utilisant la notion d'idéal excédentaire de
la section~\ref{DefIdealModuleExcedentaire}, on résume cela par
les égalités ci-dessous qui ne font intervenir que le format de
degrés~$D=(p,q)$
$$
\bfA[X,Y]_\delta \ = \ 
\Jex_{1,\delta} \,\oplus\, \bfA . X^{p-1}Y^{q-1} 
\ = \ 
\langle X^p,\, Y^q \rangle_\delta \,\oplus\, \bfA . \mouton{}
\qquad \text{et} \qquad
\Jex_{2,\delta} = 0
$$

\begin{defn}[Le constructeur $\Omega(\ \cdot \ )$ et la forme linéaire $\omega$] \label{Defomegan=2}
\leavevmode

Pour $F \in \bfA[X,Y]_\delta$, on définit l'endomorphisme 
$\Omega(F) : \bfA[X,Y]_\delta \rightarrow \bfA[X,Y]_\delta$ par :
$$
\begin{array}{c}
\\
 \Omega(F) :\  X^iY^j \longmapsto \\
\\
\end {array}
\left\{
\begin{array}{ll}
X^{i-p}Y^j \, P & \text{si $i \geqslant p$} \\ [-0.3em]
& \\ 
X^{i}Y^{j-q} \, Q & \text{si $j \geqslant q$} \\ [-0.3em]
& \\ 
F & \text{si $i=p-1$ et $j=q-1$}
\end{array}
\right.
$$
\`A l'application $\Omega : \bfA[X,Y]_\delta \rightarrow \mathrm{End}(\bfA[X,Y]_\delta)$ 
est associée la forme linéaire \og déterminant \fg{} suivante :
$$
\omega : 
\begin{array}[t]{rcl}
\bfA[X,Y]_\delta & \longrightarrow & \bfA \\ 
F & \longmapsto & \det \big(\Omega(F) \big)
\end{array}
$$
\end{defn}

\label{NOTA03-Omega}%
\label{NOTA03-omega}%

Par exemple, prenons $p = 3$ et $q = 5$.
Notons 
$$
P  \,= \, a_{0}X^{3} + a_{1}X^{2}Y + a_{2}XY^{2} + a_{3}Y^{3}
\qquad \quad
Q \, =\,  b_{0}X^{5} + b_{1}X^{4}Y + b_{2}X^{3}Y^{2} + b_{3}X^{2}Y^{3} + b_{4}XY^{4} + b_{5}Y^{5}
$$
Pour $F = \displaystyle \sum_{i = 0}^6 f_i X^i Y^{6-i}$ de degré $\delta = 6$,
l'endomorphisme $\Omega(F)$ est représenté par la matrice :
$$
\Omega(F) \ = \ 
\EastBordermatrix{
a_{0} & . & . & . & f_0 & b_{0} & . & \Heti{X^{6}} \\ 
a_{1} & a_{0} & . & . & f_1 & b_{1} & b_{0} & \Heti{X^{5}Y} \\ 
a_{2} & a_{1} & a_{0} & . & f_2 & b_{2} & b_{1} & \Heti{X^{4}Y^{2}} \\ 
a_{3} & a_{2} & a_{1} & a_{0} & f_3 & b_{3} & b_{2} & \Heti{X^{3}Y^{3}} \\ 
. & a_{3} & a_{2} & a_{1} & f_4 & b_{4} & b_{3} & \Heti{X^{2}Y^{4}}  \mouton \\ 
. & . & a_{3} & a_{2} & f_5 & b_{5} & b_{4} & \Heti{XY^{5}} \\ 
. & . & . & a_{3} & f_6 & . & b_{5} & \Heti{Y^{6}} \\ 
}
$$
Autrement dit, $\omega$ est la \og forme linéaire des mineurs signés\fg{} de la matrice 
de $\Syl_\delta$ (cette dernière matrice n'a de sens que si l'on a précisé 
les bases au départ et à l'arrivée, car il s'agit d'une application linéaire, et non d'un endomorphisme) :
$$
\Syl_{\delta} \ = \
\NorthEastBordermatrix{
\Veti{X^{3}\, e_P} & \Veti{X^{2}Y\, e_P} & \Veti{XY^{2}\, e_P} & \Veti{Y^{3}\, e_P} 
& \Veti{X\, e_Q} & \Veti{Y\,e_Q} & \\
a_{0} & . & . & . & b_{0} & . & \Heti{X^{6}} \\
a_{1} & a_{0} & . & . & b_{1} & b_{0} & \Heti{X^{5}Y} \\
a_{2} & a_{1} & a_{0} & . & b_{2} & b_{1} & \Heti{X^{4}Y^{2}} \\
a_{3} & a_{2} & a_{1} & a_{0} & b_{3} & b_{2} & \Heti{X^{3}Y^{3}} \\
. & a_{3} & a_{2} & a_{1} & b_{4} & b_{3} & \Heti{X^{2}Y^{4}} \\
. & . & a_{3} & a_{2} & b_{5} & b_{4} & \Heti{XY^{5}} \\
. & . & . & a_{3} & . & b_{5} & \Heti{Y^{6}} \\
}
$$
Le cas $n=2$ est spécial, dans le sens où, dans $\Syl_\delta$, 
il y a une colonne de moins que de lignes. 
C'est ce phénomène qui donne naturellement naissance, au signe près, à une forme linéaire
dite des mineurs signés, dont les coefficients sont des mineurs pleins de $\Syl_\delta$.
Reste le problème du signe.
Ici, dans le cadre du résultant, on peut rigidifier cette forme linéaire en décidant de la normaliser 
par le jeu étalon $(X^p, Y^q)$ : cette forme linéaire doit prendre la valeur $1$ en~$X^{p-1}Y^{q-1}$.
Et il n'y a qu'une seule façon de réaliser cela : mettre la colonne-argument 
à la même position que le \MoutonNoir{} $X^{p-1}Y^{q-1}$ au sein des monômes de 
degré $\delta$. C'est exactement ce qui a été imposé dans la définition 
de $\Omega(F)$.

\index{forme linéaire!des mineurs signés}
%
%

\begin{prop}[Propriétés fondamentales de la forme linéaire $\omega$ pour $n=2$] 
\label{omegaProperties-n=2}
\leavevmode

\noindent
Soit $\nabla$ un déterminant bezoutien de $(P,Q)$, cf.~\ref{DefNabla}.

\begin{enumerate}[\rm i)]
\item
La forme $\omega : \bfA[X,Y]_\delta \to \bfA$ est nulle sur $\langle P,Q\rangle_\delta$.  

\item 
Pour tous $F, G \in \bfA[X,Y]_\delta$, on a :
$$
\omega(G)F \, -\,  \omega(F)G \ \in \ \langle P, Q \rangle_\delta
$$
En particulier, pour tout $F\in \bfA[X,Y]_\delta$,
$$
\omega(\nabla)F \,-\, \omega(F)\nabla \ \in\  \langle P, Q \rangle_\delta
$$

\item
On suppose que $P$ et $Q$ s'écrivent
$$
P \ = \ u_0 X^p + u_1 X^{p-1}Y + \cdots
\qquad \text{et\ } \qquad
Q \ = \ v_0 Y^q + v_1 Y^{q-1}X + \cdots
$$
où les quatre coefficients $u_0, u_1, v_0, v_1$ sont des indéterminées 
sur un anneau $\bfR$, les autres coefficients étant dans $\bfR$, 
de sorte que $\bfA = \bfR[u_0, u_1, v_0, v_1]$.

La suite $\big(\omega(X^\delta),\ \omega(Y^\delta)\big)$ est régulière ; 
en particulier, $\Gr(\omega) \geqslant 2$.

\item 
Pour le jeu étalon, $\omega$ est la forme linéaire coordonnée 
sur le {\rm mouton-noir}, $(X^{p-1}Y^{q-1})^\star$.

\item 
Les coefficients de $\omega$ sont homogènes de poids $q-1$ en les coefficients de $P$
et de poids $p-1$ en les coefficients de $Q$.
\end{enumerate}
\end{prop}

\index{poids (en $P_i$)}%
%
%

\begin{proof}  \leavevmode

\noindent
i) Un système de générateurs de $\langle P,Q\rangle_\delta$ est
constitué des $q-1$ polynômes $(X^{q-2-j}Y^jP)_{0\le j\le q-2}$ et des
$p-1$ polynômes $(X^iY^{p-2-i}Q)_{0\le i\le p-2}$.  La matrice
$\Omega(X^{q-2-j}Y^jP)$ possède 2 colonnes identiques: sa colonne
mouton-noir $X^{p-1}Y^{q-1}$ et sa colonne $X^{\delta-j}Y^j$, donc son
déterminant est nul.  Idem pour la matrice $\Omega(X^iY^{p-2-i}Q)$
dont la colonne mouton-noir est égale à la colonne $X^iY^{\delta-i}$.

\smallskip

\noindent
ii) Ci-après $i$ et $j$ sont contraints par l'égalité $i+j = \delta$.
Equipons le $\bfA$-module libre $\bfA[X,Y]_\delta$ de la base monomiale 
$(X^iY^j)_{i \geqslant p} \vee (X^iY^j)_{j \geqslant q} \vee X^{p-1}Y^{q-1}$.
La forme linéaire $\omega$ s'écrit alors $\Delta_{\uv}$ 
où $\uv$ est la suite 
$\big( (X^{i-p}Y^j P)_{i \geqslant p} \vee (X^iY^{j-q}Q)_{j \geqslant q}\big)$.
On applique Cramer à $\Delta_{\uv}$, cf.~\ref{CramerSymetrie}.

\smallskip

\noindent
iii) On va montrer que la suite $\big(\omega(X^\delta),\ \omega(Y^\delta)\big)$ 
est régulière ; 
on aura alors ${\Gr\big(\omega(X^\delta),\ \omega(Y^\delta)\big) \geqslant 2}$, 
a fortiori $\Gr(\omega) \geqslant 2$.
Pour cela, on va raisonner sur les composantes homogènes 
dominantes (qui se trouvent être de degré $\delta$) de $\omega(X^\delta)$  et $\omega(Y^\delta)$
en tant que polynômes en $u_0, u_1, v_0, v_1$.

Par définition, $\omega(X^\delta)$ est le déterminant de l'endomorphisme $\Omega(X^\delta)$.
Voici sa matrice dans la base de $\bfA[X,Y]_\delta$ spécifiée ci-dessous. Cette matrice 
est de taille $\delta +1$ et son déterminant est égal à $(-1)^{q-1}$ fois le déterminant 
de la matrice obtenue en rayant la première ligne et la colonne \MoutonNoir{}. 
Cette dernière matrice, de taille $\delta$, 
a des coefficients de degré $\leqslant 1$ en les indéterminées. 
Les composantes homogènes de degré $> \delta$ de son déterminant sont nulles.
Pour en obtenir la composante homogène de degré $\delta$ en $u_0,u_1,v_0,v_1$, 
on considère, dans le développement du déterminant de taille $\delta$,
les produits contenant uniquement ces indéterminées.
Il n'y en a qu'un seul, c'est $u_1^{q-1} v_0^{p-1}$ (qui est de degré~$\delta$).
Bref, la composante homogène dominante de $\omega(X^\delta)$ est 
égale à $(-1)^{q-1} u_1^{q-1} v_0^{p-1}$.
$$
\Omega(X^\delta) \ = \ 
\EastBordermatrix{
u_0 & & & & & 1 &\VR & & & & &   \Heti{X^\delta} \\ 
u_1 & u_0 & & & & 0 &\VR & & & & & \Heti{X^{\delta-1} Y} \\
 & u_1 & \ddots & &  & \vdots & \VR & & & & & \omit \qquad \vdots\\
 & & \ddots & \ddots & & \vdots & \VR & & & & &\\
 & & & \ddots & u_0 & \vdots &\VR & & & & &\Heti{X^{p} Y^{q-2}} \\
 & & & & u_1 & 0 & \VR v_1 & & & & & \omit \quad \mouton \\
 \HR{11}
& & & & & 0 & \VR v_0 & v_1 & & & & \Heti{X^{p-2}Y^q} \\
& & & & &  &\VR & v_0 & \ddots & & & \omit \qquad \vdots \\
& & & & & \vdots &\VR &  & \ddots & \ddots & & \\ 
& & & & &  &\VR & & & \ddots & v_1 & \Heti{XY^{\delta-1}} \\
& & & & & 0 &\VR & & & & v_0 & \Heti{Y^{\delta}} \\
}
$$
De la même façon, on voit que la composante homogène dominante 
de $\omega(Y^\delta)$ vaut $(-1)^{p-1} v_1^{p-1} u_0^{q-1}$.
$$
\Omega(Y^\delta) \ = \ 
\EastBordermatrix{
u_0 & & & & & \VR 0& & & & & &   \Heti{X^\delta} \\ 
u_1 & u_0 & & & & \VR  & & & & & & \Heti{X^{\delta-1} Y} \\
 & \ddots & \ddots & &  &\VR \vdots & & & & & & \omit \qquad \vdots\\
 & & \ddots & \ddots & & \VR  & & & & & &\\
 & & & u_1 & u_0 & \VR 0 & & & & & &\Heti{X^{p} Y^{q-2}} \\
 \HR{11}
 & & & & u_1 & \VR  0 & v_1 & & & & & \omit \quad \mouton \\
& & & & &\VR  \vdots & v_0 & v_1 & & & & \Heti{X^{p-2}Y^q} \\
& & & & & \VR \vdots & & v_0 & \ddots & & & \omit \qquad \vdots \\
& & & & & \VR \vdots & &  & \ddots & \ddots & & \\ 
& & & & & \VR 0 & & & & \ddots & v_1 & \Heti{XY^{\delta-1}} \\
& & & & &\VR 1 & & & & & v_0 & \Heti{Y^{\delta}} \\
}
$$
La suite $\big( u_1^{q-1} v_0^{p-1}, \ v_1^{p-1} u_0^{q-1}\big)$ est
régulière.
D'après~\ref{ControleProfCompHmgDom-nequal2},  
on en déduit que la suite $\big( \omega(X^\delta), \omega(Y^\delta)\big)$ est régulière.

\smallskip

iv) Pour le jeu étalon, on a $\Omega(X^{p-1}Y^{q-1}) = \Id$ donc $\omega(X^{p-1}Y^{q-1}) = 1$.
Et on a $\omega(X^\alpha) = 0$ pour $X^\alpha \neq X^{p-1}Y^{q-1}$.

\smallskip

v) 
Cela résulte de la construction de $\Omega$.
Le nombre de couples $(i,j)$ tels que $i+j=\delta = p+q-2$ et $i \geqslant p$ 
est exactement égal à $q-1$. Il y a donc $q-1$ colonnes dont les 
coefficients sont soit des $0$ soit un coefficient de $P$.
Idem pour $Q$ en échangeant $p$ et $q$.
\end{proof}

\subsubsection{Monogénéité de l'idéal d'élimination pour $n=2$}

\begin{theo}\label{EliminationThFor2}
Soit $(P,Q)$ la suite générique de format $D = (p,q)$ de $\bfA[X,Y]$ 
et $\nabla$ un déterminant bezoutien de $(P,Q)$, cf.~\ref{DefNabla}.

L'idéal d'élimination est monogène
$$
\langle P,Q \rangle^\sat \cap \bfA 
\ = \ \langle \calR \rangle
\qquad 
\text{avec\ $\calR = \omega(\nabla)$}
$$
Ce générateur $\calR$ est normalisé dans le sens où il vaut $1$ pour le jeu étalon 
$(X^p, Y^q)$.
De plus, c'est un polynôme homogène de poids $q$ en les coefficients de $P$ 
et homogène de poids $p$ en les coefficients de $Q$.
\end{theo}

\index{idéal!d'élimination}%
%
%

\begin{proof}
D'après~\ref{omegaProperties-n=2}-ii, la forme linéaire $\omega$
possède la propriété \og de Cramer en $\nabla$ \fg{}, et même beaucoup
plus (cf. la remarque \ref{CramerTotalRmq}).
Comme la suite~$(P,Q)$ est générique, elle est régulière et $\Gr(\omega) \geqslant 2$ 
en vertu de~\ref{omegaProperties-n=2}-iii.
Ainsi, en appliquant~\ref{MonogeneiteElimIdeal}, on obtient que l'idéal 
d'élimination est monogène engendré par $\calR = \omega(\nabla)$.
Ce générateur est normalisé d'après~\ref{omegaProperties-n=2}-iv
et~\ref{NablaEtalon}.
L'élément $\calR$ est de poids attendu en~$P$ : 
en effet, le poids de $\nabla$ est $1$ et celui de $\omega$ est~$q-1$
d'après~\ref{omegaProperties-n=2}-v. 
Idem pour le poids en $Q$.
\end{proof}

\subsubsection{En degré $\delta+1$ pour $n=2$}

Pour $i+j=\delta+1 = p+q-1$, 
on a $i \geqslant p$ ou $j \geqslant q$.
De plus, le \og ou \fg{} est exclusif, car l'inégalité $i \geqslant p$ implique 
l'inégalité $j \leqslant q-1$.
En utilisant les notations de la section \ref{DefIdealModuleExcedentaire},
on résume cela par :
$$
\bfA[X,Y]_{\delta+1} \ = \ 
\Jex_{1,\, \delta+1} \ = \ 
\langle X^p,\, Y^q \rangle_{\delta+1}
\qquad \text{et} \qquad
\Jex_{2,\,\delta+1} = 0
$$

\begin{defn} \label{DefW1deltaplus1nequal2}
On définit l'endomorphisme 
$W_{1,\,\delta+1}: \bfA[X,Y]_{\delta+1} \rightarrow \bfA[X,Y]_{\delta+1}$ par :
$$
\begin{array}{c}
\\
W_{1,\,\delta+1} :\ X^iY^j \longmapsto \\
\\
\end {array}
\left\{
\begin{array}{ll}
X^{i-p}Y^j \, P  & \text{si $i \geqslant p$} \\ [-0.3em]
\text{\footnotesize \rm ou} & \\ 
X^{i}Y^{j-q} \, Q & \text{si $j \geqslant q$} 
\end{array}
\right.
$$
\end{defn}

\label{NOTA03-W1delta+1}%

Par exemple pour $p = 3$ et $q = 5$, on a $\delta+1 = 7$ et la matrice de $W_{1, \delta+1}$ est de 
taille $p+q = 8$.
Avec 
$$
P  \,= \, a_{0}X^{3} + a_{1}X^{2}Y + a_{2}XY^{2} + a_{3}Y^{3}
\qquad \quad
Q \, =\,  b_{0}X^{5} + b_{1}X^{4}Y + b_{2}X^{3}Y^{2} + b_{3}X^{2}Y^{3} + b_{4}XY^{4} + b_{5}Y^{5}
$$
on a :
$$
W_{1, 7} \ = \ 
\EastBordermatrix{
a_{0} & . & . & . & . & b_{0} & . & . & \Heti{X^{7}} \\ 
a_{1} & a_{0} & . & . & . & b_{1} & b_{0} & . & \Heti{X^{6}Y} \\ 
a_{2} & a_{1} & a_{0} & . & . & b_{2} & b_{1} & b_{0} & \Heti{X^{5}Y^{2}} \\ 
a_{3} & a_{2} & a_{1} & a_{0} & . & b_{3} & b_{2} & b_{1} & \Heti{X^{4}Y^{3}} \\ 
. & a_{3} & a_{2} & a_{1} & a_{0} & b_{4} & b_{3} & b_{2} & \Heti{X^{3}Y^{4}} \\ 
. & . & a_{3} & a_{2} & a_{1} & b_{5} & b_{4} & b_{3} & \Heti{X^{2}Y^{5}} \\ 
. & . & . & a_{3} & a_{2} & . & b_{5} & b_{4} & \Heti{XY^{6}} \\ 
. & . & . & . & a_{3} & . & . & b_{5} & \Heti{Y^{7}} \\ 
}
$$

\begin{prop}\label{detWdelta+1InElimIdeal}
Soit $(P,Q)$ la suite générique.
Le scalaire $\det W_{1,\, \delta+1}$ appartient à l'idéal d'élimination 
$\langle P,Q \rangle^\sat \cap \bfA$. 
Il est normalisé, dans le sens où pour le jeu étalon $(X^{p}, Y^{q})$, il vaut~$1$.
De plus, c'est un polynôme homogène de poids $q$ en les coefficients de $P$ 
et homogène de poids~$p$ en les coefficients de $Q$.
\end{prop}

\begin{proof}
On va raisonner matriciellement.
Ci-dessous, $i+j = \delta+1 = p+q-1$.
La réunion disjointe $(X^iY^j)_{i \geqslant p}  \vee (X^iY^j)_{j \geqslant q}$ est une partition de 
l'ensemble de tous les monômes de degré $\delta+1$.
On a l'égalité matricielle suivante 
où les matrices lignes sont à coefficients dans $\bfA[X,Y]$ et la matrice carrée $W_{1, \delta+1}$
à coefficients dans $\bfA$ :
$$
\Big[\ 
(X^iY^j)_{i \geqslant p} 
\ \mid \  
(X^iY^j)_{j \geqslant q}  
\ \Big]
\ W_{1, \delta+1}
\quad = \quad
\Big[\ 
\big(X^{i-p} Y^j \,P \big)_{i \geqslant p}
\ \mid \ 
\big(X^{i} Y^{j-q} \, Q \big)_{j \geqslant q}
\  \Big]
$$
Par conséquent, en multipliant à droite par la transposée de la comatrice 
et en notant $\calR' = \det W_{1,\delta+1}$, 
on obtient $X^i Y^j \calR' \in \langle P,Q\rangle$ pour tous $i+j = \delta+1$.
Cela signifie que $\calR'$ est dans l'idéal d'élimination.

Le fait que $\calR'$ soit normalisé provient du fait que l'endomorphisme $W_{1,\delta+1}$ 
est lui-même normalisé, \idest{} est égal à l'identité pour le jeu étalon (c'est évident d'après la définition).

Le fait que $\calR'$ soit homogène en les coefficients de $P$ 
est dû au fait que les coefficients d'une colonne de la matrice de $W_{1,\delta+1}$ 
sont ou bien tous homogènes de poids $1$ en les coefficients de $P$ 
ou bien tous de poids $1$ en les coefficients de $Q$.
Pour la précision sur le poids, il suffit de remarquer que 
les $(i,j)$ tels que $i +j = \delta+1$ et $i \geqslant p$ sont au nombre de $q$ 
et les $(i,j)$ tels que $i +j = \delta+1$ et $j \geqslant q$ sont au nombre de~$p$. 
\end{proof}

\subsubsection{Lien entre $W_{1,\delta+1}$ et \og la bonne vieille \fg{} 
matrice de Sylvester de 2 polynômes en une indéterminée}

Pour $n=2$ et $d=\delta+1 = p+q-1$, l'application de Sylvester est :
$$
\Syl_{\delta+1} : \
\bfA[X,Y]_{q-1} \, e_P \oplus \bfA[X,Y]_{p-1}\, e_Q 
\ \longrightarrow \ \bfA[X,Y]_{p+q-1} 
$$
Il est remarquable que les espaces de départ et d'arrivée aient même dimension
(à savoir $p+q$). Mieux, ils sont monomialement isomorphes via 
$$
\begin {array}{c}
 \\
 \varphi : \ X^i Y^j \rightarrow \\
 \\
\end {array}
\left\{
\begin{array}{ll}
X^{i-p}Y^j \, e_P  & \text{si $i \geqslant p$} \\ [-0.3em]
\text{\footnotesize ou} & \\ 
X^{i}Y^{j-q} \, e_Q & \text{si $j \geqslant q$} 
\end{array}
\right.
$$
On remarque que l'on a $W_{1,\delta+1} = \Syl_{\delta+1} \circ \varphi$.
D'où le résultat suivant pour le cas $n=2$.

\begin{prop}
Soit $\calB_{p+q-1}$ la base monomiale de $\bfA[X,Y]_{p+q-1}$.
La matrice de l'application linéaire $\Syl_{\delta+1}$ 
dans les bases $\varphi(\calB_{p+q-1})$ au départ et 
$\calB_{p+q-1}$ à l'arrivée est égale à celle de 
l'endomorphisme $W_{1,\delta+1}$ dans la base $\calB_{p+q-1}$.
\end{prop}

Par exemple, si l'on écrit 
$\calB_{p+q-1} = X^p\calB_{q-1} \vee Y^q\calB_{p-1}$ 
(on met d'abord les monômes de degré $p+q-1$ 
divisibles par $X^p$ puis ensuite ceux divisibles par $Y^q$), alors 
$\varphi(\calB_{p+q-1}) = \calB_{q-1} e_P \vee \calB_{p-1}e_Q$.
Si l'on prend maintenant pour $\calB_{q-1} = (X^{q-1}, X^{q-2}Y, \dots, Y^{q-1})$ (idem pour $\calB_{p-1}$), 
alors $\calB_{p+q-1} = (X^{p+q-1}, X^{p+q-2}Y, \dots, Y^{p+q-1})$. 
En faisant $X := T$ et $Y := 1$, on obtient les bases 
$$ 
\calB_{p+q-1} = (T^{p+q-1}, \dots, T, 1)
\qquad 
\text{et\ }
\qquad
\varphi(\calB_{p+q-1}) = (T^{q-1} e_P, \dots, T e_P, e_P) \vee 
(T^{p-1} e_Q, \dots, T e_Q, e_Q)
$$
La matrice de l'application
$$
\bfA[T]_{\leqslant q-1} \, e_P \oplus \bfA[T]_{\leqslant p-1} \, e_Q
\ \longrightarrow \  \bfA[T]_{\leqslant p+q-1}, 
\qquad \qquad 
Ue_P + V e_Q \ \longmapsto \ UP+VQ
$$
dans les bases précédentes est alors la matrice de Sylvester 
\og habituelle\fg{} pour deux polynômes en une seule indéterminée
$P = a_0 T^p + a_1 T^{p-1}+ \dots + a_p$ et $Q = b_0 T^q + b_1 T^{q-1} + \dots + b_q$ :
$$
\NorthEastBordermatrix{
\Veti{T^{q-1}P} & \Veti{T^{q-2}P} & \cdots & \cdots &\Veti{TP} & \Veti{P} & 
\Veti{T^{p-1}Q} & \cdots & \Veti{Q} &  \cr 
a_0 &  &  &  &  & & b_0 & & & \Heti{T^{p+q-1}} \cr
\vdots & a_0 & & & &  & \vdots & \ddots & & \Heti{\vdots} \cr
\vdots & \vdots & \ddots & & &  & &  & b_0 & \Heti{T^{q}} \cr
a_p & \vdots & & \ddots& &  & \vdots &  & \vdots & \Heti{T^{q-1}} \cr
 & a_p & & & &  & & & &  \Heti{\vdots} \cr
 & & \ddots & & & a_0 & \vdots & & \vdots & \Heti{T^{p}} \cr
 & & & \ddots & & \vdots & b_q & & &  \Heti{T^{p-1}} \cr
 & & & & & \vdots & & \ddots & \vdots &  \Heti{\vdots} \cr
  & & & & & a_p & &  & b_q & \Heti{1} \cr
}
$$
Cette matrice est celle de $W_{1,\delta+1}$ exprimée 
dans la base $\calB_{p+q-1}= (X^{p+q-1}, X^{p+q-2}Y, \dots, Y^{p+q-1})$.

\begin{theo}[Où l'on relie le degré $\delta$ au degré $\delta+1$ pour $n=2$] \label{Resultantn=2}

\leavevmode

On a l'égalité remarquable
$$
\omega(\nabla) \ = \ 
\det W_{1, \delta+1}
$$
Autrement dit, l'idéal d'élimination $\langle P,Q \rangle^\sat \cap \bfA$ 
est engendré par $\det W_{1, \delta+1}$.
\end{theo}

\begin{proof}
Il suffit de vérifier cette identité algébrique en terrain générique. On suppose donc que 
$P$ et~$Q$ sont génériques.
D'après~\ref{EliminationThFor2}, 
l'idéal d'élimination $\langle P,Q \rangle^\sat \cap \bfA$ est engendré par 
$\calR = \omega(\nabla)$.
Or $\calR'= \det W_{1, \delta+1}$ est dans l'idéal d'élimination 
(cf.~\ref{detWdelta+1InElimIdeal}).
Par conséquent, il existe $\lambda \in \bfA = \bfk[\text{indets pour $P$ et $Q$}]$ tel que 
$\calR' = \lambda \calR$.
Or $\calR$ et $\calR'$ sont des polynômes homogènes en les coefficients de $P$ 
et de~$Q$ de même poids. Ainsi $\lambda$ est une constante de $\bfk$. 
En spécialisant en le jeu étalon et en invoquant le fait que ces deux éléments sont normalisés, 
on obtient $\lambda = 1$, c'est-à-dire $\calR' = \calR$.
\end{proof}

Signalons une autre preuve moins structurelle de ce dernier résultat.
Cette preuve utilise le développement de Laplace. 
Intervient la notion de \emph {signe étendu} défini de la manière suivante pour $i,j \in \bbZ$
et $I,J$ deux parties finies de $\bbZ$ (ou plus généralement d'un ensemble
totalement ordonné quelconque):
$$
\varepsilon(i,j) = 
\begin {cases} 0 &\text{si } i=j\\ 1 &\text{si } i<j\\ -1 &\text{si } i>j\\ \end {cases}
\qquad\qquad
\varepsilon(I,J) = \prod_{\substack{i\in I\\ j \in J}} \varepsilon(i,j)
$$

\index{signe étendu}%
\label{NOTA03-epsilon}%
%
%

\begin{proof}[Autre preuve]
On vient de fournir une preuve structurelle de l'identité algébrique $\omega(\nabla) = 
\det W_{1,\delta+1}$. Ici, on la montre via un développement de Laplace, 
tout en l'illustrant sur l'exemple $p=3$, $q=5$ :
$$
P = a_{0}X^{3} + a_{1}X^{2}Y + a_{2}XY^{2} + a_{3}Y^{3}
\qquad
Q = b_{0}X^{5} + b_{1}X^{4}Y + b_{2}X^{3}Y^{2} + b_{3}X^{2}Y^{3} + b_{4}XY^{4} + b_{5}Y^{5}
$$
On {\it fixe} la matrice suivante $\dsV$ telle que $[P,Q] = [X,Y]\, \dsV$:
$$
\dsV =
\left[
\begin{array}{*{2}{c}}
a_{0}X^{2} + a_{1}XY + a_{2}Y^{2} \quad & \quad
b_{0}X^{4} + b_{1}X^{3}Y + b_{2}X^{2}Y^{2} + b_{3}XY^{3} + b_{4}Y^{4} \\ 
a_{3}Y^{2}& b_{5}Y^{4} \\ 
\end{array}
\right]
$$
et son déterminant $\nabla$ vaut :
$$
\nabla =
-a_{3}b_{0}X^{4}Y^{2} -a_{3}b_{1}X^{3}Y^{3} + (a_{0}b_{5} - a_3b_2)X^{2}Y^{4} +
(a_{1}b_{5} - a_3b_3)XY^{5} + (a_{2}b_{5} - a_3b_4)Y^{6}
$$
\`A gauche, la matrice $\Omega(\nabla)$ de taille $\delta +1 = p+q-1$ 
et à droite, la matrice $W_{1,\delta+1}$ de taille $\delta+2 = p+q$ :
$$
\EastBordermatrix{
a_{0}&  . &  . &  . &  . & b_{0}&  . & \Heti{X^6} \\ 
a_{1}& a_{0}&  . &  . &  . & b_{1}& b_{0} & \Heti{X^5Y}\\ 
a_{2}& a_{1}& a_{0}&  . & -a_{3}b_{0}& b_{2}& b_{1} & \Heti{X^4Y^2} \\ 
a_{3}& a_{2}& a_{1}& a_{0}& -a_{3}b_{1}& b_{3}& b_{2} & \Heti{X^3Y^3} \\ 
 . & a_{3}& a_{2}& a_{1}& a_{0}b_{5} -a_{3}b_{2}& b_{4}& b_{3} & \Heti{X^2Y^4} \\ 
 . &  . & a_{3}& a_{2}& a_{1}b_{5} -a_{3}b_{3}& b_{5}& b_{4} & \Heti{XY^5} \\ 
 . &  . &  . & a_{3}& a_{2}b_{5} -a_{3}b_{4}&  . & b_{5} & \Heti{Y^6} \\ 
}
\qquad \qquad 
\EastBordermatrix{
a_{0} & . & . & . & . & b_{0} & . & . & \Heti{X^{7}} \\ 
a_{1} & a_{0} & . & . & . & b_{1} & b_{0} & . & \Heti{X^{6}Y}\\
a_{2} & a_{1} & a_{0} & . & . & b_{2} & b_{1} & b_{0} & \Heti{X^{5}Y^{2}}\\
a_{3} & a_{2} & a_{1} & a_{0} & . & b_{3} & b_{2} & b_{1} & \Heti{X^{4}Y^{3}}\\
. & a_{3} & a_{2} & a_{1} & a_{0} & b_{4} & b_{3} & b_{2} & \Heti{X^{3}Y^{4}}\\
. & . & a_{3} & a_{2} & a_{1} & b_{5} & b_{4} & b_{3} & \Heti{X^{2}Y^{5}}\\ 
. & . & . & a_{3} & a_{2} & . & b_{5} & b_{4} & \Heti{XY^{6}} \\ 
. & . & . & . & a_{3} & . & . & b_{5} & \Heti{Y^{7}} \\ 
}
$$
Pour le calcul du déterminant de gauche, on développe le long de la colonne 
\MoutonNoir{} $X^{p-1}Y^{q-1}$ d'indice $q$ ; on a :
$$
\det \Omega(\nabla) \ = \ 
\sum_{i=1}^{\delta+1}
\ (-1)^{i+q} 
\ \nabla_{X^{\delta-(i-1)} Y^{i-1}}
\, \times \, \det \Omega(\nabla)_{\overline i \times \overline q}
\leqno (\star)
$$
où $\Omega(\nabla)_{\overline i \times \overline q}$ est la matrice 
issue de $\Omega(\nabla)$ obtenue en supprimant la ligne $i$ et la colonne $q$.

\smallskip
\noindent
Pour le déterminant de droite, on développe  avec Laplace 
le long des deux colonnes fixes $J_0 = (q,p+q)$ : 
la colonne~$q$
correspond à la dernière colonne de $P$ (les $a_i$) et la colonne~$p+q$ à la
dernière colonne de $Q$ (les $b_j$) :
$$
\det W_{1,\delta+1} 
\ = \ 
\varepsilon(J_0,\overline{J_0}) 
\sum_{\#I=2} \varepsilon(I,\overline I) 
\ \det W_{I \times J_0} 
\, \times \, 
\det W_{\overline I \times \overline {J_0}}
$$
Notons $I_i = (i, p+q)$ pour $1 \leqslant i \leqslant p+q-1$.
Tout d'abord, montrons que l'on peut sommer sur ces indices de lignes $I_i$ 
uniquement. Si $I \neq I_i$, c'est que $I$ ne contient pas $p+q$, donc ne 
s'appuie pas sur la dernière ligne.
Dans ce cas, soit la sous-matrice $W_{I \times J_0}$ a sa première ligne nulle, 
soit $W_{\overline I \times \overline {J_0}}$ a sa dernière ligne nulle.
On a donc
$$
\det W_{1,\delta+1} 
\ = \ 
\varepsilon(J_0,\overline{J_0}) 
\sum_{i=1}^{p+q-1} \varepsilon(I_i,\overline I_i) 
\ \det W_{I_i \times J_0} 
\, \times \, 
\det W_{\overline I_i \times \overline {J_0}}
\leqno (\star')
$$
Maintenant, nous sommes en mesure de comparer les deux sommes 
qui apparaissent en $(\star)$ et $(\star')$.
Pour tout $1\leqslant i\leqslant p+q-1$, 
on a l'égalité entre le coefficient $(i,q)$ de $\Omega(\nabla)$
et le mineur $I_i \times J_0$ de $W_{1,\delta+1}$, c'est-à-dire 
$\nabla_{X^{\delta-(i-1)} Y^{i-1}} = \det W_{I_i \times J_0}$ : 
$$
\text{la ligne $i$ de } 
\left[
\begin{array}{*{1}{c}}
 .  \\ 
 .  \\ 
-a_{3}b_{0} \\ 
-a_{3}b_{1} \\ 
a_{0}b_{5} -a_{3}b_{2} \\ 
a_{1}b_{5} -a_{3}b_{3} \\ 
a_{2}b_{5} -a_{3}b_{4} \\ 
\end{array}
\right]
\mbox{est égale au mineur $I_i$ de}
\left|
\begin{array}{*{2}{c}}
.& . \\ 
.& . \\ 
.& b_{0} \\ 
.& b_{1} \\ 
a_{0}& b_{2} \\ 
a_{1}& b_{3} \\ 
a_{2}& b_{4} \\ 
a_{3}& b_{5} \\ 
\end{array}
\right|
$$
Par exemple, pour $i = 4, 6,7$, on a :
$$
-a_3b_1 
\ \overset{4}{=}\ 
\begin{vmatrix} . & b_1\\ a_3 & b_5\\ \end{vmatrix},
\qquad \qquad 
a_1b_5-a_3b_3 
\ \overset{6}{=}\ 
\begin{vmatrix} a_1 & b_3\\ a_3 & b_5\\ \end{vmatrix},
\qquad \qquad 
a_2b_5-a_3b_4 
\ \overset{7}{=}\ 
\begin{vmatrix} a_2 & b_4\\ a_3 & b_5\\ \end{vmatrix}
$$
Quant aux mineurs complémentaires, 
on a l'égalité des matrices extraites (elles sont carrées de taille $\delta$):
$$
\Omega(\nabla)_{\overline i \times \overline q} 
\ = \ 
W_{\overline{I_i} \times \overline{J_0}} 
$$
Enfin, occupons-nous des signes. On a
$\varepsilon(J_0, \overline{J_0}) = (-1)^{p-1}$ et 
$\varepsilon(I_i, \overline{I_i}) = (-1)^{p+q-1+i}$ ;
le produit vaut~$(-1)^{i+q}$, signe qui figure dans le développement de $\det \Omega(\nabla)$.
\end{proof}

\begin {rmq} \label{CramerTotalRmq}
La lectrice attentive aura certainement remarqué que les deux formes linéaires $\mu : \bfA[\uX]_\delta \to \bfA$,
celle pour le format $D = (1,\dots, 1, e)$ de nom $\evalxi$, l'autre pour $n=2$ nommée $\omega$, possèdent
(cf.~\ref{evalxiProperties} et \ref{omegaProperties-n=2})
la propriété forte de Cramer suivante:
$$
\forall\ F,G \in \bfA[\uX]_\delta, \qquad 
\mu(G)F \, -\,  \mu(F)G \ \in \ \langle\uP \rangle_\delta 
\leqno (\star)
$$
Elle est bien plus forte que celle requise en $\nabla$ dans
\ref{MonogeneiteElimIdeal} pour que la forme linéaire soit qualifiée
de mirifique. L'explication est la suivante: ces deux formes linéaires
sont des cas particuliers de la forme linéaire $\omegares =
\omegaresP$ qui sera définie dans le chapitre \ref{ChapMacRaeForP}.  Celle-ci a le bon goût
de vérifier $(\star)$ quelque soit le jeu $\uP$.

\index{propriété Cramer!2@d'une forme linéaire}%

Nous aurions donc pu imposer $(\star)$ dans la proposition \ref{MonogeneiteElimIdeal} mais tel n'a pas
été notre choix qui s'est limité à la propriété Cramer en $\nabla$.

\medskip

Inutile de le cacher plus lontemps: pour n'importe quel système $\uP$,
dans la suite, nous allons faire la part belle aux formes linéaires
$\bfA[\uX]_\delta \to \bfA$ nulles sur $\langle\uP\rangle_\delta$
\idest{} aux formes linéaires sur $\bfB_\delta$ où, comme d'habitude,
$\bfB$ désigne le quotient de $\bfA[\uX]$ par $\langle\uP\rangle$.
Lorsque $\uP$ est régulière, le $\bfA$-module $\bfB_\delta$ est de
rang~$1$ (cf.~\ref{ResolutionQuotients}).
On verra que $\bfB_\delta$ se comporte \og localement \fg{}
comme un module \textit{libre} de rang 1, localement ayant ici le
sens \og après localisation en chaque élément d'une famille de 
profondeur $\geqslant 1$\fg{},
cf. la proposition \ref{FittingLocalisation}.  
On en déduit que deux formes linéaires $\alpha, \beta$ sur $\bfA[\uX]_\delta$, nulles sur
$\langle\uP\rangle_\delta$, sont proportionnelles au sens suivant
$$
\forall\ F,G \in \bfA[\uX]_\delta, \qquad
\alpha(F) \beta(G) = \alpha(G) \beta(F)
$$
En admettant cette égalité de proportionnalité, le lecteur peut, en
guise d'exercice, commencer à réfléchir aux compléments suivants, voire
les prouver puisque les ingrédients nécessaires sont présents dans ce
chapitre \og Deux cas d'école \fg{}.

Voici le contexte. Supposons disposer, comme en début de ce
chapitre, d'une forme linéaire $\mu : \bfA[\uX]_\delta \to \bfA$ ayant
la propriété de Cramer en $\nabla$ et vérifiant $\Gr(\mu) \geqslant 2$.
Alors la forme linéaire $\mu$ est une base de $\Ker \transpose
{\Syl_\delta}$.  De manière équivalente un peu plus abstraite, le
$\bfA$-module $\bfB_\delta^\star$ est libre de rang 1, de
base~$\overline\mu$.  De plus, l'évaluation en le bezoutien de $\uP$
est un isomorphisme:
$$
\begin{array}[t]{rcl}
\bfB_\delta^\star & \longrightarrow & \Ann(\bfB'_\delta) \\ [0.2cm]
\overline{\alpha} & \longmapsto & {\overline \alpha}(\overline {\nabla})
\end{array}
\qquad \text{ ou encore } \qquad 
\begin{array}[t]{rcl}
\Ker \transpose \Syl_\delta & \longrightarrow & \ElimIdeal \\ [0.2cm]
\alpha & \longmapsto & \alpha(\nabla)
\end{array}
$$
L'isomorphisme réciproque de celui de droite est 
$a \mapsto [a \times \sbullet]\strut_\nabla$
où pour $F \in \uPsat_\delta$, le symbole $[F]\strut_\nabla$ désigne la
coordonnée de $\overline F \in \bfB_\delta$ sur $\overline\nabla$, cf. la
preuve de la proposition~\ref{MonogeneiteElimIdeal}.
Nous y reviendrons dans un chapitre ultérieur (c'est l'objet
du théorème \ref{4pointsPsuperreguliere}, la forme $\mu$ ayant été dévoilée auparavant
sous son vrai nom $\omegares$).
\end {rmq}

\cleardoublepage

\section{Le jeu étalon généralisé $(p_1 X_1^{d_1}, \dots, p_n X_n^{d_n})$ et sa combinatoire}
\label{ChapJeuEtalonGeneralise}

Ici nous souhaitons traiter un cas d'école permettant, entre autres, d'illustrer la notion
d'invariant de MacRae.  Nous avons choisi l'exemple du \textit{jeu étalon généralisé} 
$\pXD = (p_1X_1^{d_1}, \dots, p_nX_n^{d_n})$ où $p_1,
\dots, p_n$ sont~$n$ indéterminées sur un petit anneau de base~$\bfk$.
L'anneau des coefficients ainsi obtenu est noté $\bfA = \bfk[p_1, \dots, p_n]$.

\medskip

Ce cas particulier paraît bien élémentaire, mais nous
en verrons plus tard l'impact sur un jeu quelconque $(P_1, \dots,
P_n)$ de format $D = (d_1, \dots, d_n)$. Il permet également
de dégager une certaine combinatoire, d'une part sur l'ensemble des monômes
(cf le rôle des idéaux monomiaux $\Jex_1,\Jex_2$ dans la suite),
d'autre part sur les mineurs (mécanisme de sélection, etc.),
combinatoire régie uniquement par le format de degrés (et non par le système $\uP$).

\index{jeu!e@étalon généralisé}%
\index{format de degrés}%
\index{invariant de MacRae}%

\medskip

Vouloir placer un tel exemple à cet endroit de notre étude nous
oblige à anticiper sur un certain nombre de notions qui seront
abordées en détails plus tard.  Nous avons fait ce choix en étant
conscients de provoquer certaines répétitions.  Le lecteur qui le
souhaite pourra se reporter aux définitions générales du
chapitre~\ref{ChapFittingVectoriel}.

\medskip

Fixons un format de degrés $D = (d_1, \dots, d_n)$ et rappelons (cf
chapitre \ref{ObjetsSuiteP}, en particulier
\ref{DefIdealModuleExcedentaire}) le rôle joué par le degré
critique $\delta = \sum_{i=1}^n (d_i-1)$ et l'idéal monomial $\Jex_1 =
\langle X_1^{d_1}, \dots, X_n^{d_n} \rangle$:
$$
\bfA[\uX]_d 
\ = \ 
\left\{
\begin{array}{ll}
\langle X_1^{d_1}, \dots, X_n^{d_n} \rangle_d & \text{pour $d \geqslant \delta+1$} \\
[0.2cm]
\langle X_1^{d_1}, \dots, X_n^{d_n} \rangle_{\delta} \oplus \bfA X^{\emouton}  
& \text{pour $d = \delta$} 
\end{array}
\right.
$$
La première égalité dit qu'un monôme $X^\alpha$ avec $|\alpha| \geqslant \delta+1$ 
est nécessairement divisible par (au moins) un $X_i^{d_i}$.
Et la seconde met en évidence le monôme $X^{\emouton} = X_1^{d_1 - 1} \cdots X_n^{d_n - 1}$
que nous avons baptisé \MoutonNoir{} \mouton:
c'est le seul monôme de degré $\delta$ non divisible par $X_i^{d_i}$ quelque soit~$i$.

\index{degré critique d'un système}%
\index{MoutonNoir@{\textit{mouton-noir}}}%

\medskip
En tout degré $d$, le $\bfA$-module 
$\bfA[\uX]_d$ possède une base privilégiée (non ordonnée): celle
constituée des monômes.  Ce qui permet de définir, pour n'importe quel
monôme $X^\alpha$ de degré $d$, la forme linéaire $(X^\alpha)^\star :
\bfA[\uX]_d \to \bfA$, coordonnée sur $X^\alpha$ relativement à cette
base monomiale.  En particulier, en degré~$\delta$, nous disposons de
la forme linéaire $(X^{\emouton})^\star$ coordonnée sur le \MoutonNoir.

\medskip
Traiter le cas particulier du jeu étalon généralisé n'a de sens que si
nous l'intégrons dans plan général de notre étude d'une suite
régulière $\uP = (P_1, \dots, P_n)$ de format $D$. Encore une fois,
nous allons insister sur la barrière $d \ge \delta+1$ et $d = \delta$.

\subsubsection*{Un aperçu de l'étude générale}

L'application de Sylvester est par définition une présentation du $\bfA[\uX]$-module $\bfB = \bfA[\uX]/\langle
\uP \rangle$: 
$$
\Syl = \Syl(\uP) : \, 
\xymatrix @C=1.5cm @M=0.4pc{
\bfA[\uX]^n \ar[r]^-{
\left [
\begin{smallmatrix}
P_1 & \cdots & P_n
\end{smallmatrix}
\right]
} & \bfA[\uX] 
}
$$
et induit pour chaque $d$ une présentation du $\bfA$-module $\bfB_d$, 
composante homogène de degré $d$ de $\bfB$:
$$
\Syl_d : \rmK_{1,d} = \bigoplus_{i=1}^n \bfA[\uX]_{d - d_i}\,e_i \longrightarrow \bfA[\uX]_d
$$
\index{application de Sylvester}%
Lorsque l'on prend le jeu étalon $\uX^D = (X_1^{d_1}, \dots, X_n^{d_n})$ comme suite $\uP$, 
le module $\bfB_d = \bfA[\uX]_d/\Jex_{1,d}$ est \textit{libre} de rang $\chi_d = \chi_d(D)$ où
$\chi_d = \dim \bfA[\uX]_d - s_d \buildrel {\rm def}\over = \dim \bfA[\uX]_d - \dim \Jex_{1,d}$.
Les idéaux déterminantiels de l'application de Sylvester du jeu étalon~$\uX^D$ vérifient
$$
\calD_{s_d+1}\big(\Syl_d(\uX^D)\big) = 0 \qquad \text{et}  \qquad
1 \in \calD_{s_d}\big(\Syl_d(\uX^D)\big)
$$
Lorsque la suite $\uP$ est régulière, l'application
de Sylvester $\Syl_d$ de $\uP$ possède comme rang l'entier $s_d$ au sens où:
$$
\calD_{s_d+1}(\Syl_d) = 0 
\qquad \text{tandis que } \qquad 
\calD_{s_d}(\Syl_d) \text{ est fidèle.}
$$
\index{rang!d'une application linéaire}%
La théorie des résolutions libres (finies) permettra de justifier
cette dernière affirmation; il ne s'agit donc pas d'un résultat que
l'on peut considérer comme élémentaire.

Ce dernier point s'énonce sous une forme équivalente
en termes du $\bfA$-module $\bfB_d$ associé à $\uP$: celui-ci
est de rang $\chi_d$, ce qui se mesure via ses idéaux de Fitting
introduits dans le chapitre ultérieur~\ref{ChapFittingVectoriel}
$$
\calF_{\chi_d +1}(\bfB_d) = 0 
\qquad \text{et } \qquad 
\calF_{\chi_d}(\bfB_d) \text{ est fidèle.}
$$
\index{rang!d'un module}%
Un certain nombre d'autres propriétés importantes vont être
conséquences de la théorie des résolutions libres.  En voici un
résumé.

\medskip
$\blacktriangleright$
En degré $d \geqslant \delta+1$, degré pour lequel $\Syl_d$ est de
rang maximum $s_d = \dim \bfA[\uX]_d$, l'idéal déterminantiel
$\calD_{s_d}(\Syl_d)$ possède un pgcd fort: c'est un scalaire régulier
de $\bfA$ n'appartenant pas en général à l'idéal. Dans la
suite, au lieu d'attribuer cette qualité uniquement à l'application
linéaire $\Syl_d$, nous l'attacherons également au $\bfA$-module
$\bfB_d$, conoyau de $\Syl_d$ : l'idéal déterminantiel précité prendra
l'appellation d'idéal de Fitting $\calF_0(\bfB_d)$.  Le $\bfA$-module
$\bfB_d$ sera alors déclaré de MacRae de rang $0$ et nous parlerons de
son invariant de MacRae noté $\MacRae(\bfB_d)$: c'est l'idéal engendré
par ce pgcd fort. On doit comprendre que le $0$ qui intervient ici est
la valeur de $\chi_d$.

\index{module!de MacRae}%
\index{invariant de MacRae}%

\medskip

Au fur et à mesure de notre étude, nous montrerons que pour $d
\geqslant \delta + 1$, chaque idéal monogène $\MacRae(\bfB_d)$
possède un générateur privilégié~$\calR_d$, \emph {indépendant} de $d$.
Ce générateur privilégié, \fbox {c'est le résultant de $\uP$}, disons
que c'est une définition possible.

\smallskip
Cette étude en degré $\geqslant \delta + 1$, attachée au rang $0$,
s'accompagne de celle qui suit en degré $d = \delta$ relevant du rang~1.

\medskip
$\blacktriangleright$
Pour $d = \delta$, on a $\chi_\delta = 1$ \idest{} $s_\delta =
\dim\bfA[\uX]_\delta -1$.  Ce ne sont pas les mineurs de $\Syl_\delta$
d'ordre maximum $\dim\bfA[\uX]_\delta$ que nous allons considérer car
ils sont nuls, mais une certaine structuration des mineurs
d'ordre~$s_\delta$.  Nous verrons en \ref{soussection-d-egal-delta}
que tout choix de $s_\delta$ colonnes de $\Syl_\delta$ permet de
construire une forme linéaire sur $\bfA[\uX]_\delta$ dont les
coefficients sont des mineurs d'ordre $s_\delta$.  Ces formes
déterminantales pures engendrent un sous-module
$\DVect_{s_\delta}(\Syl_\delta)$ de $\bfA[\uX]_\delta^\star$,
sous-module qui possède également la propriété d'avoir un pgcd
fort. Cette fois, ce pgcd fort n'est plus un scalaire mais une forme
linéaire notée $\omegares : \bfA[\uX]_\delta \to \bfA$, qui n'a aucune
raison d'appartenir à $\DVect_{s_\delta}(\Syl_\delta)$.  Cette forme
linéaire passe au quotient par $\bfA[\uX]_\delta \twoheadrightarrow
\bfB_\delta$ et définit ainsi une forme linéaire sur $\bfB_\delta$,
notée $\uomegares$ pour signaler le passage au quotient.

\medskip
A partir de ces divers objets définis à l'aide de $\Syl_\delta$, il
est préférable, pour des raisons structurelles, de former des
invariants associés au $\bfA$-module $\bfB_\delta$, parmi lesquels le
sous-module $\FittVect_1(\bfB_\delta) \subset \bfB_\delta^\star$
correspondant à $\DVect_{s_\delta}(\Syl_\delta)$ et ayant $\uomegares$ comme
pgcd fort. On décrétera le $\bfA$-module $\bfB_\delta$ de MacRae de
rang~1, d'invariant de MacRae $\MacRaeVect(\bfB_\delta)$, sous-module
engendré par la forme linéaire $\uomegares$.

\index{invariant de MacRae}%

\medskip

$\blacktriangleright$
Ces deux approches en degré $d \ge \delta+1$ et en degré $d = \delta$
sont reliées par l'égalité $\calR_d = \omegares(\nabla)$ où~$\nabla$
est un déterminant bezoutien de $\uP$. C'est le $\bfA$-module
$\bfB'_\delta = \bfB_\delta/\langle\nabla\rangle$ de rang~$0$ qui
permet d'assurer ce rapprochement.

\medskip
$\blacktriangleright$
Pour tout $d$, on dispose en fait d'un traitement uniforme.  Au
$\bfA$-module $\bfB_d$, on peut attacher son sous-module de Fitting
$\FittVect_{\chi_d}(\bfB_d) \subset \bigwedge^{\chi_d}(\bfB_d)^\star$,
qui possède un pgcd fort que nous
avons appelé l'invariant de MacRae \textit{vectoriel} de $\bfB_d$
et qui est une forme $\chi_d$-linéaire alternée sur $\bfB_d$.

\index{invariant de MacRae}%

\medskip
Ceci termine l'évocation d'un certain nombre de résultats,
qui, répétons-le, ne sont absolument pas évidents: ils nécessitent en
particulier un théorème dit de structure multiplicative des
résolutions libres.

\medskip
Nous pouvons cependant illustrer ces notions en prouvant les résultats
de manière directe dans le cadre du jeu étalon généralisé 
en degrés $d \geqslant \delta+1$ et $d = \delta$.

\smallskip
Voilà maintenant le théorème visé. 
Nous utilisons le vocabulaire qui sera mis en place ultérieurement 
en~\ref{DefModuleMacRae}
(et~\ref{DefMacRae0} pour le cas particulier du rang $0$), 
à savoir \textit{module de MacRae}.
Chaque point de ce théorème sera repris dans les propositions suivantes 
et prouvé à cette occasion.


\begin{theo}
Soit $\pXD = (p_1 X_1^{d_1},\dots, p_n X_n^{d_n})$ le jeu étalon généralisé 
et $\bfB = \bfA[\uX]/\langle \pXD\rangle$.
Posons $\widehat d_i = \prod_{j\neq i} d_j$.

\medskip

Pour $d\geqslant \delta+1$, le $\bfA$-module $\bfB_d$ est de MacRae de rang $0$ et 
son invariant de MacRae $\MacRae(\bfB_d)$ est le sous-module de $\bfA$ engendré par le scalaire
$\calR_d := p_1^{\widehat d_1}\cdots p_n^{\widehat d_n}$.

\smallskip
Pour $d = \delta$, le $\bfA$-module $\bfB_\delta$ est de MacRae de rang $1$ 
et son invariant de MacRae $\MacRaeVect(\bfB_\delta)$ est le sous-module de $\bfB_\delta^\star$ 
engendré par la forme linéaire 
$\omegares := p_1^{\widehat d_1 -1}\cdots p_n^{\widehat d_n -1} (X^\emouton)^\star$
passée au quotient.

\smallskip
On a $\omegares(\nabla) = \calR_d$ pour n'importe quel déterminant  bezoutien $\nabla$ de $\pXD$, par
exemple celui donné par $\nabla = p_1 \cdots p_n X^\emouton$.
\end{theo}

\label{NOTA04-ResJeuEtalonGen}
\label{NOTA04-omegaresJeuEtalonGen}
\label{NOTA04-MacRaeBd}
\label{NOTA04-MacRaeBdelta}

Dans ce chapitre, contrairement au chapitre précédent, l'idéal
d'élimination n'interviendra pas sauf dans la remarque importante qui
suit. Importante car elle annonce le statut de la définition à venir
du résultant comme un invariant de MacRae. Et non comme générateur de
l'idéal d'élimination, cette vision étant valide essentiellement dans
le cas générique.

\begin {rmq} [MacRae versus idéal d'élimination]\label{rmqMacRaeVsElimIdeal}
  
Le scalaire commun $\calR_d$ déterminé dans le théorème précédent,
invariant de MacRae de $\pXD = (p_1 X_1^{d_1},\dots,p_n X_n^{d_n})$,
c'est le résultant de $\pXD$ tel qu'il sera défini dans le chapitre
\ref{ChapMacRaeDefResultant}:
$$
\Res(\pXD) 
\ = \ 
p_1^{\widehat d_1}\cdots p_n^{\widehat d_n}
$$
Attention au fait qu'ici, ce n'est pas un générateur de l'idéal d'élimination. Ce dernier
idéal est bien monogène mais pas engendré par $\Res(\pXD)$:
$$
\langle \pXD \rangle^{\sat} \cap \bfA \ = \ 
\langle p_1 \cdots p_n \rangle
$$
Justifions cette égalité. Soit $a \in \langle \pXD \rangle^{\sat} \cap \bfA$;
pour chaque $i$, il y a un $e$ tel que 
$aX_i^e \in \langle p_1 X_1^{d_1},\dots,p_n X_n^{d_n}\rangle$,
donc $aX_i^e$ est multiple de $p_iX_i^{d_i}$. Ainsi $a$ est multiple de $p_i$,
et ceci pour tout $i$, d'où $a \in \langle p_1 \cdots p_n \rangle$.
L'autre inclusion est évidente.
\end {rmq}

\subsection{Le cas $d \geqslant \delta+1$}
\label{soussectionJeuEtalonGenDeltaPlus1}

Il va être question ici et ailleurs de mineurs d'une matrice.
Attention à cette terminologie \og mineur \fg{} qui peut être prise
dans plusieurs sens. En principe, elle désigne la suite des positions
des lignes/colonnes dans la matrice et conduit à une valeur bien
précise du déterminant (et pas au signe près); en toute rigueur, le
scalaire obtenu devrait être nommé \textit{valeur du mineur}.  Il nous
arrivera cependant, quand cela ne prête pas à confusion, de nous
écarter de ce principe et de désigner par mineur un déterminant défini
au signe près.  A contrario, aux endroits qui nous semblent délicats,
nous nous efforcerons d'apporter les précisions qui s'imposent.

\index{mineur}%

\medskip

L'expression \textit{mineur plein} d'une matrice désignera un mineur d'ordre égal au nombre de lignes.

\index{mineur!plein}%

\medskip
Pour le jeu étalon généralisé, en notant $\widehat d_i =
\prod_{j\ne i} d_j$, nous allons montrer que l'idéal des mineurs
pleins de~$\Syl_d$ possède $\calR := p_1^{\widehat d_1} \cdots
p_n^{\widehat d_n}$ comme pgcd fort. Comme annoncé dans
l'introduction, on dira que~$\bfB_d$ est un module de MacRae de rang
$0$ et que $\calR$, pgcd fort de l'idéal de Fitting $\calF_0(\bfB_d)$,
est son invariant de MacRae.

\medskip
Avant de traiter la technique, commençons par visualiser un exemple \og numérique \fg{}.

\subsubsection*{Exemple pour $n=3$, avec le format $D = (2,1,3)$ et le degré $d = \delta+1 = 4$}

\label{Exemple213-d4}

Voici les dimensions de l'espace de départ et d'arrivée de $\Syl_d : \rmK_{1,d} \to \bfA[\uX]_d$ :
$$
\dim \rmK_{1,d} = \sum_{i=1}^n \binom{d-d_i+n-1}{n-1} =
\binom {4}{2} + \binom {5}{2} + \binom {3}{2} = 19
\qquad
\dim \bfA[\uX]_d = \binom{d+n-1}{n-1} = \binom {6}{2} = 15
$$
Le nombre de mineurs pleins de $\Syl_d$ est $ \binom{19}{15} =
\frac {19 \times 18 \times 17 \times 16}{4!} = 3876 $.  L'invariant de
MacRae convoité est le pgcd de ces 3876 mineurs d'ordre 15. Une fois
que tout sera en place, l'invariant de MacRae pourra être obtenu comme
le pgcd de $n$ (ici 3) mineurs bien choisis.  Ci-dessous, la matrice
de $\Syl_d(\pXD)$ dans les bases indiquées :

{\small
$$
\NorthEastBordermatrix{
\Veti{X_{1}^{2}\,e_{1}} & \Veti{X_{1}X_{2}\,e_{1}} & \Veti{X_{1}X_{3}\,e_{1}} & \Veti{X_{2}^{2}\,e_{1}} & \Veti{X_{2}X_{3}\,e_{1}} & \Veti{X_{3}^{2}\,e_{1}} & \Veti{X_{1}^{3}\,e_{2}} & \Veti{X_{1}^{2}X_{2}\,e_{2}} & \Veti{X_{1}^{2}X_{3}\,e_{2}} & \Veti{X_{1}X_{2}^{2}\,e_{2}} & \Veti{X_{1}X_{2}X_{3}\,e_{2}} & \Veti{X_{1}X_{3}^{2}\,e_{2}} & \Veti{X_{2}^{3}\,e_{2}} & \Veti{X_{2}^{2}X_{3}\,e_{2}} & \Veti{X_{2}X_{3}^{2}\,e_{2}} & \Veti{X_{3}^{3}\,e_{2}} & \Veti{X_{1}\,e_{3}} & \Veti{X_{2}\,e_{3}} & \Veti{X_{3}\,e_{3}} & \\
p_{1} & . & . & . & . & . & . & . & . & . & . & . & . & . & . & . & . & . & . & \Heti{X_{1}^{4}} \\
. & p_{1} & . & . & . & . & p_{2} & . & . & . & . & . & . & . & . & . & . & . & . & \Heti{X_{1}^{3}X_{2}} \\
. & . & p_{1} & . & . & . & . & . & . & . & . & . & . & . & . & . & . & . & . & \Heti{X_{1}^{3}X_{3}} \\
. & . & . & p_{1} & . & . & . & p_{2} & . & . & . & . & . & . & . & . & . & . & . & \Heti{X_{1}^{2}X_{2}^{2}} \\
. & . & . & . & p_{1} & . & . & . & p_{2} & . & . & . & . & . & . & . & . & . & . & \Heti{X_{1}^{2}X_{2}X_{3}} \\
. & . & . & . & . & p_{1} & . & . & . & . & . & . & . & . & . & . & . & . & . & \Heti{X_{1}^{2}X_{3}^{2}} \\
. & . & . & . & . & . & . & . & . & p_{2} & . & . & . & . & . & . & . & . & . & \Heti{X_{1}X_{2}^{3}} \\
. & . & . & . & . & . & . & . & . & . & p_{2} & . & . & . & . & . & . & . & . & \Heti{X_{1}X_{2}^{2}X_{3}} \\
. & . & . & . & . & . & . & . & . & . & . & p_{2} & . & . & . & . & . & . & . & \Heti{X_{1}X_{2}X_{3}^{2}} \\
. & . & . & . & . & . & . & . & . & . & . & . & . & . & . & . & p_{3} & . & . & \Heti{X_{1}X_{3}^{3}} \\
. & . & . & . & . & . & . & . & . & . & . & . & p_{2} & . & . & . & . & . & . & \Heti{X_{2}^{4}} \\
. & . & . & . & . & . & . & . & . & . & . & . & . & p_{2} & . & . & . & . & . & \Heti{X_{2}^{3}X_{3}} \\
. & . & . & . & . & . & . & . & . & . & . & . & . & . & p_{2} & . & . & . & . & \Heti{X_{2}^{2}X_{3}^{2}} \\
. & . & . & . & . & . & . & . & . & . & . & . & . & . & . & p_{2} & . & p_{3} & . & \Heti{X_{2}X_{3}^{3}} \\
. & . & . & . & . & . & . & . & . & . & . & . & . & . & . & . & . & . & p_{3} & \Heti{X_{3}^{4}} \\
}
$$
}

\noindent
Quelques constats immédiats 
(les deux premiers sont valables quel que soit le degré $d$) :
\label{ConstatsImmediats}
\begin{enumerate}[{[}1{]}]
\item 
chaque colonne possède un seul coefficient non nul, qui est un certain $p_j$.
Plus précisément, la colonne $X^\gamma e_j$ possède le coefficient $p_j$ 
à la ligne $X^\gamma X_j^{d_j}$.

\item 
un coefficient $p_i$ en ligne $X^\alpha$ connaît sa colonne : c'est celle 
indexée par $(X^\alpha/X_i^{d_i})\,e_i$. Un tel $X^\alpha$ est donc 
divisible par $X_i^{d_i}$.

\item 
pour $d \geqslant \delta+1$, chaque ligne possède au moins un $p_i$.
Ceci est dû au fait qu'un monôme $X^\alpha$ de degré $d \geqslant \delta+1$ 
est au moins divisible par un $X_i^{d_i}$.

Ainsi, pour un tel $i$, la ligne $X^\alpha$ possède le coefficient $p_i$ en
la colonne $(X^\alpha/X_i^{d_i})\,e_i$.
\end{enumerate}

\bigskip

Examinons les mineurs pleins de la matrice ci-dessus.  D'après [1], un
tel mineur possède autant de coefficients non nuls (qui sont des
$p_j$) que sa taille.  Il y a donc deux types de mineurs pleins.

$\blacktriangleright$
D'abord ceux possédant deux colonnes proportionnelles $p_iX^\alpha$,
$p_jX^\alpha$ avec $i \ne j$. Ce qui implique l'existence d'une ligne
nulle. En effet, toute colonne possède un seul $p_k$, donc le nombre
de $p_k$ dans le mineur est la taille du mineur \idest{} le nombre de
lignes; puisque qu'il y a $p_i, p_j$ distincts dans une même ligne,
c'est qu'il y a une ligne sans aucun $p_k$. Réciproquement, via
un argument analogue, la présence d'une ligne nulle
implique l'existence de deux colonnes proportionnelles.
Bref, deux bonnes raisons qu'un tel mineur soit de valeur nulle.

\smallskip

Comme exemple de mineur nul, on peut prendre celui s'appuyant sur les
$15$ premières colonnes (donc en délaissant les 4 dernières colonnes):
les colonnes 2,7 sont proportionnelles ainsi que les colonnes 4,8 et
les colonnes 5,9. Et il y a trois lignes nulles (la ligne 10 et les
deux dernières).

\medskip
$\blacktriangleright$
Et les autres, ceux qui nous
intéressent, pour lesquels les $p_i$ sont disposés comme dans une matrice
de permutation (chaque ligne contient un unique~$p_i$).

\medskip
Notre objectif est d'obtenir tous les mineurs pleins
du type \og matrice de permutation \fg{}.  Comme chaque ligne
$X^\alpha$ possède un $p_i$ (point [3]), qui lui-même connaît sa
colonne (point [2]), un tel mineur spécifie une injection \og lignes
$\hookrightarrow$ colonnes \fg{} : 

$$
X^\alpha \ \longmapsto \ \dfrac{X^\alpha}{X_i^{d_i}}e_i 
\qquad 
\text{où $i \in \DivSeq(X^\alpha)
 = \big\{j \in \{1..n\} \mid X_j^{d_j} \text{ divise } X^\alpha\big\}$ 
}
$$
Réciproquement, on peut construire des injections \og lignes
$\hookrightarrow$ colonnes \fg{} 
$$
X^\alpha \ \longmapsto \ \dfrac{X^\alpha}{X_i^{d_i}}e_i 
\qquad 
\text{où $i$ est n'importe quel élément de $\DivSeq(X^\alpha)$}
$$
Le lecteur vérifiera que l'application ci-dessus est bien injective. Et s'il s'inquiète de 
sa bonne définition, rappelons-lui que $\DivSeq(X^\alpha)$ 
est non vide car $X^\alpha$ est de degré~$\geqslant \delta+1$.

\index{ensemble de divisibilité (d'un monôme)}%

Une telle injection permet de sélectionner des colonnes: on parlera de
\textit{mécanisme de sélection}.

En terme d'exposants, un mécanisme de sélection, relativement au format $D$,
est donc une application du type :
$$
\varkappa : 
\begin{array}[t]{rcl}
\bbN^n_d & \longrightarrow & \{1..n\} \\
\alpha & \longmapsto & i \qquad \text{avec la propriété $\alpha_i \geqslant d_i$}
\end{array}
$$
Un tel mécanisme de sélection, indépendamment du système $\uP$ de
format $D$, fournit la \textit{description} des colonnes d'un mineur
plein.  Et, pour tout système $\uP$, la valeur du mineur est
parfaitement définie, et pas seulement au signe près, grâce à la
bijection entre l'ensemble des lignes et l'ensemble des colonnes, peu
importe la manière dont les lignes sont rangées.

\index{mécanisme de sélection}%
\label{NOTA04-Nnd}
\label{NOTA04-kappa}

Un tel mineur (équipé d'une bijection entre l'ensemble des lignes et
l'ensemble des colonnes) est à rapprocher d'une \textit{matrice
strictement carrée}; une matrice strictement carrée n'est pas une
matrice dont le nombre de lignes est égal au nombre de colonnes mais
une matrice $(a_{ij})_{(i,j) \in I \times I}$ dont l'ensemble des indices
de lignes est \textit{égal} à l'ensemble des indices de colonnes
et de ce fait possède un déterminant, sans que l'ensemble $I$
ne soit ordonné.

\index{matrice!strictement carrée}%

\medskip
Enfin, dans le cadre jeu étalon généralisé, tout mécanisme de
sélection fournit un mineur non nul dont la valeur est le produit des
$p_k$ dans le mineur (il n'y a pas de signe).  Et dans ce cadre, on
obtient, pour $d \geqslant \delta+1$, que le nombre de mineurs non nuls est
$\prod\limits_{|\alpha| = d} \# \DivSeq(X^\alpha)$.

\bigskip
Reprenons l'exemple précédent. 
Voici un tableau \label{TableauDiv213} 
donnant,  
pour chaque monôme $X^\alpha$, 
l'ensemble $\DivSeq(X^\alpha)$ 
et en-dessous, les colonnes correspondantes :
{\small
$$
\setlength{\arraycolsep}{0.5\arraycolsep}
\begin {array}{c|c|c|c|c|c|c|c|c|c|c|c|c|c|c}
X_1^4& X_1^3X_2& X_1^3X_3& X_1^2X_2^2& X_1^2X_2X_3& X_1^2X_3^2& X_1X_2^3& X_1X_2^2X_3&
X_1X_2X_3^2& X_1X_3^3& X_2^4& X_2^3X_3& X_2^2X_3^2& X_2X_3^3& X_3^4 \\
\hline
1 & 1,2 & 1 & 1,2 &  1,2 & 1 & 2 & 2 & 2 & 3 & 2 & 2 & 2 & 2,3 & 3 \\
\hline
\Veti{X_{1}^{2}\,e_{1}} &
\Veti{X_{1}X_2\,e_{1}} \ 
\Veti{X_{1}^{3}\,e_{2}} &
\Veti{X_{1}X_{3}\,e_{1}} & 
\Veti{X_{2}^2 \,e_{1}} \ 
\Veti{X_{1}^{2}X_{2}\,e_{2}} & 
\Veti{X_{2}X_{3}\,e_{1}} \ 
\Veti{X_{1}^2 X_3 \,e_{2}} & 
\Veti{X_{3}^{2}\,e_{1}}& 
\Veti{X_{1}X_{2}^{2}\,e_{2}} & 
\Veti{X_{1}X_{2}X_{3}\,e_{2}} & 
\Veti{X_{1}X_{3}^{2}\,e_{2}} & 
\Veti{X_{1}\,e_{3}} & 
\Veti{X_{2}^{3}\,e_{2}} & 
\Veti{X_{2}^{2}X_{3}\,e_{2}} & 
\Veti{X_{2}X_{3}^{2}\,e_{2}} & 
\Veti{X_{3}^{3}\,e_{2}} \ 
\Veti{X_{2}\,e_{3}} & 
\Veti{X_{3}\,e_{3}} 
\end{array}
$$
}
\label{TableauExemple213-d4}
On constate qu'il y a 
$1^{11} \times 2^4 = 16$ mineurs non nuls parmi les $3876$.

\medskip
Nous venons ainsi de mettre en lumière $\DivSeq(X^\alpha)$,
ensemble de divisibilité de $X^\alpha$ relativement au format $D$, qui
joue un rôle capital dans notre combinatoire de l'élimination.

\begin{prop} [Où le scalaire $p_1^{\widehat d_1}\cdots p_n^{\widehat d_n}$ fait son apparition]
  \label{MacRaeJeuEtalon} 
Soit $d\geqslant \delta+1$.
\leavevmode
\begin{enumerate}[\rm i)]
\item 
Pour $i$ fixé, le nombre de monômes $X^\alpha$ 
de degré $d$ tels que $\DivSeq(X^\alpha) = \{i\}$ 
est $\widehat d_i = \prod_{j\neq i} d_j$.

\item 
Les mineurs pleins de $\Syl_d(\pXD)$ sont ou bien nuls ou bien, au signe près, 
des monômes en $p_1, \dots, p_n$.
Il n'y a donc pas d'intervention du petit anneau de base $\bfk$.

\item
De manière plus précise, chaque mineur plein de $\Syl_d(\pXD)$ non nul
définit un mécanisme de sélection et la valeur du mineur est du type
$p_1^{\nu_1} \cdots p_n^{\nu_n}$ avec $\nu_i \geqslant \widehat d_i$
pour tout $i$.

De plus, pour $i$ fixé, il existe un mineur explicite vérifiant $\nu_i = \widehat d_i$.

\item 
Le pgcd fort des mineurs pleins de $\Syl_d(\pXD)$ est 
$p_1^{\widehat d_1}\cdots p_n^{\widehat d_n}$. 

De plus, il y a $n$ mineurs explicites ayant pour pgcd fort 
$p_1^{\widehat d_1}\cdots p_n^{\widehat d_n}$. 

\item 
Le $\bfA$-module $\bfB_d$ est de MacRae de rang $0$ et son invariant de MacRae
$\MacRae(\bfB_d)$ est l'idéal principal de $\bfA$ engendré par le scalaire régulier 
$p_1^{\widehat d_1}\cdots p_n^{\widehat d_n}$.
\end{enumerate}
\end{prop}

\medskip

Rappelons que nous avons donné en~\ref{DefIdealModuleExcedentaire}
le nom $\Jid$  au $\bfA$-module engendré 
par les monômes $X^\alpha$ de degré~$d$ tels que $\DivSeq(X^\alpha) = \{i\}$ 
qui interviennent dans le point i). Par ailleurs, pour $d \geqslant \delta+1$,
un monôme de degré $d$ est divisible par au moins un $X_i^{d_i}$, ce
qui conduit à
$$
\bfA[\uX]_d \ = \ \Jex_{1,d} \ = \
\Jex_{1 \setminus 2, d} 
\, \oplus \, 
\Jex_{2, d}
\ = \ \bigoplus_{i=1}^{n} \Jid
\, \oplus \, 
\Jex_{2, d}
$$
Dans la preuve ci-dessous, pour $i$ fixé, nous écrirons cette égalité sous la forme:
$$
\bfA[\uX]_d \ = \ 
\Jid \ \oplus \ 
\Big(\bigoplus_{j \neq i} \Jex_{1\setminus 2,d}^{(j)} \oplus \Jex_{2, d} \Big)
$$

\begin{proof} [Preuve de la proposition]
\leavevmode

\noindent
i) Confer~\ref{LemmeDenombrementJ1moins2id}.

\smallskip

\noindent
ii) Confer les constats immédiats et l'exemple avant la proposition.

\smallskip

\noindent
iii) 
Comme déjà dit au début 
de~\ref{soussectionJeuEtalonGenDeltaPlus1},
un mineur plein non nul se construit via un mécanisme de sélection:
$$
X^\alpha \ \longmapsto \ \dfrac{X^\alpha}{X_{i_\alpha}^{d_{i_\alpha}}}e_{i_\alpha} 
\qquad 
\text{où $i_\alpha$ est n'importe quel élément de $\DivSeq(X^\alpha)$}
$$
La valeur d'un tel mineur est $p_1^{\nu_1} \cdots p_n^{\nu_n}$ 
où l'exposant $\nu_j$ est égal au nombre de $X^\alpha$ tels que $i_\alpha = j$.

Fixons $i$ et évaluons le nombre de $X^\alpha$ pour lesquels la sélection 
a fourni $i_\alpha = i$. 
Commençons par regarder ce qu'a donné la sélection au niveau des $X^\alpha \in \Jid$
pour lesquels $\DivSeq(X^\alpha)$ est le singleton~$\{i\}$.
La réponse est simple et est presque tautologique : pour chaque $X^\alpha \in \Jid$, 
la sélection fournit nécessairement $i_\alpha = i$ (seul $i$ peut être sélectionné, puisqu'il 
est tout seul !).
On vient de montrer que $\nu_i$ est nécessairement au moins égal à $\dim \Jid$.
D'après i), on obtient donc $\nu_i \geqslant \widehat d_i$.

\bigskip

Voyons comment construire un mineur tel que $\nu_i = \widehat d_i$.
Il s'agit d'adapter ce que l'on vient de dire. Voilà une sélection de colonnes 
qui fournit un mineur comme annoncé :
$$
X^\alpha \ \longmapsto \ 
\left\{
\begin{array}{ll}
\dfrac{X^\alpha}{X_i^{d_i}} e_i & \text{si $X^\alpha \in \Jid$ (pas le choix)}
\\ [0.4cm]
\dfrac{X^\alpha}{X_j^{d_j}} e_j & \text{sinon, où $j$ est quelconque 
dans $\DivSeq(X^\alpha)$ et distinct de $i$}
\end{array}
\right.
$$

iv) 
Posons $\calR = p_1^{\widehat d_1}\cdots p_n^{\widehat d_n}$.  D'après
iii), cet élément régulier $\calR$ divise tous les mineurs pleins de
$\Syl_d$.  D'après le point précédent, pour tout $i$, il existe un
mineur plein s'écrivant $\calR \times \fm_i$ où $\fm_i$ est un monôme
en les $p_j$ ne dépendant pas de $p_i$.  La suite $(\fm_1, \dots,
\fm_n)$ est de profondeur $\geqslant 2$ (cela résulte du fait que, sur
un anneau quelconque, une famille finie de monômes premiers entre eux
est de profondeur $\geqslant 2$, cf.~\ref{MonomesPremiersEntreEux}).
On en déduit que le pgcd de ces $n$ mineurs pleins est exactement
$\calR$.

Enfin, comme $\calR$ divise \textit{tous} les mineurs pleins, le pgcd fort de tous les 
mineurs pleins est $\calR$.

\medskip
v) 
D'après iv), l'idéal $\calD_{s_d}(\Syl_d)$ admet pour pgcd fort $\calR = 
p_1^{\widehat d_1}\cdots p_n^{\widehat d_n}$.
D'après la définition~\ref{DefMacRae0}, 
on en déduit que $\bfB_d$ est un module de MacRae de rang $0$ 
d'invariant de MacRae $\calR$. 
\end{proof}

\subsubsection*{Retour sur l'exemple $D = (2,1,3)$ et $d = \delta+1 = 4$, page~\pageref{Exemple213-d4}}

Dans cet exemple, nous allons cerner les mineurs ayant l'exposant
minimal $\widehat d_3 = d_1d_2 \buildrel {\rm ici} \over= 2$ en $p_3$ puis ceux ayant l'exposant
minimal $\widehat d_2 = d_1d_3 \buildrel {\rm ici} \over= 6$ en $p_2$.

\medskip
\noindent
$\blacktriangleright$
Exposant minimal en $p_3$, ici $p_3^2$.
Afin d'obtenir un tel mineur, il faut commencer par examiner les monômes
$X^\alpha$ pour lesquels $\DivSeq(X^\alpha)$ contient strictement $3$
(et donc contient un autre entier, au moins).  
Pour une telle ligne
$X^\alpha$, il faut sélectionner un indice autre que $3$.  
Le tableau page~\pageref{TableauExemple213-d4}
montre qu'il n'y a qu'une seule ligne de ce type, à savoir celle
indexée par $X_2 X_3^3$, monôme ayant comme ensemble de divisibilité
$\{2,3\}$.  Pour ce monôme, il faut donc sélectionner $2$,
c'est-à-dire la colonne $X_3^3 e_2$ (et non la colonne $X_2 e_3$).
Cette contrainte étant respectée, la sélection peut être totalement
quelconque pour les autres lignes.  Ici, il y a donc $2^3 = 8$ mineurs
ayant un exposant minimal en $p_3$.

\smallskip
Parmi ces $8$ mineurs, il y en a un bien précis, obtenu à partir du
mécanisme de sélection $i_\alpha = \min\big(\DivSeq(X^\alpha)\big)$.
Si l'on décide de ranger les colonnes relativement à l'injection de
sélection, c'est le déterminant de la matrice diagonale :

{\small
$$
\EastBordermatrix{
p_{1} & . & . & . & . & . & . & . & . & . & . & . & . & . & . & \Heti{X_{1}^{4}} \\ 
. & p_{1} & . & . & . & . & . & . & . & . & . & . & . & . & . & \Heti{X_{1}^{3}X_{2}} \\ 
. & . & p_{1} & . & . & . & . & . & . & . & . & . & . & . & . & \Heti{X_{1}^{3}X_{3}} \\ 
. & . & . & p_{1} & . & . & . & . & . & . & . & . & . & . & . & \Heti{X_{1}^{2}X_{2}^{2}} \\ 
. & . & . & . & p_{1} & . & . & . & . & . & . & . & . & . & . & \Heti{X_{1}^{2}X_{2}X_{3}} \\ 
. & . & . & . & . & p_{1} & . & . & . & . & . & . & . & . & . & \Heti{X_{1}^{2}X_{3}^{2}} \\ 
. & . & . & . & . & . & p_{2} & . & . & . & . & . & . & . & . & \Heti{X_{1}X_{2}^{3}} \\ 
. & . & . & . & . & . & . & p_{2} & . & . & . & . & . & . & . & \Heti{X_{1}X_{2}^{2}X_{3}} \\ 
. & . & . & . & . & . & . & . & p_{2} & . & . & . & . & . & . & \Heti{X_{1}X_{2}X_{3}^{2}} \\ 
. & . & . & . & . & . & . & . & . & p_{3} & . & . & . & . & . & \Heti{X_{1}X_{3}^{3}} \\ 
. & . & . & . & . & . & . & . & . & . & p_{2} & . & . & . & . & \Heti{X_{2}^{4}} \\ 
. & . & . & . & . & . & . & . & . & . & . & p_{2} & . & . & . & \Heti{X_{2}^{3}X_{3}} \\ 
. & . & . & . & . & . & . & . & . & . & . & . & p_{2} & . & . & \Heti{X_{2}^{2}X_{3}^{2}} \\ 
. & . & . & . & . & . & . & . & . & . & . & . & . & p_{2} & . & \Heti{X_{2}X_{3}^{3}} \\ 
. & . & . & . & . & . & . & . & . & . & . & . & . & . & p_{3} & \Heti{X_{3}^{4}} \\ 
}
$$
}
\label{pageAllusionW1dpXD}
C'est exactement l'endomorphisme $W_{1,d}(\pXD)$ qui sera décrit en~\ref{DefW1},
ici de déterminant $p_1^6p_2^7p_3^2$.

\medskip
\noindent 
$\blacktriangleright$
Exposant minimal en $p_2$, ici $p_2^6$.
En examinant le tableau des indices de divisibilité page \pageref{TableauDiv213}, 
on constate que les $X^\alpha$ se répartissent en deux catégories disjointes:
$$
\begin{array}{l}
\text{$\DivSeq(X^\alpha)$ contient strictement $\{2\}$} \\ 
\text{\idest{} $\DivSeq(X^\alpha) = \{2,i_\alpha\}$ avec $i_\alpha=1$ ou $3$} 
\end{array}
\hspace{2cm}
\text{ou bien} \hspace{2cm}
\begin{array}{l}
\text{$\DivSeq(X^\alpha) = \{i_\alpha\}$} 
\end{array}
$$
Ces deux circonstances (à gauche, à droite), très particulières, font
qu'un tel mineur d'exposant minimal~$\widehat d_2$ en $p_2$ est unique
: dans le cas des $X^\alpha$ de la catégorie droite, il n'y a pas le
choix dans la sélection, dans le cas des $X^\alpha$ de la catégorie
gauche, il faut obligatoirement sélectionner $i_\alpha \neq 2$ pour
avoir un exposant égal à~$\widehat d_2$ en $p_2$.  Le mineur obtenu
est $p_1^6p_2^6p_3^3$.

\medskip
Il y a une manière de décrire une telle sélection (qui dans cet exemple est unique).
On constate que $i_\alpha$ se retrouve être le plus petit indice de $\DivSeq(X^\alpha)$ mais 
à condition d'avoir muni l'ensemble 
$\{1,2, \dots, n\} =_{\rm ici} \{1,2,3\}$ de l'une des deux structures
d'ordre total \textit{pour lequel 2 est le plus grand élément}:
$$
1 < 3 < 2    \qquad\hbox {ou bien}\qquad   3 < 1 < 2
\leqno (\star)
$$

\subsubsection*{Le mécanisme de sélection $\minDiv$, le mineur $\det W_{1,d}$,
                le cofacteur $b_d$ et leur $\sigma$-version}

\index{mécanisme de sélection!$\minDiv$}%
\index{mécanisme de sélection!$\sminDiv$}%

Lorsque l'on prend comme mécanisme de sélection 
celui qui consiste à choisir le plus petit indice
de divisibilité, le mineur de $\Syl_d(\uP)$ prend le nom de $\det
W_{1,d}(\uP)$ (cf. le chapitre suivant en~\ref{DefW1}).  Ainsi, dans
le tableau page~\pageref{TableauDiv213} des indices de divisibilité de
$D = (2,1,3)$, en ayant sélectionné dans les colonnes où il y a un choix
(colonnes 2,4,5,14) les indices de divisibilité minimum $1,1,1,2$, on
a obtenu $\det W_{1,d}(\pXD) = p_1^6p_2^7p_3^2$.  Plus généralement,
en notant
$$
\minDiv(X^\alpha) = \min\big(\DivSeq(X^\alpha)\big)  \qquad
\hbox {pour $X^\alpha \in \Jex_1$}
$$
on a 
$$
\det W_{1,d}(\pXD) =  \prod_{X^\alpha \in \Jex_{1,d}} p_{\minDiv(X^\alpha)}
$$
On peut également, pour toute permutation $\sigma \in \mathfrak S_n$,
considérer le mécanisme de sélection défini par le plus petit indice
de divisibilité au sens de la relation d'ordre $<_\sigma$ défini par
$\sigma(1) <_\sigma \cdots <_\sigma \sigma(n)$.
L'analogue de $\minDiv$ est noté $\sminDiv$, ce qui conduit  
à un autre mineur désigné par $\det W_{1,d}^{\sigma}(\uP)$.
Pour plus de détails, cf.~\ref{SigmaVersion}.

Par exemple, dans $(\star)$, il s'agit de l'ordre $<_\sigma$ avec, à
gauche la transposition $\sigma = (2,3)$ et à droite le cycle $\sigma
= (1,3,2)$ qui conduisent au même mineur $\det W_{1,d}^\sigma(\pXD) =
p_1^6p_2^6p_3^3$. Plus généralement:
$$
\det W_{1,d}^\sigma(\pXD) =  \prod_{X^\alpha \in \Jex_{1,d}} p_{\sminDiv(X^\alpha)}
$$

\label{NOTA04-ordreTordu}%
\index{ordre sur $\{1..n\}$ tordu par une permutation}%
\label{NOTA04-minDiv}%
\label{NOTA04-sminDiv}%
\label{NOTA04-detW1d}%
\label{NOTA04-detW1dsigma}%

Dans la proposition suivante, nous notons $\calR_d$ l'élément
$p_1^{\widehat d_1}\cdots p_n^{\widehat d_n}$ bien qu'il ne dépende
pas de $d$, ceci pour insister sur le fait qu'il intervient en chaque
degré $d \geqslant \delta+1$.
De plus, pour alléger la notation $\det W_{1,d}(\pXD)$, 
nous omettrons $\pXD$ ci-dessous (idem pour $\det W_{1,d}^\sigma$).

\label{NOTA04-Rd}%

\medskip

\begin{prop} [Où $\Jex_{2,d}$ apparaît dans le cofacteur de $\calR_d$ dans $\det W_{1,d}$]
\label{dCofacteurJeuEtalonGen}
Pour le jeu étalon généralisé $\pXD = (p_1X_1^{d_1}, \dots, p_nX_n^{d_n})$,
en degré $d \geqslant \delta + 1$:

\begin{enumerate}[\rm i)]
\item
La relation de divisibilité $\calR_d \mid \det W_{1,d}$ s'explicite
par le cofacteur $b_d$, monôme en $p_1,
\dots, p_n$ ne dépendant pas de $p_n$ :
$$
\det W_{1,d} =  b_d \ \calR_d
\qquad \text{avec }
b_d \ =\ 
\prod_{X^\alpha \in \Jex_{2,d}} p_{\minDiv(X^\alpha)} 
\ = \ 
\prod_{i=1}^n p_i^{\nu_i}
\qquad 
\text{ où $\nu_i = \dim \Jex_{2,d}^{(i)}$ ($\nu_n = 0$).}
$$

\item De manière analogue, pour une permutation $\sigma \in \mathfrak S_n$,
on dispose d'un monôme $b^\sigma_d$ en $p_1, \dots, p_n$ ne
dépendant pas de $p_{\sigma(n)}$:
$$
\det W_{1,d}^\sigma =  b_d^\sigma \ \calR_d
\qquad \text{avec }
b_d^\sigma = \prod_{X^\alpha \in \Jex_{2,d}} p_{\sminDiv(X^\alpha)} 
$$

\item 
Soit $\Sigma_2$ l'ensemble des $n$ transpositions $(i,n)$, 
$1 \leqslant i \leqslant n$.
Alors $\calR_d$ est le pgcd fort des $(\det W_{1,d}^{\sigma})_{\sigma \in \Sigma_2}$.
\end{enumerate}
\end {prop}

\label{NOTA04-bd}%
\label{NOTA04-bdsigma}%
%
%

\begin{proof}\leavevmode

\noindent  
i)  
On a $\calR_d = p_1^{\widehat d_1}\cdots p_n^{\widehat d_n}$ et
$\widehat d_i = \dim\Jex_{1\setminus2,d}^{(i)}$.
Comme $\Jex_{1 \setminus 2,d} = \bigoplus_{i=1}^n \Jex_{1\setminus2,d}^{(i)}$:
$$
\calR_d
\ = \ 
\prod_{X^\alpha \in \Jex_{1\setminus 2,d}} p_{\minDiv(X^\alpha)} 
$$
Par ailleurs, par construction:
$$
\det W_{1,d}
\ = \ 
\prod_{X^\alpha \in \Jex_{1,d}} p_{\minDiv(X^\alpha)} 
$$
Comme $\Jex_{1,d} = \Jex_{1 \setminus 2,d} \oplus \Jex_{2,d}$ 
et $\Jex_{2,d} = \bigoplus\limits_{i=1}^n \Jex_{2,d}^{(i)}$,
on obtient les deux expressions pour le multiplicateur $b_d$:
$$
b_d
\ = \ 
\prod_{X^\alpha \in \Jex_{2,d}} p_{\minDiv(X^\alpha)} 
\ = \ 
\prod_{i=1}^n p_i^{\nu_i}
\qquad 
\text{ où $\nu_i = \dim \Jex_{2,d}^{(i)}$}
$$
Le fait que $b_d$ ne dépende pas de $p_n$ provient du fait
que tout monôme $X^\alpha$ de $\Jex_2$ ayant au moins deux indices de
divisibilité est tel que $\minDiv(X^\alpha) < n$, 
ce qui se traduit par $\Jex_{2,d}^{(n)} = 0$.

\medskip
\noindent
ii) Se traite comme i).

\medskip
\noindent
iii) Comme $b_d^\sigma$ est un monôme en $p_1, \dots, p_n$ ne dépendant pas de $p_{\sigma(n)}$, 
la famille $(b_d^\sigma)_{\sigma \in \Sigma_2}$ est de profondeur $\geqslant 2$.
\end{proof}

\begin{rmq}
Les mécanismes de sélection $\minDiv$ et/ou $\sminDiv$, qui
conduisent aux mineurs notés $\det W_{1,d}^\sigma$, sont très
particuliers car composés sous la forme
$$
\xymatrix @C=1.5cm{
\{\hbox {monômes de $\Jex_1$}\} \ar[r]_-{\DivSeq}  &
\Big\{\hbox {parties non vides de $\{1,2, \dots, n\}$}\Big\}
\ar[rr]^-{\text{sélection d'un}}_-{\text{élément dans la partie}}
&& \{1,2, \dots, n\}
}
$$
En conséquence, deux monômes $X^\alpha$, $X^\beta$ ayant même 
ensemble de divisibilité, ont même image par le mécanisme de sélection, alors 
qu'avec un mécanisme quelconque
$\{\hbox {monômes de $\Jex_1$}\} \to \{1,2, \dots, n\}$, cela ne
serait pas nécessairement le cas.
\end{rmq}

\subsection{Le cas $d = \delta$. Forme déterminantale associée à une application linéaire}

\label{soussection-d-egal-delta}

Nous avons besoin ici de définir la notion de forme linéaire
déterminantale associée à une application linéaire $u : E \to F$ entre
deux modules libres de rang fini; ou encore associée à une matrice si
nous convenons de baser les modules. Nous y reviendrons dans la
section~\ref{SectionDetCoRang1}. 

\index{forme linéaire!déterminantale}%

\subsubsection*{Forme linéaire déterminantale pure}

Désignons par $\ell = \dim F$ (le nombre de lignes de la
matrice). Tout choix de $\ell-1$ colonnes fournit une forme
linéaire~$\mu$ sur l'espace d'arrivée $F$ de $u$. Par exemple, en
présence de bases ordonnées $\bfe$ de $E$ et $\bff$ de $F$, la forme
linéaire déterminantale $\mu : F \to \bfA$ définie par le choix des
$\ell-1$ premières colonnes de $u$ est par définition:
$$
\mu : \ v \ \longmapsto \ 
\det\nolimits_\bff(u(e_1), \dots, u(e_{\ell-1}), v)  
\leqno (\star)
$$
On peut également choisir $\ell -1$ colonnes quelconques.
Dans l'exemple ci-dessous, $u$ est à valeurs dans $\bfA^3 = \bfA
f_1 \oplus \bfA f_2 \oplus \bfA f_3$, et le choix porte sur les deux
colonnes $1,4$:
$$
\begin {bmatrix}
a_1 & a_2 & a_3 & a_4 & a_5 \\
b_1 & b_2 & b_3 & b_4 & b_5 \\
c_1 & c_2 & c_3 & c_4 & c_5 \\
\end {bmatrix}
\qquad\qquad
\mu_{1,4} : (x,y,z) \ \longmapsto \ 
\begin {vmatrix}
a_1  &  a_4  & x \\
b_1  &  b_4  & y \\
c_1  &  c_4  & z \\
\end {vmatrix}
\quad \text{ou encore}\quad
\mu_{1,4} = 
\begin {vmatrix}
a_1  &  a_4  & f_1^\star \\
b_1  &  b_4  & f_2^\star \\
c_1  &  c_4  & f_3^\star \\
\end {vmatrix}
$$
Il y a aussi 
$$
\mu_{4,1} : (x,y,z) 
\ \longmapsto \
\begin {vmatrix}
a_4  &  a_1  & x \\
b_4  &  b_1  & y \\
c_4  &  c_1  & z \\
\end {vmatrix}
$$
De telles formes linéaires seront appelées formes déterminantales
pures.  Le sous-module de $F^\star$ engendré par ces formes est noté
$\DVect_{\ell-1}(u)$: c'est le sous-module des formes déterminantales de $u$.
Il est indépendant des bases et de la position du vecteur $v$ dans le déterminant $(\star)$.
En remplaçant dans $(\star)$ les $u(e_j)$ par~$\ell-1$ vecteurs
quelconques de $\Im u$, on obtient une forme linéaire qui est dans
$\DVect_{\ell-1}(u)$.

\index{forme linéaire!déterminantale pure}
\label{NOTA04-Dvect}
%
%

\subsubsection*{Forme déterminantale pure sans ambiguïté de signe
                via le procédé $(\calB_F, f, \iota)$}

Supposons $F$ muni d'une base $\calB_F$ que nous ne
supposons pas ordonnée. Cette précision est importante car la plupart
des modules libres qui interviennent dans notre étude du résultant
possèdent cette propriété. Ainsi $\bfA[\uX]_d$ dispose de sa base
monomiale non ordonnée et plus généralement, chaque terme $\rmK_{k,d}$
de la composante homogène de degré $d$ du complexe de Koszul de $\uP$
est équipé de sa base monomiale.

Fixons dans la base $\calB_F$ un vecteur $f$ que nous qualifions de distingué.
Alors toute application $\iota : \calB_F \setminus \{f\} \to E$ permet de spécifier sans
ambiguité de signe une forme linéaire pure associée à une 
application linéaire $u : E \to F$ et sous entendu à $(\calB_F, f, \iota)$.

Pour la décrire, prenons une numérotation quelconque $f_1, \dots, f_\ell$ de $\calB_F$,
désignons par~$\bff$ la base \textit{ordonnée} $(f_1, \dots,
f_\ell)$ et par $[\qquad]_\bff$ le déterminant de $\ell$ vecteurs de $F$
dans cette base. 
Si nous supposons le vecteur distingué~$f$ en dernière
position par exemple, la forme déterminantale pure associée est:
$$
v \ \longmapsto \ 
\big[u\big(\iota(f_1)\big), 
\dots, 
u\big(\iota(f_{\ell-1})\big), v\big]_\bff  \qquad v \in F
$$
Il faut bien sûr adapter l'expression déterminantale selon la position
de $f$ et mettre en adéquation le vecteur $f$ et la colonne-argument.
Ainsi, si nous supposons $f$ en position $2$, la
forme linéaire est
$$
v \ \longmapsto \ 
\big[ u\big(\iota(f_1)\big), \, v,\,  
u\big(\iota(f_3)\big), \dots, u\big(\iota(f_\ell)\big)\big]_\bff   
$$
On obtient ainsi une forme linéaire indépendante de la numérotation.

\medskip

Reprenons l'exemple précédent en supposant le vecteur privilégié 
$f$ en première position (donc $f = f_1$) et 
$\iota(f_2) = e_4$,  $\iota(f_3) = e_1$.
La forme linéaire associée est :
$$
(x,y,z) 
\ \longmapsto\ 
\begin {vmatrix}
x  &  a_4  & a_1 \\
y  &  b_4  & b_1 \\
z  &  c_4  & c_1 \\
\end {vmatrix}
$$
Cette forme linéaire est bien sûr indépendante 
du rangement des vecteurs de base. 
Par exemple, relativement aux rangements $(f_2, f_1, f_3)$ 
et $(f_3, f_1, f_2)$,  
la forme linéaire a pour expression :
$$
(x,y,z) 
\ \longmapsto\ 
\begin {vmatrix}
b_4  &  y  & b_1 \\
a_4  &  x  & a_1 \\
c_4  &  z  & c_1 \\
\end {vmatrix}
\ = \ 
\begin {vmatrix}
c_1  &  z  & c_4 \\
a_1  &  x  & a_4 \\
b_1  &  y  & b_4 \\
\end {vmatrix}
$$

\subsubsection*{Le cas de $\Syl_\delta(\uP) : \rmK_{1,\delta} \to \bfA[\uX]_\delta$:
                le sous-module $\DVect_{s_\delta}(\Syl_\delta) \subset \bfA[\uX]_\delta^\star$}

Nous allons maintenant appliquer ce qui précède aux $\bfA$-modules $E = \rmK_{1,\delta}$
et $F = \bfA[\uX]_\delta$ équipés de leur base monomiale, en prenant
comme vecteur privilégié $f$ le \MoutonNoir{} $X^\emouton$ et comme application
linéaire l'application de Sylvester
$$
\Syl_\delta(\uP) : 
\rmK_{1,\delta} = \bigoplus_{i=1}^n  \bfA[\uX]_{\delta-d_i} e_i 
\ \longrightarrow \ 
\bfA[\uX]_\delta
\qquad\qquad
U_1 e_1 + \cdots + U_n e_n  
\ \longmapsto \ 
U_1P_1 + \cdots + U_nP_n
$$
En notant $\calB$ la base monomiale de $\bfA[\uX]_\delta$, on a donc
$\calB \setminus \{X^\emouton\} = \{ \text{monômes de } \Jex_{1,\delta}\}$.
Quant aux applications $\iota : \calB\setminus\{X^\emouton\}
\to \rmK_{1,\delta}$, ce sont celles associées à un mécanisme de
sélection $\kappa$ :
$$
\iota : 
\begin{array}[t]{rcl}
\{\hbox {monômes de $\Jex_{1,\delta}$}\} & \longrightarrow & \rmK_{1,\delta} \\ [0.2cm]
X^\alpha & \longmapsto & \dfrac{X^\alpha}{X_i^{d_i}}\, e_i  
\hbox { avec $i = 
\kappa(\alpha)$}
\end{array}
$$
Nous nous limitons ici à $\uP = \pXD$ mais il est bon d'avoir à
l'esprit que ces constructions ne dépendent que du format $D$ et
permettront plus tard d'élaborer des objets s'appliquant à n'importe
quel système $\uP$ de format $D$ (cf. le
chapitre~\ref{ChapObjetsSylvester}).  En quelque sorte, le jeu
étalon généralisé sert de charpente pour tous les systèmes~$\uP$
de format~$D$.

\index{mécanisme de sélection}%

\medskip
Dans l'énoncé ci-dessous, nous rappelons que nous avons noté $(X^{\emouton})^\star$
la forme linéaire coordonnée sur le \MoutonNoir{} relativement
à la base monomiale de $\bfA[\uX]_\delta$.

\begin{prop} 
[Où la forme linéaire $p_1^{\widehat d_1-1}\cdots p_n^{\widehat d_n -1}(X^\emouton)^\star$ fait son apparition] 
\label{MacRaeJeuEtalonDelta} \leavevmode
  
\begin{enumerate}[\rm i)]
\item 
Pour $i$ fixé, le nombre de monômes $X^\alpha$ 
de degré $\delta$ tels que $\DivSeq(X^\alpha) = \{i\}$ 
est $\widehat d_i - 1 = \prod_{j\neq i} d_j -1$.

\item 
Les mineurs d'ordre $s_\delta+1 = \dim \bfA[\uX]_\delta$ de $\Syl_\delta(\pXD)$ sont nuls.

Toute forme déterminantale pure de $\Syl_\delta(\pXD)$ est ou bien nulle ou bien, au signe près, 
un multiple de la forme linéaire $(X^{\emouton})^\star$, le multiplicateur étant 
un monôme en $p_1, \dots, p_n$. 
De manière précise, chaque forme déterminantale pure non nulle de
$\Syl_\delta(\pXD)$ définit un mécanisme de sélection $\kappa$.

\item
Pour une forme déterminantale pure $\mu$ associée à un mécanisme de
sélection, on a une égalité sans signe $\mu = p_1^{\nu_1} \cdots
p_n^{\nu_n} (X^\emouton)^\star$ où $\nu_i$ vérifie $\nu_i \geqslant
\widehat d_i -1 $ pour tout~$i$.

De plus, pour chaque $1\le i\le n$, il existe une forme déterminantale
pure explicite vérifiant $\nu_i = \widehat d_i - 1$.

\item 
Posons $\omegares = p_1^{\widehat d_1-1}\cdots p_n^{\widehat d_n -1} (X^\emouton)^\star$.

Cette forme linéaire $\omegares$ est le pgcd fort du sous-module
$\DVect_{s_\delta}(\Syl_\delta(\pXD)) \subset \bfA[\uX]_\delta^*$.

De plus, il y a $n$ formes déterminantales pures explicites ayant pour pgcd fort $\omegares$. 

\item
Le $\bfA$-module $\bfB_\delta$ est de MacRae de rang $1$ et
son invariant de MacRae $\MacRaeVect(\bfB_\delta)$ est le sous-module de $\bfB_\delta^\star$
engendré par la forme linéaire $\omegares$ passée au quotient sur $\bfB_\delta$.
\end{enumerate}
\end{prop}

\label{NOTA04-DvectSyldelta}
%
%

Avant de donner la preuve, notons que les deux premiers constats
observés précédemment page~\pageref{ConstatsImmediats} restent
valables.  Le troisième est modifié comme suit :

\begin{enumerate}[{[}1{]}]
\setcounter{enumi}{2}
\item 
comme $d = \delta$, il y a une seule ligne qui ne possède pas de $p_i$ et qui est donc nulle, 
c'est celle indexée par le \MoutonNoir. 
Les autres lignes sont indexées par des monômes $X^\alpha$ divisibles
par au moins un~$X_i^{d_i}$ et contiennent donc au moins un $p_i$.
\end{enumerate}

\noindent
Illustrons cela avec le format $D=(2,4,1)$ pour lequel $\delta = 4$ et $s_\delta = 15-1$
(15 est le nombre de monômes de degré 4 en 3 variables).

{\small
$$
\Syl_\delta(\pXD)
\ = \ 
\NorthEastBordermatrix{
\Veti{X_{1}^{2}\,e_{1}} & \Veti{X_{1}X_{2}\,e_{1}} & \Veti{X_{1}X_{3}\,e_{1}} & \Veti{X_{2}^{2}\,e_{1}} & \Veti{X_{2}X_{3}\,e_{1}} & \Veti{X_{3}^{2}\,e_{1}} & \Veti{e_{2}} & \Veti{X_{1}^{3}\,e_{3}} & \Veti{X_{1}^{2}X_{2}\,e_{3}} & \Veti{X_{1}^{2}X_{3}\,e_{3}} & \Veti{X_{1}X_{2}^{2}\,e_{3}} & \Veti{X_{1}X_{2}X_{3}\,e_{3}} & \Veti{X_{1}X_{3}^{2}\,e_{3}} & \Veti{X_{2}^{3}\,e_{3}} & \Veti{X_{2}^{2}X_{3}\,e_{3}} & \Veti{X_{2}X_{3}^{2}\,e_{3}} & \Veti{X_{3}^{3}\,e_{3}} & \\
p_{1} & . & . & . & . & . & . & . & . & . & . & . & . & . & . & . & . & \Heti{X_{1}^{4}} \\
. & p_{1} & . & . & . & . & . & . & . & . & . & . & . & . & . & . & . & \Heti{X_{1}^{3}X_{2}} \\
. & . & p_{1} & . & . & . & . & p_{3} & . & . & . & . & . & . & . & . & . & \Heti{X_{1}^{3}X_{3}} \\
. & . & . & p_{1} & . & . & . & . & . & . & . & . & . & . & . & . & . & \Heti{X_{1}^{2}X_{2}^{2}} \\
. & . & . & . & p_{1} & . & . & . & p_{3} & . & . & . & . & . & . & . & . & \Heti{X_{1}^{2}X_{2}X_{3}} \\
. & . & . & . & . & p_{1} & . & . & . & p_{3} & . & . & . & . & . & . & . & \Heti{X_{1}^{2}X_{3}^{2}} \\
. & . & . & . & . & . & . & . & . & . & . & . & . & . & . & . & . & \Heti{X_{1}X_{2}^{3}} \mouton \\
. & . & . & . & . & . & . & . & . & . & p_{3} & . & . & . & . & . & . & \Heti{X_{1}X_{2}^{2}X_{3}} \\
. & . & . & . & . & . & . & . & . & . & . & p_{3} & . & . & . & . & . & \Heti{X_{1}X_{2}X_{3}^{2}} \\
. & . & . & . & . & . & . & . & . & . & . & . & p_{3} & . & . & . & . & \Heti{X_{1}X_{3}^{3}} \\
. & . & . & . & . & . & p_{2} & . & . & . & . & . & . & . & . & . & . & \Heti{X_{2}^{4}} \\
. & . & . & . & . & . & . & . & . & . & . & . & . & p_{3} & . & . & . & \Heti{X_{2}^{3}X_{3}} \\
. & . & . & . & . & . & . & . & . & . & . & . & . & . & p_{3} & . & . & \Heti{X_{2}^{2}X_{3}^{2}} \\
. & . & . & . & . & . & . & . & . & . & . & . & . & . & . & p_{3} & . & \Heti{X_{2}X_{3}^{3}} \\
. & . & . & . & . & . & . & . & . & . & . & . & . & . & . & . & p_{3} & \Heti{X_{3}^{4}} \\
}
$$
}
\label{TableauDiv241}
Voici le tableau des ensembles de divisibilité de $D = (2,4,1)$ en degré $\delta$ :
{\small
$$
\setlength{\arraycolsep}{0.5\arraycolsep}
\begin {array}{c|c|c|c|c|c|c|c|c|c|c|c|c|c|c}
X_1^4& X_1^3X_2& X_1^3X_3& X_1^2X_2^2& X_1^2X_2X_3& X_1^2X_3^2& X_1X_2^3& X_1X_2^2X_3&
X_1X_2X_3^2& X_1X_3^3& X_2^4& X_2^3X_3& X_2^2X_3^2& X_2X_3^3& X_3^4 \\
\hline
1 & 1 & 1,3 & 1 & 1,3 & 1,3 & \mouton & 3 & 3 & 3 & 2 & 3 & 3 & 3 & 3 \\
\hline
\Veti{X_{1}^{2}\,e_{1}} &
\Veti{X_{1}X_2\,e_{1}} &
\Veti{X_{1}X_{3}\,e_{1}} \ 
\Veti{X_1^3 \, e_3} & 
\Veti{X_{2}^2 \,e_{1}} & 
\Veti{X_{2}X_{3}\,e_{1}} \ 
\Veti{X_{1}^2 X_2 \,e_{3}} & 
\Veti{X_{3}^{2}\,e_{1}} \ 
\Veti{X_{1}^2 X_3 \,e_{3}} & 
& 
\Veti{X_{1}X_{2}^2 \,e_{3}} & 
\Veti{X_{1}X_2X_3\,e_{3}} & 
\Veti{X_{1}X_3^2\,e_{3}} & 
\Veti{e_{2}} & 
\Veti{X_{2}^{3}\,e_{3}} & 
\Veti{X_{2}^2 X_{3} \,e_{3}} & 
\Veti{X_2X_{3}^{2}\,e_{3}} & 
\Veti{X_{3}^3\,e_{3}} 
\end{array}
$$
}

\noindent
Le nombre de formes déterminantales pures non nulles est le produit des
cardinaux des ensembles de divisibilité, ici $1^{11} \times 2^3 = 8$.
Par exemple, celle bâtie en sélectionnant, 
dans les colonnes trois, cinq, six du tableau ci-dessus, 
les indices de divisibilité $3$, $3$, $1$ 
est $p_1^4\,p_2\,p_3^9\,(X_{1}X_{2}^{3})^\star$ 
(dans le dessin ci-après, 
la colonne $\sbullet$ est la \og la colonne argument \fg{}).
{\small
$$
\EastBordermatrix{
p_{1} & . & . & . & . & . &\sbullet & . & . & . & . & . & . & . & . & \Heti{X_{1}^{4}} \\ 
. & p_{1} & . & . & . & . &\sbullet & . & . & . & . & . & . & . & . & \Heti{X_{1}^{3}X_{2}} \\ 
. & . & p_{3} & . & . & . &\sbullet & . & . & . & . & . & . & . & . & \Heti{X_{1}^{3}X_{3}} \\ 
. & . & . & p_{1} & . & . &\sbullet & . & . & . & . & . & . & . & . & \Heti{X_{1}^{2}X_{2}^{2}} \\ 
. & . & . & . & p_{3} & . &\sbullet & . & . & . & . & . & . & . & . & \Heti{X_{1}^{2}X_{2}X_{3}} \\ 
. & . & . & . & . & p_{1} &\sbullet & . & . & . & . & . & . & . & . & \Heti{X_{1}^{2}X_{3}^{2}} \\
. & . & . & . & . & . &\sbullet & . & . & . & . & . & . & . & . & \Heti{X_{1}X_{2}^{3}} \mouton \\
. & . & . & . & . & . &\sbullet &  p_{3} & . & . & . & . & . & . & . & \Heti{X_{1}X_{2}^{2}X_{3}} \\ 
. & . & . & . & . & . &\sbullet & . & p_{3} & . & . & . & . & . & . & \Heti{X_{1}X_{2}X_{3}^{2}} \\ 
. & . & . & . & . & . &\sbullet & . & . & p_{3} & . & . & . & . & . & \Heti{X_{1}X_{3}^{3}} \\ 
. & . & . & . & . & . &\sbullet & . & . & . & p_{2} & . & . & . & . & \Heti{X_{2}^{4}} \\ 
. & . & . & . & . & . &\sbullet & . & . & . & . & p_{3} & . & . & . & \Heti{X_{2}^{3}X_{3}} \\ 
. & . & . & . & . & . &\sbullet & . & . & . & . & . & p_{3} & . & . & \Heti{X_{2}^{2}X_{3}^{2}} \\ 
. & . & . & . & . & . &\sbullet & . & . & . & . & . & . & p_{3} & . & \Heti{X_{2}X_{3}^{3}} \\ 
. & . & . & . & . & . &\sbullet & . & . & . & . & . & . & . & p_{3} & \Heti{X_{3}^{4}} \\ 
}
$$
}

Lorsque l'on prend comme mécanisme de sélection celui associé à
$\minDiv$ \idest{} celui qui consiste à choisir le plus petit indice de
divisibilité, la forme linéaire obtenue $\bfA[\uX]_\delta \to \bfA$
prend le nom de $\omega_{\uP}$ (cf.~\ref{DefEndoOmega}). Ainsi,
dans le tableau ci-dessus des indices de divisibilité de $D=(2,4,1)$,
en sélectionnant dans les colonnes où il y a un choix (colonnes 3,5,6)
les indices de divisibilité minimum $1,1,1$, on obtient $\omega_{\pXD}
= p_1^6 \, p_3^7\, p_2 \, (X_{1}X_{2}^{3})^\star$.
Plus généralement, on a 
$$
\omega_{\pXD} = \pi\times (X^\emouton)^\star  \qquad \hbox {avec} \qquad
\pi = \prod_{X^\alpha \in \Jex_{1,\delta}} p_{\minDiv(X^\alpha)}
$$

\label{omegaCasParticulier} 
\label{NOTA04-omega}%
\index{forme linéaire!$\omega=\omega_\uP$ (pilotée par $\minDiv$)}%

\begin{proof}\leavevmode
Cette démonstration présente un certain nombre d'analogies avec celle
en degré $\geqslant \delta+1$ (proposition \ref{MacRaeJeuEtalon}) et
de ce fait nous détaillerons moins.

i) Confer~\ref{LemmeDenombrementJ1moins2id}.

\smallskip

ii) Dans le cas du jeu étalon généralisé, il est inutile d'invoquer \ref{rankSyldeltaDirectProof}:
tout mineur plein de $\Syl_\delta$ possède une ligne nulle, celle correspondant
au \MoutonNoir{}, et en conséquence est nul.

\medskip
Vu la présence de la ligne nulle \MoutonNoir{}, toute forme déterminantale pure $\mu$ de
$\Syl_\delta$ est multiple de la forme linéaire $(X^{\emouton})^\star$,
le multiplicateur $\mu(X^\emouton)$ étant le mineur d'ordre $s_\delta$ défini par les
$s_\delta$ colonnes sélectionnées et les $s_\delta$ lignes indexées
par les monômes de $\Jex_{1,\delta}$.

Il s'agit donc d'étudier tous les mineurs d'ordre $s_\delta$ de ce
type. L'analyse réalisée en degré $d \geqslant \delta + 1$ s'applique et
conduit à deux types de mineurs.  Seuls ceux non nuls nous concernent
et sont associés à un mécanisme de sélection défini sur les monômes de
$\Jex_{1,\delta}$.

\smallskip  

iii) On utilise un argument analogue à celui du point correspondant de la
proposition \ref{MacRaeJeuEtalon}, à ceci près que le nombre de monômes
de $\Jex_{1\setminus2,\delta}^{(i)}$ est $\widehat d_i -1$.

\smallskip

iv) 
La forme linéaire  $\omegares = p_1^{\widehat d_1-1}\cdots p_n^{\widehat d_n-1}\,(X^\emouton)^\star$ est 
sans torsion.

D'après le point précédent, pour chaque $1 \le i\le n$, il y a une
forme linéaire pure du type $\fm_i \, \omegares$ avec $\fm_i$ monôme
en les $p_j$ ne dépendant pas de $p_i$.  La suite $(\fm_1, \dots,
\fm_n)$ est de profondeur $\geqslant 2$ (cela résulte du fait que, sur
un anneau quelconque, une famille finie de monômes premiers entre eux
est de profondeur $\geqslant 2$).  On en déduit que le pgcd fort de
ces $n$ formes déterminantales pures est $\omegares$.

Par ailleurs,  comme $\omegares$ divise \emph {chaque} forme déterminantale pure,
c'est le pgcd fort de $\DVect_{s_\delta}(\Syl_\delta)$.

\medskip
v)
D'une part, $\bfB_\delta$ est de rang 1. Et le fait que  
le sous-module $\DVect_{s_\delta}(\Syl_\delta)$ admette pour pgcd fort $\omegares$.
s'énonce en disant que $\bfB_\delta$ est un module de MacRae de rang $1$ 
d'invariant de MacRae $\uomegares$. 
\end{proof}

\subsubsection*{La forme linéaire $\omega^\sigma$ et le cofacteur $b_\delta^\sigma$
                dans le mécanisme de sélection $\sminDiv$}

Pour toute permutation $\sigma \in \mathfrak S_n$,
le mécanisme de sélection défini par le plus petit
indice de divisibilité au sens de la relation d'ordre $<_\sigma$
caractérisée par $\sigma(1) <_\sigma \cdots <_\sigma \sigma(n)$ conduit
à une forme linéaire déterminantale pure notée $\omega^\sigma_{\uP}$.
Ainsi
$$
\omega^\sigma_{\pXD} = \pi\times (X^\emouton)^\star  \qquad \hbox {avec} \qquad
\pi = \prod_{X^\alpha \in \Jex_{1,\delta}} p_{\sminDiv(X^\alpha)}
$$
Pour plus de détails, cf. la section~\ref{SigmaVersion}.

\label{NOTA04-omegasigma}%
\index{forme linéaire!$\omega^\sigma=\omega^\sigma_\uP$ (pilotée par $\sminDiv$)}%

\medskip

Pour alléger la notation $\omega_{\pXD}$, nous omettrons $\pXD$
ci-dessous (idem pour $\omega^\sigma$).

\begin{prop} [Où $\Jex_{2,\delta}$ apparaît dans le cofacteur de $\omegares$ dans $\omega$]
\label{deltaCofacteurJeuEtalonGen}
Pour le jeu étalon généralisé $(p_1X_1^{d_1}, \dots, p_nX_n^{d_n})$:

\begin{enumerate}[\rm i)]
\item 
Les deux formes linéaires
$\omega$ et $\omegares : \bfA[\uX]_\delta \to \bfA$ sont reliées par le
multiplicateur explicite $b_\delta$, qui est un monôme en $p_1,
\dots, p_n$ ne dépendant pas de $p_n$ :
$$
\omega =  b_\delta\ \omegares  
\qquad \text{avec }
b_\delta = \prod_{X^\alpha \in \Jex_{2,\delta}} p_{\minDiv(X^\alpha)} 
\ = \ 
\prod_{i=1}^n p_i^{\nu_i}
\qquad 
\text{ où $\nu_i = \dim \Jex_{2,\delta}^{(i)}$ ($\nu_n=0$).}
$$

\item
De manière analogue, pour une permutation $\sigma \in \mathfrak S_n$,
on dispose d'un monôme $b^\sigma_\delta$ en $p_1, \dots, p_n$ ne
dépendant pas de $p_{\sigma(n)}$:
$$
\omega^\sigma =  b_\delta^\sigma \ \omegares  
\qquad \text{avec }
b_\delta^\sigma = \prod_{X^\alpha \in \Jex_{2,\delta}} p_{\sminDiv(X^\alpha)} 
$$

\item 
Soit $\Sigma_2$ l'ensemble des $n$ transpositions $(i,n)$ pour 
$1 \leqslant i \leqslant n$.
Alors $\omegares$ est le pgcd fort des $(\omega^\sigma)_{\sigma \in \Sigma_2}$.
\end{enumerate}
\end {prop}

\label{NOTA04-bdelta}
\label{NOTA04-bdeltasigma}
%
%

\begin {proof} \leavevmode

\noindent
i)
Par définition de $\omegares$, la forme linéaire $\omega$ est multiple de $\omegares$.
Pour déterminer le multiplicateur, il suffit d'évaluer ces formes linéaires 
en le \MoutonNoir{} $X^\emouton$. 

On a $\omegares(X^\emouton) = p_1^{\widehat d_1 - 1} \cdots p_n^{\widehat d_n - 1}$ 
où $\widehat d_i - 1 = \dim\Jex_{1\setminus2,\delta}^{(i)}$.
Comme $\Jex_{1 \setminus 2,\delta} = \bigoplus_{i=1}^n \Jex_{1\setminus2,\delta}^{(i)}$, 
l'égalité précédente s'écrit 
$$
\omegares(X^\emouton) 
= \prod_{X^\alpha \in \Jex_{1\setminus 2,\delta}} p_{\minDiv(X^\alpha)} 
$$
Par construction, la forme linéaire $\omega$ vérifie :
$$
\omega(X^\emouton) 
\ = \ 
\prod_{X^\alpha \in \Jex_{1,\delta}} p_{\minDiv(X^\alpha)} 
$$
Comme $\Jex_{1,\delta} = \Jex_{1 \setminus 2,\delta} \oplus \Jex_{2,\delta}$
et $\Jex_{2,\delta} = \bigoplus\limits_{i=1}^n \Jex_{2,\delta}^{(i)}$, 
le multiplicateur $b_\delta$ vaut 
$$
b_\delta \ = \ 
\prod_{X^\alpha \in \Jex_{2,\delta}} p_{\minDiv(X^\alpha)} 
\ = \ 
\prod_{i=1}^n p_i^{\nu_i}
\qquad 
\text{ où $\nu_i = \dim \Jex_{2,\delta}^{(i)}$}
$$
Le fait que $b_\delta$ ne dépende pas de $p_n$ provient du fait
que tout monôme $X^\alpha$ de $\Jex_2$ ayant au moins deux indices de
divisibilité est tel que $\minDiv(X^\alpha) < n$, 
ce qui se traduit par $\Jex_{2,\delta}^{(n)} = 0$.

\medskip
\noindent
ii) Se traite comme i).

\medskip
\noindent
iii) Comme $b_\delta^\sigma$ est un monôme en $p_1, \dots, p_n$
ne dépendant pas de $p_{\sigma(n)}$, 
la famille $(b_\delta^\sigma)_{\sigma 
\in \Sigma_2}$ est de profondeur $\geqslant 2$.
\end {proof}

\subsection{Une approche structurelle en tout degré $d$}

Après la chasse aux mineurs dans les matrices de Sylvester, nous allons
procéder de manière totalement différente pour cerner la structure
du $\bfA$-module $\bfB_d$ du jeu étalon généralisé. Nous allons l'écrire
comme une somme directe de modules monogènes $\bfA/\fp_\alpha$ pour $|\alpha| = d$
où $\fp_\alpha = \langle p_i, i \in \DivSeq(X^\alpha)\rangle$, écriture sur 
laquelle les propriétés de profondeur se visualisent aisément.

Vont de nouveau intervenir les idéaux monomiaux de $\bfA[\uX]$ suivants :
$$
\Jex_0 = \bfA[\uX], \qquad\qquad
\Jex_1 = \langle X_1^{d_1}, \dots, X_n^{d_n} \rangle, \qquad \qquad 
\Jex_2 = \langle X_i^{d_i}X_j^{d_j} \mid i \neq j \rangle
$$
ainsi que les supplémentaires monomiaux $\Jex_{1\setminus 2}$ de
$\Jex_2$ dans $\Jex_1$, et $\Jex_{0 \setminus 1}$ de $\Jex_1$ dans
$\Jex_0$ (intervenu page~\pageref{NOTA02-Jex01}).
Pour tout~$d$, on a évidemment une somme directe 
$\bfA[\uX]_d = \Jex_{0\setminus 1,d} \oplus \Jex_{1,d}$
si bien que les deux $\bfA$-modules $\Jex_{0\setminus 1,d}$ et 
$\bfA[\uX]_d / \Jex_{1,d}$ sont isomorphes. 
Par conséquent, $\Jex_{0\setminus 1,\,d} = 0$ pour tout $d \geqslant \delta+1$
tandis que le $\bfA$-module $\Jex_{0\setminus 1,\,\delta}$ est monogène 
engendré par le \MoutonNoir{}  $X^{\emouton}$.
Avec ces notations, on a l'égalité évidente :
$$
\bfA[\uX]
\ = \ 
\Jex_{0\setminus 1} \,\oplus\, 
\Jex_{1\setminus 2} \,\oplus\, 
\Jex_{2}
$$
qui traduit le fait qu'un monôme est divisible soit par \textit{aucun} $X_i^{d_i}$, 
soit par \textit{un seul} $X_i^{d_i}$, soit par \textit{au moins deux} $X_i^{d_i}$.
Cette décomposition joue le rôle suivant :
pour $X^\alpha \in \bfA[\uX]$,
l'idéal $\fp_\alpha$ de $\bfA$, engendré par les $p_i$ tels que $\alpha_i \geqslant d_i$,
est 
$$
\left\{
\begin{array}{ll}
\text{nul } & \text{si $X^\alpha \in \Jex_{0\setminus 1}$} \\ [0.2cm]
\text{engendré par un élément régulier} & \text{si $X^\alpha \in \Jex_{1\setminus 2}$} \\  [0.2cm]
\text{de profondeur $\geqslant 2$} & \text{si $X^\alpha \in \Jex_2$} 
\end{array}
\right.
$$
Et pour être totalement complet et avoir un avant-goût de la preuve,
nous allons considérer la décomposition déjà intervenue: $\Jex_{1\setminus 2} =
\bigoplus\limits_{i=1}^n \Jex_{1\setminus 2}^{(i)}$, où
$\Jex_{1\setminus 2}^{(i)}$ est le $\bfA$-module engendré par les
$X^\alpha \in \Jex_{1\setminus 2}$ tels que $\DivSeq(X^\alpha) =
\{i\}$.  On apporte ainsi une précision à la monogénéité de
$\fp_\alpha$: pour $X^\alpha \in \Jex_{1\setminus 2}^{(i)}$, on a
$\fp_\alpha = \langle p_i \rangle$.

\index{supplémentaire!monomial excédentaire}
%
%

\begin{prop} \label{BdJeuEtalonGen}

Soit $\pXD = \langle p_1 X_1^{d_1},\dots, p_n X_n^{d_n}\rangle$ le jeu étalon généralisé 
et $\bfB = \bfA[\uX]/\langle \pXD\rangle$.

\begin{enumerate}[\rm i)]
\item 
On a l'égalité 
$$
\langle \pXD \rangle \ = \ 
\bigoplus_{X^\alpha \in \bfA[\uX]} 
\fp_\alpha X^\alpha
\qquad 
\text{où}\quad \fp_\alpha = \langle p_i,\, i \in \DivSeq(X^\alpha)\rangle
$$
\item 
On  a l'isomorphisme canonique de $\bfA$-modules : 
$$
\forall\, d, \qquad 
\bfB_d
\ \simeq \
\!\!\!
\bigoplus_{X^\alpha \in \bfA[\uX]_d}
\!\!\bfA/\fp_{\alpha} 
$$
\item Pour $d \geqslant \delta$, l'isomorphisme précédent se simplifie :
$$
\forall\, d \geqslant \delta+1, \quad 
\bfB_d \ \simeq \ 
\bigoplus_{i=1}^n \big(\bfA/\langle p_i \rangle\big)^{\widehat d_i}
\ \oplus \ 
\bigoplus_{X^\alpha \in \Jex_{2,d}} \bfA /\fp_\alpha
$$
et pour $d = \delta$ : 
$$
\bfB_\delta 
\ \simeq \ 
\bfA X^\emouton 
\ \oplus \ 
\bigoplus_{i=1}^n \big(\bfA/\langle p_i \rangle\big)^{\widehat d_i - 1}
\ \oplus \ 
\bigoplus_{X^\alpha \in \Jex_{2,\delta}} \bfA /\fp_\alpha
$$

\item 
Pour $d\geqslant \delta+1$, le $\bfA$-module $\bfB_d$ est de MacRae de rang $0$ et 
$\MacRae(\bfB_d)$ est le sous-module de $\bfA$ engendré par le scalaire
$p_1^{\widehat d_1}\cdots p_n^{\widehat d_n}$.

Pour $d = \delta$, le $\bfA$-module $\bfB_\delta$ est de MacRae de rang $1$ et 
$\MacRaeVect(\bfB_\delta)$ est le sous-module de $\bfB_\delta^\star$ 
engendré par la forme linéaire 
$\omegares = p_1^{\widehat d_1 -1}\cdots p_n^{\widehat d_n -1} (X^\emouton)^\star$
passée au quotient.
\end{enumerate}
\end{prop}

\begin{proof}
i) 
L'inclusion $\supset$ est facile.
Soit $\alpha$ fixé et prenons un élément de $\fp_\alpha X^\alpha$.
Il s'écrit comme combinaison $\bfA$-linéaire de $p_i X^\alpha$ pour $i \in \DivSeq(X^\alpha)$;
par définition, on a $X_i^{d_i} \mid X^\alpha$, en conséquence
$p_iX^\alpha = (X^\alpha/X_i^{d_i})\, p_i X_i^{d_i} \in \langle\pXD\rangle$. 

Reste à prouver l'autre inclusion. 
C'est le caractère monomial dans l'anneau
$\bfk[p_1, \dots, p_n, X_1, \dots, X_n]$ qui l'assure de la manière suivante. 
Un polynôme $F$ de cet anneau s'écrit 
$\sum \lambda_{\alpha, \beta} 
p^\beta X^\alpha$.
Supposons que~$F$ appartienne à l'idéal $\langle\pXD\rangle$ ;
la propriété monomiale en question affirme que 
$\lambda_{\alpha,\beta} =0$ à l'exception des $(\alpha, \beta)$
vérifiant $\alpha_i \geqslant d_i$ et $\beta_i \geqslant 1$ pour un certain $i$.
Dans ce dernier cas, en rappelant que $\bfA = \bfk[p_1, \dots, p_n]$, on a 
$p^\beta X^\alpha \in \bfA p_i X^\alpha$ avec $X_i^{d_i} \mid X^\alpha$, 
c'est-à-dire $p^\beta X^\alpha \in \fp_\alpha X^\alpha$.
Ainsi, $F \in \bigoplus\limits_{X^\alpha \in \bfA[\uX]} \fp_\alpha X^\alpha$.

ii) Il suffit d'écrire de manière tautologique :
$\bfA[\uX]_d = \bigoplus\limits_{|\alpha| = d}  \bfA X^\alpha$ 
et de quotienter par la composante homogène de degré $d$ de $\langle \pXD \rangle$ 
qui vaut $\bigoplus\limits_{|\alpha| = d} \fp_\alpha X^\alpha$.

iii)
Avec ii) et la décomposition de $\bfA[\uX]$, on a :
$$
\bfB_d
\ \simeq \ 
\bigoplus_{X^\alpha \in \Jex_{0\setminus 1,d}}
\!\!\bfA/\fp_\alpha 
\ \oplus \ 
\bigoplus_{X^\alpha \in \Jex_{1\setminus 2,d}}
\!\!\bfA/\fp_\alpha 
\ \oplus \ 
\bigoplus_{X^\alpha \in \Jex_{2,d}}
\!\!\bfA/\fp_\alpha 
$$
Pour $X^\alpha\in \Jex_{0\setminus 1}$, on a $\fp_\alpha = 0$.
Pour le membre du milieu, écrivons $\Jex_{1\setminus 2} = \bigoplus_{i=1}^n \Jex_{1\setminus 2}^{(i)}$ ;
si $X^\alpha \in \Jex_{1\setminus 2}^{(i)}$, on~a $\fp_\alpha  = \langle p_i \rangle$.
Enfin, pour $X^\alpha \in \Jex_2$, on a $\fp_\alpha \supset \langle p_i, p_j \rangle$ avec $i \neq j$ ;
en particulier $\Gr(\fp_\alpha) \geqslant 2$. Ainsi:
$$
\bfB_d
\quad \simeq \quad 
\bfA^{\dim \Jex_{0\setminus 1, d}}
\ \oplus \ 
\bigoplus_{i=1}^n
\big(\bfA/\langle p_i \rangle\big)^{\dim \Jid}
\ \oplus \ 
\bigoplus_{X^\alpha \in \Jex_{2,d}}
\!\!\bfA/\fp_\alpha 
$$
De plus, on a 
$$
\dim \Jex_{0\setminus 1, d}
\ = \ 
\left\{
\begin{array}{ll}
0 & \text{pour $d \geqslant \delta+1$} \\
[0.2cm]
1 & \text{pour $d = \delta$} 
\end{array}
\right.
\qquad \text{ et } \qquad 
\dim \Jex_{1\setminus 2, d }^{(i)}
\ = \ 
\left\{
\begin{array}{ll}
\widehat d_i & \text{si $d \geqslant \delta+1$} \\ [0.2cm]
\widehat d_i -1 & \text{si $d = \delta$} 
\end{array}
\right.
$$

iv) D'après iii), on a 
$$
\bfB_d  \ =\ 
\bigoplus_{i=1}^n \big(\bfA/\langle p_i \rangle\big)^{\widehat d_i}
\, \oplus \, 
\bigoplus_{X^\alpha \in \Jex_{2,d}} \bfA /\fp_\alpha
\qquad \text{module du type } \qquad 
\displaystyle \bigoplus_{a \in \mathscr A} \bfA/\langle a \rangle 
\oplus \bigoplus_{\ub \in \mathscr B} \bfA/\langle \ub \rangle
$$
avec $\mathscr A$ constitué de $p_1, \dots, p_1, p_2, \dots, p_2, \dots, p_n$
(où chaque $p_i$ intervient $\widehat d_i$ fois), 
et $\mathscr B$ constitué des $\ub_\alpha$ système générateur de $\fp_\alpha$.
D'après~\ref{ModuleTresDecompose}, on a $\MacRae(\bfB_d) = \prod\limits_{a\in \mathscr A} a = 
p_1^{\widehat d_1}\cdots p_n^{\widehat d_n}$.

D'après le point iii), 
$\bfB_\delta$ s'écrit $\bfA X^\emouton \oplus M_0$ 
avec 
$M_0 = 
\displaystyle \bigoplus_{i=1}^n \big(\bfA/\langle p_i \rangle\big)^{\widehat d_i - 1}
\ \oplus \ 
\bigoplus_{X^\alpha \in \Jex_{2,\delta}} \bfA /\fp_\alpha$.
Pour les mêmes raisons que précédemment, 
$M_0$ est de MacRae de rang $0$ et $\MacRae(M_0) = 
p_1^{\widehat d_1 -1}\cdots p_n^{\widehat d_n -1}$.
D'après~\ref{AoplusM0}, on en déduit que $\bfB_\delta$ est de MacRae de 
rang $1$ et $\MacRaeVect(\bfB_\delta) 
= p_1^{\widehat d_1 -1}\cdots p_n^{\widehat d_n -1} (X^\emouton)^\star$.
\end{proof}

On pourrait résumer ce qui précède en disant que 
\begin{itemize}
\item
$\Jex_{0\setminus 1,d}$ porte le rang de $\bfB_d$, 
\item 
$\Jex_{1\setminus 2,d}$ porte l'invariant de MacRae vectoriel de $\bfB_d$ 
\item 
$\Jex_{2,d}$ porte les cofacteurs, c'est-à-dire les éléments de l'idéal 
$\fb = \calF_0(\bfB_d)/\MacRae(\bfB_d)$.
\end{itemize}

\bigskip

Illustrons la décomposition du module $\bfB_d$ 
pour $D = (2,1,3)$ et $d = \delta +1 = 4$.
Pour cela, commençons par lister les 
$11 = \widehat d_1 + \widehat d_2 + \widehat d_3 = 3 + 6 + 2$ monômes de 
$\Jex_{1\setminus 2, d}$, 
puis les $4$ monômes de $\Jex_{2,d}$.
Une fois cela fait, considérons les $11$ colonnes $(X^\alpha/X_i^{d_i})\, e_i$ 
avec $X^\alpha \in \Jex_{1\setminus 2, d}$, 
puis les autres colonnes restantes (ici $8$).

Ce rangement fournit le nouveau visage de $\Syl_d(\pXD)$ du jeu étalon généralisé :
$$
\renewcommand \Heti[1] {\omit\quad \mbox{\scriptsize$#1$} \hfil} 
\NorthEastBordermatrix{
\Veti{X_{1}^{2}\, e_1} &  
\Veti{X_{1}X_{3}\,e_{1}} &
\Veti{X_{3}^{2}\,e_{1}} &
\Veti{X_{1}X_{2}^{2}\,e_{2}} & 
\Veti{X_{1}X_{2}X_{3}\,e_{2}} & 
\Veti{X_{1}X_{3}^{2}\,e_{2}} & 
\Veti{X_{2}^{3}\,e_{2}} & 
\Veti{X_{2}^{2}X_{3}\,e_{2}} & 
\Veti{X_{2}X_{3}^{2}\,e_{2}} &
\Veti{X_{1}\,e_{3}} & 
\Veti{X_{3}\,e_{3}} & 
\VR 
\Veti{X_{1}X_{2}\,e_{1}} & 
\Veti{X_{1}^{3}\,e_{2}} & 
\Veti{X_{2}^{2}\,e_{1}} & 
\Veti{X_{1}^{2}X_{2}\,e_{2}} & 
\Veti{X_{2}X_{3}\,e_{1}} & 
\Veti{X_{1}^{2}X_{3}\,e_{2}} & 
\Veti{X_{3}^{3}\,e_{2}} & 
\Veti{X_{2}\,e_{3}} & 
\\
p_{1}& .& .& .& .& .& .& .& .& .& .& \VR .& .& .& .& .& .& .& . & \Heti{X_{1}^{4}} \\ 
.& p_{1}& .& .& .& .& .& .& .& .& .& \VR .& .& .& .& .& .& .& . & \Heti{X_{1}^{3}X_{3}} \\ 
.& .& p_{1}& .& .& .& .& .& .& .& .& \VR .& .& .& .& .& .& .& . & \Heti{X_{1}^{2}X_{3}^{2}} \\ 
.& .& .& p_{2}& .& .& .& .& .& .& .& \VR .& .& .& .& .& .& .& . & \Heti{X_{1}X_{2}^{3}} \\ 
.& .& .& .& p_{2}& .& .& .& .& .& .& \VR .& .& .& .& .& .& .& . & \Heti{X_{1}X_{2}^{2}X_{3}} \\ 
.& .& .& .& .& p_{2}& .& .& .& .& .& \VR .& .& .& .& .& .& .& . & \Heti{X_{1}X_{2}X_{3}^{2}} \\ 
.& .& .& .& .& .& p_{2}& .& .& .& .& \VR .& .& .& .& .& .& .& . & \Heti{X_{2}^{4}} \\ 
.& .& .& .& .& .& .& p_{2}& .& .& .& \VR .& .& .& .& .& .& .& . & \Heti{X_{2}^{3}X_{3}} \\ 
.& .& .& .& .& .& .& .& p_{2}& .& .& \VR .& .& .& .& .& .& .& . & \Heti{X_{2}^{2}X_{3}^{2}} \\ 
.& .& .& .& .& .& .& .& .& p_{3}& .& \VR .& .& .& .& .& .& .& . & \Heti{X_{1}X_{3}^{3}} \\ 
.& .& .& .& .& .& .& .& .& .& p_{3}& \VR .& .& .& .& .& .& .& . & \Heti{X_{3}^{4}} \\ 
\HR{20}
.& .& .& .& .& .& .& .& .& .& .& \VR p_{1}& p_{2}& .& .& .& .& .& . & \Heti{X_{1}^{3}X_{2}} \\ 
.& .& .& .& .& .& .& .& .& .& .& \VR .& .& p_{1}& p_{2}& .& .& .& . & \Heti{X_{1}^{2}X_{2}^{2}}\\ 
.& .& .& .& .& .& .& .& .& .& .& \VR .& .& .& .& p_{1}& p_{2}& .& . & \Heti{X_{1}^{2}X_{2}X_{3}} \\ 
.& .& .& .& .& .& .& .& .& .& .& \VR .& .& .& .& .& .& p_{2}& p_{3} & \Heti{X_{2}X_{3}^{3}} \\ 
}
$$
On a l'égalité $\MacRae(\bfB_d) = p_1^3 p_2^6 p_3^2$. L'idéal cofacteur 
$\fb = \calF_0(\bfB_d)/\MacRae(\bfB_d)$ est
engendré par tous les mineurs pleins de la matrice Sud-Est ci-dessus 
(\idest{} par ses mineurs de taille 4); il contient l'idéal 
$\langle p_1^3 p_2, p_1^3p_3, p_2^4 \rangle$ de profondeur $2$
(le radical de ce dernier idéal est $\langle p_1p_3,p_2\rangle$).
L'idéal cofacteur $\fb$ est exactement l'idéal
produit $\langle p_1,p_2\rangle^3 \langle p_1,p_3\rangle$ des
\og idéaux par paliers\fg{} (voir remarque \ref{MatricePaliers}).

\subsection{Le module $\bfB'_\delta=\bfA[\protect\uX]_\delta/\langle\protect\uP,\nabla\rangle_\delta$
            du jeu étalon généralisé}

Rappelons que le $\bfA$-module $\bfB'_\delta$ associé à un système $\uP$
est le quotient $\bfB_\delta / \bfA \overline \nabla$ où $\nabla$ est
le (un) déterminant bezoutien de $\uP$. 
On verra (cf.~proposition~\ref{Rank1to0MacRae} et théorème~\ref{PoidsNormalisationMacRae}) 
que les deux modules de MacRae~$\bfB'_\delta$ et~$\bfB_\delta$, 
le premier de rang 0, le second de rang 1, ont leurs
invariants de MacRae imbriqués et que le lien entre les deux
est assuré par le bezoutien $\nabla$. Dans le cas du jeu étalon généralisé,
on fournit un argument direct : c'est l'objet du résultat qui vient 
et la remarque après la preuve.

\begin{prop} \label{BprimedeltaJeuEtalon}
Le $\bfA$-module $\bfB'_\delta$ du jeu étalon généralisé 
est de MacRae de rang $0$
et son invariant de MacRae $\MacRae(\bfB'_\delta)$ 
est (l'idéal principal de
$\bfA$ engendré par) le scalaire régulier 
$p_1^{\widehat d_1}\cdots p_n^{\widehat d_n}$.
\end{prop}

\begin{proof}
Le polynôme $\nabla = p_1 \cdots p_n X^{\emouton}$ est un bezoutien du jeu étalon généralisé.
Or, d'après~\ref{BdJeuEtalonGen}
$$
\bfB_\delta 
\ \simeq \ 
\bfA X^\emouton 
\ \oplus \ 
\bigoplus_{i=1}^n \big(\bfA/\langle p_i \rangle\big)^{\widehat d_i - 1}
\ \oplus \ 
\bigoplus_{X^\alpha \in \Jex_{2,\delta}} \bfA /\fp_\alpha
$$
Pour obtenir $\bfB'_\delta = \bfB_\delta / \overline{\nabla}$, 
il s'agit donc de quotienter le membre droit 
par $p_1 \cdots p_n X^{\emouton} \oplus 0 \oplus 0$.
Ainsi 
$$
\bfB'_\delta 
\ \simeq \ 
\bfA/\langle p_1 \cdots p_n \rangle
\ \oplus \ 
\bigoplus_{i=1}^n \big(\bfA/\langle p_i \rangle\big)^{\widehat d_i - 1}
\ \oplus \ 
\bigoplus_{X^\alpha \in \Jex_{2,\delta}} \bfA /\fp_\alpha
\quad \text{ du type } \quad 
\displaystyle \bigoplus_{a \in \mathscr A} \bfA/\langle a \rangle 
\oplus \bigoplus_{\ub \in \mathscr B} \bfA/\langle \ub \rangle
$$
avec $\mathscr A$ constitué des éléments réguliers 
$$
p_1\cdots p_n, \qquad 
p_1,\dots, p_1 \text{ ($\widehat d_1 - 1$ fois)}, 
\qquad \dots \qquad 
p_n, \dots, p_n \text{ ($\widehat d_n - 1$ fois)}
$$
et $\mathscr B$ constitué des systèmes générateurs de $\fp_\alpha$, suites de 
profondeur $\geqslant 2$.
D'après~\ref{ModuleTresDecompose}, 
ce module est de MacRae de rang $0$ et 
son invariant de MacRae est engendré par 
$p_1 \cdots p_n  \times p_1^{\widehat d_1 -1}\cdots p_n^{\widehat d_n -1}$
qui vaut 
$p_1^{\widehat d_1}\cdots p_n^{\widehat d_n}$. 
\end{proof}

\begin{rmq} \label{Rank1to0MacRaeJeuEtalonGen}
L'invariant de MacRae $\MacRae(\bfB'_\delta)$ est obtenu en évaluant la
forme linéaire $\uomegares = \MacRaeVect(\bfB_\delta)$ en $\overline
\nabla$.  En effet, cette dernière est $p_1^{\widehat d_1 -1}\cdots
p_n^{\widehat d_n -1} (X^\emouton)^\star$ et son évaluation en $\nabla = p_1
\cdots p_n X^{\emouton}$ est bien $\MacRae(\bfB'_\delta) =
p_1^{\widehat d_1}\cdots p_n^{\widehat d_n}$.
\end{rmq}

\subsection{Annulateurs des modules $(\bfB_d)_{d \geqslant \delta+1}$ et $\bfB'_\delta$ du jeu étalon généralisé}

La première égalité (des idéaux annulateurs) dans le résultat qui suit est
un cas particulier de~\ref{AnnEqualities} mais nous reprenons 
la preuve en charge pour le jeu étalon généralisé. Figure d'autre part le
produit $p_1 \cdots p_n$ comme générateur de ces idéaux, scalaire
rarement égal au résultant $p_1^{\widehat d_1}\cdots p_n^{\widehat d_n}$.
Nous reviendrons sur une problématique voisine : 
à quelle condition le résultant engendre-t-il
l'idéal d'élimination ?  
\`A cette occasion, on rappelle que
l'idéal d'élimination est égal à $\Ann(\bfB'_\delta)$
lorsque $\uP$ est régulière, cf. la proposition~\ref{AnnEqualities}.

\begin {lem}
Pour le jeu étalon généralisé, les modules $\bfB'_\delta$ et $\bfB_d$ pour
$d \geqslant \delta+1$,
de rang 0, ont même annulateur:
$$
\Ann(\bfB'_\delta) = \Ann(\bfB_d) = \langle p_1 \cdots p_n \rangle
$$
\end {lem}

\begin {proof} \leavevmode

Vérifions d'abord que le produit $\pi = p_1 \cdots p_n$ est dans
chaque annulateur. Pour $\bfB_d$, cela signifie que $\pi X^\alpha \in
\langle p_1X_1^{d_1},\dots, p_nX_n^{d_n}\rangle_d$ pour $|\alpha| = d$,
ce qui est vérifié puisque $X^\alpha \in \langle
X_1^{d_1},\dots, X_n^{d_n}\rangle_d$, conséquence de $d \geqslant \delta + 1$.

Pour $\bfB'_\delta$, en prenant le bezoutien $\nabla = \pi
X^\emouton$, on doit montrer que $\pi X^\alpha \in \langle \nabla,
p_1X_1^{d_1},\dots, p_nX_n^{d_n}\rangle_\delta$ pour $|\alpha| =
\delta$.  C'est encore vérifié puisque $X^\alpha \in \langle
X^\emouton, X_1^{d_1},\dots, X_n^{d_n}\rangle_\delta$.

\medskip
  
Réciproquement, soit $a \in \Ann(\bfB_d)$. En particulier, pour chaque
$i$, on a  
$aX_i^d \in \langle p_1X_1^{d_1}, \dots, p_nX_n^{d_n}\rangle_d$
donc $aX_i^d$ est multiple de $p_iX_i^{d_i}$ puis $p_i \mid a$. Ceci
ayant lieu pour tout $i$, on a $\pi \mid a$. De même, soit
$a \in \Ann(\bfB'_\delta)$; en particulier $aX_i^\delta \in \langle
\pi X^\emouton, p_1X_1^{d_1}, \cdots, p_nX_n^{d_n}\rangle_\delta$ donc
$aX_i^\delta$ est multiple de $p_iX_i^{d_i}$. On conclut comme précédemment.
\end {proof}

\cleardoublepage

\section{Notions et outils associés à l'application de Sylvester $\Syl_d(\protect \uP)$}
\label{ChapObjetsSylvester}

\subsection {Vue d'ensemble sur divers objectifs de ce chapitre}

Nous allons à présent regrouper et consolider certaines constructions
des chapitres précédents utilisées de manière éparse et dans des cas particuliers. Ici le
contexte est général: celui d'un système $\uP = (P_1, \dots, P_n)$ de
polynômes homogènes de $\bfA[\uX]$ de format $D = (d_1, \dots,
d_n)$. C'est la première différentielle
$\partial_1(\uP)$ du complexe de Koszul de $\uP$ \idest{} 
l'application $\bfA[\uX]$-linéaire qui porte le nom d'application de
Sylvester de $\uP$:
$$
\Syl = \Syl(\uP) : \, 
\xymatrix @C=1.5cm @M=0.4pc{
\bfA[\uX]^n \ar[r]^-{
\left [
\begin{smallmatrix}
P_1 & \cdots & P_n
\end{smallmatrix}
\right]
} & \bfA[\uX] 
}
$$
qui intervient par ses composantes homogènes $\Syl_d = \Syl_d(\uP)$ de degré $d$.
Celle-ci est l'application $\bfA$-linéaire :
$$
\Syl_d : 
\begin{array}[t]{rcl}
\rmK_{1,d}=
\displaystyle \bigoplus_{i=1}^n  \bfA[\uX]_{d - d_i}\,e_i & \longrightarrow & \rmK_{0,d}=\bfA[\uX]_d \vspace{0.1cm} \\
U_1 e_1 + \cdots + U_n e_n & \longmapsto & \displaystyle U_1P_1 + \cdots +  U_nP_n
\end{array}
$$
Si on se permet de noter encore $\Syl_d$ la matrice dans les bases
monomiales, on dispose donc d'une égalité de matrices lignes 
à coefficients dans $\bfA[\uX]$ :
$$
\Big[\ 
(X^\alpha)_{| \alpha | = d-d_1}\ P_1
\quad \big | \quad
\cdots 
\quad \big | \quad 
(X^\alpha)_{| \alpha | = d-d_n}\ P_n
\ \Big]
\ \ = \ \ 
\Big[\ 
(X^\alpha)_{| \alpha | = d} 
\ \Big]
\ \Syl_d
$$
Ce sont les idéaux monomiaux $\Jex_1, \Jex_2$ qui vont jouer un rôle important dans
l'indexation de certains mineurs de~$\Syl_d$.

\medskip
\noindent
$\bullet$
Dans un premier temps, nous allons expliciter 
divers scalaires appartenant à l'idéal d'élimination
$$
\ElimIdeal
\ = \ 
\Big \{
a \in \bfA \ \big | \ 
\exists\, e \in \bbN,\ 
\forall\, |\alpha| = e,\, a X^\alpha \in \langle \uP \rangle
\Big\}
$$
$\blacktriangleright$
Une première observation : tout mineur plein
(en lignes) de $\Syl_d(\uP)$ appartient à cet idéal
d'élimination. Justifions-le en invoquant une propriété générale: si
$u : E \to F$ est une application linéaire entre deux modules libres,
tout mineur plein $a$ de $u$ possède la propriété $a F \subset \Im
u$. Pour le voir, considérons des bases. Notons $M$ la matrice
restriction de $u$ au sous-espace de $E$ engendré par les vecteurs de
base correspondants au mineur; alors, pour tout $y \in F$, on a
$\det(M) y = M(\widetilde My)$ de sorte que 
$\det(M) y \in \Im M \subset \Im u$.

\medskip
Appliqué à $\Syl_d$, on obtient, pour tout mineur plein $a$ de $\Syl_d$, 
l'appartenance $a X^\alpha \in \langle\uP\rangle_d$ pour tout $|\alpha|=d$, impliquant que $a \in \ElimIdeal$.  Pour que cela soit vraiment intéressant,
il faut que, d'une part, il y ait des mineurs pleins \idest{} que la
matrice $\Syl_d$ soit plus \og large que haute\fg{}. Et que d'autre
part, ces mineurs pleins ne soient pas tous nuls et pourquoi pas, sous
certaines conditions portant sur $\uP$, soient des éléments réguliers de $\bfA$.

C'est là qu'intervient encore et toujours la barrière $d \geqslant \delta+1$.
Dans ce cas là, on a l'inégalité 
$\dim\rmK_{1,d} \geqslant \dim\rmK_{0,d}$ assurant l'existence de mineurs pleins.
Une telle inégalité dimensionnelle s'obtient via des mécanismes
de sélection $\rmK_{0,d} \hookrightarrow \rmK_{1,d}$,
cf. \ref{soussectionJeuEtalonGenDeltaPlus1}.
Mais nous avons besoin de bien plus que cette inégalité (la suite
permettra de mieux comprendre cette phrase).

\medskip

\noindent
$\blacktriangleright$
Et pour $d \leqslant \delta$, qu'en est-il? Pour cette plage, c'est
uniquement le degré $d = \delta$ qui intervient (à l'intérieur de
ce chapitre). On a vu en \ref{rangSyldelta} que tout mineur plein de
$\Syl_\delta$ est nul.  Ce sont les mineurs d'ordre $s_\delta = \dim
\bfA[\uX]_\delta -1$ qui jouent un rôle important via leur organisation
en formes linéaires déterminantales pures $\bfA[\uX]_\delta \to \bfA$
de $\Syl_\delta$.  En évaluant ces formes linéaires en un bezoutien
$\nabla$ de $\uP$, et plus généralement en un habitant de $\langle \uP
\rangle_\delta^\sat$, on obtient des scalaires dans l'idéal
d'élimination.

\medskip

\noindent $\bullet$
Un deuxième objectif réside dans la normalisation de \emph {certains} mineurs de Sylvester
et formes linéaires déterminantales. Il s'agit d'une part de contrôler
leur signe, leur spécialisation (en le jeu étalon, en le jeu étalon généralisé)
et d'autre part leur propriété de régularité.

\medskip

En ce qui concerne la normalisation, l'objectif ici est de définir \textit{précisément}
certains mineurs (pleins ou pas)  de $\Syl_d$ (pour tout $d$ et tout~$\uP$) ; 
\og précisément \fg{} voulant dire qu'il n'y a pas 
d'ambiguïté sur le signe. A cet effet, on introduira des 
\textit{endomorphismes} attachés aux idéaux monomiaux
$\Jex_1, \Jex_2$ permettant de considérer leur déterminants
$\det W_{1,d}(\uP)$ et $\det W_{2,d}(\uP)$ et les premières
propriétés de divisibilité.

\smallskip
On définira également une forme linéaire déterminantale au
sens de~\ref{soussection-d-egal-delta} (nous y reviendrons
en~\ref{DefDVect} dans un contexte hors élimination).
C'est la forme linéaire~$\omega : \bfA[\uX]_\delta \to \bfA$
mentionnée à plusieurs reprises, accompagnée de ses cousines
$\omega^\sigma$ pour $\sigma \in \fS_n$.

\medskip

En ce qui concerne la régularité, il est important de comprendre que
les mineurs pleins de Sylvester n'ont pas tous le même rôle.  Prenons
par exemple le format $D = (1,1,2)$ et le système
$$
\left\{
\begin{tabular}{rcp{10cm}} 
$P_{1}$ & $=$ & $pX + qY + rZ$\\ [0.1cm] 
$P_{2}$ & $=$ & $sX + tY + uZ$\\ [0.1cm] 
$P_{3}$ & $=$ & $aX^{2} + bXY + cXZ + dY^{2} + eYZ + fZ^{2}$\\ [0.1cm] 
\end{tabular}
\right.  
$$
Alors la matrice de Sylvester en degré $\delta+1 = 2$ est 
de format $6 \times 7$ :
$$
\Syl_{\delta+1} =
\NorthEastBordermatrix{
\Veti{X\,e_{1}} &\Veti{Y\,e_{1}} &\Veti{Z\,e_{1}} &\Veti{X\,e_{2}} &\Veti{Y\,e_{2}} &\Veti{Z\,e_{2}} &\Veti{e_{3}} & \\
p & . & . & s & . & . & a & \Heti{X^{2}} \\
q & p & . & t & s & . & b & \Heti{XY} \\
r & . & p & u & . & s & c & \Heti{XZ} \\
. & q & . & . & t & . & d & \Heti{Y^{2}} \\
. & r & q & . & u & t & e & \Heti{YZ} \\
. & . & r & . & . & u & f & \Heti{Z^{2}} \\
}
$$
Le mineur obtenu en supprimant la dernière colonne est nul. Ceci peut
se justifier par la relation suivante sur les colonnes $C_j$,
relation provenant de la deuxième différentielle
$\partial_{2,\delta+1}(\uP)$:
$$
-sC_1  -tC_2  -uC_3 + pC_4 + qC_5 + rC_6 = 0
$$
Le lecteur pourra réfléchir à la non-nullité des autres mineurs pleins. Pour
le jeu étalon généralisé $(pX, tY, fZ^2)$, quels sont
ceux qui \og survivent\fg?

Dans une introduction, c'est un peu délicat d'aller plus loin
concernant cette histoire de régularité car elle comprend plusieurs
volets.  Il y a par exemple un aspect global: les mineurs de $\Syl_d$
d'ordre le rang attendu~$s_d$ engendrent un idéal fidèle dès lors que
$\uP$ est régulière.  Mais on peut être intéressé par l'aspect
régulier d'\emph{un} mineur particulier ou bien de la forme linéaire
$\omega$ sous des conditions plus restrictives sur~$\uP$: couvrir le
jeu étalon généralisé ou bien générique. Ces divers points de vue ont
tous leur importance et seront examinés dans la suite.

\medskip

\noindent $\bullet$
Un troisième objectif va être de définir une famille emboîtée
$(\Jex_h)_{0 \leqslant h \leqslant n}$ d'idéaux monomiaux dits excédentaires.
Nous verrons ultérieurement qu'une certaine combinatoire, dépendant
uniquement du format $D$, relie ces idéaux aux différents termes du
complexe de Koszul de $\uX^D$.  Mais surtout, pour chaque $\uP$ de
format $D$, cette combinatoire permettra, dans le complexe de
Koszul de $\uP$, de relier certains mineurs 
de  la \textit{première} différentielle à des mineurs des \textit{autres}
différentielles, via des relations dites binomiales.

\medskip

\noindent $\bullet$
Le chapitre se termine par deux volets indépendants:
étude du poids de divers déterminants attachés à $\uP$ lorsque $\uP$ est générique
et profondeur de certaines familles de déterminants dits excédentaires.

\subsection{L'endomorphisme $W_1(\protect\uP)$ du $\bfA$-module $\Jex_1$
            et ses composantes homogènes}

Il s'agit, à tout système $\uP$ de format $D$, d'associer une famille
de matrices $\big(W_{1,d}(\uP)\big)_{d \geqslant 0}$. Leurs
déterminants sont déjà intervenus à plusieurs reprises en degré $d
\geqslant \delta+1$ comme mineurs pleins de~$\Syl_d$ dans le cas
d'école $n=2$ (cf.~\ref{DefW1deltaplus1nequal2}), et dans le chapitre
précédent (page~\pageref{pageAllusionW1dpXD}) où $\uP$
est le jeu étalon généralisé~$\pXD$.

Ici, nous allons définir $W_{1,d}$ comme un endomorphisme de
$\Jex_{1,d}$ via la fonction de sélection $\minDiv$ dont nous
rappelons la définition.

\begin{defn}[Sélection $\minDiv$]
\index{mécanisme de sélection}%
Il s'agit de la fonction $\{\hbox{monômes de $\bfA[\uX]$}\} \rightarrow \bbN$ définie par
$$
\minDiv : \ X^\alpha \ \longmapsto \ 
\left\{
\begin{array}{ll}
\min\big(\DivSeq(X^\alpha)\big) & \text{si $\#\DivSeq(X^\alpha)\geqslant 1$} \\
n+1 & \text{sinon} 
\end{array}
\right.
$$
\end{defn}

\label{NOTA05-minDiv}%
\index{mécanisme de sélection!$\minDiv$}%

Les $W_{1,d}(\uP)$ sont les composantes homogènes de l'endomorphisme
$W_1 = W_1(\uP)$ de l'idéal monomial $\Jex_1 = \langle X_1^{d_1},
\dots, X_n^{d_n}\rangle$ construit à l'aide de la
projection $\bfA$-linéaire $\pi_{\Jex_1} : \bfA[\uX]
\twoheadrightarrow \Jex_1$ relativement au supplémentaire monomial
$\Jex_{0\setminus 1}$
$$
\bfA[\uX] \ = \ \Jex_1\  \oplus \, \Jex_{0\setminus1}, \qquad\quad
\Jex_{0\setminus1} = \bigoplus_{\alpha \preccurlyeq \emouton} \! \bfA X^\alpha
$$

\begin{defn} \label{DefW1}
L'endomorphisme $W_1(\uP)$ du $\bfA$-module $\Jex_1$ est défini sur
la base monomiale par
$$
W_1(\uP) : \qquad
X^\alpha \ \longmapsto \ 
\pi_{\Jex_1} 
\bigg(\dfrac{X^\alpha}{X_i^{d_i}} P_i \bigg)
\qquad  \text{où $i = \minDiv(X^\alpha)$}
$$
\end{defn}

\label{NOTA05-piJ1}%
\label{NOTA05-W1d}%

Le déterminant de $W_{1,d} : \Jex_{1,d} \to \Jex_{1,d}$ est un mineur
de~$\Syl_d$ puisque $W_{1,d}$ a pour expression :
$$
X^\alpha 
\ \longmapsto \ 
\pi_{\Jex_{1, d}} \circ \Syl_d\biggl(
\dfrac{X^\alpha}{X_i^{d_i}}e_i\biggr)
\qquad  \text{où $i = \minDiv(X^\alpha)$}
$$

\begin {rmq}
Le lecteur peut se demander pourquoi avoir choisi le plus petit élément de
l'ensemble de divisibilité. C'est en fait arbitraire et on aurait pu par
exemple retenir le plus grand élément conduisant à la primitive $\maxDiv$ au lieu
de $\minDiv$. On verra ultérieurement (section \ref{SigmaVersion}) comment,
en modifiant l'ordre habituel de $\{1..n\}$, on peut rendre uniformes ces deux constructions 
et en proposer d'autres.
\end {rmq}

\label{NOTA05-maxDiv}%
\index{W@les endomorphismes!$W_1(\uP)$ du $\bfA$-module $\Jex_1$}%
\index{W@les endomorphismes!$W_{1,d}(\uP)$ de $\Jex_{1,d}$}%

\begin{prop}[Propriétés de $W_1$] \label{ProprieteW1} \leavevmode
 
\begin{enumerate}[\rm i)]
\item 
Pour le jeu étalon $\uX^D$, on a $W_1(\uX^D) = \Id$ de sorte que $\det W_{1,d}(\uX^D) = 1$.

\item 
Pour le jeu étalon généralisé $\pXD$, 
la matrice de $W_{1,d}(\pXD)$ est une matrice diagonale de $p_i$.
Chaque~$p_i$ y est représenté autant de fois que la dimension du 
$\bfA$-module $\Jex_{1,d}^{(i)}$ de base les monômes $X^\alpha \in \Jex_{1,d}$ 
vérifiant $\minDiv(X^\alpha) = i$. Ainsi:
$$
\det W_{1,d}(\pXD) = \prod_{X^\alpha \in \Jex_{1,d}} p_{\minDiv(X^\alpha)} =
\prod_{i=1}^n p_i^{\dim\Jex_{1,d}^{(i)}}
$$

\item 
Pour deux jeux $\uP$ et $\uQ$ de même format $D$, on a  
$W_1(\uP + \uQ) = W_1(\uP) + W_1(\uQ)$.

\item
Pour le jeu générique, le déterminant $\det W_{1,d}(\uP)$ est régulier.

\item 
Soit $\uP$ quelconque.
Pour $d \geqslant \delta+1$, le scalaire $\det W_{1,d}(\uP)$ appartient à
$\Ann(\bfB_d)$ donc à $\ElimIdeal$.
\end{enumerate}
\end{prop}

\label{NOTA05-Jex1di}%

\begin{proof}\leavevmode
  
Les points i), ii), iii) sont immédiats en revenant à la définition.

Pour iv), le polynôme $\det W_{1,d}(\uP)$ de $\bfA = \bfk[\indetsPi]$
est primitif par valeur puisque sa spécialisation en le jeu étalon
vaut~$1$, a fortiori régulier.

v) 
Raisonnons matriciellement. En notant $i_\alpha = \minDiv(X^\alpha)$, on a
l'égalité de matrices lignes:
$$
\Big[\ 
\big((X^\alpha/X_{i_\alpha}^{d_{i_\alpha}})\,P_{i_\alpha} \big)_{X^\alpha \in \Jex_{1,d}}
\  \Big]
\ \ = \ \ 
\Big[\ 
\big(X^\alpha\big)_{X^\alpha \in \Jex_{1,d}} 
\ \Big]
\ W_{1,d}
$$
En multipliant à droite par la transposée de la comatrice de $W_{1,d}$,
on obtient $X^\alpha \det W_{1,d} \in \langle \uP \rangle_d$ 
pour tout $X^\alpha \in \Jex_{1,d}$.
Or $d \geqslant \delta+1$, donc $\Jex_{1,d} = \bfA[\uX]_d$. Et l'appartenance 
précédente traduit donc que $\det W_{1,d}$ est dans $\Ann(\bfB_d)$.
\end{proof}

\begin{rmq}
Le point iii), certes banal, est bien utile, cf par exemple son
application au jeu circulaire $\uQ = \pXD + \uR$ dans le
chapitre~\ref{ChapJeuCirculaire}.  Chez Demazure
\cite[p. 20]{Demazure1}, c'est l'égalité $W_1(t\uX^D + \uP) = t \,
\Id + W_1(\uP)$ qui sera exploitée, où $t$ est \textit{une}
indéterminée.
\end {rmq}

\begin{rmq}
En terrain générique, pour $d \leqslant \delta$, nous montrerons dans
le chapitre \ref{ChapBgenerique} (cf en particulier
\ref{SectPsatRegBdeltaRegII}) que $\det W_{1,d}$ est régulier
modulo $\ElimIdeal$. A fortiori, $\det W_{1,d} \notin \ElimIdeal$; ce
dernier point est prouvé par J.-P. Jouanolou dans \cite[Remarque 3.9.4.5, page 129]{J7} en
invoquant le degré d'homogénéité du déterminant en les coefficients de
$P_i$ et en le comparant à celui du résultant, notion qui est en place chez
cet auteur depuis son article~\cite{J3}. Dans le chapitre
\ref{ChapBgenerique}, notre approche de la régularité modulo
$\ElimIdeal$ est radicalement différente et repose sur la combinatoire
du jeu circulaire (cf chapitre~\ref {ChapJeuCirculaire}).
\end{rmq}

\subsection{L'endomorphisme $W_{\calM}(\protect\uP)$ pour un sous-module
            monomial $\calM$ de $\Jex_1$}

Pour un sous-$\bfA$-module monomial $\calM$ de $\Jex_1$, on peut
considérer l'endomorphisme~$W_\calM$ induit-projeté sur~$\calM$ de
l'endomorphisme~$W_1$.  Concrètement, pour obtenir l'image de
$X^\alpha \in \calM$ par $W_\calM$, il suffit de reprendre
l'expression donnée en~\ref{DefW1} et garder uniquement les monômes
appartenant à~$\calM$.

\begin{defn} \label{DefWM}

On note $\pi_\calM : \bfA[\uX] \to \calM$ la projection sur $\calM$
relativement à la somme directe $\bfA[\uX] = \calM \oplus \overline
\calM$ où $\overline \calM$ est le supplémentaire monomial de $\calM$
dans $\bfA[\uX]$. L'endomorphisme $W_\calM(\uP)$ du $\bfA$-module
$\calM$ est défini sur la base monomiale par
$$
W_\calM(\uP) : \qquad
X^\alpha \ \longmapsto \ 
\pi_{\calM} 
\bigg(\dfrac{X^\alpha}{X_i^{d_i}} P_i \bigg)
\qquad  \text{où $i = \minDiv(X^\alpha)$}
$$
On a bien sûr $W_{\Jex_1} = W_1$ et $W_{\Jex_{1,d}} = W_{1,d}$.
En prenant $\calM \subset \Jex_{1,d}$, 
le déterminant de $W_{\calM}(\uP)$ est un mineur de l'application de Sylvester $\Syl_d(\uP)$.
\end{defn}

\label{NOTA05-piM}%
\label{NOTA05-WM}%
\index{W@les endomorphismes!$W_\calM(\uP)$ du sous-module monomial $\calM\subset\Jex_1$}%

\noindent
\textbf{Exemple : $D = (3,1,2)$ et $d=4$}
$$
\setlength{\tabcolsep}{2pt}
\left\{\begin{tabular}{rcp{15cm}} 
$P_{1}$ & $=$ & $a_{1}X^{3} + a_{2}X^{2}Y + a_{3}X^{2}Z + a_{4}XY^{2} + a_{5}XYZ + a_{6}XZ^{2} + a_{7}Y^{3} + a_{8}Y^{2}Z + a_{9}YZ^{2} 
+ a_{10}Z^{3}$
\\ [0.1cm] 
$P_{2}$ & $=$ & $b_{1}X + b_{2}Y + b_{3}Z$
\\ [0.1cm] 
$P_{3}$ & $=$ & $c_{1}X^{2} + c_{2}XY + c_{3}XZ + c_{4}Y^{2} + c_{5}YZ + c_{6}Z^{2}$
\\ [0.1cm] 
\end{tabular}
\right.
$$
Voici la matrice de Sylvester $\Syl_d(\uP)$:
$$
\Syl_{d} \ = \ 
\NorthEastBordermatrix{
\Veti{X\,e_{1}} &\Veti{Y\,e_{1}} &\Veti{Z\,e_{1}} &\Veti{X^{3}\,e_{2}} &\Veti{X^{2}Y\,e_{2}} &\Veti{X^{2}Z\,e_{2}} &\Veti{XY^{2}\,e_{2}} &\Veti{XYZ\,e_{2}} &\Veti{XZ^{2}\,e_{2}} &\Veti{Y^{3}\,e_{2}} &\Veti{Y^{2}Z\,e_{2}} &\Veti{YZ^{2}\,e_{2}} &\Veti{Z^{3}\,e_{2}} &\Veti{X^{2}\,e_{3}} &\Veti{XY\,e_{3}} &\Veti{XZ\,e_{3}} &\Veti{Y^{2}\,e_{3}} &\Veti{YZ\,e_{3}} &\Veti{Z^{2}\,e_{3}} &\\
a_{1} &. &. &b_{1} &. &. &. &. &. &. &. &. &. &c_{1} &. &. &. &. &. &\Heti{X^{4}} \\
a_{2} &a_{1} &. &b_{2} &b_{1} &. &. &. &. &. &. &. &. &c_{2} &c_{1} &. &. &. &. &\Heti{X^{3}Y} \\
a_{3} &. &a_{1} &b_{3} &. &b_{1} &. &. &. &. &. &. &. &c_{3} &. &c_{1} &. &. &. &\Heti{X^{3}Z} \\
a_{4} &a_{2} &. &. &b_{2} &. &b_{1} &. &. &. &. &. &. &c_{4} &c_{2} &. &c_{1} &. &. &\Heti{X^{2}Y^{2}} \\
a_{5} &a_{3} &a_{2} &. &b_{3} &b_{2} &. &b_{1} &. &. &. &. &. &c_{5} &c_{3} &c_{2} &. &c_{1} &. &\Heti{X^{2}YZ} \\
a_{6} &. &a_{3} &. &. &b_{3} &. &. &b_{1} &. &. &. &. &c_{6} &. &c_{3} &. &. &c_{1} &\Heti{X^{2}Z^{2}} \\
a_{7} &a_{4} &. &. &. &. &b_{2} &. &. &b_{1} &. &. &. &. &c_{4} &. &c_{2} &. &. &\Heti{XY^{3}} \\
a_{8} &a_{5} &a_{4} &. &. &. &b_{3} &b_{2} &. &. &b_{1} &. &. &. &c_{5} &c_{4} &c_{3} &c_{2} &. &\Heti{XY^{2}Z} \\
a_{9} &a_{6} &a_{5} &. &. &. &. &b_{3} &b_{2} &. &. &b_{1} &. &. &c_{6} &c_{5} &. &c_{3} &c_{2} &\Heti{XYZ^{2}} \\
a_{10} &. &a_{6} &. &. &. &. &. &b_{3} &. &. &. &b_{1} &. &. &c_{6} &. &. &c_{3} &\Heti{XZ^{3}} \\
. &a_{7} &. &. &. &. &. &. &. &b_{2} &. &. &. &. &. &. &c_{4} &. &. &\Heti{Y^{4}} \\
. &a_{8} &a_{7} &. &. &. &. &. &. &b_{3} &b_{2} &. &. &. &. &. &c_{5} &c_{4} &. &\Heti{Y^{3}Z} \\
. &a_{9} &a_{8} &. &. &. &. &. &. &. &b_{3} &b_{2} &. &. &. &. &c_{6} &c_{5} &c_{4} &\Heti{Y^{2}Z^{2}} \\
. &a_{10} &a_{9} &. &. &. &. &. &. &. &. &b_{3} &b_{2} &. &. &. &. &c_{6} &c_{5} &\Heti{YZ^{3}} \\
. &. &a_{10} &. &. &. &. &. &. &. &. &. &b_{3} &. &. &. &. &. &c_{6} &\Heti{Z^{4}} \\
}
$$
Soit $\calM$ le sous-module de $\Jex_{1,d}$ de base les 10 monômes
divisibles par $Z$. Déterminons l'image de $XYZ^2$ par~$W_\calM$; le
minimum de $\DivSeq(XYZ^2) = \{2,3\}$ étant 2, la colonne
$\frac{XYZ^2}{Y} e_2 = XZ^2 e_2$ de $\Syl_d$ est sélectionnée et
l'image en question en est la projection sur $\calM$.
$$
W_\calM \ = \ 
\EastBordermatrix{
a_{1} & b_{1} & c_{3} & . & . & c_{1} & . & . & . & . & \Heti{X^{3}Z} \\ 
a_{2} & b_{2} & c_{5} & b_{1} & . & c_{2} & . & . & . & . & \Heti{X^{2}YZ} \\ 
a_{3} & b_{3} & c_{6} & . & b_{1} & c_{3} & . & . & . & c_{1} & \Heti{X^{2}Z^{2}} \\ 
a_{4} & . & . & b_{2} & . & c_{4} & b_{1} & . & . & . & \Heti{XY^{2}Z} \\ 
a_{5} & . & . & b_{3} & b_{2} & c_{5} & . & b_{1} & . & c_{2} & \Heti{XYZ^{2}} \\ 
a_{6} & . & . & . & b_{3} & c_{6} & . & . & b_{1} & c_{3} & \Heti{XZ^{3}} \\ 
a_{7} & . & . & . & . & . & b_{2} & . & . & . & \Heti{Y^{3}Z} \\ 
a_{8} & . & . & . & . & . & b_{3} & b_{2} & . & c_{4} & \Heti{Y^{2}Z^{2}} \\ 
a_{9} & . & . & . & . & . & . & b_{3} & b_{2} & c_{5} & \Heti{YZ^{3}} \\ 
a_{10} & . & . & . & . & . & . & . & b_{3} & c_{6} & \Heti{Z^{4}} \\ 
}
$$

\subsubsection*{De l'importance de la $\bfA$-base monomiale de $\bfA[\uX]$ pour la notion d'induit-projeté}

De manière générale, pour un endomorphisme $u$ d'un module $G = E \oplus F$, l'endomorphisme 
induit-projeté de $u$ sur $E$ est $\pi_{E \parallel F} \circ u \circ \iota_E$ où  
$\pi_{E \parallel F}$ désigne la projection de $E$ parallèlement à $F$ et $\iota_E$ l'injection 
canonique de $E$ dans $G$.
Dans le cas où $G$ est un module libre basé, 
ceci nous permet de parler de \og la \fg{} matrice de $u$. 
Soit $\calB$ une partie de la base de $G$ et 
$E$ le sous-module de base $\calB$.
On a alors $G = E \oplus \overline E$ où $\overline E$ est le sous-module de base
le complémentaire de $\calB$ dans la base de $G$.
L'induit-projeté $u_E$ de $u$ sur~$E$ est alors défini par 
$$
u_E : \ 
x \mapsto \sum_{b \in \calB} 
\big[ u(x) \big]_b \, b
\qquad \quad 
\text{où $\big[ u(x) \big]_b$ désigne la composante de $u(x)$ sur le vecteur $b$}
$$
Matriciellement, on obtient la matrice de $u_E$ à partir de celle de
$u$ en supprimant ligne et colonne correspondant aux vecteurs non
dans $\calB$.  Cette construction conserve l'aspect
triangulaire: si la matrice de $u$ est triangulaire, alors la matrice
de $u_E$ l'est également.

\index{induit-projeté}%
%
%

\subsubsection*{Un exemple élémentaire pour illustrer l'utilisation des matrices $W_\sbullet$ et des induit-projetés}

Soit $\calN$ un sous-module monomial de $\Jex_{1,d}$ et $I \subset \{1..n\}$
une partie quelconque. Partageons l'ensemble des monômes $X^\alpha \in \calN$
en deux paquets selon $(\minDiv, I)$ de manière à obtenir
deux sous-modules monomiaux $\calN', \calN''$ de  $\calN$ supplémentaires
l'un de l'autre:
$$
\calN' : \minDiv(X^\alpha) \notin I, \qquad
\calN'' : \minDiv(X^\alpha) \in I, \qquad\qquad
\calN = \calN' \oplus \calN''
$$
Pour un jeu $\uP$ quelconque, on a, de manière banale, la structure par blocs suivante:
$$
W_\calN(\uP) 
\ = \ 
\EastBordermatrix{
\,W_{\calN'}(\uP)     & \star   & \calN'
\\ \noalign {\medskip}
\star          & W_{\calN''}(\uP) & \calN'' 
}
$$
Par définition de $\calN'$ et $\calN''$ vis à vis de $I$, les coefficients des $(P_i)_{i \in I}$ ne
figurent que dans la deuxième colonne-bloc, colonne-bloc elle-même constituée uniquement de coefficients
des $(P_i)_{i\in I}$ (et de 0). 
Définissons un nouveau système $\uQ = (Q_1, \dots, Q_n)$ de la manière suivante:
$$
Q_i = \begin {cases}
P_i      &\text {si } i \notin I\\
X_i^{d_i} &\text {si } i \in I\\
\end {cases}
$$
Alors $W_\calN(\uQ)$ a la structure suivante:
$$
W_\calN(\uQ) 
\ = \ 
\EastBordermatrix{
\,W_{\calN'}(\uQ)  & 0   & \calN'
\\ \noalign {\medskip}
\star            &\Id & \calN'' 
}
$$
Et de plus $W_{\calN'}(\uQ) = W_{\calN'}(\uP)$.
On peut donc énoncer le lemme (banal) suivant qui sera utilisé dans le chapitre~\ref{ChapWW}
lors de la mise au point de formules de récurrence pour les déterminants $\det W_\sbullet$.

\begin{lem} \label{BiPartitionMinDivLemma}
Dans le contexte ci-dessus, on dispose des égalités déterminantales:
$$
\det W_\calN(\uQ) \ = \  \det W_{\calN'}(\uQ) \ = \  \det W_{\calN'}(\uP)
$$
\end{lem}

\subsection{Les idéaux monomiaux $\Jex_h$ et les endomorphismes $W_{h,d}(\protect\uP)$ de $\Jex_{h,d}$}

Parmi les sous-modules monomiaux $\calM$ de $\Jex_1$, il y a l'idéal
monomial $\Jex_2 = \langle X_i^{d_i}X_j^{d_j} \mid i \neq j \rangle$
qui joue un rôle privilégié car il intervient dans la formule de Macaulay
ci-dessous prouvée ultérieurement (cf. section~\ref{sousSectionMacaulayRecurrence}).
Pour $\calM=\Jex_2$, l'endomorphisme $W_\calM$ de $\Jex_2$ est noté $W_2$ et sa composante
homogène, endomorphisme de $\Jex_{2,d}$, est notée $W_{2,d}$.
$$
\forall\, d \geqslant \delta+1, \qquad \Res(\uP) = \frac {\det W_{1,d}(\uP)}{\det W_{2,d}(\uP)}  
\leqno (\hbox {Macaulay})
$$

\index{W@les endomorphismes!$W_2(\uP)$ du $\bfA$-module $\Jex_2$}%
\index{W@les endomorphismes!$W_{2,d}(\uP)$ de $\Jex_{2,d}$}%
\label{NOTA05-W2d}%

\medskip
Les déterminants $\det W_{1,d}, \det W_{2,d}$ vérifient certaines
propriétés de divisibilité abordées par la suite. Celles concernant
$W_1$ font l'objet du théorème~\ref{DetW1dDiviseDetW1dplus1}, d'autres
du chapitre~\ref{ChapWW}. En ce qui concerne $W_2$, signalons le
résultat suivant, qui sera prouvé en~\ref{DivisibiliteEntreW2}:

\begin{theo}[Divisibilité entre déterminants de type $W_2$]
Pour une suite $\uP$ quelconque et pour tout $d$, le déterminant 
$\det W_{2,d}$ divise $\det W_{2, d+1}$.
\end{theo}

\medskip

Nous allons maintenant introduire une famille emboîtée 
$(\Jex_h)_{0 \leqslant h \leqslant n}$ d'idéaux monomiaux de 
$\bfA[\uX]$ dont la définition,
calquée sur celle de $\Jex_1$ et $\Jex_2$, ne dépend que du format $D =
(d_1, \dots, d_n)$. Ce qui va nous permettre, pour tout système $\uP$
de format $D$, de définir un endomorphisme $W_h(\uP)$ analogue
aux $W_1(\uP)$ et $W_2(\uP)$.

\medskip
S'il est aisé de motiver l'introduction de $W_1, W_2$ via la formule de Macaulay,
c'est un peu plus délicat d'expliquer dans ce chapitre le rôle de la famille
$W_h(\uP)$. On peut tout de même évoquer le fait que les $\Jex_{h,d}$ \og figurent\fg{} dans
le terme $\rmK_{k,d}$, composante homogène de degré~$d$ du  complexe de Koszul du
jeu étalon généralisé $(X_1^{d_1}, \dots, X_n^{d_n})$, sous la forme d'une décomposition
explicite nommée \og décomposition binomiale\fg:
$$
\rmK_{k,d}
\ \simeq \ 
\bigoplus_{h=k}^n \Jex_{h,d}^{\oplus e'_{k,h}} \qquad \hbox{où}\
  e'_{k,h} = \binom{h-1}{h-k}
$$
Cette décomposition, mise au point dans un chapitre ultérieur
(cf. la proposition~\ref{DecompositionKkdParJhd})
permettra de chasser dans les mineurs des différentielles du
complexe de Koszul et de faire apparaître des endomorphismes
\og de Cayley\fg{} $B_k$
liés aux endomorphismes \og de Macaulay\fg{} $W_h$  par des relations très surprenantes
qualifiées de~\og relations binomiales \fg.
Nous nous posons d'ailleurs la question: ces relations ont-elles un rapport avec la phrase
de Demazure en 1984 dans~\cite[p.~2]{Demazure1} ?

\begin{quote}
\textit{Il est peut-être regrettable, d'un certain point de vue, que certaines démonstrations
ne soient pas purement calculatoires; cela montre qu'il reste encore bien des choses
à comprendre en cette affaire.}
\end{quote}

\bigskip

Les idéaux monomiaux $\Jex_h$ de $\bfA[\uX]$ dépendent du
format~$D = (d_1, \dots, d_n)$ de degrés, mais nous ne faisons pas
intervenir cette dépendance dans les notations.  Ils généralisent les
idéaux engendrés par les monômes repus et dodus de Jouanolou
(cf. \cite[section 3.9.1]{J7}).

\begin{defn}
\label{defJh}  
Pour $h \in \bbN$, l'idéal des monômes $h$-excédentaires est l'idéal monomial:
$$
\Jex_{h} 
\ = \ 
\langle X^{D(I)} \mid \# I = h \rangle 
\qquad 
\text{où \ $X^{D(I)} \,= \, \prod_{i \in I} X_i^{d_i}$}
$$
On a $\Jex_h = 0$ pour $h \geqslant n+1$ et 
$\Jex_n = \langle X^D \rangle$ où $X^D = \prod_{i=1}^n X_i^{d_i}$.
On a également les inclusions:
$$
0 \,=\, \Jex_{n+1} \ \subset \ 
\Jex_{n} \ \subset \ 
\cdots\ \subset \ 
\Jex_{1} \subset \  \Jex_0 \, = \, \bfA[\uX]
$$
\end{defn}

\index{idéal!monomial excédentaire!$\Jex_h$}%
\label{NOTA05-Jexh}%
\label{NOTA05-XDI}%
%
%

\begin{prop}[Nullité de $\Jex_{h,d}$] \label{NulliteJhd}
Pour une partie $I \subset \{1..n\}$, posons $d_I = \sum\limits_{i\in I}d_i$ et 
$d_{\min}(\Jex_h) = \min\limits_{\#I=h} d_I$.
L'idéal~$\Jex_h$ contient un monôme de degré~$d$ si et seulement si 
$d \geqslant d_{\min}(\Jex_h)$. Pris à rebours :
$$
\Jex_{h,d} = 0 
\quad \Leftrightarrow \quad 
d < d_{\min}(\Jex_h)
$$
En particulier, on a $\Jex_{n,d} = 0 \Leftrightarrow d < \delta + n$.
\end{prop}

\begin{proof}
Si $d < d_{\rm min}(\Jex_h)$, il est clair que $\Jex_{h,d} = 0$.  Dans
l'autre sens, si $d \geqslant d_{\min}(\Jex_h)$, il existe $I$ tel que $\#I
= h$ et $d \geqslant d_I$ ; alors pour $|\gamma| = d-d_I$, le monôme
$X^\gamma X^{D(I)} \overset{\rm def}{=} X^\gamma \prod_{i \in I}
X_i^{d_i}$ est dans $\Jex_{h,d}$.  Le cas particulier de $h=n$
provient de $d_{\min}(\Jex_n) = \sum_i d_i = \delta+n$.
\end{proof}

Le lien entre l'idéal excédentaire $\Jex_{k}$ et le terme $\rmK_{k}$ du complexe
de Koszul de $\uX^D$ sera approfondi plus tard (chapitre~\ref{ChapBW}).
En voici un aspect vraiment élémentaire.

\begin{prop} \label{NulliteJkdKkd}
On a l'équivalence $\Jex_{k,d} = 0 \, \Leftrightarrow \, \rmK_{k,d} = 0$. 
En particulier, on retrouve l'implication $\rmK_{k,d} = 0 \Rightarrow \rmK_{k+1,d} =0$ du point iii) de
\ref{dminKk}.
\end{prop}

\begin{proof}
Une première justification consiste à utiliser~\ref{dminKk} 
qui fournit $d_{\min}(\rmK_k)= \min\limits_{\#I =k} d_I = d_{\min}(\Jex_k)$.

On peut également fournir des \og monômes témoins\fg.
Si $X^\alpha e_I \in \rmK_{k,d}$, alors $X^\alpha X^{D(I)}$ est dans $\Jex_{k,d}$.
Dans l'autre sens, soit $X^\alpha \in \Jex_{k,d}$ que l'on écrit $X^\alpha = X^{\gamma} X^{D(I)}$ avec $\#I = k$ ;
alors $X^\gamma e_I$ est dans $\rmK_{k,d}$.

\smallskip
Le \og En particulier \fg{} se déduit de l'inclusion $\Jex_{k+1,d} \subset \Jex_{k,d}$.
\end{proof}

\begin{defn}[L'endomorphisme $W_h(\uP)$ et le déterminant $\det W_{h,d}(\uP)$]\leavevmode
\label{DefWh}

On note $W_h$ au lieu de $W_{\Jex_h}$ l'endomorphisme du $\bfA$-module $\Jex_h$
et $W_{h,d}$ sa composante homogène de degré $d$, encore égale à $W_{\Jex_{h,d}}$.
Chaque $\det W_{h,d}(\uP)$ est un mineur de l'application de Sylvester~$\Syl_d(\uP)$.
\end{defn}

\label{NOTA05-Wh}%
\index{W@les endomorphismes!$W_{h,d}(\uP)$ de $\Jex_{h,d}$}%

\medskip

Voici un résultat élémentaire concernant le comportement de
l'endomorphisme $W_h(\uP)$ vis-à-vis du jeu de polynômes $\uP$.

\begin{prop} \label{ProprieteWh}
\leavevmode
\begin{enumerate}[\rm i)]
\item 
Pour le jeu étalon $\uX^D$, on a $W_h(\uX^D) = \Id$ de sorte que $\det W_{h,d}(\uX^D) = 1$.

\item
Pour le jeu générique, le déterminant $\det W_{h,d}(\uP)$ est régulier.

\item 
Pour deux jeux $\uP$ et $\uQ$ de même format $D$, on a  
$W_h(\uP + \uQ) = W_h(\uP) + W_h(\uQ)$.
 
En particulier, pour une indéterminée $t$, on a 
$W_h(\uP + t\uX^D) = W_h(\uP) + t \, \Id$. 

\item
L'endomorphisme $W_h(\uP)$ ne dépend pas des $h-1$ derniers polynômes de la suite~$\uP$.
\end{enumerate}
\end{prop}

\begin{proof}
Les points i) et iii) sont immédiats en revenant à la définition.
Pour ii), le polynôme $\det W_{h,d}(\uP)$ 
de $\bfA = \bfk[\indetsPi]$ est primitif par valeur 
(en effet, pour le jeu étalon, il vaut~$1$).

\smallskip\noindent
Le point iv) : soit $X^\alpha \in \Jex_h$, c'est-à-dire $\# \DivSeq(X^\alpha) \geqslant h$.
L'image de $X^\alpha$ par $W_h(\uP)$
dépend uniquement du polynôme $P_i$ où $i=\minDiv(X^\alpha)$.
Soit $E$ l'ensemble des $h-1$ derniers indices de $\{1..n\}$.
Comme la partie $\DivSeq(X^\alpha)$ est de cardinal $\geqslant h$, elle
rencontre $\overline E := \{1..n\} \setminus E$ de cardinal $n-h+1$; donc $i \in \overline E$.
\end{proof}

\subsection{La forme linéaire $\omega_\protect\uP : \bfA[\protect\uX]_\delta\rightarrow\bfA$ en degré $\delta$
(pilotée par $\minDiv$)}
\label{sectionFormeLineaireOmega}

\index{forme linéaire!$\omega=\omega_\uP$ (pilotée par $\minDiv$)}%

La forme linéaire $\omega = \omega_{\uP} : \bfA[\uX]_\delta \to \bfA$
est déjà intervenue à plusieurs reprises: dans l'étude du cas d'école
$n=2$ cf.~\ref{Defomegan=2} et celle du jeu étalon généralisé
en~\ref{omegaCasParticulier},
page~\pageref{MacRaeJeuEtalonDelta}.  Ici nous choisissons de la
définir par l'intermédiaire d'une construction notée $\Omega$. Cette
construction repose sur le fait qu'en degré~$\delta$, le monôme
\MoutonNoir{}~$X^\emouton$ est le seul à ne pas appartenir à $\Jex_{1} =
\langle X_1^{d_1}, \dots, X_n^{d_n} \rangle$, ce qui conduit à
$\bfA[\uX]_\delta = \Jex_{1,\delta} \, \oplus \, \bfA X^{\emouton}$.
Par ailleurs, $\Omega$ privilégie le mécanisme de sélection $\minDiv$.

\begin{defn}\label{DefEndoOmega}
Pour $F \in \bfA[\uX]_\delta$, l'endomorphisme $\Omega(F)$ est défini par :
$$
\Omega(F) : \ 
\begin{array}[t]{rcl}
\bfA[\uX]_\delta & \longrightarrow & \bfA[\uX]_\delta \\ [0.4cm]
X^\alpha & \longmapsto & 
\left\{
\begin{array}{ll}
\dfrac{X^\alpha}{X_i^{d_i}} P_i & 
\text{si $X^\alpha \in \Jex_{1,\delta}$ où $i = \minDiv(X^\alpha)$} \\ [0.6cm]
F & 
\text{si $X^\alpha = X^\emouton$} 
\end{array}
\right.
\end{array}
$$
Visuellement :
$$
\begin{tikzpicture} 
\draw [thick] (-0.1, -0.3) -- (-0.4, -0.3) -- (-0.4, 6.3) -- (-0.1, 6.3) ;
\draw [thick] (6+0.1, -0.3) -- (6+0.4, -0.3) -- (6+0.4, 6.3) -- (6+0.1, 6.3) ;
\draw [dashed, rounded corners=4pt, fill=gray!30] (3.8, -0.3) rectangle (4.2, 6.3) ;
\draw (4, 6.4) node[above] {$F$} ;
\draw [dotted] (4, 2) -- (6.6, 2) node[right] {\mouton} ;
\draw [dashed, rounded corners=4pt] (1.8,-0.3) rectangle (2.2, 6.3) ;
\draw (2,6.3) node[above] {\small $\dfrac{X^\alpha}{X_i^{d_i}} P_i$} ;
\draw [dotted] (2,4) -- (6.6, 4) node[right] {$X^\alpha$} ;
\foreach \r in {0,1,...,6} \draw (\r, 6-\r) node {\tiny $\bullet$} ;
\path (0,0) -- (0, 6) node[midway, left] {$\Omega(F) \ = \ \quad $} ;
\end{tikzpicture}
$$
On définit alors la forme linéaire $\omega$ grâce au déterminant :
$$
\omega : 
\ 
\bfA[\uX]_\delta \ \longrightarrow \ \bfA, 
\qquad 
F \ \longmapsto \ \det \big( \Omega(F) \big)
$$
\end{defn}

\label{NOTA05-Omega}%
\label{NOTA05-omega}%
%
%

\index{O@Omega-constructeur à valeurs dans!$\End_\bfA(\bfA[\uX]_\delta)$!1@$\Omega=\Omega_\uP$}%

\medskip

\noindent
Ainsi, pour $D = (3,1,2)$ de $\delta = 3$ et le système $\uP$ :

\medskip
{%
\setlength{\tabcolsep}{2pt}
\noindent
$
\left\{
\begin{tabular}{@{} rcp{15cm}} 
$P_{1}$ & $=$ & $a_{1}X^{3} + a_{2}X^{2}Y + a_{3}X^{2}Z + a_{4}XY^{2} +
  a_{5}XYZ + a_{6}XZ^{2} + 
  a_{7}Y^{3} + a_{8}Y^{2}Z + a_{9}YZ^{2} + a_{10}Z^{3}$\\ [0.1cm] 
$P_{2}$ & $=$ & $b_{1}X + b_{2}Y + b_{3}Z$\\ [0.1cm] 
$P_{3}$ & $=$ & $c_{1}X^{2} + c_{2}XY + c_{3}XZ + c_{4}Y^{2} +
c_{5}YZ + c_{6}Z^{2}$\\ [0.1cm] 
\end{tabular} 
\right.
$
}

\smallskip
\noindent
on a :
$$
\omega(\sbullet) \ = \ 
\EastBorderdet{
a_{1} & b_{1} & \sbullet & . & . & c_{1} & . & . & . & . & \Heti{X^{3}} \\ 
a_{2} & b_{2} & \sbullet & b_{1} & . & c_{2} & . & . & . & . & \Heti{X^{2}Y} \\ 
a_{3} & b_{3} & \sbullet & . & b_{1} & c_{3} & . & . & . & c_{1} & \Heti{X^{2}Z} \ \mouton \\ 
a_{4} & . & \sbullet & b_{2} & . & c_{4} & b_{1} & . & . & . & \Heti{XY^{2}} \\ 
a_{5} & . & \sbullet & b_{3} & b_{2} & c_{5} & . & b_{1} & . & c_{2} & \Heti{XYZ} \\ 
a_{6} & . & \sbullet & . & b_{3} & c_{6} & . & . & b_{1} & c_{3} & \Heti{XZ^{2}} \\ 
a_{7} & . & \sbullet & . & . & . & b_{2} & . & . & . & \Heti{Y^{3}} \\ 
a_{8} & . & \sbullet & . & . & . & b_{3} & b_{2} & . & c_{4} & \Heti{Y^{2}Z} \\ 
a_{9} & . & \sbullet & . & . & . & . & b_{3} & b_{2} & c_{5} & \Heti{YZ^{2}} \\ 
a_{10} & . & \sbullet & . & . & . & . & . & b_{3} & c_{6} & \Heti{Z^{3}} \\ 
}
$$
\label{omegaDessin}%

Par construction, cette forme linéaire $\omega$ 
appartient au sous-module $\DVect_{s_\delta}(\Syl_\delta) \subset \bfA[\uX]_\delta^\star$
qui est intervenu en \ref {NOTA04-DvectSyldelta}
et qui sera étudié dans un cadre plus général en~\ref{DefDVect}.
\index{module!déterminantiel $\DVect_{s_\delta}(\Syl_\delta)$}%
Mis à part la colonne-argument en indice \MoutonNoir{}, les colonnes
de $\Omega(F)$ sont celles de $\Syl_\delta$ indexées par les
$\frac{X^\alpha}{X_i^{d_i}} e_i$ où $i = \minDiv(X^\alpha)$.
Matriciellement, 
$\Omega(F)$ agit sur la base monomiale de $\bfA[\uX]_{\delta}$ de la manière suivante :
$$
\Big[\ 
\Big(\frac{X^\alpha}{X_i^{d_i}}\,P_i \Big)_{X^\alpha \in \Jex_{1,\delta}}
\ \mid \ 
F
\  \Big]
\ \ = \ \ 
\Big[\ 
(X^\alpha)_{X^\alpha \in \Jex_{1,\delta}} \ \mid \ X^{\emouton}
\ \Big]
\ \Omega(F)
$$
En particulier, pour le jeu étalon, on a $\Omega(X^{\emouton}) = \Id$.

\medskip

Cette description matricielle peut s'abstraire à l'aide de l'algèbre
extérieure.  En~\ref{soussection-d-egal-delta},
page~\pageref{soussection-d-egal-delta}, nous avons décrit comment,
à partir d'une application linéaire $u : E \to F$, construire sur $F$
une forme linéaire déterminantale sans ambiguïté de signe, à condition
de disposer d'une base $\calB_F$ de $F$, d'un élément distingué $f$ de
cette base et d'une application~$\iota : \calB_F \setminus \{f\} \to
E$.  On applique cette construction à l'application
linéaire~$\Syl_\delta : \rmK_{1,\delta} \to \bfA[\uX]_\delta$, à la
base monomiale de $\bfA[\uX]_\delta$ et au \MoutonNoir{} pour produire
la forme linéaire $\omega$.  Ici $\iota$ est l'application gouvernée
par le mécanisme de sélection $\minDiv$
$$
\iota : 
\begin{array}[t]{rcl}
\{\hbox {monômes de $\Jex_{1,\delta}$}\} & \longrightarrow & \rmK_{1,\delta} \\ [0.2cm]
X^\alpha & \longmapsto & \dfrac{X^\alpha}{X_i^{d_i}}\, e_i  
\hbox { avec $i = \minDiv(\alpha)$}
\end{array}
$$
En particulier, pour $|\gamma| = \delta$, 
le scalaire $\omega(X^\gamma)$ est un mineur 
d'ordre $s_\delta$ de $\Syl_\delta$ sans ambiguïté de signe.

\medskip
La forme linéaire $\omega$ s'obtient donc en ordonnant la base de
$\bfA[\uX]_\delta$ relativement à la décomposition $\Jex_{1,\delta}
\oplus \bfA X^\emouton$ et en posant $\bff = \!\!\!\!
\bigwedge\limits_{X^\alpha \in \Jex_{1,\delta}} \!\!\!\!  X^\alpha \,
\wedge \, X^{\emouton}$:
$$
\omega : \quad
F \ \longmapsto\ 
\Big[
\bigwedge_{X^\alpha \in \Jex_{1,\delta}}
\!\!\!\!
\Syl_\delta\big(\iota(X^\alpha) \big)
\ \wedge \ 
F
\Big]_{\bff}
$$

\begin{prop}[Propriétés de $\omega$]
\label{omegaProprietes}
\leavevmode

\begin{enumerate}[\rm i)]

\item 
La forme linéaire $\omega$ est de Cramer:
$$
\forall\, 
F, \, G \in \bfA[\uX]_\delta, \qquad 
\omega(F)\, G - \omega(G)\, F \, \in \, \uPdelta
$$

\item 
Si $F \in \uPdelta$ alors $\omega(F) = 0$. L'inclusion $\uPdelta \subset \Ker\omega$
s'écrit encore $\omega \in \Ker \transpose{\Syl_\delta}$.

\item 
Pour $F$ et $G \in \bfA[\uX]_\delta$ 
tels que $F \equiv G \bmod \uPdelta$, on a $\omega(F) = \omega(G)$.
En particulier, pour deux déterminants bezoutiens
$\nabla$ et $\nabla'$, on a $\omega(\nabla) = \omega(\nabla')$.

\item 
On a $\omega(X^\emouton) = \det W_{1,\delta}$.
En particulier, pour le jeu étalon, on a $\omega(\nabla) = 1$.

\item 
Pour $F \in \bfA[\uX]_\delta$, on a 
$$
\forall |\alpha | = \delta, \qquad 
X^\alpha \, \omega(F) 
\ \in \ 
\uPdelta \, + \, \bfA \, F
$$

\item 
Pour tout $F \in \uPsat_\delta$, on a $\omega(F) \in \ElimIdeal$.
En particulier, $\omega(\nabla) \in \ElimIdeal$.
Et ce scalaire~$\omega(\nabla)$ est régulier en générique.
\end{enumerate}
\end{prop}

\index{propriété Cramer!2@d'une forme linéaire}%

\begin{proof}
i) Appliquer \ref{CramerSymetrie}.

ii) Il suffit de faire la preuve pour la suite générique $\uP$.
Soit $F \in \uPdelta$.
On a donc $\omega(F) \nabla \in \uPdelta$ (utiliser l'appartenance à la Cramer 
avec $G = \nabla$ et $F \in \uPdelta$).
D'après~\ref{MiniWiebe} (conséquence du théorème de Wiebe), on obtient $\omega(F) = 0$.
On peut fournir une autre preuve : 
comme $F$ est dans $\uPdelta$, 
le scalaire $\omega(F)$ est combinaison $\bfA$-linéaire de mineurs 
d'ordre $s_\delta + 1$ de $\Syl_\delta$, et ces 
derniers mineurs sont nuls, d'après~\ref{IdeauxDeterminantielsKP}.

iii) Cela résulte de i) car $\omega$ est linéaire. 
Pour le \og en particulier \fg{}, on utilise que 
$\nabla - \nabla' \in \uPdelta$ d'après~\ref{NablaDansLeSature}.

iv) D'après~\ref{NablaEtalon}, on sait que pour le jeu étalon, 
le mouton noir $X^\emouton$ est un déterminant bezoutien.
On conclut avec le point~{ii)} et le fait que l'endomorphisme $W_{1,\delta}$
est normalisé.

v) Résulte du point i) (appartenance à la Cramer).

vi) Soit $F \in \uPsat_\delta$. D'après le point v), on a 
$X^\alpha \omega(F) \in \uPsat$ pour tout $|\alpha | = \delta$.
Donc $\omega(F) \in \uPsat$.

Pour le \og en particulier \fg{}, 
rappelons que $\nabla$ appartient à $\uPsat_\delta$. 
Enfin, en générique, $\omega(\nabla)$, vu comme un polynôme en
les coefficients des $P_i$, est un polynôme 
primitif par valeur (puisque sa spécialisation en le jeu étalon vaut $1$) ;
c'est donc un élément régulier.
\end{proof}

\begin{rmq}
Apportons quelques précisions pour le point vi).

Pour le polynôme $F = \nabla \in \uPsat_\delta$, l'appartenance
$\omega(\nabla) \in \ElimIdeal$ peut être contrôlée de la manière
suivante.  Comme pour chaque $i$, on a $X_i \nabla \in \langle \uP
\rangle_{\delta + 1}$, en multipliant par $X_i$:
$$
\forall\, |\alpha | =\delta, \qquad 
X^\alpha \omega(\nabla) 
\, \in \, 
\uPdelta \, + \, \bfA \nabla
$$
on obtient $X^\gamma\, \omega(\nabla) \in \langle \uP \rangle_{\delta +1}$
pour tout $|\gamma|=\delta+1$.
Ainsi $\omega(\nabla) \in \Ann(\bfB_{\delta + 1})$, 
plus précis que l'appartenance  $\omega(\nabla) \in \ElimIdeal$.
Cependant lorsque $\uP$ est une suite régulière, voir~\ref{AnnEqualities}.
\end{rmq}

\subsection{Sigma version des objets}
\label{SigmaVersion}

Dans un certain nombre de constructions telles que $W_1$ (cf
\ref{DefW1}), $W_\calM$ (cf \ref{DefWM}) et $\Omega$ (cf
\ref{DefEndoOmega}), nous avons privilégié un ordre de parcours des
indéterminées à savoir l'ordre $(X_1, \dots, X_n)$.  Nous allons
devoir modifier cet ordre de parcours en permutant les $X_i$ à l'aide
d'une permutation $\sigma \in \fS_n$. Mais il est important de signaler
que l'on ne change surtout pas le format $D = (d_1, \dots, d_n)$ de
degrés. Dans la définition suivante, ce n'est pas lui qui subit une
permutation par $\sigma$!

\index{sigma@$\sigma$-version des objets ($\sigma\in\fS_n$)}%

\begin{defn}\label{DefWMsigma}

Soit $\calM$ un sous-module monomial de $\Jex_1$. Pour $\sigma \in
\fS_n$, 
on note $W^\sigma_M(\uP)$ l'endomorphisme du $\bfA$-module $\calM$
défini sur sa base monomiale par :
$$
W_\calM^\sigma :\ X^\alpha \ \longmapsto \ 
\pi_\calM \bigg(\dfrac{X^\alpha}{X_i^{d_i}} P_i \bigg)
\qquad
\begin {array}{l}
\hbox {où $X_i$ est le premier pour l'ordre de parcours} \\
\hbox{$X_{\sigma(1)}, \dots, X_{\sigma(n)}$ tel que $X_i^{d_i}$ divise $X^\alpha$}
\\
\end {array}
$$
Cet endomorphisme est normalisé, au sens où pour le jeu étalon, on a 
$W^\sigma_\calM(\uX^D) = \Id$.
\end{defn}

\label{NOTA05-WMsigma}%
%
%
\index{W@les endomorphismes!z@$W^\sigma_\calM(\uP)$ ($\sigma\in\fS_n$, $\calM\subset\Jex_1$)}%

\subsubsection*{Tordre la relation d'ordre classique sur $\{1..n\}$ par $\sigma \in \fS_n$} 

\index{ordre sur $\{1..n\}$ tordu par une permutation}%

De manière équivalente, il est préférable  de considérer l'ordre $<_\sigma$ 
sur $\{1..n\}$ que nous avons déjà introduit et utilisé en \ref{NOTA04-ordreTordu}
(page \pageref{NOTA04-ordreTordu}) et \ref{NOTA04-omegasigma} (page \pageref{NOTA04-omegasigma}):
$$
\sigma(1) \ <_\sigma \ \cdots \ <_\sigma\ \sigma(n)
$$
On a donc 
$$
\sigma(i') <_{\sigma} \sigma(j') 
\ \Leftrightarrow \ 
i' < j' 
\qquad \text{c'est-à-dire } \qquad
i <_{\sigma} j
\ \Leftrightarrow \ 
\sigma^{-1}(i) < \sigma^{-1}(j) 
$$
Associé à cet ordre, 
on définit $\smin(E)$ comme étant le plus petit élément de $E$ au sens $<_\sigma$.
Précisément, pour une partie $E \subset \{1..n\}$, 
$$
\smin (E) \ = \ 
\left\{
\begin{array}{ll}
\sigma \big(\min\{i'\in \{1..n\}\text{\ tel que\ } \sigma(i') \in E\}\big)
    & \text{si $E \neq \emptyset$} \\
n+1 & \text{sinon} \\
\end{array}
\right.
$$

\begin{rmq}
Soit $\sigma \in \fS_n$ l'involution renversante de $\{1..n\}$ i.e.
$\sigma = (1,n)(2,n-1) \cdots$, qui réalise $i \mapsto n-i+1$.
Il est clair que $\smin(E) = \max(E)$ et que l'ordre $<_\sigma$ est exactement
l'ordre~$>$ usuel.
\end {rmq}

Pour $E = \DivSeq(X^\alpha)$, on note $\sminDiv(X^\alpha)$ pour $\smin(E)$.
L'endomorphisme $W^\sigma_\calM$ défini ci-dessus s'écrit alors
$$
W^\sigma_\calM :\ 
X^\alpha \ \longmapsto \ 
\pi_{\calM} 
\bigg(\dfrac{X^\alpha}{X_i^{d_i}} P_i \bigg)
\qquad  \text{où $i = \sminDiv(X^\alpha)$}
$$
\label{NOTA05-sminDiv}%
\index{mécanisme de sélection!$\sminDiv$}%
De manière analogue, la forme linéaire 
$\omega$ introduite en~\ref{DefEndoOmega} possède une version tordue par 
$\sigma \in \fS_n$ explicitée ci-dessous.

\begin{defn}
Pour $F \in \bfA[\uX]_\delta$, l'endomorphisme $\Omega^\sigma(F)$ est défini par :
$$
\Omega^\sigma(F) : \ \bfA[\uX]_\delta\ \longrightarrow \ \bfA[\uX]_\delta, 
\qquad \qquad
X^\alpha \longmapsto
\left\{
\begin{array}{ll}
\dfrac{X^\alpha}{X_i^{d_i}} P_i & 
\text{si $X^\alpha \in \Jex_{1,\delta}$ où $i = \sminDiv(X^\alpha)$} \\ [0.6cm]
F & 
\text{si $X^\alpha = X^\emouton$} 
\end{array}
\right.
$$
La forme linéaire $\omega^\sigma$ est  
$$
\omega^\sigma : \bfA[\uX]_\delta \longrightarrow \bfA, 
\qquad 
F \longmapsto \det \big( \Omega^\sigma(F) \big)
$$
et on a 
$$
\omega^\sigma(X^{\emouton}) 
\ = \ 
\det W_{1,\delta}^\sigma
$$
\end{defn}

\label{NOTA05-Omegasigma}%
\label{NOTA05-omegasigma}%
\index{forme linéaire!$\omega^\sigma=\omega^\sigma_\uP$ (pilotée par $\sminDiv$)}%
\index{O@Omega-constructeur à valeurs dans!$\End_\bfA(\bfA[\uX]_\delta)$!2@$\Omega^\sigma=\Omega^\sigma_\uP$}%

\subsubsection*{$\bullet$ Lorsque la $\sigma$-version ne fait aucun effet}

Il peut arriver que deux permutations distinctes $\sigma, \sigma' \in \fS_n$ conduisent aux
mêmes objets tordus. En voici un cas extrême dû au fait que les ensembles
de divisibilité $\DivSeq(X^\alpha)$ qui interviennent sont de cardinal~1.

\begin{prop} [Cas exceptionnels]\label{J2dNul} \leavevmode

\begin{enumerate}[\rm i)]
\item 
Si $\Jex_{2,\delta} = 0$, alors $\omega^\sigma= \omega$ pour tout $\sigma$.

\item 
Si $\Jex_{2,d} = 0$, alors $\det W_{1,d}^\sigma = \det W_{1,d}$ pour tout $\sigma$.
\end{enumerate}
\end{prop}

\subsubsection*{$\bullet$ Un exemple détaillé où pour $n=3$, il y a $6$ permutations $\sigma \in \fS_3$
                 mais seulement 4 $\sigma$-objets}

Prenons celui de $D = (d_1,\, d_2=1,\, d_3)$ 
en degré $\delta = (d_1-1) + (d_3-1)$.  Puisque $d_1 + d_3 > \delta$, les parties de $\{1,2,3\}$ contenant $\{1,3\}$
ne sont pas des ensembles de $D$-divisibilité en ce
degré. Comme ensemble $E$ de divisibilité, hormis les singletons, il
reste donc comme possibilité $E = \{1,2\}$ et/ou $E = \{2,3\}$. Cela partitionne les 6
permutations de $\fS_3$ en 4 classes
$$
\{\Id_3\},\qquad \{(1,3)\}, \qquad \{(1,2,3), (1,2)\}, \qquad \{(1,3,2),(2,3)\}
$$
au sens où deux permutations de la même classe ont même $\smin(E)$ quel que soit
$E \subset \{1,2,3\}$ ensemble de $D$-divisibilité en degré $\delta$ et fournissent
donc le même objet tordu.
Pour être complètement exhaustif, dressons le tableau 
indiquant les $\sminDiv$ de chaque ensemble de $D$-divisibilité 
selon la permutation~$\sigma$:
$$
\begin{array}{c|c|c|c|c|c|}  
\sigma \backslash E & \{1\} & \{2\} & \{3\} & \{1,2\} & \{1,3\} \\
\hline 
\id & 1 & 2 & 3 & 1 & 1 \\
\hline 
(1,3) & 1 & 2 & 3 & 2 & 3 \\
\hline 
(1,2) & 1 & 2 & 3 & 2 & 1 \\
\hline 
(2,3) & 1 & 2 & 3 & 1 & 1 \\
\end{array}
$$
Particularisons cet exemple à $D = (3,1,2)$ dont voici les ensembles
de divisibilité en degré $\delta = 3$ :
$$
\begin {array}{|c|c|c|c|c|c|c|c|c|c|c|}  
X^3 &X^2Y & \overset{\!\mouton}{X^2Z} &XY^2 &XYZ &XZ^2 &Y^3 &Y^2Z &YZ^2 &Z^3 
\\[1mm]
\hline
\vrule height12pt depth3pt width0pt
1 & 2 &\emptyset & 2 & 2 & 3 & 2 & 2 & 2,3 & 3 
\\[1mm]
\end{array}
$$
Ici, il n'y a plus que 2 classes au lieu de 4: celle constituée des
$\sigma$ telles que $\sigma(2) < \sigma(3)$ et la classe
complémentaire $\sigma(2) > \sigma(3)$. Utiliser un ordre tordu n'a
rien à voir avec le fait de permuter la base comme 
ci-dessous où $\Omega$ est donnée dans deux bases permutées
l'une de l'autre.
$$
\EastBordermatrix{
a_{1}& b_{1}& \sbullet & .& .& c_{1}& .& .& .& . & X^{3}\\ 
a_{2}& b_{2}& \sbullet & b_{1}& .& c_{2}& .& .& .& . & X^{2}Y\\ 
a_{3}& b_{3}& \sbullet& .& b_{1}& c_{3}& .& .& .& c_{1} & X^{2}Z \\ 
a_{4}& .& \sbullet & b_{2}& .& c_{4}& b_{1}& .& .& . & XY^{2} \\ 
a_{5}& .& \sbullet & b_{3}& b_{2}& c_{5}& .& b_{1}& .& c_{2} & XYZ \\ 
a_{6}& .& \sbullet & .& b_{3}& c_{6}& .& .& b_{1}& c_{3} & XZ^{2} \\ 
a_{7}& .& \sbullet & .& .& .& b_{2}& .& .& . & Y^{3} \\ 
a_{8}& .& \sbullet & .& .& .& b_{3}& b_{2}& .& c_{4} & Y^{2}Z\\ 
a_{9}& .& \sbullet & .& .& .& .& b_{3}& b_{2}& c_{5} & YZ^{2}\\ 
a_{10}& .& \sbullet & .& .& .& .& .& b_{3}& c_{6} & Z^{3}\\ 
}
\quad 
\EastBordermatrix{
c_{6}& b_{3}& .& .& .& .& .& \sbullet & .& a_{10} & Z^{3}\\ 
c_{5}& b_{2}& b_{3}& .& .& .& .& \sbullet & .& a_{9} & YZ^{2}\\ 
c_{4}& .& b_{2}& b_{3}& .& .& .& \sbullet & .& a_{8} & Y^{2}Z \\ 
.& .& .& b_{2}& .& .& .& \sbullet & .& a_{7} & Y^{3} \\ 
c_{3}& b_{1}& .& .& c_{6}& b_{3}& .& \sbullet & .& a_{6} & XZ^{2}\\ 
c_{2}& .& b_{1}& .& c_{5}& b_{2}& b_{3}& \sbullet & .& a_{5} & XYZ \\ 
.& .& .& b_{1}& c_{4}& .& b_{2}& \sbullet & .& a_{4} & XY^{2}\\ 
c_{1}& .& .& .& c_{3}& b_{1}& .& \sbullet & b_{3}& a_{3} & X^{2}Z\\ 
.& .& .& .& c_{2}& .& b_{1}& \sbullet & b_{2}& a_{2} & X^{2}Y\\ 
.& .& .& .& c_{1}& .& .& \sbullet & b_{1}& a_{1} & X^{3}\\ 
}
$$
Par contre voici l'effet de l'ordre $<_\sigma$ pour $\sigma = (1,3)$
qui appartient à la classe $\sigma(2) > \sigma(3)$.
A droite, on a fait figurer le poids en $P_i$ (nombre de colonnes de type $P_i$) pour chacune des $\Omega^\sigma$.
$$
\Omega^{(1,3)} =
\EastBordermatrix{
a_{1}& b_{1}& \sbullet & .& .& c_{1}& .& .& .& . & X^{3} \\ 
a_{2}& b_{2}& \sbullet & b_{1}& .& c_{2}& .& .& c_{1}& . & X^{2}Y \\ 
a_{3}& b_{3}& \sbullet & .& b_{1}& c_{3}& .& .& .& c_{1} & X^{2}Z \\ 
a_{4}& .& \sbullet & b_{2}& .& c_{4}& b_{1}& .& c_{2}& . & XY^{2} \\ 
a_{5}& .& \sbullet & b_{3}& b_{2}& c_{5}& .& b_{1}& c_{3}& c_{2} & XYZ \\ 
a_{6}& .& \sbullet & .& b_{3}& c_{6}& .& .& .& c_{3} & XZ^{2} \\ 
a_{7}& .& \sbullet & .& .& .& b_{2}& .& c_{4}& . & Y^{3} \\ 
a_{8}& .& \sbullet & .& .& .& b_{3}& b_{2}& c_{5}& c_{4} & Y^{2}Z \\ 
a_{9}& .& \sbullet & .& .& .& .& b_{3}& c_{6}& c_{5} & YZ^{2} \\ 
a_{10}& .& \sbullet & .& .& .& .& .& .& c_{6} & Z^{3} \\ 
}
\qquad\qquad\qquad
\begin {array}{c|c|c|c}  
\sigma  &P_1  &P_2 &P_3 \\[0.5mm]
\hline
\vrule height12pt depth3pt width0pt
\Id_3 &1        &6       &2  \\[0.5mm]
\hline
\vrule height12pt depth3pt width0pt
(1,3) &1        &5       &3   \\
\end {array}
$$

\subsection{Décomposition de $\Jex_1$ et divisibilité $\det W_{1,d}(\protect\uP)\mid\det W_{1,d+1}(\protect\uP)$
            pour $d\geqslant\delta +1$}
\label {SousSectionDecompositionJ1circJ1plus}

L'égalité du premier point du théorème suivant figure chez
Demazure \cite[\S6]{Demazure1} avec une preuve semblable à la nôtre.
Elle est également abordée par Jouanolou en \cite[3.6.9.1 p. 138]{J7}
(la numérotation y est incorrecte, elle devrait être 3.9.6.1).
Jouanolou utilise une structure triangulaire en y faisant intervenir
la suite $\uP' = (P_1,\dots, P_{n-1}, X_n^{d_n})$ 
ce qui rend son argumentation un tantinet plus complexe que la nôtre.
Il est amené à utiliser la formule de Macaulay pour $\uP$ et $\uP'$ en degré $d \geqslant \delta+1$ 
(ce qui suppose que le résultant soit en place et que cette formule soit acquise). 
Il mentionne, dans la remarque 3.9.6.8, p.139, qu'il n'utilise 
pas toute la force de la formule de Macaulay mais
seulement le fait que $\Res(\uP) \mid \det W_{1,d}(\uP)$ 
avec un quotient ne dépendant pas de $P_n$.

\begin{theo} \label{DetW1dDiviseDetW1dplus1}
Soit $d \geqslant \delta+1$.

\begin {enumerate} [\rm i)]
\item
Soit $\uP' = (P'_1, \ldots, P'_{n-1})$ défini par $P'_i = P_i(X_n := 0) \in \bfA[X_1, \ldots, X_{n-1}]$. Alors
$$
\det W_{1,d+1}(\uP) \ =\  
\det W_{1,d}(\uP) \, \det W_{1,d+1}(\uP') 
$$
\item
Pour tout $\sigma \in \fS_n$, on a $\det W_{1,d}^\sigma(\uP) \mid \det W_{1,d+1}^\sigma(\uP)$ avec
un cofacteur ne dépendant pas de $P_{\sigma(n)}$.  
\end {enumerate}  
\end{theo}

Nous allons en fait obtenir un résultat plus général et plus précis, qui se situe
au niveau des endomorphismes et pas seulement des déterminants. A cet effet,
indépendamment du format $D$, nous allons partager les monômes en deux catégories
selon leur divisibilité par $X_n$.

\begin{defns} \leavevmode

\begin {enumerate} [\rm i)]
\item
Soit $\calM$ un sous-module monomial de $\bfA[\uX]$. On note $\calM^+$
le sous-module de $\calM$ de base les $X^\alpha \in \calM$ tels
que $\alpha_n \geqslant 1$, $\calM^\circ$ son supplémentaire monomial
(de base les $X^\alpha$ tels que $\alpha_n = 0$) et $\calM^+/X_n$ le sous-module
monomial de base les $X^\alpha/X_n$ avec $X^\alpha \in \calM^+$.
En particulier $\calM = \calM^+ \oplus \calM^\circ$.

\item
On note $\uX' = (X_1, \dots, X_{n-1})$ et on désigne par $\calM'$ le
sous-module monomial de $\bfA[\uX'] \subset \bfA[\uX]$ de base les
$X^\alpha \in \calM^\circ$. Ainsi $\calM'$ n'est autre que $\calM^\circ$ mais
vu dans $\bfA[\uX']$.

\item
Etant donné un système $\uP = (P_1,\dots,P_n)$ de $\bfA[\uX]$, on lui associe,
en posant $P'_i = P_i(X_n := 0)$, deux systèmes, le premier dans $\bfA[\uX]$,
le second dans $\bfA[\uX']$:  
$$
\uP^\circ = (P'_1, \dots, P'_n),  \qquad \uP' = (P'_1, \dots, P'_{n-1})
$$
\end {enumerate}
\end{defns}

On rappelle que pour $d' \geqslant \delta+1$, on a $\Jex_{1,d'} = \bfA[\uX]_{d'}$.
Cependant, dans le lemme et la proposition ci-dessous, nous avons choisi d'utiliser
$\Jex_{1,\sbullet}$ au lieu de $\bfA[\uX]_{\sbullet}$ pour attirer l'attention
sur le fait que $W_\calN(\uP)$ n'est défini que si $\calN \subset \Jex_1$.

\begin{lem}
Soit $d \geqslant \delta+1$ et $\calM \subset \Jex_{1,d+1}$.

\begin{enumerate}[\rm i)]
\item
On a ${\calM^+/X_n} \subset \Jex_{1,d}$.
De plus, pour $\calM = \Jex_{1,d+1}$, on a d'une part $\calM^+/X_n = \Jex_{1,d}$
et d'autre part, dans $\bfA[\uX']$, $\calM' = \Jex_{1,d+1}$.

\item
L'application linéaire monomiale suivante est un isomorphisme qui conserve $\minDiv$:
$$
\Psi : \ 
\begin{array}[t]{rcl}
\calM^+ & \buildrel {\simeq} \over \longrightarrow & \calM^+/X_n \\ [0.1cm]
X^{\alpha} & \longmapsto & \dfrac{X^{\alpha}}{X_n} 
\end{array}
$$
\end{enumerate}
\end {lem}

\begin {proof} \leavevmode

\noindent  
i) On a l'inclusion $\calM^+/X_n \subset \bfA[\uX]_d$ équivalente, puisque $d \ge
\delta+1$, à l'inclusion $\calM^+/X_n \subset \Jex_{1,d}$.
En ce qui concerne l'inclusion $\Jex_{1,d} \subset \Jex_{1,d+1}^+/X_n$:
soit $X^\alpha \in \Jex_{1,d}$; alors $X_n X^\alpha \in \Jex_{1,d+1}^+$,
donc $X^\alpha = (X_n X^\alpha)/X_n \in \Jex_{1,d+1}^+/X_n$.
L'égalité concernant $\calM'$ dans $\bfA[\uX']$ est immédiate.

\smallskip
\noindent
ii)
Soit $X^\alpha \in \calM^+$ et $i := \minDiv(X^\alpha)$.  On sait déjà que $X^\alpha/X_n \in \Jex_1$
i.e. que $\DivSeq(X^\alpha/X_n)$ est non vide.

\smallskip
\noindent
Si $i < n$, il est clair que $i = \minDiv(X^\alpha/X_n)$.
Et dans le cas $i = n$, on a $\DivSeq(X^\alpha) = \{n\}$ donc 
aussi $\DivSeq(X^\alpha/X_n) =\{n\}$ d'où l'égalité des $\minDiv$.
\end {proof}

\begin{prop} \label{XnDecompositionJ1d}
Soit $d \geqslant \delta+1$ et $\calM \subset \Jex_{1,d+1}$.

\begin{enumerate}[\rm i)]
\item 
L'endomorphisme $W_\calM(\uP)$ stabilise $\calM^+$ :
$$
W_\calM \ = \ 
\EastBordermatrix{
\,W_{\calM^+}    & \star   & \calM^+ \\ \noalign {\medskip}
0           & W_{\calM^\circ} & \calM^\circ
}
$$

\item
L'isomorphisme $\Psi$ conjugue les endomorphismes $W_{\calM^+}(\uP)$ et $W_{\calM^+/X_n}(\uP)$.

\item
On a les égalités $W_{\calM^\circ}(\uP) =  W_{\calM^\circ}(\uP^\circ) = W_{\calM'}(\uP')$.
En conséquence:
$$
\det W_\calM(\uP) \ = \ 
\det W_{\calM^+/X_n}(\uP) \times 
\det W_{\calM'}(\uP') 
$$
\end{enumerate}
\end{prop}

\begin{proof} \leavevmode

\smallskip
\noindent
i) Soit $X^\alpha \in \calM^+$ et $i=\minDiv(X^\alpha)$. L'image de $X^\alpha$ par 
$W_\calM$ est une combinaison linéaire de monômes de $\calM$ 
du type $X^\beta = (X^\alpha/X_i^{d_i}) X^\gamma$ où $X^\gamma$ est un monôme de $P_i$
et il faut montrer que $X^\beta \in \calM^+$.

\smallskip
\noindent
Supposons $i < n$. Alors $\beta_n = \alpha_n + \gamma_n$ et puisque $\alpha_n \geqslant 1$, on a 
$X^\beta \in \calM^+$.

\noindent
Supposons $i = n$. Comme $\Psi$ conserve $\minDiv$, on a
$\minDiv(X^\alpha/X_n) = n$, a fortiori $X_n^{d_n} \mid X^\alpha/X_n$
donc $X_n \mid X^\alpha/X_n^{d_n}$; on en déduit $X_n \mid X^\beta$,
c'est-à-dire $X^\beta \in \calM^+$.

\medskip
\noindent
ii) Soit $X^\alpha \in \calM^+$ et $i = \minDiv(X^\alpha)$ égal, d'après le lemme précédent,
à $\minDiv(X^\alpha/X_n)$. Il faut vérifier que le diagramme suivant est commutatif
$$
\xymatrix @R = 1.0cm @C = 2.5cm @M=0.4pc{
X^\alpha \ar[d]_-{W_{\calM^+}(\uP)} \ar[r]^{\Psi} &
     X^\alpha/X_n \ar[d]^-{W_{\calM^+/X_n}(\uP)}
\\
\pi_{M^+}\big((X^\alpha/X_i^{d_i})P_i\big) \ar[r]^{\Psi} & 
     \pi_{\calM^+/X_n}\big((X^\alpha/X_i^{d_i}X_n)P_i\big)
\\
}
$$
D'après le point précédent appliqué à $\calM = \bfA[\uX]_d$, tout monôme $X^\beta$
de $(X^\alpha/X_i^{d_i})P_i$ est divisible par~$X_n$ et il suffit donc de vérifier
que
$$
\pi_{\calM^+} (X^\beta) = \pi_{\calM^+/X_n} (X^\beta/X_n) 
$$
ce qui est immédiat.

\smallskip
\noindent
iii) Sur les monômes $X^\alpha$ de $\calM^\circ = \calM'$, la sélection
$\minDiv$ ne prend pas la valeur $n$ puisque $\alpha_n = 0$.
Ceci fait que ni les monômes des $P_i$ divisibles par $X_n$, ni $P_n$
n'interviennent dans $W_{\calM^\circ}(\uP)$ et conduit à l'égalité
des endomorphismes.

\noindent
D'après i), on a:
$$
\det W_\calM(\uP) \ = \ 
\det W_{\calM^+}(\uP) \times 
\det W_{\calM^\circ}(\uP) 
$$
Mais d'après le point ii), on a $\det W_{\calM^+}(\uP) = \det W_{\calM^+/X_n}(\uP)$.
Et d'autre part
$$
\det W_{\calM^\circ}(\uP) =  \det W_{\calM^\circ}(\uP^\circ) = \det W_{\calM'}(\uP')
$$
d'où le résultat.
\end{proof}

\begin{proof}[Preuve du théorème~\ref{DetW1dDiviseDetW1dplus1}] \leavevmode

\noindent
i)  On applique le point iii) de la proposition précédente à $\calM = \Jex_{1,d+1}$.
D'après le lemme, on a d'une part $\calM^+/X_n = \Jex_{1,d}$ et d'autre part
dans $\bfA[\uX']$, $\calM' = \Jex_{1,d+1}$, ce qui donne le résultat. 

\medskip
\noindent
ii) D'après le premier point, la propriété est acquise pour $\sigma = \id$.
Cette propriété s'appuie sur le  mécanisme de sélection $\minDiv$ que
l'on peut remplacer par $\sminDiv$, d'où le résultat.

\smallskip
\noindent
Cependant, si le lecteur a besoin d'expliciter le cofacteur, voici quelques détails
qui pourront lui être utiles.
Il faut bien entendu adapter à $\sigma$ les définitions de $\calM^+,\calM^\circ,\calM'$ et
faire intervenir $\calM^+/X_{\sigma(n)}$. La suite $\uX'$ désigne maintenant
$(X_1, \cdots, X_n)$ privée de $X_{\sigma(n)}$ et $\uP'$ le système de $\bfA[\uX']$ défini par:
$$
P'_i = P_i(X_{\sigma(n)} := 0) \qquad\qquad
\uP' = (P'_1, \cdots, P'_n) \setminus \{P'_{\sigma(n)}\}
$$
Enfin, soit $\sigma' \in \fS_{n-1}$ définie pour $1 \leqslant i \leqslant n-1$ par
la formule de gauche. Avec un exemple à droite:
$$
\sigma'(i) = \begin {cases}
  \sigma(i)   &\hbox{si } \sigma(i) < \sigma(n) \\
  \sigma(i)-1 &\hbox{si } \sigma(i) > \sigma(n) \\
\end {cases}
\qquad\qquad\quad
\begin{array}{c|c|c|c|c|c|c|c|c|c|}  
       &1 &2 &3 &4 &5 &6 &7 &8 &9 \\
\hline 
\sigma &7 &3 &1 &6 &9 &8 &2 &4 &5 \\
\hline 
\sigma'&6 &3 &1 &5 &8 &7 &2 &4\\
\end{array}
$$
Cette permutation $\sigma' \in \fS_{n-1}$ est caractérisée par la propriété suivante: la
restriction de $\sigma$ à $\{1..n-1\}$ induit un isomorphisme
d'ensembles ordonnés $\sigma : \big(\{1..n-1\}, <_{\sigma'}\big)
\buildrel \simeq \over\longrightarrow \big(\sigma(\{1..n-1\}),
<_\sigma\big)$.

\noindent
Alors:
$$
\det W^\sigma_\calM(\uP) \ = \ \det W^\sigma_{\calM^+/X_{\sigma(n)}}(\uP) \times  \det W^\sigma_{\calM^\circ}(\uP)
\ = \ \det W^\sigma_{\calM^+/X_{\sigma(n)}}(\uP) \times  \det W^{\sigma'}_{\calM'}(\uP') 
$$
En particulier:
$$
\det W^\sigma_{1,d+1}(\uP) \ =\  \det W^\sigma_{1,d}(\uP) \, \det W^{\sigma'}_{1,d+1}(\uP')
$$
\end{proof}

\subsection{Une suite $\protect \uP$ régulière un peu spéciale}

Considérons $\uP = (Y,X,Z^3)$ de format $D = (1,1,3)$ de
$\delta=2$. Il s'agit d'une suite régulière, mais qui présente un
certain caractère récalcitrant: la plupart des objets matriciels que
nous avons attachés à~$\uP$ dans les sections précédentes ne
présentent aucune utilité: ils sont triviaux.  Afin de mieux contrôler
les calculs, nous allons utiliser une forme avoisinante, qui est la
suite généralisée $(aY, bX, cZ^3)$, où $a,b,c$ sont des indéterminées.

\index{système!z@$(aY,bX,cZ^3)$}%

\begin{prop} \label{YXZ3}
Soit $\uP = (aY, bX, cZ^3)$ de $\delta = 2$, $d \geqslant \delta$ et $\sigma \in \fS_3$ quelconque. 

\begin{enumerate}[\rm i)]
\item La forme linéaire $\omega^\sigma$ est nulle. 
En particulier, $\det W_{1,\delta}^\sigma = 0$.

\item  On a $\det W_{1,d}^\sigma = 0$.

\item On a $\det W_{2,d}^\sigma = 0$. Et pour $d = \delta$, $W_{2,d}^\sigma$
est la matrice nulle $1 \times 1$.
\end{enumerate}
\end{prop}

\begin{proof} 
i) Pour $F \in \bfA[X,Y,Z]_2$, l'endomorphisme $\Omega^\sigma(F)$
transforme les deux vecteurs de base distincts ci-dessous $X^2,Y^2$
en deux colonnes proportionnelles
$$
X^2 \ \mapsto \ \dfrac{X^2}{X} P_1 = aXY 
\qquad \text{et } \qquad 
Y^2 \ \mapsto \ \dfrac{Y^2}{Y} P_2 = bXY 
$$
Ainsi son déterminant $\omega^\sigma(F)$ est nul. 
On a donc montré que $\omega^\sigma = 0$.

\medskip
\noindent
Soit $d \geqslant \delta = 2$ et $\sigma \in \fS_3$.

ii) Supposons que pour l'ordre $<_{\sigma}$, l'entier $3$ soit le plus grand. Alors
$$
W_{1,d}^\sigma : \qquad  X^2 Z^{d-2} \ \mapsto \ aXY Z^{d-2},  \qquad
Y^2 Z^{d-2} \ \mapsto \ bXY Z^{d-2} 
$$
Ainsi, on a $\det W_{1,d}^\sigma = 0$. Idem si $1$ est le plus grand
pour l'ordre $<_{\sigma}$:
$$
W_{1,d}^\sigma : \qquad X^2 X^{d-2} \ \mapsto \ aXY X^{d-2}, \qquad
Y^2 X^{d-2} \ \mapsto \ bXY X^{d-2} 
$$
On laisse le soin au lecteur de traiter le cas qui reste ($2$ maximum pour l'ordre $<_{\sigma}$).

\medskip

iii)
Les monômes $XY^{d-1}$ et $X^{d-1} Y$ sont dans $\Jex_{2,d}$. 
Suivant la valeur de $\sigma$, l'image de l'un de ses monômes par $W_{2,d}^\sigma$ est nulle.

\noindent
Si $1 <_\sigma 2$, on a 
$W_{2,d}^\sigma(XY^{d-1}) = \pi_{\Jex_2}\Big(\dfrac{XY^{d-1}}{X} aY \Big) = \pi_{\Jex_2}(aY^{d}) = 0$.

\noindent
Si $2 <_\sigma 1$, on a 
$W_{2,d}^\sigma(X^{d-1}Y) = \pi_{\Jex_2}\Big(\dfrac{X^{d-1}Y}{Y} bX \Big) = \pi_{\Jex_2}(bX^{d}) = 0$.

\medskip
Pour $d = \delta$, on a $\Jex_{2,\delta} = \bfA XY$ de dimension 1 donc $W^\sigma_{2,\delta} = 0$.
\end{proof}

\begin{rmq}
Soit $\uP = (P_1, P_2, P_3)$ de même format $D = (1,1,3)$. 
On laisse le soin au lecteur de vérifier que $\det W_{2,\delta}^\sigma(\uP)$ est
ou bien $\coeff_X(P_1)$ ou bien $\coeff_Y(P_2)$.  On comprendra plus
tard que c'est essentiellement la nullité de $\det W_{2,\delta}^\sigma$ qui est en cause.
En effet, celle de $\det
W_{2,\delta}$ entraîne celle de~$\omega$ puisque chaque coefficient de
$\omega$ est multiple de $\det W_{2,\delta}$ (cf. section~\ref{sousSectionMacaulayRecurrence}).
Et également celle de $\det W_{2,d}$ pour $d \geqslant \delta$ puisque
$\det W_{2,d}$ est multiple de $\det W_{2,d'}$
pour $d \geqslant d'$ (cf.~\ref{DivisibiliteEntreW2}).  Idem
pour $\sigma$ quelconque.
\end{rmq}

\subsection{Homogénéité et poids en $P_i$ des divers déterminants}

\index{poids (en $P_i$)}%

Ici, nous supposons le système $\uP$ générique, ainsi $\bfA = \bfk[\indetsPi]$.  Il
s'agit de prouver que les divers déterminants introduits précédemment,
vus dans $\bfA$ comme des polynômes en les coefficients des $P_i$,
sont homogènes en chaque $P_i$ et de préciser ce poids d'homogénéité.

Nous allons voir que ce poids d'homogénéité n'est autre qu'une dimension
d'un sous-module monomial ad-hoc de $\bfA[\uX]$, dépendant du déterminant
considéré. A la base, comme on le voit dans le premier point de la
proposition suivante, il y a le déterminant $\det W_\calM(\uP)$ où
$\calM$ est un sous-module monomial de~$\Jex_{1,d}$.
La généralisation aux endomorphismes 
$W_{\calM}^\sigma$ est immédiate: il suffit de tordre la relation d'ordre 
classique sur $\{1..n\}$ par $\sigma \in \fS_n$ 
comme indiqué en~\ref{SigmaVersion} ; 
concrètement, il s'agit de remplacer toutes les occurrences de 
$\minDiv$ par $\sminDiv$ !

\medskip

Commençons par donner un exemple: celui de la matrice de $W_{\calM}$
pour $\calM = \Jex_{2,d}$ avec $D = (3,1,2)$ et $d=5$ :
$$
W_{2,5} \ = \ 
\EastBordermatrix{
a_{1} & . & . & . & . & . & . & . & . & . & \Heti{X^{4}Y} \\ 
a_{2} & a_{1} & . & . & . & . & . & . & . & . & \Heti{X^{3}Y^{2}} \\ 
a_{3} & . & a_{1} & . & . & . & . & . & . & . & \Heti{X^{3}YZ} \\ 
. & . & . & a_{1} & b_{1} & . & . & . & . & . & \Heti{X^{3}Z^{2}} \\ 
a_{6} & . & a_{3} & a_{2} & b_{2} & b_{1} & . & . & . & . & \Heti{X^{2}YZ^{2}} \\ 
a_{9} & a_{6} & a_{5} & a_{4} & . & b_{2} & . & b_{1} & . & . & \Heti{XY^{2}Z^{2}} \\ 
a_{10} & . & a_{6} & a_{5} & . & b_{3} & b_{2} & . & b_{1} & . & \Heti{X YZ^{3}} \\ 
. & a_{9} & a_{8} & a_{7} & . & . & . & b_{2} & . & . & \Heti{Y^{3}Z^{2}} \\ 
. & a_{10} & a_{9} & a_{8} & . & . & . & b_{3} & b_{2} & . & \Heti{Y^{2}Z^{3}} \\ 
. & . & a_{10} & a_{9} & . & . & . & . & b_{3} & b_{2} & \Heti{YZ^{4}} \\ 
}
$$
Il ne fait aucun doute que son déterminant est homogène en chaque $P_i$ de
poids le nombre de colonnes de type $P_i$ \idest{} de poids~4 en $P_1$,
de poids~6 en $P_2$ et de poids~0 en $P_3$. La proposition qui suit
fournit une formalisation de ce genre de résultat.

\begin{prop}[Poids]\leavevmode 
\label{PoidsDetW}

\begin{enumerate}[\rm i)]
\item 
Soit $\calM$ un sous-module monomial de $\Jex_{1,d}$.
Le déterminant $\det W_\calM$ est un polynôme homogène de~$\bfA$, de poids en $P_i$ égal à 
$\dim \calM^{(i)}$, où $\calM^{(i)}$ désigne le sous-module de $\calM$ 
de base les $X^\alpha$ tels que $\minDiv(X^\alpha) = i$

\item 
Soit $h \geqslant 1$.
Le poids en $P_i$ de $\det W_{h,d}$ est égal à $\dim \Jex_{h,d}^{(i)}$.
En particulier, $\det W_{h,d}$ ne dépend pas des $h-1$ derniers polynômes.
Plus généralement, pour $\sigma\in \fS_n$, $\det W_{h,d}^\sigma$ ne dépend pas de $P_i$ où $i$ parcourt les $h-1$ 
derniers éléments de $\big(\sigma(1), \dots, \sigma(n)\big)$.

\item 
Le déterminant $\det W_{1,d}$ est homogène en $P_n$ de poids 
$\dim \Jex_{1\setminus 2,d}^{(n)}$.

Plus particulièrement, pour $d \geqslant \delta$, ce poids vaut 
$$
\left \{
\begin{array}{ll}
\widehat d_n -1 & \text{si $d = \delta$} \\ [5pt]
\widehat d_n & \text{si $d \geqslant \delta+1$} \\ 
\end{array} 
\right.
\qquad \hbox {où\quad $\widehat d_n  = d_1 \cdots d_{n-1}$}
$$
Plus généralement, le poids de $\det W_{1,d}^\sigma$ en $P_{\sigma(n)}$ est 
$\dim \Jex_{1\setminus 2,d}^{(\sigma(n))}$, avec les mêmes précisions que ci-dessus
(en remplaçant $d_n$ par $d_{\sigma(n)}$).

\item 
Chaque scalaire $\omega(X^\beta)$ est homogène en $P_i$ 
de poids $\dim \Jex_{1,\delta}^{(i)}$, indépendant de $X^\beta$.

En particulier, il est homogène en $P_n$ de poids 
$\dim \Jex_{1\setminus 2, \delta}^{(n)} = \widehat d_n -1$. 

\medskip

Plus généralement, $\omega^\sigma(X^\beta)$ est homogène en $P_{\sigma(n)}$ 
de poids $\dim \Jex_{1\setminus 2, \delta}^{(\sigma(n))} = \widehat d_{\sigma(n)} -1$. 
\end{enumerate}
\end{prop}

\label{NOTA05-M^{(i)}}%
\label{NOTA05-Jexhdi}%
%
%
%

\begin{proof} \leavevmode
i)
Raisonnons matriciellement relativement à la base monomiale de $\calM$.
Les coefficients non nuls d'une colonne de
la matrice de $W_{\calM}$ sont les coefficients d'un polynôme
de type $P_i$ \idest{} de la forme  $(X^\alpha/X_i^{d_i})\,P_i$
dans la base fixée, où $i = \minDiv(X^\alpha)$.

\medskip
Ainsi, le poids en $P_i$ du déterminant est 
le nombre de colonnes de type $P_i$. Ce nombre est
égal au nombre de monômes $X^\alpha \in \calM$
tels que $\minDiv(X^\alpha) = i$.
Ce poids est donc $\dim \calM^{(i)}$.

\medskip
ii) C'est un cas particulier du point i) avec $\calM = \Jex_{h,d}$.

\medskip
iii)
Confer le lemme de dénombrement \ref{LemmeDenombrementJ1moins2id}.

\medskip
iv) 
Par définition, $\omega(X^\beta)$ est le déterminant 
de $\Omega(X^\beta)$. En utilisant un développement du déterminant 
par rapport à la colonne-argument, on voit que ce déterminant 
est, dans la matrice de Sylvester $\Syl_\delta$,
le mineur d'une sous-matrice obtenue en supprimant 
la ligne $X^\beta$, cf. le dessin à la page \pageref{omegaDessin}.
Chaque colonne de cette sous-matrice est constituée de coefficients
d'un polynôme de la forme $(X^\alpha/X_i^{d_i})\,P_i$ avec $i = \minDiv(X^\alpha)$,
colonne qualifiée de type $P_i$.
Le nombre de colonnes de type $P_i$ est le nombre de $X^\alpha \in \Jex_{1,\delta}$
tels que $i = \minDiv(X^\alpha)$ \idest{} la dimension annoncée.

Le cas particulier $i = n$ provient du fait que $\Jex_{1}^{(n)} = \Jex_{1\setminus2}^{(n)}$: si
$\minDiv(X^\alpha) = n$, c'est que $\DivSeq(X^\alpha)$ est réduit à $\{n\}$.
\end{proof}

\medskip

Pour un système $\uP$ quelconque, non nécessairement générique, une manière
de raconter l'homogénéité en $P_i$ de $\det W_\calM$  réside dans l'égalité
suivante où $\lambda_1, \dots, \lambda_n \in \bfA$:
$$
\det W_\calM^\sigma(\lambda_1P_1, \dots, \lambda_nP_n) = \pi\, \det W_\calM^\sigma(\uP)
\qquad \hbox {avec} \qquad
\pi = \prod_{X^\alpha \in \calM} \lambda_{\sminDiv(X^\alpha)}
$$
Dans l'énoncé suivant, le système $\uP$ couvre le jeu étalon généralisé
$(p_1X_1^{d_1}, \dots, p_nX_n^{d_n})$ (cf.~\ref{JeuCouvrantEtalon}),
et l'homogénéité est relative aux indéterminées $p_1, \dots, p_n$.
La preuve en est laissée au lecteur.

\begin{prop}[Composante homogène dominante]\leavevmode

Soit $\uP$ un système couvrant le jeu étalon généralisé $\pXD = (p_1X_1^{d_1}, \dots, p_nX_n^{d_n})$.

\begin{enumerate}[\rm i)]
\item 
Soit $\calM$ un sous-module monomial de $\Jex_{1,d}$.

Alors la composante homogène dominante de $\det W_{\calM}(\uP)$ 
est $\det W_{\calM}(\pXD)$ \idest{}
$$
\prod_{X^\alpha \in \calM} p_{\minDiv(X^\alpha)} 
\ = \ 
p_1^{\nu_1} \cdots p_n^{\nu_n}   \qquad
\text{avec $\nu_i = \dim \calM^{(i)}$}
$$  

\item 
La composante homogène dominante de $\omega_{\uP}(X^\emouton)$ 
est $\det W_{1,\delta}(\pXD)$ \idest{}
$$
\prod_{X^\alpha \in \Jex_{1,\delta}} p_{\minDiv(X^\alpha)} 
\ = \ 
p_1^{\nu_1} \cdots p_n^{\nu_n}   \qquad
\text{avec $\nu_i = \dim \Jex_{1,\delta}^{(i)}$}
$$  
\end{enumerate}
\end{prop}


\subsection{Profondeur de l'idéal engendré par les $\big(\det W_{h,d}^\sigma(\protect\uP)\big)_\sigma$}
\label{sectionProfondeurWh}

Notre objectif est d'obtenir, lorsque le système $\uP$ couvre le jeu étalon généralisé $\pXD$
(cf.~\ref{JeuCouvrantEtalon}), 
la minoration suivante de la profondeur :
$$
\Gr\big(\det W^\sigma_{h,d}(\uP),\, \sigma \in \fS_n\big) \,\geqslant\, h
$$
Nous avons vu (dernier point de la proposition \ref{ProprieteWh}) que
l'endomorphisme excédentaire $W_h$ ne dépend pas des $h-1$ derniers
polynômes du système~$\uP$ et l'inégalité ci-dessus précise
donc la dépendance en les $n-(h-1)$ premiers polynômes.

\medskip

\index{idéal!de Veronese}%

Nous allons d'abord examiner le cas particulier du jeu étalon
généralisé $\pXD = (p_1X_1^{d_1}, \dots, p_nX_n^{d_n})$ et notre
anneau habituel $\bfA$ est ici l'anneau de polynômes $\bfR[p_1,
  \ldots, p_n]$.  C'est dans ce cadre que l'idéal de Veronese
$\scrV_{n-(h-1)}$ de $\bfR[p_1, \ldots, p_n]$ joue un rôle
important. On rappelle (cf. le théorème \ref{ProfondeurVeronese} et son
contexte) que, d'une part, on a noté $\scrV_r$ l'idéal monomial de
$\bfR[p_1, \ldots, p_n]$ engendré par les monômes sans facteur carré
de degré~$r$ et que d'autre part, on a montré:
$$
\Gr \big(\scrV_{n-(h-1)}\big) \, \geqslant\, h
$$
Jugeant ce $n-(h-1)$ typographiquement désagréable 
et pour mettre le paramètre $h$ en évidence, 
nous proposons d'utiliser une astuce notationnelle~:
$$
\hderniers 
= \{1,\dots,n-(h-1)\}   \qquad\qquad
\hbox {\footnotesize permettant l'utilisation de $\#\hderniers$ pour $n-(h-1)$}
$$
Enfin, histoire de faire des économies, nous allons pouvoir
remplacer le groupe symétrique $\fS_n$ par un sous-ensemble
beaucoup plus petit, la petitesse étant fonction de $h$. Ainsi, nous
désignons par $\Sigma_h$ n'importe quel sous-ensemble de $\fS_n$ vérifiant:
$$
\hbox {toute partie de $\{1..n\}$ de cardinal $\#\hderniers$ est de la forme $\sigma(\hderniers)$ pour
  un certain $\sigma \in \Sigma_h$}
$$
Puisque par définition on a 
$\{ \sigma(\hderniers),\ \sigma \in \Sigma_h \} = \calP_{\#\hderniers}\big(\{1..n\}\big)$,
on peut écrire:
$$
\scrV_{\#\hderniers} 
\ = \ 
\langle 
\,  p_{\sigma(\hderniers)},\ \sigma \in \Sigma_h \, 
\rangle
\qquad \text{où $p_{\sigma(\hderniers)} \ =\ p_{\sigma(1)} \cdots p_{\sigma(n-h+1)}\ =\ \prod_{i\in \sigma(\hderniers)} p_i$}
$$
Voici des exemples de parties $\Sigma_h$ de $\fS_n$~:
$$
\Sigma_2 = \big\{(n,i), \  1\leqslant i \leqslant n \big\}, 
\qquad\qquad
\Sigma_3 = \big\{(n,i)(n-1,j),\  1 \leqslant i < j \leqslant n \big\}
$$

\label{NOTA05-Sigma2}%
\label{NOTA05-Sigma3}%
\label{NOTA05-Sigmah}%
%
%

\begin{prop} \label{GrWhJeuEtalonGen}
Soit $\pXD$ le jeu étalon généralisé. 
Pour tout $d$, on a 
$$
\Gr \big(
\det W_{h,d}^\sigma(\pXD), \ 
\sigma \in \Sigma_h
\big) 
\ \geqslant \ h
$$
\end{prop}

\begin{proof}
Soit $\sigma \in \fS_n$. 
Pour tout $d$, puisque la matrice de $W_{h,d}^\sigma(\pXD)$ 
est diagonale avec des $p_j$ sur la diagonale, on a~:
$$
\det W_{h,d}^\sigma(\pXD) 
\ = \ 
\prod_{i \in \sigma(\hderniers)} p_i^{\nu_{i,d}}  
$$
où $\nu_{i,d}$ est la dimension du $\bfA$-module 
de base les $X^\alpha \in \Jex_{h,d}$ tels que $\sminDiv(X^\alpha) = i$.
Ainsi il existe un exposant $e$ qui dépend de $h$ et de $d$ tel que 
$$
\big(\scrV_{\#\hderniers}\big)^{e}
\ \subset \ 
\langle 
\det W_{h,d}^\sigma(\pXD), \ 
\sigma \in \Sigma_h
\rangle 
$$
D'après~\ref{ProfondeurVeronese}, on sait que $\Gr( \scrV_{\#\hderniers} ) \geqslant h$.
On termine en utilisant le fait que la profondeur se comporte bien vis-à-vis des puissances 
et de l'inclusion, cf.~\ref{Gr2ProprietesElementaires}.
\end{proof}

\begin{rmq} 
Nous n'avons pas mentionné la dépendance des exposants $\nu_{i,d}$ en $h$ et $\sigma$.
Remarquons que $\nu_{i,d} = 0$ pour $i \notin \sigma(\hderniers)$.
Pour $i \in \sigma(\hderniers)$ fixé, la suite $\nu_{i,d}$ est croissante en $d$ et on a
$\nu_{i,d} \geqslant 1$ pour $d \geqslant d_{\max} = \max\limits_{\#I = h} d_I$.
\end{rmq}

\begin{theo}\label{ProfondeurWh}
Soit $\uP$ un jeu couvrant le jeu étalon généralisé (en particulier la suite générique).
Pour tout $d$, on a 
$$
\Gr \big(
\det W_{h,d}^\sigma(\uP), \ 
\sigma \in \Sigma_h
\big) 
\ \geqslant \ h
$$
\end{theo}

\begin{proof}
Examinons la composante homogène dominante de $\det W_{h,d}^\sigma(\uP)$, polynôme 
de $\bfR[p_1, \dots, p_n]$. 
Un coefficient de $W_{h,d}^\sigma(\uP)$ est de degré $1$ s'il est sur la diagonale, 
et de degré $0$ sinon
(un coefficient de la diagonale est un certain $p_i$ et les coefficients hors de la diagonale 
sont dans $\bfR$).
Par conséquent, dans le développement de ce déterminant, 
un produit indexé par une permutation $\tau$ est, 
dans $\bfR[p_1, \dots, p_n]$,
un monôme de degré le nombre de points fixes de $\tau$.
La composante homogène dominante de $\det W_{h,d}^\sigma(\uP)$ 
est donc réduite au produit correspondant à l'identité. 
C'est exactement $\det W_{h,d}^\sigma(\pXD)$.
On conclut à l'aide de~\ref{GrWhJeuEtalonGen}
et du contrôle de la profondeur par les composantes homogènes dominantes, 
cf.~\ref{ControleProfondeur}.
\end{proof}

\subsection {Note terminologique concernant les idéaux $\Jex_h$ et les endomorphismes~$W_h(\protect\uP)$}

Donner un nom aux objets et attribuer telle notion à tel auteur n'a
pas été facile.  Commençons par les idéaux monomiaux $\Jex_h$. La
terminologie ``monômes repus'' (ceux de $\Jex_1$), ``monômes dodus''
(ceux de~$\Jex_2$) utilisée par Jouanolou (cf. \cite[section
  3.9.1]{J7}) ne nous convenant pas (probablement à cause de la
connotation ``surpoids''), nous avons choisi de ne pas l'utiliser.  Par
ailleurs, nous estimons que les idéaux monomiaux $\Jex_1, \Jex_2$ (qui
n'apparaissent pas chez cet auteur), et également tous les $\Jex_h$
pour $1 \leqslant h \leqslant n$, jouent un rôle structurel non
négligeable.  Nous avons choisi ``idéal $h$-excédentaire'' pour
$\Jex_h$, l'adjectif excédentaire nous semblant plus délicat que ceux
de Jouanolou.

\medskip
En ce qui concerne les matrices et leurs mineurs, c'est un peu plus compliqué.
Nous pensons que tout le monde s'accorde sur la notion de matrice de Sylvester
$\Syl_d = \Syl_d(\uP)$ en degré $d$
$$
\rmK_{1,d}(\uP) = \bfA[\uX]_{d-d_1} \oplus \cdots \oplus  \bfA[\uX]_{d-d_n}  \to  \bfA[\uX]_d,
\qquad (U_1, \dots, U_n) \mapsto U_1 P_1 + \cdots + U_nP_n
$$
Quant aux divers mineurs de $\Syl_d(\uP)$ et tout ce qui gravite
autour, nous avons choisi la lettre~$W$, le choix de cette lettre
étant initialement un acronyme pour weight (poids dans un sens non
mathématique). L'utilisation des matrices $W_{h,d}$, $W_\calM$, qui
sont en fait des \emph {endomorphismes}, a complètement résolu le
problème de signe des mineurs (plus besoin de matrices strictement
carrées), nous a permis un traitement uniforme et des notations
commodes.  Mais comment fallait-il qualifier/baptiser ces diverses
matrices~$W_\calM$? De Macaulay?  Excédentaires?  A notre
connaissance, l'idéal $\Jex_h$ et l'endomorphisme $W_h$ n'existent pas
dans la littérature pour $h \geqslant 3$. Et il faut noter que nous
disposons également d'une version $W^\sigma_{h}$ pour une permutation
$\sigma \in \fS_n$, permutation destinée à modifier l'ordre habituel
de $\{1..n\}$.

\bigskip

Nous rappelons que la notion de matrice strictement carrée à été
précisée dans les premières pages du chapitre~\ref{ChapJeuEtalonGeneralise}
et qu'elle permet de définir sans ambiguité de signe la valeur d'un mineur.

\medskip

Dans \cite[3.9.3.2]{J7}, on voit apparaître, \textit{en tout degré $d$},
deux matrices strictement carrées qui sont les
transposées $\transpose{W_{1,d}(\uP)}$ et $\transpose{W_{2,d}(\uP)}$. 
La formule de Macaulay (faisant intervenir les déterminants de ces matrices en degré 
$d \geqslant \delta+1$) fait l'objet de sa proposition 3.9.4.4. 
La preuve a lieu par récurrence sur le nombre d'indéterminées.
On trouve également cette même preuve ainsi que les ingrédients nécessaires
à sa mise en place, dans la thèse de Chardin \cite[section 4.III]{Chardin4}.
\`A noter que chez Chardin, 
intervient un mineur particulier, baptisé mineur de Macaulay,
mais qui n'est pas directement reliable à nos déterminants
(quand on regarde de près, on comprend qu'il s'agit de 
$\det W_{2,\delta+1}(\uP) \times \Res(P_1, \dots, P_{n-1}, X_n^{d_n})$).

\medskip

Le déterminant $\det W_{1,\delta+1}$ est également visible, au signe
près, chez Cox, Little \& O'Shea dans \cite[chap 3, \S 4, p. 104, def 4.2.]{Cox}
et est noté $D_n$ ; 
on voit apparaître, avant la proposition (4.7), la notation~$D_i$ pour
désigner n'importe quel $\det W^{\sigma_i}_{1,\delta+1}$ où
$\sigma_i \in \fS_n$ vérifie $\sigma_i(n) = i$. L'énoncé (sans
preuve) de la formule de Macaulay, au signe près et en degré $\delta+1$,
figure dans le théorème 4.9 de ce chapitre 3 (au signe près car les
matrices ne sont pas strictement carrées).

\medskip

Dans~\cite[tout début de la section 6]{Demazure1}, Demazure choisit 
de nommer notre $W_{1,d}$ \og \textit{la
matrice de Sylvester (classique pour $d = \delta+1$)}\fg{}, 
alors qu'il s'agit en fait d'une sous-matrice de~$\Syl_{d}$.
Dans un article plus récent~\cite[p. 351]{Demazure2}, il utilise à nouveau la
terminologie de déterminant de Sylvester, qu'il note $D_\sigma$ et qui
n'est autre que notre $\det W^\sigma_{1,\delta+1}$ (à cet endroit de
son article, le résultant n'est pas encore en place).

\bigskip

On ne peut pas dissocier les mineurs de $\Syl_d(\uP)$ de la formule de
Macaulay (celle qui donne le résultant), que nous prouverons plus loin
de deux manières radicalement différentes.  Une première preuve
classique (par récurrence sur $n$) fait l'objet du
théorème~\ref{MacaulayParRecurrence}.  Disons le tout de suite: cette
preuve par récurrence ne nous semble pas du tout élégante et nous lui
préférons la preuve plus structurelle qui figure dans le chapitre~\ref{ChapBW},
même si cette dernière demande plus de travail.

L'approche par récurrence reste mystérieuse malgré le fait
d'avoir traité le cas du jeu étalon généralisé $\pXD = (p_1X_1^{d_1},
\dots, p_nX_n^{d_n})$ dans le
chapitre~\ref{ChapJeuEtalonGeneralise}, chapitre dans lequel on voit
apparaître les idéaux $\Jex_1, \Jex_2$ portant la structure du
numérateur et dénominateur de la formule de Macaulay.

\smallskip
Dans la suite, on retrouvera les objets $W$ à plusieurs reprises, en particulier
dans le chapitre~\ref{ChapWW} (Des identités entre déterminants excédentaires $\det W_\calM(\uP)$)
contenant un certain nombre d'identités entre déterminants de type~$W$.
Nous n'y sommes pas allés avec le dos de la cuillère.

\cleardoublepage

\section{Le jeu circulaire $(X_1^{d_1} - R_1,\dots,X_n^{d_n}-R_n)$ avec $R_i = X_i^{d_i-1}X_{i+1}$}
\label{ChapJeuCirculaire}

Dans ce chapitre, nous allons étudier certains jeux $\uQ$ de la forme $\uQ = \uX^D -
\uR$ où $\uR$ est un jeu monomial de format $D = (d_1,\dots,d_n)$ i.e. chaque $R_i$ est
un monôme de degré $d_i$. Il y a donc équivalence entre se donner le
jeu $\uQ$ ou se donner le jeu $\uR$; il nous arrivera de privilégier
l'un ou l'autre selon le contexte.

\index{jeu!monomial!de format $D$}%

\subsection{Le jeu $\protect\uQ = \big(X_1^{d_1-1}(X_1-X_2),X_2^{d_2-1}(X_2-X_3),\dots,X_n^{d_n-1}(X_n-X_1)\big)=
            \protect\uX^D - \protect\uR$}
\label{IntroJeuCirculaire}

Ici, on souhaite exhiber un système homogène $\uQ = (Q_1, \dots, Q_n)$, à
coefficients dans $\bbZ$, de degré $(d_1, \dots, d_n)$ ayant comme seul zéro
projectif le point $\Un = (1 : 1 : \cdots : 1)$ et tel que ce zéro soit
simple. Ceci dans l'intention d'obtenir le fait que $\det W_{1,\delta}(\uQ)$
est inversible, par exemple $\det W_{1,\delta}(\uQ) = \pm 1$
(c'est la situation opposée de $\det W_{1,d}(\uP) = 0$ pour $d \geqslant \delta+1$ dès 
que les polynômes $P_i$ ont un zéro commun).
\og Géométriquement \fg{}, on voudrait donc que dans chaque carte affine 
$\{X_i = 1\}$, le point affine $(1, 1, \dots, 1)$ soit le seul zéro du système affine
$\uQ(X_i := 1)$ et soit simple. Ce qui se traduit algébriquement  
par l'égalité d'idéaux de $\bfk[X_1,\dots,X_{i-1},X_{i+1},\dots,X_n]$ :
$$
\left \langle \uQ(X_i := 1)\right \rangle 
\ = \ 
\langle X_1-1,\, \dots, \, X_{i-1}-1,\, X_{i+1}-1, \dots, \,X_n-1\rangle
$$
On peut aussi formuler cela de manière globale homogène en imposant 
$\langle \uQ\rangle^\sat = I$ où 
$I$ désigne l'idéal du point $\Un = (1 : 1 : \cdots : 1)$ c'est-à-dire 
$$
I 
\ = \ 
\langle X_i-X_j,\  1 \leqslant i,j \leqslant n \rangle
\ = \ 
\langle X_1-X_2,\  X_2-X_3,\ \dots, \, X_n-X_1 \rangle
$$
Cet idéal $I$ est égal à 
$\{ F \in \bfk[\uX] \ \mid\ F(t,\dots,t) = 0\}$ où $t$ est une indéterminée ;
l'inclusion $\subset$ est claire. 
Dans l'autre sens, en remarquant 
que ${X_i \equiv X_j \bmod I}$, on a 
${F(X_1,X_2,\dots,X_n) - F(X_1,X_1,\dots,X_1) \in I}$ 
pour tout $F \in \bfk[\uX]$ ;
par conséquent, si $F(t,\dots, t) = 0$, alors $F \in I$.
On notera que cette caractérisation de $I$ permet facilement de vérifier 
que l'idéal $I$ est saturé.

\label{NOTA06-UnitPoint}%

\medskip
\og Le \fg{} système $\uQ$ qui vient à l'esprit est :
$$
Q_i(\uX) = X_i^{d_i-1}(X_i - X_{i+1}) 
\qquad 
\hbox {en convenant de $X_{n+1} = X_1$}
$$
Voici une première explication de \og nature géométrique \fg{}.
Du point de vue zéro projectif, la résolution schématique de 
$$
\left\{
\begin {array}{c}
x_1^{d_1-1}(x_1 - x_2) = 0 \\ [0.5em]
x_2^{d_2-1}(x_2 - x_3) = 0 \\
     \vdots                \\
x_n^{d_n-1}(x_n - x_1) = 0 \\
\end {array}
\right.
$$
impose au point $(x_1 : \cdots : x_n)$ d'être égal à $\Un = (1 :
\cdots : 1)$ avec la multiplicité $1$.  En effet, considérons que pour
un point projectif, il y a nécessairement un $x_i$ inversible et
supposons pour simplifier que ce soit~$x_1$; on en
déduit~\mbox{$x_1=x_2$} puis \mbox{$x_2=x_3$}, etc.
Il en serait de même pour un point projectif 
seulement unimodulaire i.e. $1 \in \langle x_1, \cdots,x_n\rangle$,
hypothèse plus faible que l'existence d'un $x_i$ inversible.

Cette \og vérification géométrique \fg{} induit la vérification
algébrique de l'égalité \mbox{$\langle\uQ\rangle^\sat = I$} : dans
chaque localisé en $X_\ell$, on a $I = \langle\uQ\rangle$, d'où un
exposant $e$ tel que \mbox{$X_\ell^e I \subset \langle\uQ\rangle$.}

De manière algébrique plus précise concernant un tel exposant~$e$,
montrons que $X_\ell^\delta I \subset \langle\uQ\rangle$ pour tout~$\ell$. 
Par permutation circulaire, il suffit de le faire pour $\ell = 1$. 
L'identité
$$
X_1^{d_i-1}(X_1 - X_{i+1}) \ =\  
X_1^{d_i-1}(X_1 - X_i) + (X_1^{d_i-1} - X_i^{d_i-1})(X_i - X_{i+1})  + Q_i
$$
montre l'appartenance
$$
X_1^{d_i-1}(X_1 - X_{i+1}) \, \in \, \langle X_1 - X_i, \, Q_i\rangle
$$
En posant $e_i = \sum_{j=1}^i(d_j-1)$, on obtient
$X_1^{e_i}(X_1 - X_{i+1}) \in \langle X_1^{e_{i-1}}(X_1 - X_i),\, Q_i\rangle$ puis par
récurrence sur $i \leqslant n-1$, que :
$$
X_1^{e_i}(X_1 - X_{i+1}) \, \in \, \langle Q_1, \ldots, Q_i\rangle \, \subset \,  \langle\uQ\rangle
$$
Puisque $e_i \leqslant \delta$, on tire $X_1^\delta (X_1 -X_{i+1}) \in \langle\uQ\rangle$ 
pour tout $i \leqslant n-1$, d'où l'inclusion annoncée.

Reste le point le plus important concernant l'inversibilité de $\det W_{1,\delta}(\uQ)$. 
Effectivement, on va voir que ce système convient au sens
où $\det W_{1,\delta}(\uQ) = 1$. 
En réalité, le degré $d = \delta$ est pour $W_{1,d}(\uQ)$ un seuil critique qui partage 
la plage des degrés $d$ en deux intervalles : $d \leqslant \delta$ et $d \geqslant \delta+1$.
Pour $d \geqslant \delta+1$, 
le scalaire $\det W_{1,d}(\uQ)$ est nul car il appartient à l'idéal 
d'élimination, qui est réduit à $0$, vu le zéro commun $\Un = (1 : \cdots : 1)$ du système $\uQ$.
Nous invitons le lecteur à en donner une preuve directe 
(il pourra par exemple remarquer que chaque colonne de $W_{1,d}(\uQ)$ contient exactement 
deux coefficients non nuls qui sont $1$ et $-1$, si bien que la somme des lignes est nulle). 
Quant à la plage $d \leqslant \delta$, elle va nous entraîner dans une 
curieuse combinatoire,
révélant un certain nombre de propriétés remarquables du jeu circulaire 
figurant dans la définition ci-après.


\begin{defn}
Le jeu circulaire $\uQ$ de format $D = (d_1, \dots, d_n)$ est le jeu de polynômes :
$$
\uQ \ = \ \Big(X_1^{d_1 - 1}(X_1 - X_2), \ X_2^{d_2-1}(X_2-X_3), \ 
\dots, \ X_n^{d_n - 1}(X_n - X_1)\Big)
$$
Il est égal à la différence entre le jeu étalon $\uX^D$
et le jeu $\uR = (X_1^{d_1 - 1}X_2, \ X_2^{d_2-1}X_3,\ \dots, \ X_n^{d_n - 1}X_1)$.
\end{defn}

\index{jeu!circulaire}%

Avec ces notations, on a l'égalité d'endomorphismes de Macaulay :
$$
W_{\calM}(\uQ) = \id - W_{\calM}(\uR)
$$

%
%

\subsubsection{But et conséquence !}

En montrant que $\det W_{1,\delta}(\uQ) = 1$ (cf.~\ref{detW1deltaQ}), 
on récupère une propriété très forte 
de $\det W_{1,\delta}(\uP)$ pour la suite générique $\uP$, à savoir que 
ce scalaire est $\bfB_\delta$-régulier. Concrètement, cela dit que pour un polynôme
$F \in \bfA[\uX]_\delta$, on a l'implication 
$$
\det W_{1,\delta}(\uP) \times F \ \in \ \uPdelta 
\ \implies \ 
F \in \uPdelta
$$
Cette propriété de $\bfB_\delta$-régularité est conséquence du fait 
que le jeu circulaire $\uQ$ vérifie $Q_i(\Un) = 0$ pour tout~$i$ 
ce qui permet d'appliquer le résultat~\ref{GenPsatRegScalars} 
reformulé ici :
\begin{quote}
\it 
Soit $\uP$ la suite générique et 
$a \in \bfA = \bfk[\indetsPi]$.
Si la spécialisation de $a$ en le jeu circulaire $\uQ$ vaut $1 \in \bfk$, 
alors $a$ est régulier sur $\bfB_\delta$.
\end{quote}

\medskip

Essayons d'expliquer, pour une suite $\uP$, quel est l'impact du fait que
$\det W_{1,\delta}(\uP)$ soit $\bfB_\delta$-régulier.
Dans l'encadré, et dans l'encadré uniquement, nous tenons à faire figurer~$\uP$
(que nous omettons habituellement) sauf pour $\bfB_\delta = \bfA[\uX]_\delta/\uPdelta$.
Précisons que $\overline\omega : \bfB_\delta \to \bfA$ désigne la
forme linéaire $\omega$ passée au quotient; dire que $\overline\omega$ est
injective, c'est donc dire que $\Ker\omega = \uPdelta$.

\medskip

\fbox{\parbox {0.95\textwidth}{
\centerline {$\displaystyle
\det W_{1,\delta}(\uP) \quad \bfB_\delta\text{-régulier} \quad\overset {(1)}{\Longrightarrow}\quad  
\overline {\omega}_\uP \quad \text{injective} \quad\overset {(2)}{\Longrightarrow}\quad
\overline\omega_{\res,\uP} \quad\text{injective}
$}
\smallskip
$\uP$ régulière:

\noindent
\centerline {$\displaystyle
\overline\omega_{\res,\uP} \quad\text{injective} \quad\overset {(3)}{\Longrightarrow}\quad
\Gr(\omegaRes{\uP}) \ge 2 \quad\overset{(4)}{\Longrightarrow}\quad
\ElimIdeal = \langle \Res(\uP)\rangle
$}

\smallskip
$\uP$ quelconque:

La forme linéaire $\omegaresP : \bfA[\uX]_\delta \to \bfA$
possède la propriété Cramer: $\omegaresP(F)G - \omegaresP(G)F \in \uPdelta$, $\forall\, F, G \in
\bfA[\uX]_\delta$
}}

\bigskip

En ce qui concerne~(1), voir l'implication
\rotatebox[y=1.5mm]{-45}{$\Rightarrow$} dans le diagramme de
l'introduction du chapitre~\ref{ChapBgenerique}.

La forme $\omega$ est multiple de la forme $\omegares$ (définie dans
le chapitre \ref{ChapMacRaeForP}) et l'implication~(2) en
résulte.

L'implication (3) fait l'objet du théorème \ref{omegaresGr2}, cf
également les lemmes \ref{InjectivityToGr2} et~\ref{InjectivityToGr2bis}.

Quant à l'implication (4), elle fait intervenir le résultant défini
dans la section \ref{ChapMacRaeDefResultant}.  Un énoncé plus précis
(que le fait que le résultant soit un générateur de l'idéal
d'élimination) figure dans le théorème~\ref{4pointsPsuperreguliere}.

Le dernier résultat (propriété Cramer) possède un statut un peu
spécial au sens où d'une part, il est absolu i.e. vérifié pour tout
système $\uP$ et d'autre part se spécialise i.e. il suffit de le
prouver pour $\uP$ générique. C'est en ce sens que la $\bfB_\delta$-régularité
de $\det W_{1,\delta}(\uP)$, assurée quand $\uP$ est générique,
participe à l'établissement de cette propriété, cf le
théorème~\ref{omegaresCramer}.

\medskip

Une grande partie de ce chapitre est consacrée à la preuve de l'égalité $a(\uQ) = 1$ 
lorsque $a = \det W_{1,\delta}$.
Nous obtiendrons également des \og bonus\fg{} en montrant que c'est le cas pour $a = \det W_{\calM}$ 
avec $\calM$  sous-module monomial de $\Jex_{1,d}$ pour $d \leqslant \delta$ 
ou bien $\calM$ sous-module monomial de $\Jex_{2,d}$ pour $d \in \bbN$.

\subsubsection{Jeu circulaire généralisé $\pXD + \bsq\,\uR$}

Nous utiliserons souvent le jeu circulaire généralisé $\uQ := \pXD + \bsq\,\uR$:
$$
Q_i \ = \ X_i^{d_i - 1}(p_iX_i + q_iX_{i+1}) 
\ =\ p_i X_i^{d_i} + q_i X_i^{d_i-1}X_{i+1}
$$ 
où $p_i$ et $q_i$ sont des indéterminées.
Ce jeu permet un contrôle plus fin des résultats :
nous allons montrer que $\det W_{1,\delta}(\uQ)$ est un produit de $p_i$.
On revient aisément au jeu circulaire en réalisant $p_i :=1$ et $q_i := -1$.

\label{NOTA06-bsp-bsq}%
\index{jeu!circulaire généralisé}%

\subsubsection{Illustration}

Prenons $D=(1,2,2)$ et considérons  $W_{1,\delta}(\uQ)$ dans des bases monomiales
de $\Jex_{1,\delta}$: à gauche, la base est rangée selon l'ordre lexicographique croissant
(avec $X_1 > X_2 > X_3$) et à droite de manière quelconque:
$$
W_{1,\delta}(\uQ) \ :
\EastBordermatrix{
p_{3} & . & . & . & . & \Heti{X_{3}^{2}} \\ 
. & p_{2} & . & q_{1} & . & \Heti{X_{2}^{2}} \\ 
q_{3} & . & p_{1} & . & . & \Heti{X_{1}X_{3}} \\ 
. & . & . & p_{1} & q_{1} & \Heti{X_{1}X_{2}} \\ 
. & . & . & . & p_{1} & \Heti{X_{1}^{2}} \\ 
}
\qquad\qquad\qquad
\EastBordermatrix{
p_{3} & . & . & . & . & \Heti{X_{3}^{2}} \\ 
q_{3} & p_{1} & . & . & . & \Heti{X_{1}X_{3}} \\ 
. & . & p_{1} & . & q_{1} & \Heti{X_{1}X_{2}} \\ 
. & . & q_{1} & p_{2} & . & \Heti{X_{2}^{2}} \\ 
. & . & . & . & p_{1} & \Heti{X_{1}^{2}} \\ 
}
$$
A priori, il n'est pas immédiat  que le déterminant soit un produit de $p_i$
(en regardant attentivement, on perçoit tout de même une structure en blocs
qui permet de s'en assurer).
En revanche, avec bon rangement monomial, la matrice est triangulaire supérieure
avec une diagonale de $p_i$:
$$
W_{1,\delta}(\uQ) \ : 
\EastBordermatrix{
p_{1} & . & . & q_{3} & . & \Heti{X_{1}X_{3}} \\ 
. & p_{2} & q_{1} & . & . & \Heti{X_{2}^{2}} \\ 
. & . & p_{1} & . & q_{1} & \Heti{X_{1}X_{2}} \\ 
. & . & . & p_{3} & . & \Heti{X_{3}^{2}} \\ 
. & . & . & . & p_{1} & \Heti{X_{1}^{2}} \\ 
}
$$
et le résultat sur le déterminant devient évident.

\noindent
Le but du chapitre est d'exhiber un tel \og rangement monomial triangulaire adéquat\fg{}.

\index{rangement monomial triangulaire}%

\subsection{L'itérateur $w_{\calM}(\protect \uR)$ (piloté par $\minDiv$) et son graphe}

\label{NOTA06-wMiterateur}%
\index{jeu!monomial!de format $D$}%

Soit $\uR = (R_1, \dots, R_n)$ un jeu monomial de format $D$ \idest{} 
$R_i$ est un monôme de degré $d_i$. 
De manière informelle, il lui est associé un \textit{système de ré-écriture}
qui, étant donné $X^\alpha$,
consiste à remplacer $X_i^{d_i}$ par~$R_i$ à condition que
$X_i^{d_i} \mid X^\alpha$ :
$$
X^\alpha \ \longmapsto \ \dfrac{X^\alpha}{X_i^{d_i}} R_i
$$
Ceci est à rapprocher, pour un sous-module monomial $\calM$ de $\bfA[\uX]$,
de l'endomorphisme $W_\calM(\uR)$ de $\calM$ défini par :
$$
W_{\calM}(\uR) : 
\begin{array}[t]{rcl}
\calM & \longrightarrow & \calM \\[0.5em]
X^\alpha 
& \longmapsto &
\pi_{\calM}\Big(\dfrac{X^\alpha}{X_i^{d_i}}R_i \Big) \qquad \text{avec $i = \minDiv(X^\alpha)$} 
\end{array}
$$
On remarquera que l'on a fait le choix du plus petit indice de divisibilité.

\bigskip

Ici, nous allons nous limiter au jeu 
$\uR = (X_1^{d_1-1}X_2, \, X_2^{d_2-1}X_3,\dots, \,X_n^{d_n-1}X_1)$. 
Ainsi, l'endomorphisme $W_{\calM}(\uR)$ a une expression dans laquelle 
il n'y a plus mention apparente de $D$ (le format $D$ est cependant implicite dans
$\minDiv$), à savoir :
$$
X^\alpha 
\ \longmapsto \ 
\pi_{\calM}\Big(\dfrac{X^\alpha}{X_i}X_{i+1}\Big) \qquad \text{avec $i = \minDiv(X^\alpha)$} 
$$

\begin{defn}
Soit $\calM$ un sous-module monomial inclus dans $\Jex_{1,d}$.

\begin{enumerate}[\rm i)]
\item 
L'itérateur relatif à $\calM$ du jeu monomial $\uR$ est 
$$
w_{\calM}(\uR) : 
\begin{array}[t]{rcl}
\text{\rm Monômes}(\bfA[\uX]_d) & \longrightarrow & \text{\rm Monômes}(\bfA[\uX]_d) \\[0.5em]
X^\alpha & \longmapsto &
\begin{cases}
(X^\alpha/X_i^{d_i}) R_i \overset{\rm ici}{=}  
(X^\alpha/X_i) X_{i+1}  & \text{\small si $X^\alpha \in \calM$ 
                                où $i = \minDiv(X^\alpha)$}
\\
X^\alpha & \text{\small sinon}
\end{cases}
\end{array}
$$
En particulier,  pour $d = \delta$ et $\calM = \Jex_{1,\delta}$ :
$$
w_{1,\delta}(\uR) : 
\begin{array}[t]{rcl}
\text{\rm Monômes}(\bfA[\uX]_\delta) & \longrightarrow & \text{\rm Monômes}(\bfA[\uX]_\delta)
\\[0.5em]
X^\alpha 
& \longmapsto &
\begin{cases}
(X^\alpha/X_i)X_{i+1}  & \text{\small si $X^\alpha \in \Jex_{1,\delta}$ 
              où $i = \minDiv(X^\alpha)$}
\\ 
\mouton & \text{\small si $X^\alpha = \mouton$}
\end{cases}
\end{array}
$$

\item 
Le graphe associé à l'itérateur $w_{\calM}(\uR)$ 
est le graphe \textit{simple}
orienté dont les sommets sont les monômes de $\bfA[\uX]_d$, 
avec un arc de $X^\alpha$ vers~$X^\beta$ si $w_\calM(\uR)(X^\alpha) = X^\beta$ ;
en particulier, les points fixes
(qui sont exactement les monômes n'appartenant pas à $\calM$)
sont de degré sortant $0$.
On étiquette chaque arc par l'entier $\minDiv{}$ du monôme source.

\end{enumerate}
\end{defn}

\index{itérateur!$w_\calM$, $w_{1,\delta}$}%

\subsubsection{Quelques exemples de graphe associé à l'itérateur $w_\calM(\uR)$ et 
l'endomorphisme $W_{\calM}(\uR)$}

$\bullet$  
Voilà le graphe de $w_{\calM}(\uR)$ pour $D = (1,1,2)$ et $\calM = \Jex_{2,d}$ avec 
$d=4$ :
$$
\xymatrix{
&&&&X_1^2 X_3^2 \ar[d]^-{1} & \\
&& X_1^3 X_2 \ar[d]^-{1} & & X_1 X_2 X_3^2 \ar[d]^-{1} & \\
&&X_1^2 X_2^2 \ar[d]^-{1} & X_1^2 X_2 X_3 \ar[d]^-{1}  & X_2^2 X_3^2 \ar[d]^-{2} & X_1 X_3^3 \ar[dl]^-{1}  \\
&& X_1 X_2^3 \ar[d]^-{1} & X_1 X_2^2 X_3 \ar[d]^-{1} & X_2 X_3^3 \ar[d]^-{2} &\\
X_1^4 & X_1^3 X_3 & X_2^4 & X_2^3 X_3 & X_3^4 &
}
$$
Les puits sont exactement les monômes qui n'appartiennent pas à $\calM = \Jex_{2,4}$.

\bigskip

$\bullet$ Pour $D=(1,2,2)$ et $\calM = \Jex_{1,\delta}$, on a :
$$
\xymatrix @M=0.4pc @C=1pc @R=1.6pc{
 & X_1^2 \ar[d]^-{1} \\
X_3^2 \ar[d]_-{3} & X_1X_2\ar[d]^-{1}  \\
X_1X_3 \ar[dr]_-{1} & X_2^2 \ar[d]^-{2}  \\
& X_2X_3 = \mouton
}
$$ 
On peut ranger les monômes par strates horizontales (en commençant par exemple par les strates 
proches du \MoutonNoir{}) ; voilà deux tels rangements :
$$
(X_1X_3,\, X_2^2,\, X_1X_2,\, X_3^2, \,X_1^2), \qquad
(X_2^2,\, X_1X_3,\, X_3^2, \, X_1X_2, \, X_1^2)
$$
\index{strate}%
Chacun de ces rangements est un \og rangement monomial triangulaire adéquat\fg{} au sens
matriciel suivant:
$$
W_{1,\delta}(\uQ) \ = 
\EastBordermatrix{
p_{1} & . & . & q_{3} & . & \Heti{X_{1}X_{3}} \\ 
. & p_{2} & q_{1} & . & . & \Heti{X_{2}^{2}} \\ 
. & . & p_{1} & . & q_{1} & \Heti{X_{1}X_{2}} \\ 
. & . & . & p_{3} & . & \Heti{X_{3}^{2}} \\ 
. & . & . & . & p_{1} & \Heti{X_{1}^{2}} \\ 
}
\qquad \qquad 
W_{1,\delta}(\uQ) \ = 
\EastBordermatrix{
p_{2} & . & . & q_1 & . & \Heti{X_{2}^{2}} \\ 
 . & p_{1} & q_3 & . & . & \Heti{X_{1}X_{3}} \\ 
. & . & p_3 & . & . & \Heti{X_{3}^{2}} \\ 
. & . & . & p_1 & q_{1} & \Heti{X_{1}X_{2}} \\ 
. & . & . & . & p_{1} & \Heti{X_{1}^{2}} \\ 
}
$$

\index{rangement monomial triangulaire}%

\bigskip
$\bullet$
Pour comprendre l'importance du puits dans le dernier graphe, 
il est important de montrer un exemple de jeu monomial $\uR$
dont le graphe associé à l'itérateur $w_{\Jex_{1,\delta}}(\uR)$
possède un circuit et n'est donc pas du tout du type précédent.
Par exemple, $D = (1,1,2)$ et $\uR = (X_2, X_1, X_2^2)$ :
$$
\xymatrix {
X_1 \ar@/^7pt/[r]^1 & X_2 \ar@/^7pt/[l]^2 & X_3 = \mouton
}
$$

Voilà le théorème fondamental que nous cherchons à prouver :

\index{théorème!du puits (jeu circulaire)}%

\begin{theo}[Le théorème du puits] \label{TheoPuits}
Toute itération de $w_{1,\delta}(\uR)$ se termine par le \MoutonNoir{}.
Par conséquent, le graphe de $w_{1,\delta}(\uR)$ est une anti-arborescence 
dont le puits est le \MoutonNoir{}.
\end{theo}

\noindent
Il y a un théorème plus général :
\begin{quote}
\it
Soit $\calM$ un sous-module monomial inclus dans $\Jex_{h,d}$.

Pour $(h=1,\, d\leqslant \delta)$ et $(h \geqslant 2,\, d \in \bbN)$, 
toute itération de $w_{\calM}(\uR)$ se termine par un des points fixes du graphe.
Par conséquent, le graphe de $w_{\calM}(\uR)$ est une réunion d'anti-arborescences.
Chaque anti-arborescence possède un unique monôme qui n'est pas dans $\calM$ ;
c'est alors l'unique puits (le point fixe) de l'anti-arborescence.
\end{quote}

La preuve de ce résultat plus général fait l'objet des sections~\ref{SousSectionEnDessousDelta}
et~\ref{SousSectionW2Qtriangulaire}. Ici, nous mettons en place les ingrédients permettant
de prouver le théorème du puits ($\calM = \Jex_{1,\delta}$) qui est le plus important
(et le plus difficile).


\subsubsection{Reformulation en termes d'exposants}

Notons $(\varepsilon_i)_{1 \leqslant i \leqslant n}$ la base canonique de $\bbZ^n$ 
et posons $e_i = (0, \dots, 1, -1, \dots, 0) = \varepsilon_i - \varepsilon_{i+1}$
(avec la convention $\varepsilon_{n+1} = \varepsilon_1$).

L'itérateur attaché à $w_{\calM}(\uR)$ est défini sur 
$\bbN^n_d = \{ \alpha \in \bbN^n \text{\ tel que\ } |\alpha| = d\}$ 
par :
$$
\begin{array}[t]{rcl}
\bbN^n_d & \longrightarrow & \bbN^n_d \\[0.5em]
\alpha & \longmapsto &
\begin{array}[t]{ll}
\alpha - e_i  & \text{\small si $X^\alpha \in \calM$
avec $i = \minDiv(X^\alpha)$}\\ [0.2em]
\alpha & \text{\small sinon}
\end{array}
\end{array}
$$

\label{NOTA06-bbNhmg}%

Nous allons également translater ces exposants par l'exposant du \MoutonNoir{}.
Ceci va avoir pour effet de \og plonger \fg{} l'étude de l'itérateur ci-dessus dans une machinerie 
plus générale, dans laquelle~$D$ et~$d$ disparaissent (au moins visuellement) !
Cette translation se réalise en posant $x = \alpha - (\emouton)$ de sorte que 
$\alpha_i \geqslant d_i \Leftrightarrow x_i > 0$.
Dans cette correspondance, 
l'ensemble $\{i \text{ tel que } x_i > 0\}$ de stricte positivité de~$x$ 
est le pendant de $\DivSeq(X^\alpha)$, et aura donc un rôle important.

\label{NOTA06-emouton}%
\subsubsection{Cas $\calM = \Jex_{1,\delta}$}

En translatant les $\alpha \in \bbN^n_\delta$ par $\emouton = (d_1-1,\cdots,d_n-1)$,
on obtient des $n$-uplets $x = \alpha - (\emouton)$ qui sont dans $\bbZ^n_0$ où 
$$
\bbZ^n_0 \ = \ 
\Big \{ x = (x_1,\dots, x_n) \in \bbZ^n \ \Big\vert \ \sum\limits_{i = 1}^n x_i = 0 
\Big \}
$$
L'itérateur à étudier est donc 
$$
u : 
\begin{array}[t]{rcl}
\bbZ^n_0 & \longrightarrow & \bbZ^n_0 \\[0.5em]
x & \longmapsto &
\begin{array}[t]{ll}
x-e_i &  \text{où $i$ est le plus petit indice tel que $x_i > 0$ si $x \ne \mathbf 0$}  \\ [0.5em]
x & \text{si $x = \mathbf 0$} 
\end{array}
\end{array}
$$
Le théorème~\ref{TheoPuits} se reformule donc ainsi :
\og{\it Toute itération de $u$ se termine par $\mathbf 0$ }\fg{}.
La section suivante est consacrée à la preuve de ce résultat.

\subsection{La chasse aux entiers strictement positifs}

\begin{prop}
On note $(\varepsilon_i)_{1\leqslant i \leqslant n}$ la base canonique de $\bbZ^n$, 
$e_i = \varepsilon_i - \varepsilon_{i+1}$ (cycliquement) et $u$ l'itérateur sur 
$\bbZ^n_0 = \big \{ x = (x_1,\dots, x_n) \in \bbZ^n \mid \sum\limits_{i = 1}^n x_i = 0 \big \}$ 
défini par
$$
u : \ 
x 
\ \longmapsto \ 
\left \{
\begin{array}{ll}
x-e_i &  \text{où $i$ est le plus petit indice tel que $x_i > 0$ si $x \ne \mathbf 0$}  \\ [0.5em]
x & \text{si $x = \mathbf 0$} 
\end{array}
\right .
$$
Alors toute itération de $u$ conduit au point fixe $\mathbf 0$.
\end{prop}

\index{itérateur!chasse aux entiers positifs}%

\begin{proof}
On dispose d'une famille génératrice évidente pour $\bbZ^n_0$, à savoir $\calG = \{e_1, \dots, e_n\}$ 
(et $n$ bases privilégiées $\calB_k = \{ e_1, \dots, e_n \} \setminus \{e_k\}$ de $\bbZ^n_0$).
Faisons deux remarques préliminaires.

$\sbullet$ Si l'on a une écriture $x = a_1 e_1 + \dots + a_n e_n$ avec $a_j \in \bbZ$, alors $x_j = a_j - a_{j-1}$.

$\sbullet$ Si l'on dispose d'une écriture positive de $x$ sur $\calG$ \idest{} $x = b_1 e_1 + \dots + b_n e_n$ 
avec $b_j \geqslant 0$ pour tout $j$, alors $x_i > 0 \Rightarrow b_i > 0$
car $b_i = x_i + b_{i-1}$.

\smallskip
 
Montrons que l'on peut trouver une écriture positive de $x$ sur $\calG$.
On commence par écrire de n'importe quelle façon $x = a_1 e_1 + \dots + a_n e_n$ avec $a_j \in \bbZ$.
Comme $\sum_{j=1}^n e_j = 0$, on voit que $x = (a_1 - m)e_1 + \dots + (a_n - m)e_n$ 
et ceci pour tout $m \in \bbZ$. 
En particulier, en prenant $m = \min a_j$, alors $b_j = a_j - m \geqslant 0$ et 
l'élément $x$ s'écrit sur $\calG$ de manière positive via 
$x = b_1 e_1 + \dots + b_n e_n$.

Nous sommes en mesure d'étudier les itérations de $u$. 
Si $x = \mathbf 0$, alors c'est terminé. Sinon, il existe un~$i$ tel que $x_i > 0$.
En prenant le premier $i$, on a alors 
$u(x) = b_1 e_1 + \cdots + (b_i-1) e_i + \cdots + b_n e_n$.
On voit que l'itéré de $x$ s'écrit \og de la même façon\fg{} que $x$ sur~$\calG$~:
il n'y a qu'une seule composante qui diffère et cette composante diminue d'une unité.
Autrement dit, après une itération, l'entier positif $\sum_{j=1}^n b_j$ diminue d'une unité, 
donc le processus finit par s'arrêter.
\end{proof}

Pour la suite, on va avoir besoin d'exhiber une fonction hauteur.
Celle que l'on a retenue diminue exactement d'une unité à chaque itération de 
$u$.

\index{hauteur!chasse aux entiers positifs}%

\begin{prop} \label{FonctionHauteur}
Considérons la fonction hauteur $\calH_0 : \bbZ^n_0 \rightarrow \bbN$ définie par :
$$
\calH_0(x) \ = \ \sum_{j=1}^n (a_j - m)
\qquad \qquad \text{où } \quad
a_j =  -(x_{j+1} + \dots + x_n)
\quad \text{et} \quad 
m = \min a_j
$$
Pour $x \in \bbZ^n_0$ et n'importe quel indice 
$i$ tel que $x_i>0$ (non nécessairement le premier), on a $\calH_0(x-e_i) = \calH_0(x) -1$.
Ainsi, $\calH_0(x)$ est le nombre d'itérations de $u$ dans la 
proposition précédente.
\end{prop}

\begin{proof}
On reprend les notations de la preuve précédente.
On écrit $x$ sur la base $\calB_n = \{e_1, \dots, e_n\} \setminus \{e_n\}$ de $\bbZ^n_0$.
Donc $x = a_1 e_1 + \cdots + a_n e_n$ avec $a_j \in \bbZ$ et $a_n= 0$. 
D'où l'on tire $a_j = -(x_{j+1} + \cdots + x_n)$.

On en déduit une écriture positive 
$x = b_1 e_1 + \cdots + b_n e_n$ en posant $b_j = a_j - m$ où $m = \min a_j$
(ainsi, si $m = a_k$, on a écrit $x$ sur $\calB_k$ \textit{avec des coefficients positifs}).
En posant $\calH_0(x) = \sum_{j=1}^n b_j$, 
la preuve précédente nous montre que, pour n'importe quel indice $i$ tel que $x_i >0$, 
on a $\calH_0(x-e_i)= \calH_0(x) -1$ 
\end{proof}

\subsection{Structure triangulaire de l'endomorphisme $W_{1,\delta}(\protect \uQ)$}

Dans la section précédente, nous avons prouvé le théorème du puits 
(cf.~\ref{TheoPuits}) :
\begin{quote}
\it
Toute itération de $w_{1,\delta}(\uR)$ se termine par le \MoutonNoir{}.
Par conséquent, le graphe de $w_{1,\delta}(\uR)$ est une anti-arborescence 
dont le puits est le \MoutonNoir{}.
\end{quote}
On peut alors décomposer $\bfA[\uX]_{\delta}$ en strates :
$$
\bfA[\uX]_\delta \ = \ 
\bigoplus_{t \in \bbN} \, \calS_t
$$
où $\calS_t$ désigne le $\bfA$-module libre engendré par les monômes à distance $t$ 
du \MoutonNoir{} ; en particulier $\calS_0 = \bfA X^\emouton$.

\label{NOTA06-strate}%
\index{strate}%

\noindent
Et ce découpage vérifie la propriété :
$$
\text{pour $X^\alpha \in \Jex_{1,\delta}$ à hauteur $t$ du \MoutonNoir{}, 
$\qquad w_{1,\delta}(\uR)(X^\alpha) 
\ \in \ 
\bigoplus_{t' < t} \, \calS_{t'}$
}
$$

\index{hauteur!dans le graphe de l'itérateur!$w_{1,\delta}$}%

\begin{prop} \label{detW1deltaQ}
Relativement à un rangement des monômes de $\Jex_{1,\delta}$ en strates, 
la matrice de $W_{1,\delta}(\uQ)$ est triangulaire \og à diagonale de $p_i$ \fg{}. 

Plus généralement, pour tout sous-module monomial $\calM \subset \Jex_{1,\delta}$, 
et relativement à un rangement en strates, 
la matrice de $W_{\calM}(\uQ)$ est triangulaire \og à diagonale de $p_i$ \fg{}. 

En particulier,  
$$
\det W_{1,\delta}(\uQ) \ = \ 
\prod_{X^\alpha \in \Jex_{1,\delta}}
p_{\minDiv(X^\alpha)}
\qquad \qquad 
\det W_{\calM}(\uQ) \ = \ 
\prod_{X^\alpha \in \calM}
p_{\minDiv(X^\alpha)}
$$
et pour $Q_i = X_i^{d_i} - X_i^{d_i-1}X_{i+1}$, on a $\det W_{1,\delta}(\uQ)  = 1$.
\end{prop}

\index{strate}%
\index{rangement monomial triangulaire}%

\begin{proof}
Relativement à la décomposition $\Jex_{1,\delta} = \bigoplus_{t > 0} \, \calS_{t}$, 
la matrice de $W_{1,\delta}(\uR)$ est triangulaire stricte 
(cf. l'explication donnée avant l'énoncé) ; précisément, 
pour tout $t > 0$, on a 
$$
W_{1,\delta}(\uR)(\calS_t) 
\ \subset \ 
\bigoplus_{t' < t} \, \calS_{t'}
$$
En particulier, pour tout sous-module monomial, $W_{\calM}(\uR)$ est triangulaire stricte 
(il suffit de restreindre l'endomorphisme $W_{1,\delta}(\uR)$ au départ et à l'arrivée à $\calM$).
De plus, la matrice de $W_{\calM}(\pXD)$ du jeu étalon généralisé 
est toujours (quel que soit le rangement monomial) une matrice diagonale de $p_i$.
Finalement, le résultat se déduit de l'égalité 
$W_{\calM}(\uQ) = W_{\calM}(\pXD) + W_{\calM}(\bsq\,\uR)$.
\end{proof}

\subsection{La forme linéaire $\omega_{\protect\uQ}$}

\index{forme linéaire!$\omega=\omega_\uP$ (pilotée par $\minDiv$)}%

On rappelle que $\det W_{1,\delta}(\uP)$ est toujours égal à $\omega_\uP(X^\emouton)$.
Dans le cas du jeu circulaire généralisé~$\uQ$, la formule donnée 
en~\ref{detW1deltaQ} pour $\calM = \Jex_{1,\delta}$ se réécrit donc :
$$
\omega_\uQ(X^\emouton) \ = \ 
\prod_{X^\alpha \in \Jex_{1,\delta}}
p_{\minDiv(X^\alpha)}
$$
En particulier pour $Q_i = X_i^{d_i} - X_i^{d_i-1}X_{i+1}$, on a $\omega_\uQ(X^\emouton) = 1$.
Nous allons montrer que cette 
dernière égalité se généralise à n'importe quel monôme~$X^\alpha$ de degré $\delta$, 
autrement dit $\omega_\uQ(X^\alpha) = 1$.
La stratégie va être d'exhiber une bonne base dans laquelle la matrice 
de~$\Omega_\uQ(X^\alpha)$ --- dont le déterminant vaut $\omega_\uQ(X^\alpha)$ --- 
se décompose en une matrice triangulaire supérieure par blocs, et 
ces blocs (au nombre de 2) ont une forme très particulière rendant le calcul de leur déterminant immédiat.

\bigskip

\begin{prop}\label{OmegaXalpha4blocs}
Soit $X^\alpha \in \bfA[\uX]_\delta$.

\leavevmode
\begin{enumerate}[\rm i)]

\item Notons $\calO_\alpha$ 
le $\bfA$-module \og orbite de~$X^\alpha$ sous~$w = w_{1,\delta}(\uR)$ \fg{} :
$$
\calO_\alpha \ = \ 
\bfA X^\alpha \,\oplus\, \bfA w(X^\alpha) \,\oplus\, \bfA w^2(X^\alpha)
\, \oplus\,  \cdots \, \oplus\, \bfA X^{\emouton}
$$
Désignons par $\overline{\calO_\alpha}$ le supplémentaire monomial de $\calO_\alpha$ 
de sorte que 
$\bfA[\uX]_\delta = \calO_\alpha \, \oplus \, \overline{\calO_\alpha}$.

Pour le jeu circulaire généralisé
(c'est-à-dire $Q_i = p_iX_i^{d_i}+ q_i X_i^{d_i-1}X_{i+1}$), 
on a :
$$
\omega_\uQ(X^\alpha) \ = \ 
\prod_{X^\beta \in \calO_\alpha \setminus  X^\emouton}
-q_{\minDiv(X^\beta)}
\prod_{X^\beta \in \overline{\calO_\alpha}}
p_{\minDiv(X^\beta)}
$$

\item 
Pour le jeu circulaire 
(c'est-à-dire $Q_i = X_i^{d_i} - X_i^{d_i-1}X_{i+1}$), 
on a 
$$
\omega_\uQ(X^\alpha) = 1
$$
\end{enumerate}
\end{prop}

\label{NOTA06-Oalpha}%
%
%

\begin{proof}
La preuve a lieu en trois temps.
On dégage une structure générale pour l'endomorphisme $\Omega_\uP(X^\alpha)$ 
valable pour une suite $\uP$ quelconque, 
puis on examine ce qui se passe pour les jeux $\uR$, $\bsq\,\uR = (q_1R_1, \cdots, q_nR_n)$ 
et enfin pour le jeu circulaire généralisé $\uQ$.

\bigskip

$\rhd$ L'évaluation $\omega_\uP(X^\alpha)$ provient du déterminant d'un certain endomorphisme
dont re-voici la définition :
$$
\Omega_\uP(X^\alpha) : 
\begin{array}[t]{rcl}
\bfA[\uX]_\delta & \longrightarrow & \bfA[\uX]_\delta \\[0.5em]
X^\beta
& \longmapsto &
\begin{array}[t]{ll}
\dfrac{X^\beta}{X_i^{d_i}} P_i  & \text{\small si $X^\beta \in \Jex_{1,\delta}$ 
avec $i = \minDiv(X^\alpha)$}\\ [1.2em]
X^\alpha & \text{\small si $X^\beta = \mouton$}
\end{array}
\end{array}
$$
Pour $\calM \subset \Jex_{1,\delta}$, 
l'endomorphisme $\Omega_\uP(X^\alpha)$ co-restreint à $\calM$ est exactement $W_\calM(\uP)$.
Comme $\overline{\calO_\alpha} \subset \Jex_{1,\delta}$, 
l'endomorphisme $\Omega_\uP(X^\alpha)$ a pour structure,
relativement à la décomposition 
$\bfA[\uX]_\delta = \calO_\alpha \oplus \overline{\calO_{\alpha}}$ :
$$
\Omega_\uP(X^\alpha) 
\ = \ 
\begin{bmatrix}
NO & \star \\
SO & W_\calM(\uP) 
\end{bmatrix}
$$

$\rhd$ Travaillons avec le jeu $\uR$ et déterminons la structure de $\Omega_\uR(X^\alpha)$ : 
nous allons montrer que, pour ce jeu, le bloc NO est bien particulier et le bloc SO est nul.

Comparons les deux endomorphismes :
$$
\Omega_\uR(X^\alpha) : 
\begin{array}[t]{rcl}
\bfA[\uX]_\delta & \longrightarrow & \bfA[\uX]_\delta \\[0.5em]
X^\beta
& \longmapsto &
\begin{array}[t]{l}
\dfrac{X^\beta}{X_i}X_{i+1}
\\ [1.5em]
X^\alpha
\end{array}
\end{array}
\ \text{et } \ 
w_{1,\delta}(\uR) : 
\begin{array}[t]{rcl}
\bfA[\uX]_\delta & \longrightarrow & \bfA[\uX]_\delta \\[0.5em]
X^\beta
& \longmapsto &
\begin{array}[t]{ll}
\dfrac{X^\beta}{X_i} X_{i+1}  & 
\begin{array}{l}
\text{si $X^\beta \in \Jex_{1,\delta}$} \\
\text{avec $i = \minDiv(X^\alpha)$}
\end{array} \\ [1.5em]
X^\beta & \text{\small si $X^\beta = \mouton$}
\end{array}
\end{array}
$$
On constate que ces deux endomorphismes sont quasiment identiques : 
seule l'image de $X^\emouton$ diffère.

\noindent
Déterminons à présent $\Omega_\uR(X^\alpha)$ sur $\calO_\alpha$.
Notons 
$\ell = \dim \calO_\alpha$ de sorte que $w^{\ell-1}(X^\alpha) = X^\emouton$.
Soit $X^\beta \in \calO_\alpha$ qui s'écrit donc $w^e(X^\alpha)$ ; 
on a 
$$
\Omega_\uR(X^\alpha)\Big(w^e(X^\alpha)\Big) 
= 
\left\{
\begin{array}{ll}
X^\alpha & \text{si $e = \ell-1$} \\
w^{e+1}(X^\alpha) & \text{si $e < \ell-1$}
\end{array}
\right.
$$

\noindent
En raisonnant modulo $\ell$ sur l'exposant $e$, on a
donc l'égalité 
$\Omega_\uR(X^\alpha)\Big(w^e(X^\alpha)\Big) 
 = w^{e+1}(X^\alpha)$.

\noindent
Ainsi $\Omega_\uR(X^\alpha)$ 
laisse stable $\calO_\alpha$ avec une structure \og de cycle \fg{}.
On en déduit la structure des endomorphismes $\Omega_\uR(X^\alpha)$ et  
$\Omega_{\bsq\uR}(X^\alpha)$ co-restreint à $\calO_\alpha$ :
$$
\EastBordermatrix{
0 &  &  &  &  & 1 & \Heti{X^\alpha} \\ 
1 & 0 &  & &  & 0 & \Heti{w(X^\alpha)}\\
  & 1 & \ddots &  &  & & \Heti{\vdots} \\
  &  & \ddots & \ddots &  & \vdots & \Heti{\vdots} \\
  &  &  & \ddots & 0 & & \Heti{w^{\ell-2}(X^\alpha)} \\
  &  &  &  & 1 & 0 & \Heti{X^\emouton} 
}
\hspace{3cm}
\EastBordermatrix{
0 &  &  &  &  & 1 & \Heti{X^\alpha} \\ 
q_{i_0} & 0 &  & &  & 0 & \Heti{w(X^\alpha)}\\
  & q_{i_1} & \ddots &  &  & & \Heti{\vdots} \\
  &  & \ddots & \ddots &  & \vdots & \Heti{\vdots} \\
  &  &  & \ddots & 0 & & \Heti{w^{\ell-2}(X^\alpha)} \\
  &  &  &  & q_{i_{\ell - 2}} & 0 & \Heti{X^\emouton} 
}
$$
où $i_k = \minDiv\big(w^k(X^\alpha)\big)$.

\bigskip

$\rhd$ Revenons au jeu $\uQ = \pXD + \bsq\,\uR$ généralisé. 

\noindent
L'endomorphisme $\Omega_\uQ(X^\alpha)$ est construit à partir de 
l'endomorphisme $\Omega_\uR(X^\alpha)$ en \og ajoutant \fg{} ce qui correspond au jeu étalon, 
à savoir une diagonale ayant 
\begin{itemize}
\item un $0$, à la ligne du \MoutonNoir{} 
\item à la ligne de $X^\alpha$, $p_i$ où $i = \minDiv(X^\alpha)$ 
\end{itemize}
Avec ce qui précède, on obtient alors :
$$
\Omega_\uQ(X^\alpha) \ = \ 
\begin{bmatrix}
NO & \star \\
\bf 0 & SE 
\end{bmatrix}
$$
où 
$$
NO \ = \ 
\EastBordermatrix{
\mathstrut p_{i_0} &  &  &  &  & 1 & \Heti{X^\alpha} \\ 
q_{i_0} & p_{i_1} &  & &  & 0 & \Heti{w(X^\alpha)}\\
  & q_{i_1} & \ddots &  &  & & \Heti{\vdots} \\
  &  & \ddots & \ddots &  & \vdots & \Heti{\vdots} \\
  &  &  & \ddots & p_{i_{\ell-2}} & & \Heti{w^{\ell-2}(X^\alpha)} \\
  &  &  &  & q_{i_{\ell - 2}} & 0 & \Heti{X^\emouton} 
}
\qquad 
\text{et}
\qquad 
SE \ = \ 
W_{\overline{\calO_\alpha}}(\uQ)
$$
où $i_k = \minDiv\big(w^k(X^\alpha)\big)$.

\bigskip

$\rhd$ On en déduit le déterminant de $\Omega_\uQ(X^\alpha)$ qui vaut :
$$
\omega_\uQ(X^\alpha) 
\ = \ 
\det NO \, \times \, \det W_{\overline{\calO_\alpha}}(\uQ)
$$
avec 
$\det NO = 
(-1)^{\text{taille NO}-1} 
\prod\limits_{X^\beta \in \calO_\alpha \setminus  X^\emouton}
q_{\minDiv(X^\beta)}
$, qui s'écrit encore 
$\prod\limits_{X^\beta \in \calO_\alpha \setminus  X^\emouton}
-q_{\minDiv(X^\beta)}$.

\bigskip
\noindent
Quant à la valeur de 
$\det W_{\overline {O_\alpha}}(\uQ)$, elle est fournie par
l'utilisation de~\ref{detW1deltaQ} 
avec $\calM = \overline{O_\alpha}$, sous-module de $\Jex_{1,\delta}$. 
\end{proof}

\begin{exemple}\label{Graphe122}
Rien qu'à la lecture du graphe, on peut donc déterminer $\omega_\uQ(X^\alpha)$ :
partir de $X^\alpha$ et descendre jusqu'au puits en prenant le produit 
des $-q_{\text{étiquette arc}}$ puis multiplier par le produit des $p_{\text{étiquette autres arcs}}$.

Illustrons le sur l'exemple $D=(1,2,2)$. 
Pour $X^\alpha = X_1X_2$, on trouve
$\omega_\uQ(X_1X_2) = (-q_1)( -q_2) p_1 p_3 p_1$.

$$
\vcenter{
\xymatrix @M=0.4pc @C=1pc @R=1.4pc{
 & X_1^2 \ar[d]^-{1} \\
X_3^2 \ar[d]_-{3} & X_1X_2\ar[d]^-{1}  \\
X_1X_3 \ar[dr]_-{1} & X_2^2 \ar[d]^-{2}  \\
& X_2X_3 
}}
\qquad\qquad
\begin{array}{lcl}
\omega_\uQ(X_1^2) \ =\  -q_1^2\,q_2\,p_1\,p_3
& &
\omega_\uQ(X_1X_2) \ =\  q_1\,q_2 \,p_1^2\,p_3 
\\ [0.6cm]
\omega_\uQ(X_1X_3) \ =\  -q_1\,p_1^2 \,p_2\,p_3 
& &
\omega_\uQ(X_2^2) \ =\  -q_2\,p_1^3\,p_3
\\ [0.6cm]
\omega_\uQ(X_2X_3) \ =\  p_1^3\,p_2\,p_3 
& & 
\omega_\uQ(X_3^2) \ =\  q_1 \,q_3\,p_1^2\,p_2 
\end{array}
$$
\medskip

Ceci est bien sûr confirmé par la structure des matrices des $\Omega_\uQ(X^\alpha)$.
$$
\Omega_\uQ(X_1^2) \ = \ 
\EastBordermatrix{
p_{1} & . & . & 1 & \VR . & . & \Heti{X_{1}^{2}} \\ 
q_{1} & p_{1} & . & . & \VR . & . & \Heti{X_{1}X_{2}} \\ 
. & q_{1} & p_{2} & . & \VR . & . & \Heti{X_{2}^{2}} \\ 
. & . & q_{2} & . & \VR q_{1} & . & \Heti{X_{2}X_{3}} \\ 
\HR{6} 
. & . & . & . & \VR p_{1} & q_{3} & \Heti{X_{1}X_{3}} \\ 
. & . & . & . & \VR . & p_{3} & \Heti{X_{3}^{2}} \\ 
\noalign{\vskip-1pt}
}
\qquad \qquad
\Omega_\uQ(X_1X_2) \ = \ 
\EastBordermatrix{
p_{1} & . & 1 & \VR . & . & q_{1} & \Heti{X_{1}X_{2}} \\ 
q_{1} & p_{2} & . & \VR . & . & . & \Heti{X_{2}^{2}} \\ 
. & q_{2} & . & \VR q_{1} & . & . & \Heti{X_{2}X_{3}} \\ 
\HR{6} 
. & . & . & \VR p_{1} & q_{3} & . & \Heti{X_{1}X_{3}} \\ 
. & . & . & \VR . & p_{3} & . & \Heti{X_{3}^{2}} \\ 
. & . & . & \VR . & . & p_{1} & \Heti{X_{1}^{2}} \\ 
\noalign{\vskip-1pt}
}
$$
$$
\Omega_\uQ(X_1X_3) \ = \ 
\EastBordermatrix{
p_{1} & 1 & \VR . & . & q_{3} & . & \Heti{X_{1}X_{3}} \\ 
q_{1} & . & \VR q_{2} & . & . & . & \Heti{X_{2}X_{3}} \\ 
\HR{6} 
. & . & \VR p_{2} & q_{1} & . & . & \Heti{X_{2}^{2}} \\ 
. & . & \VR . & p_{1} & . & q_{1} & \Heti{X_{1}X_{2}} \\ 
. & . & \VR . & . & p_{3} & . & \Heti{X_{3}^{2}} \\ 
. & . & \VR . & . & . & p_{1} & \Heti{X_{1}^{2}} \\ 
\noalign{\vskip-1pt}
}
\qquad \qquad
\Omega_\uQ(X_2^2) \ = \ 
\EastBordermatrix{
p_{2} & 1 & \VR . & q_{1} & . & . & \Heti{X_{2}^{2}} \\ 
q_{2} & . & \VR q_{1} & . & . & . & \Heti{X_{2}X_{3}} \\ 
\HR{6} 
. & . & \VR p_{1} & . & q_{3} & . & \Heti{X_{1}X_{3}} \\ 
. & . & \VR . & p_{1} & . & q_{1} & \Heti{X_{1}X_{2}} \\ 
. & . & \VR . & . & p_{3} & . & \Heti{X_{3}^{2}} \\ 
. & . & \VR . & . & . & p_{1} & \Heti{X_{1}^{2}} \\ 
\noalign{\vskip-1pt}
}
$$
$$
\Omega_\uQ(X_2X_3) \ = \ 
\EastBordermatrix{
1 & \VR q_{1} & q_{2} & . & . & . & \Heti{X_{2}X_{3}} \\ 
\HR{6} 
. & \VR p_{1} & . & . & q_{3} & . & \Heti{X_{1}X_{3}} \\ 
. & \VR . & p_{2} & q_{1} & . & . & \Heti{X_{2}^{2}} \\ 
. & \VR . & . & p_{1} & . & q_{1} & \Heti{X_{1}X_{2}} \\ 
. & \VR . & . & . & p_{3} & . & \Heti{X_{3}^{2}} \\ 
. & \VR . & . & . & . & p_{1} & \Heti{X_{1}^{2}} \\ 
\noalign{\vskip-1pt}
}
\qquad \qquad
\Omega_\uQ(X_3^2) \ = \ 
\EastBordermatrix{
p_{3} & . & 1 & \VR . & . & . & \Heti{X_{3}^{2}} \\ 
q_{3} & p_{1} & . & \VR . & . & . & \Heti{X_{1}X_{3}} \\ 
. & q_{1} & . & \VR q_{2} & . & . & \Heti{X_{2}X_{3}} \\ 
\HR{6} 
. & . & . & \VR p_{2} & q_{1} & . & \Heti{X_{2}^{2}} \\ 
. & . & . & \VR . & p_{1} & q_{1} & \Heti{X_{1}X_{2}} \\ 
. & . & . & \VR . & . & p_{1} & \Heti{X_{1}^{2}} \\ 
\noalign{\vskip-1pt}
}
$$
\hfill $\square$
\end{exemple}

\subsubsection{La forme linéaire $\omega_\uQ$ pour le jeu circulaire}

Pour le jeu circulaire $\uQ$ défini par $Q_i = X_i^{d_i-1}(X_i - X_{i+1})$, 
on a donc obtenu $\omega_\uQ(X^\alpha) = 1$
pour tout monôme $X^\alpha$ de degré $\delta$. 
Ceci prouve que la forme linéaire $\omega_\uQ : \bfA[\uX]_{\delta} \to \bfA$ 
est la restriction à $\bfA[\uX]_\delta$ de l'évaluation 
$F \mapsto F(1,\dots,1)$ au point $(1 : \cdots : 1)$.
Cette forme linéaire ne dépend donc que de $\delta$ et pas du format $D$. 
Par conséquent, il en est de même de son noyau.
L'objet de la proposition suivante est de redémontrer ce résultat sans disposer de la structure
de l'endomorphisme $\Omega_\uQ(X^\alpha)$.

\begin{prop} \label{ProprietesJeuCirculaire}
Le jeu circulaire $\uQ$ défini par $Q_i = X_i^{d_i-1}(X_i - X_{i+1})$ possède les propriétés suivantes.

\begin{enumerate}[\rm i)]
\item
Pour tout $X^\alpha$ de degré $\delta$, on a $X^\alpha - X^{\emouton} \in \langle\uQ\rangle_\delta$.

\item
Pour tout $X^\alpha$ de degré $\delta$, on a $\omega_\uQ(X^\alpha) = 1$.
Autrement dit, 
$\omega_\uQ = \Big(\displaystyle \sum_{|\alpha|=\delta} X^\alpha \Big)^\star$.

\item
On a les égalités
$$
\Ker \omega_\uQ 
\ =\ 
\bigoplus_{X^\alpha \in \Jex_{1,\delta}} \bfA \, (X^\alpha - X^{\emouton})
\ =\ 
\langle\uQ\rangle_\delta
$$
En particulier, 
$$
\bfA[\uX]_\delta 
\ = \ 
\langle\uQ\rangle_\delta
\  \oplus \ 
\bfA X^{\emouton}
$$
\end{enumerate}
\end{prop}

\begin{proof}
i) 
C'est évident pour $X^\alpha = X^\emouton$. Prenons désormais $X^\alpha \in \Jex_{1,\delta}$.
L'endomorphisme $W_{1,\delta}(\uQ) = \id - W_{1,\delta}(\uR)$ envoie $X^\alpha$ dans 
$\langle \uQ \rangle_\delta$. 
Notons $w=W_{1,\delta}(\uR)$ et raisonnons modulo $\langle \uQ \rangle_\delta$. 
On a alors $X^\alpha \equiv w(X^\alpha)$ puis 
$w(X^\alpha) \equiv w^2(X^\alpha)$ 
et ainsi de suite, jusqu'à tomber sur le \MoutonNoir{} 
d'après~\ref{TheoPuits}.
D'où le résultat.

ii) D'après~\ref{detW1deltaQ}, on a $\det W_{1,\delta}(\uQ) = 1$, c'est-à-dire 
$\omega_\uQ(X^{\emouton}) = 1$.
De plus, on sait que $\omega_\uQ$ est nulle sur $\langle\uQ\rangle_\delta$ 
(propriété générale de la forme linéaire $\omega$, cf~\ref{omegaProprietes}).
On a donc $\omega_\uQ(X^\alpha) = \omega_\uQ(X^{\emouton}) = 1$ grâce à i).

iii) 
On a les inclusions :
$$
\bigoplus_{X^\alpha \in \Jex_{1,\delta}} \bfA \, (X^\alpha - X^{\emouton})
\quad \subset \quad
\langle \uQ \rangle_\delta 
\quad \subset \quad
\Ker \omega_\uQ
$$
Utilisons maintenant le fait que $\omega_\uQ(X^\alpha) = 1$ pour tout $| \alpha | = \delta$. 
Dans la base duale de la base monomiale de $\bfA[\uX]_\delta = 
\Jex_{1,\delta} \oplus  \bfA X^{\emouton}$, 
la forme linéaire $\omega_\uQ$ a donc toutes ses composantes égales à~1. 
Son noyau est donc libre avec pour base la famille des 
$X^\alpha-X^{\emouton}$ avec $X^\alpha \in \Jex_{1,\delta}$. 
Les inclusions ci-dessus sont donc des égalités.
\end{proof}

\begin{rmq}

\leavevmode

$\bullet$
Ce résultat permet de retrouver l'inclusion difficile 
$I \subset \langle \uQ \rangle^\sat$ donnée dans l'introduction 
(cf.~page~\pageref{IntroJeuCirculaire}), du moins pour $\delta \geqslant 1$.
En prenant 
$X^\alpha = X^\gamma X_i$ et $X^\beta = X^\gamma X_j$, on a :
$$
\forall\, |\gamma| = \delta-1,\quad 
X^\gamma (X_i - X_j) \in \langle \uQ \rangle 
$$
Ceci est même plus précis : ici on a le degré $\delta-1$ pour les $X^\gamma$
alors qu'à la page~\pageref{IntroJeuCirculaire} on avait seulement~$\delta$.

\bigskip

$\bullet$ 
En ce qui concerne le point iii), voici un autre argument pour montrer directement 
l'inclusion $\langle \uQ \rangle_\delta \supset \Ker \omega_\uQ$.
Un argument analogue sera utilisé pour le système générique $\uP$ dans 
la preuve de~\ref{omegaresInjectiveDoncCramer}.
Présentons-le dans le cas du système circulaire $\uQ$, qui n'est pas du tout générique.
Soit $F \in \bfA[\uX]_\delta$ tel que $\omega_\uQ(F) = 0$.
D'après l'appartenance à la Cramer,
pour tout $G \in \bfA[\uX]_\delta$, on~a  
$\omega_\uQ(G) F - \omega_\uQ(F) G \in \langle \uQ \rangle_\delta$ ce
qui se résume ici à $\omega_\uQ(G) F \in \langle \uQ \rangle_\delta$. 
Prenons $G = X^{\emouton}$. On~a 
$\omega_\uQ(X^\emouton) = \det W_{1,\delta}(\uQ) = 1$. 
D'où $F  \in \langle \uQ \rangle_\delta$.
\end{rmq}

\subsubsection*{Le jeu de Chardin}

Soit $\uQ'$ un système dont l'idéal saturé 
$\langle \uQ'\rangle^\sat$ est l'idéal du point $\Un$. 
On se gardera de croire que $\uQ'$ possède les propriétés du jeu circulaire. 
En voici un exemple avec le format $D = (2,2,3)$.

Considérons le triplet en $(Y_1, Y_2, Y_3)$ :
$$
(Y_1Y_3 - Y_2^2,\quad  -Y_1^2 + Y_2Y_3,\quad Y_1Y_3^2)
\leqno (\star)
$$
et posons :
$$
Y_1 = X_1-X_3, \quad Y_2=X_2-X_3,\quad Y_3=X_3
$$
On obtient ainsi un autre jeu $\uQ'(\uX) = (Q'_1, Q'_2, Q'_3)$ avec 
$Q'_3 = (X_1-X_3)X_3^2$. 
On laisse le soin au lecteur de vérifier que le saturé de l'idéal engendré par les 
polynômes de $(\star)$ est l'idéal $\langle Y_1,Y_2\rangle$, idéal du point~$(0:0:1)$. 
Par conséquent, le saturé de l'idéal $\langle \uQ'\rangle$ est l'idéal 
$\langle X_1-X_3, X_2-X_3\rangle$ de notre point fétiche $(1:1:1)$.  
Qu'en est-il de $\det W_{1,\delta}(\uQ')$ ? 
Il est nul ! Pire, $W_{2,\delta}(\uQ')$ est la matrice nulle $1 \times 1$ 
comme on le vérifie facilement. 
On a $\delta = 4$, $\Jex_{2,\delta} = \bfA X_1^2 X_2^2$ et $\minDiv(X_1^2X_2^2) = 1$ ;
enfin le coefficient en~$X_1^{d_1}$ de~$Q'_1$ est nul. 
La nullité de $\det W_{2,\delta}(\uQ')$ a pour conséquence la
nullité de la forme linéaire~$\omega_{\uQ'}$. 
On pourrait d'ailleurs vérifier cette dernière assertion par la force brute : 
la base monomiale de $\bfA[X_1,X_2,X_3]_4$ est constituée de $15$ monômes~$X^\alpha$ et
toutes les matrices $\Omega_{\uQ'}(X^\alpha)$ ont leur première ligne
nulle, à l'exception de $\Omega_{\uQ'}(X_1^4)$ qui est singulière pour une
autre raison.

Le système $(\star)$ est extrait d'une famille de systèmes au point
$(0 : \cdots : 0 : 1$) utilisée par Marc Chardin dans l'étude des
sous-résultants~\cite{Chardin1}. 
La plupart des systèmes de cette famille conduisent
à un jeu $\uQ'$ avec $\omega_{\uQ'} = 0$, donc déficient par rapport
à la propriété convoitée dans ce chapitre 
(\og la plupart \fg{} ayant un sens bien précis: les seuls cas pour lesquels 
$\det W_{2,\delta}(\uQ') \ne 0$ sont $n=2$ et $n=3$ avec $D$ de la forme $(d, d, 2)$).

\subsection{Un premier bonus : l'endomorphisme $W_{\calM}(\protect\uQ)$ pour
            $\calM\subset\Jex_{1,d}$ et $d \leqslant \delta$}
\label{SousSectionEnDessousDelta}

Nous continuons à désigner par $\varepsilon_i$ le $i$-ème vecteur de la base canonique de $\bbZ^n$
et $e_i = \varepsilon_i - \varepsilon_{i+1}$ avec $\varepsilon_{n+1} = \varepsilon_1$.

\begin{prop} \label{Chasseh=1}
Soit $d \leqslant \delta$.
Considérons l'itérateur $u$ sur $\bbZ^n_{d-\delta}
\ = \ 
\big \{ x \in \bbZ^n \mid  |x| = d - \delta \big \}$ 
$$
u : \ 
x 
\ \longmapsto \ 
\left \{
\begin{array}{ll}
x-e_i &  \text{où $i$ est le plus petit indice tel que $x_i > 0$ si cela existe}  \\ [0.5em]
x & \text{sinon} 
\end{array}
\right .
$$
dont les points fixes sont les $n$-uplets de $\bbZ^n_{d-\delta}$ n'ayant aucune composante 
strictement positive.
\noindent
Alors toute itération de $u$ conduit à un point fixe. 
\end{prop}

\begin{proof}
Fixons $\ell$ quelconque. Associons à $x \in \bbZ^n_{d-\delta}$ l'élément 
$y \in \bbZ^n_{0}$ défini par $y = x - s \varepsilon_\ell$ où $s = |x| = d-\delta \leqslant 0$.
On a ou bien $y_j = x_j - s$ ou bien $y_j = x_j$ selon que $j$ est égal à $\ell$ ou pas.
Donc, dans tous les cas, puisque $s \leqslant 0$, on a
l'implication $x_i > 0 \Rightarrow y_i > 0$.
Or pour $y \in \bbZ^n_0$ et $y_i > 0$, la proposition~\ref{FonctionHauteur} dit que 
$\calH_0(y -e_i) = \calH_0(y)-1 \in \bbN$.
Ainsi l'entier $\calH_0(y)$ diminue d'une unité après chaque itération de~$u$.
Par conséquent, le processus finit par s'arrêter sur un point fixe de $u$.
\end{proof}

On en déduit alors le résultat suivant :
\begin{quote}
\it 
Pour $d \leqslant \delta$, le graphe de l'itérateur 
$w_{1,d}(\uR)$ est une réunion d'anti-arborescences. Chaque anti-arborescence possède un unique 
monôme qui n'est pas dans $\Jex_{1,d}$, qui est alors l'unique puits (anti-racine).
\end{quote}
On peut alors décomposer $\bfA[\uX]_{d}$ en strates :
$$
\bfA[\uX]_d \ = \ 
\bigoplus_{t \in \bbN} \, \calS_t
$$
où $\calS_t$ désigne le $\bfA$-module libre engendré par les monômes à distance $t$ 
de leur anti-racine ;
en particulier $\calS_0 = \bigoplus_{X^\alpha \notin \Jex_{1,d}} \bfA X^\alpha$.
\noindent
Et ce découpage vérifie la propriété :
$$
\text{pour $X^\alpha \in \Jex_{1,d}$ à hauteur $t$ de son anti-racine, on a  
$\qquad w_{1,d}(\uR)(X^\alpha) 
\ \in \ 
\bigoplus_{t' < t} \, \calS_{t'}$
}
$$

\index{strate}%
\index{hauteur!dans le graphe de l'itérateur!$w_{1,d}$}%

De la même façon qu'en~\ref{detW1deltaQ}, on en déduit le résultat suivant :

\begin{prop}

Soit $d \leqslant \delta$.
Relativement à un rangement des monômes de $\Jex_{1,d}$ en strates, 
la matrice de $W_{1,d}(\uQ)$ est triangulaire \og à diagonale de $p_i$ \fg{}. 

Plus généralement, pour tout sous-module monomial $\calM \subset \Jex_{1,d}$, 
et relativement à un rangement en strates, 
la matrice de $W_{\calM}(\uQ)$ est triangulaire \og à diagonale de $p_i$ \fg{}.

En particulier,  
$$
\det W_{1,d}(\uQ) \ = \ 
\prod_{X^\alpha \in \Jex_{1,d}}
p_{\minDiv(X^\alpha)}
$$
et pour $Q_i = X_i^{d_i} - X_i^{d_i-1}X_{i+1}$, on a $\det W_{1,d}(\uQ)  = 1$.
\end{prop}

\index{strate}%

\subsubsection{Exemple}

Prenons $D = (4,2,1)$, et $d = 3 = \delta - 1$. 
\`A gauche, le graphe orienté de~$w_{1,d}(\uR)$ et à droite la matrice de $W_{1,d}(\uQ)$.

$$
\vcenter{
\xymatrix @M=0.4pc @C=1pc @R=1.4pc{
& X_2^3 \ar[d]^-{2} & \\
X_3^3 \ar[d]^-{3} & X_2^2 X_3 \ar[d]^-{2} & \\
X_1X_3^2 \ar[d]^-{3} & X_2 X_3^2\ar[d]^-{3} & X_1X_2^2  \ar[ld]^-{2} \\
X_1^2 X_3 \ar[d]^-{3} & X_1X_2X_3 \ar[d]^-{3} & \\
X_1^3 & X_1^2 X_2 & \\
}}
\hspace{1cm}
W_{1,\delta-1}(\uQ)
\ = \ 
\EastBordermatrix{
p_{3} & . & . & q_{3} & . & . & . & . & \Heti{X_{1}^{2}X_{3}} \\ 
. & p_{3} & q_{2} & . & q_{3} & . & . & . & \Heti{X_{1}X_{2}X_{3}} \\ 
. & . & p_{2} & . & . & . & . & . & \Heti{X_{1}X_{2}^{2}} \\ 
. & . & . & p_{3} & . & . & q_{3} & . & \Heti{X_{1}X_{3}^{2}} \\ 
. & . & . & . & p_{3} & q_{2} & . & . & \Heti{X_{2}X_{3}^{2}} \\ 
. & . & . & . & . & p_{2} & . & q_{2} & \Heti{X_{2}^{2}X_{3}} \\ 
. & . & . & . & . & . & p_{3} & . & \Heti{X_{3}^{3}} \\ 
. & . & . & . & . & . & . & p_{2} & \Heti{X_{2}^{3}} \\ 
}
$$
%

Une remarque en passant : 
comme l'itérateur laisse stable chaque composante connexe, on peut 
également procéder composante connexe par composante connexe.
Pour ce même exemple, on a aussi :
$$
W_{1,\delta-1}(\uQ)
\ = \ 
\EastBordermatrix{
p_{3} & q_{3} & . & \VR . & . & . & . & . & \Heti{X_{1}^{2}X_{3}} \\ 
. & p_{3} & q_{3} & \VR . & . & . & . & . & \Heti{X_{1}X_{3}^{2}} \\ 
. & . & p_{3} & \VR . & . & . & . & . & \Heti{X_{3}^{3}} \\ 
\HR{8} 
. & . & . & \VR p_{3} & q_{2} & q_{3} & . & . & \Heti{X_{1}X_{2}X_{3}} \\ 
. & . & . & \VR . & p_{2} & . & . & . & \Heti{X_{1}X_{2}^{2}} \\ 
. & . & . & \VR . & . & p_{3} & q_{2} & . & \Heti{X_{2}X_{3}^{2}} \\ 
. & . & . & \VR . & . & . & p_{2} & q_{2} & \Heti{X_{2}^{2}X_{3}} \\ 
. & . & . & \VR . & . & . & . & p_{2} & \Heti{X_{2}^{3}} \\ 
\noalign{\vskip-1pt}
}
$$

\subsection{Second bonus : l'endomorphisme $W_{\calM}(\protect\uQ)$ pour $\calM \subset \Jex_{2,d}$ et $d \in \bbN$}
\label{SousSectionW2Qtriangulaire}

\subsubsection{$\bullet$ A l'aide de l'itérateur translaté}

Ce cas a déjà été en partie traité puisque $\Jex_{2,d} \subset \Jex_{1,d}$.
Il ne reste donc plus que le cas $\Jex_{2,d}$ avec $d \geqslant \delta+1$ à examiner.
On peut encore procéder à l'aide du résultat \og chasse aux entiers positifs \fg{} établi 
pour $\Jex_{1,\delta}$ et l'adapter. Ici, la condition 
\textit{le plus petit $i$ tel que} \dots{} est importante, comme on pourra 
s'en convaincre en lisant la preuve.

\begin{prop} \label{Chasseh=2}
Soit $d \in \bbN$. 
Considérons l'itérateur $u$ sur $\bbZ^n_{d-\delta}
\ = \ 
\big \{ x \in \bbZ^n \mid  |x| = d - \delta \big \}$ 
$$
u : \ 
x 
\ \longmapsto \ 
\left \{
\begin{array}{ll}
x-e_i &  
\begin{array}{l}
\text{où $i$ est \textbf{le plus petit} indice tel que $x_i > 0$} \\
\text{si $x$ possède au moins deux composantes strictement positives}  
\end{array} \\ [0.4cm]
x & \text{sinon}
\end{array}
\right .
$$
dont les points fixes sont les $n$-uplets de $\bbZ^n_{d-\delta}$ n'ayant pas 
deux composantes strictement positives.
\noindent
Alors toute itération de $u$ conduit à un point fixe. 
\end{prop}

\begin{proof}
L'idée est de reprendre la preuve de~\ref{Chasseh=1} avec $\ell = n$ 
qui n'est jamais atteint en tant que \og plus petit indice $i$ tel que \fg{} 
en raison du \og au moins \textbf{deux} composantes \dots \fg{}.

Associons à $x \in \bbZ^n$ l'élément 
$y \in \bbZ^n_{0}$ défini par $y = x - s \varepsilon_n$ où $s = |x|$.
Si $x$ est fixé par $u$, c'est terminé, sinon le plus petit $i$ tel que $x_i > 0$ 
est nécessairement $<n$.
Par conséquent,
on a l'égalité $y_i = x_i$, a fortiori $x_i > 0 \Rightarrow y_i > 0$.
Or pour $y \in \bbZ^n_0$ et $y_i > 0$, on a $\calH_0(y -e_i) = \calH_0(y)-1 \in \bbN$.
Ainsi l'entier $\calH_0(y)$ diminue d'une unité après chaque itération de $u$.
Par conséquent, le processus finit par s'arrêter sur un point fixe de $u$.
\end{proof}

\subsubsection{$\bullet$ A l'aide d'une technique d'ordre monomial}

Le cas $\calM \subset \Jex_{2,d}$ avec $d \in \bbN$ est vraiment beaucoup plus facile 
que celui de $\calM \subset \Jex_{1,d}$ avec $d \leqslant \delta$ car
on peut le régler en utilisant un bon ordre monomial :

\begin{prop} \label{WkdQTriangSup}
Soit $\calM \subset \Jex_{2,d}$ et $\leqslant$
un ordre monomial sur $\bfA[\uX]$ tel que $X_1 > \cdots > X_n$.

\noindent
L'endomorphisme $W_{\calM}(\uQ)$ est triangulaire relativement à cet ordre monomial au sens où,
pour $X^\alpha$ fixé:
$$
W_{\calM}(\uQ)(X^\alpha) 
\ \in \ 
\bigoplus_{X^{{\alpha'}} \leqslant\, X^{\alpha}} \bfA X^{\alpha'}
$$

\noindent
Le déterminant $\det W_\calM(\uQ)$ est un produit de $p_i$ : 
$$
\det W_\calM(\uQ)
\ = \ 
\prod_{X^\alpha \in \calM} p_{\minDiv(X^\alpha)}
$$

\noindent
En particulier, pour $Q_i = X_i^{d_i} - X_i^{d_i-1}X_{i+1}$, on a $\det W_{\calM}(\uQ)  = 1$.
\end{prop}

\begin{proof}
Soit $X^\alpha \in \calM$ et $i = \minDiv(X^\alpha)$. 
On a 
$$
W_\calM(\uQ)(X^\alpha) \ = \ 
\pi_{\calM}\Big(\dfrac{X^\alpha}{X_i^{d_i}} Q_i\Big) \ = \ 
p_i \, X^\alpha \ + \ q_i \, \pi_{\calM}(X^{\alpha'}) 
\qquad
\text{avec $X^{\alpha'} = \dfrac{X^\alpha}{X_i} X_{i+1}$}
$$
Comparons $X^{\alpha'}$ et $X^\alpha$.  Puisque $i < n$ (car $\calM
\subset \Jex_2$), on a $X_{i+1} < X_i$, puis en multipliant par
$\frac{X^\alpha}{X_i}$, on obtient $X^{\alpha'} < X^\alpha$.

En ordonnant la base monomiale de $\calM$ de manière croissante pour $\leqslant$,
la matrice de $W_\calM$ est triangulaire supérieure 
à diagonale de $p_i$ ; d'où le résultat sur le déterminant.
\end{proof}

\index{rangement monomial triangulaire}%

\subsubsection{Exemple $D = (1,1,2)$ et $d=4$}

En prenant l'ordre lexicographique avec $X_1 > X_2 > X_3$ et en rangeant les
monômes de $\Jex_{2,d}$ de manière croissante, on a
$$
W_{2,d}(\uQ) \ = \ 
\EastBordermatrix{
p_{2} & q_{2} & q_{1} & . & . & . & . & . & . & . & \Heti{X_{2}X_{3}^{3}} \\ 
. & p_{2} & . & q_{1} & . & . & . & . & . & . & \Heti{X_{2}^{2}X_{3}^{2}} \\ 
. & . & p_{1} & . & . & . & . & . & . & . & \Heti{X_{1}X_{3}^{3}} \\ 
. & . & . & p_{1} & . & . & q_{1} & . & . & . & \Heti{X_{1}X_{2}X_{3}^{2}} \\ 
. & . & . & . & p_{1} & . & . & q_{1} & . & . & \Heti{X_{1}X_{2}^{2}X_{3}} \\ 
. & . & . & . & . & p_{1} & . & . & q_{1} & . & \Heti{X_{1}X_{2}^{3}} \\ 
. & . & . & . & . & . & p_{1} & . & . & . & \Heti{X_{1}^{2}X_{3}^{2}} \\ 
. & . & . & . & . & . & . & p_{1} & . & . & \Heti{X_{1}^{2}X_{2}X_{3}} \\ 
. & . & . & . & . & . & . & . & p_{1} & q_{1} & \Heti{X_{1}^{2}X_{2}^{2}} \\ 
. & . & . & . & . & . & . & . & . & p_{1} & \Heti{X_{1}^{3}X_{2}} \\ 
}
$$
En utilisant la technique de l'itérateur $w_{2,d}(\uR)$, on obtient le graphe suivant 
$$
\xymatrix{
&&&&X_1^2 X_3^2 \ar[d]^-{1} & \\
&& X_1^3 X_2 \ar[d]^-{1} & & X_1 X_2 X_3^2 \ar[d]^-{1} & \\
&&X_1^2 X_2^2 \ar[d]^-{1} & X_1^2 X_2 X_3 \ar[d]^-{1}  & X_2^2 X_3^2 \ar[d]^-{2} & X_1 X_3^3 \ar[dl]^-{1}  \\
&& X_1 X_2^3 \ar[d]^-{1} & X_1 X_2^2 X_3 \ar[d]^-{1} & X_2 X_3^3 \ar[d]^-{2} &\\
X_1^4 & X_1^3 X_3 & X_2^4 & X_2^3 X_3 & X_3^4 &
}
$$
où les puits sont exactement les monômes de degré $d=4$ n'appartenant pas à $\Jex_{2,d}$. 
Avec le rangement par strates, on obtient la matrice suivante :
$$
W_{2,d}(\uQ) \ = \
\EastBordermatrix{
p_{1} & . & . & q_{1} & . & . & . & . & . & . & \Heti{X_{1}X_{2}^{3}} \\ 
. & p_{1} & . & . & q_{1} & . & . & . & . & . & \Heti{X_{1}X_{2}^{2}X_{3}} \\ 
. & . & p_{2} & . & . & q_{1} & q_{2} & . & . & . & \Heti{X_{2}X_{3}^{3}} \\ 
. & . & . & p_{1} & . & . & . & q_{1} & . & . & \Heti{X_{1}^{2}X_{2}^{2}} \\ 
. & . & . & . & p_{1} & . & . & . & . & . & \Heti{X_{1}^{2}X_{2}X_{3}} \\ 
. & . & . & . & . & p_{1} & . & . & . & . & \Heti{X_{1}X_{3}^{3}} \\ 
. & . & . & . & . & . & p_{2} & . & q_{1} & . & \Heti{X_{2}^{2}X_{3}^{2}} \\ 
. & . & . & . & . & . & . & p_{1} & . & . & \Heti{X_{1}^{3}X_{2}} \\ 
. & . & . & . & . & . & . & . & p_{1} & q_{1} & \Heti{X_{1}X_{2}X_{3}^{2}} \\ 
. & . & . & . & . & . & . & . & . & p_{1} & \Heti{X_{1}^{2}X_{3}^{2}} \\ 
}
$$
On peut également effectuer le rangement par strates, mais au sein de chaque composante connexe :
$$
W_{2,d}(\uQ) \ = \
\EastBordermatrix{
p_{1} & q_{1} & . & \VR . & . & \VR . & . & . & . & . & \Heti{X_{1}X_{2}^{3}} \\ 
. & p_{1} & q_{1} & \VR . & . & \VR . & . & . & . & . & \Heti{X_{1}^{2}X_{2}^{2}} \\ 
. & . & p_{1} & \VR . & . & \VR . & . & . & . & . & \Heti{X_{1}^{3}X_{2}} \\ 
\HR{11} 
. & . & . & \VR p_{1} & q_{1} & \VR . & . & . & . & . & \Heti{X_{1}X_{2}^{2}X_{3}} \\ 
. & . & . & \VR . & p_{1} & \VR . & . & . & . & . & \Heti{X_{1}^{2}X_{2}X_{3}} \\ 
\HR{11} 
. & . & . & \VR . & . & \VR p_{2} & q_{1} & q_{2} & . & . & \Heti{X_{2}X_{3}^{3}} \\ 
. & . & . & \VR . & . & \VR . & p_{1} & . & . & . & \Heti{X_{1}X_{3}^{3}} \\ 
. & . & . & \VR . & . & \VR . & . & p_{2} & q_{1} & . & \Heti{X_{2}^{2}X_{3}^{2}} \\ 
. & . & . & \VR . & . & \VR . & . & . & p_{1} & q_{1} & \Heti{X_{1}X_{2}X_{3}^{2}} \\ 
. & . & . & \VR . & . & \VR . & . & . & . & p_{1} & \Heti{X_{1}^{2}X_{3}^{2}} \\ 
\noalign{\vskip-1pt}
}
$$

\label{NOTA06-w2diterateur}%
\index{strate}%

\subsection {Compléments sur les jeux \og simples\fg{} analogues au jeu circulaire}
\label{SectionJeuxSimples}

\centerline{\framebox [1.01\width][c]{Dans toute cette section,
nous écartons le cas $\delta=0$ i.e. le cas du format $D = (1,\dots,1)$}}
\medskip

En ce qui concerne le jeu circulaire, nous pouvons considérer que la plupart de ses
propriétés reposent sur l'existence de la fonction hauteur (cf.~\ref{FonctionHauteur})
dont une conséquence est le théorème du puits (cf.~\ref{TheoPuits}).
Le jeu circulaire possède-t-il des propriétés spécifiques? Qu'en est-il d'un jeu
de la forme $\uQ := \uX^D - \uR$ où $\uR = (R_1, \dots, R_n)$ est un jeu monomial
quelconque de format $D = (d_1, \dots, d_n)$?

\index{jeu!monomial!de format $D$}%

Il convient tout d'abord de revenir sur la notion de hauteur. Celle
pour le jeu circulaire, que nous avons notée $\calH_0
: \bbZ^n_0 \to \bbN$, possède des propriétés adaptées au système
générateur $(\varepsilon_i - \varepsilon_{i+1})_{1\leqslant i\leqslant n}$ de
$\bbZ^n_0$ où $(\varepsilon_1, \dots, \varepsilon_n)$ est la base
canonique de~$\bbZ^n$ et $\varepsilon_{n+1} = \varepsilon_1$. Nous l'avons
en particulier utilisée en degré~$\delta$, \idest{} sur $\bbZ^n_\delta \supset \bbN^n_\delta$,
par l'intermédiaire de la translation:
$$
\alpha = x + (\emouton), \qquad  x = \alpha - (\emouton) \qquad \qquad
\alpha \in \bbZ^n_\delta, \quad  x \in \bbZ^n_0
$$
Ici nous privilégions le degré $\delta$ et uniquement le degré $\delta$.
Via la correspondance $\alpha \leftrightarrow X^\alpha$, nous nous interrogeons sur l'existence 
d'une fonction hauteur~$h$, à valeurs dans $\bbN$, \emph{définie sur l'ensemble
des monômes de degré~$\delta$}, vérifiant
$$
h(X^\emouton) = 0, \qquad\qquad
h\Big(\dfrac{X^\alpha}{X_i^{d_i}} R_i\Big) = h(X^\alpha)-1
\quad \hbox {pour n'importe quel $i$ tel que }  X_i^{d_i} \mid X^\alpha
$$
L'objectif de cette section est d'une part de donner des
conditions nécessaires et suffisantes de l'existence de $h$
et de fournir un certain nombre de propriétés du
système $\uQ := \uX^D - \uR$ ou du système $\pXD + \bsq\,\uR \overset{\rm def.}{=}
(p_1X_1^{d_1} + q_1R_1, \cdots, p_nX_n^{d_n} + q_nR_n)$, en particulier
le lien avec la simplicité de $\uR$ (cf. la définition~\ref{DefJeuSimple}).

\index{hauteur!sur!1@les monômes de degré $\delta$}%

Avant d'aller plus loin, il est impératif d'une part de linéariser le problème en
attachant à $\uR$ une matrice carrée d'ordre $n$ à coefficients entiers
et d'autre part d'introduire une certaine combinatoire en associant à~$\uR$
un graphe orienté.

\subsubsection {Matrice $\Jac_\Un(\uX^D-\uR)$ attachée à $\uR=(R_1,\dots,R_n)$
et ses $n$ mineurs principaux $(r_1,\dots,r_n)$}

\index{matrice!attachée à un jeu monomial}%

Notons $\uQ := \uX^D-\uR$. La matrice $\Jac_\Un(\uQ)$ est la spécialisation
en $\uX := \Un$ de la matrice jacobienne de $\uQ$ avec la convention suivante
($\partial_j$ est l'opération de dérivée partielle $\frac{\partial}{\partial X_j}$):
$$
\begin{array}{c}
\\
\Jac_\uX(\uQ) =
\end{array}
\setlength{\arraycolsep}{0.5\arraycolsep}
\begin {array}{cc}
   &\partial_1\quad\cdots\quad\partial_n 
\\
\begin {array}{c} Q_1\\[1mm] \vdots\\[1mm] Q_n\end{array}
  &\left[\begin {array}{c}  
      \\[1mm]
      \quad\partial_j(Q_i) \\[1mm]
      \\[1mm]
  \end {array}\quad\right]
\\
\end {array}
$$
Cette matrice $\Jac_\Un(\uQ)$, que nous notons $A$ dans la suite, est
donc la matrice de $\bbM_n(\bbZ)$ dont les $n$ lignes sont les vecteurs
$\ell_1, \dots, \ell_n \in \bbZ^n_0$ définis de la manière suivante:
$$
\ell_i = \exposant(X_i^{d_i}) - \exposant(R_i)
$$
Précisément, 
en notant $(\varepsilon_i)_{1 \leqslant i \leqslant n}$ la base canonique de $\bbZ^n$,  
on a pour $R_i = X_1^{\alpha_{i_1}} X_2^{\alpha_{i_2}} \cdots X_n^{\alpha_{i_n}}$ :
$$
\ell_i = d_i \varepsilon_i - \sum_j \alpha_{ij} \varepsilon_j =
(-\alpha_{i1}, \dots, -\alpha_{i,i-1}, d_i-\alpha_{ii}, -\alpha_{i,i+1},\dots, -\alpha_{in})
$$
On notera $r_i$ le mineur principal de $A$ d'ordre $n-1$ obtenu en supprimant la ligne $i$
et colonne $i$. Nous verrons que c'est toujours un entier $\geqslant 0$.

\label{NOTA06-MatJac1Q}%
\index{hauteur!sur!2@$\bbN^n_\delta$}%

Une fonction hauteur peut alors être vue comme une fonction $h
: \bbN^n_\delta \to \bbN$ vérifiant la propriété:
$$
h(\emouton) = 0, \qquad\qquad
h(\alpha - \ell_i) = h(\alpha) - 1
\quad \hbox {pour n'importe quel $i$ tel que }  \alpha_i \geqslant d_i
$$
Nous insistons ici sur le fait que nous demandons à la fonction $h$ d'être
définie sur $\bbN^n_\delta$ et pas sur $\bbZ^n_\delta$ (ou sur son translaté
$\bbZ^n_0$, ce qui revient au même).

Cette matrice $A = (a_{ij})$ vérifie un certain nombre de propriétés susceptibles
d'être étudiées indépendamment les unes des autres.

$\blacktriangleright$
La somme de chaque ligne de $A$ est nulle \idest{} la somme des colonnes
de $A$ est nulle.

$\blacktriangleright$
Les coefficients non diagonaux sont $\leqslant 0$ donc ceux diagonaux sont $\geqslant 0$.

$\blacktriangleright$
Il s'ensuit que $a_{ii} = \sum_{j \ne i} |a_{ij}|$ pour tout $i$. A fortiori,
la matrice $A$ est faiblement dominante en lignes au sens suivant:
$a_{ii} \geqslant \sum_{j \ne i} |a_{ij}|$ pour tout $i$.
En conséquence, tout mineur principal de $A$ est $\geqslant 0$ comme le montre la
proposition suivante.

\index{matrice!faiblement dominante en lignes}%
\index{matrice!strictement dominante en lignes}%

\begin {prop} [Gershgorins's theorem and co] \label {Gershgorin} \leavevmode

Dans les trois premiers points, $A \in \bbM_n(\bbC)$ mais dans le dernier point $A$ est
réelle.

\begin {enumerate} [\rm i)]
\item
On suppose $\ker A$ non  réduit à $\{0\}$; alors il existe $1 \leqslant i \leqslant n$ tel que
$|a_{ii}| \leqslant \sum_{j \ne i} |a_{ij}|$.

\item
On suppose la diagonale de $A$ strictement dominante en lignes au sens où,
pour tout $i$ :
$$
|a_{ii}| > \sum_{j \ne i} |a_{ij}|
$$
Alors $A$ est inversible.

\item
Pour toute valeur propre $\lambda$ de $A$, il y a un $1 \leqslant i \leqslant n$ vérifiant
$|\lambda - a_{ii}| \leqslant \sum_{j \ne i} |a_{ij}|$.

\item
Soit $A \in \bbM_n(\bbR)$. On suppose que $A$ est faiblement dominante en lignes \idest{} pour tout $i$
$$
a_{ii}  \geqslant \sum_{j \ne i} |a_{ij}|  \qquad\qquad
\hbox {(en particulier $a_{ii} \geqslant 0$)}
$$
Alors tout mineur principal de $A$ est $\geqslant 0$.
\end {enumerate}
\end {prop}

\index{théorème!de Gershgorin}%
\index{matrice!strictement dominante en lignes}%
\index{mineur!principal (d'une matrice carrée)}%

\begin {proof} \leavevmode

i) Soit $x \in \bbC^n$ non nul vérifiant $Ax = 0$ et $i$ tel que $|x_i| = \max_j |x_j|$; on a donc
$$
a_{ii} = - \sum_{j \ne i} a_{ij} \frac {x_j}{x_i} \qquad \hbox {d'où} \qquad
|a_{ii}| \leqslant \sum_{j \ne i} |a_{ij}|
$$

ii) Par contraposée du point i).

iii)  Appliquer le point i) à $\lambda\Id_n - A$.

iv) Il suffit de montrer que $\det(A) \geqslant 0$. En effet, pour tout $I \subset \{1..n\}$,
la sous-matrice $A_{I \times I}$ vérifie les mêmes hypothèses de dominance en lignes que $A$
donc on aura $\det(A_{I \times I}) \geqslant 0$.

Utilisons le fait que $\det(A)$ est le produit des valeurs propres complexes de $A$.
Donc $\det(A) = \pi_1 \pi_2$ où $\pi_1$ est le produit
des valeurs propres non réelles et $\pi_2$ celui des valeurs propres réelles.
On a $\pi_1 > 0$ puisque pour toute valeur propre non réelle $\lambda$ de $A$,
$\overline\lambda$ est aussi valeur propre. Il suffit donc de montrer que toute
valeur propre réelle $\lambda$ de $A$ est $\geqslant 0$. Or, d'après le point iii), on a,
pour un certain $i$:
$$
|\lambda - a_{ii}| \leqslant \sum_{j \ne i} |a_{ij}|
$$
Et par hypothèse, on a $\sum_{j \ne i} |a_{ij}| \leqslant a_{ii}$; bilan: $|\lambda - a_{ii}| \leqslant a_{ii}$
d'où l'on déduit $\lambda \geqslant 0$.
\end {proof}

\begin {lem} \label{riliSumRelation}
Soit $A \in \bbM_n(\bfA)$ de lignes $\ell_1, \dots, \ell_n$ telle que
la somme des coefficients de chaque ligne $\ell_i$ soit nulle. On note
$r_i$ le mineur principal de $A$ d'ordre $n-1$ obtenu en supprimant la ligne $i$
et colonne $i$.

\medskip
\begin {enumerate}[\rm i)]
\item 
Soit $\widetilde A$ la matrice adjointe de $A$. Alors
$$
\widetilde A =
\begin {bmatrix}
r_1 &r_2 & \cdots &r_n \\
r_1 &r_2 & \cdots &r_n \\
\vdots &\vdots&  &\vdots  \\
r_1 &r_2 & \cdots &r_n \\
\end {bmatrix}
$$

\item
Les mineurs $r_i$ fournissent une relation linéaire entre les lignes $\ell_i$ ; précisément, $\sum_i r_i\ell_i = 0$.
\end {enumerate}
\end {lem}

\begin {proof} \leavevmode

i) Résulte du fait que la somme des coefficients de chaque ligne est nulle.
Montrons par exemple que $\widetilde {a}_{11} = \widetilde {a}_{21}$ i.e.
$\det(A^{1,1}) = -\det(A^{1,2})$ où $A^{i,j}$ désigne la matrice obtenue
en supprimant dans $A$ ligne~$i$ et colonne~$j$. Remarquons que $A^{1,1}$ et $A^{1,2}$ ont
des colonnes identiques à l'exception des premières; en utilisant le fait que la
somme des colonnes de $A$ est nulle:
$$
{\rm colonne}_1(A^{1,1}) = -{\rm colonne}_1(A^{1,2})  \qquad \hbox {d'où} \quad
\det(A^{1,1}) = -\det(A^{1,2})
$$

\medskip
ii) Résulte de $\widetilde A\,A = \det(A)\,\Id_n = 0$.
\end {proof}

\medskip
Dans la suite, nous allons prendre comme anneau de base $\bfA = \bbZ$ et nous interroger sur
les égalités suivantes dans lesquelles il est important de distinguer $\bbZ\,\ell_i$ et
$\bbN\,\ell_i$: 
$$
\bbZ^n_0 \overset{?}{=} \sum_i \bbZ\,\ell_i,
\qquad\qquad
\bbZ^n_0 \overset{?}{=} \sum_i \bbN\,\ell_i
$$
Bien entendu, si on a celle de droite, on a celle de gauche. Et réciproquement, si
on a celle de gauche et $-\ell_j \in \sum_i \bbN\,\ell_i$ pour chaque $j$, alors
on a celle de droite. Les mineurs principaux $r_1, \dots, r_n$ permettent une
caractérisation simple de ces égalités.

\begin {prop} \label {EasyLinearAlgebra}
On garde le contexte du lemme précédent en supposant l'anneau de base $\bfA=\bbZ$.

\begin {enumerate}[\rm i)]
\item
Les propriétés suivantes sont équivalentes:
\begin {enumerate}
\item
$\bbZ^n_0 = \bbZ\cdot\ell_1 + \cdots + \bbZ\cdot\ell_n$
\item
$\pgcd(r_1, \dots, r_n) = 1$.
\end {enumerate}
\noindent
Dans ce cas, le vecteur $r := \sum_i r_i\varepsilon_i$ est une base du module des relations
entre les lignes $\ell_i$ de $A$:
$$
\ker \transpose{A} = \bbZ\,.
\begin{bmatrix}
r_1 \\
\vdots \\
r_n 
\end{bmatrix}
$$

\item
On suppose de plus la matrice $A$ faiblement dominante en
lignes. Alors les $n$ mineurs principaux $r_i$ sont des entiers $\geqslant 0$ et on a l'équivalence:
\begin {enumerate}
\item
$\bbZ^n_0 = \bbN\cdot\ell_1 + \cdots + \bbN\cdot\ell_n$
\item
$\pgcd(r_1, \dots, r_n) = 1$ et $r_i \geqslant 1$ pour tout $i$.
\end {enumerate}
Dans ce cas, chaque $x \in \bbZ^n_0$ possède une unique \emph
{écriture positive minimale} $x = \sum_i m_i \ell_i$,
positive signifiant $m_i \in \bbN$ pour tout $i$, et minimale qu'elle
minimise la somme~$\sum_i m_i$.
\end {enumerate}
\end {prop}

\index{e@écriture positive minimale $\sum_i m_i\ell_i$}

\begin {proof} \leavevmode

i) Considérons $\transpose{A} : \bbZ^n \to \bbZ^n$ d'image contenue
dans $\bbZ^n_0$, que l'on voit comme une application linéaire
$U : \bbZ^n \to \bbZ^n_0 \simeq \bbZ^{n-1}$. L'écriture de $u \in \bbZ^n$
appartenant à $\bbZ^n_0$ sur la base $(\varepsilon_1-\varepsilon_n,
\varepsilon_2-\varepsilon_n, \dots, \varepsilon_{n-1}-\varepsilon_n)$ de
$\bbZ^n_0$ est
$$
u_1\varepsilon_1 + \cdots + u_n\varepsilon =
u_1(\varepsilon_1 - \varepsilon_n) + u_2(\varepsilon_2 - \varepsilon_n) + \cdots +
u_{n-1}(\varepsilon_{n-1} - \varepsilon_n) 
$$
Ainsi les $n-1$ lignes de $U$ sont les $n-1$ premières lignes
de $\transpose{A}$. La matrice~$U$, à $n$ colonnes, $n-1$ lignes, est donc la transposée
de $A_{\{1..n\} \times \{1..n-1\}}$, matrice extraite de $A$ sur les $n-1$ premières colonnes. 

On en déduit que les $(r_1, \dots, r_n)$ sont, au signe près, les $n$ mineurs d'ordre
$n-1$ de $U \in \bbM_{n-1,n}(\bbZ)$. Il s'ensuit que $\Im\transpose{A}$ est d'indice fini
dans $\bbZ^n_0$ si et seulement si un des $r_i$ est non nul, auquel cas :
$$
\#\bigl(\bbZ^n_0/\Im\transpose{A}\bigr) = |\pgcd(r_1, \dots, r_n)|
$$
D'où l'équivalence entre les propriétés {\it(a)} et {\it(b)}.

\smallskip
Pour démontrer que $r := \sum_i r_i\varepsilon_i$ est une base de
$\ker\transpose{A}$, utilisons le fait que $\transpose{A}
: \bbZ^n \twoheadrightarrow
\bbZ^n_0$ est une surjection. Ceci prouve que le sous-module $\ker\transpose{A}$ de $\bbZ^n$
est libre de rang~1 et facteur direct dans~$\bbZ^n$, donc engendré par un vecteur dont
les composantes sont premières entre elles. Mais comme 
$r$ a ses composantes premières entre elles et appartient au noyau
$\ker\transpose{A}$, le vecteur $r$ est une base de $\ker\transpose{A}$.

\medskip
ii) La positivité des $r_i$ résulte de la proposition \ref{Gershgorin}.

\smallskip
Supposons (a), a fortiori $\bbZ^n_0 = \bbZ\cdot\ell_1 + \cdots
+ \bbZ\cdot\ell_n$ donc $\pgcd(r_1, \dots, r_n) = 1$.
Puisque $-\sum_i \ell_i \in \bbZ^n_0$, on peut écrire
$-\sum_i \ell_i = \sum_i m_i \ell_i$ avec $m_i \in \bbN$. Il vient
$\sum_i (m_i+1)\ell_i = 0$ d'où un $q \in \bbZ$ tel que $m_i + 1 =
qr_i$ pour tout $i$. On en déduit que $r_i \ne 0$ pour tout $i$
donc $r_i \geqslant 1$.

Supposons (b).
On sait alors d'après i) que $\bbZ^n_0 = \bbZ\cdot\ell_1 + \cdots
+ \bbZ\cdot\ell_n$ et il s'agit de remplacer $\sum_i \bbZ\,\ell_i$ par
$\sum_i \bbN\,\ell_i$.  Pour cela, on écrit $x \in \bbZ^n_0$ comme une
combinaison entière quelconque de $\ell_1, \dots, \ell_n$ sans se
soucier de positivité et on utilise $\sum_i r_i\ell_i = 0$:
$$
x = \sum_i a_i \ell_i = \sum_i (a_i - mr_i) \ell_i  \qquad
a_i \in \bbZ, \quad \forall m \in \bbZ
$$
Puis on choisit $m \in \bbZ$ tel que pour tout $i$, on ait $a_i -
mr_i \geqslant 0$ i.e. $m \leqslant a_i/r_i$, ce qui est possible car $r_i \geqslant 1$. En prenant
$$
m = \min_{i=1,\dots, n} \left\lfloor \frac {a_i}{r_i} \right\rfloor
$$
on obtient une écriture $x = \sum_i m_i\ell_i$ sur $\sum_i \bbN\,\ell_i$ (écriture
minimale au sens minimalité de~$\sum_i m_i$).

\smallskip
Pour l'unicité de l'écriture positive minimale, il suffit, étant données deux égalités
$$
\sum_i m_i\ell_i = \sum_i m'_i \ell_i  \qquad \hbox {avec} \quad m_i, m'_i \in \bbZ 
$$
de montrer l'implication
$$
\sum_i m_i = \sum_i m'_i   \ \Longrightarrow\   m = m'
$$
On a $\sum_i (m_i - m'_i)\ell_i = 0$, d'où, d'après i), l'existence 
de $q \in \bbZ$ tel que $m - m' = q\,r$. Il vient alors
$0 = \sum_i m_i - \sum m'_i = q \sum_i r_i$ et puisque $\sum_i r_i$ n'est pas nul,
$q=0$ i.e. $m = m'$.
\end {proof}

\subsubsection {Graphe orienté étiqueté associé à un jeu monomial $\uR$} 
\label {GraphOfR}

\index{graphe (orienté étiqueté) d'un jeu monomial}%

Il s'agit du graphe orienté dont les sommets sont les monômes de degré $\delta$, avec des arcs étiquetés:
un arc $X^\alpha \overset{i}{\to} X^\beta$ pour chaque $i$ vérifiant
$$
\hbox {$X_i^{d_i} \mid X^\alpha$ et $X^\beta = \frac {X^\alpha}{X_i^{d_i}} R_i$}
\leqno (\hbox {règle de réécriture numéro $i$})
$$
Ce graphe ne doit pas être confondu avec le graphe d'un itérateur $w_\calM$ qui
est piloté par $\minDiv$. Le graphe dont nous parlons ici est indépendant de
tout mécanisme de sélection.

\medskip

On a d'une part $X^\alpha - X^\beta \in \langle\uX^D-\uR\rangle$ puisque
$$
X^\alpha - X^\beta = \frac{X^\alpha}{X_i^{d_i}} (X_i^{d_i} - R_i) \in \langle\uX^D-\uR\rangle
$$
Et d'autre part:
$$
\alpha - \beta = \exposant(X_i^{d_i}) - \exposant(R_i) \overset {\rm def}{=} \ell_i
$$
A noter que le nombre d'arcs sortant de $X^\alpha$ est le cardinal de
l'ensemble de divisibilité de $X^\alpha$; en particulier,
aucun arc ne sort du mouton-noir.
Dans la suite, on supposera $R_i \ne X_i^{d_i}$, ce qui équivaut au fait que le graphe de $\uR$
ne possède pas de boucle.

\medskip

En voici deux exemples. Le premier pour le format $D = (1,1,2)$ de degré critique $\delta = 1$. En prenant
$\uR = (X_2, X_1, X_2^2)$, on obtient le graphe ci-dessous. A droite, on a fait figurer
la matrice $A=\Jac_\Un(\uX^D-\uR)$ attachée à $\uR$ ainsi que ses 3 mineurs principaux.
$$
\xymatrix {
\vertex{X_1} \ar@/^7pt/[r]^1 &\vertex{X_2}\ar@/^7pt/[l]^2 & \vertex{X_3}
}
\qquad\qquad
A = \begin {bmatrix}
1 & -1 & 0 \\
-1 & 1 & 0 \\
0  & -2 & 2 \\
\end {bmatrix}
\qquad
(r_1, r_2, r_3) = (2, 2, 0)
$$
Dans le second, $D = (3,2,1)$ de degré critique $\delta = 3$, de
mouton-noir $X_1^2X_2$, et $\uR = (X_2^2X_3, X_1X_2,X_1)$.
$$
\let\V=\vertex
\xymatrix {
                  &                    &                     &                   &\V{X_2X_3^2}\ar[r]^3 &
                         \V{X_1X_2X_3}\ar[rd]^3  \\
\V{X_3^3}\ar[r]^-{3} &\V{X_1X_3^2}\ar[r]^3 &\V{X_1^2X_3}\ar[r]^3 &\V{X_1^3}\ar[r]^1 &\V{X_2^2X_3}\ar[ru]^2\ar[rd]_3 &&
                          \V{X_1^2X_2} \\
                  &                    &                     &                   &\V{X_2^3}\ar[r]_2 &
                         \V{X_1X_2^2}\ar[ru]_2  \\
}
$$
Voici la matrice $A=\Jac_\Un(\uX^D-\uR)$ attachée à $\uR$ et ses mineurs principaux:
$$
A = \begin {bmatrix}
3 & -2 & -1 \\
-1 & 1 & 0 \\
-1  & 0 & 1\\
\end {bmatrix}
\qquad
(r_1, r_2, r_3) = (1, 2, 1)
$$
Le lemme suivant, immédiat, est à la charge du lecteur.

\index{matrice!attachée à un jeu monomial}%

\begin {lem}

Soit un graphe fini orienté muni d'une fonction $h$ sur les sommets, à valeurs dans $\bbN$, vérifiant
$h(\beta) = h(\alpha)-1$ pour tout arc $\alpha \to \beta$. Pour un sommet $\alpha$, on considère
le processus suivant: prendre, s'il en existe, un arc quelconque $\alpha \to \beta$ (i.e. sortant de $\alpha$)
et itérer cette construction avec $\beta$.
\begin {enumerate} [\rm i)]
\item
Quelque soit le sommet $\alpha$, le processus se termine sur un sommet $\beta$ sans arc sortant.

\item
Le graphe est sans circuit.

\item
Etant donnés deux chemins $\xymatrix {\alpha\ar@/^8pt/@{~>}[rr]\ar@/_8pt/@{~>}[rr] & &\beta}$, ils
ont même longueur $h(\alpha) - h(\beta)$.
\end {enumerate}
\end {lem}

\medskip
Un des résultats importants et plus délicat est le suivant.

\begin {lem} \label{LemmeExistenceHauteur}
Soit $\uR$ un jeu monomial vérifiant:
$$
\forall\, \alpha \in \bbN^n_\delta, \quad \alpha - (\emouton) \in \sum_i \bbN\,\ell_i 
$$
Alors, on dispose des propriétés suivantes
\begin{enumerate} [\rm i)]
\item
On a l'égalité $\bbZ^n_0 = \sum_i\bbZ\,\ell_i$.

\item
Pour chaque $\alpha \in \bbN^n_\delta$, il y a une unique \emph
{écriture positive minimale} $\alpha -(\emouton) = \sum_i m_i \ell_i$,
positive signifiant $m_i \in \bbN$ pour tout $i$, et minimale qu'elle
minimise la somme~$\sum_i m_i$.

\item
Il existe une fonction hauteur $h$ pour $\uR$.
\end {enumerate}
\end {lem}

\index{hauteur!sur!2@$\bbN^n_\delta$}%
\index{e@écriture positive minimale $\sum_i m_i\ell_i$}%

\begin {proof} \leavevmode

i) Pour $\alpha, \beta \in \bbN^n_\delta$, on déduit de l'hypothèse que
$\alpha - \beta \in \sum_i \bbZ \ell_i$.  Pour obtenir $\bbZ^n_0
= \sum_i\bbZ\,\ell_i$, il suffit de voir que tout vecteur de
$\bbZ^n_0$ est la différence de deux vecteurs de $\bbN^n_\delta$ (ici
on va utiliser $\delta \geqslant 1$), ce qui revient à le montrer pour les
$\varepsilon_j - \varepsilon_k$ où $(\varepsilon_i)_{1\le i\le n}$ est la base
canonique de $\bbZ^n$. Par exemple:
$$
\varepsilon_1 - \varepsilon_2 \overset {\rm def}{=} (1, -1, 0, \dots, 0) =
(1, \delta-1, 0, \dots, 0)  - (0, \delta, 0, \dots, 0)  
$$
L'égalité $\bbZ^n_0 = \sum_i\bbZ\,\ell_i$ nous permet d'utiliser
(proposition~\ref{EasyLinearAlgebra}) le fait que le vecteur $r = \sum_i r_i \varepsilon_i$
est une base du module des relations entre les vecteurs $\ell_i$ et
l'égalité $\pgcd(r_1, \dots, r_n) = 1$.

\medskip
ii) Pour la preuve de l'unicité, reprendre celle du point ii) de la proposition~\ref{EasyLinearAlgebra}.

\medskip
iii)
On définit la fonction $h : \bbN^n_\delta \to \bbN$ de la manière suivante:
$$
\alpha \mapsto \sum_i m_i \quad \hbox {pour l'écriture positive minimale} \quad
\alpha - (\emouton) = \sum_i m_i\ell_i, \ m_i \in \bbN
$$
Il reste à prouver que $h$ est une fonction hauteur.
Elle vérifie clairement $h(\emouton) = 0$.
Nous allons montrer que $\alpha_j \geqslant d_j$ entraine $m_j \geqslant 1$; il s'en suivra que l'écriture positive
minimale de $\alpha - \ell_j$ est
$$
\alpha - \ell_j = \sum_{i < j} m_i\ell_i + (m_j-1) \ell_j + \sum_{k > j} m_k \ell_k
$$
donc que $h(\alpha - \ell_j) = h(\alpha)-1$.

Soit $A = (a_{i,j})_{1\le i,j\le n}$ la matrice de lignes $\ell_1, \dots, \ell_n$. La composante sur $\varepsilon_j$
de l'égalité $\alpha - (\emouton) = \sum_i m_i\ell_i$ fournit:
$$
\alpha_j - (d_j- 1) = m_j a_{jj} + \sum_{i \ne j} m_i a_{ij}
$$
Mais on a $m_i a_{ij} \leqslant 0$ car les $m_i$ sont $\geqslant 0$ et les
$a_{ij} \leqslant 0$ pour $i \ne j$. Il vient donc $m_j a_{jj} \geqslant \alpha_j
- (d_j- 1)$ et comme on a supposé $\alpha_j \geqslant d_j$, on a $\alpha_j - (d_j- 1) \geqslant 1$
dont on déduit $m_j \ne 0$ \idest{} $m_j \geqslant 1$.
\end {proof}

\begin {theo}
\label{EquivalencesExistenceHauteur}
Etant donné un jeu monomial $\uR$, les propriétés suivantes sont équivalentes.

\begin {enumerate} [\rm i)]
\item
Le graphe de $\uR$ est sans circuit.

\item
Pour chaque sommet $X^\alpha$ du graphe de $\uR$, il y a un chemin reliant
$X^\alpha$ à $X^\emouton$.

\item
Pour tout $\alpha \in \bbN^n_\delta$, on a $\alpha - (\emouton) \in \sum_i \bbN\,\ell_i$.

\item
Il existe une fonction hauteur $h$ pour $\uR$.
\end {enumerate}
\end {theo}

\index{graphe (orienté étiqueté) d'un jeu monomial}%

\begin {proof} \leavevmode
On rappelle que l'on a supposé $R_i \ne X_i^{d_i}$ de manière à ce que le graphe de $\uR$
ne possède pas de boucle.

\medskip
\noindent
i) $\Rightarrow$ ii)
Dans un graphe fini sans circuits, pour tout sommet, il existe un chemin de ce sommet
à un sommet sans arc sortant. Ici, il y a un seul sommet sans arc sortant: c'est
le \MoutonNoir $X^\emouton$.

\medskip
\noindent
ii) $\Rightarrow$ iii)
Pour un arc $X^\alpha \overset{i}{\to} X^\beta$ du graphe de $\uR$, on a $\alpha - \beta = \ell_i$
donc pour un chemin joignant $X^\alpha$ à $X^\beta$, on a $\alpha - \beta \in \sum_i \bbN\ell_i$.
D'où le résultat puisque par hypothèse, il y a un chemin reliant $X^\alpha$ à $X^\emouton$.

\medskip
\noindent
iii) $\Rightarrow$ iv) C'est l'objet du lemme précédent.

\medskip
\noindent
iv) $\Rightarrow$ i)
Découle du fait qu'en présence d'un arc $X^\alpha \to X^\beta$, on a $h(X^\beta) = h(X^\alpha) - 1$.
\end {proof}

\medskip

La construction de la fonction hauteur dans le lemme~\ref{LemmeExistenceHauteur}
et la preuve du théorème fournissent en prime le corollaire suivant.

\begin {coro} \label {CorollaireExistenceHauteur}
Soit $\uR$ un jeu monomial vérifiant les conditions équivalentes
du théorème précédent.
On note $\ell_1, \dots, \ell_n$ les lignes de la matrice associée et $r_1, \dots, r_n$ les $n$
mineurs principaux.

\begin {enumerate} [\rm i)]
\item
On a $\pgcd(r_1, \dots, r_n) = 1$.

\item
La fonction hauteur $h$ est unique.

\item
Tous les chemins reliant $X^\alpha$ à $X^\beta$ ont même longueur $h(X^\alpha) - h(X^\beta)$.
En particulier, tous les chemins reliant $X^\alpha$ à $X^\emouton$ ont même longeur $h(X^\alpha)$.

\item
Pour n'importe quel chemin reliant $X^\alpha$ à $X^\beta$, en notant $m_i$ le nombre
d'étiquettes~$i$ de ce chemin:
$$
\alpha - \beta = \sum_i m_i \ell_i   \qquad h(X^\alpha) - h(X^\beta) = \sum_i m_i
$$
En particulier, pour n'importe quel chemin reliant $X^\alpha$ à $X^\emouton$, en notant $m_i$ le nombre
d'étiquettes~$i$ de ce chemin:
$$
\alpha - (\emouton) = \sum_i m_i \ell_i   \qquad h(X^\alpha) = \sum_i m_i
$$

\item
Pour $\alpha \in \bbN^n_\delta$, il y a une écriture positive minimale (minimale au sens $\sum_i m_i)$
qui est unique:
$$
\alpha - (\emouton) = \sum_i m_i \ell_i  \qquad m_i \in \bbN
$$
Et elle est réalisée par tout chemin reliant $X^\alpha$ à $X^\emouton$.
\end {enumerate}
\end {coro}

\bigskip

Voici, avec $n=3$, l'exemple d'un jeu $\uR$ ne possédant pas de fonction hauteur
mais vérifiant $\Im\transpose{A} = \bbZ^3_0$ ou encore de manière
équivalente $\pgcd(r_1, r_2, r_3) = 1$.
Considérons $\uR = (X_3, X_1X_2, X_1X_3)$ de format
$(1,2,2)$, de degré critique $\delta = 2$, de mouton-noir $X_2X_3$. La matrice
$A=\Jac_\Un(\uX^D-\uR)$ attachée à $\uR$ est
$$
A = \begin {bmatrix}
 1 & 0 &-1 \\
-1 & 1 & 0 \\
-1 & 0 & 1 \\
\end {bmatrix}
\qquad
(r_1, r_2, r_3) = (1, 0, 1)
$$
On voit que les lignes $\ell_1, \ell_2$ forment une base de $\bbZ^3_0$
et on a la relation $\ell_1 + \ell_3 = 0$. Cependant, on a un circuit
$\xymatrix {\vertex{X_1X_3} \ar@/^7pt/[r]^1 &\vertex{X_3^2}\ar@/^7pt/[l]^3}$
et 2 composantes connexes:
$$
\xymatrix {
\vertex{X_1^2} \ar[r]^1 &\vertex{X_1X_3} \ar@/^7pt/[r]^1 &\vertex{X_3^2}\ar@/^7pt/[l]^3
}
\qquad\qquad\qquad
\xymatrix {
\vertex{X_2^2} \ar[r]^2 &\vertex{X_1X_2} \ar[r]^1 &\vertex{X_2X_3}
} 
$$
Le système $\uR$ n'est pas simple au sens de la définition~\ref{DefJeuSimple}.
En effet, en notant $\uQ = \uX^D-\uR = (X_1-X_3, X_2^2 - X_1X_2, X_3^2 - X_1X_3)$,
ce dernier système, en plus du point unité $(1:1:1)$, admet $(1:0:1)$ comme zéro.
En notant l'idéal de chacun de ces 2 points projectifs:
$$
I_{(1:1:1)} = \langle X_1-X_2, X_1-X_3\rangle, \qquad
I_{(1:0:1)} = \langle X_1-X_3, X_2\rangle
$$
le lecteur pourra vérifier
que
$$
\langle\uQ\rangle^\sat = I_{(1:1:1)} \cap I_{(1:0:1)} =
\langle X_1-X_3,\ X_2(X_2-X_3)\rangle =
\langle X_1-X_3,\ X_2(X_1-X_2)\rangle
$$
En ce qui concerne le lien entre la non-existence d'une fonction hauteur et la non-simplicité de $\uR$,
se référer à la conjecture~\ref{ConjectureJeuSimple}.

\index{matrice!attachée à un jeu monomial}%
\index{jeu!monomial!simple}%

\medskip
L'exemple ci-dessous montre que l'existence d'une fonction hauteur ne peut pas se déduire du
seul examen de la matrice attachée à $\uR$. Il s'agit de $\uR = (X_1X_3, X_1, X_1X_3)$, de format
$D = (2,1,2)$, de degré critique $\delta=2$, de mouton-noir $X_1X_3$. Sa matrice $\Jac_\Un(\uX^D-\uR)$
est la même que celle de l'exemple précédent et pourtant il admet une
fonction hauteur vu que son graphe est sans circuit:
$$
\xymatrix @R=0.4cm{
                    &                    &                    &\vertex{X_2X_3}\ar[d]^2 \\
\vertex{X_2^2}\ar[r]^2 &\vertex{X_1X_2}\ar[r]^2 &\vertex{X_1^2}\ar[r]^1 &\vertex{X_1X_3}  \\
                   &                     &                    &\vertex{X_3^2}\ar[u]_3 \\ 
}
$$

\index{graphe (orienté étiqueté) d'un jeu monomial}%

\bigskip

Dans la proposition suivante, nous abordons, pour un jeu monomial $\uR$,
l'existence d'une fonction hauteur $h$ \emph {définie sur $\bbZ^n_\delta$ tout entier}
\idest{}  $h : \bbZ^n_\delta \to \bbN$
vérifiant la propriété:
$$
h(\emouton) = 0, \qquad\qquad
h(\alpha - \ell_i) = h(\alpha) - 1
\quad \hbox {pour n'importe quel $i$ tel que }  \alpha_i \geqslant d_i
$$
Ou encore, de manière équivalente par translation, l'existence d'une
fonction hauteur $h_0 : \bbZ^n_0 \to \bbN$ vérifiant $h_0({\bf 0}) =
0$ et $h_0(x - \ell_i) = h_0(x)-1$ pour tout $i$ tel que $x_i \geqslant 1$.

\index{hauteur!sur!3@$\bbZ^n_0$}%

\begin {prop} \label{ExistenceSuperHauteur}

Pour un jeu monomial $\uR$ dont on note $\ell_1, \dots, \ell_n$ les
lignes de sa matrice attachée $\Jac_\Un(\uX^D-\uR)$  et $r_1, \dots, r_n$ les $n$
mineurs principaux, on a l'équivalence:

\begin {enumerate}[\rm i)]
\item
$\pgcd(r_1, \dots, r_n) = 1$ et $r_i \geqslant 1$ pour tout $i$.

\item
Il existe une fonction hauteur $h_0 : \bbZ^n_0 \to \bbN$.
\end {enumerate}

\end {prop}

\begin {proof} \leavevmode

\noindent
i) $\Rightarrow$ ii)
D'après le point ii) de la proposition \ref{EasyLinearAlgebra}, tout $x \in \bbZ^n_0$
possède une unique écriture positive minimale $x = \sum_i m_i\,\ell_i$ avec $m_i \in \bbN$.
On pose alors
$$
h_0(x) = \sum_i m_i
$$
On vérifie, de manière analogue au point iii) du lemme~\ref{LemmeExistenceHauteur},
que $h_0$ est une fonction hauteur.

\smallskip
\noindent
ii) $\Rightarrow$ i)
D'après le point ii) de la proposition \ref{EasyLinearAlgebra}, il
suffit de montrer que $\bbZ^n_0 = \sum_i \bbN\, \ell_i$. Soit
$x \in \bbZ^n_0$. Pour prouver l'appartenance
$x \in \sum_i \bbN\,\ell_i$, raisonnons par récurrence sur
$h_0(x)$. Si $h_0(x) = 0$, alors $x$ est nul et l'appartenance est immédiate.
Sinon, $x$ étant non nul et dans $\bbZ^n_0$, il
existe un indice $j$ tel que $x_j > 0$, donc $h_0(x - \ell_j) =
h_0(x) -1$ et par récurrence, $x-\ell_j \in \sum_i \bbN\,\ell_i$;
il en est alors de même de $x = (x-\ell_j) + \ell_j$.
\end {proof}

\subsubsection {Quelques exemples}

\medskip
$\blacktriangleright$
Pour le jeu circulaire $\uR = (X_1^{d_1-1}X_2, X_2^{d_2-1}X_3,\dots,
X_n^{d_n-1}X_1)$, on a $\ell_i = \varepsilon_i - \varepsilon_{i+1}$ où
$(\varepsilon_i)_{1 \leqslant i \leqslant n}$ est la base canonique de
$\bbZ^n$. On vérifie aisément que tous les $r_i$ sont égaux à 1. En
utilisant~\ref{ExistenceSuperHauteur}, on retrouve l'existence d'une
hauteur $h_0$ définie sur $\bbZ^n_0$.

\medskip
$\blacktriangleright$
Soit le jeu $\uR^{(1)} = (X_1^{d_1-1}X_2, X_1^{d_2-1}X_3, \dots, X_1^{d_n-1}X_1)$,
\idest{} via une indexation cyclique modulo~$n$:
$$
R^{(1)}_i = X_1^{d_i-1} X_{i+1}  \qquad \qquad (X_{n+1} = X_1)
$$
Bien qu'analogue au jeu circulaire, il privilégie~$X_1$. 
On a $\ell_i = d_i\varepsilon_i - (d_i-1)\varepsilon_1 - \varepsilon_{i+1}$,
ce qui donne comme  matrice attachée $A = \Jac_\Un\big(\uX^D - \uR^{(1)}\big)$
$$
A = \begin {bmatrix}
1        & -1  & 0     & \cdots   & 0  \\
1-d_2    & d_2 & -1     & \cdots  & 0 \\
\vdots   &     &\ddots &          &\vdots\\
1-d_{n-1} & 0   &\cdots &d_{n-1}   & -1 \\
-d_n     & 0   &\cdots &  0       & d_n \\
\end {bmatrix}
$$
Le caractère échelonné des $n-1$ premières lignes $\ell_1, \dots, \ell_{n-1}$
et la présence des $-1$ à droite montrent que celles-ci forment une base de
$\bbZ^n_0$. Le point (iii) de la proposition~\ref{EasyLinearAlgebra}
fournit alors le fait que $\pgcd(r_1, \dots, r_n) = 1$ et que $\sum_i
r_i \varepsilon_i$ est une base du module des relations entre les
$\ell_i$.

\index{jeu!monomial!de format $D$@$R^{(1)}$}%

Par ailleurs, il est facile de vérifier que l'on $\sum_i r'_i\ell_i = 0$ en prenant:
$$
r'_1 = d_2 \cdots d_n, \qquad r'_2 = d_3 \cdots d_n,\qquad \dots,
\qquad r'_{n-1} = d_n,\qquad r'_n = 1
$$
Il existe donc $k \in \bbZ$ tel que $r'_i = kr_i$ pour tout $i$.
Et comme $r_1 = d_2\cdots d_n = r'_1$, c'est que $k = 1$.
Bilan: les $r'_i$ sont les $n$ mineurs principaux de la matrice attachée~$A$.

En utilisant~\ref{ExistenceSuperHauteur}, on obtient l'existence d'une
hauteur $h_0$ définie sur $\bbZ^n_0$.

\label{NOTA06-jeuR1}%

\medskip
$\blacktriangleright$
Pour illustrer la proposition~\ref{ExistenceSuperHauteur}, 
voici l'exemple d'un jeu possédant une fonction hauteur
sur~$\bbN^n_\delta$ mais pas sur $\bbZ^n_\delta$.  On prend $n = 3$,
$D = (1,1,2)$ donc $\delta = 1$ et
$$
\uR = (X_3, X_1, X_1X_3) 
$$
Le mouton-noir est~$X_3$ et le graphe de $\uR$ est sans circuit
$$
\xymatrix {
X_2 \ar[r] & X_1\ar[r] & X_3
}
$$
d'où l'existence d'une fonction hauteur (sous-entendu sur
$\bbN^n_\delta \overset{\rm ici}{=} \bbN^3_1$).
Voici la matrice $A = \Jac_\Un(\uX^D-\uR)$ attachée à~$\uR$:
$$
A = \EastBordermatrix{
1  &0 & -1  &\ell_1 \\
-1 &1  &0   &\ell_2 \\
-1 &0  &1   &\ell_3 \\
},
\qquad\qquad
\ell_1 + \ell_3 = 0
\qquad
r_1 = 1, \quad r_2 = 0, \quad r_3 = 1
$$
Donc $(\ell_1, \ell_2)$ est une base de $\bbZ^3_0$, a fortiori
$\bbZ^3_0 = \bbZ\,\ell_1 + \bbZ\,\ell_2 + \bbZ\,\ell_3$
en accord avec $\pgcd(r_1, r_2, r_3) = 1$. Mais comme $r_2 = 0$, on n'a pas
$\bbZ^3_0 = \bbN\,\ell_1 + \bbN\,\ell_2 + \bbN\,\ell_3$.
On laisse le soin au lecteur de vérifier que 
$-\ell_2 \notin \bbN\,\ell_1 + \bbN\,\ell_2 + \bbN\,\ell_3$.

\smallskip
On peut également se convaincre qu'il n'y a pas de fonction hauteur $h_0$ sur
$\bbZ^3_0$, en prenant comme vecteur $x = -\ell_2 = (1,-1,0)$. Il y a un seul
$i$ tel que $x_i \geqslant 1$, à savoir $i=1$. On réalise $x' = x - \ell_1 =
(0, -1, 1)$; de nouveau, un seul $i$ tel que $x'_i \geqslant 1$, à savoir $i = 3$. On
réalise $x' - \ell_3$, et on retombe sur $x$ (puisque $x - \ell_1
- \ell_3 = x$ vu que $\ell_1 + \ell_3 = 0$).
Et on est donc en présence d'une boucle infinie $\xymatrix {x\ar@/^7pt/[r] &x'\ar@/^7pt/[l]}$.

\index{graphe (orienté étiqueté) d'un jeu monomial}%

\subsubsection {Pour un système monomial $\uR$, quels sont les bénéfices d'une fonction hauteur?}

Sans entrer tout de suite dans les détails, on peut affirmer que les
résultats \framebox [1.1\width][c]{en degré $\delta$} montrés pour le
jeu circulaire restent valides pour tout jeu monomial $\uR$ admettant
une fonction hauteur. Par exemple, en notant $\uQ = \uX^D - \uR$, on
peut ranger la base monomiale de $\Jex_{1,\delta}$ de sorte que la
matrice $W_{1,\delta}(\uQ)$ soit triangulaire (à diagonale unité). En
conséquence, $\omega_\uQ(X^\emouton) = 1$ puis $\omega_\uQ(X^\alpha) =
1$ pour tout $X^\alpha$ de degré $\delta$, etc. Nous allons évoquer
quelques-uns de ces résultats en commençant par un qui ne privilégie
pas $\minDiv$.

\smallskip

\begin {prop} \label {HauteurSimplicite} Soit $\uR$ un jeu monomial admettant une fonction hauteur
et $\uQ = \uX^D - \uR$.

\begin {enumerate} [\rm i)]
\item
On a $X^\alpha - X^\beta \in \langle\uQ\rangle$ pour $|\alpha| = |\beta|
= \delta$.

\item 
Le système $\uQ$ admet comme  seul zéro projectif le point $\Un = (1 : \cdots : 1)$
et celui-ci est simple. Ou encore, de manière équivalente,
le saturé $\langle\uQ\rangle^\sat$ est l'idéal $I = \langle X_i-X_j, 1 \leqslant 
i,j \leqslant n\rangle$ de ce point.
\end {enumerate}
\end {prop}

\begin {proof} \leavevmode

i) En présence d'un arc $X^\alpha \to X^\beta$, on a
$X^\alpha - X^\beta \in \langle\uQ\rangle$. Redonnons l'argument vu
lors de l'introduction (page \pageref{GraphOfR}) du graphe de $\uR$: il y a un $i$ tel que
$X_i^{d_i} \mid X^\alpha$ et $X^\beta = R_i X^\gamma$  où $X^\gamma$ est défini par
$X^\alpha = X_i^{d_i} X^\gamma$, auquel cas
$$
X^\alpha - X^\beta = (X_i^{d_i} - R_i)X^\gamma \overset{\rm def.}{=} Q_i X^\gamma \in \langle\uQ\rangle
$$
Par itération, on en déduit qu'il en
est de même en présence d'un chemin reliant $X^\alpha$ à $X^\beta$. Par conséquent,
on a $X^\alpha - X^\emouton \in \langle\uQ\rangle$ d'où le résultat
par différence.

\medskip
ii)
Il est clair que $\uQ(\Un) = 0$ donc $\langle\uQ\rangle^\sat \subseteq I$; quant à l'inclusion
opposée (ici intervient $\delta \ne 0$) pour tout $|\gamma| = \delta-1$,
on a $X^\gamma (X_i - X_j) \in \langle\uQ\rangle$ d'après le point i)  donc $X_i -
X_j \in \langle\uQ\rangle^\sat$.
\end {proof}

\index{hauteur!sur!2@$\bbN^n_\delta$}%

\subsubsection*{Le jeu généralisé $\uQ=\pXD+\bsq\,\uR$
($\uR$ jeu monomial de format $D$ ayant une fonction hauteur)}

\index{jeu!monomial!de format $D$}%

Ici, nous changeons les notations pour $\uQ$ en désignant par $\uQ$ le système
$\pXD + \bsq\,\uR$ dans lequel interviennent~$2n$ indéterminées sur
$\bbZ$ notées $p_1, \dots, p_n, q_1, \dots, q_n$:
$$
\uQ = \pXD + \bsq\,\uR,\qquad    Q_i \ =\ p_i X_i^{d_i} + q_i R_i
$$
On suppose que \fbox{$\uR$ admet une fonction hauteur (sous-entendu sur
$\bbN^n_\delta$)}. On note $(\ell_1, \dots, \ell_n)$ les $n$ lignes
de la matrice $\Jac_\Un(\uX^D-\uR)$ attachée à $\uR$ et
$(r_1, \dots, r_n)$ ses $n$ mineurs principaux.

De manière analogue à la proposition~\ref{OmegaXalpha4blocs}, 
on peut déterminer $\omega_\uQ$ via ses évaluations $\omega_\uQ(X^\alpha)$.
En notant $\overline {q}_j = -q_j$, cette évaluation est
un monôme en les $p_i, \overline{q}_j$, produit d'une $q$-contribution et d'une $p$-contribution:
$$
\forall\, |\alpha| = \delta, \quad 
\omega_\uQ(X^\alpha) \ = \ 
\prod_{X^\beta \in \calO_\alpha \setminus \{X^\emouton\}} \kern -9pt \overline {q}_{\minDiv(X^\beta)}
\prod_{X^\gamma \in \overline{\calO_\alpha}} p_{\minDiv(X^\gamma)}
$$
Au vu de notre étude, point v) du corollaire~\ref{CorollaireExistenceHauteur}, nous
pouvons apporter une précision importante concernant la $q$-contribution.

\label{NOTA06-ovq}%
%
%

\begin {theo}[Caractère intrinsèque et interprétation de la $q$-contribution] \label{qContributionTheorem}
\leavevmode

\begin {enumerate}[\rm i)]
\item
La $q$-contribution de $\omega_\uQ(X^\alpha)$
est \emph{intrinsèque} à $X^\alpha$ i.e. ne dépend pas de la sélection
$\minDiv$ qui pilote $\omega_\uQ$.

\item
De manière précise, en écrivant la formule sous la forme suivante avec des
exposants anonymes pour les $p_i$:
$$
\omega_\uQ(X^\alpha) = \overline{q}_1^{m_1}\cdots\overline{q}_n^{m_n}\  p_1^\sbullet\cdots p_n^\sbullet
$$
alors, dans tout chemin du graphe de $\uR$ reliant $X^\alpha$ au \MoutonNoir{} $X^\emouton$, le
nombre d'étiquettes~$i$ est égal à $m_i$.  De plus, les $m_i$ sont ceux qui
interviennent dans l'unique écriture positive minimale de $\alpha - (\emouton)$ sur les $\ell_i$:
$$
\alpha - (\emouton) = \sum_i m_i \ell_i, \qquad\quad
h(\alpha) = \sum_i m_i
$$
\item
Soit une écriture $\alpha - (\emouton) = \sum_i m_i \ell_i$ où les $m_i$ sont dans $\bbN$.
C'est l'écriture positive minimale de $\alpha - (\emouton)$ sur les $\ell_i$
si et seulement si il existe un $j$ tel que $m_j < r_j$. 
\end{enumerate}
\end {theo}

\index{graphe (orienté étiqueté) d'un jeu monomial}%

\begin {proof} \leavevmode

i) découle de ii) qui découle du 
point v) du corollaire~\ref{CorollaireExistenceHauteur}.

\medskip

iii) 
On rappelle que les $r_i$ sont $\geqslant 0$ et que $\pgcd(r_1, \dots, r_n) = 1$.

$\rhd$
Supposons $m_j < r_j$ pour un $j$. 
Soient des $m'_i \in \bbN$ tels que
$\sum_i m'_i \ell_i =   \sum_i m_i \ell_i$. Il existe $q \in \bbZ$ tel que
$m'_\sbullet - m_\sbullet = q\, r_\sbullet$. Comme $m'_j \geqslant 0$ et $0 \leqslant m_j < r_j$,
l'égalité $m'_j = qr_j + m_j$ prouve que $q \geqslant 0$. De $m'_i = qr_i + m_i$, on 
tire $m'_i  \geqslant m_i$ pour tout $1 \leqslant i \leqslant n$.

$\rhd$ Supposons $m_i \geqslant r_i$ pour tout $i$. On pose $m'_i = m_i - r_i$.
Alors $m'_i \in \bbN$ et $\sum_i m_i \ell_i = \sum_i m'_i \ell_i$.
Et on~a $\sum_i m'_i < \sum_i m_i$ car il y a au moins un $r_i \ne 0$.
Donc $\sum_i m_i \ell_i$ n'est pas minimale.
\end {proof}

\subsubsection {Pour un jeu monomial $\uR$: existence d'une fonction hauteur versus simplicité}

\begin {defn} [Jeu simple] \label{DefJeuSimple}
Convenons de dire qu'un jeu monomial $\uR$ de format $D$ est \emph {simple} si le
système $\uQ=\uX^D-\uR$
admet comme  seul zéro projectif le point $\Un = (1 : \cdots : 1)$
et si ce dernier est simple. Ou encore, de manière équivalente,
si le saturé $\langle\uQ\rangle^\sat$ est l'idéal $I = \langle X_i-X_j, 1 \leqslant 
i,j \leqslant n\rangle$, idéal du point~$\Un$.
\end {defn}

\index{jeu!monomial!simple}%

Une manière équivalente de décréter que $\uR$ est simple consiste à
faire intervenir la série d'Hilbert-Poincaré de la $\bbQ$-algèbre
graduée $\bbQ[\uX]/\langle\uQ\rangle$ où $\uQ=\uX^D-\uR$:
$$
S_{\bbQ[\uX]/\langle\uQ\rangle} \overset{\rm def.}{=}
\sum_{k\ge 0} \dim\big(\bbQ[\uX]_k/\langle\uQ\rangle_k\big)\, t^k
$$
C'est une fraction rationnelle de la forme
$$
S_{\bbQ[\uX]/\langle\uQ\rangle}(t) = \dfrac{N(t)}{(1-t)^d},
\qquad\quad
N(t) \in \bbZ[t],\quad N(1) \ge 1,\quad  d \ge 1
$$
En des termes algébriques: l'entier $d$ est la dimension de Krull de
l'anneau $\bbQ[\uX]/\langle\uQ\rangle$ (cette dimension est $\ge 1$
car $\langle\uQ\rangle$ est contenu dans l'idéal engendré par tous les
$X_i - X_j$) et $N(1)$ est ce que l'on appelle le degré de la
$\bbQ$-algèbre graduée $\bbQ[\uX]/\langle\uQ\rangle$.  De manière
géométrique, l'entier $d-1$ est la dimension de la variété projective
$\{\uQ=0\}$ définie sur $\bbQ$ et l'entier $N(1)$ représente le degré
de cette variété projective. Dire que $\uR$ est simple se traduit donc
par $d=1, N(1)=1$.

\medskip

Après de nombreuses expérimentations, nous proposons la conjecture suivante, qui énonce
une réciproque à la proposition~\ref{HauteurSimplicite}.

\begin {conj} \label {ConjectureJeuSimple} \leavevmode
Soit $\uR$ un jeu monomial simple. Alors il existe une fonction hauteur pour $\uR$.
\end {conj}

Pour mesurer l'impact de ce résultat conjectural, reprenons l'exemple du jeu $\uR^{(1)}$
privilégiant $X_1$, défini par $R_i = X_1^{d_i-1} X_{i+1}$, où
l'indexation est cyclique modulo~$n$ ($X_{n+1} = X_1$). En ce qui
concerne sa simplicité, nous laissons le
soin au lecteur d'examiner, sur un anneau quelconque, le système
projectif $\uQ(\ux) = 0$
$$
\setlength{\arraycolsep}{0.5\arraycolsep}
\left\{
\begin {array} {ccccc}
x_1^{d_1} &=& R_1(\ux) &=& x_1^\bullet x_2 \\
x_2^{d_2} &=& R_2(\ux) &=& x_1^\bullet x_3 \\
         &\vdots&     &\vdots&   \\
x_{n-1}^{d_{n-1}} &=& R_{n-1}(\ux) &=& x_1^\bullet x_n \\
x_n^{d_n} &=& R_n(\ux) &=& x_1^\bullet x_1 \\
\end {array}
\right.
$$
et de contrôler que, dès qu'un $x_i$ est inversible, il en est de même
de $x_{i+1}$ donc de tous. Puis d'en déduire que $x_1 = x_2 = \cdots =
x_n$.  On perçoit ici la facilité à vérifier la simplicité de $\uR$;
cette facilité est à mettre en opposition avec la
complexité à montrer l'existence d'une fonction hauteur pour $\uR$.

\bigskip

Soit $\uR$ un jeu monomial de format $D$ et $\Jac_\Un(\uX^D-\uR)$ sa
matrice attachée; notons $\ell_1,\dots,\ell_n \in \bbZ_0^n$ ses~$n$
lignes et $(r_1, \cdots, r_n)$ ses mineurs principaux. Pour ces jeux
$\uR$, on a dégagé 3 catégories schématisées ci-après par des
rectangles d'inclusion entre ces catégories. On aurait
bien aimé disposer pour la catégorie intermédiaire (propriété d'existence
d'une fonction hauteur sur $\bbN_\delta^n$), d'une caractérisation de nature aussi
simple que celle figurant dans le rectangle interne.

\medskip

\begin {tikzpicture}
\coordinate (A1) at (0,0) ;
\coordinate (X1) at (15,0) ;
\coordinate (Y1) at (0,4) ;
\draw[rounded corners = 3mm] (A1) rectangle ($(X1)+(Y1)$) ;
\draw (Y1) node [below right]
   {$\uR$ vérifie $\bbZ_0^n=\bbZ\ell_1+\cdots+\bbZ\ell_n \iff \pgcd(r_1,\dots,r_n)=1$} ;
\coordinate (A2) at (0.5,0.5) ;
\coordinate (X2) at (14.2,0) ;
\coordinate (Y2) at (0,2.5) ;
\draw[rounded corners = 3mm] (A2) rectangle ($(A2)+(X2)+(Y2)$) ;
\draw ($(A2)+(Y2)$) node [below right] {$  
  \begin{array} {l}
  \text{existence pour $\uR$ d'une fonction hauteur sur $\bbN^n_\delta$} 
  \iff \text{(conjecturalement) simplicité de $\uR$} \\
  \end {array}$} ;
\coordinate (A3) at (1.5,1) ;
\coordinate (D3) at (12,2) ;
\draw[rounded corners = 3mm] (A3) rectangle (D3) ;
\draw (1.5,2) node [below right] {$
  \begin{array} {l}
    \text{existence pour $\uR$ d'une fonction hauteur sur } \bbZ^n_0 \iff \\
    \pgcd(r_1,\dots,r_n)=1 \text{ et } r_i \ge 1 \ \forall i
    \iff \bbZ_0^n = \bbN\ell_1+\cdots+\bbN\ell_n\\
  \end {array}$} ;
\end {tikzpicture}

\medskip
En vrac, quelques chiffres pour illustrer le fait que le nombre de jeux simples $\uR$
n'est pas négligeable. Le nombre de jeux monomiaux $\uR$ de format
$D = (d_1, \dots, d_n)$, en excluant $R_i = X_i^{d_i}$ pour éviter les
boucles dans le graphe de $\uR$, est donné par le produit $N$ suivant:
$$
N = \prod_{i=1}^n  (N_i-1) ,  \qquad  N_i = \binom{d_i + n-1}{n-1}
$$
Par exemple, pour $D = (2,2,3,4)$, on a $N = 9 \times 9 \times
19\times 34 = 52326$ jeux monomiaux de format $D$ dont $N' := 12 557$
sont simples. Dans le tableau suivant, $N$ désigne le nombre de jeux de
format $D$, et $N'$ le nombre de jeux simples.
$$
\begin{array}{c|c|c|c|c|c|c|c|c|c|c|}  
D &(2,2,3)&(2,2,4)&(2,3,3)&(2,3,4)&(2,3,5)&(3,3,3)&(3,3,4)&(1,2,2,2)&(1,2,2,3) \\
\hline 
N  &225   &350    &405    &630    &900    &729    &1134   &2187      &4617   \\
\hline 
N' &83    &128    &161    &244    &381    &305    &458    &540       &1300    \\
\end {array}
$$

\subsubsection {Pas toujours de phénomène bonus sur $\Jex_{2,d}$ pour $d\geqslant\delta+1$}

Soit $R = (X_2^2, X_1, X_2)$ de format $(2,1,1)$ de degré critique 1. Le graphe de $R$
est $X_3 \to X_2 \to X_1$, sans circuit d'où l'existence
d'une fonction hauteur. Le graphe défini de manière analogue au graphe
de~$\uR$ mais sur $\Jex_{2,d}$ avec $d=3$ présente un circuit:
$$
\xymatrix {
\vertex {X_1^2X_2} &\vertex{X_1X_2X_3}\ar[ld]^2   &\vertex {X_2X_3^2} \ar[d]^3 \\
\vertex {X_1^2 X_3}\ar[u]^3 \ar[rr]_1         &&\vertex{X_2^2X_3} \ar[ul]^2 \\
}
$$
En posant $\uQ = \pXD + \bsq\,\uR$, on a
$$
W_{2,3}(\uQ) =
\EastBordermatrix{
p_{1} & . & . & . & . & \Heti{X_{1}^{2}X_{2}} \\ 
. & p_{1} & q_{2} & . & . & \Heti{X_{1}^{2}X_{3}} \\ 
. & . & p_{2} & q_{2} & . & \Heti{X_{1}X_{2}X_{3}} \\ 
. & q_{1} & . & p_{2} & . & \Heti{X_{2}^{2}X_{3}} \\ 
. & . & . & . & p_{2} & \Heti{X_{2}X_{3}^{2}} \\ 
}
$$
dont le déterminant n'est pas un monôme en les $p_i$:
$$
\det W_{2,3}(\uQ) = p_1p_2(p_1p_2^2 + q_1q_2^2) = p_1^2p_2^3 + p_1p_2q_1q_2^2
$$
Ici, pas de phénomène bonus pour $\Jex_{2,d}$ avec $d=3 > \delta=1$.

\cleardoublepage

\section{Propriétés de régularité de $\bfB=\bfA[\protect\uX]/\langle\protect\uP\rangle$ en générique}
\label{ChapBgenerique}

Ici, \emph {sauf mention expresse du contraire}, le terrain est
générique: l'anneau de base est un anneau de polynômes $\bfA =
\bfk[\indetsPi]$ en des indéterminées allouées aux coefficients des
polynômes $P_i$ du système $\uP$.
Nous allons étudier les points suivants (qui ne sont pas indépendants):

\begin {enumerate}[1.]
\item
Structure du localisé $\bfB_{x_i}$.
\item
Régularité du scalaire $\det W_{1,\delta}$ sur $\bfB_\delta$.
\item[2'.]
Injectivité de la forme linéaire $\overline\omega : \bfB_\delta \to \bfA$.
\item
Régularité du scalaire $\det W_{1,\delta}$ sur $\bfA[\uX]/\uPsat$.
\end {enumerate}

\label{NOTA07-Bdelta}%
\label{NOTA07-omegaBdelta}%

\medskip

Rappelons notre terminologie : étant donné un $\bfA$-module $M$, un
scalaire $a \in \bfA$ est dit régulier sur~$M$ ou bien $M$-régulier si,
pour $m \in M$, on a l'implication $am = 0 \implies m = 0$.
Lorsque~$M$ est fidèle \idest{} $\Ann(M) = 0$, tout scalaire $a$ qui est
$M$-régulier est régulier comme le montrent les implications pour $a' \in \bfA$:
$$
aa' = 0 \quad\Rightarrow\quad a(a'M) = 0
\quad\overset{\text{$a$ $M$-régulier}}{\Longrightarrow}\quad a'M = 0
\quad\overset{\Ann(M)=0}{\Longrightarrow}\quad a' = 0
$$
Rappelons également que lorsque $\uP$ est régulière,
le $\bfA$-module $\bfB_\delta$ est fidèle (conséquence du fait que le déterminant
bezoutien $\overline\nabla \in \bfB_\delta$ est sans torsion, cf le point i) de la
proposition \ref{MiniWiebe}).

\medskip

Voici un rapide aperçu de ce chapitre (nous omettons le premier point qui est un peu à part).

\subsubsection*{Objet du point 2.}

Pour $a = \det W_{1,\delta}(\uP)$, la propriété de régularité en degré
$\delta$ mentionnée au point 2. équivaut à:
$$
\forall\, F \in \bfA[\uX]_\delta, \qquad 
aF \in \langle\uP\rangle_\delta
\ \implies \ 
F \in \langle\uP\rangle_\delta
$$
Cette propriété \og $\det W_{1,\delta}$ est $\bfB_\delta$-régulier\fg{}
est une \emph {propriété du jeu générique}, qui n'est pas
impliquée par le seul fait que $\uP$ soit régulière.

\medskip

Considérons par exemple le jeu étalon généralisé $\pXD = (p_1X_1^{d_1}, \dots, p_nX_n^{d_n})$
et montrons que $a := \det W_{1,\delta}(\pXD )$ n'est pas $\bfB_\delta$-régulier.
Ce déterminant est un monôme en $p_1,\dots, p_n$
multiple du produit $\pi = p_1 \cdots p_n$.
Prenons  un monôme $X^\alpha$ de degré $\delta$ autre que le \MoutonNoir; il y a donc
un indice $i$ tel que $\alpha_i \ge d_i$ de sorte que $\pi X^\alpha \in \langle\pXD\rangle_\delta$.
A fortiori $aX^\alpha \in \langle \pXD \rangle_\delta$ sans avoir $X^\alpha \in
\langle\pXD\rangle_\delta$. Ainsi $a$ est un scalaire régulier non
$\bfB_\delta$-régulier. De la même manière, aucun des $p_i$ n'est $\bfB_\delta$-régulier.
Pour $p_1$ par exemple, prenons $F = p_2\cdots p_n\, X_1^\delta$; alors $p_1F \in 
\langle\pXD\rangle_\delta$ sans avoir $F \in \langle\pXD\rangle_\delta$.

\smallskip

En liaison avec ce qui suit, en notant $\omega$ pour $\omega_{\pXD}$,
on dispose d'une inclusion \emph{stricte} $\langle\pXD\rangle_\delta
\subsetneq \Ker\omega$.  En effet, on a $\omega = \det
W_{1,\delta}\times (X^\emouton)^\star$ (cf la
section~\ref{soussection-d-egal-delta}, en particulier en
page~\pageref{omegaCasParticulier}) de sorte que $\Ker\omega =
\Jex_{1,\delta} = \langle\uX^D\rangle_\delta$.

\subsubsection*{Autour du point 2'.}

Rappelons que la forme linéaire $\overline\omega : \bfB_\delta \to
\bfA$ est toujours définie pour une suite $\uP$ quelconque, ce qui est dû au fait
que $\omega : \bfA[\uX]_\delta \to \bfA$ est nulle sur $\uPdelta$
\idest{} $\uPdelta \subseteq \Ker \omega$.  Le cas d'égalité $\uPdelta
= \Ker \omega$, qui équivaut à l'injectivité de
$\overline\omega$, va jouer un rôle important dans la suite de notre
étude.

Le schéma suivant fournit un lien étroit entre les propriétés des points {2.} et {2'.}:
$$
\xymatrix @R=1.2cm @C=0.3cm{
     &\det W_{1,\delta} \text{ est $\bfB_\delta$-régulier} \ar@{=>}[dr]
\\
\txt{tout scalaire régulier\\ est $\bfB_\delta$-régulier}
        \ar@{=>}[ur]|{\txt{sous couvert de\\ $\det W_{1,d}$ régulier}}
   &
   &\overline{\omega}:\bfB_\delta\to\bfA \text { est injective} \ar@{=>}[dl]
\\
   &\txt{le $\bfA$-module $\bfB_\delta$\\ se plonge dans $\bfA$} \ar@{=>}[ul]
\\   
}
$$
Il apporte également l'éclairage structurel sur la possibilité, dans un contexte adéquat,
de plonger $\bfB_\delta$ dans~$\bfA$


Sous quelles conditions les \emph{implications} représentées par ce
diagramme sont-elles valides? Réponse: aucune contrainte sur $\uP$
n'est nécessaire. Et c'est pour cette raison que nous avons mentionné
au tout début le \emph{sauf mention expresse du contraire} 
car nous nous réservons la possibilité d'étudier par endroits
certaines imbrications entre propriétés sans supposer $\uP$ nécessairement
générique. Ces quatre implications sont de surcroit aisées sauf peut-être
l'implication \rotatebox[y=1.5mm]{-45}{$\Rightarrow$}.
Nous l'avons déjà utilisée pour le jeu circulaire (voir le second
point de la remarque figurant après la proposition
\ref{ProprietesJeuCirculaire}). Reprenons l'argument pour
souligner que tout repose sur la propriété Cramer de $\omega$: $\omega(G)F - \omega(F)G
\in \uPdelta$ pour $F, G \in \bfA[\uX]_\delta$. Pour $F \in
\bfA[\uX]_\delta$ tel que $\omega(F) = 0$, on doit montrer que $F \in
\uPdelta$. En particulier, pour $G = X^\emouton$, on a
$$
\omega(X^\emouton)F \ - \ \underbrace{\omega(F)}_{=0} X^\emouton \ \in \ \uPdelta
\qquad \text{d'où } \qquad 
\omega(X^\emouton)F \ \in \ \uPdelta
$$
Or, par hypothèse, $\omega(X^\emouton) = \det W_{1,\delta}$ est $\bfB_\delta$-régulier,
donc $F \in \uPdelta$.

\label{BdeltaRegImpliesInjectivity}

\medskip

Ce diagramme d'implications n'a d'intérêt que si l'on dispose de
critères permettant d'assurer les conditions figurant en ses \emph
{sommets}.  Cette tâche est beaucoup plus délicate et le
statut du jeu~$\uP$ va se révéler primordial.

\subsubsection*{Point 3.: régularité de 
$\det W_{1,\delta}(\uP^\gen)$ modulo $\langle\uP^\gen\rangle^\sat$
et intervention du jeu circulaire}

Ainsi dans le cas du jeu générique, la clause figurant sur le sommet
haut du diagramme est vérifiée donc les clauses aux autres sommets le
sont également. Cette propriété de $\bfB_\delta$-régularité du
scalaire~$\det W_{1,\delta}$, nous la déduirons (cf le
lemme~\ref{PsatRegImpliesBdeltaReg}) d'une propriété beaucoup plus
forte: ce scalaire est en fait régulier sur $\bfA[\uX]/\uPsat$, cf le
théorème~\ref{GenPsatRegScalars}.
En ce sens, $\det W_{1,\delta}$ se distingue
notablement d'autres scalaires réguliers de $\bfA$, qui sont également
$\bfB_\delta$-réguliers sans être nécessairement réguliers sur
$\bfA[\uX]/\uPsat$: considérer par exemple un élément régulier de
$\bfA$ appartenant à l'idéal d'élimination $\uPsat\cap\bfA$!

\medskip

Dans cette introduction, il nous est difficile de préciser comment nous allons utiliser
le jeu circulaire, cf les sections \ref{SectPsatRegBdeltaRegI} et
\ref{SectPsatRegBdeltaRegII}.

\subsubsection*{Le statut du couple $(\det W_{1,\delta},\ \omega)$ versus
$(\det W^\sigma_{1,\delta},\ \omega^\sigma)$. Importance du degré critique $\delta$.}

$\blacktriangleright$
Le mouton-noir $X^\emouton$ relie le scalaire $\det W_{1,\delta}$ à la forme
linéaire $\omega : \bfA[\uX]_\delta \to \bfA$ via $\det W_{1,\delta} =
\omega(X^\emouton)$ et les deux objets (déterminant, forme) sont pilotés par 
le même mécanisme de sélection $\minDiv$.
Dans l'histoire de régularité sur $\bfB_\delta$ ou sur $\bfA[\uX]/\uPsat$,
ce mécanisme de sélection n'a aucune vertu particulière et les résultats
s'adaptent aux versions  $(\det W_{1,\delta}^\sigma,\ \omega^\sigma)$
tordues par $\sigma \in \fS_n$.

\medskip

Nous définirons ultérieurement (cf. le chapitre~\ref{ChapMacRaeForP}) une forme
linéaire \emph{intrinsèque} $\omegares : \bfA[\uX]_\delta \to \bfA$, diviseur des
formes $\omega^\sigma$, qui possède la propriété du point {2'.} en terrain \emph {générique}:
$$
\Ker\omegares = \langle\uP\rangle_\delta, \qquad\qquad
\uomegares : \bfB_\delta \hookrightarrow \bfA \text{ est injective}
$$
Cette forme $\omegares$,  épine dorsale de notre étude, est étroitement liée
aux formes $(\omega^\sigma)_{\sigma\in\fS_n}$: dans dans un contexte adéquat, elle en est
le pgcd fort.

Mais pour l'instant, elle n'est pas en place. Nous avons seulement
les $\omega^\sigma$ à notre disposition et pour cette raison, nous
énonçons les résultats avec $\omega$ et $\det W_{1,\delta} = \omega(X^\emouton)$.

\medskip

$\blacktriangleright$
Pour $d \geqslant \delta+1$, le comportement de $\det W_{1,d}(\uP)$
sur $\bfB_d$ est tout autre: pour tout $|\alpha| = d$, on a $\det
W_{1,d}(\uP)\, X^\alpha \in \langle\uP\rangle_d$ ce qui s'énonce aussi
$\det W_{1,d} \in \Ann(\bfB_d)$ (et ceci pour n'importe quel
système~$\uP$), en totale opposition avec la $\bfB_d$-régularité.

\subsubsection*{A propos des diverses notions de régularité}

L'étude qui vient met en jeu diverses propriétés de régularité parmi lesquelles
intervient le localisé~$\bfB_{x_i}$ où $x_i$ est la classe de $X_i$ dans $\bfB$
(on rejoint ainsi le point {1.}).

A ce propos, il est important de remarquer que, d'une part, les
localisations $\bfB \to \bfB_{x_i}$ ne sont pas injectives et que
d'autre part, $\bfB$ est un quotient de $\bfA[\uX]$. Il convient donc
de ne pas mélanger, pour un élément $a \in \bfA$, les notions de
régularité sur $\bfA, \bfB, \bfB_{x_i}, \bfB_{\delta}, \bfB_{d}$.
Considérons par exemple un système générique $\dsL$ de $n$ formes
linéaires.  Le degré critique $\delta$ est nul et si on prend $a =
\det(\dsL)$, alors $a$ est régulier dans $\bfA$, mais n'est pas
régulier dans~$\bfB_{x_i}$ puisque $ax_i=0$ sans que $x_i$ ne soit
nul; il est régulier sur $\bfB_0$ mais pas sur $\bfB_1$.

\medskip

Enfin, comme déjà mentionné, nous n'avons pas souhaité nous limiter systématiquement
au cadre générique d'où une certaine attention demandée au lecteur. Nous préciserons avec le plus
grand soin le contexte utilisé localement.

\subsection{Structure du localisé $\bfB_{x_j}$ (en générique ou assimilé)}

\label{NOTA07-Bxj}%

Notons $\bfA'_j$ la $\bfk$-algèbre engendrée par les coefficients des~$P_i$ 
autres que le coefficient~de~$X_j^{d_i}$ dans chaque $P_i$. 
On va montrer que l'on dispose d'un isomorphisme canonique de $\bfk$-algèbres 
$$
\bfB_{x_j} \ \simeq \ 
\bfA'_j[\uX]_{X_j}
$$
C'est l'isomorphisme qui provient de la localisation en $X_j$ du morphisme 
$\bfA'_j[\uX] \longrightarrow \bfB$.

En particulier, $\bfB_{x_j}$ est isomorphe en tant que $\bfk$-algèbre au localisé 
d'une $\bfk$-algèbre de polynômes.

\medskip

Afin d'alléger les notations, on énonce le résultat avec $j=n$ ce qui permet 
également d'y voir plus clair.
On adopte les notations suivantes:
$p_{i,n}$ désigne le coefficient en $X_n^{d_i}$ de $P_i$ et 
$\bfA'$ la $\bfk$-algèbre engendrée par les coefficients des~$P_i$ 
autres que les~$p_{i,n}$.
Ainsi $\bfA = \bfA' [p_{1,n}, \dots, p_{n,n}]$ 
et $P_i = p_{i,n} X_n ^{d_i} + P_i'$ avec ${P'_i \in \bfA'[\uX]}$.
Ceci a l'avantage de mettre en évidence un \emph {nouveau contexte}
dans lequel $\uP$ n'est pas nécessairement générique: il suffit que les coefficients
$p_{i,n}$ soient algébriquement indépendants sur un anneau de base.
De manière précise, il suffit de disposer d'un
anneau de polynômes $\bfA = \bfA'[p_{1,n},\cdots,p_{n,n}]$ et que chaque
$P_i$ soit à coefficients dans $\bfA'$ à l'exception du coefficient
$p_{i,n}$ en $X_n^{d_i}$ de $P_i$.

Le lecteur pourra d'ailleurs vérifier que la proposition suivante
s'applique même si $P'_i = 0$ i.e. pour 
$\uP = (p_{1,n}X_n^{d_1}, p_{2,n}X_n^{d_2}, \cdots, p_{n,n}X_n^{d_n})$ qui est loin
d'être générique! (ce n'est même pas une suite régulière).
Il s'agit certes d'un exemple un tantinet pathologique mais en voici
un autre qui ne l'est pas. Soit $L_1, \cdots, L_n$ un système
générique de $n$ formes linéaires en $n$ variables $X_1, \cdots, X_n$
et, pour un exposant $q \ge 1$, le système $\uP$ défini par:
$$
P_i = L_i(X_1^q, \cdots, X_n^q)
$$
Alors $\uP$ n'est pas générique mais on peut tout de même lui appliquer
le résultat suivant.

\label{NOTA07-pij}%
%
%

\begin{prop}\label{StructureLocalise}

Dans le contexte ci-dessus (qui couvre le cas de $\uP$ générique), le
morphisme canonique $\bfA' [\uX]_{X_n} \rightarrow \bfB_{x_n}$ est un
isomorphisme.
  
\smallskip
\noindent
$\rhd$ L'injectivité se traduit par $\bfA' [\uX] \cap \langle \uP \rangle = 0$.

\smallskip
\noindent
$\rhd$ La surjectivité s'explicite de la manière suivante :
pour $H \in \bfA[\uX]$, il existe
$H' \in \bfA'[\uX]$ tel que $X_n^e H \equiv H' \bmod \langle \uP \rangle$ 
pour un certain exposant $e$ ;
l'image de 
$H' / X_n^e \in \bfA' [\uX]_{X_n}$ dans $\bfB_{x_n}$ est alors $\overline H$.
\end{prop}

\begin{proof} \leavevmode

Injectivité. Puisque le morphisme de localisation $\bfA'[\uX] \to
\bfA'[\uX]_{X_n}$ est injectif, l'injectivité du morphisme
$\bfA'[\uX]_{X_n} \to \bfB_{x_n}$ est équivalente à celle de
$\bfA'[\uX] \to \bfB_{x_n}$.  A quelle condition $F \in \bfA'[\uX]$
est-il dans le noyau de $\bfA'[\uX] \to \bfB_{x_n}$? C'est le cas si
et seulement si, pour un certain exposant $e$, on a $x_n^e \overline F
= 0$ \idest{} $X_n^e F \in \langle\uP\rangle$.
Formellement:
$$
\Ker(\bfA'[\uX] \to \bfB_{x_n}) = \{ F \in \bfA'[\uX] \mid \exists\ e
\text{ tel que } X_n^e F \in \langle \uP \rangle \}
$$
On en déduit que la nullité du noyau à gauche est équivalente à celle de 
$\bfA' [\uX] \cap \langle\uP\rangle$.

Posons alors $\bfR = \bfA'[\uX]$, notons $U_i := p_{i,n}$ pour alléger et voyons $P_i$ 
comme polynôme de $\bfR[U_1, \dots, U_n]$ via l'écriture $P_i = r_i U_i + s_i$
avec $r_i = X_n^{d_i}$ et $s_i = P'_i$. 
L'égalité $\bfA' [\uX] \cap \langle \uP \rangle = 0$ provient du résultat suivant :
\begin{quote}
Soit $r_1, \dots, r_n, s_1, \dots, s_n$ des éléments d'un anneau $\bfR$ et 
$U_1, \dots, U_n$ des indéterminées sur $\bfR$.
Si les $r_i$ sont réguliers, alors $\bfR \cap \langle r_1 U_1 + s_1, \dots, r_n U_n + s_n\rangle = 0$.
\end{quote}
Ce résultat se montre en remarquant tout d'abord que 
l'anneau $\bfR$ s'injecte dans le localisé $\bfR_{r_1 \cdots r_n}$ (les~$r_i$ sont réguliers) 
puis en réalisant l'évaluation $U_i := -s_i/r_i$ dans ce localisé.

\medskip

Surjectivité.
On a $\bfB = \bfA[\uX]/\langle \uP \rangle = \bfA'[\uU][\uX] / \langle \uP \rangle 
= \bfA'[\uX][\uU] /\langle \uP \rangle$.
Il reste donc à voir pourquoi $\overline U_i \in \bfB$ est dans
l'image de $\bfA'[\uX]_{X_n} \rightarrow \bfB_{x_n}$.
Ceci est dû à l'égalité $P_i = U_i X_n^{d_i} + P'_i$ 
qui prouve que $\overline U_i$ est l'image de  $- {P'_i} / X_n^{d_i}$.
\end{proof}

\begin{rmq}
On peut fournir la même preuve mais de manière plus concise.

Pour un anneau $\bfS$, des indéterminées $U_i$ sur cet anneau et des $t_i \in \bfS$, 
il est clair que le morphisme canonique suivant est un isomorphisme 
$$
\bfS \ \longrightarrow \ 
\dfrac{\bfS[U_1, \dots, U_n]}{\langle U_1 + t_1, \dots, U_n + t_n \rangle}
$$
En appliquant cette remarque à $\bfS=\bfA'[\uX]_{X_n}$, $U_i=p_{i,n}$,
et $t_i = \frac{P'_i}{X_n^{d_i}}$ où $P'_i = P_i-p_{i,n}X_n^{d_i}$ de
sorte que $P_i = (U_i + t_i) X_n^{d_i}$, l'anneau quotient de droite
est exactement
$$
\dfrac{\bfA'[\uX]_{X_n}[U_1, \dots, U_n]}{\langle P_1/X_n^{d_1}, \dots, 
P_n/X_n^{d_n} \rangle}
\ = \ 
\dfrac{\bfA'[U_1, \dots, U_n][\uX]_{X_n}}{\langle P_1, \dots, P_n \rangle }
\ = \ 
\bfA[\uX]_{X_n}/\langle\uP \rangle 
\ = \ 
\bfB_{x_n}
$$

%
\end{rmq}

\begin{prop}[En générique, pour saturer, un seul localisé suffit] \label{GeneriqueLocaliseSature} 
Soit $\uP$ la suite générique au dessus d'un anneau $\bfk$.

\begin{enumerate}[\rm i)]
\item 
Chaque $x_i$ est un élément régulier du localisé $\bfB_{x_j}$. 

\item
Le noyau de chaque morphisme canonique $\bfA[\uX] \to \bfB_{x_i}$ est l'idéal $\uPsat$,
idéal saturé de $\langle\uP\rangle$.

\item
L'anneau $\bfA[\uX]/\uPsat$ s'injecte dans $\bfA'_i[\uX]_{X_i}$,
localisé en $X_i$ de l'anneau de polynômes $\bfA'_i[\uX]$ sur $\bfk$.
En particulier, si $\bfk$ est intègre, le saturé $\uPsat$ est un idéal premier de $\bfA[\uX]$ et
l'idéal d'élimination $\uPsat \cap \bfA$ est un idéal premier de $\bfA$.
\end{enumerate}
\end{prop}

\index{théorème!de saturation de $\langle\uP\rangle$ en générique}%

\begin{proof}
i) 
D'après~\ref{StructureLocalise},
$\bfB_{x_j}$ est le localisé d'un anneau de polynômes où $X_i$ figure comme indéterminée.
Cet élément $X_i$ reste régulier dans ce localisé.

ii)
Soit ${F \in \bfA[\uX]}$ nul dans le localisé $\bfB_{x_i}$. 
Montrons que $F$ est nul dans tous les localisés~$\bfB_{x_j}$.
Contemplons le rectangle commutatif dans lequel les morphismes sont les
morphismes de localisation. Le morphisme vertical à droite est injectif 
d'après i).
$$
\xymatrix {
\bfA[\uX] \ar[d]\ar[r]           &\bfB_{x_j}\ar[d]^{\rm inj.} \\
\bfB_{x_i}\ar[r]            &\bfB_{x_jx_i} \\
}
\hspace{2cm}
\xymatrix {
F \ar[d]\ar[r]           & F_{x_j}\ar[d]^{\rm inj.} \\
F_{x_i}\ar[r]            & F_{x_jx_i} \\
}
$$
Par hypothèse $F_{x_i} = 0$, a fortiori $F_{x_ix_j} = 0$.
L'élément $F_{x_j}$ s'envoie sur $0$ par 
le morphisme \textit{injectif}, donc $F_{x_j} = 0$.

iii) D'après ii), l'anneau $\bfA[\uX]/\uPsat$ s'injecte dans $\bfB_{x_i}$, isomorphe 
à $\bfA'_i[\uX]_{X_i}$ (cf.~\ref{StructureLocalise}).
Supposons~$\bfk$ intègre. Alors l'anneau $\bfA'_i$ est intègre, 
$\bfA'_i[\uX]$ aussi, son localisé en $X_i$ également.
On en déduit que l'idéal $\uPsat$ est premier, et il en est de même
de sa trace sur $\bfA$.
\end{proof}

\begin {rmq}\leavevmode

$\blacktriangleright$  
Soit, dans le contexte de la proposition précédente, un polynôme $F
\in \bfA[\uX]$.  Le point ii) affirme donc qu'une appartenance $X_i^e
F \in \langle\uP\rangle$ pour un certain indice $i$ et un certain
exposant~$e$ a comme conséquence que, pour tout indice $j$, il y a un
exposant $e_j$ tel que $X_j^{e_j} F \in \langle\uP\rangle$. On se
gardera de croire que l'on peut prendre $e_j = e$. Voir l'exemple
ci-après.

\medskip

$\blacktriangleright$
Par contre, en degré $\delta$, il y a un phénomène particulier qui est le suivant.
Pour $F \in \bfA[\uX]_\delta$, en terrain générique bien entendu,
on dispose de l'implication:
$$
\text{$X_i^e F \in \langle\uP\rangle$ pour un certain $i$ et un certain $e$}
\qquad \Longrightarrow\qquad
\text{$X_j F \in \langle\uP\rangle$ pour tout $j$}
$$
En effet, on a $F \in \uPsat_\delta$ et, d'après le point ii) de la
proposition \ref{MiniWiebe}, $\uPsat_\delta = \langle\uP\rangle_\delta
\oplus \bfA\nabla$ où $\nabla$ est un déterminant bezoutien de
$\uP$. L'implication résulte alors du fait que $X_j\nabla \in
\langle\uP\rangle$.

\medskip

$\blacktriangleright$
Soit l'exemple $D = (1,2,2)$, de degré critique $\delta=2$, fourni
après la proposition \ref{FormesInertieDegred}.  Alors, avec les
notations de cet exemple, la forme linéaire $\ell_{1,2}$ appartient à
$\uPsat_1$. On peut \emph{expliciter} $X_3\ell_{1,2} \in
\langle\uP\rangle$ à l'aide d'un système de Calcul Formel. Mais
$X_i\ell_{1,2} \notin \langle\uP\rangle$ pour $i=1,2$, ce qui est
réalisable à la main puisqu'il suffit de l'obtenir pour une
spécialisation de $\uP$. On choisit de spécialiser $P_2$ en
$p_2X_2^2$, $P_3$ en~$p_3X_3^2$. On se rapporte alors à l'exemple illustrant
la proposition \ref{FormesInertieDegred} où on a noté $P_1 = a_1X_1+a_2X_2+a_3X_3$.
La matrice $(\uX,\uY)$-bezoutienne rigidifiée de $\uP$ spécialisé fournit
un $(\uX,\uY)$-bezoutien très simple:
$$
\Bez(\uX,\uY) = 
\begin {vmatrix}
a_1 & 0             & 0 \\
a_2 & p_2(X_2+Y_2)  & 0 \\
a_3 & 0             & p_3(X_3+Y_3) \\
\end {vmatrix} =
a_1p_2p_3 (X_2+Y_2)(X_3+Y_3)
$$
Les $\ell_{i,j}$ sont définis par $\ell_{i,j} := a_i\Bez_j(\uX)-
a_j\Bez_i(\uX)$ où les $\Bez_i(\uX)$ s'obtiennent en voyant
$\Bez(\uX,\uY)$ dans $\bfA[\uX][\uY]$ et en écrivant sa composante
linéaire en $\uY$ sous la forme:
$$
\Bez_1(\uX) Y_1 +  \Bez_2(\uX) Y_2 +  \Bez_3(\uX) Y_3 
$$
On trouve $\Bez_1 = 0$, $\Bez_2 = a_1p_2p_3X_3$ et $\Bez_3 =
a_1p_2p_3X_2$, si bien que $\ell_{1,2} = a_1^2 p_2p_3 X_3$.
Pour montrer que $X_i\ell_{1,2} \notin \langle P_1, p_2X_2^2,
p_3X_3^2\rangle$ pour $i=1,2$, on spécialise encore via $a_1 = p_1 = p_2 := 1$. On
a ainsi $\ell_{1,2} = X_3$ et l'idéal $\langle P_1, p_2X_2^2,
p_3X_3^2\rangle$ se réduit à l'idéal $\fa$ fonction de $a_2,a_3,X_1,X_2,X_3$:
$$
\fa(a_2,a_3,X_1,X_2,X_3) = \langle X_1+a_2X_2+a_3X_3, X_2^2, X_3^2\rangle
$$
Pour voir $X_1X_3, X_2X_3 \notin \fa$, on réalise une dernière spécialisation
forçant $X_1 + a_2X_2 + a_3X_3 = 0$: 
$$
\begin {array}{c} 
\fa(a_2:=-1, a_3:=0, X_1, X_2:=X_1,X_3) = \langle X_1^2,X_3^2\rangle
\qquad \text {et l'on a} \qquad
X_1X_3 \notin \langle X_1^2,X_3^2\rangle
\\[2mm]
\fa(a_2:=0, a_3:=-1, X_1:=X_3, X_2,X_3) = \langle X_2^2,X_3^2\rangle
\qquad \text {et l'on a} \qquad
X_2X_3 \notin \langle X_2^2,X_3^2\rangle
\\
\end {array}
$$
\end {rmq}

\subsection{Régularité sur $\bfA[\protect\uX]/\protect\uPsat$ et son impact
sur la $\bfB_\delta$-régularité}  
\label{SectPsatRegBdeltaRegI}

Commençons par le fait évident: 

\begin {fact} \label{RegModuloPsat}
Pour n'importe quelle suite $\uP$, le morphisme naturel suivant est injectif :
$$
\bfA[\uX]/\uPsat  \ \lhook\joinrel\xrightarrow{\qquad} \ \prod_{j=1}^n \bfB_{x_j}
\qquad 
\hbox{où $\bfB = \bfA[\uX]/\langle \uP \rangle$}
$$
En particulier,  si $a \in \bfA$ est régulier sur chaque localisé $\bfB_{x_j}$, 
alors $a$ est régulier sur~$\bfA[\uX]/\uPsat$.
\end{fact}

\medskip

Le théorème ci-dessous s'applique en particulier, d'après la
proposition~\ref{detW1deltaQ}, au jeu circulaire $\uQ$ et au scalaire
$a = \det W_{1,\delta}(\uP)$.  L'hypothèse de généricité de $\uP$ y
est utile à deux reprises: nous avons besoin d'une part de spécialiser
certains coefficients des~$P_i$ et d'autre part de la structure des
localisés~$\bfB_{x_j}$.

On peut essayer d'en éclairer la conclusion très forte, qui consiste
en la $\bfA[\uX]/\uPsat$-régularité du scalaire~$a$. Un tel scalaire
est à fortiori un élément régulier de $\bfA/(\ElimIdeal)$. Anticipons
sur la suite en supposant le résultant $\Res(\uP)$ défini.  Dans le
contexte du théorème où $\uP$ est générique, $\Res(\uP)$ est un scalaire
régulier (on verra que c'est le cas sous la seule hypothèse que $\uP$
est une suite régulière) et générateur de l'idéal d'élimination
$\ElimIdeal$ (ceci sera l'objet de nos préoccupations ultérieures).
Bilan: $a$ est un élément régulier de $\bfA/\langle\Res(\uP)\rangle$,
et en conséquence:
$$
(\Res(\uP), a) \text{ est une suite régulière}
$$
Ce n'est pas rien! C'est ce que fournit le théorème ci-dessous grâce à une hypothèse
spécifique de spécialisation du scalaire~$a$. Et quitte à anticiper, dans le
cas où $a = \det W_{1,\delta}(\uP)$, ce déterminant est multiple de
$\omegaRes{\uP}(X^\emouton)$ et on obtient ainsi deux scalaires aux \og caractères antinomiques\fg{}
dans l'image de $\omegaRes{\uP}$ avec la propriété:
$$
\big(\omegaRes{\uP}(\nabla_\uP),\ \omegaRes{\uP}(X^\emouton) \big) \text{ est une suite régulière}
$$
Antinomiques car le premier est dans l'idéal d'élimination (c'en est même un générateur)
tandis que le second est en dehors de cet idéal en un sens trés fort
(il est régulier modulo le premier). La présence du mouton-noir est-elle remarquable?
Non, car on verra en section \ref{SectPsatRegBdeltaRegII} qu'il peut être remplacé par n'importe
quel monôme de degré $\delta$ fournissant ainsi un certain nombre de suites régulières
de longueur~$2$ dans l'image de $\omegaRes{\uP}$.

\begin{theo}[Une condition suffisante de régularité sur $\bfA\lbrack\uX\rbrack/\uPsat$
             en générique] \leavevmode
\label{PsatReg}

Soit $\uP$ le système générique sur $\bfk$ dont nous notons $\bfA = \bfk[\indetsPi]$
l'anneau des coefficients.
On suppose disposer d'un système $\uQ$, à coefficients dans $\bfk$, de même
format que celui de $\uP$, tel que $\Un \overset{\rm def.}{=}
(1 : \cdots : 1)$ soit un zéro de ce système~$\uQ$.

\medskip

Alors tout $a \in \bfA$ dont la spécialisation en $\uQ$ vaut 1
est régulier sur $\bfA[\uX]/\uPsat$ et de surcroît un scalaire régulier.
\end{theo}

\begin{proof} \leavevmode

$\rhd$ 
Gardons les notations de~\ref{StructureLocalise} et considérons un indice $1\le j \le n$.
Il existe un polynôme $a' \in \bfA'_j[\uX]$ 
tel que $X_j^e \,a \equiv a' \bmod \langle \uP \rangle$ pour un certain exposant~$e$.
Les spécialisations $\uP := \uQ$ et $\uX := \Un$ 
fournissent $a'(\uQ,\Un) = 1$ (on utilise le fait que $Q_i(\mathds 1) = 0$ et l'hypothèse $a(\uQ) = 1$).
Par conséquent 
$a' \in \bfA'_j[\uX]$ est régulier en tant que polynôme primitif par valeur.
Donc l'élément $a'/X_j^e$ de $\bfA'_j[\uX]_{X_j}$ est régulier.
Par l'isomorphisme $\bfA'_j[\uX]_{X_j} \simeq \bfB_{x_j}$, la classe
$\overline a \in \bfB_{x_j}$ qui lui correspond est également un élément régulier
de l'anneau localisé $\bfB_{x_j}$.

\noindent
Ainsi $a$ est régulier sur chaque $\bfB_{x_j}$ donc, en vertu du fait évident~\ref{RegModuloPsat},
régulier sur~$\bfA[\uX]/\uPsat$.

\medskip

$\rhd$ 
Par ailleurs, comme $a$ est primitif par valeur vu que $a(\uQ)=1$, il est régulier sur $\bfA$.
\end{proof}

\medskip

Nous relions maintenant les deux notions de régularité en montrant que
la régularité sur $\bfB_\delta$ se déduit de celle sur
$\bfA[\uX]/\uPsat$, ce qui va permettre de compléter le théorème
précédent (cf le corollaire après le lemme). Le lemme est de nature
déductive et il n'y a pas à supposer~$\uP$ générique car tout le sel
de l'affaire repose sur les épaules de Wiebe. Insistons sur le fait
que la $\bfB_\delta$-régularité est une propriété bien plus faible que
la régularité sur~$\bfA[\uX]/\uPsat$: un scalaire régulier qui est
$\bfA[\uX]/\uPsat$-régulier est régulier sur $\bfB$ tout entier! Cf la
remarque qui suit le corollaire~\ref{corBdeltaReg}.

\begin{lem}[La régularité sur $\bfA\lbrack\uX\rbrack/\uPsat$ entraîne celle sur $\bfB_\delta$]
\label{PsatRegImpliesBdeltaReg}
\leavevmode
Soit $\uP$ une suite régulière.
Tout scalaire régulier $a \in \bfA$, régulier sur $\bfA[\uX]/\uPsat$, est $\bfB_\delta$-régulier.
\end{lem}

\begin{proof} \leavevmode
  
Pour $F \in \bfA[\uX]_\delta$ tel que $a \overline F = 0$ dans $\bfB_\delta$,
on doit montrer que $\overline F = 0$. On a $aF \in \uPdelta$, a fortiori,
$aF \in \uPsat_\delta$ et donc d'après
l'hypothèse de régularité sur $\bfA[\uX]/\uPsat$, il vient $F\in \uPsat_\delta$.

\medskip

D'après le théorème de Wiebe~(cf.~\ref{MiniWiebe}), il existe $\lambda
\in \bfA$ tel que $F - \lambda \nabla \in \uPdelta$.  En multipliant
par~$a$, et en réinjectant l'hypothèse $aF \in \uPdelta$, on
obtient $a \lambda \nabla \in \uPdelta$.  Puis $a \lambda = 0$ dans
$\bfA$ (toujours d'après Wiebe).  Puisque $a$ est un scalaire
régulier, on en déduit $\lambda = 0$ donc $F \in \uPdelta$,
c'est-à-dire $\overline F = 0$.
\end{proof}

En enchaînant les deux résultats précédents, on obtient le corollaire suivant.

\begin {coro} \label{corBdeltaReg}
Dans le contexte du théorème \ref{PsatReg},  le scalaire $a$ est de plus $\bfB_\delta$-régulier.
\end {coro}

\begin{rmqs} [A propos de régularité sur $\bfB_d$, sur $\bfB$] \leavevmode

Ici nous supposons $\uP$ régulière. Nous allons tirer conséquence de deux informations
fournies par le théorème de Wiebe:
$$
\forall\ d \geqslant \delta+1, \quad 
\uPsat_{d} = \langle \uP \rangle_d 
\qquad
\text{ et }
\qquad
(\uP : \uX) = \langle \uP, \nabla \rangle  \ \hbox { a fortiori }\ 
(\uP : \uX)_{\delta - 1} = \langle \uP \rangle_{\delta - 1}
$$
$\rhd$
Contexte: $d \leqslant \delta$ et un scalaire $a \in \bfA$.
\emph{Sous la seule hypothèse} que $a$ est $\bfB_\delta$-régulier,
montrons que $a$ est $\bfB_d$-régulier.  Notons $d'$ l'entier défini
par $d+d' = \delta$.  Soit $F \in \bfA[\uX]_d$ tel que $aF \in \langle
\uP \rangle_d$.  Pour tout monôme $X^\alpha$ avec $|\alpha| = d'$, on
a (banal) $aF X^\alpha \in \uPdelta$.  On utilise alors que $a$ est
$\bfB_\delta$-régulier, ce qui donne $F X^\alpha \in \uPdelta$ et ceci
pour tout $\alpha$ tel que $|\alpha| = d'$.  On a alors $F X^\beta \in
(\uP : \uX)_{\delta - 1} = \langle \uP \rangle_{\delta - 1}$ pour tout
$|\beta| = d'-1$.  En itérant $e$ fois ce procédé, on trouve $F
X^\gamma \in \langle \uP \rangle_{\delta - e}$ pour tout $|\gamma| =
d'-e$.  En particulier, pour $e = d'$, on obtient $F \in \langle \uP
\rangle_d$, ce qu'il fallait démontrer.

\medskip
\noindent
$\rhd$
Contexte: un scalaire régulier $a \in A$, régulier sur
$\bfA[\uX]/\uPsat$. Alors $a$ est régulier sur $\bfB$ c'est-à-dire
$\bfB_d$-régulier pour tout $d$. Tout d'abord, d'après le lemme
\ref{PsatRegImpliesBdeltaReg}, $a$ est $\bfB_\delta$-régulier.  Donc
d'après le premier point de cette remarque, il est $\bfB_d$-régulier
pour $d \le \delta$.  Examinons le cas $d \geqslant \delta+1$.  Par
hypothèse, $a$ est régulier sur $(\bfA[\uX]/\uPsat)_d$.  Or
$\uPsat_{d} = \langle \uP \rangle_d$.  Ainsi, $a$ est régulier sur
$\bfB_d$.
\end{rmqs}

\subsection{En générique: $\bfA[\protect\uX]/\protect\uPsat$-régularité via
la spécialisation en le jeu circulaire}
\label{SectPsatRegBdeltaRegII}

Nous avons étudié dans le chapitre \ref{ChapJeuCirculaire} un certain nombre
de scalaires $a(\uP)$ (déterminants, évaluations de formes linéaires), définis
pour n'importe système $\uP$ de format $D$, dont la  spécialisation
en le jeu circulaire $\uQ$ de même format $D$, a pour valeur $a(\uQ) = 1$.
Nous allons en récolter les bénéfices.

\subsection*{Une liste de scalaires $(\bfA[\uX]/\uPsat)$-réguliers, a fortiori
$\bfB_\delta$-réguliers}

Voici la liste de scalaires $a(\sbullet)$ (ce sont tous des
déterminants) accompagnée de références pour lesquels nous avons
prouvé $a(\uQ) = 1$ où nous rappelons qu'ici $\uQ$ désigne le jeu
circulaire.  Il y a les déterminants $\det W_{1,\delta}$ et $\det
W_\calM$ pour $\calM \subseteq \Jex_{1,\delta}$, cf la proposition
\ref{detW1deltaQ}.

Viennent ensuite les évaluations $\omega(X^\alpha)$ où $X^\alpha$ est
de degré $\delta$, cf la proposition \ref{ProprietesJeuCirculaire}. Il
n'est pas inutile de rappeler, cf le commentaire avant la proposition
en question, qu'en le jeu circulaire, la forme $\omega_\uQ :
\bfA[\uX]_\delta \to \bfA$ est indépendante de $D$; elle ne dépend que
de $\delta$ puisque c'est l'évaluation $F \mapsto F(\Un)$ au point
$\Un = (1 : \cdots : 1)$.

Terminons par les sections ``bonus'': la section
\ref{SousSectionEnDessousDelta} (premier bonus) concerne $\det
W_\calM$ pour $\calM \subseteq \Jex_{1,d}$ et $d \le \delta$. Enfin la
section \ref{SousSectionW2Qtriangulaire} (second bonus), cf
proposition \ref{WkdQTriangSup} règle le cas de $\det W_\calM$ avec
$\calM \subseteq \Jex_{2,d}$ pour n'importe quel~$d$.

\medskip
Bien entendu, tous ces résultats associés au mécanisme de sélection $\minDiv$
s'adaptent à la $\sigma$-version tordue par $\sigma \in \fS_n$.
Nous obtenons alors le résultat principal de ce chapitre.

\medskip

\begin{theo}\label{GenPsatRegScalars}
  
Soit $\uP$ la suite générique sur $\bfk$. On dispose de la liste suivante
de scalaires réguliers de l'anneau~$\bfA$ des coefficients de $\uP$, qui sont
réguliers sur $\bfA[\uX]/\uPsat$, a fortiori $\bfB_\delta$-réguliers.
Dans cette énumération, $\sigma$ désigne une permutation de $\fS_n$.

\begin {enumerate}[1.]
\item  
Les déterminants $\det W^\sigma_{1,\delta}(\uP)$.

\item
Plus généralement, les déterminants $\det W^\sigma_{\calM}(\uP)$ pour
tout sous-module monomial $\calM \subseteq \Jex_{1,\delta}$.

\item
Les évaluations $\omega^\sigma_\uP(X^\alpha)$ pour $X^\alpha$ de degré $\delta$.

\item
Les déterminants $\det W^\sigma_{\calM}(\uP)$ pour $\calM \subseteq \Jex_{1,d}$ et $d \le\delta$.

\item
Les déterminants $\det W^\sigma_{\calM}(\uP)$ pour $\calM \subseteq \Jex_{2,d}$ et n'importe quel $d$.
\end {enumerate}  
\end{theo}

\begin{proof}
Le théorème~\ref{PsatReg} reformulé avec le jeu circulaire $\uQ$ stipule :
\begin{quote}
\it 
Soit $\uP$ la suite générique sur $\bfk$ et $a \in \bfA = \bfk[\indetsPi]$.  Si
la spécialisation de $a$ en le jeu circulaire $\uQ$ vaut~$1$,
alors $a$ est régulier sur $\bfA[\uX]/\uPsat$.
La $\bfB_\delta$-régularité du scalaire~$a$ se déduit du corollaire \ref{corBdeltaReg} du
théorème cité.
\end{quote}
\end{proof}

\medskip

En un certain sens, on peut considérer que le point~{\it 1.} du
théorème ci-dessus est une conséquence du théorème du
puits~\ref{TheoPuits}, résultat qui a permis d'obtenir la structure
triangulaire de $W_{1,\delta}(\uQ)$ (cf.~\ref{detW1deltaQ}),
conduisant directement à $\det W_{1,\delta}(\uQ) = 1$.  Pour le point
{\it 5.}, rappelons, cf.~\ref{SousSectionW2Qtriangulaire}, que
l'aspect triangulaire de $W_\calM(\uQ)$ a été obtenu de manière
directe pour tout sous-module monomial $\calM$ de $\Jex_{2,d}$ et pour
tout entier $d$ ; et que, dans ce cas, la preuve a été beaucoup plus
aisée que celle faite pour $W_{1,\delta}(\uQ)$.

\subsection*{L'égalité $\Ker\omega=\protect\uPdelta$ en générique ou
  l'injectivité de $\overline\omega: \bfB_\delta\hookrightarrow\bfA$}

Pour une suite $\uP$ quelconque, la forme linéaire $\omega$ est nulle
sur $\uPdelta$ \idest{} $\uPdelta \subseteq \Ker \omega$.  Le cas
d'égalité n'a pas toujours lieu et on peut même avoir $\omega = 0$
comme le prouve l'exemple de la suite régulière $(aY, bX, cZ^3)$, cf
la proposition~\ref{YXZ3}. En revanche, en terrain générique, il y a
bien égalité!

\medskip

Le théorème suivant reprend une partie de la
page~\pageref{BdeltaRegImpliesInjectivity} et la précédente (cf le diagramme
d'implications en introduction).

\index{forme linéaire!$\omega=\omega_\uP$ (pilotée par $\minDiv$)}%

\begin{theo} \label{omegaInjective}
Soit $\uP$ la suite générique.

\begin{enumerate}[\rm i)]
\item  
Le noyau de la forme linéaire $\omega : \bfA[\uX]_\delta \to \bfA$ est $\uPdelta$
\idest{}  
la forme $\overline \omega : \bfB_\delta \to \bfA$ est injective.

\item
Tout scalaire $a \in \bfA$ régulier est $\bfB_\delta$-régulier.
\end {enumerate}  
\end{theo}

\begin{proof} \leavevmode

\noindent i)
La preuve a été donnée en introduction (page~\pageref{BdeltaRegImpliesInjectivity})
en utilisant $\det W_{1,\delta} = \omega(X^\emouton)$. Nous la reprenons
ici pour appuyer le fait que le mouton-noir n'a pas de rôle privilégié.

\smallskip

\emph{Fixons} un monôme $X^\alpha$ de degré $\delta$.
Rappelons que $\omega$ est de Cramer: $\omega(G)F - \omega(F)G \in
\uPdelta$ pour $F, G \in \bfA[\uX]_\delta$. Pour $F \in
\bfA[\uX]_\delta$ tel que $\omega(F) = 0$, on doit montrer que $F \in
\uPdelta$. En particulier, pour $G = X^\alpha$, on a
$$
\omega(X^\alpha)F \ - \ \underbrace{\omega(F)}_{=0} X^\emouton \ \in \ \uPdelta
\qquad \text{d'où } \qquad 
\omega(X^\alpha)F \ \in \ \uPdelta
$$
Or $\omega(X^\alpha)$ est $\bfB_\delta$-régulier  (confer~\ref{GenPsatRegScalars}).
On obtient donc $F \in \uPdelta$.

\medskip
\noindent ii) On peut faire la preuve directe suivante:
pour $\overline F \in \bfB_\delta$ tel que $a\,\overline F = 0$, on doit montrer que $\overline F = 0$.
En appliquant $\overline\omega$ et en utilisant la régularité de $a$ dans $\bfA$, on
obtient $\overline\omega(\overline F) = 0$ donc $\overline F=0$. 

Il est cependant préférable de prendre du recul, de penser que le $\bfA$-module
$\bfB_\delta$ se plonge dans $\bfA$ et d'utiliser le fait suivant.
\end{proof}

\begin {fact} \leavevmode
  
\begin {enumerate}[\rm i)]
\item
Tout scalaire régulier sur un module $M$ est régulier
sur n'importe quel sous-module de $M$.  
\item
Tout scalaire régulier est régulier sur n'importe quel module libre $\bfA^r$,
à fortiori régulier sur n'importe quel sous-module d'un module libre.
\item    
Soit $\uP$ un système tel que le $\bfA$-module $\bfB_\delta$ se plonge dans $\bfA$,
ce que l'on schématise via $\bfB_\delta \hookrightarrow\bfA$. C'est le cas par exemple si
$\overline\omega : \bfB_\delta \to \bfA$ est injective.
Alors tout scalaire de $\bfA$ régulier est $\bfB_\delta$-régulier.
\end {enumerate}  
\end {fact}

\medskip

Dans la suite, nous allons mettre l'accent sur plusieurs subtilités.
La première concerne le fait que la condition \og $\uP$
générique\fg{} n'est pas nécessaire pour obtenir un plongement de $\bfB_\delta$
dans $\bfA$. La seconde
concerne le fait que l'on pourrait avoir $\omega^\sigma = 0$ pour tout $\sigma$
mais que $\bfB_\delta$ se plonge tout de même dans $\bfA$ (par~$\uomegares$!).

\subsection{$\bfB_\delta$-régularité: étude d'exemples non génériques}

Nous avons choisi le premier cas d'école de format
$D = (1,\cdots,1,e)$ de degré critique $\delta = e-1$, cf la
section~\ref{soussectionPremierCasEcole}. On en rappelle les
notations: $\uP = (L_1, \cdots, L_{n-1}, P_n)$ où les $L_j$ sont $n-1$
formes linéaires et $P_n$ un polynôme homogène de degré $e$ jouant un
rôle secondaire puisque:
$$
\langle\uP\rangle_\delta = \langle L_1, \cdots, L_{n-1}\rangle_\delta
$$
En posant $\xi_i = \det(L_1, \dots, L_{n-1}, X_i)$, nous y avons
défini $\uxi = (\xi_1, \cdots, \xi_n)$ et la forme $\evalxi :
\bfA[\uX]_\delta \to \bfA$, restriction à $\bfA[\uX]_\delta$ du
morphisme d'évaluation en $\uxi$.  La question se pose de déterminer
dans quels cas on dispose d'une part de l'égalité:
$$
\Ker\evalxi \overset{?}{=} \langle\uP\rangle_\delta
$$
et d'autre part de scalaires de $\bfA$ qui soient $\bfB_\delta$-réguliers.

\medskip

C'est un problème qui nous semble complexe et nous allons y
répondre que très partiellement  en fournissant un cas particulier
\emph {pas du tout générique} que nous jugeons pertinent. On rappelle le
principe qui a été utilisé dans la preuve du théorème
précédent~\ref{omegaInjective}.  Pour $F, G \in \bfA[\uX]_\delta$, on a
$$
\evalxi(G)F - \evalxi(F)G \in \langle L_1, \cdots, L_{n-1}\rangle_\delta
$$
Supposons $\evalxi(F) = 0$. Comment, si tout va bien, peut-on obtenir $F \in \langle
L_1, \cdots, L_{n-1}\rangle_\delta$?
On a bien entendu $\evalxi(G)F \in \langle L_1, \cdots, L_{n-1}\rangle_\delta$;
en particulier, en prenant pour $G$ un monôme $X^\alpha$ de degré $\delta$:
$$
\xi^\alpha\, F \in \langle L_1, \cdots, L_{n-1}\rangle_\delta
$$
On aimerait bien pouvoir simplifier par $\xi^\alpha$ modulo $\langle L_1, \cdots, L_{n-1}\rangle_\delta$,
ce qui en général n'est pas licite.

\subsection*{Un premier exemple}

Voici le cas particulier pour lequel nous allons pouvoir réaliser cette simplification:
des indéterminées $\bsp = (p_1, \cdots, p_{n-1})$,
$\bsq = (q_1, \cdots, q_{n-1})$ sur un anneau de base $\bfk$ et les $L_j$ donnés
par
$$
\left\{
\begin {array} {ccl}
L_1 &=& p_1X_1  + q_1X_2 \\
L_2 &=& p_2X_2  + q_2X_3 \\
    &\vdots&  \\
L_{n-1} &=& p_{n-1}X_{n-1}  + q_{n-1}X_n \\  
\end {array} 
\right.
$$
On laisse le soin au lecteur de vérifier que
$$
\xi_i = p_1\cdots p_{i-1} \overline{q}_i\cdots \overline{q}_{n-1} \qquad
\qquad \text{où} \qquad \overline {q}_j = -q_j
$$

\begin {theo} \label{BdeltaRegNotGeneric}
On garde le contexte ci-dessus et on note $T$ une indéterminée sur $\bfA := \bfk[\bsp,\bsq]$.

\begin {enumerate}[\rm i)]
\item
Le morphisme $\varphi$ d'évaluation en $T\uxi$, défini par $X_i \mapsto \xi_i\,T$,
fournit un plongement
$$
\bfA[\uX]/\langle L_1,\cdots,L_{n-1}\rangle  \quad\hookrightarrow\quad   \bfA[T]
$$  

\item
Tout élément régulier de $\bfA$ est  régulier modulo $\langle L_1, \cdots, L_{n-1}\rangle$.

\item
Pour chaque $\delta \ge 0$, en notant $\evalxi : \bfA[\uX]_\delta \to \bfA$ la
restriction du morphisme d'évaluation en $\uxi$
$$
\Ker\evalxi = \langle L_1, \cdots, L_{n-1}\rangle_\delta
$$
\end {enumerate}  
\end {theo}

La preuve figure en page~\pageref{BdeltaRegNotGenericProof} après le
lemme crucial ci-dessous.  Dans ce lemme dont l'énoncé est rendu autonome,
nous gardons le même contexte mais pour éviter des signes, nous changeons $q_j$ en $-q_j$
de sorte que:
$$
L_j = p_jX_j - q_jX_{j+1} \quad 1 \le j \le n-1, \qquad
\xi_i = p_1\cdots p_{i-1} q_i\cdots q_{n-1} \quad 1 \le i \le n
$$
En particulier:
$$
\xi_1 = q_1\cdots q_{n-1},\qquad  \xi_n = p_1\cdots p_{n-1}
$$
On vérifie directement que $L_j(\uxi) = 0$ et comme $L_j$ est homogène (de degré~1), $L_j(T\uxi) = 0$.

\medskip

Un commentaire s'impose concernant l'évaluation en $T\uxi$ versus
l'évaluation en $\uxi$.  Le noyau du morphisme d'évaluation en $\uxi$
est bien entendu $\langle X_1-\xi_1, \cdots, X_n-\xi_n\rangle$ mais
ceci ne nous intéresse pas. Le noyau qui nous concerne, bien plus
complexe à déterminer, est celui du morphisme $\varphi$ d'évaluation
en~$T\uxi$. Ce dernier noyau est un idéal \emph {homogène}, ce qui
permet ensuite de considérer son intersection avec chaque composante homogène
$\bfA[\uX]_\delta$ de $\bfA[\uX]$.  On peut tenter une comparaison avec la différence
résidant entre l'idéal d'un point affine et l'idéal d'un point
projectif. Imaginons que les $\xi_i$ soient à coefficients dans un
corps; alors l'idéal du point affine $(\xi_1, \cdots, \xi_n) \in
\bbA^n$ est $\langle X_1 - \xi_1, \cdots, X_n-\xi_n\rangle$ tandis
l'idéal du point projectif $(\xi_1 : \cdots : \xi_n) \in \bbP^{n-1}$
est $\langle \xi_iX_j - \xi_j X_i, \ 1 \le i,j \le n\rangle$.

\smallskip

Enfin, dans notre cadre, les $\xi_i$ ne sont pas à coefficients dans
un corps et les $\xi_iX_j - \xi_j X_i$, certes dans le noyau
du morphisme $\varphi$, n'engendrent pas ce noyau. On voit par exemple
qu'il va falloir \og dégraisser\fg:
$$
\xi_2X_1 - \xi_1X_2 = p_1q_2\cdots q_{n-1}\,X_1 - q_1\cdots q_{n-1}\,X_2 =
q_2 \cdots q_{n-1}\, L_1
$$

\begin {lem} \leavevmode

Soit $\bfA = \bfk[\bsp,\bsq]$ et $\varphi$ le $\bfA$-morphisme $\bfA[\uX] \to \bfA[T]$ qui
réalise $X_i \mapsto \xi_i\,T$. Alors:
$$
\Ker\varphi = \langle L_1, \cdots, L_{n-1}\rangle
$$
\end {lem}

\medskip

Voici le schéma de la preuve.
Nous allons considérer les domaines de départ et d'arrivée de $\varphi$
comme des anneaux de polynômes sur $\bfk$:
$$
\bfA[\uX] = \bfk[\bsp,\bsq,\uX], \qquad\bfA[T] = \bfk[\bsp,\bsq,T]
$$
et profiter du fait que $\varphi$ transforme un monôme (au sens $\bfk$)
en un monôme.

Notons $I$ l'idéal $\langle L_1, \cdots, L_{n-1}\rangle$, contenu dans
$\Ker\varphi$. La stratégie va consister à exhiber un supplémentaire
\emph {monomial} $S$ de $I$ tel que la restriction de $\varphi$ à $S$
soit injective i.e.  $\Ker\varphi \cap S = \{0\}$:
$$
\bfk[\bsp,\bsq,\uX] = I + S
$$
Nous avons volontairement écrit $+$ au lieu $\oplus$ car le fait que
la somme ci-dessus soit directe est conséquence de $\Ker\varphi \cap S
= \{0\}$ vu que $I \subset \Ker\varphi$.  Pour en déduire l'égalité
$\Ker\varphi = I$, on prend $F \in \Ker\varphi$ que l'on décompose en
$F = F_1 + F_2$ avec $F_1 \in I$ et $F_2 \in S$. En appliquant
$\varphi$, on obtient $\varphi(F_2) = 0$ donc $F_2=0$ et $F = F_1 \in
I$.

\begin {proof} \leavevmode

\noindent $\bullet$
Définition de $S$. On prend pour $S$ le $\bfk$-module de base les monômes
non multiples de $p_jX_j$ pour chaque $1 \le j \le n-1$. Pour montrer
que $\bfk[\bsp,\bsq,\uX] = I + S$, il suffit de montrer que tout
monôme $m$ non dans $S$ est congru modulo $I$ à un monôme dans $S$.
Considérons le \emph {plus petit} $j$ tel que $m$ soit multiple de $p_jX_j$
et supposons que $j=1$. On remplace alors $p_1X_1$ par $q_1X_2$
qui vérifie $q_1X_2 \equiv p_1X_1 \bmod I$. Dans le monôme obtenu,
on recommence si besoin ce genre d'opération fournissant un $j > 1$
(le plus petit) et on remplace $p_jX_j$ par $q_jX_{j+1}$. 
Et ainsi de suite.

Ainsi, avec $n = 4$, montrons comment obtenir, à partir du monôme $m$ (non dans $S$) à gauche ci-dessous,  
le monôme $m'$ (dans $S$) à droite, vérifiant $m' \equiv m \bmod I$:
$$
m = p_1^3 p_2^2 p_3\, X_1^2X_3, \qquad m' = p_1 q_1^2 q_2^2 q_3 X_3^2X_4
$$
Voici la suite des opérations:
$$
\xymatrix @R = 0.3cm @C = 2.5cm{
m = (p_1X_1)^2 p_1p_2^2 p_3X_3 \ar[r]^-{p_1X_1\leadsto q_1X_2} & m_1 = (q_1X_2)^2 p_1 p_2^2 p_3X_3 =
q_1^2 p_1 p_2^2 p_3 X_2^2 X_3  
\\
m_1 = (p_2X_2)^2 q_1^2 p_1p_3X_3 \ar[r]^-{p_2X_2\leadsto q_2X_3} & m_2 = (q_2X_3)^2 q_1^2 p_1 p_3X_3 =
q_2^2 q_1^2 p_1 p_3 X_3^3
\\
m_2 = (p_3X_3) q_2^2 q_1^2 p_1X_3^2 \ar[r]^-{p_3X_3\leadsto q_3X_4} & (q_3X_4) q_2^2 q_1^2 p_1 X_3^2 =
q_3 q_2^2 q_1^2 p_1 X_3^2 X_4 = m'
\\
}
$$

\noindent $\bullet$
Injectivité de la restriction de $\varphi$ à $S$. Comme des monômes distincts
sont $\bfk$-linéairement indépendants et que $\varphi$ transforme un monôme
en un monôme, il suffit de montrer que la restriction de $\varphi$ à
l'ensemble des monômes de $S$ est injective.

Déterminons l'image par $\varphi$ d'un monôme $m = \bsp^\alpha \bsq^\beta X^\gamma$
en posant:
$$
s_i = \gamma_1 + \cdots + \gamma_i, \qquad t_i = \gamma_{i+1} + \cdots + \gamma_n
\quad 1 \le i \le n
$$
Il est clair $\varphi(m)$ est de la forme $\bsp^? \bsq^? T^{s_n}$.
En utilisant $\xi_i = p_1 \cdots p_{i-1} q_i \cdots q_{n-1}$, on obtient 
les contributions de $\bsq$ et $\bsp$ dans $\varphi(m)$:
$$
q_1^{\beta_1 + s_1}\, q_2^{\beta_2 + s_2}\, \cdots\, q_{n-1}^{\beta_{n-1} + s_{n-1}},
\qquad\quad
p_1^{\alpha_1 + t_1}\, p_2^{\alpha_2 + t_2}\, \cdots\, p_{n-1}^{\alpha_{n-1} + t_{n-1}} 
$$
Considérons maintenant deux monômes $m = \bsp^\alpha \bsq^\beta X^\gamma$ et
$m' = \bsp^{\alpha'} \bsq^{\beta'} X^{\gamma'}$ \framebox [1.2\width][c]{dans $S$} tels que
$\varphi(m) = \varphi(m')$ et montrons que $m = m'$.
L'appartenance de $m$ à $S$, \idest{} $m$ non divisible par $p_jX_j$, se traduit
par $\alpha_j=0$ ou $\gamma_j=0$ pour $1 \le j \le n-1$.  Idem pour $m'$.

L'exposant en $T$ dans $\varphi(m) = \varphi(m')$ donne $s_n = s'_n$ que l'on désigne
par $s$. L'exposant en $p_1$ dans $\varphi(m) = \varphi(m')$  fournit
$$
\alpha_1 + t_1 = \alpha'_1 + t'_1
\qquad \text{\idest} \qquad
\alpha_1 + s - \gamma_1 = \alpha'_1 + s - \gamma'_1 
\qquad \text{donc} \qquad
\alpha_1 - \gamma_1 = \alpha'_1 - \gamma'_1 
$$
Mais $\alpha_1= 0$ ou $\gamma_1 = 0$; idem avec des $'$. On laisse le soin au
lecteur de vérifier que cela entraîne $\alpha_1 = \alpha'_1$ et $\gamma_1 = \gamma'_1$!

L'exposant en $p_2$ dans $\varphi(m) = \varphi(m')$  fournit
$$
\alpha_2 + t_2 = \alpha'_2 + t'_2
\qquad \text{\idest} \qquad
\alpha_2 + (s-\gamma_1) - \gamma_2 = \alpha'_2 + (s-\gamma'_1) - \gamma'_2
\qquad \text{donc} \qquad
\alpha_2 - \gamma_2 = \alpha'_2 - \gamma'_2 
$$
Mais $\alpha_2= 0$ ou $\gamma_2 = 0$; idem avec des $'$. Via le même \emph{trick},
on obtient $\alpha_2 = \alpha'_2$ et $\gamma_2 = \gamma'_2$.

\medskip
En continuant ainsi, on obtient $\alpha = \alpha'$ et $\gamma = \gamma'$. Puis
on obtient $\beta = \beta'$ en considérant les exposants en $q_j$.
Bilan: $m=m'$, ce qu'il fallait démontrer.
\end {proof}   

\medskip

Quelques commentaires à propos de la décomposition
$\bfk[\bsp,\bsq,\uX] = I \oplus S$.  Le choix de $S$ est guidé par la
notion de base de Gröbner. Munissons $\bfk[\bsp,\bsq,\uX]$ de l'ordre
monomial lexicographique pour lequel $p_1 > \cdots > p_{n-1} > q_1 >
\cdots > q_{n-1} > X_1 > \cdots > X_n$.  La forme $L_j$ admet alors
$p_jX_j$ comme monôme dominant. Le fait que les $p_jX_j$ soient
premiers deux à deux assure, lorsque $\bfk$ est un corps ou bien
$\bbZ$, que $(L_1, \cdots, L_{n-1})$ est une base de Gröbner de $I$ et
donc l'idéal initial de $I$ est $\langle p_1X_1, \cdots,
p_{n-1}X_{n-1}\rangle$.  Les monômes de $S$ sont ceux ``sous
l'escalier'' et sont dits normalisés ou standards.  Ce supplémentaire
$S$ est parfois qualifié de supplémentaire de Macaulay de $I$. En ce
qui concerne l'exemple $(m,m')$ donné dans la preuve, $m'$ est la
forme normale de $m$ relativement à l'ordre monomial retenu.

Avec un anneau de base quelconque, lorsque les monômes dominants sont
premiers deux à deux, il y a un traitement général: le lecteur pourra
consulter le problème (corrigé) 9 du chapitre X de la seconde édition
de~\cite{LombardiQuitte}, problème intitulé ``Quand les monômes dominants sont
premiers entre eux''.

\begin {proof} [Preuve du théorème \ref{BdeltaRegNotGeneric}] \label{BdeltaRegNotGenericProof} \leavevmode

 \noindent
i) Découle directement du lemme.  

\medskip
\noindent
ii) Soit $a \in \bfA$ un scalaire régulier et $F \in \bfA[\uX]$ tels que
$aF \in \langle L_1, \cdots, L_{n-1}\rangle$. Il s'agit de montrer
$F \in \langle L_1, \cdots, L_{n-1}\rangle$. On donne un coup de $\varphi$
i.e. $0 = a\varphi(F)$ et en utilisant la régularité de $a$, on obtient
$\varphi(F) = 0$ donc $F \in \langle L_1, \cdots, L_{n-1}\rangle$.

\medskip
\noindent
iii) Relire l'introduction avant le premier exemple et utiliser
le fait que chaque $\xi_i$ est régulier modulo $\langle L_1, \cdots,
L_{n-1}\rangle$.  Ou bien considérer la restriction de~$\varphi$ à
$\bfA[\uX]_\delta$ et utiliser le fait que pour $F \in
\bfA[\uX]_\delta$, on a $\varphi(F) = \evalxi(F)\,T^\delta$.
\end {proof}

\subsection*{Un second exemple}

Ici $n=3$, $p,q,r,s$ désignent des indéterminées sur $\bbZ$ et $\uP = (L_1,L_2,P_3)$
le système de format $D = (1,1,3)$ de degré critique $\delta = 2$ avec:
$$
L_1 = pX_2 + qX_3, \qquad  L_2 = rX_1 + sX_3, \qquad  P_3 = X_3^3
$$
Alors $\omega^\sigma = 0$ pour tout $\sigma \in \fS_3$. Cependant
$\evalxi : \bfA[\uX]_\delta \to \bfA$, par passage au quotient, permet
de plonger $\bfB_\delta$ dans $\bfA$. En ce qui concerne la preuve,
nous allons fournir quelques pistes, les détails étant laissés au
lecteur. A noter que le polynôme $P_3$ n'intervient pas dans la suite
(il n'intervient pas dans la composante homogène de degré $\delta$ du
complexe de Koszul de $\uP$).  Nous l'avons choisi ainsi car il permet
de montrer facilement que la suite $\uP$ est régulière et donc que son
complexe de Koszul est exact. On a
$$
\dim\rmK_{2,\delta} = 1, \qquad  \dim\rmK_{1,\delta} = \dim\rmK_{0,\delta} = 6 
$$
avec les différentielles:
$$
\Syl_\delta = \partial_{1,\delta} =
\NorthEastBordermatrix{
\Veti{X_{1}\,e_{1}} & \Veti{X_{2}\,e_{1}} & \Veti{X_{3}\,e_{1}} & \Veti{X_{1}\,e_{2}} & \Veti{X_{2}\,e_{2}} & \Veti{X_{3}\,e_{2}} & \\
. & . & . & r & . & . & \Heti{X_{1}^{2}} \\
p & . & . & . & r & . & \Heti{X_{1}X_{2}} \\
q & . & . & s & . & r & \Heti{X_{1}X_{3}} \\
. & p & . & . & . & . & \Heti{X_{2}^{2}} \\
. & q & p & . & s & . & \Heti{X_{2}X_{3}} \\
. & . & q & . & . & s & \Heti{X_{3}^{2}} = \mouton \\
}
\qquad\qquad
\partial_{2,\delta} =
\NorthEastBordermatrix{
\Veti{e_{12}} & \\
-r & \Heti{X_{1}\,e_{1}} \\
. & \Heti{X_{2}\,e_{1}} \\
-s & \Heti{X_{3}\,e_{1}} \\
. & \Heti{X_{1}\,e_{2}} \\
p & \Heti{X_{2}\,e_{2}} \\
q & \Heti{X_{3}\,e_{2}} \\
}
$$
On en tire les deux matrices $\Omega^\sigma$ suivantes:
$$
\Omega =
\EastBordermatrix{
. & . & . & . & . & \sbullet & \Heti{X_{1}^{2}} \\ 
p & . & . & r & . & \sbullet & \Heti{X_{1}X_{2}} \\ 
q & . & . & . & r & \sbullet & \Heti{X_{1}X_{3}} \\ 
. & p & . & . & . & \sbullet & \Heti{X_{2}^{2}} \\ 
. & q & p & s & . & \sbullet & \Heti{X_{2}X_{3}} \\ 
. & . & q & . & s & \sbullet & \Heti{X_{3}^{2}} \\ 
}
\qquad\qquad
\Omega^{(1,2,3)} =
\EastBordermatrix{
. & r & . & . & . & \sbullet & \Heti{X_{1}^{2}} \\ 
p & . & . & r & . & \sbullet & \Heti{X_{1}X_{2}} \\ 
q & s & . & . & r & \sbullet & \Heti{X_{1}X_{3}} \\ 
. & . & . & . & . & \sbullet & \Heti{X_{2}^{2}} \\ 
. & . & p & s & . & \sbullet & \Heti{X_{2}X_{3}} \\ 
. & . & q & . & s & \sbullet & \Heti{X_{3}^{2}} \\ 
}
$$
On laisse le soin au lecteur de vérifier que leurs 5 premières
colonnes sont solidement liées (indication: dans $\Omega$, il manque
la colonne 4 de $\Syl_\delta$ et dans $\Omega^{(1,2,3)}$, il manque la
colonne 2; puis utiliser $\partial_{2,\delta}$, qui a la bonne idée
d'avoir un 0 en ligne~2 et ligne~4). Enfin, toute $\Omega^\sigma$
est égale soit à $\Omega$ soit à $\Omega^{(1,2,3)}$. Bilan:
$\omega^\sigma = 0$ pour tout $\sigma \in \fS_3$.

\medskip

Passons à $\evalxi$:
$$
\begin {vmatrix}
.  &r  &T_1 \\  
p  &.  &T_2 \\
q  &s  &T_3 \\  
\end {vmatrix}
= \xi_1 T_1 + \xi_2 T_2 + \xi_3 T_3 
\qquad \text{avec} \qquad
\uxi \overset{\rm def.}{=} (\xi_1, \xi_2,\xi_3) = (ps, qr, -pr)
$$
Enfin, la matrice de $\evalxi$ dans base déjà utilisée des monômes de degré $2$ de $\bfA[X_1,X_2,X_3]_2$
est:
$$
\begin {bmatrix}
\xi_1^2 &\xi_1\xi_2 &\xi_1\xi_3 &\xi_2^2 &\xi_2\xi_3 &\xi_3^2
\end {bmatrix}
=
\begin {bmatrix}
p^2s^2 &pqrs &-p^2rs &q^2r^2 &-pqr^2 &p^2r^2  
\end {bmatrix}
$$
Au lecteur de montrer que $\Ker\evalxi = \langle L_1,L_2\rangle_2$.

\cleardoublepage

\section{AC$_2$: idéaux et sous-modules de Fitting, modules de \mbox{MacRae}}
\label{ChapFittingVectoriel}

L'objectif principal de ce chapitre est de définir la notion de
module de MacRae avec une attention toute particulière pour ceux de
rang $0$ ou $1$. Il nous a semblé opportun de commencer en revisitant rapidement la
notion d'idéal de Fitting, sans toutefois passer sous silence les
propriétés de base. Nous avons choisi de les appuyer sur un traitement
non habituel (Schanuel).  En fin de chapitre, nous avons fait figurer
un aperçu de la littérature consacrée au sujet ``Idéaux de Fitting''.

Nous avons tenu à faire figurer un certain nombre d'exemples en dehors
du contexte de l'élimination, car, à notre connaissance, la notion de
module de MacRae de la littérature n'est développée qu'en rang 0,
cadre dans lequel l'invariant de MacRae associé est un scalaire (à un
inversible près). Ici, un module $M$ de MacRae de rang $c$ porte un
invariant (de MacRae), qui est une forme
$c$-linéaire alternée $\vartheta \in \bigwedge^c(M)^\star$,
\emph {définie à un inversible près}.

\index{invariant de MacRae}%

\subsection{Idéaux de Fitting et modules (de présentation finie) de rang constant}
\label{sous-sectionIdeauxFitting}

\`A un module de présentation finie $M = \Coker u$ présenté par $u : E \to F$, on associe 
ses idéaux de Fitting définis à l'aide des idéaux déterminantiels de $u$:
$$
\calF_k(M) 
\ = \ \calD_{\dim F-k}(u)
$$ 
Cette définition a du sens car elle ne dépend pas de $u$ (cf. bien-fondé des idéaux 
de Fitting ci-après).

On a bien sûr les inclusions :
$$
0 = \calF_{-1}(M) 
\ \subset \ 
\calF_0(M) 
\ \subset \
\calF_1(M) 
\ \subset \
\calF_2(M) 
\ \subset \
\cdots 
$$
qui finissent par stagner sur l'anneau $\bfA$ à partir de l'indice
$q$, si $M$ est engendré par $q$ générateurs.

\label {NOTA08-Dru}%
\label {NOTA08-FkM}%
\index{idéal!de Fitting d'un module de présentation finie}%

Lorsqu'il existe $c$ tel que 
$$
\calF_{c-1}(M) = 0 
\qquad \text{ et } \qquad 
\text{$\calF_c(M)$ fidèle}
$$
un tel $c$ est unique, et on dit que $M$ est de rang $c$, ou encore
que $u$ est de rang $r = \dim F - c$.  Cette notion de rang est celle
de \og rang stable \fg{} de Northcott dans ses chapitres $3$ et $4$.
Attention, dans~\cite[définition 5.7 du chapitre II]{LombardiQuitte},
la terminologie \og rang = $r$ \fg{} pour une matrice $u$ est
différente: il s'agit du \og rang au sens fort\fg{}, ce qui signifie
$1 \in \calD_r(u)$ et $\calD_{r+1}(u) = 0$.

\index{module!de rang constant}%

\smallskip

Donnons quelques exemples. 

$\rhd$ Le module libre $\bfA^c$ est évidemment de rang $c$. 

$\rhd$ Soit $I$ un idéal de type fini. On a $\calF_0(\bfA/I) = I$. 
Ainsi, le module $\bfA/I$ est de rang $0$ si et 
seulement si $I$ est fidèle, ce qui est par exemple le cas lorsque $I$ contient un élément régulier.

$\rhd$ Tout module librement résoluble $M$ possède un rang en ce sens, 
et le rang est la caractéristique d'Euler-Poincaré de n'importe quelle résolution libre
de $M$.

\medskip

L'idéal de Fitting d'indice $0$, particulièrement important, est relié à l'annulateur :

\begin{prop}[$0$-Fitting d'un module de présentation finie, 
{cf.~\cite[IV.9.6.]{LombardiQuitte}}] \label{0FittingAnnulateur}

Soit $M$ un module de présentation finie engendré par $g$ générateurs.
Alors 
$$
\Ann(M)^g 
\ \subset \ 
\calF_0(M)
\ \subset \ 
\Ann(M)
$$
\end{prop}

\subsubsection*{Bien-fondé des idéaux de Fitting}

C'est Fitting, qui en 1936, associa à un module $M$ de présentation finie la famille des idéaux qui 
porte son nom et étudia leurs propriétés ;
ceux-ci sont définis comme idéaux déterminantiels d'une matrice de présentation finie de $M$
et la première chose à assurer est l'indépendance vis-à-vis de cette présentation.
Même si ce résultat est très souvent cité dans la littérature, 
mettre effectivement le doigt sur une preuve est plus difficile, 
et parfois source d'étonnements.
Plus loin, nous aurons besoin de ce résultat d'indépendance, 
mais dans une version enrichie, valable dans un cadre plus général.
Pour ces deux raisons, il nous semble indispensable d'assurer 
les détails et si possible avec des arguments efficaces.

Nous aurons besoin du lemme de Schanuel. 
Il en existe deux versions.
La version classique :
\begin{quote}
\it 
Soit $\pi: F \twoheadrightarrow M$ et $\pi' : F' \twoheadrightarrow M$ 
deux applications linéaires surjectives, de même image~$M$, où $F$ et $F'$ sont projectifs.
Alors il existe un isomorphisme entre $\Ker \pi \oplus F'$ et $F \oplus \Ker \pi'$.
\end{quote}
et la version améliorée (confer~\cite[V, lemme 2.7]{LombardiQuitte}) :
\begin{quote}
\it 
Soit $\pi: F \twoheadrightarrow M$ et $\pi' : F' \twoheadrightarrow M$ 
deux applications linéaires surjectives, de même image~$M$, avec les modules $F$ et $F'$ projectifs.
Alors il existe des isomorphismes réciproques $\Phi, \Phi' : F \oplus F' \to F \oplus F'$ 
faisant commuter le diagramme suivant 
$$
\xymatrix @R=2pt @M=0.5pc{
F\oplus F' \ar@<-1ex>[dd]_{\Phi} \ar@{->>}[dr]^{\pi \oplus 0}  \\
            & M \\
F \oplus F' \ar[uu]_{\Phi'} \ar@{->>}[ur]_{0 \oplus \pi'}  \\
}
$$
En particulier, $\Phi$ transforme $\Ker(\pi \oplus 0) = \Ker \pi \oplus F'$
en $\Ker(0 \oplus \pi') =  F \oplus \Ker \pi'$ ; 
idem pour $\Phi'$.
\end{quote}
\`A l'aide de Schanuel version améliorée, démontrons le résultat suivant.
\begin {prop} [Fitting from Schanuel] \label{FittingInvariance}
\leavevmode
Pour deux présentations d'un même module $M$, on~a les égalités des idéaux déterminantiels : 
$$
\vcenter {
\xymatrix @R = 0.01cm{
E \ar[r]^u & F \ar@{->>}[dr]  \\
           &                &M \\
E'\ar[r]_{u'} & F'\ar@{->>}[ur] \\
}
}
\qquad\qquad
\quad \forall\, k, \quad \calD_{\dim F-k}(u) =\calD_{\dim F' -k}(u')  
$$
\end{prop}

\begin{proof}
On pose $f = \dim F$, idem avec un prime et 
on note avec un tilde les applications prolongées par l'identité ;  
c'est-à-dire $\widetilde u = u \oplus \id_{F'}$ et idem pour $u'$.
Remarquons dès maintenant que cette opération tilde ne modifie pas 
les idéaux déterminantiels dans le sens où 
pour tout $j$, on a 
$\calD_{j+f'}(\widetilde u) = \calD_{j}(u)$. 
Matriciellemment, c'est évident :
$$
\calD_{j+f'} \left(\begin {bmatrix} U & 0\cr 0 & \Id_{f'} \end {bmatrix}\right) 
\ =\  
\calD_{j} (U) 
$$
Commençons la preuve à proprement parler.
D'après le lemme de Schanuel amélioré, on a le diagramme commutatif et exact suivant :
$$
\xymatrix @R=2pt @M=0.5pc{
E \oplus F' \ar[r]^{\widetilde u} & F \oplus F' \ar@<-1ex>[dd]_{\Phi} \ar@{->>}[dr]^{\pi \oplus 0}  \\
             &            & M \\
F \oplus E' \ar[r]_{\widetilde u'} & F \oplus F' \ar[uu]_{\Phi'} \ar@{->>}[ur]_{0\oplus\pi'}  \\
}
$$
En particulier, 
$\Phi$ transforme $\Im \widetilde u$ en $\Im \widetilde u'$,
ce qui s'écrit $\Im (\Phi \circ \widetilde u) =  \Im \widetilde u'$.

\noindent
On utilise à présent successivement ces deux résultats :
\begin{quote}
\it 
$\rhd$ Soient $u_1 : L_1 \to L$ et $u_2 : L_2 \to L$ 
deux applications linéaires entre modules libres telles que 
$\Im u_1 \subset \Im u_2$ ; 
alors pour tout $j$, 
on a l'inclusion $\calD_j(u_1) \subset \calD_ j(u_2)$. 
A fortiori, si $u_1,u_2$ ont même image, elles ont même idéaux déterminantiels.

\smallskip
$\rhd$ Soit $\phi$ et $\psi$ deux applications linéaires entres
modules libres, $\phi$ étant un isomorphisme du but de $\psi$
(i.e. $\phi \circ \psi$ est défini). Alors pour tout $j$, on a
$\calD_j(\phi\circ \psi) = \calD_j(\psi)$.
\end{quote}
Ils conduisent aux deux égalités successives suivantes :
$$
\calD_{f+f'-k}(\widetilde u') 
\ = \ 
\calD_{f+f'-k}(\Phi \circ \widetilde u) 
\ = \ 
\calD_{f+f'-k}(\widetilde u) 
$$
On termine en utilisant le fait que l'opération tilde ne change pas les idéaux déterminantiels :
$$
\calD_{f-k}(u)
\ = \ 
\calD_{f'-k}(u')
$$
\end {proof}

\begin{prop}[Modules de rang constant et localisation]
\label{FittingLocalisation} \leavevmode

\noindent
Pour un $\bfA$-module $M$ de présentation finie, les deux conditions sont équivalentes:

\begin {enumerate}[\rm i)]
\item
$M$ est de rang $c$.

\item
Il existe une famille finie $\us = (s_\ell)$ de scalaires vérifiant
$\Ann(\us) =0$ et telle que sur chaque localisé~$\bfA[1/s_\ell]$, le
module $M[1/s_\ell]$ soit libre de rang $c$.
\end {enumerate}

\smallskip

De plus, lorsque $M$ est de rang $c$, on peut imposer à~$\us$
d'être un système de générateurs de l'idéal de Fitting $\calF_c(M)$.
De manière précise, soit $u : \bfA^m \to \bfA^n$ une présentation de $M$ et $r =
n-c$ de sorte que $\calF_c(M) = \calD_r(u)$; alors on peut
prendre pour $\us$ la famille des mineurs d'ordre $r$ de $u$.
\end{prop}

\begin{proof} 
\leavevmode

Supposons i).
L'idéal $\calD_r(u)$ est fidèle et l'idéal $\calD_{r+1}(u)$ est nul.
Utilisons le lemme de la liberté (cf.~\cite[II.5.10]{LombardiQuitte}):
celui-ci affirme qu'une matrice $U \in \bbM_{n,m}(\bfR)$ vérifiant $\calD_{r+1}(U) = 0$ et
possédant un mineur d'ordre $r$ inversible est équivalente
à $\left[\begin {smallmatrix} \Id_r & 0 \\ 0 & 0\\ \end{smallmatrix}\right]$; en conséquence
$\Coker(U)$ est libre de rang $n - r$.
En l'appliquant à $\bfR = \bfA[1/s_\ell]$, où $s_\ell$ est un mineur d'ordre $r$
de $u$, on obtient que $M[1/s_\ell]$ est libre de rang~$c$.

\medskip

\noindent
Pour la réciproque, voici une remarque préalable en présence d'une famille
finie $\us = (s_\ell)$ vérifiant $\Ann(\us) = 0$. Soit $I$ un idéal de
$\bfA$. On laisse le soin au lecteur de vérifier que $I$ est nul (resp. fidèle)
si et seulement si chaque~$I[1/s_\ell]$ est nul (resp. fidèle).

Supposons ii). Utilisons que les idéaux de Fitting commutent à
n'importe quelle localisation au sens où
$\calF_k(S^{-1}M) = S^{-1}\calF_k(M)$. 
En appliquant la remarque préalable
à $I=\calF_{c+1}(M)$, on obtient $\calF_{c+1}(M) = 0$. Et en
l'appliquant à $I=\calF_c(M)$, on obtient que $\calF_c(M)$
est fidèle.
\end{proof}

\begin {prop} [Formes linéaires sur un module de rang 0, de rang 1] \leavevmode
\label {FormesLineairesRang01}
Soit $M$ un $\bfA$-module de présentation finie de rang $c$.
  
\begin{enumerate} [\rm i)]
\item 
Si $c= 0$, toute forme linéaire sur $M$ est nulle.

\item
On suppose $c=1$.
\begin{itemize}
\item[a)]
Deux formes linéaires $\alpha,\beta : M \to \bfA$ sont proportionnelles
au sens où $\alpha(m)\beta(m') = \alpha(m')\beta(m)$ pour tous $m,m' \in M$.

\item[b)] 
Soit $\alpha \in M^\star$ une forme linéaire.
Alors $\alpha$ est une base de $M^\star$ si et seulement si $\Gr(\alpha) \geqslant 2$.
\end{itemize}
\end{enumerate}
\end{prop}

\begin{proof} 
\leavevmode

i) 
Puisque $\calF_0(M) \subset \Ann(M)$, pour tout $a \in \calF_0(M)$, on a
$a\alpha(m) = \alpha(am) = 0$. Donc ${\calF_0(M)\alpha(m) = 0}$. Or 
$\calF_0(M)$ est un idéal fidèle, donc $\alpha(m) = 0$.

\smallskip
ii)a)
Utilisons la proposition précédente avec $c=1$. Il suffit, pour obtenir
l'égalité de scalaires $\alpha(m)\beta(m') = \alpha(m')\beta(m)$, de
localiser en $s_\ell$. Or sur ce localisé, $M[1/s_\ell]$
devient libre de rang 1 donc les deux formes linéaires y sont
proportionnelles.

\smallskip
ii)b) Supposons $\Gr(\alpha) \geqslant 2$. 
Soit $\beta \in M^\star$.
On a $\alpha(m)\beta(m') = \alpha(m')\beta(m)$. 
Or l'idéal de type fini $\langle \alpha(m), m\in M\rangle$ est de profondeur $\geqslant 2$ 
par hypothèse.
Donc il existe un (unique) $q \in \bfA$ tel que $\beta(m) = q \, \alpha(m)$ 
pour tout $m \in M$ i.e. $\beta =  q \alpha$. Ainsi $\alpha$ est une base de $M^\star$.

\smallskip
Réciproquement, supposons que $\alpha$ soit une base de $M^\star$. Considérons une
présentation $E \xrightarrow[]{u}  F \buildrel \pi \over \twoheadrightarrow M$. 
Il s'agit de prouver que $\Im\alpha$, égal à $\Im(\alpha\circ\pi)$,
est de profondeur $\geqslant  2$.
Dire que $\alpha$ est une base de $M^\star$ signifie que la forme linéaire 
$\alpha \circ\pi$ est
une base du sous-module constitué des formes linéaires sur $F$ nulles
sur $\Im u$. Ce dernier module n'est autre que
$\ker \transpose{u}$ où $\transpose {u} : F^\star \to E^\star$ et le
résultat est alors conséquence du lemme suivant appliqué à
$\psi = \transpose{u}$ et $x = \alpha\circ\pi$.
\end{proof}

\begin {lem}
Soit $\psi : \bfA^m \to \bfA^n$ une application linéaire. Supposons
que $\Ker\psi$ soit libre de dimension~1 de base $x \in \bfA^m$;
alors $\Gr(x) \geqslant 2$.
\end {lem}

\begin{proof}
Il est clair que $\Ann(x)$ est réduit à $0$. 
Fixons $m$ scalaires $y_1, \dots, y_m$ vérifiant $y_ix_j = y_jx_i$ pour tous $i,j$.
En vectorisant ces égalités, 
on obtient un vecteur $y \in \bfA^m$ vérifiant $x_iy = y_ix$ pour tout~$i$.
En appliquant $\psi$, il vient $x_i\psi(y) = 0$ et comme $\Ann(x) = 0$, on obtient
$\psi(y) = 0$ \idest{} $y \in \Ker \psi = \bfA x$. Ainsi, il existe
$q \in \bfA$ tel que $y = qx$, ce qu'il fallait montrer.
\end{proof}

Eu égard au contexte de l'élimination, le point ii)b) de la proposition
précédente~(\ref{FormesLineairesRang01}) mérite la reformulation qui vient.
L'application linéaire $u$ y joue le rôle de l'application de Sylvester
$\Syl_\delta(\uP)$ entre les modules $\rmK_{1,\delta}$ et $\bfA[\uX]_\delta$,
première différentielle de la composante homogène de degré $\delta$ du complexe
de Koszul de $\uP$.

\begin {prop} \label{FormeLineaireGr2}
\leavevmode
Soit une application linéaire $u : E \to F$ entre deux $\bfA$-modules
libres, que l'on suppose de rang $\dim F - 1$. Alors une
forme linéaire $\mu$ sur $F$ est une base de $\Ker\transpose{u}$
(sous-module de $F^\star$ constitué des formes linéaires nulles sur
$\Im u$) si et seulement si $\Gr(\mu) \geqslant 2$.
\end {prop}

\subsection{Module de MacRae de rang~$0$ et son invariant (de MacRae) scalaire~$\MacRae$}
\label{InvariantMacRaeRang0}

Sur $\bbZ$, il est facile d'expliquer ce qu'est un module de MacRae de
rang~$0$: c'est tout simplement un groupe abélien fini; et son
invariant de MacRae est son cardinal.  Pour assurer le lien avec le
reste de la section, il suffit, étant donné un groupe abélien $M$ de
type fini, de le \textit{présenter} \idest{}
de l'écrire (de manière non unique) $M = \bbZ^n / \Im u$ où $u : \bbZ^m \to \bbZ^n$ est une
application linéaire. Son cardinal se lit alors à l'aide d'une telle présentation de 
la manière suivante : $M = \Coker(u)$ est fini (\idest{} $M$ est de rang 0) si et seulement
si $u$ possède un mineur plein (d'ordre $n$) non nul. Auquel cas :
$$
\# M = \pgcd\big(\calD_n(u)\big)
$$
Un cas particulier  simple est celui où le module est présenté par une matrice 
\textit{carrée} $U \in \bbM_n(\bbZ)$, ce qui est toujours possible.
Le module $M = \bbZ^n / \Im U$ est fini si et seulement si $\det U \ne 0$,
auquel cas $\#M = |\det U|$.

\index{invariant de MacRae}%

\medskip
On dispose d'un résultat analogue pour un anneau principal $\bfA$:
soit $U \in \bbM_m(\bfA)$ une matrice \textit{carrée} et $M
= \bfA^m/\Im U$; alors le $\bfA$-module $M$ est de torsion (au sens
annulé par un élément régulier) si et seulement si $\det U \ne 0$
(penser à $\det U$ régulier), ou encore si 
et seulement si $M$ est de rang 0 ; dans ce cas, le produit des facteurs invariants de $M$ 
est égal à $\det U$, à un inversible près. Sur un
annneau principal, on vient de décrire la classe des modules de MacRae
de rang 0 et de leur attacher un invariant (le produit des
facteurs invariants), scalaire régulier défini à un inversible près.

\medskip
Un cas particulier: soit $u$ un endomorphisme d'un $\bfK$-espace vectoriel $E$
de dimension finie et $M$ le $\bfK[T]$-module associé; alors $M$ est de MacRae
de rang $0$ et son invariant de MacRae est le polynôme caractéristique de $u$
(à ne pas confondre avec son annulateur qui est le polynôme minimal de $u$).

\medskip
Passons maintenant à un anneau commutatif \textit{quelconque} $\bfA$ ; l'exemple
le plus simple de module de MacRae de rang $0$ que l'on puisse donner 
est celui d'un module de la forme $\bfA^m / \Im U$ où $U \in \bbM_m(\bfA)$ est une
matrice carrée dont le déterminant est régulier; son invariant de
MacRae est alors l'idéal engendré par le scalaire régulier $\det U$.

\medskip
Après cette tentative de motivation, l'objectif est de définir, sur un
anneau commutatif quelconque, la notion de modules de MacRae, en
commençant par ceux de rang~0.  Un peu de technique est nécessaire,
technique qui s'appuie essentiellement sur la profondeur $\geqslant 2$.

\subsubsection*{L'invariant de MacRae chez Northcott}

Chez Northcott \cite{NorthcottFFR}, la notion d'invariant de 
MacRae est définie en deux temps. 
Une première définition a lieu dans le chapitre 3 (The invariants of Fitting and MacRae) 
et ne couvre que le cas des modules ayant une résolution élémentaire c'est-à-dire 
une résolution finie dont chaque terme est un module élémentaire\footnote{%
Un module élémentaire est un module de présentation finie présenté par
une matrice carrée de déterminant régulier, cf. la section 3.5 (The Euler characteristic) de Northcott.}.
\`A ce stade, l'invariant de MacRae est un idéal principal fractionnaire 
(\idest{} de l'anneau total des fractions de l'anneau de base). 
La notion d'invariant de MacRae est ensuite prolongée dans le chapitre~7 
(The multiplicative structure) aux modules librement résolubles 
de rang $0$. La vision définitive est annoncée par {\sc new definition}
dans la sous-section 7.4, p. 225; pour un module~$E$ librement
résoluble de caractéristique d'Euler-Poincaré nulle, Northcott dit
``The MacRae invariant of~$E$ is defined to be the smallest principal
ideal $\langle g\rangle$ containing $\calF_0(E)$''. C'est certes vrai car 
cela dit que $\langle
g\rangle$ est l'invariant de MacRae de~$E$ si $g$ est un pgcd de
n'importe quel système générateur $\ua$ de $\calF_0(E)$; mais
il n'est pas mention de l'inégalité $\Gr(\ua/g) \geqslant 2$, pourtant capitale.



\begin{defn}\label{DefMacRae0}
Soit $M$ un module de présentation finie.
On dit que $M$ est de MacRae de rang~$0$ lorsque l'idéal $\calF_0(M)$ possède un pgcd fort.

Dans ce cas, on note $\MacRae(M)$ l'idéal principal engendré par un
tel pgcd, appelé \textit{invariant de MacRae de $M$}.  Dans la suite,
on se permettra un abus de notation en désignant parfois par
$\MacRae(M)$ n'importe quel scalaire générateur de l'idéal $\MacRae(M)$.
\end{defn}

\label {NOTA08-MacRaeScalaire}%
\index{module!de MacRae!de rang $0$}%
\index{invariant de MacRae}%

La terminologie est cohérente dans le sens où un module $M$ de MacRae de rang 0
est un module (de présentation finie) de rang 0 ! 
En effet, l'idéal $\calF_0(M)$ admet
un pgcd fort et de ce fait, par définition, c'est un idéal fidèle.

En résumé, pour un module $M$ de MacRae de rang $0$, on a :
$$
\calF_0(M) \ = \ 
\MacRae(M)\, \fb
\qquad \text{avec } \quad 
\begin{array}[t]{l}
\MacRae(M) \text{ idéal principal engendré par un élément régulier} \\
\fb \text { idéal de type fini vérifiant } \Gr(\fb) \geqslant 2
\end{array}
$$
En particulier, $\calF_0(M) \subset \MacRae(M)$.

\subsubsection*{Quelques exemples élémentaires}

$\rhd$ Soit $M = \bfA/\langle a \rangle$ avec $a \in \bfA$ régulier.
Le lecteur constatera qu'il s'agit d'un module de MacRae de rang $0$ et 
que $\MacRae(M) = \langle a \rangle$.

\medskip

$\rhd$ 
Dans la même veine, on peut considérer $M = \bfA / \fa$ où $\fa = g \fb$ est de type 
fini avec $g \in \bfA$ régulier et $\Gr(\fb) \geqslant 2$.
Comme $\calF_0(M) = \fa = g \fb$, il est clair que $M$ est 
un module de MacRae de rang $0$ et que $\MacRae(M) = \langle g \rangle$.
\medskip

$\rhd$ Prenons le $\bfA$-module 
$M = \bfA/\langle a_1 \rangle \oplus \cdots \oplus \bfA/\langle a_m \rangle$ avec 
des $a_i \in \bfA$ réguliers.
C'est un module de MacRae de rang $0$.
Il est présenté par 
$$
\xymatrix @C=4pc @M=1pc{
\bfA^m \ar[r]^-{ 
\left[
\begin{smallmatrix}
a_1 & \\
 & \ddots \\
&  & a_m \\
\end{smallmatrix}
\right]
}
& \bfA^m 
}
$$
Posons $g = \prod_i a_i$ ; c'est un élément régulier.
On a $\calF_0(M) = \langle g \rangle$ 
et $\MacRae(M) = \langle g \rangle$.

\medskip

$\rhd$ Un autre exemple : 
\label{ExempleAsurbGr2}
soit $\ub = (b_1, \dots, b_n)$ une suite de profondeur $\geqslant 2$
(par exemple, une suite régulière de longueur $n \geqslant 2$).
Alors $M = \bfA/\langle \ub \rangle$ est un module de MacRae de rang $0$.
En effet, il est présenté par 
$$
\xymatrix @C=4pc @M=1pc{
\bfA^n \ar[r]^-{ 
\left[
\begin{smallmatrix}
b_1 & \cdots & b_n 
\end{smallmatrix}
\right]
} 
& \bfA 
}
$$
L'idéal de Fitting $\calF_0(M) = \langle \ub \rangle$ peut se factoriser 
$\langle 1 \rangle \, \fb$
avec $\fb = \langle \ub \rangle$ de profondeur $\geqslant 2$.
Donc $\MacRae(M) = \langle 1 \rangle$.

\medskip

$\rhd$ Généralisons le point précédent.
Prenons $M = 
\bfA/\langle \ub^{(1)} \rangle \oplus \cdots \oplus \bfA/\langle \ub^{(p)} \rangle$
avec 
$\ub^{(1)},  \dots, \ub^{(p)}$ des suites de profondeur $\geqslant 2$.
Ce module est de MacRae de rang $0$.
Il est présenté par une matrice \og par paliers\fg{} où le palier numéro $i$ 
est la \textit{ligne} $\ub^{(i)}$. 
Ainsi, pour $p = 3$, voilà la matrice par paliers:
\begin{center}
\begin{tikzpicture}[line cap=round, line join=round,
x=1cm,y=1cm,  xscale = 1.4,  yscale = 0.6]
\clip (-0.5,0.7) rectangle (6.5,3.5);
\draw [<->] (0.,3.)-- (2.,3.);
\draw [<->] (2.,2.)-- (5.,2.);
\draw [<->]  (5.,1.)-- (6.,1.);
\draw [thick] (-0.2, 3.3)-- (-0.2, 0.7);
\draw [thick] (-0.2, 3.3)-- (-0.1, 3.3);
\draw [thick] (-0.2, 0.7)-- (-0.1, 0.7);
\draw [thick] (6.2, 3.3)-- (6.2, 0.7);
\draw [thick] (6.2-0.1, 3.3)-- (6.2, 3.3);
\draw [thick] (6.2-0.1, 0.7)-- (6.2, 0.7);
\path [fill=white] (1.,3.) circle (7pt);
\draw (1.1,3.1) node {$\underline b^{(1)}$};
\path [fill=white] (3.5, 2) circle (7pt);
\draw (3.6, 2.1) node {$\underline b^{(2)}$};
\path [fill=white]  (5.5, 1.) circle (7pt);
\draw (5.6, 1.1) node {$\underline b^{(3)}$};
\end{tikzpicture}
\end{center}
Ainsi, l'idéal des mineurs pleins de $u$, à savoir $\calF_0(M)$, 
est engendré par les $b_{i_1}^{(1)} \cdots b_{i_p}^{(p)}$ 
pour $1 \leqslant i_k \leqslant \text{longueur}\big(\ub^{(k)}\big)$. 
D'après la propriété de multiplicativité des suites de profondeur $\geqslant 2$ 
(cf.~\ref{Gr2Multiplicativity}-i), le produit des 
$p$ suites $\ub^{(1)}, \dots, \ub^{(p)}$ est de profondeur $\geqslant 2$.
Donc pour les mêmes raisons que l'exemple précédent, on a $\MacRae(M) = \langle 1 \rangle$.

\medskip

$\rhd$
Donnons un dernier exemple élaboré à partir des précédents.  Prenons $M =
\displaystyle \bigoplus_{i=1}^m \bfA/\langle a_i \rangle 
\oplus \bigoplus_{j=1}^p \bfA/\langle \ub^{(j)} \rangle$ 
avec $a_1, \dots, a_m$ des éléments réguliers et 
$\ub^{(1)},  \dots, \ub^{(p)}$ des suites de profondeur $\geqslant 2$.
Ce module est de MacRae de rang $0$.
Il est présenté par une matrice par paliers construite à partir de l'assemblage 
de la matrice diagonale des $a_1, \dots, a_m$ et de la matrice par paliers ci-dessus.

Posons $g = \prod_i a_i$.
En gardant les notations précédentes, 
on voit que l'idéal $\calF_0(M)$ des mineurs pleins de la matrice ci-dessus 
est engendré par les $g b_{i_1}^{(1)} \cdots b_{i_p}^{(p)}$. 
On a donc $\calF_0(M) = \langle g \rangle \fb$ où $\fb$ est l'idéal
$\langle\ub^{(1)}\rangle \cdots \langle\ub^{(p)}\rangle$
de profondeur $\geqslant 2$ (cf.~\ref{Gr2Multiplicativity}-i).
Ainsi $\MacRae(M) = \langle g \rangle$.

\begin {rmq} \label{MatricePaliers}
Indépendamment des histoires de modules de MacRae de rang $0$,
voici un exemple typique de matrice par paliers.
$$
\left[
\begin{array}{*{13}{c}}
a_{1}& a_{2}& a_{3}& .& .& .& .& .& .& .& .& .& . \\ 
.& .& .& b_{4}& b_{5}& b_{6}& b_{7}& .& .& .& .& .& . \\ 
.& .& .& .& .& .& .& c_{8}& c_{9}& .& .& .& . \\ 
.& .& .& .& .& .& .& .& .& d_{10}& .& .& . \\ 
.& .& .& .& .& .& .& .& .& .& e_{11}& e_{12}& . \\ 
.& .& .& .& .& .& .& .& .& .& .& .& f_{13} \\ 
\end{array}
\right]
$$
A tout palier, on associe l'idéal engendré par les éléments du palier. Par exemple,
l'idéal associé au premier palier est $\langle a_1,a_2,a_3\rangle$.
On dispose alors du résultat suivant: l'idéal des mineurs pleins d'une matrice par
paliers est le produit des idéaux associés aux paliers.
\end {rmq}

\subsubsection*{Invariant de MacRae d'un module très décomposé}

Le dernier exemple, qui engloble tous les précédents, s'énonce de la manière suivante.

\begin{prop} \label{ModuleTresDecompose}
Soit $\mathscr A$ une famille finie d'éléments réguliers $a \in \bfA$
et $\mathscr B$ une famille finie de suites $\ub$ de profondeur $\geqslant 2$.
Alors le module 
$$
\displaystyle \bigoplus_{a \in \mathscr A} \bfA/\langle a \rangle 
\ \oplus \ 
\bigoplus_{\ub \in \mathscr B} \bfA/\langle \ub \rangle
$$
est un module de MacRae de rang $0$ 
et son invariant de MacRae est $\prod\limits_{a \in \mathscr A} a$.
\end{prop}

\index{invariant de MacRae}%

\begin{proof}
Elle est contenue dans le dernier exemple ou dans la
remarque; elle consiste donc à chasser dans les mineurs d'une présentation.

Une autre preuve s'appuie sur le cas des modules $\bfA/\langle a \rangle$ et 
$\bfA/\langle \ub \rangle$ (traités dans les exemples)
et sur la multiplicativité de l'invariant de MacRae pour une somme directe 
(confer l'énoncé ci-après). Si on y regarde de près, on voit d'ailleurs
que le résultat de multiplicativité réside également en une certaine chasse
dans les mineurs, ce qui n'est guère étonnant vu la définition de l'invariant
de MacRae.
\end{proof}

\subsubsection*{Multiplicativité de l'invariant de MacRae}

Commençons par un cas particulier extrêmement simple.

\begin{prop}[Multiplicativité de l'invariant de MacRae pour une somme directe]
Soient $M'$ et $M''$ deux modules de MacRae de rang $0$. 
Alors $M' \oplus M''$ est un module de MacRae de rang~$0$ et
$$
\MacRae(M'\oplus M'') 
\ = \ 
\MacRae(M') \MacRae(M'')
$$
\end{prop}

\index{invariant de MacRae}%

\begin{proof}
Une présentation de $M' \oplus M''$ est donnée par 
$\begin{bmatrix} A' & 0 \\ 0 & A'' \end{bmatrix}$ 
où $A'$ (resp. $A''$) est une matrice de présentation de $M'$ (resp. $M''$).
En considérant l'idéal des mineurs pleins, on obtient (grâce au développement de Laplace) :
$\calF_0(M' \oplus M'') = \calF_0(M') \calF_0(M'')$.
Les idéaux $\calF_0(M')$ et $\calF_0(M'')$ admettent un pgcd fort. 
Donc, par multiplicativité 
(cf.~\ref{Gr2Multiplicativity}-ii), 
l'idéal $\calF_0(M') \calF_0(M'')$ admet un pgcd fort qui est le produit de ces deux pgcd.
\end{proof}

Poursuivons avec la multiplicativité de l'invariant de MacRae dans un cas un peu plus 
général : celui d'une suite exacte. 
Pour cela, commençons par dégager un résultat local, local ayant ici le sens de 
\og localisation en une famille de profondeur $\geqslant 2$ \fg{}.

\begin{lem}[Présentation carrée \og locale \fg{} 
d'un module de MacRae de rang~$0$]
\label{LocalSquarePresentation}

Soit $M$ un module de MacRae de rang $0$.
Il existe une famille finie $\ub = (b_J)$ 
de scalaires vérifiant $\Gr(\ub) \geqslant 2$ telle que 
sur chaque localisé $\bfA[\frac{1}{b_J}]$, le module $M$ soit présenté
par une matrice carrée de déterminant générateur de $\MacRae(M)$
(idéal invariant de MacRae de $M$). 
\end{lem}

\begin{proof}
Soit $u : \bfA^m \to \bfA^r$ une présentation de $M$.
Notons $(e_j)_j$ la base canonique de $\bfA^m$ et pour une partie $J$
de cardinal $r$ de $\{1..m\}$, $\Delta_J$ le mineur
d'ordre $r$ correspondant de $u$. Par définition, en désignant par $g$
un générateur de $\MacRae(M)$, on a $\Delta_J = g b_J$ avec
$\Gr(\ub) \geqslant 2$.

Nous allons utiliser la formule de Cramer. On  en rappelle un
exemple typique pour 4 vecteurs $v_1, v_2, v_3$ et $v \in \bfA^3$ 
$$
\det(v_1,v_2,v_3)v 
\ = \ 
\det(v,v_2,v_3)v_1 + \det(v_1,v,v_3)v_2 + \det(v_1,v_2,v)v_3
$$
Pour une partie $J \subset \{1..m\}$ de cardinal $r$ et un indice
quelconque $k \in \{1..m\}$, la formule de Cramer appliquée aux $r+1$
vecteurs $u(e_j)_{j \in J}$ et $u(e_k)$ de $\bfA^r$ s'écrit
$$
\Delta_J \, u(e_k) \ = \ 
\sum_{j \in J} \Delta_{J\setminus j \vee k}\, u(e_j)
$$
Puisque chaque $\Delta_{J'}$ est multiple de $g$ et que $g$ est
régulier, on peut simplifier par $g$ pour obtenir :
$$
b_J \, u(e_k) 
\ \in \ 
\sum_{j \in J} \bfA \, u(e_j)
$$
En localisant en $b_J$, on en déduit que $u(e_k)$ est combinaison
linéaire des $u(e_j)$ pour $j \in J$.  On obtient ainsi que \emph {sur le
localisé} $\bfA[1/b_J]$, le module $M$ est présenté par une matrice
carrée, à savoir par la restriction de~$u$ à $\bigoplus_{j \in
J} \bfA[1/b_J] e_j$, module libre de dimension $r$. Et son déterminant est
$\Delta_J = g b_J$, générateur de $\MacRae(M) = \langle g \rangle$
sur le localisé.
\end{proof}

\begin {theo} [Multiplicativité de l'invariant de MacRae dans une suite exacte]\leavevmode
\label{MacRaeMultiplicativity}

\noindent
Soit $0 \to M' \to M \to M'' \to 0$ une suite exacte avec $M'$ et $M''$ des 
modules de MacRae de rang $0$. 
Alors $M$ est un module de MacRae de rang $0$ et on a $\MacRae(M) = \MacRae(M')\MacRae(M'')$.
\end {theo}

\index{théorème!de multiplicativité!de l'invariant de MacRae}%

\begin{proof}

Soit, pour le module $M'$, une famille $\ua = (a_i)$ comme dans le lemme
précédent; idem pour le module~$M''$ avec une famille $\ub = (b_j)$.
On a $\Gr(\ua\,\ub) \geqslant 2$ d'après \ref{Gr2Multiplicativity}.
Localisons en $a_ib_j$. D'après le lemme, sur $\bfA[1/(a_ib_j)]$,
le module~$M'$ est présenté par une matrice \textit{carrée} $C'$ ayant
un déterminant engendrant~$\MacRae(M')$.  Idem pour~$M''$.
Par conséquent, d'après \cite[prop. 4.2, chap IV]{LombardiQuitte}, le
module~$M$ est présenté par une matrice \textit{carrée} du type
$\left[\begin{smallmatrix} C'&*\\ 0 &C''\end{smallmatrix}\right]$.
\emph {Sur ce localisé}, on a l'égalité $\calF_0(M) = \MacRae(M') \MacRae(M'')$
puisque ces deux idéaux sont engendrés par $\det(C')\det(C'')$.
D'après~\ref{IdealPrincipalLocalGlobal}, on en déduit
que $\MacRae(M')\MacRae(M'')$ est un pgcd fort de $\calF_0(M)$.
Ainsi, $M$ est de MacRae de rang $0$, d'invariant de MacRae $\MacRae(M')\MacRae(M'')$.
\end {proof}

\subsection{Sous-module déterminantiel $\DVect_r(u) \subset \BW^{\dim F-r}(F)^\star$ de $u:E\to F$}
\label{DefDVect}

\index{module!déterminantiel $\DVect_r(u)$}
Soit $u : E \to F$ une application linéaire entre deux $\bfA$-modules libres.
A tout entier $r$, on va attacher un sous-module
$\DVect_r(u) \subset \bigwedge^c(F)^\star$ où $c = \dim F - r$, 
le cas vraiment \og utile\fg{} étant celui où $u$ est de rang~$r$. 

Mais pour l'instant, au niveau des généralités, nous ne faisons aucune
hypothèse particulière et désignons par $r$, $c$ deux entiers
complémentaires à $\dim F$, \idest{} $r+c = \dim F$.  La lettre $r$
(pour rang) sera plutôt attachée à $u$, tandis que le symbole $c$ sera
attaché au module $\Coker u$, celui-ci étant décrété de rang $c$
lorsque $u$ est de rang $r$. Une précision: lorsque $\Coker u$ est
librement résoluble, $c$ est également la caractéristique
d'Euler-Poincaré de n'importe quelle résolution libre de $\Coker u$.

\smallskip

On désigne par $\sharp$ un isomorphisme déterminantal entre
$\bigwedge^r(F)$ et $\bigwedge^c(F)^\star$ ; par exemple celui défini
par $\Theta \mapsto \Theta^\sharp = [\Theta \wedge \sbullet]_{\bff}$
où $\bff$ est une orientation de $F$.
Le sous-module de $\bigwedge^c(F)^\star$ en question est:
$$
\boxed{
\DVect_r(u)
\ = \ 
\bigl(\Im \textstyle \bigwedge^r(u)\bigr)^\sharp
}
$$
Par définition, l'idéal déterminantiel $\calD_r(u)$ s'obtient par évaluation de $\DVect_r(u)$ sur $F$
$$
\calD_r(u) \ = \ \scp{\DVect_r(u)}{F}   \buildrel {\rm def} \over =
\sum_{\alpha \in \DVect_r(u)} \Im \alpha
$$
Ce sous-module $\DVect_r(u)$ est indépendant de l'isomorphisme déterminantal
$\sharp$ choisi et donc ne dépend ni de la position de $\sbullet$ dans
le produit extérieur, ni de l'orientation choisie sur $F$.  On en obtient
un système fini de générateurs en fixant une base $(e_j)$ de $E$ et en
considérant les formes $c$-linéaires alternées $\bfw_J^\sharp$ sur~$F$,
avec $\#J = r$ où $\bfw_J = \bigwedge^r(u)(e_J)$ ; ces formes
$c$-linéaires alternées sont donc du type :
$$
\begin{array}{rcl}
F^c & \longrightarrow & \bfA \\
(y_1, \dots, y_c) & \longmapsto &
\big[u(e_1) \wedge \cdots \wedge u(e_r) \wedge y_1 \wedge \cdots \wedge y_c\big]_\bff
\end{array}
$$
Deux cas vont être particulièrement importants: $c = 0$ et $c = 1$.

\label {NOTA08-DVect}%

\medskip

\subsubsection*{Le cas $c=0$ \idest{} $r = \dim F$}

Rien de vraiment nouveau car
le module $\DVect_r(u)$ s'identifie à l'idéal déterminantiel $\calD_r(u)$.
En effet, $\DVect_r(u)$ est le sous-module de $\bfA^\star$ engendré 
par les formes linéaires multiplication $\delta\,\id_\bfA$ où $\delta$ parcourt
la famille des mineurs pleins de $u$, famille génératrice de
l'idéal $\calD_r(u)$.

\smallskip
Rappelons que ces mineurs~$\delta$ vérifient 
$$
\forall\, y \in F, \quad \delta y \in \Im u
$$
En effet, $\delta$ est le déterminant d'une sous-matrice $A$ carrée de taille $r$ de $u$ et
la formule de Cramer s'écrit $A \widetilde A = \delta\,\Id_r$
de sorte que $\delta y$ est combinaison linéaire des colonnes de $A$, donc appartient à $\Im u$.

L'inclusion $\calD_r(u) F \subset \Im u$ ci-dessus peut s'écrire en terme de
transporteur: $\calD_r(u) \subset (\Im u : F)$, qui  est un
visage de l'inclusion bien connue~\ref{0FittingAnnulateur} :
$$
\calF_0(M) \ \subset \ \Ann(M) 
\qquad \text{ où $M = \Coker u$}
$$

\subsection{Sous-module déterminantiel (suite): cas particulier $\DVect_{\dim F-1}(u)\subset F^\star$}
\label {SectionDetCoRang1}

\centerline{\fbox{\parbox{0.90\linewidth}{Dans toute cette section,
$u:E \to F$ est une application linéaire (entre deux modules libres de rang fini),
$r=\dim F-1$ et le sous-module déterminantiel étudié 
est $\DVect_r(u) \subset F^\star$.}}
}

\medskip

Une manière simple et visuelle
\footnote{Nous n'avons pas trouvé de notation adéquate pour désigner cette forme linéaire,
ce qui conduira dans la suite à l'utilisation d'une périphrase du type ``forme linéaire construite
à partir de $r$ colonnes de $u$''. De plus, se limiter à~$r$ colonnes de~$u$ est
restrictif puisqu'on peut remplacer ces $r$ colonnes par $r$
vecteurs de l'image de $u$.}
de construire une forme linéaire appartenant à $\DVect_r(u)$ consiste
à considérer $r$ colonnes quelconques de $u$ et à leurs associer la forme
linéaire obtenue en bordant ces~$r$ colonnes par une \og colonne joker
dans $F$\fg{}, cf. l'exemple ci-dessous et la
section~\ref{soussection-d-egal-delta} dans laquelle nous avons
qualifié une telle forme linéaire de ``forme déterminantale pure''.
Alors le sous-module de $F^\star$ engendré par ces formes
déterminantales pures est exactement le module $\DVect_r(u)$.

Par exemple, pour 
$u =  \begin{bmatrix}  a&b&c \\ a'&b'&c'\end{bmatrix} : \bfA^3 \to \bfA^2$,
le module $\DVect_r(u)$ est engendré par les formes linéaires 
$\bfA^2 \to \bfA$ suivantes 
$$
\begin{bmatrix} x\\ x'\end{bmatrix} \ \longmapsto \ 
\begin{vmatrix} 
a & x \\ a'& x'
\end{vmatrix}
\qquad 
\qquad
\begin{bmatrix} x\\ x'\end{bmatrix} \ \longmapsto \ 
\begin{vmatrix} 
b & x \\ b'& x'
\end{vmatrix}
\qquad 
\qquad
\begin{bmatrix} x\\ x'\end{bmatrix} \ \longmapsto \ 
\begin{vmatrix} 
c & x \\ c'& x'
\end{vmatrix}
$$

\subsubsection*{Forme linéaire des mineurs signés attachée au cas particulier $\dim F = 1+\dim E$}

Ici l'application linéaire est donc de la forme $u : \bfA^{r} \to
\bfA^{r+1}$.  \`A une telle application linéaire est attachée \og la
\fg{} forme linéaire $\mu : \bfA^{r+1} \to \bfA$ de ses mineurs
signés.  La terminologie est un peu abusive car cette forme linéaire
n'est définie qu'à un inversible près, selon des choix de bases.  On a
alors $\DVect_r(u) = \bfA \mu$.

Par exemple, pour $u : \bfA^2 \to \bfA^3$ de matrice
$
\begin{bmatrix} 
a&b \\ a'&b' \\ a'' & b'' 
\end{bmatrix}
$, le module $\DVect_r(u)$ est monogène et engendré par la forme linéaire
$\mu : \bfA^3 \to \bfA$ 
$$
\begin{bmatrix} x\\ x'\\ x''\end{bmatrix} \ \longmapsto \ 
\begin{vmatrix} 
a&b & x \\ a'&b'& x' \\ a'' & b'' & x''
\end{vmatrix}
$$
Il faut noter que l'information fournie par $\mu$ est beaucoup plus
précise que celle donnée par la collection des $r+1$ mineurs d'ordre
$r$ de $u$, définis au signe près et considérés de manière
indépendante: cette collection de scalaires ne permet pas, sans plus
de renseignements, de reconstruire la forme linéaire~$\mu$.  En
revanche, à partir de $\mu$, on récupère les mineurs d'ordre $r$ de $u$
puisque ce sont les composantes de $\mu$ et de ce fait l'idéal déterminantiel
$\calD_r(u) = \Im\mu$.

\subsubsection*{Un épaulement structurel de la phrase
\og border $r$ colonnes de $u$ par une colonne joker\fg?}

Considérer $r$ colonnes de $u$, donc dans $F$ module d'arrivée de $u :
E \to F$, peut être trompeur car cela masque le rôle du module~$E$,
domaine de départ de $u$. Ici, nous allons procéder autrement et
mettre en avant le choix de $r$ vecteurs d'une base du module~$E$ (dont
l'image par $u$ constitue une famille de $r$ colonnes de $u$). Ceci va
permettre l'obtention d'une famille de formes linéaires
déterminantales en \og position concordante\fg.  C'est le choix d'un
isomorphisme déterminantal $\sharp_\bff$ sur~$F$ qui va permettre de
le réaliser. Rappelons que la définition de $\sharp_\bff : F \simeq
\bigwedge^r(F^\star)$ passe par le choix d'une base ordonnée $\bff$
de~$F$ et d'une position de colonne-argument.  Ainsi en dernière
position, l'image de $y \in F$ par~$\sharp_\bff$ est, dans
l'identification \emph {canonique} $\bigwedge^r(F^\star) \buildrel
{\rm can.} \over \simeq \bigwedge^r(F)^\star$, la forme linéaire
$$
\sharp_\bff(y) \quad = \qquad
\begin{array}[t]{rcl}
{\textstyle \bigwedge^r(F)} & \longrightarrow & \bfA \\ 
y_1 \wedge \cdots \wedge y_r 
& \longmapsto 
& [y_1 \wedge \cdots \wedge y_r \wedge y]_\bff 
\end{array}
$$

\begin {prop} \leavevmode

Soit $u : E \to F$ une application linéaire entre deux modules libres
de rang fini et $r = \dim F - 1$. On fixe une base ordonnée
$\bfe$ de $E$ ainsi qu'un isomorphisme déterminantal $\sharp_\bff :
F \simeq \bigwedge^r(F^\star)$.  Pour $\bfc \subset \bfe$ de cardinal
$r$, on note $\bfe^\star_\bfc \in \bigwedge^r(E^\star)$ le produit
extérieur sur $\bfc$ des vecteurs de la base duale de $\bfe$:
$$
\bfe^\star_\bfc = c_1^\star  \wedge \cdots \wedge  c_r^\star   \quad\qquad
\bfc = (c_1, \dots, c_r)
$$
On désigne par $\mu_\bfc : F \to \bfA$ la forme déterminantale définie par le choix des $r$ colonnes
de $u$ indexées par $\bfc$ et par la position de la colonne-argument de l'isomorphisme déterminantal 
$\sharp_\bff$ fixé.

Alors la puissance $r$-ème extérieure $\bigwedge^{r}(\transpose{u}) : \bigwedge^r (F^\star) \to \bigwedge^r (E^\star)$
de la transposée de $u$, s'identifie, via l'isomorphisme déterminantal, à l'application 
$$
\begin{array}{rcl}
F  & \longrightarrow &  {\textstyle  \bigwedge^r(E^\star) } \\[0.2cm]
y & \longmapsto &
\displaystyle 
\sum_{\substack{\bfc\subset\bfe \\ \#\bfc=r}} \mu_\bfc(y)\, \bfe^\star_\bfc \qquad
\end{array}
$$
On peut schématiser le résultat de la manière suivante. Pour
$\bfc \subset \bfe$ de cardinal $r$, dessinons de manière verticale
la forme coordonnée sur $\bfe_\bfc^\star$. Alors, par composition, on
obtient la forme déterminantale~$\mu_\bfc$ schématisée de manière
diagonale en pointillés.
$$
\xymatrix @R = 1.5cm @C = 2cm {
F\ar@{..>}[drr]|{\textstyle\mu_\bfc}\ar[r]^{\sharp_\bff} & \bigwedge^{r}(F^\star) \ar[r]^-{\bigwedge^{r}(\transpose{u})}
           & \bigwedge^{r}(E^\star) \ar[d]|{\text{coordonnée sur } \bfe_\bfc^\star}  \\
           &&\bfA \\
}                                              
$$
\end {prop}

\begin {proof} \leavevmode
On va manier les identifications \emph {canoniques} comme des égalités:
$$
\textstyle 
\bigwedge^r(E^\star) =  \bigwedge^r(E)^\star, \qquad
\bigwedge^r(F^\star) =  \bigwedge^r(F)^\star, \qquad
\bigwedge^{r}(\transpose{u}) = \transpose{\Bigl(\bigwedge^{r}u\Bigr)}, \qquad
c_1^\star  \wedge \cdots \wedge  c_r^\star   =
(c_1  \wedge \cdots \wedge  c_r)^\star  
$$
Supposons la colonne-argument en dernière position et notons
$\bfc = (c_1, \dots, c_r)$. Soit $y \in F$. Son image par $\sharp_{\bff}$ est
la forme linéaire $\alpha$ sur $\bigwedge^r(F)$ définie par
$$
\alpha : y_1\wedge\cdots\wedge y_r  \mapsto [y_1\wedge\cdots\wedge y_r \wedge y]_\bff
$$
L'image de $\alpha$ par $\transpose{\Bigl(\bigwedge^{r}u\Bigr)}$ est
la forme linéaire $\beta$ sur $\bigwedge^r(E)$ définie par
$$
\beta : x_1\wedge\cdots\wedge x_r  \mapsto [u(x_1)\wedge\cdots\wedge u(x_r) \wedge y]_\bff
$$Modulo les identifications, la coordonnée de $\beta$ sur $\bfe_\bfc^\star$ est
$$
\beta(c_1  \wedge \cdots \wedge  c_r) =
[u(c_1)\wedge\cdots\wedge u(c_r) \wedge y]_\bff
$$
Ce qui est exactement $\mu_\bfc(y)$, ce qu'il fallait prouver.
\end {proof}

\subsubsection{Propriétés des formes linéaires appartenant à $\DVect_r(u)$}

\begin{prop}[Formes linéaires de Cramer] 
\label{Cramer-de-u}


Les formes linéaires $\mu \in \DVect_r(u) \subset F^\star$ vérifient
$$
\forall\, y, y' \in F, \quad 
\mu(y)y' - \mu(y') y \in \Im u
$$
Nous appellerons \og formes linéaires de Cramer \fg{} (relativement à $u$)
de telles formes linéaires. 
\end{prop}

%
%
%
%

\begin{prop}[Passage au quotient des formes linéaires déterminantales]
\label{PassageQuotientFormeLineaire}

Tout en gardant le contexte de la section, on ajoute l'hypothèse
$\calD_{\dim F}(u) = 0$ i.e. $\calD_{r+1}(u) = 0$. Et on désigne
par~$M$ le conoyau $\Coker(u)$.

\begin {enumerate} [\rm i)]
\item
Chaque forme linéaire du sous-module $\DVect_r(u) \subset F^\star$
s'annule sur $\Im u$.  Par conséquent, une telle forme linéaire passe
au quotient à travers $F \twoheadrightarrow M$ et fournit une forme
linéaire sur $M$.

\item
On suppose disposer d'une factorisation du sous-module $\DVect_r(u)
\subset F^\star$ de la forme $\mu\,\fb$ où $\fb$ est un idéal fidèle
et $\mu \in F^\star$ (on ne suppose pas $\mu$ dans $\DVect_r(u)$).
Alors la forme linéaire $\mu$ sur $F$ passe au
quotient pour fournir une forme linéaire $\overline\mu$ sur $M$
ayant même idéal image que~$\mu$.
\end {enumerate}
\end{prop}

\begin{proof} \leavevmode

i)  
Soit $\alpha$ la forme linéaire sur $F$ définie par $\alpha(y) =
[u(x_1) \wedge \cdots \wedge u(x_r) \wedge y]$ où $x_1, \dots, x_r \in
E$; alors $\alpha$ est nulle sur $\Im u$ puisque $\calD_{r+1}(u) =
0$. Idem avec une forme linéaire du type $y \mapsto
[\bigwedge^r(u).\bfx \wedge y]$ pour n'importe quel $\bfx \in
\bigwedge^r(E)$.

\medskip

ii) Soit $y \in \Im u$. Pour $b \in \fb$, comme $b\mu \in \DVect_r(u)$, on a $b\mu(y) = 0$. Ceci ayant lieu pour
tout $b\in \fb$, la fidélité de~$\fb$ conduit à $\mu(y) = 0$. Bilan: $\mu$ est nulle sur $\Im u$.
\end{proof}

\begin{rmq}
Dans le contexte du point ii), la forme linéaire $\mu$ n'a aucune
raison de vérifier la propriété de Cramer.  Prenons par exemple $u =
\begin{bmatrix} 
a \\ b 
\end{bmatrix}
: \bfA \to \bfA^2$. 
On a 
$$
\Im u = \bfA(a e_1 + be_2)
\qquad \text{ et } \qquad 
\DVect_1(u) = \bfA (ae_2^\star - b e_1^\star)
$$
Soit $d$ un élément régulier divisant  $a$ et $b$. Notons $\mu  = a'e_2^\star -b' e_1^\star$
où $a = da'$ et $b = db'$. Alors
$$
ae_2^\star - b e_1^\star \ = \ d \mu 
$$
mais $\mu(e_2) e_1 - \mu(e_1) e_2 = a' e_1 + b'e_2$ n'a aucune raison d'appartenir à $\Im u$.
\end{rmq}

\subsubsection{Lorsque $\DVect_r(u)$ possède un pgcd fort dans $F^\star$}

\label{DVectVERSUScalD}

Supposons que le sous-module $\DVect_r(u)$ de $F^\star$ admette un
pgcd fort $\mu \in F^\star$ au sens de la
définition~\ref{PgcdFortModule}. Ce qui signifie que $\mu$ divise
chaque forme de $\DVect_r(u)$ et que l'idéal (de type fini) $\fb$ défini par
$$
\DVect_r(u) \ = \mu\,\fb
\qquad \text {a la propriété} \qquad
\Gr(\fb) \geqslant 2
$$
L'évaluation sur $F$ de cette égalité fournit $\calD_r(u) = \Im\mu\,\fb$, 
a fortiori l'inclusion $\calD_r(u) \subset \Im\mu$.
On voit ainsi que l'idéal cofacteur $\fb$ 
qui intervient dans la factorisation du sous-module $\DVect_r(u)$ de $F^\star$
apparaît également dans la factorisation de l'idéal $\calD_r(u)$ de $\bfA$.

\subsection{Sous-module de Fitting $\FittVect_c(M)\subset\bigwedge^c(M)^\star$ d'un module $M$ de rang $c$}

L'objectif est d'associer à~$M$ un sous-module
$\FittVect_c(M) \subset \bigwedge^c(M)^\star$ à l'aide d'une
présentation $u : E \to F$ de $M$ et du module $\DVect_r(u)$
précédemment défini.  Tout le travail consiste à montrer que c'est une
définition intrinsèque, indépendante de la présentation
choisie.  On peut se permettre d'affaiblir l'hypothèse $M$ de rang $c$
en supposant seulement $M$ module de présentation finie
vérifiant $\calF_{c-1}(M) = 0$ (ce qui est moins contraignant: la
différence étant au niveau du module $\calF_c(M)$ dont on ne suppose
pas qu'il est fidèle).

\medskip

Pour $c = 0$, on peut régler cela rapidement en s'appuyant sur la notion d'idéal de Fitting.
En effet, $\DVect_{\dim F}(u)$ est défini \textit{directement} à partir de $\calD_{\dim F}(u)$ 
(puisqu'une forme $0$-linéaire de $\DVect_{\dim F}(u)$ est la multiplication par un élément de $\calD_{\dim F}(u)$).
Ce dernier idéal ne dépend pas de $u$, c'est l'idéal $\calF_0(M)$.
Par conséquent, $\DVect_{\dim F}(u)$ ne dépend pas de $u$ !

\medskip

Le cas $c=1$ est plus difficile, car il ne se déduit pas directement
de l'existence de $\calF_1(M)$ !  Le sous-module
$\FittVect_1(M) \subset M^\star$ que l'on va construire est plus fin
que l'idéal de Fitting $\calF_1(M)$ : deux modules peuvent avoir même
$\calF_1$ sans avoir même $\FittVect_1$.  Prenons par exemple $M
= \Coker u$ où $u = \begin{bmatrix} a\\ b\end{bmatrix} : \bfA \to \bfA^2$ avec
$\Gr(a,b) \geqslant 1$. Alors $\calF_1(M) = \langle a,b\rangle$ tandis que
$\FittVect_1(M)$ est engendré par la forme linéaire $be_1^\star -
ae_2^\star$ vue sur $M$; on peut multiplier $a$ et $b$ par des inversibles,
ce qui ne change pas $\calF_1(M)$ mais pourrait modifier radicalement
$\FittVect_1(M)$.  De plus, on peut retrouver $\calF_1(M)$ à partir de
$\FittVect_1(M)$ par évaluation sur~$M$; mais le contraire n'est pas
possible !

\medskip

Commentons l'hypothèse $\calF_0(M) = 0$ faite ci-après.  Cette égalité
est équivalente à $\calD_{\dim F}(u) = 0$ et, d'après le
résultat~\ref{PassageQuotientFormeLineaire} de passage au quotient,
permet d'interpréter une forme linéaire sur $F$ appartenant à
$\DVect_r(u)$ comme une forme linéaire de $\bigwedge^{1}(M)^\star =
M^\star$.

\begin{theo}[Le sous-module de Fitting $\FittVect_1(M) \subset M^\star$]
\label{1FittingVectoriel}

\leavevmode

Soit $M$ un module de présentation finie vérifiant $\calF_0(M) = 0$. 

\begin{enumerate}[\rm i)]
\item 
Soient $u : E \to F$, $u' : E' \to F'$ deux présentations de $M$ 
et $r = \dim F - 1$, $r'= \dim F' - 1$.
Alors les deux sous-modules $\DVect_r(u)$ et $\DVect_{r'}(u')$ vus dans $M^\star$ sont égaux. 

De manière plus formelle, dans le schéma:
$$
\vcenter 
{\xymatrix @R=2pt{
E\ar[r]^u    & F\ar@{->>}[dr]^{\pi}  \\
             &            &M \\
E'\ar[r]_{u'} & F'\ar@{->>}[ur]_{\pi'}  \\
}}
\hspace{3cm}
\vcenter 
{\xymatrix @R=2pt{
\llap{$\DVect_r(u) \ \subset \ $} F^\star \ar@{<-}[dr]^-{\transpose \pi} & \\
  & M^\star \\
\llap{$\DVect_{r'}(u') \ \subset \ $} F'^\star  \ar@{<-}[ur]_-{\transpose \pi'} & \\
}}
$$
on a l'égalité des images réciproques :
$$
(\transpose {\pi})^{-1} \big(\DVect_r(u)\big) 
\ = \ 
(\transpose {\pi'})^{-1}\big(\DVect_{r'}(u')\big)
$$
Ce sous-module de $M^\star$, indépendant de la présentation, est noté $\FittVect_1(M)$.

\item 
Les formes linéaires $\alpha \in \FittVect_1(M)$ possèdent la propriété qualifiée \og de Cramer \fg:
$$
\forall\, m, m' \in M, \quad 
\alpha(m)m' = \alpha(m') m
$$

\item
L'idéal de Fitting $\calF_1(M)$ s'obtient par évaluation de $\FittVect_1(M)$ sur $M$:
$$
\calF_1(M) = \scp{\FittVect_1(M)}{M} \buildrel {\rm def} \over = \sum_{\alpha \in\FittVect_1(M) } \Im\alpha 
$$
\item
Si $M$ est de rang $1$, alors le module $\FittVect_1(M)$ est fidèle.
\end{enumerate}
\end{theo}

\label {NOTA08-FittVect1}%
%
%

\begin{proof} \leavevmode

i)  
La stratégie de la preuve est similaire à celle
de~\ref{FittingInvariance}, qui fonde les \textit{idéaux} de Fitting.
Mais la technique est un peu plus sophistiquée et pourrait effrayer le
lecteur.  Nous avons choisi d'en déporter en annexe la preuve
(cf.~page~\pageref{Preuve1FittingVectorielInvariance}).

\medskip
ii) 
Il s'agit de relire \ref{Cramer-de-u} affirmant que les formes 
linéaires ${\mu \in \DVect_r(u) \subset F^\star}$ ont la propriété
$$
\forall\, y, y' \in F, \quad 
\mu(y)y' - \mu(y') y \in \Im u
$$

\medskip
iii)
Découle de $\displaystyle \sum\limits_{\alpha \in \DVect_r(u)} \Im \alpha = \calD_r(u)$.

\medskip
iv)
De iii) on déduit $\Ann\,\FittVect_1(M) \subset \Ann\,\calF_1(M)$
prouvant que $\FittVect_1(M)$ est fidèle si $\calF_1(M)$~l'est.
\end{proof}

Ci-après, nous illustrons la première étape de la preuve (reléguée en
annexe) dans un cas extrêmement particulier.  Cette première étape
consiste à contrôler ce qui se passe après une modification
élémentaire de la présentation.
Imaginons que $M$ soit présenté par $u : \bfA^4 \to \bfA^3$; il l'est
également par $u \oplus \Id : \bfA^5 \to \bfA^4$
$$
u = 
\begin{bmatrix}
a_1&b_1&c_1&d_1 \cr
a_2&b_2&c_2&d_2\cr
a_3&b_3&c_3&d_3\cr
\end{bmatrix}
\qquad\qquad
u \oplus \Id = 
\begin{bmatrix}
a_1&b_1&c_1&d_1&0\cr
a_2&b_2&c_2&d_2&0\cr
a_3&b_3&c_3&d_3&0\cr
0  &0  &0  & 0&1\cr
\end {bmatrix}
$$
Le théorème fournit une correspondance biunivoque canonique entre les formes
linéaires de $\DVect_2(u)$ et celles de $\DVect_3(u \oplus \Id)$.
%
Ainsi, les deux formes linéaires se correspondent:
$$
\begin{bmatrix} x_1\\ x_2\\ x_3\end{bmatrix} \ \longmapsto \ 
\begin{vmatrix}
a_1&b_1&x_1 \\
a_2&b_2&x_2 \\
a_3&b_3&x_3 \\ 
\end{vmatrix}
\qquad\qquad
\begin{bmatrix} x_1\\ x_2\\ x_3\\y\end{bmatrix} \ \longmapsto \ 
\begin{vmatrix}
a_1&b_1 & 0 & x_1 \\
a_2&b_2&  0 & x_2 \\
a_3&b_3 & 0 & x_3 \\
0 & 0 & 1 & y 
\end{vmatrix}
$$
Dans la preuve, la notion de correspondance sera précisée grâce au formalisme mis en place.

\subsubsection*{Fondement du sous-module de Fitting d'un module de rang $c$}

Ce lemme est la généralisation du résultat~\ref{PassageQuotientFormeLineaire} 
énoncé pour $c=1$.

\begin{lem}[Passage au quotient des formes $c$-linéaires alternées]
\label{PassageQuotientFormeAlternee}
\leavevmode

Soit $u : E \to F$ une application linéaire entre deux modules libres $E,F$ de rang fini.
Supposons $\bigwedge^{r+1}(u) = 0$ et notons $c = \dim F - r$, $M = \Coker(u)$.

\begin{enumerate}[\rm i)]
\item 
Toute forme linéaire du sous-module $\DVect_r(u) \subset \bigwedge^c(F)^\star$
passe au quotient à travers $\bigwedge^c(F) \twoheadrightarrow \bigwedge^c(M)$ 
et fournit une forme linéaire sur $\bigwedge^c(M)$.

\item
Supposons $\DVect_r(u)$ factorisé sous la forme $\DVect_r(u) = \mu\,\fb$ 
où $\fb$ est un idéal fidèle de $\bfA$ et $\mu \in \bigwedge^c(F)^\star$
($\mu$ est rarement dans $\DVect_r(u)$).
Alors  la forme linéaire $\mu$ passe au quotient
et fournit une forme linéaire $\overline \mu \in \bigwedge^c(M)^\star$.
 
Et par construction, l'idéal $\Im\mu$ est égal à l'idéal $\Im\overline \mu$.
 
\end{enumerate}
\end{lem}

\begin{proof} \leavevmode

i) Partons d'une forme linéaire $\bigwedge^c(F) \to \bfA$ s'écrivant
$\bfw^\sharp$ avec $\bfw \in \Im\bigwedge^r(u)$.  Montrons qu'on peut
la définir sur $\bigwedge^c(M)$, c'est-à-dire montrons qu'elle est
nulle sur le noyau de la surjection
$\bigwedge^c(F) \twoheadrightarrow \bigwedge^c(M)$.  Nous allons
utiliser le résultat d'algèbre linéaire suivant
(confer~\cite[proposition 4.9, chapitre IV]{LombardiQuitte}):
\begin{center}
\begin{quote}
\it 
Soit une suite exacte 
$\xymatrix @R=2pt{E\ar[r]^u & F\ar[r]^{\pi}  & M \ar[r] & 0 }$ de modules \textrm{quelconques}.

Alors pour tout $k$, le module $\Ker\big(\bigwedge^k \pi\big)$ est engendré par les 
$u(x) \wedge y_1 \wedge \dots \wedge y_{k-1}$ 
lorsque $x$ parcourt $E$ et les $y_i$ parcourent $F$.
Plus formellement, en définissant ${\widehat u}$ par  
$$
{\widehat u} : 
\begin{array}[t]{rcl}
E \otimes \bigwedge^{k-1}(F) & \longrightarrow & \bigwedge^{k}(F) \\
x \otimes \bfy & \longmapsto & u(x) \wedge \bfy
\end{array}
$$
la suite ci-dessous est exacte :
$$
\xymatrix @R=2pt{
E \otimes \bigwedge^{k-1}(F)\ar[r]^-{\widehat u} & \bigwedge^{k}(F) \ar[r] & \bigwedge^{k}(M) \ar[r] & 0
}
$$
Signalons que, pour $k=1$, le résultat est immédiat.
\end{quote}
\end{center}
\'Evaluons la forme linéaire $\bfw^\sharp$ en un générateur du noyau 
de la surjection $\bigwedge^c(F) \twoheadrightarrow \bigwedge^c(M)$ :
$$
\bfw^\sharp\big(u(x) \wedge y_1 \wedge \dots \wedge y_{k-1}\big) 
\ = \ 
[\bfw \wedge u(x) \wedge y_1 \wedge \dots \wedge y_{k-1}] 
$$
Par hypothèse $\bigwedge^{r+1}(u) = 0$, donc, puisque
$\bfw \in \Im\bigwedge^r(u)$, on a $\bfw \wedge u(x) = 0$ et l'évaluation
ci-dessus est nulle.  Ainsi, la forme linéaire $\bfw^\sharp$ passe au
quotient et définit une forme linéaire $\bigwedge^c(M) \to \bfA$.

\medskip

ii)
Soit $G$ un sous-module quelconque de $\bigwedge^c(F)$.  Puisque $\fb$
est un idéal fidèle, la condition \og $\mu$ s'annule sur~$G$ \fg{} est
équivalente à \og pour tout $b \in \fb$, la forme linéaire $b \mu$
s'annule sur $G$ \fg.  En l'appliquant à $G
= \Ker\big(\bigwedge^c \pi\big)$, on obtient le résultat annoncé.
\end{proof}

\bigskip

Soit $M$ un module de présentation finie tel que $\calF_{c-1}(M) = 0$,
typiquement un module de rang $c$.
De manière analogue aux cas $c = 0$ et $c=1$, on peut, à l'aide d'une présentation
de $M$, définir un sous-module de $\bigwedge^c(M)^\star$
noté~$\FittVect_c(M)$. Ce sous-module est décrit dans le résultat ci-dessous
que nous énonçons sans preuve.

Vu notre cadre d'utilisation où nous prendrons pour $M$ le
$\bfA$-module $\bfB_d = \bfA[\uX]_d/\langle\uP\rangle_d$, nous jugeons
que l'indépendance de la présentation n'est pas fondamentale puisque
$\bfB_d$ est naturellement présenté par l'application de Sylvester
$\Syl_d(\uP) : \rmK_{1,d} \to \rmK_{0,d}$.  Lorsque $\uP$ est
régulière, $\bfB_d$ est de rang $\chi_d = \dim \bfA[\uX]_d/\langle
X_1^{d_1}, \ldots, X_n^{d_n}\rangle_d$ et nous attribuerons à $\bfB_d$
ce qu'en principe, on devrait attribuer à la présentation naturelle de
$\bfB_d$. Nous nous permettrons ainsi d'écrire:
$$
\FittVect_{\chi_d}(\bfB_d) \subset \BW^{\chi_d}(\bfB_d)^\star
$$

\begin{theo}[Le sous-module de Fitting $\FittVect_c(M) \subset \BW^c(M)^\star$]
\label{cFittingVectoriel}

Soit $M$ un module de présentation finie vérifiant $\calF_{c-1}(M) =
0$ et $u : E \to F$, $u' : E' \to F'$ deux présentations
de $M$. On note $r=\dim F-c$ et $r'=\dim F'- c$.

\begin{enumerate}[\rm i)]
\item 
Le sous-module de $\bigwedge^c(M)^\star$ obtenu en passant
$\DVect_r(u)$ au quotient à travers
$\bigwedge^c(F) \twoheadrightarrow \bigwedge^c(M)$ est égal à son
analogue construit à partir de $(E',F',u')$.  De manière plus
formelle:
$$
\vcenter {
\xymatrix @R=2pt{
E\ar[r]^u    & F\ar@{->>}[dr]^{\pi}  \\
             &            &M \\
E'\ar[r]_{u'} & F'\ar@{->>}[ur]_{\pi'}  \\
}}
\hspace{3cm}
\vcenter  {
\xymatrix @R=2pt{
\llap{$\DVect_r(u)\ \subset\ $} \bigwedge^c(F)^\star\ar@{<-}[dr]^-{\transpose\,{\bigwedge^{\!c} \!\pi}}
\\
&\bigwedge^c(M)^\star
\\
\llap{$\DVect_{r'}(u')\ \subset\ $}\bigwedge^c(F')^\star\ar@{<-}[ur]_-{\transpose\,{\bigwedge^{\!c}\!\pi'}}
\\
}}
$$
on a l'égalité des images réciproques :
$$
\Big(\transpose\,{\bigwedge^{\!c} \!\pi}\Big)^{-1} \big(\DVect_r(u)\big) 
\ = \ 
\Big(\transpose\, {\bigwedge^{\!c} \!\pi'}\Big)^{-1}\big(\DVect_{r'}(u')\big)
$$
Ce sous-module de $\bigwedge^c(M)^\star$ est noté $\FittVect_c(M)$.

\item
On retrouve l'idéal de Fitting $\calF_c(M)$ par évaluation de $\FittVect_c(M)$ sur $M$:
$$
\calF_c(M) = \scp{\FittVect_c(M)}{M} \buildrel {\rm def} \over = \sum_{\alpha \in\FittVect_c(M) } \Im\alpha 
$$

\item 
Si $M$ est de rang $c$, alors le module $\FittVect_c(M)$ est fidèle.
\end{enumerate}
\end{theo}

\label {NOTA08-FittVectc}%
%
%

\subsection{Module de MacRae de rang $c$ et son invariant $\MacRaeVect=\bfA\vartheta\subset\BW^c(M)^\star$}
\label{MacRaeModule}

Les modules de MacRae que nous souhaitons définir ici forment une
classe particulière de modules de présentation finie possédant un rang,
classe contenant comme sous-classe celle des modules librement
résolubles. Nous pouvons même affirmer, dans le cadre de
l'élimination, que la plupart des modules (voire tous) utilisés ici 
sont librement résolubles. Nous avons cependant tenu à
dégager une des propriétés qui nous semble la plus importante et qui
n'utilise qu'une partie infime de ``la'' résolution, à savoir la
différentielle en degré homologique 1, c'est-à-dire une présentation du module.

\begin{defn} \label{DefModuleMacRae}

Un module de MacRae de rang $c$ est un module $M$ (de présentation finie) de rang $c$
tel que le sous-module de Fitting $\FittVect_c(M) \subset \BW^c(M)^\star$ admette un pgcd fort dans
$\BW^c(M)^\star$.

\smallskip

Par définition, un tel pgcd fort, défini à un inversible près, est une forme $c$-linéaire alternée sur $M$
sans torsion, qui divise chaque forme $c$-linéaire de $\FittVect_c(M)$;
de plus, en faisant un choix $\vartheta \in \bigwedge^c(M)^\star$ d'un pgcd,
l'idéal cofacteur $\fb$ de $\vartheta$ dans $\FittVect_c(M)$ est de profondeur
au moins~2:
$$
\FittVect_c(M) =  \vartheta\,\fb
\quad \text{avec} \quad
\Gr(\fb) \geqslant 2
$$
Le sous-module (libre de rang $1$) de $\bigwedge^c(M)^\star$ engendré par $\vartheta$ est
appelé l'invariant de MacRae de~$M$ et noté $\MacRaeVect(M)$.
\end{defn}

\label {NOTA08-MacRaeVect}%
\index{module!de MacRae!de rang $c$}%
\index{invariant de MacRae}%

Dans le cas particulier $c=0$, la forme $c$-linéaire  $\vartheta$  est 
du type $g\,\id_{\bfA}$ où $g$ est régulier. 
En identifiant $\bigwedge^0(M)^\star$ à $\bfA$, 
on retrouve alors la définition classique
de l'invariant de MacRae (cf.~\ref{DefMacRae0}).

\subsubsection*{Des modules de MacRae de rang $1$ spéciaux: les modules 1-élémentaires}

Dans la section \ref{InvariantMacRaeRang0}, on a donné comme exemple introductif de module
de MacRae de rang~0 celui d'un module présenté par une matrice carrée de déterminant régulier
et on a montré en~\ref{LocalSquarePresentation} que localement (au sens d'un système
de localisations de profondeur $\ge 2$) tout module de MacRae de rang 0
est de ce type. En ce qui concerne les modules de MacRae de rang 1, on dispose
d'une notion analogue.

\begin {defn}

Un module $M$ est dit 1-élémentaire s'il admet une présentation $u :
\bfA^r \to \bfA^{r+1}$ dont l'idéal $\calD_r(u)$ des mineurs d'ordre
$r$ est fidèle. Ou encore, en utilisant la théorème de McCoy relatif à
l'injectivité d'une matrice (cf énoncé au début de la
section~\ref{sectionHB}) s'il admet une résolution libre du type $0
\to \bfA^r \buildrel u \over \longrightarrow \bfA^{r+1}
\buildrel\pi\over\twoheadrightarrow M$.

\end {defn}

Dans le contexte de la définition, le module $\DVect_r(u)$ est monogène 
engendré par $\mu$, forme linéaire des mineurs signés de $u$. 
L'image de $\mu$ est l'idéal $\calD_r(u)$. Comme cet idéal est fidèle, 
la forme linéaire $\mu$ est sans torsion et 
est donc une base de $\DVect_r(u)$. 
En désignant par $M$ le module $\Coker(u)$, on a alors l'égalité 
$$
\FittVect_1(M) \ = \ \bfA\,\overline{\mu}
$$
faisant de $M$ un module de MacRae de rang $1$. Résumons :

\begin{fact}
Un module $1$-élémentaire $M$ est un module de MacRae de rang 1.  En
désignant par $\vartheta \in M^\star$ un générateur de MacRae, pour
toute présentation $1$-élémentaire $0 \to \bfA^r \buildrel u \over
\longrightarrow \bfA^{r+1} \buildrel\pi\over\twoheadrightarrow M$,
alors $\vartheta \circ \pi : \bfA^{r+1} \to \bfA$ est la forme
linéaire des mineurs signés de $u$.
\end{fact}

\medskip

En un certain sens, les modules 1-élémentaires constituent les briques locales
pour les modules de MacRae de rang~1. L'adjectif local est à prendre au sens
\og localisation en une suite de profondeur $\geqslant 2$ \fg{} et le résultat
précis est le suivant.

\begin{theo}[Présentation 1-élémentaire locale d'un module de MacRae de rang~$1$]
\label{1ElementaryPresentation} \leavevmode

Soit $M$ un module de MacRae de rang $1$. Il existe une famille finie
$\us= (s)$ de scalaires vérifiant $\Gr(\us) \geqslant 2$ telle que sur
chaque localisé $\bfA_s = \bfA[\frac{1}{s}]$, le module $M_s$ soit
1-élémentaire.
\end{theo}

On s'inspire très fortement de la preuve du résultat pour les modules de rang $0$ 
qui sont localement $0$-élémentaires, cf.~\ref{LocalSquarePresentation}.

\begin{proof}
Soit $u : \bfA^m \to \bfA^{r+1}$ une présentation de $M$.
Notons $(e_j)$ la base canonique de $\bfA^m$ 
et remontons l'invariant de MacRae de $M$
en une forme linéaire $\mu$ sur $\bfA^{r+1}$.

Dans la suite, $J$ désigne une partie de cardinal $r$ de $\{1..m\}$ 
et $\alpha_J$ la forme déterminantale pure sur $\bfA^{r+1}$ 
construite sur les $r$ colonnes de $u$ indexées par $J$
(cf. la section \ref{SectionDetCoRang1}).
Par exemple, pour $J = \{1..r\}$, 
on a $\alpha_J = [u(e_1) \wedge \cdots \wedge u(e_r) \wedge \sbullet]$.
Par définition, l'image de la forme linéaire $\alpha_J$ est l'idéal 
des mineurs d'ordre~$r$ de~$u$ qui s'appuient sur les colonnes  indexées par $J$.

Par définition de l'invariant de MacRae de $M$, la forme linéaire
$\mu$ est sans torsion (son image est un idéal fidèle), et on dispose
d'égalités du type $\alpha_J = \mu b_J$ où la famille $\ub = (b_J)$
est de profondeur $\geqslant 2$.

Une remarque utile en fin de la preuve : 
sur le localisé en $b_J$, l'idéal image de la forme linéaire~$\alpha_J$ 
est égal à l'idéal image de $\mu$, qui est fidèle.

Nous allons utiliser la formule de Cramer. On en rappelle un
exemple typique pour 4 vecteurs $v_1$, $v_2$, $f$, $v$ de $\bfA^3$ 
$$
\det(v_1,v_2, f)v 
\ = \ 
\det(v,v_2,f)v_1 + \det(v_1,v,f)v_2 + \det(v_1,v_2,v)f
$$
Fixons $k$ dans $\{1..m\}$. La formule de Cramer appliquée aux 
vecteurs $u(e_j)_{j \in J}$, $f$ et $u(e_k)$ de $\bfA^{r+1}$ s'écrit
$$
\alpha_J(f) \, u(e_k) \ = \ 
\sum_{j \in J} \pm \alpha_{J_j}(f) \, u(e_j)
\ + \ 
\alpha_J\big(u(e_k)\big) f
\qquad 
\text{en notant $J_j = J \setminus j \vee k$}
$$
Les mineurs d'ordre $r+1$ de $u$ étant nuls, la forme déterminantale
$\alpha_J$ est nulle sur $\Im u$ (cf. le premier point de la
proposition~\ref{PassageQuotientFormeLineaire}), en particulier
$\alpha_J\big(u(e_k)\big) = 0$.  Dans l'égalité ci-dessus, en tenant
compte de cette information et en remplaçant chaque $\alpha_{J'}(f)$
par $\mu(f)b_{J'}$, on obtient l'égalité dans $\bfA^{r+1}$ :
$$
\forall\, f \in \bfA^{r+1}, \quad 
\mu(f) \, b_J \, u(e_k) 
\ = \ 
\sum_{j \in J} \pm \mu(f) \, b_{J_j} \, u(e_j)
$$
Comme $\mu$ est sans torsion, on en déduit:
$$
b_J \, u(e_k) 
\ = \ 
\sum_{j \in J} \pm b_{J_j} \, u(e_j)
$$
En localisant en $b_J$, l'égalité ci-dessus prouve que $u(e_k)$ est combinaison
linéaire des $\big(u(e_j)\big)_{j\in J}$.  On a ainsi obtenu, sur le
localisé $\bfA[1/b_J]$, que le module $M$ est présenté par une matrice $r \times (r+1)$, 
à savoir par la restriction de~$u$ à $\bigoplus_{j \in
J} \bfA[1/b_J] e_j$, module libre de dimension $r$. 

Pour montrer que le module $M$ est $1$-élémentaire sur le localisé
$\bfA[1/b_J]$, il reste à vérifier que l'idéal engendré par les mineurs
d'ordre $r$ de cette présentation est fidèle.  Par construction, ces
mineurs sont les mineurs d'ordre $r$ de $u$ de colonnes
indexées par $J$.  Or on a fait remarquer en début de preuve que, sur le
localisé, cet idéal est l'image de $\mu$, fidèle par
hypothèse.

\smallskip
Bilan: le module $M$ est $1$-élémentaire sur $\bfA[1/b_J]$.
\end{proof}

\subsubsection*{Des modules de MacRae de rang $1$ du type 
$\bfA \oplus M_0$ avec $M_0$ de MacRae de rang $0$}

Nous fournissons ici quelques exemples de modules $M$ de MacRae de rang $1$ 
du type $\bfA \oplus M_0$ avec $M_0$ de MacRae de rang $0$.
Dans chacun des cas, nous vérifions que le module $M$ est de présentation finie,
que $\calF_0(M) = 0$, puis que le sous-module $\FittVect_1(M)\subset M^\star$ admet 
une forme linéaire pgcd fort.

\medskip

$\rhd$ Prenons le $\bfA$-module 
$M = \bfA \oplus M_0$ 
où $M_0 = \bfA/\langle a_1 \rangle \oplus \cdots \oplus \bfA/\langle a_m \rangle $ avec 
des $a_i \in \bfA$ réguliers.
Il est présenté par l'application linéaire:
$$
\xymatrix @C=4pc @M=1pc{
u : \quad \bfA^m \ar[r]^-{ 
\left[
\begin{smallmatrix}
0 & \cdots & 0 \\
a_1 & \\
 & \ddots \\
&  & a_m \\
\end{smallmatrix}
\right]
}
& \bfA \oplus \bfA^m 
}
$$
L'idéal $\calD_{m+1}(u)$ est nul et l'idéal $\calD_m(u)$ est fidèle, 
car contient le produit $a_1a_2 \cdots a_m$.
Donc $u$ est de rang $r := m$.
D'autre part, le module $\DVect_r(u)$ est engendré par la forme linéaire 
$\mu : \bfA \oplus \bfA^m \to \bfA$:
$$
\begin {bmatrix} x_0 \\ x_1 \\ \vdots \\ x_m\\ \end {bmatrix}  \ \longmapsto\ 
\left|
\begin{matrix}
0 & \cdots & 0 & x_0 \\
a_1 &  &  & x_1 \\
 & \ddots & & \vdots \\
&  & a_m & x_m \\
\end{matrix}
\right|
\ = \ 
\pm \, g\, x_0 
\qquad \text{ où $g = \prod_i a_i$}
$$
Autrement dit, en termes du module $M$, on a $\calF_0(M) = 0$ 
et le module $\FittVect_1(M)$ est engendré par la forme linéaire 
$\mu$ passée au quotient sur $M$, notée $\overline \mu$.
Ainsi, le module $M$ est de MacRae de rang~$1$ 
et $\MacRaeVect(M)$ est engendré par $\overline \mu$.

\bigskip

$\rhd$ Considérons le $\bfA$-module $M = \bfA \oplus M_0$ 
où $M_0 = \bfA/\langle \ub \rangle $ avec $\ub = (b_1, \dots, b_n)$ de profondeur $\geqslant 2$.
Il est présenté par 
$$
\xymatrix @C=4pc @M=1pc{
u : \quad 
\bfA^n \ar[r]^-{ 
\left[
\begin{smallmatrix}
0 & \cdots & 0 \\
b_1 & \cdots & b_n 
\end{smallmatrix}
\right ]
}
& \bfA \oplus \bfA^1
}
$$
L'idéal $\calD_{2}(u)$ est nul et $\calD_1(u)$ est fidèle.
Donc $u$ est de rang $r = 1$.
Et d'autre part, le module $\DVect_r(u)$ est engendré par les formes linéaires 
$\mu_i : \bfA \oplus \bfA^1  \to \bfA$ définies par 
$$
\begin {bmatrix} x\\ y\\ \end {bmatrix} \ \longmapsto\ 
\left|
\begin{matrix}
0 & x \\
b_i & y
\end{matrix}
\right| = -b_ix
$$
Comme $\Gr(\ub) \geqslant 2$, ces formes linéaires possèdent un pgcd
fort, à savoir $(x,y) \mapsto x$.  En termes du module~$M$, on a
$\calF_0(M) = 0$ et le module $\FittVect_1(M)$ est engendré par les
$b_i\vartheta$ où $\vartheta : M = \bfA \oplus M_0 \rightarrow \bfA$
est la première projection. Comme ces formes $b_i\vartheta$ ont $\vartheta$
comme pgcd fort, le module $M$ est de MacRae de rang~$1$ et
$\MacRaeVect(M)$ est engendré par $\vartheta$.  On notera l'équivalence 
$\vartheta \in \FittVect_1(M) \iff 1 \in \langle\ub\rangle$.


\medskip

Voici un énoncé qui généralise les deux exemples précédents.

\begin{prop} \label{AoplusM0}
Soit $M_0$ un module de MacRae de rang $0$. On se permet un abus de
notation en désignant par $\MacRae(M_0)$ un générateur de son idéal de
MacRae.
Alors le module $M = \bfA \oplus M_0$ est de MacRae de rang $1$ et
son invariant de MacRae $\MacRaeVect(M)$ est (engendré par) la forme linéaire
$$
\begin{array}{rcl}
\bfA \oplus M_0 & \longrightarrow & \bfA \\
x \oplus m & \longmapsto & \MacRae(M_0) x \\
\end{array}
$$
\end{prop}

\begin{proof}
Soit $u_0:\bfA^s \to \bfA^r$ une présentation de~$M_0$.
Alors $M$ est présenté par $u : \bfA^s \to \bfA \oplus \bfA^r$ 
$$
u = 
\begin{bmatrix}
0 & \cdots & 0 \\
 & & \\
 & u_0 & \\
 & & \\
\end{bmatrix}
$$
D'une part, l'idéal déterminantiel $\calD_{r+1}(u)$ est nul.  D'autre
part, pour $J$ partie de cardinal $r$ de $\{1..s\}$, désignons par
$\det_J(u_0)$ le mineur plein de~$u_0$ s'appuyant sur les colonnes
indexées par $J$; alors le module~$\DVect_r(u)$ est engendré par les formes
linéaires $\mu_J : \bfA \oplus \bfA^{r} \to \bfA$ définies par $x
\oplus y \mapsto \det_J(u_0)\, x$.
Le pgcd fort de ces formes linéaires est $x \oplus y \mapsto \MacRae(M_0) \, x$
puisque le pgcd fort des mineurs pleins de $u_0$ est, par définition, $\MacRae(M_0)$.
En termes du module~$M$, cela signifie que $\calF_0(M) = 0$ 
et que le module $\FittVect_1(M)$ est engendré par les formes linéaires
$\det_J(u_0)\vartheta$ où
$\vartheta : M = \bfA \oplus M_0 \rightarrow \bfA$ est $x \oplus m \mapsto \MacRae(M_0)x$.
Ainsi, le module $M$ est de MacRae de rang~$1$ 
et $\MacRaeVect(M)$ est engendré par $\vartheta$.


\end{proof}

\subsection{Quotient d'un module de MacRae de rang 1 par un vecteur sans torsion}
\label{sectionRank1to0}

\noindent
Cette section a été très longtemps intitulée ``Du rang 1 au rang 0'' au motif
suivant: à partir de la donnée d'un module de rang 1, on
étudie son quotient par un vecteur sans torsion, module qui \mbox{est de rang 0}.

\medskip

Rappelons les ingrédients du cadre de l'élimination.  On dispose
d'un système $\uP$ de degré critique $\delta$ et de l'application de
Sylvester $\Syl_\delta$ de $\uP$ en degré $\delta$, d'image $\langle\uP\rangle_\delta$
$$
\Syl_\delta :
\bfA[\uX]_{d_1-\delta} \times \cdots \times \bfA[\uX]_{d_n-\delta}  \to \bfA[\uX]_\delta,
\qquad
(U_1, \dots, U_n) \longmapsto \sum_{i=1}^n U_iP_i
$$
Au chapitre~\ref{ObjetsSuiteP}, il a été établi que $\overline\nabla \in \bfB_\delta
= \bfA[\uX]_\delta/\langle\uP\rangle_\delta$ est indépendant du
bezoutien $\nabla$ de $\uP$ choisi. Et en~\ref{ResolutionQuotients}, 
nous avons énoncé, lorsque $\uP$  est une suite régulière, que les $\bfA$-modules
$\bfB_\delta$ et $\bfB'_\delta = \bfB_\delta / \bfA \overline{\nabla}$
sont librement résolubles, respectivement de rang $1$ et de rang $0$.
On rappelle à cette occasion que c'est la théorie des résolutions
libres (cf le chapitre~\ref{ChapStructureMultiplicative}) qui permet d'affirmer qu'un module librement
résoluble est un module (de présentation finie) de rang constant
égal à sa caractéristique d'Euler-Poincaré.

\medskip

De manière à y voir plus clair, nous allons mettre provisoirement de
côté ce contexte de l'élimination et étudier une situation en un
certain sens plus simple (en tout cas moins encombrée).  L'objectif
est, pour un module $M$ de rang 1, de relier ses invariants à ceux du
module $M' = M/\bfA m_0$ de rang $0$, où $m_0 \in M$ est sans torsion.
Pour faciliter la lecture de ce qui suit, il est bon d'avoir
en tête le cadre de l'élimination et la correspondance suivante:
$$
M \leftrightarrow \bfB_\delta, 
\quad 
m_0 \leftrightarrow \overline \nabla, 
\quad
M' \leftrightarrow \bfB'_\delta, 
\quad
u \leftrightarrow \Syl_\delta, 
\quad 
F \leftrightarrow \bfA[\uX]_\delta, 
\quad 
r \leftrightarrow s_\delta = \dim\bfA[\uX]_\delta - 1,
\quad 
\Im u \leftrightarrow \langle \uP \rangle_\delta
$$
Mais pour l'instant, $M$ désigne un module de présentation finie et $m_0 \in M$ est quelconque.
Le module $M' := M/\bfA m_0$ est de présentation finie.
En effet, 
si $u : E \to F$ est une présentation de $M$ et $y_0 \in F$ un relèvement de $m_0$, alors
$M'$ est présenté par 
$$
u' = u \oplus {\rm mult}_{y_0} : E \oplus \bfA \to F, \qquad
x \oplus \lambda \mapsto u(x) + \lambda y_0
$$
Introduisons l'évaluation en $m_0$ qui va nous permettre de relier le
rang 1 (de $M$) et le rang 0 (de $M'$). Nous la notons~$\eval_{m_0}$ et
nous la voyons indifféremment soit sur $M^\star$ soit sur
$\Ker\transpose u$ (les deux modules sont canoniquement isomorphes) :
$$
\eval_{m_0} : \begin{array}[t]{rcl}
M^\star & \longrightarrow & \bfA \\
\alpha & \longmapsto & \alpha(m_0)
\end{array}
\qquad\qquad \text{ou encore } \qquad\qquad
\eval_{y_0} : \begin{array}[t]{rcl}
\Ker \transpose u & \longrightarrow & \bfA \\
\mu & \longmapsto & \mu(y_0)
\end{array}
$$

\begin{prop}[Du sous-module de Fitting $\FittVect_1(M) \subset M^\star$ à
    l'idéal de Fitting $\calF_0(M/\bfA m_0)$]
\label{Fitting1toFitting0}
\leavevmode

On suppose $\calF_0(M) = 0$. Alors $\eval_{m_0}\big(\FittVect_1(M)\big) = \calF_0(M/\bfA m_0)$.
\end{prop}

\begin{proof}
Reprenons le contexte précédant l'énoncé et notons $r=\dim F-1$ et  $r'=r+1=\dim F$. 
Il s'agit de montrer que $\eval_{y_0}\big(\DVect_r(u)\big) = \calD_{r'}(u')$.

Commençons par l'inclusion $\subset$. Soit $\bfw \in \Im\bigwedge^r(u)$;
l'image de $\bfw^\sharp \in \DVect_r(u)$ par $\eval_{y_0}$ est le déterminant
$[\bfw \wedge y_0]$, qui est bien un mineur d'ordre $r'$ de $u'$.

\smallskip

Quant à l'inclusion réciproque, il y a deux types de mineurs d'ordre $r' = r+1$ de $u'$ :
\begin{itemize}
\item 
ceux construits à partir de $r+1$ colonnes de $u$: ils sont nuls 
car $\calD_{r+1}(u) = 0$ ;
\item
ceux construits à partir de $r$ colonnes de $u$ et de la colonne
$y_0$.  Un tel mineur s'écrit au signe près $[\bfw \wedge y_0]$ avec
$\bfw \in \Im\bigwedge^r(u)$, c'est-à-dire $\bfw^\sharp(y_0)$ où
$\bfw^\sharp \in \DVect_r(u)$.
\end{itemize}%
\end{proof}

\noindent
\textbf{Contexte élimination.} \label{Fitting1toFitting0Elimination}
Soit $\uP$ quelconque. 

Le module $M = \bfB_\delta$ est de présentation finie et $\calF_0(\bfB_\delta) = 0$. On
prend pour $m_0$ la classe d'un  déterminant bezoutien $\nabla$ de sorte
que $\eval_{m_0}$ est l'évaluation en $\overline \nabla$.
La proposition précédente s'écrit alors:
$$
\FittVect_{1}(\bfB_\delta)\text{-évalué-en-$\overline \nabla$}
\ = \ 
\calF_0(\bfB'_\delta)
$$
Remontons à la présentation de $\bfB_\delta$ par $\Syl_\delta$ et
à celle de $\bfB'_\delta$ par $\Syl'_\delta = \Syl_\delta \oplus
\mathrm{mult}_{\nabla}$. L'égalité ci-dessus raconte que l'idéal
constitué des évaluations en $\nabla$ des formes linéaires
déterminantales de $\Syl_\delta$ est égal à l'idéal engendré par les
mineurs pleins de $\Syl'_\delta$:
$$
\DVect_{s_\delta}(\Syl_\delta)\text{-évalué-en-$\nabla$}
\ = \ 
\calD_{s_\delta + 1}(\Syl'_\delta)
$$
Cette égalité est uniquement un \textit{jeu d'écriture} 
une fois acquise la nullité des mineurs d'ordre $s_\delta + 1$ de~$\Syl_\delta$.

\medskip

Dans la suite, va intervenir le sous-module de $M^\star$, dépendant de $m_0$
$$
\Cramer_{m_0}(M) = 
\big\{ \alpha \in M^\star \mid \forall\, m \in M, \ 
\alpha(m_0) m = \alpha(m) m_0
\big\}
$$
Il s'intègre dans le diagramme suivant:
$$
\xymatrix @C=0.5cm {
\FittVect_1(M)\ar[d]_{\eval_{m_0}}  &\subset   &\Cramer_{m_0}(M)\ar[d]|{\eval_{m_0}} &\subset  &M^\star\ar[d]^{\eval_{m_0}}
\\
\calF_0(M/\bfA m_0)         &\subset   &\Ann(M/\bfA m_0)              &\subset  &\bfA
\\
}
$$

\label {NOTA08-Cramerm0M}%
%
%

\begin{prop}[Du sous-module $\Cramer_{m_0}(M)\subset M^\star$ à l'idéal $\Ann(M/\bfA m_0)$]   
\label{Rank1to0}
\leavevmode

Soient $M$ un module de rang 1 et $M' = M/\bfA m_0$ où $m_0 \in M$ est sans torsion.

\begin{enumerate}[\rm i)]
\item 
Le module $M'$ est de rang $0$.

\item 
L'évaluation $\eval_{m_0} : M^\star \to \bfA$ est injective.

\item
Elle induit un isomorphisme $\FittVect_1(M) \simeq \calF_0(M')$.

\item 
Elle induit également un isomorphisme $\Cramer_{m_0}(M) \simeq \Ann(M')$.

En particulier, si $\Cramer_{m_0}(M)$ est monogène engendré par $\vartheta \in M^\star$, alors 
$\Ann(M')$ est monogène engendré par $\vartheta(m_0)$.
\end{enumerate}
\end{prop}

\begin{proof} \leavevmode

On reprend le contexte du début de section où $u : E \to F$ présente $M$,
$y_0 \in F$ relève~$m_0$ et les notations $u'$, $r$ et $r'$.

i)
Pour $a \in \bfA$, montrons l'implication 
$a\calD_{r'}(u')= 0 \Rightarrow a\DVect_{r}(u)= 0$. 
Il en résultera que $\calD_{r'}(u')$ est fidèle puisque $\DVect_r(u)$ l'est.

\smallskip

Soit $\mu \in \DVect_r(u)$. On a $\mu(y_0) \in \calD_{r'}(u')$ donc
$a\mu(y_0) = 0$.  Comme $\mu$ est de Cramer, $a\mu(y) y_0 \in \Im u$
pour tout $y \in F$.  Puisque $m_0$ est sans torsion, $a\mu(y) = 0$.
Ceci ayant lieu pour tout $y \in F$, on en déduit~$a\mu = 0$.

\medskip

ii) Montrons qu'une forme linéaire $\alpha \in M^\star$ telle que $\alpha(m_0) = 0$
est nulle.
L'hypothèse $\alpha(m_0) = 0$ permet de passer $\alpha$ au quotient.
En vertu de~\ref{FormesLineairesRang01} :

\begin{quote}
\it Toute forme linéaire sur un module de présentation finie de rang $0$ est nulle. 
\end{quote}

\noindent
la forme linéaire obtenue sur $M' = M/\bfA m_0$, module de rang $0$, est nulle
et donc $\alpha=0$.

\medskip
iii)
Résulte de $\eval_{m_0}\big(\FittVect_1(M)\big)=\calF_0(M')$ (proposition précédente)
et de l'injectivité de $\eval_{m_0}$.

\medskip
iv)
Inclusion $\eval_{m_0}\big(\Cramer_{m_0}(M)\big) \subset \Ann(M')$.  Soit
$\alpha \in \Cramer_{m_0}(M)$. Montrons l'appartenance $\alpha(m_0) \in
\Ann(M')$.  Par définition de $\Cramer_{m_0}(M)$, la forme linéaire
$\alpha$ vérifie :
$$
\forall\, m \in M, \ \alpha(m_0) m \in \bfA m_0
\qquad \text{d'où} \qquad
\alpha(m_0) \in \Ann(M')
$$
Inclusion réciproque. Soit $a \in \Ann(M')$.
Montrons qu'il existe $\alpha \in \Cramer_{m_0}(M)$ tel que $a = \alpha(m_0)$.
On va construire une forme linéaire $\alpha \in M^\star$ et on montrera 
ensuite qu'elle appartient à $\Cramer_{m_0}(M)$.
Par définition de $a$, on a
$$\forall\, m \in M, \ am \in \bfA m_0$$
En utilisant le fait que $m_0$ est sans torsion, on obtient un 
unique $\lambda_m \in \bfA$ tel que $am = \lambda_m m_0$, ceci pour tout $m \in M$.
On introduit alors l'application $\alpha : M \to \bfA$, $m \mapsto \lambda_m$, et on vérifie 
facilement que cette application est une forme linéaire.
On a alors $am = \alpha(m) m_0$ pour $m\in M$.
En appliquant cette égalité à $m = m_0$ et en utilisant que $m_0$ est sans torsion, on trouve 
$a = \alpha(m_0)$.

En reportant cette information dans l'égalité $am=\alpha(m) m_0$, on obtient 
$$
\forall\, m \in M, \ \alpha(m_0) m = \alpha(m) m_0
$$
ce qui traduit le fait que $\alpha \in \Cramer_{m_0}(M)$.
\end{proof}

\medskip

Dans l'étude du jeu étalon généralisé, nous avons fourni une preuve
\emph {directe} du fait que $\bfB_\delta$ est de MacRae de rang $1$, cf.
\ref{MacRaeJeuEtalonDelta} et, en~\ref{BprimedeltaJeuEtalon}, que
$\bfB'_\delta = \bfB_\delta / \bfA \overline{\nabla}$ est de MacRae de
rang $0$. De plus, nous avons établi le lien précis entre leurs
invariants de MacRae (une forme linéaire d'une part, et un scalaire
d'autre part): l'invariant de MacRae de $\bfB'_\delta$ s'obtient en
évaluant la forme linéaire de $\bfB_\delta$ en $\overline \nabla$
(cf. la remarque~\ref{Rank1to0MacRaeJeuEtalonGen}).

Nous allons voir que ce résultat concernant les invariants de MacRae du couple
$(\bfB_\delta, \bfB'_\delta)$, établi dans le cas très
particulier du jeu étalon généralisé, est valide pour toute suite $\uP$
régulière.  Il utilise d'une part, pour une telle suite, que les
modules étant librement résolubles sont de MacRae (cf. le
théorème~\ref{LibrementResolubleImpliqueMacRae}).  Et d'autre part le
résultat général suivant permettant de relier les invariants de
MacRae d'un module de rang 1 et son quotient par un vecteur sans
torsion.

\begin{prop}[Quotient d'un module de MacRae de rang $1$ par un vecteur sans torsion]
\label{Rank1to0MacRae}
\leavevmode

Soit $M$ un module de MacRae de rang $1$ et $m_0 \in M$ sans torsion.
Alors $M' = M/\bfA m_0$ est un module de MacRae de rang $0$ et
l'évaluation en $m_0$ réalise un isomorphisme:
$$
\MacRaeVect(M) \ \overset{\eval_{m_0}}{\simeq}  \ \MacRae(M')
$$
Ainsi, en notant $\vartheta \in M^\star$ l'invariant de MacRae de $M$, 
celui de $M'$ est $\vartheta(m_0)$.
\end{prop}

\index{invariant de MacRae}%

\begin{proof} 
Notons $\vartheta \in M^\star$ un générateur de $\MacRaeVect(M)$.
Alors, le module $\FittVect_1(M)$ se factorise :
$$
\FittVect_1(M) =  \vartheta\, \fb 
\qquad  \text{avec \quad } \Gr(\fb) \geqslant 2 
$$
En appliquant $\eval_{m_0}$, on obtient $\eval_{m_0}\big(\FittVect_1(M)\big) =
\vartheta(m_0) \fb$.  D'où, avec~\ref{Fitting1toFitting0},
$\calF_0(M') = \vartheta(m_0) \fb$.  L'idéal $\calF_0(M')$ étant
fidèle, cette dernière égalité prouve que le scalaire $\vartheta(m_0)
\in \bfA$ est régulier.  Elle prouve aussi, $\fb$ étant de
profondeur~${\geqslant~2}$, que $\vartheta(m_0)$ est un pgcd fort de
l'idéal $\calF_0(M')$. Ainsi, conformément à la
définition~\ref{DefMacRae0}, $M'$ est de MacRae de rang~$0$,
d'idéal de MacRae $\MacRae(M') = \langle \vartheta(m_0) \rangle$
\end{proof}

\subsection{Les idéaux de Fitting dans la littérature}

Pour convaincre le lecteur de la nécessité de l'intermède initial sur
les idéaux de Fitting, voici, autour de la thématique
\og Les idéaux de Fitting d'un module de présentation finie ne dépendent pas de la
présentation\fg{} quelques références (commentées)
dans la littérature. Il n'est pas rare que le traitement soit relégué en exercice.

$\bullet$
Bourbaki, Algèbre Commutative, chapitre 7 (Diviseurs), exercice 10 de
la section~\S4, p.~106-107. Le mot idéal de Fitting n'est pas prononcé
mais les idéaux déterminantiels sont indexés comme il se doit pour
obtenir l'invariance vis-à-vis de la présentation. L'exercice est
marqué difficile mais c'est peut-être à cause du nombre de questions
(7 questions de (a) à (g)).

$\bullet$
Bruns \& Herzog, Cohen-Macaulay Rings, chap I, section Ideals of minors and Fitting
invariants, p.~21-23: ``the proof is left as an exercice for the reader''.

$\bullet$
Cox, Little, O'Shea, Using Algebraic Geometry, chapter~5, \S4, pages 241-242; tout est
reporté en exercice.

$\bullet$
Eisenbud, Commutative Algebra with a View Toward Algebraic Geometry, section 20.2 (Fitting
Ideals). L'auteur utilise la localisation par tous les idéaux premiers et
mentionne des histoires de présentation minimale.

$\bullet$
Sarah Glaz, Commutative Coherent Rings, chap IV, section 4 Fitting
invariants and Euler Characteristic, p. 96.  Elle dit simplement ``By
Schanuel's lemma 1.1.5, the Fitting ideals of $M$ are independent of
the choice of the presentation''.  Le lemma 1.1.5 en question
(page 2) est le lemme de Schanuel ordinaire (et pas la version améliorée).

$\bullet$
Kunz, Introduction to Commutative Algebra and Algebraic Geometry, le traite en
exercice: exercice~3, page 103 du chapitre IV, section \S1

$\bullet$
Kunz, Kälher Differentials, Appendice D (The Fitting Ideals of a Module), p. 331,
asssure les détails.

$\bullet$
Lang, Algebra, chap XIX (The Alternating Product), section 2 (Fitting ideals),
proposition 2.2 et Lemma 2.3. Les détails y sont assurés.

$\bullet$
Le livre \cite[chap. IV, section 9 (Idéaux de Fitting)]{LombardiQuitte},
fournit, via le lemme 9.2, un plan d'attaque sérieux
mais les détails sont laissés à la lectrice.

$\bullet$
L'ouvrage de Northcott,  \cite[chap. 3 (The invariant of Fitting and MacRae)]{NorthcottFFR},
notamment le théorème~1, page~58, contient un traitement complet.

$\bullet$
Vasconcelos, Computational Methods in Commutative Algebra and
Algebraic Geometry, section 2.4, p.~39. Il énonce le résultat sans preuve
mais en disant ``a fact which follows easily from the Schanuel's lemma
in homological algebra (or by determinantal calculations as originally
done by Fitting)''.

\cleardoublepage

\section{AC$_3$ : Structure multiplicative des résolutions libres finies}
\label{ChapStructureMultiplicative}

Ce chapitre se concentre sur certains complexes de $\bfA$-modules libres de rang fini pour lesquels
nous adoptons les notations suivantes:
$$
\xymatrix @M=0.4pc{
0 \ar[r] &
F_n \ar[r]^-{u_n} &
F_{n-1} \ar[r]^-{u_{n-1}} &
\quad \cdots \quad \ar[r] &
F_1 \ar[r]^-{u_1} &
F_0
}
$$
Nous rappelons la définition du rang attendu $r_k$ de la différentielle $u_k$
(cf. chapitre~\ref{ObjetsSuiteP}, page~\pageref{RangAttendu}) et celle de $r_0$, qui n'est autre,
par définition, que la caractéristique d'Euler-Poincaré $c = c(F_\sbullet)$ du complexe $F_\sbullet$:
$$
\left\{
\begin{array}{l}
r_{n+1} = 0 \\ 
\forall\, k \in \{n, \, {n\!-\!1},\,\dots,1,\, 0\},\quad r_{k+1} + r_k = \dim F_k
\end{array}
\right.
\qquad\qquad
\boxed{c = r_0}
$$
La classe des complexes qui nous intéresse est la classe des complexes
de Cayley (voir la définition page suivante) qui contient en
particulier celle des complexes exacts. Nous avons fait figurer en fin
de chapitre un bref aperçu historique en essayant de préciser les
contributions des auteurs intervenus dans le domaine des
résolutions libres finies.

\label{NOTA09-FreeComplex}%
\label{NOTA09-rk}%
\label{NOTA09-r0}%
\index{rang!attendu (d'une différentielle dans un complexe)}%
\index{caractéristique d'Euler-Poincaré}%

\medskip
Nous allons citer sans preuve deux résultats importants en fournissant
des références précises, non pas chez les initiateurs Buchsbaum \&
Eisenbud mais plutôt dans le livre de Northcott \cite{NorthcottFFR} ou
l'article~\cite{CoquandQuitte} qui sont plus dans l'esprit de notre étude.
En revanche, nous reprenons en charge la notion
de structure multiplicative dans le contexte des complexes de Cayley.
Précisons que les deux dernières références ci-dessus bien qu'elles ne
soient absolument pas de
même nature (le premier est un ouvrage de 270 pages, le second un
article de 14 pages) possèdent un point commun: elles sont
\og self-contained\fg.

\medskip

Dans toute notre étude, nous faisons l'hypothèse $r_k \geqslant 0$ pour
tout $k$.  Ceci est accord avec notre champ d'applications dans lequel
les complexes qui interviennent sont les composantes homogènes du
complexe de Koszul de $\uP = (P_1, \dots, P_n)$. Par ailleurs, dans le cadre plus général
d'un complexe libre \emph {exact}, si l'on~a $r_k < 0$ pour au moins
un $k$, alors l'anneau $\bfA$ est nul.  Ceci n'est pas un résultat
complètement trivial: il est contenu
dans \cite[th. 6.8]{CoquandQuitte}.  Supposer l'anneau de base non
nul n'est pas non plus anodin: c'est le cas des chapitres IV
(Stability and finite free resolutions) ou VI (Grade and finite free
resolutions) de Northcott: ``Throughout the present chapter, $R$ will
denote a non-trivial commutative ring with an identity element''. Ce
qui conduira Northcott à utiliser des idéaux premiers contrairement
à l'article~\cite{CoquandQuitte}.

\medskip
Une preuve du résultat suivant figure en \cite[chap. 4, th.8]{NorthcottFFR} ou en
\cite[th. 6.8]{CoquandQuitte}.

\begin{theo}[Rang stable] \label{StableRank} \leavevmode

Si le complexe $F_\sbullet$ est exact, alors chaque différentielle $u_k$ est de rang $r_k$, au sens où :
$$
\calD_{r_k + 1}(u_k) = 0
\qquad \text{ tandis que } \qquad 
\text{$\calD_{r_k}(u_k)$ est fidèle}
$$
\end{theo}

\index{théorème!du rang stable (complexe libre exact)}%

\medskip
Un autre résultat célèbre est le suivant.

\begin{theo}[What makes a complex exact?] \label{WhatMakesAComplexExact} \leavevmode

\medskip
(i) Si le complexe $F_\sbullet$ est exact, alors $\Gr\big(\calD_{r_k}(u_k)\big) \geqslant k$
pour tout $1 \leqslant k \leqslant n+1$.

\medskip
(ii) Réciproquement, si $\Gr\big(\calD_{r_k}(u_k)\big) \geqslant k$
pour tout $1 \leqslant k \leqslant n+1$, alors le complexe $F_\sbullet$ est exact.
\end{theo}

\index{théorème!what makes a complex exact?}%

On notera que l'inégalité sur la profondeur du point (i) est bien plus précise
que le résultat de fidélité du théorème précédent qui s'énonce
$\Gr\big(\calD_{r_k}(u_k)\big) \geqslant 1$.

En terrain noethérien, une preuve figure dans l'article  \cite[th. I, cor. 1]{BE1}
de D. Buchsbaum \& D. Eisenbud.
Sans aucune hypothèse sur l'anneau (à part non nul chez Northcott), la
preuve du point (i) se trouve en
\cite[th. 6.8]{CoquandQuitte} ou en
\cite[chap. 6, th.14]{NorthcottFFR}.
Quant à la réciproque (ii), cf \cite[cor. 7.5]{CoquandQuitte} ou~\makebox{\cite[chap. 6, th.15]{NorthcottFFR}}.

\subsection{Structure multiplicative d'un complexe de Cayley}
\label{subsectionStructureMult}

Soit $F_\sbullet$ un complexe libre de caractéristique
d'Euler-Poincaré $c$.  Moyennant une hypothèse adéquate sur
$F_\sbullet$, l'objectif est d'associer à ce complexe une forme
$c$-linéaire alternée sans torsion $\mu$ sur $F_0$, définie à un inversible
près.  Le $\bfA$-module engendré par cette forme $\mu$ est donc un
sous-module $\CayleyVect(F_\sbullet) \subset \BW^c(F_0)^\star$, libre
de rang~1: c'est le \emph {déterminant de Cayley} de $F_\sbullet$.

\medskip 
L'hypothèse adéquate consiste à affaiblir la notion de complexe exact de la manière suivante.

\begin{defn} \label{DefComplexeCayley}
On considère un complexe de $\bfA$-modules libres de rang fini 
$$
\xymatrix @M=0.4pc{
0 \ar[r] &
F_n \ar[r]^-{u_n} &
F_{n-1} \ar[r]^-{u_{n-1}} &
\quad \cdots \quad \ar[r] &
F_1 \ar[r]^-{u_1} &
F_0
}
$$
On dit que le complexe est de Cayley lorsque 
$\Gr\big(\calD_{r_k}(u_k)\big) \geqslant 2$ 
pour $k \geqslant 2$ et \mbox{$\Gr\big(\calD_{r_1}(u_1)\big) \geqslant 1$}.
\end{defn}

\medskip
Il s'agit bien d'un affaiblissement: un complexe exact de modules
libres vérifie ${\Gr\big(\calD_{r_k}(u_k)\big) \geqslant k}$, cf. le
point (ii) du théorème \ref{WhatMakesAComplexExact}
\og What makes a complex exact \fg{} et de ce fait
est un complexe de Cayley.

\medskip
Nous verrons que la construction de la forme $c$-linéaire $\mu$ 
(dont il est question avant la définition) est
étroitement liée à une factorisation en rang $r_1$ de la première
différentielle $u_1$, factorisation décrite par 
le diagramme commutatif
$$
\vcenter {\hbox{
\xymatrix @M= 0.4pc{ 
\bigwedge^{r_1}(F_1) \ar[rd]_-{\nu_1} \ar[rr]^-{\bigwedge^{r_1}(u_1)} & & 
\bigwedge^{r_1}(F_0) \\
& \bfA \ar[ru]_-{\times \Theta_1} & 
}}}
\qquad
\begin {array}{l}
\hbox{et nous aurons $\mu = \Theta_1^\sharp$ où $\sharp = \sharp_0$ est un isomorphisme} \\ [2mm]
\hbox {déterminantal $\sharp_0 : \BW^{r_1}(F_0) \simeq \BW^{c}(F_0)^\star$} \\
\end{array}
$$
Cette factorisation s'accompagne d'une précision supplémentaire sur les profondeurs
de $\nu_1$ et $\Theta_1$ qui va faire que le module $M := \Coker(u_1)$ présenté
par $u_1$ est un module de MacRae de rang $c$. On rejoint et complète ainsi
les notions/informations du chapitre précédent.

\index{isomorphisme déterminantal}%
\index{factorisation $\text{colonne}\times\text{ligne}$ (puissance extérieure d'une application linéaire)}%

\medskip

Terminons cette présentation en apportant quelques
précisions. Pour toute différentielle $u_k$, et pas seulement $u_1$, on va disposer d'une
factorisation de la puissance extérieure $\BW^{r_k}(u_k)$ sous la
forme \og vecteur $\times$ forme linéaire \fg{}.  La mise en place de
ces factorisations va nécessiter des choix d'isomorphismes déterminantaux
sur les $(F_k)_{k \geqslant 0}$. Mais le déterminant de Cayley $\CayleyVect(F_\sbullet)$
de $F_\sbullet$ (module libre de rang $1$)
est indépendant de ces choix.

\emph {Tout} complexe libre~$F_\sbullet$ possède un déterminant
(attention, ce n'est pas le déterminant de Cayley) sorte de produit
tensoriel alterné des puissances extérieures de rang maximum des termes 
$$
\textstyle
\Det(F_\sbullet) \ = \ 
\bigotimes\limits_{k \, \rm pair} \bigwedge^{\dim  F_k}(F_k)
\ \otimes \ \!\!
\bigotimes\limits_{k \, \rm impair} \!\!
\bigwedge^{\dim  F_k}(F_k)^\star
\, \ = \ \, 
\bigwedge^{\dim  F_0}(F_0) \, \otimes \, \bigwedge^{\dim  F_1}(F_1)^\star \, \otimes \, 
\bigwedge^{\dim  F_2}(F_2) \, \otimes \, \cdots
$$
Ce $\bfA$-module $\Det(F_\sbullet)$, qui est évidemment libre de rang~$1$, ne dépend que
des termes $F_k$ du complexe (et pas des différentielles).
L'hypothèse supplémentaire \og $F_\sbullet$ de Cayley \fg{} 
permettra la
mise en place d'un morphisme injectif intrinsèque $\rho
:\Det(F_\sbullet)^\star \hookrightarrow \BW^c(F_0)^\star$, établissant un
isomorphisme $\Det(F_\sbullet)^\star \simeq \CayleyVect(F_\sbullet)$.

\bigskip

Le théorème suivant est énoncé dans l'article~\cite{CoquandQuitte}
avec des termes $F_k$ équipés de base, et seule la partie existence de
la factorisation est démontrée.  Dans la formulation qui vient, nous
n'utilisons pas de bases seulement des isomorphismes déterminantaux
fixés sur les termes.

A cette occasion, on rappelle qu'un isomorphisme déterminantal sur un
module libre $F$ dépend d'une orientation $\bff$ de $F$. Dans la
suite, il sera sous-entendu que cette orientation $\bff$ est \emph
{partie intégrante} de l'isomorphisme déterminantal.  Ce dernier dépend
également de la \og position de la colonne-argument\fg{} et nous pouvons
par exemple choisir de mettre le joker à droite
$$
\sharp : 
\begin{array}[t]{rcl}
\bigwedge^{r}(F) & \longrightarrow & \bigwedge^{r'}(F)^\star \\ [0.4em]
\Theta & \longmapsto & \Theta^\sharp := 
\oriented{\Theta\wedge \sbullet\,}_\bff
\end{array}
\qquad 
\text{avec $r + r' = \dim F$}
$$
mais rien ne nous y oblige. Cependant \emph {pour les calculs}, nous prendrons
toujours cette même position de joker à droite.

\medskip 
Ces précisions étant apportées, voici l'énoncé que l'on obtient :

\index{structure multiplicative d'un complexe de Cayley}%

\begin{theo}[Structure multiplicative, à isomorphismes déterminantaux imposés] \label{Factorisation}

\leavevmode

Considérons un complexe de Cayley :
$$
\xymatrix @M=0.4pc{
0 \ar[r] &
F_n \ar[r]^-{u_n} &
F_{n-1} \ar[r]^-{u_{n-1}} &
\quad \cdots \quad \ar[r] &
F_1 \ar[r]^-{u_1} &
F_0
}
$$
Pour $k \geqslant 1$, on munit $F_k$ d'un isomorphisme déterminantal $\sharp_k
: \BW^{r_{k+1}}(F_k) \buildrel \simeq \over \longrightarrow \BW^{r_k}(F_k)^\star$.
Relativement à ces isomorphismes,  il existe une suite bien déterminée de vecteurs
$(\Theta_k)_{1 \leqslant k \leqslant n+1}$ avec $\Theta_k \in \BW^{r_k}(F_{k-1})$
et $\Theta_{n+1} = 1 \in \bigwedge^{0}(F_n)$,  telle que

\begin{enumerate}[\rm i)]
\item 
$\bigwedge^{r_k}(u_k)$ soit factorisé via ce diagramme commutatif :
$$
\vcenter{
\xymatrix @M= 0.4pc{ 
\bigwedge^{r_k}(F_k) \ar[rd]_-{\Theta_{k+1}^{\sharp}} \ar[rr]^-{\bigwedge^{r_k}(u_k)} & & 
\bigwedge^{r_k}(F_{k-1}) \\
& \bfA \ar[ru]_-{\times \Theta_k} & 
}}
\quad 
\qquad 
\textstyle \bigwedge^{r_k}(u_k) 
\ = \ \Theta_k\,\Theta_{k+1}^\sharp, \text{où $\sharp = \sharp_k$}
$$

\item
on ait les inégalités de profondeur 
$\Gr(\Theta_k) \geqslant 2$ pour $k \geqslant 2$ et $\Gr(\Theta_1) \geqslant 1$.
\end {enumerate}

On dira que $(\Theta_k)_{1 \leqslant k \leqslant n+1}$ est le système de factorisation de $F_\sbullet$
déduit de $(\sharp_k)_{1 \leqslant k \leqslant n}$.
\end{theo}

\label{NOTA09-bfe}%
\label{NOTA09-sharp}%
\label{NOTA09-Theta}%
%
%

\begin{proof}

En ce qui concerne l'unicité, supposons $\Theta_{k+1}$ déterminé
avec $\Gr(\Theta_{k+1}) \geqslant 2$, a fortiori $\Theta_{k+1}$ sans
torsion. La forme linéaire $\Theta_{k+1}^\sharp$
est également sans torsion donc pour deux vecteurs $\Theta_k, \Theta'_k$
du module libre $\BW^{r_k}(F_{k-1})$, on a l'implication:
$$
\Theta_k\,\Theta_{k+1}^\sharp = \Theta'_k\,\Theta_{k+1}^\sharp \quad\Rightarrow\quad
\Theta_k = \Theta'_k
$$
ce qui assure l'unicité.

\medskip
\noindent
En ce qui concerne l'existence, nous avons choisi de la reporter en
annexe (\ref {sectionAnnexeStructureMultiplicative}).  Ici, on peut
juste en évoquer le schéma: puisque $\Theta_{n+1} = 1$, on factorise
sans peine la puissance extérieure maximale de la dernière
différentielle $u_n$ (à gauche sur les schémas) et, par récurrence, on
descend jusqu'au degré homologique~1. La propagation de la
factorisation repose essentiellement sur la notion de profondeur
$\geqslant 2$ et sur un théorème de proportionnalité découlant
d'identités de Plücker-Sylvester entre mineurs.
\end{proof}

\medskip

Cette propriété de factorisation de la puissance extérieure (de rang
convenable) de chaque différentielle est une propriété extrêmement
profonde, notamment celle de la \textit{première} différentielle.  En rappelant
que~$c$ est la caractéristique d'Euler-Poincaré de $F_\sbullet$, la
factorisation $\bigwedge^{r_1}(u_1) = \Theta_1\,\Theta_2^\sharp$ donne naissance
à une forme $c$-linéaire alternée sur le premier terme $F_0$.
C'est le vecteur $\Theta_1$, de profondeur $\geqslant 1$, 
qui donne naissance à cette forme $c$-linéaire.

\begin {defn}[Forme $c$-linéaire, à isomorphismes déterminantaux imposés] 
\label{cLinearFormOfComplex}
On garde le contexte précédent et on munit $F_0$ d'un isomorphisme déterminantal~$\sharp_0$.
On définit une forme $c$-linéaire alternée 
 $\mu \in \BW^c(F_0)^\star$, associée
au système $(\sharp_k)_{0 \leqslant k \leqslant n}$, par 
$\mu = \Theta_1^{\sharp_0}$.
Cette forme $c$-linéaire $\mu$ est sans torsion.
\end {defn}

\label{NOTA09-mu}%
%
%

\begin{rmq}

\`A partir d'un système de factorisation de $F_\sbullet$ par des 
\textit{vecteurs} $\Theta_k \in \bigwedge^{r_k}(F_{k-1})$ 
pour $k \in \llbracket 1, n+1\rrbracket$, on peut construire un
système de factorisation de $F_\sbullet$ par des \textit{formes
linéaires} $\nu_k \in \bigwedge^{r_k}(F_{k})^\star$ pour
$k \in \llbracket 0, n \rrbracket$, via $\nu_k = \Theta_{k+1}^\sharp$.
Et il y a évidemment une correspondance entre ces deux systèmes, 
dans le sens où $\Theta_{k} = \nu_{k-1}^{\flat}$ en notant~$\flat$ l'isomorphisme
déterminantal inverse $\bigwedge^{r_{k-1}}(F_{k-1})^\star \to \bigwedge^{r_{k}}(F_{k-1})$. 
Ainsi, 
$\bigwedge^{r_k}(u_k) = \Theta_k \, \Theta_{k+1}^\sharp$
peut être également lu en 
$\bigwedge^{r_k}(u_k) = \nu_{k-1}^{\flat} \, \nu_k$.

La suite de \textit{vecteurs} 
$\Theta_k$ s'est construite à partir du degré homologique $n+1$ 
en posant $\Theta_{n+1} = 1$.
{\it A contrario}, la suite de \textit{formes linéaires} $\nu_k$ est obtenue à partir du degré homologique $1$, 
à partir de la fameuse forme $c$-linéaire alternée $\mu$ intervenant dans la définition précédente.
On a alors une forme linéaire $\nu_1 \in \bigwedge^{r_1}(F_1)^\star$ 
telle que $\bigwedge^{r_1}(u_1) = \mu^\flat\,\nu_1$ ; et on peut ainsi reconstruire toute 
la factorisation du complexe.
\end{rmq}

\begin {rmq}
Nous utiliserons essentiellement cette notion de structure multiplicative de complexe de
Cayley dans le cadre d'un complexe exact à savoir la composante homogène de degré $d$
(variable) du complexe de Koszul d'une suite régulière~$\uP$.
Nous avons cependant tenu à présenter cette notion de complexe de Cayley
car elle nous semble être un bon cadre permettant de mettre en place le
déterminant de Cayley.

\medskip
\noindent
Bien entendu, il existe des complexes de Cayley qui ne sont pas
exacts. Prenons le cas du complexe de Koszul $\rmK_\sbullet(\ua)$
d'une $n$-suite de scalaires $\ua = (a_1, \dots, a_n)$. En posant
$r_k = \binom{n-1}{k-1}$, on a $r_{k+1} + r_k = \binom{n}{k}
= \dim \BW^k(\bbA^n)$ et $r_{n+1} = 0$ prouvant ainsi que $r_k$ est le
rang attendu de la différentielle $\partial_k$.

\noindent
Fixons $n=3$. Voici le complexe $\rmK_\sbullet(\ua)$
$$
0 \to \xymatrix {\bigwedge^3(\bbA^3)\ar[r]^{\partial_3} & \bigwedge^2(\bbA^3)
\ar[r]^{\partial_2} &\bigwedge^1(\bbA^3)
\ar[r]^{\partial_1} &\bigwedge^0(\bbA^3)}
$$
avec comme rangs attendus:
$$
0 \to \xymatrix {\bbA\ar[r]^{\partial_3}_1 &\bbA^3\ar[r]^{\partial_2}_2 &\bbA^3 
\ar[r]^{\partial_1}_1 &\bbA}
$$
En utilisant la base $(e_2\wedge e_3, e_3\wedge e_1, e_1\wedge e_2)$ pour
$\bigwedge^2(\bbA^3)$, on a~:
$$
\partial_3 = \begin {bmatrix} a_1\\ a_2\\ a_3 \end {bmatrix}
\qquad
\partial_2 = \begin {bmatrix} 0&a_3 & -a_2\\ -a_3 &0 &a_1\\ a_2 &-a_1 &0 \end{bmatrix}
\qquad
\partial_1 = \begin {bmatrix} a_1 & a_2 & a_3 \end {bmatrix}
$$
Ce qui donne comme idéaux déterminantiels:
$$
\calD_1(\partial_3) = \langle\ua\rangle, \qquad
\calD_2(\partial_2) = \langle\ua\rangle^2, \qquad
\calD_1(\partial_1) = \langle\ua\rangle.
$$
Pour obtenir un complexe de Cayley non exact, il suffit de prendre $\ua$ vérifiant $\Gr(\ua) = 2$.
\end {rmq}

\bigskip
Dans un complexe de Cayley $F_\sbullet$, il n'est pas évident a priori
que $\calD_{r_k+1}(u_k) = 0$ mais c'est bien le cas.  Cela se déduit
de la nature de la factorisation de $\BW^{r_k}(u_k)$.  Nous en
reportons la preuve plus loin, cf. la proposition~\ref{StableRankFromFactorization}
mais nous énonçons ici le résultat sous la forme:

\begin{theo}[Rang des différentielles]  

Dans un complexe de Cayley, chaque différentielle $u_k$ est de rang $r_k$, au sens où :
$$
\calD_{r_k + 1}(u_k) = 0
\qquad \text{ et } \qquad 
\text{$\calD_{r_k}(u_k)$ est fidèle}
$$
\end{theo}

\subsection{Peut-on illustrer la factorisation via un exemple?}

Dans le contexte de l'élimination, c'est bien entendu la composante
homogène $\rmK_{\sbullet,d}$ de degré $d$ du complexe de Koszul de la
suite $\uP$ qui est au coeur du sujet.  Si la suite $\uP$ est
régulière, ce complexe de $\bfA$-modules libres est exact, et, en
chaque degré homologique~$k$, la puissance extérieure d'ordre
$r_{k,d}$ de la différentielle $\partial_{k,d}(\uP)
: \rmK_{k,d} \rightarrow \rmK_{k-1,d}$ se factorise sous la forme \og
vecteur $\times$ forme linéaire \fg{}.

Le cas $(k=1, d=\delta)$, pour lequel $r_{1,\delta}
= \dim \bfA[\uX]_\delta-1$, va jouer un rôle tout à fait remarquable.
En rappelant la notation $s_\delta = r_{1,\delta}$, on dispose d'une
factorisation de $\bigwedge^{s_\delta}(\Syl_\delta) =
\Theta\,\nu$ où $\Theta \in \BW^{s_\delta}(\bfA[\uX]_\delta)$ et
$\nu : \BW^{s_\delta}(\rmK_{1,\delta}) \to \bfA$. \`A partir de $\Theta$
et d'une orientation $\bff$ de $\rmK_{0,\delta} \overset{\rm def.}{=} \bfA[\uX]_\delta$, 
on peut élaborer la forme linéaire $\Theta^\sharp = [\Theta \wedge\sbullet]_\bff :
\bfA[\uX]_\delta \to \bfA$. Une telle factorisation, unique à un inversible près, dépend 
de choix de systèmes d'orientations des termes de
$\rmK_{\sbullet,\delta}$. On verra dans le
chapitre \ref{ComplexeDecompose} comment sélectionner certains systèmes
d'orientations de manière à obtenir une \emph {unique} forme linéaire $\omegares = \Theta^\sharp$.

\medskip

Il est difficile, voire impossible, d'illustrer \textit{directement}
cette factorisation vu sa complexité.
Par exemple, dans le cas très modeste du format $D=(2,2,2)$, on a 
$\delta = 3$, $s_\delta +1 = 10$, $\omegares=\omega$, et, dans le cas générique 
(\idest{} $\bfA$ anneau de polynômes 
à $3 \times 6 = 18$ indéterminées qui sont les coefficients des~$P_i$), voici 
le nombre de monômes (en les coefficients des $P_i$) 
de chaque $\omegares(X^\alpha)$ pour les $10$ monômes~$X^\alpha$
de degré~$\delta$ :
$$
2238,\  1650,\  1650,\  1650,\  1302,\  1650,\  2238,\  1650,\  1650,\  2238
$$
Quant au scalaire $\omegares(\nabla)$, le résultant de $\uP$ quand tout sera
en place, il comporte 21894 monômes !

\bigskip

Nous proposons donc un exemple plus modeste de factorisation 
d'application linéaire issue d'une résolution libre.
Il s'agit de la deuxième différentielle $u = u_2 : \bfA^4 \to \bfA^3$ de la résolution libre minimale 
de l'idéal $\langle x^2, yt, z(x+y) \rangle$ de $\bfA = \bbZ[x,y,z,t]$ :
$$
\xymatrix @M=0.4pc{
0 \ar[r] & \bfA^2 \ar[r]^-{u_3} & \bfA^4 \ar[r]^-{u} & \bfA^3 \ar[r]^-{u_1} & \bfA
}
$$
Posons $s = x+y$ et notons $(e_j)$ une base de $E = \bfA^4$ et $(f_i)$ une base de $F= \bfA^3$.
Alors:
$$
u_3 =
\left[
\begin{array}{*{2}{c}}
. & x \\ 
x & -y \\ 
-t & . \\ 
z & z \\ 
\end{array}
\right]
\qquad\qquad
u = \NorthEastBordermatrix{
\Heti{\,e_{1}} & \Heti{\,e_{2}} & \Heti{\,e_{3}} & \Heti{\,e_{4}} \\
. & zt & zs & yt & \Heti{\,f_{1}} \\ 
zs & xz & . & -x^{2} & \Heti{\,f_{2}}\\ 
-yt & -xt & -x^{2} & . & \Heti{\,f_{3}} \\ 
}
\qquad\qquad
u_1 = \left[
\begin{array}{*{3}{c}}
x^{2} & yt & zs \\ 
\end{array}
\right]
$$
L'application linéaire $u$ est de rang attendu $2$. 
Quant à $\bigwedge^2(u)$, c'est l'application linéaire 
représentée par la matrice 
$$
\NorthEastBordermatrix{
\Heti{\  e_{12}} & \Heti{\quad  e_{13}} & 
\Heti{\quad e_{14}} & \Heti{\quad e_{23}} & \Heti{\quad e_{24}} & \Heti{\quad e_{34}} & \\
-z^{2}st & -z^{2}s^{2} & -yzst & -xz^{2}s & -xzst & -x^{2}zs & \Heti{\,f_{12}} \\ 
yzt^{2} & yzst & y^{2}t^{2} & xyzt & xyt^{2} & x^{2}yt & \Heti{\,f_{13}} \\ 
-x^{2}zt & -x^{2}zs & -x^{2}yt & -x^{3}z & -x^{3}t & -x^{4} & \Heti{\,f_{23}} \\ 
}
\text{dont chaque colonne est multiple de\ }
\begin{bmatrix}
zs  \\
-yt \\
x^2 \\
\end{bmatrix}.
$$
Précisément, on a la factorisation $\bigwedge^2(u) = \Theta \, \nu$ 
avec $\Theta = zs\,f_{12} - yt \, f_{13} + x^2 \,f_{23} \in \bigwedge^2(F)$
et $\nu = -(zt e_{12}^\star + zs e_{13}^\star + yt e_{14}^\star + xz e_{23}^\star
+ xt e_{24}^\star + x^2 e_{34}^\star) \in \bigwedge^2(E)^\star$.

Cette factorisation (ou plutôt la forme linéaire $\nu$)
provient de la factorisation $\bigwedge^{r_3}(u_3) = \Theta_3\,\nu_3$.
Ici $r_3 = 2$ et $\nu_3 : \bigwedge^{2}(\bfA^2) \rightarrow \bfA$ 
est la forme linéaire qui envoie une orientation de $\bfA^2$ sur $1$.
Quant au vecteur $\Theta_3 \in \bigwedge^2(E)$, c'est 
$\Theta_3 = -x^2 e_{12} + xt e_{13} - xz e_{14} - yt e_{23} + zs e_{24} - zt e_{34}$.
Et la forme linéaire~$\nu$ intervenant dans la factorisation de $\bigwedge^2(u)$ 
est l'image de $\Theta_3$ par un isomorphisme déterminantal~$\sharp$. 
Si on choisit pour~$\sharp$ l'isomorphisme $\bfw \mapsto \oriented{\bfw\wedge\sbullet}_\bfe$ avec
$\bfe = e_1 \wedge e_2 \wedge e_3 \wedge e_4$, alors: 
$$
\Theta_3^\sharp 
\ =\   
-x^2 e_{12}^\sharp + xt e_{13}^\sharp  - xz e_{14}^\sharp  - yt e_{23}^\sharp + 
zs e_{24}^\sharp  - zt e_{34}^\sharp
\ =\ 
-x^2 e_{34}^\star - xt e_{24}^\star  - xz e_{23}^\star  - yt e_{14}^\star - 
zs e_{13}^\star  - zt e_{12}^\star
$$
ce qui est bien la forme linéaire $\nu$ précédente.

\subsection {Extension des scalaires d'un système de factorisation}

\begin{prop}[Extension des scalaires] \label{ExtensionScalaires}
Soit $\kappa : \bfA \to \bfA'$ un morphisme d'anneaux, 
$F_\sbullet$ un complexe de Cayley sur $\bfA$, 
et $\kappastar(F_\sbullet)$ le complexe obtenu à partir de $F_\sbullet$ par extension des scalaires.
Si $\kappastar(F_\sbullet)$ est un complexe de Cayley,
alors un système de factorisation de $\kappastar(F_\sbullet)$ est obtenu en prenant 
l'image par $\kappastar$ d'un système de factorisation de $F_\sbullet$.
\end{prop}

\begin{proof}
Notons avec des primes les images des objets par $\kappastar$: $F'_k
= \bfA' \otimes_\bfA F_k$ et $u'_k = \id_{\bfA'} \otimes_\bfA u_k$.
Comme $\dim\nolimits_{\bfA}(F_k) = \dim\nolimits_{\bfA'}(F'_k)$, le
rang attendu de $u'_k$ n'est autre que $r_k$, celui de $u_k$.  Enfin,
pour des raisons typographiques, nous continuons à noter $\sharp$ au
lieu de $\sharp'$ les isomorphismes déterminantaux
de~$\kappastar(F_\sbullet)$ déduits de ceux de $F_\sbullet$.

\medskip
\noindent
En appliquant $\kappastar$ à la factorisation
$\bigwedge\nolimits^{r_k}(u_k) = \Theta_k\ \Theta_{k+1}^{\sharp}$ de
$F_\sbullet$, on obtient $\bigwedge\nolimits^{r_k}(u'_k)
= \Theta'_k\ \Theta_{k+1}'^{\sharp}$.  Il reste à vérifier que ces
vecteurs $\Theta'_k$ ont la profondeur désirée.

\medskip
\noindent
$\rhd$
Première preuve, en utilisant uniquement la définition \og Cayley \fg{}.
On a l'inclusion :
$$
\calD_{r_k}(u'_k) 
\ \subset \ 
\langle \text{composantes de } \Theta'_k \rangle 
$$
Par hypothèse, $\kappastar(F_\sbullet)$ est de Cayley, 
donc $\Gr\big(\calD_{r_k}(u'_k)\big) \geqslant 2$ pour tout $k \geqslant 2$ 
et $\Gr\big(\calD_{r_1}(u'_1)\big) \geqslant 1$. 
Puisque $\fb\supseteq\fa \Rightarrow \Gr(\fb)\geqslant \Gr(\fa)$,
cf.~\ref{Gr2ProprietesElementaires}, on a $\Gr(\Theta'_k) \geqslant
2$ pour tout $k \geqslant 2$ et $\Gr(\Theta'_1) \geqslant 1$.

\medskip
\noindent
$\rhd$
Deuxième preuve, en utilisant la factorisation \og Cayley\fg{}.
Par hypothèse, $\kappastar(F_\sbullet)$ est de Cayley, donc possède
un système de factorisation $(\theta'_k)_k$ avec
les profondeurs désirées et $\theta'_{n+1} = 1$ où
$n$ est la longueur de $F_\sbullet$.

Raisonnons par récurrence descendante pour montrer que $\Theta'_k
= \theta'_k$ ce qui impliquera que $\Theta'_k$ possède la profondeur
adéquate.  Par définition des systèmes de factorisation, on a
$\Theta'_{n+1} = \theta'_{n+1} = 1$ d'où l'initialisation de la récurrence.
Considérons:
$$
\textstyle
\bigwedge\nolimits^{r_k}(u'_k) 
\ =\ 
\Theta'_k\ \Theta_{k+1}'^{\sharp}
\ = \ 
\theta'_k \ \theta_{k+1}'^{\sharp}
$$
et supposons avoir prouvé $\Theta'_{k+1} = \theta'_{k+1}$. Puisque $\theta'_{k+1}$ est
sans torsion, on en déduit $\Theta'_k = \theta'_k$.
\end {proof}

\begin{rmq}
Nous utiliserons essentiellement ce résultat dans le cadre de l'élimination de
la manière suivante.
On dispose d'un système homogène $\uP$ à coefficients dans $\bfA$. Lorsque la suite $\uP$
est régulière, $\rmK_{\sbullet,d}(\uP)$ est un complexe libre exact.
Imaginons que l'on souhaite démontrer certains résultats relatifs à $\uP$ et au
système de factorisation de $\rmK_{\sbullet,d}(\uP)$. On peut alors se
permettre de génériser~$\uP$ en prenant pour indéterminées certains
coefficients des $P_i$, voire tous. On dispose ainsi d'un autre 
anneau~$\bfA^\gen$ et d'un morphisme $\kappa : \bfA^\gen \to \bfA$.
On peut alors se limiter à montrer le résultat pour $\uP^\gen$ générique,
ce qui le prouve pour $\uP$ via la proposition précédente.
\end{rmq}

\subsection{Factorisation $\BW^r(u)=\hbox{vecteur}\times\hbox{forme linéaire}$ \og pour elle-même\fg}

\index{factorisation $\text{colonne}\times\text{ligne}$ (puissance extérieure d'une application linéaire)}%

\noindent
\begin{minipage}[c]{0.6\linewidth}
Le \og pour elle-même\fg{} du titre a la signification suivante: on met provisoirement de côté
les complexes de Cayley et on se donne
\emph{une} application linéaire $u : E \to F$ entre deux modules libres
ainsi qu'\emph{une} factorisation $\bigwedge^r(u) = \Theta\,\nu$ 
où $\Theta \in \bigwedge^r(F)$ et
$\nu : \bigwedge^r(E) \rightarrow \bfA$.
Modulo des hypothèses supplémentaires de régularité sur $\Theta,\nu$,
quelles en sont les conséquences déterminantales?
L'objectif est l'égalité $\calD_{r+1}(u) = 0$ 
(cf.~\ref{StableRankFromFactorization})
et la coïncidence entre la factorisation $\bigwedge^r(u) = \Theta\,\nu$ 
et celle de $\DVect_r(u)$ sous la forme $\DVect_r(u)=\Theta^\sharp\,\fb$ où $\fb$ est l'idéal $\Im\nu$
(cf.~\ref{FactorisationEtPgcdFort}).
\end{minipage}
\hfill
\begin{minipage}[c]{0.4\linewidth}
$
\begin {array}{c}
\xymatrix @C=0.7cm @R=1.1cm @M=0.4pc{
\BW^{r}(E) \ar[d]_-{\nu} \ar[r]^-{\BW^{r}(u)} 
                 &\BW^{r}(F)\ar[r]_{\sharp}^\simeq &\BW^c(F)^\star \\
\bfA \ar[ru]_-{\times\Theta}
} \\
c = \dim F - r \\
\end {array}
$
\end{minipage}


\bigskip

L'attention portée à ce type de factorisation est bien entendue guidée
par le fait qu'elle intervient dans tout complexe de Cayley 
(a fortiori dans tout complexe libre exact) pour
chaque différentielle, la puissance extérieure étant celle de rang
attendu. De manière
indépendante, voici deux exemples élémentaires de factorisation,
presque triviaux.

\smallskip
\noindent
$\rhd$ 
Le premier est celui d'une application injective 
avec $r = \dim E$.
Alors $\bigwedge^r(u) : \bigwedge^r(E) \simeq \bfA \rightarrow \bigwedge^r(F)$ 
s'identifie, après le choix d'une base $\bfe \in \bigwedge^r(E)$, à la multiplication par 
$\Theta = \bigwedge^r(u)(\bfe)$, vecteur sans torsion car $u$ est injective.
Autrement dit, on a la factorisation $\bigwedge^r(u) = \Theta\, \bfe^\star$.

\noindent
$\rhd$ 
Le second est celui d'une application surjective avec $r = \dim F$.
Alors $\bigwedge^r(u) : \bigwedge^r(E) \rightarrow \bigwedge^r(F) \simeq \bfA$ est surjective 
et s'identifie, après le choix d'une base $\bff \in \bigwedge^r(F)$, 
à la forme linéaire (surjective donc) 
$\nu = \bff^\star \circ \bigwedge^r(u) \in \bigwedge^r(E)^\star$.
Autrement dit, on a la factorisation $\bigwedge^r(u) = \bff \, \nu$.

\bigskip
On rappelle, pour un vecteur $v$, que l'inégalité $\Gr(v) \geqslant 1$ est synonyme de
$v$ sans torsion ou encore du fait que l'idéal contenu $\rmc(v)$ est un idéal fidèle.

\begin{prop}[Factorisation et rang] \label{StableRankFromFactorization} \leavevmode

\noindent
Soit $u : E \to F$ et une factorisation $\bigwedge^r(u) = \Theta\, \nu$ où
$\Theta \in \bigwedge^r(F)$ et $\nu : \bigwedge^r(E) \rightarrow \bfA$.

\begin{enumerate}[\rm i)]
\item 
Si $\Gr(\nu,\Theta) \geqslant 1$ alors $\bigwedge^{r+1}(u) = 0$, 
c'est-à-dire $\calD_{r+1}(u) = 0$.
\item
On a la factorisation $\calD_r(u) = \rmc(\Theta)\rmc(\nu)$.
\item
Si $\Gr(\nu) \geqslant 1$ et $\Gr(\Theta) \geqslant 1$, l'idéal $\calD_r(u)$ est fidèle.
\end{enumerate}

Par conséquent, si $\Gr(\nu) \geqslant 1$ et $\Gr(\Theta) \geqslant 1$, 
alors $u$ est de rang $r$.
\end{prop}

\begin {proof} \leavevmode

i) Soit $U$ la matrice de $u$ dans des bases $(e_j)$ de $E$ et $(f_i)$ de $F$.
Ecrivons $\Theta = \sum_{\#I = r} \Theta_I f_I$ et posons $\nu_J
= \nu(e_J)$ de sorte que $\det_{I\times J}(U) = \Theta_I\, \nu_J$.

\smallskip
\noindent
Pour toute partie $K$ de cardinal $r$, on va montrer que l'on a 
$\nu_K^2\,\bigwedge^{r+1}(u)=0$ ; 
par transposée, il en sera de même pour $\Theta_K^2$ à la place de $\nu_K^2$ 
et l'hypothèse $\Gr(\Theta,\nu) \geqslant 1$ permettra de conclure.

\noindent
On doit démontrer que $\nu_K^2\, \det_{I'\times J'}(U) = 0$ pour $I'$ et $J'$ de cardinal $r+1$.
La partie $I'$ étant fixée, on note ci-dessous 
$[\quad]$ le déterminant de $r+1$ vecteurs de $F$ extraits sur les lignes d'indice $I'$. 

\smallskip
\noindent
Remarquons d'abord que pour $X \in F$ et $J$ de cardinal~$r$, on a : 
$$
\nu_K [X, U_J] = \nu_J [X, U_K]
$$
Il suffit en effet de développer chaque déterminant de part et d'autre
de l'égalité selon la première colonne et d'utiliser $\det_{I\times
J}(U) = \Theta_I\,\nu_J$ et $\det_{I\times K}(U) = \Theta_I\,\nu_K$.

\noindent
En conséquence, pour tout $\ell$ :
$$
\nu_K [U_\ell,U_J] = \nu_J [U_\ell,U_K]
\leqno (\star_{\ell,J})
$$ 
Soit $j$ le premier élément de $J'$, $J$ son complémentaire de sorte que $J' = j \vee J$ et $\#J = r$.
De la même manière, soit $k$ le premier élément de $K$, $L$ son complémentaire  de sorte que  
$K = k \vee L$ et $\#L = r-1$. On a alors
$$
\begin{array}{rcl}
\nu_K [U_j,U_K] 
&\buildrel{\rm def}\over=& \nu_K [U_j,U_k,U_L] =  -\nu_K[U_k,U_j,U_L]
\\
&\buildrel{\rm def}\over=&
-\nu_K [U_k,U_{j\vee L}] \buildrel {(\star_{k,j\vee L})} \over =
-\nu_{j\vee L} [U_k,U_K] = 0
\\
\end {array}
$$
la nullité du déterminant de droite provenant du fait que ses deux premières colonnes sont égales.
En fin de compte, on obtient
$$
\nu_K^2\det\nolimits_{I' \times J'}(U) \buildrel{\rm def}\over=
\nu_K^2 [U_j,U_J] \buildrel {(\star_{j, J})} \over = \nu_K\nu_J [U_j,U_K] = 
\nu_J \times \nu_K [U_j,U_K] = 0
$$
\smallskip
ii)
Puisque l'égalité $\BW^r(u) = \Theta\,\nu$ est du type
$\hbox {matrice} = \hbox {vecteur-colonne} \times \hbox {vecteur-ligne}$, on
en déduit l'égalité d'idéaux $\calD_r(u) = \rmc(\Theta)\rmc(\nu)$.

\smallskip
iii)
Les idéaux $\rmc(\Theta)$ et $\rmc(\nu)$ étant fidèles, il en est de même de
leur produit $\calD_r(u)$.
\end {proof}

\noindent
Voici comment s'interprète la factorisation de $\BW^r(u)$ en terme
du sous-module $\DVect_r(u) \subset \BW^c(F)^\star$
défini en~\ref{DefDVect}.

\begin{prop}\label{FactorisationEtPgcdFort}
Soit $u : E \to F$ une application linéaire entre modules libres
et un isomorphisme déterminantal $\sharp
: \BW^r(F) \buildrel\simeq\over\longrightarrow \BW^c(F)^\star$ où $r,c$
sont complémentaires à $\dim F$.

\medskip
\noindent
Les deux assertions suivantes sont équivalentes.

\smallskip
\noindent
(i)
Il existe une factorisation $\bigwedge^r(u) = \Theta \, \nu$ où
$\nu \in \BW^r(E)^\star$ et $\Theta \in \bigwedge^r(F)$
avec $\Gr(\Theta) \geqslant 1$.

\smallskip
\noindent
(ii) Il existe une factorisation $\DVect_r(u) = \mu\,\fb$ où $\fb$ un idéal de type fini
et $\mu \in \BW^c(F)^\star$ avec $\Gr(\mu) \geqslant 1$.

\medskip
\noindent
La correspondance entre les deux est $\Theta^\sharp\leftrightarrow\mu$ et
$\Im\nu \leftrightarrow \fb$. En conséquence, $\DVect_r(u)$ admet un pgcd
fort (à savoir $\mu$)
si et seulement si $\Gr(\nu) \geqslant 2$.
\end{prop}

\begin{proof}\leavevmode
Soit $(e_j)_j$ une base de $E$.

\smallskip
\noindent
$(i) \Rightarrow (ii)$.
Appliquons l'égalité $\bigwedge^r(u) = \Theta \, \nu$ en~$e_J$ où
$\#J = r$ et donnons un coup de dièse :
$\big(\bigwedge^r(u)(e_J)\big)^\sharp = \nu(e_J)\,\Theta^\sharp$.  En
notant $\fb = \Im\nu$, idéal engendré par les $\nu(e_J)$, on a
$\DVect_r(u) = \Theta^\sharp\,\fb$.

\smallskip
\noindent
$(ii) \Rightarrow (i)$.
Il s'agit d'exhiber une factorisation de $\bigwedge^r(u)$ à partir de l'information
$\DVect_r(u) = \mu\,\fb$ où $\Gr(\mu) \geqslant 1$.
Pour $\#J = r$, on a
$\big(\bigwedge^r(u)(e_J)\big)^\sharp = \mu b_J$ pour un certain
$b_J \in \fb$ bien défini puisque $\mu$ est sans torsion.  Soit $\nu
: \bigwedge^r(E) \to \bfA$ la forme linéaire définie par $\nu(e_J) =
b_J$ et $\Theta \in \bigwedge^r(F)$ défini par $\Theta^\sharp = \mu$.
On a alors l'égalité $\bigwedge^r(u) = \Theta\,\nu$ comme on le voit
en évaluant en $e_J$ et en appliquant $\sharp$.
\end{proof}

\subsection {Déterminant de Cayley d'un complexe de Cayley}
\label{subsectionCayleyMacRae}

\index{de@déterminant de Cayley d'un complexe}%

Soit $(F_\sbullet, u_\sbullet)$ un complexe de Cayley.  Le principe
même de l'énoncé du théorème~\ref{Factorisation} fait que les
factorisations des puissances extérieures de deux différentielles
successives sont reliées entre elles.  Si on met de côté cette
inter-dépendance et si on oublie les isomorphismes
déterminantaux, on en déduit que chaque
différentielle, de manière indépendante, se factorise dans le sens
suivant :
$$
\textstyle \bigwedge^{r_k}(u_k) = \Theta_k \, \nu_k, \qquad 
\text{où $\Theta_k \in \bigwedge^{r_k}(F_{k-1})$ et 
$\nu_k \in \bigwedge^{r_k}(F_k)^\star$}
$$
avec les informations de profondeur suivantes : du côté de la forme
linéaire, on a $\Gr(\nu_k) \geqslant 2$ pour tout $k \geqslant 1$ tandis
que $\Gr(\Theta_k) \geqslant 2$ pour tout $k \geqslant 2$ mais
$\Gr(\Theta_1) \geqslant 1$.

\smallskip
Ces précisions de profondeur pour $\Theta_k$ et $\nu_k$ permettent
d'en déduire que la factorisation de $\bigwedge^{r_k}(u_k)$ est unique
à un inversible près au sens où toute autre factorisation est de la
forme $(\varepsilon \Theta_k, \varepsilon^{-1}\nu_k)$ pour un inversible
$\varepsilon$ comme on le voit en utilisant~\ref{uniciteFactorisation}.
Ainsi, le module engendré par la forme $c$-linéaire $\Theta_1^\sharp$ est bien défini.
On peut donc définir, sans ambiguïté, un sous-module libre de 
rang $1$ attaché à $F_{\sbullet}$ de la manière suivante.

\begin{theo}[Déterminant de Cayley vectoriel] 
\label{CayleyDetVectoriel}
\leavevmode

Soit $(F_\sbullet,u_\sbullet)$ un complexe de Cayley de caractéristique 
d'Euler-Poincaré $c$.
Le déterminant de Cayley vectoriel 
$\CayleyVect(F_\sbullet)$ de $F_\sbullet$ est
le sous-module libre de rang $1$ de $\bigwedge^{c}(F_0)^\star$ caractérisé par :
$$
\textstyle
\CayleyVect(F_\sbullet) \ = \ \bfA \mu \ \subset \ \bigwedge\nolimits^{c}(F_0)^\star
\qquad \qquad
\begin {array} {l}
\text{où $\mu$ est la forme $c$-linéaire
définie en~\ref{cLinearFormOfComplex}}
\\
\end {array}
$$
Cette forme $c$-linéaire $\mu$ 
est le pgcd fort du sous-module
$\DVect_{r_1}(u_1) \subset \BW^c(F_0)^\star$ (défini en 
\ref{DefDVect}).

\smallskip
Par abus de langage, on parlera de \og la \fg{} forme 
$c$-linéaire alternée de $F_\sbullet$ pour parler d'un générateur du
sous-module $\CayleyVect(F_\sbullet)$ de $\bigwedge^{c}(F_0)^\star$.
\end{theo}

\label{NOTA09-CayleyVect}%

\begin{proof}
Pour l'aspect pgcd fort, 
cela résulte du fait que la première 
différentielle de $F_\sbullet$ se factorise sous la forme 
$\bigwedge^{r_1}(u_1) = \Theta_1 \nu_1$ 
avec $\Gr(\nu_1) \geqslant 2$.
Et la proposition~\ref{FactorisationEtPgcdFort} permet de conclure.
\end{proof}

\medskip

Pour le cas très particulier de la caractéristique d'Euler-Poincaré $c = 0$, 
les auteurs préfèrent parler d'un \textit{scalaire} 
plutôt que de la \textit{forme linéaire} \og multiplication par ce scalaire\fg{}.
Le vocabulaire employé est alors \og déterminant de Cayley \fg{}.
C'est la raison pour laquelle nous avons repris cette terminologie 
en accolant l'adjectif \og vectoriel \fg{} dans le 
cas d'une caractéristique d'Euler-Poincaré quelconque, 
et nous parlerons de la \og forme déterminant de Cayley \fg{} 
(sous-entendu, forme $c$-linéaire alternée).

\medskip

Dans le cas où $c = 0$, à $F_\sbullet$ est donc attaché
un \textit{scalaire} régulier, défini à un inversible près: c'est le
pgcd fort de l'idéal $\calD_{r_1}(u_1)$ des mineurs pleins de $u_1$.
Dans ce cas, on préfère noter $\fC(F_\sbullet)$ \textit{l'idéal} de
$\bfA$ engendré par le scalaire régulier.

\medskip

L'autre cas important est celui de la caractéristique d'Euler-Poincaré
$c = 1$. \`A un tel complexe de Cayley  est attachée une forme linéaire $\mu : F_0 \to \bfA$
sans torsion, définie à un inversible près, 
qui est le pgcd fort des formes linéaires du sous-module $\DVect_{r_1}(u_1)$ de $F_0^\star$.

\bigskip
Si l'on veut exprimer la forme $c$-linéaire alternée, il faut encore
travailler.  La notion de complexe décomposé, développée dans le
chapitre~\ref{ComplexeDecompose}, participe à cette opération ; non
seulement d'une manière  pratique (elle permet d'assurer la
calculabilité de la factorisation) mais également de manière théorique
en aboutissant aux expressions de type quotients alternés.

\subsubsection*{Où l'on retrouve les modules de MacRae}

Rappelons qu'un module de MacRae de rang $c$ est un module possédant
une présentation $u : E \to F$ telle qu'en posant $r = \dim F -c$, on
ait $\calD_{r+1}(u) = 0$ et l'existence d'un pgcd fort de
$\DVect_r(u)$.  C'est une définition indépendante de la présentation
choisie (cf. le résultat~\ref{cFittingVectoriel} et la
définition plus intrinsèque~\ref{DefModuleMacRae} d'un module
de MacRae, équivalente à celle que nous venons de rappeler).

\begin{theo}[Module librement résoluble versus module de MacRae] \label{LibrementResolubleImpliqueMacRae}
Un module librement résoluble est un module de MacRae.

\smallskip
Précisément, supposons $M$ librement résolu par un complexe libre
$(F_\sbullet, u_\sbullet)$ de caractéristique d'Euler-Poincaré $c$.  Alors $M$ est
un module de MacRae de rang $c$ dont l'invariant de MacRae \mbox{$\vartheta
: \BW^c(M) \to \bfA$} s'obtient par passage au quotient du déterminant de
Cayley $\mu : \bigwedge^{c}(F_0) \to \bfA$ de $F_\sbullet$.

\smallskip
Avec un léger abus de langage, le déterminant de Cayley de
$F_\sbullet$ et l'invariant de MacRae de $M$ coïncident:
$$
\CayleyVect(F_\sbullet) \ \buildrel {\rm abus} \over = \ 
\MacRaeVect(M) 
$$
Note: à gauche, il s'agit d'un sous-module de $\bigwedge^{c}(F_0)^\star$ et à droite
d'un sous-module de $\bigwedge^{c}(M)^\star$.
\end{theo}

\begin{proof}
Le module $M$ est présenté par $u_1$ qui vérifie $\calD_{r_1 + 1}(u_1)
= 0$
(d'après~\ref{StableRank} ou encore~\ref{StableRankFromFactorization}).

De plus, par définition, la forme $c$-linéaire alternée $\mu$
est le pgcd fort de $\DVect_{r_1}(u_1)$.  Ainsi $M$ est  un module 
de MacRae de rang $c$. Le reste résulte de l'étude précédente.
\end{proof}

\subsection{L'isomorphisme $\rho : \Det(F_\sbullet)^\star \simeq \CayleyVect(F_\sbullet)$}

Ici, pour toute orientation $\bfe$ d'un module libre $F$, on choisit comme isomorphisme
déterminantal celui pour lequel le joker est à droite:
$$
\sharp_{\bfe} : 
\begin{array}[t]{rcl}
\bigwedge^{r}(F) & \longrightarrow & \bigwedge^{r'}(F)^\star \\ [0.4em]
\Theta & \longmapsto & \Theta^\sharp := 
\oriented{\Theta\wedge\sbullet\,}_\bfe
\end{array}
\qquad 
\text{avec $r + r' = \dim F$}
$$
Pour un inversible $\varepsilon$, on va utiliser l'égalité suivante
$[\Theta\wedge\sbullet\,]_{\varepsilon\bfe} \,= \, \varepsilon^{-1} [\Theta\wedge\sbullet\,]_{\bfe}$. 

\index{isomorphisme déterminantal}%

\begin{prop}[Dépendance en les orientations] \label{DependanceOrientation}

Soit $F_\sbullet$ un complexe de Cayley de caractéristique d'Euler-Poincaré $c$
et deux systèmes d'orientation $(\bfe_k)_{k \geqslant 0}$, $(\bfe'_k)_{k \geqslant 0}$ sur
$(F_k)_{k \geqslant 0}$. 
On note $\varepsilon_k \in \bfA$ l'unique scalaire inversible tel que $\bfe'_k = \varepsilon_k \bfe_k$.

En désignant par $(\Theta_k)_{k \geqslant 1}$
(resp. $(\Theta'_k)_{k \geqslant 1}$) le système de factorisation
associé au système d'orientation $(\bfe_k)_{k \geqslant 1}$ 
(resp. $(\bfe'_k)_{k \geqslant 1}$)
et par $\mu = \sharp_{\bfe_0}(\Theta_1) = [\Theta_1 \wedge \sbullet]_{\bfe_0}$  
(idem avec un prime), 
on a :
\begin{itemize}
\item dans $\bigwedge^{r_k}(F_{k-1})$, l'égalité :
$$
\textstyle
\Theta'_k 
\ = \ 
\dfrac{\varepsilon_k \,\varepsilon_{k+2} \, \cdots}
{\varepsilon_{k+1} \, \varepsilon_{k+3} \, \cdots}
\ \Theta_k 
$$
\item 
dans $\bigwedge^{c}(F_{0})^\star$, l'égalité :
$$
\mu' 
\ = \ 
\dfrac{\varepsilon_1 \,\varepsilon_{3} \, \cdots}
{\varepsilon_{0} \, \varepsilon_{2} \, \cdots}
\ \mu
$$ 
\end{itemize}
\end{prop}

\begin{proof}
\leavevmode

\begin{itemize}
\item 
Par hypothèse, on a les égalités:
$$
\textstyle
\bigwedge^{r_k}(u_k) \ =\  \Theta_k\ [\Theta_{k+1} \wedge\sbullet\,]_{\bfe_k} \ =\ 
\Theta'_k\ [\Theta'_{k+1} \wedge\sbullet\,]_{\bfe'_k}
\leqno (\star)
$$
Supposons $\Theta'_{k+1} = \lambda_{k+1}\, \Theta_{k+1}$ avec $\lambda_{k+1}$ inversible.
C'est le cas pour $k =n$ (prendre $\lambda_{n+1} = 1$).
Il vient :
$$
[\Theta'_{k+1} \wedge\sbullet\,]_{\bfe'_k} \ =\ 
\lambda_{k+1} [\Theta_{k+1} \wedge\sbullet\,]_{\bfe'_k} \ = \ 
\lambda_{k+1} \,\varepsilon_k^{-1}\,[\Theta_{k+1} \wedge\sbullet\,]_{\bfe_k}
$$
la deuxième égalité provenant de 
$[\Theta\wedge\sbullet\,]_{\varepsilon\bfe} \,= \, \varepsilon^{-1} [\Theta\wedge\sbullet\,]_{\bfe}$. 
En reportant cela dans l'égalité droite de $(\star)$, on obtient :
$$
\Theta_k\ [\Theta_{k+1} \wedge\sbullet\,]_{\bfe_k} \ =\ 
\lambda_{k+1} \, \varepsilon_k^{-1}\,
\Theta'_k\ [\Theta_{k+1} \wedge\sbullet\,]_{\bfe_k}
$$
En simplifiant par $[\Theta_{k+1} \wedge\sbullet\,]_{\bfe_k}$ (qui est une forme linéaire 
sans torsion),
on en déduit :
$$
\Theta_k \ =\  \lambda_{k+1} \,\varepsilon_k^{-1}\, \Theta'_k
\qquad \idest{} \qquad
\Theta'_k \ = \  \lambda_k \Theta_k
\quad \hbox {où $\lambda_k$ est défini par } 
\lambda_k\,\lambda_{k+1} = \varepsilon_k
$$

\item 
On a $\Theta'_1 =  \lambda_1 \Theta_1$ 
et $\bfe'_0 = \varepsilon_0 \bfe_0$.
Montrons que $\mu' = \lambda_1\dfrac{1}{\varepsilon_0 } \mu$, ce qui suffira à conclure.

On a les égalités suivantes :
$$
\mu' 
\ = \  
[\Theta'_1 \wedge \sbullet]_{\bfe'_0}
\ = \ 
[\lambda_1 \Theta_1 \wedge \sbullet]_{\varepsilon_0 \bfe_0}
\ = \ 
\lambda_1 [\Theta_1 \wedge \sbullet]_{\varepsilon_0 \bfe_0}
\ = \
\lambda_1 \dfrac{1}{\varepsilon_0 }[\Theta_1 \wedge \sbullet]_{\bfe_0}
\ = \ 
\lambda_1\dfrac{1}{\varepsilon_0 } \mu
$$
\end{itemize}
\end{proof}

\subsubsection{Une autre vision du déterminant de Cayley $\CayleyVect(F_\sbullet)$}

\`A un complexe de Cayley $F_\sbullet$,
on associe le $\bfA$-module libre de rang $1$ suivant 
qui ne dépend que des termes $F_k$ du complexe :
$$
\textstyle
\Det(F_\sbullet) \ = \ 
\bigotimes\limits_{k \, \rm pair} \bigwedge^{\dim  F_k}(F_k)
\ \otimes \ \!\!
\bigotimes\limits_{h \, \rm impair} \!\!
\bigwedge^{\dim  F_h}(F_h)^\star
\, \ = \ \, 
\bigwedge^{\dim  F_0}(F_0) \, \otimes \, \bigwedge^{\dim  F_1}(F_1)^\star \, \otimes \, 
\bigwedge^{\dim  F_2}(F_2) \, \otimes \, \cdots
$$

\begin{prop}
Soit $\bfe_k$ une orientation de $F_k$ pour $k \geqslant 0$ et 
$\bfe_k^\star$ l'orientation déduite sur $F_k ^\star$.
Considérons la forme $c$-linéaire $\mu$, définie en~\ref{cLinearFormOfComplex}, 
dépendant des orientations $(\bfe_k)$.

On dispose alors d'un morphisme $\rho$ bien défini (\idest{} indépendant des orientations choisies) :
$$
\textstyle
\rho : \ \Det(F_\sbullet)^\star \ \longrightarrow \ \bigwedge^{c}(F_0)^\star, 
\qquad \quad
\beta_0 \otimes \alpha_1 \otimes \beta_2 \cdots 
\ \longmapsto \ 
\beta_0(\bfe_0) \, \bfe_1^\star(\alpha_1) \, \beta_2(\bfe_2) \cdots \ \, 
\times 
\mu 
$$
Ce morphisme est injectif et établit un isomorphisme de $\Det(F_\sbullet)^\star$ sur $\CayleyVect(F_\sbullet)$:
$$
\textstyle
\CayleyVect(F_\sbullet) \ = \ 
\rho\big(\Det(F_\sbullet)^\star \big) \ \subset \ 
\bigwedge^{c}(F_0)^\star
$$
\end{prop}

\label{NOTA09-Det}%
%
%

\begin{proof}
Montrons que $\rho$ est indépendant des orientations.
Soit $\bfe'_k= \varepsilon_k \, \bfe_k$ une autre orientation pour~$F_k$ 
(où $\varepsilon_k$ est inversible), de sorte 
qu'au niveau des formes linéaires, on a ${\bfe'_k}^\star = 
\varepsilon_k^{-1} \bfe_k^\star$.
On a donc les égalités :
$$
\beta_k(\bfe'_k) \, = \, \varepsilon_k \, \beta_k(\bfe_k)
\qquad \text{et} \qquad 
{\bfe'_h}^\star(\alpha_h) \, = \, \varepsilon_h^{-1} \, \bfe_h^\star(\alpha_h)
$$
Ainsi 
$$
\beta_0(\bfe'_0) \, {\bfe'_1}^\star(\alpha_1) \, \beta_2(\bfe'_2) \cdots \ \, 
\ = \ 
\dfrac{\varepsilon_0 \,\varepsilon_2 \, \cdots}
{\varepsilon_1 \, \varepsilon_3 \, \cdots} \ \, 
\beta_0(\bfe_0) \, \bfe_1^\star(\alpha_1) \, \beta_2(\bfe_2) \cdots \ \, 
$$
En multipliant cette égalité par $\mu'$ 
associée à $(\bfe'_k)$,
qui vaut 
$
\mu' 
=
\dfrac{\varepsilon_1 \,\varepsilon_{3} \, \cdots}
{\varepsilon_{0} \, \varepsilon_{2} \, \cdots}
\ \mu
$ 
(d'après~\ref{DependanceOrientation}), 
on obtient :
$$
\beta_0(\bfe'_0) \, {\bfe'_1}^\star(\alpha_1) \, \beta_2(\bfe'_2) \cdots \ \, \mu'
\ = \ 
\beta_0(\bfe_0) \, \bfe_1^\star(\alpha_1) \, \beta_2(\bfe_2) \cdots \ \, 
\mu
$$

\medskip
\noindent
Pour l'injectivité de $\rho$, il suffit d'utiliser le fait que 
$\mu$ est sans torsion, par définition (cf.~\ref{cLinearFormOfComplex}).
Concernant l'image de $\rho$:
par définition, $\CayleyVect(F_\sbullet)$ est 
le $\bfA$-module libre de rang $1$ de base $\mu$, tout comme $\Im \rho$.
\end{proof}

\subsection{Bonus : idéaux de factorisation d'un complexe exact}

La donnée ici est un complexe libre \emph{exact} $(F_\sbullet, u_\sbullet)$
dont nous notons comme auparavant $(r_k)_{0\leqslant k\leqslant n+1}$ la suite des
rangs attendus et $(\Theta_k)_{k \geqslant 1}$ un système de factorisation:
$\bigwedge^{r_k}(u_k) = \Theta_k\,\Theta_{k+1}^\sharp$, cf le théorème~\ref{Factorisation}.
Pour alléger, nous posons $\fD_k = \calD_{r_k}(u_k)$ et utiliserons parfois
$\nu_k := \Theta_{k+1}^\sharp \in \BW^{r_k}(F_{k})^\star$.

\label{NOTA09-fDk}%

\medskip
Nous allons considérer l'idéal contenu $\rmc(\Theta_k)$ du vecteur $\Theta_k$.
Puisque l'égalité $\BW^{r_k}(u_k) = \Theta_k \nu_k$ est du type
$\hbox {matrice} = \hbox {vecteur-colonne} \times \hbox {vecteur-ligne}$, on
en déduit la factorisation $\fD_k = \rmc(\Theta_k) \rmc(\Theta_{k+1})$.
Il faut noter que cette information obtenue au niveau des idéaux
est plus faible que celle fournie par la factorisation \og vectorielle\fg{} de 
$\BW^{r_k}(u_k)$.

\begin{defn}[Idéaux de factorisation d'un complexe exact] \label{IdeauxFactorisation}
Dans le contexte ci-dessus, on définit les idéaux $(\fB_k)_{1 \leqslant k \leqslant n+1}$
où $\fB_k = \rmc(\Theta_k)$ est l'idéal engendré par les coordonnées de $\Theta_k$.
\end{defn}

\label{NOTA09-fBk}%
%
%

\begin{prop}[Idéaux déterminantiels versus idéaux de factorisation]
Les idéaux déterminantiels $\fD_k$ de $F_\sbullet$ 
se factorisent à l'aide des idéaux $\fB_k$ sous la forme :
$$
\fD_k = \fB_k\fB_{k+1}
\qquad \text{avec } \qquad 
\left\{
\begin{array}{l}
\Gr(\fB_1) \geqslant 1\\
\Gr(\fB_k) \geqslant k \quad \text{pour tout $k \geqslant 2$} \\ 
\end{array}
\right.
\qquad \text{ et } \qquad 
\fB_{n+1} = \bfA
$$
En particulier, pour un complexe de caractéristique d'Euler-Poincaré $0$, 
l'idéal $\fB_1$ est monogène.
\end{prop}

\begin{proof} \leavevmode

\noindent
Une façon équivalente d'obtenir la factorisation de $\fD_k$ consiste à
évaluer l'égalité $\bigwedge^{r_k}(u_k)
= \Theta_k\,\Theta_{k+1}^\sharp$ en une base de
$\bigwedge^{r_{k}}(F_k)$,  ce qui fournit une égalité de vecteurs de
$\bigwedge^{r_{k}}(F_{k-1})$.  En prenant les coordonnées de ces
vecteurs dans une base, on obtient la factorisation annoncée.

\medskip
\noindent
On a l'inclusion $\fD_k \subset \fB_k$, conséquence de l'égalité
$\fD_k = \fB_k\fB_{k+1}$, donc $\Gr(\fB_k) \geqslant \Gr(\fD_k)$. On conclut
en utilisant ``What makes a complex exact'' (cf. \ref{WhatMakesAComplexExact}).

\medskip
\noindent
Si $r_0 = 0$, alors $r_1 = \dim F_0$, et 
ainsi le vecteur $\Theta_1 \in \bigwedge^{r_1}(F_0) \simeq \bfA$ 
s'identifie à un scalaire, si bien que~$\fB_1$ est monogène.
\end{proof}

\begin {rmq}
Si l'on considère un module librement résoluble $M$, nous ignorons
si les idéaux de factorisation sont indépendants du choix de la résolution de $M$.
Northcott passe totalement sous silence cet aspect 
(confer la phrase juste après la définition page $218$ : 
\og Sometimes we shall simply say that $(\fB_k)_k$ 
is a system of factorization ideals for $M$ without specifying the resolution 
which produces them \fg{}).
\end {rmq}

\medskip

Voici maintenant un théorème assez subtil.
D'ailleurs, Northcott écrit juste avant son énoncé : \og the next result is more unexpected \fg{} 
(cf. \cite[théorème 8, chapitre 7]{NorthcottFFR}).

\begin{theo} \label{RacineIdealFactorisation}
On conserve le contexte ci-dessus d'un complexe exact $F_\sbullet$.

\medskip
i) Pour $k \geqslant 2$, on a l'égalité $\sqrt {\fD_k} = \sqrt {\fB_k}$.

ii) On dispose des inclusions 
$$
\sqrt{\fD_1} \ \subset \  \sqrt{\fD_2}
\subset \ \cdots \ \subset \ \sqrt{\fD_k} \subset \ \cdots
$$
\end{theo}

Pour la preuve de ce théorème, nous avons mis 
au point deux énoncés \ref{FactorisationKerFacteurDirect} et \ref{FactorisationImFacteurDirect} 
concernant des propriétés relatives à la factorisation 
$\bigwedge^r(u) = \Theta \, \nu$.
De ces deux énoncés, on peut essentiellement retenir cela : 
\og si la forme linéaire $\nu$ est surjective, alors le noyau de $u$ est facteur direct \fg{} et 
\og si l'image de $u$ est facteur direct, alors la forme linéaire $\nu$ est surjective \fg{}.
Plus précisément :
\begin{quote}
{\it 
Soit $u : E \to F$ avec une factorisation $\bigwedge^r(u) = \Theta\, \nu$ telle que 
$\Gr(\Theta) \geqslant 1$ et $\Gr(\nu) \geqslant 1$.
$$
\Im u \text{ facteur direct dans } F 
\ \implies \
\nu \text{ surjective}
\ \implies \
\Ker u \text{ facteur direct dans } E 
\leqno (\star)
$$
}
\end{quote}

\begin{proof}[Preuve du théorème \ref{RacineIdealFactorisation}] \leavevmode

\noindent
i) Il suffit de montrer les inclusions $\fD_k \subset \fB_k \subset \sqrt{\fD_k}$.
La première résulte de $\fD_k = \fB_k\fB_{k+1}$.
Quant à la seconde, il suffit de montrer que $\fB_k \subset \sqrt{\fB_{k+1}}$.
Après localisation en $a \in \fB_k$, cela revient à montrer l'implication
$1 \in \fB_k \Rightarrow 1 \in \fB_{k+1}$.

\smallskip
\noindent
Supposons donc $1 \in \fB_k$. Puisque $\fB_k = \rmc(\Theta_k) = \rmc(\nu_{k-1})$, la forme linéaire 
$\nu_{k-1}$ est surjective.
En appliquant $(\star)$ à $u_{k-1}$, on en déduit que $\Ker u_{k-1}$ est facteur direct.
Mais $\Ker u_{k-1} = \Im u_k$ et de nouveau $(\star)$ appliqué à $u_k$ fournit
la surjectivité de $\nu_k$ c'est-à-dire $1 \in \fB_{k+1}$.

\medskip
\noindent
ii) Le fait que la suite $(\sqrt{\fD_k})_{k \geqslant 1}$ soit croissante résulte de :
$$
\fD_{k-1} \ \subset \ \fB_k \ \subset \ \sqrt{\fB_k} \, = \, \sqrt{\fD_k}
$$
\end{proof}

\begin {rmq} 
La contrainte $k \geqslant 2$ dans l'égalité $\sqrt {\fD_k} = \sqrt
{\fB_k}$ est indispensable.  Ou encore, ce n'est pas vrai que $\nu_0
:= \Theta_1^\sharp$ surjective entraîne $\nu_1 := \Theta_2^\sharp$
surjective.  Considérons une forme linéaire $u : E \to \bfA$ sans
torsion. Avec $\Theta = 1$, $\nu = u$ et $r=1$, on a la factorisation
$\bigwedge^r(u) = \Theta \, \nu$.
Ainsi, pour obtenir un contre-exemple, il
suffit de trouver un complexe exact dont la première différentielle
est une forme linéaire \emph{non} surjective.  Ceci n'est pas
difficile : prendre le complexe de Koszul d'une suite $\ua =
(a_1, \ldots, a_n)$ complètement sécante mais non unimodulaire.
\end {rmq}

\subsection{Petit historique}

La notion de structure multiplicative joue un rôle capital dans notre approche
sur le résultant et il nous semble donc important d'apporter quelques
précisions de nature historique. Nous avons retenu cette terminologie 
\og structure multiplicative\fg{} car elle 
s'inspire directement du titre du chapitre~7 
(The multiplicative structure) de l'ouvrage de Northcott, mais il n'est pas
certain qu'elle évoque un processus de factorisation des puissances
extérieures ad hoc des différentielles d'une résolution libre.

Les initiateurs sont Buchsbaum~\&~Eisenbud dans ``Some structure theorems for
finite free resolutions'' (1974), en particulier leur théorème~3.1, déja
annoncé par ces auteurs en 1972. On retrouve l'énoncé de ce théorème dans
le papier ``On the Buchsbaum-Eisenbud theory of finite free resolutions'' 
de Eagon~\&~Northcott paru en janvier 1973 (c'est très exactement leur théorème~3). 
Les dates 1973-1974 de parution s'expliquent par le fait que Eagon~\&~Northcott 
ont eu accès au preprint des précurseurs
Buchsbaum~\&~Eisenbud. On peut dire que grosso modo, la volonté de 
Eagon~\&~Northcott était de faire simple (ce qui est est également le cas l'ouvrage de
Northcott paru en 1976), comme en témoignent les quelques passages suivants de
leur papier~:
\begin{quote}
{\it
``The techniques used are all relativement elementary'',
``... makes the subject more accessible to the newcomer''. 
``It was primarily because Theorem~3 has such a rich variety of applications, 
that we have attempted to find a simpler treatment. What has gone before, we
would claim, uses only the simplest results of Commutative Algebra. What comes
now, it will be found, requires no previous knowledge of Homological Algebra.''
}
\end{quote}

\noindent
Melvin Hochster, à la fin de son review (1975) de l'article ``Some structure
theorems for finite free resolution'' de Buchsbaum \& Eisenbud, dit lui-même~:

\begin{quote}
{\it 
``Finally, we note that J. A. Eagon and D. G. Northcott have given an account
of some of the authors' results without the use of exterior algebra or
homological methods. In particular, the commutative diagrams of tensor
products of several exterior algebra that abound in the Buchsbaum-Eisenbud
paper are absent, and some readers may prefer the Eagon-Northcott treatment
first.''
}
\end{quote}

\noindent
Et dans son review (1978) du livre de Northcott~:

\begin{quote}
{\it 
``There are two features in which the author's treatment
differs from existing accounts of the subject: first, he confines himself
almost entirely to elementary methods, avoiding Ext, Tor, and even exterior
powers (we shall do likewise), and, second, he exploits a new notion of grade
(or depth) in the non-Noetherian case which permits him to dispense entirely
with the Noetherian restrictions on the ring. The very elementary form of the
treatment enables the author to make accessible some fancy results from the
homological theory of rings to readers with virtually no background in
algebra.''
}
\end{quote}

\noindent
De manière plus récente (2011), le fondement de la théorie des résolutions
libres est revisité dans l'article~\cite{CoquandQuitte} 
de Coquand~\&~Quitté, l'objectif étant d'une
part de rendre encore plus élémentaire l'approche de Northcott et d'autre part
de supprimer l'utilisation du spectre premier. 
Dans cet article, l'existence de la structure multiplicative repose sur le théorème de
proportionnalité, ce qui permet de remplacer le contexte \og résolution libre
finie/complexe exact \fg{} par \og complexe de Cayley \fg{} ; dans le premier contexte, l'escalier de
minoration des profondeurs est $1,2,3,4, \dots$
à comparer avec l'escalier $1,2,2,2,\dots$ pour un complexe de Cayley.
Signalons que les théorèmes de proportionnalité qui figurent ou bien dans la
proposition~7 du papier de Eagon~\&~Northcott ou bien dans le lemme~2 du 
chapitre~7 de Northcott sont plus faibles que le lemme~8.2 de~\cite{CoquandQuitte}.
Il nous est ainsi difficile de comprendre les propos de Hochster dans les dernières lignes de 
son review sur le papier de Buchsbaum~\&~Eisenbud~: 

\begin{quote}
{\it 
``The arguments of Eagon and Northcott show that \dots{} in the non-Noetherian
case and that for (3.1), it is not necessary to assume the complex acyclic:
the depths need only climb $1,2,2,\ldots,2,\ldots$ instead of
$1,2,3,\ldots,i,\ldots$ as in the acyclic case.''
}
\end{quote}

\cleardoublepage

\section{Invariants de MacRae (scalaires et vectoriels) attachés à $\protect\uP$}
\label{ChapMacRaeForP}

Après avoir étudié les invariants de MacRae associés au jeu étalon
généralisé (cf.~\ref{ChapJeuEtalonGeneralise}), nous les étudions
ici dans le cadre d'une suite $\uP$ générale.  Nous allons \og
jongler\fg{} entre les cas $\uP$ générique, $\uP$ régulière et $\uP$
quelconque, le cadre $\uP$ régulière permettant d'assurer que les
modules qui interviennent sont de MacRae.  Dans un premier temps, nous
allons utiliser le cadre générique de façon à pouvoir spécifier de
manière précise (sans ambiguïté de signe) des générateurs de ces
invariants.

\medskip

Pour réaliser ce programme, nous allons avoir besoin des deux résultats fondamentaux 
(qui nécessitent tous les deux le théorème de structure multiplicative) :
\begin{itemize}
\item 
l'invariant de MacRae se spécialise, cf.~\ref{ExtensionScalaires}
\item
un module librement résoluble est un module de MacRae, cf.~\ref{LibrementResolubleImpliqueMacRae}
\end{itemize}

\medskip

Interviennent les acteurs habituels: le $\bfA[\uX]$-module gradué
$\bfB= \bfA[\uX]/\langle \uP \rangle$, sa composante homogène~$\bfB_d$
de degré $d$, un déterminant bezoutien~$\nabla$ non unique,
déterminant d'une matrice homogène~$\dsV$ exprimant la suite~$\uP$ en
fonction de la suite~$\uX$ (confer~\ref{DefNabla}).
Un tel déterminant bezoutien permet de définir un nouveau $\bfA[\uX]$-module indépendant du choix 
de la matrice~$\dsV$ (et de $\nabla$), à savoir $\bfB' = \bfA[\uX]/\langle \uP, \nabla \rangle$ 
(cf.~\ref{DefQuotientsP}).
L'entier $\widehat d_i$ désigne le produit $\prod_{j \neq i} d_j$
où $D = (d_1, \dots,d_n)$ est le format de degrés 
du système~$\uP$.

\medskip

D'après~\ref{ResolutionQuotients}, lorsque $\uP$ est régulière, 
les modules $\bfB_d$ et $\bfB'_\delta$ sont librement résolubles
a fortiori des modules de MacRae.
D'après le résultat de spécialisation, 
on peut se permettre de déterminer l'invariant de MacRae de ces modules
en supposant $\uP$ générique, puis ensuite spécialiser.

\medskip

Précisément, dans le premier énoncé qui vient, nous allons dégager un générateur
privilégié (et pas seulement défini au signe près) de chaque
module/idéal de MacRae pour la suite générique :

\begin{itemize}
\item pour $d \geqslant \delta + 1$, 
le module $\bfB_d$ est de MacRae de rang $0$ et 
son invariant de MacRae $\MacRae(\bfB_d)$ est engendré par le scalaire $\calR_d$ ;

\smallskip

\item 
le module $\bfB_\delta$ est de MacRae de rang $1$ et 
son invariant de MacRae $\MacRaeVect(\bfB_\delta)$ 
est engendré par la forme linéaire $\uomegares \in \bfB_\delta^\star$ ;

\smallskip

\item 
le module $\bfB'_\delta$ est de MacRae de rang $0$ et 
son invariant de MacRae 
$\MacRae(\bfB'_\delta)$ est engendré par le scalaire $\omegares(\nabla)$.
\end{itemize}

\medskip
Signalons que le choix des générateurs invoqué ci-dessus est attaché à
la \emph {suite} $\uP$ et pas seulement à l'idéal $\langle\uP\rangle$.
Dans le chapitre suivant (cf le théorème~\ref{MacRaeEqualities}), nous montrerons,
pour les modules de MacRae de rang~0 ci-dessus, que les générateurs invariants
de MacRae que nous allons exhiber sont tous égaux,
ce qui permettra de définir le résultant de $\uP$.


\medskip

Dans la suite, nous utilisons, sans toujours le mentionner 
explicitement, la correspondance biunivoque naturelle entre les formes
linéaires sur $\bfB_\delta$ et celles sur $\bfA[\uX]_\delta$
nulles sur $\langle\uP\rangle_\delta \buildrel {\rm def} \over = \Im\Syl_\delta$.
Ce dernier sous-module de $\bfA[\uX]_\delta^\star$ se désigne de manière cryptique par la
notation $\Ker \transpose{\Syl_\delta}$; il  contient (en général strictement) 
le sous-module~$\DVect_{s_\delta}(\Syl_\delta)$ des formes linéaires déterminantales de
l'application de Sylvester~$\Syl_\delta$.

\subsection{Normalisation des générateurs de MacRae:
  $\MacRaeVect(\bfB_\delta)$, $\MacRae(\bfB'_\delta)$ et $\MacRae(\bfB_d)_{d \geqslant \delta+1}$}

Comme annoncé précédemment, ici, la suite $\uP = 
(P_1, \dots, P_n)$ de polynômes de $\bfA[\uX] = \bfA[X_1, \dots, X_n]$
est \emph {générique}.
On rappelle ce que cela signifie. L'anneau $\bfA$ est un anneau de polynômes 
sur un anneau~$\bfk$ quelconque et les indéterminées de~$\bfA$ sur $\bfk$ 
sont allouées aux coefficients de chaque $P_i$.
Pour $a \in \bfA$, l'expression \og $a$ est homogène en~$P_i$ \fg{} 
doit être comprise de la façon suivante :
$a$ vu comme élément de $\bfR_i[\text{coefficients de $P_i$}]$ est homogène,
où $\bfR_i = \bfk[\text{coefficients des $P_j$ pour $j \neq i$}]$.

\medskip
Dans l'anneau de polynômes $\bfA$, on va séparer les indéterminées en deux paquets : 
les indéterminées $p_1, \dots, p_n$, coefficients des~$X_i^{d_i}$ (intervenant pour
le jeu étalon généralisé),  et les autres $a_{i,\gamma}$ avec $|\gamma| = d_i$ et $X^\gamma \neq X_i^{d_i}$, 
de sorte que $\bfA = \bfk[p_1, \dots, p_n][(a_{i,\gamma})]$.

\begin{theo}[Définition/caractérisation des scalaires $\calR_d$ et de la forme $\omegares$] 
\label{PoidsNormalisationMacRae}
\leavevmode

Soit $\uP$ générique.
\begin{enumerate}[\rm i)]
\item 
En degré $d \geqslant \delta +1$, 
l'idéal de MacRae $\MacRae(\bfB_d)$ possède un unique générateur $\calR_d$
contenant le monôme $p_1^{\widehat d_1} \cdots p_n^{\widehat d_n}$. Ce générateur $\calR_d$
est homogène en $P_i$ de poids 
$\widehat d_i = \dim \Jex_{1\setminus 2,d}^{(i)}$.

En considérant $\bfA$ comme anneau de polynômes en les $a_{i,\gamma}$ 
à coefficients dans $\bfk[p_1, \dots, p_n]$, 
le monôme $p_1^{\widehat d_1} \cdots p_n^{\widehat d_n}$
est le \textrm{terme constant} de $\calR_d$.

En considérant cette fois $\bfA$ comme anneau de polynômes en les $p_1, \dots, p_n$
à coefficients dans $\bfk[(a_{i,\gamma})]$, 
ce monôme $p_1^{\widehat  d_1} \cdots p_n^{\widehat d_n}$ 
est la composante homogène dominante de $\calR_d$.

\item 
En degré $d = \delta$, le $\bfA$-module de MacRae $\MacRaeVect(\bfB_\delta)$ 
possède un unique générateur $\uomegares \in \bfB_\delta^\star$ 
tel que l'évaluation de
la forme linéaire $\omegares \in \bfA[\uX]_\delta^\star$ 
en le \MoutonNoir{}, à savoir 
$\omegares(X^\emouton) \in \bfA$, 
contienne le monôme $p_1^{\widehat d_1-1} \cdots p_n^{\widehat d_n-1}$.
De plus, chaque $\omegares(X^\alpha)$ pour $|\alpha| = \delta$ est homogène en $P_i$ de poids 
$\widehat d_i - 1
= \dim \Jex_{1\setminus 2,\delta}^{(i)}$.

\medskip
En particulier, 
l'idéal de MacRae $\MacRae(\bfB'_\delta)$ est engendré par $\omegares(\nabla)$, 
homogène en $P_i$ de poids $\widehat d_i$ 
et contient le monôme $p_1^{\widehat d_1} \cdots p_n^{\widehat d_n}$.
\end{enumerate}
\end{theo}

\index{invariant de MacRae}%
\index{forme linéaire!intrinsèque $\omegares=\omegaRes{\uP}$}%

\begin{proof}
i) Rappelons que $\MacRae(\bfB_d)$ est engendré par un élément régulier, pgcd fort 
des mineurs pleins de $\Syl_d(\uP)$ ; chacun de ces mineurs est homogène 
en $P_i$ de poids le nombre de colonnes en $P_i$.
Parmi ces mineurs, il en existe un bien explicite, à savoir $\det W_{1,d}$, 
qui est régulier.
Ainsi tout générateur de~$\MacRae(\bfB_d)$, en tant que diviseur de $\det W_{1,d}$, 
est homogène en chaque $P_i$. 
Nous allons prouver l'existence d'un générateur contenant le monôme 
$p_1^{\widehat d_1} \cdots p_n^{\widehat d_n}$; un tel 
générateur sera homogène en $P_i$ de poids~$\widehat d_i$, 
poids égal à la dimension de~$\Jex_{1\setminus 2,d}^{(i)}$ 
d'après~\ref{LemmeDenombrementJ1moins2id}.

\medskip

Plaçons-nous dans un premier temps au-dessus de $\bfk = \bbZ$, en considérant $\bfA = \bbZ[\indetsPi]$.
Les seuls inversibles de $\bfA$ étant $\pm 1$, 
l'idéal $\MacRae(\bfB_d)$ ne possède que deux générateurs.
Nous en prenons un quelconque, disons $\calR$ (l'autre étant $-\calR$).
Considérons la spécialisation 
$\bbZ[p_1, \dots, p_n][(a_{i,\gamma})] \to \bbZ[p_1, \dots, p_n]$ 
qui consiste à mettre à $0$ les indéterminées $a_{i,\gamma}$ :
c'est la spécialisation du jeu générique en le jeu étalon généralisé.
Comme l'invariant de MacRae se spécialise et que nous avons 
déterminé celui du jeu étalon généralisé,
cette spécialisation réalise $\calR \mapsto \pm \, p_1^{\widehat d_1} \cdots p_n^{\widehat d_n}$.
Quitte à changer $\calR$ en $-\calR$, on peut 
supposer que $\calR \mapsto p_1^{\widehat d_1} \cdots p_n^{\widehat d_n}$.
On a donc $\calR = p_1^{\widehat d_1} \cdots p_n^{\widehat d_n} + S$ 
où $S$ est dans l'idéal de $\bfA$ engendré par les~$a_{i,\gamma}$. 
Ainsi, $\calR$ contient le monôme~$p_1^{\widehat d_1} \cdots p_n^{\widehat d_n}$.

\medskip

Revenons à $\bfk$ avec la spécialisation $\bbZ \to \bfk$. 
Considérons l'image du générateur $\calR$ précédent. 
C'est un générateur de $\MacRae(\bfB_d)$ au-dessus de $\bfk$ (par propriété 
de l'invariant de MacRae) 
et il contient donc le monôme $p_1^{\widehat d_1} \cdots p_n^{\widehat d_n}$. 

\bigskip

ii) La preuve de ce point présente beaucoup d'analogies avec celle du
point précédent et de ce fait nous détaillons seulement la première étape où $\bfk = \bbZ$.  
Le $\bfA$-module $\MacRaeVect(\bfB_\delta)$
possède alors deux générateurs opposés l'un de l'autre : nous en choisissons un. 
Ce générateur est une forme linéaire $\overline \mu : \bfB_\delta \to \bfA$, pgcd fort 
des formes linéaires de $\DVect_{s_\delta}(\Syl_\delta)$ vues sur $\bfB_\delta$.
Travaillons au niveau de $\bfA[\uX]_\delta$. 
La forme linéaire $\omega$ est multiple de la forme linéaire $\mu$ : 
écrivons $\omega = a \mu$ où $a$ est un élément régulier de~$\bfA$.
Pour $X^\alpha$ de degré $\delta$, le scalaire $\omega(X^\alpha)$ est 
homogène en chaque $P_i$ d'un poids \textit{indépendant} de~$X^\alpha$: c'est par
exemple le poids en $P_i$ de $\omega(X^\emouton) = \det W_{1,\delta}$.
Grâce à l'égalité $\omega = a \mu$, nous en déduisons que les $\mu(X^\alpha)$ sont
homogènes en $P_i$, d'un poids \textit{indépendant} de $X^\alpha$.

\medskip

Nous procédons à la spécialisation $a_{i,\gamma} \mapsto 0$,
celle correspondant à la spécialisation du jeu générique~$\uP$ en le jeu étalon généralisé. 
D'après~\ref{MacRaeJeuEtalonDelta}, elle réalise donc 
$\mu \to \varepsilon\, p_1^{\widehat d_1 -1} \cdots p_n^{\widehat d_n -1} (X^\emouton)^\star$ avec
$\varepsilon = \pm 1$. Quitte à changer $\mu$ en $-\mu$, nous pouvons
supposer $\varepsilon = 1$.

Ainsi, $\mu(X^\emouton)$, vu comme un polynôme en les $(a_{i,\gamma})$
à coefficients dans $\bfk[p_1, \dots, p_n]$, a pour terme constant
$p_1^{\widehat d_1 -1} \cdots p_n^{\widehat d_n -1}$. Nous en déduisons
que $\mu(X^\emouton)$, vu dans $\bfA$, est homogène en chaque $P_i$
de poids $\widehat d_i -1$.
D'après la précision d'indépendance en
$X^\alpha$ formulée auparavant, on en déduit que chaque
$\mu(X^\alpha)$ est homogène en $P_i$ de même poids 
$\widehat d_i -1$
(poids égal à la dimension de $\Jex_{1\setminus 2, \delta}^{(i)}$ d'après~\ref{LemmeDenombrementJ1moins2id}).

\bigskip

La fin de l'énoncé 
\og l'idéal $\MacRae(\bfB'_\delta)$ est engendré par $\omegares(\nabla)$ \fg{} 
résulte du résultat d'algèbre commutative général~\ref{Rank1to0MacRae}. 
Les précisions sur le poids s'obtiennent via les deux points précédents.
\end{proof}

\label{NOTA10-omegares-Rd}%
%
%

\subsection{Les générateurs de MacRae pour une suite $\protect\uP$ quelconque}

Soit $\uP$ un système quelconque.  Le $\bfA$-module $\bfB_d$ est de
présentation finie, présenté par l'application de Sylvester $\Syl_d$,
ce qui permet de considérer ses idéaux de Fitting $\calF_i(\bfB_d)$.
En notant $c= \chi_d$ la caractéristique d'Euler-Poincaré du complexe
$\rmK_{\sbullet,d}$, on~a $\calF_{c-1}(\bfB_d) = 0$ permettant de
définir le sous-module de Fitting $\FittVect_c(\bfB_d) \subset
\BW^c(\bfB_d)^\star$,~cf~\ref{cFittingVectoriel}.  En revanche, on ne
peut pas vraiment parler d'invariant de MacRae; le module $\bfB_d$
n'est peut-être pas de rang $c$ : il n'y a aucune raison pour que
$\calF_c(\bfB_d)$ soit un idéal fidèle sans hypothèse supplémentaire
sur~$\uP$.
Cependant, nous pouvons définir les invariants $\omegares$
et~$\calR_d$ pour un système quelconque $\uP$ comme étant les
spécialisés de ces mêmes invariants pour la suite générique.

\begin{defn}
Soit $\uP$ quelconque.
Les scalaires $\calR_d(\uP)$ pour $d \geqslant \delta+1$ et la forme linéaire $\omega_{\res,\uP}$ 
sont définis comme étant la spécialisation de ces mêmes objets pour la suite générique.
\end{defn}

\begin{rmq}
\label{Rem-omegares-regulier}
Lorsque la suite $\uP$ est régulière, le $\bfA$-module $\bfB_d$ 
est résoluble de rang $\chi_d$, a fortiori de MacRae.
On peut donc définir \textit{directement} 
ses invariants de MacRae et ceci sans passer par la spécialisation du cas générique.
Le lecteur est alors en droit de se demander si les spécialisés des générateurs des invariants de MacRae
du cas générique sont des générateurs des invariants de MacRae de la définition qualifiée de directe.
La réponse est évidemment oui, et ceci grâce à la propriété de spécialisation~\ref{ExtensionScalaires}.
Une conséquence est la suivante : pour une suite $\uP$ \textit{régulière}, 
\begin{itemize}
\item 
le scalaire $\calR_d$ est \textit{régulier}, 
en tant qu'invariant de MacRae 
du module~$\bfB_d$ ;

\item 
la forme linéaire $\omegares$ est \textit{sans torsion}, 
en tant qu'invariant de MacRae du module $\bfB_\delta$ ;

\item 
le scalaire $\omegares(\nabla)$ est \textit{régulier}, 
en tant qu'invariant de MacRae 
du module~$\bfB'_\delta$.
\end{itemize}
\end{rmq}

\medskip

Comme nous l'avons signalé avant l'énoncé~\ref{PoidsNormalisationMacRae},
nous utiliserons le plus souvent la forme linéaire $\omegares
: \bfA[\uX]_\delta \to \bfA$, remontée de $\uomegares
: \bfB_\delta \to \bfA$ (le fait que $\uomegares$ soit définie
sur le quotient $\bfB_\delta = \bfA[\uX]_\delta/ \uPdelta$ correspondant 
à la nullité de $\omegares$ sur $\uPdelta$).

\medskip
On dispose d'une égalité de sous-modules de $\bfA[\uX]_\delta^\star$
$$
\DVect_{s_\delta}(\Syl_\delta) = \omegares\,\fb_\delta
\qquad \text{où $\fb_\delta$ est un idéal de type fini bien précis}
$$
En évaluant en les $X^\alpha$ cette
égalité, on obtient une factorisation de l'idéal des mineurs d'ordre
$s_\delta$ de $\Syl_\delta$ 
$$
\calD_{s_\delta}(\Syl_\delta) =
\langle\omegares(X^\alpha),\, |\alpha| = \delta\rangle\,\fb_\delta
\ = \ \Im\,\omegares\,\fb_\delta
$$
qui donne en particulier l'inclusion:
$$
\calD_{s_\delta}(\Syl_\delta)  \ \subset \ \Im\,\omegares
$$
Lorsque~$\uP$ est régulière, cette forme linéaire $\omegares$ est (par définition de
$\uomegares$) un pgcd fort des formes linéaires de $\DVect_{s_\delta}(\Syl_\delta)$,
de sorte que, dans l'égalité $\DVect_{s_\delta}(\Syl_\delta) = \omegares\,\fb_\delta$,
l'idéal $\fb_\delta$ est de profondeur $\geqslant 2$.

\subsubsection{Spécialisation de la suite générique à une suite quelconque et idéaux cofacteurs: résumé}
\label{ResumeSpecialisationCasGenerique}

Par spécialisation du  cas générique, on a défini pour une suite $\uP$ les objets suivants:
\begin{quote}
\noindent  
\it 
\begin{itemize}
\item 
la forme linéaire $\omega_{\res,\uP} \in \bfA[\uX]_\delta^\star$ et l'idéal $\fb_\delta = \fb_\delta(\uP)$ de $\bfA$
\begin{enumerate}[$\rhd$]
\item 
$\omega_{\res, \uP}$ s'annule sur $\uPdelta$ et vérifie
$$
\DVect_{s_\delta}(\Syl_\delta)
\ = \ 
\omega_{\res, \uP} \, \fb_\delta
\qquad \text{ ou encore } \qquad 
\FittVect_1(\bfB_\delta) 
\ = \ 
\overline{\omega}_{\res, \uP}
\, \fb_\delta
$$

\item 
Lorsque $\uP$ est régulière, 
la forme linéaire $\omega_{\res,\uP}$ est sans torsion
et l'idéal $\fb_\delta$ est de profondeur~$\geqslant 2$.

\item 
Pour le jeu étalon généralisé, 
$\omega_{\res, \pXD} = p_1^{\widehat d_1-1} \cdots p_n^{\widehat d_n-1} (X^\emouton)^\star$. 
Et le jeu étalon, $\omega_{\res, \uX^D} = (X^\emouton)^\star$. 

\item 
Pour $\uP$ couvrant le jeu étalon généralisé, 
la composante homogène dominante de $\omega_{\res, \uP}(X^\emouton)$ est 
$\omega_{\res, \pXD}(X^\emouton) =
p_1^{\widehat d_1-1} \cdots p_n^{\widehat d_n-1}$.
\end{enumerate}

\item 
pour $d \geqslant \delta+1$, le scalaire $\calR_d(\uP) \in \bfA$  et l'idéal $\fb_d = \fb_d(\uP)$ de $\bfA$

\index{idéal!cofacteur $\fb_d$}%

\begin{enumerate}[$\rhd$]
\item On a les égalités :
$$
\calD_{s_d}(\Syl_d)
\ = \ 
\calR_d(\uP) \, \fb_d
\qquad \text{ ou encore } \qquad 
\calF_0(\bfB_d) 
\ = \ 
\calR_d(\uP) \, \fb_d
$$

\item 
Lorsque $\uP$ est régulière, 
$\calR_d(\uP)$ est un élément régulier de~$\bfA$ 
et l'idéal $\fb_d$ est de profondeur~$\geqslant 2$.

\item 
Pour le jeu étalon généralisé, 
$\calR_d(\pXD) = p_1^{\widehat d_1} \cdots p_n^{\widehat d_n}$.
Et le jeu étalon, $\calR_d(\uX^D) = 1$. 

\item 
Pour $\uP$ couvrant le jeu étalon généralisé, 
la composante homogène dominante de $\calR_d(\uP)$ est 
$\calR_d(\pXD) = p_1^{\widehat d_1} \cdots p_n^{\widehat d_n}$.


\end{enumerate}
\end{itemize}
\end{quote}

\label{NOTA10-fbd}%
%
%

\subsection{Les générateurs de MacRae pour des formats $D$ exceptionnels}

\begin{prop}[La forme $\omegares$ dans les cas d'école] \label{omegaresCasEcole}
Soit $\uP$ un système quelconque.

\begin{enumerate}[\rm i)]
\item Pour $D = (1,\dots, 1,e)$, la forme linéaire $\omegares$ est la forme linéaire $\evalxi$.

\item Pour $n = 2$, la forme linéaire $\omegares$ est la forme linéaire $\omega$.
\end{enumerate}
\end{prop}

\index{théorème!profondeur $\ge 2$ de la forme $\omegaRes{\uP}$ pour des systèmes $\uP$ spécifiques}%
\begin{proof}
Par définition, il suffit de le montrer en générique. Nous traitons les deux points simultanément.

D'après~\ref{FormeLineaireGr2}, si on dispose d'une forme linéaire $\mu : \bfA[\uX]_\delta \to \bfA$
vérifiant 
\begin{center}
$\mu$ nulle sur $\uPdelta$
\qquad \text{ et } \qquad 
$\Gr(\mu) \geqslant 2$
\end{center}
alors cette forme linéaire engendre le sous-module des formes linéaires nulles sur $\uPdelta$.

\noindent
En générique, la forme linéaire $\evalxi$ (resp. $\omega$) 
est une telle forme linéaire (cf.~\ref{evalxiProperties}, resp.~\ref{omegaProperties-n=2}).
Donc~$\omegares$, qui est nulle sur $\uPdelta$, est multiple de $\evalxi$ (resp. $\omega$).
Or les formes linéaires $\omegares$ et 
$\evalxi$ (resp. $\omega$) ont 
même poids en les coefficients de $P_i$ et sont normalisées. 
Donc elles sont égales.
\end{proof}

\medskip

Nous nous intéressons maintenant aux formats $D$ tels que le complexe
$\rmK_{\sbullet,d}(X^D)$, pour $d = \delta$ ou $d = \delta+1$, soit
réduit à $0 \to \rmK_{1,d} \to \rmK_{0,d}$.  Puisque $\rmK_{k,d} =0
\Rightarrow \rmK_{k+1,d} = 0$ (cf. \ref{dminKk}), il faut et il suffit
que $\rmK_{2,d} =0$. Par ailleurs, on a vu en \ref{NulliteJkdKkd}
l'équivalence $\rmK_{h,d} =0\iff\Jex_{h,d} = 0$, si bien que l'on peut
appliquer la proposition~\ref{NulliteJhd} (Nullité de $\Jex_{h,d}$). Ce
qui nous conduit à la condition :
$$
d < \min_{i \ne j} (d_i + d_j)
$$
Enfin, pour $d=\delta$ et $d=\delta+1$, les formats $D$ pour lesquels $\Jex_{2,d}(D) = 0$ ont été
répertoriés en~\ref{J2deltaNul}. Par ailleurs,
la caractéristique d'Euler-Poincaré $\chi_d$ de $\rmK_{\sbullet,d}$
vérifie toujours $\chi_\delta = 1$ et $\chi_{\delta+1} = 0$ et ici
elle est réduite à $\dim \rmK_{0,d} - \dim \rmK_{1,d}$.

\begin{prop}[Pour des formats $D$ particuliers] \label{CasParticuliers}

Soit $\uP$ quelconque.

\begin{enumerate}[\rm i)]
\item 
Lorsque $n=2$ ou ($n=3$ et $d_1, d_2, d_3 \leqslant 2$) 
ou ($n\geqslant 4$ et tous les $d_i = 1$ sauf éventuellement un $d_j = 2$), 
alors $\omegares = \omega$.

\item 
Lorsque $n=2$ ou $D = (1, \dots, 1)$, alors 
$\calR_{\delta+1} = \det W_{1,\delta+1}$.
\end{enumerate}
\end{prop}

\begin{proof} \leavevmode

i) 
Dans tous ces cas, l'application de Sylvester $\Syl_\delta$ 
possède une colonne de moins que son nombre de lignes.
Le module $\DVect_{s_\delta}(\Syl_{\delta})$ est alors \textit{monogène}, engendré 
par la forme linéaire (normalisée) $\omega$, donc $\omegares = \omega$. 

ii) 
Dans les deux cas, l'application de Sylvester $\Syl_{\delta + 1}$ possède autant de colonnes que de lignes.
L'idéal $\calD_{s_{\delta + 1}}(\Syl_{\delta + 1})$ est alors \textit{monogène}, engendré par le seul mineur (normalisé) $\det W_{1,{\delta + 1}}$, 
donc $\calR_{\delta + 1} = \det W_{1,{\delta + 1}}$.
\end {proof}

\subsubsection {Le cas particulier de $D = (2,2,2)$}


Ce format est historiquement célèbre car Cayley (cf. \cite{Cayley1}) et
Sylvester ont étudié le sujet.
Anticipons sur les \emph{définitions} possibles du résultant,
et notamment sur celle en degré $\delta$, à savoir $\Res(\uP) = \omegaRes{\uP}(\nabla_\uP)$
(cf.~\ref{DefResultant}). Plusieurs \emph{expressions} très différentes existent
pour le résultant $\calR$ des 3 polynômes:
$$
\setlength{\tabcolsep}{2pt}
\left\{
\begin{tabular}{rcp{15cm}} 
$P_{1}$ & $=$ & $a_{1}X^{2} + a_{2}XY + a_{3}XZ + a_{4}Y^{2} + a_{5}YZ + a_{6}Z^{2}$\\ [0.1cm] 
$P_{2}$ & $=$ & $b_{1}X^{2} + b_{2}XY + b_{3}XZ + b_{4}Y^{2} + b_{5}YZ + b_{6}Z^{2}$\\ [0.1cm] 
$P_{3}$ & $=$ & $c_{1}X^{2} + c_{2}XY + c_{3}XZ + c_{4}Y^{2} + c_{5}YZ + c_{6}Z^{2}$\\ [0.1cm] 
\end{tabular}
\right.
$$
Pour ce format, on a $\omegares = \omega$, donc $\calR = \omega(\nabla)$.
Voici la forme linéaire $\omega$ :
$$
\omega(\sbullet) \ =\ 
\EastBorderdet{
a_{1}& .& .& b_{1}& \sbullet & c_{1}& .& .& .& . & \Heti{X^{3}} \\ 
a_{2}& a_{1}& .& b_{2}& \sbullet & c_{2}& b_{1}& .& c_{1}& . & \Heti{X^{2}Y} \\ 
a_{3}& .& a_{1}& b_{3}& \sbullet & c_{3}& .& b_{1}& .& c_{1} & \Heti{X^{2}Z} \\ 
a_{4}& a_{2}& .& b_{4}& \sbullet & c_{4}& b_{2}& .& c_{2}& . & \Heti{XY^{2}} \\ 
a_{5}& a_{3}& a_{2}& b_{5}& \sbullet & c_{5}& b_{3}& b_{2}& c_{3}& c_{2} & \Heti{XYZ} \\ 
a_{6}& .& a_{3}& b_{6}& \sbullet & c_{6}& .& b_{3}& .& c_{3} & \Heti{XZ^{2}} \\ 
.& a_{4}& .& .& \sbullet & .& b_{4}& .& c_{4}& . & \Heti{Y^{3}} \\ 
.& a_{5}& a_{4}& .& \sbullet & .& b_{5}& b_{4}& c_{5}& c_{4} & \Heti{Y^{2}Z} \\ 
.& a_{6}& a_{5}& .& \sbullet & .& b_{6}& b_{5}& c_{6}& c_{5} & \Heti{YZ^{2}} \\ 
.& .& a_{6}& .& \sbullet & .& .& b_{6}& .& c_{6} & \Heti{Z^{3}} \\ 
}
$$
et le déterminant bezoutien \og rigidifié\fg{} (cf après la définition \ref{DefNabla})
$$
\nabla =
\begin {vmatrix}
a_1X + a_2Y + a_3Z &b_1X + b_2Y + b_3Z &c_1X + c_2Y + c_3Z \\
a_4Y + a_5Z        &b_4Y + b_5Z        &c_4Y + c_5Z        \\
a_6Z               &b_6Z               &c_6Z  \\
\end {vmatrix}
$$
Une manière de calculer $\nabla$ consiste à développer le produit extérieur des colonnes:
$$
\big((a_1X + a_2Y + a_3Z)e_1 + (a_4Y + a_5Z)e_2 + a_6Ze_3\big) \wedge
\big(\text{colonne en $b_i$}\big) \wedge \big(\text{colonne en $c_i$}\big) 
$$
Le développement donne $3 \times 2 \times 1 = 6$ termes. Et en regardant
par exemple la colonne 1 dans les yeux:
$$
\nabla = [146]XYZ + [246]Y^2Z + [346]YZ^2 + [156]XZ^2 + [256]YZ^2 + [356]Z^3
\quad \text{où}\quad
[ijk] = \begin{vmatrix} a_i&b_i&c_i\\ a_j&b_j&c_j\\ a_k&b_k&c_k\\ \end{vmatrix}
$$
L'expression $\calR = \omega(\nabla)$ 
est donc donnée par le déterminant $10 \times 10$ :
$$
\newcommand \petit[1] {\mbox{\hspace{-0.3cm}\footnotesize$#1$\hspace{-0.3cm}}}
\calR \ =\ 
\EastBorderdet{
a_{1}& .& .& b_{1}& . & c_{1}& .& .& .& . & \Heti{X^{3}} \\ 
a_{2}& a_{1}& .& b_{2}& . & c_{2}& b_{1}& .& c_{1}& . & \Heti{X^{2}Y} \\ 
a_{3}& .& a_{1}& b_{3}& . & c_{3}& .& b_{1}& .& c_{1} & \Heti{X^{2}Z} \\ 
a_{4}& a_{2}& .& b_{4}& . & c_{4}& b_{2}& .& c_{2}& . & \Heti{XY^{2}} \\ 
a_{5}& a_{3}& a_{2}& b_{5}& \petit{[146]} & c_{5}& b_{3}& b_{2}& c_{3}& c_{2} & \Heti{XYZ} \\ 
a_{6}& .& a_{3}& b_{6}& \petit{[156]} & c_{6}& .& b_{3}& .& c_{3} & \Heti{XZ^{2}} \\ 
.& a_{4}& .& .& . & .& b_{4}& .& c_{4}& . & \Heti{Y^{3}} \\ 
.& a_{5}& a_{4}& .& \petit{[246]} & .& b_{5}& b_{4}& c_{5}& c_{4} & \Heti{Y^{2}Z} \\ 
.& a_{6}& a_{5}& .& \petit{[346]\!+\![256]} & .& b_{6}& b_{5}& c_{6}& c_{5} & \Heti{YZ^{2}} \\ 
.& .& a_{6}& .& \petit{[356]} & .& .& b_{6}& .& c_{6} & \Heti{Z^{3}} \\ 
}
$$

\medskip

Nous verrons également (cf. section~\ref{sousSectionMacaulayRecurrence})
qu'il existe une formule du résultant $\calR$ 
en degré $\delta+1 = 4$ comme quotient d'un déterminant $15 \times 15$ par
un déterminant $3 \times 3$, à savoir :
$$
\calR = \frac{\det W_{1,\delta+1}} {\det W_{2,\delta+1}}  
\qquad\qquad
\text{où } 
\qquad 
W_{2,\delta+1} = 
\EastBordermatrix{
  a_1 & . & . & \Heti{X^2Y^2} \\
  . & a_1 & b_1 & \Heti{X^2Z^2} \\
  a_6 & a_4 & b_4 & \Heti{Y^2Z^2} \\
}
$$
et
$$
W_{1,\delta+1} \ = \ 
\EastBordermatrix{
a_{1} & . & . & . & . & . & . & . & . & . & . & . & . & . & . & \Heti{X^{4}} \\ 
a_{2} & a_{1} & . & . & . & . & b_{1} & . & c_{1} & . & . & . & . & . & . & \Heti{X^{3}Y} \\ 
a_{3} & . & a_{1} & . & . & . & . & b_{1} & . & c_{1} & . & . & . & . & . & \Heti{X^{3}Z} \\ 
a_{4} & a_{2} & . & a_{1} & . & . & b_{2} & . & c_{2} & . & b_{1} & . & . & . & . & \Heti{X^{2}Y^{2}} \\ 
a_{5} & a_{3} & a_{2} & . & a_{1} & . & b_{3} & b_{2} & c_{3} & c_{2} & . & b_{1} & . & c_{1} & . & \Heti{X^{2}YZ} \\ 
a_{6} & . & a_{3} & . & . & a_{1} & . & b_{3} & . & c_{3} & . & . & b_{1} & . & c_{1} & \Heti{X^{2}Z^{2}} \\ 
. & a_{4} & . & a_{2} & . & . & b_{4} & . & c_{4} & . & b_{2} & . & . & . & . & \Heti{XY^{3}} \\ 
. & a_{5} & a_{4} & a_{3} & a_{2} & . & b_{5} & b_{4} & c_{5} & c_{4} & b_{3} & b_{2} & . & c_{2} & . & \Heti{XY^{2}Z} \\ 
. & a_{6} & a_{5} & . & a_{3} & a_{2} & b_{6} & b_{5} & c_{6} & c_{5} & . & b_{3} & b_{2} & c_{3} & c_{2} & \Heti{XYZ^{2}} \\ 
. & . & a_{6} & . & . & a_{3} & . & b_{6} & . & c_{6} & . & . & b_{3} & . & c_{3} & \Heti{XZ^{3}} \\ 
. & . & . & a_{4} & . & . & . & . & . & . & b_{4} & . & . & . & . & \Heti{Y^{4}} \\ 
. & . & . & a_{5} & a_{4} & . & . & . & . & . & b_{5} & b_{4} & . & c_{4} & . & \Heti{Y^{3}Z} \\ 
. & . & . & a_{6} & a_{5} & a_{4} & . & . & . & . & b_{6} & b_{5} & b_{4} & c_{5} & c_{4} & \Heti{Y^{2}Z^{2}} \\ 
. & . & . & . & a_{6} & a_{5} & . & . & . & . & . & b_{6} & b_{5} & c_{6} & c_{5} & \Heti{YZ^{3}} \\ 
. & . & . & . & . & a_{6} & . & . & . & . & . & . & b_{6} & . & c_{6} & \Heti{Z^{4}} \\ 
}
$$

\bigskip

Terminons par une formule attribuée à Sylvester à l'aide d'un déterminant $6\times 6$.
Voir par exemple~\cite{Cox}, formule (2.8) de la section $\S2$ du chapitre~3
et l'exercice~22 de la section~$\S4$ ;
ou bien \cite{Sturmfels2} (attention, oubli d'une puissance
de $2$ dans la formule) ou encore \cite{AD}, section 2.7.

Il y intervient le jacobien $J$ des trois polynômes $P_1, P_2, P_3$ qui est un polynôme homogène en
$X,Y,Z$ de degré 3. On note $(x_1, \dots,
x_6)$ les coordonnées de $\frac{\partial J}{\partial X}$ dans la
base monomiale $(X^2,XY,XZ,Y^2,YZ,Z^2)$  de $\bfA[X,Y,Z]_2$ 
utilisée pour les polynômes $P_i$.
Idem pour les $y_i$ et $z_i$ en remplaçant
$\frac{\partial}{\partial X}$ par $\frac{\partial}{\partial Y}$ et
$\frac{\partial}{\partial Z}$. 
L'expression du résultant est alors :
$$
\calR 
\ = \ 
\frac{1}{2^9} \, 
\NorthEastBorderdet{
\Veti{P_1} & \Veti{\partial J/\partial Z} & \Veti{\partial J/\partial Y} & 
\Veti{P_2} & \Veti{\partial J/\partial X} & \Veti{P_3} & \\
  a_1 & z_1 & y_1 & b_1 & x_1 & c_1 & \Heti{X^2} \\
  a_2 & z_2 & y_2 & b_2 & x_2 & c_2 & \Heti{XY} \\
  a_3 & z_3 & y_3 & b_3 & x_3 & c_3 & \Heti{XZ} \\
  a_4 & z_4 & y_4 & b_4 & x_4 & c_4 & \Heti{Y^2} \\
  a_5 & z_5 & y_5 & b_5 & x_5 & c_5 & \Heti{YZ} \\
  a_6 & z_6 & y_6 & b_6 & x_6 & c_6 & \Heti{Z^2} \\  
}
\ = \ 
\frac{1}{2^9} \, 
\NorthEastBorderdet{
\Veti{P_1} & \Veti{P_2} & \Veti{P_3} & 
\Veti{\partial J/\partial X} & \Veti{\partial J/\partial Y}  & 
\Veti{\partial J/\partial Z} & \\
 a_1 & b_1 & c_1 & x_1 & y_1 & z_1 & \Heti{X^2} \\
 a_4 & b_4 & c_4 & x_4 & y_4 & z_4 & \Heti{Y^2} \\
 a_6 & b_6 & c_6 & x_6 & y_6 & z_6 & \Heti{Z^2} \\
 a_5 & b_5 & c_5 & x_5 & y_5 & z_5 & \Heti{YZ} \\
 a_3 & b_3 & c_3 & x_3 & y_3 & z_3 & \Heti{XZ} \\
 a_2 & b_2 & c_2 & x_2 & y_2 & z_2 & \Heti{XY} \\  
}
$$
Nous ne prétendons pas avoir prouvé cette formule \emph {pour l'instant}.
Nous avons fourni deux visages de la même matrice (nous aurions pu en fournir
d'autres) pour insister sur son aspect strictement carré,
défini par la correspondance 
$\begin {smallmatrix}
P_1 &P_2 &P_3 &\partial/\partial X&\partial/\partial Y&\partial/\partial Z\\
X^2 &Y^2 &Z^2 &YZ                           &XZ & XY \\
\end {smallmatrix}
$.
Une petite vérification avec le jeu étalon généralisé $(p_1X^2, p_2Y^2, p_3Z^2)$.
Sa matrice jacobienne est 
$\left[\begin{smallmatrix}
2p_1X &  0  &   0 \\
 0  & 2p_2Y  & 0 \\
0  & 0 & 2p_3Z\\
\end{smallmatrix}\right]
$, 
de déterminant $2^3 p_1p_2p_3XYZ$.
Le résultant vaut $p_1^4p_2^4p_3^4$ (confer le chapitre~\ref{ChapJeuEtalonGeneralise} consacré au jeu étalon généralisé) 
et la formule de Sylvester redonne bien cette valeur:
$$
\frac{1}{2^9} \, 
\NorthEastBorderdet{
\Veti{P_1} & \Veti{\partial J/\partial Z} & \Veti{\partial J/\partial Y} & 
\Veti{P_2} & \Veti{\partial J/\partial X} & \Veti{P_3} & \\
  p_1 & . & . & . & . & . & \Heti{X^2} \\
  . & 2^3p_1p_2p_3 & . & . & . & . & \Heti{XY} \\
  . & . & 2^3p_1p_2p_3 & . & . & . & \Heti{XZ} \\
  . & . & . & p_2 & . & . & \Heti{Y^2} \\
  . & . & . & . & 2^3p_1p_2p_3 & . & \Heti{YZ} \\
  . & . & . & . & . & p_3 & \Heti{Z^2} \\  
}
\ = \ 
p_1^4p_2^4p_3^4
$$

\subsection{Des propriétés remarquables de la forme $\omegaRes{\protect\uP}:\bfA[\protect\uX]_\delta \to\bfA$}
\label{SectionomegaresProperties}
\index{forme linéaire!intrinsèque $\omegares=\omegaRes{\uP}$}%

\subsubsection*{$\bullet$ Propriété Cramer}

\begin{theo} \label{omegaresCramer}
Soit $\uP$ un système quelconque.
La forme linéaire $\omegares$ attachée à $\uP$ est de Cramer :
$$
\forall\,  F, G \in \bfA[\uX]_\delta, \qquad 
\omegares(G)F - \omegares(F)G \ \in \ 
\langle\uP\rangle_\delta
$$
\end{theo}

\index{propriété Cramer!2@d'une forme linéaire}%

\begin{proof}
Il suffit de démontrer le résultat pour la suite générique.
La forme linéaire $\omega$ est multiple de la forme linéaire $\omegares$: on a $\omega = b_\delta\,\omegares$.
Comme $\omega$ est de Cramer (cf.~\ref{omegaProprietes}), il suffit de montrer que $b_\delta$ est 
$\bfB_\delta$-régulier. 
En évaluant en $X^\emouton$, on obtient $\det W_{1,\delta} = b_\delta\, \omegares(X^\emouton)$.
Ainsi $b_\delta$ est un diviseur de $\det W_{1,\delta}$ qui est 
$\bfB_\delta$-régulier (cf.~\ref{GenPsatRegScalars}), 
donc est lui-même $\bfB_\delta$-régulier.
\end{proof}

\subsubsection*{$\bullet$ Injectivité de la forme $\uomegares : \bfB_\delta \to \bfA$}
\begin{theo}
\label{omegaresInjectiveDoncCramer}

Soit $\uP$ la suite {\rm \bf générique}.

La forme linéaire $\uomegares$ est injective, 
c'est-à-dire $\Ker \omegares = \uPdelta$.
\end{theo}

\begin{proof}
Soit $F \in \bfA[\uX]_\delta$ tel que $\omegares(F) = 0$. Puisque $\omega$ est multiple de
$\omegares$, on a $\omega(F) = 0$. Or le noyau de $\omega$ est $\uPdelta$ en générique
d'après~\ref{omegaInjective} : on obtient donc $F \in \uPdelta$.
\end{proof}

\begin{rmq}
En terrain générique, la propriété Cramer se déduit de l'injectivité de $\uomegares$
puisque $\omegares(F) G - \omegares(G) F \in \Ker \omegares$. Et comme la propriété
Cramer se spécialise, on l'obtient pour tout~$\uP$.
Bien entendu, la propriété d'injectivité de $\uomegares$ ne se spécialise pas, confer
par exemple le jeu étalon généralisé.
\end{rmq}

\subsubsection*{$\bullet$ Profondeur de l'idéal $\Im\omegares$}

Pour une suite régulière $\uP$, la propriété d'injectivité de
$\uomegares$ lorsqu'elle est vérifiée, fournit (cf. le théorème qui
vient) une inégalité de profondeur qui va s'avérer capitale:
$\Gr(\Im\omegares) \geqslant 2$. Mais ces deux propriétés de
$\uomegares$ (injectivité, profondeur de l'image) sont bien distinctes
comme en témoigne l'exemple de la section
suivante~\ref{ExempleProfondeurNonInjective}.

\index{théorème!profondeur $\ge 2$ de la forme $\omegaRes{\uP}$ pour des systèmes $\uP$ spécifiques}%

\begin{theo} \label{omegaresGr2}

Soit $\uP$ une suite \emph {régulière}. 

\begin{enumerate}[\rm i)]
\item
On suppose $\uomegares$ injective (c'est le cas par exemple de la suite générique).
  
Alors l'image de la forme ${\omegares}$ et l'idéal
déterminantiel d'ordre $s_\delta = \dim\bfA[\uX]_\delta-1$ de
$\Syl_\delta$, liés par l'inclusion $\calD_{s_\delta}(\Syl_\delta)
\subset \Im\omegares$, sont des idéaux de profondeur $\geqslant 2$:
$$
\Gr(\omegares) \geqslant 2, \qquad
\Gr\big(\calD_{s_\delta}(\Syl_\delta)\big) \geqslant 2
$$
  
\item
On suppose $\Gr(\omegares) \geqslant 2$.
  
Alors le module $\bfB_\delta^\star$ est libre de rang $1$ engendré par la forme linéaire $\uomegares$.

Remonté au niveau $\bfA[\uX]_\delta$, cela s'énonce : 
la forme linéaire $\omegares$ est une base de $\Ker \transpose {\Syl_\delta}$.
\end{enumerate}
\end{theo}

\begin{proof} \leavevmode

En ce qui concerne les deux idéaux intervenant dans le premier point,
voici quelques rappels sous forme d'une variation des résultats
fournis en page~\pageref{ResumeSpecialisationCasGenerique}.  Tout
d'abord, sans aucune hypothèse sur $\uP$, on dispose d'une
factorisation du type $\BW^{s_\delta}(\Syl_\delta) = \Theta_\res\,\nu$
avec $\nu : \BW^{s_\delta}(\rmK_{1,\delta}) \to \bfA$ et $\Theta_\res
\in \BW^{s_\delta}(\rmK_{0,\delta})$.  Comme $\Theta_\res$ et
$\omegares$ se correspondent par un isomorphisme déterminantal, ils
ont même idéal contenu.  Lorsque $\uP$ est régulière, la factorisation
vérifie $\Gr(\nu) \geqslant 2$ et $\Gr(\Theta_\res) \geqslant 1$; en
prenant les idéaux contenus des vecteurs intervenant dans la
factorisation, on obtient donc:
$$
\calD_{s_\delta}(\Syl_\delta) = \fb_\delta\, \Im\omegares
\qquad \text{avec}\qquad \fb_\delta = \rmc(\nu)
$$
Puisque $\Gr(\fb_\delta) \geqslant 2$, on voit donc que les deux inégalités de profondeur
annoncées dans le point i) sont équivalentes, en vertu de~\ref{Gr2ProprietesElementaires} et~\ref{Gr2Multiplicativity}.

\smallskip
Note: on montrera dans le théorème \ref{omegaresVersusSyldelta}, sans aucune hypothèse
sur $\uP$, que les idéaux $\calD_{s_\delta}(\Syl_\delta)$ et $\Im\omegares$ ont même racine
et de ce fait (cf. le corollaire~\ref{GrCompatibleInclusionRacine}) même profondeur.

\medskip
i)
L'argument principal consiste à augmenter le complexe $\rmK_{\sbullet,\delta}(\uP)$
à l'aide de $\omegares : \rmK_{0,\delta} \to \bfA$.  
Les détails sont fournis dans le lemme général d'algèbre commutative ci-après.
Certaines redites sont voulues pour améliorer la lisibilité
d'une formulation plus abstraite.

\medskip

ii) La suite $\uP$ étant supposée régulière, le $\bfA$-module
$\bfB_\delta$ est de rang 1 et le résultat peut être obtenu comme
conséquence de~\ref{FormesLineairesRang01}-ii).  On propose une
variante basée sur la propriété Cramer de $\uomegares$ et qui a
l'avantage d'utiliser uniquement $\Gr(\uomegares) \geqslant 2$ sans
supposer~$\uP$ régulière. En notant $x^\alpha$ l'image
dans~$\bfB_\delta$ d'un monôme $X^\alpha$ de degré $\delta$:
$$
\forall\,  |\alpha| = |\beta| = \delta, \qquad 
\uomegares(x^\alpha)\,x^\beta  = \uomegares(x^\beta)\,x^\alpha \qquad\qquad
$$
Soit $\mu : \bfB_\delta \to \bfA$ une forme linéaire. En appliquant $\mu$ à
cette égalité Cramer, il vient
$$
\uomegares(x^\alpha)\,\mu(x^\beta)  = \uomegares(x^\beta)\,\mu(x^\alpha)
$$
Par définition même de la profondeur $\geqslant 2$, il existe $q \in \bfA$ tel
que $\mu(x^\alpha) = q\,\uomegares(x^\alpha)$ donc $\mu = q\,\uomegares$.
\end{proof}

En ce qui concerne le premier point du théorème précédent, nous en 
fournissons deux approches dans des contextes légèrement différents.
Dans la première, le module $M$ est supposé librement résoluble de
rang~1 (c'est le cas de $M = \bfB_\delta$ lorsque $\uP$ est régulière)
et \og la \fg{} forme linéaire dont il est question est celle 
qui intervient dans~\ref{LibrementResolubleImpliqueMacRae} ;
tandis que dans la seconde, plus générale, $M$ est seulement supposé
de MacRae de rang~1 et la forme linéaire qui intervient est définie en~\ref{DefModuleMacRae}.

\begin{lem}[Injectivité versus profondeur]
\label{InjectivityToGr2}
Soit $M$ un module librement résoluble de rang $1$, de forme linéaire $\vartheta$.
Si $\vartheta$ est injective, alors $\Gr(\vartheta) \geqslant 2$. 
\end{lem}

\begin{proof}
Soit $(F_\sbullet, u_\sbullet)$ une résolution libre de $M$ de sorte que la différentielle $u_1$ 
est de rang $r_1 = \dim F_0 -1$. On augmente cette résolution en :
$$
\xymatrix{
F_1 \ar[r]^-{u_1} & F_0 \ar[r]^-{\mu} & \bfA
}
$$
où $\mu$ est la forme linéaire composée $F_0 \twoheadrightarrow M
\xrightarrow{\vartheta} \bfA$ de sorte que $\Im \mu = \Im \vartheta$
(autrement dit, $\mu$ est un relèvement de $\vartheta$ à $F_0$ et
$\vartheta = \overline \mu$). Il s'agit donc de montrer que $\Gr(\mu)
\geqslant 2$.  L'injectivité de $\vartheta$ se traduit par $\Im u_1 =
\Ker \mu$.  L'exactitude du complexe augmenté fournit d'après \ref
{WhatMakesAComplexExact} (What makes a complex exact?)
l'inégalité $\Gr\big(\calD_{r_1}(u_1)\big) \geqslant 2$.  Or on
dispose de l'inclusion $\calD_{r_1}(u_1) \subset \Im \mu$ (cf le
rappel ci-après) d'où l'on déduit $\Gr(\mu) \geqslant 2$.

Rappelons pourquoi $\calD_{r_1}(u_1) \subset \Im \mu$ 
(cf. aussi~page~\pageref{DVectVERSUScalD}).
Par définition de $\mu$, on dispose d'une égalité $\DVect_{r_1}(u_1) = \mu\,\fb$ de sous-modules de~$F_0^\star$,
qui, par évaluation sur $F_0$, fournit l'égalité d'idéaux de $\bfA$ :
$$
\calD_{r_1}(u_1) = (\Im\mu)\,\fb
$$
a fortiori l'inclusion annoncée.
\end{proof}

Ci-dessous, la machinerie des résolutions libres est remplacée par celle de la profondeur~2.

\begin{lem}[Injectivité versus profondeur, bis] \label{InjectivityToGr2bis}

Soit $M$ un module de MacRae de rang $1$ et $\vartheta \in M^\star$ sa forme linéaire invariant
de MacRae que l'on suppose \emph {injective}.

\begin{enumerate}[\rm i)]
\item 
L'idéal image de $\vartheta$ est de profondeur $\geqslant 2$.

\item 
La forme linéaire $\vartheta$ possède la propriété de Cramer: $\vartheta(m)m' = \vartheta(m')m$
pour tous $m,m' \in M$.
\end{enumerate}
\end{lem}

\index{propriété Cramer!2@d'une forme linéaire}%

\begin {proof} \leavevmode

i) Pour montrer $\Gr(\vartheta) \geqslant 2$, d'après le
principe local-global pour la profondeur 2 (point iv de la
proposition~\ref{PropProfondeur2}) il suffit de le faire après localisation en
des éléments d'une suite $\us$ de profondeur~$\geqslant
2$. Considérons celle qui intervient dans le
théorème \ref{1ElementaryPresentation} (présentation 1-élémentaire locale d'un
module de MacRae de rang 1) et localisons en un scalaire
$s$ de $\us$. Sur le localisé $\bfA_s$, on dispose donc d'une suite
exacte du type:
$$
0 \to \xymatrix {\bfA_s^r\ar[r]^-u  & \bfA_s^{r+1}\ar[r]^\pi & M_s} \to 0
$$
Et la forme linéaire $\vartheta_s : M_s \to \bfA_s$ est par hypothèse
injective (localisation de l'injectivité de $\vartheta$). En désignant
par $\mu : \bfA_s^{r+1} \to \bfA_s$ la forme linéaire des mineurs
signés de $u$ ($\mu$ est un relèvement de $\vartheta_s$ au sens où
$\vartheta_s \circ \pi = \mu$), l'injectivité de $\vartheta_s$ se traduit par $\Im u
= \Ker \mu$ ; d'où l'exactitude en $\bfA_s^{r+1}$ de:
$$
0 \to \xymatrix {\bfA_s^r\ar[r]^-u  & \bfA_s^{r+1}\ar[r]^\mu & \bfA_s} 
$$
Le théorème d'Hilbert-Burch (cf.~\ref{sectionHB}) fournit alors
$\Gr(\mu) \geqslant 2$ \idest{} $\Gr(\vartheta_s) \geqslant 2$.

\medskip
ii) Le vecteur $\vartheta(m)m' - \vartheta(m')m$ est dans le noyau de 
la forme linéaire $\vartheta$, injective par hypothèse. Ainsi, ce vecteur est nul.
\end {proof}

\begin {rmq}
Dans le contexte des deux derniers lemmes, 
il n'y a pas de réciproque au sens où l'on
peut avoir $\Gr(\vartheta) \geqslant 2$ \emph {sans que} $\vartheta$
ne soit injective. Dans le cadre de l'élimination (celui qui nous
intéresse le plus), nous allons en donner un exemple dans la
section suivante.

\smallskip

En dehors de ce cadre, c'est facile d'en donner des exemples.
Considérons le $\bfA$-module $M = \bfA \oplus M_0$ où $M_0 =
\bfA/\langle \ub \rangle $ avec $\ub = (b_1, b_2)$ de profondeur
$\geqslant 2$.  Il est présenté par
$$
\xymatrix @C=4pc{  
u : \quad 
\bfA^2 \ar[r]^-{ 
\left[
\begin{smallmatrix}
0 &  0 \\
b_1 & b_2
\end{smallmatrix}
\right]
}
& \bfA \oplus \bfA^1
}
$$
On a vu (cf.~\ref{AoplusM0}) que ce module est de MacRae de rang $1$ 
et que $\MacRaeVect(M)$ est engendré par la première projection 
$\vartheta : M = \bfA \oplus M_0 \to \bfA$. 
Cette forme linéaire est surjective \idest{} de profondeur infinie (donc supérieure à $2$), et 
pourtant, elle n'est pas injective: son noyau est $0 \oplus M_0$, 
non nul dès lors que $\ub = (b_1, b_2)$ n'est pas unimodulaire.

\smallskip

Une autre \og pathologie\fg{} pour ce même exemple : la forme linéaire $\vartheta$ n'a aucune
raison d'être de Cramer.  Puisque $\vartheta : M = \bfA \oplus M_0 \to
\bfA$ est définie par $\vartheta(a + m_0) = a$, l'égalité de Cramer s'écrit
$\vartheta(a' + m'_0) (a + m_0) = \vartheta(a + m_0) (a' + m'_0)$
et équivaut donc à $a'm_0 = am'_0$.  Cette dernière égalité a lieu pour
tous $a,a' \in \bfA$, $m_0, m'_0 \in M_0$ si et seulement si $M_0 =
0$, ce qui équivaut à $\ub$ unimodulaire. Bilan : 
\begin{center}
$\vartheta$ de Cramer $\iff$ $M_0 = 0$ $\iff$ $\ub$ unimodulaire $\iff$ $\vartheta$ injective
\end{center}
Signalons une généralisation de ce petit contre-exemple.  Soit $M_0$
un module de MacRae de rang $0$ d'invariant de MacRae trivial
$\MacRae(M_0) = \bfA$.  Alors le module $M = \bfA \oplus M_0$ est de
MacRae de rang $1$. En notant $\vartheta \in M^\star$ une forme linéaire
engendrant l'invariant de MacRae $\MacRaeVect(M)$,  on a les équivalences:
\begin{center}
$\vartheta$ de Cramer $\iff M_0 = 0 \iff$ $\vartheta$ injective
\end{center}
\end {rmq}

\begin {rmqs} [Où l'on brode autour de $\Gr(\omegaRes{\uP}) \geqslant 2 \nRightarrow \uP$ régulière]

  Il est légitime de se poser des questions du type: lorsque $\Gr(\omegaRes{\uP}) \geqslant 2$,
peut-on en déduire que $\uP$ est régulière? La réponse est négative.

Nous allons donner comme exemple celui du jeu circulaire, objet du
chapitre~\ref{ChapJeuCirculaire}. Pour cette raison, nous changeons
la notation $\uP$ en $\uQ$ en rappelant que $Q_i = X_i^{d_i-1}(X_i -
X_{i+1})$ avec une notation circulaire des indices.  Nous allons en
profiter pour examiner la négation \og $\uQ$ n'est \emph {pas} régulière\fg{}
et disserter sur le fait d'en tirer des énoncés positifs.
Ceci permet en prime de réaliser quelques retours en arrière.

\medskip  
$\bullet$
On a bien $\Gr(\omegaRes{\uQ}) \geqslant 2$ (et même plus). En effet, dans
la proposition \ref{ProprietesJeuCirculaire}, on a cerné la
forme~$\omega_\uQ$, en particulier on a vu qu'elle est surjective
(elle vaut 1 en chaque monôme). Ceci a comme conséquence d'une part
l'égalité $\omegaRes{\uQ} = \omega_\uQ$ et d'autre part
$\Gr(\omegaRes{\uQ}) = \infty$.  De plus, dans cette même proposition, on a vu
que $\Ker \omega_\uQ = \langle\uQ\rangle_\delta$ donc
${\uomegares}_{,\uQ}$ est injective.

\smallskip
Note pour les amateurs de preuve directe. Soit $\uP$ un système tel que
$\omegaRes{\uP}$ soit surjective. Montrer directement que
${\uomegares}_{,\uP}$ est une base de $\bfB_\delta^\star$ (s'inspirer de
la preuve du point ii du théorème~\ref{omegaresGr2}).

\medskip
$\bullet$
Nous allons donner plusieurs arguments permettant de montrer que la
suite $\uQ$ n'est pas régulière.  Anticipons un peu en supposant le
résultant défini (il le sera définitivement dans le prochain chapitre)
et en supposant acquises deux de ses propriétés.  La première est que le résultant
d'une suite régulière est un scalaire régulier (cf la proposition
\ref{ProprietesBasiquesResultant} qui reprend la
remarque~\ref{Rem-omegares-regulier} en
page~\pageref{ResumeSpecialisationCasGenerique} puisque $\calR_d(\uQ)$
est un visage possible de $\Res(\uQ)$ pour $d \geqslant \delta+1$).  Or ici,
on a $\Res(\uQ) = 0$ car le point $\Un = (1 : \cdots : 1)$ est un zéro
commun du système (propriété classique connue de tous, énoncée en~\ref
{ZeroCommunNulliteRes}). L'argument de nullité du résultant a
l'avantage de s'appliquer à tout système $\uX^D-\uR$ étudié dans la
section \ref{SectionJeuxSimples}: puisque $R_i$ est un monôme de
degré $d_i$, le point $\Un$ est un zéro de ce système, donc
$\Res(\uX^D - \uR) = 0$; a fortiori un tel système n'est jamais une
suite régulière.

\medskip
$\bullet$
On peut se passer du résultant de la manière suivante.  Puisque $\Un$
est un zéro commun de $\uQ$, alors $\langle\uQ\rangle^\sat \cap \bfA =
0$ (cf le commentaire après la définition de l'idéal d'élimination
dans le chapitre~\ref{ObjetsSuiteP}).  En rappelant que tout mineur
plein de $\Syl_{\delta+1}(\uQ)$ est dans l'idéal d'élimination, on en
déduit~${\calF_0(\bfB_{\delta+1}) = 0}$. Ainsi,
$\calF_0(\bfB_{\delta+1})$ étant loin d'être fidèle, le module
$\bfB_{\delta+1}$ n'est pas de rang $0$, ce qui constitue une
obstruction au fait que $\uQ$ est régulière.

Pour les amateurs de preuve directe (bis): on peut obtenir que
$\calF_0(\bfB_{\delta+1}) = 0$ en montrant que la somme des lignes de
$\Syl_{\delta+1}(\uQ)$ est nulle.

Bien entendu, puisque le système $\uQ$ est à coefficients dans
l'anneau principal $\bbZ$, le $\bbZ$-module $\bfB_{\delta+1}$ est
librement résoluble mais pas via la composante homogène de degré
$\delta+1$ du complexe de Koszul de~$\uQ$! (voir ci-après).
En fait, le module $\bfB_{\delta+1}$ est libre de rang 1. Pour le voir,
on laisse le soin au lecteur de démontrer que:
$$
\langle\uQ\rangle_{\delta+1} = \Ker \eval_\Un =
\sum_{|\alpha| = |\beta| = \delta+1} \bfA.(X^\alpha - X^\beta)
$$
Fixons un $X^\beta$ de degré $\delta+1$. Alors:
$$
\langle\uQ\rangle_{\delta+1} = \bigoplus_{|\alpha| = \delta+1} \bfA.(X^\alpha - X^\beta),
\qquad
\bfA[\uX]_{\delta+1} = \langle \uQ\rangle_{\delta + 1} \oplus \bfA\, X^\beta
$$
On en déduit que $\bfB_{\delta+1}$ est libre de rang 1 avec comme base
la classe de n'importe quel monôme de degré~$\delta+1$.

\medskip
$\bullet$
Une autre justification consiste à utiliser le fait que tout déterminant
bezoutien $\nabla$ d'une suite régulière est un polynôme régulier
(cf. \ref{BezoutienRegulier}), ou bien le fait que 
$\overline{\nabla} \in \bfB_\delta$ est sans torsion
(cf.~\ref{WiebeCech}). 

Or ici, nous allons exhiber un déterminant bezoutien $\nabla_\uQ$ qui est nul, obstruction
au fait que $\uQ$ soit régulière ; et qui confirme que $\Res(\uQ) = 0$, car
$\omegaRes{\uQ}(\nabla_\uQ)$ est un autre visage de $\Res(\uQ)$.

Le déterminant bezoutien rigidifié (cf après la définition \ref{DefNabla}) n'est pas
nul et ne convient donc pas. 
Voici la matrice bezoutienne rigidifiée (pour $n=4$) lorsque certains $d_i \geqslant 2$:
$$
\begin {bmatrix}
X_1^{d_1-1} - X_2^{d_1-1} &.                    &.                        &X_4^{d_4-1} \\
.                      &X_2^{d_2-2}(X_2-X_3)  &.                        &.        \\
.                       &.                    &X_3^{d_3-2}(X_3-X_4)  &.         \\
.                       &.                    &.                         &X_4^{d_4-1} \\
\end {bmatrix}
$$
Nous lui préférons la matrice bezoutienne ci-dessous qui présente plusieurs avantages:
$$
\dsV = \begin {bmatrix}
X_1^{d_1-1}  &.            &.           &-X_4^{d_4-1} \\
-X_1^{d_1-1} &X_2^{d_2-1}   &.           &. \\
.           &-X_2^{d_2-1}  &X_3^{d_3-1}  &. \\
.           &.            &-X_3^{d_3-1} &X_4^{d_4-1} \\  
\end {bmatrix}
$$
Puisque la somme des lignes est nulle, on voit aussitôt que $\nabla_\uQ := \det(\dsV) = 0$.

\medskip
$\bullet$
En multipliant à droite $[\uQ] = [\uX].\dsV$ par $\widetilde \dsV$, on obtient $n$
relations homogènes de degré $\delta+1$ données par $[\uQ].\widetilde\dsV = 0$. En fait une seule
puisque, pour une matrice dont la somme des lignes est nulle, les colonnes de sa cotransposée sont
identiques et de composantes les mineurs principaux (considérer la transposée et lui appliquer
la proposition~\ref{riliSumRelation}). Ici:
$$
M = \begin {bmatrix}
a_1  &.     &.    &-a_4 \\
-a_1 &a_2   &.    &. \\
.    &-a_2  &a_3  &. \\
.    &.     &-a_3 &a_4 \\  
\end {bmatrix}
\qquad \text{col. commune de $\widetilde M$}
\def\wha{\widehat a}
= \begin {bmatrix}
\wha_1 \\
\wha_2 \\
\wha_3 \\
\wha_4 \\
\end {bmatrix}
\quad \text{où} \quad
\widehat a_i = a_1 \cdots a_{i-1} a_i\kern-6pt/\,a_{i+1}\cdots a_4
$$
Bien entendu, pour $n$ quelconque $\widehat a_i = a_1 \cdots a_{i-1} a_{i+1} \cdots a_n$.
En conséquence, on dispose d'une relation homogène de degré $\delta+1$
$$
\sum_{i=1}^n U_iQ_i = 0   \qquad \text{avec}\qquad  U_i = \frac{X^\emouton}{X_i^{d_i-1}}
$$
Vu la définition de $Q_i$, on aurait pu deviner cette relation en écrivant 
$$
\frac{Q_1}{X_1^{d_1-1}} +  \frac{Q_2}{X_2^{d_2-1}} + \cdots + \frac{Q_n}{X_n^{d_n-1}} =
(X_1 - X_2) + (X_2-X_3) + \cdots + (X_n-X_1) = 0
$$
et en chassant les dénominateurs.

\medskip
$\bullet$
Puisque $\uQ$ n'est pas régulière, c'est que le complexe de Koszul
descendant $\rmK_\sbullet(\uQ)$ n'est pas exact. Où et comment?
Réponse: en degré $\delta+1$.  Le complexe composante homogène
$\rmK_{\sbullet,\delta+1}(\uQ)$ n'est pas exact et sa non-exactitude
se manifeste dès le départ en degré homologique 1:
$$
\rmH_1(\uQ_{\delta+1};\bfA) \simeq \bfA
$$
Et surtout: la relation fournie par $(U_1, \dots, U_n)$ est un générateur de ce groupe d'homologie. Note:
ici~$\uQ_{\delta+1}$ désigne la famille constituée de la base monomiale de $\langle\uQ\rangle_{\delta+1}$.
Nous n'allons pas le prouver et nous laissons le soin au lecteur (courageux) de montrer
$$
\Ker\partial_{1,\delta+1}(\uQ) = \Im\partial_{2,\delta+1}(\uQ) \oplus \bfA . \sum_{i=1}^n U_ie_i
$$

\medskip
$\bullet$
Le même lecteur courageux pourra s'interroger sur l'exactitude du complexe $\rmK_{\sbullet,\delta}(\uQ)$
i.e. en degré~$\delta$ au lieu de~$\delta+1$.

$\bullet$
Un dernier argument précisant que $\langle\uQ\rangle$ est de profondeur $n-1$ et pas $n$.
Sur chaque localisé en~$X_\ell$, on a, comme on le voit à la première page du chapitre~\ref{ChapJeuCirculaire},
$\langle\uQ\rangle = \langle X_1-X_2, X_2-X_1, \dots, X_n-X_1\rangle$ qui est de profondeur $n-1$.
Et ces égalités de profondeur se recollent puisque $\Gr(\uX) = n$.

\end {rmqs}  

\subsection{Le format $(1,1,3)$ où $\uomegares:\bfB_\delta\to\bfA$
            est de profondeur $\geqslant 2$ mais non injective}
\label{ExempleProfondeurNonInjective}

Nous avons vu précédemment en~\ref{omegaresGr2} que l'injectivité
de $\uomegares$ en générique 
avait comme conséquence le fait que $\Gr(\omegares) \geqslant 2$. 
Mais la réciproque est fausse. Donnons un exemple simple d'une suite 
régulière $\uP = (P_1, \ldots, P_n)$ vérifiant $\Gr(\omegares) \geqslant 2$ 
sans avoir l'injectivité de $\uomegares$ c'est-à-dire sans avoir 
l'égalité $\langle\uP\rangle_\delta = \Ker(\omegares)$.
On ne peut pas prendre $n=2$ à cause du théorème d'Hilbert-Burch. 
Nous avons misé sur $n=3$. Quant au choix de $D = (d_1,d_2,d_3)$, 
il est guidé par le fait que nous voulons contrôler à la main la non exactitude de 
$$
\xymatrix @C = 1.2cm{ 
\rmK_{1,\delta}\ar[r]^-{\Syl_{\delta}} &
\rmK_{0, \delta} \ar[r]^{\omegares} & \bfA
}
$$
Comme $\rmK_{3,\delta} = 0$  (rappel: en degré $d = \delta$, on a 
toujours $\rmK_{n,\delta} = 0$), 
le rang attendu de $\partial_{2,\delta}$ est $\dim \rmK_{2,\delta}$.
Il va être pratique d'imposer à $\rmK_{2,\delta}$ d'être de petite dimension pour contrôler 
la profondeur de l'idéal des mineurs déterminantiels de rang attendu 
de $\partial_{2,\delta}$. Imposons donc $\dim \rmK_{2,\delta} = 1$.
L'utilisation de la formule:
$$
\dim \rmK_{k,d} = \sum_{\#I = k} \binom{n+d-d_I-1}{d-d_I} 
\qquad \text{ où } d_I = \sum_{i \in I} d_i
$$
nous a conduit au plus petit exemple $D = (1,1,3)$. 
Désignons par $L_1$ et $L_2$ les deux premiers polynômes de 
$\uP = (L_1, L_2, P_3)$ :
$$
L_1 = a_1X_1+ a_2X_2 + a_3X_3, 
\qquad 
\qquad L_2 = b_1X_1+ b_2X_2 + b_3X_3
$$
où les $a_i, b_i$ seront déterminés plus loin ; 
quant à $P_3$, il n'interviendra pratiquement pas.

\medskip
Décrivons le complexe augmenté :
$$
\xymatrix @C = 1.2cm{0 \ar[r] & 
\rmK_{2,\delta}\ar[r]^{\partial_{2,\delta}} & 
\rmK_{1,\delta}\ar[r]^-{\Syl_{\delta}} &
\rmK_{0, \delta} \ar[r]^{\omegares} & \bfA
}
$$
en commençant par étudier le cas de la suite générique.

\bigskip

$\bullet$ \textbf{Première passe avec une suite $\uP$ générique}

\bigskip

D'après l'étude faite à la page~\pageref{evalxiProperties}  
sur le format $D = (1,\dots, 1, e)$, on sait que 
la forme linéaire $\omegares : \bfA[\uX]_\delta \to \bfA$ 
est très simple et est donnée par 
la formule close (cf.~\ref{omegaresCasEcole}) :
$$
\omegares = \evalxi  \qquad
\hbox {où $\uxi=(\xi_1, \xi_2, \xi_3)$ est défini par} \quad
\begin {vmatrix} a_1 &b_1 &T_1 \\ a_2 &b_2 &T_2 \\ a_3 &b_3 &T_3 \end {vmatrix} =
\xi_1 T_1 + \xi_2 T_2 + \xi_3 T_3
$$
formule où l'on voit que le polynôme $P_3$ n'intervient pas.
Comme $\rmK_{0,\delta} = \bfA[X_1, X_2, X_3]_2$, 
matriciellement, cette forme linéaire 
est représentée par 
$$
\omegares \ = \ 
\NorthEastBordermatrix{
\Veti{X_1^2} & \Veti{X_1X_2} & \Veti{X_1X_3} & \Veti{X_2^2} & \Veti{X_2X_3} & \Veti{X_3^2} & \\
\xi_1^2 & \xi_1\xi_2 & \xi_1\xi_3 & \xi_2^2 & \xi_2\xi_3 & \xi_3^2 & \\
}
$$

\medskip

Examinons l'application de Sylvester en degré $\delta = 2$. 
Rappelons que 
pour un format du type $D = (1,\dots, 1, e)$, l'application $\Syl_\delta$ 
attachée au jeu $(L_1, \dots, L_{n-1},P_n)$ 
est donnée par 
$$
\Syl_\delta : 
\begin{array}[t]{rcl}
\rmK_{1,\delta} = \bigoplus\limits_{i=1}^{n-1} \bfA[\uX]_{\delta-1} e_i 
& \longrightarrow & \rmK_{0,\delta} = \bfA[\uX]_\delta  \\ [0.2cm]
X^\gamma e_i & \longmapsto & X^\gamma L_i 
\end{array}
$$
Ici, pour le format $D = (1,1,3)$, on a 
$\rmK_{0,\delta} = \bfA[X_1, X_2, X_3]_2$
et 
$\rmK_{1,\delta}$ a pour $\bfA$-base monomiale les $X_i e_j$ où $j \in \{1,2\}$.
Ces deux modules ont même dimension, à savoir $6$.
Matriciellement, on obtient 
\label{SyldeltaFormat113}
$$
\Syl_\delta \ = \ 
\NorthEastBordermatrix{
\Veti{X_{1}\,e_{1}} & \Veti{X_{2}\,e_{1}} & \Veti{X_{3}\,e_{1}} & \Veti{X_{1}\,e_{2}} & \Veti{X_{2}\,e_{2}} & \Veti{X_{3}\,e_{2}} & \\
a_{1} & . & . & b_{1} & . & . & \Heti{X_{1}^{2}} \\
a_{2} & a_{1} & . & b_{2} & b_{1} & . & \Heti{X_{1}X_{2}} \\
a_{3} & . & a_{1} & b_{3} & . & b_{1} & \Heti{X_{1}X_{3}} \\
. & a_{2} & . & . & b_{2} & . & \Heti{X_{2}^{2}} \\
. & a_{3} & a_{2} & . & b_{3} & b_{2} & \Heti{X_{2}X_{3}} \\
. & . & a_{3} & . & . & b_{3} & \Heti{X_{3}^{2}} \\
}
$$
Le module $\rmK_{2,\delta} = \bfA e_{12}$ est libre de dimension $1$ et la différentielle 
$\partial_{2,\delta}$ est définie par 
$e_{12} \mapsto L_1e_2 - L_2e_1$ donc 
$$
\partial_{2,\delta} 
\ = \ 
\NorthEastBordermatrix{
\Veti{e_{12}} & \\
-b_1 & \Heti{X_1e_1} \\
-b_2 & \Heti{X_2e_1} \\
-b_3 & \Heti{X_3e_1} \\
a_1 & \Heti{X_1e_2} \\
a_2 & \Heti{X_2e_2} \\
a_3 & \Heti{X_3e_2} \\
}
$$
\bigskip

$\bullet$ \textbf{Deuxième passe avec une suite $\uP$ \og moins générique \fg{} !}

\bigskip

Fixons maintenant les $a_i, b_i$, coefficients de $L_1$ et $L_2$, 
en prenant seulement \textit{deux} indéterminées
$a,b$ (sur un petit anneau de base~$\bfk$). 
On fait le choix de :
$$
L_1 = aX_1 + bX_2, \qquad \qquad 
L_2 = bX_1 + aX_3
$$ 
si bien que le complexe augmenté 
$$
\xymatrix @C = 1.2cm{0 \ar[r] & 
\rmK_{2,\delta}\ar[r]^{\partial_{2,\delta}} & 
\rmK_{1,\delta}\ar[r]^-{\partial_{1,\delta}} &
\rmK_{0, \delta} \ar[r]^{\omegares} & \bfA
}
$$
a pour différentielles : 
$$
\NorthEastBordermatrix{
\Veti{e_{12}} & \\
-b & \Heti{X_{1}\,e_{1}} \\
. & \Heti{X_{2}\,e_{1}} \\
-a & \Heti{X_{3}\,e_{1}} \\
a & \Heti{X_{1}\,e_{2}} \\
b & \Heti{X_{2}\,e_{2}} \\
. & \Heti{X_{3}\,e_{2}} \\
}
\qquad 
\NorthEastBordermatrix{
\Veti{X_{1}\,e_{1}} & \Veti{X_{2}\,e_{1}} & \Veti{X_{3}\,e_{1}} & \Veti{X_{1}\,e_{2}} & \Veti{X_{2}\,e_{2}} & \Veti{X_{3}\,e_{2}} & \\
a & . & . & b & . & . & \Heti{X_{1}^{2}} \\
b & a & . & . & b & . & \Heti{X_{1}X_{2}} \\
. & . & a & a & . & b & \Heti{X_{1}X_{3}} \\
. & b & . & . & . & . & \Heti{X_{2}^{2}} \\
. & . & b & . & a & . & \Heti{X_{2}X_{3}} \\
. & . & . & . & . & a & \Heti{X_{3}^{2}} \\
}
\qquad 
\NorthEastBordermatrix{
\Veti{X_1^2} & \Veti{X_1X_2} & \Veti{X_1X_3} & \Veti{X_2^2} & \Veti{X_2X_3} & \Veti{X_3^2} & \\
a^2b^2 & -a^3b & -ab^3  & a^4  &a^2b^2  &b^4 & \\
}
$$
Un petit détail pour la matrice de $\omegares$ à droite. Comme le format est particulier, à savoir de la forme $D=(1,\dots, 1,e)$, on a 
$\omegares = \evalxi$ (cf.~\ref{omegaresCasEcole}).
Ici, on a 
$$
\begin{vmatrix} a &b &T_1 \\ b & . &T_2 \\ . &a &T_3 \end{vmatrix} 
=
ab T_1 - a^2 T_2 -b^2 T_3 
\qquad \text{donc} \quad
\uxi = (ab, -a^2, -b^2)
$$
Ainsi 
$$
\omegares(X^\alpha) = \evalxi(X^\alpha) = 
\uxi^\alpha = \xi_1^{\alpha_1} \xi_2^{\alpha_2} \xi_3^{\alpha_3}
= (ab)^{\alpha_1}(-a^2)^{\alpha_2}(-b^2)^{\alpha_3}
$$

$\rhd$ L'idéal des mineurs d'ordre $r_{2,\delta} = 1$ de $\partial_{2,\delta}$ est 
de profondeur au plus $2$ (en fait, ici exactement $2$ car $a$ et $b$ sont deux indéterminées).
On a donc $\Gr\big(\calD_{r_{2,\delta}}(\partial_{2,\delta})\big) < 3$ et l'utilisation du critère \og 
What makes a complex exact \fg{} empêche le complexe augmenté d'être exact en
$\rmK_{0, \delta}$, autrement dit $\uPdelta \subsetneq \Ker \omegares$.

\medskip

Pour confirmer ce qui vient d'être dit, 
le lecteur pourra vérifier que $F := X_1^2 - X_2X_3$, qui est dans
$\Ker(\omegares)$, n'appartient pas $\uPdelta = \Im \Syl_\delta$. 
En revanche, il s'étonnera peut-être de constater que 
les deux polynômes $aF$ et $bF$ 
sont dans $\Im \Syl_\delta = \uPdelta = \langle L_1, L_2 \rangle_\delta$, puisque :
$$
\left\{ 
\begin {array} {l}
\hbox {col. 1 $-$ col. 5:} \qquad
\Syl_\delta(X_1e_1 - X_2e_2) = X_1L_1 - X_2L_2 = aF \\[0.2cm]
\hbox {col. 4 $-$ col. 3:} \qquad
\Syl_\delta(X_1e_2 - X_3e_1) = X_1L_2 - X_3L_1 = bF \\
\end {array}
\right.
$$

$\rhd$
On a $\Gr(\omegares) \geqslant 2$. Un premier argument réside dans le
fait que les $\omegares(X^\alpha)$ sont, au signe près, des monômes en
$a$ et $b$, premiers dans leur ensemble, donc ici une suite de
profondeur~2. Un autre argument, bien meilleur car plus général,
consiste à remarquer que $\Gr(\uxi) \geqslant 2$ et $\Im\omegares
= \langle\uxi\rangle^2$, égalité provenant de $\omegares = \evalxi$.
Plus général car il s'applique à tout format $D = (1,\dots,1,e)$:
on a $\Im\omegares = \langle\uxi\rangle^\delta$ avec $\delta=e-1$ prouvant
l'implication $\Gr(\uxi) \geqslant 2 \Rightarrow \Gr(\omegares) \geqslant 2$.

$\rhd$ 
Quant à $P_3$, il y a beaucoup de latitude dans le choix d'un polynôme homogène de degré~3 (par exemple, 
un monôme): il suffit juste d'assurer que la suite $\uP = (L_1, L_2, P_3)$ obtenue est régulière où 
$L_1 = aX_1 + bX_2$,  $L_2 = bX_1 + aX_3$.
Choisissons $P_3 = X_1^3$.
Par la propriété d'exponentation des suites régulières
(cf.~\ref{PropProfondeur2}), il suffit de voir que 
la suite $(L_1, L_2, X_1)$ est régulière ; 
et par permutabilité en terrain homogène (cf.~\ref{PermutabiliteSuiteReguliereHomogene})
que la suite $(X_1, L_1, L_2)$ l'est. 
Modulo~$X_1$, cela revient à voir que $(bX_2, aX_3)$ est régulière, ce qui est évident.

\bigskip

$\bullet$ \textbf{Troisième passe avec la suite $(aX_1+bX_2,\ bX_1+aX_2, \ X_1^3)$}

\bigskip

Sur le système $\uP = (L_1,L_2,X_1^3)$ ci-dessus, nous allons apporter
les précisions et compléments ci-dessous permettant d'illustrer un
certain nombre de notions intervenues. Tout peut
être explicité, en partie grâce au fait que ce système relève du premier
cas d'école, cf la section \ref{soussectionPremierCasEcole}. Par
exemple, l'appartenance suivante, due au fait que $\omegares
= \evalxi$, est élémentaire (cf la proposition \ref{evalxiProperties}):
$$
\forall\, F,G \in \bfA[\uX]_\delta,\quad
\omegares(F)G - \omegares(G)F \in \langle\uP\rangle_\delta 
$$
Il en est de même de la détermination de $\calR = \omegares(\nabla)$, 
qui sera défini plus loin comme étant le résultant de $\uP$ (qui engendre l'idéal d'élimination en terrain générique, cf le théorème \ref{Resultant11d}).

On pourra remarquer que l'on dispose d'un automorphisme involutif
$(a \leftrightarrow b, X_2 \leftrightarrow X_3)$ de $\bbZ[a,b][\uX]$
qui échange $L_1$, $L_2$ et fixe $P_3$. On rappelle que $\uxi = (ab,
-a^2, -b^2)$ et que $\delta=2$.

\begin {enumerate}
\item
Les scalaires $a,b$ sont réguliers sans  être $\bfB_\delta$-réguliers.

\item 
Il en est donc de même de $\det W_{1,\delta} = ab^4$ 
(car multiple de $a$, ou de $b$, comme on veut).
  
\item
Détermination de $\calR = a^3b^3$, via $\evalxi(\nabla)$, ou via $P_3(\uxi)$.

\item
Détermination du cofacteur: $\omega = a\,\omegares$.

\item
Détermination de $\uPsat$.  
\end {enumerate}  

En ce qui concerne 1. tout repose sur $F = X_1^2 - X_2X_3$. 
En effet, on a 
$$
aF = 
aL_1 - b L_2 \in \langle\uP\rangle_2
\qquad \text{ mais } \qquad 
F \notin \langle\uP\rangle_2
$$
Idem avec le polynôme $bF$ si l'on veut.
On peut d'ailleurs préciser que $F \notin \uPsat_2$ en constatant que
l'idéal contenu de $F$ est $\langle 1\rangle$ tandis que:
$$
\uPsat_2 \subset (a\bfA+b\bfA)\bfA[\uX]_2
$$
En effet, d'après Wiebe, cf \ref{MiniWiebe}, on a
$\uPsat_2 = \langle\uP\rangle_2 \oplus \bfA\nabla$
avec $\langle\uP\rangle_2 =\langle L_1,L_2\rangle_2 \subset (a\bfA+b\bfA)\bfA[\uX]_2$.
Calculons un déterminant bezoutien $\nabla$ ; on peut considérer l'égalité matricielle :
$$
[L_1, L_2, P_3] = [X_1, X_2, X_3]
\begin {bmatrix}
a & b &X_1^2 \\
b & . &. \\  
. & a &. \\
\end{bmatrix}
\qquad \text{d'où } \qquad
\nabla = abX_1^2  
$$

2. On dresse la matrice $W_{1,\delta}$ :
ici, dans cet exemple, elle correspond à la matrice de $\Syl_\delta$ de la page~\pageref{SyldeltaFormat113}, à laquelle 
on supprime la ligne $X^\emouton = X_3^2$ et la colonne $X_1e_2$.
Un calcul facile montre que $\det W_{1,\delta} = ab^4$.

\medskip

3. Pour déterminer le résultant $\calR = \omegares(\nabla)$, on peut utiliser 
la formule $\evalxi(\nabla)$ (cf.~\ref{omegaresCasEcole}), qui fournit $\evalxi(abX_1^2) =ab \xi_1^2 = (ab)^3$, ou la formule $P_3(\uxi) = \xi_1^3 = (ab)^3$, cf le premier cas d'école~\ref{soussectionPremierCasEcole}.

\medskip

4. Quant au cofacteur, on peut le déterminer avec l'évaluation des formes linéaires en le \MoutonNoir.
On a d'une part $\omega(X_3^2) = \det W_{1,\delta}(\uP) = ab^4$ et
d'autre part $\omegares(X_3^2) = \evalxi(X_3^2) = \xi_3^2 = (-b^2)^2 = b^4$, d'où l'égalité $\omega = a \omegares$.

\bigskip

Le point 5. est le plus difficile. 

\noindent
On a $\uPsat_d = \langle\uP\rangle_d$ pour $d > \delta=2$ et pour $d=2$, on a 
$\uPsat_2 = \langle\uP\rangle_2 \oplus \bfA\nabla$ comme on l'a déjà signalé précédemment. Reste la détermination des composantes
homogènes de degré $0$ et $1$.

En ce qui concerne la composante homogène de degré 0, on ne peut pas directement
invoquer la monogénéité de l'idéal d'élimination (théorème \ref{Resultant11d} du premier cas d'école) 
par \og manque de généricité\fg{} pour affirmer:
$\ElimIdeal = \langle\calR\rangle$.

Il est cependant possible de généraliser l'énoncé du théorème cité de
la manière suivante: si une suite régulière $\uP = (L_1, \dots,
L_{n-1}, P_n)$ de format $(1,\dots, 1,e)$ vérifie $\Gr(\uxi) \geqslant 2$,
alors
$$
\ElimIdeal = \langle \calR \rangle
\quad \text{ où $\calR = \evalxi(\nabla)$}
$$
En effet, on a $\Gr(\evalxi) \geqslant 2$ et on peut alors appliquer la proposition \ref{MonogeneiteElimIdeal} (forme linéaire mirifique).
On en rappelle le principe. Soit $a \in \ElimIdeal$.
Il s'agit de montrer que $a$ est multiple de $\evalxi(\nabla)$.
Pour tout $X^\alpha$ de degré $\delta$, on a:
$$
aX^\alpha \in \uPsat_\delta = \langle\uP\rangle_\delta + \bfA\,\nabla
\qquad \qquad
\text{\small (l'égalité est conséquence de Wiebe, cf \ref{MiniWiebe})}
$$
En appliquant $\evalxi$ qui est nulle sur $\langle\uP\rangle_\delta$ d'après~\ref{evalxiProperties}, on obtient
$$
\forall\ |\alpha| = \delta, \quad \evalxi(\nabla) \mid a\,\evalxi(X^\alpha)
$$
Le fait que $\Gr(\evalxi) \geqslant 2$ permet d'en déduire 
$\evalxi(\nabla) \mid a$, d'où $a \in \langle \calR \rangle$,
ce qu'il fallait démontrer.

\medskip

De manière générale, l'égalité $\ElimIdeal = \langle \omegares(\nabla) \rangle$,
pour un format quelconque et une suite régulière~$\uP$, est valide
lorsque $\Gr(\omegares) \geqslant 2$: c'est l'objet du théorème
ultérieur~\ref{ResGenElimIdeal}.

\bigskip

Enfin, en ce qui concerne la composante homogène de degré 1 de
$\uPsat$, c'est plus fastidieux et il est indispensable d'avoir la
table des $\omegares(X_iX_j)$ sous les yeux. Pour $i$ fixé:
$$
\pgcd\big(\omegares(X_iX_j), j \in \{1,2,3\}\big) =
\begin {cases}
\pgcd(a^2b^2,-a^3b,-ab^3) = ab &\text{si $i=1$} \\
\pgcd(-a^3b,a^4, a^2b^2) = a^2 &\text{si $i=2$} \\
\pgcd(-ab^3,a^2b,b^4) = b^2    &\text{si $i=3$} \\
\end {cases} 
$$
Nous allons en déduire $\uPsat_1 \cap \bfA X_i
= \bfA Y_i$ où les $Y_i$ sont donnés par:
$$
Y_1 = \frac{a^3b^3}{ab} X_1 = a^2b^2\,X_1, \qquad
Y_2 = \frac{a^3b^3}{a^2} X_2 = ab^3\,X_2, \qquad
Y_3 = \frac{a^3b^3}{b^2} X_3 = a^3b\,X_3
$$
Traitons le cas de $X_1$. Soit
$\lambda\in \bfA$ tel que $\lambda X_1 \in \uPsat$ ; on a donc une
appartenance $\lambda X_1X_j \in \langle \uP\rangle_2
+ \bfA\nabla$ pour $j=1,2,3$. Un coup de $\omegares$ donne
$$
\lambda\,\omegares(X_1X_j) \equiv 0  \bmod \omegares(\nabla)
\qquad \text{i.e.} \qquad
a^3b^3 \mid \lambda\,\pgcd\big(\omegares(X_1X_j), j \in \{1,2,3\}\big) = \lambda\,ab
$$
D'où $a^2b^2 \mid \lambda$, d'où $\lambda X_1 \in \bfA Y_1$. 
Reste à contrôler que $Y_1 \in \uPsat$ ; autrement dit, a-t-on pour $j=1,2,3$ 
des égalités du type :
$$
Y_1X_j \overset{?}{=} \text{un élément de $\langle L_1, L_2\rangle_2$} + q_j\nabla \ ;
\quad \text{comme nécessairement} \quad
q_j = \frac{\omegares(Y_1X_j)}{\omegares(\nabla)} = \frac{\omegares(X_1X_j)}{ab}
$$
il n'y a plus qu'à vérifier que $Y_1X_j - q_j\nabla \in \langle L_1,L_2\rangle_2$. C'est bien le cas, car :
$$
Y_1X_1 - ab\nabla = 0, \qquad Y_1X_2 + a^2\nabla = ab(bX_2 -aX_1)L_1 , \qquad
Y_1X_3 + b^2\nabla = ab^2X_1L_2 
$$
Un travail analogue pour $X_2$ fournit $Y_2 = ab^3X_2$ avec
$Y_2X_j \in \langle L_1,L_2\rangle_2 + \bfA\nabla$:
$$
Y_2X_1 + ab\nabla = ab^2 X_1L_1, \qquad
Y_2X_2 - a^2\nabla = ab (bX_2-aX_1) L_1, \qquad
Y_2X_3 - b^2\nabla = -ab^3F
$$
Pour $X_3$, il suffit d'appliquer à $X_2$ l'involution $(a \leftrightarrow b, X_2 \leftrightarrow X_3)$.

\medskip
Ces $Y_i$ sont liés modulo $\uP$ au sens où, pour $i \ne j$, on a $Y_i \pm
Y_j \in \langle L_1,L_2\rangle \subset \langle\uP\rangle$.  Très
précisément:
$$
Y_1 + Y_2 = ab^2\,L_1,\qquad  Y_1 + Y_3 = a^2b\,L_2
$$
On a donc obtenu:
$$
\bfA L_1 \oplus \bfA L_2 \oplus \bfA Y_1 =
\bfA L_1 \oplus \bfA L_2 \oplus \bfA Y_2 =
\bfA L_1 \oplus \bfA L_2 \oplus \bfA Y_3 \ \subseteq \ \uPsat_1
$$
Faisons un dernier effort pour démontrer qu'il y a égalité à droite.  Soit $G
= \gamma X_1 + H \in \uPsat_1$ avec $H = \beta X_2 + \alpha X_3$.
Puisque $Y_1=a^2b^2X_1 \in \uPsat_1$, il en est de même de $a^2b^2G
- \gamma Y_1 = a^2b^2H$ et donc $a^2b^2HX^2$ appartient à $\uPsat_2 = \langle \uP\rangle_2 + \bfA \nabla$. Un coup de $\omegares$ donne
$a^3b^3 \mid \omegares(a^2b^2HX_2)$ donc
$$
ab \mid \omegares(HX_2) = \beta a^4 + \alpha a^2b^2
\quad \text{ d'où } \quad b \mid \beta
$$
De la même manière, en écrivant $a^3b^3 \mid \omegares(a^2b^2HX_3)$, on obtient $a \mid \alpha$.
En conséquence:
$$
G - \frac{\beta}{b}\,L_1 - \frac{\alpha}{a}\,L_2 \ \in\ 
\uPsat_1 \cap \bfA X_1 = \bfA Y_1
$$
En définitive, $G \in \bfA L_1 + \bfA L_2 + \bfA Y_1$.

Bilan. On a l'égalité :
$$
\uPsat_1 = \bfA L_1 + \bfA L_2 + \bfA a^2b^2X_1 =
\bfA L_1 + \bfA L_2 + \bfA ab^3X_2 = \bfA L_1 + \bfA L_2 + \bfA a^3bX_3 
$$

\subsection{Les cofacteurs scalaires $b_d^\sigma(\protect\uP)_{d\geqslant \delta}$ pour $\protect\uP$ quelconque
             et $\sigma \in \fS_n$}
\label{CofacteursTordus}
\index{cofacteur scalaire $b_d(\uP)$}%

Dans les modules/idéaux $\DVect_{s_\delta}(\Syl_\delta)$ et $\calD_{s_d}(\Syl_d)$, nous avons explicité
des éléments privilégiés (confer \ref{sectionFormeLineaireOmega},
\ref{SigmaVersion}, \ref{DefW1}), à savoir
$$
\omega^\sigma 
\qquad \text{ et } \qquad 
\forall\, d \geqslant \delta + 1,\ 
\det W_{1,d}^\sigma
$$
Ils sont multiples de l'invariant de MacRae (vectoriel/scalaire) via un cofacteur $b_d^\sigma$
qui est parfaitement défini 
lorsque la suite $\uP$ est générique. D'où la définition et propriétés suivantes:
\begin{quote}
\it 
Par spécialisation du 
cas générique, on définit pour une suite $\uP$ quelconque et $d \geqslant \delta$,
le cofacteur $b_d^\sigma = b_d^\sigma(\uP) \in \bfA$. Il vérifie :
$$
\omega^\sigma 
\ = \ 
\omegares \, \, b_\delta^\sigma
\qquad \text{ et } \qquad 
\forall\,d \geqslant \delta+1,\ 
\det W_{1,d}^\sigma = \calR_d \, b_d^\sigma
$$
Pour le jeu étalon généralisé, 
$b_d^\sigma(\pXD) 
= \prod\limits_{X^\beta \in \Jex_{2,d}} p_{\sminDiv(X^\beta)}$.
Et le jeu étalon, $b_d^\sigma(\uX^D) = 1$.

Si $\uP$ couvre le jeu étalon généralisé, la composante homogène dominante de $b_d^\sigma(\uP)$ est 
$b_d^\sigma(\pXD)$.
\end{quote}

\label{NOTA10-bdsigma}%
%
%

\begin {prop} [Propriétés des cofacteurs tordus]
\label{ProprietesCofacteurs}

\leavevmode
\begin{enumerate}[\rm i)]
\item 
Pour $d \geqslant \delta$, le cofacteur $b_d = b_d(\uP)$ est homogène en $P_i$ de poids $\dim \Jex_{2,d}^{(i)}$.
Ce qui signifie qu'en notant $\nu_i$ le nombre de monômes de $\Jex_{2,d}$ de $\minDiv$ égal à $i$, on a
pour tous $a_1, \dots, a_n \in \bfA$:
$$
b_d(a_1P_1, \dots, a_nP_n) = a_1^{\nu_1}\cdots a_n^{\nu_n}\, b_d(P_1, \dots, P_n)
$$
En particulier, $b_d$ ne dépend pas de $P_n$.

\item
Plus généralement, pour $\sigma \in \fS_n$, en notant
$\nu_i$ le nombre de monômes de $\Jex_{2,d}$ de $\sminDiv$ égal à $i$:
$$  
b^\sigma_d(a_1P_1, \dots, a_nP_n) = a_1^{\nu_1}\cdots a_n^{\nu_n}\, b^\sigma_d(P_1, \dots, P_n)
$$
En particulier, le scalaire $b^\sigma_d$ ne dépend pas de $P_{\sigma(n)}$.

\item 
Pour $\uP$ couvrant le jeu étalon généralisé, on a 
$$
\Gr\big((b_d^\sigma(\uP))_{\sigma\in\Sigma_2}\big) \geqslant 2
$$
\end{enumerate}
\end {prop}

\begin {proof} \leavevmode

i)
Par définition, on a les égalités de $\bfA$ :
$$
\underbrace{\omega(X^\emouton)}_{\det W_{1,\delta}}
\ = \ 
\omegares(X^\emouton) \, \, b_\delta
\qquad \text{ et } \qquad 
\forall\,d \geqslant \delta+1,\ 
\det W_{1,d}  = \calR_d \, b_d
$$
D'après~\ref{PoidsDetW}, les scalaires à gauche des
égalités sont homogènes en $P_i$ de poids $\dim \Jex_{1,d}^{(i)}$.
Quant à $\omegares(X^\emouton)$ et $\calR_d$, ils sont homogènes en
$P_i$ de poids $\dim \Jex_{1\setminus 2,d}^{(i)}$
(cf.~\ref{PoidsNormalisationMacRae}).  Par conséquent, les
cofacteurs~$b_d$ sont homogènes en $P_i$ de poids $\dim
\Jex_{1,d}^{(i)} - \dim \Jex_{1\setminus 2,d}^{(i)}$.  D'où le
résultat puisque $\Jex_{1,d}^{(i)} = \Jex_{1\setminus 2,d}^{(i)}
\oplus \Jex_{2,d}^{(i)}$.

\smallskip

On peut aussi fournir la preuve suivante (relativement similaire) :
les cofacteurs $b_d$ sont homogènes en $P_i$ 
puisque ce sont des diviseurs d'éléments eux-mêmes homogènes en $P_i$.
Le poids est obtenu en spécialisant en le jeu étalon généralisé.
Et les propositions \ref{dCofacteurJeuEtalonGen} et~\ref{deltaCofacteurJeuEtalonGen}
fournissent le résultat.

\smallskip

Pour le \og En particulier \fg{}, il suffit de remarquer que
$\Jex_{2,d}^{(n)} = 0$ (quel que soit $d$), car un monôme $X^\alpha$
divisible par deux $X_i^{d_i}$ distincts vérifie $\minDiv(X^\alpha) <
n$.  Ainsi, $b_d$ est homogène en $P_n$ de poids $0$, ce qui signifie
que $b_d$ ne dépend pas de $P_n$.

\medskip
ii)
La même preuve s'applique à $\sigma \in \fS_n$ en considérant 
$\sminDiv$ en lieu et place de $\minDiv$.

\medskip
iii)
D'après le contrôle de la profondeur par les composantes homogènes dominantes (cf.~\ref{ControleProfondeur}), 
il suffit de montrer cette propriété sur les composantes homogènes dominantes des~$b_d^\sigma$. 
Or cette composante homogène est le \textit{monôme} en $p_1, \dots, p_n$:
$$
\pi^\sigma \ = \ 
\prod_{X^\beta \in \Jex_{2,d}} p_{\sminDiv(X^\beta)}
$$
Ce produit ne dépend pas de $p_{\sigma(n)}$ d'après i).
Ainsi, pour toute indéterminée $p_i$, il y a, par définition de $\Sigma_2$, 
un
$\sigma \in \Sigma_2$ tel que $p_i$ ne figure pas dans~$\pi^\sigma$.
On obtient donc, pour les composantes homogènes
$(\pi^\sigma)_{\sigma \in \Sigma_2}$, une famille de monômes premiers
dans leur ensemble, donc de profondeur $\geqslant 2$, 
d'après~\ref{MonomesPremiersEntreEux}.
\end {proof}

\subsubsection*{Identification du cofacteur} 
\label{IdentificationCofacteurCommentaire}

Un des défis sera d'identifier précisément ces cofacteurs.
On montrera   que ce cofacteur peut se lire uniquement à l'aide  de la première différentielle,
à savoir l'application de Sylvester :
$$
\forall\, d \geqslant \delta, \quad 
b_d^\sigma \ = \  
\det W_{2,d}^\sigma
$$
Cette formule, extrêmement simple, sera établie une \emph {première
fois} en~\ref{sousSectionMacaulayRecurrence}, à l'arraché (disons comme 
certains auteurs), par récurrence sur $n$ et en utilisant quelques
propriétés des déterminants $\det W_{h,d}$.  Par ailleurs, cette première preuve
pas très glorieuse nécessite d'avoir avec soi le fait que les invariants de MacRae
des~$\bfB'_d$ sont égaux pour $d \geqslant \delta$,
cf.~\ref{MacRaeEqualities}, ce qui n'est pas une évidence!
Signalons également que pour montrer une formule par récurrence sur~$n$,
il faut la connaître (l'avoir devinée?), ce qui n'est pas rien et vérifier
l'hérédité n'explique en rien sa provenance.

\smallskip

Qui parle d'une première preuve sous entend l'existence d'une seconde preuve?
Exact : une autre stratégie pour identifier ce cofacteur consistera à
remarquer que l'invariant de MacRae d'un module librement résoluble
s'identifie au déterminant de Cayley d'une résolution de ce module
(cf.~\ref{LibrementResolubleImpliqueMacRae}). Et nous disposons d'un
moyen de calcul de ce déterminant de Cayley (scalaire ou vectoriel)
via des quotients alternés de déterminants $\det B^\sigma_{k,d}$ issus des différentielles
de la composante homogène de degré~$d$ du complexe de Koszul de $\uP$.
Précisément, on commencera par montrer (cf. les sections
\ref{DecompositionMacaulay}, \ref{SectionDeltakdMacaulay}
et~\ref{sigmaDecompositionMacaulay}) une égalité de nature
\emph {plus complexe que la précédente}:
$$
\forall\, d \geqslant \delta, \quad  
b_d^\sigma \ = \ 
\dfrac{\det B_{2,d}^\sigma \,\det B_{4,d}^\sigma \, \cdots}
{\det B_{3,d}^\sigma \, \det B_{5,d}^\sigma \, \cdots}
$$
Nul besoin de préciser en quoi ce quotient alterné de déterminants est
plus complexe qu'un seul déterminant!  Mais (il y a en fait plusieurs
mais), cette dernière formule est d'une part \og sans surprise\fg{};
d'autre part, cette formule quotient alterné n'est qu'un cas
particulier d'une formule plus générale car valide pour tout $d$ et
tout degré homologique $k$ (et pas seulement pour $k=2$). Pour
comprendre cette dernière phrase, nous devons mentionner une troisième
famille $\big(\Delta^\sigma_{k,d}\big)_{k \ge 1}$ de scalaires qui
englobe les $\big(b^\sigma_d\big)_{d \ge \delta}$ via l'égalité $b^\sigma_d =
\Delta^\sigma_{2,d}$ pour $d\ge \delta$.

\smallskip

Pour simplifier, nous prenons $\sigma = \Id_n$ et visualisons dans le
schéma suivant les trois protagonistes entremêlés dans cette
histoire:
$$
\vcenter{
\xymatrix @R=1.5cm{
  & *+<0.6cm>[o][F]{\big(w_{h,d}\big)_{h\ge 1}}\ar@{-->}[dl]_-{\textstyle 3}\ar[dr]^{\textstyle 2}
\\
*+<0.6cm>[o][F]{\big(\Delta_{k,d}\big)_{k\ge 1}}\ar@/^15pt/[rr]^{\textstyle 1}
  &&
    *+<0.6cm>[o][F]{\big(\beta_{k,d}\big)_{k\ge 1}}\ar@/^15pt/[ll]^{\textstyle 1'}  
\\  
}}
\qquad
\begin {array}{l}
w_{h,d}\overset{\rm def.}{=}\det W_{h,d},\quad \beta_{k,d} \overset{\rm def.}{=}\det B_{k,d}
\\[0.2cm]
w_{n+1,d} = \beta_{n+1,d} = \Delta_{n+1,d} = 1
\\[0.5cm]
\beta_{k,d} \overset{\textstyle 1}{=} \Delta_{k,d}\Delta_{k+1,d}
\\[0.5cm]
\Delta_{k,d} \overset{\textstyle 1'}{=}
\dfrac{\beta_{k,d}\ \beta_{k+2,d}\ \beta_{k+4,d}\cdots}
      {\beta_{k+1,d}\beta_{k+3,d}\beta_{k+5,d}\cdots}
\\
\end {array}
$$
Essayons d'en expliquer la lecture. Au tout début (cf la section
\ref{SectionDeltakdMacaulay}), nous disposerons de la flèche~1 et
de l'égalité~1, égalité exprimant un scalaire figurant à l'extrêmité
de la flèche en fonction de la famille située en son origine.  Le \og
sans surprise\fg{} plus haut consiste à inverser cette égalité~1 pour
obtenir l'égalité~1'.  Ceci reste assez banal. Ce qui l'est beaucoup
moins est la flèche~2, objet du chapitre~\ref{ChapBW}: nous montrerons
que $\det B_{k,d}$ est un produit pondéré des $(\det W_{h,d})_{h\ge k}$,
les exposants étant des coefficients binomiaux.  Il sera alors
facile d'en déduire la flèche 3 en pointillés, censée schématiser le fait
que $\Delta_{k,d}$ s'exprime en fonction des $\big(\det W_{h,d}\big)_{h\ge k}$.  Ce
qui conduira, pour $k=2$, à la formule extrêmement simple
$\Delta_{2,d} = \det W_{2,d}$ pour tout~$d$ dont l'expression du
cofacteur $b_d = \det W_{2,d}$ est un cas particulier pour~$d \ge
\delta$.

\smallskip

L'étude du déterminant de Cayley débutera au chapitre~\ref{ComplexeDecompose}
et ses relations plus poussées avec les déterminants excédentaires feront l'objet du
chapitre~\ref{ChapBW}, cf en particulier les
théorèmes~\ref{DetBkdProdBinomialDetWhd} et~\ref{DeltakdProdBinomialDetWhd}.

\subsection{Pgcd fort pour un système $\protect\uP$ couvrant le jeu étalon généralisé }

Soit $M$ un module de MacRae de rang $0$. Par définition, son
invariant de MacRae $\MacRae(M)$ est le pgcd fort de tout système
générateur fini de $\calF_0(M)$; on peut prendre par exemple les
mineurs pleins (au sens des lignes) de n'importe quelle présentation de $M$.

Il en est de même pour un module $M$ de MacRae de rang~1. Pour toute
présentation $u : E \to F$ de $M$, nécessairement de rang $r = \dim F
- 1$, son invariant de MacRae $\MacRaeVect(M)$ est, par définition, le
sous-module de $M^\star$ de base une forme linéaire pgcd fort des
formes linéaires du sous-module~$\DVect_r(u)$ de~$F^\star$,
celles-ci étant vues sur $M = \Coker(u)$ par passage au quotient.

\smallskip

Dans le
cadre du résultant, cette vision de l'invariant de MacRae présente
plusieurs inconvénients : d'une part, le nombre relativement élevé de
mineurs, et d'autre part leur manque de maniabilité.  Ici, en degré 
$d \geqslant \delta+1$, nous allons exhiber $n$ mineurs particuliers, qui certes ne
forment pas un système générateur de $\calF_0(\bfB_d)$, mais sont
tels que $\MacRae(\bfB_d)$ continue à être leur pgcd fort. Ce sont des
déterminants de Macaulay, moulés sur le modèle $\det W_{1,d}(\uP)$, et
qui présentent vis-à-vis de la spécialisation certaines propriétés de
robustesse.

\smallskip 

Il convient cependant d'imposer à $\uP$ une contrainte: celle de
couvrir le jeu étalon généralisé. Nous en rappelons le sens
(cf.~\ref{JeuCouvrantEtalon}): l'anneau de base $\bfA$ est de la forme
$\bfA = \bfR[p_1, \dots, p_n]$ où $(p_1, \dots, p_n)$ sont
algébriquement indépendants sur $\bfR$, l'indéterminée $p_i$ étant
allouée au coefficient en~$X_i^{d_i}$ de $P_i$. Si cette contrainte de
couverture du jeu étalon généralisé n'est pas respectée, le
comportement des déterminants de Macaulay peut s'avérer chaotique,
l'exemple paradigmatique étant celui, pour $n=3$, de $\uP = (aY, bX,
cZ^3)$, cf.~\ref{SuiteTypiqueYXZ3}.  Rappelons également qu'un système
$\uP$ couvrant le jeu étalon généralisé est une suite régulière, ce
qui légitime la notion d'invariant de MacRae de~$\bfB_d = \bfA[\uX]_d
/ \langle \uP \rangle_d$.

\smallskip

Voici l'énoncé précis dans lequel nous notons $\Sigma_2$
l'ensemble des transpositions $(i,n)$ pour $1\leqslant i\leqslant n$; ou plus
généralement n'importe quelle partie $\Sigma_2 \subset \fS_n$ telle que
$\Sigma_2(n) = \{1..n\}$.

\begin{prop}[MacRae en termes de pgcd fort de mineurs de Macaulay] 
\label{MacRaePGCDfort}
\leavevmode

Soit $\uP$ un système couvrant le jeu étalon généralisé.

\begin{enumerate}[\rm i)]
\item 
Pour $d \geqslant \delta+1$, l'invariant de MacRae $\calR_d$, générateur
privilégié de l'idéal $\MacRae(\bfB_d)$, est le pgcd fort des mineurs $(\det
W_{1,d}^\sigma)_{\sigma \in \Sigma_2}$.

\item 
La forme linéaire $\omega_{\res} : \bfA[\uX]_\delta \to \bfA$ est le pgcd
fort des formes linéaires $(\omega^\sigma)_{\sigma \in \Sigma_2}$.

\item 
Le scalaire $\omega_{\res}(\nabla)$, générateur privilégié de
$\MacRae(\bfB'_\delta)$, est le pgcd fort des 
$(\omega^\sigma(\nabla))_{\sigma \in \Sigma_2}$.
\end{enumerate}
\end {prop}

\begin {proof} 
Prouvons i) et ii) simultanément.

Ci-dessous, les notions de composantes homogènes sont relatives à la structure
de l'anneau de polynômes $\bfA = \bfR[p_1, \dots, p_n]$. Notons $b_d^\sigma$
les scalaires cofacteurs définis précédemment en~\ref{CofacteursTordus}
$$
\omega^\sigma 
\ = \ 
\omegares \, \, b_\delta^\sigma
\qquad \text{ et } \qquad 
\forall\, d \geqslant \delta+1, \ 
\det W_{1,d}^\sigma 
\ = \ 
\calR_d \, b_d^\sigma
$$
D'après~\ref{ProprietesCofacteurs}, comme $\uP$ couvre le jeu étalon 
généralisé, on a 
$\Gr\big((b_d^\sigma)_{\sigma\in\Sigma_2}\big) \geqslant 2$.

iii) Puisque 
$$
\omega^\sigma (\nabla) \ = \  \omegares(\nabla) \, \, b_\delta^\sigma
\qquad \hbox {et} \qquad 
\Gr\big(({b_\delta^\sigma})_{\sigma \in \Sigma_2}\big) \geqslant 2
$$
il suffit de montrer que $\omegares(\nabla)$ est régulier dans $\bfA$.
Mais $\uP$ étant une suite régulière (cf.~\ref{JeuCouvrantEtalonRegularite}),
$\omegares(\nabla)$ est l'invariant de MacRae de $\bfB'_\delta$,
a fortiori régulier, cf.~\ref{Rem-omegares-regulier}.

\medskip\noindent
Variante. Il suffit de montrer que $\omega(\nabla)$ est régulier dans $\bfA$.
Examinons d'abord le cas du jeu étalon généralisé 
$\pXD = (p_1X_1^{d_1}, \dots, p_nX_n^{d_n})$.
Une matrice bezoutienne est $\diag(p_1X_1^{d_1-1}, \dots, p_nX_n^{d_n-1})$ de déterminant 
$\nabla =
p_1\cdots p_n X^\emouton$ de sorte que
$$
\omega(\nabla) 
\ =\ 
p_1\cdots p_n\, \omega(X^\emouton) 
\ = \ 
p_1 \cdots p_n\, \det(W_{1,\delta}) 
\ = \ 
p_1^{1 + \nu_1} \cdots p_n^{1 + \nu_n}  \qquad \hbox {avec} \qquad
\nu_i = \dim\Jex_{1,\delta}^{(i)}
$$
Supposons maintenant que $\uP$ couvre le jeu étalon généralisé. Alors, pour ce jeu $\uP$,
le scalaire $\omega(\nabla)$, vu dans l'anneau de polynômes $\bfR[p_1, \dots, p_n]$,
a pour composante homogène dominante $p_1^{1 + \nu_1} \cdots p_n^{1 + \nu_n}$ où les
$\nu_i$ sont les $\nu_i$ précédents (la dimension de $\Jex_{1,\delta}^{(i)}$ n'a pas changé
entre temps!). 
On en déduit que $\omega(\nabla)$ est régulier dans $\bfA$. Il en est donc de même
de $\omegares(\nabla)$.
\end {proof}

\begin {rmq}
En degré $d \geqslant \delta+1$, lorsque $\Jex_{2,d} = 0$, le pgcd fort est très facile à déterminer en 
vertu de~\ref{J2dNul} ; 
on a alors $\calR_d = \det W_{1,d} = \det W_{1,d}^\sigma$ pour n'importe
quel $\sigma \in \fS_n$. Idem en degré $\delta$: si $\Jex_{2,\delta} = 0$, on a alors
$\omegares = \omega = \omega^\sigma$. La proposition~\ref{CasParticuliers}, qui
reprend~\ref{J2deltaNul}, donne les formats $D$ pour lesquels cette remarque s'applique.
\end{rmq}

\subsection {Un exemple élémentaire de détermination de $\omegaRes{\protect\uP}$ : le format $(1,p,q)$}

En guise d'application du contrôle du poids de
$\omegares$, nous allons, dans le cas du format $D = (1,p,q)$,
déterminer (partiellement) la forme linéaire $\omegares$. De manière précise, pour tout
système $\uP = (P_1, P_2, P_3)$ de format $D$, en notant $p_1$ le
coefficient de $X_1$ dans $P_1 = p_1X_1+ q_2X_2 + r_3X_3$, nous
allons prouver l'égalité
$$
\omega = p_1^e\,\omegares  \leqno(\star)
$$
égalité dans laquelle l'exposant $e$ possède plusieurs expressions dimensionnelles (cf le tableau ci-dessous)
$$
e = \left\{
\begin {array} {l}
\dim \Jex_{2,\delta}^{(1)} = \binom {p-1}{2} + \binom {q-1}{2} 
\\[2mm]
\dim \Jex_{1,\delta}^{(1)} - \dim \Jex_{1\setminus2,\delta}^{(1)} =
\tfrac{\delta(\delta+1)}{2} - (pq-1)
\\
\end {array}
\right.
$$
Pour établir $(\star)$, nous allons commencer par montrer que le
cofacteur $b = b_\delta(\uP) \in \bfA$, intervenant dans l'égalité
$\omega_\uP = b\, \omegaRes{\uP}$, ne dépend pas des coefficients de
$P_2$ et $P_3$ (bien qu'il en dépende a priori).  Puisque $d_2 + d_3 =
p+q$ est strictement supérieur à $\delta =(p-1) + (q-1)$, l'ensemble
de divisibilité d'un monôme de $\Jex_{2,\delta}$ ne peut pas contenir
à la fois $2$ et $3$. \'Etant de cardinal au moins $2$, il vaut donc
ou bien $\{1,2\}$ ou bien $\{1,3\}$.  Le minimum de chacun de ces
ensembles étant~$1$, on a $\dim \Jex_{2,\delta}^{(2)} = 0$ et $\dim
\Jex_{2,\delta}^{(3)} = 0$.

Une fois cette indépendance acquise, on peut se permettre de prendre
$P_2 = X_2^p$ et $P_3 = X_3^q$ pour déterminer $b$ ; autrement dit, on
a $b = b(\uQ)$ où $\uQ = (P_1, X_2^p, X_3^q)$.  Or le scalaire
$\omega_\uQ(X^\emouton) = \det W_{1,\delta}(\uQ)$ ne dépend pas des coefficients $q_2, r_3$
de $P_1$ (ceci se voit en examinant la matrice $W_{1,\delta}(\uQ)$
dans base monomiale de $\Jex_{1,\delta}$ rangée de manière
décroissante relativement à l'ordre lexicographique pour lequel $X_1 >
X_2 > X_3$: elle est triangulaire inférieure avec une diagonale
contenant soit $p_1$, soit $1$).  Ainsi, $\omega_\uQ(X^\emouton)$ ne
dépend que de $p_1$ et l'exposant sur $p_1$ est donné
par~\ref{PoidsDetW} :
$$
\omega_\uQ(X^\emouton) = p_1^{\dim \Jex_{1,\delta}^{(1)}}
$$
Ainsi $b(\uQ)$ ne dépend que de $p_1$ et d'après~\ref{ProprietesCofacteurs}, on maîtrise 
l'exposant : $b(\uQ) = p_1^{\dim \Jex_{2,\delta}^{(1)}}$.
On a donc $b = p_1^e$ avec $e = \dim \Jex_{2,\delta}^{(1)}$.

Justifions également les autres formules annoncées pour $e$.
Pour cela, établissons le tableau dimensionnel suivant 
dans lequel la dernière ligne est la somme
des deux premières:
$$
\begin {array}{c|c|c|c}  
i  &1  &2 &3 \\[0.5mm]
\hline\vrule height15pt depth5pt width0pt
\dim \Jex_{1\setminus2,\delta}^{(i)} &pq-1        &q-1       &p-1  \\[0.5mm]
\hline \vrule height15pt depth5pt width0pt
\dim \Jex_{2,\delta}^{(i)} &\binom{p-1}{2}+\binom{q-1}{2}  &0       &0  \\[0.5mm]
\hline\vrule height15pt depth5pt width0pt
\dim \Jex_{1,\delta}^{(i)} &\tfrac{\delta(\delta+1)}{2}        &q-1       &p-1  \\[0.5mm]
\end {array}
$$
La première ligne est conséquence du résultat général $\dim\Jex_{1\setminus2,\delta}^{(i)} = \widehat{d}_i-1$
avec ici $D = (d_1, d_2, d_3) = (1,p,q)$.
Dans la deuxième ligne, il reste à établir la dimension 
de $\Jex_{2,\delta}^{(1)}$. 
Les
monômes de $\Jex_{2,\delta}$ d'ensemble de divisibilité $\{1,2\}$ sont
les
$$
X_1^{1+i} X_2^{p+j} X_3^\bullet  \qquad \hbox {avec} \quad
i \geqslant 0,\ j \geqslant 0,\  i+j < q-2  \qquad\qquad
\begin {array}{l}
\hbox {la dernière inégalité provenant de} \\
1+i + p+j \leqslant \delta = p-1 + q-1  \\
\end {array}
$$
Ils sont donc en nombre $\binom{q-1}{2}$. 
De la même façon, les monômes de $\Jex_{2,\delta}$
d'ensemble de divisibilité $\{1,3\}$ sont en nombre $\binom{p-1}{2}$.

La dernière ligne s'obtient en faisant la somme des deux premières.
Mais il ne nous semble pas inutile de déterminer à la main cette dernière ligne.
Commençons par $\dim \Jex_{1,\delta}^{(2)}$ \idest{} le nombre de
$X^\alpha \in \Jex_1$ de degré $\delta = p-1 + q-1$ vérifiant
$\minDiv(X^\alpha) = 2$ donc $\alpha_1 = 0$ et $\alpha_2 \geqslant p$. Un
tel monôme est déterminé par $\alpha_3 = \delta - (\alpha_1+ \alpha_2)$ 
qui doit donc vérifier
$0 \leqslant \alpha_3 \leqslant \delta - p = q-2$. 
Ce qui nous donne les monômes 
$X_2^{\delta-\alpha_3} X_3^{\alpha_3}$ pour $0 \leqslant \alpha_3 \leqslant q-2$ ; donc $\dim \Jex_{1,\delta}^{(2)} = q-1$.
De manière analogue, on a $\dim \Jex_{1,\delta}^{(3)} = p-1$.

Quant aux monômes de $\Jex_{1,\delta}^{(1)}$, ce sont les
monômes vérifiant $\alpha_1 \geqslant 1$, 
ou encore les monômes du type 
$X_1^{\delta-\alpha_2-\alpha_3} X_2^{\alpha_2} X_3^{\alpha_3}$ avec 
$\alpha_2 + \alpha_3 \leqslant \delta -1$ ; 
un petit calcul de dénombrement fournit 
$\dim \Jex_{1,\delta}^{(1)} = \tfrac{\delta(\delta+1)}{2}$. 
On peut également l'obtenir par dimension complémentaire:
$$
\begin {array} {lll}
\dim \Jex_{1,\delta}^{(1)}
 &=& \dim \Jex_{1,\delta} -
      \dim \Jex_{1,\delta}^{(2)} - \dim \Jex_{1,\delta}^{(3)} \\ [0.2cm]
&=&
      \big(\dim \bfA[\uX]_\delta -1\big) \ - \ (q-1) - (p-1) \\ [0.2cm]
&=&
\frac{(\delta+1)(\delta+2)}{2} - 1 \ - \ \delta \\ [0.2cm]
&=& \frac{\delta(\delta+1)}{2}
\\
\end {array}
$$

\subsubsection{Une autre analyse en faisant intervenir certains cofacteurs $b_\delta^\sigma$, $\sigma\in\fS_3$}

Toujours dans le cadre du format $(1,p,q)$, pour n'importe
quelle permutation $\sigma \in \fS_3$, on dispose du cofacteur
$b_\delta^\sigma \in \bfA$, polynôme en les coefficients des $P_i$:
$$
\omega^\sigma = b_\delta^\sigma\,\omegares
$$
Il ne faudrait pas croire que $b_\delta^\sigma$ soit un polynôme aussi
simple que celui trouvé pour $\sigma = \Id_3$; et d'autre part, qu'il
soit facile à déterminer.  Nous fournirons dans le
chapitre~\ref{ChapBW} une formule déterminantale
pour~$b_\delta^\sigma$ (relire à ce sujet le paragraphe \og
Identification du cofacteur\fg{} à la
page~\pageref{IdentificationCofacteurCommentaire}) : il s'agit du
déterminant que nous avons noté $\det W_{2,\delta}^\sigma(\uP)$
(cf. \ref{NOTA05-W2d} ainsi que la définition~\ref{DefWh}):
$$
b_\delta^\sigma(\uP) = \det W_{2,\delta}^\sigma(\uP)
$$
Par exemple, pour $(p,q)=(3,4)$ et $\sigma = (1,3)$, la matrice $W_{2,\delta}^\sigma$,
qui ne dépend pas de $P_{\sigma(3)} = P_1$, est:
$$
W_{2,\delta}^\sigma =
\EastBordermatrix{
p_2 & b_{4} & . & c_{7} & \Heti{X_{1}^{2}X_{2}^{3}} \\ 
. & p_2 & . & c_{11} & \Heti{X_{1}X_{2}^{4}} \\ 
. & b_{8} & p_2 & c_{12} & \Heti{X_{1}X_{2}^{3}X_{3}} \\ 
. & . & b_{10} & p_3 & \Heti{X_{1}X_{3}^{4}} \\ 
}
\qquad\qquad
\det W_{2,\delta}^\sigma = p_2(p_2^2p_3 - p_2b_{10}c_{12} + b_8b_{10}c_{11})
$$
où les $b_i, c_j$ sont les coefficients de $P_2, P_3$:
$$
\left\{
\begin {array}{lcl} 
P_{2} &=& p_2X_{2}^{3} \quad +\quad   b_{4}X_{1}X_{2}^{2} + b_{8}X_{2}^{2}X_{3} + b_{10}X_{3}^{3} + \cdots
\\[0.2cm]
P_{3} &=& p_3X_{3}^{4} \quad +\quad c_{7}X_{1}X_{2}^{3} +  c_{11}X_{2}^{4} + c_{12}X_{2}^{3}X_{3} + \cdots
\\
\end {array}
\right.
$$
Cependant, nous allons voir comment utiliser l'ordre $<_\sigma$ pour
\emph {retrouver directement} le fait que le cofacteur~$b_\delta$ (pour $\sigma = \Id_3$) ne
dépend que de $P_1$. Comme d'habitude, désignons par $p_i$ le
coefficient en $X_i^{d_i}$ de $P_i$ pour $D = (d_1, d_2, d_3) =
(1,p,q)$:
$$
P_1  = p_1 X_1 + \cdots, \qquad  P_2 = p_2X_2^p + \cdots,\qquad P_3 = p_3X_3^q + \cdots
$$
Dans le tableau ci-dessous, pour chacune des permutations
$\sigma \in \fS_3$, on a indiqué l'ordre $<_\sigma$, la valeur
de~$\sigma(3)$ et celle de $b_\delta^\sigma$ quand celle-ci est
susceptible d'être déterminée.  Nous allons expliquer pourquoi ces
permutations sont regroupées en seulement 4 classes séparées par un
trait épais et pourquoi, dans chaque classe, les invariants
$\omega^\sigma$ et $b_\delta^\sigma$ sont les mêmes. Ainsi dans le tableau,
le tiret \og \hbox {---} \fg{} désigne la valeur figurant dans la case au dessus.

Le premier point capital déjà évoqué réside dans le fait que, ni
$\{2,3\}$, ni $\{1,2,3\}$ ne sont des ensembles de $D$-divisibilité en
degré $\delta$, ceci étant dû au fait que $d_2 + d_3 > \delta$. Il
reste donc, en ce degré, comme ensembles possibles de $D$-divisibilité les
singletons, $\{1,2\}$ et $\{1,3\}$.  Puisqu'un objet tordu par
$\sigma$ est piloté par la primitive $\sminDiv$, deux permutations
$\sigma$, $\sigma'$ fournissent le même objet tordu en degré
$\delta$ lorsque
$$
\min_\sigma(E) = \min_{\sigma'}(E) \quad
\begin {array}{c}
\hbox {pour tout ensemble $E$} \\
\hbox {de $D$-divisibilité en degré $\delta$} \\
\end {array}
\quad\hbox{i.e.}\quad
\left\{
\begin {array}{c}
\min_\sigma \{1,2\} = \min_{\sigma'} \{1,2\} \\
\min_\sigma\{1,3\} = \min_{\sigma'} \{1,3\} \\
\end {array}
\right.
$$
On peut ainsi vérifier que $\Id_3$ et $(2,3)$ fournissent le même objet tordu en degré $\delta$. Ainsi $b_\delta^{\Id_3} = b_\delta^{(2,3)}$.
Idem pour les permutations $(1,3)$ et $(1,2,3)$ qui sont regroupées.
$$
\def\saut{\vrule height15pt depth5pt width0pt}
\begin {array}{c|c|c|c}  
\sigma  &\scriptstyle{\sigma(1)<\sigma(2)<\sigma(3)} &\sigma(3) & b_\delta^\sigma \\[0.5mm]
\hline\saut
\Id_3 & 1 < 2 < 3 & 3 &p_1^{\binom{p-1}{2} + \binom{q-1}{2}}  \\[0.5mm]
\hline\saut
(2,3) & 1 < 3 < 2 & 2 & \hbox {---}  \\[0.5mm]
\noalign {\hrule height1.5pt}\saut
(1,3) & 3 < 2 < 1 & 1 & \hbox {expression complexe indépendante de $P_1$}  \\[0.5mm]
\hline\saut 
(1,2,3) & 2 < 3 < 1 & 1 & \hbox {---}  \\[0.5mm]
\noalign {\hrule height1.5pt}\saut
(1,2)  & 2 < 1 < 3 & 3 &p_1^{\binom{p-1}{2}} p_2^{\binom{q-1}{2}}  \\[0.5mm]
\noalign {\hrule height1.5pt}\saut
(1,3,2)  & 3 < 1 < 2 & 2 &p_1^{\binom{q-1}{2}} p_3^{\binom{p-1}{2}}
\end {array}
$$
Ce tableau nous permet de conclure de la manière suivante. On a vu en
\ref{ProprietesCofacteurs} que $b_\delta^\sigma$ ne dépend pas de~$P_{\sigma(n)}$. 
Ainsi $b_\delta^{\Id_3}$ est indépendant de $P_3$, et $b_\delta^{(2,3)}$ est
indépendant de $P_3$. Comme ces deux cofacteurs sont égaux, on obtient ainsi d'une autre manière que $b_\delta$ ne dépend que de $P_1$. Sa détermination précise a été
réalisée précédemment (on a $b_\delta = p_1^e$) ; 
celles de $b_\delta^\sigma$ pour $\sigma =
(1,2)$ et $\sigma = (1,3,2)$ sont laissées au lecteur.

\subsection {\'Etude de la forme $\omegares$ de certains systèmes combinatoires
              $\bsp\protect\uX^D + \bsq\protect\uR$}
\label{SectomegaresJeuxSimplex}

Chaque système $\uQ = (Q_1, \dots, Q_n)$ considéré ici
est défini à partir d'un système monomial $\uR = (R_1, \dots, R_n)$
de format $D$ où $R_i$ est un monôme de degré $d_i$, et à partir de $2n$ indéterminées sur
$\bbZ$ notées $p_1, \dots, p_n, q_1, \dots, q_n$ 
$$
\uQ = \pXD + \bsq\,\uR,\qquad    Q_i \ =\ p_i X_i^{d_i} + q_i R_i
$$
En écrivant $\bbZ[p_1, \dots, p_n, q_1, \dots, q_n] = \bfR[p_1, \dots, p_n]$
avec $\bfR = \bbZ[q_1, \dots, q_n]$, ce jeu $\uQ$ couvre le jeu étalon
généralisé $\pXD = (p_1X_1^{d_1}, \dots, p_nX_n^{d_n})$.

\medskip
Dans la suite, nous \emph{supposons que $\uR$ possède une fonction hauteur} (notée~$h$)
au sens de la section~\ref{SectionJeuxSimples}, l'exemple
typique étant le jeu circulaire généralisé (pour ce dernier jeu,
c'est plutôt $\uQ$ qui est mis en avant au lieu de $\uR$). Pour un tel $\uR$, rappelons
l'objet du théorème \ref{qContributionTheorem} concernant la forme linéaire 
$\omega_\uQ$ (attachée à $\minDiv$) et la formule qui le précède 
portant sur son évaluation en chaque
monôme~$X^\alpha$ de degré~$\delta$.  En désignant par $\calO_\alpha$
l'\og orbite\fg{} (pilotée par $\minDiv$) de $X^\alpha$ sous $\uR$ et
par $\overline{\calO_\alpha}$ son complémentaire
dans~$\Jex_{1,\delta}$, cette évaluation est un monôme en les
$p_i, \overline{q}_j$ (où $\overline{q}_j = -q_j$), produit d'une
$q$-contribution et d'une $p$-contribution:
$$
\forall\, |\alpha| = \delta, \quad 
\omega_\uQ(X^\alpha) \ = \ 
\prod_{X^\beta \in \calO_\alpha \setminus \{X^\emouton\}} \kern -9pt \overline{q}_{\minDiv(X^\beta)}
\prod_{X^\gamma \in \overline{\calO_\alpha}} p_{\minDiv(X^\gamma)}
$$
Cette formule combinatoire de $\omega_\uQ(X^\alpha)$ est capitale dans ce qui suit.
Elle fournit en particulier le fait que $\omega_\uQ(X^\emouton)$ est
le \textit{monôme} suivant de $\bbZ[p_1, \dots, p_n]$ (on peut
également utiliser la proposition~\ref{detW1deltaQ})
$$
\omega_\uQ(X^\emouton) \overset{\rm toujours}{=} \det W_{1,\delta}(\uQ) =
\prod_{X^\beta \in \Jex_{1,\delta}} p_{\minDiv(X^\beta)}
\qquad \hbox {aussi égal à} \qquad
\det W_{1,\delta}(\pXD) 
$$

Dans l'énoncé du théorème \ref{qContributionTheorem}, nous avons vu que la
$q$-contribution peut se lire sur le graphe de~$\uR$, en particulier
est \emph{intrinsèque} à $X^\alpha$ i.e. ne dépend pas du mécanisme
$\minDiv$ qui pilote $\omega_\uQ$. Ici nous apportons un éclairage sur
cette $q$-contribution en faisant intervenir la forme
$\omegaRes{\uQ}$.

\begin {theo} [Formule de Macaulay dans le contexte ci-dessus]\leavevmode
\label{omegaresJeuSimple}

\begin {enumerate} [\rm i)]
\item
Le cofacteur $b_\delta = b_\delta(\uQ)$ intervenant dans $\omega_\uQ = b_\delta\,\omegaRes{\uQ}$
se détermine comme suit:
$$
 b_\delta = \det W_{2,\delta}(\pXD) =  \prod_{X^\beta\in\Jex_{2,\delta}} p_{\minDiv(X^\beta)}  
$$
Il est indépendant des $q_i$.
 
\item
En le \MoutonNoir{} $X^\emouton$, la valeur de $\omegaRes{\uQ}$ est donnée par
$$  
\omegaRes{\uQ}(X^\emouton) = \prod_{X^\beta \in \Jex_{1\setminus2,\delta}} p_{\minDiv(X^\beta)} 
\ =\ 
p_1^{\widehat{d_1}-1} \cdots  p_n^{\widehat{d_n}-1}
$$  
\item
 La détermination  de $\omegaRes{\uQ}(X^\alpha)$ se réalise à partir de celle de $\omega_\uQ(X^\alpha)$
 \emph{sans} la connaissance explicite de $b_\delta$ (mis à part le fait que celui-ci ne dépend
 que des $p_i$):
$$
\omega_\uQ(X^\alpha) = p_1^\sbullet \cdots p_n^\sbullet \ \overline{q}_1^{m_1}\cdots \overline{q}_n^{m_n}
\quad \Longrightarrow\quad
\omegaRes{\uQ}(X^\alpha) =
p_1^{\widehat{d_1}-1-m_1} \cdots p_n^{\widehat{d_n}-1-m_n} \ \overline{q}_1^{m_1}\cdots \overline{q}_n^{m_n}
$$
A fortiori $(m_1, \dots, m_n)$ ne dépend que de $\omegaRes{\uQ}(X^\alpha)$ et pas du mécanisme $\minDiv$. 

\item 
On rappelle que $\ell_i = \exposant(X_i^{d_i}) - \exposant(R_i)$.
Dans tout chemin du graphe de $\uR$ reliant $X^\alpha$ au \MoutonNoir{} $X^\emouton$, le
nombre d'étiquettes~$i$ est égal à $m_i$. De plus, la suite $(m_i)_{1 \leqslant i \leqslant n}$
correspond à l'écriture positive minimale de $\alpha - (\emouton)$ sur les $\ell_i$
et contribue à la définition de la fonction hauteur $h$:
$$
\alpha - (\emouton) = \sum_i m_i \ell_i, \qquad\quad
h(\alpha) = \sum_i m_i
$$
\end {enumerate}  
\end {theo}

\begin{proof}

Commençons par montrer i) et ii).
On sait que 
$\omega_\uQ(X^\emouton)$ est un \textit{monôme} en $p_1, \dots, p_n$.
En tant que diviseurs, on en déduit que $b_\delta(\uQ)$ et $\omegaRes{\uQ}(X^\emouton)$
sont nécessairement, au 
signe près, des monômes en $p_1, \dots, p_n$.  
Nous contrôlons,
au sens de l'anneau $\bfR[p_1,\dots,p_n]$, leur
composante homogène dominante: il s'agit 
respectivement de $\prod\limits_{X^\beta \in \Jex_{2,\delta}} p_{\minDiv(X^\beta)}$ et 
$\prod\limits_{X^\beta \in \Jex_{1\setminus2,\delta}} p_{\minDiv(X^\beta)} =
p_1^{\widehat{d_1}-1} \cdots  p_n^{\widehat{d_n}-1}$.
Donc $b_\delta(\uQ)$ et $\omegaRes{\uQ}(X^\emouton)$
sont exactement ces monômes.

\smallskip
Une variante possible, une fois obtenue l'indépendance en les $q_i$, consiste
à réaliser la spécialisation $q_i := 0$ et à invoquer les résultats
du jeu étalon généralisé (chapitre~\ref{ChapJeuEtalonGeneralise}).

\medskip
Le point iii) résulte du fait que $b_\delta$ est
un monôme en $p_i$ (a fortiori ne dépend pas de $q_i$) et d'autre
part que $\omegaRes{\uQ}(X^\alpha)$ est homogène de poids $\widehat {d_i}-1$ en les coefficients $p_i,q_i$ de $Q_i$.

\medskip

Quant au point iv), c'est une redite du point ii) du résultat~\ref{qContributionTheorem}.
\end{proof}

\bigskip

Nous avons qualifiée la formule de l'énoncé formule de Macaulay car il
s'agit effectivement d'un cas particulier de la formule générale:
$$
\omega_\uP =  b_\delta(\uP)\,\omegaRes{\uP} \qquad
b_\delta(\uP) = \det W_{2,\delta}(\uP) 
$$
Une suite naturelle à ce théorème est la détermination précise du
résultant de $\uQ$ qui est, par définition, le scalaire défini par
$\Res(\uQ) := \omegaRes{\uQ}(\nabla_\uQ)$ (cf. la
définition~\ref{DefResultant} du résultant dans le chapitre à venir).
Nous avons établi une formule pour ce résultant : c'est la
formule~\ref{ResQconj} ; elle est \emph{conjecturale} dans le cas
général, mais pas dans le cas du jeu circulaire, ni du jeu
$\uR^{(1)}$, ni de nombreux autres jeux; bien entendu, cette formule,
sous sa forme conjecturale, ne sera pas utilisée dans la suite.

\medskip


\subsubsection {Le cas du jeu circulaire généralisé, où $\uR = (X_1^{d_1-1}X_2, X_2^{d_2-1}X_3,\dots, X_n^{d_n-1}X_1)$}

On rappelle (cf. chapitre~\ref{ChapJeuCirculaire}) que le jeu circulaire généralisé
$\uQ = \pXD + \bsq\,\uR$ est défini par
$$
Q_i \ = \ X_i^{d_i - 1}(p_iX_i + q_iX_{i+1}) 
\ =\ p_i X_i^{d_i} + q_i X_i^{d_i-1}X_{i+1}
$$ 
Allons plus loin en déterminant $\Res(\uQ)$.
Pour obtenir un déterminant bezoutien $\nabla_\uQ$, considérons
comme matrice bezoutienne de $\uQ$ la plus naturelle qui soit: une diagonale
principale de $p_iX_i^{d_i-1}$, une sous-diagonale de $q_i X_i^{d_i -
  1}$ et un petit bout en haut à droite égal à $q_n X_n^{d_n-1}$.
Par exemple pour $n=4$, on prend
$$
\dsV \ = \ 
\begin{bmatrix}
p_1X_1^{d_1-1}  &.             &.               &q_4X_4^{d_4-1} \\
\noalign {\smallskip}
q_1X_1^{d_1-1} &p_2X_2^{d_2-1}   &.            &. \\
\noalign {\smallskip}
.             &q_2X_2^{d_2-1}  &p_3X_3^{d_3-1} &. \\
\noalign {\smallskip}
.             &.                &q_3X_3^{d_3-1} &p_4X_4^{d_4-1} \\
\end{bmatrix}
$$
Or il est facile de calculer un déterminant d'une matrice de ce type ; en effet, on a :
$$
\Delta_4(\ua, \ub) =
\begin{vmatrix}
a_1  &.             &.               &b_4 \\
\noalign {\smallskip}
b_1 & a_2   &.            &. \\
\noalign {\smallskip}
.             &b_2  &a_3 &. \\
\noalign {\smallskip}
.             &.                &b_3 &a_4 \\
\end{vmatrix}
= a_1a_2a_3a_4 - b_1b_2b_3b_4 
$$
et de manière générale, pour un déterminant d'ordre $n$ de cette nature, on a :
$$
\Delta_n(\ua,\ub) \ = \ 
a_1 \cdots a_n - (-1)^n b_1 \cdots b_n \ =\ 
\prod_{i=1}^n a_i - \prod_{i=1}^n (-b_i)
$$
On obtient donc:
$$
\nabla_\uQ \ \overset{\rm def}{=} \ 
\det \dsV \ = \ 
\Big( \prod_{i=1}^n p_i \, - \,  \prod_{i=1}^n \overline{q}_i \Big) 
X^{\emouton}
$$
Ce qui donne pour $\Res(\uQ) = \omegaRes{\uQ}(\nabla_\uQ)$:
$$
\Res(\uQ) \ = \ 
\Big( \prod_{i=1}^n p_i \, - \, \prod_{i=1}^n \overline{q}_i \Big) 
\omegaRes{\uQ}(X^\emouton) =
\Big( \prod_{i=1}^n p_i \, - \, \prod_{i=1}^n \overline{q}_i \Big) 
\prod_{i=1}^n p_i^{\widehat d_i - 1}
\ = \ 
\prod_{i=1}^n p_i^{\widehat d_i}
- 
\prod_{i=1}^n \overline {q}_i p_i^{\widehat d_i-1}
$$
En particulier, si $p_i = 1 = \overline{q}_i$, alors $\Res(\uQ) = 0$, ce qui était attendu 
car, dans ce cas, $\xi = (1 : \cdots : 1)$ est un zéro commun aux $Q_i$.

\subsubsection {Le cas du jeu $\uR^{(1)} = (X_1^{d_1-1}X_2, X_1^{d_2-1}X_3, \dots, X_1^{d_n-1}X_1)$}

\index{jeu!monomial!de format $D$@$R^{(1)}$}%

Ce jeu figure après la proposition~\ref{ExistenceSuperHauteur} parmi les
exemples de jeux possédant une fonction hauteur.
Il est facile de déterminer une matrice bezoutienne de $\uQ := \pXD + \bsq\,\uR^{(1)}$ analogue
à celle utilisée pour le jeu circulaire généralisé, la différence résidant
dans le fait qu'en dehors de la diagonale, il y a des $q_i X_1^\sbullet$. Par exemple pour $n=4$:
$$
\dsV \ = \ 
\begin{bmatrix}
p_1X_1^{d_1-1}  &.             &.               &q_4X_1^{d_4-1} \\
\noalign {\smallskip}
q_1X_1^{d_1-1} &p_2X_2^{d_2-1}   &.            &. \\
\noalign {\smallskip}
.             &q_2X_1^{d_2-1}  &p_3X_3^{d_3-1} &. \\
\noalign {\smallskip}
.             &.                &q_3X_1^{d_3-1} &p_4X_4^{d_4-1} \\
\end{bmatrix}
$$
Ce qui conduit au déterminant bezoutien (en continuant à noter $\overline {q}_i = -q_i$):
$$
\nabla_\uQ \ = \ \det(\dsV) \ = \ p_1\cdots p_n\,X^\emouton - \overline{q}_1\cdots\overline{q}_n\,X_1^\delta
$$
Pour déterminer $\omegaRes{\uQ}(\nabla_Q)$, il suffit de connaître $\omegaRes{\uQ}(X_1^\delta)$, puisque l'on sait que 
$\omegaRes{\uQ}(X^\emouton) = p_1^{\widehat{d_1}-1} \cdots  p_n^{\widehat{d_n}-1}$.

\medskip
Dans la section~\ref{SectionJeuxSimples}, à un jeu monomial
$\uR = (R_1, \dots, R_n)$ de format $D$, nous avons attaché
la matrice $\Jac_\Un(\uX^D-\uR)$, matrice de $\bbM_n(\bbZ)$ dont les $n$ lignes
$\ell_1, \dots, \ell_n \in \bbZ^n$ (de somme nulle) sont données par
$$
\ell_i = \exposant(X_i^{d_i}) - \exposant(R_i)
$$
ainsi que ses $n$ mineurs principaux d'ordre $n-1$, notés dans l'ordre $r_1, \dots, r_n$.
Ceux-ci vérifient $\sum_i r_i \ell_i = 0$ ce qui conduit par sommation à l'identité:
$$
\exposant(R_1 \cdots R_n) - \exposant(X^D) = -\sum_{i=1}^n \ell_i =
\sum_{i=1}^n (r_i - 1) \ell_i
$$
Dans le cas du jeu $\uR^{(1)}$, on a $R_1\cdots R_n = X_1^\delta\, X_1\cdots X_n$
et l'identité ci-dessus peut s'écrire:
$$
\exposant(X_1^\delta) - \exposant(X^\emouton) = \sum_{i=1}^n (r_i - 1) \ell_i
$$
Enfin, pour ce jeu, on a déterminé les $n$ mineurs principaux~$r_i$ et
montré que $r_i \geqslant 1$ pour chaque $i$.
En conséquence, d'après le point iii) du théorème \ref{qContributionTheorem},
l'écriture ci-dessus est l'écriture positive minimale de
$\exposant(X_1^\delta) - \exposant(X^\emouton)$ sur les $\ell_i$.
Le théorème \ref{omegaresJeuSimple} fournit alors:

\begin {coro} \label{omegaresR1X1delta}
Pour $\uQ := \pXD + \bsq\,\uR^{(1)}$:
$$
\omegaRes{\uQ}(X_1^\delta) \ = \ \prod_{i=1}^n \overline {q}_i^{r_i-1}\, p_i^{\widehat d_i-r_i}
$$
\end {coro}

\medskip

\noindent
Commentaire. Il y a une propriété remarquable spécifique à ce jeu $\uR^{(1)}$.
On a vu (confer ce qui suit~\ref{ExistenceSuperHauteur}) que les $r_i$ sont donnés par:
$$
r_i = \prod_{j > i} d_j   \qquad \text{en particulier} \qquad
r_1 = \widehat {d_1}, \qquad  r_n = 1
$$
Ainsi, en examinant les exposants de $p_1$ et $\overline {q}_n$, 
on en déduit que $\omegaRes{\uQ}(X_1^\delta)$ ne dépend ni de
$p_1$, ni de $q_n$.

\medskip

En utilisant le déterminant bezoutien $\nabla_\uQ$ que nous avons
exhibé (combinaison de $X^\emouton$ et $X_1^\delta$) et le corollaire
précédent, on en déduit l'égalité ci-dessous, de même nature
que celle du jeu circulaire généralisé pour lequel chaque $r_i$ vaut 1.

\begin {coro} Le résultant du jeu $\uQ = \pXD +\bsq\,\uR^{(1)}$ est donné par
$$  
\Res(\uQ)\ = \ 
\prod_{i=1}^n p_i^{\widehat d_i} - \prod_{i=1}^n \overline {q}_i^{r_i} p_i^{\widehat d_i-r_i}
$$  
\end {coro}

\subsubsection {La formule (conjecturale) du résultant lorsque $\uR$ admet une fonction hauteur}

Voici la formule que l'on peut voir comme un mixte de combinatoire monomiale en degré $\delta$
et d'algèbre linéaire.

\begin {conj} \label {ResQconj}
Lorsque $\uR$ admet une fonction hauteur (sous-entendu sur $\bbN^n_\delta$), le résultant du
jeu $\uQ = \pXD +\bsq\,\uR$ est donné par
$$
\Res(\uQ) \ = \ 
\prod_{i=1}^n p_i^{\widehat d_i} -  \prod_{i=1}^n \overline {q}_i^{r_i} p_i^{\widehat d_i-r_i}
$$
\end {conj}

\medskip

Bien entendu, le fait que $\Res(\uQ)$ soit un tel binôme en $p_1,
\dots, p_n, q_1, \dots, q_n$ est fortement lié au statut \og posséder une fonction hauteur\fg{}
du jeu~$\uR$.
Voici un exemple où le résultant n'est pas de ce type. 
Considérons le format $D = (1,1,2)$ et le jeu $\uR$ suivant, 
où $X^\alpha = X_1^{\alpha_1} X_2^{\alpha_2} X_3^{\alpha_3}$
désigne n'importe quel monôme de degré 2:
$$
\uR = (X_2,\ X_1,\ X^\alpha),  \qquad\quad
Q_1 = p_1X_1 + q_1X_2, \quad
Q_2 = q_2X_1 + p_2X_2, \quad
Q_3 = p_3X_3^2 + q_3X^\alpha
$$
Alors $\Res(\uQ)$ est donné par la formule du premier cas d'école 
(cf. \ref{Resultant11d}), 
à savoir $\Res(\uQ) = Q_3(\xi_1, \xi_2,\xi_3)$ avec
$\xi_i = \det(Q_1, Q_2, X_i)$. Ici on a $\xi_1 = \xi_2 = 0$ et $\xi_3 = p_1p_2 - q_1q_2$ de sorte
que:
$$
\Res(\uQ) = p_3\,(p_1p_2-q_1q_2)^2 + q_3\,0^{\alpha_1} 0^{\alpha_2} (p_1p_2-q_1q_2)^{\alpha_3} 
$$
Par exemple, pour $X^\alpha = X_1^2$, on a $\Res(\uQ) = p_3\,(p_1p_2-q_1q_2)^2$ qui n'est pas un binôme.


\subsubsection*{A propos de l'énoncé conjectural \ref{ResQconj}} 

Nous allons prouver cet énoncé mais sous une hypothèse plus forte
que celle de~\ref{ResQconj}: celle où le jeu $\uR$ admet une fonction
hauteur sur $\bbZ^n_\delta$ (et pas seulement sur $\bbN^n_\delta$), ce
qui équivaut, en désignant par $(r_1, \dots, r_n)$ les $n$ mineurs
principaux de la matrice $A=\Jac_\Un(\uX^D-\uR)$ attachée à $\uR$, au fait que
$\pgcd(r_1, \dots, r_n) = 1$ et $r_i \geqslant 1$ pour tout $i$, cf la
proposition~\ref{ExistenceSuperHauteur}.

\subsubsection*{Premier maillon: un nouveau monôme $X^\gamma$ de degré $\delta$
sous la seule hypothèse $\forall i\ r_i \geqslant 1$}

On va montrer, c'est l'objet du lemme qui vient, que
$X_1X_2\cdots X_n \mid R_1R_2 \cdots R_n$, ce qui fait apparaître
un nouveau monôme de degré $\delta$
$$
X^\gamma = \frac{R_1R_2 \cdots R_n}{X_1X_2\cdots X_n}
$$
Par exemple, pour le jeu circulaire, on a $X^\gamma = X^\emouton$, tandis
que pour le jeu $\uR^{(1)}$, on a $X^\gamma = X_1^\delta$. Le lecteur peut
d'ailleurs consulter ce qui a été réalisé pour le jeu $\uR^{(1)}$, avant
le corollaire~\ref{omegaresR1X1delta}, car ce qui suit s'en inspire.

\medskip

En notant comme auparavant par $\ell_i$ les lignes de $A$, montrons l'égalité
suivante (liée aux chemins de $X^\gamma$ à $X^\emouton$ dans le graphe associé
à $\uR$):
$$
\gamma - (\emouton) = \sum_i (r_i-1) \ell_i
\leqno (\star)
$$
En effet, par définition de $\ell_i$, on a $\exposant(R_i) = \exposant(X_i^{d_i}) - \ell_i$
donc en sommant
$$
\exposant(R_1\cdots R_n) = \exposant(X^D) - \sum_i \ell_i
$$
d'où l'on tire
$$
\begin {array} {ccl}
  \gamma = \exposant(X^\gamma) &=& \exposant(R_1\cdots R_n) - \exposant(X_1\cdots X_n)
\\[2mm]
                             &=& \exposant(X^D) - \sum_i \ell_i - \exposant(X_1\cdots X_n)
\\[2mm]
                             &=& (\emouton) - \sum_i \ell_i 
\\
\end {array}
$$
En se souvenant de $\sum_i r_i\ell_i =0$ (cf lemme~\ref{riliSumRelation}), on en déduit $(\star)$.

Pour $\uQ = \bsp\,\uX^D + \bsq\,\uR$, l'égalité $(\star)$ et le théorème \ref{omegaresJeuSimple} fournissent:
$$
\omegaRes{\uQ}(X^\gamma) \ = \ \prod_{i=1}^n \overline {q}_i^{r_i-1}\, p_i^{\widehat d_i-r_i}
$$
ce qui constitue un premier maillon dans l'histoire. Avant d'aborder le deuxième maillon,
le lemme promis dans lequel il est inutile de supposer $\pgcd(r_1, \dots, r_n) = 1$.

\begin {lem} \label{X1X2XndiviseR1R2Rn}\leavevmode

Sous le couvert de la seule hypothèse $r_j \geqslant 1$ pour un $j$ donné, on a $X_j \mid \prod_{i\ne j} R_i$, à fortiori
$X_j \mid \prod_i R_i$.   En conséquence, si tous les $r_j$ sont $\geqslant 1$, on a
$X_1\cdots X_n \mid R_1\cdots R_n$. 
\end {lem}  

\begin {proof} \leavevmode

Pour simplifier, supposons $j=1$ et montrons le résultat sous la forme $X_1 \nmid R_2\cdots R_n
\Rightarrow r_1= 0$. Réécrivons la définition de la première colonne de
$A$ à l'aide des exposants des $R_i = X_1^{\alpha_{i,1}} X_2^\sbullet \cdots X_n^\sbullet$:
$$  
  A = \begin {bmatrix}
    *        &\cdots  & *\\
-\alpha_{2,1} &   \\
\vdots       &   B \\
-\alpha_{n,1} &\cdots &* \\
\end {bmatrix}
$$
Dans ce dessin, $B$ est la sous-matrice $(n-1)\times (n-1)$ de $A$ et on a $r_1 = \det(B)$. L'hypothèse
$X_1 \nmid R_2\cdots R_n$ se traduit par $\alpha_{i,1} = 0$ pour $i \geqslant 2$. Mais la somme des colonnes
de $A$ est nulle, donc la somme des colonnes de $B$ est également nulle d'où $\det(B) = 0$.
\end {proof}

\subsubsection*{Second maillon: sus aux déterminants bezoutiens de certains systèmes monomiaux $\uR$}

On \emph {voudrait bien disposer}, pour $\uQ = \bsp\,\uX^D +
\bsq\,\uR$, d'un déterminant bezoutien $\nabla_\uQ$ de $\uQ$ tel que
$$
\nabla_\uQ = p_1\cdots p_n X^\emouton -  \overline{q}_1\cdots\overline{q}_n X^\gamma
$$
Supposons ce souhait réalisé. En appliquant $\omegaRes{\uQ}$:
$$
\begin {array} {ccl}
\Res(\uQ) &=& p_1\cdots p_n\,\omegaRes{\uQ}(X^\emouton) -
    \overline{q}_1\cdots\overline{q}_n\,\omegaRes{\uQ}(X^\gamma)
\\[2mm]
&=& p_1\cdots p_n\,\prod_{i=1}^n p_i^{\widehat d_i-1}  -
\overline{q}_1\cdots\overline{q}_n\, \prod_{i=1}^n \overline {q}_i^{r_i-1}\, p_i^{\widehat d_i-r_i}
\\[2mm]
&=& \prod_{i=1}^n p_i^{\widehat d_i}  - \prod_{i=1}^n \overline {q}_i^{r_i}\, p_i^{\widehat d_i-r_i}
\\
\end {array}
$$
C'est exactement la formule de l'énoncé conjectural~\ref{ResQconj}.

Mais les auteurs ignorent s'il existe un tel déterminant bezoutien. 
Ils ont cependant constaté, de manière expérimentale, sur de nombreux exemples, que pour
\emph {n'importe quel} déterminant bezoutien $\nabla_\uQ$, on a seulement une congruence:
$$
\nabla_\uQ  \equiv p_1\cdots p_n X^\emouton -  \overline{q}_1\cdots\overline{q}_n X^\gamma
\bmod \langle\uQ\rangle
$$
Ce qui permet, par le même raisonnement, en appliquant $\omegaRes{\uQ}$, de conclure
à la formule du résultant. 

\bigskip

Cette congruence fait l'objet du théorème suivant.  Mais attention,
celui-ci fait apparaître un \emph {nouveau} contexte:
celui d'un jeu monomial $\uR$ vérifiant
seulement $r_i \geqslant 1$ pour tout $i$ où $(r_1, \dots, r_n)$ sont les
$n$ mineurs principaux de la matrice attachée à~$\uR$, sans
imposer pour autant $\pgcd(r_1, \dots, r_n) = 1$.  Ce contexte
suffit pour assurer $X_1\cdots X_n \mid R_1\cdots R_n$,
(cf. lemme~\ref {X1X2XndiviseR1R2Rn}) donnant naissance au monôme de
degré $\delta$ que nous avons noté~$X^\gamma$:

$$
X^\gamma = \frac{R_1R_2 \cdots R_n}{X_1X_2\cdots X_n}
$$

\begin {theo} [Déterminant bezoutien d'un système monomial lorsque $r_i\geqslant 1,\ \forall i$]
\label{MonomialNabla}

Dans ce contexte,  tout déterminant bezoutien $\nabla_\uQ$ de
$\uQ = \bsp\,\uX^D + \bsq\,\uR$ vérifie  
$$
\nabla_\uQ  \equiv p_1\cdots p_n X^\emouton -  \overline{q}_1\cdots\overline{q}_n X^\gamma
\bmod \langle\uQ\rangle
$$
\end {theo}

Commentaire. Il y a donc 2 contextes proches l'un de l'autre mais
distincts. Il y a celui d'un jeu admettant une fonction hauteur sur
$\bbN^n_\delta$ pour lequel on propose une formule pour le
résultant~(\ref{ResQconj}). Et il y a celui ci-dessus concernant le
déterminant bezoutien. Ce dernier permet la résolution du premier avec
cependant une hypothèse supplémentaire: on demande l'existence d'une
fonction hauteur sur $\bbZ^n_\delta$ pour~$\uR$.

A noter que dans le contexte ci-dessus, $\Res(\uQ)$ n'a aucune raison
d'être un binôme en les $p_i, \overline {q}_j$.
Idem, aucune raison que $\omegaRes{\uQ}(X^\alpha)$ soit un monôme, même pour
$X^\alpha = X^\emouton$.

\medskip

La preuve figure en page~\pageref{MonomialNablaProof}, mais nous devenons
au préalable étudier le calcul du déterminant de certaines matrices
bezoutiennes.

\subsubsection*{Matrice bezoutienne d'un système monomial}

Revenons sur la construction d'une matrice bezoutienne $\dsV_\uR$ d'un
système \emph{monomial} $\uR$ de format~$D$, ce qui fournira une matrice
bezoutienne pour $\uQ$. Voici une stratégie élémentaire pour laquelle
dans chaque colonne, il y a un seul coefficient non nul. Rappelons
qu'il s'agit de réaliser $\begin{bmatrix} R_1& \cdots& R_n\end{bmatrix} =
\begin{bmatrix} X_1 & \cdots & X_n\end{bmatrix}\,\dsV_\uR$: pour chaque $R_j$, on sélectionne un
$X_i$ tel que $X_i \mid R_j$ et on place $R_j/X_i$ en colonne~$j$,
ligne $i$. Par exemple, pour $n=3$:
$$
[R_1, R_2, R_3] = [X_1, X_2, X_3] \
\begin{bmatrix}
.               &.                 &. \\
\frac{R_1}{X_2} &.                 &\frac{R_3}{X_2} \\
.               &\frac{R_2}{X_3}   &. \\
\end {bmatrix}
$$
\label{NOTA10-MonomialBezMat}%
Ceci suppose bien entendu que $X_2 \mid R_1$, $X_3 \mid R_2$ et $X_2 \mid R_3$. Nous
en considérerons un exemple plus loin et pour l'instant, nous nous appuyons sur ce cas
de figure. Qu'en est-il de $\dsV_\uQ$?
$$
\dsV_\uQ =
\begin {bmatrix}
p_1X_1^{d_1-1} &. & . \\
.             &p_2X_2^{d_2-1} &. \\
.             &.             &p_3X_3^{d_3-1} \\
\end {bmatrix}
+
\begin {bmatrix}
.                 &.                 &. \\
q_1\frac{R_1}{X_2} &.                 &q_3\frac{R_3}{X_2} \\
.                 &q_2\frac{R_2}{X_3} &. \\
\end {bmatrix}
$$
\label{VQelementary}%
On voit donc apparaître des matrices d'une certaine nature, typiquement pour $n=4$:
$$
\begin {bmatrix}
a_1 &.   &b_3 &.  \\
b_1 &a_2 &.   &b_4 \\
.   &.   &a_3 &.   \\
.   &b_2 &.   &a_4 \\
\end {bmatrix}
$$
dont le déterminant se calcule via:
$$
(a_1\varepsilon_1 + b_1\varepsilon_2) \wedge
(a_2\varepsilon_2 + b_2\varepsilon_4) \wedge
(a_3\varepsilon_3 + b_3\varepsilon_1) \wedge
(a_4\varepsilon_4 + b_4\varepsilon_2) 
$$
De manière générale, si $b_1$ est en ligne $\ell_1$, $b_2$ en
ligne $\ell_2$ etc. le déterminant $\Delta$ qui apparaît est donné par
$$
\Delta\, (\varepsilon_1 \wedge \cdots \wedge\varepsilon_n) =
(a_1\varepsilon_1 + b_1\varepsilon_{\ell_1}) \wedge
(a_2\varepsilon_2 + b_2\varepsilon_{\ell_2}) \wedge \cdots \wedge
(a_n\varepsilon_n + b_n\varepsilon_{\ell_n}) 
$$
Nous aurons besoin de la formule banale ci-dessous que nous fournissons
sous forme d'un lemme sans preuve.

\begin {lem} \label{DeltaFormula}

Le développement du membre droit fournit une somme portant sur les parties $J$
de~$\{1..n\}$ dont on note $\overline J$ le complémentaire:
$$
\Delta = \sum_J s_J\, \prod_{i \in \overline J} a_i \prod_{j \in J} b_j 
$$
dans laquelle $s_J \in \{0, \pm 1\}$ est un signe étendu.

\medskip
Ce signe étendu se détermine comme suit, à l'aide de la fonction $\tau_J$:
$$
\tau_J : \{1..n\} \to \{1..n\}  \qquad
\tau_J(j) = \begin {cases}
\ell_j &\text{si } j\in J   \\
j     &\text{sinon}
\end {cases}
$$
Alors $s_J = 0$ si $\tau_J$ n'est pas une permutation, sinon $s_J$ est
la signature de $\tau_J$. En utilisant une notation fonctionnelle pour
$\ell$, i.e. $\ell : \{1..n\} \to \{1..n\}$, donc $\ell(j)$ au lieu de
$\ell_j$, alors $\tau_J$ est une permutation si et seulement si
$\ell(J) = J$.  Dans ce cas $\tau_J(J) = J$ et $\tau_J$ est l'identité
sur $\overline J$, si bien que $s_J$ est aussi la signature de la
permutation ${\tau_J}_{|J} = \ell_{|J}$ (l'indice $_{|J}$ désigne la
restriction à $J$).
\end {lem}

\medskip

La preuve  du théorème \ref{MonomialNabla} va nécessiter un certain nombre
de lemmes de nature combinatoire. Nous préférons commencer par un
petit exemple.

\subsubsection*{Un exemple élémentaire}

Le déterminant de $\dsV_\uQ$ donné à la page~\pageref{VQelementary} est facile à calculer, soit
de manière directe, soit via l'algèbre extérieure, car du type:
$$
\begin {vmatrix}
a_1 &.   &. \\
b_1 &a_2 &b_3 \\
.   &b_2 &a_3 \\
\end {vmatrix}
\qquad
(a_1\varepsilon_1 + b_1\varepsilon_2) \wedge
(a_2\varepsilon_2 + b_2\varepsilon_3) \wedge
(a_3\varepsilon_3 + b_3\varepsilon_2) =
(a_1a_2a_3 - a_1b_2b_3)\, \varepsilon_1 \wedge \varepsilon_2  \wedge \varepsilon_3
$$
Ce qui donne
$$
\nabla_\uQ = p_1p_2p_3X^\emouton - p_1q_2q_3X_1^{d_1-1}\frac{R_2R_3}{X_2X_3}
$$
Ce n'est \emph {pas} le binôme convoité:
$$
p_1p_2p_3X^\emouton - \overline{q}_1 \overline{q}_2 \overline{q}_3 X^\gamma \overset{\rm def.}{=}
p_1p_2p_3X^\emouton - \overline{q}_1 \overline{q}_2 \overline{q}_3 \frac{R_1R_2R_3}{X_1X_2X_3}
$$
Cependant, modulo $\langle\uQ\rangle$, on dispose, si $X_1^{d_1}\mid X^\alpha$, de la règle de réécriture suivante:
$$
p_1 X^\alpha \equiv \overline{q}_1 \frac{X^\alpha}{X_1^{d_1}}R_1   \bmod \langle\uQ\rangle
$$
Peut-on l'appliquer à $X^\alpha := X_1^{d_1-1}\frac{R_2R_3}{X_2X_3}$? C'est le cas si et
seulement si $X_1$ divise $\frac{R_2R_3}{X_2X_3}$
i.e. si $X_1X_2X_3$ divise $R_2R_3$, auquel cas, par définition de $X^\alpha$:
$$
\frac{X^\alpha}{X_1^{d_1}}\,R_1 = \frac{R_1R_2R_3}{X_1X_2X_3} = X^\gamma
$$
Plaçons nous dans ce cas $X_1X_2X_3$ divise $R_2R_3$. Alors
$$
\nabla_\uQ = p_1p_2p_3X^\emouton - p_1q_2q_3X_1^{d_1-1}\frac{R_2R_3}{X_2X_3} \equiv
p_1p_2p_3X^\emouton - \overline{q}_1 \overline{q}_2 \overline{q}_3 X^\gamma \quad
\bmod \langle\uQ\rangle
$$
Youpi!

\bigskip

Voici un jeu tel $\uR$, de format $D = (d_1, d_2,d_3)$, la matrice $A$ attachée, de lignes
$\ell_i = \exposant(X_i^{d_i}) - \exposant(R_i)$ et les 3 mineurs principaux $r_i$ de $A$.
$$
\uR = (X_2^{d_1}, X_1^{d_2-1}X_3, X_3^{d_3-1}X_2),
\qquad\qquad
A = \begin{bmatrix}
d_1     &-d_1   &0 \\
1-d_2 &d_2  &-1 \\
  0   &-1   &1 \\
\end{bmatrix},
\qquad
(r_1, r_2, r_3) = (d_2-1, d_1, d_1)
$$
On suppose $d_2 \geqslant 2$ de manière à avoir $r_1 \geqslant 1$. On a bien
les conditions requises:
$$
X_2 \mid R_1, \quad X_3 \mid R_2, \quad X_2\mid R_3, \qquad X_1X_2X_3 \mid R_2R_3
$$
la dernière divisibilité étant due à $d_2 \geqslant 2$.

\medskip

Disposait-on d'autres choix pour construire $\dsV_\uR$ toujours avec la même stratégie:
$R_j/X_i$ en colonne~$j$, ligne~$i$, lorsque $X_i \mid R_j$? 
Pour $R_1$, on n'a pas le choix, il faut prendre $i=2$. Pour $R_2$, on peut prendre $i=1$
(car $d_2 \geqslant 2$) ou $i=3$. Quant à $R_3$, $i=2$ convient toujours
et si $d_3 \geqslant 2$, on peut prendre $i=3$. D'où d'autres possibilités:
$$
\dsV_\uR : \qquad
\begin{bmatrix}
.               &\frac{R_2}{X_1}  &. \\
\frac{R_1}{X_2} &.                &\frac{R_3}{X_2}\\
.               &.               &. \\
\end {bmatrix}
\qquad\qquad
\begin{bmatrix}
.               &.                 &. \\
\frac{R_1}{X_2} &.                 &. \\
.               &\frac{R_2}{X_3}   &\frac{R_3}{X_3} \\
\end {bmatrix}
\qquad\qquad
\begin{bmatrix}
.               &\frac{R_2}{X_1}   &. \\
\frac{R_1}{X_2} &.                 &. \\
.               &.                 &\frac{R_3}{X_3} \\
\end {bmatrix}
$$
conduisant à des matrices bezoutiennes pour $\uQ = \bsp\,\uX^D + \bsq\,\uR$
obtenues comme somme de la matrice diagonale $\diag(p_1X_1^{d_1-1}, p_2X_2^{d_2-1}, p_3X_3^{d_3-1})$
et de $\dsV_{\bsq\uR}$ moulée sur $\dsV_{\uR}$.
Aucune n'a un déterminant égal au binôme convoité 
$$
p_1p_2p_3X^\emouton - \overline{q}_1 \overline{q}_2 \overline{q}_3 X^\gamma
$$
Mais  tous les $\nabla_\uQ$ sont égaux modulo $\langle\uQ\rangle$.
Et comme on en a exhibé un égal modulo $\langle\uQ\rangle$
au binôme convoité, il en est de même de n'importe quel $\nabla_\uQ$.

\medskip

Dans cet exemple où $(r_1, r_2, r_3) = (d_2-1, d_1, d_1)$, imposons,
en plus de $r_1 \geqslant 1$, la condition arithmétique $\pgcd(r_1,r_2,r_3) = 1$
i.e. $(d_2-1) \wedge d_1 = 1$. Alors $\uR$ possède une fonction
hauteur sur $\bbZ^3_\delta$ et en conséquence:
$$
\Res(\uQ) = p_1^{\widehat d_1} p_2^{\widehat d_2} p_3^{\widehat d_3}
- \overline q_1^{r_1} p_1^{\widehat d_1-r_1} \overline q_2^{r_2} p_2^{\widehat d_2-r_2}
\overline q_3^{r_3} p_3^{\widehat d_3-r_3}
$$
Cette formule n'est \emph{pas} valide sans la condition
arithmétique $(d_2-1) \wedge d_1 = 1$, le résultant est
beaucoup plus compliqué et n'est pas un binôme en les $p_i,\overline q_j$.

\subsubsection {Trois lemmes combinatoires}

Pour une partie $J \subset \{1..n\}$, nous adoptons les notations:
$$
X_J = \prod_{j \in J} X_j, \qquad\qquad
R_J = \prod_{j \in J} R_j
$$
Le lemme qui suit est une généralisation du lemme~\ref{X1X2XndiviseR1R2Rn}

\label{NOTA10-produitXJRJ}%
%
%

\begin {lem}\label {XiDivRJlemma} \leavevmode 

\begin {enumerate} [\rm i)]
\item
Soit $\uR$ un jeu monomial quelconque.
Pour tout $J \subset \{1..n\}$ \emph {non vide} de complémentaire $\overline J$, on a l'implication:
$$
[\forall i \in \overline J \quad  X_i \nmid R_J]  \quad\Longrightarrow\quad
[\forall i \in \overline J \quad r_i=0]
$$

\item
Soit $\uR$ un jeu monomial vérifiant $r_i \geqslant 1$ pour tout $i$. Alors pour toute
partie \emph {propre et non vide} $J$ de $\{1..n\}$, il existe $i \in \overline J$
tel que $X_i \mid R_J$.
\end {enumerate}
\end {lem}

\begin {proof} \leavevmode

i) Pour simplifier, supposons $J = \{4..n\}$. L'hypothèse est donc:
$$
X_i \nmid R_4 \cdots R_n   \qquad i=1,2,3
$$
Et il faut démontrer que $r_1=r_2=r_3 = 0$. Réécrivons la définition des 3 premières colonnes de
$A$ à l'aide des exposants des $R_k$:
$$
R_k = X_1^{\alpha_{k,1}} X_2^{\alpha_{k,2}} \cdots X_n^{\alpha_{k,n}}
$$
en notant, pour des raisons typographiques, $\overline\alpha_{i,j}$ pour $-\alpha_{i,j}$:

$$
\def\al{\overline\alpha}
A =
\begin {bmatrix}
    *      &*       &*        &*       &       &*\\
    *     &\sbullet    &\sbullet        &*       &       &\\
    *     &\sbullet        &\sbullet        \\
\al_{4,1} &\al_{4,2} &\al_{4,3} &*      &\cdots &*   \\
\vdots   &\vdots    &\vdots   &\vdots &B      &\vdots  \\
\al_{n,1} &\al_{n,2} &\al_{n,3} &*      &\cdots &* \\
\end {bmatrix}
=
\begin {bmatrix}
    *     &*       &*        &*       &       &*\\
    *    &\sbullet        &\sbullet        &*       &       &\\
    *    &\sbullet        &\sbullet        \\
    0    &0         &0        &*      &\cdots &*   \\
\vdots   &\vdots    &\vdots   &\vdots &B       &\vdots  \\
    0    &0         &0 &*      &\cdots &* \\
\end {bmatrix}
$$
A droite, la nullité des 3 sous-colonnes provient de l'hypothèse $X_i \nmid R_4\cdots R_n$ pour $i =1,2,3$,
équivalente à $\alpha_{\ell,1} =\alpha_{\ell,2} =\alpha_{\ell,3} = 0$ pour $\ell > 3$.
Montrons que $r_1=0$ par exemple en utilisant la sous-matrice $(n-3)\times(n-3)$ de $A$ notée $B$. Comme
la somme des colonnes de $A$ est nulle, il en est de même de $B$, donc $\det(B) = 0$. Et le déterminant $r_1$
de la sous-matrice $A_{\{2..n\} \times \{2..n\}}$ est le produit du mineur $2\times 2$ visible avec des
$\sbullet$ et de $\det(B)$, qui est nul. Bilan: $r_1=0$. Et de même, $r_2=r_3=0$. 

\medskip

ii) Comme $r_i \geqslant 1$ pour tout $i$, la seule partie non vide $J$ qui
vérifie $[\forall i \in \overline J \quad r_i=0]$ est $J = \{1..n\}$,
qui a été exclue par hypothèse. D'où la négation de
$[\forall i \in \overline J \quad X_i \nmid R_J]$, ce qui est
le résultat à montrer.
\end {proof}

\medskip

Commentaires. Dans le i), il est impératif d'exclure $J = \emptyset$ pour lequel
$\overline J = \{1..n\}$ et $R_\emptyset = 1$: l'hypothèse est donc toujours
vérifiée mais pas la conclusion. Dans ii), il est impératif d'exclure en plus
$J = \{1..n\}$, car $\overline J = \emptyset$  pour lequel il est hors de
question de démontrer qu'il existe $i \in \emptyset \text{ tel que ...}$.
Enfin, dans le point i), en prenant $J = \{1..n\} \setminus \{j\}$, donc $\overline J = \{j\}$
et en supposant $r_j \geqslant 1$, on obtient, par contraposée, $X_j \mid \prod_{i\ne j} R_i$, ce
qui constitue le lemme~\ref{X1X2XndiviseR1R2Rn}.

\bigskip

Nous allons maintenant étudier certains termes définis à partir d'un
jeu $\uR$ vérifiant $r_i \geqslant 1$ pout tout $i$ et d'une partie $J$ de
$\{1..n\}$. En une première approximation, il s'agit de termes qui
interviennent dans le développement du déterminant d'une matrice
bezoutienne de $\bsp\,\uX^D + \bsq\,\uR$ selon la formule du
lemme~\ref{DeltaFormula}, voir aussi,
page~\pageref{MonomialNablaProof} le début de la preuve du
théorème~\ref{MonomialNabla}.
En désignant par $I$ le complémentaire de~$J$, nous posons
$$
T_J = \prod_{i\in I} p_i X_i^{d_i-1} \times \prod_{j\in J}q_j \times \frac {R_J}{X_J}
\qquad
\framebox [1.05\width][c]{sous la condition de divisibilité $X_J \mid R_J$}
$$
Par exemple, $T_\emptyset$ est toujours défini, $T_{\{1..n\}}$ l'est grâce
au lemme \ref{X1X2XndiviseR1R2Rn} et
$$
T_\emptyset = p_1 \cdots p_n\, X^\emouton, \qquad\qquad
T_{\{1..n\}} = q_1 \cdots q_n\, X^\gamma, \qquad\qquad
X^\gamma = \frac{R_1\cdots R_n}{X_1 \cdots X_n}
$$

\begin {lem} \leavevmode \label{TJcongruence}

Soit $\uR$ un jeu monomial vérifiant $r_i \geqslant 1$ pour tout $i$.

\begin {enumerate} [\rm i)]
\item
Soit $J$ une partie \emph {propre et non vide} de $\{1..n\}$, de complémentaire $I$, 
tel que $T_J$ soit défini. Alors pour tout $i_0 \in I$ vérifiant 
$X_{i_0} \mid R_J$ (il en existe d'après le lemme~\ref{XiDivRJlemma}), le terme $T_{J \cup \{i_0\}}$
est défini et on a la congruence:
$$
T_{J\cup\{i_0\}} \equiv -T_J \bmod \langle\uQ\rangle
$$

\item
Pour toute partie  \emph{non vide} $J$ de $\{1..n\}$ telle que
$T_J$ soit  définie, on a:
$$
T_J \equiv (-1)^{n -\#J}\, q_1\cdots q_n\ X^\gamma  \bmod \langle\uQ\rangle
$$
\end {enumerate}
\end {lem}

\begin {proof} \leavevmode

i) Comme $X_J \mid R_J$, $X_{i_0} \mid R_J$ et $i_0 \notin J$, on a
$X_{J\cup\{i_0\}} \mid R_J$, a fortiori $X_{J\cup\{i_0\}} \mid
R_{J\cup\{i_0\}}$ donc $T_{J\cup\{i_0\}}$ est défini. En écrivant:
$$
\frac{R_J}{X_J} = X_{i_0} \times \frac {R_J}{X_{J\cup\{i_0\}}} 
\qquad \qquad
\prod_{i\in I} p_i X_i^{d_i-1} =  p_{i_0}\,X_{i_0}^{d_{i_0}-1}\times\prod_{i\in I \setminus \{i_0\}} p_i X_i^{d_i-1}
$$
On obtient la forme de $T_J$:
$$
T_J = p_{i_0} X_{i_0}^{d_{i_0}}  \times 
\prod_{i\in I \setminus \{i_0\}} p_i X_i^{d_i-1} \times \prod_{j\in J}q_j \times \frac {R_J}{X_{J\cup\{i_0\}}}
$$
En y remplaçant $p_{i_0} X_{i_0}^{d_{i_0}}$ par $-q_{i_0} R_{i_0}$ qui lui est congru modulo
$\langle\uQ\rangle$ (par définition de $\uQ$), le lecteur vérifiera que l'on
obtient $-T_{J\cup\{i_0\}}$.

\medskip

ii) Le résultat s'obtient par récurrence sur $n-\#J$. Si $n-\#J = 0$ i.e. 
$J = \{1..n\}$, cela découle de $T_{\{1..n\}} = q_1 \cdots q_n\, X^\gamma$. Si $\#J < n$,
on utilise le point précédent dans la récurrence.
\end {proof}

\medskip

Il est bien connu qu'étant donné un ensemble fini de cardinal $n$,
son nombre de parties est $2^n$. Et dans le cas où l'ensemble n'est
pas vide, le nombre de parties de cardinal pair est égal au nombre de
nombre de parties de cardinal impair. Ce qui résulte du
développement de Newton via les coefficients binomiaux où $n \geqslant 1$:
$$
0 = (1 - 1)^n = \sum_{i\ \rm pair} \binom{n}{i} - \sum_{j\ \rm impair} \binom{n}{j} 
$$
On peut considérer que le lemme qui vient utilise de manière cruciale
ce résultat tout en le généralisant (prendre $f = \id_X$).

\begin {lem} \label{PatriceLemma} \leavevmode

Soit $X$ un ensemble fini et $f : X \to X$ une application. On note $\Gamma$ l'ensemble
des $Y \subseteq X$ tels que $f(Y) = Y$ ou de manière équivalente $Y \subseteq f(Y)$.
Ainsi $\emptyset \in \Gamma$. Pour $Y \in \Gamma$, on note $f_Y$ la restriction de $f$ à~$Y$:
c'est une permutation de $Y$.

\begin {enumerate} [\rm i)]
\item
L'ensemble $\Gamma$ est stable par réunion. En conséquence, il possède
un plus grand élément $Y_0$ pour l'inclusion et $Y_0$ est l'unique
élément de $\Gamma$ de cardinal maximum.

\item
On suppose $X$ non vide.
On désigne, pour $Y \in \Gamma$, par $\rho(Y) = (-1)^{\#Y}\epsilon(Y)$
où $\epsilon(Y)$ la signature de la permutation $f_Y$, et on définit
$\Gamma^+$, $\Gamma^-$ par
$$
\Gamma^s = \{Y \in \Gamma \mid   \rho(Y) = s \}  \qquad  s = \pm 1
$$
Alors, $\#\Gamma^+ = \#\Gamma^-$ et $\#\Gamma$ est une puissance de 2
(une \og vraie\fg{} autre que 1).

De manière plus précise, $Y_0$ est non vide. Désignons par $X_i$ les orbites de
la permutation $f_{Y_0}$:
$$
Y_0 = X_1 \sqcup \cdots \sqcup X_k
$$
Alors $\Gamma$ est constitué des parties qui sont des réunions de ces
orbites, et $\Gamma^+$ (resp. $\Gamma^-$) est constitué des 
réunions d'un nombre pair (resp. impair) d'orbites. Ainsi
$\#\Gamma = 2^k$ et $\#\Gamma^+ = \#\Gamma^- = 2^{k-1}$.

\item
L'ensemble $\Gamma$ est un treillis pour $(\subseteq,\cup,\cap)$ avec un plus petit et plus grand élément,
stable par l'involution $Y \mapsto Y_0\setminus Y$.
\end {enumerate}
\end {lem}

\begin {proof} \leavevmode

i) Clair. Le problème $(X,f)$ se ramène à $(Y_0,f_{Y_0})$ qui est une permutation,
à condition de montrer que $Y_0$ est non vide si $X$ est non vide.

\medskip

ii) Montrons que $Y_0$ est non vide. Fixons un $x \in X$ ($X$ est non vide)
et considérons la suite $(f^m(x))_{m \in \bbN}$. Puisque $X$ est fini, il
existe $m_1 < m_2$ tels que $f^{m_1}(x) = f^{m_2}(x)$. Notons $Y$
l'ensemble \emph{non vide}:
$$
Y = \{f^m(x) \mid  m_1 \leqslant m < m_2\}
$$
Alors $f(Y) = Y$ puisque:
$$
f(Y) = \{f^{m+1}(x) \mid  m_1 \leqslant m < m_2\} =
\{f^{m'}(x) \mid  m_1 < m' \leqslant m_2\} = \{f^m(x) \mid  m_1 \leqslant m < m_2\}
$$
Comme $Y$ est non vide, il en est de même de $Y_0$ qui le contient.

La signature d'un cycle fournit  $\rho(X_i) = -1$ pour toute orbite $X_i$.
Le reste ne pose pas de difficulté une fois remarqué que $\rho$ est
un \og morphisme\fg{} au sens où, pour deux parties \emph {disjointes} $Y,
Y' \in \Gamma$:
$$
\rho(Y \sqcup Y') =\rho(Y)\rho(Y') 
$$

iii) Cela résulte de la description de $\Gamma$. On peut cependant donner
une preuve directe de la stabilité par complémentaire.
Il suffit de montrer que $Y_0 \setminus Y \subseteq f(Y_0 \setminus Y)$. Soit $y \in
Y_0 \setminus Y$; il existe $x \in Y_0$ tel que $y = f(x)$ et $x \notin Y$ puisque
$f(x) \notin f(Y) = Y$.
\end {proof}

\begin {proof} [Preuve du théorème \ref{MonomialNabla}]
\label{MonomialNablaProof}

Soit une matrice bezoutienne $\dsV_\uR$ de $\uR$ avec  un seul coefficient non nul par colonne,
$Q_j/X_{\ell(j)}$ en colonne $j$, en ligne $\ell(j)$, étant entendu que $X_{\ell(j)} \mid Q_j$.
On adopte une notation fonctionnelle $\ell : \{1..n\} \to \{1..n\}$.
Cette matrice $\dsV_\uR$ détermine une matrice bezoutienne $\dsV_\uQ$.
On utilise la formule du déterminant $\Delta$ donnée dans le lemme~\ref{DeltaFormula}. 
Elle se limite aux $J$ tels que $s_J \ne 0$ i.e. $\ell(J) = J$; 
comme $X_{\ell(l)} \mid R_j$, on a donc pour un tel $J$
$$
\prod_{j \in J} X_j = \prod_{j\in J} X_{\ell(j)}  \mid \prod_{j \in J} R_j
$$
I.e. avec nos notations antérieures $X_J \mid R_J$. Ainsi le terme $T_J$ est défini, on vérifie
que c'est bien lui qui intervient dans la formule $\Delta$ du lemme \ref{DeltaFormula}
via l'expression:
$$
\nabla_\uQ = \sum_J  s_J T_J  \qquad
\framebox [1.05\width][c]{la sommation ayant lieu sur les $J$ tels que $\ell(J) =J$}
$$
On met $J = \emptyset$ de côté pour lequel $s_J T_J = p_1\cdots p_nX^\emouton$. Pour les
autres $J \ne \emptyset$, on a d'après le lemme~\ref{TJcongruence}:
$$
s_J T_J \equiv s_J (-1)^{n-\#J}\, q_1 \cdots q_n\, X^\gamma  \bmod \langle\uQ\rangle
$$
si bien que
$$
\nabla_\uQ \equiv p_1\cdots p_nX^\emouton + (-1)^n\, s\, q_1 \cdots q_n\, X^\gamma  \bmod \langle\uQ\rangle
\qquad \text{avec} \quad
s = \sum_{J \ne \emptyset} s_J (-1)^{\#J}
$$
Utilisons le lemme~\ref{PatriceLemma}. Comme nous avons mis de côté
$J=\emptyset$ pour lequel $s_J (-1)^{\#J} = 1$, le nombre de $J$ qui
interviennent dans la somme $s$ ci-dessus est impair, disons $2k+1$.
Et il y a~$k$ parties $J$ pour lesquelles $s_J (-1)^{\#J} = 1$ et $k+1$ pour
lesquelles $s_J (-1)^{\#J} = -1$. Ce qui fait qu'il reste $s = -1$. 
En définitive:
$$
\begin {array} {ccl}
\nabla_\uQ &\equiv&
p_1\cdots p_nX^\emouton - (-1)^n\, q_1 \cdots q_n\, X^\gamma \bmod \langle\uQ\rangle
\\
&\equiv& p_1\cdots p_nX^\emouton -  \overline q_1 \cdots \overline q_n\, X^\gamma  \bmod \langle\uQ\rangle
\\
\end {array}
$$

\end {proof}


\subsection {Systèmes creux $\protect\uP$ tels que $\Gr(\protect\omegaresP) \geqslant 2$}
\label{SectSystemesCreuxGr2}

\index{système!creux}%
\index{système!réduit}%
\index{théorème!profondeur $\ge 2$ de la forme $\omegaRes{\uP}$ pour des systèmes $\uP$ spécifiques}%

L'objectif ici est d'exhiber des systèmes $\uPred$ ayant \emph {peu de
  coefficients}, ces coefficients étant des indéterminées, et tels que
$\Gr(\omegaresPred) \geqslant 2$.  A fortiori, d'après le théorème~\ref{GrViaPcouvreQ} 
(contrôle de la profondeur par la propriété de couverture), pour tout système~$\uP$
couvrant~$\uPred$, on a $\Gr(\omegaresP) \geqslant 2$. En voici un
exemple pour $n = 3$, le format $D = (d_1, d_2, d_3)$ étant
quelconque, en utilisant $9$ indéterminées:
$$
\uPred : \quad
\left\{
\begin {array} {*{7}c}
p_1X_1^{d_1} &+& q_1X_1^{d_1-1}X_2 &+&                   &+& s_1X_3^{d_1-1}X_2 \\
p_2X_2^{d_2} &+& q_2X_1^{d_2-1}X_3 &+& r_2X_2^{d_2-1} X_3                      \\
p_3X_3^{d_3} &+&                 &+& r_3X_2^{d_3-1}X_1 &+& s_3X_3^{d_3-1}X_1 \\
\end {array}
\right.
$$
\label{NOTA10-jeuPred}%
On est loin du nombre d'indéterminées d'un système générique qui, pour $n = 3$, vaut :
$$
\sum_{i=1}^n \binom {d_i+n-1}{n-1} =
\binom {d_1+2}{2} + \binom {d_2+2}{2} + \binom {d_3+2}{2} 
$$
Pour cela, nous allons utiliser le jeu $\uR^{(j)}$ suivant, analogue
au jeu $\uR^{(1)}$ déjà rencontré dans les pages précédentes, et qui,
en un certain sens, privilégie l'indéterminée $X_j$. Comme d'habitude
l'indexation est cyclique i.e. $X_{n+1} = X_1$:
$$
R^{(j)}_i = X_j^{d_i-1}X_{i+1} \quad  1 \leqslant i \leqslant n
\qquad\quad
\uR^{(j)} = (X_j^{d_1-1} X_2,\ X_j^{d_2-1} X_3,\ \dots\  ,
X_j^{d_{n-1}-1} X_n,\ X_j^{d_n-1} X_1)
$$
Les propriétés démontrées pour $\uR^{(1)}$ perdurent évidemment pour
le jeu $\uR^{(j)}$. En particulier, ce dernier
admet une fonction hauteur sur $\bbZ^n_0$ et on dispose d'une 
formule analogue à celle figurant dans le corollaire~\ref{omegaresR1X1delta}.
Nous la donnons de manière explicite mais le point le plus important c'est qu'elle ne
dépend ni de $p_j$ ni de $q_{j-1}$ (voir le commentaire après le
corollaire~\ref{omegaresR1X1delta}).

\label{NOTA10-jeuRj}%
%
%
\index{jeu!monomial!de format $D$@$R^{(j)}$}%

\begin {coro}
Pour $\uQ := \pXD + \bsq\,\uR^{(j)}$:
$$
\omegaRes{\uQ}(X_j^\delta) \ = \ \prod_{i=1}^n \overline {q}_i^{r_i-1}\, p_i^{\widehat d_i-r_i}
$$
où $(r_1, \dots, r_n)$ sont les $n$ mineurs principaux de la matrice $\Jac_\Un(\uX^D-\uR^{(j)})$
attachée au jeu~$\uR^{(j)}$. 
On a en particulier $r_{j-1} = 1$ et $r_j = \widehat {d}_j$.
En conséquence, l'évaluation à gauche ne dépend ni de $p_j$ ni de $q_{j-1}$ (en convenant
de la notation cyclique $q_0 = q_n$).
\end {coro}

\medskip

Nous pouvons maintenant expliquer la construction du système en nous allouant un
certain nombre d'indéterminées sur le modèle
$$
\bsp\uX^D + \bsq^{(1)}\uR^{(1)} + \cdots + \bsq^{(n)}\uR^{(n)}
$$
Pour éviter des notations trop lourdes, nous allons nous limiter à $n=3$ mais
ce qui suit est valide pour n'importe quel $n$. Auparavant,
il nous semble bon de redonner le jeu $\uR^{(j)}$ en le disposant de manière
verticale.
$$
\uR^{(j)} : \quad  R^{(j)}_i = X_j^{d_i-1}X_{i+1} \quad
\left\{ \begin {array} {l}
X_j^{d_1-1} X_2 \\
X_j^{d_2-1} X_3 \\
\quad \vdots \\
X_j^{d_{n-1}-1} X_n \\
X_j^{d_n-1} X_1 \\
\end {array}
\right.
$$
En un premier temps, on s'alloue des indéterminées
\footnote{Les symboles $r_1,r_2,r_3$ qui apparaissent, et qui désignent des indéterminées,
n'ont rien à voir avec les mineurs principaux de la matrice $\Jac_\Un(\uX^D - \uR^{(j)})$
figurant dans le dernier corollaire.}  
$$
\bsp = (p_1, p_2,p_3), \qquad  \bsq = (q_1, q_2,q_3), \qquad
\bsr = (r_1, r_2,r_3), \qquad \bss = (s_1, s_2,s_3)
$$
et on considère provisoirement le système:
$$
\bsp\uX^D + \bsq\uR^{(1)} + \bsr\uR^{(2)} + \bss\uR^{(3)} 
$$
c'est-à-dire 
$$
\left\{
\begin {array} {*{7}c}
p_1X_1^{d_1} &+& q_1X_1^{d_1-1}X_2 &+& \cercle{$r_1$}X_2^{d_1-1}X_2  &+& s_1X_3^{d_1-1}X_2 \\
p_2X_2^{d_2} &+& q_2X_1^{d_2-1}X_3 &+& r_2X_2^{d_2-1} X_3 &+& \cercle{$s_2$}X_3^{d_2-1}X_3 \\
p_3X_3^{d_3} &+& \cercle{$q_3$}X_1^{d_3-1}X_1  &+& r_3X_2^{d_3-1}X_1 &+& s_3X_3^{d_3-1}X_1 \\
\end {array}
\right.
$$
Nous avons entouré d'un petit cercle 3 indéterminées. Pourquoi? Parce qu'elles
sont spéciales et tellement spéciales que nous allons les spécialiser à $0$
i.e. nous en débarrasser. Ce qui conduit au système~$\uPred$ donné au début.
Nous pouvons annoncer le résultat.

\begin {theo} \leavevmode
\label{omegaresPredGr2}

\begin {enumerate}[\rm i)]
\item  
On a $\Gr(\omegaresPred) \geqslant 2$. De manière plus précise, $n+1$ évaluations bien
précises suffisent:
$$
\Gr\Bigl(
\omegaresPred(X^\alpha),\quad  
X^\alpha 
\in\{X^\emouton, X_1^\delta, \dots, X_n^\delta\}
\Bigr) \geqslant 2
$$  

\item
Pour tout système $\uP$ couvrant le jeu $\uPred$, on a $\Gr(\omegaresP) \geqslant 2$.
\end{enumerate}  
\end {theo}

\begin {proof} 
Pour simplifier les notations, nous prenons $n=3$.  

\medskip
i)  
Nous allons nous limiter au cas où l'anneau de base $\bfk$ (sur lequel sont
montées les 9 indéterminées $\bsp, \bsq, \bsr, \bss$) est un anneau intègre à pgcd,
auquel cas il en est de même de $\bfk[\bsp, \bsq, \bsr, \bss]$. Pour une
suite~$\ua$ d'un anneau intègre à pgcd, il y a équivalence entre
$\Gr(\ua) \geqslant 2$ et le fait que les éléments de $\ua$ sont premiers entre eux.
Nous allons donc montrer que les 4 évaluations sont premières entre elles i.e.
pour $g \in \bfk[\bsp, \bsq, \bsr, \bss]$:  
$$
g \text{ divise }  \omegaresPred(X^\alpha) \text{ pour }
X^\alpha 
\in\{X^\emouton, X_1^\delta, X_2^\delta, X_3^\delta\}
\quad \Longrightarrow\quad
g \text{ est inversible}
\leqno (\star)
$$
On rappelle que $h := \omegaresPred(X^\emouton)$ est un polynôme
homogène en $\bsp, \bsq, \bsr,\bss$; son degré est $\widehat d_1-1 +
\widehat d_2-1 + \widehat d_3-1$ mais c'est sans importance ici. Ce
qui est important, c'est qu'il est régulier et même primitif puisqu'il
contient le monôme $p_1^{\widehat d_1-1} p_2^{\widehat d_2-1}
p_3^{\widehat d_3-1}$. En conséquence, comme $g$ divise $h$, on peut
déjà annoncer que $g$ est un polynôme homogène en $\bsp, \bsq,
\bsr,\bss$. Nous l'écrivons $g = g(\bsp,\bsq,\bsr,\bss)$. Comme
$h(\bsp,0,0,0) = p_1^{\widehat d_1-1} p_2^{\widehat d_2-1}
p_3^{\widehat d_3-1}$, on a $g(\bsp,0,0,0) \mid p_1^{\widehat d_1-1}
p_2^{\widehat d_2-1} p_3^{\widehat d_3-1}$ mais il va falloir réaliser
beaucoup mieux pour obtenir $g$ inversible.

\medskip

\noindent
$\bullet$
Spécialisons en les $r_k = s_\ell = 0$. Alors $\uPred$ se spécialise
en $\uQ$:
$$
\uQ := \bsp\uX^D + \bsq\uR^{(1)}
$$
et les deux premières relations de divisibilité 
de l'hypothèse $(\star)$ se spécialisent en:
$$
g(\bsp,\bsq,0,0) \text{ divise}\quad
\left|
\begin {array} {lcl}
\omegaRes{\uQ}(X^\emouton) &=&
p_1^{\widehat d_1-1} p_2^{\widehat d_2 -1} p_3^{\widehat d_3-1} =
p_1^\sbullet p_2^\sbullet p_3^\sbullet 
\\[3mm]
\omegaRes{\uQ}(X_1^\delta) &=&
p_1^0 p_2^{\widehat d_2 -r_2} p_3^{\widehat d_3-r_3}
\overline{q}_1^{r_1-1} \overline{q}_2^{r_2-1} \overline{q}_3^0 =
\pm p_2^\sbullet p_3^\sbullet q_1^\sbullet q_2^\sbullet
\\
\end {array}
\right.
$$
En fait, la véritable valeur des exposants importe peu: ce qui compte, c'est le
$p_1^0$ et le $q_3^0$. Et c'est pour cette raison que dans $\uPred$, on s'est
permis de se débarrasser de $q_3$ i.e. $q_3$, c'est~$0$. Bilan:
$$
g(\bsp,\bsq,0,0) \text{ divise}\quad p_2^{\widehat d_2 -1} p_3^{\widehat d_3-1} =
p_2^\sbullet p_3^\sbullet
$$

\noindent
$\bullet$
Et de la même manière (encore une fois, peu importe les exposants):
$$
g(\bsp,0,\bsr,0) \text{ divise}\quad p_1^{\widehat d_1 -1} p_3^{\widehat d_3-1} = p_1^\sbullet p_3^\sbullet
\qquad\qquad
g(\bsp,0,0,\bss) \text{ divise}\quad p_1^{\widehat d_1 -1} p_2^{\widehat d_2-1} = p_1^\sbullet p_2^\sbullet
$$
A fortiori, $g(\bsp,0,0,0)$ divise les 3 monômes
premiers entre eux:
$$
g(\bsp,0,0,0) \text{ divise}\quad p_2^\sbullet p_3^\sbullet,
\quad p_1^\sbullet p_3^\sbullet,
\quad \text{ et } 
\quad p_1^\sbullet p_2^\sbullet
$$
On en déduit que $g(\bsp,0,0,0)$ est inversible donc un polynôme de
degré $0$.  Mais $g$ étant un polynôme homogène d'un certain degré
$d$, le polynôme $g(\bsp,0,0,0)$ est homogène de même degré $d$.
Bilan: $d = 0$, $g$ est constant donc $g = g(\bsp,0,0,0)$ est
inversible, ce qu'il fallait démontrer.

\bigskip

ii)  Résulte du théorème qui suit. L'examen de la preuve de ce théorème
apporte la précision suivante sur le nombre d'évaluations:
$$
\Gr\Bigl(
\omegaresP(X^\alpha),\quad  
X^\alpha \in\{X^\emouton, X_1^\delta, X_2^\delta, X_3^\delta\}
\Bigr) \geqslant 2
$$

\end {proof}

\begin {theo} [Contrôle de la profondeur par la propriété de couverture] \leavevmode
\label{GrViaPcouvreQ} 

Soit $\uP$ un système couvrant un système $\uQ$ (cf définition \ref{JeuCouvrant}). 
On a alors $\Gr(\omegaRes{\uP}) \geqslant \Gr(\omegaRes{\uQ})$, dont la véritable
signification est:
$$
\Gr(\omegaRes{\uQ}) \geqslant k \quad\Longrightarrow\quad   \Gr(\omegaRes{\uP}) \geqslant k
$$
En particulier $\Gr(\omegaRes{\uQ}) \geqslant 2 \Longrightarrow  \Gr(\omegaRes{\uP}) \geqslant 2$.
\end {theo}

\index{théorème!de contrôle de la profondeur par!2@la propriété de couverture}%

\begin {proof} \leavevmode

Par définition de $\uP$ couvre $\uQ$, l'anneau des coefficients de
$\uQ$ est un anneau de polynômes sur un anneau~$\bfk$. Désignons par
$\uP^\gen$ le système générique de format $D$ au dessus de~$\bfk$.
Ainsi $\uP^\gen$ est à coefficients dans $\bfk[\uT]$ où les
indéterminées~$T_\ell$ sont allouées aux coefficients des $P^\gen_i$
et les coefficients non nuls des polynômes $\uQ_i$ sont certaines
indéterminées parmi les $T_\ell$ . Notons $\uU \subset \uT$ celles ne
figurant \emph {pas} dans les~$Q_i$.

\medskip 
Pour un monôme $X^\alpha$ de degré $\delta$,
soient:
$$  
F_\alpha = \omegaRes{\uP^\gen}(X^\alpha),\qquad
H_\alpha = \omegaRes{\uQ}(X^\alpha), \qquad
G_\alpha = \omegaRes{\uP}(X^\alpha)
$$
Posons $s = \sum_i (\widehat d_i-1)$.  Nous allons appliquer le
théorème~\ref{ControleProfondeur}.  du contrôle de la profondeur par
les composantes homogènes dominantes à la famille $(G_\alpha)$ de
polynômes de degré $\le s$.  Le polynôme $F_\alpha \in \bfk[\uT]$ est
homogène de degré $s$ et $H_\alpha$ est obtenu à partir de~$F_\alpha$
en spécialisant les variables de $\uU$ à 0 tandis que $G_\alpha$ est
obtenu en spécialisant les variables de $\uU$ en des éléments
quelconques de~$\bfk$.  Nous sommes alors en mesure d'appliquer le
lemme~\ref{BabyHmgTrick} qui suit.  Celui-ci fournit le fait que
$H_\alpha = \omegaRes{\uQ}(X^\alpha)$ est la composante homogène de
degré $s$ de $G_\alpha = \omegaRes{\uP}(X^\alpha)$ et c'est aussi sa
composante homogène dominante, \emph {y compris si
  $\omegaRes{\uQ}(X^\alpha)$ est nul}.

\smallskip

Le résultat est alors conséquence du théorème~\ref{ControleProfondeur}.
\end {proof}

\medskip

Donnons tout de suite comme corollaire le résultat suivant prouvé d'une autre
manière en utilisant l'injectivité de ${\uomegares}_{,\uP^\gen}$
(cf théorème~\ref{omegaresInjectiveDoncCramer}) et sa conséquence
$\Gr(\omegaRes{\uP^\gen}) \ge 2$ (point i) du théorème~\ref{omegaresGr2}).

\begin {coro}
\label{omegaresPgenGr2}
On a $\Gr(\omegaRes{\uP^\gen}) \ge 2$ où $\uP^\gen$ désigne le système générique.
\end {coro}

\begin {proof}
Le système $\uQ:=\uPred$ vérifie $\Gr(\omegaRes{\uQ}) \ge 2$ d'après le théorème~\ref{omegaresPredGr2}.
Et le système $\uP:=\uP^\gen$ couvre $\uQ$. On applique alors le théorème \ref{GrViaPcouvreQ}.
\end {proof}  

\medskip

\begin {lem} [Baby homogeneous trick] \leavevmode
\label{BabyHmgTrick}
  
Soit $F\in\bfk[\uT]$ un polynôme à plusieurs variables sur un anneau~$\bfk$,
homogène de degré $s$ et $\uU$ un sous-ensemble des variables de $\uT$.
On définit deux polynômes de $\bfk[\uT\setminus \uU]$: d'une part
$G$ obtenu à partir de $F$ en spécialisant de manière quelconque les
variables de $\uU$ et d'autre part $H$ obtenu en spécialisant à~$0$.

\medskip
Alors $H$ est homogène de degré $s$, c'est la composante homogène de degré $s$ de
$G$ et aussi sa composante homogène dominante \og en degré $s$\fg{} (\emph {même si $H$ est nul}).
\end {lem}

\begin {proof}

Pour faire simple, supposons que $\uT$ soit constitué de 3 variables
notées $a,b,c$ et que $\uU$ soit réduit à $\{c\}$.  La spécialisation
qui intervient est donc celle de $c$:
$$
G = F(a,b,\gamma) \quad\text {avec}\quad \gamma \in \bfk,
\qquad
H = F(a,b,0)
$$
Le polynôme $H$ est bien sûr homogène de degré $s$. Ecrivons:
$$
F = \sum_{i+j+k = s} \kern -7pt \lambda_{ijk}\, a^ib^jc^k = F(a,b,0) + R(a,b,c) 
\qquad \text{avec} \qquad
R = \sum_{\substack{i+j+k = s\\k \ge 1\\}} \kern -7pt \lambda_{ijk}\, a^ib^jc^k
$$
On a donc
$$
G = F(a,b,0) + R(a,b,\gamma) = H + R(a,b,\gamma)  
$$
Le polynôme $H$ étant homogène de degré $s$ et $R$ de degré $< s$, on obtient
que $H$ est la composante homogène dominante (en degré $s$) de $G$.
\end {proof}

\cleardoublepage

\section{Le résultant : définitions à la MacRae}
\label{ChapMacRaeDefResultant}

L'objectif est de montrer que les divers modules de MacRae de rang 0
que nous avons attachés à un système $\uP$ ont les mêmes invariants de
MacRae. En fait, nous allons montrer un résultat plus précis: non
seulement, les invariants de MacRae (idéaux monogènes de l'anneau
$\bfA$ des coefficients) sont identiques, mais tous leurs générateurs
privilégiés $\omegaRes{\uP}(\nabla)$ et les $\calR_d(\uP)$ que nous
avons dégagés dans le chapitre précédent
(cf.~théorème~\ref{PoidsNormalisationMacRae}) ne font qu'un: c'est le
résultant de $\uP$.

\subsection{\'Egalité des invariants de MacRae des
$\bfA$-modules $\bfB'_d(\protect\uP)_{d\ge\delta}$ de rang $0$}

On rappelle que le quotient $\bfB'$ défini par 
$$
\bfB' = \bfA[\uX]/\langle \uP,\nabla\rangle
$$
est indépendant du déterminant bezoutien choisi $\nabla$ de $\uP$. La
considération de $\bfB'$ permet de traiter les cas $d = \delta$ et
$d \geqslant \delta + 1$ de manière uniforme: $\bfB'_d$ est le module
$\bfB'_\delta$ pour $d = \delta$ et $\bfB_d$ pour
$d \geqslant \delta+1$.

\begin{theo}[\'Egalité des invariants de MacRae] \label{MacRaeEqualities} \leavevmode

Pour un système $\uP$ quelconque, on a les égalités 
$$
\omega_{\res}(\nabla) \ = \ 
\calR_{\delta+1}
\qquad \text{ et } \qquad 
\forall\, d \geqslant \delta+1, \quad 
\calR_d  = \calR_{d+1}
$$
En particulier, pour $\uP$ régulière, 
on dispose des égalités des invariants de MacRae :
$$
\MacRae(\bfB'_{\delta}) = \MacRae(\bfB_{\delta+1})
\qquad \text{ et } \qquad 
\forall\, d \geqslant \delta+1, \quad 
\MacRae(\bfB_d) = \MacRae(\bfB_{d+1})
$$
Ce qui se résume en : l'idéal $\MacRae(\bfB'_d)$ est indépendant de $d \geqslant \delta$.
\end{theo}

\index{invariant de MacRae}%
\index{théorème!e1@égalité des invariants de MacRae des $\bfB_d$ en degré $d\ge\delta+1$}

Vu l'importance de ce résultat, nous avons choisi d'en fournir trois preuves.
Voici un résumé succinct des stratégies, résumé dans lequel on convient
de noter $\calR_\delta$ pour $\omegares(\nabla)$.

\medskip
$\rhd$
La première preuve assure, via un argument de profondeur~2, l'égalité des
idéaux $\MacRae(\bfB'_d) = \MacRae(\bfB'_{d+1})$ pour
$d \geqslant \delta$.  Il est ensuite aisé d'obtenir l'égalité des
générateurs privilégiés via le fait qu'ils sont normalisés.

$\rhd$
La seconde possède des points communs avec la première mais se
contente d'assurer l'inclusion
$\MacRae(\bfB'_{d+1}) \subset \MacRae(\bfB'_d)$ pour
$d \geqslant \delta$.  On a donc $\calR_d \mid \calR_{d+1}$ et la
terminaison se réalise grâce au poids et à la normalisation des
générateurs privilégiés.  Le contrôle par le poids joue ici un rôle important
alors qu'il n'intervient pas dans la première preuve (puisque, dans celle-ci, l'égalité
des idéaux fournit directement $\calR_d \sim \calR_{d+1}$).

$\rhd$
La troisième est indépendante des deux premières et très différente dans l'esprit.
Elle se limite à $d \geqslant \delta+1$ et utilise la relation de divisibilité
$\det W^\sigma_{1,d} \mid \det W^\sigma_{1,d+1}$. Grâce au pgcd fort, on en
déduit que $\calR_{d} \mid \calR_{d+1}$ et on termine en utilisant
poids et normalisation.

\subsubsection*{Partie commune aux 2 premières preuves:
une suite exacte $0 \to \bfB'_d \to \bfB'_{d+1} \to G_{d+1}\to 0$.}

Par définition même des objets, il suffit de faire la preuve pour la suite $\uP$ générique,
en prenant donc $\bfA = \bfk[\indetsPi]$ où $\bfk$ est un anneau quelconque.

\smallskip
\noindent
Puisque $X_1\nabla \in \langle \uP \rangle$, on dispose de la suite exacte de $\bfA[\uX]$-modules gradués 
$$
\xymatrix @M=0.4pc{
\bfA[\uX]/\langle\uP,\, \nabla\rangle \ar[r]^-{\times X_1} & 
\big(\bfA[\uX]/\langle\uP\rangle\big)(1) \ar[r] &
\big(\bfA[\uX]/\langle X_1,\,\uP\rangle\big)(1)  \ar[r] & 
0
}
$$
En passant à la composante homogène de degré $d \geqslant \delta$, et en tenant compte
du facteur de décalage~(1), on obtient la suite \textit{exacte} de $\bfA$-modules :
$$
\xymatrix @M=0.4pc @R=0pc{
0 \ar[r] & \big(\bfA[\uX]/\langle\uP, \, \nabla \rangle \big)_d \ar[r]^-{\times X_1} & 
\big(\bfA[\uX]/\langle\uP\rangle \big)_{d+1} \ar[r] &
\big(\bfA[\uX]/\langle X_1,\,\uP\rangle\big)_{d+1} \ar[r] & 
0 \\
& = \bfB'_d  & =\bfB'_{d+1} &   & \\
}
$$
Justifions l'injectivité de l'application à gauche (de multiplication
par $X_1$) $\bfB'_d \to \bfB'_{d+1}$.

\noindent
Soit $F \in \bfA[\uX]_d$ tel que $X_1 F \in \langle \uP \rangle$.
Puisque $\uP$ est générique, on peut utiliser le point ii) de la
proposition~\ref{GeneriqueLocaliseSature} (intitulée ``En générique,
pour saturer, un seul localisé suffit'') qui fournit $F \in \uPsat_d$.
Or, d'après~\ref{MiniWiebe} (Composante homogène $\uPsat_d$ en degré
$d \ge \delta$), on a $\uPsat_d = \langle \uP, \nabla \rangle_d$.
Bilan: $F \in \langle \uP, \nabla \rangle_d$, ce qui termine la
justification.

\subsubsection*{Mise en place de la première preuve}

Nous disposons ainsi d'une suite exacte $0 \to E \to F \to G \to 0$ où
$E,F$ sont des modules de MacRae de rang 0 et $G$ un module de
présentation finie. La \og seule manière raisonnable\fg{} pour obtenir
$\MacRae(E) = \MacRae(F)$ est d'avoir
$\Gr\big(\calF_0(G)\big) \geqslant 2$. En effet, si cette inégalité
est vérifiée, $G$ est de MacRae de rang~$0$, d'invariant de MacRae
$\langle 1\rangle$, et le théorème de multiplicativité de l'invariant
de MacRae (cf.~\ref{MacRaeMultiplicativity}) fournit $\MacRae(F)
= \MacRae(E)\MacRae(G)$ d'où $\MacRae(F) = \MacRae(E)$. Il est
d'ailleurs inutile d'invoquer ce théorème de multiplicativité
puisqu'on peut le reprendre directement en charge dans ce cas
particulier.

\begin {lem} [Un cas particulier de multiplicativité de l'invariant de MacRae]
\label{MultiplicativiteMacRaeCasParticulier}
\leavevmode

\noindent
Soit $0 \to E \to F \to G \to 0$ une suite exacte où $E, F$ sont
de MacRae de rang $0$ et $\Gr(\calF_0(G)) \geqslant 2$.
Alors $\MacRae(E) = \MacRae(F)$.
\end {lem}

\begin{proof}
Notons que $G \simeq F/\iota(E)$ est bien de présentation finie
puisque $E,F$ le sont.  Soit $(a_i)$ un système générateur fini de
$\calF_0(G)$. Localisons en $a_i$; puisque $\calF_0(G) \subseteq
\Ann(G)$, on a $G=0$ sur $\bfA[1/a_i]$ donc $E \simeq F$ sur ce
localisé. En conséquence $\calF_0(E) = \calF_0(F)$ sur chaque localisé
$\bfA[1/a_i]$.  Les idéaux $\calF_0(E)$ et $\calF_0(F)$ ont donc même
pgcd (à un inversible près) sur chaque localisé.  Donc
d'après~\ref{PropProfondeur2}, ces deux pgcd sont égaux globalement sur
$\bfA$.  Ce qui s'écrit exactement $\MacRae(E) = \MacRae(F)$.
\end{proof}

Pourquoi avoir écrit la \og seule manière raisonnable\fg?  Pour la
raison suivante: pour peu que $G$ soit de MacRae (de rang 0), les
égalités $\MacRae(F) = \MacRae(E)\MacRae(G)$ et $\MacRae(E) =
\MacRae(F)$ forcent $\MacRae(G) = \langle 1\rangle$
i.e. $\Gr\big(\calF_0(G)\big) \geqslant 2$.

\medskip

C'est donc cette inégalité de profondeur $\geqslant 2$ que nous allons
assurer.  Les moyens que nous avons à notre disposition n'étant pas si
nombreux, nous avons encore une fois opté sur l'utilisation du
contrôle de la profondeur par les composantes homogènes dominantes
et d'un système ad-hoc $\uQ = \bsp\uX^D + \bsq\uR$ où $\uR$ est un jeu monomial.

Ce système $\uQ$ nécessite l'introduction de $2n-1$ indéterminées
$p_1, \dots, p_n, q_1, \dots, q_{n-1}$ sur un anneau~$\bfR$. Il est
défini de la manière suivante:
$$
Q_i = p_i X_i^{d_i} + q_i X_{i+1}^{d_i} 
\qquad \text{pour $i < n$} \qquad \text{ et } \qquad 
Q_n = p_n X_n^{d_n}
$$
Voici à quoi il va être utile.

\begin{theo}\label{TheoFittingProfondeur2}
Notons $G_d = \bfA[\uX]_d/\langle X_1,\,\uP\rangle_d$.
Soit $\uP$ un système couvrant le jeu $\uQ$, typiquement le jeu générique.
Alors, pour $d \geqslant \delta$,  les deux déterminants suivants
$$
\det W_{1,d+1}(\uP)
\qquad \det W_{1,d+1}(X_1, P_1, \dots, P_{n-1})
$$
constituent une suite \emph{régulière} de l'idéal $\calF_0(G_{d+1})$.
En conséquence $\Gr\big(\calF_0(G_{d+1})\big) \geqslant 2$.
\end{theo}

La preuve est reportée après le lemme suivant. On notera que
l'inégalité $d \geqslant \delta$ est indispensable ; non pas en ce qui concerne le
statut de régularité de la suite (elle est régulière pour tout $d$, cf
le lemme ci-dessous) mais en ce qui concerne l'appartenance du premier déterminant à l'idéal de
Fitting $\calF_0(G_{d+1})$.
Pour $d = \delta-1$, il se peut que l'idéal $\calF_0(G_{d+1}) = \calF_0(G_\delta)$ ne soit
pas de profondeur $\geqslant 2$. En voici un exemple. Le $\bfA$-module
$G_\delta \overset{\rm def.}{=} \bfA[\uX]_\delta/\langle X_1,\,\uP\rangle_\delta$ 
étant présenté par l'application linéaire:
$$
\begin{array}[t]{rcl}
\bfA[\uX]_{\delta-1} \times \bfA[\uX]_{\delta-d_1} \times \cdots 
\times \bfA[\uX]_{\delta - d_n} & \longrightarrow & \bfA[\uX]_\delta \\[0.2cm]
(V,U_1,\dots, U_n) 
& \longmapsto & 
VX_1 + \sum_{i=1}^n U_iP_i \\
\end{array}
$$
son idéal de Fitting $\calF_0(G_\delta)$ est engendré par les mineurs pleins de cette application linéaire.
Prenons un système $\uP = (L_1,
\dots, L_{n-1}, P_n)$ de format $D = (1,\dots,1,2)$. On a $\delta=1$
et le module $G_\delta$ est
présenté par la matrice carrée $n \times n$ où les $n-1$ polynômes linéaires
$L_j$ sont en colonne:
$$
\begin {bmatrix}
1      &\vdots &       &\vdots\\
0      &L_1    &\cdots & L_{n-1} \\
\vdots &\vdots &       &\vdots\\
0      &\vdots &       &\vdots \\
\end {bmatrix}
$$
Ainsi $\calF_0(G_\delta)$, engendré par le déterminant $n\times n$ ci-dessus,
n'est pas de profondeur $\geqslant 2$.

\begin{lem}
On fixe un ordre monomial < tel que $X_1 > \dots > X_n$.
Ici le degré $d$ est quelconque.

\begin{enumerate}[\rm i)]
\item 
Soit $\calM \subset \Jex_{1,d}(D)$ un sous-module monomial. 

Alors l'endomorphisme $W_{\calM}(\uQ)$ est triangulaire relativement à l'ordre monomial.
De manière précise, pour $X^\alpha \in \calM$ fixé:
$$
W_{\calM}(\uQ)(X^\alpha) 
\ \in \ 
\bigoplus_{X^{\alpha'} \leqslant X^{\alpha}} \bfA X^{\alpha'}
$$
De plus, sa diagonale est constituée de $p_i$. En particulier,
$\det W_\calM(\uQ)$ est un produit de $p_i$.

\item 
Soit $\calM' \subset \Jex_{1,d}(D')$ un sous-module monomial où $D' = (1,d_1, \dots, d_{n-1})$.
On note $\uQ'$ le jeu de format $D'$ défini par
$\uQ' = (X_1, Q_1, \dots, Q_{n-1})$.

L'endomorphisme $W_{\calM'}(\uQ')$ est triangulaire relativement à l'ordre monomial,
avec une relation d'ordre inversée par rapport à celle du point précédent \idest{}
pour $X^\alpha \in \calM'$ fixé:
$$
W_{\calM'}(\uQ')(X^\alpha) 
\ \in \ 
\bigoplus_{X^{\alpha'} \geqslant X^\alpha} \bfA X^{\alpha'}
$$
De plus, sa diagonale est constituée de $1$ ou $q_j$.
En particulier, $\det W_{\calM'}(\uQ')$ est un produit de $q_j$.

\item
Soit $\uP$ un système couvrant le jeu $\uQ$ et soit $\uP'$ le jeu $(X_1, P_1, \dots, P_{n-1})$.
Alors les deux déterminants suivants
$$
\det W_{1,d}(\uP), \qquad \det W_{1,d}(\uP')
$$
forment une suite \emph{régulière}.
\end{enumerate}
\end{lem}

\begin{proof} \leavevmode
  
i) 
Soit $X^\alpha \in \calM$ et $i = \minDiv_D(X^\alpha)$. 
Si $i = n$, alors $W_\calM(\uQ)(X^\alpha) = p_n X^\alpha$ ; sinon 
$$
W_\calM(\uQ)(X^\alpha) \ = \ 
\pi_{\calM}\Big(\dfrac{X^\alpha}{X_i^{d_i}} Q_i\Big) \ = \ 
p_i \, X^\alpha \ + \ q_i \, \pi_{\calM}(X^{\alpha'}) 
\qquad
\text{avec $X^{\alpha'} = \dfrac{X^\alpha}{X_i^{d_i}}X_{i+1}^{d_i}$}
$$
Comparons $X^{\alpha'}$ et $X^\alpha$. 

\noindent
On a $X_i > X_{i+1}$ puis 
$X_i^{d_i} > X_{i+1}^{d_i}$ et en multipliant par 
$\frac{X^\alpha}{X_i^{d_i}}$, on obtient $X^{\alpha} > X^{\alpha'}$.

\medskip
ii) 
Soit $X^\alpha \in \calM'$ et $i = \minDiv_{D'}(X^\alpha)$. 
Si $i = 1$, alors $W_{\calM'}(\uQ')(X^\alpha) = X^\alpha$ ; sinon
$$
W_{\calM'}(\uQ')(X^\alpha) \ = \ 
\pi_{\calM'}\Big(\dfrac{X^\alpha}{X_i^{d_{i-1}}}Q_{i-1}\Big) \ = \ 
q_{i-1} \, X^\alpha \ + \ p_{i-1} \, \pi_{\calM'}(X^{\alpha'}) 
\qquad
\text{avec $X^{\alpha'} = \dfrac{X^\alpha}{X_i^{d_{i-1}}}X_{i-1}^{d_{i-1}}$}
$$
On a $X_{i-1} > X_i$ puis 
$X_{i-1}^{d_{i-1}} > X_{i}^{d_{i-1}}$ et en multipliant par 
$\frac{X^\alpha}{X_i^{d_{i-1}}}$, on obtient $X^{\alpha'} > X^\alpha$.

\medskip
iii)
Puisque $\uP$ couvre $\uQ$, l'anneau $\bfA$ des coefficients
de $\uP$ s'écrit $\bfA=\bfR[p_1,\dots, p_n,q_1, \dots, q_{n-1}]$.
En vertu de~\ref{ControleProfCompHmgDom-nequal2}, il suffit de montrer que la
suite formée des deux composantes homogènes dominantes des deux déterminants
(relativement à la structure d'anneau de polynômes de $\bfA$ sur $\bfR$) 
est régulière.

\smallskip
Ordonnons la base monomiale $\Jex_{1,d}(D)$ de manière décroissante. Alors la matrice
$W_{1,d}(\uQ)$ est triangulaire inférieure avec des $p_i$ sur la
diagonale. De ce fait, la partie supérieure stricte de la matrice $W_{1,d}(\uP)$ est
constituée d'éléments de $\bfR$:
$$
W_{1,d}(\uQ) =
\begin {bmatrix}
p_\sbullet &0         &0 \\
*         &p_\sbullet &0 \\
*         &*         &p_\sbullet \\
\end {bmatrix}
\qquad \qquad
W_{1,d}(\uP) =
\begin {bmatrix}
p_\sbullet &r         &r' \\
*         &p_\sbullet &r'' \\
*         &*         &p_\sbullet \\
\end {bmatrix}
$$
En conséquence, la composante homogène dominante de $\det W_{1,d}(\uP)$ est 
le produit de la diagonale de~$W_{1,d}(\uP)$, qui est un produit de~$p_i$.

\smallskip
Comme $\uP'$ couvre $\uQ'$, on peut faire un raisonnement analogue pour $(\uQ', \uP')$ modulo la permutation inférieur $\leftrightarrow$ supérieur 
$$
W_{1,d}(\uQ') =
\begin {bmatrix}
1         &*         &* \\
0         &q_\sbullet &* \\
0         &0         &q_\sbullet \\
\end {bmatrix}
\qquad \qquad
W_{1,d}(\uP') =
\begin {bmatrix}
1         &*         &* \\
r         &q_\sbullet &* \\
r'        &r''       &q_\sbullet \\
\end {bmatrix}
$$
Ainsi la composante homogène dominante $\det W_{1,d}(\uP')$ est
le produit de la diagonale, produit de~$q_j$.

\medskip
Bilan: les composantes homogènes dominantes de $\det
W_{1,d}(\uP)$ et $\det W_{1,d}(\uP')$  forment une suite régulière et il en est de
même des deux déterminants.
\end{proof}


\begin{proof} [Preuve du théorème \ref{TheoFittingProfondeur2}] \leavevmode

Par définition, l'idéal de Fitting $\calF_0(G_d)$
du $\bfA$-module $G_d \overset{\rm def.}{=} \bfA[\uX]_d/\langle X_1,\,\uP\rangle_d$ 
est engendré par les mineurs pleins de l'application linéaire:
$$
\begin{array}[t]{rcl}
\bfA[\uX]_{d-1} \times \bfA[\uX]_{d-d_1} \times \cdots 
\times \bfA[\uX]_{d - d_n} & \longrightarrow & \bfA[\uX]_d \\[0.2cm]
(V,U_1,\dots, U_n) 
& \longmapsto & 
VX_1 + \sum_{i=1}^n U_iP_i \\
\end{array}
$$
Le degré critique de $D'$ est $\delta' = \delta-(d_n-1)$.  Supposons
$d\geqslant \delta+1$, a fortiori $d \geqslant \delta'+1$.  En
conséquence, $\Jex_{1,d}(D) = \Jex_{1,d}(D') = \bfA[\uX]_d$, donc les
deux déterminants du point iii) du lemme précédent sont des mineurs
pleins de l'application linéaire ci-dessus et donc
appartiennent à~$\calF_0(G_d)$. D'où $\Gr\big(\calF_0(G_d)\big) \geqslant 2$
pour $d\geqslant \delta+1$. En tenant compte de $d \leftrightarrow d+1$,
c'est ce qu'il fallait démontrer.
\end{proof}

\begin{proof} [Première preuve du théorème \ref{MacRaeEqualities}] \leavevmode

Le théorème \ref{TheoFittingProfondeur2} fournit l'égalité des idéaux
$\MacRae(\bfB'_d) = \MacRae(\bfB'_{d+1})$ pour
$d \geqslant \delta$. En rappelant que le terrain est générique, les
deux générateurs privilégiés diffèrent multiplicativement d'un
inversible $\lambda$ de l'anneau des coefficients $\bfA
= \bfk[\indetsPi]$. Puisque $\lambda$ est inversible et homogène,
c'est un inversible de $\bfk$. La spécialisation en le jeu étalon
fournit $\lambda=1$.
\end {proof}

\subsubsection*{Seconde preuve}

\begin{proof} [Seconde preuve du théorème \ref{MacRaeEqualities}] \leavevmode

Dans la suite exacte courte de $\bfA$-modules mise en place dans la partie
commune, les modules $\bfB'_d$ et $\bfB'_{d+1}$  sont librement résolubles de rang $0$
d'après~\ref{ResolutionQuotients}.
Il est alors classique  (cf. par exemple~\cite[Annexe~A, th 4.1, page 208]{Tete})
que le module à l'extrémité droite l'est aussi.
Ainsi tous les modules de la suite exacte sont de MacRae de rang~$0$. 
Par multiplicativité de l'invariant de MacRae (cf.~\ref{MacRaeMultiplicativity}),
on en déduit que 
$\MacRae(\bfB'_{d+1}) \subset \MacRae(\bfB'_{d})$.

\medskip
\noindent
Terminons cette preuve en distinguant les cas $d = \delta$ et $d \geqslant \delta+1$.

\noindent
$\rhd$ Pour $d = \delta$, l'inclusion $\MacRae(\bfB'_{d+1}) \subset \MacRae(\bfB'_{d})$
s'écrit 
$$
\MacRae(\bfB_{\delta+1}) = \langle \calR_{\delta+1} \rangle
\quad  \subset \quad 
\MacRae(\bfB'_{\delta}) = \langle \omega_\res(\nabla)\rangle
$$
Il~y~a donc un multiplicateur $\lambda \in \bfA$ 
tel que $\calR_{\delta+1} = \lambda\,\omega_\res(\nabla)$.
Or ces générateurs $\calR_{\delta+1}$ et $\omega_\res(\nabla)$
ont même poids en chaque $P_i$ d'après~\ref{PoidsNormalisationMacRae}, 
donc $\lambda \in \bfk$.
En spécialisant $\uP$ en le jeu étalon, $\lambda$ n'est pas affecté, et on trouve que 
$1 = \lambda \times 1$ donc $\lambda = 1$.

\smallskip
\noindent
$\rhd$ Pour $d \geqslant \delta+1$, l'inclusion $\MacRae(\bfB'_{d+1}) \subset \MacRae(\bfB'_{d})$
s'écrit 
$$
\MacRae(\bfB_{d+1}) = \langle \calR_{d+1} \rangle
\quad  \subset \quad 
\MacRae(\bfB_{d}) = \langle \calR_{d} \rangle
$$
On termine comme précédemment à l'aide des propriétés de poids et de normalisation 
de $\calR_d$ (cf.~\ref{PoidsNormalisationMacRae}).
\end{proof}

\subsubsection*{Preuve de  $\calR_d(\uP) = \calR_{d+1}(\uP)$
via la caractérisation \og pgcd fort \fg{} du MacRae}

Cette preuve n'utilise pas le fait que $\uP$ est générique, seulement
la propriété que $\uP$ couvre le jeu étalon généralisé $(p_1X_1^{d_1}, \dots, p_nX_n^{d_n})$.
Ainsi, $\calR_d$ est le pgcd fort des mineurs $(\det
W_{1,d}^\sigma)_{\sigma \in \Sigma_2}$, cf.~\ref{MacRaePGCDfort}.

\begin{proof} [Troisième preuve du théorème \ref{MacRaeEqualities} pour $d \geqslant \delta+1$]  \leavevmode

Nous allons utiliser un cas particulier du résultat~\ref{InclusionPGCD}, à savoir :

\begin{quote}
\it 
Dans un anneau commutatif, supposons disposer de deux 
familles finies $\ua = (a_j)$, $\ub = (b_j)$ indexées
par le même ensemble d'indices, chacune ayant un pgcd fort; si $a_j \mid b_j$
pour tout $j$, alors $\mathrm{pgcd}(\ua) \mid \mathrm{pgcd}(\ub)$.
\end{quote}

Pour $d \geqslant \delta+1$ et $\sigma\in\fS_n$, on a $\det
W_{1,d}^\sigma(\uP) \mid \det W_{1,d+1}^\sigma(\uP)$
(confer~\ref{DetW1dDiviseDetW1dplus1}).  Soit
$\Sigma_2 \subset \fS_n$ l'ensemble des transpositions $(i,n)$
pour $1 \leqslant i \leqslant n$.  
L'application du résultat général ci-dessus aux
deux familles $\big(\det
W_{1,d}^\sigma(\uP)\big)_{\sigma \in \Sigma_2}$ et $\big(\det
W_{1,d+1}^\sigma(\uP)\big)_{\sigma \in \Sigma_2}$ fournit la relation
de divisibilité entre leur pgcd fort : on a $\lambda\,\calR_{d}(\uP) = \calR_{d+1}(\uP)$
avec $\lambda \in\bfA$.  Pour des raisons de
poids, et du fait de la présence du monôme $p_1^{\widehat d_1} \cdots
p_n^{\widehat d_n}$ dans chacun des deux invariants de MacRae, on a
$\lambda =1$.
\end{proof}

\subsection{Le résultant comme générateur normalisé de l'invariant de MacRae}

La définition suivante est légitimée par l'égalité des différents générateurs de MacRae
que nous venons d'établir en~\ref{MacRaeEqualities}.
Elle~a lieu en deux temps : pour une suite $\uP$ générique, puis pour une suite $\uP$ quelconque.
\'Evidemment, pour~$\uP$ régulière, 
il est important que les deux points de vue coïncident; c'est bien le cas, car 
l'invariant de MacRae se spécialise (cf.~\ref{ExtensionScalaires}).

\index{invariant de MacRae}%

\begin{defn} [Résultant] \label{DefResultant} \leavevmode

Le résultant de $\uP$ est le scalaire, noté $\Res(\uP)$, défini de la manière suivante:

\begin{itemize}
\item 
pour $\uP$ générique, c'est le générateur normalisé (au sens de~\ref{PoidsNormalisationMacRae})
d'un des idéaux suivants de $\bfA = \bfk[\indetsPi]$
$$
\MacRae(\bfB'_{\delta}) 
\qquad \text{ ou } \qquad 
\MacRae(\bfB_d)
\quad \text{pour } d \geqslant \delta+1
$$
à savoir 
$$
\omega_\res(\nabla) 
\qquad \text{ ou } \qquad 
\calR_d
\quad \text{pour } d \geqslant \delta+1
$$
\item 
pour $\uP$ quelconque, c'est la  spécialisation du résultant générique.
\end{itemize}
\end{defn}

\index{résultant}%

\bigskip
\label{IdealCofacteur}

\noindent
Pour une suite $\uP$ quelconque, on dispose des idéaux cofacteurs $\fb_d$ définis en
\ref{ResumeSpecialisationCasGenerique}
$$
\calF_0(\bfB'_\delta) \ = \ 
\Res(\uP) \, \fb_\delta
\qquad \text{ et } \qquad 
\forall\, d \geqslant \delta+1,\quad 
\calF_0(\bfB_d) \ = \ 
\Res(\uP) \, \fb_d
$$
Par définition, les idéaux de Fitting qui interviennent ci-dessus s'expriment comme
des idéaux déterminantiels de l'application de Sylvester $\Syl = \Syl(\uP)$
et ces deux égalités s'écrivent aussi 
$$
\DVect_{s_\delta}(\Syl_\delta)\text{-évalué-en-$\nabla$}
\ = \ 
\Res(\uP) \, \fb_\delta
\qquad \text{ et } \qquad 
\forall\, d \geqslant \delta+1,\quad 
\calD_{s_d}(\Syl_d)
\ = \ 
\Res(\uP) \, \fb_d
$$
Dans chaque idéal de gauche, il y a un scalaire privilégié (avec un nom de baptème) :
$$
\text{$\omega(\nabla)$ pour $d = \delta$
\qquad \qquad 
$\det W_{1,d}$ pour $d \geqslant \delta+1$ 
}
$$
Il s'agit d'un cas particulier pour $\sigma = \Id_n$ de deux familles
de scalaires indexées par $\sigma \in \fS_n$ :
$$
\text{$\omega^\sigma(\nabla)$ pour $d = \delta$
\qquad \qquad 
$\det W^\sigma_{1,d}$ pour $d \geqslant \delta+1$ 
}
$$
Tous ces scalaires privilégiés sont donc des multiples du résultant.

\index{idéal!cofacteur $\fb_d$}%

\bigskip

Nous énonçons ci-dessous en termes du \textit{résultant} les
propriétés du résumé page~\pageref{ResumeSpecialisationCasGenerique}
du chapitre~\ref{ChapMacRaeForP}.

\begin{prop}[Spécialisation, régularité, poids et normalisation, pgcd fort] 
\label{ProprietesBasiquesResultant}

\leavevmode
\begin{enumerate}[\rm i)]
\item 
Pour le jeu étalon généralisé, on a 
$$
\Res(p_1X_1^{d_1}, \dots, p_nX_n^{d_n}) 
\ = \ 
p_1^{\widehat d_1} \cdots p_n^{\widehat d_n}
$$
En particulier, $\Res(X_1^{d_1}, \dots, X_n^{d_n}) = 1$.

\item 
Le résultant d'une suite régulière est un élément régulier.

\item 
Le résultant de la suite générique $\uP = (P_1, \dots, P_n)$ est un polynôme
en les coefficients de $P_1, \dots, P_n$ 
qui contient le monôme $p_1^{\widehat d_1} \cdots p_n^{\widehat d_n}$ 
(où $p_i$ est le coefficient en $X_i^{d_i}$ de $P_i$) ;
pour tout $i$, il est homogène en les coefficients de $P_i$ de poids $\widehat d_i$.

En particulier, pour toute suite $\uP$, et pour tous $a_1, \dots, a_n \in \bfA$, on a 
$$
\Res(a_1 P_1, \dots, a_n P_n) \ = \ 
a_1^{\widehat d_1} \cdots a_n^{\widehat d_n}\, \Res(\uP) 
$$

\item
En désignant par $\Sigma_2 \subset \fS_n$
n'importe quelle partie telle que $\Sigma_2(n) = \{1..n\}$ (par
exemple l'ensemble des transpositions $(i,n)$ pour 
$1 \leqslant i \leqslant n$), le
résultant d'une suite $\uP$ \emph{couvrant le jeu étalon généralisé}
est le pgcd fort normalisé

\begin{itemize}
\item 
de $\big(\det W_{1,d}^\sigma\big)_{\sigma \in \Sigma_2}$ pour $d \geqslant \delta+1$ fixé.

\item 
de $\big(\omega^\sigma(\nabla)\big)_{\sigma \in \Sigma_2}$.
\end{itemize}
\end{enumerate}
\end{prop}

\begin{proof}
i)  Cf. la proposition \ref{BdJeuEtalonGen}.

ii) L'invariant de MacRae d'un module de MacRae de rang 0 est par définition un scalaire régulier.

iii) Voir le théorème \ref{PoidsNormalisationMacRae}.

iv) Cf. la proposition \ref{MacRaePGCDfort}.
\end{proof}

\subsection{L'exemple du format $D = (1,\dots,1,2)$}

Bien qu'il ne soit pas en accord avec le titre de la section, nous
glissons ici le cas linéaire.

\begin{prop}[Le jeu linéaire] \label{ResultantFormesLineaires}
Soit $\uL = (L_1, \dots, L_n)$ une suite de formes linéaires.
Alors 
$$
\Res(\uL) = \det \dsL
$$ 
où 
$\dsL \in \bbM_n(\bfA)$ est \textit{la} matrice donnant les coefficients des formes linéaires:
$$
\begin{bmatrix} 
L_1 & \cdots & L_n 
\end{bmatrix} 
\ = \ 
\begin{bmatrix} 
X_1 & \cdots & X_n 
\end{bmatrix} 
\dsL
$$
En particulier, pour le jeu étalon linéaire permuté, on a 
$$
\Res(X_{\sigma(1)}, \dots, X_{\sigma(n)}) 
\ = \ 
\text{signature de la permutation $\sigma$}
$$
\end{prop}

\begin{proof}
Nous allons fournir deux arguments en exploitant le fait que le format $D = (1, \dots, 1)$
est très particulier, confer~\ref{CasParticuliers}.

\smallskip

\noindent
$\rhd$ Utilisation de $\Res(\uL) = \calR_{\delta+1}$.
En raison du format $D$, on a $\calR_{\delta+1} = \det W_{1,\delta+1}$.
Le degré critique $\delta$ vaut $0$ 
et la matrice $\dsL$ coïncide clairement avec la matrice $W_{1,\delta+1}=W_{1,1}$.
Donc $\Res(\uL) = \det \dsL$.

\smallskip
\noindent
$\rhd$ 
Utilisation de $\Res(\uL) = \omega_\res(\nabla)$.
En raison du format $D$, on a $\omega_\res = \omega$ et la forme linéaire $\omega$ 
est l'identité de $\bfA$. 
De plus, un déterminant bezoutien $\nabla$ est $\det \dsL$. Par conséquent, 
$\Res(\uL) = \id_\bfA(\det \dsL) = \det \dsL$.

\medskip

Pour le jeu $(X_{\sigma(1)}, \dots, X_{\sigma(n)})$,
la matrice $\dsL$ est la matrice de la permutation $\sigma$, dont 
le déterminant est la signature.
D'où l'égalité annoncée.
\end{proof}

\subsubsection {Exemple $D=(1,\dots,1,2)$ pour lequel on a l'égalité des trois formes
                $\omegares = \omega = \evalxi$}

Ce format $D = (1, \dots, 1, 2)$, de degré critique $\delta = 1$, a
déjà fait l'objet de la proposition~\ref{CasParticuliers}.  Sa
particularité réside dans le fait que la forme linéaire $\omega_\res$
est égale à~$\omega$. Rappelons que cela provient de la nullité de
$\rmK_{2,\delta}$ conduisant au fait que le complexe
$\rmK_{\sbullet,\delta}$ est réduit à $0 \to \rmK_{1,\delta}
\xrightarrow{\ \Syl_\delta\ } \bfA[\uX]_\delta$ avec $\dim
\rmK_{1,\delta} = \dim\bfA[\uX]_\delta + 1 = n+1$.

Le résultant de $\uP$ s'explicite alors sous la forme $\omega(\nabla)$
où $\nabla$ est un déterminant bezoutien de~$\uP$.  Puisque ce format
est également celui du premier cas d'école $(1, \dots, 1,e)$, on \emph {doit
pouvoir retrouver} l'égalité $\Res(\uP) = P_n(\uxi)$ où $\uxi =
(\xi_1, \dots, \xi_n)$ est le point d'évaluation défini par les
$n-1$ polynômes linéaires $P_1, \dots, P_{n-1}$ (cf le premier cas
d'école~\ref{soussectionPremierCasEcole}):
$$
\xi_i = \det(P_1, \dots, P_{n-1}, X_i)
$$
Pour retrouver $\Res(\uP) = P_n(\uxi)$, nous allons d'abord nous
interroger sur l'égalité \boxed{\omega = \evalxi}. En utilisant
$$
\bfA[\uX]_\delta = \bfA[\uX]_1 = \bfA X_1 \oplus \cdots \oplus \bfA X_n,
\qquad\qquad
\Jex_{1,\delta} = \Jex_{1,1} = \bfA X_1 \oplus\cdots\oplus \bfA X_{n-1}
$$
et $\minDiv(X_i) = i$ pour $i < n$, on obtient, par définition de $\omega$:
$$
\omega(X_i) = \det(P_1, \dots, P_{n-1}, X_i)
\overset {\rm def.}{=} \xi_i
$$
Ainsi $\omega$ est bien la restriction à $\bfA[\uX]_1$ de la forme
linéaire \og évaluation en $\uxi$ \fg{} $\bfA[\uX]\to\bfA$.

Par exemple, pour $n=3$, avec
$$
P_1 = a_{1}X_{1} + a_{2}X_{2} + a_{3}X_{3}, \qquad
P_{2} = b_{1}X_{1} + b_{2}X_{2} + b_{3}X_{3}
$$
voici la forme linéaire $\omega : \bfA[\uX]_\delta \to \bfA$:
$$
\omega : \quad 
\begin{array}[t]{rcl}
\bfA[\uX]_1 = \bfA X_1 \oplus \bfA X_2 \oplus \bfA X_3 & \longrightarrow & \bfA \\ 
F = F_{X_1} X_1 + F_{X_2} X_2 + F_{X_3} X_3 & \longmapsto & 
\begin{vmatrix}
a_{1}& b_{1}& F_{X_1} \\ 
a_{2}& b_{2}& F_{X_2} \\ 
a_{3}& b_{3}& F_{X_3} \\ 
\end{vmatrix}
\ = \ 
\xi_1 F_{X_1} \ + \ \xi_2 F_{X_2} \ + \  \xi_3 F_{X_3}
\end{array}
$$
Venons en maintenant à la formule $\Res(\uP) = P_n(\uxi)$ du premier
cas d'école que \emph {nous entendons retrouver}.  On dispose des
égalités $\Res(\uP) \overset{\rm def.}{=} \omega_\res(\nabla)
\overset{\rm ici}{=} \omega(\nabla) \overset{\rm ici}{=}
\nabla(\uxi)$. Pour conclure, il reste à voir pourquoi $\nabla(\uxi) =
P_n(\uxi)$. Cela résulte d'un petit calcul que nous avons réalisé pour
le format $(1,\dots,1,e)$, en début de la
section~\ref{soussectionPremierCasEcole}, et qui n'est donc pas
spécifique à $D = (1, \ldots, 1, 2)$.

\bigskip

Ci-dessous, nous brodons\footnote{Broder: amplifier ou exagérer à
plaisir, selon Le Petit Robert; embellir, amplifier par des variations
et des fioritures, selon le dictionnaire de l'Académie Française.}
autour de l'égalité $\nabla(\uxi) =
P_n(\uxi)$, histoire de voir ce qu'elle dit pour $n=3$, donc dans
le cas du format $D = (1,1,2)$, avec le système $\uP$:
$$
\setlength{\tabcolsep}{2pt}
\left\{
\begin{tabular}{rcp{15cm}} 
$P_{1}$ & $=$ & $a_{1}X_{1} + a_{2}X_{2} + a_{3}X_{3}$\\ [0.1cm] 
$P_{2}$ & $=$ & $b_{1}X_{1} + b_{2}X_{2} + b_{3}X_{3}$\\ [0.1cm] 
$P_{3}$ & $=$ & $c_{1}X_{1}^{2} + c_{2}X_{1}X_{2} + c_{3}X_{1}X_{3} 
+ c_{4}X_{2}^{2} + c_{5}X_{2}X_{3} + 
c_{6}X_{3}^{2}$\\ [0.1cm] 
\end{tabular}
\right.
$$
En écrivant $P_3$ sous la forme (non unique) $P_3 = X_1 G_1 + X_2 G_2 + X_3 G_3$ 
avec les $G_i$ homogènes de degré $\delta = 1$, on obtient une 
matrice bezoutienne de $\uP$ du type suivant :
$$
\dsV =
\left[
\begin{array}{*{3}{c}}
a_{1}& b_{1}& G_1 \\ 
a_{2}& b_{2}& G_2 \\ 
a_{3}& b_{3}& G_3 \\ 
\end{array}
\right]
\quad \hbox {par exemple}\quad
\left[
\begin{array}{*{3}{c}}
a_{1}& b_{1}& c_{1}X_{1} + c_{2}X_{2} + c_{3}X_{3} \\ 
a_{2}& b_{2}& c_{4}X_{2} + c_{5}X_{3} \\ 
a_{3}& b_{3}& c_{6}X_{3} \\ 
\end{array}
\right]
$$
A droite, nous avons fait un choix particulier mais le traitement qui
vient n'en dépend pas. Il peut simplement permettre au lecteur de
réaliser quelques calculs intermédiaires pour vérification.
Notons~$\nabla$ le déterminant de~$\dsV$; celui-ci se calcule en
développant selon la dernière colonne:
$$
\nabla = \xi_1 G_1 + \xi_2 G_2 + \xi_3 G_3, \qquad
\xi_1 = \begin {vmatrix} a_2 & b_2\\ a_3 &b_3 \\ \end {vmatrix},\qquad
\xi_2 = -\begin {vmatrix} a_1 & b_1\\ a_3 &b_3 \\ \end {vmatrix},\qquad
\xi_3 = \begin {vmatrix} a_1 & b_1\\ a_2 &b_2 \\ \end {vmatrix}
$$
Via $P_3 = \sum_i X_iG_i$, on obtient ainsi $\nabla(\uxi) = P_3(\uxi)$ mais, comme nous
l'avons déjà signalé plus haut, ceci n'est ni spécifique au format $(1,1,2)$
ni au format $(1,\dots,1,2)$.

\medskip

On peut opérer quelques vérifications à la main avec le choix 
particulier de la matrice $\dsV$ ci-dessus à droite.
Le coefficient en $X_i$ de $\nabla$, noté $\nabla_{X_i}$, est obtenu 
en évaluant $\nabla$ en $X_i := 1$ et $X_j := 0$ pour $j \ne i$. 
Avec la matrice $\dsV$ ci-dessus, on obtient donc :
$$
\nabla_{X_1} \ =\  \xi_1 c_{1},
\qquad 
\nabla_{X_2} \ =\ \xi_1c_{2} + \xi_2c_{4},
\qquad
\nabla_{X_3} \ =\  \xi_1 c_{3} + \xi_2 c_{5} + \xi_3 c_{6}
$$
Ainsi, l'égalité $\Res(\uP) = \omega(\nabla) 
= \xi_1 \nabla_{X_1} + \xi_2 \nabla_{X_2} + \xi_3 \nabla_{X_3}$ s'explicite en:
$$
\Res(\uP) \ = \ 
\xi_1 (\xi_1 c_1)
\ + \ 
\xi_2 (\xi_1 c_2 + \xi_2 c_4) 
\ + \  
\xi_3 (\xi_1 c_3 + \xi_2 c_5 + \xi_3 c_6)
$$
qui n'est autre que $P_3(\uxi)$ :
$$
P_3(\uxi) \ = \ 
c_{1}\xi_{1}^{2} + c_{2}\xi_{1}\xi_{2} + c_{3}\xi_{1}\xi_{3} 
+ c_{4}\xi_{2}^{2} + c_{5}\xi_{2}\xi_{3} + 
c_{6}\xi_{3}^{2}
$$

\bigskip

\noindent
En anticipant sur la suite (cf. section~\ref{sousSectionMacaulayRecurrence}),
la formule en degré $\delta+1$ du résultant n'est pas déterminantale
au sens où l'expression $\Res(\uP) = \det W_{1,\delta+1}/\det
W_{2,\delta+1}$ fait intervenir le quotient de deux déterminants (et
pas un seul):
$$
W_{1,\delta+1} \ = \ 
\EastBordermatrix{
a_{1} & . & . & . & . & c_{1} & \Heti{X_{1}^{2}} \\ 
a_{2} & a_{1} & . & b_{1} & . & c_{2} & \Heti{X_{1}X_{2}} \\ 
a_{3} & . & a_{1} & . & b_{1} & c_{3} & \Heti{X_{1}X_{3}} \\ 
. & a_{2} & . & b_{2} & . & c_{4} & \Heti{X_{2}^{2}} \\ 
. & a_{3} & a_{2} & b_{3} & b_{2} & c_{5} & \Heti{X_{2}X_{3}} \\ 
. & . & a_{3} & . & b_{3} & c_{6} & \Heti{X_{3}^{2}} \\ 
}
\qquad  \qquad 
W_{2,\delta+1} \ = \ 
\EastBordermatrix{
\ a_{1} & \Heti{X_{1}X_{2}} 
}
$$
En voici la valeur:
{\small
$$
\begin {array} {l}
\\
\\
\\
\Res(\uP) \ = \ \\
\\
\end {array}
\begin{array}{c}
a_{1}^{3}b_{2}^{2}c_{6} -a_{1}^{3}b_{2}b_{3}c_{5} + a_{1}^{3}b_{3}^{2}c_{4} 
-2a_{1}^{2}a_{2}b_{1}b_{2}c_{6} + a_{1}^{2}a_{2}b_{1}b_{3}c_{5} + 
a_{1}^{2}a_{2}b_{2}b_{3}c_{3} - a_{1}^{2}a_{2}b_{3}^{2}c_{2} + 
a_{1}^{2}a_{3}b_{1}b_{2}c_{5} \\ - 2a_{1}^{2}a_{3}b_{1}b_{3}c_{4}  
-a_{1}^{2}a_{3}b_{2}^{2}c_{3} + a_{1}^{2}a_{3}b_{2}b_{3}c_{2} + 
a_{1}a_{2}^{2}b_{1}^{2}c_{6} -a_{1}a_{2}^{2}b_{1}b_{3}c_{3} + 
a_{1}a_{2}^{2}b_{3}^{2}c_{1} -a_{1}a_{2}a_{3}b_{1}^{2}c_{5} + \\ 
a_{1}a_{2}a_{3}b_{1}b_{2}c_{3} + a_{1}a_{2}a_{3}b_{1}b_{3}c_{2} 
-2a_{1}a_{2}a_{3}b_{2}b_{3}c_{1} + a_{1}a_{3}^{2}b_{1}^{2}c_{4} 
-a_{1}a_{3}^{2}b_{1}b_{2}c_{2} + a_{1}a_{3}^{2}b_{2}^{2}c_{1} \\ [0.1cm]
\hline \\ [-0.3cm]
a_1 
\end{array}
$$
}
A noter que le monôme de tête que l'on voit ci-dessus, après
simplification par $a_1$, n'est autre (en utilisant notre notation
habituelle $p_i$ coefficient en $X_i^{d_i}$ de $P_i$) que le fameux monôme suivant :
$$
p_1^{\widehat d_1} p_2^{\widehat d_2} p_3^{\widehat d_3} \qquad
\text{où } \quad 
\widehat d_1 = d_2d_3 = 2, \quad
\widehat d_2 = d_1d_3 = 2, \quad
\widehat d_3 = d_1d_2 = 1
$$
On peut se demander, dans le cas de bébé $D = (1,1,2)$, comment montrer directement que
$\det W_{1,\delta+1}$ est divisible par~$a_1$ et comment prouver que le quotient
(exact) est égal à $\omega(\nabla)$.

\subsection{Propriétés de divisibilité entre deux systèmes $\protect\uP$ et $\protect\uP'$
lorsque $\langle\protect\uP'\rangle \subset \langle\protect\uP\rangle$}

Pour la proposition suivante, on se donne deux formats : $D =
(d_1, \dots, d_n)$ de degré critique $\delta$ et $D' = (d'_1, \dots,
d'_n)$ de degré critique~$\delta'$ ; on 
se donne également deux systèmes de polynômes : 
$\uP$ de format~$D$ et~$\uP'$ de format~$D'$, le tout vérifiant
$\langle\uP'\rangle \subset\langle\uP\rangle$.  
On peut donc écrire
$$
P'_j = \sum_i U_{ij} P_i \quad
\hbox {où $U_{ij} \in \bfA[\uX]$ est homogène de degré $d'_j - d_i$}
$$
On définit ainsi une matrice $U \in \bbM_n(\bfA[\uX])$ telle que
$$
[P'_1, \dots, P'_n] = [P_1, \dots, P_n]\, U
$$
Dans ce qui suit, tout est donc porté par $\uP$ et la matrice $U$: à partir
de ces deux données, on peut définir $\uP' = \uP\,U$ de format $D'$ vérifiant
$\langle\uP'\rangle \subset \langle\uP\rangle$.

\medskip
Le déterminant $\det U$ est homogène de degré $\sum_j d'_j - \sum_i
d_i = \delta' - \delta$, ce qui fait que l'on peut considérer la
multiplication par $\det U$ : $\bfA[\uX]_\delta \to
\bfA[\uX]_{\delta'}$. L'objectif est de montrer que la forme linéaire
sur $\bfA[\uX]_\delta$ définie par $F \mapsto \omegaRes{\uP'}(F\det
U)$ est multiple de $\omegaRes{\uP}$ et d'en déduire en particulier
que $\Res(\uP) \mid \Res(\uP')$.


\begin {prop}[Divisibilité] \label{PremierLemmeDivisibiliteJPJ}Dans le contexte ci-dessus:

\begin{enumerate}[\rm i)]
\item
Si $\nabla_\uP$ est un déterminant bezoutien de $\uP$, alors
$\nabla_\uP\,\det U$  est un déterminant bezoutien de $\uP'$.
  
\item
Il y a un scalaire bien déterminé $c \in \bfA$ tel que
$$
\forall\, F \in \bfA[\uX]_\delta, \qquad \omegaRes{\uP'}(F\det U) = c\ \omegaRes{\uP}(F)
$$
\item
Si le scalaire $\omegaRes{\uP}(X^\emouton)$ est régulier, cette constante $c$ peut être déterminée par
$$
c = \frac{\omegaRes{\uP'}(X^\emouton \det U)} {\omegaRes{\uP}(X^\emouton)}
$$  
 
\item
On a $\Res(\uP') = c\,\Res(\uP)$, a fortiori $\Res(\uP) \mid \Res(\uP')$.
\end {enumerate}  
\end {prop}

\index{théorème!de divisibilité du résultant}%
%
%

\begin {proof} \leavevmode

\smallskip
i) Soit $\dsV$ une matrice bezoutienne de $\uP$ de déterminant $\nabla_\uP$.
Par définition, on a $\uP =\uX \dsV$ donc $\uP' = \uX (\dsV U)$.
On vérifie que $\dsV U$ est homogène si bien que $\dsV U$ est une matrice bezoutienne
de $\uP'$ et son déterminant est $\nabla_\uP\,\det U$.

\smallskip
ii) Soit $\mu$ la forme linéaire sur $\bfA[\uX]_\delta$ définie par $\mu : F  \mapsto \omegaRes{\uP'}(F\det U)$.
De l'égalité $\uP U = \uP'$, on tire $\det(U) \uP = \uP'\,\widetilde U$
donc $\det(U) \uPdelta \subset \langle\uP'\rangle_{\delta'}$. 
On en déduit  que $\mu$ est nulle sur $\uPdelta$.

\smallskip
Pour obtenir le résultat, on peut se permettre de passer en générique pour
$\uP$ avec comme conséquence (cf. \ref{omegaresGr2}) que $\omegaRes{\uP}$ est une base
de l'espace des formes linéaires sur $\bfA[\uX]_\delta$ nulles
sur $\uPdelta$. Ainsi $\mu$ est multiple de~$\omegaRes{\uP}$.

\noindent
Le scalaire $c$ est bien déterminé car spécialisé du cas
générique pour lequel la forme $\omegaRes{\uP}$ est sans torsion.

\smallskip
iii) Prendre dans ii) $F = X^\emouton$ (le \MoutonNoir{} de $D$). 

\smallskip
iv) Prendre dans ii) $F = \nabla_\uP$.
\end {proof}

On déduit de la proposition précédente les trois résultats suivants :
\begin {coro} [En cas d'égalité $\langle\uP\rangle=\langle\uP'\rangle$] \label{ResultantEgaliteIdeal}
Soient $\uP = (P_1, \dots, P_n)$ de format $D$ 
et $\uP' = (P'_1, \dots, P'_n)$ de format~$D'$ tels que
$\langle\uP'\rangle = \langle\uP\rangle$.
Alors $\Res(\uP)$ et $\Res(\uP')$ engendrent le même idéal de $\bfA$.
\end{coro}

\begin {coro} [Triangularité] \label{omegaresTriangularite}
Soient $\uP = (P_1, \dots, P_n)$ et $\uP' = (P'_1, \dots, P'_n)$ deux systèmes
de même format $D$ vérifiant:
$$
\forall\, i, \qquad 
P'_i - P_i \in \langle P_{i+1}, \dots, P_n\rangle 
$$
Alors:
$$
\langle\uP\rangle = \langle\uP'\rangle, \qquad
\omegaRes{\uP} = \omegaRes{\uP'}, \qquad 
\Res(\uP) = \Res(\uP')
$$
\end {coro}

\begin {proof}
Par hypothèse, il y a $U \in \bbM_n(\bfA[\uX])$ triangulaire inférieure à diagonale
unité où chaque $U_{ij}$ est homogène de degré $d_j - d_i$ telle que $\uP' = \uP\,U$.
Alors $U$ est inversible donc $\langle\uP\rangle = \langle\uP'\rangle$.

\smallskip
On peut se permettre de passer en terrain générique au dessus de
$\bbZ$ pour $\uP$ et $U$, en désignant par $\bfA = \bbZ[\uT] $ un
anneau de polynômes où $\uT$ est une famille d'indéterminées allouée
aux coefficients des $P_i$ et des coefficients $(U_{\ell,j})_{\ell >
j}$ de la matrice $U$.  D'après la proposition précédente, il existe
$c, c' \in \bfA$ tels que
$$
\omegaRes{\uP'} = c\,\omegaRes{\uP}, \qquad \omegaRes{\uP} = c'\,\omegaRes{\uP'},
\qquad
\Res(\uP') = c\,\Res(\uP), \qquad \Res(\uP) = c'\,\Res(\uP')
$$
On en déduit $cc' = 1$. Les seuls inversibles de $\bfA$ étant $\pm 1$, on a
$c = c' = \pm 1$. Pour obtenir $c=1$, il suffit de spécialiser en $U := \Id_n$.
\end {proof}

\begin {coro} Etant donné un système $\uP = (P_1, \dots, P_n)$, on ne change pas la forme $\omegares$
en ajoutant à l'un des $P_i$ une combinaison $\bfA[\uX]$-linéaire des autres $P_j$ pourvu que cette
combinaison soit homogène de degré $\deg(P_i)$. Idem pour le résultant.
\end {coro}

\medskip

Ci-dessous, on envisage le cas où la matrice $U$, qui était jusqu'à
maintenant à coefficients dans $\bfA[\uX]$, est à coefficients dans
$\bfA$.  Nous gardons les notations $[P'_1, \dots, P'_n] =
[P_1, \dots, P_n]\,U$ mais de manière à ce que chaque $P'_i$ soit
homogène nous imposons, pour le format $D = (d_1, \dots, d_n)$ de
$\uP$, de vérifier $d_i = d_j$ pour tous $i,j$. Nous avons alors le résultat suivant.

\begin {prop} \leavevmode
\label {PtimesU}

Soit $\uP$ de format $D = (q,\dots,q)$, $U \in \bbM_n(\bfA)$ et le
système $\uP'$, de même format, défini par
$$
[P'_1, \dots, P'_n] = [P_1, \dots, P_n]\, U
$$
Alors:
$$
\Res(\uP') = \det(U)^{q^{n-1}}\,\Res(\uP)
$$
\end {prop}


\begin {proof} \leavevmode

Nous pouvons supposer le système $\uP$ et la matrice $U$ génériques au
dessus de $\bbZ$. L'anneau $\bfA$ des coefficients est donc un anneau
de polynômes sur $\bbZ$ dont les indéterminées sont allouées aux
coefficients des $P_i$ et de la matrice $U$. Nous allons utiliser le
localisé $\bfA' := \bfA[1/\det(U)]$ qui possède la propriété suivante:
ses seuls inversibles sont les $\pm \det(U)^m$ avec $m \in \bbZ$.
On laisse le soin au lecteur de vérifier que cette propriété est
équivalente à l'irréductibité de $\det(U)$ dans l'anneau de polynômes~$\bfA$
au dessus de $\bbZ$.

\smallskip

De manière générale, si $\bfk$ est un anneau intègre, alors le déterminant
générique d'ordre $n$ est un polynôme à $n^2$ indéterminées au dessus de
$\bfk$, qui engendre un idéal premier, a fortiori irréductible.
On peut bien sûr invoquer que ce
déterminant est le résultant de $n$ formes linéaires génériques, qu'il
engendre l'idéal d'élimination, et de ce fait la primalité de l'idéal engendré
résulte du point iii) de~\ref{GeneriqueLocaliseSature}. 
Mais est-ce bien raisonnable?

Nous invitons le lecteur à chercher une preuve directe de
l'irréductibilité. C'est par exemple l'objet de l'exercice~17, \S3,
p. 101 du chapitre 7 (Diviseurs) d'Algèbre Commutative de Bourbaki,
dont le but est de montrer que le déterminant générique au dessus d'un
anneau intègre est extrémal (terminologie retenue par Bourbaki dans
Algèbre, chap VI, \S1.13).

\medskip

En considérant $\uP$ et $\uP'$ au dessus de ce localisé $\bfA'$ et en
appliquant le résultat de divisibilité figurant dans le dernier point de
la proposition~\ref{PremierLemmeDivisibiliteJPJ}, on obtient un inversible $c$ de $\bfA'$
tel que
$$
\Res(\uP') = c\,\Res(\uP)
$$
Pour déterminer $c$, qui est de la forme $c = \varepsilon \det(U)^m$ avec
$\varepsilon = \pm 1$ et $m \in \bbZ$, on spécialise $U$ en $t\Id_n$
où~$t$ est une indéterminée. Alors $c$ se spécialise en $\varepsilon
(t^n)^m$ et on obtient donc:
$$
\Res(tP_1, \dots, tP_n) = \varepsilon t^{nm}\,\Res(P)
$$
A gauche, en utilisant l'homogénéité du résultant, notre habituel
$\widehat d_i$ étant $q^{n-1}$, on obtient l'égalité $t^{nq^{n-1}}\Res(\uP)
=\varepsilon t^{nm}\Res(\uP)$. D'où $\varepsilon = 1$ et $m= q^{n-1}$,
ce qui termine la preuve.
\end {proof}

\subsection{De $n-1$ à $n$ variables et vice versa (où $P_n=X_n$: Poisson pour bébés)}

Dans les deux résultats ci-dessous interviennent $\bfA[\uX] = \bfA[X_1, \dots, X_n]$
et $\bfA[\uX'] = \bfA[X_1, \dots, X_{n-1}]$. En plus de l'inclusion $\bfA[\uX'] \subset
\bfA[\uX]$, on dispose d'une surjection $\bfA$-linéaire $\bfA[\uX] \twoheadrightarrow \bfA[\uX']$
définie par $F \mapsto F' := F(X_n := 0)$, surjection qui est l'identité sur $\bfA[\uX']$ et vérifie
$F - F' \in \langle X_n\rangle$.

\medskip

Commençons par expliquer le titre de la section.  Poisson est celui de
la formule de Poisson qui relie $\Res(P_1, \ldots, P_n)$ et
$\Res(P'_1, \ldots, P'_{n-1})$:
$$
\hbox {Norme}\big({\widetilde {P_n}}\big) = \frac{\Res(P_1, \ldots, P_n)}
{\Res(P'_1,\ldots, P'_{n-1})^{d_n}}
\qquad\qquad
\widetilde {P_n} := P_n(X_n := 1) \in \bfA[\uX']
$$
Quel est le sens de cette formule et son lieu de vie?  Cette égalité a
lieu dans le localisé $\bfA_{R'}$ de~$\bfA$ par le scalaire $R'
:= \Res(P'_1,\ldots, P'_{n-1})$, de sorte que le dénominateur
$R'^{d_n}$ y est inversible.  Et la norme est celle de la
$\bfA_{R'}$-algèbre $\bfA_{R'}[\uX'] / \langle P'_1,\ldots,
P'_{n-1}\rangle$.  Cette dernière algèbre est un module projectif de
type fini, ce qui permet de définir la norme d'un élément comme le
déterminant de la multiplication par cet élément.  Précision: cette
algèbre est en fait stablement libre, de rang constant $d_1 \ldots
d_{n-1}$.

\smallskip

Pour plus de précisions autour de la formule de Poisson, nous 
renvoyons à l'article de Jean-Pierre Jouanolou, ``Le formalisme du résultant'',
en~\cite[sections 3-4]{J3}, les 2 sections occupant une vingtaine de pages.
On y constate qu'il faut travailler un peu pour, par exemple, montrer  
que la $\bfA_{R'}$-algèbre $\bfA_{R'}[\uX'] / \langle P'_1,\ldots,
P'_{n-1}\rangle$ est stablement libre. 

\medskip

\emph{Ici}, nous n'utilisons pas cette formule de Poisson et nous allons
nous contenter de $P_n = X_n^{d_n}$ (en commençant par $d_n = 1$!).
Mais nous tenions à signaler l'existence de cette formule générale, qui,
lorsque $P_n= X_n^{d_n}$ (pour lequel $\widetilde {P_n} = 1$) 
fournit dans $\bfA$:
$$
\Res(P_1, \ldots, P_n) = \Res(P'_1,\ldots, P'_{n-1})^{d_n}
$$
Nous commençons par le cas de bébé $P_n = X_n$ en montrant l'égalité
des résultants ci-dessus via l'étude des formes linéaires~$\omegares$ et
nous traiterons le cas $P_n = X_n^{d_n}$ dans la prochaine section Multiplicativité.
Nous avouons que notre approche est un peu laborieuse.

\begin {lem}[De $n-1$ à $n$ variables]
Etant donné un système $\uQ = (Q_1, \dots, Q_{n-1})$ de $\bfA[\uX']$
de format $D' = (d_1, \dots, d_{n-1})$, on définit le système $\uQ^+$ de
$\bfA[\uX]$ par $\uQ^+ = (Q_1, \dots, Q_{n-1}, X_n)$ de format $D =
(d_1, \dots, d_{n-1}, 1)$. Ils sont de même degré critique $\delta$.
Alors:

\begin {enumerate} [i)]
\item
La forme $\omegaRes{\uQ^+}$ prolonge $\omegaRes{\uQ}$ et vérifie
$$
\forall\, F \in \bfA[\uX]_\delta, \quad \omegaRes{\uQ^+}(F) = \omegaRes{\uQ}(F') \qquad \text{où $F' := F(X_n := 0)$}
$$
\item
On a $\Res(\uQ) = \Res(\uQ^+)$.
\end {enumerate}
\end {lem}

\begin {proof}
Nous pouvons supposer le système $\uQ$ générique sur $\bbZ$ donc $\bfA
= \bbZ[\text{indets pour les $Q_j$}]$. On sait alors
(cf \ref{omegaresGr2}) que $\omegaRes{\uQ}$ est une base de l'espace
des formes linéaires sur $\bfA[\uX']_\delta$ nulles sur
$\langle\uQ\rangle_\delta$.

\medskip
\noindent
i) Puisque $\langle\uQ\rangle_\delta \subset
\langle\uQ^+\rangle_\delta$, la restriction de $\omegaRes{\uQ^+}$
à $\bfA[\uX']_\delta$ est nulle
sur $\langle\uQ\rangle_\delta$ donc il y a une constante $c \in \bfA$ telle
que:
$$
\omegaRes{\uQ^+} = c\, \omegaRes{\uQ}  \quad \text{sur}\quad  \bfA[\uX']_\delta
$$
Pour déterminer $c$, on évalue l'égalité ci-dessus en le \MoutonNoir{}
$\uX^\emouton$ du format $D$ qui est aussi celui du format $D'$. Ces
évaluations sont des polynômes homogènes en chaque $Q_j$ de même poids
$\widehat {d}_j -1$ (cf~\ref{PoidsNormalisationMacRae}) donc
$c \in \bbZ$.  Spécialisons $\uQ$ en le jeu étalon $(X_1^{d_1}, \dots,
X_{n-1}^{d_{n-1}})$ de $D'$, de sorte que $\uQ^+$ se spécialise en le
jeu étalon de $D$. On obtient alors $c=1$ puisque les deux formes
spécialisées sont~$(X^\emouton)^\star$ (pour $\uQ^+$) et sa
restriction à $\bfA[\uX']_\delta$ (pour $\uQ$).

\smallskip
\noindent
Soit $F \in \bfA[\uX]_\delta$. Puisque $F - F' \in \langle X_n\rangle_\delta \subset \langle\uQ^+\rangle_\delta$,
on a $\omegaRes{\uQ^+}(F) = \omegaRes{\uQ^+}(F')$. Et $\omegaRes{\uQ^+}(F') = \omegaRes{\uQ}(F')$,
parce que $\omegaRes{\uQ^+}$ prolonge $\omegaRes{\uQ}$.

\medskip
\noindent
ii) Soit $\nabla_\uQ$ un déterminant bezoutien de $\uQ$. C'est aussi un déterminant
bezoutien de $\uQ^+$. En effet, on dispose d'une matrice homogène $\dsV' \in
\bbM_{n-1}(\bfA[\uX'])$ telle que $\uQ = \uX'\,\dsV'$, de déterminant $\nabla_\uQ$.
La matrice $\dsV = \diag(\dsV',1)$ est une matrice bezoutienne de $\uQ^+$ de
déterminant $\nabla_\uQ$.

\smallskip
\noindent
L'égalité $\omegaRes{\uQ}(\nabla_\uQ) = \omegaRes{\uQ^+}(\nabla_\uQ)$ fournit l'égalité
des résultants.
\end {proof}

\begin {coro} [De $n$ à $n-1$ variables quand $P_n=X_n$]
\label{WhenPnIsXn}
Soit $\uP = (P_1, \dots, P_{n-1}, X_n)$ un système de format $D = (d_1, \dots, d_{n-1}, 1)$.
On introduit les deux systèmes, le premier dans $\bfA[\uX]$ de format~$D$,
le second dans $\bfA[\uX']$ de format $D':= (d_1, \dots, d_{n-1})$:
$$
\uP^\circ = (P'_1, \dots, P'_{n-1}, X_n), \qquad\qquad
\uP' = (P'_1, \dots, P'_{n-1}) \qquad \qquad \text{où $P'_i := P_i(X_n := 0)$}
$$
\begin {enumerate} [i)]
\item
On a l'égalité des formes linéaires : $\omegaRes{\uP} = \omegaRes{\uP^\circ}$.

\item
L'application linéaire $\bfA[\uX]_\delta \to \bfA[\uX']_\delta$ définie par
$F \mapsto F'$ relie les deux formes $\omegaRes{\uP}$ et $\omegaRes{\uP'}$ 
de la manière suivante: $\omegaRes{\uP}(F) = \omegaRes{\uP'}(F')$ 
pour tout $F \in \bfA[\uX]_\delta$.

\item
Les trois résultants sont égaux:
$$
\Res(\uP) = \Res(\uP^\circ) = \Res(\uP')
$$
\end {enumerate}
\end {coro}

\begin {proof} \leavevmode

\noindent
i) Résulte du fait que $P_i - P'_i \in \langle X_n\rangle$ pour $1 \leqslant i \leqslant n-1$ et
du corollaire~\ref{omegaresTriangularite} appliqué à $\uP$ et $\uP^\circ$, corollaire
qui donne aussi $\Res(\uP) = \Res(\uP^\circ)$.

\medskip
\noindent
ii) et iii) découlent du point précédent et du lemme en prenant $\uQ = \uP'$, auquel cas $\uQ^+ = \uP^\circ$.
\end {proof}

\subsection {Multiplicativité du résultant et de la forme $\omegares$}

\begin{prop}[Multiplicativité du résultant] \label{MultiplicativiteResultant}
Soit $P_1, P'_1, P_2, \ldots, P_n$ des polynômes homogènes de degrés $d_1, d'_1, d_2, \ldots, d_n$. 
On a alors l'égalité :
$$
\Res(P_1P'_1,\, P_2, \ldots, P_n) 
\ =\  
\Res(P_1, P_2, \ldots, P_n) \ \Res(P'_1, P_2, \ldots, P_n)
$$
\end{prop}

\index{théorème!de multiplicativité!du résultant}%
%
%

\begin{proof}
Notons :
$$
\uP = (P_1, P_2, \ldots, P_n), \qquad \uP' = (P'_1, P_2, \ldots, P_n), \qquad
\uQ = (P_1P'_1, P_2, \ldots, P_n)
$$
Il suffit de se placer en terrain générique au-dessus de $\bbZ$. Considérons l'anneau de polynômes 
$\bfA = \bbZ[\uU]$ où $\uU$ est une famille d'indéterminées
allouée aux coefficients de $P_1, P'_1, P_2, \ldots, P_n$.
Ainsi les suites $\uP$ et~$\uP'$ sont génériques, a fortiori régulières. 
La suite $\uQ$ est également régulière (confer 
le $(a,b,ab)$-trick en~\ref{ababRegTrick}).

On va utiliser, pour un $\bfR$-module $E$ et deux scalaires $a,b$ où $a$ est $E$-régulier, que
la suite courte ci-dessous est exacte :
$$
\xymatrix @M=0.4pc{
0 \ar[r] & 
E/bE  \ar[r]^-{\times a} & 
E/abE \ar[r] & 
E/aE \ar[r] & 
0
}
$$
Appliquons cela à $\bfR = \bfA[\uX]$, 
$E = \bfA[\uX]/\langle P_2, \ldots, P_n\rangle$, $a = P_1$, $b = P'_1$. 
On doit s'assurer que $P_1$ est régulier sur $\bfA[\uX]/\langle P_2, \ldots, P_n\rangle$, 
ce qui est bien vrai car la suite $(P_2, \dots, P_n,P_1)$ est générique homogène 
donc régulière (cf.~\ref{SuiteGeneriqueReguliere}).
La suite exacte courte ci-dessus s'écrit :
$$
\xymatrix @M=0.4pc{
0 \ar[r] & 
\bfA[\uX]/\langle\uP'\rangle \ar[r]^-{\times P_1} & 
\bfA[\uX]/\langle\uQ\rangle \ar[r] &
\bfA[\uX]/\langle\uP\rangle \ar[r] & 
0
}
$$
%
Prenons la composante homogène de degré $d$, 
ce qui fournit la suite exacte courte de $\bfA$-modules :
$$
\xymatrix @M=0.4pc{
0 \ar[r] & 
\big(\bfA[\uX]/\langle\uP'\rangle\big)_d \ar[r]^-{\times P_1} & 
\big(\bfA[\uX]/\langle\uQ\rangle\big)_{d+d_1} \ar[r] &
\big(\bfA[\uX]/\langle\uP\rangle\big)_{d+d_1} \ar[r] & 
0
}
$$
Choisissons $d$ vérifiant l'inégalité :
$$
d \geqslant \delta_{\uP'}+1  \quad \hbox {avec} \quad \delta_{\uP'} 
\ \overset{\rm def}{=}\ 
(d'_1-1) + (d_2-1) + \cdots + (d_n-1)
$$
On a alors $d+d_1 \geqslant \delta_{\uQ}+1$ et $d+d_1 \geqslant \delta_{\uP}+1$. 
Ainsi, d'après~\ref{ResolutionQuotients}, 
chacun des trois $\bfA$-modules librement résolubles 
qui intervient ci-dessus est de caractéristique d'Euler-Poincaré nulle 
et les invariants de MacRae sont respectivement :
$$
\langle \Res(\uP')\rangle, \qquad \langle \Res(\uQ)\rangle, \qquad \langle \Res(\uP)\rangle
$$
La propriété de multiplicativité de l'invariant de MacRae (cf.~\ref{MacRaeMultiplicativity}) 
conduit à l'égalité 
$$
\Res(\uQ) 
\ =\  
u \, \Res(\uP)\, \Res(\uP')
$$
où $u \in \bbZ[\uU]$ est un inversible, donc $u = \pm 1$.
Les spécialisations $P_i := X_i^{d_i}$ et $P'_1 := X_1^{d'_1}$ 
permettent d'en déduire que $u = 1$.

Remarque : on aurait pu se dispenser de passer par $\bbZ$ en 
restant au-dessus de $\bfk$ quelconque. Pour cela, on aurait constaté que 
$u \in \bfk$ pour des raisons de poids, puis que $u = 1$ par normalisation.
\end{proof}

\begin{prop}[Multiplicativité de la forme $\omegares$] \label{omegaResMultiplicativity}
Soient $\uP = (P_1, \dots, P_n)$ un système de degré critique $\delta$ et $H$ un polynôme
homogène de degré $h$. On note $\widetilde {\uP} = (P_1, \dots, P_{n-1}, HP_n)$ le système
de degré critique $\delta + h$. Alors:
$$
\forall\, F \in \bfA[\uX]_\delta, \quad 
\omegaRes{\widetilde{\uP}}(HF) = \Res(P_1, \dots,P_{n-1},H)\ \omegaRes{\uP}(F)
$$
\end {prop}

\begin {proof} 
On a:
$$
\widetilde {\uP} = \uP\, U   \qquad \hbox {avec} \quad  U = \diag(1, \dots, 1, H)
$$
ce qui permet d'appliquer le résultat de divisibilité~\ref{PremierLemmeDivisibiliteJPJ} à $\uP' = \widetilde {\uP}$.
Comme $\det(U) = H$, celui-ci fournit:
$$
\omegaRes{\widetilde{\uP}}(HF) = c\ \omegaRes{\uP}(F)
$$
où la constante $c$ vérifie
$$
\Res(\widetilde {\uP}) = c\, \Res(\uP)
$$
Mais d'après le théorème de multiplicativité du résultant (cf. \ref{MultiplicativiteResultant}) : 
$$
\Res(\widetilde {\uP}) = \Res(P_1, \dots,P_{n-1},H)\, \Res(\uP)
$$
En rapprochant les deux expressions de $\Res(\widetilde {\uP})$, on peut se permettre,
quitte à mettre pour $\uP$ un peu de généricité dans le moteur, de simplifier
par $\Res(\uP)$. Ce qui fournit $c = \Res(P_1, \dots,P_{n-1},H)$ et termine la
preuve.
\end {proof}

\begin {coro} [De $n$ à $n-1$ variables quand $P_n=X_n^{d_n}$] \label{WhenPnIsXndn}
On note $\bfA[\uX'] = \bfA[X_1, \dots, X_{n-1}]$ et $F' = F(X_n :=
0) \in \bfA[\uX']$ pour $F \in \bfA[\uX]$.  Etant donné un système
$\uP$ de $\bfA[\uX]$ de format $D = (d_1, \dots, d_n)$, on définit le
système $\uP' = (P'_1, \dots, P'_{n-1})$ de $\bfA[\uX']$ de format $D'
= (d_1, \dots, d_{n-1})$ et on désigne par $\delta'$ son degré critique.

Alors, lorsque \framebox [1.1\width][c]{$P_n = X_n^{d_n}$}, on dispose de deux
formules de récurrence. Une pour le résultant:
$$
\Res(\uP) = \Res(\uP')^{d_n}
$$
Et une autre (partielle) pour la forme $\omegares$:
$$
\forall\, F \in \bfA[\uX]_{\delta'}, 
\qquad
\omegaRes{\uP}(X_n^{d_n-1}\,F) \ = \ 
\Res(\uP')^{d_n-1}\, \omegaRes{\uP'}(F') 
$$
En particulier:
$$
\omegaRes{\uP}(X^\emouton) \ = \ \Res(\uP')^{d_n-1}\, \omegaRes{\uP'}(X^{D' - \mathds1})
$$
\end {coro}

\begin {proof} 
Notons $\uQ = (P_1, \dots, P_{n-1}, X_n)$, qui est de degré critique $\delta'$.

\medskip

$\rhd$ Le théorème de multiplicativité du résultant fournit $\Res(\uP) = \Res(\uQ)^{d_n}$.
Or, d'après le corollaire~\ref{WhenPnIsXn}
intitulé \og de $n$ à $n-1$ variables quand $P_n=X_n$\fg{}, on a $\Res(\uQ) = \Res(\uP')$.
D'où l'égalité $\Res(\uP) = \Res(\uP')^{d_n}$.

\medskip

$\rhd$ Appliquons le théorème de multiplicativité de $\omegares$ (cf.~\ref{omegaResMultiplicativity})
à $\uQ$ et $H = X_n^{d_n-1}$ de sorte que $\widetilde {\uQ} = \uP$:
$$
\omegaRes{\uP}(X_n^{d_n-1}\,F) = \Res(P_1, \dots, P_{n-1}, X_n^{d_n-1})\,
\omegaRes{\uQ}(F)
$$
D'une part, $\Res(P_1, \dots, P_{n-1}, X_n^{d_n-1})
= \Res(\uP')^{d_n-1}$ d'après le point précédent.  D'autre part, le
même corollaire~\ref{WhenPnIsXn}-ii) donne l'égalité
$\omegaRes{\uQ}(F) = \omegaRes{\uP'}(F')$. D'où l'égalité de l'énoncé.

\smallskip
Le cas particulier s'obtient en prenant $F = X^{D'-\mathds1}$ pour lequel $HF = X^\emouton$.
\end {proof}

\subsection {Permutation et composition linéaire:
             $(P_1,\dots,P_n) \rightsquigarrow (P_1\circ u,\dots, P_n\circ u)$}

\subsubsection*{Suite permutée}
\begin{prop}[Invariants de la suite permutée] \label{SuitePermutee}
Soit $\sigma \in \fS_n$. En notant $\epsilon(\sigma)$ la signature de $\sigma$ 
et $\uP_\sigma = (P_{\sigma(1)}, \dots, P_{\sigma(n)})$, on a les égalités
$$
\omegaRes{\uP_\sigma} = \epsilon(\sigma)^{d_1\cdots d_n-1}\,\omegaRes{\uP},
\qquad
\Res(\uP_\sigma)
\ = \ 
\epsilon(\sigma)^{d_1\cdots d_n}\,
\Res(\uP)
$$
\end{prop}

\label {NOTA11-Psigma}%

\begin{proof}
Il suffit de faire la preuve en générique, \idest{} avec $\bfA
=\bfk[\indetsPi]$.  Les modules invariants de MacRae attachés aux deux
suites $\uP$ et $\uP_\sigma$ sont les mêmes puisque ces modules ne
dépendent que de $\bfA[\uX]/\langle\uP\rangle
= \bfA[\uX]/\langle\uP_\sigma\rangle$. Ainsi, il existe un inversible
$\lambda \in \bfA$ tel que $\omegaRes{\uP_\sigma}
= \lambda\,\omegaRes{\uP}$.

\smallskip
\noindent
Par ailleurs, en notant $\nabla_\uP$ un déterminant bezoutien de
$\uP$, on vérifie facilement que $\epsilon(\sigma)\, \nabla_\uP$ est un
déterminant bezoutien de $\uP_\sigma$. En évaluant l'égalité des
formes linéaires $\omegares$ en $\nabla_\uP$, on obtient l'égalité
$\epsilon(\sigma)^{-1}\,\Res(\uP_\sigma) = \lambda\,\Res(\uP)$.
Pour des raisons de poids, on a $\lambda \in \bfk$.
On peut alors spécialiser, sans affecter~$\lambda$.

Spécialisons en le jeu étalon $\uX^D = (X_1^{d_1}, \dots, X_n^{d_n})$ de résultant $1$.
On a alors 
$$ 
\epsilon(\sigma)^{-1}\, \Res(X_{\sigma(1)}^{d_{\sigma(1)}}, \dots, X_{\sigma(n)}^{d_{\sigma(n)}}) = \lambda
$$
L'utilisation répétée de la multiplicativité du résultant (cf.~\ref{MultiplicativiteResultant}) fournit :
$$
\Res\big(X_{\sigma(1)}^{d_{\sigma(1)}}, \dots, X_{\sigma(n)}^{d_{\sigma(n)}}\big)
\ =\  
\Res(X_{\sigma(1)}, \dots, X_{\sigma(n)})^{d_{\sigma(1)} \cdots {d_{\sigma(n)}}}
$$
Or le résultant de $n$ formes linéaires est leur déterminant (cf.~\ref{ResultantFormesLineaires}), 
donc $\Res(X_{\sigma(1)}, \dots, X_{\sigma(n)}) = \epsilon(\sigma)$ ; 
ce qui termine la preuve.
\end{proof}

\subsubsection*{Transformation linéaire (composition à droite)}

Pour $F \in \bfA[\uX] = \bfA[X_1, \ldots, X_n]$ et $u \in \bbM_n(\bfA)$, on définit
$F \circ u \in \bfA[\uX]$ de la manière suivante:
$$
F \circ u = F(Y_1, \ldots, Y_n)  \qquad \hbox{avec}\qquad
\begin {bmatrix} Y_1\cr \vdots\cr Y_n\cr \end {bmatrix} =
u 
\begin {bmatrix} X_1\cr \vdots\cr X_n\cr \end {bmatrix}
$$
On utilisera la notation alternative $F^u$ au lieu de $F\circ u$. Il est clair
que $F \mapsto F^u$ est un $\bfA$-endomorphisme de l'anneau $\bfA[\uX]$, gradué
de degré $0$, et que $(F^u)^v = F^{uv}$ pour $u,v \in \bbM_n(\bfA)$.

\medskip
\noindent
Soit $\uP = (P_1, \ldots, P_n)$ un système homogène de format de degrés $(d_1, \ldots,
d_n)$ et $u \in \bbM_n(\bfA)$.  
On note~$\uP^u$ le système :
$$
\uP^u
\ = \ 
(P_1^u, \ldots, P_n^u)
$$
C'est un système homogène de même format que $\uP$.

\label {NOTA11-Fou}%
\label {NOTA11-Pou}%

\medskip

\noindent
\textbf{Bezoutien de $\uP^u$}

\`A partir d'une matrice bezoutienne $\dsV$ pour $\uP$
\idest{} 
$\dsV \in \bbM_n(\bfA[\uX])$ telle que $[\uP] = [\uX] \cdot \dsV$, on
peut en construire une pour $\uP^u$, disons $\dsV'$, 
vérifiant $[\uP^u] = [\uX]\cdot \dsV'$ et 
$\det(\dsV') = \det(u)\, \det(\dsV)^u$. 

Voici comment : 
en appliquant l'endomorphisme $F \mapsto F^u$ à l'égalité
matricielle $[\uP] = [\uX]\cdot \dsV$, on obtient, en notant~$\dsV^u$ la
matrice $(\dsV_{i,j}^u)$:
$$
[\uP^u]
\  = \ 
[\uX^u]
\cdot \dsV^u 
\ =\ 
[\uX]
\cdot \transpose{u} \cdot \dsV^u
$$
et on prend $\dsV' = \transpose{u} \cdot \dsV^u$.

\medskip

On peut résumer cela par l'égalité (légèrement 
abusive car elle sous-entend que le bezoutien est unique) :
$$
\nabla_{\uP^u} \ = \  
\det(u)\, {(\nabla_{\uP})}^u
\leqno(\star)
$$

\begin {prop}\leavevmode

\noindent
En conservant les notations précédentes, on a, pour $F \in \bfA[\uX]_\delta$, l'égalité:
$$
\omegaRes{\uP^u}(F\circ u) 
\ = \ 
\det(u)^{d_1\cdots d_n - 1}\, 
\omegaRes{\uP}(F)
$$
\end {prop}

\begin {proof} \leavevmode

Il suffit de démontrer la formule dans le cadre générique (pour $\uP$
et $u$) suivant: $u$ est la matrice générique $u = (u_{ij})$ où
$u_{ij}$ sont $n^2$ indéterminées sur $\bbZ$, l'anneau $\bfk$ est
$\bbZ[(u_{ij})_{1 \leqslant i,j \leqslant n}]$ et $\bfA = \bfk[\indetsPi]$.  

On 
rappelle (cf.~\ref{PoidsNormalisationMacRae}) 
qu'en posant $\widehat {d_i} = \prod_{j\ne i}d_j$, chaque
coefficient de la forme linéaire $\omegaRes{\uP}$ est homogène en
$P_i$ de poids $\widehat {d_i}-1$; il en est de même pour les formes
linéaires $\omegaRes{\uP^u}$ et 
$F \mapsto \omegaRes{\uP}(F^u)$.

\medskip

Pour $F \in \langle\uP\rangle_\delta$, on a $F^u \in \langle\uP^u\rangle_\delta$,
par conséquent la forme linéaire $F \mapsto \omegaRes{\uP^u}(F^u)$
est nulle sur~$\uPdelta$.

\medskip
Puisque $\uP$ est générique, la forme linéaire $\omegaRes{\uP}$ est un
générateur du $\bfA$-module des formes linéaires sur
$\bfA[\uX]_\delta$ nulles sur $\langle\uP\rangle_\delta$
(cf.~\ref{omegaresGr2}-ii).
Il y a donc
$c \in \bfA = \bfk[\indetsPi]$ tel que:
$$
\forall\, F \in \bfA[\uX]_\delta, \qquad 
\omegaRes{\uP^u}(F^u) 
\ = \ 
c\  \omegaRes{\uP}(F) 
$$
En utilisant les propriétés de poids, 
on voit que le multiplicateur $c$ est homogène en chaque $P_i$ de
poids~$0$ donc $c \in \bfk$. 

En multipliant par $\det u \in \bfA$ l'égalité précédente, 
on obtient 
$$
\forall\, F \in \bfA[\uX]_\delta, \qquad
\omegaRes{\uP^u}(\det(u)\, F^u) 
\ = \ 
c\det(u)\ \omegaRes{\uP}(F)
$$
Puis en évaluant en $F = \nabla_{\uP}$ et en utilisant $(\star)$, on obtient 
$$
\Res(\uP^u)
\ = \ 
c\det(u)\ 
\Res(\uP)
$$
Pour achever la détermination de $c$
(indépendant de $\uP$), on considère l'égalité précédente 
avec le jeu étalon $\uP = \uX^D$.
On a donc 
$$
\Res\big((X_1^u)^{d_1}, \dots, (X_n^u)^{d_n}\big) 
\ = \ 
c\det(u) 
\underbrace{\Res(\uX^D)}_{=1}
$$
En utilisant d'une part la multiplicativité du résultant et d'autre part
le fait que le résultant de $n$ formes linéaires est leur déterminant, on obtient:
$$
\underbrace{%
\Res(X_1^u, \dots, X_n^u)^{d_1\cdots d_n}}_{=\det(u)^{d_1\cdots d_n}}
\ = \ 
c \det u
$$
d'où la valeur $c = \det(u)^{d_1\cdots d_n-1}$.
\end {proof}

\begin {coro} \label {LinearComposition} \leavevmode 
\noindent
Dans le contexte de la proposition précédente, on a:
$$
\Res(P_1 \circ u, \dots, P_n \circ u)
\ = \ 
\det(u)^{d_1\cdots d_n}\, \Res(P_1, \dots, P_n)
$$
\end {coro}

\begin {proof}
La proposition précédente fournit l'égalité
$$
\omegaRes{\uP^u}(\det(u)\,F^u) 
\ = \ 
\det(u)^{d_1\cdots d_n}\, \omegaRes{\uP}(F)
$$
que l'on applique à $F = \nabla_{\uP}$.
Comme $\det(u)\,(\nabla_{\uP})^u = \nabla_{\uP^u}$, 
on obtient l'égalité requise !
\end {proof}

\subsubsection*{Un zeste de discriminant}

Etant donné un polynôme homogène $F \in \bfA[\uX]$, de degré $d\ge 2$,
en notant $\partial_i F$ sa dérivée partielle en~$X_i$, qui est un
polynôme homogène de degré $d-1$, on définit son r-discriminant
comme le résultant de format $(d-1, \dots, d-1)$:
$$
\Discr(F) = \Res(\partial_1 F, \dots, \partial_n F)
$$
\label{NOTA11-Discr}%
La lettre r est un acronyme pour ``résultant''. Ce r-discriminant
n'est pas le discriminant réel de~$F$. Ce dernier, dit parfois
discriminant divisé, s'obtient en divisant $\Discr(F)$ par une
puissance adéquate de~$d$ de manière à obtenir, en générique au dessus
de $\bbZ$, un élément primitif, cf.  l'introduction de Demazure
en \cite{Demazure2} et les sections 5,6 de cet article. A cet effet,
on introduit le polynôme à coefficients entiers:
$$
a_n(T) = \dfrac{(T-1)^n - (-1)^n}{T}
\qquad \text{et on pose} \qquad
a(n,d) = a_n(d) = \dfrac{(d-1)^n - (-1)^n}{d}
$$
Pour $n,d \ge 1$, l'entier $a(n,d)$ est $\ge 0$.
Le discriminant de $F$ est le quotient exact:
$$
\Disc(F) = \dfrac{\Discr(F)}{d^{a(n,d)}}
$$
\label{NOTA11-Disc}%
Par exemple, pour $d = 2$, on a $a(n,2) = 0$ pour $n$ pair et $a(n,2)
= 1$ pour $n$ impair.  Ainsi pour une forme quadratique $F$,
le r-discriminant $\Discr(F)$ est le déterminant des $n$ dérivées partielles, à diviser
par $2$ lorsque $n$ est impair pour obtenir $\Disc(F)$. On a donc
$$
\Disc(aX^2 + bXY + cY^2) = \begin {vmatrix} 2a & b\\ b & 2c \\ \end {vmatrix} =
4ac - b^2
$$
tandis que pour
$$
F = aX^2 + bXY + cY^2 + dXZ + eYZ + fZ^2
$$
on a
$$
\Disc(F) = \dfrac {1}{2}
\begin {vmatrix} 2a & b &d\\ b & 2c &e \\ d &e &2f\end {vmatrix} =
4acf + bde - ae^2 - cd^2 - fb^2
$$
La divisibilité par 2 lorsque $n$ est impair peut se justifier comme
suit.  Soit $A \in \bbM_n(\bfR)$ une matrice \emph {symétrique} telle
que chaque coefficient diagonal $a_{ii} \in 2\bfR$; alors $\det(A) \in
2\bfR$. Pour le voir, on considère le développement habituel sur
$\fS_n$, $\det(A) = \sum_\sigma a_\sigma$ où $a_\sigma
= \varepsilon(\sigma)a_{1\sigma(1)} \cdots a_{n\sigma(n)}$.  Si
$\sigma^2 = \Id_n$, alors $\sigma$ possède un point fixe (car $n$ est
impair) donc $a_\sigma \in 2\bfR$; sinon, $\sigma \ne \sigma^{-1}$,
auquel cas $a_\sigma = a_{\sigma^{-1}}$ (car $A$ est symétrique) et
donc $a_\sigma + a_{\sigma^{-1}} \in 2\bfR$.

\smallskip

Pour une étude détaillée du discriminant, cf. l'article de Demazure et
l'article \cite[section 4]{BuseJouanolouDiscriminantII} de
Busé~\& Jouanolou. En guise d'application des règles de transformation
du résultant, nous nous contentons ici de montrer le résultat suivant
(cf. le point e) de la proposition~5.11 de \cite{Demazure2} ou la
proposition~4.13 de \cite{BuseJouanolouDiscriminantII}).

\index{discriminant d'un polynôme homogène}%

\begin {prop} \leavevmode

Soit $F \in \bfA[\uX]$ un polynôme homogène de degré $d \ge 2$ et
$U \in \bbM_n(\bfA)$. Alors, pour le polynôme homogène $F\circ U$ de
même degré $d$, on dispose de la formule:
$$
\Discr(F\circ U) = \det(U)^{d(d-1)^{n-1}}\,\Discr(F)
$$
\end {prop}

\begin {proof} \leavevmode

Posons $G = F\circ U$ i.e. $G(\uX) = F(\uY)$ avec $Y_i = \sum_j u_{i,j} X_j$. 
Calculons les dérivées partielles de $G$, par exemple la première et, pour
y voir quelque chose, prenons provisoirement $n=3$
$$
G = F(u_{11}X_1 + u_{12}X_2 + u_{13}X_3,\ u_{21}X_1 + u_{22}X_2 + u_{23}X_3,\
u_{31}X_1 + u_{32}X_2 + u_{33}X_3)
$$
D'où
$$
\partial_1 G = u_{11} (\partial_1 F)(\uY) + u_{21} (\partial_2 F)(\uY) + u_{31} (\partial_3 F)(\uY)
$$
On voit ainsi qu'il faut continuer à utiliser \og les notations de vecteurs couchés\fg{}, ce qui
permet d'écrire:
$$
[\partial_1 G, \dots, \partial_n G] = \big[(\partial_1 F) \circ U, \dots, (\partial_n F)\circ U\big]\, U
$$
La proposition \ref{PtimesU} fournit:
$$
\Res(\partial_1 G, \dots, \partial_n G) = \det(U)^{(d-1)^{n-1}}\,
\Res\big( (\partial_1 F) \circ U, \dots, (\partial_n F)\circ U\big)
$$
Et le corollaire de composition linéaire~\ref{LinearComposition} conduit à
$$
\Res(\partial_1 G, \dots, \partial_n G) = \det(U)^{(d-1)^{n-1}}\, \det(U)^{(d-1)^{n}}\,
\Res(\partial_1 F, \dots, \partial_n F)
$$
c'est-à-dire au résultat escompté puisque $(d-1)^{n-1} + (d-1)^n = d(d-1)^{n-1}$.
\end {proof}

\subsection{Une suite régulière particulière: le retour de $(aY,bX,cZ^3)$}
\label{SuiteTypiqueYXZ3}

On veut illustrer ici plusieurs phénomènes: d'une part, l'importance de
la clause \og couvrir le jeu étalon généralisé\fg{} dans l'obtention du
résultant $\Res(\uP)$ comme pgcd fort; et d'autre part une méthode que nous qualifions
de \og générisation \fg{} de Demazure, consistant à ajouter à $\uP$
un multiple générique du jeu étalon.

\medskip

\index{système!z@$(aY,bX,cZ^3)$}%
Prenons l'exemple typique du jeu $\uP = (aY, bX, cZ^3)$
pour lequel $\Res(\uP) = -a^3b^3c$ (cette dernière égalité peut être prouvée 
avec le cas d'école \ref{Resultant11d} ou 
en utilisant la suite permutée, cf.~\ref{SuitePermutee}). 
Cette suite $(aY, bX, cZ^3)$ ne couvre pas le jeu étalon généralisé,
donc on ne peut pas appliquer~\ref{ProprietesBasiquesResultant}: il n'y~a
aucune raison pour que $\Res(\uP)$ soit \emph{un} pgcd fort des
$\big(\omega^\sigma(\nabla)\big)_{\sigma\in\Sigma_2}$ ou des
$\big(\det W_{1,\delta+1}^\sigma\big)_{\sigma \in \Sigma_2}$.  Et
c'est effectivement le cas, puisque d'après~\ref{YXZ3}, on a
$\omega^\sigma(\nabla) = 0$ et $\det W_{1,d}^\sigma = 0$ pour tout
$d \geqslant \delta+1$ et $\sigma \in \fS_3$.  Bien entendu,
$-a^3b^3c$ n'est pas le pgcd fort d'une famille constituée de 0,
famille qui d'ailleurs n'admet pas de pgcd fort en notre sens.

\medskip

Voyons ce que fournit la méthode de \og générisation \fg{} de Demazure.
Celle-ci consiste à ajouter à $\uP$ un multiple générique du jeu étalon,
autrement dit, il s'agit de considérer $\uP^{[t]} := \uP + t\uX^D$ où $t$ 
est une indéterminée sur~$\bfA$. 
On revient ensuite à la suite initiale avec la spécialisation $t := 0$.
Notons que cette suite $\uP^{[t]}$ n'est pas une suite 
qui couvre le jeu étalon généralisé en notre sens (cf.~\ref{JeuCouvrantEtalon}).

\medskip

Pour $\sigma$ variant dans $\Sigma_2 = \{\Id_3, (2,3),
(1,3)\} \subset \fS_3$, examinons les différents déterminants $\det W_{1,d}^\sigma(\uP^{[t]})$
avec $d = \delta+1$ et $\uP^{[t]} =
(aY + tX,\, bX + tY,\, cZ^3 + tZ^3)$.  Pour $\sigma = \Id_3$:
$$
W_{1,\delta+1}(\uP^{[t]}) \ = \ 
\EastBordermatrix{
t & . & . & . & . & . & . & . & . & . & \Heti{X^{3}} \\ 
a & t & . & . & . & . & . & . & . & . & \Heti{X^{2}Y} \\ 
. & . & t & . & . & . & . & . & . & . & \Heti{X^{2}Z} \\ 
. & a & . & t & . & . & b & . & . & . & \Heti{XY^{2}} \\ 
. & . & a & . & t & . & . & b & . & . & \Heti{XYZ} \\ 
. & . & . & . & . & t & . & . & b & . & \Heti{XZ^{2}} \\ 
. & . & . & a & . & . & t & . & . & . & \Heti{Y^{3}} \\ 
. & . & . & . & a & . & . & t & . & . & \Heti{Y^{2}Z} \\ 
. & . & . & . & . & a & . & . & t & . & \Heti{YZ^{2}} \\ 
. & . & . & . & . & . & . & . & . & t+c& \Heti{Z^{3}} \\ 
}
$$
Cette matrice présente une structure triangulaire en trois blocs d'ordre $3$, $6$, $1$, qui conduit à:
$$
\det W_{1,\delta+1}(\uP^{[t]})\, = \,
t^3 (t^2 - ab)^3 (t + c)
$$
Pour $\sigma$ égale à la transposition $(2,3)$, on a 
$$
W_{1,\delta+1}^\sigma(\uP^{[t]}) \ = \ 
W_{1,\delta+1}(\uP^{[t]}) 
$$
Cette égalité s'explique par le fait que l'ordre $<_\sigma$ sur
$\{1,2,3\}$ est $1 < 3 < 2$ et que tout ensemble $E$ de $D$-divisibilité
pour $D = (1,1,3)$ en degré $\delta+1 = 3$ ne contient pas $\{2,3\}$
donc $\min_{<_\sigma}(E) = \min(E)$: pas de différence entre l'ordre
habituel et l'ordre $<_\sigma$. Enfin pour $\sigma$ égale à la
transposition $(1,3)$, on a :
$$
W_{1,\delta+1}^\sigma(\uP^{[t]}) \ = \ 
\EastBordermatrix{
t & b & . & . & . & . & . & . & . & . & \Heti{X^{3}} \\ 
a & t & . & b & . & . & . & . & . & . & \Heti{X^{2}Y} \\ 
. & . & t & . & b & . & . & . & . & . & \Heti{X^{2}Z} \\ 
. & . & . & t & . & . & b & . & . & . & \Heti{XY^{2}} \\ 
. & . & a & . & t & . & . & b & . & . & \Heti{XYZ} \\ 
. & . & . & . & . & t & . & . & b & . & \Heti{XZ^{2}} \\ 
. & . & . & . & . & . & t & . & . & . & \Heti{Y^{3}} \\ 
. & . & . & . & . & . & . & t & . & . & \Heti{Y^{2}Z} \\ 
. & . & . & . & . & a & . & . & t & . & \Heti{YZ^{2}} \\ 
. & . & . & . & . & . & . & . & . & t+c & \Heti{Z^{3}} \\ 
}
$$
Cette matrice présente une structure triangulaire en trois blocs d'ordre $5$, $4$, $1$, 
conduisant au déterminant:
$$
t(t^2 - ab)^2 \times t^2(t^2 - ab) \times (t+c)
$$
\idest{} le même que celui de $W_{1,\delta+1}(\uP^{[t]})$.

Ainsi, pour $\sigma$ variant dans $\Sigma_2$, 
le pgcd des déterminants $\det W_{1,\delta+1}^\sigma(\uP^{[t]})$
est très simple à calculer puisque ces déterminants sont égaux.
Mais ce pgcd ne fournit pas ici le résultant de $\uP^{[t]}$.

\medskip

En revanche, la générisation exposée par Demazure peut s'appliquer
puisqu'elle utilise le fait que pour toute matrice carrée
$A \in \bbM_m(\bfR)$, le déterminant $\det(t \, \Id_m + A)$ est un polynôme
unitaire en $t$ de degré~$m$, a fortiori un élément régulier de
$\bfR[t]$. Il est donc possible dans la formule de Macaulay (exposée
en section~\ref{sousSectionMacaulayRecurrence})
de déterminer le quotient (exact) via une division par un polynôme unitaire. Ici
$$
W_{2,\delta+1}(\uP^{[t]})
\ = \ 
\EastBordermatrix{
t & . & . & \Heti{X^{2}Y} \\ 
a & t & . & \Heti{XY^{2}} \\ 
. & . & t & \Heti{XYZ} \\ 
}
\qquad 
\text{d'où } \qquad 
\det W_{2,\delta+1}(\uP^{[t]}) \, = \,  
t^{3}
$$
Ainsi, le quotient exact $\dfrac{\det W_{1,\delta+1}}{\det W_{2,\delta+1}}$ pour cette suite 
$\uP^{[t]}$ vaut 
$$
(t^2 - ab)^3 (t+c)
$$
Pour la suite $\uP$, il vaut donc $-a^{3}b^{3}c$ (spécialiser en $t := 0$).
Comme annoncé au début, on trouve $\Res(\uP) = -a^3b^3c$.

\cleardoublepage

\section{Résultant et idéal d'élimination (MacRae versus annulateur)}
\label{ChapElimination}

Résumons l'organisation générale des trois idéaux intervenus
précédemment dans l'étude d'un module~$M'$ de présentation finie. 
Commençons par  la double inclusion classique entre $0$-Fitting et annulateur 
(pour plus de précisions, cf.~\ref{0FittingAnnulateur})
$$
\calF_0(M') \ \subset \  \Ann(M')\  \subset\ \sqrt{\calF_0(M')}
$$
En particulier, $\Ann(M')$ et $\calF_0(M')$ ont même racine.

\smallskip
Supposons de surcroît que $M'$ soit un module de MacRae de rang $0$. 
Alors, par définition, l'idéal invariant de MacRae
$\MacRae(M')$ est engendré par un élément régulier~$g$, 
pgcd fort de l'idéal $\calF_0(M')$, au sens où l'idéal co-facteur $\fb
= \calF_0(M')/g$ vérifie $\Gr(\fb) \geqslant 2$.

\medskip

Sur certains anneaux $\bfA$ (par exemple les anneaux principaux ou les
anneaux de Dedekind), la condition $\Gr(\fb) \geqslant 2$ entraîne
$1 \in \fb$, auquel cas $\calF_0(M') = \langle g\rangle$, a fortiori
$g \in \Ann(M')$. Mais ces anneaux ne sont absolument pas ceux qui
interviennent dans le cadre de notre étude: il faut plutôt penser aux
anneaux de polynômes à plusieurs indéterminées.  Et en général
l'invariant de MacRae n'a aucune raison d'appartenir à l'annulateur.
En voici un exemple à la fois élémentaire et générique.  Prenons une
suite $\ua = (a_1, \dots, a_n)$ de profondeur $\geqslant 2$ (par
exemple, une suite régulière de longueur $n \geqslant 2$) ; alors $M'
= \bfA/\langle \ua \rangle$ est de MacRae de rang $0$ (nous avons
donné cet exemple~page~\pageref{ExempleAsurbGr2}) et on a les deux
égalités :
$$
\Ann(M') = \langle \ua \rangle
\qquad  \qquad 
\MacRae(M') = \langle 1 \rangle
$$
On laisse au lecteur la possibilité de choisir sa suite 
régulière non unimodulaire préférée, en vue d'obtenir un exemple 
de module dont l'invariant de MacRae n'est pas dans l'annulateur.
\medskip

Cependant, dans le cadre de l'élimination, nous allons voir, pour une suite
régulière $\uP$, que le $\bfA$-module de MacRae de rang~0
$$
\bfB'_\delta = (\bfA[\uX]/\langle \uP,\nabla\rangle)_\delta \qquad\qquad
\hbox {($\nabla$ bezoutien de $\uP$)}
$$
possède une propriété spécifique: son invariant de MacRae $\Res(\uP)$,
égal à $\omegares(\nabla)$, appartient à son annulateur. Cette appartenance
peut être mise en relief en écrivant $\MacRae(\bfB'_\delta)
\in \Ann(\bfB'_\delta)$. Ainsi, dans ce cadre, 
la propriété dite de neutralisation des idéaux de profondeur $\geqslant 2$
du lemme \ref{Gr2StrongGcd},  porte bien son nom,
car tout se passe comme si 1 appartenait à l'idéal cofacteur bien
que ce ne soit absolument pas~\mbox{le cas}.

\subsection {Appartenance du résultant à l'idéal d'élimination}

Ce qui suit repose essentiellement sur la forme linéaire $\omegares = \omegaRes{\uP}
: \bfA[\uX]_\delta \to \bfA$ et sur le fait de
voir le résultant de $\uP$ comme son évaluation en $\nabla$:
$$
\Res(\uP) \ = \  \omegares(\nabla) 
$$

\begin{theo}[Précisions sur l'appartenance du résultant à l'idéal d'élimination]
\label{omegaresNablaInElimIdeal} \leavevmode

Pour un système $\uP$ quelconque, on a $\omegares(\nabla) \in \Ann(\bfB'_\delta)$,
ce qui se traduit dans $\bfA[\uX]$ par:
$$
\omegares(\nabla)\,\bfA[\uX]_\delta \ \subset \ \langle\uP,\nabla\rangle_\delta
$$
En conséquence, on a l'inclusion d'idéaux:
$$
\omegares(\nabla)\,\langle X_1, \cdots, X_n\rangle^{\delta+1} \subset \langle\uP\rangle
$$
A fortiori, le résultant de $\uP$, perçu comme $\omegares(\nabla)$, appartient à l'idéal d'élimination:
$$
\Res(\uP) \in \ElimIdeal
$$
\end{theo}

\begin{proof}

La forme linéaire $\omegares$ possède la propriété
exceptionnelle d'être de Cramer (cf.~\ref{omegaresCramer}),  à fortiori de Cramer en $\nabla$:
$$
\omegares(\nabla)F  - \omegares(F)\nabla  \in \uPdelta \qquad \forall F \in \bfA[\uX]_\delta
$$
On en déduit $\omegares(\nabla)\,\bfA[\uX]_\delta \ \subset \ \langle\uP,\nabla\rangle_\delta$,
qui n'est autre que $\omegares(\nabla) \in \Ann(\bfB'_\delta)$.

\medskip
L'inclusion d'idéaux provient de $X_i\nabla \in \langle\uP\rangle$, appartenance due à la définition même
d'un déterminant bezoutien $\nabla$ de~$\uP$ (cf~\ref{NablaDansLeSature}).
\end{proof}

\subsubsection {Analyse du rôle de $\omegares$ dans la preuve du théorème précédent}

Dans le contexte $\uP$ régulière, nous voulons maintenant insister sur le fait que
l'appartenance du résultant à l'idéal d'élimination est \emph {équivalente} à
la propriété de Cramer en $\nabla$ pour~$\omegares$.
Nous avons imposé $\uP$ régulière de
manière à ce que les $\bfA$-modules $\bfB_\delta$ et~$\bfB'_\delta$
associés à $\uP$ soient de MacRae. Nous savons alors que
$\Ann(\bfB'_\delta) = \ElimIdeal$ d'après la partie non triviale de~\ref{AnnEqualities}
(non triviale car elle utilise en arrière plan le théorème de Wiebe).

\smallskip

C'est pour mieux comprendre cette propriété spécifique
$\MacRae(\bfB'_\delta) \in \Ann(\bfB'_\delta)$ (\og le résultant est
dans l'idéal d'élimination\fg), que nous avons tenu, depuis plusieurs
chapitres, à abstraire la situation en adoptant un cadre plus général
relevant de l'algèbre commutative.  Nous venons de voir que c'est la
forme linéaire $\uomegares : \bfB_\delta
\to \bfA$, invariant de MacRae du module $\bfB_\delta$ de rang 1, qui
apporte la solution et bien entendu ce qui la relie 
au $\bfA$-module $\bfB'_\delta$ de rang~0 via $\MacRae(\bfB'_\delta)
= \uomegares(\overline\nabla)$.

\medskip

Reprenons les résultats prouvés
en~\ref{Rank1to0} et~\ref{Rank1to0MacRae} en choisissant ici de
considérer l'invariant de MacRae comme un élément défini à un
inversible près (un scalaire ou une forme linéaire) et non pas comme
l'idéal ou le module qu'il engendre.

\begin{quote} \label{RappelCramerAnn}
\it 
\`A un module $M$ de MacRae de rang $1$ et un $m_0 \in M$ sans torsion, 
on associe le module $M' := M/\bfA m_0$ de MacRae de rang $0$ 
ainsi que l'évaluation en $m_0$ 
$$
\eval_{m_0} : \begin{array}[t]{rcl}
M^\star & \longrightarrow & \bfA \\
\alpha & \longmapsto & \alpha(m_0)
\end{array}
$$
Cette application établit un double isomorphisme schématisé 
de manière verticale ci-dessous :
$$
\xymatrix @C=0.2pc{
\FittVect_1(M) = \MacRaeVect(M) \fb \ar[d]_{\eval_{m_0}}^{\simeq} & \subset &
      \Cramer_{m_0}(M) \ar[d]^{\eval_{m_0}}_{\simeq} \\ 
\calF_0(M') = \MacRae(M') \fb & \subset & \Ann(M') \\ 
}
$$
Dans ce schéma, l'invariant de MacRae $\MacRaeVect(M)$ est
une forme linéaire sur $M$ notée $\vartheta$, et
l'invariant de MacRae $\MacRae(M')$ est un scalaire, égal à
$\vartheta(m_0)$; quant à l'idéal cofacteur $\fb$ de type
fini, il vérifie $\Gr(\fb) \geqslant 2$.

\medskip 
Par conséquent, on a l'équivalence:
$$
\MacRaeVect(M) \in \Cramer_{m_0}(M) \  \iff \
\MacRae(M') \ \in \ \Ann(M')
$$
\end{quote}

Ici, la correspondance est la suivante:
$$
M = \bfB_\delta = \bfA[\uX]_\delta /\langle \uP \rangle_\delta, \qquad 
m_0 = \overline \nabla, \qquad 
M' = \bfB'_\delta = \bfA[\uX]_\delta /\langle \uP,\, \nabla \rangle_\delta
$$
La forme linéaire $\vartheta$ est la forme linéaire $\uomegares$ et 
$\vartheta(m_0) = \omegares(\nabla)$ est l'invariant de MacRae de $\bfB'_\delta$.

\medskip

Quelle est, dans ce contexte, la signification de l'équivalence finale
du résultat général ci-dessus?
A gauche, à savoir la propriété \og de Cramer en $\overline\nabla$\fg{}
de $\MacRaeVect(\bfB_\delta) = \uomegares$
$$
\uomegares(\overline\nabla)b = \uomegares(b)\overline\nabla  \quad \hbox {sur $\bfB_\delta$}
$$
est bien sûr identique à la propriété \og $\omegares$ est de Cramer en $\nabla$\fg.
Et le bilan de l'équivalence est le suivant: \emph {la propriété de Cramer en $\nabla$ de~$\omegares$
est équivalente à l'appartenance du résultant à l'idéal d'élimination}.

\medskip

Note: si on examine d'un peu plus près l'hypothèse à droite de l'équivalence
i.e. l'appartenance de $\MacRae(\bfB'_\delta) =\omegares(\nabla)$ à $\Ann(\bfB'_\delta)$, 
on voit qu'elle se traduit par:
$$
\omegares(\nabla)\bfB_\delta \subset \bfA\overline\nabla
\qquad \hbox {ou encore par} \qquad
\omegares(\nabla)\,\bfA[\uX]_\delta \ \subset\ \langle\uP,\nabla\rangle_\delta
$$
qui n'est autre, bien entendu, que l'inclusion intervenue dans la preuve du théorème~\ref{omegaresNablaInElimIdeal}.
A priori, elle ne semble pas équivalente à la propriété de Cramer en $\nabla$ de $\omegares$ et
pourtant elle l'est.

\subsection {Idéaux égaux à radical près: variations autour de $\sqrt{\protect\ElimIdeal} = \sqrt{\Res(\protect\uP)}$}

\subsubsection {Inversibilité et nullité du résultant}

Le résultat suivant utilise le théorème précédent et la vision de
$\Res(\uP)$ en degré $\delta$ et degré $d \ge \delta + 1$ via
l'invariant de MacRae $\calR_d$ de $\bfB_d$.

\begin {coro}
Pour un système $\uP$ de format $(d_1,\cdots,d_n)$ de degré critique
$\delta$, on dispose des équivalences suivantes:

\begin {enumerate}[\rm i)]  
\item
Le scalaire $\Res(\uP)$ est inversible.

\item
On a l'inclusion $\langle X_1, \cdots, X_n\rangle^{\delta+1} \subset \langle\uP\rangle$.

\item
Il existe $e$ tel que $\langle X_1, \cdots, X_n\rangle^e \subset \langle\uP\rangle$.

\item
Il existe $r$ tel que $\langle X_1^r, \cdots, X_n^r\rangle \subset \langle\uP\rangle$.

\item
Il existe $d$ tel que l'application de Sylvester $\Syl_d(\uP) :
\rmK_{1,d} := \bigoplus_{i=1}^n \bfA[\uX]_{d-d_i} e_i \to \rmK_{0,d}
:= \bfA[\uX]_d$ soit surjective. Dit autrement: $\langle\uP\rangle_d =
\bfA[\uX]_d$.
\end {enumerate}

Quelques précisions. Tout d'abord, dans ce cas, la suite $\uP$ est régulière.
Dans le point iii) si l'inclusion est vérifiée pour un $e_0$, elle l'est pour
tout $e \ge e_0$. Observations analogues pour les points suivants.
\end {coro}  

\begin {proof} \leavevmode

Si on s'appuie sur n'importe quel point autre que que le i), on voit que
l'idéal $\langle\uP\rangle$ contient une suite régulière de
longueur~$n$, à savoir une suite du type $(X_1^r, \cdots, X_n^r)$. En conséquence,
la suite (homogène)~$\uP$, de longueur $n$, est régulière.

\medskip
D'après le théorème~\ref{omegaresNablaInElimIdeal}, on a:
$$  
\Res(\uP) . \langle X_1, \cdots, X_n\rangle^{\delta+1} \subset \langle\uP\rangle
$$
En conséquence $\rm i) \Rightarrow \rm ii)$. Il est clair que $\rm ii)
\Rightarrow \rm iii)$ et $\rm iii) \iff \rm iv)$. On rappelle
à ce propos que pour tout entier $r$, il existe un entier $e$ tel que:
$$
\langle X_1, \cdots, X_n\rangle^e \subset \langle X_1^r, \cdots, X_n^r\rangle
$$
On a $\rm iii) \iff \rm v)$ avec $d=e$. Par exemple, pour passer de l'inclusion des idéaux de
$\rm iii)$ à $\rm v)$, considérer la composante homogène de degré $e$ des idéaux.

\smallskip
Enfin, supposons le point v) vérifié avec un $d$ que l'on peut
supposer $\ge \delta+1$.  L'application $\Syl_d(\uP)$ étant
surjective, son idéal déterminantiel d'ordre $s_d:=\dim\bfA[\uX]_d$
contient 1. Donc 1 est un pgcd fort de cet idéal, de sorte qu'au niveau
de l'invariant de MacRae, on a $\langle\calR_d\rangle = \langle 1\rangle$
donc $\Res(\uP)$, égal à $\calR_d$, est inversible.
Nous venons en fait d'écrire que $\bfB_d = 0$ est
d'invariant de MacRae $\MacRae(\bfB_d) = \langle\calR_d\rangle = \langle 1\rangle$!

\smallskip
On pourrait également partir de l'inclusion $\rm iv)$ et invoquer le
théorème de divisibilité \ref {PremierLemmeDivisibiliteJPJ}
$$
\Res(\uP) \mid \Res(X_1^r, \cdots, X_n^r) = 1
$$
pour en déduire $\Res(\uP)$ inversible. Mais est-ce que cela serait bien
raisonnable?
\end {proof}

\begin {coro} \label {ZeroCommunNulliteRes}
Soit $\uP$ un système possédant un zéro commun \og solide\fg{} $\uxi = (\xi_1, \cdots, \xi_n)$
dans un sur-anneau de l'anneau $\bfA$ des coefficients de $\uP$. Solide signifie ici
que pour $a \in \bfA$, on a l'implication $a\,\uxi = 0 \Rightarrow a=0$.
Alors $\Res(\uP) = 0$.
\end {coro}

\begin {proof}
Il y a un exposant $e$ tel que $\Res(\uP) \langle\uX\rangle^e \subset \langle\uP\rangle$.
En réalisant $X_i := \xi_i$, il vient $\Res(\uP) \langle\uxi\rangle^e = 0$. Si $e\ge 1$, on en
déduit $\Res(\uP) \langle\uxi\rangle^{e-1} = 0$ et de proche en proche $\Res(\uP) = 0$.
\end {proof}

\subsubsection {Résultant, idéaux de Fitting et idéal d'élimination : à radical près}

Pour un système quelconque $\uP$, nous allons maintenant compléter le résultat
$\omegares(\nabla) \in \Ann(\bfB'_\delta)$ du théorème
précédent~\ref{omegaresNablaInElimIdeal} en fournissant d'une part
un analogue en degré $d \ge \delta + 1$ et d'autre part en faisant intervenir les
idéaux de Fitting $\calF_0(\bfB'_\delta)$ et $\calF_0(\bfB_d)$.

Nous rappelons à cette occasion (cf la
définition~\ref{DefResultant} ainsi
que le chapitre~\ref{ChapMacRaeForP}, en particulier à la
page~\pageref{ResumeSpecialisationCasGenerique}) que le résultant
$\Res(\uP)$ est multiplement défini par
$$
\Res(\uP) = \omegares(\nabla) = \calR_d
$$
et que le scalaire $\calR_d = \calR_d(\uP)$ divise $\calF_0(\bfB_d)$,
propriété analogue au fait que $\omegares(\nabla)$ divise
$\calF_0(\bfB'_\delta)$. Quant aux divers idéaux de Fitting qui interviennent,
ce sont les idéaux déterminantiels suivants de l'application de Sylvester de $\uP$:
$$
\calF_0(\bfB'_\delta) = \DVect_{s_\delta}(\Syl_\delta)\text{-évalué-en-$\nabla$},
\qquad\qquad
\calF_0(\bfB_d) = \calD_{s_d}(\Syl_d)
$$
où $s_\delta = \dim \bfA[\uX]_\delta -1$ et $s_d = \dim \bfA[\uX]_d$
(ne pas oublier que $d \ge \delta+1$).  En ce qui concerne l'égalité
de gauche, $\DVect_{s_\delta}(\Syl_\delta)$ désigne le sous-module
de $\bfA[\uX]^\star$ des formes déterminantales, cf le chapitre~\ref{ChapFittingVectoriel},
page~\pageref{Fitting1toFitting0Elimination}.

\medskip
L'énoncé suivant précise en particulier l'appartenance du résultant
à l'idéal d'élimination puisque tous les annulateurs qui interviennent
sont contenus dans $\ElimIdeal$ (fait facile mentionné en \ref{AnnEqualities}).
On voit également comment le résultant s'intercale 
dans la double inclusion classique entre $0$-Fitting et annulateur
du module de présentation finie $\bfB'_\delta$ et/ou $\bfB_d$.

\begin {coro}
Soit $\uP$ un système quelconque.

\begin {enumerate} [i)]
\item
En degré $\delta$, on dispose des inclusions:
$$
\calF_0(\bfB'_\delta) \ \subset\ \langle\omegares(\nabla) \rangle \ \subset\
\Ann(\bfB'_\delta) \ \subset\ \sqrt{\calF_0(\bfB'_\delta)}
$$
\item
En degré $d \geqslant \delta + 1$, de manière analogue:
$$
\calF_0(\bfB_d) \ \subset\ \langle\calR_d\rangle \ \subset\
\Ann(\bfB_d) \ \subset\ \sqrt{\calF_0(\bfB_d)}
$$
\end {enumerate}

\smallskip
\noindent
En conséquence, tous les idéaux ci-dessus ont même racine $\sqrt{\Res(\uP)}$.
\end {coro}

\begin {proof} \leavevmode

Dans les deux cas, on va utiliser le fait que l'annulateur
d'un module de présentation finie est contenu dans la racine de son $0$-Fitting.

\smallskip
i) L'appartenance $\omegares(\nabla) \in \Ann(\bfB'_\delta)$ 
résulte du théorème \ref{omegaresNablaInElimIdeal} et le reste des
divers rappels.

\medskip

ii) L'appartenance $\calR_d \in \Ann(\bfB_d)$ résulte du fait que
$\calR_d = \omegares(\nabla)$, de
$\omegares(\nabla) \in \Ann(\bfB'_\delta)$ et enfin de l'inclusion
banale $\Ann(\bfB'_\delta) \subset \Ann(\bfB_d)$ vue en~\ref{AnnEqualities}.
Pour le reste, cf les rappels.
\end {proof}

\medskip

Dans le même ordre d'idées, sans aucune hypothèse spécifique sur $\uP$, voici
comment se positionne l'idéal d'élimination par rapport à l'idéal engendré
par le résultant.

\begin{coro} \label{ResultantVersusElimIdeal}
Pour un système quelconque $\uP$, on dispose des inclusions:
$$
\langle \Res(\uP) \rangle
\ \subset \ 
\ElimIdeal 
\ \subset \ 
\sqrt{\Res(\uP)}
$$
L'idéal d'élimination a donc même radical que l'idéal engendré par le résultant:
$\sqrt{\ElimIdeal} = \sqrt{\Res(\uP)}$.
\end{coro}

\begin{proof} \leavevmode

L'appartenance $\Res(\uP) \in \ElimIdeal$ résulte du
théorème~\ref{omegaresNablaInElimIdeal}.  En ce qui concerne la seconde
inclusion, soit $a \in \ElimIdeal$. Nous allons fournir deux preuves
du fait qu'une certaine puissance de $a$ est multiple du résultant.

\medskip
La première preuve utilise le fait que l'annulateur d'un module de présentation finie
est contenu dans la racine de son 0-Fitting. 
Par définition de $\ElimIdeal$, il y a un $d$, que l'on peut supposer
$\geqslant\delta+1$, tel que $X^\alpha a \in \langle\uP\rangle$ pour
tout $|\alpha| = d$, ce qui s'écrit $a \in \Ann(\bfB_d)$.  Puisque
$\Ann(\bfB_d) \subset \sqrt{\calF_0(\bfB_d)}$, il y a une puissance de
$a$ dans $\calF_0(\bfB_d)$, ce qui termine la preuve puisque tout
élément de $\calF_0(\bfB_d)$ est multiple de~$\Res(\uP)$ vu que
$d \geqslant\delta+1$.

\medskip

Pour la seconde preuve (tirée de~\cite[Proposition 2.4]{BuseJouanolouDiscriminantI}),
nous allons utiliser la propriété de divisibilité du résultant énoncée
en~\ref{PremierLemmeDivisibiliteJPJ}. Par définition de $\ElimIdeal$,
il y a des exposants $e_i \geqslant 0$ tels que
$a X_i^{e_i} \in \langle \uP \rangle$.
Par la propriété de divisibilité appliquée à $Q_i = a X_i^{e_i}$, on obtient
$$
\Res(a X_1^{e_1}, \dots, a X_n^{e_n}) 
\ \in \
\langle \Res(\uP) \rangle
$$
Or, en notant $\widehat {e_i} = \prod_{j\ne i} e_j$, on a $\Res(a_1
X_1^{e_1}, \dots, a_n X_n^{e_n}) = a_1^{\widehat e_1} \cdots
a_n^{\widehat e_n}$.  Ce qui fournit ici l'appartenance
$a^{\widehat e_1+\cdots+\widehat e_n} \in \langle \Res(\uP) \rangle$.
\end{proof}

\medskip

Nous allons de nouveau comparer les idéaux attachés au degré $\delta$,
intervenus dans le chapitre~\ref{ChapMacRaeForP}: d'une part
$\Im\,\omegares$ et d'autre part l'idéal déterminantiel
$\calD_{s_\delta}(\Syl_\delta)$ d'ordre $s_\delta
= \dim\bfA[\uX]_\delta-1$. Quel que soit le système $\uP$, cet idéal
déterminantiel se factorise sous la forme
(cf. page~\pageref{ResumeSpecialisationCasGenerique}):
$$
\calD_{s_\delta}(\Syl_\delta) = (\Im\omegares)\fb_\delta
$$

\begin {theo}
\label{omegaresVersusSyldelta}
Pour tout système $\uP$, on dispose de l'inclusion et de l'égalité des radicaux:
$$
\calD_{s_\delta}(\Syl_\delta)  \subset \Im\,\omegares, \qquad 
\sqrt{\calD_{s_\delta}(\Syl_\delta)}  = \sqrt{\Im\,\omegares}
$$
En particulier, les deux idéaux en question ont même profondeur.

\medskip

Précision: pour $a\in \Im\,\omegares$, on a $a^{s_\delta} \in \calD_{s_\delta}(\Syl_\delta)$.
\end {theo}

\begin {proof}

En ce qui concerne l'inclusion (qui ne nécessite aucune hypothèse sur
$\uP$), voir en page~\pageref{ResumeSpecialisationCasGenerique} (cf
également en début de preuve du théorème \ref{omegaresGr2}).

\medskip

Comme l'égalité des radicaux se spécialise, on peut supposer $\uP$
générique.  On procède comme dans la preuve du
théorème \ref{omegaresGr2} en augmentant le complexe
$\rmK_{\sbullet,\delta}(\uP)$ par $\omegares
: \rmK_{0,\delta} \to \bfA$.  Le complexe augmenté reste exact (car on
s'est placé en terrain générique) et, en utilisant les
notations \ref{IdeauxFactorisation} pour ses idéaux de factorisation,
on a, en tenant compte du décalage dû à l'augmentation:
$$
\fD_2 = \fB_2\fB_3  \qquad \text{avec} \qquad
\fD_2 = \calD_{s_\delta}(\Syl_\delta), \qquad \fB_2 = \Im\,\omegares
$$
Le théorème \ref{RacineIdealFactorisation} (\og unexpected result\fg{}
selon Northcott) permet de conclure: $\sqrt {\fD_2} = \sqrt {\fB_2}$.

\medskip

La justification du résultat beaucoup plus précis,
$a^{s_\delta} \in \calD_{s_\delta}(\Syl_\delta)$ pour $a\in
\Im\,\omegares$, nécessite la mise en place des outils 
qui suivent et est reportée après la preuve du corollaire~\ref{TracicIdentity}.
\end {proof}

\medskip

La vie (mathématique) réserve parfois des surprises. La première
surprise est que le résultat plus précis est basé sur le théorème
suivant et son corollaire, étrangers au contexte de l'élimination.  La
seconde surprise réside dans le fait que ce théorème ne sort pas du
chapeau d'un magicien mais de l'analyse d'un cas trivial de
factorisation (thème du
chapitre~\ref{ChapStructureMultiplicative}).  La dernière surprise,
c'est l'utilisation du produit intérieur (droit) qui permet de ne pas
sombrer dans des calculs épouvantables.

\begin {theo} [Factorisation explicite et comment s'invite le produit intérieur droit]
\label{ExplicitFactorisation}

Soient $F$ un module libre de rang $p$, $\mu \in F^\star$ et $y \in F$.
On pose \boxed{s = p-1} et on note $y\cdot\mu$ l'endomorphisme de~$F$ défini par $x \mapsto \mu(x)y$,
si bien que $\varphi := \mu(y)\Id_F - y\cdot\mu$ est l'endomorphisme de $F$:
$$
x \mapsto \mu(y)x - \mu(x)y
$$

\begin {enumerate}[\rm i)]
\item
En fixant une orientation $\bff$ sur $F$,  on a la factorisation  
$$
\BW^s(\varphi) = \mu(y)^{s-1}\ \Theta_\mu \cdot \nu_y \qquad \text{où} \quad
\left\{\begin {array} {l}
\Theta_\mu = \bff\intd\mu, \text { vecteur de  } \BW^s(F) \\
\nu_y = \bff^\star \intd y, \text { forme linéaire sur } \BW^s(F) \\
\end {array}\right.
$$
De plus $\nu_y(\Theta_\mu) = \mu(y)$.

\item
Une version détypée consiste à prendre $F = \bfA^p$, sa base canonique
$f = (f_1, \cdots, f_p)$, 
un vecteur $y \in \bfA^p$ jouant le même rôle et $\mu$ sous la
forme $\transpose{z}$ pour un $z \in \bfA^p$. En définissant $Y, Z \in \BW^s(\bfA^p)$:
$$
Y = \sum_i (-1)^{i-1} y_i\, f_{\{1..p\} \setminus i},  \qquad
Z = \sum_i (-1)^{i-1} z_i\, f_{\{1..p\} \setminus i} \qquad \text{avec} \qquad
f_I = \BW\limits_{i\in I} f_i
$$
la factorisation s'écrit alors:
$$
\BW^s\big(\scp{y}{z}\Id_p -y\cdot\transpose {z}\big) = \scp{y}{z}^{s-1} \
Z\cdot\transpose{Y}
$$
De plus $\scp{Y}{Z} = \scp{y}{z}$.
\end {enumerate}
\end {theo}

Pour $p = 3$, ce résultat dit la chose concrète suivante. Considérons la version détypée
et posons $a = \scp{y}{z}$. L'endomorphisme $\varphi := a\Id_3 - y\cdot \transpose{z}$ de $\bfA^3$
est donné sur la base canonique $(f_1, f_2, f_3)$ par
$$
\varphi(f_i) = af_i - z_i(y_1f_1 + y_2f_2 + y_3f_3)
$$
Posons $f_{ij} = f_i\wedge f_j$. Il faut avoir le courage de calculer les
$\BW^2(\varphi)(f_{ij}) = \varphi(f_i) \wedge \varphi(f_j)$:
$$
\left\{
\begin {array}{ccl}
\varphi(f_2)\wedge \varphi(f_3) &=& ay_1 (z_1 f_{23} - z_2f_{13} + z_3f_{12}) \\
\varphi(f_1)\wedge \varphi(f_3) &=& ay_2 (-z_1 f_{23} + z_2f_{13} - z_3f_{12}) \\
\varphi(f_1)\wedge \varphi(f_2) &=& ay_3 (z_1 f_{23} - z_2f_{13} + z_3f_{12}) \\
\end {array}
\right.
$$
Ce  qui conduit à la matrice de $\BW^2(\varphi)$ dans la base $(f_{23},\ f_{13},\ f_{12})$
avec la factorisation escomptée:
$$
\BW^2(\varphi) = a
\begin {bmatrix}
z_1 \times y_1  & z_1 \times -y_2  & z_1 \times y_3 \\
-z_2 \times y_1 & -z_2\times -y_2  & -z_2 \times y_3 \\
z_3 \times y_1  & z_3\times -y_2   & z_3 \times y_3 \\
\end {bmatrix}
=
a\, \begin {bmatrix} z_1\cr -z_2\cr z_3 \end {bmatrix} \
\begin {bmatrix} y_1 & -y_2 & y_3 \end {bmatrix}
$$
Nous proposons deux preuves de ce théorème. La première est basée sur
le lemme suivant.

\begin {lem}

Soit $F$ un $\bfA$-module quelconque, $\mu$ une forme linéaire sur $F$ et $y\in F$.

\begin {enumerate}[\rm i)]
\item
Pour $z \in F$ et $\bfx \in \BW^k(F)$, on a l'égalité:
$$
\big((y\wedge\bfx)\intd\mu\big) \wedge \big(\mu(y)z - \mu(z)y\big) =
\mu(y)\, (y\wedge\bfx\wedge z)\intd\mu
$$

\item
Pour $z_1, \ldots, z_m \in F$, on a l'égalité dans $\BW^m(F)$:
$$
\big(\mu(y)z_1 - \mu(z_1)y\big) \wedge \cdots \wedge \big(\mu(y)z_m - \mu(z_m)y\big) =
\mu(y)^{m-1}\, (y \wedge z_1 \wedge z_2 \wedge \cdots \wedge z_m) \intd \mu
$$
\end {enumerate}
\end {lem}

\begin {proof} \leavevmode

Pour un élément homogène $\bfy$ de $\BW^\sbullet(F)$, on note
$J(\bfy) = (-1)^{\deg \bfy} \bfy$.

\medskip
i) Le membre droit de l'égalité à montrer est de la forme $\mu(y)A$ avec
$A = (y\wedge\bfx\wedge z)\intd\mu$.
Utilisons le fait que $\sbullet\intd\mu$ est une anti-dérivation à gauche pour transformer $A$:
$$
\begin {array} {ccl}
A &=& \big((y\wedge\bfx) \intd\mu\big) \wedge z + J(y\wedge\bfx)\wedge (z\intd\mu) \\
  &=& B\wedge z + (-1)^{k+1} \mu(z)\, y\wedge\bfx \\
\end {array}
\qquad\quad
\hbox {avec $B = (y\wedge\bfx) \intd\mu$}
\leqno(\star)
$$
On fait ainsi apparaître $B$ qui intervient dans le membre gauche de l'égalité de l'énoncé.
En multipliant~$(\star)$ par $\mu(y)$, on obtient:
$$
\mu(y)\,A = B\wedge \mu(y) z + (-1)^{k+1} \mu(z)\,\mu(y)\,y\wedge\bfx 
$$
Montrons maintenant que $\mu(y)\, y\wedge\bfx = y\wedge B$. En
utilisant de nouveau que $\sbullet\intd\mu$ est une anti-dérivation à
gauche pour transformer $B$:
$$
B = (y \wedge \mu) \wedge\bfx + J(y)\wedge(\bfx\intd\mu)
\qquad\text{d'où}\qquad
y\wedge B = \mu(y)\, y\wedge\bfx
$$
En intégrant cette information dans l'égalité $\mu(y)A = \cdots$ 
et en utilisant le fait que $B$ est de degré $k$:
$$
\begin {array} {ccl}
\mu(y)\, A &=& B\wedge \mu(y) z + (-1)^{k+1} \mu(z)\, y \wedge B =
B\wedge \mu(y) z + (-1)^{k+1} (-1)^k \mu(z)\, B \wedge y \\
&=& B\wedge \big(\mu(y)z - \mu(z)y\big) \\
\end {array}
$$
C'est exactement ce que l'on veut.

\medskip
ii)
Par récurrence sur $m$, en posant $\bfx = z_1 \wedge z_2 \wedge \cdots \wedge z_{m-1}$:
$$
\big(\mu(y)z_1 - \mu(z_1)y\big) \wedge \cdots \wedge \big(\mu(y)z_{m-1} - \mu(z_{m-1})y\big) =
\mu(y)^{m-2}\, (y \wedge \bfx) \intd \mu
$$
On termine avec le point i).
\end {proof}

\begin {proof} [Première preuve du théorème \ref{ExplicitFactorisation}]

Examinons l'égalité à prouver. A gauche,
en $z_2 \wedge \cdots \wedge z_p$, l'endomorphisme $\BW^s(\varphi)$ vaut:
$$
A =
\big(\mu(y)z_2 - \mu(z_2)y\big) \wedge \cdots \wedge \big(\mu(y)z_p - \mu(z_p)y\big)
$$
tandis que l'évaluation du membre droit est:
$$
B = 
\mu(y)^{s-1}\ \nu_y(z_2 \wedge \cdots \wedge z_p)\cdot\Theta_\mu =
\mu(y)^{s-1}\ [y \wedge z_2 \wedge \cdots \wedge z_p]_\bff\,(\bff\intd\mu)
$$
Il s'agit de montrer l'égalité $A=B$ dans $\BW^s(F)$. D'après le point ii)
du lemme précédent, on a l'égalité $A = \mu(y)^{s-1}\, A'$ avec
$$
A' = (y \wedge z_2 \wedge \cdots \wedge z_p) \intd \mu
$$
A droite, on a $B = \mu(y)^{s-1}\ B'$ avec
$$
B' = [y \wedge z_2 \wedge \cdots \wedge z_p]_\bff\,(\bff\intd\mu) \buildrel {\rm (*)} \over =
(y \wedge z_2 \wedge \cdots \wedge z_p) \intd\mu
$$
l'égalité $(*)$ résultant de l'égalité $y \wedge z_2 \wedge \cdots \wedge z_p =
[y \wedge z_2 \wedge \cdots \wedge z_p]_\bff\,\bff$. Bilan: $A'=B'$ donc $A=B$,
ce qui termine la preuve.
\end {proof}

\begin {proof} [Seconde preuve de la factorisation du théorème \ref{ExplicitFactorisation}]\leavevmode

Nous reprenons les notations $A,B$ en début de la première preuve, pour lesquelles
nous devons montrer l'égalité $A=B$ dans $\BW^s(F)$.
A gauche, il s'agit de développer le produit extérieur de $s$ vecteurs:
$$
A \ =\
\overbrace {(\mu(y)z_2 - \mu(z_2)y)}^{(2)} \ \wedge\
\overbrace {(\mu(y)z_3 - \mu(z_3)y)}^{(3)} \ \wedge\
\overbrace {(\mu(y)z_4 - \mu(z_4)y)}^{(4)} \ \wedge\
\cdots
$$
Puisque $y \wedge y = 0$, mis à part le terme spécial 
$\mu(y)^s\ z_2 \wedge z_3 \wedge z_4 \wedge \cdots$, on 
sélectionne un seul $\mu(z_i)y$ par parenthèse, ce qui réalise
$A$ comme la somme de:
$$
\begin {array} {ccccccccc}
\mu(y)^{s-1}  &\mu(y) &z_2&\wedge &z_3&\wedge&z_4& \cdots &
\\
-\mu(y)^{s-1} &\mu(z_2) &y &\wedge&z_3&\wedge&z_4 &\cdots
     &\hbox {(on a pris $-\mu(z_2)y$ dans $(2)$)}  \\
-\mu(y)^{s-1} &\mu(z_3) &z_2&\wedge&y&\wedge&z_4 &\cdots
     &\hbox {(on a pris $-\mu(z_3)y$ dans $(3)$)}  \\
-\mu(y)^{s-1} &\mu(z_4) &z_2&\wedge&z_3&\wedge&y &\cdots
     &\hbox {(on a pris $-\mu(z_4)y$ dans $(4)$)}  \\
\end {array}
$$
ou encore $A = \mu(y)^{s-1}\, A'$ avec
$$
\begin {array} {ccl}
A' &=& \mu(y)\,z_2\wedge z_3\wedge z_4\wedge\cdots 
- \mu(z_2)\,y\wedge z_3\wedge z_4\wedge\cdots 
+ \mu(z_3)\,y\wedge z_2\wedge z_4\wedge\cdots 
- \mu(z_4)\,y\wedge z_2\wedge z_3\wedge\cdots
\\
  &\overset{\rm def.}{=}& (y \wedge z_2 \wedge z_3 \wedge z_4 \wedge \cdots \wedge z_p) \intd \mu
\\
\end {array}
$$
Cette expression de $A'$, donc de $A$, est celle obtenue dans la première preuve.
On termine alors comme dans cette première preuve.
\end {proof}

\begin {coro} [Identité tracique et idéal déterminantiel] \leavevmode
\label {TracicIdentity}

Le contexte est celui du théorème \ref{ExplicitFactorisation} dans lequel intervient
l'endomorphisme $\varphi$ de $F$ égal à $\mu(y)\Id_F -y\cdot\mu$:
$$
\varphi : x\mapsto\mu(y)x - \mu(x)y
$$
En rappelant que $s = \dim F - 1$, on a l'identité tracique:
$$
\Tr \big(\BW^s \varphi\big) = \mu(y)^s
$$
En particulier, le scalaire  $\mu(y)^s$ appartient l'idéal
déterminantiel d'ordre $s$ de l'endomorphisme~$\varphi$:
$$
\mu(y)^s \in \calD_s(\varphi)
$$
Pour être totalement explicite, on note $f_I = \BW\limits_{i \in I} f_i$ où
$(f_i)$ est une base de $F$ et $(f^\star_i)$ la base duale. Alors:
$$
\mu(y)^s = \sum_{\#I=s} \scp{v_I}{f^\star_I}
\qquad \text{où} \qquad
v_I = \Big(\BW^s \varphi\Big)(f_I) = \BW\limits_{i\in I} \big(\mu(y)f_i - \mu(f_i)y\big)
$$
\end {coro}

\begin {proof}
Il suffit de prendre la trace dans la factorisation intervenant dans le théorème et d'utiliser:
$$
\Tr(\Theta_\mu \cdot \nu_y) = \nu_y(\Theta_\mu) = \mu(y)
$$
\end {proof}

\begin {proof} [Fin de la preuve du théorème \ref{omegaresVersusSyldelta}]

Nous devons montrer que $a^{s_\delta} \in \calD_{s_\delta}(\Syl_\delta)$ avec
$a \in \Im\omegares$, disons $a = \omegares(H)$ où $H \in \bfA[\uX]_\delta$.
On prend $F=\bfA[\uX]_\delta$, $\mu = \omegares$ et $y=H \in \bfA[\uX]_\delta$.
L'entier $s=p-1$ vaut~$s_\delta$ et l'endomorphisme $\varphi$ de $\bfA[\uX]_\delta$ est
$$
\varphi \overset{\rm def.}{=} \omegares(H)\Id_{\bfA[\uX]_\delta} - \omegares\,H
\qquad\quad
G \mapsto \omegares(H)G - \omegares(G)H
$$
La propriété Cramer de $\omegares$ se traduit par
l'inclusion $\Im \varphi \subset \langle\uP\rangle_\delta \overset{\rm def}{=}
\Im\Syl_\delta$. On en déduit: 
$$
\Im\BW^{s_\delta}(\varphi) \subset \Im \BW^{s_\delta}(\Syl_\delta) \qquad \text{donc} \qquad
\calD_{s_\delta}(\varphi) \subset \calD_{s_\delta}(\Syl_\delta)
$$
D'après le corollaire ci-dessus, on a $\omegares(H)^{s_\delta} \in \calD_{s_\delta}(\varphi)$,
a fortiori l'appartenance:
$$
\omegares(H)^{s_\delta} \in \calD_{s_\delta}(\Syl_\delta)
$$
qui avait été annoncée.
\end {proof}

\subsection {A quelle condition le résultant engendre-t-il l'idéal d'élimination?}

Nous allons de nouveau utiliser les propriétés de $\omegares$, ou de
$\uomegares$ si l'on veut, pour répondre à la question posée dans le
titre de cette section. Mais quelles propriétés exactement? Nous
allons voir qu'il s'agit d'une part de la propriété Cramer en le
bezoutien $\nabla$ de $\uP$ et d'autre part de l'inégalité de profondeur
$\Gr(\omegares) \geqslant 2$.  Nous rappelons à cette occasion que
$\uomegares$ est de Cramer (et pas seulement en le bezoutien) 
et ceci sans aucune hypothèse sur $\uP$ (cf. l'énoncé~\ref{omegaresCramer}).
Et que l'inégalité $\Gr(\omegares) \geqslant 2$ est vérifiée pour la suite $\uP$
générique (cf. l'énoncé~\ref{omegaresGr2}).

\medskip
Afin de nous convaincre de la suffisance de ces hypothèses, nous
fournissons un premier résultat abstrait dans le contexte 
$(M, \vartheta, m_0)$ devenu habituel et rappelé en page \pageref{RappelCramerAnn}.
Mais que le lecteur ne s'y trompe pas car nous y
avons mis les hypothèses ad-hoc: $\vartheta$ est de Cramer en $m_0$ et
vérifie $\Gr(\vartheta) \geqslant 2$!
S'il le souhaite, il peut lire ce résultat dans le contexte de
l'élimination, que nous énonçons de toute manière à la suite.

\begin{theo}
\leavevmode

Soit $M$ un module de MacRae de rang $1$, $\vartheta
\in M^\star$ un générateur de son invariant de MacRae $\MacRaeVect(M)$ et $M'$
le module de MacRae de rang $0$ défini par $M' := M/\bfA m_0$ où $m_0 \in M$ est sans torsion.

\smallskip
\noindent
On suppose que la forme linéaire $\vartheta$ est de Cramer en $m_0$ et
qu'elle vérifie $\Gr(\vartheta) \geqslant 2$.

\begin{enumerate}[\rm i)]
\item 
La forme linéaire $\vartheta$ est une base de $M^\star$ et $M^\star = \Cramer_{m_0}(M) = \bfA\vartheta$.

\item 
L'évaluation en $m_0$, $\alpha \mapsto \alpha(m_0)$, 
réalise un isomorphisme $M^\star \buildrel{\simeq}\over \longrightarrow \Ann(M')$.

\item
L'idéal $\Ann(M')$ est engendré par son invariant de MacRae $\MacRae(M') = \vartheta(m_0)$.

\item
L'invariant de MacRae $\MacRae(M')$ appartient à $\sqrt{\calF_0(M')}$.
En conséquence, $\sqrt{\MacRae(M')} = \sqrt{\calF_0(M')}$.
\end{enumerate}
\end{theo}

\begin{proof}
i) 
Le point ii) de~\ref{FormesLineairesRang01} s'applique à la forme linéaire $\vartheta$
qui, par hypothèse, vérifie $\Gr(\vartheta) \geqslant 2$. Ce résultat nous dit
que $M^\star$ est libre de rang $1$ engendré par $\vartheta$.
Comme on a supposé $\vartheta$ de Cramer en~$m_0$, on obtient donc les
inclusions
$$
\bfA \vartheta \ \subset \ \Cramer_{m_0}(M) \subset \ 
M^\star = \bfA \vartheta
$$
En conséquence, il y a égalité partout.

\smallskip

ii)
D'après le rappel en début de chapitre
(page~\pageref{RappelCramerAnn}), l'évaluation en $m_0$ établit un
isomorphisme entre $\Cramer_{m_0}(M)$ et $\Ann(M')$. Et d'après i),
$\Cramer_{m_0}(M) = M^\star$.

\smallskip

iii) La forme linéaire $\vartheta$ est une base de $\Cramer_{m_0}(M)$. 
D'après ce même rappel, l'invariant $\MacRae(M')$, égal à $\vartheta(m_0)$, 
est un générateur de l'idéal $\Ann(M')$. 

\smallskip

iv) Résulte du fait que $\MacRae(M')$ est un générateur de  $\Ann(M')$
et du fait que les idéaux $\Ann(M')$, $\calF_0(M')$ ont même racine.
\end{proof}

\medskip

Tout est prêt pour énoncer le résultat principal. Insistons encore que
dans notre approche, nous avons choisi de définir le résultant de $\uP$
comme étant l'évaluation $\omegares(\nabla)$ en un
bezoutien $\nabla$ de $\uP$. Il est donc impératif d'avoir en tête
que nous parlons d'un même objet sous trois noms différents:
$$
\MacRae(\bfB'_\delta) \ = \  \omegares(\nabla) \ = \ \Res(\uP)
$$

\begin{theo} 
\label{4pointsPsuperreguliere}
\leavevmode

Soit $\uP$ régulière vérifiant $\Gr(\omegares) \geqslant 2$
(c'est le cas par exemple de la suite générique).

\begin{enumerate}[\rm i)]
\item 
Le $\bfA$-module $\bfB_\delta^\star$ est libre de rang 1, de base $\uomegares$,
invariant de MacRae de $\bfB_\delta$.
Remonté à l'anneau de polynômes $\bfA[\uX]$, cela s'énonce : 
la forme linéaire $\omegares$ est une base de $\Ker \transpose {\Syl_\delta}$.

\item 
L'évaluation en un bezoutien $\nabla$ de $\uP$ est un isomorphisme:
$$
\begin{array}[t]{rcl}
\bfB_\delta^\star & \buildrel {\simeq} \over \longrightarrow & \Ann(\bfB'_\delta) \\ [0.2cm]
\overline{\mu} & \longmapsto & {\overline \mu}(\overline {\nabla})
\end{array}
\qquad \text{ ou encore } \qquad 
\begin{array}[t]{rcl}
\Ker \transpose \Syl_\delta & \buildrel {\simeq} \over \longrightarrow & \ElimIdeal \\ [0.2cm]
\mu & \longmapsto & \mu(\nabla)
\end{array}
$$

\item
On a l'égalité $\Ann(\bfB'_\delta) = \ElimIdeal$ et l'invariant de MacRae
$\MacRae(\bfB'_\delta) = \omegares(\nabla)$ est un générateur régulier
de cet idéal.

\item
Pour $d \geqslant \delta+1$, on a $\Ann(\bfB_d) = \ElimIdeal$, 
et l'invariant de MacRae $\MacRae(\bfB_d) = \calR_d$ est un générateur régulier de cet idéal.

\end{enumerate}
\end{theo}

\begin{proof}
L'hypothèse $\uP$ régulière assure que $\bfB_\delta$ est de MacRae de rang $1$ 
et que $\Ann(\bfB'_\delta) = \ElimIdeal$ (cf. le deuxième point de~\ref{AnnEqualities}).
La forme linéaire $\uomegares$  est toujours de Cramer (cf.~\ref{omegaresCramer}),
a fortiori de Cramer en~$\overline \nabla$.
L'hypothèse $\Gr(\omegares)\geqslant 2$ imposée dans l'énoncé permet d'appliquer
le théorème précédent, ce qui fournit les 3 premiers points.

\medskip
Note: en ce qui concerne le point i), cf une variante dans la \emph{preuve}
du point ii) de~\ref{omegaresGr2} qui utilise uniquement $\Gr(\uomegares) \ge 2$.

\medskip

Quant au dernier point iv), il se justifie par le fait que, pour une
suite régulière, on dispose de l'égalité des annulateurs (cf. le
deuxième point de~\ref{AnnEqualities}), annulateurs tous égaux à
$\ElimIdeal$
$$
\Ann(\bfB'_{\delta}) = \Ann(\bfB_{\delta+1})
\qquad \text{ et } \qquad 
\forall\, d \geqslant \delta+1, \quad 
\Ann(\bfB_d) = \Ann(\bfB_{d+1})
$$
\end{proof}

\subsection{En générique, le résultant engendre l'idéal d'élimination}

On reformule le résultat principal avec le vocabulaire \og résultant
\fg{}.  Le premier point n'est pas spécifique au cas
générique. Appliqué au cas générique, le fait que le résultant est un
générateur de l'idéal d'élimination contribue à apporter une précision sur
l'inégalité $\Gr(\omegaRes{\uP^\gen}) \ge 2$.

\begin{theo} \label{ResGenElimIdeal}\leavevmode

\begin {enumerate}[\rm i)]
\item
Soit $\uP$ une suite régulière vérifiant $\Gr(\omegares) \geqslant 2$, par exemple la suite générique.
Alors, l'idéal d'élimination est monogène, engendré par le résultant :
$$
\ElimIdeal \ = \  \langle \Res(\uP) \rangle
$$
\item
Quand $\uP$ est générique, pour tout monôme $X^\alpha$ de degré $\delta$, la suite
$\big(\Res(P), \omegares(X^\alpha)\big)$ est une suite régulière dans l'image de $\omegares$,
précisant ainsi l'inégalité $\Gr(\omegares) \ge 2$.
\end {enumerate}
\end{theo}

\begin {proof} \leavevmode
ii)
Nous reprenons le commentaire figurant avant le théorème \ref{PsatReg}.
On a vu en~\ref{GenPsatRegScalars} que~$\omega(X^\alpha)$ est
régulier sur $\bfA[\uX]/\uPsat$, a fortiori sur le sous-anneau
$\bfA/(\ElimIdeal)$.  Il en est de même de~$\omegares(X^\alpha)$ qui
est un diviseur de $\omega(X^\alpha)$. Mais en générique, l'idéal
$\ElimIdeal$ est engendré par $\Res(\uP)$ et donc la suite
$\big(\Res(P), \omegares(X^\alpha)\big)$ est régulière, bien sûr dans
l'idéal $\Im\omegares$ puisque $\Res(\uP)=\omegares(\nabla)$.
\end {proof}

\medskip

L'idéal d'élimination n'est pas en général engendré par le résultant, même pour une suite régulière.
Par exemple, pour le jeu étalon généralisé $\pXD = (p_1 X_1^{d_1},\dots,p_n X_n^{d_n})$, on a vu
dans la remarque~\ref{rmqMacRaeVsElimIdeal}:
$$
\langle \pXD \rangle^{\sat} \cap \bfA \ = \ 
\langle p_1 \cdots p_n \rangle
\qquad \text{ tandis que } \qquad 
\Res(\pXD) 
\ = \ 
p_1^{\widehat d_1}\cdots p_n^{\widehat d_n}
$$

\begin{prop}[Caractère \og irréductible\fg{} du résultant] \label{IrreductibiliteResultant}
En générique au-dessus d'un anneau de base $\bfk$ intègre, 
le résultant est irréductible au sens où il engendre un idéal premier.
\end{prop}

\begin{proof}
Résulte de $\ElimIdeal = \langle \Res(\uP) \rangle$ et
du fait que $\uPsat$ est un idéal premier, cf.~\ref{GeneriqueLocaliseSature}-iii).
\end{proof}

\subsection{Propriétés du résultant, via \og générateur de l'idéal d'élimination \fg{}}

Les résultats ci-dessous ont déjà fait l'objet du chapitre
précédent, cf~\ref{PremierLemmeDivisibiliteJPJ} et~\ref{MultiplicativiteResultant}.
Nous les abordons ici de manière différente en utilisant le fait qu'en
terrain générique le résultant est un générateur de l'idéal
d'élimination, cf \ref{ResGenElimIdeal} et qu'il est
irréductible au dessus de $\bbZ$, cf~\ref{IrreductibiliteResultant}.

\begin{prop}[Divisibilité]
Soient $\uP = (P_1,\dots,P_n)$ et $\uQ = (Q_1,\dots,Q_n)$ deux suites 
de polynômes homogènes de $\bfA[\uX]$ avec leur propre format de degrés.
Si $\langle \uQ \rangle \subset \langle \uP \rangle$ 
alors $\Res(\uP) \mid \Res(\uQ)$.
\end{prop}

\index{théorème!de divisibilité du résultant}%
%
%

\begin{proof}
On écrit $Q_j = \sum_i U_{ij} P_i$
avec $U_{ij} \in \bfA[\uX]$ homogène de degré $\deg(Q_j) - \deg(P_i)$.
On note $\uP^\gen$ le système générique au dessus de $\bfA$ de même format que $\uP$
, de sorte que $P_i^{\gen}$ est à coefficients 
dans $\bfA' = \bfA[\text{indets pour les\ } P_i^\gen]$.
On introduit $Q'_j = \sum_i U_{ij} P_i^\gen \in \bfA'[\uX]$.
On a 
$$
\Res(\uQ') \ \in \ 
\langle \uQ' \rangle^\sat \cap \bfA' 
\ \subset \ 
\langle \uP^\gen \rangle^\sat \cap \bfA' 
\ = \ 
\langle \Res(\uP^\gen) \rangle_{\bfA'}
$$
En effet, l'appartenance à gauche est banale, l'inclusion du milieu 
est due à $\langle \uQ' \rangle \subset \langle \uP^\gen \rangle$ ;
l'égalité à droite résulte du fait que $\uP^\gen$ est générique.
Ainsi $\Res(\uP^\gen) \mid_{\bfA'} \Res(\uQ')$ et 
en spécialisant on obtient
$\Res(\uP) \mid_{\bfA} \Res(\uQ)$.
\end{proof}

\begin{prop}[Multiplicativité]
Soit $P_1, P'_1, P_2, \ldots, P_n$ des polynômes homogènes de degrés $d_1, d'_1, d_2, \ldots, d_n$. 
On a alors l'égalité :
$$
\Res(P_1P'_1,\, P_2, \ldots, P_n) 
\ =\  
\Res(P_1, P_2, \ldots, P_n) \ \Res(P'_1, P_2, \ldots, P_n)
$$
\end{prop}

\index{théorème!de multiplicativité!du résultant}%

\begin{proof}
Notons :
$$
\uP = (P_1, P_2, \ldots, P_n), \qquad \uP' = (P'_1, P_2, \ldots, P_n), \qquad
\uQ = (P_1P'_1, P_2, \ldots, P_n)
$$
Il suffit de se placer en terrain générique au-dessus de $\bbZ$ en
considérant l'anneau de polynômes $\bfA = \bbZ[\uT,\uU]$ où $\uT$ est une
famille d'indéterminées allouée aux coefficients des $P_i$ et $\uU$
à ceux de $P'_1$. Ainsi les suites $\uP$ et~$\uP'$ sont génériques
et nous allons pouvoir utiliser l'arithmétique des anneaux de
polynômes sur $\mathbb Z$. On note $p_i$ le coefficient en $X_i^{d_i}$
de $P_i$ et $p'_1$ celui en $X_1^{d'_1}$ de $P'_1$.

\smallskip
\noindent
Comme $\langle\uQ\rangle \subset \langle\uP\rangle$, le théorème  de divisibilité du résultant
fournit $\Res(\uP) \mid \Res(\uQ)$ et de la même manière, $\Res(\uP') \mid \Res(\uQ)$.

\smallskip
\noindent
D'après~\ref{IrreductibiliteResultant}, $\Res(\uP)$ et $\Res(\uP')$ sont
premiers. De plus, ils ne sont pas associés : en effet, en notant $e_1
= d_2 \cdots d_n$, $\Res(\uP)$ est dans $\bbZ[\uT]$ et contient un
monôme du type $(p_2^{e_2} \cdots p_n^{e_n}) p_1^{e_1}$ tandis que
$\Res(\uP')$ contient un monôme du type $(p_2^{e_2'} \cdots
p_n^{e_n'}) {p'_1}^{e_1}$.  On en déduit~:
$$
\Res(\uP)\,\Res(\uP') \mid  \Res(\uQ)
$$
On vérifie que $\Res(\uP)\Res(\uP')$ et $\Res(\uQ)$ ont même poids en les coefficients de 
$P_2, \dots, P_n$ et du produit $P_1P'_1$.
Ainsi il existe $u$ \textit{dans $\bbZ$} tel que 
$\Res(\uQ) = u \Res(\uP)\Res(\uP')$.
Avec les spécialisations $P_i := X_i^{d_i}$ et $P'_1 := X_1^{d'_1}$, 
on obtient $u=1$, ce qui termine la preuve.
\end{proof}

\begin{prop}[Résultant de la suite permutée]
Soit $\sigma \in \fS_n$. En notant $\epsilon(\sigma)$ la signature de $\sigma$, on a 
l'égalité 
$$
\Res(P_{\sigma(1)}, \dots, P_{\sigma(n)}) 
\ = \ 
\epsilon(\sigma)^{d_1\cdots d_n}\,
\Res(P_1, \dots, P_n)
$$
\end{prop}

\begin{proof}
Il suffit de faire la preuve en générique. On profite alors du fait que l'idéal d'élimination 
est monogène engendré par le résultant.
Comme $\langle \uP_\sigma \rangle = \langle \uP \rangle$, les idéaux d'élimination sont égaux.
Ainsi, il existe un inversible $u \in \bfA$ tel que $\Res(\uP_\sigma) = u\,\Res(\uP)$.
Pour des raisons de poids, on a $u \in \bfk$.
On peut alors spécialiser sans affecter~$u$. 
Spécialisons en le jeu étalon $\uX^D = (X_1^{d_1}, \dots, X_n^{d_n})$ de résultant $1$.
On a alors 
$$ 
u 
\ = \ 
\Res(X_{\sigma(1)}^{d_{\sigma(1)}}, \dots, X_{\sigma(n)}^{d_{\sigma(n)}}) 
$$
L'utilisation répétée de la multiplicativité du résultant fournit :
$$
u
\ = \ 
\Res\big(X_{\sigma(1)}^{d_{\sigma(1)}}, \dots, X_{\sigma(n)}^{d_{\sigma(n)}}\big)
\ =\  
\Res(X_{\sigma(1)}, \dots, X_{\sigma(n)})^{d_{\sigma(1)} \cdots {d_{\sigma(n)}}}
$$
Or le résultant de $n$ formes linéaires est leur déterminant (cf.~\ref{ResultantFormesLineaires}), 
donc $\Res(X_{\sigma(1)}, \dots, X_{\sigma(n)}) = \epsilon(\sigma)$,
ce qui termine la preuve.
\end{proof}

\subsubsection{Le théorème général de composition pour $\Res(P_1\circ\uQ, \cdots,P_n\circ\uQ)$}

Soit un système $\uQ = (Q_1, \cdots, Q_n)$ où tous les $Q_i$ sont
homogènes de même degré $q$. Pour $F \in \bfA[\uX]$, on pose:
$$
F \circ \uQ = F(Q_1, \cdots, Q_n)
$$
Il est clair que si $F$ est homogène de degré $d$, alors $F \circ \uQ$ est homogène
de degré $qd$.

\label {NOTA12-FoQ}%

On se donne un système $\uP$ de format $D = (d_1, \cdots, d_n)$ et on définit le système
$\uP' = (P'_1, \cdots, P'_n)$ par $P'_i = P_i \circ \uQ$. C'est un système homogène
de format $D' = (d'_1, \cdots, d'_n) := (qd_1, \cdots, qd_n)$. En notant~$\delta'$
son degré critique et $\delta$ celui de $\uP$, on a
$$
\delta' = q\delta + n(q-1)
$$
Le résultat ci-dessous généralise~\ref{LinearComposition} consacré au cas linéaire $q=1$.

\begin {theo} Dans le contexte ci-dessus, on a:
$$
\Res(P_1\circ\uQ, \cdots,P_n\circ\uQ) = \Res(\uQ)^{d_1\cdots d_n}\, \Res(\uP)^{q^{n-1}}
$$
\end {theo}

\begin {proof} \leavevmode

On note $\calR=\Res(\uP)$. Il y a un exposant $e$ tel que $X_i^e\,\calR \in \langle P_1,
\cdots, P_n\rangle$. En évaluant en $X_i := Q_i$:
$$
Q_i^e\, \calR \in \langle P'_1, \cdots, P'_n\rangle
$$
Le théorème de divisibilité fournit alors:
$$
\Res(\uP') \quad \hbox {divise}\quad \Res(Q_1^e\,\calR, \cdots, Q_n^e\,\calR)
$$
D'après le théorème de multiplicativité, le résultant de droite est de
la forme $\calR^s\, \Res(\uQ)^t$ \idest{} $\Res(\uP)^s\, \Res(\uQ)^t$ pour
deux exposants $s,t \ge 1$.

On peut se permettre de passer en générique sur $\bbZ$ pour
$\uP, \uQ$.  On dispose donc d'une relation de divisibilité
$\Res(\uP') \mid \Res(\uP)^s\, \Res(\uQ)^t$ dans un anneau de
polynômes sur $\bbZ$ dans lequel les seuls inversibles sont $\pm 1$ et
$\Res(\uP)$, $\Res(\uQ)$ des irréductibles non associés. On en
déduit qu'il y a des exposants $a, b\ge 0$ et $\varepsilon = \pm 1$
tels que:
$$
\Res(\uP') = \varepsilon\, \Res(\uP)^a\, \Res(\uQ)^b
$$
Pour déterminer $(a,b,\varepsilon)$, on spécialise en prenant $P_i = p_iX_i^{d_i}$ et $Q_j = q_jX_j^q$ où
$p_1, \cdots, p_n, q_1, \cdots, q_n$ sont des indéterminées sur~$\bbZ$.
Alors $P'_i = p_i(q_i X_i^q)^{d_i} = p_iq_i^{d_i} X_i^{d'_i}$. D'où:
$$
\Res(\uP') = (p_1q_1^{d_1})^{\widehat {d'_1}} \cdots  (p_nq_n^{d_n})^{\widehat{d'_n}}
$$
Mais:
$$
\widehat{d'_i} = q^{n-1} \widehat {d_i} \qquad \text {donc} \qquad
d_i \widehat{d'_i} = d_1 \cdots d_n\, q^{n-1}
$$
ce qui conduit à
$$
\Res(\uP') = \big( p_1^{\widehat {d_1}} \cdots p_n^{\widehat {d_n}} \big)^{q^{n-1}}
\times \big( (q_1 \cdots q_n)^{q^{n-1}}\big)^{d_1 \cdots d_n}
\qquad \hbox {ou encore} \qquad
\Res(\uP')  =  \Res(\uP)^{q^{n-1}}\, \Res(\uQ)^{d_1\cdots d_n}
$$
On obtient alors les exposants $a = q^{n-1}$, $b = d_1\cdots d_n$ ainsi que $\varepsilon = 1$,
ce qui termine la preuve du théorème.
\end {proof}

\subsubsection{Le cas particulier de la composition $P'_i = P_i(X_1^q, \cdots, X_n^q)$}
\label{CasParticulierComposition}

Le corollaire qui suit apporte un certain nombre de précisions dans le
cas particulier $Q_i = X_i^q$. Nous suggérons cependant au lecteur
d'étudier au préalable le cas où $\uP$ est linéaire, qu'ici nous
notons plutôt~$\uL$, de sorte que $\uP'$, défini par $P'_i =
L_i(X_1^q, \cdots, X_n^q)$, est de format $D' = (q, \cdots, q)$, de
degré critique $\delta' = n(q-1)$.  Dans ce cas particulier, en ayant
introduit la forme coordonnée sur le $D'$-mouton-noir:
$$
\mu = (X_1^{q-1} \cdots X_n^{q-1})^\star :
\bfA[\uX]_{\delta'} \to \bfA
$$
le résultat principal est le suivant
$$
\omegaRes{\uP'} = \lambda\, \mu  \qquad \text{avec} \qquad
\lambda = \det(\dsL)^{q^{n-1}-1}
$$
Pour le montrer, on pourra utiliser le fait que $\mu$ est une base du
sous-module de $\bfA[\uX]_{\delta'}^\star$ constitué des formes
linéaires nulles sur $\Jex_{1,\delta'}(D') = \langle X_1^q, \dots,
X_n^q\rangle_{\delta'}$. D'autre part, on dispose de la double inclusion suivante:
$$
\det(\dsL)\,\Jex_{1,\delta'}(D') \quad\subseteq\quad \langle \uP'\rangle_{\delta'}
\quad\subseteq\quad  \Jex_{1,\delta'}(D')
$$
Elle montre, en supposant $\uL$ générique, que $\omegaRes{\uP'}$ appartient
à ce sous-module de $\bfA[\uX]_{\delta'}^\star$, donc est multiple de $\mu$.
Le multiplicateur se détermine en évaluant les formes linéaires en le
$D'$-mouton-noir.

\smallskip
Dans le cas très particulier où $n=3$, $q=2$, $D' = (2,2,2)$ et $\delta'=3$,
le résultat principal peut être établi encore plus directement. Adoptons
les notations suivantes:
$$
[L_1,L_2,L_3] = [X_1,X_2,X_3]\,\begin {bmatrix}
a_1 &a_2 &a_3 \\
b_1 &b_2 &b_3 \\
c_1 &c_2 &c_3 \\    
\end {bmatrix}
\qquad\qquad
\setlength{\tabcolsep}{2pt}
\left\{
\begin{tabular}{rcp{5cm}} 
$P'_{1}$ & $=$ & $a_{1}X_{1}^{2} + b_{1}X_{2}^{2} + c_{1}X_{3}^{2}$\\ [0.1cm] 
$P'_{2}$ & $=$ & $a_{2}X_{1}^{2} + b_{2}X_{2}^{2} + c_{2}X_{3}^{2}$\\ [0.1cm] 
$P'_{3}$ & $=$ & $a_{3}X_{1}^{2} + b_{3}X_{2}^{2} + c_{3}X_{3}^{2}$\\ [0.1cm] 
\end{tabular}
\right.
$$
Vu le format $D' = (2,2,2)$, on a $\omegaRes{\uP'} = \omega_{\uP'}$. On
va fournir $\Syl_{\delta'}(\uP')$  dans des bases ad-hoc de
$\rmK_{0,\delta'}$ (de dimension 10) et $\rmK_{1,\delta'}$ (de dimension 9):
$$
\rmK_{1,\delta'} = \bigoplus_i \bfA[\uX]_{\delta'-q}\,e_i =
\bfA[\uX]_1\,e_1 \oplus \bfA[\uX]_1\,e_2 \oplus \bfA[\uX]_1\,e_3 
$$
La voici:
$$
\Syl_{\delta'}(\uP') =
\NorthEastBordermatrix{
\Veti{X_{1}\,e_{1}} & \Veti{X_{1}\,e_{2}} & \Veti{X_{1}\,e_{3}} & \Veti{X_{2}\,e_{1}} & \Veti{X_{2}\,e_{2}} & \Veti{X_{2}\,e_{3}} & \Veti{X_{3}\,e_{1}} & \Veti{X_{3}\,e_{2}} & \Veti{X_{3}\,e_{3}} & \\
a_{1} & a_{2} & a_{3} & . & . & . & . & . & . & \Heti{X_{1}^{3}} \\
b_{1} & b_{2} & b_{3} & . & . & . & . & . & . & \Heti{X_{1}X_{2}^{2}} \\
c_{1} & c_{2} & c_{3} & . & . & . & . & . & . & \Heti{X_{1}X_{3}^{2}} \\
. & . & . & a_{1} & a_{2} & a_{3} & . & . & . & \Heti{X_{1}^{2}X_{2}} \\
. & . & . & b_{1} & b_{2} & b_{3} & . & . & . & \Heti{X_{2}^{3}} \\
. & . & . & c_{1} & c_{2} & c_{3} & . & . & . & \Heti{X_{2}X_{3}^{2}} \\
. & . & . & . & . & . & a_{1} & a_{2} & a_{3} & \Heti{X_{1}^{2}X_{3}} \\
. & . & . & . & . & . & b_{1} & b_{2} & b_{3} & \Heti{X_{2}^{2}X_{3}} \\
. & . & . & . & . & . & c_{1} & c_{2} & c_{3} & \Heti{X_{3}^{3}} \\
. & . & . & . & . & . & . & . & . & \Heti{X_{1}X_{2}X_{3}} \\
}
$$
La dernière ligne mouton-noir $X_1X_2X_3$ étant nulle (pourquoi?), tous les mineurs d'ordre 9 sont
nuls à l'exception de l'un d'entre eux. Ceci a pour conséquence que la forme linéaire $\omega_{\uP'}$
des mineurs signés de $\Syl_{\delta'}(\uP')$ est \og portée\fg{} par $\mu = (X_1X_2X_3)^\star$ et
que
$$
\omegaRes{\uP'} = \omega_{\uP'} = \det(\dsL)^3\, \mu
$$
Après ces quelques lignes consacrées au cas particulier $\uP = \uL$
linéaire dont l'étude complète est laissée au lecteur, voici l'énoncé
plus général relatif à la composition $P_i(X_1^q, \cdots, X_n^q)$.

\begin {coro}
Pour $F \in \bfA[\uX]$, on note $F(\uX^q)$ pour $F(X_1^q,\cdots,X_n^q)$ et on définit
l'injection monomiale
$$
\theta_q : \bfA[\uX] \mapsto \bfA[\uX], \qquad
F \mapsto (X_1\cdots X_n)^{q-1}\, F(\uX^q)
$$
Soit $\uP$ un système de format $D = (d_1, \cdots, d_n)$ et $\uP' =
(P'_1, \cdots, P'_n)$ le système de format $D' = (qd_1, \cdots, qd_n)$
défini par $P'_i=P_i(\uX^q)$.  On note $\delta$ le degré critique de
$\uP$ et $\delta' = q\delta + n(q-1)$ celui de~$\uP'$.

\begin {enumerate} [\rm i)]
\item
Les résultants de $\uP$ et $\uP'$ sont reliés par l'égalité:
$$
\Res(\uP') = \Res(\uP)^{q^{n-1}}
$$

\item
L'image de $\bfA[\uX]_\delta$ par $\theta_q$ est contenue dans $\bfA[\uX]_{\delta'}$
et $\theta_q$ transforme le mouton noir $X^\emouton$ en le mouton noir $X^{D' - \Un}$.
De plus, l'image par $\theta_q$ de tout déterminant bezoutien de $\uP$ est un déterminant
bezoutien de $\uP'$.

\item
Les formes fondamentales de $\uP$ et $\uP'$ sont reliées par l'égalité sur $\bfA[\uX]_\delta$:
$$
\omegaRes{\uP'} \circ \theta_q = \lambda \times \omegaRes{\uP}  \qquad \text{avec}\qquad
\lambda = \Res(\uP)^{q^{n-1}-1}   
$$

\item
La forme $\omegaRes{\uP'}$ est nulle sur le supplémentaire monomial de $\theta_q(\bfA[\uX]_\delta)$
dans $\bfA[\uX]_{\delta'}$ i.e. sur les $(X^{\beta})_{|\beta|=\delta'}$ tels qu'il existe un $j$ vérifiant
$\beta_j \not\equiv -1 \bmod q$.

En conséquence, en notant $\pi_q : \bfA[\uX]_{\delta'} \to
\theta_q(\bfA[\uX]_\delta)$ la projection parallèlement à ce
supplémentaire monomial, on a l'égalité sur $\bfA[\uX]_{\delta'}$:
$$
\omegaRes{\uP'} = \lambda \times (\omegaRes{\uP} \circ \theta_q^{-1} \circ \pi_q)
$$
\end {enumerate}
\end {coro}

\label {NOTA12-FXq}%
%
%

\begin {proof} \leavevmode

L'application $\theta_q$ est bien monomiale et injective puisque:
$$
\theta_q(X^\alpha) = X^{\alpha'}    \quad \text{avec} \quad
\alpha' = q\alpha + (q-1, \cdots, q-1)
$$
De plus $|\alpha'| = q|\alpha| + n(q-1)$. Et l'on a $\alpha'_i 
\equiv -1 \bmod q$ pour tout $i$.  Il est facile de voir que
$\theta_q(\bfA[\uX]_\delta)$ est le sous-module monomial de
$\bfA[\uX]_{\delta'}$ de base les $(X^\beta)_{|\beta|=\delta'}$ tels que $\beta_i 
\equiv -1 \bmod q$ pour tout $i$.

\medskip  
i) Résulte directement du théorème précédent.

\medskip
ii) La première partie ne pose pas de difficulté. Soit $\nabla = \nabla(\uX)$ un déterminant bezoutien
de $\uP$ donc $\nabla = \det \dsV(\uX)$ avec une matrice homogène $\dsV(\uX)$ vérifiant:
$$
[\uP] = [\uX].\dsV(\uX)
$$
En réalisant la substitution $X_i := X_i^q$, on obtient une matrice
bezoutienne $\dsV'(\uX)$ de $\uP'$:
$$
[\uP'] = [\uX^q]. \dsV(\uX^q) = [\uX]. \dsV'(\uX)
\qquad \text{avec} \qquad
\dsV'(\uX) = \diag(X_1^{q-1}, \cdots, X_n^{q-1}) \dsV(\uX^q)
$$
dont le déterminant est $(X_1\cdots X_n)^{q-1}\, \nabla(\uX^q)$, égal à
$\theta_q(\nabla)$ par définition.

\medskip
iii)
Par définition, $\theta_q(P_i) = (X_1\cdots X_n)^{q-1} P'_i \in \langle \uP'\rangle$.
Donc $\omegaRes{\uP'} \circ \theta_q$ est nulle sur $\langle \uP\rangle$. On
peut supposer $\uP$ générique donc il existe $\lambda$ dans l'anneau des
coefficients de $\uP$ tel que:
$$
\omegaRes{\uP'} \circ \theta_q = \lambda \times \omegaRes{\uP}
$$
En évaluant en un déterminant bezoutien $\nabla_\uP$ et en utilisant
que $\theta_q(\nabla_\uP)$ est un déterminant bezoutien de $\uP'$, il
vient:
$$
\Res(\uP') = \lambda\, \Res(\uP)
$$
d'où la détermination de $\lambda$ en utilisant le point i).

\medskip
iv) Pour un monôme $X^\beta$ de degré $\delta'$  tel qu'il existe
un $j$ vérifiant $\beta_j \not\equiv -1 \bmod q$, montrons que
$$  
\Res(\uP)\, X^\beta \in \langle \uP'\rangle_{\delta'}
\leqno (\star)
$$  
On introduit $\alpha\in \bbN^n$ en posant $\alpha_i = \lfloor \beta_i/q\rfloor$. D'après
le lemme qui suit, on a $|\alpha| \ge \delta + 1$. Donc, d'après le théorème~\ref{omegaresNablaInElimIdeal}
$$
\Res(\uP)\, X^\alpha \in \langle \uP\rangle
$$
En réalisant la substitution $X_i := X_i^q$ dans cette appartenance, il vient:
$$
\Res(\uP)\, X^{q\alpha} \in \langle \uP'\rangle
$$
Ceci prouve $(\star)$ puisque $X^\beta$ est multiple de $X^{q\alpha}$ vu que, par définition
de $\alpha$, on a $\beta_i \ge q\alpha_i$ pour tout $i$.

\smallskip 

Pour montrer le point iv) on peut supposer $\uP$ générique auquel cas $\Res(\uP)$ est
régulier. Comme $\Res(\uP)\, X^\beta \in \langle \uP'\rangle_{\delta'}$, on a
$$
\Res(\uP)\, \omegaRes{\uP'}(X^\beta) = 0 \qquad \text{donc} \qquad
\omegaRes{\uP'}(X^\beta) = 0
$$
\end {proof}

\begin {lem} 
Soient $\delta \in \bbN$, $q \in \bbN^*$ et $\beta \in \bbN^n$ tels que $|\beta| = q\delta + n(q-1)$.
On suppose qu'il existe un indice $j$ tel que $\beta_j \not\equiv -1 \bmod q$. Alors:
$$
  \sum_{i=1}^n \Big\lfloor\frac{\beta_i}{q}\Big\rfloor \ge \delta + 1
$$  
\end {lem}   

\begin {proof} 
Ecrivons la division euclidienne de $\beta_i$ par $q$
$$
\beta_i = q q_i + r_i  \quad 0 \le r_i \le q-1  \qquad \text {de sorte que} \qquad
q_i = \Big\lfloor\frac{\beta_i}{q}\Big\rfloor
$$  
On a alors
$$
\sum_i \beta_i = q\sum_i q_i + \sum_i r_i  \quad \text{\idest} \quad
q\delta + n(q-1) =  q\sum_i q_i + \sum_i r_i
$$
Ou encore:
$$
q \Big(\sum_i q_i - \delta\Big) = \sum_i (q-1-r_i)
$$
Chaque $q-1-r_i$ est $\ge 0$ et $q-1 -r_j > 0$ puisque $\beta_j \not\equiv -1 \bmod q$.
On en déduit que la somme de droite est $> 0$. Il en est donc de même de celle de gauche,
ce qu'il fallait montrer.
\end {proof}  

\cleardoublepage

\section{Des identités entre déterminants excédentaires $\det W_\calM(\protect\uP)$} 
\label{ChapWW}

Concernant les déterminants $\det W_\calM(\uP)$, commençons par
rappeler nos hésitations terminologiques évoquées dans la dernière
section du chapitre \ref{ChapObjetsSylvester}: excédentaires ou de
Macaulay? Comme nous n'avons pas vraiment tranché nous utiliserons
indifféremment l'un ou l'autre. 

\medskip

Ce chapitre est consacré à l'étude de diverses relations entre
déterminants de Macaulay $\det W_\calM(\uP)$.  Nous en avons déjà
rencontré dans la section \ref{SousSectionDecompositionJ1circJ1plus}
où nous avons défini une décomposition $\calM = \calM^+ \oplus
\calM^\circ$ et montré:
$$
\det W_\calM(\uP) \ = \ \det W_{\calM^+/X_n}(\uP) \times \det W_{\calM^\circ}(\uP)
\qquad 
\text{où } \calM \subset \Jex_{1,d+1} \text{ et }  d \geqslant \delta+1
$$
Ce qui a conduit au résultat suivant (cf. la proposition~\ref{XnDecompositionJ1d})
$$
\det W_{1,d+1}(\uP) \ = \ \det W_{1,d}(\uP) \times \det W^\circ_{1,d+1}(\uP)
\qquad d \geqslant \delta + 1
$$
Une telle égalité prouve que $\det W_{1,d}(\uP)$ divise $\det W_{1,d+1}(\uP)$
pour $d \geqslant \delta+1$ et explicite le quotient comme un déterminant de Macaulay.
Il est important de noter que la relation ci-dessus peut être écrite de
la manière suivante en faisant intervenir le $(n-1)$-système $\uP'$ de
$\bfA[X_1, \dots, X_{n-1}]$ (cf l'énoncé du théorème \ref{DetW1dDiviseDetW1dplus1})
$$
\det W_{1,d+1}(\uP) \ =\ \det W_{1,d}(\uP) \times \det W_{1,d+1}(\uP')
\qquad d \geqslant \delta + 1
$$
Pour réaliser cette étude, il est indispensable d'étudier de manière
très précise, non pas les \textit{déterminants} --- par on ne sait quel  calcul
\og brute force \fg{} ne menant probablement à rien --- mais les 
\textit{endomorphismes} dont ils sont issus. 
Et de ce fait, les déterminants seront rarement comparés directement. 
L'essentiel de notre tâche va plutôt consister à faire
intervenir des décompositions monomiales, des induit-projetés etc.
Nous n'avons pas compté les décompositions triangulaires qui interviennent
dans la suite ni les petits diagrammes commutatifs qui nous ont permis de
venir à bout des relations déterminantales, mais il y en a un certain nombre !

\medskip

Une des activités principales va ainsi tourner autour du partitionnement des
monômes en catégories adéquates ou de manière équivalente autour de
décompositions ad-hoc de sous-modules monomiaux. Ceci permettra par exemple
d'obtenir, pour $h \geqslant 2$ fixé, la \og relation de divisibilité en $d$ \fg:
$$
\forall\, d \in \bbN, \quad 
\det W_{h, d+1} \ = \ \det W_{h,d} \times \det W_{h,d+1}^\eq
$$
Un autre aspect de l'étude est l'obtention de relations de récurrence entre
divers déterminants de Macaulay conduisant à la détermination de $\Res(\uP)$ et
de la forme $\omegaRes{\uP}$.

\smallskip

Nous étudierons également la \og relation de divisibilité en $h$\fg{}
qui est plus subtile car elle repose à la base sur $\det W_{2,d}(\uP)
\mid \det W_{1,d}(\uP)$ pour n'importe quel $d$.  Cette dernière relation
de divisibilité n'est abordée que partiellement dans ce chapitre et
fera l'objet de chapitres ultérieurs.  Elle n'est pas de même nature
que les autres au sens où le quotient n'est pas un déterminant de
Macaulay: pour $d \geqslant \delta + 1$, le quotient est le résultant $\Res(\uP)$
et pour $d = \delta$, c'est $\omegaRes{\uP}(X^\emouton)$.

\subsection{\'Egalités déterminantales : stratégies triangulaires à venir} 
\label{subsectionDecompositionTriangulaire}

\subsubsection{Décomposition triangulaire d'un endomorphisme et endomorphismes quasi-conjugués}

\begin{defn} \label{DecompositionTriangulaireDef}
Soit $u$ un endomorphisme d'un module libre $E$ de rang fini.
Une décomposition triangulaire de $u$ est la donnée d'une famille finie 
de sous-modules libres $(E_i)_{i \in I}$ où $(I, \preccurlyeq)$ est un ensemble ordonné, 
le tout vérifiant 
$$
E \ = \ \bigoplus_{i \in I} E_i 
\qquad \hbox{et } \qquad 
u(E_i) \, \subset \, \bigoplus_{i' \preccurlyeq i} E_{i'}
$$
Dans ce cas, $\det u = \prod_{i \in I} \det u_i$ 
et $\chi_u = \prod_{i \in I} \chi_{u_i}$, où $u_i$ désigne l'induit-projeté de $u$ sur $E_i$.

\medskip

On dira également que $u$ est triangulaire relativement à $E = \bigoplus_{i\in I} E_i$, 
sous-entendu relativement à la structure d'ordre $\preccurlyeq$ de $I$.
\end{defn}

\begin{rmq}
Il faut noter qu'il est inutile de supposer que $\preccurlyeq$ est une relation d'ordre 
\textit{total}.
Cependant, pour justifier l'égalité déterminantale, 
on peut considèrer une relation d'ordre total $\leqslant$ sur $I$
vérifiant $i' \preccurlyeq i \Rightarrow i' \leqslant i$, 
et utiliser ce petit fait :
\begin{quote}
\textit{
Si $u$ est triangulaire relativement à $E = \bigoplus_{i \in I} E_i$ et $(I, \preccurlyeq)$
alors $u$ est triangulaire relativement à cette même somme directe et $(I, \leqslant)$.
Autrement dit 
$$
u(E_i) \, \subset \, \bigoplus_{i' \preccurlyeq i} E_{i'}
\quad \Longrightarrow  \quad 
u(E_i) \, \subset \, \bigoplus_{i' \leqslant i} E_{i'}
$$
}
\end{quote}
\parbox{0.8\linewidth}{
En ordonnant $(I, \leqslant)$ de manière \textit{croissante},
la matrice de $u$ dans une base adaptée à la décomposition est alors 
triangulaire \textit{supérieure} par blocs. 
Et chaque bloc diagonal est la matrice de $u_i$ dans la base mentionnée de $E_i$. 
D'où l'égalité déterminantale $\det u = \prod_{i \in I} \det u_i$ 
et celle des polynômes caractéristiques $\chi_u = \prod_{i \in I} \chi_{u_i}$.
}
\parbox{0.2\linewidth}{
\begin{tikzpicture}[scale = 0.6]
\clip (-1,-0.5) rectangle (4.3,3.9) ;
\draw (1.5, 3) node[above] {\tiny $E_i$} ;
\draw (3, 1.5) node[right] {\tiny $E_i$} ;
\draw (3, 2.5) node[right] {\tiny $E_{i'}$} ;
\draw (0,2) rectangle (1,3) ;
\draw (2,0) rectangle (3,1) ;
\draw (2,2) rectangle (3,3) ;
\draw[fill, gray!20] (1,2) rectangle (2,3) ;
\draw (1,2) rectangle (2,3) ;
\draw[fill, gray!60] (1,1) rectangle (2,2) ;
\draw (1.5, 1.5) node {\tiny $u_i$} ;
\draw (1,1) rectangle (2,2) ;
\draw (0,0) rectangle (3,3) ;
\end{tikzpicture}
}

Trouver une relation d'ordre total $\leqslant$ qui couvre la relation d'ordre partiel $\preccurlyeq$ 
est toujours possible (prendre pour plus petit élément de $\leqslant$ 
un élément de $I$ sans $\preccurlyeq$-prédécesseur, etc.).
Dans ce cas, on dit que 
l'ordre total $\leqslant$ est une extension linéaire de l'ordre partiel $\preccurlyeq$.
Dans notre contexte, une extension linéaire de $\preccurlyeq$ sera 
évidente à exhiber. Elle sera du type lex, glex, grevlex\dots 
\end{rmq}

\begin{exemple}
Voici un exemple simple relatif à la structure triangulaire de $W_n$. 
Soit $X^\alpha \in \Jex_n$ ; on a $\minDiv(X^\alpha) = 1$ de sorte que
$W_n(X^\alpha) = \pi_{\Jex_n}\Bigl(\frac{X^\alpha}{X_1^{d_1}}\, P_1\Bigr)$.
Notons $X^\gamma 
= X_1^{\gamma_1} \cdots X_n^{\gamma_n}$ un monôme quelconque de $P_1$, 
donc $\gamma_1 + \cdots + \gamma_n = d_1$. 
Alors le monôme
$X^{\alpha'} = \frac{X^\alpha}{X_1^{d_1}}\,X^\gamma$ a pour exposants
$$
\alpha'_1 = \alpha_1 - (d_1-\gamma_1) \, \leqslant \, \alpha_1, \qquad
\alpha'_2 = \alpha_2 + \gamma_2 \, \geqslant \, \alpha_2, \qquad \dots \qquad
\alpha'_n = \alpha_n + \gamma_n \, \geqslant \, \alpha_n
$$
Définissons l'ordre $\preccurlyeq$ par 
$$
\alpha' \preccurlyeq \alpha
\ \iff \ 
\text{$|\alpha'| = |\alpha|$ et $\alpha'  \leqslant_{\times} \alpha$}
$$
où $\leqslant_{\times}$ désigne la structure d'ordre 
\og produit cartésien \fg{} 
$\bbN_{\leqslant} \times \bbN_{\geqslant} \times \cdots \times \bbN_{\geqslant}$. 
Avec cet ordre $\preccurlyeq$, on a~:
$$
W_n(X^\alpha) \ \in 
\sum_{X^{\alpha'} \in \, \Jex_n \atop \alpha' \preccurlyeq \alpha} \bfA \,X^{\alpha'}
$$
Cette structure d'ordre $\preccurlyeq$ est couverte par la structure d'ordre
lexicographique $\leqslant _{\rm lex}$ au sens où on a l'implication
$\alpha' \preccurlyeq \alpha \Rightarrow \alpha'\leqslant _{\rm lex}\alpha$. 
En effet, si $\alpha' \preccurlyeq \alpha$ alors $\alpha' \leqslant_\times \alpha$.
Donc ou bien $\alpha'_1 < \alpha_1$, auquel cas $\alpha' <_{\rm lex} \alpha$,
ou bien $\alpha'_1 = \alpha_1$ auquel cas $\alpha' = \alpha$ 
(car $|\alpha' | = |\alpha |$ et que $\alpha'_i \geqslant \alpha_i$ pour tout $i$).
Par conséquent, si on range les monômes de
$\Jex_{n,d}$ de manière croissante pour l'ordre $\leqslant _{\rm lex}$, 
la matrice de $W_{n,d}$ est triangulaire supérieure.
\end{exemple}

\begin{defn} \label{EndosQuasiConjugues} 
Soient $(u,E)$ et $(v,F)$ deux endomorphismes.
On dit qu'ils sont quasi-conjugués relativement aux décompositions triangulaires 
$E = \bigoplus_i E_i$ et $F = \bigoplus_i F_i$ portant 
sur le même ensemble ordonné $(I, \preccurlyeq)$
lorsque l'induit-projeté $u_i$ est conjugué à l'induit-projeté $v_i$ ;
autrement dit, lorsqu'il existe un isomorphisme $\psi_i : E_i \rightarrow F_i$ 
faisant commuter le diagramme :
$$
\def \Binduitproj{\begin{array}{c}
     \hbox {$u_i$ induit-proj.} \\ \hbox {de $u$} \end {array}}
\def \Winduitproj{\begin{array}{c}
     \hbox {$v_i$ induit-proj.} \\ \hbox {de $v$} \end {array}}
\xymatrix @R = 1.5cm @C = 5cm @M=0.4pc{
E_i \ar[d]|-{\Binduitproj} \ar[r]^{\psi_i}_{\simeq} &  F_i \ar[d]|-{\Winduitproj}
\\
E_i \ar[r]^{\psi_i}_{\simeq} & F_i
\\
}
$$
En particulier, on a $\det u = \det v$ et même $\chi_u = \chi_v$. 
\end{defn}

\index{quasi-conjugués (endomorphismes)}%

\subsubsection{Un cas d'école de structure triangulaire : le Zeroed-system}

Le résultat qui vient permet notamment de simplifier le calcul de $\det W_\calM(\uP)$ 
si $\calM \subset \Jex_{h,d}$ avec $h \geqslant 2$, en particulier celui de $\det W_{h,d}(\uP)$. 
Tout d'abord, l'endomorphisme $W_\calM(\uP)$ ne dépend pas des $h-1$ derniers polynômes~$P_i$ 
que l'on peut donc remplacer par $X_i^{d_i}$. Ceci est banal. 
Mais ce qui est beaucoup moins banal, 
c'est que les $h-1$ dernières variables peuvent être mises à $0$ sans changer le déterminant.
Introduisons le système $\Ph$ obtenu en spécialisant les $h-1$ dernières variables à $0$ et
en remplaçant les $h-1$ derniers polynômes~$P_i$ par~$X_i^{d_i}$, c'est-à-dire :
$$
\Ph
\ = \ 
\big( 
P_1^{(h)}, \dots, P_{n-h+1}^{(h)},\ X_{n-h+2}^{d_{n-h+2}}, \dots, X_n^{d_n}
\big) 
\quad \text{où } 
P_i^{(h)} = P_i(X_1, \dots, X_{n-h+1}, 0, \dots, 0)
$$
\label{AncienJeuPh}
\begin{prop} \label{ZeroedSystem}
Pour tout $\calM \subset \Jex_{h,d}$, les
endomorphismes $W_\calM(\uP)$ et $W_\calM\big(\uP^{(h)}\big)$ sont
quasi-conjugués et a fortiori ont même déterminant. 
\end{prop}

Avant de fournir la preuve, prenons l'exemple du format $D = (3,1,2)$. On a :
\smallskip

{%
\setlength{\tabcolsep}{2pt}

\noindent
$\left\{
\begin{tabular}{rcp{14.5cm}} 
$P_{1}$ & $=$ & $a_{1}X_{1}^{3} + a_{2}X_{1}^{2}X_{2} + a_{3}X_{1}X_{2}^{2} + a_{4}X_{2}^{3} + 
a_{5}X_{1}^{2}X_{3} + a_{6}X_{1}X_{2}X_{3} + a_{7}X_{2}^{2}X_{3} + 
a_{8}X_{1}X_{3}^{2} + a_{9}X_{2}X_{3}^{2} + a_{10}X_{3}^{3}$\\ [0.1cm] 
$P_{2}$ & $=$ & $b_{1}X_{1} + b_{2}X_{2} + b_{3}X_{3}$\\ [0.1cm] 
$P_{3}$ & $=$ & $c_{1}X_{1}^{2} + c_{2}X_{1}X_{2} + c_{3}X_{2}^{2} + c_{4}X_{1}X_{3} + 
c_{5}X_{2}X_{3} + c_{6}X_{3}^{2}$\\ [0.1cm] 
\end{tabular} 
\right.
$

\smallskip

\noindent
et pour $h = 2$ :

\smallskip
\noindent
$\left\{
\begin{tabular}{rcp{14.5cm}} 
$P_{1}^{(h)}$ & $=$ & $a_{1}X_{1}^{3} + a_{2}X_{1}^{2}X_{2} + a_{3}X_{1}X_{2}^{2} + a_{4}X_{2}^{3}$\\ [0.1cm] 
$P_{2}^{(h)}$ & $=$ & $b_{1}X_{1} + b_{2}X_{2}$\\ [0.1cm] 
$P_{3}^{(h)}$ & $=$ & $X_{3}^{2}$\\ [0.1cm] 
\end{tabular} 
\right.
$
}

\smallskip
\noindent
Pour $\calM = \Jex_{h,d}$ avec $h=2$ et $d=6$, 
voici les deux endomorphismes en question, présentés matriciellement relativement 
à la même base (base soigneusement choisie et commentée en~\ref{OrdreMonomialSurNhGrevlex}).

{\small
$$
\renewcommand \Heti[1] {\omit \quad \mbox{\scriptsize$#1$} \hfil}  
W_{h,d}(\uP) \ = \ 
\EastBordermatrix{
a_{1} & . & . & \VR . & . & \VR . & . & . & . & . & \VR . & . & . & . & \VR . & . & \VR . & \Heti{X_{1}^{5}X_{2}} \\ 
a_{2} & a_{1} & . & \VR . & . & \VR . & . & . & . & . & \VR . & . & . & . & \VR . & . & \VR . & \Heti{X_{1}^{4}X_{2}^{2}} \\ 
a_{3} & a_{2} & a_{1} & \VR . & . & \VR . & . & . & . & . & \VR . & . & . & . & \VR . & . & \VR . & \Heti{X_{1}^{3}X_{2}^{3}} \\ 
\HR{21} 
a_{5} & . & . & \VR a_{1} & . & \VR . & . & . & . & . & \VR . & . & . & . & \VR . & . & \VR . & \Heti{X_{1}^{4}X_{2}X_{3}} \\ 
a_{6} & a_{5} & . & \VR a_{2} & a_{1} & \VR . & . & . & . & . & \VR . & . & . & . & \VR . & . & \VR . & \Heti{X_{1}^{3}X_{2}^{2}X_{3}} \\ 
\HR{21} 
. & . & . & \VR . & . & \VR a_{1} & . & . & . & . & \VR . & . & . & . & \VR . & . & \VR . & \Heti{X_{1}^{4}X_{3}^{2}} \\ 
a_{8} & . & . & \VR a_{5} & . & \VR a_{2} & a_{1} & b_{1} & . & . & \VR . & . & . & . & \VR . & . & \VR . & \Heti{X_{1}^{3}X_{2}X_{3}^{2}} \\ 
a_{9} & a_{8} & . & \VR a_{6} & a_{5} & \VR a_{3} & a_{2} & b_{2} & b_{1} & . & \VR . & . & . & . & \VR . & . & \VR . & \Heti{X_{1}^{2}X_{2}^{2}X_{3}^{2}} \\ 
. & a_{9} & a_{8} & \VR a_{7} & a_{6} & \VR a_{4} & a_{3} & . & b_{2} & b_{1} & \VR . & . & . & . & \VR . & . & \VR . & \Heti{X_{1}X_{2}^{3}X_{3}^{2}} \\ 
. & . & a_{9} & \VR . & a_{7} & \VR . & a_{4} & . & . & b_{2} & \VR . & . & . & . & \VR . & . & \VR . & \Heti{X_{2}^{4}X_{3}^{2}} \\ 
\HR{21} 
. & . & . & \VR . & . & \VR a_{5} & . & . & . & . & \VR a_{1} & b_{1} & . & . & \VR . & . & \VR . & \Heti{X_{1}^{3}X_{3}^{3}} \\ 
a_{10} & . & . & \VR a_{8} & . & \VR a_{6} & a_{5} & b_{3} & . & . & \VR a_{2} & b_{2} & b_{1} & . & \VR . & . & \VR . & \Heti{X_{1}^{2}X_{2}X_{3}^{3}} \\ 
. & a_{10} & . & \VR a_{9} & a_{8} & \VR a_{7} & a_{6} & . & b_{3} & . & \VR a_{3} & . & b_{2} & b_{1} & \VR . & . & \VR . & \Heti{X_{1}X_{2}^{2}X_{3}^{3}} \\ 
. & . & a_{10} & \VR . & a_{9} & \VR . & a_{7} & . & . & b_{3} & \VR a_{4} & . & . & b_{2} & \VR . & . & \VR . & \Heti{X_{2}^{3}X_{3}^{3}} \\ 
\HR{21} 
. & . & . & \VR a_{10} & . & \VR a_{9} & a_{8} & . & . & . & \VR a_{6} & . & b_{3} & . & \VR b_{2} & b_{1} & \VR . & \Heti{X_{1}X_{2}X_{3}^{4}} \\ 
. & . & . & \VR . & a_{10} & \VR . & a_{9} & . & . & . & \VR a_{7} & . & . & b_{3} & \VR . & b_{2} & \VR . & \Heti{X_{2}^{2}X_{3}^{4}} \\ 
\HR{21} 
. & . & . & \VR . & . & \VR . & a_{10} & . & . & . & \VR a_{9} & . & . & . & \VR . & b_{3} & \VR b_{2} & \Heti{X_{2}X_{3}^{5}} \\ 
\noalign{\vskip-1pt}
}
$$
$$
\renewcommand \Heti[1] {\omit \quad \mbox{\scriptsize$#1$} \hfil}  
W_{h,d}\big(\uP^{(h)}\big) \ = \ 
\EastBordermatrix{
a_{1} & . & . & \VR . & . & \VR . & . & . & . & . & \VR . & . & . & . & \VR . & . & \VR . & \Heti{X_{1}^{5}X_{2}} \\ 
a_{2} & a_{1} & . & \VR . & . & \VR . & . & . & . & . & \VR . & . & . & . & \VR . & . & \VR . & \Heti{X_{1}^{4}X_{2}^{2}} \\ 
a_{3} & a_{2} & a_{1} & \VR . & . & \VR . & . & . & . & . & \VR . & . & . & . & \VR . & . & \VR . & \Heti{X_{1}^{3}X_{2}^{3}} \\ 
\HR{21} 
. & . & . & \VR a_{1} & . & \VR . & . & . & . & . & \VR . & . & . & . & \VR . & . & \VR . & \Heti{X_{1}^{4}X_{2}X_{3}} \\ 
. & . & . & \VR a_{2} & a_{1} & \VR . & . & . & . & . & \VR . & . & . & . & \VR . & . & \VR . & \Heti{X_{1}^{3}X_{2}^{2}X_{3}} \\ 
\HR{21} 
. & . & . & \VR . & . & \VR a_{1} & . & . & . & . & \VR . & . & . & . & \VR . & . & \VR . & \Heti{X_{1}^{4}X_{3}^{2}} \\ 
. & . & . & \VR . & . & \VR a_{2} & a_{1} & b_{1} & . & . & \VR . & . & . & . & \VR . & . & \VR . & \Heti{X_{1}^{3}X_{2}X_{3}^{2}} \\ 
. & . & . & \VR . & . & \VR a_{3} & a_{2} & b_{2} & b_{1} & . & \VR . & . & . & . & \VR . & . & \VR . & \Heti{X_{1}^{2}X_{2}^{2}X_{3}^{2}} \\ 
. & . & . & \VR . & . & \VR a_{4} & a_{3} & . & b_{2} & b_{1} & \VR . & . & . & . & \VR . & . & \VR . & \Heti{X_{1}X_{2}^{3}X_{3}^{2}} \\ 
. & . & . & \VR . & . & \VR . & a_{4} & . & . & b_{2} & \VR . & . & . & . & \VR . & . & \VR . & \Heti{X_{2}^{4}X_{3}^{2}} \\ 
\HR{21} 
. & . & . & \VR . & . & \VR . & . & . & . & . & \VR a_{1} & b_{1} & . & . & \VR . & . & \VR . & \Heti{X_{1}^{3}X_{3}^{3}} \\ 
. & . & . & \VR . & . & \VR . & . & . & . & . & \VR a_{2} & b_{2} & b_{1} & . & \VR . & . & \VR . & \Heti{X_{1}^{2}X_{2}X_{3}^{3}} \\ 
. & . & . & \VR . & . & \VR . & . & . & . & . & \VR a_{3} & . & b_{2} & b_{1} & \VR . & . & \VR . & \Heti{X_{1}X_{2}^{2}X_{3}^{3}} \\ 
. & . & . & \VR . & . & \VR . & . & . & . & . & \VR a_{4} & . & . & b_{2} & \VR . & . & \VR . & \Heti{X_{2}^{3}X_{3}^{3}} \\ 
\HR{21} 
. & . & . & \VR . & . & \VR . & . & . & . & . & \VR . & . & . & . & \VR b_{2} & b_{1} & \VR . & \Heti{X_{1}X_{2}X_{3}^{4}} \\ 
. & . & . & \VR . & . & \VR . & . & . & . & . & \VR . & . & . & . & \VR . & b_{2} & \VR . & \Heti{X_{2}^{2}X_{3}^{4}} \\ 
\HR{21} 
. & . & . & \VR . & . & \VR . & . & . & . & . & \VR . & . & . & . & \VR . & . & \VR b_{2} & \Heti{X_{2}X_{3}^{5}} \\ 
\noalign{\vskip-1pt}
}
$$
}
On constate que les déterminants sont égaux !

\label {NOTA13-hderniers}%
%
%

\begin{proof}[Preuve de~\ref{ZeroedSystem}]
Commençons par donner la décomposition triangulaire 
de l'endomorphisme $W_\calM(\uP)$. 
De manière informelle, le critère de sélection consiste à regrouper les monômes
selon leurs $h-1$ derniers exposants, si bien qu'il va être commode
d'introduire la suite $\hderniers = (n-h+2, \ldots, n)$ des $h-1$ derniers éléments de $\{1..n\}$. 
Formellement, pour $L \in \bbN^\hderniers$,
notons $\calM^{(L)}$ le sous-$\bfA$-module monomial de $\calM$
de base les $X^\alpha$ tels que $(\alpha_j)_{j \in \hderniers} = L$ et
considérons la structure d'ordre partiel $\preccurlyeq$ sur
$\bbN^\hderniers$ définie par $L \preccurlyeq L'$ si $L_j \leqslant L'_j$
pour tout $j \in \hderniers$.

\noindent 
$\rhd$ 
Alors, au niveau de l'endomorphisme, on a :
$$
W_\calM(\uP)\big(\calM^{(L)}\big) \ \subset \ 
\bigoplus_{L' \succcurlyeq L} \calM^{(L')}
$$
En effet, pour $X^\alpha \in \calM^{(L)}$, l'élément $W_\calM(\uP)(X^\alpha)$ est
une combinaison $\bfA$-linéaire de monômes $X^{\alpha'}$ du type
$\dfrac{X^\alpha}{X_i^{d_i}} X^\gamma$ où $i = \minDiv(X^\alpha)$ et
$X^\gamma$ est un monôme de $P_i$.
Comme $X^\alpha \in \calM \subset \Jex_h$, l'entier $i$ n'est pas dans $\hderniers$ ;
ainsi,  pour tout $j \in \hderniers$, 
on a $\alpha'_j = \alpha_j + \gamma_j \geqslant \alpha_j$.
Par conséquent, $X^{\alpha'} \in \calM^{(L')}$ avec $L' \succcurlyeq L$.

\noindent 
$\rhd$ 
Cette décomposition triangulaire étant valide pour tout $\uP$, elle l'est également pour $\uP^{(h)}$. 

\noindent 
$\rhd$ 
Quant à la quasi-conjugaison, elle 
est réduite à sa plus simple expression.
C'est l'identité qui permet de quasi-conjuguer.
Prenons $X^\alpha \in \calM^{(L)}$.
Comme $X^\alpha \in \calM \subset \Jex_h$, 
l'entier $i = \minDiv(X^\alpha)$ n'est pas dans $\hderniers$.
Ainsi, la suite des $h-1$ derniers exposants du monôme ${X^\alpha}/{X_i^{d_i}}$ vaut $L$.

Il s'agit d'assurer au niveau de la flèche pointillée la commutativité du diagramme suivant:


$$
\xymatrix @M=0.6pc @R =1cm @C=1.5cm{
X^\alpha \ar[r]^-{\id} \ar[d]_-{W_{\calM^{(L)}}(\uP)} &
X^\alpha
\ar[d]^-{W_{\calM^{(L)}}\big(\uP^{(h)}\big)} \\
\pi_{\calM^{(L)}} \Big( \dfrac{X^\alpha}{X_i^{d_i}}\, P_i \Big)
\ar@{-->}[r]^-{\id} & 
\pi_{\calM^{(L)}} \Big( \dfrac{X^\alpha}{X_i^{d_i}}\, P_i^{(h)} \Big) \\
}
$$
L'élément en bas à gauche est 
une combinaison linéaire de monômes $X^{\alpha'} \in \calM^{(L)}$ 
du type $X^{\alpha'} = \dfrac{X^\alpha}{X_i^{d_i}}\, X^\gamma$ 
où $X^\gamma$ est un monôme de~$P_i$.
Comme la suite des $h-1$ derniers exposants du monôme ${X^\alpha}/{X_i^{d_i}}$ vaut $L$ 
et $X^{\alpha'} \in \calM^{(L)}$, 
on en déduit que le monôme $X^\gamma$ ne dépend pas des $X_j$ pour $j \in \hderniers$, 
donc est un monôme de $P_i^{(h)}$.
Ainsi, $X^{\alpha'}$ est dans 
$\pi_{\calM^{(L)}} \Big( \dfrac{X^\alpha}{X_i^{d_i}}\, P_i^{(h)} \Big)$.
\end{proof}

\medskip

Conformément à~\ref{DecompositionTriangulaireDef}, nous avons obtenu le bonus suivant.

\begin{prop}
Pour $\calM \subset \Jex_{h,d}$ et $h \geqslant 2$, on a
$$
\det W_\calM(\uP) \ = \ 
\prod_{L \in \bbN^\hderniers} \det W_{\calM^{(L)}} (\uP^{(h)}) \ ;
\quad \text{ en particulier, }  
\det W_{h,d}(\uP) \ = \ 
\prod_{L \in \bbN^\hderniers} \det W_{h,d}^{(L)} (\uP^{(h)})
$$
\end{prop}

\begin{rmq} \label{OrdreMonomialSurNhGrevlex}
En ce qui concerne les relations d'ordre $\leqslant$ sur $\bbN^\hderniers$, 
extensions linéaires de $\preccurlyeq$, c'est-à-dire vérifiant 
$$
L \preccurlyeq L'   \quad\Longrightarrow\quad  L \leqslant L'
$$
\textit{n'importe} quelle structure d'ordre monomiale $\leqslant$ sur $\bbN^\hderniers$
convient. 

\noindent
\parbox{0.8\linewidth}{
Ainsi, si on ordonne les $L$ de manière \textit{croissante pour $\leqslant$}, 
alors la matrice de $W_\calM(\uP)$ dans une base adaptée à la décomposition exhibée est 
triangulaire \textit{inférieure} par blocs. 
En effet, pour $L$ fixé et pour un rangement $\leqslant$-croissant, 
les $L'$ tels que $L' \succcurlyeq L$ viennent après $L$, de sorte que 
l'endomorphisme $W_\calM(\uP)$ a l'allure ci-contre :
}
\parbox{0.3\linewidth}{
\begin{tikzpicture}[scale = 0.6]
\clip (-1,-0.5) rectangle (4,3.7) ;
\draw (1.5, 3) node[above] {\tiny $L$} ;
\draw (3, 1.5) node[right] {\tiny $L$} ;
\draw (3, 0.5) node[right] {\tiny $L'$} ;
\draw (0,2) rectangle (1,3) ;
\draw[fill, gray!60] (1,1) rectangle (2,2) ;
\draw (1,1) rectangle (2,2) ;
\draw (2,0) rectangle (3,1) ;
\draw (0,1) rectangle (1,2) ;
\draw[fill, gray!20] (1,0) rectangle (2,1) ;
\draw (1,0) rectangle (2,1) ;
\draw (0,0) rectangle (3,3) ;
\end{tikzpicture}
}

\bigskip

Pour terminer, commentons la base choisie dans l'exemple de la page précédente.
Les monômes sont rangés de manière \textit{décroissante} relativement à l'ordre
monomial \textsc {GrevLex}. On voit que les (six) blocs, indexés par l'exposant sur~$X_3$ 
(ici $h=2$ et $n=3$) sont tels que la suite
$(X_3^i)_{0 \leqslant i \leqslant 5}$ est \textit{croissante} pour l'unique ordre monomial
sur $\bbN = \bbN^\hderniers$ ; et l'aspect triangulaire inférieur est
conforme au paragraphe précédent. Il y a là une propriété 
générale de \textsc {GrevLex}; en notant $\Rev{\hderniers}$ la suite $\hderniers = (n-h+2, \ldots, n)$
à l'envers, on a :
$$
|\alpha| = |\alpha' | \text{\ \ et \ } X^\alpha >_{\textsc {GrevLex}} X^{\alpha'}
\quad \Longrightarrow \quad
(\alpha_j)_{j \in \Rev{\hderniers}} 
\ \leqslant_{\rm lex} \ 
(\alpha'_j)_{j \in \Rev{\hderniers}}
$$
On comprend ainsi l'avantage qu'il y a d'utiliser un rangement des monômes via \textsc{GrevLex}.
Les $h-1$ derniers exposants se retrouvent directement rangés pour l'ordre 
monomial \textsc{Lex} sur $\bbN^{\Rev{\hderniers}}$.
Et ainsi, il n'y a plus qu'à \textit{délimiter} les blocs constitués par l'égalité
des $h-1$ derniers exposants.
\end{rmq}

\subsection{Décomposition $\maxDiv$ et divisibilité $\det W_{h,d}(\protect\uP)\mid \det W_{h,d+1}(\protect\uP)$
            pour $h \geqslant 2$}
\label{SousSectionMaxDivDecomposition}

Etant donné un sous-module monomial $\calM$ de $\Jex_2$, l'objectif est
de définir deux sous-modules monomiaux $\calM^\ssup, \calM^\eq$ de $\calM$,
supplémentaires l'un de l'autre, et vérifiant $\det W_\calM = \det W_{\calM^\ssup}
\times W_{\calM^\eq}$. Le résultat suivant, qui prouve a fortiori que
$\det W_{h,d}$ divise $\det W_{h+1,d}$, est un cas particulier de cette égalité
déterminantale.

\begin{theo} \label{DivisibiliteEntreW2}
Pour $h \geqslant 2$ et tout $d$, on a 
$\det W_{h, d+1} = \det W_{h,d} \times \det W_{h,d+1}^\eq$.
\end{theo}

\medskip

On utilisera à plusieurs reprises le fait que pour
$X^\alpha \in \calM$, l'indice $i := \minDiv(X^\alpha)$ est
strictement inférieur à $j := \maxDiv(X^\alpha)$, ce qui
est dû à l'inclusion $\calM \subset \Jex_2$.

\label {NOTA13-maxDiv}%

\begin{defn} 
Pour $j \in \bbN$,  on note $\calM\double{j}$ 
le sous-module de $\calM$ de base les $X^\alpha \in \calM$ 
tels que $\maxDiv(X^\alpha) = j$,  de sorte que $\calM = \bigoplus\limits_{j=1}^n \calM\double{j}$, 
égalité appelée {\rm maxDiv-décomposition}.

Introduisons également $\calM^\ssup$ (resp. $\calM^\eq$)
le module de base les monômes $X^\alpha \in \calM$ tels que
$\alpha_j > d_j$ (resp. $\alpha_j = d_j$) où $j = \maxDiv(X^\alpha)$ de sorte que 
$\calM = \calM^\ssup \oplus \calM^\eq$.

Enfin, $W^\ssup_{h,d}$ désigne l'endomorphisme $W_{\calM^\ssup}$ où $\calM = \Jex_{h,d}$.
Idem pour $W^\eq_{h,d}$
\end{defn}

\label {NOTA13-M[[j]]}%
\label {NOTA13-Mssup}%
\label {NOTA13-Meq}%
\label {NOTA13-Whdssup}%
\label {NOTA13-Whdeq}%

Bien que la décomposition 
$\calM =\calM^\ssup \oplus \calM^\eq$ ne soit pas triangulaire pour $W_\calM$, 
on a quand même une belle égalité déterminantale 
(cf.~\ref{maxDivDecompositionTriangulaire}-iii ci-dessous).
En combinant la $(\ssup,\eq)$-décomposition  et la maxDiv-décomposition, on 
obtient la double décomposition suivante :
$$
\calM \ =\ 
\bigoplus_{j=1}^n \Big(\calM\double{j}^\ssup \oplus \calM\double{j}^\eq \Big)
$$
qui est, comme nous allons le voir, \og doublement \fg{} triangulaire pour $W_\calM$.


Pour illustrer ce que nous venons de raconter de manière informelle, 
soit $D=(d_1,d_2,d_3)$ avec $d_1 = d_3$.
$$
P_1 \ = \  p_1 X_1^{d_1} \, +\,  a X_3^{d_3}, \qquad 
P_2 \ = \ p_2 X_2^{d_2} \, +\,  bX_2^{d_2-1}X_3, \qquad
P_3 \ = \ X_3^{d_3}
$$
Prenons $\calM = (X_1^{d_1}X_2^{d_2+1},\, X_2^{d_2+1}X_3^{d_3},\, X_2^{d_2}X_3^{d_3+1})$
de sorte que $\calM^\ssup = 
(X_1^{d_1}X_2^{d_2+1},\, X_2^{d_2}X_3^{d_3+1})$
et 
$\calM^\eq = (X_2^{d_2+1}X_3^{d_3})$.
La décomposition $\calM = \calM^\ssup \oplus \calM^\eq$
\textbf{n'}est \textbf{pas} triangulaire, comme nous pouvons le constater :
\vspace{-0.3cm} 
$$
W_\calM \ = \ 
\EastBordermatrix{
p_1 & . & \VR . & \Heti{X_1^{d_1}X_2^{d_2+1}} \\ 
. & p_2 & \VR b & \Heti{X_2^{d_2}X_3^{d_3+1}} \\ 
\HR{3} 
a & . & \VR p_2 & \Heti{X_2^{d_2+1}X_3^{d_3}} \\ 
\noalign{\vskip-1pt}
}
$$
En revanche, en raffinant la décomposition suivant la valeur du maxDiv 
(matrice ci-dessous à gauche), et  au sein d'une même valeur de maxDiv,
suivant $\ssup$ ou $\eq$ (matrice ci-dessous à droite), 
on obtient :
\vspace{-0.4cm} 
$$
W_\calM \ = \ 
\EastBordermatrix{
p_1 & \VR . & .& \Heti{X_1^{d_1}X_2^{d_2+1}} \\ 
\HR{4}
a & \VR p_2 & . & \Heti{X_2^{d_2+1}X_3^{d_3}} \\ 
. & \VR b & p_2 & \Heti{X_2^{d_2}X_3^{d_3+1}} \\ 
\noalign{\vskip-1pt}
}
\qquad \qquad \qquad 
W_\calM \ = \ 
\EastBordermatrix{
p_1 & \VRepaisse . & \VR .& \Heti{X_1^{d_1}X_2^{d_2+1}} \\ 
\HRepaisse{5}
a & \VRepaisse  p_2 & \VR . & \Heti{X_2^{d_2+1}X_3^{d_3}} \\ 
\HR{4}
. & \VRepaisse b & \VR p_2 & \Heti{X_2^{d_2}X_3^{d_3+1}} \\ 
\noalign{\vskip-1pt}
}
$$

\begin{prop} \label{maxDivDecompositionTriangulaire}
\leavevmode

\begin{enumerate}[\rm i)]
\item 
L'endomorphisme $W_\calM$ est triangulaire relativement à 
$\calM = \bigoplus\limits_{j=1}^n \calM\double{j}$.

Plus précisément, pour $j$ fixé:
$$
W_\calM \big(\calM\double{j} \big) 
\ \subset \ 
\bigoplus_{j' \geqslant j}\, \calM\double{j'}
$$

\item
L'endomorphisme $W_{\calM\double{j}}$ stabilise $\calM\double{j}^\ssup$ 
dans la décomposition  $\calM\double{j}= \calM\double{j}^\ssup \oplus \calM\double{j}^\eq$.

\item 
On a $\det W_\calM =\det W_{\calM^\ssup} \times \det W_{\calM^\eq}$.
\end{enumerate}
\end{prop}

\begin{proof}

i) Soit $X^\alpha \in \calM\double{j}$ \idest{} $j = \maxDiv(X^\alpha)$.
Posons $i = \minDiv(X^\alpha)$. 
L'image $W_\calM(X^\alpha)$ est une combinaison linéaire 
de monômes de $\calM$ du type $X^\beta = (X^\alpha/X_i^{d_i}) X^\gamma$.
Comme $\calM \subset \Jex_2$, on a $i < j$, a fortiori $j \neq i$. Ainsi $\beta_j = \alpha_j + \gamma_j$,
ce qui implique $\beta_j \geqslant \alpha_j$, donc  
${\maxDiv(X^\beta) \geqslant j}$.

\smallskip
ii) Il suffit de reprendre le point précédent avec $X^\alpha \in \calM\double{j}^\ssup$.
L'inégalité $\alpha_j > d_j$ implique $\beta_j > d_i$, donc $X^\beta$ appartient 
à $\calM\double{j}^\ssup$.

\smallskip
iii) D'après i) et ii), on a 
$$
\det W_\calM = \prod\limits_{j=1}^n 
\det W_{\calM\double{j}} 
\qquad \text{ et } \qquad 
\det W_{\calM\double{j}} = \det W_{\calM\double{j}^\ssup} \times \det W_{\calM\double{j}^\eq}
$$
D'où 
$$
\det W_{\calM} \ = \ 
\prod_{j=1}^n \det W_{\calM\double{j}^\ssup} \times 
\prod_{j=1}^n \det W_{\calM\double{j}^\eq} 
$$
En reprenant le point i) avec le module $\calM^\ssup$ 
qui se décompose en $\bigoplus \calM\double{j}^\ssup$ 
(petit jeu de commutation entre $\double{j}$ et $\ssup$), on obtient  
$\det W_{\calM^\ssup} = \prod_{j=1}^n \det W_{\calM\double{j}^\ssup}$.
Idem pour $\calM^\eq$, d'où le résultat.
\end{proof}

Dans l'énoncé suivant, on rappelle que $W^\ssup_{h,d+1}$ désigne l'endomorphisme $W_{\calM^\ssup}$ où $\calM = \Jex_{h,d+1}$.

\begin{prop} \label{QuasiConjugaisonMaxDivDecomposition}
\leavevmode

\begin{enumerate}[\rm i)]
\item 
Relativement aux décompositions
$\calM^\ssup = \bigoplus_j \calM^\ssup\double{j}$ 
et $\calN = \bigoplus_j \calM^\ssup\double{j}/X_j$,
les endomorphismes $W_{\calM^\ssup}$ et $W_{\calN}$ sont quasi-conjugués.

\item Pour $h \geqslant 2$, on a $\det W_{h,d+1}^\ssup = \det W_{h,d}$.
\end{enumerate}
\end{prop}

\index{quasi-conjugués (endomorphismes)}%

\begin{proof}
i) L'application $\Psi$ suivante conserve $\minDiv$:
$$
\Psi : \ 
\begin{array}[t]{rcl}
\calM & \longrightarrow & \bfA[\uX] \\ [0.3cm]
X^\alpha & \longmapsto & \dfrac{X^{\alpha}}{X_{j_\alpha}} 
\quad \text{où $j_\alpha = \maxDiv(X^\alpha)$}
\end{array}
$$
Sa restriction à $\calM^\ssup$ conserve $\maxDiv$ et sa restriction à
$\calM\double{j}$ est injective.  Ainsi, l'application suivante est
une bijection qui conserve $\minDiv$ et $\maxDiv$, dont la réciproque
est la multiplication par $X_j$ :
$$
\Psi_j : \ 
\begin{array}[t]{rcl}
\calM^\ssup\double{j} & \longrightarrow & \dfrac{\calM^\ssup\double{j}}{X_j} \\ [0.3cm]
X^{\alpha} & \longmapsto & \dfrac{X^{\alpha}}{X_j} 
\end{array}
$$
Montrons que pour $j$ fixé, les endomorphismes 
$W_{\calM^\ssup\double{j}}$ et $W_{\frac{\calM^\ssup\double{j}}{X_j}}$ sont 
conjugués par $\Psi_j$.

Soit $X^\alpha \in \calM^\ssup\double{j}$ et $i = \minDiv(X^\alpha)$.
Il s'agit d'assurer au niveau de la flèche pointillée la commutativité du diagramme suivant:
$$
\xymatrix @M=0.6pc @R =1cm @C=1.5cm{
X^\alpha \ar[r]^-{\Psi_j} \ar[d]_-{W_{\calM^\ssup\double{j}}} &
\dfrac{X^\alpha}{X_j}
\ar[d]^-{W_{\frac{\calM^\ssup\double{j}}{X_j}}} \\
\pi_{\calM^\ssup\double{j}} \Big( \dfrac{X^\alpha}{X_i^{d_i}}\, P_i \Big)
\ar@{-->}[r]^-{\Psi_j} & 
\pi_{\frac{\calM^\ssup\double{j}}{X_j}} \Big(\dfrac{X^\alpha}{X_i^{d_i}X_j} \,P_i\Big) \\
}
$$
Cela résulte, pour  un monôme $X^\gamma$ quelconque de $P_i$, de l'équivalence
suivante, obtenue en utilisant la réciproque de $\Psi_j$:
$$
\dfrac{X^\alpha}{X_i^{d_i}} X^\gamma \in  
\calM^\ssup\double{j}
\ \iff \ 
\dfrac{X^\alpha}{X_i^{d_i}X_j} X^\gamma \in 
\frac{\calM^\ssup\double{j}}{X_j}
$$

ii)  Pour $\calM = \Jex_{h,d+1}$, on a $\calN = \Jex_{h,d}$.
D'après i), les endomorphismes 
$W_{h,d+1}^\ssup$  et $W_{h,d}$ sont quasi-conjugués 
relativement à la $\maxDiv$-décomposition et ont donc même déterminant.
\end{proof}

\begin{proof}[Preuve du théorème~\ref{DivisibiliteEntreW2}]
D'après~\ref{maxDivDecompositionTriangulaire}-iii avec $\calM = \Jex_{h,d+1}$, on a 
$$
\det W_{h,d+1} = \det W_{h,d+1}^\ssup \times \det W_{h,d+1}^\eq
$$
Et d'après le point ii) de~\ref{QuasiConjugaisonMaxDivDecomposition},
on a $\det W_{h,d+1}^\ssup = \det W_{h,d}$, d'où le résultat.
\end{proof}

\subsection{Formules de récurrence pour les déterminants de Macaulay}
\label{sousSectionLastPartition}

On fixe un entier $h \geqslant 2$ dans l'intervalle $\{1..n\}$ et on pose
\framebox [1.1\width][c]{$m = n-(h-1)$}.On note
$\bfA[\Xh] = \bfA[X_1, \dots, X_m]$ le sous-anneau de $\bfA[\uX]$
obtenu en supprimant les $h-1$ dernières variables et pour un $n$-système
$\uP = (P_1, \dots, P_n)$, on définit le $m$-système $\Ph$:
$$
\Ph = (P_{1}^{(h)}, \dots, P_m^{(h)}), 
\quad 
\text{où 
$P_{i}^{(h)} = P_i(X_1, \dots, X_m, 0, \dots, 0) \in \bfA[\Xh]$}
$$
Etant donné un déterminant de Macaulay $\det W_\calM(\uP)$, l'objectif
est, sous certaines conditions portant sur le $n$-système $\uP$ ou
sur $\calM$, d'exprimer ce déterminant en fonction de déterminants de
Macaulay attachés à $\Ph$.  Un exemple typique est le résultat suivant, pour
$h=2$, qui sera utilisé dans la section~\ref{sousSectionMacaulayRecurrence}
pour montrer la formule de Macaulay.

\begin{prop}[Identités pour la preuve par récurrence de la formule de Macaulay]
\label{FormuleRecurrenceW1W2}
\leavevmode

Soit $\uP = (P_1,\dots, P_n)$ un $n$-système avec \framebox [1.1\width][c]{$P_n = X_n^{d_n}$}
et $\uP'$ le $(n-1)$-système $(P'_1, \dots, P'_{n-1})$ défini par $P'_i = P_i(X_n := 0)$
pour $1 \leqslant i \leqslant n-1$. Alors pour tout $d \in \bbN$, on dispose des identités:

\begin{enumerate}[\rm i)]
\item
Au niveau $W_1$:
$$
\det W_{1,d}(\uP) 
\ = \ 
\prod_{j=1}^d \det W_{1, j}(\uP')
$$
\item 
Au niveau $W_2$:
$$
\det W_{2,d}(\uP) \ = \quad 
\prod_{j=1}^{d-d_n} \det W_{1,j}(\uP') 
\ \times \!\!
\prod_{j = d-(d_n- 1)}^d \!\!\det W_{2,j}(\uP') 
$$
\end{enumerate}
\end{prop}

\smallskip
La preuve est reportée plus loin en page~\pageref{ProofRecurrenceW1W2}.
Voici quelques commentaires permettant de préciser un certain nombre
de points de cette section.

\smallskip
$\blacktriangleright$
Tout d'abord, dans le cas particulier ci-dessus, la contrainte bien
visible $P_n = X_n^{d_n}$ sur $\uP$ est capitale pour $W_1$ mais
inutile pour $W_2$ puisque $W_2(\uP)$ ne dépend pas de $P_n$ (ou
plutôt ne dépend que du degré de $P_n$).

\smallskip
$\blacktriangleright$
Le second commentaire concerne les notations.  En~\ref{ZeroedSystem},
nous avions surtout mis l'accent sur le fait de mettre à $0$ les $h-1$
dernières indéterminées.  Ici, nous opérons différemment en supprimant
les $h-1$ derniers polynômes et bien sûr, en mettant à 0 les $h-1$
dernières variables dans les $m$ premiers.  L'anneau de polynômes est
changé : il passe de $\bfA[\uX]$ à $\bfA[\Xh]$ ; et le nouveau format
de degrés est $D^{(h)} = (d_1, \dots, d_{m})$.  Le lecteur attentif
aura remarqué le léger changement de notation pour~$\Ph$
avec~\ref{ZeroedSystem}: on a juste tronqué l'ancienne suite $\Ph$ de
la page~\pageref{AncienJeuPh}.

\smallskip
$\blacktriangleright$
A propos des contraintes portant sur $\uP$ ou $\calM$, précisons comment
elles interagissent l'une sur l'autre tout en restant dans $\bfA[\uX]$.
La suite des $h-1$ derniers éléments de $\{1..n\}$ intervenant constamment sera notée~$\hderniers$.
Pour tout système $\uP$, définissons le $n$-système $\uQ$ de la manière suivante:
$$
Q_i = \begin {cases}
P_i      &\text {si } i \notin \hderniers \qquad \text { (au début)}\\
X_i^{d_i} &\text {si } i \in \hderniers \qquad \text { (à la fin)}\\
\end {cases}
$$
Alors, pour tout sous-module monomial $\calM \subset \Jex_{1,d}$, en
notant $\calM'$ le sous-module monomial de $\calM$ de base les
$\{X^\alpha \in \calM \mid \minDiv(X^\alpha) \notin \hderniers\}$, on a
(cf. le lemme~\ref{BiPartitionMinDivLemma}):
$$
\det W_{\calM'}(\uP) = \det W_{\calM}(\uQ)
$$
A gauche de l'égalité, aucune contrainte sur $\uP$ mais une contrainte sur $\calM'$ ;
tandis qu'à droite, c'est l'inverse: pas de contrainte sur $\calM$ (à part celle
d'être contenu dans $\Jex_1$) mais une contrainte sur $\uQ$ concernant les $h-1$
derniers polynômes du système.

\smallskip
$\blacktriangleright$
Ce qui précède nous conduit à faire intervenir 
le module $\bigoplus_{i=1}^{m} \Jex_{1,d}^{(i)}$:
$$
\bigoplus_{i=1}^{m} \Jex_{1,d}^{(i)}  =
\hbox {sous-module monomial de base les $X^\alpha \in \Jex_{1,d}$
tels que $\minDiv(X^\alpha) \notin \hderniers$}
$$
Pour $\calM$ sous-module de ce dernier module, on a alors :
$$
X^\alpha \in \calM  \implies \minDiv(X^\alpha) \notin \hderniers
$$
de sorte que $\calM' = \calM$ avec les notations ci-dessus.
Des exemples de tels modules $\calM$ : ceux vérifiant 
$\calM \subset \Jex_h$, par exemple 
$\calM = \Jex_k$ avec $k \geqslant h$.

Pour un tel $\calM$, nous obtiendrons la formule récurrente suivante (c'est l'objet
de la proposition~\ref{ResultatsSurL}) dont les
ingrédients seront détaillés dans la suite:
$$
\det W_\calM(\uP) \ = \ 
\prod_{\calL \in \bbN^\hderniers} 
\det W_{\calM\crochet{\calL}/X^\calL}(\Ph)
$$
Signalons que les résultats relatifs à  $W_1,W_2$ dans la proposition~\ref{FormuleRecurrenceW1W2}
sont produits de manière uniforme par cette formule via le choix $\calM = 
\bigoplus_{i=1}^{n-1} \Jex_{1,d}^{(i)}$, puis le choix $\calM = \Jex_{2,d}$.

\smallskip
$\blacktriangleright$
Enfin, nous obtiendrons dans un chapitre ultérieur (corollaire~\ref{DetW2diviseDetW1})
la relation de divisibilité:
$$
\forall\, d \in \bbN, \qquad \det W_{2,d} \mid \det W_{1,d}  
$$
La section \ref{sousSectionMacaulayRecurrence} en fournira une justification pour
$d \geqslant \delta$ seulement (cf.~\ref{DetW2diviseDetW1DegreDelta}). 
Alors, grâce aux relations de récurrence mises
en place dans cette section, nous serons capables de prouver la relation $\det W_{h,d} \mid W_{h-1,d}$
pour tout $d$, à condition de disposer de cette relation pour $h=2$.

\subsubsection{La décomposition qualifiée de \og Last-décomposition\fg}

On rappelle que $\hderniers = (m+1, \ldots, n)$ est la suite des $h-1$ derniers éléments de~$\{1..n\}$
où $m = n-(h-1)$ et que nous imposons $\calM \subseteq \bigoplus_{i=1}^{m} \Jex_{1}^{(i)}$
de sorte que $X^\alpha \in \calM  \implies \minDiv(X^\alpha) \notin \hderniers$.

\begin{defns} \leavevmode

\noindent  
On désigne par $\preccurlyeq$ la relation d'ordre sur $\bbN^\hderniers$ composante à composante.
Pour $\calL \in \bbN^\hderniers$, on pose
$$
|\calL| = \sum_\ell \calL_\ell, \qquad\qquad
X^\calL = \prod_{\ell \in \hderniers} X_\ell^{\calL_\ell}
$$
Pour un monôme $X^\alpha \in \bfA[\uX]$, on note $\Last_h(X^\alpha) = (\alpha_{m+1},\dots,\alpha_n)
\in \bbN^\hderniers$. Ainsi le monôme de $\bfA[\Xh]$ :
$$
\dfrac{X^{\alpha}}{X^{\calL_\alpha}}  \quad \text{où $\calL_\alpha = \Last_h(X^\alpha)$}
$$
est le monôme obtenu via la spécialisation $(X_\ell := 1)_{\ell \in \hderniers}$ de $X^\alpha$.

\medskip
\noindent
On désigne par $\Last_h(\calM)$ la partie finie de $\bbN^\hderniers$ définie par
$\Last_h(\calM) = \{\Last_h(X^\alpha) \mid X^\alpha \in \calM\}$.

\medskip
\noindent
Enfin, $\calM\crochet{\calL}$ désigne le sous-module monomial de $\calM$ de base les $X^\alpha$ 
tels que $\Last_h(X^\alpha) = \calL$, de sorte que l'on a la décomposition
qualifiée de {\rm Last-décomposition}:
$$
\calM = \bigoplus\limits_{\calL \in \bbN^\hderniers} \calM\crochet{\calL}
$$
Note : on a $\calM\crochet{\calL} = 0$ pour $\calL \notin \Last_h(\calM)$.
\end{defns}

\label {NOTA13-Lasth}%
\label {NOTA13-LasthM}%
\label {NOTA13-McalL}%

\medskip

\begin{prop} \label{WhTriangulaireEnL}
L'endomorphisme $W_\calM$ est triangulaire relativement à 
$\calM = \bigoplus\limits_{\calL \in \bbN^\hderniers} \calM\crochet{\calL}$ 
au sens où pour $\calL \in \bbN^\hderniers$, on a 
$$
W_\calM \big(\calM\crochet{\calL} \big) 
\ \subset \ 
\bigoplus_{\calL' \succcurlyeq \calL}\, \calM\crochet{\calL'}
$$ 
\end{prop}

\begin{proof}
Soit $X^\alpha \in \calM$ et $i = \minDiv(X^\alpha)$.
L'image $W_\calM(X^\alpha)$ est une combinaison linéaire 
de monômes du type $X^{\alpha'} = (X^\alpha/X_i^{d_i}) X^\gamma$.
On cherche à montrer que $\Last_h(X^{\alpha'}) \succcurlyeq \Last_h(X^{\alpha})$.
Comme $X^\alpha \in \calM$, l'indice $i$ n'est pas dans $\hderniers$,
donc $\Last_h(X^{\alpha'}) = \Last_h(X^\alpha X^\gamma)$, 
qui est clairement $\succcurlyeq \Last_h(X^\alpha)$.
\end{proof}

\medskip
On peut maintenant énoncer le résultat principal de cette section.

\begin{prop} \label{ResultatsSurL}
\leavevmode

\begin{enumerate}[\rm i)]
\item 
Considérons la \textrm{Last-décomposition} 
$\calM =  \bigoplus_{\calL \in \bbN^\hderniers} \calM\crochet{\calL}$ 
de $\calM$ et notons $\calN$ 
le sous-module monomial de $\bfA[\Xh]$ défini par
$\calN = \bigoplus_{\calL \in \bbN^\hderniers} \calM\crochet{\calL}/X^\calL$.

Relativement aux décompositions précédentes, 
les endomorphismes $W_{\calM}(\uP)$ et $W_{\calN}(\Ph)$ sont 
quasi-conjugués.

\item 
On a l'égalité déterminantale 
$$
\det W_\calM(\uP) \ = \  \prod_{\calL \in \bbN^\hderniers}  \det W_{\calM\crochet{\calL}/X^\calL}(\Ph)
\ = \  \prod_{\calL \in \Last_h(\calM)}  \det W_{\calM\crochet{\calL}/X^\calL}(\Ph)
$$

\item 
Pour $k \geqslant h \geqslant 2$, on a, en notant $\calL^+ = \{i \in \hderniers \ | \  \calL_i\geqslant d_i\}$:
$$
\det W_{k,d}(\uP) \ = \ \prod_{\calL \in \bbN^\hderniers} \det W_{k-\#\calL^+,\, d-|\calL|}(\Ph) \ = \
\prod_{\calL \in \Last_h(\Jex_{k,d})} \det W_{k-\#\calL^+,\, d-|\calL|}(\Ph)
$$
\end{enumerate}
\end{prop}

\medskip
Avant la preuve, un commentaire concernant les ensembles qui indexent les produits dans iii).
Dans l'anneau $\bfA[\Xh]$, intervient $\Jex_{k-\#\calL^+,\, d-|\calL|}$ avec $k-\#\calL^+ \geqslant 1$ (puisque
$\#\calL^+ \leqslant h-1$ et $k \geqslant h$).
Supposons $\Jex_{k-\#\calL^+,\, d-|\calL|}$ non réduit à $\{0\}$.
Alors il contient un monôme $X^\beta$ de $\bfA[\Xh]$ et on vérifie facilement que
le monôme $X^\alpha := X^\beta X^\calL$ de $\bfA[\uX]$ appartient à $\Jex_{k,d}$;
en conséquence, $\calL \in \Last_h(\Jex_{k,d})$. Pris à rebours, si
$\calL \in \bbN^\hderniers \setminus \Last_h(\Jex_{k,d})$, alors 
$\Jex_{k-\#\calL^+,\, d-|\calL|} = \{0\}$ et le déterminant correspondant vaut 1.
    
\begin{proof}
i) L'application $\Psi$ suivante conserve $\minDiv$:
$$
\Psi : \ 
\begin{array}[t]{rcl}
\calM & \longrightarrow & \bfA[\Xh]\\ [0.2cm]
X^{\alpha} & \longmapsto & \dfrac{X^{\alpha}}{X^{\calL_\alpha}} 
\quad \text{où $\calL_\alpha = \Last_h(X^\alpha)$}
\end{array}
$$
Sa restriction à chaque $\calM\crochet{\calL}$ est injective.  Ainsi,
l'application suivante est une bijection qui conserve $\minDiv$, et
dont la réciproque est la multiplication par $X^{\calL}$ :
$$
\Psi_\calL : \ 
\begin{array}[t]{rcl}
\calM\crochet{\calL} & \longrightarrow & \calM\crochet{\calL}/X^\calL 
\\ [0.2cm]
X^{\alpha} & \longmapsto & \dfrac{X^{\alpha}}{X^{\calL}} 
\end{array}
$$
Montrons que pour $\calL$ fixé, les endomorphismes 
$W_{\calM\crochet{\calL}}(\uP)$ et $W_{\calM\crochet{\calL}/X^\calL}(\Ph)$ sont 
conjugués par $\Psi_\calL$.

\noindent
Soit $X^\alpha \in  \calM\crochet{\calL}$ et $i = \minDiv(X^\alpha)$.
Comme $X^\alpha \in \calM \subset \Jex_h$, 
l'entier $i = \minDiv(X^\alpha)$ n'est pas dans $\hderniers$.
Ainsi, $\Last_h\Big(\dfrac{X^\alpha}{X_i^{d_i}}\Big) = \calL$.

Il s'agit d'assurer au niveau de la flèche pointillée la commutativité du diagramme suivant:
$$
\xymatrix @R = 1.2cm @C = 3cm @M=0.4pc{
X^{\alpha} \ar[d]_{W_{\calM\crochet{\calL}}(\uP)} \ar[r]^{\Psi_\calL} 
& \dfrac{X^{\alpha}}{X^\calL}  \ar[d]^{W_{\calM\crochet{\calL}/X^\calL}(\Ph)}
\\
\pi_{\calM\crochet{\calL}} \Big(\dfrac{X^{\alpha}}{X_i^{d_i}} \,P_i\Big)
\ar@{-->}[r]^-{\Psi_\calL} & 
\pi_{\calM\crochet{\calL}/X^\calL} \Big( \dfrac{X^{\alpha}}{X_i^{d_i}X^{\calL}}\, P_{i}^{(h)} \Big)
\\
}
$$
L'élément en bas à gauche est 
une combinaison linéaire de monômes $X^{\alpha'} \in \calM\crochet{\calL}$ 
du type $X^{\alpha'} = \dfrac{X^\alpha}{X_i^{d_i}}\, X^\gamma$ 
où $X^\gamma$ est un monôme de~$P_i$.
Comme on a $\Last_h\Big(\dfrac{X^\alpha}{X_i^{d_i}}\Big) = \calL$ 
(cf. l'explication avant le diagramme) 
et~\makebox{$X^{\alpha'} \in \calM\crochet{\calL}$},  on en déduit que le monôme 
$X^\gamma$ ne dépend pas des $X_j$ pour $j \in \hderniers$, 
donc est un monôme de $P_i^{(h)}$.
Ainsi, $\dfrac{X^{\alpha'}}{X^\calL}$ est dans 
$\pi_{\calM\crochet{\calL}/X^\calL} \Big( \dfrac{X^{\alpha}}{X_i^{d_i}X^{\calL}}\, P_{i}^{(h)} \Big)$.

\noindent
ii) 
D'après la propriété triangulaire dégagée dans la proposition précédente~\ref{WhTriangulaireEnL}, on a 
$$
\det W_\calM(\uP) \ = \ 
\prod_{\calL \in \bbN^\hderniers} \det W_{\calM\crochet{\calL}}(\uP)
$$
et le point précédent fournit  $\det W_{\calM\crochet{\calL}}(\uP)
= \det W_{\calM\crochet{\calL}/X^\calL}(\Ph)$. D'où la formule.

\smallskip
\noindent
iii) Comme $k \geqslant h$, on a $\Jex_{k,d} \subset \Jex_{h,d} \subset \bigoplus_{i=1}^m \Jex_{1,d}^{(i)}$
et on peut donc appliquer ii) à $\calM = \Jex_{k,d}$.
Pour conclure, il reste à vérifier que pour $\calL \in \bbN^\hderniers$, on a,
dans~$\bfA[\Xh]$, l'égalité $\calM\crochet{\calL}/X^\calL = \Jex_{k-\#\calL^+,\, d-|\calL|}$.
Soit $X^\beta \in \bfA[\Xh]$ appartenant à $\Jex_{k-\#\calL^+,\, d-|\calL|}$; alors
$X^\alpha := X^\beta X^\calL \in \calM\crochet{\calL}$ et $X^\alpha/X^\calL = X^\beta$. Dans
l'autre sens, soit $X^\alpha \in \calM\crochet{\calL}$; alors $X^\alpha$ s'écrit
$X^\beta X^\calL$ et on a $X^\beta \in \Jex_{k-\#\calL^+,\, d-|\calL|}$.
\end{proof}

\subsubsection*{$\bullet$ Le cas particulier $h=2$}

Allégeons les notations avec un prime, au lieu d'une puissance $(h)$  
$$
\begin {array} {l@{\qquad\qquad}l}
P'_i  \overset{\rm def}{=}  P_i^{(h)} = P_i(X_1,\dots, X_{n-1}, 0) &
\uP'  \overset{\rm def}{=} \Ph =  (P'_1, \dots, P'_{n-1})
\\ [3mm]
\bfA[\uX'] \overset{\rm def}{=} \bfA[\Xh] = \bfA[X_1, \dots, X_{n-1}] &
\Jex'_{h} \overset{\rm def}{=}  \Jex_{h} \cap \bfA[\uX']
\\
\end {array}
$$
On a également $\bbN^\hderniers = \bbN^{\{n\}}$ et pour $\calL =
(\ell) \in \bbN^\hderniers$, l'égalité $\calM\crochet{\calL} = \calM^{(\ell)} =
\{X^\alpha \in \calM \mid \alpha_n = \ell\}$.

\bigskip

\begin{proof} [Preuve de la proposition \ref{FormuleRecurrenceW1W2}]
\label{ProofRecurrenceW1W2}
\leavevmode

\medskip
\noindent
i) L'application du lemme \ref {BiPartitionMinDivLemma} à $I= \{n\}$,
au système $\uQ = \uP$ et  au module $\calN = \Jex_{1,d}$ fournit :
$$
\det W_{1,d}(\uP) = \det W_\calM(\uP) \qquad \text{avec} \qquad
\calM = \bigoplus_{i=1}^{n-1} \Jex_{1,d}^{(i)}
$$
D'autre part, 
on peut appliquer la proposition~\ref{ResultatsSurL} à ce
$\calM$ pour lequel $\calM^{(\ell)} / X_n^{\ell} = \Jex'_{1, d-\ell}$.
Le point~ii) nous donne:
$$
\det W_\calM(\uP) 
\quad = \quad
\prod_{\ell \in \bbN} \det W_{1,d-\ell}(\uP')
\quad = \quad
\prod_{j=1}^d \det W_{1, j}(\uP')
$$
Par transitivité, on obtient l'égalité voulue.

\medskip
\noindent
ii) On applique le point iii) de cette même proposition:
$$
\det W_{2,d}(\uP)
\ = \ 
\prod_{\ell \in \bbN} \det W_{2-\ell^{+},\, d-\ell}(\uP')
\qquad 
\text{où }
\ell^+ \ = \ 
\left \{
\begin{array}{ll}
0 & \text{si } \ell < d_n \\
1 & \text{si } \ell \geqslant  d_n \\
\end{array}
\right.
$$
On tire alors la formule ci-dessous :
$$
\det W_{2,d}(\uP) 
\quad = \quad 
\prod_{\ell < d_n} \det W_{2, d-\ell}(\uP') 
\times 
\prod_{\ell \geqslant d_n} \det W_{1, d-\ell}(\uP') 
$$
qui donne la formule annoncée une fois le changement de variable $j= d-\ell$ effectué.
\end{proof}

\begin{theo} \label{NiveauhDivisehmoins1}
Pour tout $d$ et tout $h \geqslant 2$, on a
$$
\det W_{h,d}(\uP) \ \mid \ \det W_{h-1,d}(\uP)
$$
\end{theo}

\begin{proof}
Procédons par récurrence sur $h$, avec l'hypothèse portant sur tout $d$ et tout système $\uP$.

L'initialisation $h = 2$, hautement non triviale, sera traitée en~\ref{DetW2diviseDetW1}.
Pour l'hérédité, on utilise deux fois l'égalité de~\ref{ResultatsSurL}-iii), une fois avec $k=h$
puis avec $k = h+1$ :
$$
\det W_{h,d}(\uP) \ = \ 
\prod_{\calL \in \bbN^{\hderniers}} \det W_{h-\#\calL^+, d-|\calL|}(\Ph),
\qquad\quad 
\det W_{h+1,d}(\uP) \ = \ 
\prod_{\calL \in \bbN^{\hderniers}} \det W_{h+1-\#\calL^+, d-|\calL|}(\Ph)
$$
D'après l'hypothèse de récurrence appliquée au système $\Ph$, on obtient : 
$$
\forall\, \calL, \qquad 
\det W_{h+1-\#\calL^+, d-|\calL|}(\Ph)
\ \mid \ 
\det W_{h-\#\calL^+, d-|\calL|}(\Ph)
$$
Par produit sur les $\calL$, on obtient $\det W_{h+1,d}(\uP) \mid \det W_{h,d}(\uP)$.
\end{proof}

\subsection{Les formules de Macaulay en degré $\ge \delta$: preuve à l'arraché (récurrence sur $n$)}
\label{sousSectionMacaulayRecurrence}

L'objectif ici est d'identifier les cofacteurs $\big(b_d(\uP)\big)_{d \geqslant \delta}$ introduits
en~\ref{CofacteursTordus}, où $\uP=(P_1,\cdots,P_n)$ est un système quelconque de $\bfA[\uX]$:
$$
\omega 
\ = \ 
 b_\delta \,\omegares 
\qquad \text{ et } \qquad 
\forall\,d \geqslant \delta+1,\ 
\det W_{1,d} = \calR_d \, b_d 
$$
Nous allons montrer, par récurrence sur $n$, que $b_d  = \det W_{2,d}$.
Nous utiliserons d'une part l'égalité des $(\calR_d)_{d \ge \delta+1}$ sous la forme
$\Res(\uP) = \calR_d$ et d'autre part la formule de Poisson pour bébés
vérifiée par le résultant et la forme~$\omegares$ lorsque $P_n = X_n^{d_n}$
(corollaire~\ref{WhenPnIsXndn}).

L'égalité $b_d = \det W_{2,d}$ ci-dessus fournit une expression polynomiale
explicite d'une part pour la forme~$\omegares$ et d'autre part pour le
résultant. Il existe également, pour $\sigma \in \fS_n$, une
$\sigma$-version de cet énoncé, à savoir $b^\sigma_d(\uP) = \det
W^\sigma_{2,d}(\uP)$, mais le point le plus important est de se
concentrer sur la preuve du cas $\sigma = \Id_n$, le cas $\sigma$
quelconque s'adaptant aussitôt.

\medskip

Malgré les analogies entre la preuve du cas $d \geqslant \delta + 1$ et celle du cas $d = \delta$,
nous avons choisi d'en faire deux énoncés.

\begin{theo} [Formule de Macaulay en degré $\geqslant \delta+1$] \label {MacaulayParRecurrence} 
Pour $\sigma \in \fS_n$ et $d \geqslant \delta+1$, on a l'égalité:
$$
\det W^\sigma_{1,d} = \det W^\sigma_{2,d}\ \Res(\uP)
$$
Ce qui fournit, pour le résultant, une formule polynomiale à travers plusieurs expressions:
$$
\Res(\uP) = \frac{\det W_{1,d}}{\det W_{2,d}} = \frac{\det W^\sigma_{1,d}}{\det W^\sigma_{2,d}}
$$
\end{theo}

\index{formules de Macaulay}%

Quant au cas du degré $\delta$, il peut s'énoncer sous la forme suivante:

\begin{theo} [En degré $\delta$] \label {MacaulayParRecurrenceDegreeDelta} 
Pour $\sigma \in \fS_n$, on a l'égalité des formes linéaires
$$
\omega^\sigma = \det W^\sigma_{2,\delta}\ \omegares
$$  
De manière équivalente, la forme $\omegares$ se détermine via plusieurs expressions:
$$
\omegares = \frac{\omega}{\det W_{2,\delta}} = \frac{\omega^\sigma}{\det W^\sigma_{2,\delta}}
$$
\end{theo}

Pour prouver ces formules, il suffit de le faire pour la suite générique.
Nous allons procéder par récurrence sur $n$, 
en supposant $\sigma = \Id_n$. Il va être indispensable de mentionner la dépendance en $\uP$ 
des objets, car vont cohabiter divers systèmes déduits de $\uP$.

Une fois n'est pas coutume, disons un mot à propos de l'initialisation $n=1$!
Dans ce cas, la suite~$\uP$ est $(p_1X_1^{d_1})$ et relève ainsi du
jeu étalon généralisé.  On a $\delta = d_1-1$ et $\widehat d_1 = 1$,
d'où $\Res(\uP) = p_1$ et $\omegaRes{\uP} = (X_1^{d_1-1})^\star$.
Du côté des idéaux
excédentaires, $\Jex_2=0$ donc chaque $W_{2,d}$ est la matrice
vide de déterminant $1$.  Concernant $\Jex_1$ : pour $d \geqslant
\delta+1$, on a $\Jex_{1,d} = \bfA X_1^d$ donc $W_{1,d} = (p_1)$, et
pour $d \leqslant \delta$, on a $\Jex_{1,d} = 0$, donc $\det W_{1,d} = 1$.
Ceci étant, les formules annoncées deviennent évidentes à vérifier.

\smallskip

Avant de commencer à proprement parler les preuves, nous rassemblons
les renseignements à notre disposition. Tout d'abord, $b_d(\uP)$ ne dépend pas
de $P_n$ (cf.~proposition~\ref{ProprietesCofacteurs}), si bien que nous
pouvons supposer \framebox [1.1\width][c]{$P_n = X_n^{d_n}$}. Nous
pouvons également supposer que $P_1, \dots, P_{n-1}$ sont génériques.
Nous notons $\uP' = (P'_1, \dots, P'_{n-1})$ où $P'_i = P_i(X_n := 0)$,
suite dont le degré critique vaut $\delta' = \delta - (d_n - 1)$.

\smallskip
\noindent
$\bullet$ Du côté des déterminants $w_{1,d} := \det W_{1,d}(\uP)$ et $w_{2,d} := \det W_{2,d}(\uP)$,
nous avons, avec des notations analogues pour $\uP'$, les formules de récurrence
(cf. proposition~\ref{FormuleRecurrenceW1W2}) pour tout $d \in \bbN$:
$$
w_{1,d} = \prod_{j=1}^d w'_{1,j} \qquad\qquad
w_{2,d} = \Bigg(\prod_{i=1}^{d-d_n} w'_{1,i}\Bigg)  \ \Bigg(\prod_{j=d-(d_n-1)}^{d} w'_{2,j}\Bigg)
$$
Les déterminants $w_{1,d}$ et $w_{2,d}$ sont des scalaires
réguliers (car primitifs par valeur : spécialiser en le jeu étalon).
Idem pour $\uP'$. Nous pouvons donc nous permettre d'écrire des quotients:
$$
\frac{w_{1,d}}{w_{2,d}} = \prod_{j=d-(d_n-1)}^{d} \frac{w'_{1,j}} {w'_{2,j}}
$$
$\bullet$ Du côté du résultant et de $\omegares$, il est commode d'introduire
une notation locale $\calQ_\delta(\uP) := \omegaRes{\uP}(X^\emouton)$
et son analogue pour $\uP'$. Nous avons alors  (corollaire~\ref{WhenPnIsXndn})
$$
\Res(\uP) = \Res(\uP')^{d_n}
\qquad\text{ et } \qquad
\calQ_\delta(\uP) = \calQ_{\delta'}(\uP')\, \Res(\uP')^{d_n-1}
$$

\begin {proof} [\bf Preuve de la formule en degré $\geqslant \delta+1$.]
Occupons-nous de l'hérédité de la preuve par récurrence sur~$n$.
Fixons $d \geqslant \delta + 1$.
Il s'agit de montrer l'égalité $w_{1,d} = w_{2,d}\,\Res(\uP)$ qu'il est commode d'écrire sous la forme
$$
\frac{w_{1,d}}{w_{2,d}} \overset{?}{=} \Res(\uP)
$$
D'après l'hypothèse de récurrence, on a :
$$
\forall\, d' \geqslant \delta' + 1, \qquad 
\frac{w'_{1,d'}} {w'_{2,d'}} = \Res(\uP') 
$$
Réécrivons la formule du produit pour le quotient $w_{1,d}/w_{2,d}$:
$$
\frac{w_{1,d}}{w_{2,d}} = \prod_{d'=d-(d_n-1)}^{d} \frac{w'_{1,d'}} {w'_{2,d'}}
$$
Dans ce produit, $d'$ est supérieur à $\delta'+1$ : 
en effet, $d\geqslant \delta+1$ et $\delta ' = \delta - (d_n-1)$.
De plus, le nombre de facteurs dans ce produit vaut $d_n$. 
On a donc :
$$
\frac{w_{1,d}}{w_{2,d}} = \Res(\uP')^{d_n}
$$
qui n'est autre que $\Res(\uP)$, d'après les rappels faits avant la preuve.
\end {proof}

\begin {proof}  [\bf Preuve du théorème en degré $\delta$.]
La forme $\omega$ est multiple de $\omegares$
et l'évaluation $\calQ_\delta(\uP)=\omegares(X^\emouton)$, diviseur du scalaire régulier
$w_{1,\delta}= \omega(X^\emouton)$, est un élément régulier.
Pour montrer l'égalité $\omega = w_{2,\delta}\,\omegares$, il suffit donc,
par évaluation en $X^\emouton$, de prouver que $w_{1,\delta} = w_{2,\delta}\,\calQ_\delta(\uP)$.

\noindent
\'Ecrivons l'égalité à prouver sous la forme :
$$
\frac{w_{1,\delta}}{w_{2,\delta}} \overset{?}{=} \calQ_\delta(\uP)
$$
La formule du produit pour le quotient $w_{1,d}/w_{2,d}$
avec $d = \delta$ s'écrit :
$$
\frac{w_{1,\delta}}{w_{2,\delta}} 
\ = \ 
\Bigg(\frac{w'_{1,\delta'}} {w'_{2,\delta'}}\Bigg) \times 
\Bigg(\prod_{j=\delta' + 1}^{\delta} \frac{w'_{1,j}} {w'_{2,j}}\Bigg)
$$
Or, pour $P'$, l'hypothèse de récurrence et la formule de Macaulay en degré $> \delta'$ fournissent:
$$
\frac{w'_{1,\delta'}} {w'_{2,\delta'}} = \calQ_{\delta'}(\uP')
\qquad \text{ et } \qquad
\forall\, j \geqslant \delta' + 1, \quad \frac{w'_{1,j}} {w'_{2,j}} = \Res(\uP') 
$$
Le nombre de facteurs dans le produit étant $\delta - \delta' = d_n-1$, on
obtient 
$$
\frac{w_{1,\delta}}{w_{2,\delta}} 
\ =\ 
\calQ_{\delta'}(\uP')\,  \Res(\uP')^{d_n-1}
$$
ce qui, comme par miracle, vaut $\calQ_\delta(\uP)$, cf. les rappels réalisés avant la preuve.
\end {proof}

\begin {coro} \label{DetW2diviseDetW1DegreDelta} \leavevmode
\begin {enumerate} [\rm i)]
\item  
Pour $d \geqslant \delta$, $\det W_{2,d}(\uP)$ divise $\det W_{1,d}(\uP)$.

\item
Le quotient $\det W_{1,d} / \det W_{2,d}$ est
indépendant de $d \geqslant \delta+1$ au sens de la nullité du déterminant:
$$
\forall\, d,d' \geqslant \delta + 1, \qquad 
\begin {vmatrix}
\det W_{1,d}  & \det W_{1,d'}  \\
\det W_{2,d}  & \det W_{2,d'}  \\  
\end {vmatrix} = 0
$$
\end{enumerate}
\end {coro}

\subsection{Une autre décomposition qualifiée de \og $\End$-décomposition \fg}
\label {sousSectionEndPartition}

Un des objectifs de cette section est d'obtenir, pour $h \geqslant 2$,
une formule de récurrence pour le déterminant de Macaulay $\det
W_{h,d}(\uP)$ comme produit d'autres déterminants de Macaulay $\det
W_{h-1,d'}(\uQ)$ où $d' < d$ est variable, le système $\uQ$ étant
également variable, de taille $< n$, et obtenu en spécialisant à~$0$ 
des indéterminées de la fin.

Nous verrons également dans la section suivante comment la \og
$\End$-décomposition \fg{} étudiée ici participe à l'établissement de
l'égalité des invariants de MacRae des $(\bfB_d)_{d\ge \delta+1}$.

\smallskip
Il y a des analogies, mais des analogies seulement, avec la
section~\ref{sousSectionLastPartition}: il y intervenait
$\uQ = \uP^{(h)}$ fixe et des endomorphismes $W_{h'}(\uP^{(h)})$ avec des $h' < h$
variables, tandis qu'ici on passe de $h$ à $h-1$ avec des systèmes $\uQ$
variables.

\smallskip

Rappelons que dans cette section~\ref{sousSectionLastPartition}, on
fixait $h \geqslant 2$ et regroupait les monômes $X^\alpha$ selon
les $h-1$ derniers éléments de $\alpha = (\alpha_1, \dots, \alpha_n)$.
Ici, on va procéder différemment en considérant pour un monôme
$X^\alpha \in \Jex_1$, la sous suite $\End(X^\alpha) := (\alpha_j,
\dots, \alpha_n)$, où $j = \maxDiv(X^\alpha)$.  Pour des raisons
techniques, on va imposer $X^\alpha \in \Jex_2$ de sorte que $j
\geqslant 2$. Ainsi $\bbN^\hderniers$ va être remplacé en première
approximation par la réunion disjointe $\bigsqcup_{j=2}^n
\bbN^{[j..n]} = \bbN^{[2..n]} \sqcup \cdots \sqcup \bbN^{[n-1..n]}
\sqcup \bbN^{[n..n]}$ et les monômes $X^\alpha$ de $\Jex_2$ seront
regroupés par égalité de~$\End(X^\alpha)$.

\label {NOTA13-End}%

\smallskip

Nous adoptons la lettre $\calE$ (pour end), $\calE = \End(X^\alpha)$,
versus la lettre $\calL$ (pour last), $\calL = \Last_h(X^\alpha)$.
Signalons que nous allons traiter le cas plus général des 
déterminants $\det W_\calM$ en imposant à $\calM$ d'être un
sous-module monomial de $\Jex_2$  et que nous particulariserons ensuite
$\calM$ à $\Jex_{h,d}$ avec $h \geqslant 2$.

\smallskip

Voici des définitions précises qui corrigent en particulier le lieu
de vie des $\End(X^\alpha) = (\alpha_j, \dots, \alpha_n)$, où $j =
\maxDiv(X^\alpha)$, en ayant remarqué que $\alpha_j \ge d_j$ et
$\alpha_k < d_k$ pour $k > j$.

\begin{defns}\label{EndOrder}
On note $\scrE=\scrE(D)$ le sous-ensemble de $\bigsqcup\limits_{j=2}^n \bbN^{[j..n]}$ constitué des $\calE =
(\alpha_j, \dots, \alpha_n)$ vérifiant:
$$
\alpha_j \geqslant d_j, \qquad  \text{ et } \quad 
\quad \forall\  k > j, \quad \alpha_k < d_k  
$$  
Si $\calE \in \bbN^{[j..n]}$, $\calE' \in \bbN^{[j'..n]}$, alors
$\#\calE = n-j+1$, $\#\calE' = n-j'+1$ de sorte que l'on
a l'équivalence $\#\calE > \#\calE' \iff j < j'$.

On munit $\scrE$ de la relation d'ordre notée $\preccurlyeq$:
$$
\calE \preccurlyeq \calE' : \quad
\left\{
\begin{array}{l}
\#\calE > \#\calE' \\ 
\text{ou} \\ 
\#\calE = \#\calE' \text{ et \ } \calE_k \leqslant \calE'_k \text{ pour tout $k$} \\
\end{array}
\right.
$$
Pour un sous-module monomial $\calM \subset \Jex_2$, on désigne par
$\End(\calM)$ la partie finie de $\scrE$ définie par $\End(\calM) =
\{\End(X^\alpha) \mid X^\alpha \in \calM\}$.

Pour $\calE \in \scrE$, on note $\calME$ le sous-module
monomial de $\calM$ de base les $X^\alpha$ tels que $\End(X^\alpha) =
\calE$, de sorte que l'on a la décomposition qualifiée de
{\rm $\End$-décomposition} :
$$
\calM = \bigoplus_{\calE \in \scrE} \calME
$$
Note: on a $\calME = 0$ pour $\calE \notin \End(\calM)$.
\end{defns}

\label {NOTA13-EndM}%
\label {NOTA13-McalE}%

\medskip

La End-décomposition est un raffinement de la $\maxDiv$-décomposition
dans le sens où $\calME \subset \calM\double{j}$ en ayant posé $j =
n-\#\calE + 1$.  La End-décomposition est également un raffinement de
la Last-décomposition dans le sens où $\calME \subset
\calM\crochet\calL$ où $\calL$ est n'importe quel suffixe de $\calE$.


\subsubsection*{Exemple: $\calM = \Jex_{2,d}(D)$ où $D=(4,2,3,3)$}

Dans le tableau ci-dessous, nous regroupons les
$\Jex_{2,d}\double\calE$ (non nuls) de même dimension.  Par exemple,
pour $d=7$, pour lequel $\dim \Jex_{2,d} = 30$, la suite $(1^8, 2^2,
4^2, 10)$ est une suite de dimensions indiquant qu'il y a 8 sous-espaces $\Jex_{2,d}\double\calE$ de
dimension 1, 2 de dimension 2, 2 de dimension 4 et 1 de dimension
10. Et $8\times1 + 2\times2 + 2\times4 + 1\times10 =30$.
$$
\begin {array}{c|c|c|c|c|c}
  d          &5 &6 &7 &8 &9
\\[1mm]
\hline
\vrule height10pt depth3pt width0pt
\dim \Jex_{2,d} &2 &10 &30 &66 &118
\\[1mm]
\hline
\vrule height10pt depth3pt width0pt
\text{dims } \Jex_{2,d}\double\calE& 1^2 & 1^4,2,4 &1^8,2^2,4^2,10 &1^{14},2^3,4^3,6,10,18
            &1^{22},2^3,4^4,6^2,7,10,18,27
\end {array}
$$
Nous allons voir (points ii) et iii) de \ref{EndProperties}) que le
déterminant $\det W_{2,7}$  i.e. $\det W_\calM$ avec
$\calM=\Jex_{2,7}$, de dimension 30, est le produit des déterminants $\det W_\calME$, de
dimensions $1,2,4,10$. C'est encore plus spectaculaire avec $d=9$
où le déterminant de dimension 118 est le produit de déterminants
de dimensions bien plus petites.

\medskip

Pour $d=6$, la suite des dimensions $(1^4,2,4)$ précise que les 10
monômes $X^\alpha$ de $\Jex_{2,6}$ sont partitionnés en $4+1+1$ paquets de même
$\End(X^\alpha)$, 4 de cardinal~1, 1 de cardinal~2 et 1 de cardinal~4.
L'utilisation du symbole $\dot X_j$ indique que $j = \maxDiv(X^\alpha)$:
$$
\def\udX{\dot X}
X_2^2\udX_4^4,
\quad
X_2^2\udX_3^4,
\quad
X_2^2\udX_3^3X_4,
\quad
X_1^4\udX_2^2,
\quad
X_1X_2^2\udX_3^3,\ X_2^3\udX_3^3,
\quad
X_1X_2^2\udX_4^3,\ X_2^3\udX_4^3,\ X_2^2X_3\udX_4^3,\ X_3^3\udX_4^3
$$
Si on permute $D$, cela modifie $\Jex_{2,d}(D)$ sans changer sa dimension,
et du coup la combinatoire des $\calME$ peut être
radicalement transformée. Par exemple, pour $D = (2,3,3,4)$ au lieu de $(4,2,3,3)$,
on a:
$$
\begin {array}{c|c|c|c|c|c}
  d          &5 &6 &7 &8 &9
\\[1mm]
\hline
\vrule height10pt depth3pt width0pt
\text{dims } \Jex_{2,d}\double\calE& 1^2 & 1^7,3 &1^{14},3^2,5^2 &1^{24},3^3,5^3,6,12
            &1^{35},3^4,5^4,6^2,7,12,20
\end {array}
$$

\begin{prop} \label{EndTriangularite}
L'endomorphisme $W_\calM=W_\calM(\uP)$ est triangulaire relativement à 
$\calM = \bigoplus_{\calE \in \scrE} \calME$
au sens où, pour $\calE \in \scrE$ fixé, on a 
$$
W_{\calM}\big(\calME \big) 
\ \subset \ 
\bigoplus_{\calE' \succcurlyeq \calE}\, \calM\double{\calE'}
$$ 
\end{prop}

\begin{proof}
  
Soit $X^\alpha \in \calM$, $i = \minDiv(X^\alpha)$ et $j =
\maxDiv(X^\alpha)$ de sorte que $i < j$.  L'image $W_\calM(X^\alpha)$
est une combinaison linéaire de monômes $X^{\alpha'}\in \calM$ du type
$(X^\alpha/X_i^{d_i}) X^\gamma$ avec $X^\gamma$ monôme de $P_i$.

\noindent
Il faut montrer que
$\End(X^{\alpha}) \preccurlyeq \End(X^{\alpha'})$.  En notant $j' =
\maxDiv(X^{\alpha'})$, puisque $i < j$, on a $j \leqslant j'$.  Si
$j<j'$, c'est terminé par définition de $\preccurlyeq$.  Sinon $j =
j'$ et:
$$
\End(X^\alpha) = (\alpha_j, \alpha_{j+1}, \dots, \alpha_n)
\qquad \qquad
\End(X^{\alpha'}) = 
(\alpha_j + \gamma_j, \alpha_{j+1} + \gamma_{j+1}, \dots, \alpha_n + \gamma_n)
$$
donc $\End(X^{\alpha}) \preccurlyeq \End(X^{\alpha'})$.
\end{proof}

\medskip

Pour $\calE \in \scrE$, on pose $\bfA[\uX_\calE] = \bfA[X_1,\dots,X_{n-\#\calE}]$.
Vont à présent cohabiter le $n$-système $\uP = (P_1, \dots, P_n)$  de $\bfA[\uX]$
et le système $\uP_\calE$ de $\bfA[\uX_\calE]$ défini par:
$$
\uP_\calE = (P_{1,\calE}, \dots, P_{n-\#\calE,\,\calE})
\qquad \text{où} \qquad
P_{i,\calE} = P_i(X_1, \dots, X_{n-\#\calE}, 0, \dots, 0)
$$

\begin{prop} \label{EndProperties}
\leavevmode

\begin{enumerate}[\rm i)]
\item 
Pour tout $\calE \in \scrE$, les endomorphismes 
$W_\calME(\uP)$ et 
$W_{\calME/X^\calE}(\uP_\calE)$ sont 
conjugués et ont donc même déterminant.

\medskip
\item 
On a l'égalité déterminantale 
$$
\det W_\calM(\uP) \ = \ 
\prod_{\calE \in \scrE} 
\det W_{\calME/X^\calE}(\uP_\calE)
$$

\item 
Pour $\calM = \Jex_{h,d}$, on a 
$$
\det W_{h,d}(\uP) \ = \  \prod_{\calE\in\scrE} \det W_{h-1,\, d-|\calE|}(\uP_\calE)
\ = \  \prod_{\calE\in\End(\Jex_{h,d})} \kern -5pt \det W_{h-1,\, d-|\calE|}(\uP_\calE)
$$
\end{enumerate}
\end{prop}

\begin{proof}
i) 
L'application  $\Psi$ suivante conserve $\minDiv$ :
$$
\Psi : \ 
\begin{array}[t]{rcl}
\calM & \longrightarrow & \bfA[\uX]\\ [0.2cm]
X^{\alpha} & \longmapsto & \dfrac{X^{\alpha}}{X^{\calE_\alpha}}
\ \in \ \bfA[\uX_{\calE_\alpha}] \quad \text{où $\calE_\alpha = \End(X^\alpha)$}
\end{array}
$$
Pour un $\calE \in \scrE$ fixé,
sa restriction $\Psi_\calE$ à chaque $\calME$ est injective et
induit une bijection dont la réciproque est la multiplication par
$X^\calE$:
$$
\Psi_\calE : \ 
\begin{array}[t]{rcl}
\calME & \longrightarrow & \calME/X^\calE \subset \bfA[\uX_\calE]\\ [0.2cm]
X^{\alpha} & \longmapsto & \dfrac{X^{\alpha}}{X^{\calE}}
\end{array}
$$
Soit $X^\alpha \in \calME$.  Comme $X^\alpha \in \calM \subset
\Jex_2$, l'indice $i := \minDiv(X^\alpha)$ est strictement inférieur à
$j := \maxDiv(X^\alpha)$, de sorte que
$\End\big(X^\alpha/X_i^{d_i}\big) = \calE$.

\smallskip

Il s'agit d'assurer au niveau de la flèche pointillée la commutativité du diagramme suivant:
$$
\xymatrix @R = 1.2cm @C = 3cm @M=0.4pc{
X^{\alpha} \ar[d]_{W_\calME(\uP)} \ar[r]^{\Psi_{\calE}} 
& \dfrac{X^{\alpha}}{X^\calE}  \ar[d]^{W_{\calME/X^\calE}(\uP_\calE)}
\\
\pi_\calME \Big(\dfrac{X^{\alpha}}{X_i^{d_i}} \,P_i\Big)
\ar@{-->}[r]^-{\Psi_{\calE}} & 
\pi_{\calME/X^\calE} \Big( \dfrac{X^{\alpha}}{X_i^{d_i}X^{\calE}}\, P_{i,\calE} \Big)
\\
}
$$
L'élément en bas à gauche est une combinaison linéaire de monômes
$X^{\alpha'} \in \calME$ du type $X^{\alpha'} =
(X^\alpha/X_i^{d_i})\,X^\gamma$ où $X^\gamma$ est un monôme de~$P_i$.
Comme $\End\big(X^\alpha/X_i^{d_i}\big) = \calE$ et $X^{\alpha'} \in
\calME$, on en déduit que le monôme $X^\gamma$ ne dépend pas des
$\#\calE$ dernières variables, donc est un monôme de~$P_{i,\calE}$.
Ainsi, $\dfrac{X^{\alpha'}}{X^\calE}$ est dans $\pi_{\calME/X^\calE}
\Big( \dfrac{X^{\alpha}}{X_i^{d_i}X^{\calE}}\, P_{i,\calE} \Big)$.

\medskip
ii) 
La décomposition triangulaire utilisée dans la proposition précédente~\ref{EndTriangularite} fournit:
$$
\det W_\calM(\uP) \ = \ 
\prod_{\calE \in \scrE} \det W_\calME(\uP)
$$
et d'après le point i), on a $\det W_\calME(\uP) = \det
W_{\calME/X^\calE}(\uP_\calE)$ d'où le résultat.

\medskip
iii) 
Soit $\calM=\Jex_{h,d}$. Pour $\calE = (\alpha_j, \dots, \alpha_n) \in \scrE$,
montrons, dans $\bfA[\uX_\calE]$, que l'on a l'égalité
$\calME/X^\calE =  \Jex_{h-1,\,d-|\calE|}$.
Il en résultera $W_{\calME/X^\calE}(\uP_\calE) =  W_{h-1, d-|\calE|}(\uP_\calE)$.

\smallskip

Rappelons, par définition de $\scrE$, que l'on a $\alpha_j \geqslant
d_j$ et $\alpha_k < d_k$ pour $k > j$.  Un monôme $X^\alpha \in
\calME$ est de la forme $X^\alpha = X^\beta X^\calE$, vérifie $j =
\maxDiv(X^\alpha)$ et $\DivSeq(X^\alpha) \setminus \{j\} \subset
\DivSeq(X^\beta)$.
En conséquence $X^\beta \in \Jex_{h-1,\,d-|\calE|}$, d'où
l'inclusion $\calME/X^\calE \subset \Jex_{h-1,\,d-|\calE|}$. Pour l'inclusion
opposée, soit $X^\beta$ un monôme de $\bfA[\uX_\calE]$ appartenant à $\Jex_{h-1,\,d-|\calE|}$;
en posant $X^\alpha = X^\beta X^\calE$, on a, en raison du rappel, d'une part
$X^\alpha \in \Jex_{h,d}$ et d'autre part $\End(X^\alpha) = \calE$.
\end{proof}

\medskip
Le théorème qui suit a déjà été énoncé et démontré en~\ref{NiveauhDivisehmoins1}.
On peut remarquer que la récurrence ci-dessous est d'une autre nature
mais l'initialisation $h=2$, non triviale, reportée
en~\ref{DetW2diviseDetW1}, est la même.

\begin{theo}
Pour tout $d$ et tout $h \geqslant 2$, on a
$$
\det W_{h,d}(\uP) \ \mid \ \det W_{h-1,d}(\uP)
$$
\end{theo}

\begin{proof}
Procédons par récurrence sur $h$, avec l'hypothèse portant sur tout $d$ et tout système $\uP$.
Pour l'hérédité, on utilise~\ref{EndProperties}-iii) qui fournit :
$$
\det W_{h+1,d}(\uP) \ = \ 
\prod_{\calE\in\scrE} \det W_{h,d-|\calE|}(\uP_\calE),
\qquad\quad 
\det W_{h,d}(\uP) \ = \ 
\prod_{\calE\in\scrE} \det W_{h-1,d-|\calE|}(\uP_\calE)
$$
D'après l'hypothèse de récurrence appliquée au système $\uP_\calE$, on obtient:
$$
\det W_{h, d-|\calE|}(\uP_\calE)
\ \mid \ 
\det W_{h-1, d-|\calE|}(\uP_\calE)
$$
D'où $\det W_{h+1,d}(\uP) \mid \det W_{h,d}(\uP)$.
\end{proof}

\subsubsection{Ordres totaux, extensions linéaires de la relation d'ordre partiel
$(\scrE, \preccurlyeq)$ définie en \ref{EndOrder}}

\begin{lem}
Considérons sur chaque $\bbN^{[j..n]}$ une structure d'ordre monomiale que nous
notons~$\leqslant_{m}$ (indépendamment de $j$).
Et définissons une relation $\leqslant$ sur $\scrE$ par:
$$
\calE \leqslant \calE' : \quad
\left\{
\begin{array}{l}
j < j' \\ 
\text{ou} \\ 
j = j' \text{ et \ } \calE \leqslant_m \calE' \\
\end{array}
\right.
\qquad \text{où}\quad  j = n -\#\calE + 1,\quad j' = n -\#\calE' + 1
$$
Alors $\leqslant$ est une relation d'ordre total qui vérifie
$\calE \preccurlyeq \calE' \Rightarrow \calE \leqslant \calE'$.
\end{lem}

\begin{proof}
Supposons $\calE \preccurlyeq \calE'$ donc $j \leqslant j'$. Si $j < j'$,
alors $\calE \leqslant \calE'$. Si $j = j'$,
on peut écrire $\calE' = \calE + \calE''$ pour un certain $\calE'' \in \mathbb N^{[j..n]}$. 
Comme $\leqslant_m$ est un ordre monomial, on a 
nécessairement $\calE \leqslant_m \calE + \calE''$, c'est-à-dire $\calE \leqslant_m \calE'$.
\end{proof}

\begin{rmq}\label{RangementEnd}
Par exemple, en prenant l'ordre monomial \textsc{Lex}, on obtient l'écriture suivante :
$$
\calE \leqslant \calE'  
\ \Leftrightarrow \ 
(j,\calE) \, \leqslant_{\rm lex} (j',\calE')
\qquad \hbox{avec $j = n -\#\calE + 1$ et $j' = n -\#\calE' + 1$}
$$
On peut également considérer l'ordre total suivant, de type \textsc{GrevLex}.
En notant $\Rev\calE$ la suite renversée de~$\calE$, il est défini par 
$$
\calE \leqslant \calE' : \quad
\left\{
\begin{array}{l}
j < j' \\ 
\text{ou} \\ 
j = j' \text{ et \ } \Rev{\calE} \geqslant_{\rm lex} \Rev{\calE'}
\end{array}
\right.
$$
En page~\pageref{RangementEndSuivantGrevLex}, on utilisera que ce dernier ordre total $\leqslant$
possède la propriété:
$$
\left\{
\begin{array}{l}
| \alpha | = | \alpha' |   \\ [0.1cm]
X^\alpha \ <_{\textsc{\tiny GrevLex}} \ X^{\alpha'} \\[0.1cm]
\maxDiv(X^\alpha) = \maxDiv(X^{\alpha'}) \\
\end{array}
\right\}
\quad \Longrightarrow \quad
\End(X^\alpha) \leqslant \End(X^{\alpha'})
$$
Cette implication dit grosso-modo qu'à $\maxDiv$-égal, l'ordre
monomial \textsc{GrevLex} a un impact sur la relation d'ordre total
$\leqslant$ de $\scrE$.
En revanche, \textsc{GrevLex} est sans influence
directe sur $\maxDiv$ (ce qui est normal car l'un dépend de $D$ et
l'autre pas).  Par exemple, pour $D=(1,1,2,2)$, on a
$$
X_1 X_2 X_3^2 \  >_{\textsc{\tiny GrevLex}}  \ X_1 X_2 X_3 X_4 \  
>_{\textsc{\tiny GrevLex}} \ X_2 X_3 X_4^2
$$
mais les $\maxDiv$ respectifs sont $3$, $2$ et $4$.
Plus généralement, en considérant $\alpha = (\dots, d_{n-1}, d_n - 2)$, 
$\alpha' = (\dots, d_{n-1}-1, d_n-1)$ et $\alpha'' = (\dots, d_n)$, 
on a 
$$
X^\alpha \ >_{\textsc{\tiny GrevLex}} \ X^{\alpha'}  \ >_{\textsc{\tiny GrevLex}} \  X^{\alpha''} 
\quad \hbox{ et } \quad
\maxDiv(X^{\alpha'}) \, < \,
\maxDiv(X^{\alpha})=n-1 \, < \,
\maxDiv(X^{\alpha''})=n
$$
\end{rmq}

\subsection{L'égalité de stabilité de l'invariant de MacRae de $\bfB_d$ en degré $d\geqslant \delta+1$}
\label{sectionEgaliteStabiliteMacRae}

\index{théorème!e2@égalité de stabilité de l'invariant de MacRae des $\bfB_d$ en degré $d\ge\delta+1$}%

On fixe un système $\uP$ n'apparaissant pas dans les notations. On
rappelle (cf. les deux premières sections du
chapitre~\ref{ChapMacRaeForP}) que pour $d\ge\delta+1$, on a attaché à
$\uP$ un scalaire $\calR_d$ comme spécialisation du cas générique, cas
dans lequel $\calR_d$ est l'invariant de MacRae normalisé de~$\bfB_d$,
$\bfA$-module de MacRae de rang~0 (cf. le
théorème~\ref{PoidsNormalisationMacRae}).  Et nous avons montré
en~\ref{MacRaeEqualities} la propriété suivante d'invariance par
rapport à $d$:
$$
\forall\, d,d' \geqslant \delta+1, \qquad   \calR_d = \calR_{d'}
$$
Ce scalaire commun est une définition \emph{possible} de $\Res(\uP)$,
cf. la définition~\ref{DefResultant} du résultant de $\uP$.

\smallskip

Par ailleurs, nous avons prouvé dans le théorème~\ref{MacaulayParRecurrence}:
$$
\forall\,d \geqslant \delta+1, \qquad
\calR_d = \dfrac{\det W_{1,d}}{\det W_{2,d}} 
$$
Nous avons donc l'égalité des quotients ci-dessous, égalité qui doit être comprise sous la forme
de droite (exprimée dans le corollaire~\ref {DetW2diviseDetW1DegreDelta}):
$$
\forall\, d,d' \geqslant \delta+1, \qquad
\dfrac{\det W_{1,d}}{\det W_{2,d}} =  \dfrac{\det W_{1,d'}}{\det W_{2,d'}}
\qquad\qquad
\begin {vmatrix}
\det W_{1,d}  & \det W_{1,d'}  \\
\det W_{2,d}  & \det W_{2,d'}  \\  
\end {vmatrix} = 0
$$
On peut supposer $\uP$ générique, ce qui a l'avantage de rendre les déterminants
réguliers, auquel cas la nullité des déterminants de la famille ci-dessus est équivalente
à celle  pour $d' = d+1$:
$$
\forall\, d \geqslant \delta+1, \qquad
\begin {vmatrix}
\det W_{1,d}  & \det W_{1,d+1}  \\
\det W_{2,d}  & \det W_{2,d+1}  \\  
\end {vmatrix} = 0
$$
L'objectif de cette section est de donner pour cette dernière égalité une
preuve \emph {directe}, totalement autonome et indépendante des
notions de résultant et d'invariant de MacRae et qui repose uniquement
sur les matrices excédentaires (variantes des matrices de
Macaulay).

\medskip

Nous avons essentiellement besoin de rappeler quelques
définitions: $\Jex_1^\circ$ est le sous-module monomial de base les
monômes $X^\beta \in \Jex_1$ tels que $\beta_n = 0$
(section~\ref{SousSectionDecompositionJ1circJ1plus}) et
$\Jex_2^\eq$ celui de base les monômes $X^\alpha \in \Jex_2$ tels que
$\alpha_j = d_j$ où $j = \maxDiv(X^\alpha)$
(section~\ref{SousSectionMaxDivDecomposition}).  Et on a montré
en~\ref{DetW1dDiviseDetW1dplus1} et \ref{DivisibiliteEntreW2} que
$$
\begin{array}{ccccl}
\det W_{1,d+1} & = & \det W_{1,d} \times \det W_{\calM} & &
\hbox{avec $\calM = \Jex_{1,d+1}^\circ$ et $d \geqslant \delta+1$} \\ [0.4cm]
\det W_{2,d+1} & = & \det W_{2,d} \times \det W_{\calN} & & 
\hbox{avec $\calN = \Jex_{2,d+1}^\eq$ et $d \in \bbN$} 
\end{array}
$$
L'invariance par rapport à $d$ dont nous entendons donner une preuve
directe est donc équivalente à l'identité algébrique $\det W_\calM
= \det W_\calN$.  Avec les notations précédentes et en faisant le
changement $d \leftrightarrow d+1$, cette identité s'écrit :
$$
\det W_{2,d}^\eq \ =\ \det W_{1,d}^\circ
\qquad \text{pour tout $d \geqslant \delta + 2$}
$$

\label {NOTA13-Jex1o}%
\label {NOTA13-Jex2=}%
\label {NOTA13-W1o}%
\label {NOTA13-W2=}%

\noindent
C'est cette égalité de déterminants que nous allons étudier ici 
mais surtout pas en s'attaquant directement aux déterminants !
Plutôt en étudiant les \textit{endomorphismes} 
$W_{2,d}^\eq$ et $W_{1,d}^\circ$, qui possèdent une jolie propriété 
que nous illustrons avec $D=(1,1,2,2)$ et $d = \delta+2 = 4$.
Après un petit travail (sic), voici ce que l'on obtient :

{\small
$$
\renewcommand \Heti[1] {\omit \quad \mbox{\scriptsize$#1$} \hfil}  
W_{2,d}^\eq \ = \ 
\EastBordermatrix{
a_{1} & \VR . & \VR . & \VR . & \VR . & . & . & \VR . & . & \VR . & . & . & . & . & . & \Heti{X_{1}^{3}X_{2}} \\ 
\HR{20} 
a_{4} & \VR a_{1} & \VR . & \VR . & \VR . & . & . & \VR . & . & \VR . & . & . & . & . & . & \Heti{X_{1}^{2}X_{2}X_{4}} \\ 
\HR{20} 
a_{3} & \VR . & \VR a_{1} & \VR . & \VR . & . & . & \VR . & . & \VR . & . & . & . & . & . & \Heti{X_{1}^{2}X_{2}X_{3}} \\ 
\HR{20} 
. & \VR a_{3} & \VR a_{4} & \VR a_{1} & \VR . & . & . & \VR . & . & \VR . & . & . & . & . & . & \Heti{X_{1}X_{2}X_{3}X_{4}} \\ 
\HR{21} 
. & \VR . & \VR . & \VR . & \VR a_{1} & . & . & \VR . & . & \VR . & . & . & . & . & . & \Heti{X_{1}^{2}X_{3}^{2}} \\ 
. & \VR . & \VR a_{3} & \VR . & \VR a_{2} & a_{1} & b_{1} & \VR . & . & \VR . & . & . & . & . & . & \Heti{X_{1}X_{2}X_{3}^{2}} \\ 
. & \VR . & \VR . & \VR . & \VR . & a_{2} & b_{2} & \VR . & . & \VR . & . & . & . & . & . & \Heti{X_{2}^{2}X_{3}^{2}} \\ 
\HR{20} 
. & \VR . & \VR . & \VR . & \VR a_{4} & . & . & \VR a_{1} & b_{1} & \VR . & . & . & . & . & . & \Heti{X_{1}X_{3}^{2}X_{4}} \\ 
. & \VR . & \VR . & \VR a_{3} & \VR . & a_{4} & b_{4} & \VR a_{2} & b_{2} & \VR . & . & . & . & . & . & \Heti{X_{2}X_{3}^{2}X_{4}} \\ 
\HR{21} 
. & \VR . & \VR . & \VR . & \VR . & . & . & \VR . & . & \VR a_{1} & . & . & . & . & c_{1} & \Heti{X_{1}^{2}X_{4}^{2}} \\ 
. & \VR a_{4} & \VR . & \VR . & \VR . & . & . & \VR . & . & \VR a_{2} & a_{1} & . & b_{1} & . & c_{2} & \Heti{X_{1}X_{2}X_{4}^{2}} \\ 
. & \VR . & \VR . & \VR . & \VR . & . & . & \VR . & . & \VR a_{3} & . & a_{1} & . & b_{1} & c_{3} & \Heti{X_{1}X_{3}X_{4}^{2}} \\ 
. & \VR . & \VR . & \VR . & \VR . & . & . & \VR . & . & \VR . & a_{2} & . & b_{2} & . & c_{5} & \Heti{X_{2}^{2}X_{4}^{2}} \\ 
. & \VR . & \VR . & \VR a_{4} & \VR . & . & . & \VR . & . & \VR . & a_{3} & a_{2} & b_{3} & b_{2} & c_{6} & \Heti{X_{2}X_{3}X_{4}^{2}} \\ 
. & \VR . & \VR . & \VR . & \VR . & . & . & \VR a_{4} & b_{4} & \VR . & . & a_{3} & . & b_{3} & c_{8} & \Heti{X_{3}^{2}X_{4}^{2}} \\ 
\noalign{\vskip-1pt}
}
$$
$$
W_{1,d}^\circ \ = \ 
\EastBordermatrix{
a_{1} & \VR . & \VR . & \VR . & \VR . & . & . & \VR . & . & \VR . & . & . & . & . & . & \Heti{X_{1}^{4}} \\ 
\HR{20} 
a_{3} & \VR a_{1} & \VR . & \VR . & \VR . & . & . & \VR . & . & \VR . & . & . & . & . & . & \Heti{X_{1}^{3}X_{3}} \\ 
\HR{20} 
a_{2} & \VR . & \VR a_{1} & \VR . & \VR . & . & . & \VR . & . & \VR . & . & . & . & . & . & \Heti{X_{1}^{3}X_{2}} \\ 
\HR{20} 
. & \VR a_{2} & \VR a_{3} & \VR a_{1} & \VR . & . & . & \VR . & . & \VR . & . & . & . & . & . & \Heti{X_{1}^{2}X_{2}X_{3}} \\ 
\HR{20} 
. & \VR . & \VR a_{2} & \VR . & \VR a_{1} & . & . & \VR . & . & \VR . & . & . & . & . & . & \Heti{X_{1}^{2}X_{2}^{2}} \\ 
. & \VR . & \VR . & \VR . & \VR a_{2} & a_{1} & b_{1} & \VR . & . & \VR . & . & . & . & . & . & \Heti{X_{1}X_{2}^{3}} \\ 
. & \VR . & \VR . & \VR . & \VR . & a_{2} & b_{2} & \VR . & . & \VR . & . & . & . & . & . & \Heti{X_{2}^{4}} \\ 
\HR{20} 
. & \VR . & \VR . & \VR a_{2} & \VR a_{3} & . & . & \VR a_{1} & b_{1} & \VR . & . & . & . & . & . & \Heti{X_{1}X_{2}^{2}X_{3}} \\ 
. & \VR . & \VR . & \VR . & \VR . & a_{3} & b_{3} & \VR a_{2} & b_{2} & \VR . & . & . & . & . & . & \Heti{X_{2}^{3}X_{3}} \\ 
\HR{20} 
. & \VR a_{3} & \VR . & \VR . & \VR . & . & . & \VR . & . & \VR a_{1} & . & . & . & . & c_{1} & \Heti{X_{1}^{2}X_{3}^{2}} \\ 
. & \VR . & \VR . & \VR a_{3} & \VR . & . & . & \VR . & . & \VR a_{2} & a_{1} & . & b_{1} & . & c_{2} & \Heti{X_{1}X_{2}X_{3}^{2}} \\ 
. & \VR . & \VR . & \VR . & \VR . & . & . & \VR . & . & \VR a_{3} & . & a_{1} & . & b_{1} & c_{3} & \Heti{X_{1}X_{3}^{3}} \\ 
. & \VR . & \VR . & \VR . & \VR . & . & . & \VR a_{3} & b_{3} & \VR . & a_{2} & . & b_{2} & . & c_{5} & \Heti{X_{2}^{2}X_{3}^{2}} \\ 
. & \VR . & \VR . & \VR . & \VR . & . & . & \VR . & . & \VR . & a_{3} & a_{2} & b_{3} & b_{2} & c_{6} & \Heti{X_{2}X_{3}^{3}} \\ 
. & \VR . & \VR . & \VR . & \VR . & . & . & \VR . & . & \VR . & . & a_{3} & . & b_{3} & c_{8} & \Heti{X_{3}^{4}} \\ 
\noalign{\vskip-1pt}
}
$$
}

\noindent
Tout ceci est relatif à l'écriture suivante des polynômes ($P_4$ n'intervient pas):

\smallskip

{%
\setlength{\tabcolsep}{2pt}
\noindent
$
\left\{
\begin{tabular}{rcp{15cm}} 
$P_{1}$ & $=$ & $a_{1}X_{1} + a_{2}X_{2} + a_{3}X_{3} + a_{4}X_{4}$\\ [0.1cm] 
$P_{2}$ & $=$ & $b_{1}X_{1} + b_{2}X_{2} + b_{3}X_{3} + b_{4}X_{4}$\\ [0.1cm] 
$P_{3}$ & $=$ & $c_{1}X_{1}^{2} + c_{2}X_{1}X_{2} + c_{3}X_{1}X_{3} + c_{4}X_{1}X_{4} + 
c_{5}X_{2}^{2} + c_{6}X_{2}X_{3} + c_{7}X_{2}X_{4} + c_{8}X_{3}^{2} + 
c_{9}X_{3}X_{4} + c_{10}X_{4}^{2}$\\ [0.1cm] 
\end{tabular} 
\right.
$
}

Le lecteur remarquera que, comme par magie, les matrices sont triangulaires par blocs et que 
les blocs diagonaux sont les mêmes pour les deux matrices (d'où l'égalité des déterminants).
Du côté de $\Jex_{2,d}^\eq$, les monômes sont regroupés par égalité de $\End$.
Quant à la partition des monômes de $\Jex_{1,d}^\circ$, 
c'est l'image de celle des monômes de $\Jex_{2,d}^\eq$ 
par un isomorphisme monomial construit de manière astucieuse qui s'est révélé délicat à découvrir.

\medskip

Commentons encore un peu l'exemple précédent.
Les blocs de $\Jex_{2,d}^\eq$ correspondent aux $\End$-uplets suivants :
$$
(1,0,0) \qquad 
(1,0,1) \qquad 
(1,1,0) \qquad 
(1,1,1) \qquad 
(2,0) \qquad 
(2,1) \qquad 
(2) 
$$
Ces $\End$-uplets sont rangés conformément à l'ordre total précisé
en~\ref{RangementEnd} gouverné par l'ordre lexicographique ; cet ordre
total est une extension linéaire de l'ordre partiel $\preccurlyeq$ qui
intervient dans la structure triangulaire de l'endomorphisme $W_\calM$
(cf.~\ref{EndTriangularite}).  Remarquons que l'on peut permuter les
uplets $(1,0,1)$ et $(1,1,0)$ car ils sont incomparables pour
$\preccurlyeq$.

\bigskip

\label{RangementEndSuivantGrevLex}
Donnons un autre rangement possible pour les monômes de
$\Jex_{2,d}^\eq$ toujours avec $D=(1,1,2,2)$ et $d = \delta+2 = 4$.
Conformément à~\ref{RangementEnd}, on peut utiliser l'ordre monomial
\textsc{GrevLex} en ayant pris soin de ranger les monômes suivant leur
$\maxDiv$.  Au sein d'un bloc de même $\maxDiv$ (il y en a 3 dans
l'exemple ci-dessous), on ordonne les monômes selon l'ordre monomial
\textsc{GrevLex} (ici de manière décroissante), et le découpage
suivant les $\End$-uplets se fait tout seul !  Cela se manifeste de la
façon suivante ($>$ désigne~$>_{\textsc{\tiny GrevLex}}$)
$$
\underbrace{
X_1^3X_2 
>
X_1^2X_2X_3
>
X_1^2X_2X_4
>
X_1X_2X_3X_4}_{\maxDiv = 2}
\qquad 
\underbrace{
\begin{array}{c}
X_1^2X_3^2\\
\vee \\
X_1X_2X_3^2\\
\vee \\
X_2^2X_3^2\\
\end{array}
\ > \ 
\begin{array}{c}
X_1X_3^2X_4\\
\vee \\
X_2X_3^2X_4\\
\end{array}
}_{\maxDiv = 3}
\quad 
\underbrace{
\begin{array}{c}
X_1^2X_4^2\\
\vee \\
X_1X_2X_4^2\\
\vee \\
X_2^2X_4^2\\
\vee \\
X_1X_3X_4^2\\
\vee \\
X_2X_3X_4^2\\
\vee \\
X_3^2X_4^2\\
\end{array}}_{\maxDiv = 4}
$$
Voici le rangement des $\End$-uplets (au dessus de chaque $\End$-uplet, 
on a indiqué le \og $\End$-monôme \fg{} correspondant) :
$$
\underbrace{
\overset{X_2\phantom{X_3X_4}}{(1,0,0)} \quad 
\overset{X_2X_3\phantom{X_4}}{(1,1,0)} \quad 
\overset{X_2\ X_4}{(1,0,1)} \quad 
\overset{X_2X_3X_4}{(1,1,1)}}_{\maxDiv = 2} \qquad 
\underbrace{
\overset{X_3^2\phantom{X_3}}{(2,0)} \quad 
\overset{X_3^2X_4}{(2,1)}
}_{\maxDiv = 3} \qquad
\underbrace{
\overset{X^2_4}{(2)}
}_{\maxDiv = 4}
$$
Quant aux endomorphismes $W^\eq_{2,d}(\uP)$ et $W^\sh_{1,d}(\uP)$:
{\small 
$$
W_{2,d}^\eq \ = \ 
\EastBordermatrix{
a_{1} & \VR . & \VR . & \VR . & \VR . & . & . & \VR . & . & \VR . & . & . & . & . & . & \Heti{X_{1}^{3}X_{2}} \\ 
\HR{20} 
a_{3} & \VR a_{1} & \VR . & \VR . & \VR . & . & . & \VR . & . & \VR . & . & . & . & . & . & \Heti{X_{1}^{2}X_{2}X_{3}} \\ 
\HR{20} 
a_{4} & \VR . & \VR a_{1} & \VR . & \VR . & . & . & \VR . & . & \VR . & . & . & . & . & . & \Heti{X_{1}^{2}X_{2}X_{4}} \\ 
\HR{20} 
. & \VR a_{4} & \VR a_{3} & \VR a_{1} & \VR . & . & . & \VR . & . & \VR . & . & . & . & . & . & \Heti{X_{1}X_{2}X_{3}X_{4}} \\ 
\HR{21} 
. & \VR . & \VR . & \VR . & \VR a_{1} & . & . & \VR . & . & \VR . & . & . & . & . & . & \Heti{X_{1}^{2}X_{3}^{2}} \\ 
. & \VR a_{3} & \VR . & \VR . & \VR a_{2} & a_{1} & b_{1} & \VR . & . & \VR . & . & . & . & . & . & \Heti{X_{1}X_{2}X_{3}^{2}} \\ 
. & \VR . & \VR . & \VR . & \VR . & a_{2} & b_{2} & \VR . & . & \VR . & . & . & . & . & . & \Heti{X_{2}^{2}X_{3}^{2}} \\ 
\HR{20} 
. & \VR . & \VR . & \VR . & \VR a_{4} & . & . & \VR a_{1} & b_{1} & \VR . & . & . & . & . & . & \Heti{X_{1}X_{3}^{2}X_{4}} \\ 
. & \VR . & \VR . & \VR a_{3} & \VR . & a_{4} & b_{4} & \VR a_{2} & b_{2} & \VR . & . & . & . & . & . & \Heti{X_{2}X_{3}^{2}X_{4}} \\ 
\HR{21} 
. & \VR . & \VR . & \VR . & \VR . & . & . & \VR . & . & \VR a_{1} & . & . & . & . & c_{1} & \Heti{X_{1}^{2}X_{4}^{2}} \\ 
. & \VR . & \VR a_{4} & \VR . & \VR . & . & . & \VR . & . & \VR a_{2} & a_{1} & b_{1} & . & . & c_{2} & \Heti{X_{1}X_{2}X_{4}^{2}} \\ 
. & \VR . & \VR . & \VR . & \VR . & . & . & \VR . & . & \VR . & a_{2} & b_{2} & . & . & c_{5} & \Heti{X_{2}^{2}X_{4}^{2}} \\ 
. & \VR . & \VR . & \VR . & \VR . & . & . & \VR . & . & \VR a_{3} & . & . & a_{1} & b_{1} & c_{3} & \Heti{X_{1}X_{3}X_{4}^{2}} \\ 
. & \VR . & \VR . & \VR a_{4} & \VR . & . & . & \VR . & . & \VR . & a_{3} & b_{3} & a_{2} & b_{2} & c_{6} & \Heti{X_{2}X_{3}X_{4}^{2}} \\ 
. & \VR . & \VR . & \VR . & \VR . & . & . & \VR a_{4} & b_{4} & \VR . & . & . & a_{3} & b_{3} & c_{8} & \Heti{X_{3}^{2}X_{4}^{2}} \\ 
\noalign{\vskip-1pt}
}
$$
$$
W_{1,d}^\circ \ = \ 
\EastBordermatrix{
a_{1} & \VR . & \VR . & \VR . & \VR . & . & . & \VR . & . & \VR . & . & . & . & . & . & \Heti{X_{1}^{4}} \\ 
\HR{20} 
a_{2} & \VR a_{1} & \VR . & \VR . & \VR . & . & . & \VR . & . & \VR . & . & . & . & . & . & \Heti{X_{1}^{3}X_{2}} \\ 
\HR{20} 
a_{3} & \VR . & \VR a_{1} & \VR . & \VR . & . & . & \VR . & . & \VR . & . & . & . & . & . & \Heti{X_{1}^{3}X_{3}} \\ 
\HR{20} 
. & \VR a_{3} & \VR a_{2} & \VR a_{1} & \VR . & . & . & \VR . & . & \VR . & . & . & . & . & . & \Heti{X_{1}^{2}X_{2}X_{3}} \\ 
\HR{20} 
. & \VR a_{2} & \VR . & \VR . & \VR a_{1} & . & . & \VR . & . & \VR . & . & . & . & . & . & \Heti{X_{1}^{2}X_{2}^{2}} \\ 
. & \VR . & \VR . & \VR . & \VR a_{2} & a_{1} & b_{1} & \VR . & . & \VR . & . & . & . & . & . & \Heti{X_{1}X_{2}^{3}} \\ 
. & \VR . & \VR . & \VR . & \VR . & a_{2} & b_{2} & \VR . & . & \VR . & . & . & . & . & . & \Heti{X_{2}^{4}} \\ 
\HR{20} 
. & \VR . & \VR . & \VR a_{2} & \VR a_{3} & . & . & \VR a_{1} & b_{1} & \VR . & . & . & . & . & . & \Heti{X_{1}X_{2}^{2}X_{3}} \\ 
. & \VR . & \VR . & \VR . & \VR . & a_{3} & b_{3} & \VR a_{2} & b_{2} & \VR . & . & . & . & . & . & \Heti{X_{2}^{3}X_{3}} \\ 
\HR{20} 
. & \VR . & \VR a_{3} & \VR . & \VR . & . & . & \VR . & . & \VR a_{1} & . & . & . & . & c_{1} & \Heti{X_{1}^{2}X_{3}^{2}} \\ 
. & \VR . & \VR . & \VR a_{3} & \VR . & . & . & \VR . & . & \VR a_{2} & a_{1} & b_{1} & . & . & c_{2} & \Heti{X_{1}X_{2}X_{3}^{2}} \\ 
. & \VR . & \VR . & \VR . & \VR . & . & . & \VR a_{3} & b_{3} & \VR . & a_{2} & b_{2} & . & . & c_{5} & \Heti{X_{2}^{2}X_{3}^{2}} \\ 
. & \VR . & \VR . & \VR . & \VR . & . & . & \VR . & . & \VR a_{3} & . & . & a_{1} & b_{1} & c_{3} & \Heti{X_{1}X_{3}^{3}} \\ 
. & \VR . & \VR . & \VR . & \VR . & . & . & \VR . & . & \VR . & a_{3} & b_{3} & a_{2} & b_{2} & c_{6} & \Heti{X_{2}X_{3}^{3}} \\ 
. & \VR . & \VR . & \VR . & \VR . & . & . & \VR . & . & \VR . & . & . & a_{3} & b_{3} & c_{8} & \Heti{X_{3}^{4}} \\ 
\noalign{\vskip-1pt}
}
$$
}

Il s'agit maintenant de passer à l'action et de mettre au point tous les ingrédients nécessaires.

\medskip

La première chose à faire pour tenter de prouver l'égalité $\det W_{2,d}^\eq = \det W_{1,d}^\circ$
est de mettre en bijection les bases monomiales de $\Jex_{2,d}^\eq$ et $\Jex_{1,d}^\circ$.
Une idée informelle et plus générale est la suivante. Partons de 
$X^\alpha \in \Jex_{2,d}$ 
et essayons de lui associer un monôme $X^\beta \in \bfA[X_1, \dots, X_{n-1}]_d$ de sorte que $\beta_n = 0$.
On peut procéder au shift vers la gauche des indices des 
indéterminées de $X^\alpha$ comprises entre $\maxDiv(X^\alpha)$ et $n$ ; précisément :
$$
\Shift : \ 
X^\alpha \ \longmapsto \  
X^\beta = 
X_1^{\alpha_1} \cdots X_{j-2}^{\alpha_{j-2}}
X_{j-1}^{\alpha_{j-1}+\alpha_j}X_j^{\alpha_{j+1}} \cdots X_{n-1}^{\alpha_n} 
\qquad 
\hbox{où $j = \maxDiv(X^\alpha)$}
$$
Pour l'instant, on dispose d'une application  
$\Shift : \Jex_{2,d} \to \bfA[X_1, \dots, X_{n-1}]_d$ dont on peut, pour le moment, 
oublier la dépendance en $d$. L'objectif est de montrer que la restriction de $\Shift$ à $\Jex_2^\eq$ 
est injective et de caractériser l'image $\Shift(\Jex_2^\eq)$ (cf.~\ref{CaracJ1sh}). 
En degré $d \geqslant \delta+2$, on va relier $\Jex^\eq_2$ et $\Jex_1^\circ$ en montrant que 
$\Shift(\Jex_{2,d}^\eq) = \Jex_{1,d}^\circ$ 
(cf.~\ref{CaracJ1sh-dGrand}).

\label {NOTA13-Shift}%

\medskip

Avant de donner la définition formelle~\ref{MaxDivShiftDef}, 
expliquons d'une part comment 
retrouver $X^\alpha \in \Jex_2^\eq$ à partir de son image $X^\beta$ par $\Shift$, 
et d'autre part, comment prouver l'inclusion $\Shift(\Jex_2^\eq) \subset \Jex_1^\circ$.

\medskip

\noindent
$\rhd$ 
Prenons $X^\alpha \in \Jex_2^\eq$. On a donc $\alpha_j = d_j$ où $j = \maxDiv(X^\alpha)$.
Le monôme $X^\beta = \Shift(X^\alpha)$ vérifie alors 
$$
\fbox{$
\beta_1 = \alpha_1, 
\qquad \ldots \qquad
\beta_{j-2} = \alpha_{j-2}, \quad
\beta_{j-1} = \alpha_{j-1} + d_j, \quad
\beta_j = \alpha_{j+1}, 
\qquad \ldots \qquad
\beta_{n-1} = \alpha_n, \quad
\beta_n = 0
$}
$$
Pour retrouver $X^\alpha$ à partir de $X^\beta$, il suffit 
d'exprimer l'entier $j$ (qui dépend de $\alpha$) en fonction de $\beta$. 
C'est possible car $\beta_{j-1} \geqslant d_j$ et, pour $k > j$, $\beta_{k-1} = \alpha_k < d_k$. 
Tout cela conduit à 
$$
j = \max \big ( k \in \{2..n\} \mid \beta_{k-1} \geqslant d_k \big )
\qquad \hbox{et} \qquad
X^\alpha = X_1^{\beta_1} \cdots X_{j-2}^{\beta_{j-2}} \ 
X_{j-1}^{\beta_{j-1}-d_j}\  X_j^{d_j}\ X_{j+1}^{\beta_j} \cdots X_{n}^{\beta_{n-1}}
$$

\noindent
$\rhd$ 
Examinons l'image de l'application $\Shift$ en montrant que
$\Shift(\Jex_2^\eq) \subset \Jex_1^\circ$. 
Soit $X^\alpha \in \Jex_2^\eq$ et
introduisons $i = \minDiv(X^\alpha) < j = \maxDiv(X^\alpha)$. 
On distingue les cas $i < j-1$ et $i=j-1$. 
En notant $X^\beta = \Shift(X^\alpha)$, on a :
$$
i < j-1 \ \Rightarrow \ \beta_i = \alpha_i \geqslant d_i
\qquad \hbox{et} \qquad 
i = j-1 \ \Rightarrow \ 
\beta_i = \alpha_i + d_{i+1} \geqslant d_i + d_{i+1}
$$
Visuellement :
$$
\beta = (\dots, \underset{\geqslant d_i}{\beta_i}, \dots, 
\underset{\geqslant d_j}{\beta_{j-1}}, \underset{< d_{j+1}}{\beta_j},
\dots, 
\underset{< d_{n}}{\beta_{n-1}},
0
)
\qquad 
\text{ ou } 
\qquad 
\beta = (\underset{<d_1}{\beta_1} \dots, \underset{<d_{i-1}}{\beta_{i-1}},
\underset{\geqslant d_{i} + d_{i+1}}{\beta_{i}}, \underset{< d_{j+1}}{\beta_{j}},
\dots, 
\underset{< d_{n}}{\beta_{n-1}},
0
)
$$
Dans les deux cas, on constate que $i \in \DivSeq(X^\beta)$, 
donc $X^\beta \in \Jex_1$. 
Et par définition de $X^\beta$, on a $\beta_n = 0$, 
d'où $X^\beta \in \Jex_1^\circ$. 

Au passage, on en déduit que  $i = \minDiv(X^\beta)$, 
c'est-à-dire $\minDiv(X^\alpha) = \minDiv(X^\beta)$.

\bigskip

Résumons. Pour $X^\alpha \in \Jex_2^\eq$, on peut définir le monôme $\Shift(X^\alpha)$
qui habite $\Jex_1^\circ$ 
$$
\begin{array}{rlccccccccc}
\alpha &=& \alpha_1 &\cdots &\alpha_{j-2} &\alpha_{j-1} &d_j &\alpha_{j+1} &\cdots &\alpha_{n-1} &\alpha_n
\\ 
\Shift(\alpha) &=& \alpha_1 &\cdots &\alpha_{j-2} &\alpha_{j-1}+d_j &\alpha_{j+1} &\alpha_{j+2} &\cdots &\alpha_n &0
\\ 
\end{array}
$$
Réciproquement, à partir de $X^\beta \in \Jex_1^\circ$ vérifiant
certaines conditions (cf. proposition~\ref{CaracJ1sh} à venir), on
peut définir un élément de $\Jex_2^\eq$
$$
\begin{array}{rlccccccccc}
\beta &=& \beta_1 &\cdots &\beta_{j-2} &\beta_{j-1} &\beta_j &\beta_{j+1} &\cdots &\beta_{n-1} &0
\\ 
\Shift^{-1}(\beta) &=& \beta_1 &\cdots &\beta_{j-2} &\beta_{j-1}-d_j &d_j &\beta_{j} &\cdots &\beta_{n-2} &\beta_{n-1}
\\ 
\end{array}
$$

\begin{defn}\label{MaxDivShiftDef}
Pour $X^\beta \in \Jex_{1}^\circ$, on pose 
$$
\maxDivSh(X^\beta) \ = \ 
\max \big ( k \in \{2..n\} \mid \beta_{k-1} \geqslant d_k \big ) 
$$
Quand l'ensemble d'indices sur lequel porte le $\max$ est vide, on convient que
celui-ci a une valeur strictement plus petite que 2, par exemple~1. 
Cela se produit si et seulement si $\beta_{k-1} \leqslant d_k - 1$
pour tout $k \geqslant 2$ ;
auquel cas, ${|\beta| \leqslant \delta - (d_1 - 1) \leqslant \delta}$.
Pris à rebours, si $|\beta| \geqslant \delta+1$, l'indice 
$\maxDivSh(X^\beta)$ est dans $\{2..n\}$.
\end{defn}

\label {NOTA13-maxDivSh}%
%
%

\begin{prop} [Caractérisation de l'image $\Shift(\Jex^\eq_2)$ et introduction de $\Jex^\sh_1 \subset \Jex^\circ_1$]
\label{CaracJ1sh} 

\leavevmode
\begin{enumerate}[\rm i)]
\item 
L'application suivante est injective et  conserve $\minDiv$ :
$$
\Shift : \begin{array}[t]{rcl}
\Jex_2^\eq & \longrightarrow & \bfA[\uX] \\ [0.2cm]
X^\alpha & \longmapsto & 
X_1^{\alpha_1} \cdots X_{j-2}^{\alpha_{j-2}} 
X_{j-1}^{\alpha_{j-1}+d_j} X_j^{\alpha_{j+1}} \cdots X_{n-1}^{\alpha_n} 
\quad \text{où $j = \maxDiv(X^\alpha)$}
\end{array}
$$
Son image est incluse dans $\Jex_1^\circ$ et 
pour tout $X^\alpha \in \Jex_{2}^\eq$, on a $\maxDivSh\big(\Shift(X^\alpha)\big) = \maxDiv(X^\alpha)$.

\item 
Notons $\Jex_1^\sh$ le sous-module de $\Jex_1^\circ$ 
de base les $X^\beta$ tels qu'en posant 
$j = \maxDivSh(X^\beta)$ et $i = \minDiv(X^\beta)$, on ait : 
$$
j \in \{2..n\} 
\qquad \text{ et } \qquad 
\left\{
\begin{array}{l} 
i < j-1 \\ 
\text{ou } \\
i=j-1 \hbox{ et } \beta_i \geqslant d_i + d_{i+1}
\end{array}
\right.
$$
Schématiquement
$$
\beta = (\dots, \underset{\geqslant d_i}{\beta_i}, \dots, 
\underset{\geqslant d_j}{\beta_{j-1}}, \underset{< d_{j+1}}{\beta_j},
\dots, 
\underset{< d_{n}}{\beta_{n-1}},
0
)
\quad 
\text{ ou } 
\quad 
\beta = (\underset{<d_1}{\beta_1}, \dots, \underset{<d_{j-2}}{\beta_{j-2}},
\underset{\geqslant d_{j-1} + d_j}{\beta_{j-1}}, \underset{< d_{j+1}}{\beta_j},
\dots, 
\underset{< d_{n}}{\beta_{n-1}},
0
)
$$
On a alors $\boxed{\Jex_1^\sh = \Shift(\Jex_2^\eq)}$.

\item 
La bijection réciproque de $\Shift : \Jex_2^\eq \xrightarrow[]{\simeq} \Jex_1^\sh$ est donnée par
$$
\Shift^{-1} : 
\begin{array}[t]{rcl}
\Jex_1^{\sh} & \longrightarrow & \Jex_2^\eq \\ [0.2cm]
X^\beta & \longmapsto & 
X_1^{\beta_1} \cdots X_{j-2}^{\beta_{j-2}} 
X_{j-1}^{\beta_{j-1}-d_j} X_j^{d_j}X_{j+1}^{\beta_j} \cdots X_{n}^{\beta_{n-1}} 
\quad \text{où $j = \maxDivSh(X^\beta)$}
\end{array}
$$
Dans la correspondance biunivoque monomiale entre $\Jex_2^\eq$ et $\Jex^\sh_1$
donnée par $\Shift$, les primitives $\maxDiv$ et $\maxDivSh$ se correspondent.
\end{enumerate}
\end{prop}

\label {NOTA13-Jex1sh}%
%
%

\begin{proof} \leavevmode

 i)
L'inclusion $\Shift(\Jex^\eq_2) \subset \Jex_1^\circ$ et le fait que
$\Shift$ conserve $\minDiv$ ont été vus en introduction. Et nous avons
été conduits à la définition de $\maxDivSh$ de façon à ce que
$\maxDivSh \circ \Shift = \maxDiv$ sur~$\Jex_2^\eq$.

\smallskip

Passons à l'injectivité.
Soient $X^\alpha$ et $X^{\gamma}$ dans $\Jex_2^\eq$ tels que 
$\Shift(X^\alpha) = \Shift(X^{\gamma})$.
En appliquant $\maxDivSh$, on obtient $\maxDiv(X^\alpha) = \maxDiv(X^\gamma)$, 
entier que l'on note $j$. On a donc  
$$
X_1^{\alpha_1} \cdots X_{j-2}^{\alpha_{j-2}} 
X_{j-1}^{\alpha_{j-1}+d_j} X_j^{\alpha_{j+1}} \cdots X_{n-1}^{\alpha_n} 
\ = \ 
X_1^{\gamma_1} \cdots X_{j-2}^{\gamma_{j-2}} 
X_{j-1}^{\gamma_{j-1}+d_j} X_j^{\gamma_{j+1}} \cdots X_{n-1}^{\gamma_n} 
$$
On en tire $\alpha_\ell = \gamma_\ell$ pour $\ell \neq j$.
D'autre part, $\alpha_j = \gamma_j$, puisque ces deux entiers valent~$d_j$.
D'où $\alpha = \gamma$.

\medskip

ii)
L'inclusion $\Jex_1^\sh \supset \Shift(\Jex_2^\eq)$ a déjà été prouvée
dans l'introduction.  Montrons que $\Jex_1^\sh \subset
\Shift(\Jex_2^\eq)$. Soit $X^\beta \in \Jex_1^\sh$.  En posant $j =
\maxDivSh(X^\beta)$, on propose comme antécédent de $X^\beta$ par
$\Shift$ le monôme
$$
X^\alpha := 
X_1^{\beta_1} \cdots X_{j-2}^{\beta_{j-2}} 
X_{j-1}^{\beta_{j-1}-d_j} X_j^{d_j}X_{j+1}^{\beta_j} \cdots X_{n}^{\beta_{n-1}}
$$
Pour vérifier $X^\alpha \in \Jex_2^\eq$, introduisons
$i = \minDiv(X^\beta)$ et montrons que 
$$
\hbox{\textcircled{\footnotesize 1}} \ 
i, j \in \DivSeq(X^\alpha) \quad \text{avec $i < j$}
\qquad \text{et } \qquad 
\hbox{\textcircled{\footnotesize 2}} \ 
\maxDiv(X^\alpha) = j \quad \text{avec $\alpha_j = d_j$}
$$
Par définition de $X^\alpha$, on a déjà $\alpha_j = d_j$ ;
et par définition de $\maxDivSh(X^\beta)$, on a 
$\alpha_k = \beta_{k-1} < d_k$ pour tout $k > j$.
D'où \textcircled{\footnotesize 2}.
Reste à montrer que $i \in \DivSeq(X^\alpha)$ ; 
ou bien $i < j-1$ auquel cas $\alpha_i = \beta_i \geqslant d_i$ 
ou bien $(i=j-1 \hbox{ et } \beta_i \geqslant d_i + d_{i+1})$ 
auquel cas $\alpha_i = \beta_i - d_{i+1} \geqslant d_i$. 
D'où \textcircled{\footnotesize 1}.

\smallskip
L'égalité $\Shift(X^\alpha) = X^\beta$ résulte de la définition de $\Shift$
(et surtout du fait que $X^\alpha$ a été construit à cet effet!).

\medskip
iii) Résulte de la construction dans le point ii) de l'antécédent $X^\alpha$ de $X^\beta$ par $\Shift$.
\end{proof}

\begin{prop}[L'égalité $\Shift(\Jex_{2,d}^\eq) = \Jex_{1,d}^\circ$ dans le cas $d \geqslant \delta + 2$]\leavevmode
\label{CaracJ1sh-dGrand} 

Pour $d \geqslant \delta +2$, on a $\boxed{\Jex_{1,d}^\sh = \Jex_{1,d}^\circ}$, ce qui
équivaut à dire que $\Shift$ est un isomorphisme monomial:
$$
\Shift : \ 
\Jex_{2,d}^\eq 
\ \xrightarrow{\quad\sim\quad} \ 
\Jex_{1,d}^\circ
$$

\end{prop}

\begin{proof}
Soit $X^\beta \in \Jex_{1,d}^\circ$. Il s'agit de prouver que $X^\beta
\in \Jex^\sh_{1,d}$.  
En posant $i = \minDiv(X^\beta)$ et $j=\maxDivSh(X^\beta)$, on a
$$
\forall\, k \in \{1..i-1\}, \ \, \beta_k < d_k 
\qquad \text{et} \qquad 
\forall\, k \in \{j..n\moins 1\},\ \, \beta_k < d_{k+1}
\leqno(\star)
$$
Comme $d \ge \delta+2$, a fortiori $d \ge \delta+1$, la remarque
incluse dans la définition \ref{MaxDivShiftDef} de $\maxDivSh$ prouve
que $j \in \{2..n\}$

\smallskip

\noindent
$\sbullet$
Montrons que $i < j$. Dans le cas contraire $i \geqslant j$, on aurait $i \in \{j..n-1\}$
et la situation suivante :
$$
d = 
\underbrace{\beta_1 + \dots + \beta_{i-1}}_{
\text{tous ces $\beta_k \leqslant d_{k}-1$}}
+ 
\underbrace{\beta_{i} + \cdots + \beta_{n-1}}_{
\text{tous ces $\beta_k \leqslant d_{k+1}-1$}}
$$
En sommant, on obtiendrait $d \leqslant \delta - (d_i-1)$, a fortiori $d \leqslant \delta$
contredisant $d \geqslant \delta+2$.

\bigskip

\noindent
$\sbullet$
Une fois acquis $i < j$, on est donc dans la situation $i < j-1$ ou $i
= j-1$, situation qui gouverne l'appartenance de $X^\beta$ à $\Jex^\sh_1$.  Le
seul cas à examiner est celui de $i = j-1$. Ecrivons que $d = |\beta|$
et utilisons~$(\star)$, en remplaçant $j$ par $i+1$:
$$
d = 
\underbrace{\beta_1 + \dots + \beta_{i-1}}_{
\text{tous ces $\beta_k \leqslant d_{k}-1$}}
+ \ \beta_i \ + 
\underbrace{\beta_{i+1} + \cdots + \beta_{n-1}}_{
\text{tous ces $\beta_k \leqslant d_{k+1}-1$}}
$$
En sommant, on obtient $d \leqslant \delta - \big((d_i-1) + (d_{i+1}-1)\big) + \beta_i$,
ce qui s'écrit encore :
$$
d \ \leqslant\ \delta + 2 + \big(\beta_i - (d_i + d_{i+1})\big)
$$
La condition $d \geqslant \delta + 2$ impose 
donc $\beta_i \geqslant d_i + d_{i+1}$. 

\bigskip

Résumons. On a, ou bien $i < j-1$ ou bien $i = j-1$ auquel cas
$\beta_i \geqslant d_i + d_{i+1}$.  Donc $X^\beta$ appartient à
$\Jex_{1,d}^\sh$, en vertu de la définition intégrée à la
proposition~\ref{CaracJ1sh}.
\end{proof}

\begin {rmqs} \leavevmode

\medskip
$\bullet$
Pour $d \ge \delta+2$, on peut démontrer que $\Shift(\Jex^\eq_{2,d}) =
\Jex^\circ_{1,d}$, sans passer par $\Jex^\sh_1$, grâce au fait que
$\Shift(\Jex^\eq_{2,d})$ est un sous-module \emph{monomial} de
$\Jex^\circ_{1,d}$ de \emph{même} dimension.  De manière précise:
$$
\dim \Jex^\eq_{2,d}  = \binom{n-2 + d}{n-2} = \dim \Jex^\circ_{1,d}
$$
Nous n'allons pas donner tous les détails. L'égalité de droite résulte
du fait que $\Jex^\circ_{1,d} = \bfA[\uX]^\circ_{d}$ pour $d \ge \delta+1$
et de $\bfA[\uX]^\circ_{d} \simeq \bfA[X_1,\cdots,X_{n-1}]_{d}$ pour tout $d$.

\smallskip

A gauche, c'est plus compliqué.  Montrons d'abord que
$\Jex^\eq_{1,d} = \Jex^\eq_{2,d}$.  En effet, un monôme
$X^\beta \in \Jex^\eq_{1,d}$ vérifie $\beta_j = d_j$ où $j=\maxDiv(X^\beta)$;
puisque $X^\beta/X_j$ est de degré $d-1 \ge \delta+1$, il est
est divisible par un~$X_i^{d_i}$; l'exposant en $j$ de
$X^\beta/X_j$ étant $\beta_j-1 = d_j-1$, on a nécessairement $i < j$ de
sorte que $X^\beta \in \Jex_2$.

\smallskip

Reste à calculer $\dim \Jex^\eq_{1,d}$. On va utiliser à cet effet
les 2 information suivantes, valides pour $d \ge \delta+1$:
$$
\bfA[\uX]_d = \Jex^\ssup_{1,d} \oplus \Jex^\eq_{1,d},
\qquad\qquad
\Jex^\ssup_{1,d} \simeq \Jex_{1,d-1},
\leqno (\star)
$$
l'isomorphie monomiale à droite étant réalisée par les deux bijections
réciproques l'une de l'autre:
$$
\begin{array}[t]{rcl}
\Jex_{1,d-1} & \longrightarrow & \Jex^\ssup_{1,d}
\\ [2mm]
X^\alpha & \longmapsto & X_jX^\alpha,\quad j = \maxDiv(X^\alpha)
\end{array}
\qquad\qquad
\begin{array}[t]{rcl}
\Jex^\ssup_{1,d} & \longrightarrow & \Jex_{1,d-1}
\\ [2mm]
X^\beta & \longmapsto & X^\beta/X_j,\quad j = \maxDiv(X^\beta)
\end{array}
$$
On peut enfin déterminer $\dim\Jex^\eq_{2,d}$ en utilisant $(\star)$
et l'égalité $\Jex_{1,d-1} = \bfA[\uX]_{d-1}$ (due à $d-1 \ge \delta+1$):
$$
\begin {array}{ccl}
\dim\Jex^\eq_{2,d} = \dim\Jex^\eq_{1,d}
                   &=& \dim \bfA[\uX]_d - \dim \Jex^\ssup_{1,d}
\\[2mm]
                  &=& \dim \bfA[\uX]_d - \dim \Jex_{1,d-1}
\\[2mm]
                  &=& \dim \bfA[\uX]_d - \dim \bfA[\uX]_{d-1}
\\[2mm]
&=& \binom{n-1 + d}{n-1} - \binom{n-1 + d-1}{n-1} = \binom{n-2 + d}{n-2}
\\
\end {array}
$$

\medskip
$\bullet$
L'inégalité $d \geqslant \delta+2$ est-elle optimale pour obtenir
$\Shift(\Jex^\eq_{2,d}) = \Jex^\circ_{1,d}$?  Oui, car elle n'est pas
valide pour $d = \delta+1$.  En effet, en degré $\delta +1$, on peut
démontrer que:
$$
\dim \Jex^\eq_{2,\delta+1}
\ = \ 
\binom{n-1+\delta}{n-2} - (n-1) 
\qquad \text{ et } \qquad 
\dim \Jex^\circ_{1,\delta+1} \ = \ \binom{n-1+\delta}{n-2}
$$
L'égalité de droite résulte de
$\Jex^\circ_{1,\delta+1} = \bfA[\uX]^\circ_{\delta+1} \simeq \bfA[X_1,\dots,X_{n-1}]_{\delta+1}$.
Par ailleurs, via $\Shift$, on~a $\Jex^\eq_{2,d} \simeq \Jex^\sh_{1,d}
\subseteq \Jex^\circ_{1,d}$. Et donc pour $d = \delta+1$, l'inclusion
$\Jex^\sh_{1,d} \subseteq \Jex^\circ_{1,d}$ est stricte, mesurée par:
$$
\dim \Jex^\circ_{1,\delta+1}  - \dim \Jex^\sh_{1,\delta+1} = n-1
$$
\end {rmqs}

\subsubsection{Décomposition de $\Jex_1^\sh$ et structure triangulaire de $W_1^\sh(\uP)$}

Jusqu'à présent, sont intervenus des idéaux et isomorphismes
monomiaux, dépendant uniquement du format de degrés~$D$. Maintenant,
nous allons faire intervenir un système $\uP$ de polynômes de format $D$ et les
endomorphismes correspondants $W_\calM(\uP)$ sous-jacents à ces idéaux,
sans mentionner explicitement~$\uP$.

\medskip

Nous allons utiliser la partition des monômes étudiée dans la
section~\ref{sousSectionEndPartition}, partition qui consiste à
regrouper les $X^\alpha \in \Jex_2$ par égalité des $\End(X^\alpha)$
où l'on rappelle que $\End(X^\alpha) = (d_j, \alpha_{j+1}, \dots,
\alpha_n)$ avec $j = \maxDiv(X^\alpha)$.  Pour $\calE \in \scrE =
\scrE(D)$, en appliquant~\ref{EndTriangularite} à $\calM =
\Jex_2^\eq$, on a, avec les notations habituelles
$$
W_2^\eq \big(\Jex_2^\eq\double\calE \big) 
\ \subset \ 
\bigoplus_{\calE' \succcurlyeq \calE}\, \Jex_2^\eq\double{\calE'}
$$ 
Un phénomène triangulaire analogue a lieu du côté de $\Jex_1^\sh$,
à condition de définir correctement le pendant de $\End$.

\begin{defn}\label{DefEndSh}
Pour $X^\beta\in\Jex_1^{\sh}$, on définit $\EndSh(X^\beta) \in \scrE$:
$$
\EndSh(X^\beta) \ = \ 
(d_j, \beta_j, \dots, \beta_{n-1})
\quad \text{où $j = \maxDivSh(X^\beta)$}
$$
Pour $\calE \in \scrE$, on note $\Jex_1^{\sh}\double\calE$ le sous-module de $\Jex_1^{\sh}$ 
de base les monômes $X^\beta$ tels que $\EndSh(X^\beta) = \calE$.
\end{defn}

Le bien-fondé de cette définition résulte des égalités des $(n-j+1)$-uplets :
$$
\begin {array}{l}
\text{pour $X^\alpha \in \Jex_2^{\eq}$,}\quad  
\EndSh\big(\Shift(X^\alpha)\big)
\ = \ 
\End(X^\alpha)
\\[2mm]
\text{pour $X^\beta \in \Jex_1^{\sh}$,} \quad 
\EndSh(X^\beta) \ = \ \End\big(\Shift^{-1}(X^\alpha)\big)
\end {array}
$$

\begin{prop} \label{W1shTriangulaireEnSh}
  
Notons $W_1^\sh = W_1^\sh(\uP)$ l'endomorphisme induit-projeté de $W_1(\uP)$
sur~$\Jex_1^{\sh}$.  Il est triangulaire relativement à la décomposition
$\Jex_1^{\sh} = \bigoplus\limits_{\calE} \Jex_1^{\sh}\double\calE$, 
dans le sens où, pour $\calE \in \scrE(D)$:
$$
W_1^\sh \big(\Jex_1^{\sh}\double\calE \big) 
\ \subset \ 
\bigoplus_{\calE' \succcurlyeq \calE}\, \Jex_1^{\sh}\double{\calE'}
$$ 
\end{prop}

\label {NOTA13-W1sh}%
%
%

\begin{proof} 

Soit $X^\beta \in \Jex_1^{\sh}$. En posant $i = \minDiv(X^\beta)$ et
$j = \maxDivSh(X^\beta)$, l'exposant $\beta$ est, par définition de $\Jex_1^\sh$, dans
l'une des deux configurations suivantes:
$$
\begin {array} {c}
i < j-1 \\ [2mm]
(\dots, \underset{\geqslant d_i}{\beta_i}, \dots, 
\underset{\geqslant d_j}{\beta_{j-1}}, \underset{< d_{j+1}}{\beta_j},
\dots, 
\underset{< d_{n}}{\beta_{n-1}},
0
)
\end {array}
\qquad  \text{ ou }  \qquad
\begin {array} {c}
i = j-1 \\   [2mm]
(\underset{<d_1}{\beta_1}, \dots, \underset{<d_{i-1}}{\beta_{i-1}},
\underset{\geqslant d_{j-1} + d_{j}}{\beta_{j-1}}, \underset{< d_{j+1}}{\beta_{j}},
\dots, 
\underset{< d_{n}}{\beta_{n-1}},
0
)
\end {array}
\leqno (\star)
$$
L'image $W_1^\sh(X^\beta)$ est une combinaison linéaire 
de monômes $X^{\beta'}\in \Jex_1^{\sh}$ 
du type $X^{\beta'} = (X^\beta/X_i^{d_i}) X^\gamma$ 
où~$X^\gamma$ est un monôme de $P_i$. Il s'agit de 
montrer que $\EndSh(X^{\beta}) \preccurlyeq \EndSh(X^{\beta'})$
sachant que:
$$
\EndSh(X^\beta) = (d_j, \beta_j, \dots, \beta_{n-1})
\leqno (\star')
$$
Il va donc falloir cerner $\EndSh(X^{\beta'})$.  Dans la configuration
de gauche où $i < j-1$ (a fortiori $i \ne j-1$), on a $\beta'_{j-1} =
\beta_{j-1} + \gamma_{j-1}$ et dans celle de droite, où $i = j-1$, on
a $\beta'_{j-1} = \beta_{j-1} - d_{j-1} + \gamma_{j-1}$.  
En tenant compte dans~$(\star)$ des inégalités figurant en dessous de
$\beta_{j-1}$, on constate, dans l'une ou l'autre configuration,
que l'on a $\beta'_{j-1} \geqslant d_j$.  Ainsi, en posant $j' :=
\maxDivSh(X^{\beta'})$, on a, par définition de $\maxDivSh$,
l'inégalité $j \le j'$.

\smallskip

\noindent
$\blacktriangleright$ Cas $j < j'$.
D'après la définition~\ref{EndOrder} de la relation d'ordre
$\preccurlyeq$ sur $\scrE$, on a directement l'inégalité $\EndSh(X^{\beta})
\preccurlyeq \EndSh(X^{\beta'})$.

\smallskip

\noindent
$\blacktriangleright$ Cas $j = j'$. Par définition  de $\EndSh$,
on a $\EndSh(X^{\beta'}) = (d_j, \beta'_j, \dots, \beta'_{n-1})$.
En utilisant l'égalité $X^{\beta'} = (X^\beta/X_i^{d_i}) X^\gamma$ et
le fait que $i < j$, on a $\beta'_k = \beta_k + \gamma_k$ pour $k\ge j$,
donc:
$$
\EndSh(X^{\beta'}) = (d_j,\beta_j+\gamma_j,\dots,\beta_{n-1}+\gamma_{n-1})
$$
La comparaison avec $(\star')$ fournit l'inégalité composante à
composante $\EndSh(X^\beta) \preccurlyeq \EndSh(X^{\beta'})$ requise
dans la définition de $\preccurlyeq$.
\end{proof}

\begin{prop} 
Pour tout $\calE \in \scrE$, pour tout entier $d$, 
les endomorphismes $W_{2,d}^\eq\double\calE$ et $W_{1,d}^\sh\double\calE$ 
sont conjugués via l'application 
$\Shift\double\calE : \Jex_{2,d}^\eq\double\calE \to  \Jex_{1,d}^\sh\double\calE$
déduite de $\Shift$,
et ont donc même déterminant.
\end{prop}

\begin {proof}

Définissons l'indice $j$ par $\#\calE = \#[j..n]$.  Commençons par
assurer une remarque préliminaire que nous détaillons: il s'agit de ne
pas trop s'effrayer avec les notations du type
$\Jex_2^\eq\double\calE$.

\medskip
Donnons-nous un $X^\alpha \in \Jex_2^\eq\double\calE$. 
Par définition de~$\EndSh$ (cf.~\ref{DefEndSh}), la bijection $\Shift$ 
fait correspondre $\End$ et $\EndSh$, donc se donner un tel $X^\alpha$ revient à se donner
$X^\beta \in \Jex_1^\sh\double\calE$ via $X^\beta
= \Shift(X^\alpha)$. Notons:
$$
i = \minDiv(X^\alpha) = \minDiv(X^\beta)
$$
l'égalité étant assurée par le fait que $\Shift$ conserve $\minDiv$.
Par définition de~$\Jex_2^\eq\double\calE$, nous avons $j = \maxDiv(X^\alpha)$
donc $i < j$ puisque $X^\alpha \in \Jex_2$.

\medskip
$\bullet$
Considérons un $X^\gamma$ et posons $X^{\alpha'} :=
(X^\alpha/X_i^{d_i}) X^\gamma$, $X^{\beta'} := (X^\beta/X_i^{d_i})
X^\gamma$. Nous affirmons que:

\begin {enumerate}[1.]
\item
Le monôme $X^{\alpha'}$ appartient à $\Jex_2^\eq\double\calE$ si et
seulement si $X^\gamma$ ne dépend que des $j-1$ premières variables.
\item
Idem en ce qui concerne l'appartenance de $X^{\beta'}$ à $\Jex_1^\sh\double\calE$.
\item
Dans ce cas, on a $\Shift(X^{\alpha'}) = X^{\beta'}$.
\end {enumerate}

Pour s'en convaincre, écrivons $X^\alpha$ sous
la forme (préfixe, milieu, suffixe) dans le sens $X^\alpha = m'
X_j^{d_j} m$ où $m',m$ sont des monômes, $m'$ dépendant des $j-1$
premières variables et $m$ des $n-j$ dernières. Par définition de
$\Jex_2^\eq\double\calE$, on a $\End(X^\alpha) = \exposant(X_j^{d_j}m) = \calE$ et
comme $i < j$, on a $i = \minDiv(m')$ permettant d'écrire
$X^\alpha/X_i^{d_i} = (m'/X_i^{d_i}) X_j^{d_j} m$, 
qui est donc un monôme ayant un $\End$-uplet égal à $\calE$.
En multipliant ce monôme par $X^\gamma$, son $\End$-uplet est égal à $\calE$ si et seulement 
si $X^\gamma$ ne dépend que des $j-1$ premières variables.
Cela assure le point~1.
La justification du point~2. est analogue.

\medskip

Concernant le point 3., notons d'abord l'expression générale
$\Shift(X^\alpha) = m' X_{j-1}^{d_j} \widetilde m$ où $\widetilde m$
est le monôme \og décalé\fg{} de $m$. Or nous disposons de 
l'écriture (préfixe, milieu, suffixe) de $X^{\alpha'}$:
$$
X^{\alpha'} = m'' X_j^{d_j} m, \qquad \text{avec} \qquad
m'' = (m'/X_i^{d_i}) X^\gamma
$$
Nous en déduisons $\Shift(X^{\alpha'}) = m'' X_{j-1}^{d_j} \widetilde m$, égal à
$(\Shift(X^\alpha)/X_i^{d_i})\, X^\gamma = X^{\beta'}$, comme annoncé.

\bigskip
$\bullet$
Notons $\calM = \Jex_2^\eq\double\calE$ et $\calN = \Jex_1^\sh\double\calE$.
En gardant les notations ci-dessus, il s'agit d'assurer au niveau de
la flèche pointillée la commutativité du diagramme suivant:
$$
\xymatrix @R = 1.2cm @C = 3cm @M=0.4pc{
X^{\alpha} \ar[d]_{W_\calM} \ar[r]^{\Shift\double\calE} 
& X^\beta \ar[d]^{W_\calN}
\\
\pi_{\calM} \Big( \dfrac{X^{\alpha}}{X_i^{d_i}} \,P_i\Big)
\ar@{-->}[r]^-{\Shift\double\calE} & 
\pi_{\calN} \Big( \dfrac{X^{\beta}}{X_i^{d_i}}\, P_i\Big)
\\
}
$$
\'Ecrivons $P_i$ comme une combinaison de monômes, $P_i = \sum_{|\gamma| = d_i} a_\gamma X^\gamma$,
certains des $a_\gamma$ pouvant être nuls. On a donc:
$$
W_\calM(X^\alpha) = \sum_\gamma a_\gamma (X^\alpha/X_i^{d_i})X^\gamma, \qquad\qquad
W_\calN(X^\beta) = \sum_\gamma a_\gamma (X^\beta/X_i^{d_i})X^\gamma
$$
la somme de gauche étant limitée aux $X^\gamma$ pour lesquels
$X^{\alpha'}:=(X^\alpha/X_i^{d_i})X^\gamma$ appartient à $\calM$ et
celle de droite aux $X^\gamma$ pour lesquels
$X^{\beta'}:=(X^\beta/X_i^{d_i})X^\gamma$ appartient à $\calN$. Ce
qui équivaut, d'après la remarque préliminaire, à limiter chaque somme aux monômes
$X^\gamma$ de $P_i$ ne dépendant que des $j-1$ premières variables.
La commutativité est alors assurée par l'égalité $\Shift(X^{\alpha'}) = X^{\beta'}$
du point 3.
\end {proof}

Nous voici maintenant en mesure de démontrer le résultat annoncé dans l'introduction 
page~\pageref{sectionEgaliteStabiliteMacRae}, à savoir que 
$\dfrac{\det W_{1,d}}{\det W_{2,d}}$ 
est indépendant de $d \geqslant \delta + 1$.

\begin{theo}\label{W2eqVersusW1sh} 
\leavevmode
\begin{enumerate}[\rm i)]
\item Pour $d \in \bbN$, on a $\det W_{2,d}^\eq = \det W_{1,d}^\sh$.
\item 
En particulier, pour $d \geqslant \delta+1$, on a $\det W_{2,d+1}^\eq = \det W_{1,d+1}^\circ$.
\item
On a
$$
\forall\, d,d' \geqslant \delta + 1, \qquad 
\begin {vmatrix}
\det W_{1,d}  & \det W_{1,d'}  \\
\det W_{2,d}  & \det W_{2,d'}  \\  
\end {vmatrix} = 0
$$
\end{enumerate}
\end{theo}

\begin{proof}
i) D'après les structures triangulaires de $W_2^\eq$ et $W_1^\sh$
(cf. \ref{EndTriangularite} et~\ref{W1shTriangulaireEnSh}), 
on a
$$
\det W_{2,d}^\eq \ = \ 
\prod_{\calE} \det W_{2,d}^\eq\double\calE
\qquad 
\text{et} 
\qquad 
\det W_{1,d}^\sh \ = \ 
\prod_{\calE} \det W_{1,d}^\sh\double\calE
$$
En utilisant la proposition précédente, on a $\det W_{2,d}^\eq\double\calE  = 
\det W_{1,d}^\sh\double\calE$, d'où l'égalité annoncée.

ii) Comme $d +1\geqslant \delta+2$, on a $\Jex_{1,d+1}^\sh = \Jex_{1,d+1}^\circ$
d'après~\ref{CaracJ1sh-dGrand}, a fortiori $W_{1,d+1}^\sh = W_{1,d+1}^\circ$
et on conclut avec le point précédent.

iii) Relire le début de la section en page~\pageref{sectionEgaliteStabiliteMacRae} et
utiliser des deux points précédents.
\end{proof}

\begin{exemple}
Prenons un dernier exemple avec $D=(2,3,1,3)$ et $d =\delta=5$, donc
en dessous de~$\delta+2$.  
Nous allons comparer les endomorphismes $W_{2,d}^\eq$ et $W_{1,d}^\sh$.

Les monômes de $\Jex_{2,d}^\eq$ sont regroupés par égalité des $\End$-uplets, 
et ceux de $\Jex_{1,d}^\sh$ par égalité des $\EndSh$-uplets.
Voici les endomorphismes en question. On constate, encore une fois, que leurs déterminants sont égaux !
$$
W_{2,d}^\eq \ = \ 
\EastBordermatrix{
a_{1} & \VR . & . & . & . & . & \VR . & . & . & \VR . & \VR . & . & . & . & \Heti{X_{1}^{2}X_{2}^{3}} \\ 
\HR{17} 
. & \VR a_{1} & . & . & b_{1} & . & \VR . & . & . & \VR . & \VR . & . & . & . & \Heti{X_{1}^{4}X_{3}} \\ 
. & \VR a_{2} & a_{1} & . & b_{2} & b_{1} & \VR . & . & . & \VR . & \VR . & . & . & . & \Heti{X_{1}^{3}X_{2}X_{3}} \\ 
. & \VR a_{5} & a_{2} & a_{1} & b_{5} & b_{2} & \VR . & . & . & \VR . & \VR . & . & . & . & \Heti{X_{1}^{2}X_{2}^{2}X_{3}} \\ 
a_{3} & \VR . & a_{5} & a_{2} & b_{11} & b_{5} & \VR . & . & . & \VR . & \VR . & . & . & . & \Heti{X_{1}X_{2}^{3}X_{3}} \\ 
a_{6} & \VR . & . & a_{5} & . & b_{11} & \VR . & . & . & \VR . & \VR . & . & . & . & \Heti{X_{2}^{4}X_{3}} \\ 
\HR{17} 
. & \VR a_{4} & . & . & b_{4} & . & \VR a_{1} & . & b_{1} & \VR . & \VR . & . & . & . & \Heti{X_{1}^{3}X_{3}X_{4}} \\ 
. & \VR a_{7} & a_{4} & . & b_{7} & b_{4} & \VR a_{2} & a_{1} & b_{2} & \VR . & \VR . & . & . & . & \Heti{X_{1}^{2}X_{2}X_{3}X_{4}} \\ 
a_{9} & \VR . & . & a_{7} & . & b_{13} & \VR . & a_{5} & b_{11} & \VR . & \VR . & . & . & . & \Heti{X_{2}^{3}X_{3}X_{4}} \\ 
\HR{17} 
. & \VR a_{10} & . & . & b_{10} & . & \VR a_{4} & . & b_{4} & \VR a_{1} & \VR . & . & . & . & \Heti{X_{1}^{2}X_{3}X_{4}^{2}} \\ 
\HR{17} 
. & \VR . & . & . & . & . & \VR . & . & . & \VR . & \VR a_{1} & c_{1} & . & . & \Heti{X_{1}^{2}X_{4}^{3}} \\ 
. & \VR . & . & . & b_{20} & . & \VR a_{10} & . & b_{10} & \VR a_{4} & \VR a_{3} & c_{3} & . & c_{1} & \Heti{X_{1}X_{3}X_{4}^{3}} \\ 
. & \VR . & . & . & . & b_{20} & \VR . & a_{10} & b_{16} & \VR a_{7} & \VR a_{6} & . & c_{3} & c_{2} & \Heti{X_{2}X_{3}X_{4}^{3}} \\ 
. & \VR . & . & . & . & . & \VR . & . & b_{19} & \VR a_{9} & \VR a_{8} & . & . & c_{3} & \Heti{X_{3}^{2}X_{4}^{3}} \\ 
\noalign{\vskip-1pt}
}
$$
$$
W_{1,d}^\sh = 
\EastBordermatrix{
a_{1} & \VR . & . & . & . & . & \VR . & . & . & \VR . & \VR . & . & . & . & \Heti{X_{1}^{5}} \\ 
\HR{17} 
a_{2} & \VR a_{1} & . & . & b_{1} & . & \VR . & . & . & \VR . & \VR . & . & . & . & \Heti{X_{1}^{4}X_{2}} \\ 
a_{5} & \VR a_{2} & a_{1} & . & b_{2} & b_{1} & \VR . & . & . & \VR . & \VR . & . & . & . & \Heti{X_{1}^{3}X_{2}^{2}} \\ 
. & \VR a_{5} & a_{2} & a_{1} & b_{5} & b_{2} & \VR . & . & . & \VR . & \VR . & . & . & . & \Heti{X_{1}^{2}X_{2}^{3}} \\ 
. & \VR . & a_{5} & a_{2} & b_{11} & b_{5} & \VR . & . & . & \VR . & \VR . & . & . & . & \Heti{X_{1}X_{2}^{4}} \\ 
. & \VR . & . & a_{5} & . & b_{11} & \VR . & . & . & \VR . & \VR . & . & . & . & \Heti{X_{2}^{5}} \\ 
\HR{17} 
a_{6} & \VR a_{3} & . & . & b_{3} & . & \VR a_{1} & . & b_{1} & \VR . & \VR . & . & . & . & \Heti{X_{1}^{3}X_{2}X_{3}} \\ 
. & \VR a_{6} & a_{3} & . & b_{6} & b_{3} & \VR a_{2} & a_{1} & b_{2} & \VR . & \VR . & . & . & . & \Heti{X_{1}^{2}X_{2}^{2}X_{3}} \\ 
. & \VR . & . & a_{6} & . & b_{12} & \VR . & a_{5} & b_{11} & \VR . & \VR . & . & . & . & \Heti{X_{2}^{4}X_{3}} \\ 
\HR{17} 
. & \VR a_{8} & . & . & b_{8} & . & \VR a_{3} & . & b_{3} & \VR a_{1} & \VR . & . & . & . & \Heti{X_{1}^{2}X_{2}X_{3}^{2}} \\ 
\HR{17} 
. & \VR . & . & . & . & . & \VR . & . & . & \VR . & \VR a_{1} & c_{1} & . & . & \Heti{X_{1}^{2}X_{3}^{3}} \\ 
. & \VR . & . & . & . & . & \VR . & . & . & \VR . & \VR a_{3} & c_{3} & . & c_{1} & \Heti{X_{1}X_{3}^{4}} \\ 
. & \VR . & . & . & . & . & \VR . & . & b_{17} & \VR a_{8} & \VR a_{6} & . & c_{3} & c_{2} & \Heti{X_{2}X_{3}^{4}} \\ 
. & \VR . & . & . & . & . & \VR . & . & . & \VR . & \VR a_{8} & . & . & c_{3} & \Heti{X_{3}^{5}} \\ 
\noalign{\vskip-1pt}
}
$$
Le système $(P_1, P_2, P_3, P_4)$ de format $(2,3,1,3)$ est le suivant ($P_4$ n'intervient pas):

\smallskip

\setlength{\tabcolsep}{2pt}
\noindent
$\left\{
\begin{tabular}{@{}rcp{14cm}} 
$P_{1}$ & $=$ & $a_{1}X_{1}^{2} + a_{2}X_{1}X_{2} + a_{3}X_{1}X_{3} + a_{4}X_{1}X_{4} + 
a_{5}X_{2}^{2} + a_{6}X_{2}X_{3} + a_{7}X_{2}X_{4} + a_{8}X_{3}^{2} + 
a_{9}X_{3}X_{4} + a_{10}X_{4}^{2}$\\ [0.1cm] 
$P_{2}$ & $=$ & $b_{1}X_{1}^{3} + b_{2}X_{1}^{2}X_{2} + b_{3}X_{1}^{2}X_{3} + 
b_{4}X_{1}^{2}X_{4} + b_{5}X_{1}X_{2}^{2} + b_{6}X_{1}X_{2}X_{3} + 
b_{7}X_{1}X_{2}X_{4} + b_{8}X_{1}X_{3}^{2} + b_{9}X_{1}X_{3}X_{4} + 
b_{10}X_{1}X_{4}^{2} + b_{11}X_{2}^{3} + b_{12}X_{2}^{2}X_{3} + 
b_{13}X_{2}^{2}X_{4} + b_{14}X_{2}X_{3}^{2} + b_{15}X_{2}X_{3}X_{4} + 
b_{16}X_{2}X_{4}^{2} + b_{17}X_{3}^{3} + b_{18}X_{3}^{2}X_{4} + 
b_{19}X_{3}X_{4}^{2} + b_{20}X_{4}^{3}$
\\ [0.1cm] 
$P_{3}$ & $=$ & $c_{1}X_{1} + c_{2}X_{2} + c_{3}X_{3} + c_{4}X_{4}$\\ [0.1cm] 
\end{tabular} 
\right.
$
\end{exemple}

\cleardoublepage

{\Large in \verb+14-FFR-ComplexeDecompose.tex+}

\section{Le déterminant de Cayley est un quotient alterné de mineurs}
\label{ComplexeDecompose}


Le contexte est celui de la section~\ref{subsectionCayleyMacRae}, en
particulier du théorème \ref{CayleyDetVectoriel}: un complexe de
Cayley $(F_\sbullet,u_\sbullet)$ de longueur $n$, de caractéristique
d'Euler-Poincaré $c = r_0$, de rangs attendus $(r_k)_{1\leqslant
k \leqslant n+1}$ où $r_{n+1}=0$ et ``sa'' forme déterminant de Cayley
$\mu : \BW^{c}(F_0) \to \bfA$, définie à un inversible près, qui est un
générateur de $\CayleyVect(F_\sbullet)$.

\medskip
Un système d'orientations $(\bfe_0, \dots, \bfe_n)$ de $F_\sbullet$
consiste en la donnée d'une orientation $\bfe_k$ de $F_k$ pour $0 \leqslant
k \leqslant n$.  Dans la suite, on choisit comme isomorphisme déterminantal
$\sharp_k : \bigwedge^{r_{k+1}}(F_k)
\overset{\simeq}{\longrightarrow} \bigwedge^{r_k}(F_k)^\star$
l'isomorphisme de Hodge droit défini par $\bfe_k$ (cf.~\ref{IsoDetOrientation}).
Pour alléger, on utilisera souvent le même symbole~$\sharp$ au lieu de $\sharp_k$.
Le théorème~\ref{Factorisation} permet alors
de définir une suite bien déterminée de vecteurs 
$\Theta_k \in \bigwedge^{r_k}(F_{k-1})$, pour $1 \leqslant k \leqslant n+1$,
factorisant $\bigwedge^{r_k}(u_k)$ sous la forme \og vecteur $\times$ forme linéaire \fg:
$$
\BW^{r_k}(u_k) = \Theta_k\,\Theta_{k+1}^{\sharp_k}, \qquad \qquad \Theta_{n+1} = 1
$$
ainsi que la forme $\mu : \BW^{c}(F_0) \to \bfA$ définie
en~\ref{cLinearFormOfComplex}, qui s'obtient à partir de
$\Theta_1 \in \BW^{r_1}(F_0)$ par $\mu = \Theta_1^{\sharp_0}$.
Nous rappelons (cf. \ref{PassageQuotientFormeAlternee}) que cette
forme passe au quotient sur $\Coker(u_1)$.

\index{isomorphisme déterminantal}%

\medskip
La détermination de $\mu$ (ou de certains de ses coefficients) à laquelle
on s'intéresse ici est grosso-modo basée sur l'obtention par récurrence
de $\Theta_k$ en fonction de $\Theta_{k+1}$:
$$
\Theta_k = \frac{\BW^{r_k}(u_k)(\bfx)} {\Theta_{k+1}^\sharp(\bfx)} \qquad
\text{en faisant le choix de vecteurs \og convenables\fg{} $\bfx \in \BW^{r_k}(F_k)$ }
$$
Malgré une écriture d'aspect compliqué, on ne dit rien d'autre que ce qui suit.
Soit $c \in \bbM_{q,p}(\bfA)$ une matrice produit d'une matrice colonne $b$ par une
matrice ligne $\transpose{a}$, avec $\Gr(\ua) \ge 2$:
$$
\vcenter{
\xymatrix @C=1.5cm{
\bfA^p\ar[dr]|{[a_1,\dots,a_p]} \ar[rr]^{c} && \bfA^q\\
         &\bfA\ar[ur]|{\left[\begin{smallmatrix} b_1\\\vdots\\b_q\end{smallmatrix}\right]} \\
}}
\qquad\qquad
c = b \transpose{a}
$$
On peut alors déterminer $b$ à partir de $(c,a)$ en utilisant
$c_{ij} = b_ia_j$ pour $1\le i \le q$ et $1 \le j \le p$:
$$
b_i = \frac{c_{i1}}{a_1} = \frac{c_{i2}}{a_2} = \cdots = \frac{c_{ip}}{a_p} =
\frac{t_1c_{i1} + t_2c_{i2} + \cdots + t_pc_{ip}}{t_1a_1 + t_2a_2 + \cdots + t_pa_p}
$$

\subsection {Les $\Delta_k$ associés à une décomposition d'un complexe
$F_\sbullet$ de Cayley et l'expression du déterminant de Cayley $\mu : \BW^c(F_0)\to \bfA$ qui s'en déduit}
\label{SousSectionLesDeltak}

On veut expliquer, dans un premier temps de manière informelle,
comment exprimer chaque coefficient de cette forme linéaire~$\mu$
sous la forme d'un quotient alterné de mineurs d'ordre $r_k$~de~$u_k$.
\`A cet effet, on suppose dans cette présentation que chaque terme~$F_k$ du complexe
est muni d'une base ordonnée que l'on prolonge aux puissances extérieures de $F_k$.

Pour une différentielle $u : \big(E,(e_j)\big) \to \big(F,(f_i)\big)$ du complexe, de
rang attendu~$r$, on dispose du régime de croisière suivant :
$$
\vcenter {
\xymatrix @M=0.4pc @R=0.6cm{ 
\bigwedge^{r}(E) \ar[rd]_-{(\Theta')^\sharp} \ar[rr]^-{\bigwedge^{r}(u)} & & 
      \bigwedge^{r}(F) \\
& \bfA \ar[ru]_-{\times \Theta} & 
}}
\qquad \qquad
\left\{
\begin {array} {l}
\BW^{r}(u) \ = \ \Theta \,(\Theta')^\sharp \\[2mm]
\Theta \in \bigwedge^{r}(F), \quad \Theta' \in \bigwedge^{r'}(E) \\[2mm]
r' = \dim E - r \\
\end {array}
\right.
$$
L'isomorphisme déterminantal Hodge droit $\sharp : \bigwedge^{r'}(E)
\overset{\simeq}{\longrightarrow} \bigwedge^{r}(E)^\star$ défini en~\ref{IsoDetOrientation}
est caractérisé par:
$$
\text{pour } \Theta' = \sum_{\#J = r'} \Theta'_J\,e_J,
\qquad 
(\Theta')^\sharp = \sum_{\#J=r'}\Theta'_J\,\varepsilon(J,\overline J)\, e_{\overline J}^\star,
\qquad \text{\idest}\qquad
(\Theta')^\sharp(e_{\overline J}) = \varepsilon\big(J,\overline {J}\big)\,\Theta'_J
$$
où $\overline J$ désigne le
complémentaire de $J$ dans (l'ensemble des indices de) la base de $E$.
Pour $I$ fixé dans (l'ensemble des indices de) la base de $F$ avec
$\#I=r$, on veut récupérer la composante~$\Theta_I$ de $\Theta$ sur
$f_I$.  \'Evaluons l'égalité $\BW^{r}(u)
=\Theta \,(\Theta')^\sharp$ en $e_{\overline J}$ pour $\#J = r'$ puis prenons-en la coordonnée sur $f_I$ :
$$
\textstyle
\bigwedge^{r}(u)(e_{\overline J})
\ = \ 
\Theta \,(\Theta')^\sharp(e_{\overline J})
\qquad \rightsquigarrow \qquad 
\det_{I\times\overline J}(u)
\ = \ 
\varepsilon\big(J,\overline J\big)\,
\Theta_I \,  
\Theta'_J
$$
On fait ainsi apparaître un mineur d'ordre $r$ de $u$ dans le résultat 
(que l'on encadre pour une utilisation ultérieure !) :
$$
\fbox {$
\varepsilon\big(J,\overline J\big)\, \det\nolimits_{I\times\overline J}(u)
\ = \ 
\Theta_I \,  
\Theta'_J
$}
$$
Ce que l'on peut encore écrire de manière symbolique :
$$
\Theta_I \ =\ 
\varepsilon\big(J,\overline J\big)\, \dfrac{\det_{I\times\overline J}(u)}{\Theta'_J}
$$
On peut noter l'asymétrie entre $I$ et $J$~: c'est la composante $\Theta_I$ de 
$\Theta \in \bigwedge^{r}(F)$ sur $f_I$ qui est convoitée (donc $I$ est imposé), 
tandis que $e_J \in \bigwedge^{r'}(E)$ est un vecteur auxiliaire variable
parmi les vecteurs de base.
Inutile de se préoccuper de la possible nullité d'un $\Theta'_J$.
D'une part, car une égalité $a = \frac{b}{c}$ est symbolique et signifie $ac = b$.  
D'autre part, comme $\Gr(\Theta') \geqslant 1$ (et même $\Gr(\Theta') \geqslant 2$), 
on peut \og espérer\fg{} que l'un des~$\Theta'_J$ soit régulier (ce qui est bien mieux que 
\og non nul \fg{}).

\medskip

Aidés de ce régime de croisière pour une différentielle, considérons le complexe $F_\sbullet$
dans son intégralité, le but étant d'obtenir une expression des composantes de $\Theta_1$ (ce qui
revient à obtenir les coefficients de la forme $\mu
= \Theta_1^{\sharp_0}$).  Essayons de cerner la composante sur 
un vecteur de base $e_{I_1}$ de $\bigwedge^{r_1}(F_0)$, où $I_1$ est une partie
de cardinal $r_1$ de la base ordonnée de $F_0$.

Appuyons-nous sur le schéma suivant ($n=5$ ici et $r_6=0$) :
$$
\let\ov=\overline 
\xymatrix @R = 0.2cm {
0\ar[r]  &F_5\ar[r]^{u_5} &F_4\ar[r]^{u_4} &F_3\ar[r]^{u_3} &F_2\ar[r]^{u_2} 
         &F_1\ar[r]^{u_1} &F_0
\\
\text{dimensions}&r_5             &r_5+r_4         &r_4+r_3         &r_3+r_2
         &r_2+r_1         &r_1+r_0
\\
         &\ov{I_6}             &I_5\vee\ov{I_5}     &I_4\vee\ov{I_4}     &I_3\vee\ov{I_3}
         &I_2\vee \ov{I_2}     &*+[o][F]{I_1}
\\
}
$$
Dans ce schéma, $I_2$ est une partie de cardinal $r_2$ de (l'ensemble
des indices de) la base de $F_1$ et $\overline{I_2}$ son
complémentaire (donc de cardinal $r_1$). Idem pour les autres $I_k$.
En vue d'obtenir la composante $(\Theta_1)_{I_1}$ de $\Theta_1$ sur
$e_{I_1}$ (ou encore la valeur de $\mu(e_{\overline{I_1}})$, 
qui diffère de cette dernière valeur par le signe $\varepsilon(I_1, 
\overline{I_1})$),
choisissons une partie $I_2$ de cardinal $r_2$.
En appliquant à $u_1$ ce qui précède (avec
$\bigwedge^{r_1}(u_1) = \Theta_1 \Theta_2^\sharp$), on obtient via
l'encadré de la page précédente:
$$
\varepsilon\big(I_2, \overline {I_2}\big)\, 
\det\nolimits_{I_1\times\overline{I_2}}(u_1)
\ = \ 
(\Theta_1)_{I_1} \,  
(\Theta_2)_{I_2}
$$
L'objectif étant d'attraper la composante de $\Theta_1$ sur $I_1$, 
on réitère ce processus pour exprimer la composante de $\Theta_2$ sur $I_2$:
$$
\varepsilon\big(I_3, \overline {I_3}\big)\, 
\det\nolimits_{I_2\times\overline{I_3}}(u_2)
\ = \ 
(\Theta_2)_{I_2} \,  
(\Theta_3)_{I_3}
$$
Et ainsi de suite jusqu'au bout en se souvenant que $r_6=0$:
$$
\varepsilon\big(I_6, \overline {I_6}\big)\, 
\det\nolimits_{I_5\times\overline{I_6}}(u_5)
\ = \ 
(\Theta_5)_{I_5} \,  
\underbrace{(\Theta_6)_{I_6}}_{= 1}
\qquad \text{avec $\textstyle \Theta_6 = 1 \in \bigwedge^{r_6}(F_5) = \bfA$}
$$
En notant $D_k = \varepsilon\big(I_{k+1}, \overline {I_{k+1}}\big)\,
\det\nolimits_{I_k\times\overline{I_{k+1}}}(u_k)$, mineur d'ordre $r_k$ de $u_k$,
on obtient :
$$
{D_2\ D_4}\, (\Theta_1)_{I_1}
\ =\ 
D_1\ D_3\ D_5
\qquad
\begin {array}{c}
\text{ce que l'on peut écrire} \\
\text {de manière symbolique}  \\
\end {array}
\qquad 
(\Theta_1)_{I_1}
\ =\ 
\dfrac{D_1\ D_3\ D_5}{D_2\ D_4}
$$
Bien sûr, l'idéal serait de faire des choix de $I_2, I_3, \dots$ 
qui fournissent des mineurs non nuls et même réguliers. 
Mais ici, la question n'est pas d'étudier une telle faisabilité
dans un cadre général.

\bigskip

Dans le cadre que nous visons, celui du résultant, les complexes qui
interviendront sont les composantes homogènes de degré $d$ (pour n'importe
quel $d$) du complexe de Koszul de $\uP = (P_1, \ldots, P_n)$ et,
en un certain sens, les $I_k$ seront fixés, donc pas de choix draconien à
réaliser.  Nous pourrons cependant agir au niveau de $\uP$ en
imposant à $\uP$ de couvrir le jeu étalon généralisé ou bien en
générisant~$P_i$ et ceci de deux manières : ou bien complètement via le polynôme
générique homogène de degré~$d_i$ ou bien partiellement via
$P_i + tX_i^{d_i}$ où $t$ est une indéterminée ; chacune de ces stratégies
conduisant au fait que les mineurs concernés sont réguliers.

\smallskip

Par ailleurs, au lieu de manier des mineurs d'ordre $r_k$ de $u_k$
du type $\det_{I_1\times \overline {I_2}}(u_1)$,  $\det_{I_2\times \overline {I_3}}(u_3)$
etc., nous allons opérer de manière différente, plus commode à mettre en place,
en considérant des induits-projetés des $u_k$ relativement à une décomposition
de~$F_\sbullet$. Le bénéfice en est multiple: les bases ordonnées sont
remplacées par des orientations fournissant un mécanisme
plus souple, les mineurs concernés sont normalisés (des scalaires précis
et pas à un inversible près) et enfin et surtout introduction de nouveaux
invariants notés $(\Delta_k)_{1 \leqslant k \leqslant n+1}$.

\subsubsection*{Complexe de Cayley décomposé, version provisoire un tantinet ollé-ollé}

On réalise une passe provisoire et informelle en supposant le complexe
de Cayley $F_\sbullet$ décomposé d'une manière très particulière:
chaque terme $F_k$ est la somme directe de deux sous-modules libres
$F_k = M_k \oplus S_k$ et, pour des raisons pédagogiques, une
hypothèse a priori étrange (voir la suite), $S_k = M_{k-1}$ pour
$k \geqslant 1$. On a donc $r_k = \dim M_{k-1}$ pour $k \geqslant 1$ et $r_0 = \dim S_0$.
Les bases considérées sont adaptées à cette décomposition.

De manière à être en accord avec les notations précédentes, on désigne
par $I_k$ l'ensemble des indices de la base de $F_k$ correspondant aux
vecteurs de $M_{k-1}$, et $\overline {I_k}$ désigne son
complémentaire. On introduit l'induit-projeté~$\beta_k$, construit à
partir de $u_k$, de la manière suivante :
$$
u_k \ =\ 
\NorthEastBordermatrix{ 
\mbox{\footnotesize $M_k$}  & \mbox{\footnotesize $S_k$} & \\
\alpha_k & \cercle{$\beta_k$} & \Heti{M_{k-1}} \\
\noalign{\vskip4pt}
\gamma_k & \delta_k & \Heti{S_{k-1}} \\
}
\qquad
\beta_k = \pi_{M_{k-1}} \circ u_k \circ \iota_{S_k} : S_k \to M_{k-1}
$$
Vu l'hypothèse $S_k = M_{k-1}$, l'application $\beta_k$ est un endomorphisme de $M_{k-1}$ et
$$
\det\nolimits_{I_k \times \overline{I_{k+1}}} (u_k) = \det \beta_k.
$$
C'était le but de cette passe informelle: partant d'une décomposition,
faire apparaître un certain mineur de rang $r_k$ de $u_k$ comme le
déterminant d'un induit-projeté de $u_k$.  En désignant par
$\bfe_{M_0} \in \BW^{r_1}(F_0)$ le produit extérieur de la base de
$M_0$, ce procédé permettrait d'obtenir une expression de la composante de
$\Theta_1 \in \bigwedge^{r_1}(F_0)$ sur $\bfe_{M_0}$ sous la forme
d'un quotient alterné des $\det \beta_k$. Le but final étant,
puisque $\mu = \Theta_1^{\sharp_0}$, d'obtenir une expression des
évaluations de $\mu$ sous forme d'un quotient alterné de déterminants
d'induits-projetés.

\bigskip

Corrigeons une inexactitude : dans le cadre du résultant, les décompositions sont
\emph {presque} de la forme $F_k = M_k \oplus S_k$, sauf qu'il faut remplacer
l'égalité $S_k = M_{k-1}$ par la donnée d'un isomorphisme (de référence) fixé
$\varphi : M_{k-1} \to S_k$.  Ceci conduit à la définition suivante dont le
but principal est de fournir un cadre commun pour les diverses décompositions de
Macaulay de $\rmK_{\sbullet,d}(\uP)$ indexées par $\sigma \in \fS_n$.

\subsubsection*{Complexe de Cayley décomposé}

\begin{defn} \label{ContexteDetCayley}
Une décomposition d'un complexe de Cayley $(F_\sbullet, u_\sbullet)$ de longueur $n$
$$
\xymatrix @M=0.3pc{
0 \ar[r] &
F_n \ar[r]^-{u_n} &
F_{n-1} \ar[r]^-{u_{n-1}} &
\ \cdots \ \ar[r] &
F_1 \ar[r]^-{u_1} &
F_0
}
$$
consiste en la donnée d'une part d'une décomposition de chacun des termes $F_k$:
$$
F_k \ = \ M_k \oplus S_k
\qquad 
\text{avec \ \, $M_k$, $S_k$ libres et $M_n = 0$}
$$
et, d'autre part, pour $k \geqslant 1$, d'un isomorphisme\footnote{%
Il eût été légitime d'indexer~$\varphi$ par $k$, et donc d'utiliser la notation $\varphi_k$.
Pourquoi ne l'avons-nous pas fait ? 
Les auteurs ne le savent pas eux-mêmes.}
$\varphi : M_{k-1} \rightarrow S_k$.

\smallskip
Pour $k \geqslant 1$, on note $\beta_k : S_k \to M_{k-1}$ l'induit-projeté:
$$
u_k \ =\ 
\NorthEastBordermatrix{ 
\mbox{\footnotesize $M_k$}  & \mbox{\footnotesize $S_k$} & \\
\alpha_k & \cercle{$\beta_k$} & \Heti{M_{k-1}} \\
\noalign{\vskip4pt}
\gamma_k & \delta_k & \Heti{S_{k-1}} \\
}
\qquad
\beta_k = \pi_{M_{k-1}} \circ u_k \circ \iota_{S_k}
$$
et $B_k : M_{k-1} \to M_{k-1}$ l'endomorphisme de $M_{k-1}$ défini  par la formule $B_k=\beta_k\circ\varphi$. 
On a donc le schéma
$$
\xymatrix @R=0.1pc @C=4pc @M=0.4pc{
M_n=0 & M_{n-1} \ar@<0.5ex>@{-->}[ddl]^{\varphi} &\cdots  &M_2
  &M_1\ar@<0.5ex>@{-->}[ddl]^{\varphi} &M_0\ar@<0.5ex>@{-->}[ddl]^{\varphi}
\\
\oplus & \oplus & \cdots & \oplus & \oplus & \oplus
\\
S_n \ar@<0.5ex>[uur]^{\beta_n} & S_{n-1} & \cdots & S_2 \ar@<0.5ex>[uur]^{\beta_2} 
& S_1 \ar@<0.5ex>[uur]^{\beta_1} & S_0 \\
}
$$
En posant $r_k = \dim S_k$, on a  $r_k = \dim M_{k-1}$ pour $k \geqslant 1$ et
$r_k + r_{k+1} = \dim F_k$, de sorte que $r_k$ est le rang attendu
de la différentielle $u_k$ du complexe $F_\sbullet$ et $r_0$ la caractéristique
d'Euler-Poincaré de $F_\sbullet$.

\smallskip

Le scalaire $\det B_k$ est bien défini (et pas à un inversible près), ne dépend
que de la décomposition de~$F_\sbullet$ ($\varphi$ étant partie
intégrante de la décomposition) et c'est un mineur d'ordre~$r_k$ de~$u_k$.
\medskip
\end{defn}

En rappelant les notations du chapitre~\ref{ChapStructureMultiplicative} où $\fD_k$ désigne
l'idéal déterminantiel $\calD_{r_k}(u_k)$ et $\fB_k$ l'idéal contenu
$\rmc(\Theta_k)$, nous allons expliciter une égalité de scalaires
$\det B_k = \Delta_k \, \Delta_{k+1}$, compatible avec la
factorisation d'idéaux $\fD_k = \fB_k \,\fB_{k+1}$, compatible au sens
où $\det B_k \in \fD_k$, $\Delta_k \in \fB_k$ et
$\Delta_{k+1} \in \fB_{k+1}$.  Pour définir ces $\Delta_k$, nous
allons avoir besoin des factorisations $\BW^{r_k}(u_k)
= \Theta_k\,\Theta_{k+1}^\sharp$ et de faire intervenir certains
systèmes d'orientations (mais les $\Delta_k$ n'en dépendront pas),
ce qui nécessite quelques définitions précises.

\begin {defn} [Système d'orientations d'une décomposition $(M_\sbullet, S_\sbullet, \varphi)$]
\leavevmode
\begin{enumerate}[\rm i)]
\item
Un système d'orientations de $(M_\sbullet, S_\sbullet, \varphi)$
est la donnée d'orientations $\bfe_{M_k}$ de $M_k$ et $\bfe_{S_k}$ de $S_k$
pour chaque $k \geqslant 0$:
$$
\bfe_{M_k} \in \BW^{r_{k+1}}(M_k) \hookrightarrow \BW^{r_{k+1}}(F_k),\qquad\qquad
\bfe_{S_k} \in \BW^{r_k}(S_k) \hookrightarrow \BW^{r_k}(F_k)
$$
vérifiant $\bfe_{S_k} = \bigwedge^{r_k}(\varphi)(\bfe_{M_{k-1}})$ pour $k \geqslant 1$.

\item
Un système d'orientations de $(M_\sbullet, S_\sbullet, \varphi)$ donne naissance à
un système d'orientations de $F_\sbullet$:
$$
\bfe_{F_k} = \bfe_{M_k} \wedge \bfe_{S_k}  \qquad   k \geqslant 0
$$

\item
Le système d'orientations de $(M_\sbullet, S_\sbullet, \varphi)$ est dit
adapté à un système d'orientations $(\bfe_0, \dots, \bfe_n)$ donné de $F_\sbullet$
si $\bfe_{M_k} \wedge \bfe_{S_k} = \bfe_k$ pour tout $k \geqslant 0$.
\end{enumerate}
\end {defn}

\begin {lem} [Décomposition $\bfe$-orientée]
\label{ExistenceSystemeAdapte}
Soit $\bfe = (\bfe_0, \dots, \bfe_n)$ un système d'orientations
de~$F_\sbullet$ et une décomposition $(M_\sbullet,
S_\sbullet, \varphi)$ de $F_\sbullet$.  Pour toute orientation imposée
$\bfe_{M_0}$ de $M_0$, il existe un unique système d'orientations de
$(M_\sbullet, S_\sbullet, \varphi)$ adapté à $\bfe$.

Nous utiliserons la terminologie ``décomposition $\bfe$-orientée''
pour qualifier la décomposition enrichie par un système d'orientations
adapté à $\bfe$.
\end {lem}

\begin {proof}
Au niveau $F_0$, il y a une et une seule orientation $\bfe_{S_0}$ de $S_0$ telle que
$\bfe_{M_0} \wedge \bfe_{S_0} = \bfe_0$.  Au niveau~$F_1$,
l'orientation~$\bfe_{M_0}$ définit l'orientation $\bfe_{S_1}$; on
considère  alors l'unique orientation $\bfe_{M_1}$ de $M_1$ telle que 
$\bfe_{M_1}\wedge\bfe_{S_1} = \bfe_1$.  Au niveau $F_2$, l'orientation~$\bfe_{M_1}$ définit
l'orientation $\bfe_{S_2}$; on considère alors l'unique orientation
$\bfe_{M_2}$ de $M_2$ telle que $\bfe_{M_2} \wedge \bfe_{S_2} = \bfe_2$. Et ainsi de suite.
\end {proof}

Nous avons déjà vu en~\ref{DependanceOrientation} qu'un système 
de factorisation $(\Theta_k)_{k \geqslant 1}$ de $F_\sbullet$ 
ne dépend que du système d'orientation
$(\bfe_k)_{k \geqslant 1}$ sur $(F_k)_{k \geqslant 1}$ et 
nous avons précisé cette dépendance. Quant à la forme $\mu$,
elle nécessite une orientation $\bfe_0$ sur $F_0$.
Ici, on précise à nouveau cette dépendance lorsque l'on dispose d'une
décomposition $(M_\sbullet, S_\sbullet, \varphi)$ du complexe.

\begin {lem} \label{MultiplicateursOrientationsDecomposition}
Soient deux systèmes d'orientations de $(M_\sbullet, S_\sbullet, \varphi)$,
le second étant noté avec un prime~$'$. Il y a donc des inversibles
$\xi_0, \dots, \xi_n$ et $\xi_{n+1}=1$ tels que $\bfe'_{S_k} = \xi_k\,\bfe_{S_k}$.

\smallskip
Chacun des deux systèmes induit un système d'orientations de $F_\sbullet$, le premier noté
$(\bfe_0, \dots, \bfe_n)$ avec son système associé de vecteurs $(\Theta_k)$,
le second $(\bfe'_0, \dots, \bfe'_n)$ avec son système de vecteurs $(\Theta'_k)$.

\begin {enumerate}[\rm i)]
\item
On a $\Theta'_k = \xi_k\,\Theta_k$ pour $1 \leqslant k \leqslant n+1$.

\item
Soient $\mu, \mu' : \BW^{r_0}(F_0) \to \bfA$ les formes déterminant de
Cayley associées aux deux systèmes:
$$
\mu = \sharp_{\bfe_0}(\Theta_1), \qquad \mu' = \sharp_{\bfe'_0}(\Theta'_1)
$$
Alors $\mu' = \xi_0^{-1}\,\mu$. En particulier, si $\xi_0 = 1$ \idest{} si $\bfe'_{S_0} = \bfe_{S_0}$,
alors $\mu' = \mu$.
\end {enumerate}
\end {lem}

\begin {proof} \leavevmode

i)
On a $\bfe'_k = \varepsilon_k\,\bfe_k$ avec
$\varepsilon_k = \xi_k\xi_{k+1}$ pour $0 \leqslant k \leqslant n$ comme illustré dans le
tableau suivant:
$$
\begin {array}{c|c|c|c|c|c|c|}
k   &n        &n-1       &n-2       &\cdots     &1      &0 \\ [1mm]
\hline  
M_k &\xi_{n+1}=1 &\xi_n        &\xi_{n-1}   &\cdots     &\xi_2    &\xi_1 \\ [1mm]
\hline
S_k &\xi_n       &\xi_{n-1}    &\xi_{n-2}   &\cdots     &\xi_1    &\xi_0  \\ [1mm]
\hline
F_k &\xi_n\xi_{n+1} &\xi_{n-1}\xi_n &\xi_{n-2}\xi_{n-1} &\cdots &\xi_1\xi_2 &\xi_0\xi_1  \\
\end {array}  
$$
D'après la proposition~\ref{DependanceOrientation} (Dépendance en les orientations), on a~:
$$
\let\vep=\varepsilon
\Theta'_k = q_k\,\Theta_k  \qquad \text{avec} \qquad
q_k = \frac {\vep_k \vep_{k+2} \cdots} {\vep_{k+1} \vep_{k+3} \cdots}, \qquad
q_k q_{k+1} = \vep_k, \qquad q_{n+1} = 1.
$$
D'où $q_k q_{k+1} = \xi_k \xi_{k+1}$. 
Comme $q_{n+1}$ et $\xi_{n+1}$ sont tous deux égaux 
à $1$, on en déduit par récurrence que $q_k = \xi_k$.
Bilan: $\Theta'_k = \xi_k \Theta_k$.

\medskip
ii) 
Avec les notations du point i), on a 
$\bfe'_0 = \varepsilon_0 \,\bfe_0$ 
et $\Theta'_1 = \xi_1 \Theta_1$.
D'après la proposition~\ref{DependanceOrientation}, on a 
$
\mu' =
\xi_1 \dfrac{1}{\varepsilon_0} \mu
$.
Comme $\varepsilon_0 = \xi_0 \xi_1$, on en déduit  $\mu' = \dfrac{1}{\xi_0} \mu$.

\end{proof}

\subsubsection*{Notion de coordonnée dans un contexte où il n'y a pas de base}

Soit $L = R \oplus S$ une somme directe où les modules sont pour l'instant
quelconques.  On note $\pi : L \twoheadrightarrow R$ la projection et
$\iota : R \hookrightarrow L$ l'injection canonique si bien que
$\pi \circ \iota = \Id_R$.  Pour $r \in \bbN$, on a
$\BW^r(\pi) \circ \BW^r(\iota) = \Id_{\BW^r(R)}$ de sorte que
$\BW^r(\iota) : \BW^r(R) \to \BW^r(L)$ est une injection. Dans la
suite, dans ce contexte, on identifiera $\BW^r(R)$ à un sous-module de
$\BW^r(L)$ par cette injection.  Si on voit $\pi$ comme un
endomorphisme de $L$, c'est un projecteur d'image $R$, donc
$\BW^r(\pi)$ est un projecteur de $\BW^r(L)$ d'image $\BW^r(R)$, de
sorte que l'on dispose d'une somme directe bien précise $\BW^r(L)
= \BW^r(R) \oplus S'$, à savoir $S' = \Ker \BW^r(\pi)$.

\smallskip

Supposons de plus $R$ libre de rang $r$ et soit $\bfe_R$ une orientation de $R$ \idest{} une
base de $\BW^r(R)$. On voit~$\bfe_R$ à la fois dans $\BW^r(R)$ et dans $\BW^r(L)$.
On dispose donc de la somme directe bien précise:
$$
\BW^r(L) = \bfA.\bfe_R \oplus S'
$$

Dans la suite, dans \emph {ce contexte} i.e. celui d'une décomposition
donnée $L = R \oplus S$ etc., on parlera de \emph{la coordonnée sur
$\bfe_R$ d'un vecteur $\bfx \in \BW^r(L)$}: c'est l'unique scalaire
$a \in \bfA$ tel que $\bfx - a\,\bfe_R \in S'$ ou encore tel que
$\BW^r(\pi)(\bfx) = a\,\bfe_R$.

\smallskip

Supposons de surcroît $S$ libre de rang $s$. Soient $(e_1, \cdots, e_r)$ une base
de $R$ telle que $\bfe_R = e_1 \wedge \cdots \wedge e_r$ et $(e_{r+1}, \cdots, e_{r+s})$ une base
quelconque de $S$. On obtient ainsi une base ordonnée de $L$ et une base $(e_I)_{\#I = r}$
de $\BW^r(L)$ dont $\bfe_R = e_{\{1..r\}}$ fait partie.
On vérifie facilement que
$$
\BW^r(\pi)(e_I) = \begin {cases}
0           &\text{si $I \ne \{1..r\}$}  \\
e_{\{1..r\}} &\text{sinon} \\
\end {cases}
$$
On en déduit que
$$
S' = \bigoplus_{\substack {\#I=r \\ I\ne\{1..r\} \\}} \bfA.e_I
$$
En conséquence, la coordonnée de $\bfx$ sur $\bfe_R$ n'est autre que
la coordonnée de $\bfx$ sur~$\bfe_R$ relativement à la base
$(e_I)_{\#I = r}$. Ceci justifie la terminologie ``coordonnée''
que nous avons utilisée.

\subsubsection*{Où l'on définit enfin les $\Delta_k$}

Pour définir les $\Delta_k$ associés à une décomposition $(M_\sbullet,
S_\sbullet, \varphi)$, on considère un système quelconque
d'orientations $(\bfe_k)_{0\le k\le n}$ de $F_\sbullet$, ce qui donne
naissance aux $(\Theta_k)_{1\le k \le n+1}$ ainsi qu'à $\mu
= \sharp_0(\Theta_1)$. Il est important de comprendre qui dépend de
quoi: les $\Theta_k$ sont indépendants de la décomposition
$(M_\sbullet, S_\sbullet, \varphi)$ tandis que le scalaire $\det B_k$
est défini à partir de cette décomposition sans recours à aucune
orientation.

Un système d'orientations de $(M_\sbullet, S_\sbullet, \varphi)$
adapté à $(\bfe_0, \dots, \bfe_n)$ permet d'exprimer $\det B_k$ à
l'aide des orientations des $M_\sbullet$ et $S_\sbullet$:
$$
\det B_k = \text{la coordonnée sur $\bfe_{M_{k-1}}$ du $r_k$-vecteur 
$\BW^{r_k}(u_k)(\bfe_{S_k})$ de $\BW^{r_k}(F_{k-1})$}
$$
Pour le justifier, utilisons la définition $B_k=\pi\circ u_k \circ \varphi$ où $\pi=\pi_{M_{k-1}}$
et posons $\bfy=\BW^{r_k}(B_k)(\bfe_{M_{k-1}})$. Ce vecteur $\bfy$ est un habitant
de $\BW^{r_k}(M_{k-1})$ qui vérifie $\bfy = \det B_k\,\bfe_{M_{k-1}}$. 
On écrit $\bfy$ de la manière suivante:
$$
\bfy = \BW^{r_k}(\pi)(\bfx) \qquad \text{avec} \qquad
\bfx =  \BW^{r_k}(u_k)\Big(\BW^{r_k}(\varphi)(\bfe_{M_{k-1}})\Big) = \BW^{r_k}(u_k)(\bfe_{S_k})
$$
Cette écriture montre que $\det B_k$ est la coordonnée de $\bfx = \BW^{r_k}(u_k)(\bfe_{S_k})$ sur $\bfe_{M_{k-1}}$.

\begin{prop} [Les $\Delta_k$ associés à une décomposition d'un complexe de Cayley]
\label{SuiteDeltak}
\leavevmode

Dans le contexte ci-dessus, les scalaires $(\Delta_k)_{1 \leqslant k \leqslant n+1}$ sont définis de la manière suivante:
$$
\fbox {$\Delta_k$ est la coordonnée de $\Theta_k$ sur $\bfe_{M_{k-1}}$}
$$
En particulier $\Delta_{n+1} = 1$.

\begin {enumerate}[\rm i)]

\item
Les $\Delta_k$ ne dépendent d'aucun système d'orientations.

\item
On a les égalités scalaires $\Delta_k = \Theta_k^{\sharp}(\bfe_{S_{k-1}})$,
en particulier $\Delta_1 = \mu(\bfe_{S_0})$.

\item
On dispose également des égalités vectorielles $\BW^{r_k}(u_k)(\bfe_{S_k}) = \Theta_k\,\Delta_{k+1}$.

\item
On a en particulier $\BW^{r_1}(u_1)(\bfe_{S_1}) = \Theta_1\,\Delta_2$ et
la factorisation de l'idéal déterminantiel $\calD_{r_1}({u_1}_{|S_1})$
où~${u_1}_{|S_1}$ désigne la restriction de $u_1$ à $S_1$:
$$
\calD_{r_1}({u_1}_{|S_1}) = \Delta_2\,\rmc(\Theta_1)
$$

\item
On dispose des relations $\det B_k = \Delta_k\,\Delta_{k+1}$ pour $1 \leqslant k \leqslant n$.

\item
Les relations précédentes peuvent être écrites de manière symbolique (où $a = \frac{b}{c}$ signifie $a \times c = b$) :
$$
\Delta_k 
\ = \ 
\dfrac{\det B_k \,\det B_{k+2} \, \cdots}
{\det B_{k+1}  \det B_{k+3} \, \cdots}
$$

\item
Supposons tous les $\det B_k$ réguliers. Alors il en est de même des $\Delta_k$
et cette famille $(\Delta_k)_{1 \leqslant k \leqslant n+1}$ est la seule à vérifier
$\det B_k = \Delta_k\,\Delta_{k+1}$ et $\Delta_{n+1}=1$. 
\end{enumerate}
\end{prop}

\index{quotient alterné $\Delta_{k,d}$ en les $(\det B_{k',d})_{k'\ge k}$}%

\begin{proof} \leavevmode

\medskip
i) Soient $(\bfe_0, \dots, \bfe_n)$ et $(\bfe'_0, \dots, \bfe'_n)$
deux systèmes d'orientations de $F_\sbullet$.  Considérons deux
systèmes d'orientations de $(M_\sbullet, S_\sbullet, \varphi)$, le
premier adapté aux $(\bfe_k)$, le second aux $(\bfe'_k)$, noté avec un
prime~$'$.  Utilisons les notations $(\xi_k)$ du
lemme~\ref{MultiplicateursOrientationsDecomposition}, de sorte que
$\bfe'_{S_k} = \xi_k\,\bfe_{S_k}$ donc $\bfe'_{M_{k-1}} = \xi_k\,\bfe_{M_{k-1}}$.
La conclusion du lemme est que $\Theta'_k = \xi_k \Theta_k$.
En conséquence, la coordonnée de $\Theta'_k
= \xi_k\Theta_k$ sur $\bfe'_{M_{k-1}} = \xi_k\,\bfe_{M_{k-1}}$ est
égale à la coordonnée de $\Theta_k$ sur $\bfe_{M_{k-1}}$ \idest{}
$\Delta'_k = \Delta_k$.

\medskip 
ii) Par définition, on a $\Theta_k^{\sharp}(\bfe_{S_{k-1}}) = [\Theta_k \wedge \bfe_{S_{k-1}}]_{\bfe_{k-1}}$ ; 
puisque $\bfe_{k-1} = \bfe_{M_{k-1}} \wedge \bfe_{S_{k-1}}$,
ce scalaire est donc la coordonnée de $\Theta_k$  sur~$\bfe_{M_{k-1}}$
et vaut donc par définition $\Delta_k$.

\medskip
iii)
L'évaluation en $\bfe_{S_k}$ de l'égalité $\BW^{r_k}(u_k) = \Theta_k\, \Theta_{k+1}^\sharp$
fournit dans $\BW^{r_k}(F_{k-1})$:
$$
\BW^{r_k}(u_k)(\bfe_{S_k}) = \Theta_k\,\Delta_{k+1}
$$

\medskip
iv)
En ce qui concerne la factorisation de l'idéal déterminantiel, consisérer les idéaux contenus
dans l'égalité $\BW^{r_1}(u_1)(\bfe_{S_1}) = \Theta_1\,\Delta_2$.

\medskip
v)
La coordonnée sur $\bfe_{M_{k-1}}$ des deux membres de $\BW^{r_k}(u_k)(\bfe_{S_k}) = \Theta_k\,\Delta_{k+1}$
conduit à la relation:
$$
\det B_k = \Delta_k\,\Delta_{k+1}
$$

\medskip
vi) et vii) Ces deux points sont immédiats.
\end{proof}

\medskip

\'Enonçons à présent un résultat très important :
il permet d'obtenir une \textit{expression} de la forme déterminant de
Cayley lorsque l'on dispose d'une décomposition $(M_\sbullet,
S_\sbullet, \varphi)$ de $F_\sbullet$ et du scalaire $\Delta_2$ associé à cette
décomposition (en supposant toutefois que $\Delta_2$ est régulier).
Pour parvenir à cette expression, on fixe un système quelconque~$(\bfe_k)$
d'orientations de $F_\sbullet$, ce qui permet de définir les $\Theta_k$,
la forme $\boxed{\mu = \Theta_1^\sharp}$, générateur de $\CayleyVect(F_\sbullet)$
(cf. le théorème~\ref{CayleyDetVectoriel})
et de considérer un système d'orientations de $(M_\sbullet, S_\sbullet, \varphi)$ adapté
au système d'orientations choisi $(\bfe_k)$ de $F_\sbullet$.

\medskip

Notons $\bfw = \bigwedge^{r_1}(u_1)(\bfe_{S_1})$ et voyons
${\bfw^\sharp : \BW^{c}(F_0) \to \bfA}$ comme une forme $c$-linéaire
alternée $F_0^{c} \to \bfA$.
On rappelle que cette forme s'obtient \og à partir de $r_1$ colonnes (précises) de~$u_1$\fg{}
de la manière suivante.  On considère une base $(e_1, \cdots, e_{r_1})$ de $S_1$
induisant l'orientation~$\bfe_{S_1}$:
$$
\bfe_{S_1} = e_1 \wedge e_2 \wedge \cdots \wedge e_{r_1}
$$
Alors la forme $c$-linéaire alternée $\bfw^\sharp$ s'évalue par
l'intermédiaire d'un déterminant d'ordre $\dim F_0$:
$$
(v_1, \dots, v_{c}) \mapsto
\left[\BW^{r_1}(u_1)(\bfe_{S_1}) \wedge v_1\wedge\cdots\wedge v_{c}\right]_{\bfe_{0}} =
\big[u_1(e_1) \wedge u_1(e_2) \wedge\cdots\wedge u_1(e_{r_1}) \wedge v_1\wedge\cdots\wedge v_{c}\big]_{\bfe_{0}} 
$$
Note: une fois n'est pas coutume mais la petite chanson
\og forme obtenue à partir de $r_1$ colonnes de~$u_1$\fg{} s'adresse aux auteurs,
cf. la section \ref{SectionDetCoRang1}. Et ici,
les colonnes en question sont les images par $u_1$ de la base $(e_i)$ de $S_1$.

\medskip
La proposition suivante nous informe que la forme $c$-linéaire $\mu$  
s'exprime comme le quotient de~$\bfw^\sharp$ par le scalaire $\Delta_2$.  
Dans le cadre du résultant, ce résultat correspond aux égalités 
(cf.~\ref{MacaulayParRecurrenceDegreeDelta} et
\ref{MacaulayParRecurrence}) :
$$
\omegaresP = \frac{\omega_\uP}{\det W_{2,\delta}(\uP)}, \qquad\qquad
\calR_d(\uP) = \frac{\det W_{1,d}(\uP)}{\det W_{2,d}(\uP)}, \qquad  d \geqslant \delta+1
$$
Le lien sera réalisé plus tard en rapprochant~$\Delta_{2,d}(\uP)$ et $\det W_{2,d}(\uP)$:
on montrera que ces deux scalaires sont égaux et ceci pour n'importe quel $d$.

\begin{prop} [Expression d'un générateur du déterminant de Cayley $\CayleyVect(F_\sbullet)$] 
\label{muDelta2Expression}
En utilisant le contexte précédent et ses notations, la forme
$c$-linéaire alternée $\mu := \Theta_1^\sharp$ engendrant $\CayleyVect(F_\sbullet)$
vérifie
$$
\big(\BW^{r_1}(u_1)(\bfe_{S_1})\big)^\sharp = \Delta_2\,\mu
\qquad
\begin {array}{c}
\text{ce que l'on peut écrire} \\
\text{de manière symbolique} \\
\end {array}
\qquad 
\mu = \dfrac{\big(\BW^{r_1}(u_1)(\bfe_{S_1})\big)^\sharp}{\Delta_2}
$$
\end{prop}

\begin{proof}
C'est en fait une redite, \og à $\sharp$-près\fg,  de l'égalité du point iv) de
la proposition~\ref{SuiteDeltak}:
$$
\BW^{r_1}(u_1)(\bfe_{S_1}) = \Theta_1 \Delta_{2} 
$$
à qui on applique~$\sharp = \sharp_0$ pour obtenir celle annoncée.
\end{proof}

On illustrera ce résultat de manière concrète dans la section \ref{CanonicalCayleyDetSection}.

\begin {lem} [Rigidité] \label{RigidityLemma}
Soient $(M_\sbullet, S_\sbullet, \varphi)$ et $(M'_\sbullet, S'_\sbullet, \varphi')$
deux décompositions d'un même complexe de Cayley $F_\sbullet$ telles que $M_0 = M'_0$.
Alors $\Delta_1 = \Delta'_1$.
\end {lem}

\begin {proof} \leavevmode
Indépendamment des décompositions, fixons un système d'orientations
$(\bfe_0, \dots, \bfe_n)$ de $F_\sbullet$, ce qui définit les $\Theta_k$.
Choisissons ensuite un
système d'orientations pour $(M_\sbullet, S_\sbullet, \varphi)$ qui
lui soit adapté.  Idem pour $(M'_\sbullet,
S'_\sbullet, \varphi')$. D'après le
lemme~\ref{ExistenceSystemeAdapte}, on peut imposer $\bfe'_{M'_0}
= \bfe_{M_0}$ (ce qui a du sens puisque $M_0 = M'_0$).

\smallskip
Par définition, $\Delta_k$ est la coordonnée sur $\bfe_{M_{k-1}}$ de $\Theta_k$
tandis que $\Delta'_k$ en est la coordonnée sur $\bfe'_{M'_{k-1}}$.
Pour $k=1$, cela donne $\Delta_1 = \Delta'_1$.
\end {proof}

\begin {prop} [Interprétation de $\Delta_k$ comme déterminant de Cayley] \label{InterpretationStructurelleDeltak}
Dans le contexte d'une décomposition $(M_\sbullet, S_\sbullet, \varphi)$ d'un
complexe de Cayley $F_\sbullet$:

\begin {enumerate} [\rm i)]
\item
Le complexe suivant possède les mêmes rangs attendus que le complexe $F_\sbullet$
$$
\xymatrix @M=0.3pc{
0 \ar[r] &
F_n \ar[r]^-{u_n} &
F_{n-1} \ar[r]^-{u_{n-1}} &
\ \cdots \ \ar[r] &
F_1 \ar[r]^-{u'_1} &
M_0
}
\qquad\qquad
u'_1 = \pi_{M_0} \circ u_1
$$
Sa caractéristique d'Euler-Poincaré est nulle.
Il est de Cayley si et seulement si $\Delta_1$ est régulier auquel cas, $\Delta_1$
est son déterminant de Cayley.

\item
De manière analogue, les rangs attendus du complexe ci-dessous sont dans l'ordre
$(r_n, \dots, r_k)$:
$$
\xymatrix @M=0.3pc{
0 \ar[r] &
F_n \ar[r]^-{u_n} &
F_{n-1} \ar[r]^-{u_{n-1}} &
\ \cdots \ \ar[r] &
F_k \ar[r]^-{u'_k} &
M_{k-1}
}
\qquad\qquad
u'_k = \pi_{M_{k-1}} \circ u_k
$$
Sa caractéristique d'Euler-Poincaré est nulle.
Il est de Cayley si et seulement si $\Delta_k$ est régulier auquel cas, $\Delta_k$
est son déterminant de Cayley.
\end {enumerate}
\end {prop}

\begin {proof} \leavevmode

Les affirmations sur les rangs attendus et la caractéristique d'Euler-Poincaré ne
posent pas de difficulté. On fixe un système d'orientations de $F_\sbullet$ et
un système adapté d'orientations de $(M_\sbullet, S_\sbullet, \varphi)$.

\medskip
i) Identifions $\BW^{r_1}(M_0) = \bfA\bfe_{M_0}$ à $\bfA$ de sorte que
$\BW^{r_1}(\pi_{M_0}) : \BW^{r_1}(F_0) \to \BW^{r_1}(M_0) \simeq \bfA$
est la coordonnée sur $\bfe_{M_0}$. En particulier
$\BW^{r_1}(\pi_{M_0})(\Theta_1) = \Delta_1$ par définition de
$\Delta_1$. Le calcul suivant
$$
\BW^{r_1}(u'_1) = \BW^{r_1}(\pi_{M_0}) \circ \BW^{r_1}(u_1) =
\BW^{r_1}(\pi_{M_0}) \circ (\times \Theta_1) \circ \Theta_2^\sharp =
\BW^{r_1}(\pi_{M_0})(\Theta_1)\,\Theta_2^\sharp  
$$
montre que $\BW^{r_1}(u'_1) = \Delta_1\,\Theta_2^\sharp$.  Comme
$\Gr(\Theta_2) \geqslant 2$, on en déduit que $\Delta_1$ est le pgcd fort de
l'idéal déterminantiel $\calD_{r_1}(u'_1) = \Delta_1\rmc(\Theta_2)$ 
si et seulement si il est régulier.

\medskip
ii)  Il suffit d'appliquer le premier point au complexe tronqué:
$$
\xymatrix @M=0.3pc{
0 \ar[r] &
F_n \ar[r]^-{u_n} &
F_{n-1} \ar[r]^-{u_{n-1}} &
\ \cdots \ \ar[r] &
F_k \ar[r]^-{u_k} &
F_{k-1}
}
$$
\end {proof}


{\Large Dans \verb+14-EnAttente.tex+ car le statut n'est pas clair.}
Et le contenu, écrit il y a des années, n'étant plus à jour par rapport aux chapitres précédents,
doit être examiné et repris.

\subsection{Complexe décomposé : un cas d'école (le degré $d \geqslant \delta+1$)}
 
Voilà ce que dit Chardin dans son résumé de~\cite{Chardin2} 
\og The Resultant via a Koszul Complex \fg{}.

\begin{quote}
{\it 
As noticed by Jouanolou, Hurwitz proved in 1913 \cite{Hurwitz} that,
in the generic case, the Koszul complex is acyclic in positive degrees
if the number of (homogeneous) polynomials is less than or equal to
the number of variables. It was known around 1930 that resultants may
be calculated as a Mc Rae invariant of this complex. This expresses
the resultant as an alternate product of determinants coming from the
differentials of this complex. Demazure explained in a preprint
(\cite{Demazure1}), how to recover this formula from an easy particular case of
deep results of Buchsbaum and Eisenbud on finite free resolutions. He
noticed that one only needs to add one new variable in order to do the
calculation in a non generic situation.

I have never seen any mention of this technique of calculation in
recent reports on the subject (except the quite confidential one of
Demazure and in an extensive work of Jouanolou, however from a rather
different point of view).  So, I will give here elementary and short
proofs of the theorems needed except the well known acyclicity of the
Koszul complex and the ``Principal Theorem of Elimination'' and
present some useful remarks leading to the subsequent algorithm. In
fact, no genericity is needed (it is not the case for all the other
techniques). Furthermore, when the resultant vanishes, some
information can be given about the dimension of the associated
variety.
}
\end{quote}

\medskip
\noindent

``As noticed by Jouanolou'': cf. \cite[4.7]{J3}.
L'auteur y démontre, en suivant Hurwitz, que la suite générique $\uP$ est 1-sécante ; 
or, en terrain gradué, cela entraîne que la suite est complètement sécante.

\bigskip
Et voici maintenant un extrait de la thèse de Penchèvre, \cite[p. 150]{Penchevre}.

\begin{quote}
{\it 
La lignée géométrique des recherches de Cayley montre un décalage par
rapport aux travaux de Bézout, que d'ailleurs Cayley ne cite pas.
A-t-il eu connaissance des idées de Bézout par l'intermédiaire de
l'esquisse qu'en donne Waring dans la seconde édition de ses
Meditationes algebraicae (que Cayley connaissait)?  Bien que cette
esquisse semble aujourd'hui incompréhensible à qui n'a pas lu Bézout,
et qu'ici Cayley n'en fasse pas mention, elle aurait pu au moins lui
donner l'idée. Du reste, Cayley met en oeuvre les mêmes mécanismes
d'algèbre linéaire et rencontre la même pierre d'achoppement (ni
Bézout, ni Cayley ne démontrent que $\Ker f_1 = \Im f_2$). Sa
méthode est exactement la méthode de Bézout, informée par la théorie
des déterminants, la méthode dialytique et le symbolisme matriciel. La
somme alternée $N$ est présente chez Bézout, chez Waring, et chez
Cayley.

Une préoccupation calculatoire chez Cayley constitue un deuxième
décalage par rapport aux travaux de Bézout : Cayley cherche une
formule du résultant. Dans l'article de 1847, il propose la formule
suivante:
$$
R = {Q_1Q_3 \cdots \over Q_2Q_4\cdots}
$$
où pour chaque $i$, $Q_i$ est un mineur de la matrice de $f_i$. Le
choix des mineurs obéit à la règle suivante. La matrice de $f_i$ a le
même nombre de colonnes que la matrice de $f_{i+1}$ compte de lignes.
La règle est que l'ensemble des colonnes sélectionnées pour former le
mineur $Q_i$ doit correspondre au complémentaire de l'ensemble des
lignes sélectionnées pour former le mineur $Q_{i+1}$. Quant au
dernier, soit $Q_j$, il doit être choisi parmi les mineurs maximaux de
la matrice de $f_j$ de sorte qu'aucun des $Q_i$ ne s'annule. Cayley ne
donne pas de règle pour ce choix-ci. Plusieurs auteurs (Salmon, Netto)
semblent avoir essayé de montrer la formule de Cayley, jusqu'à ce
que Macaulay crut bon d'y renoncer, ayant trouvé une formule plus
simple mais \og moins générale\fg{} de la forme $Q_1/\Delta$ où $Q_1$ est
encore un mineur de la matrice de~$f_1$, avec moins de liberté de
choix cependant. En 1926, E.~Fischer est parvenu a démontrer
rigoureusement la formule de Cayley.
}
\end{quote}

\newpage
\subsubsection{Le mécanisme de Cayley pour $n=3$}

Nous souhaitons illustrer les propos rapportés par Penchèvre au sujet 
de la \og préoccupation calculatoire de Cayley \fg{}, et ceci 
en degré $\delta+1$ et pour $n=3$.

$\blacktriangleright$ 
Commençons par traiter le cas d'un complexe libre de longueur $3$ 
de caractéristique d'Euler-Poincaré nulle~:
$$
\xymatrix @M=0.6pc{
0 \ar[r] &
F_2 \simeq \bfA^a \ar[r]^-{u} &
F_1 \simeq \bfA^{a+b} \ar[r]^-{v} &
F_0 \simeq \bfA^b
}
\qquad\qquad
r_2 = a, \quad r_1 = b, \quad r_0 = 0
$$
Le mécanisme de Cayley consiste à choisir une base $\bf1$ de
$\bigwedge^a F_2 \simeq \bfA$, 
à considérer $\Theta = \bigwedge^a(u)({\bf1}) \in \bigwedge^a F_1$ 
puis à créer, à partir de ce vecteur et grâce à un isomorphisme déterminantal $\sharp$, 
une forme linéaire $\Theta^\sharp$.
D'autre part, après l'identification $F_0 \simeq \bfA$, on dispose 
de la forme linéaire $\bigwedge^b v$.
Résumons, nous avons deux formes linéaires :
$$
\textstyle 
\Theta^\sharp : \ \bigwedge\nolimits^b F_1 \longrightarrow \bfA
\qquad \text{ et } \qquad 
\bigwedge\nolimits^b v : \ 
\bigwedge^b F_1 \longrightarrow \bfA
$$
Le théorème de proportionnalité, sous la seule hypothèse 
$v \circ u = 0$, 
nous informe que ces deux formes linéaires sont proportionnelles. 
En notant $(e_i)_{1 \leqslant i \leqslant a+b}$ désigne la base du module central $F_1$, 
cela peut se traduire par les égalités suivantes 
de quotients, à voir comme des produits en croix :
$$
\text{$\forall\  J,\, J'$
parties de cardinal $b$ de $\{1..a+b\}$}, \qquad 
{\bigwedge^b(v)(e_J) \over \Theta^\sharp(e_J)} 
\ = \  
{\bigwedge^b(v)(e_{J'}) \over \Theta^\sharp(e_{J'})}
$$
Choisissons pour 
$\sharp :\ \bigwedge^\sbullet F_1 \rightarrow \Big(\bigwedge^{a+b-\sbullet} F_1\Big)^\star$, 
l'application 
$\Theta \mapsto \Theta^\sharp = [\Theta\wedge\bullet]_\bfe$ où $\bfe$ est une orientation de~$F_1$. 
Pour toute partie $I$ de cardinal $a$, on a alors
$e_I^\sharp = \varepsilon(I,\overline I) {e_{\overline I}}^\star$. 
Avec l'égalité $\Theta = \sum\limits_{\#I = a} 
\det_{I \times \{1..a\}}(u) e_I$, 
on peut alors déterminer $\Theta^\sharp(e_J)$ et l'égalité vectorielle ci-dessus 
s'écrit comme l'égalité de scalaires :
$$
{\det_{\{1..b\} \times J}(v) \over
\varepsilon(\overline J,J) \det_{\overline J\times \{1..a\}}(u)}
\quad = \quad
{\det_{\{1..b\} \times J'}(v) \over
\varepsilon(\overline {J'},J') \det_{\overline J'\times \{1..a\}}(u)}
$$
Ces \og quotients \fg{}, lorsqu'ils sont exacts 
(cf. hypothèse sur la profondeur des idéaux déterminantiels),
valent exactement le déterminant de Cayley du complexe.

Lorsque que l'on fait une hypothèse sur la profondeur des idéaux 
déterminantiels, alors on peut exprimer de manière globale l'égalité 
des quotients ci-dessus.
Précisément, en supposant $\Gr \calD_a(u) \geqslant 2$ \idest{} 
$\Gr(\Theta) \geqslant 2$ \idest{} $\Gr(\Theta^\sharp) \geqslant 2$, 
alors il existe un seul scalaire $q$ tel que $\bigwedge^b(v) = q\, \Theta^\#$. 
Ce scalaire est exactement le déterminant de Cayley du complexe (défini à un inversible près). 
Si on ne veut pas trop détyper les objets, 
cela pourrait être préférable de voir $q$ comme un habitant
$\Theta' \in \bigwedge^b F_0$ avec la factorisation $\bigwedge^b v = \Theta'\, \Theta^\sharp$
$$
\xymatrix @R = 0.4cm {
\bigwedge^b F_1 
\ar[dr]_{\Theta^\sharp}\ar[rr]^{\bigwedge^b v} &       &\bigwedge^b F_0 \simeq \bfA\\
                                        &\bfA\ar[ur]_{\times \Theta'} \\
}                                      
$$

$\blacktriangleright$ 
Appliquons ce qui précéde à la composante homogène de degré $\delta+1$ du complexe de 
Koszul de $(P_1,P_2,P_3)$.
On
rappelle que $\dim \bfA[\uX]_d = \binom{n+d-1}{n-1} \overset{\rm ici}{=} \binom {d+2}{2}$.  
On a
$\rmK_{3,d} \simeq \bfA[\uX]_{d-(d_1+d_2+d_3)} = 0$ 
pour $d < d_1+d_2+d_3 = \delta+3$ \idest{} pour $d \leqslant \delta+2$.  
Le petit complexe qui nous concerne est donc :
$$
\xymatrix @M=0.6pc @C=1.5cm{
0 \ar[r] &
\rmK_{2,\delta+1} \simeq \bfA^{a} \ar[r]^-{\partial_{2,\delta+1}} &
\rmK_{1,\delta+1} \simeq \bfA^{a+b} \ar[r]^-{\Syl_{\delta+1}} &
\rmK_{0,\delta+1} \simeq \bfA^b
}
$$
avec:
$$
b = \binom {\delta+3}{2} = \binom {d_1+d_2+d_3}{2}, \qquad \qquad 
a+b = \binom{d_1+d_2}{2} + \binom {d_1+d_3}{2} + \binom{d_2+d_3}{2}
$$


$\blacktriangleright$ 
Examinons le cas $D = (1,1,2)$

\begin{tabular}{rcp{15cm}} 
$P_{1}$ & $=$ & $a_{1}X_{1} + a_{2}X_{2} + a_{3}X_{3}$\\ [0.1cm] 
$P_{2}$ & $=$ & $b_{1}X_{1} + b_{2}X_{2} + b_{3}X_{3}$\\ [0.1cm] 
$P_{3}$ & $=$ & $c_{1}X_{1}^{2} + c_{2}X_{1}X_{2} + c_{3}X_{1}X_{3} + c_{4}X_{2}^{2} + 
c_{5}X_{2}X_{3} + c_{6}X_{3}^{2}$\\ [0.1cm] 
\end{tabular} 

\noindent
Ici $\delta = 1$ et la résolution est du type $0 \to \bfA \to \bfA^7 \to \bfA^6$:
$$
\partial_{2,\delta+1} \ = \ 
\NorthEastBordermatrix{
\Veti{e_{12}} & \\
-b_{1} & \Heti{X_{1}\,e_{1}} \\
-b_{2} & \Heti{X_{2}\,e_{1}} \\
-b_{3} & \Heti{X_{3}\,e_{1}} \\
a_{1} & \Heti{X_{1}\,e_{2}} \\
a_{2} & \Heti{X_{2}\,e_{2}} \\
a_{3} & \Heti{X_{3}\,e_{2}} \\
. & \Heti{e_{3}} \\
}
\qquad \qquad
\Syl_{\delta+1}\ = \ 
\NorthEastBordermatrix{
\Veti{X_{1}\,e_{1}} & \Veti{X_{2}\,e_{1}} & \Veti{X_{3}\,e_{1}} & \Veti{X_{1}\,e_{2}  \  \leftarrow} & \Veti{X_{2}\,e_{2}} & \Veti{X_{3}\,e_{2}} & \Veti{e_{3}} & \\
a_{1} & . & . & b_{1} & . & . & c_{1} & \Heti{X_{1}^{2}} \\
a_{2} & a_{1} & . & b_{2} & b_{1} & . & c_{2} & \Heti{X_{1}X_{2}} \\
a_{3} & . & a_{1} & b_{3} & . & b_{1} & c_{3} & \Heti{X_{1}X_{3}} \\
. & a_{2} & . & . & b_{2} & . & c_{4} & \Heti{X_{2}^{2}} \\
. & a_{3} & a_{2} & . & b_{3} & b_{2} & c_{5} & \Heti{X_{2}X_{3}} \\
. & . & a_{3} & . & . & b_{3} & c_{6} & \Heti{X_{3}^{2}} \\
}
$$
La structure multiplicative donne une égalité de 
$\binom{a+b}{b} = 7$ quotients de déterminants. 
Voici 4 quotients signés parmi les 7 (qui apparaissent dans l'ordre lexicographique 
des parties $J$ de cardinal $b$ de $\{1..a+b\}$)
$$
\setlength{\arraycolsep}{0.6\arraycolsep}
\def\arraystretch{0.8}
{\left|
\begin{array}{*{6}{c}}
a_{1}& .& .& b_{1}& .& . \\ 
a_{2}& a_{1}& .& b_{2}& b_{1}& . \\ 
a_{3}& .& a_{1}& b_{3}& .& b_{1} \\ 
.& a_{2}& .& .& b_{2}& . \\ 
.& a_{3}& a_{2}& .& b_{3}& b_{2} \\ 
.& .& a_{3}& .& .& b_{3} \\ 
\end{array}
\right|
\over 
1 \times \left|
\begin{array}{*{1}{c}}
0 \\ 
\end{array}
\right|}
= 
{\left|
\begin{array}{*{6}{c}}
a_{1}& .& .& b_{1}& .& c_{1} \\ 
a_{2}& a_{1}& .& b_{2}& b_{1}& c_{2} \\ 
a_{3}& .& a_{1}& b_{3}& .& c_{3} \\ 
.& a_{2}& .& .& b_{2}& c_{4} \\ 
.& a_{3}& a_{2}& .& b_{3}& c_{5} \\ 
.& .& a_{3}& .& .& c_{6} \\ 
\end{array}
\right|
\over 
(-1) \times \left|
\begin{array}{*{1}{c}}
a_3 \\ 
\end{array}
\right|}
= 
{\left|
\begin{array}{*{6}{c}}
a_{1}& .& .& b_{1}& .& c_{1} \\ 
a_{2}& a_{1}& .& b_{2}& .& c_{2} \\ 
a_{3}& .& a_{1}& b_{3}& b_{1}& c_{3} \\ 
.& a_{2}& .& .& .& c_{4} \\ 
.& a_{3}& a_{2}& .& b_{2}& c_{5} \\ 
.& .& a_{3}& .& b_{3}& c_{6} \\ 
\end{array}
\right|
\over 
1 \times \left|
\begin{array}{*{1}{c}}
a_2 \\ 
\end{array}
\right|}
= 
{\left|
\begin{array}{*{6}{c}}
a_{1}& .& .& .& .& c_{1} \\ 
a_{2}& a_{1}& .& b_{1}& .& c_{2} \\ 
a_{3}& .& a_{1}& .& b_{1}& c_{3} \\ 
.& a_{2}& .& b_{2}& .& c_{4} \\ 
.& a_{3}& a_{2}& b_{3}& b_{2}& c_{5} \\ 
.& .& a_{3}& .& b_{3}& c_{6} \\ 
\end{array}
\right|
\over 
(-1) \times \left|
\begin{array}{*{1}{c}}
a_1 \\ 
\end{array}
\right|}
$$
Et les trois derniers qui manquent à l'appel:
$$
\setlength{\arraycolsep}{0.6\arraycolsep}
\def\arraystretch{0.8}
{\left|
\begin{array}{*{6}{c}}
a_{1}& .& b_{1}& .& .& c_{1} \\ 
a_{2}& a_{1}& b_{2}& b_{1}& .& c_{2} \\ 
a_{3}& .& b_{3}& .& b_{1}& c_{3} \\ 
.& a_{2}& .& b_{2}& .& c_{4} \\ 
.& a_{3}& .& b_{3}& b_{2}& c_{5} \\ 
.& .& .& .& b_{3}& c_{6} \\ 
\end{array}
\right|
\over 
1 \times \left|
\begin{array}{*{1}{c}}
-b_3 \\ 
\end{array}
\right|}
= 
{\left|
\begin{array}{*{6}{c}}
a_{1}& .& b_{1}& .& .& c_{1} \\ 
a_{2}& .& b_{2}& b_{1}& .& c_{2} \\ 
a_{3}& a_{1}& b_{3}& .& b_{1}& c_{3} \\ 
.& .& .& b_{2}& .& c_{4} \\ 
.& a_{2}& .& b_{3}& b_{2}& c_{5} \\ 
.& a_{3}& .& .& b_{3}& c_{6} \\ 
\end{array}
\right|
\over 
(-1) \times \left|
\begin{array}{*{1}{c}}
-b_2 \\ 
\end{array}
\right|}
= 
{\left|
\begin{array}{*{6}{c}}
.& .& b_{1}& .& .& c_{1} \\ 
a_{1}& .& b_{2}& b_{1}& .& c_{2} \\ 
.& a_{1}& b_{3}& .& b_{1}& c_{3} \\ 
a_{2}& .& .& b_{2}& .& c_{4} \\ 
a_{3}& a_{2}& .& b_{3}& b_{2}& c_{5} \\ 
.& a_{3}& .& .& b_{3}& c_{6} \\ 
\end{array}
\right|
\over 
1 \times \left|
\begin{array}{*{1}{c}}
-b_1 \\ 
\end{array}
\right|}
$$

Le lecteur attentif aura remarqué qu'en supprimant la 4 eme colonne 
de $\Syl_{\delta+1}$ (\idest{} en considérant $J= \{1,2,3,5,6,7\}$),
on obtient exactement la matrice de l'endomorphisme 
$W_{1,\delta+1}$ (les colonnes sont rangées dans le même ordre que les lignes, 
après identification de $\Smac_{1,\delta+1} \overset{\varphi}{\simeq} \Jex_{1,\delta+1} = 
\bfA[\uX]_{\delta+1}$).
Cette 4\up{ème} colonne est indexée par le seul monôme extérieur dans $\Mmac_{1,\delta+1}$.
Par conséquent, le 4\up{ème} quotient a exactement pour numérateur  $\det W_{1,\delta+1}$.

On remarque que le premier dénominateur est nul; c'est donc que le numérateur est nul.
Et effectivement, l'égalité $\Syl_{\delta+1} \circ \partial_{2,\delta+1} = 0$ 
fournit la relation linéaire entre les 6 premières colonnes $C_i$ de $v = \Syl_{\delta+1}$
(a fortiori entre les colonnes de la matrice numérateur du premier quotient):
$$
-b_1\,C_1  -b_2\,C_2  -b_3\,C_3 + a_1\,C_4 + a_2\,C_5 + a_3\,C_6 \ =\ 0
$$
On remarquera que les autres dénominateurs ne nous laissent pas tomber
et qu'il s'agit des 6 coefficients de $P_1, P_2$.

\textcolor{red} {Dans le texte qui suit, la déf du résultant est celle du
générateur de l'idéal d'élimination et de toutes manières ce texte qui
suit est à revoir.}

Que représente cette valeur commune $\calR$ des quotients ? Réponse : 
c'est le résultant au signe près. Pourquoi ? Par théorème 
(cf. la sous-section~\ref{sousSectionMacaulayRecurrence}, en particulier
le thèorème~\ref{MacaulayParRecurrence} ainsi que le prochain chapitre
Lorsque l'on débute, on apprécie déjà l'égalité de ces $1 + 6$ quotients. 
Et on se dit que l'on devrait être capable, si l'on croit à l'égalité 
$\calR = \pm\Res(P_1, P_2, P_3)$, de prouver \textit{directement} que $\calR \in \ElimIdeal$.
Ce qui revient à montrer (Wiebe étant là pour nous rassurer lorsque $\uP$ est
régulière) que $\langle X_1,X_2,X_3\rangle^{\delta+1} \calR \subset \langle\uP\rangle$ ou encore
sans dénominateur en posant $\calR = \frac{F}G$,
$$
\langle X_1,X_2,X_3\rangle^{\delta+1} \ F 
\quad \overset{?}{\subset} \quad 
G\,\langle P_1, P_2, P_3 \rangle
$$
Mais cette appartenance, c'est une famille (pour $|\alpha| =
\delta+1$) d'identités algébriques du type $X^\alpha F = Q_\alpha \times G
\times (U_{1,\alpha} P_1 + U_{2,\alpha} P_2 + U_{3,\alpha} P_3)$.  
Qui se spécialise et qui est toujours vérifiée, que $\uP$ soit régulière ou pas.
Mystère. En tout cas, lorsque l'on est assez avancé, on peut affirmer
$\calR = -\Res(P_1,P_2,P_3)$: en effet, lorsque l'on spécialise $\uP$
en le jeu étalon \idest{} $a_1 := 1,\, b_2 := 1,\, c_6 := 1$ et les autres coefficients
nuls, le quatrième quotient a pour numérateur
$\det(\Id_6) = 1$ et dénominateur $-1$. 

\bigskip
\noindent
Un mot sur les \textit{endomorphismes} $W_{1,\delta+1}$ et $W_{2,\delta+1}$ :
$$
W_{1,\delta+1} =
\EastBordermatrix{
a_{1} & . & . & . & . & c_{1} & \Heti{X_{1}^{2}} \\ 
a_{2} & a_{1} & . & b_{1} & . & c_{2} & \Heti{X_{1}X_{2}} \\ 
a_{3} & . & a_{1} & . & b_{1} & c_{3} & \Heti{X_{1}X_{3}} \\ 
. & a_{2} & . & b_{2} & . & c_{4} & \Heti{X_{2}^{2}} \\ 
. & a_{3} & a_{2} & b_{3} & b_{2} & c_{5} & \Heti{X_{2}X_{3}} \\ 
. & . & a_{3} & . & b_{3} & c_{6} & \Heti{X_{3}^{2}} \\ 
}
\qquad\qquad
W_{2,\delta+1} = 
\EastBordermatrix{
a_{1} & \Heti{X_{1}X_{2}} \\ 
}
$$
On a fait figurer une base de $\Jex_{1,\delta+1}$ et de $\Jex_{2,\delta+1}$ (réduite à $X_1X_2$).
On rappelle que $W_{1,\delta+1}$ est construite à partir de 
$\Syl_{\delta+1}$ et du mécanisme de sélection $\varphi(X^\alpha) = (X^\alpha/X_i^{d_i})\, e_i$
avec $i=\minDiv(X^\alpha)$. Ici
$$
\varphi(X_1^2) = X_1\,e_1, \quad
\varphi(X_1X_2) = X_2\,e_1, \quad
\varphi(X_1X_3) = X_3\,e_1, \quad
\varphi(X_2X_3) = X_3\,e_2, \quad
\varphi(X_3^2) = e_3
$$

\bigskip

$\blacktriangleright$  
Et pour terminer, l'exemple 
$D = (2,1,2)$.

\medskip

\begin{tabular}{rcp{15cm}} 
$P_{1}$ & $=$ & $a_{1}X_{1}^{2} + a_{2}X_{1}X_{2} + a_{3}X_{1}X_{3} + a_{4}X_{2}^{2} + a_{5}X_{2}X_{3} + a_{6}X_{3}^{2}$\\ [0.1cm] 
$P_{2}$ & $=$ & $b_{1}X_{1} + b_{2}X_{2} + b_{3}X_{3}$\\ [0.1cm] 
$P_{3}$ & $=$ & $c_{1}X_{1}^{2} + c_{2}X_{1}X_{2} + c_{3}X_{1}X_{3} + c_{4}X_{2}^{2} + c_{5}X_{2}X_{3} + c_{6}X_{3}^{2}$\\ [0.1cm] 
\end{tabular}

\smallskip
\noindent
Ici, $\delta=2$, $b = \binom{5}{2} = 10$, $a+b = \binom{3}{2}
+ \binom{3}{2} + \binom{4}{2} = 12$ et la résolution est du
type $0 \to \bfA^2 \to \bfA^{12} \to \bfA^{10}$ avec :
$$
\partial_{2,\delta+1} \ =\  
\NorthEastBordermatrix{
\Veti{e_{12}} & \Veti{e_{23}} & \\
-b_{1} & . & \Heti{X_{1}\,e_{1}} \\
-b_{2} & . & \Heti{X_{2}\,e_{1}} \\
-b_{3} & . & \Heti{X_{3}\,e_{1}} \\
a_{1} & -c_{1} & \Heti{X_{1}^{2}\,e_{2}} \\
a_{2} & -c_{2} & \Heti{X_{1}X_{2}\,e_{2}} \\
a_{3} & -c_{3} & \Heti{X_{1}X_{3}\,e_{2}} \\
a_{4} & -c_{4} & \Heti{X_{2}^{2}\,e_{2}} \\
a_{5} & -c_{5} & \Heti{X_{2}X_{3}\,e_{2}} \\
a_{6} & -c_{6} & \Heti{X_{3}^{2}\,e_{2}} \\
. & b_{1} & \Heti{X_{1}\,e_{3}} \\
. & b_{2} & \Heti{X_{2}\,e_{3}} \\
. & b_{3} & \Heti{X_{3}\,e_{3}} \\
}
\quad
\Syl_{\delta+1} \ = \ 
\NorthEastBordermatrix{
\Veti{X_{1}\,e_{1}} & \Veti{X_{2}\,e_{1}} & \Veti{X_{3}\,e_{1}} & \Veti{X_{1}^{2}\,e_{2}} & \Veti{X_{1}X_{2}\,e_{2}} & \Veti{X_{1}X_{3}\,e_{2}} & \Veti{X_{2}^{2}\,e_{2}} & \Veti{X_{2}X_{3}\,e_{2}} & \Veti{X_{3}^{2}\,e_{2}} & \Veti{X_{1}\,e_{3}} & \Veti{X_{2}\,e_{3}} & \Veti{X_{3}\,e_{3}} & \\
a_{1} & . & . & b_{1} & . & . & . & . & . & c_{1} & . & . & \Heti{X_{1}^{3}} \\
a_{2} & a_{1} & . & b_{2} & b_{1} & . & . & . & . & c_{2} & c_{1} & . & \Heti{X_{1}^{2}X_{2}} \\
a_{3} & . & a_{1} & b_{3} & . & b_{1} & . & . & . & c_{3} & . & c_{1} & \Heti{X_{1}^{2}X_{3}} \\
a_{4} & a_{2} & . & . & b_{2} & . & b_{1} & . & . & c_{4} & c_{2} & . & \Heti{X_{1}X_{2}^{2}} \\
a_{5} & a_{3} & a_{2} & . & b_{3} & b_{2} & . & b_{1} & . & c_{5} & c_{3} & c_{2} & \Heti{X_{1}X_{2}X_{3}} \\
a_{6} & . & a_{3} & . & . & b_{3} & . & . & b_{1} & c_{6} & . & c_{3} & \Heti{X_{1}X_{3}^{2}} \\
. & a_{4} & . & . & . & . & b_{2} & . & . & . & c_{4} & . & \Heti{X_{2}^{3}} \\
. & a_{5} & a_{4} & . & . & . & b_{3} & b_{2} & . & . & c_{5} & c_{4} & \Heti{X_{2}^{2}X_{3}} \\
. & a_{6} & a_{5} & . & . & . & . & b_{3} & b_{2} & . & c_{6} & c_{5} & \Heti{X_{2}X_{3}^{2}} \\
. & . & a_{6} & . & . & . & . & . & b_{3} & . & . & c_{6} & \Heti{X_{3}^{3}} \\
}
$$

Que le lecteur se rassure: on ne va pas lui infliger les $\binom{a+b}{b} = 66$ quotients.
En voici cependant deux: le premier correspond à $J = \{2..11\}$, le second à
$J = \{3..12\}$
$$
\setlength{\arraycolsep}{0.5\arraycolsep}
\def\arraystretch{0.8}
{\left|
\begin{array}{*{10}{c}}
.& .& b_{1}& .& .& .& .& .& c_{1}& . \\ 
a_{1}& .& b_{2}& b_{1}& .& .& .& .& c_{2}& c_{1} \\ 
.& a_{1}& b_{3}& .& b_{1}& .& .& .& c_{3}& . \\ 
a_{2}& .& .& b_{2}& .& b_{1}& .& .& c_{4}& c_{2} \\ 
a_{3}& a_{2}& .& b_{3}& b_{2}& .& b_{1}& .& c_{5}& c_{3} \\ 
.& a_{3}& .& .& b_{3}& .& .& b_{1}& c_{6}& . \\ 
a_{4}& .& .& .& .& b_{2}& .& .& .& c_{4} \\ 
a_{5}& a_{4}& .& .& .& b_{3}& b_{2}& .& .& c_{5} \\ 
a_{6}& a_{5}& .& .& .& .& b_{3}& b_{2}& .& c_{6} \\ 
.& a_{6}& .& .& .& .& .& b_{3}& .& . \\ 
\end{array}
\right|
\over 
1 \times \left|
\begin{array}{*{2}{c}}
-b_{1}& . \\ 
.& b_{3} \\ 
\end{array}
\right|}
\ = \ 
{\left|
\begin{array}{*{10}{c}}
.& b_{1}& .& .& .& .& .& c_{1}& .& . \\ 
.& b_{2}& b_{1}& .& .& .& .& c_{2}& c_{1}& . \\ 
a_{1}& b_{3}& .& b_{1}& .& .& .& c_{3}& .& c_{1} \\ 
.& .& b_{2}& .& b_{1}& .& .& c_{4}& c_{2}& . \\ 
a_{2}& .& b_{3}& b_{2}& .& b_{1}& .& c_{5}& c_{3}& c_{2} \\ 
a_{3}& .& .& b_{3}& .& .& b_{1}& c_{6}& .& c_{3} \\ 
.& .& .& .& b_{2}& .& .& .& c_{4}& . \\ 
a_{4}& .& .& .& b_{3}& b_{2}& .& .& c_{5}& c_{4} \\ 
a_{5}& .& .& .& .& b_{3}& b_{2}& .& c_{6}& c_{5} \\ 
a_{6}& .& .& .& .& .& b_{3}& .& .& c_{6} \\ 
\end{array}
\right|
\over 
1 \times \left|
\begin{array}{*{2}{c}}
-b_{1}& . \\ 
-b_{2}& . \\ 
\end{array}
\right|}
$$
Ces deux quotients ne se laissent pas spécialiser en le jeu étalon
et ne permettent pas de détecter si le quotient commun vaut $\Res(\uP)$ ou $-\Res(\uP)$. 
En revanche, en sélectionnant les colonnes indexées par~$\Smac_{1,\delta+1}$, 
ce qui correspond à $J = \{1..12\} \setminus \{4,11\}$, alors 
on récupère (éventuellement à permutation près des colonnes)  
la matrice de $W_{1,\delta+1}$. On a 
$$
W_{1,\delta+1} 
\ =\ 
\EastBordermatrix{
a_{1} & . & . & . & . & c_{1} & . & . & . & . & \Heti{X_{1}^{3}} \\ 
a_{2} & a_{1} & . & b_{1} & . & c_{2} & . & . & . & . & \Heti{X_{1}^{2}X_{2}} \\ 
a_{3} & . & a_{1} & . & b_{1} & c_{3} & . & . & . & c_{1} & \Heti{X_{1}^{2}X_{3}} \\ 
a_{4} & a_{2} & . & b_{2} & . & c_{4} & b_{1} & . & . & . & \Heti{X_{1}X_{2}^{2}} \\ 
a_{5} & a_{3} & a_{2} & b_{3} & b_{2} & c_{5} & . & b_{1} & . & c_{2} & \Heti{X_{1}X_{2}X_{3}} \\ 
a_{6} & . & a_{3} & . & b_{3} & c_{6} & . & . & b_{1} & c_{3} & \Heti{X_{1}X_{3}^{2}} \\ 
. & a_{4} & . & . & . & . & b_{2} & . & . & . & \Heti{X_{2}^{3}} \\ 
. & a_{5} & a_{4} & . & . & . & b_{3} & b_{2} & . & c_{4} & \Heti{X_{2}^{2}X_{3}} \\ 
. & a_{6} & a_{5} & . & . & . & . & b_{3} & b_{2} & c_{5} & \Heti{X_{2}X_{3}^{2}} \\ 
. & . & a_{6} & . & . & . & . & . & b_{3} & c_{6} & \Heti{X_{3}^{3}} \\ 
}
\qquad
\qquad
W_{2,\delta+1}
\ = \ 
\EastBordermatrix{
a_{1} & . & \Heti{X_{1}^{2}X_{2}} \\ 
a_{6} & b_{2} & \Heti{X_{2}X_{3}^{2}} \\ 
}
$$

$$
B_{2,\delta+1}
\ = \ 
\EastBordermatrix{
a_{1} & -c_{1} & \Heti{X_{1}^{2}\,e_{2}} \\ 
. & b_{2} & \Heti{X_{2}\,e_{3}} \\ 
}
$$
Ici, on a $\Mmac_{1,\delta+1} \simeq \Jex_{2,\delta+1}$ ;
cela se devine sur les bases canoniques indiquées à droite des 
matrices $W_{2,\delta+1}$ et~$B_{2,\delta+1}$.
Elles ne sont pas égales, mais ont même déterminant.




\subsection{La décomposition de Macaulay du complexe $\rmK_\sbullet(\protect\uX^D)$ et l'endomorphisme $B_k(\protect\uP)$}
\label{DecompositionMacaulay}

Par définition même, le $\bfA$-module $\bfB_d$, où $\bfB =
\bfA[\uX]/\langle \uP \rangle$, est présenté par $\Syl_d(\uP)$,
composante homogène de degré $d$ de l'application de Sylvester.  Par
conséquent, un certain nombre d'invariants de ce $\bfA$-module sont
ancrés dans $\Syl_d(\uP)$, par exemple ses idéaux de Fitting, 
via les idéaux déterminantiels de $\Syl_d(\uP)$.  Lorsque
la suite $\uP$ est régulière, c'est \og encore mieux \fg{} car le
$\bfA$-module $\bfB_d$ est librement résoluble, résolu par la
composante homogène de degré $d$ du complexe de Koszul de $\uP$.
Cette résolution libre finie fournit des informations structurelles
supplémentaires sur $\bfB_d$, par exemple le fait qu'il soit de
MacRae de rang $r_{0,d} \overset{\rm def.}{=}\dim \bfA[\uX]_d/\langle X_1^{d_1},
\dots, X_n^{d_n}\rangle_d$.    

Ici, que $\uP$ soit régulière ou pas, il s'agit de faire intervenir
toutes les différentielles des
complexes~$\rmK_{\sbullet,d}(\uP)$ composantes homogènes du complexe
de Koszul $\rmK_\sbullet(\uP)$, complexe dont le terme en degré homologique~$k$
est $\rmK_k = \bigwedge^k\big(\bfA[\uX]^n\big)$.  L'objectif est d'une
part d'exhiber une décomposition monomiale du complexe de Koszul, qui ne
dépend que du format $D = (d_1, \dots, d_n)$ des degrés de $\uP$ et
d'autre part d'en donner les conséquences sur les différentielles, qui
elles sont attachées à~$\uP$.  Nous attirons encore une fois
l'attention sur le fait que certains objets dépendent uniquement du
format $D$, d'autres du système~$\uP$.  La décomposition du complexe
de Koszul, dite de Macaulay, uniquement définie par le jeu étalon
$\uX^D = (X_1^{d_1}, \dots, X_n^{d_n})$, peut être schématisée de la
manière suivante :
$$
\def \egal{\rotatebox{90}{=}}
\xymatrix @H=0pt @R=2pt @C=4pc {
\rmK_n & \rmK_{n-1} &  \cdots  & \rmK_2 & \rmK_1  & \rmK_0  \\
\egal & \egal & & \egal & \egal & \egal \\
\Mmac_n=0 & \Mmac_{n-1} &  \cdots  & \Mmac_2 & \Mmac_1  & \Mmac_0  \\
\oplus & \oplus & & \oplus & \oplus & \oplus \\
\Smac_n \ar@{--}[uur] & \Smac_{n-1} & \cdots & \Smac_2 \ar@{--}[uur]
& \Smac_1 \ar@{--}[uur] & \Smac_0 \\
}
$$
Quelle est la signification du trait en pointillés ? 
Tout ne peut pas être expliqué sur le champ. Mais on peut déjà 
dire que ce trait en pointillés 
\label{FenetreObservation}
a le rôle d'une \og fenêtre d'observation \fg{} sur la différentielle $\partial_k(\uP)$
$$
\partial_k(\uP) \ =\ 
\NorthEastBordermatrix{ 
\mbox{\footnotesize $\Mmac_k$}  & \mbox{\footnotesize $\Smac_k$} & \\
\alpha_k & \cercle{$\beta_k$} & \Heti{\Mmac_{k-1}} \\
\noalign{\vskip4pt}
\gamma_k & \delta_k & \Heti{\Smac_{k-1}} \\
}
\qquad\qquad
\beta_k(\uP) : \Smac_k \to \Mmac_{k-1}
$$
On verra en \ref{varphiIso} que pour $\uP = \uX^D$,
l'induit-projeté $\beta_k(\uX^D)$ est un isomorphisme (monomial) de
degré~$0$.  Comme on peut considérer la composante homogène de degré
$d$ pour n'importe quel $d$, la première conséquence immédiate est
l'égalité dimensionnelle $\dim \Smac_{k,d} = \dim \Mmac_{k-1,d}$ pour
$k \geqslant 1$.
Ainsi la fenêtre d'observation en degré $d$ est \textit{carrée} et, grâce à $\Mmac_{n,d} = 0$,
sa taille est le rang attendu $r_{k,d}$
$$
r_{k,d} \overset{\rm def.}{=}
\dim \rmK_{k,d} - \dim \rmK_{k+1,d} + \cdots + (-1)^{n-k} \dim \rmK_{n,d} =
\dim \Smac_{k,d} = \dim \Mmac_{k-1,d}
$$
Ainsi $\beta_{k,d}(\uP)$ permet la sélection d'un mineur de rang $r_{k,d}$ de la
différentielle $\partial_{k,d}(\uP)$. 
Mais ce n'est pas tout : grâce au fait que $\beta_{k,d}(\uX^D)$ est un
isomorphisme, on pourra donner un sens à $\det \beta_{k,d}(\uP)$ (qui
n'est défini qu'à un inversible près).  Précisément, on va considérer
l'\textit{endomorphisme} de $\Mmac_{k-1,d}$ défini par $B_{k,d}(\uP)
:= \beta_{k,d}(\uP) \circ \beta_{k,d}(\uX^D)^{-1}$ qui vaut l'identité
lorsque l'on spécialise $\uP$ en le jeu étalon~$\uX^D$.

Quand tout sera en place, on obtiendra un fait remarquable : certains
objets initialement définis à l'aide de tout le complexe $\rmK_{\sbullet,d}(\uP)$
et qui semblent nécessiter des mineurs ad hoc de \textit{toutes} les
différentielles, pourront en fait s'exprimer uniquement en termes de
la matrice de Sylvester $\Syl_d(\uP)$, c'est-à-dire uniquement à
partir de la \textit{première} différentielle $\partial_{1,d}(\uP)$,
et ceci grâce aux endomorphismes~$W_{h,d}(\uP)$.  C'est l'objet du
chapitre~\ref{ChapBW}, en particulier du
théorème~\ref{DetBkdProdBinomialDetWhd} dont voici un cas particulier:
$$
\det B_{2,d} = \det W_{2,d}\ \det W_{3,d}\ \cdots\ \det W_{n,d}
$$

\label {NOTA14-partialkP}%
%
%

\begin{rmq} \label{FenetreObservationPourSyl}
  
Avant les définitions formelles, on peut préciser ce que sont
$\Mmac_0, \Smac_0, \Mmac_1, \Smac_1$. Commençons par $k=0$: $\Mmac_0$
est tout simplement l'idéal monomial $\Jex_1 = \langle X_1^{d_1},
\dots, X_n^{d_n}\rangle$ et $\Smac_0$ son supplémentaire
monomial.  En ce qui concerne~$\Smac_1$, on peut le faire apparaître
via $\Smac_1 = \varphi(\Jex_1)$ où $\varphi : \Jex_1 \rightarrow
\rmK_1$ est l'injection monomiale définie par $X^\alpha \mapsto
(X^\alpha/X_i^{d_i})\,e_i$ avec $i = \minDiv(X^\alpha)$, cas
particulier de la définition ultérieure~\ref{varphiIso}.  Enfin
$\Mmac_1$ est le sous-$\bfA$-module de $\rmK_1$ de base les monômes
extérieurs n'appartenant pas à $\Smac_1$; ce qui n'est pas banal,
c'est que $\Mmac_1$ est un sous-$\bfA[\uX]$-module de type fini comme
on le verra plus loin.

La \og fenêtre d'observation\fg{} $\Mmac_0 \leftrightarrow \Smac_1$
peut être utilisée pour donner une définition de l'endomorphisme
$W_\calM(\uP)$, où $\calM$ est un sous-module monomial de $\Jex_1$,
uniquement à partir de l'application de Sylvester et de l'injection
monomiale $\varphi$ restreinte à $\calM$:
$$
W_\calM(\uP) = \pi_{\calM} \circ \Syl(\uP) \circ \varphi_{|\calM}
$$
\end{rmq}

\subsubsection{$\bullet$ 
Le sous-$\bfA[\protect\uX]$-module $\Mmac_k \subset \rmK_k$ attaché au format $D$ et
      son $\bfA$-supplémentaire monomial $\Smac_k$}

Nous présentons la décomposition monomiale de l'algèbre extérieure
$\bigwedge\big(\bfA[\uX]^n\big)$ en tant que $\bfA$-module, due à
Macaulay, et qui figure chez~\cite{Demazure1}.  La qualification \og monomiale\fg{} prend
sa source dans la présence de la $\bfA$-base monomiale de $\bigwedge\big(\bfA[\uX]^n\big)$ constituée des
monômes extérieurs $X^\beta\,e_J$. Cette décomposition,
$\rmK_k = \Mmac_k\oplus\Smac_k$, a deux vertus.  Tout d'abord, lorsque
$k=0$, elle généralise à l'algèbre extérieure la décomposition monomiale suivante
de $\rmK_0 = \bfA[\uX]$:
$$
\bfA[\uX] \ = \  \Mmac_0\oplus\Smac_0 \ = \ 
\Jex_1
\ \oplus \, 
\bigoplus_{\alpha \preccurlyeq \emouton} \! \bfA X^\alpha
$$
décomposition qui partage les monômes en deux catégories selon que l'ensemble 
de $D$-divisibilité du monôme est non vide ou pas.
De plus, cette décomposition possède la propriété suivante:
la première fenêtre d'observation, indexée par $\Smac_1$ et $\Mmac_0 = \Jex_1$ 
(donc indexée par~$\Mmac_0 = \Jex_1$ après composition avec l'isomorphisme~$\varphi$
de la remarque précédente), s'identifie à l'endomorphisme~$W_1$ de $\Jex_1$.

Cette décomposition est pilotée par le mécanisme de sélection $\minDiv
: \{\text{monômes}(\bfA[\uX])\} \rightarrow \bbN$ attaché au format
$D=(d_1, \dots, d_n)$.
Nous en verrons ultérieurement une version
tordue par $\sigma \in \fS_n$, pilotée par $\sminDiv$.
On rappelle la convention utilisée dans la définition~\ref{NOTA05-minDiv} de $\minDiv$,
à savoir que $\min J$, pour une partie $J \subset \{1..n\}$, est toujours
défini même si $J$ est vide: $\min\emptyset = n+1$. En particulier,
$\minDiv(X^\beta) = n+1$ lorsque $\DivSeq(X^\beta) = \emptyset$.

\medskip

Dans la définition ci-dessous, le prédicat $i < J$, pour $1 \leqslant i\le
n$ et $J \subset \{1..n\}$, a la signification \og $i$ est strictement
plus petit que tout élément de $J$\fg. Il est donc vérifié si $J =
\emptyset$  et avec la convention ci-dessus le prédicat en question
peut être écrit $i < \min J$.

\index{de@décomposition de Macaulay de $\rmK_\sbullet(\uP)$}%

\begin{defns} [Sous-module de Macaulay $\Mmac_k$ et son supplémentaire monomial $\Smac_k$]
\label{DefDecompositionMacaulay}
\leavevmode
  
\begin {enumerate} [\rm i)]
\item
On note $\Mmac_k$ le sous-$\bfA$-module de $\rmK_k = \bigwedge^k(\bfA[\uX]^n)$ 
de base les~$X^\beta e_J$ vérifiant $X^\beta \in \langle X_i^{d_i} \mid \ i < J \rangle$ et
$\#J = k$.
En tenant compte des conventions rappelées avant la définition,
la contrainte sur $X^\beta$ peut être écrite $\minDiv(X^\beta) < \min J$,
cette inégalité stricte impliquant $X^\beta \in \Jex_1$.

Ce sous-module $\Mmac_k$ est un sous-$\bfA[\uX]$-module de $\rmK_k$, nommé
sous-module de Macaulay de $\rmK_k$. C'est le sous-$\bfA[\uX]$-module de $\rmK_k$
engendré par les $X_i^{d_i}\,e_J$  avec $i < J$ et $\#J = k$.

\item
On désigne par $\Smac_k$ le supplémentaire monomial de $\Mmac_k$ dans $\rmK_k$. 
C'est donc le sous-$\bfA$-module de $\rmK_k$ de base les monômes extérieurs
non dans $\Mmac_k$.

\item
En résumé, sans recourir à $\minDiv$:
$$
\textstyle
\bigwedge^k(\bfA[\uX]^n) \ = \ \Mmac_k \oplus \Smac_k
\qquad 
\text{avec }
\qquad 
X^\beta e_J \ \in \ 
\left\{
\begin{array}{ll}
\Mmac_k & \text{si $\exists\, i < \min J$ tel que $\beta_i \geqslant d_i$}\\ [0.3em]
\Smac_k & \text{si $\forall \, i < \min J$, $\beta_i < d_i$}
\end{array}
\right.
$$
Pour $k=0$, on retrouve la décomposition 
$\bfA[\uX] =\Mmac_0 \oplus \Smac_0$
avec $\Mmac_0 = \Jex_1 = \langle X_1^{d_1}, \dots, X_n^{d_n}\rangle$ 
et $\Smac_0 = \bigoplus_{\alpha \preccurlyeq \emouton} \bfA X^\alpha$.
Pour $k=n$, on a $J = \{1..n\}$, à fortiori $\Mmac_n = 0$.
\end {enumerate}
\end{defns}

\label {NOTA14-MmacSmac}%

\bigskip
Voilà une propriété concernant la différentielle du complexe de Koszul $\rmK_\sbullet$ de $\uP$,
propriété qui n'a pas été exploitée par les auteurs.

\begin{prop}\label{CuriositeMcomplexe}
On a $\partial(\uP)(\Mmac_k) \subset \Mmac_{k-1}$. 
Ainsi $\Mmac_\sbullet$ est 
un sous-complexe du complexe~$\rmK_\sbullet(\uP)$.
\end{prop}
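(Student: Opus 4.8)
The plan is to exploit that the Koszul differential $\partial(\uP) = \sbullet \intd \big(\sum_i P_i\,e_i^\star\big)$ is $\bfA[\uX]$-linear, being interior product by a fixed linear form, together with the fact recalled in Definition~\ref{DefDecompositionMacaulay} that $\Mmac_k$ is the sub-$\bfA[\uX]$-module of $\rmK_k$ generated by the elements $X_i^{d_i}\,e_J$ with $i < \min J$ and $\#J = k$. Since $\partial(\uP)$ is $\bfA[\uX]$-linear and $\Mmac_{k-1}$ is itself a sub-$\bfA[\uX]$-module, it suffices to check the inclusion $\partial(\uP)(X_i^{d_i}\,e_J) \in \Mmac_{k-1}$ on each such generator.

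Fix $i < \min J$ with $\#J = k$ and write out the differential:
$$
\partial(\uP)(X_i^{d_i}\,e_J) \ =\ \sum_{j \in J} (-1)^{\varepsilon_j(J)}\, P_j\, X_i^{d_i}\, e_{J\setminus j}
$$
First I would show that each summand $P_j\,X_i^{d_i}\,e_{J\setminus j}$ lies in $\Mmac_{k-1}$. Put $J' = J \setminus j$, so that $\#J' = k-1$ and, since removing an element can only raise the minimum, $\min J' \geqslant \min J > i$. Decomposing $P_j$ on the monomial basis, every monomial term of $P_j\,X_i^{d_i}\,e_{J'}$ has the shape $X^\gamma X_i^{d_i}\,e_{J'}$, whose monomial is divisible by $X_i^{d_i}$; hence $\minDiv(X^\gamma X_i^{d_i}) \leqslant i < \min J'$, which is exactly the membership criterion of Definition~\ref{DefDecompositionMacaulay} placing $X^\gamma X_i^{d_i}\,e_{J'}$ in $\Mmac_{k-1}$. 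Summing over the monomials of $P_j$ and then over $j \in J$ yields $\partial(\uP)(X_i^{d_i}\,e_J) \in \Mmac_{k-1}$, and the stability $\partial(\uP)(\Mmac_k) \subset \Mmac_{k-1}$ follows.

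There is no genuine obstacle here: the only point requiring care is the elementary monotonicity $\min(J\setminus j) \geqslant \min J$, which guarantees that the divisibility index $i$ remains strictly below the minimum of the shortened support $J'$. One can equally run the argument directly on a monomial basis element $X^\beta e_J \in \Mmac_k$ without passing through generators: with $i_0 = \minDiv(X^\beta) < \min J$ one has $\beta_{i_0} \geqslant d_{i_0}$, so for every $j \in J$ and every monomial $X^\gamma$ of $P_j$ the exponent of $X_{i_0}$ in $X^\gamma X^\beta$ is still $\geqslant d_{i_0}$, whence $\minDiv(X^\gamma X^\beta) \leqslant i_0 < \min(J\setminus j)$ and $X^\gamma X^\beta e_{J\setminus j} \in \Mmac_{k-1}$. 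The generator-based formulation is preferable, as it makes transparent that the assertion is really a statement about the $\bfA[\uX]$-module structure of the Macaulay filtration rather than about $\uP$ itself.
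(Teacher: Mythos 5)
Your proof is correct and follows essentially the same route as the paper's: both rest on the elementary monotonicity $\min(J\setminus j)\geqslant \min J$ combined with the $\bfA[\uX]$-module structure of the Macaulay submodules, and your closing remark (running the argument directly on monomial basis elements $X^\beta e_J$) is precisely the paper's own proof. The reduction to the generators $X_i^{d_i}e_J$ via the $\bfA[\uX]$-linearity of $\partial(\uP)$ is only a minor repackaging of the same verification.
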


\begin{proof}
Soit $X^\alpha e_I \in \Mmac_k$. 
Pour $i \in I$, on a l'implication 
évidente~\mbox{$j < \min I \Rightarrow j < \min (I \setminus i)$.}
Par conséquent,
on obtient $X^\alpha e_{I \setminus i} \in \Mmac_{k-1}$. 
Comme $\Mmac_{k-1}$ est un $\bfA[\uX]$-module, on a 
$P_i X^\alpha  e_{I \setminus i} \in \Mmac_{k-1}$, d'où le résultat puisque 
$\partial(\uP)(X^\alpha e_I)$ est une somme de $\pm P_i X^\alpha e_{I \setminus i}$ pour $i \in I$.
\end{proof}

\subsubsection{$\bullet$ L'induit-projeté $\beta_k(\protect\uP) : \Smac_k \rightarrow \Mmac_{k-1}$
  et l'isomorphisme monomial  $\varphi : \Mmac_{k-1} \rightarrow \Smac_k$ pour $k \geqslant 1$}

\begin{defn} \label{DefBetak}
On désigne par $\beta_k(\uP) : \Smac_k \rightarrow \Mmac_{k-1}$ 
l'application $\bfA$-linéaire 
qui est l'induit-projeté de $\partial_k(\uP)$, 
\idest{} $\beta_k = \pi_{\Mmac_{k-1}} \circ \partial_k(\uP) \circ \iota_{\Smac_k}$.
\end{defn}

\begin{prop}\label{varphiIso}
Pour le jeu étalon $\uX^D$, l'application $\beta_k(\uX^D) : \Smac_k \rightarrow \Mmac_{k-1}$ réalise
$X^\alpha e_I  \, \mapsto\,  X^\alpha X_i^{d_i} e_{I \setminus i}$ 
où $i = \min I$. 
C'est un isomorphisme de $\bfA$-modules gradué de degré $0$ dont l'inverse, noté $\varphi$,
est donné par 
$$
\boxed{
\varphi : 
\begin{array}[t]{rcl}
\Mmac_{k-1} & \longrightarrow & \Smac_k \\ [0.3em]
X^\beta e_J & \longmapsto & \dfrac{X^\beta}{X_i^{d_i}}\, e_i\wedge e_J \quad 
\text{où $i = \minDiv(X^\beta)$}
\end{array}
}
$$
\end{prop}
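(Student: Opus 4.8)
We must show that for the standard game $\uX^D$, the induced-projected map $\beta_k(\uX^D):\Smac_k\to\Mmac_{k-1}$ sends $X^\alpha e_I\mapsto X^\alpha X_i^{d_i}e_{I\setminus i}$ with $i=\min I$, that this is a graded isomorphism of degree $0$, and that its inverse is the monomial map $\varphi$ described in the box. Let me think about how to set this up.

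The plan is to evaluate $\beta_k(\uX^D)=\pi_{\Mmac_{k-1}}\circ\partial_k(\uX^D)\circ\iota_{\Smac_k}$ directly on each monomial basis vector $X^\alpha e_I$ of $\Smac_k$ and to sort, term by term, the summands of $\partial_k(\uX^D)(X^\alpha e_I)$ according to whether they survive the monomial projection onto $\Mmac_{k-1}$. With the descending Koszul differential specialized at $\uP=\uX^D$ this reads
$$
\partial_k(\uX^D)(X^\alpha e_I) = \sum_{i \in I} (-1)^{\varepsilon_i(I)}\, X_i^{d_i} X^\alpha\, e_{I \setminus i}.
$$
The crux is that exactly one summand lands in $\Mmac_{k-1}$ while all the others remain in $\Smac_{k-1}$ and are therefore annihilated by $\pi_{\Mmac_{k-1}}$.

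First I would record the defining constraint of $\Smac_k$ (Definition~\ref{DefDecompositionMacaulay}): since $X^\alpha e_I \in \Smac_k$, one has $\alpha_\ell < d_\ell$ for every $\ell < \min I$. Write $m = \min I$ and, for each $i \in I$, set $X^\beta = X_i^{d_i} X^\alpha$ and $J = I \setminus i$. For the term $i=m$ one has $\min J > m$ and $X_m^{d_m} \mid X^\beta$, whence $\minDiv(X^\beta) \le m < \min J$; this places $X^\beta e_J$ in $\Mmac_{k-1}$. For any term with $i > m$ one has $\min J = m$ (because $m$ stays in $J$), and for every $\ell < m$ the exponent $\beta_\ell = \alpha_\ell < d_\ell$ (as $\ell \ne i$); hence $\minDiv(X^\beta) \ge m = \min J$, so $X^\beta e_J \in \Smac_{k-1}$ and dies under the projection. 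Thus only the $i=m$ term contributes, and since $m=\min I$ no element of $I$ lies strictly below $m$, so $\varepsilon_m(I)=0$ and the sign is $+1$. This gives
$$
\beta_k(\uX^D)(X^\alpha e_I) = X_i^{d_i} X^\alpha\, e_{I \setminus i}, \qquad i = \min I,
$$
as asserted, and a one-line degree count (with $\deg e_i = d_i$) shows the image has the same degree as the source, i.e. the map is homogeneous of degree $0$.

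It then remains to check that $\varphi$ is a well-defined two-sided inverse. I would first verify that $\varphi$ lands in $\Smac_k$: for $X^\beta e_J \in \Mmac_{k-1}$ with $i = \minDiv(X^\beta) < \min J$, the quotient $X^\gamma := X^\beta / X_i^{d_i}$ is a genuine monomial with $\gamma_\ell = \beta_\ell < d_\ell$ for all $\ell < i$ (by minimality of $\minDiv$), while $e_i \wedge e_J = e_{\{i\}\cup J}$ has minimum index $i$; hence $X^\gamma e_{\{i\}\cup J}\in\Smac_k$. The two composites are then checked on basis monomials: starting from $X^\alpha e_I\in\Smac_k$ with $m=\min I$, the monomial $X_m^{d_m}X^\alpha$ has $\minDiv$ equal to $m$ (its exponents below $m$ are those of $\alpha$, all $<d_\ell$, whereas its $m$-th exponent is $\ge d_m$), so $\varphi$ returns $X^\alpha\, e_m\wedge e_{I\setminus m}=X^\alpha e_I$; the reverse composite $\beta_k(\uX^D)\circ\varphi$ is the identity by the same bookkeeping run backwards. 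Being mutually inverse bijections of the monomial bases, $\beta_k(\uX^D)$ and $\varphi$ are inverse isomorphisms of $\bfA$-modules.

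The only delicate point, and the one I would write out most carefully, is the term-by-term sorting in the projection step: the whole argument hinges on the interplay between the membership predicate $\minDiv(X^\beta) < \min J$ defining $\Mmac_{k-1}$ and the inequalities $\alpha_\ell < d_\ell$ ($\ell < \min I$) defining $\Smac_k$, together with the elementary remark that multiplying by $X_i^{d_i}$ with $i>\min I$ cannot lower the divisibility index below $\min I$. Once this dichotomy is in place, the signs, the degree count, and the two composition identities are all routine.
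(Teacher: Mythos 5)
Your proof is correct and follows essentially the same route as the paper's: you expand $\partial_k(\uX^D)$ on a monomial $X^\alpha e_I$, show via the dichotomy $i=\min I$ versus $i>\min I$ that exactly the term $i=\min I$ survives the projection onto $\Mmac_{k-1}$, and then check that $\varphi$ and $\beta_k(\uX^D)$ are mutually inverse using the same two elementary implications on $\minDiv$ and $\min$. Your additional remarks (the sign $\varepsilon_{\min I}(I)=0$, the degree count, and the verification that $\varphi$ indeed lands in $\Smac_k$) only make explicit what the paper leaves implicit.
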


\label {NOTA14-betak}%
\label {NOTA14-varphi}%

\begin{proof}
Justifions tout d'abord l'expression de $\beta_k(\uX^D)$. 
En notant $\varepsilon_i(I)$ le nombre d'éléments de $I$ strictement inférieurs à $i$,
rappelons que
$$
\partial(\uX^D) : \quad 
X^\alpha e_I \ \longmapsto \ 
\sum_{i \in I} (-1)^{\varepsilon_i(I)} X^\alpha X_i^{d_i} e_{I \setminus i}
$$
Supposons $X^\alpha e_I \in \Smac_k$.
La projection sur $\Mmac_{k-1}$ de la somme ci-dessus passe par la détermination
des $i \in I$ tels que 
$X^\alpha X_i^{d_i} e_{I \setminus i} \in \Mmac_{k-1}$. 
En posant $X^\beta = X^\alpha X_i^{d_i}$, 
il s'agit de décider à quelle condition sur $i \in I$, le monôme
extérieur $X^\beta e_{I\setminus i}$ est dans $\Mmac_{k-1}$. 
On va voir que c'est le cas si et seulement si $i = \min I$.

On distingue deux cas.  Premier cas : $i = \min I$. Alors $X^\beta
e_{I\setminus i}$ est dans $\Mmac_{k-1}$ car on dispose de $i < \min(I
\setminus i)$ vérifiant $\beta_i \geqslant d_i$ (et ceci quel que soit
le statut de $X^\alpha e_I$, dans $\Smac_k$ ou pas !).  Deuxième cas :
$i > \min I$.  Comme $X^\alpha e_I$ est dans $\Smac_k$ (cette fois,
cette information est utilisée), pour tout $j < \min I$, on a
$\alpha_j < d_j$. Or, vu que $\min I < i$, on a $\min(I\setminus i)
= \min I$, donc pour tout $j < \min(I\setminus i)$, on a $\beta_j <
d_j$ ce qui traduit $X^\beta e_{I\setminus i} \in \Smac_{k-1}$.

Par conséquent :
$$
\beta_k(\uX^D) :
\begin{array}[t]{rcl}
\Smac_k & \longrightarrow & \Mmac_{k-1} \\ [0.3em]
X^\alpha e_I & \longmapsto & X^\alpha X_i^{d_i} e_{I \setminus i}
\quad \text{où $i = \min I$}
\end{array}
$$
Reste à vérifier que $\varphi \circ \beta_k(\uX^D) = \id_{\Smac_k}$ et
$\beta_k(\uX^D) \circ \varphi = \id_{\Mmac_{k-1}}$. 
Cela repose sur les deux implications :
$$
\minDiv(X^\alpha) \geqslant i \ \Rightarrow \ 
\minDiv\big(X^\alpha X_i^{d_i}\big) = i
\qquad \qquad \text{et} \qquad \qquad 
i < \min J \ \Rightarrow \ 
\min (i \vee J) = i
$$
\end{proof}

Pour résumer la situation, voici deux petits schémas
(la présence du $0$ dans la matrice de $\partial_k(\uP)$ tient compte de~\ref{CuriositeMcomplexe} 
mais cette information ne sera pas utilisée dans la suite) :

\begin{minipage}[t]{0.4\textwidth}
$$
\partial_k(\uP)  \ =\ 
\NorthEastBordermatrix{ 
\mbox{\footnotesize $\Mmac_k$}  & \mbox{\footnotesize $\Smac_k$} & \\
\star & \cercle{$\beta_k$} & \Heti{\Mmac_{k-1}} \\
0 & \star & \Heti{\Smac_{k-1}} \\
}
$$
\end{minipage} \qquad
\begin{minipage}[t]{0.4\textwidth}
$$
\xymatrix @M=0.2pc @R=0.2pc @C=5pc{
\Mmac_k & \Mmac_{k-1}  \ar@<0.5ex>@{-->}[ddl]^{\varphi} \\
\oplus & \oplus \\
\Smac_k \ar@<0.5ex>[ruu]^-{\beta_k(\uP)} & \Smac_{k-1} \\
}
$$
\end{minipage}

\subsubsection{$\bullet$ L'endomorphisme $B_k = B_k(\protect\uP)$ de $\Mmac_{k-1}$ pour $k \geqslant 1$}

Dans la définition qui vient, l'endomorphisme $B_k$ du $\bfA[\uX]$-module $\Mmac_{k-1}$
est un $\bfA$-endomorphisme  (car $\varphi$ est seulement $\bfA$-linéaire).

\begin{defn}\label{EndoBk}
L'endomorphisme $B_k(\uP)$ du $\bfA$-module $\Mmac_{k-1}$
est celui donné par la formule $B_k(\uP) = \beta_k(\uP) \circ \varphi \overset {\rm def.}{=}
\pi_{\Mmac_{k-1}} \circ \partial_k(\uP) \circ \varphi$.
\end{defn}

\label {NOTA14-Bk}%
\index{B@les endomorphismes!$B_k(\uP)$ du $\bfA$-module de Macaulay $\Mmac_{k-1}$}%

\smallskip

Insistons sur le décalage d'une unité $k \leftrightarrow k-1$:
$B_k(\uP) \overset{\rm def.}{=} \beta_k(\uP) \circ\beta_k(\uX^D)^{-1}$
opère sur $\Mmac_{k-1}$. Ceci est voulu de manière à ce que pour $k =
1$, $B_1(\uP)$ opère sur $\Mmac_0$, en cohérence avec $B_1(\uP) =
W_1(\uP)$ et $\Mmac_0 = \Jex_1$.  Voir également l'exemple suivant
où nous montrons que $B_n(\uP)$ s'identifie à $W_n(\uP)$.
Ceci nous a conduit à rejeter l'autre choix $\beta_k(\uX^D)^{-1} \circ \beta_k(\uP)$ opérant
sur~$\Smac_k$.

\begin {exemple} [$B_n(\uP)$ s'identifie à $W_n(\uP)$]
\label{BnEgalWn}

L'idéal monomial $\Jex_n$ et le sous-module de Macaulay $\Mmac_{n-1}$ de $\rmK_{n-1}$
sont deux $\bfA[X]$-modules libres de rang~1:
$$
\Jex_n = \bfA[\uX]\,X^D \overset{\rm def.}{=} \bfA[\uX]\,X_1^{d_1} \cdots X_n^{d_n},
\qquad\qquad
\Mmac_{n-1} = \bfA[\uX]\, X_1^{d_1}\, e_2\wedge\cdots\wedge e_n
$$ 
L'égalité de droite résulte du fait que $\Mmac_{n-1}$ est le $\bfA[X]$-module engendré par les $X_i^{d_i} e_I$
avec $\#I = n-1$ et $i < I$: la seule possibilité est $i = 1$ et $I = \{2,\dots,n\}$.

L'isomorphisme $\Mmac_{n-1} \to \Jex_n$ de $\bfA[\uX]$-modules qui transforme
$X_1^{d_1}\, e_2\wedge\cdots\wedge e_n$ en $X^D$ peut-être décrit comme la
restriction du pont homologique $\psi$ vers le degré $0$, ce dernier étant
$$
\psi : \rmK(X^D) \to \bfA[\uX], \qquad X^\beta e_J \to X^\beta X^{D(J)}
\qquad \text{où} \quad X^{D(J)} = \prod_{j \in J} X_j^{d_j}
$$
Cette application $\psi$ est un morphisme gradué de $\bfA[\uX]$-modules dont
nous reparlerons dans le chapitre~\ref{ChapBW}.  Montrons que sa restriction à
$\Mmac_{n-1}$, isomorphisme de $\Mmac_{n-1}$ sur $\Jex_n$,
rend commutatif le diagramme:
$$
\xymatrix {
\Mmac_{n-1}\ar[d]_{B_n(\uP)} \ar[r]^{\psi}_{\simeq} &\Jex_n \ar[d]^{W_n(\uP)}\\
\Mmac_{n-1} \ar[r]^{\psi}_{\simeq} & \Jex_n \\
}
$$  
Pour $y = X^\alpha\, e_2\wedge\cdots\wedge e_n \in \Mmac_{n-1}$ donc
$\alpha_1 \geqslant d_1$, il s'agit de voir (en omettant $\uP$ des notations) que:
$$
(\psi \circ B_n)(y) = (W_n \circ \psi)(y)
$$
A gauche, on utilise $B_n = \pi_{\Mmac_{n-1}} \circ \partial_n(\uP) \circ \varphi$ avec
$$
\varphi(y) = z := \frac{X^\alpha}{X_1^{d_1}}\,e_1\wedge e_2\wedge\cdots\wedge e_n,
\qquad\qquad
B_n(y) = \big(\pi_{\Mmac_{n-1}} \circ \partial_n(\uP)\big) (z) =
\frac{X^\alpha}{X_1^{d_1}}\,P_1 \wedge e_2 \wedge \cdots \wedge e_n
$$
D'où:
$$
(\psi \circ B_n)(y) = \frac{X^\alpha}{X_1^{d_1}}\,P_1 X_2^{d_2} \cdots X_n^{d_n}
$$
A droite, en  utilisant $\alpha_1 \geqslant d_1$:
$$
\psi(y) = X^\alpha\, X_2^{d_2} \cdots X_n^{d_n}
\qquad\qquad
(W_n \circ \psi)(y) = \frac{X^\alpha}{X_1^{d_1}}\,P_1 X_2^{d_2} \cdots X_n^{d_n}
$$
On a bien l'égalité $(\psi \circ B_n)(y) = (W_n \circ \psi)(y)$.

\smallskip
Puisque $B_n(\uP)$ et $W_n(\uP)$ sont conjugués par un isomorphisme gradué de degré $0$,
on en déduit que $B_{n,d}(\uP)$ et $W_{n,d}(\uP)$ sont conjugués, à fortiori
$\det B_{n,d}(\uP) = \det W_{n,d}(\uP)$ pour tout $d$.
\end {exemple}

\begin{prop}\label{ExpressionBkXbetaeJ}
L'image de $X^\beta e_J \in \Mmac_{k-1}$ par l'endomorphisme $B_k(\uP)$ est donnée par :
$$
\big(B_k(\uP)\big)(X^\beta e_J)
\ = \ 
\pi_{\Mmac_{k-1}} 
\Bigg( 
\dfrac{X^\beta}{X_i^{d_i}}\  P_i \ e_J 
\quad + \quad 
\sum_{j \in J} 
\pm\, \dfrac{X^\beta}{X_i^{d_i}} \ 
P_j \ e_{i \vee J \setminus j}
\Bigg)
\qquad 
\text{où $i = \minDiv(X^\beta)$}
$$
\end{prop}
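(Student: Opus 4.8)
The plan is to unwind the definition $B_k(\uP) = \pi_{\Mmac_{k-1}} \circ \partial_k(\uP) \circ \varphi$ (cf.~\ref{EndoBk}) and to evaluate it directly on the exterior monomial $X^\beta e_J \in \Mmac_{k-1}$; the only subtlety is the bookkeeping of signs.

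First I would record what the membership $X^\beta e_J \in \Mmac_{k-1}$ provides. By definition of $\Mmac_{k-1}$ it forces $i := \minDiv(X^\beta) < \min J$. This is precisely what makes $\varphi$ applicable with this index $i$ and, more importantly, it places $i$ as the smallest element of $I := i \vee J$. Writing $X^\alpha := X^\beta / X_i^{d_i}$, the formula for $\varphi$ (Proposition~\ref{varphiIso}) then reads $\varphi(X^\beta e_J) = X^\alpha\, e_i \wedge e_J = X^\alpha\, e_I$, with no intervening sign since $i < \min J$; moreover $\varepsilon_i(I) = 0$.

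Next I would apply the Koszul differential of $\uP$ to the monomial $X^\alpha e_I$, using its explicit description $\partial_k(\uP)(X^\alpha e_I) = \sum_{j \in I} (-1)^{\varepsilon_j(I)} P_j X^\alpha\, e_{I \setminus j}$, which is the same expression as for $\uX^D$ with each $X_j^{d_j}$ replaced by $P_j$. Splitting the sum according to $j = i$ versus $j \in J = I \setminus i$, the term $j = i$ has sign $(-1)^{\varepsilon_i(I)} = +1$ and satisfies $e_{I \setminus i} = e_J$, hence contributes $\frac{X^\beta}{X_i^{d_i}} P_i\, e_J$; each term $j \in J$ contributes $(-1)^{\varepsilon_j(I)} \frac{X^\beta}{X_i^{d_i}} P_j\, e_{i \vee J \setminus j}$, the extended sign $\varepsilon_j(i \vee J)$ being the $\pm$ of the statement.

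Finally I would apply $\pi_{\Mmac_{k-1}}$ to the whole expression; since the projection is $\bfA$-linear, it simply carries both groups of terms inside the bracket, yielding verbatim the claimed identity. There is no genuine obstacle here: the proof is a one-line composition of the three maps, and the single point demanding care is checking, via $i < \min J$, that $i = \min(i \vee J)$, which both removes the sign in $e_i \wedge e_J = e_{i \vee J}$ and makes the leading coefficient $P_i$ appear without sign.
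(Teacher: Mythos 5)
Your proposal is correct and follows exactly the paper's own proof: unwind $B_k(\uP) = \pi_{\Mmac_{k-1}} \circ \partial_k(\uP) \circ \varphi$, use $i = \minDiv(X^\beta) < \min J$ (which comes from $X^\beta e_J \in \Mmac_{k-1}$) to write $\varphi(X^\beta e_J) = \frac{X^\beta}{X_i^{d_i}}\, e_i \wedge e_J$ without sign, then expand the Koszul differential and project. The paper states this in three lines; your version merely makes the sign bookkeeping ($\varepsilon_i(i \vee J) = 0$, the $\pm$ being $(-1)^{\varepsilon_j(i\vee J)}$) explicit, which is a faithful elaboration rather than a different route.
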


\begin{proof}
Par définition de $X^\beta e_J \in \Mmac_{k-1}$, on a $i < \min J$. 
L'endomorphisme $B_k$ est égal à $\pi_{\Mmac_{k-1}} \circ \partial_k \circ \varphi$ où 
$\varphi : \Mmac_{k-1} \rightarrow \Smac_k$ est l'isomorphisme monomial de référence donné par
$\varphi(X^\beta e_J) = \frac{X^\beta}{X_i^{d_i}} e_i \wedge e_J$.
D'où le résultat par définition de la différentielle $\partial_k$ du complexe de Koszul de $\uP$
et du fait que $i < \min J$.
\end{proof}

\medskip

La proposition suivante, analogue à la proposition~\ref{ProprieteWh},
ne présente pas de difficulté.

\index{B@les endomorphismes!$B_{k,d}(\uP)$ de $\Mmac_{k-1,d}$}%

\begin{prop} \label{BkPropriete}
\leavevmode
\begin{enumerate}[\rm i)]
\item
Pour le jeu étalon $\uX^D$, on a $B_k(\uX^D) = \Id$ de sorte que $\det B_{k,d}(\uX^D) = 1$.

\item
L'endomorphisme $B_1(\uP)$ de $\Mmac_0$ coïncide avec l'endomorphisme $W_1(\uP)$ de $\Jex_1 = \Mmac_0$.

\item
Si $\uP$ est le jeu générique, le déterminant $\det B_{k,d}(\uP)$ est primitif par valeur, à
fortiori régulier.

\item 
Pour deux jeux $\uP$ et $\uQ$ de même format $D$, on a $B_k(\uP + \uQ) = B_k(\uP) + B_k(\uQ)$.

\item
En particulier, pour une indéterminée $t$, on~a $B_k(\uP + t\uX^D) = B_k(\uP) + t \,\Id$.
Chaque $\det B_{k,d}(\uP + t\uX^D)$ est un polynôme unitaire en $t$, de
degré égal à $\dim \Smac_{k,d}$, a fortiori un élément régulier de $\bfA[t]$.
\end{enumerate}
\end{prop}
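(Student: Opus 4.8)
The plan is to dispatch the five points in an order governed by their logical dependencies rather than by their numbering, the one genuinely structural fact being the $\uP$-linearity of point iv), from which almost everything else falls out. First I would isolate this linearity. In the expression $B_k(\uP) = \pi_{\Mmac_{k-1}} \circ \partial_k(\uP) \circ \varphi$ the two outer maps depend on the format $D$ only: the projection $\pi_{\Mmac_{k-1}}$ onto the monomial submodule and the monomial isomorphism $\varphi = \beta_k(\uX^D)^{-1}$ of Proposition~\ref{varphiIso} involve no $\uP$. Only the Koszul differential carries the $\uP$-dependence, and it is $\bbZ$-linear in the tuple $\uP=(P_1,\dots,P_n)$ since $\partial_k(\uP)(X^\alpha e_I) = \sum_{i\in I}(-1)^{\varepsilon_i(I)}\,P_i\,X^\alpha e_{I\setminus i}$. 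For $\uP,\uQ$ of the same format $D$ the sum $\uP+\uQ$ is again of format $D$ and $\partial_k(\uP+\uQ)=\partial_k(\uP)+\partial_k(\uQ)$, whence $B_k(\uP+\uQ)=B_k(\uP)+B_k(\uQ)$, which is point iv). The same computation gives the homogeneity $B_k(t\uX^D)=t\,B_k(\uX^D)$ over $\bfA[t]$.

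Point i) is then immediate from the very definition $\varphi=\beta_k(\uX^D)^{-1}$: one has $B_k(\uX^D)=\beta_k(\uX^D)\circ\varphi=\beta_k(\uX^D)\circ\beta_k(\uX^D)^{-1}=\Id$, so $\det B_{k,d}(\uX^D)=1$. For point ii) I would, in degree $d$, identify the first Koszul differential $\partial_{1,d}(\uP)$ with the Sylvester map $\Syl_d(\uP)$ and observe that for $k=1$ (so $J=\emptyset$) the isomorphism $\varphi$ restricts to the monomial injection $\Jex_1\to\Smac_1$, $X^\alpha\mapsto (X^\alpha/X_i^{d_i})\,e_i$ with $i=\minDiv(X^\alpha)$, of Remark~\ref{FenetreObservationPourSyl}. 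Since $\Mmac_0=\Jex_1$, the composite $\pi_{\Mmac_0}\circ\partial_1(\uP)\circ\varphi$ is by definition $W_{\Jex_1}(\uP)=W_1(\uP)$.

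Point v) follows by specialising iv) to $\uQ=t\uX^D$ over $\bfA[t]$ and invoking point i): $B_k(\uP+t\uX^D)=B_k(\uP)+t\,B_k(\uX^D)=B_k(\uP)+t\,\Id$. Passing to the homogeneous component of degree $d$, one gets $\det B_{k,d}(\uP+t\uX^D)=\det\big(t\,\Id+B_{k,d}(\uP)\big)$, a monic polynomial in $t$ of degree $\dim\Mmac_{k-1,d}=\dim\Smac_{k,d}$, the dimensional equality being exactly the statement that $\beta_{k,d}(\uX^D)$ is an isomorphism (Proposition~\ref{varphiIso}). A monic polynomial is primitive, hence a regular element of $\bfA[t]$ by McCoy's lemma~\ref{McCoyPolyLemma}.

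The only point carrying a grain of substance is iii), and it is where I would be most careful, although no genericity-specific computation is required. By the linearity established in iv), the entries of the matrix of $B_{k,d}(\uP^\gen)$ in the monomial basis of $\Mmac_{k-1,d}$ are $\bbZ$-linear forms in the generic coefficients of $\uP^\gen$, so that $\det B_{k,d}(\uP^\gen)$ is a bona fide polynomial in those coefficients. The evaluation morphism specialising $\uP^\gen$ to the étalon game $\uX^D$ commutes with the formation of this matrix and of its determinant, and by point i) it sends $\det B_{k,d}(\uP^\gen)$ to $\det B_{k,d}(\uX^D)=1$. Hence this determinant takes the invertible value $1$ under a specialisation, i.e.\ it is \emph{primitif par valeur}, and is therefore regular by Lemma~\ref{McCoyPolyLemma}. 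The whole argument rests on point i) together with the functoriality of $\uP\mapsto B_{k,d}(\uP)$ under coefficient specialisation, so the main (modest) obstacle is purely to make sure these two ingredients are phrased cleanly rather than to overcome any real difficulty.
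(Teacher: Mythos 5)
Your proof is correct and follows exactly the route the authors intend: the paper states this proposition \emph{without proof}, merely noting that it is analogous to Proposition~\ref{ProprieteWh} and "ne pr\'esente pas de difficult\'e". Your write-up (linearity of $\partial_k(\uP)$ in $\uP$ giving iv), the identity $\varphi=\beta_k(\uX^D)^{-1}$ giving i), the $k=1$ comparison with $W_1$ giving ii), then v) by taking $\uQ=t\,\uX^D$ together with $\dim\Mmac_{k-1,d}=\dim\Smac_{k,d}$, and iii) by specialising the generic system to $\uX^D$ and invoking Lemma~\ref{McCoyPolyLemma}) is precisely the routine verification being left to the reader, and it contains no gap.
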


\begin{rmq} \label{RmqConstructionWh}
Suite  à la remarque~\ref{FenetreObservationPourSyl},
l'endomorphisme $B_k(\uP)$ est défini à partir de l'application linéaire~$\beta_k(\uP)$
issue de $\partial_k(\uP)$ :
$$
\xymatrix @M=0.4pc @C=1.2cm{
\Smac_k \ar@{^{(}->}[r] \ar@/_0.5cm/[rrr]_-{\beta_k(\uP)}  & 
\rmK_k \ar[r]^-{\partial_k(\uP)} & 
\rmK_{k-1} \ar@{->>}[r] & 
\Mmac_{k-1} \ar@{-->}@<-1ex>@/_0.5cm/[lll]_-{\varphi} \\
}
\qquad 
\text{\ et }
\qquad 
B_k(\uP) = \beta_k(\uP) \circ \varphi
$$
Pour $k=1$, le schéma prend la forme suivante en remarquant que $\partial_1(\uP)$
est l'application de Sylvester $\Syl(\uP)$ et 
$\Mmac_0 = \Jex_1$ : 
$$
\xymatrix @M=0.4pc @C=1.2cm{
\varphi(\Jex_1) \ar@{^{(}->}[r] \ar@/_0.5cm/[rrr]_-{\beta_1(\uP)}  & 
\rmK_1 \ar[r]^-{\Syl(\uP)} & 
\rmK_{0} \ar@{->>}[r] & 
\Jex_1 \ar@{-->}@<-1ex>@/_0.5cm/[lll]_-{\varphi} \\
}
\qquad 
\text{\ et }
\qquad 
B_1(\uP) = \beta_1(\uP) \circ \varphi
$$
On peut généraliser cela à $\Jex_h$ 
pour $h \geqslant 1$ avec le schéma ci-dessous.
Naît alors l'application $\bfA$-linéaire 
$\scrW_h(\uP) := \pi_{\Jex_h} \circ \Syl(\uP)_{|\varphi(\Jex_h)}$ 
qui vérifie en particulier $\scrW_1(\uP) = \beta_1(\uP)$.
Et finalement, on récupère l'endomorphisme~$W_h(\uP)$ par la formule $\scrW_h(\uP) \circ \varphi$.
$$
\xymatrix @M=0.4pc @C=1.2cm{
\varphi(\Jex_h) \ar@{^{(}->}[r] \ar@/_0.5cm/[rrr]_-{\scrW_h(\uP)}  & 
\rmK_1 \ar[r]^-{\Syl(\uP)} & 
\rmK_{0} \ar@{->>}[r] & 
\Jex_h \ar@{-->}@<-1ex>@/_0.5cm/[lll]_-{\varphi} \\
}
\qquad 
\text{\ et }
\qquad 
W_h(\uP) = \scrW_h(\uP) \circ \varphi
$$
\end{rmq}

\subsection{Les scalaires $\Delta_{k,d}(\protect\uP)$ et la forme déterminant de Cayley du
            complexe $\rmK_{\sbullet,d}(\protect\uP)$}
\label{SectionDeltakdMacaulay}

Lorsque $\uP$ est régulière, le complexe $\rmK_{\sbullet,d}(\uP)$ est
un $\bfA$-complexe de modules libres \textit{exact} dont nous avons
noté $\chi_d$ la caractéristique d'Euler-Poincaré. Etant a fortiori de
Cayley, on peut donc lui appliquer les résultats de la
section~\ref{SousSectionLesDeltak}.

L'introduction des $B_{k,d}(\uP)$
et des $\Delta_{k,d}(\uP)$ va fournir un éclairage sur ``la'' forme
générateur de son invariant de Cayley
$\CayleyVect(\rmK_{\sbullet,d})$, sous-module de
$\bigwedge^{\chi_d}(\bfA[\uX]_d)^\star$, défini
en~\ref{CayleyDetVectoriel}. Ceci va  permettre en particulier, pour $d
\ge \delta+1$, de relier $\calR_d(\uP)$ et $\Delta_{1,d}(\uP)$,
conduisant ainsi, pour le résultant $\Res(\uP)$, à une expression de
nature différente de la formule de Macaulay figurant
en~\ref{MacaulayParRecurrence}, et permettant plus tard de retrouver cette dernière d'une manière
que nous estimons plus naturelle.
Idem en degré $\delta$: nous retrouverons ici $\omegares$ sous un autre habillage.

Cependant le cas du degré $\ge \delta$ n'est pas l'unique but visé.
En plus de la mise en place de nouvelles notions, il s'agit d'apporter
un autre point de vue sur certains chapitres précédents (par exemple
sur la preuve de la formule de Macaulay) et de préparer, en
collaboration avec le chapitre suivant, l'établissement en degré $d$
quelconque de résultats non abordés jusqu'à maintenant.

\medskip

Comme nous comptons à la fois compléter certains résultats et en
revisiter d'autres de manière autonome (indépendamment de preuves
fournies auparavant), il nous semble opportun de faire le point de \og
là où nous en sommes, à ce moment de notre étude \fg{} en commençant par rappeler
quelques notations. Deux dimensions sont attachées à la décomposition
$\bfA[\uX]_d = \Jex_{1,d} \oplus \Smac_{0,d}$, décomposition qui ne
dépend que du format $D$:
$$
s_d = r_{1,d} = \dim \Jex_{1,d}, \qquad \chi_d = r_{0,d} = \dim\Smac_{0,d}
$$
En fonction de $\uP$, l'entier $s_d$ est aussi le rang
attendu de l'application de Sylvester $\Syl_d(\uP)$ et $\chi_d$ la
caractéristique d'Euler-Poincaré du complexe $\rmK_{\sbullet,d}(\uP)$,
cf.~\ref{chiProprieteComplements}.
Ces deux dimensions sont évidemment reliées par $s_d + \chi_d= \dim \bfA[\uX]_d$.

\subsubsection*{``La'' forme $\mu_d : \bigwedge\nolimits^{\chi_d}(\bfA[\uX]_d) \to \bfA$
du complexe $\rmK_{\sbullet,d}(\uP)$  (épisode I, retour sur le passé)}

Reprenons l'énoncé \og \textit{Module librement résoluble versus
module de MacRae} \fg{} figurant en~\ref{LibrementResolubleImpliqueMacRae} :

\begin{quote}
{\it
Un module $M$ librement résoluble est un module de MacRae.

\smallskip
Précisément, soit $M$ résolu par un complexe libre
$(F_\sbullet, u_\sbullet)$ de caractéristique d'Euler-Poincaré~$c$.  Alors $M$ est
un module de MacRae de rang $c$ dont la forme invariant de MacRae \mbox{$\vartheta
: \BW^c(M) \to \bfA$} s'obtient par passage au quotient du déterminant de
Cayley $\mu : \bigwedge^{c}(F_0) \to \bfA$ de~$F_\sbullet$.

\smallskip
Avec un léger abus de langage, le déterminant de Cayley de
$F_\sbullet$ et l'invariant de MacRae de~$M$ coïncident:
$$
\CayleyVect(F_\sbullet) \ \buildrel {\rm abus} \over = \ 
\MacRaeVect(M) 
$$
\`A gauche, il s'agit d'un sous-module de $\bigwedge^{c}(F_0)^\star$ et à droite
d'un sous-module de~$\bigwedge^{c}(M)^\star$.
}
\end{quote}

\medskip

Insistons sur le fait qu'ici $d$ est quelconque.  Lorsque $\uP$ est
régulière, le résultat ci-dessus s'applique au $\bfA$-module $\bfB_d$,
librement résolu par le complexe $\rmK_{\sbullet,d}(\uP)$, et donc, de ce fait,
de MacRae de rang $\chi_d$. On dispose ainsi d'une forme $\mu_d$,
définie à un inversible près, et d'un idéal de type fini $\fb_d$
vérifiant :
$$
\DVect_{s_d}(\Syl_d) = \mu_d\,\fb_d  \quad
\text{où $\mu_d : \bigwedge\nolimits^{\chi_d}(\bfA[\uX]_d)\to\bfA$
est sans torsion et $\Gr(\fb_d) \geqslant 2$}
$$
Nous allons mettre en veilleuse $\bfB_d$ sans oublier pour autant que
$\mu_d$ passe au quotient. Nous resterons principalement au niveau
$\bfA[\uX]_d$ et nous n'utiliserons pratiquement pas la forme $\overline{\mu_d}$,
pgcd-fort du sous-module déterminantal~$\FittVect_{\chi_d}(\bfB_d)$
de $\BW^{\chi_d}(\bfB_d)^\star$. Toutefois, il nous
arrivera d'utiliser indifféremment la notation MacRae $\MacRaeVect(\bfB_d) = \bfA
\overline{\mu_d}$ au lieu de celle de l'invariant de Cayley
$\CayleyVect(\rmK_{\sbullet,d}) = \bfA\mu_d$.

Non seulement nous resterons au niveau $\bfA[\uX]_d$ mais nous verrons
dans la section~\ref{CanonicalCayleyDetSection} qu'il est préférable
de masquer l'algèbre extérieure en considérant $\mu_d$ comme une forme
$\chi_d$-linéaire alternée $\bfA[\uX]_d^{\chi_d} \to \bfA$.
En remplaçant le domaine de définition $\bfA[\uX]_d^{\chi_d}$ de cette forme par un
module isomorphe adéquat, on obtiendra un objet intrinsèque de nom $\Det_d$.

\medskip

Plusieurs questions se posent.

\begin {enumerate}
\item
Parmi toutes ces formes $\mu_d$, peut-on en sélectionner une particulière 
présentant un caractère intrinsèque?  

\item
Si oui, peut-on en fournir une expression algébrique?

\item
Si oui, dans quelle mesure cette expression algébrique est-elle susceptible
de fournir un moyen de calcul? Ce moyen de calcul nécessite-t-il des hypothèses
supplémentaires sur $\uP$? (nous pensons à la présence de dénominateurs).
\end {enumerate}  

\bigskip

$\rhd$
Nous pouvons répondre \emph {partiellement} à la question 1.
Pour $d\geqslant \delta$, en~\ref{PoidsNormalisationMacRae}, en passant en générique,
en utilisant le poids en $P_i$ et le jeu étalon généralisé, nous
avons produit un générateur privilégié $\mu_d$ (normalisé vis-à-vis du jeu étalon)
de l'invariant de MacRae $\MacRaeVect(\bfB_d)$.

\smallskip

En degré $\delta$ pour lequel $\chi_\delta=1$, la forme $\mu_\delta$ distinguée 
a pris le nom de $\omega_\res \in
\bigwedge^{1}(\bfA[\uX]_\delta)^\star = \bfA[\uX]_\delta^\star$, forme linéaire
normalisée pour le jeu étalon (confer~\ref{PoidsNormalisationMacRae}
pour les détails) : c'est le pgcd fort des formes linéaires
déterminantales de $\Syl_\delta$.

En degré $\geqslant \delta+1$, on a $\chi_d = 0$. Ainsi 
$\bigwedge\nolimits^{\chi_d}(\bfA[\uX]_d)^\star$ s'identifie canoniquement à $\bfA^\star = \bfA$
et la forme linéaire $\mu_d$ bien particulière dont il est question est 
\textit{ici} un scalaire $\calR_d \in \bfA$ normalisé, pgcd fort 
des mineurs pleins de $\Syl_d$.

\smallskip

Il nous semble important de préciser que cette normalisation a pu être
réalisée uniquement à partir de la \textit{présentation canonique} du
module $\bfB_d$ par l'application de Sylvester $\Syl_d(\uP)$, sans
utiliser le complexe~$\rmK_{\sbullet,d}(\uP)$ dans sa globalité.
Cependant, lorque $\uP$ est régulière, l'existence même de l'invariant
de MacRae $\MacRaeVect(\bfB_d)$ repose sur la structure multiplicative
du complexe $\rmK_{\sbullet,d}(\uP)$.


\bigskip

$\rhd$
Qu'en est-il de la question~2 (en degré $\ge \delta$)?  On dispose des
formules de Macaulay figurant en~\ref{MacaulayParRecurrence}
et~\ref{MacaulayParRecurrenceDegreeDelta}, formules simples et
efficaces (en degré $\ge \delta+1$, il s'agit de $\calR_d = \det W_{1,d}/\det
W_{2,d}$).

\smallskip

Nous estimons cependant que la mise en place de ces formules n'est pas
du tout satisfaisante.  Un mot concernant la \emph {nature} des
théorèmes~\ref{MacaulayParRecurrence} et
\ref{MacaulayParRecurrenceDegreeDelta}.  La preuve a été établie par
récurrence sur~$n$ (comment a-t-on pu deviner ces formules?), conférant à ce
résultat un certain mystère et une grande insatisfaction.  Il semble
que nous ne soyons pas seuls à le penser.

Par exemple, dans \cite[haut de la page 8 de l'introduction]{GKZ}, les auteurs
I.M. Gelfand, M.M. Kapranov \& A.V. Zelevinski écrivent en 1994:

\begin {quote}
... Macaulay made another intriguing contribution to the theory
by giving an ingenious refinement of the Cayley method. It would be interesting
to put this approach in the general framework of this book.
\end {quote}

Il en est de même plus récemment (2021) de C. D'Andrea et ses co-auteurs dans
\cite[premières lignes de l'introduction]{AJS}:

\begin {quote}
In [Mac1902], Macaulay introduced the notion of homogeneous resultant,
extending the Sylvester resultant to systems of homogeneous
polynomials in several variables with given degrees. In the same
paper, he also presented an intriguing family of formulae, each of
them allowing to compute it as the quotient of the determinant of a
Sylvester type square matrix by one of its principal minors.
\end {quote}

En plus de la récurrence sur $n$, la preuve nécessite d'avoir avec soi
l'égalité des invariants de MacRae des~$(\bfB'_d)_{d \geqslant
  \delta}$, cf.~\ref{MacRaeEqualities}, ce qui n'est pas une évidence!

\subsubsection*{Un aperçu de ce que nous allons réaliser pour
  répondre de manière positive aux 3 questions}

Nous proposons dans ce chapitre et le suivant une approche
radicalement différente de celle par récurrence sur $n$.  Pour nous
simplifier la vie, dans cette description, nous supposons~$\uP$
générique et nous ne le faisons pas intervenir dans les notations.

\smallskip

Nous allons principalement travailler à un degré $d$ fixé
en nous aidant de la décomposition de Macaulay
$(\Mmac_{\sbullet,d}, \Smac_{\sbullet,d}, \varphi)$ de
$\rmK_{\sbullet, d}$.  Tout choix d'un système d'orientations de cette
décomposition (cf. la définition en
section~\ref{SousSectionLesDeltak}) fournit une forme $\mu_d \in
\bigwedge\nolimits^{\chi_d}(\bfA[\uX]_d)^\star$ bien précise,
générateur de $\CayleyVect(\rmK_{\sbullet,d})$, et \emph
{simultanément} une expression de cette forme utilisant cette fois-ci
\textit{toutes les différentielles} du complexe.
Dans la suite, nous ne considérerons que des systèmes monomiaux
d'orientations: cela signifie que l'orientation d'un sous-module monomial est le
produit extérieur de sa base monomiale ordonnée arbitrairement.

\smallskip

Attention ici à la terminologie et à l'histoire.  On attribue à
Macaulay deux notions bien distinctes : d'une part les formules
impliquant~$W_{1,d}$ et $W_{2,d}$ relatives à la \textit{première}
différentielle et d'autre part, la décomposition de
$\rmK_{\sbullet,d}$ qui porte son nom et donne naissance au mécanisme
de Cayley, mécanisme qui fait apparaître les formules de quotients
alternés relatives à \textit{toutes les} différentielles. De quoi s'y
perdre.

\medskip

Pour l'instant, nous n'avons pas encore répondu complètement à la question~1.  Car
au départ, dans notre description, il y a le \emph{choix} d'un système
monomial d'orientations qui conditionne la forme $\mu_d$ et fait que
celle-ci est définie au signe près.  Et un choix, c'est là le hic,
ce n'est pas intrinsèque. Mais alors que tout semble englué dans des histoires
d'orientations, nous verrons que

\smallskip
\centerline{\fbox{$
\begin {array}{c}    
\text{la base monomiale $\calS_{0,d}$ de $\Smac_{0,d}$, de cardinal $\chi_d$, permet d'élaborer} \\
\text{une forme $\chi_d$-linéaire alternée sur $\bfB_d$, qui est intrinsèque} \\
\end {array}
$}}
\smallskip

Voir à ce propos le
lemme~\ref{MultiplicateursOrientationsDecomposition} qui affirme que
l'orientation sur $\Smac_{0,d}$ rigidifie $\mu_d$.

\medskip

Peut-on orienter canoniquement $\Smac_{0,d}$? Oui pardi si $\dim
\Smac_{0,d} \le 1$! C'est le cas pour $d \ge \delta$: pour $d=\delta$,
on a $\Smac_{0,d} = \bfA X^\emouton$, orienté canoniquement
par $X^\emouton$ (il faudrait être tordu pour orienter~$\Smac_{0,d}$
par~$-X^\emouton$) tandis que $\Smac_{0,d} = 0$ pour $d \ge \delta+1$.
Le lecteur vérifiera que pour le format $D =
(1,1,\dots,e)$, on a $\dim \Smac_{0,d} \le 1$ pour tout~$d$, plus
précisément, $\Smac_{0,d} = \bfA X_n^d$ pour $d \le \delta$ et
$\Smac_{0,d}=0$ pour $d \ge \delta + 1$. Ceci a comme conséquence,
pour ce format,  qu'il y a, pour $d \le \delta$, 
une forme linéaire intrinsèque $\mu_d : \bfA[\uX]_d \to \bfA$, qui
passe au quotient sur $\bfB_d$.

Mais en général, pour $d <\delta$, on a $\dim \Smac_{0,d} \ge 2$ et
aucun moyen d'orienter canoniquement $\Smac_{0,d}$.  Cependant, nous
ferons ce qu'il faut plus tard dans la
section~\ref{CanonicalCayleyDetSection} pour obtenir une forme
$\chi_d$-linéaire alternée intrinsèque sur $\bfA[\uX]_d$ (qui
passe au quotient sur $\bfB_d$).

\medskip

Lorsque $d \ge \delta$, nous allons montrer dans cette section (cf les propositions \ref{mu_d=R_d}
et \ref{mu_delta=omegares}) que
la forme $\mu_d$ définie ici par l'orientation canonique (et triviale) de
$\Smac_{0,d}$ coïncide avec la forme $\mu_d$ spécifiée
en~\ref{PoidsNormalisationMacRae}.  Nous obtiendrons une expression
\emph{différente} de celles fournies en~\ref{MacaulayParRecurrence}
et~\ref{MacaulayParRecurrenceDegreeDelta}; mais que le lecteur soit
rassuré: nous retrouverons plus tard (cf. les théorèmes
\ref{DetBkdProdBinomialDetWhd} et \ref{DeltakdProdBinomialDetWhd})
les formules de Macaulay d'une
toute autre manière.  Et nous aurons ainsi court-circuité complètement
la section~\ref{sousSectionMacaulayRecurrence}.

\medskip

Signalons tout de même deux problèmes majeurs. Il y a celui de la
comparaison entre le degré~$d$ et~$d+1$ d'une part et d'autre part le
fait qu'en degré $\delta$, nous n'avons pas réussi à faire cohabiter
le bezoutien~$\nabla$ de $\uP$ et la décomposition de Macaulay de
$\rmK_{\sbullet,\delta}(\uP)$.  Nous parviendrons tout de même à
montrer (cf. en fin de section \ref{sectionProfondeurBk}) que
$\Delta_{1,d} = \Delta_{1,d+1}$ pour $d \ge \delta+1$ retrouvant ainsi
l'égalité des invariants de MacRae des $(\bfB_d)_{d\ge \delta+1}$.
Mais nous n'avons rien de nouveau concernant le lien entre le
degré~$\delta$ et~$\delta+1$.

\smallskip

Ceci termine l'aperçu. Place à la technique.

\subsubsection*{La structure multiplicative de $\rmK_{\sbullet,d}(\uP)$ enrichie
par la décomposition de Macaulay}

Nous allons appliquer une partie des résultats de la
section~\ref{SousSectionLesDeltak} au complexe
$\rmK_{\sbullet,d}(\uP)$ dont nous assurons l'exactitude en supposant
$\uP$ régulière.  Nous reviendrons dans le paragraphe suivant sur le statut de $\uP$.
Ci-dessous, la plupart du temps, comme d'habitude, nous omettons $\uP$ des notations.

Fixons un système monomial $\bfe$ d'orientations de $\rmK_{\sbullet,d}(\uX^D)$ (et les
habituels isomorphismes déterminantaux
associés notés~$\sharp$). D'après~\ref{Factorisation}, ce système 
définit \textit{de manière unique} une suite de vecteurs $\Theta_{k,d}
\in \bigwedge^{r_{k-1,d}}(\rmK_{k-1,d})$ pour $1 \leqslant k
\leqslant n+1$ vérifiant $\Theta_{n+1,d} = 1$ et la factorisation:
$$
\bigwedge\nolimits^{r_{k,d}}(\partial_{k,d}) \ = \ 
\Theta_{k,d} \, \Theta_{k+1,d}^\sharp
\qquad \text{où} \qquad
\Theta_{k+1,d}^\sharp = \oriented{\Theta_{k+1,d} \, \wedge \, \sbullet\,}_{\bfe_{\rmK_{k,d}}}
\leqno\Bigl(
\begin{array}{c}
  \text{matricielle} \\ \text{colonne} \times \text{ligne} \\
\end{array}
\Bigr)  
$$
Puisque $\calD_{r_{k,d}}(\partial_{k,d}) = \rmc(\Theta_{k,d})\rmc(\Theta_{k+1,d}) \subset \rmc(\Theta_{k,d})$,
on dispose de l'inégalité de profondeur $\Gr(\Theta_{k,d}) \geqslant k$.

\smallskip

Tenons maintenant compte de la décomposition de Macaulauy $(\Mmac_{\sbullet,d}, \Smac_{\sbullet,d}, \varphi)$
et considérons un système monomial d'orientations de cette décomposition adapté au système~$\bfe$.
L'évaluation en $\bfe_{\Smac_{k,d}}$ de l'égalité matricielle ci-dessus fournit (point ii) de
\ref{SuiteDeltak} appliqué à $\Delta_{k+1,d}$), l'égalité
$$
\bigwedge\nolimits^{r_{k,d}}(\partial_{k,d})(\bfe_{\Smac_{k,d}})
\ =\ 
\Theta_{k,d} \, \Delta_{k+1,d}
\leqno (\text{vectorielle})
$$
En projetant sur $\bfe_{\Mmac_{k-1,d}}$, on obtient alors (cette fois par définition
de $\Delta_{k,d}$, cf. \ref{SuiteDeltak}), l'égalité
$$
\det B_{k,d}\ = \ \Delta_{k,d}\, \Delta_{k+1,d}
\leqno (\text{scalaire})
$$
dans laquelle les scalaires ne dépendent plus d'aucun système d'orientations,
seulement de la décomposition de Macaulay choisie (pour l'instant, nous ne
connaissons qu'une telle décomposition mais il y en a d'autres).

\label {NOTA14-DeltakdP}%

\medskip

De l'égalité $(\text{vectorielle})$, on tire le résultat de divisibilité ci-dessous.

\begin{prop} \label{DeltakdDiviseMineurs}
Soit $\uP$ une suite régulière.

Le scalaire $\Delta_{k+1,d}(\uP)$ divise tous
les mineurs d'ordre $r_{k,d}$ de $\partial_{k,d}(\uP)$ indexés par les
colonnes de~$\Smac_{k,d}$.

En particulier, $\Delta_{2,d}(\uP)$ divise tous les mineurs d'ordre $s_d$ de 
$\Syl_d(\uP)$ indexés par les colonnes de $\Smac_{1,d}$.
\end{prop}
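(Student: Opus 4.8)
Le plan est de faire reposer entièrement la démonstration sur l'égalité (vectorielle)
$$
\BW^{r_{k,d}}(\partial_{k,d})(\bfe_{\Smac_{k,d}}) = \Theta_{k,d}\,\Delta_{k+1,d}
$$
établie juste avant l'énoncé, en identifiant les composantes du membre de gauche, lues dans la base monomiale de $\BW^{r_{k,d}}(\rmK_{k-1,d})$, aux mineurs figurant dans la conclusion. La divisibilité se lira alors directement sur le facteur scalaire $\Delta_{k+1,d}$, le vecteur $\Theta_{k,d}$ jouant le rôle du vecteur des cofacteurs.

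Premièrement, on utiliserait le fait que le système d'orientations retenu est monomial: l'orientation $\bfe_{\Smac_{k,d}}$ est donc, au signe près, le produit extérieur $s_1 \wedge \cdots \wedge s_{r_{k,d}}$ des $r_{k,d}$ monômes extérieurs constituant la base de $\Smac_{k,d}$, de sorte que $\BW^{r_{k,d}}(\partial_{k,d})(\bfe_{\Smac_{k,d}})$ vaut, au signe près, $\partial_{k,d}(s_1)\wedge\cdots\wedge\partial_{k,d}(s_{r_{k,d}})$. Deuxièmement, en développant ce $r_{k,d}$-vecteur sur la base $(e_L)_{\#L = r_{k,d}}$ de $\BW^{r_{k,d}}(\rmK_{k-1,d})$ indexée par les parties $L$ de cardinal $r_{k,d}$ de la base monomiale de $\rmK_{k-1,d}$, le coefficient de $e_L$ est, au signe près, le mineur de $\partial_{k,d}$ dont les colonnes sont celles de $\Smac_{k,d}$ et les lignes celles de $L$. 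Ce sont précisément les mineurs d'ordre $r_{k,d}$ indexés par les colonnes de $\Smac_{k,d}$. Troisièmement, l'égalité (vectorielle) exprimant ce même $r_{k,d}$-vecteur comme $\Delta_{k+1,d}$ fois le vecteur fixe $\Theta_{k,d}$, une comparaison composante par composante montre que chacun de ces mineurs est $\Delta_{k+1,d}$ fois la composante correspondante de $\Theta_{k,d}$; d'où la divisibilité annoncée.

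Le cas particulier s'obtiendrait en spécialisant $k=1$, où $\partial_{1,d}(\uP)$ n'est autre que l'application de Sylvester $\Syl_d(\uP)$ et $r_{1,d}=s_d$, livrant que $\Delta_{2,d}(\uP)$ divise tous les mineurs d'ordre $s_d$ de $\Syl_d(\uP)$ indexés par les colonnes de $\Smac_{1,d}$. Le point délicat ne sera pas d'ordre conceptuel — tout le contenu substantiel (existence et unicité de la factorisation, définition des $\Delta_{k,d}$) ayant déjà été acquis via la structure multiplicative du complexe $\rmK_{\sbullet,d}(\uP)$, exact car $\uP$ est régulière — mais purement combinatoire: il faudra veiller à la traduction correcte entre \og composante dans la base monomiale de $\BW^{r_{k,d}}(\rmK_{k-1,d})$\fg{} et \og mineur indexé par les colonnes de $\Smac_{k,d}$\fg, ainsi qu'au suivi des signes induits par l'ordre choisi sur les bases monomiales, seule véritable source d'erreur.
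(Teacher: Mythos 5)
Votre preuve est correcte et suit exactement la voie du papier, qui se contente d'affirmer \og De l'égalité (vectorielle), on tire le résultat de divisibilité\fg : vous explicitez simplement que les composantes de $\BW^{r_{k,d}}(\partial_{k,d})(\bfe_{\Smac_{k,d}})$ dans la base monomiale de $\BW^{r_{k,d}}(\rmK_{k-1,d})$ sont, au signe près, les mineurs en question, et que l'égalité $\BW^{r_{k,d}}(\partial_{k,d})(\bfe_{\Smac_{k,d}}) = \Theta_{k,d}\,\Delta_{k+1,d}$ livre la divisibilité composante par composante. Votre remarque sur le rôle de la régularité de $\uP$ (exactitude de $\rmK_{\sbullet,d}(\uP)$, donc existence des $\Theta_{k,d}$ et $\Delta_{k,d}$) est bien le seul point où l'hypothèse intervient, conformément au texte.
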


\subsubsection*{Précision sur les $\Delta_{k,d}(\uP)$ selon le statut de $\uP$}

Aucune contrainte sur $\uP$ n'est nécessaire pour définir le déterminant $\det B_{k,d}(\uP)$.
En ce qui concerne les $\Delta_{k,d}(\uP)$, c'est un peu plus délicat:
si $\uP$ est régulière, le complexe $\rmK_{\sbullet,d}(\uP)$ est exact,
en particulier de Cayley, et les $\Delta_{k,d}(\uP)$ sont bien définis, 
mais \textit{non nécessairement réguliers}, pire, ils peuvent être nuls.
Ils vérifient en particulier:
$$
\det B_{k,d}(\uP) = \Delta_{k,d}(\uP)\, \Delta_{k+1,d}(\uP) 
\qquad \text{ et } \qquad 
\Delta_{n+1,d}(\uP) = 1
$$
Pour le jeu étalon, on a en revanche de la vraie régularité, puisque $\det B_{k,d}(\uX^D) = 1$, 
donc $\Delta_{k,d}(\uX^D) = 1$.

Comprenons que, même si $\uP$ est régulière, la relation ci-dessus ne permet pas toujours la détermination
(par récurrence) des $\Delta_{k,d}$ à partir des $\det B_{k,d}$ car certains de ces déterminants
peuvent être nuls. Il n'est donc \textit{pas} question d'écrire brutalement:
$$
\Delta_{k,d} \ = \ 
\dfrac{\det B_{k,d} \,\det B_{k+2, d} \, \cdots}
{\det B_{k+1, d} \, \det B_{k+3, d} \, \cdots}
$$
Une solution consiste à considérer la suite générique pour obtenir des \textit{identités algébriques} (donc valides 
pour n'importe quelle suite~$\uP$).
Dans un tel contexte générique, 
ces identités sont du type $bq = a$ où $a$ et $b$ sont deux intervenants \og connus\fg{} 
et réguliers de l'anneau $\bfA = \bfk[\indetsPi]$.
Cette dernière égalité dit que $b$ divise $a$ et elle \textit{fait naître} un nouvel 
intervenant : le quotient $q$.
Une fois ces égalités et nouveaux intervenants dégagés dans le cas générique, 
rien n'empêche d'opérer à une spécialisation et de définir les objets 
pour n'importe quelle suite $\uP$. Ainsi : 
\begin{quote}
$\Delta_{k,d}(\uP)$ est bien défini
pour n'importe quel système $\uP$, comme spécialisé du cas générique.
\end{quote}

\subsubsection*{Un moyen de forcer la régularité des $\det B_{k,d}$ et des $\Delta_{k,d}$}

Pour déterminer (et exprimer) les $\Delta_{k,d}(\uP)$ à partir des $\det B_{k,d}(\uP)$ 
par l'intermédiaire de la relation $\det B_{k,d}(\uP) = \Delta_{k,d}(\uP)\,\Delta_{k+1,d}(\uP)$, 
on peut procéder à une petite générisation de la suite~$\uP$, forçant les scalaires qui
interviennent à être réguliers.  Cette petite générisation signalée
par Demazure (cf.~\cite{Demazure1}) consiste à considérer une seule
indéterminée $t$ et à ajouter, à la suite quelconque $\uP$, le jeu $t
\uX^D$, multiple du jeu étalon ; autrement dit, il s'agit de
considérer la suite $(P_1 + tX_1^{d_1}, \dots, P_n + tX_n^{d_n})$ de $\bfA[t][\uX]$. 
D'après~\ref{BkPropriete}, chaque $\det B_{k,d}(\uP + t\uX^D)$ est un polynôme
unitaire en~$t$, donc régulier. Par conséquent, on montre par récurrence descendante 
que $\Delta_{k,d}(\uP + t\uX^D)$ est également régulier (on a $\Delta_{n+1,d} = 1$).
On revient au système $\uP$ par la spécialisation $t := 0$.

\subsubsection*{Une interprétation structurelle des $\Delta_{k,d}(\uP)$}

Ici on suppose la suite $\uP$ régulière.
Dans la suite, comme d'habitude, nous nous permettrons parfois d'omettre $\uP$ dans les notations.

\medskip
$\rhd$
\label{Delta1dMacRaeBd}
Commençons par le cas simple où $k = 1$ et $d \ge \delta+1$.
Après avoir fixé un système monomial d'orientations $\bfe$ de la
décomposition de Cayley de $\rmK_{\sbullet,d}$, on récupère
un vecteur $\Theta_{1,d} \in \BW^{\chi_d}(\bfA[\uX]_d)$ sans torsion.
Mais ici $\chi_d = 0$ donc $\BW^{\chi_d}(\bfA[\uX]_d) \simeq \bfA$,
de sorte que ce vecteur s'identifie à sa coordonnée $\Delta_{1,d}$ 
sur $\bfe_{\Mmac_{0,d}}$. Bilan: le scalaire (régulier) $\Delta_{1,d}$
est \emph{un} générateur de MacRae de $\bfB_d$.

\medskip
$\rhd$
Passons à $k$ quelconque en supposant  le scalaire $\Delta_{k,d}(\uP)$ régulier,
ce qui est le cas si $\uP$ est générique. 
La proposition~\ref{InterpretationStructurelleDeltak} fournit une caractérisation
structurelle du scalaire $\Delta_{k,d}$ : c'est un générateur de l'idéal 
déterminant de Cayley du complexe $\rmK_{\sbullet,d}$
tronqué en degré homologique $k-1$ et projeté au bout sur $\Mmac_{k-1,d}$.
$$
0 \to
\xymatrix @M=0.4pc @C=2cm{
\rmK_{n,d} \ar[r]^-{\partial_{n,d}} &
\rmK_{n-1,d} \ar[r]^-{\partial_{n-1,d}} &
\quad \cdots \quad \ar[r] &
\rmK_{k,d} \ar[r]^-{\pi_{\Mmac_{k-1,d}} \circ \partial_{k,d}} &
\Mmac_{k-1,d}
}
$$
Ce complexe est bien de caractéristique d'Euler-Poincaré nulle, il est de Cayley
parce que l'on a supposé~$\Delta_{k,d}(\uP)$ régulier, mais il n'y a aucune
raison qu'il soit exact en $\rmK_{k,d}$.

En conséquence, pour tout entier $d$, le scalaire $\Delta_{k,d}$,
lorsqu'il est régulier, engendre l'idéal invariant de MacRae du module
$\Coker (\pi_{\Mmac_{k-1,d}} \circ \partial_{k,d}) = \Mmac_{k-1,d}
/ \Im (\pi_{\Mmac_{k-1,d}} \circ \partial_{k,d})$, module de MacRae de
rang~0.

\medskip

Particularisons ce qui précède à $k = 1$ et $d\geqslant \delta+1$:
dans ce cas, le $\bfA$-module $\Mmac_{0,d}$ est
exactement~$\bfA[\uX]_d$.  On retrouve le fait que $\Delta_{1,d}$ est
un générateur de MacRae de $\bfB_d = \bfA[\uX]_d / \Im (\Syl_d)$, cas
particulier par lequel on avait commencé \emph {sans supposer
  $\Delta_{1,d}$ régulier}, uniquement sous le couvert de $\uP$
régulière.

\medskip

Revenons à $d$ quelconque.
Comme $\rmK_{k-1,d} = \Mmac_{k-1,d} \oplus \Smac_{k-1,d}$, on peut
remplacer le module $\Mmac_{k-1,d}$ par $\rmK_{k-1,d}/\Smac_{k-1,d}$:
$$
0 \to
\xymatrix @M=0.4pc @C=1.5cm{
\rmK_{n,d} \ar[r]^-{\partial_{n,d}} &
\rmK_{n-1,d} \ar[r]^-{\partial_{n-1,d}} &
\quad \cdots \quad \ar[r] &
\rmK_{k,d} \ar[r]^-{\overline{\partial_{k,d}}} &
\rmK_{k-1,d}/\Smac_{k-1,d}
}
$$
Pour $k=1$ et $d \in \bbN$, le conoyau de $\overline{\partial_{1,d}} = \overline{\Syl_d}$ est
isomorphe à $\bfB_d/\Smac_{0,d}$. Donc, lorsque $\Delta_{1,d}$ est
régulier, le $\bfA$-module $\bfB_d/\Smac_{0,d}$ est de MacRae de
rang $0$, d'invariant de MacRae engendré par $\Delta_{1,d}$.

\medskip
$\rhd$
Une mise en garde en ce qui concerne les $\Delta_{1,d}(\uP)$ pour $d \le \delta$,
en continuant à supposer $\uP$ régulière. On a toujours $\Delta_{1,0}(\uP) =1$.
Prenons trois indéterminées $a_1,b_3,c_2$ sur $\bbZ$ et $\uQ$ défini par:
$$
\uQ = (a_1X_1,\ b_3X_3,\ c_2X_2^e)
$$
C'est un petit exemple d'une suite $\uQ$ régulière, de format $D =
(1,1,e)$, donc de degré critique $\delta=e-1$ pour lequel nous allons
démontrer:
$$
\Delta_{1,d}(\uQ) = \begin {cases}
1 & \text{si } d=0 \\
0        & \text{si } 0 \le d \le \delta \\
c_2(-a_1b_3)^e & \text{si } d \ge \delta+1  \\
\end {cases}
$$
Pour les affirmations qui suivent dans un cadre plus général, les
détails sont parfois laissés à la charge du lecteur.  Tout d'abord
pour un format $D = (d_1,\dots, d_n)$ quelconque, on a $\rmK_{n,d} =
0$ pour $d < \sum_i d_i = \delta+n$. A fortiori $\Smac_{n,d} = 0$ donc
$\Mmac_{n-1,d}$, qui lui est isomorphe, est également nul. De sorte
que $\rmK_{n-1,d} = \Smac_{n-1,d}$ et $\Delta_{n-1,d}(\uP) = \det
B_{n-1,d}(\uP)$.  Pour $n=3$, cela s'énonce ainsi: lorsque $d \le
\delta+2$, on a $\rmK_{2,d} = \Smac_{2,d}$ et $\Delta_{2,d}(\uP) =
\det B_{2,d}(\uP)$.

Nous nous limitons désormais au format $(1,1,e)$ et à $d < \delta+2$.
Utilisons la formule
$$
\dim \rmK_{2,d} = \sum_{i < j} \binom{n + d - (d_i + d_j) - 1}{d - (d_i + d_j)}
\xymatrix @C = 2cm {\ar[r]^{D = (1,1,e)}_{d < \delta+2} & }
\dim \rmK_{2,d} = \binom{d}{d-2} +0 + 0 = \binom{d}{2} 
$$
La base monomiale de $\rmK_{2,d} = \Smac_{2,d}$ est formée des
$X^\beta e_{12}$ avec $|\beta| = d-2$ tandis que celle de
$\Mmac_{1,d}$ est constituée des $X_1X^\beta e_2$ avec $|\beta| =
d-2$. Il vient
$$
\varphi(X_1X^\beta e_2) = X^\beta e_{12}, \qquad\qquad
\partial_{2,d}(\uP)\big(\varphi(X_1X^\beta e_2)\big) = X^\beta (P_1e_2-P_2e_1)
$$
On en déduit que $B_{2,d}(\uP)$ ne dépend que de $P_1 = a_1X_1 +
a_2 X_2 + a_3 X_3$. En ordonnant la base $(X_1X^\beta
e_2)_{|\beta|=d-2}$ de $\Mmac_{1,d}$ selon l'ordre lexicographique sur
les $X_1X^\beta$ pour lequel $X_1 > X_2 > X_3$, on obtient que
$B_{2,d}(\uP)$ est triangulaire avec des $a_1$ sur la diagonale.
Par exemple, pour $d=4$:
$$
B_{2,d}(\uP) = 
\EastBordermatrix{
a_{1} & . & . & . & . & . & \Heti{X_{1}^{3}\,e_{2}} \\ 
a_{2} & a_{1} & . & . & . & . & \Heti{X_{1}^{2}X_{2}\,e_{2}} \\ 
a_{3} & . & a_{1} & . & . & . & \Heti{X_{1}^{2}X_{3}\,e_{2}} \\ 
. & a_{2} & . & a_{1} & . & . & \Heti{X_{1}X_{2}^{2}\,e_{2}} \\ 
. & a_{3} & a_{2} & . & a_{1} & . & \Heti{X_{1}X_{2}X_{3}\,e_{2}} \\ 
. & . & a_{3} & . & . & a_{1} & \Heti{X_{1}X_{3}^{2}\,e_{2}} \\ 
}
$$
Bilan:
$$
\Delta_{2,d}(\uP) = \det B_{2,d}(\uP) = a_1^{\binom{d}{2}}
$$
\c Ca, c'est du solide sur lequel on peut s'appuyer pour calculer $\Delta_{1,d}(\uP)$ via
$$
\det B_{1,d} = \Delta_{1,d} \Delta_{2,d}
$$
L'exposant de $a_1$ dans $\Delta_{2,d}(\uP)$ importe peu: ce qui compte, c'est
que l'on puisse contrôler la régularité de $\Delta_{2,d}(\uP)$.

Traitons enfin le petit système monomial $\uQ$. Pour $1 \le d \le
\delta$, nous affirmons que la ligne $X_2^d$ de $B_{1,d}(\uQ)$ est
nulle. En effet, la base monomiale de $\Smac_{1,d}$ est constituée de
monômes extérieurs $X^\gamma e_i$ avec $i = 1,2$ pour lesquels on a
$\coeff_{X_2^d}(X^\gamma Q_i) = 0$ vu le choix de $Q_1, Q_2$.  Bilan:
cette ligne nulle fait que $\det B_{1,d}(\uQ) = 0$ donc
$\Delta_{1,d}(\uQ) = 0$ dans la fourchette $1 \le d \le
\delta$, comme annoncé. Le lecteur attentif remarquera que ce résultat de nullité de
$\Delta_{1,d}(\uQ)$ dans la fourchette est encore valide
pour:
$$
\uQ = (a_1X_1 + a_3X_3,\ b_1X_1 + b_3X_3,\ P_3)   \qquad \deg(P_3) = e
$$
Le cas $d \ge \delta+1$ attendra que l'on établisse l'égalité $\Delta_{1,d}(\uQ) = \Res(\uQ)$.

\subsubsection*{La forme $\mu_d : \bigwedge\nolimits^{\chi_d}(\bfA[\uX]_d) \to \bfA$
associée à un système d'orientations de la décomposition de Macaulay (épisode II)}

On reprend le contexte précédant la
proposition~\ref{DeltakdDiviseMineurs}: un système monomial
d'orientations de la décomposition de Macaulay de $\rmK_{\sbullet,d}$,
etc. Rappelons que $\chi_d = \dim \Smac_{0,d}$.  La
proposition~\ref{CayleyDetVectoriel}, dans le cadre du complexe 
composante homogène de degré $d$ du complexe de Koszul de $\uP$,
fournit une forme $\mu_d : \BW^{\chi_d}(\bfA[\uX]_d) \to \bfA$, forme
déterminant de Cayley bien précise de $\rmK_{\sbullet,d}(\uP)$.  La
proposition~\ref{muDelta2Expression} \og expression d'un générateur de
$\CayleyVect(F_\sbullet)$ \fg{}, établie dans le cadre général d'un
complexe de Cayley $F_\sbullet$, permet
d'énoncer dans notre contexte-Koszul:

\begin {prop}
\label{mudDelta2dExpression}
Tout système monomial d'orientations de la décomposition de Macaulay de~$\rmK_{\sbullet,d}$
donne naissance à la forme $\mu_d$ définie à partir de $\Theta_{1,d} \in \BW^{\chi_d}(\bfA[\uX]_d)$
par:
$$
\mu_d = \Theta_{1,d}^\sharp \overset{\rm def.}{=}
\oriented{\Theta_{1,d} \,\wedge \,\sbullet\,}_{\bfe_{\Jex_{1,d}} \wedge \bfe_{\Smac_{0,d}}}
$$
Elle vérifie:
$$
\big(\BW^{s_d}(\Syl_d)(\bfe_{S_{1,d}})\big)^\sharp = \Delta_{2,d}\,\mu_d
\qquad \text{symboliquement} \qquad 
\mu_d = \dfrac{\big(\BW^{s_d}(\Syl_d)(\bfe_{S_{1,d}})\big)^\sharp}{\Delta_{2,d}}
$$
\end {prop}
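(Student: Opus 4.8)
The statement to prove is Proposition \ref{mudDelta2dExpression}, which asserts two things about the Cayley determinant form $\mu_d$ attached to a monomial system of orientations of the Macaulay decomposition of $\rmK_{\sbullet,d}(\uP)$. The plan is to deduce both equalities directly from the general multiplicative-structure machinery already established, specializing the abstract factorization $\bigwedge^{r_{k,d}}(\partial_{k,d}) = \Theta_{k,d}\,\Theta_{k+1,d}^\sharp$ and the vectorial/scalar identities recalled in the passage just before Proposition \ref{DeltakdDiviseMineurs}. Since the excerpt grants me the results of section \ref{SousSectionLesDeltak} (in particular \ref{Factorisation}, \ref{SuiteDeltak}, \ref{CayleyDetVectoriel}) and the general-complex statement \ref{muDelta2Expression}, the real work is a translation: verifying that the Koszul complex $\rmK_{\sbullet,d}(\uP)$, once $\uP$ is regular, fits the hypotheses of those abstract results, and that the monomial orientations of the Macaulay decomposition are precisely the ``adapted'' orientations required there.

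\textbf{Step 1: the definition of $\mu_d$ as $\Theta_{1,d}^\sharp$.} First I would invoke the fact that for $\uP$ regular the complex $\rmK_{\sbullet,d}(\uP)$ is exact, hence a Cayley complex with Euler characteristic $\chi_d = \dim\Smac_{0,d}$. Proposition \ref{CayleyDetVectoriel}, applied to this complex, produces the Cayley determinant form $\mu_d : \BW^{\chi_d}(\bfA[\uX]_d)\to\bfA$. The chosen system of orientations $\bfe$ determines via \ref{Factorisation} the unique vector $\Theta_{1,d}\in\BW^{\chi_d}(\bfA[\uX]_d)$ (the case $k=1$ of the factorization family), which is torsion-free because $\Gr(\rmc(\Theta_{1,d}))\ge 1$. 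The identification $\mu_d = \Theta_{1,d}^\sharp = \oriented{\Theta_{1,d}\wedge\sbullet}_{\bfe_{\Jex_{1,d}}\wedge\bfe_{\Smac_{0,d}}}$ is then exactly the content of \ref{muDelta2Expression} instantiated at $F_\sbullet = \rmK_{\sbullet,d}(\uP)$, $F_0 = \bfA[\uX]_d = \Jex_{1,d}\oplus\Smac_{0,d}$; here I must check only that the orientation $\bfe_{F_0}$ used in the abstract sharp coincides with the split monomial orientation $\bfe_{\Jex_{1,d}}\wedge\bfe_{\Smac_{0,d}}$, which holds by the compatibility clause ``adapted to the system $\bfe$'' assumed in the hypotheses.

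\textbf{Step 2: the quotient expression by $\Delta_{2,d}$.} For the second equality I would start from the vectorial identity recalled before \ref{DeltakdDiviseMineurs}, namely $\bigwedge^{r_{k,d}}(\partial_{k,d})(\bfe_{\Smac_{k,d}}) = \Theta_{k,d}\,\Delta_{k+1,d}$ (point ii of \ref{SuiteDeltak}), taken at $k=1$. Since $\partial_{1,d}(\uP) = \Syl_d(\uP)$, $r_{1,d}=s_d$, and $\Delta_{2,d}$ is the corresponding scalar, this reads
$$
\BW^{s_d}(\Syl_d)(\bfe_{\Smac_{1,d}}) = \Theta_{1,d}\,\Delta_{2,d}
\qquad \text{in } \BW^{s_d}(\rmK_{0,d}).
$$
Applying the isomorphism $\sharp$ (which is $\bfA$-linear) to both sides and using $\Theta_{1,d}^\sharp = \mu_d$ gives $\big(\BW^{s_d}(\Syl_d)(\bfe_{\Smac_{1,d}})\big)^\sharp = \Delta_{2,d}\,\mu_d$, the claimed relation; the ``symbolic quotient'' form is merely the rewriting $\mu_d = \big(\BW^{s_d}(\Syl_d)(\bfe_{\Smac_{1,d}})\big)^\sharp/\Delta_{2,d}$, legitimate once $\Delta_{2,d}$ is regular (forced by the generization $\uP + t\uX^D$ described in the passage on ``forcing regularity'', then specialized).

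\textbf{Expected main obstacle.} The delicate point is not either algebraic identity but the bookkeeping that makes the \emph{two} uses of the orientation data consistent: the sharp operator $\sharp$ in Step 1 is built from the orientation $\bfe_{\rmK_{0,d}} = \bfe_{\Jex_{1,d}}\wedge\bfe_{\Smac_{0,d}}$ of the bottom term $F_0$, whereas the evaluation in Step 2 uses the orientation $\bfe_{\Smac_{1,d}}$ of a summand of $F_1 = \rmK_{1,d}$. I expect the hard part to be verifying that the monomial system of orientations chosen for the Macaulay decomposition $(\Mmac_{\sbullet,d},\Smac_{\sbullet,d},\varphi)$ is genuinely ``adapted'' in the precise sense of \ref{SuiteDeltak} — i.e.\ that $\bfe_{\Smac_{k,d}}$ and $\varphi$ are coherently normalized so that no stray unit or sign enters the passage from the matricial factorization to the vectorial and scalar identities. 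Because $W_{1,d}(\uP)$ is an honest endomorphism rather than a matrix (as emphasized in the introduction), there is no basis-ordering ambiguity to track, and I would argue that the monomial-supplement structure of each $\Smac_{k,d}$ pins down the orientation up to the single global choice, so that the coherence is automatic once one fixes $\bfe_{\Smac_{0,d}}$; this is exactly the rigidification asserted in \ref{MultiplicateursOrientationsDecomposition}, which I would cite to close the gap.
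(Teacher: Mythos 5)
Your proposal is correct and follows essentially the same route as the paper: the paper gives no separate proof for this proposition, deriving it exactly as you do — the identity $\mu_d=\Theta_{1,d}^\sharp$ by instantiating the general Cayley-complex result \ref{muDelta2Expression} (via \ref{CayleyDetVectoriel} and \ref{Factorisation}), and the quotient relation by applying $\sharp$ to the vectorial identity $\BW^{s_d}(\Syl_d)(\bfe_{\Smac_{1,d}})=\Theta_{1,d}\,\Delta_{2,d}$ from point ii) of \ref{SuiteDeltak} at $k=1$. Your handling of the orientation bookkeeping (adapted monomial orientations, rigidification via \ref{MultiplicateursOrientationsDecomposition}) and of the regularity of $\Delta_{2,d}$ by the genericization $\uP+t\uX^D$ matches the paper's own discussion preceding the statement.
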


Nous allons nous concentrer sur l'égalité à gauche ci-dessus, principalement en degré $d \ge \delta$.
Nous la ré-écrivons sous la forme:
$$
\Big[\BW^{s_d}(\Syl_d)(\bfe_{\Smac_{1,d}}) \wedge \sbullet \ \Big]_{\bfe_{\Jex_{1,d}}\wedge\, \bfe_{\Smac_{0,d}}}
= \Delta_{2,d} \ \mu_d(\sbullet)
\leqno (\clubsuit)
$$
Essayons de déjouer cette écriture un tantinet cryptique dans laquelle
$s_d = r_{1,d} = \dim \Jex_{1,d}$.  Les deux formes de part et d'autre
de l'égalité ne dépendent que de l'orientation~$\bfe_{\Smac_{0,d}}$: à
gauche, cela est dû au fait que $\bfe_{\Jex_{1,d}}$ et
$\bfe_{\Smac_{1,d}}$ se correspondent par $\BW^{s_d}(\varphi)$ et à
droite cela résulte du point ii) du lemme \ref{MultiplicateursOrientationsDecomposition}.
Il y a une certaine redondance dans ces deux justifications puisque le scalaire
$\Delta_{2,d}$ lui est indépendant de toute orientation.


\subsubsection*{L'égalité $(\clubsuit)$ en degré $d \geqslant \delta+1$}

En degré $d \geqslant \delta + 1$,  cette égalité prend le visage suivant 
$$
\det B_{1,d} \ = \ 
\Delta_{2,d} \times \Delta_{1,d}
$$
qui est, comme $B_{1,d} = W_{1,d}$, un avatar de l'égalité suivante (cf.~\ref{MacaulayParRecurrence}) :
$$
\det W_{1,d} \ = \ 
\det W_{2,d} \times \calR_d
$$
Pour comprendre l'analogie de ces deux formules, une première chose à
noter est l'égalité $\Delta_{1,d} = \calR_d$, c'est ce que nous allons
montrer ci-dessous.  La preuve (indépendante de la
section~\ref{sousSectionMacaulayRecurrence}) n'a pas lieu par
récurrence sur le nombre de variables, et utilise exclusivement le
degré $d$, contrairement à la formule~\ref{MacaulayParRecurrence}
sortie, comme par miracle, on ne sait d'où. Cette nouvelle formule
constitue en quelque sorte la voie normale dans l'obtention d'un
déterminant de Cayley \emph {normalisé} du complexe
$\rmK_{\sbullet,d}$.

\begin {prop}
\label{mu_d=R_d}
Soit $d \geqslant \delta+1$.

Pour tout système $\uP$:
$$
\Delta_{1,d}(\uP) = \calR_d(\uP)
$$
Autrement dit, avec les précautions d'usage décrites auparavant concernant les quotients, on a, 
$$
\calR_d(\uP) \ = \ 
\dfrac{\det B_{1,d} \,\det B_{3, d} \, \cdots} {\det B_{2, d}\,\det B_{4, d}\,\cdots}
= \dfrac{\det B_{1,d}}{\Delta_{2,d}}
\qquad \text{où} \qquad
\Delta_{2,d}  \ \overset{\rm def}{=} \ 
\dfrac{\det B_{2,d}\,\det B_{4, d}\,\cdots} {\det B_{3, d}\,\det B_{5, d}\,\cdots}
$$

\end {prop}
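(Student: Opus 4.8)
La stratégie consiste à tout ramener au système générique $\uP^\gen$, puis à spécialiser. Les deux membres $\calR_d(\uP)$ et $\Delta_{1,d}(\uP)$ sont en effet définis, pour un système $\uP$ quelconque, comme spécialisés de leurs homologues génériques (cf. le paragraphe \og Précision sur les $\Delta_{k,d}(\uP)$\fg{}); il suffit donc d'établir l'identité algébrique $\Delta_{1,d}(\uP^\gen) = \calR_d(\uP^\gen)$ dans $\bfA^\gen = \bbZ[\indetsPi]$, puis de spécialiser $\uP^\gen$ en $\uP$. Le plan est de reconnaître $\Delta_{1,d}(\uP^\gen)$ et $\calR_d(\uP^\gen)$ comme deux pgcd forts d'un \emph{même} idéal déterminantiel, donc associés, puis de lever l'indétermination par l'inversible grâce à la normalisation sur le jeu étalon $\uX^D$.

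Désormais $\uP = \uP^\gen$, qui est régulière, et $d \geqslant \delta+1$, de sorte que $\chi_d = 0$. Le complexe $\rmK_{\sbullet,d}(\uP)$ est exact, a fortiori de Cayley. Appliquée à sa première différentielle $\partial_{1,d} = \Syl_d$, la structure multiplicative (cf.~\ref{Factorisation}) fournit une factorisation $\BW^{s_d}(\Syl_d) = \Theta_{1,d}\,\nu$ avec $\Theta_{1,d}$ sans torsion et $\Gr(\rmc(\nu)) \geqslant 2$, où $s_d = \dim\Jex_{1,d}$. Comme $\chi_d = 0$, on a $\BW^{s_d}(\bfA[\uX]_d) \simeq \bfA$ et le vecteur $\Theta_{1,d}$ s'identifie à sa coordonnée, le scalaire régulier $\Delta_{1,d}$ (cf.~\ref{InterpretationStructurelleDeltak}). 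Par suite, l'idéal déterminantiel $\calD_{s_d}(\Syl_d) = \rmc\big(\BW^{s_d}(\Syl_d)\big) = \langle\Delta_{1,d}\rangle\,\rmc(\nu)$ admet $\Delta_{1,d}$ pour pgcd fort. Or, par construction (cf.~\ref{PoidsNormalisationMacRae}), $\calR_d$ est le pgcd fort normalisé des mineurs pleins de $\Syl_d$, \idest{} de ce même idéal. Par unicité du pgcd fort à un inversible près, il existe un inversible $u \in \bfA^\gen$ tel que $\Delta_{1,d}(\uP^\gen) = u\,\calR_d(\uP^\gen)$.

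Il reste à prouver $u = 1$. On spécialise $\uP^\gen$ en le jeu étalon $\uX^D$. D'après~\ref{BkPropriete}, on a $\det B_{k,d}(\uX^D) = 1$ pour tout $k$; partant de $\Delta_{n+1,d} = 1$ et remontant la relation $\det B_{k,d} = \Delta_{k,d}\,\Delta_{k+1,d}$, une récurrence descendante sur $k$ livre $\Delta_{k,d}(\uX^D) = 1$, en particulier $\Delta_{1,d}(\uX^D) = 1$. Comme $\calR_d$ est normalisé par $\calR_d(\uX^D) = 1$, l'inversible $u$ se spécialise en $1$; les inversibles de $\bfA^\gen = \bbZ[\indetsPi]$ étant les constantes $\pm 1$, on conclut $u = 1$, d'où l'égalité générique, puis l'égalité pour tout $\uP$ par spécialisation. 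La formule en quotient alterné s'en déduit par télescopage: la relation $\det B_{k,d} = \Delta_{k,d}\,\Delta_{k+1,d}$ jointe à $\Delta_{n+1,d} = 1$ donne $\Delta_{1,d} = \det B_{1,d}/\Delta_{2,d}$, avec $\Delta_{2,d} = \det B_{2,d}/\Delta_{3,d}$, etc.; on raisonnera en générique (ou via la générisation $\uP + t\uX^D$) pour assurer la régularité des facteurs intermédiaires.

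Le point délicat est la détermination de l'inversible $u$. C'est précisément parce que les $B_{k,d}$ (et $W_{1,d} = B_{1,d}$) sont des \emph{endomorphismes} --- aucune base à ordonner, donc aucune ambiguïté de signe --- que $\Delta_{1,d}(\uX^D)$ vaut exactement $1$ et non $\pm 1$, ce qui autorise la conclusion $u = 1$ à partir de la seule spécialisation en $\uX^D$.
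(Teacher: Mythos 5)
Votre preuve est correcte et suit essentiellement la même route que celle du papier : dans les deux cas on se ramène au générique sur $\bbZ$, on identifie $\Delta_{1,d}$ et $\calR_d$ comme deux générateurs du même invariant de MacRae de $\bfB_d$ (de façon équivalente, deux pgcd forts de l'idéal $\calD_{s_d}(\Syl_d)$ des mineurs pleins de $\Syl_d$), d'où une égalité à un inversible $\pm 1$ de $\bbZ[\indetsPi]$ près, et on fixe le signe en spécialisant en le jeu étalon $\uX^D$ où chaque membre vaut $1$. La seule différence, cosmétique, est que vous redérivez explicitement la propriété de pgcd fort de $\Delta_{1,d}$ à partir de la factorisation de $\BW^{s_d}(\Syl_d)$, là où le papier cite directement le fait, établi juste avant, que $\Delta_{1,d}$ est un générateur de MacRae de $\bfB_d$.
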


\begin {proof} On peut supposer $\uP$ générique au dessus de $\bbZ$.

\smallskip
Puisque $\calR_d(\uP)$ et $\Delta_{1,d}(\uP)$ sont des invariants de MacRae de $\bfB_d$,
il y a un inversible $\varepsilon$ de l'anneau des coefficients de $\uP$ tel que
$\Delta_{1,d}(\uP) = \varepsilon\,\calR_d(\uP)$. Cet anneau 
étant un anneau de polynômes sur~$\bbZ$, on a $\varepsilon = \pm 1$.
On termine en spécialisant $\uP$ en le jeu étalon $\uX^D$ pour lequel
$\calR_d(\uX^D) = \Delta_{1,d}(\uX^D) = 1$.

\end {proof}  

\noindent
\textbf{Commentaire.}

Ce résultat et le commentaire qui précède l'énoncé permettent d'en
déduire l'égalité $\Delta_{2,d} = \det W_{2,d}$ pour $d \geqslant
\delta+1$.  Pour l'instant, la preuve de cette égalité utilise le
résultat~\ref{MacaulayParRecurrence} dont nous voulons nous passer!
Une preuve indépendante et plus générale (pour tout
$d$) est contenue dans le théorème~\ref{DeltakdProdBinomialDetWhd}.

\subsubsection*{L'égalité $(\clubsuit)$ en degré $d = \delta$}

En degré $\delta$, la caractéristique d'Euler-Poincaré $\chi_\delta$ vaut $1$, 
et on a $\Smac_{0,\delta} = \bfA X^\emouton$.
On considère un système quelconque d'orientations de la décomposition 
$\rmK_{\sbullet,\delta} = \Mmac_{\sbullet,\delta} \oplus \Smac_{\sbullet,\delta}$, 
en imposant cependant d'orienter $\Smac_{0,\delta} = \bfA X^\emouton$
par~$X^\emouton$. 
La formule trèfle s'écrit alors :
$$
\omega = \Delta_{2,\delta} \times \mu_\delta
$$
qui est un avatar de l'égalité (cf.~\ref{MacaulayParRecurrenceDegreeDelta}) :
$$
\omega = \det W_{2,\delta} \times \omega_\res
$$

\begin {prop} [En degré $\delta$]
\label{mu_delta=omegares}  
\leavevmode

Pour tout système $\uP$, on a l'égalité $\mu_\delta = \omegares$. 
Autrement dit :
$$
\omegares \ = \ 
\dfrac{\omega}{\Delta_{2,\delta}}
\qquad \text{où} \qquad
\Delta_{2,\delta} \ \overset{\rm def}{=} \
\dfrac{\det B_{2,\delta}\,\det B_{4, \delta}\,\cdots} {\det B_{3, \delta}\,\det B_{5, \delta}\,\cdots}
$$
A fortiori $\Delta_{1,\delta} = \omegares(X^\emouton)$.  

\end {prop}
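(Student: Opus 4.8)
Le plan est de reproduire la preuve de la proposition~\ref{mu_d=R_d}, en rempla\c{c}ant l'invariant de MacRae de rang~$0$ par celui de rang~$1$. Je commencerais par me ramener au cas g\'en\'erique : comme dans~\ref{mu_d=R_d}, l'\'egalit\'e souhait\'ee $\mu_\delta(\uP) = \omegares(\uP)$ est, coefficient par coefficient, une identit\'e alg\'ebrique en les entr\'ees de $\uP$, et les deux membres sont des sp\'ecialis\'es du cas g\'en\'erique ; il suffit donc de traiter $\uP = \uP^\gen$ au dessus de $\bbZ$, de sorte que l'anneau des coefficients $\bfA$ est un anneau de polyn\^omes sur $\bbZ$ dont les seuls inversibles sont $\pm 1$.

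Ensuite j'invoquerais le dictionnaire de la structure multiplicative. Puisque $\uP$ est r\'eguli\`ere, $\bfB_\delta$ est librement r\'esoluble de rang $\chi_\delta = 1$, r\'esolu par $\rmK_{\sbullet,\delta}(\uP)$ ; d'apr\`es~\ref{LibrementResolubleImpliqueMacRae} son invariant de MacRae $\MacRaeVect(\bfB_\delta) = \CayleyVect(\rmK_{\sbullet,\delta})$ est un sous-module libre de rang~$1$ de $(\bfB_\delta)^\star$. La forme $\mu_\delta$ construite \`a partir du syst\`eme monomial d'orientations choisi (avec $\Smac_{0,\delta} = \bfA X^\emouton$ orient\'e par $X^\emouton$) en est un g\'en\'erateur d'apr\`es~\ref{mudDelta2dExpression}, et $\omegares$ en est le g\'en\'erateur normalis\'e sur le jeu \'etalon (\ref{PoidsNormalisationMacRae}). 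Il existe donc un inversible $\varepsilon \in \bfA$, n\'ecessairement $\pm 1$, tel que $\mu_\delta = \varepsilon\,\omegares$.

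Reste \`a fixer le signe, seule \'etape demandant une v\'erification effective plut\^ot qu'un jeu de notations. Je sp\'ecialiserais $\uP^\gen$ en le jeu \'etalon $\uX^D$. Alors $\calS_{0,\delta} = \{X^\emouton\}$, donc $\bfB_\delta$ est libre de rang~$1$ de base $\overline{X^\emouton}$, et par sa normalisation m\^eme $\omegares$ se sp\'ecialise en la coordonn\'ee duale $(X^\emouton)^\star$. De l'autre c\^ot\'e, pour $\uX^D$ on a $B_{k,\delta}(\uX^D) = \Id$ et $\Delta_{k,\delta}(\uX^D) = 1$ (\ref{BkPropriete}), de sorte que le vecteur de Cayley $\Theta_{1,\delta}$ attach\'e \`a la d\'ecomposition de Macaulay de $\rmK_{\sbullet,\delta}(\uX^D)$ se r\'eduit, via les isomorphismes monomiaux $\beta_{k,\delta}(\uX^D)$, \`a $\bfe_{\Jex_{1,\delta}}$ ; avec $\Smac_{0,\delta}$ orient\'e par $X^\emouton$, cela donne $\mu_\delta(\uX^D) = (X^\emouton)^\star$ \'egalement, c'est-\`a-dire $\mu_\delta(\uX^D)(X^\emouton) = \Delta_{1,\delta}(\uX^D) = 1$. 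En comparant les deux formes sp\'ecialis\'ees, on obtient $\varepsilon = 1$, d'o\`u $\mu_\delta = \omegares$ en g\'en\'erique, puis pour tout $\uP$ par sp\'ecialisation.

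Enfin, la clause \og a fortiori\fg{} est imm\'ediate : en degr\'e $\delta$ la forme $\mu_\delta$ est la forme de Cayley intrins\`eque $\Det_\delta$, et $\Delta_{1,\delta} = \Det_\delta(X^\emouton) = \mu_\delta(X^\emouton)$, si bien que l'\'egalit\'e $\mu_\delta = \omegares$ fournit $\Delta_{1,\delta} = \omegares(X^\emouton)$. L'obstacle principal est le calcul du signe sur le jeu \'etalon : il faut suivre les orientations monomiales \`a travers la d\'ecomposition de Macaulay pour s'assurer que l'orientation canonique de $\Smac_{0,\delta}$ par $X^\emouton$ est bien celle qui fait co\"{\i}ncider $\mu_\delta(\uX^D)$ avec $\omegares(\uX^D)$ plut\^ot qu'avec son oppos\'e.
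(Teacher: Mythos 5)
Votre preuve est correcte et suit essentiellement la m\^eme voie que celle du texte : r\'eduction au cas g\'en\'erique au dessus de $\bbZ$, comparaison des deux g\'en\'erateurs du module de MacRae de rang 1 de $\bfB_\delta$ (d'o\`u $\mu_\delta = \varepsilon\,\omegares$ avec $\varepsilon = \pm 1$), puis fixation du signe par \'evaluation en $X^\emouton$ et sp\'ecialisation au jeu \'etalon via $\Delta_{1,\delta}(\uX^D)=1$ et $\omegaRes{\uX^D}=(X^\emouton)^\star$. Notez seulement que votre \og obstacle principal\fg{} (suivre les orientations monomiales \`a travers la d\'ecomposition de Macaulay) est en fait d\'ej\`a neutralis\'e par l'identit\'e $\mu_\delta(X^\emouton)=\Delta_{1,\delta}$, valable pour tout $\uP$ d\`es que $\Smac_{0,\delta}$ est orient\'e par $X^\emouton$ : le texte \'evalue d'abord en $X^\emouton$ puis sp\'ecialise l'\'egalit\'e scalaire $\Delta_{1,\delta}(\uP) = \varepsilon\,\omegaRes{\uP}(X^\emouton)$, ce qui rend superflu tout suivi d'orientations au niveau des formes.
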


\begin {proof} \leavevmode

On peut supposer le système $\uP$ générique au dessus de $\bbZ$. 
Notons $\mu_{\delta,\uP} : \bfA[\uX]_\delta \to \bfA$ au lieu de $\mu_\delta$.

\medskip
Il y a un inversible $\varepsilon$ de l'anneau des coefficients de $\uP$ tel que
$\mu_{\delta,\uP} = \varepsilon\,\omegaRes{\uP}$. Cet anneau des coefficients étant un anneau
de polynômes sur $\bbZ$, on a $\varepsilon = \pm 1$. Pour montrer que $\varepsilon = 1$,
il suffit de montrer que les évaluations des deux formes linéaires en $X^\emouton$ sont
les mêmes.  L'évaluation en $X^\emouton$ conduit à:
$$
\Delta_{1,\delta}(\uP) = \varepsilon\, \omegaRes{\uP}(X^\emouton)
$$
Dans cette égalité, on peut spécialiser $\uP$ en le jeu étalon $\uX^D$ sans toucher à $\varepsilon = \pm 1$.
A gauche, cette spécialisation donne $\Delta_{1,\delta}(\uX^D) = 1$. Idem à droite du côté de $\omegares$
puisque $\omegaRes{\uX^D} = (X^\emouton)^\star$. D'où~\mbox{$\varepsilon = 1$}.
\end {proof}  

\noindent

De nouveau, nous signalons que la preuve ci-dessus n'utilise \emph
{pas} l'égalité $\omega = \det W_{2,\delta}\,\omega_\res$ rappelée
avant la proposition.  Cette égalité montre que l'on a
$\Delta_{2,\delta} = \det W_{2,\delta}$ et nous conduit, quitte à
paraître un tantinet répétitifs, à apporter la précision suivante,
analogue à celle du cas $d \ge \delta+1$: le résultat général
$\Delta_{2,d} = \det W_{2,d}$ pour tout $d$ est conséquence du
théorème ultérieur~\ref{DeltakdProdBinomialDetWhd}, à l'aide d'une
preuve complètement indépendante de la
section~\ref{sousSectionMacaulayRecurrence}.

\subsubsection*{Un aperçu des relations binomiales}

En principe, à cet endroit, le lecteur doit avoir compris que dans le
chapitre~\ref{ChapBW} figure une preuve (de nature combinatoire) de
l'égalité générale $\Delta_{2,d} = \det W_{2,d}$. Cette égalité traduit
donc que $\Delta_{2,d}$, initialement \emph {quotient} alterné de
déterminants, est égal à un unique déterminant.

\smallskip

Dans le même ordre d'idées, pour $k \geqslant 3$, le scalaire
$\Delta_{k,d}$ s'exprime comme un \emph{produit} de déterminants
$(\det W_{h,d})^{f_{k,h}}$ pour $h \geqslant k$ où $f_{k,h}$ est un
certain coefficient binomial.  Nous complétons maintenant le
paragraphe intitulé ``Identification du cofacteur''
(section~\ref{CofacteursTordus},
page~\pageref{IdentificationCofacteurCommentaire}) dans lequel nous
avons commencé à mentionner l'histoire des relations binomiales.
Comme annoncé dans ce paragraphe, les relations binomiales s'observent
d'abord au niveau des endomorphismes $B_{k,d}$ et $W_{h,d}$ sous la
forme précise suivante (cf le théorème
\ref{DetBkdProdBinomialDetWhd}):
$$
\forall\, k \geqslant 1, \quad 
\det B_{k,d} \ = \ 
\prod_{h = k}^n \ \big(\det W_{h,d}\big)^{e_{k,h}} 
\qquad 
\text{où \  $e_{k,h} = \binom{h-2}{h-k}$} 
$$
Il est important de rappeler la convention binomiale que nous avons
adoptée (cf. le chapitre~\ref{ObjetsSuiteP}, avant la
proposition~\ref{dminKk}).  Ainsi pour $k=1$, on a $e_{1,h} = 0$ pour
tout $h\geqslant 2$ et $e_{1,h} = 1$ pour $h=1$, si bien que
l'égalité ci-dessus n'est autre que $\det B_{1,d} = \det W_{1,d}$.
Ce n'est pas une information très intéressante puisque nous savons
que $B_1 = W_1$ mais nous constatons que la formule exprimant $\det B_{k,d}$
a l'avantage d'être uniforme en~$k$.

\smallskip

Et ces dernières relations entraînent les relations binomiales dont il
était question précédemment entre les $\Delta_{k,d}$ et les $\det
W_{h,d}$ (cf. le théorème \ref{DeltakdProdBinomialDetWhd}):
$$
\forall\, k \geqslant 1, \quad 
\Delta_{k,d} \ = \ 
\prod_{h = k}^n \ \big(\det W_{h,d}\big)^{f_{k,h}}
\qquad 
\text{où \  $f_{k,h} = \binom{h-3}{h-k}$} 
$$
Encore une fois, il faut tenir compte de la convention binomiale
adoptée.  Pour $k = 2$, on a $f_{2,h} = 0$ pour tout $h \geqslant 3$
et $f_{2,h} = 1$ pour $h = 2$; pour $k = 3$, on a $f_{3,h} = 1$ pour
tout $h \geqslant 3$. Si bien que pour $k=2,3$, l'expression de
$\Delta_{k,d}$ est la suivante:
$$
\Delta_{2,d} = \det W_{2,d}, \qquad\qquad
\Delta_{3,d} = \det W_{3,d} \det W_{4,d} \cdots \det W_{n,d}
$$
Le cas $k=1$ est un peu spécial, cf l'énoncé du théorème
\ref{DeltakdProdBinomialDetWhd} et le commentaire qui vient après: les
exposants non nuls $(f_{1,h})_{h \ge 1}$ sont $f_{1,1}=1$, $f_{1,2}
= -1$ et l'expression obtenue pour $\Delta_{1,d}$
est le quotient (exact) $\det W_{1,d} / \det W_{2,d}$.  C'est bien sûr
un polynôme en les coefficients des $P_i$ mais visiblement \emph{cette}
expression de $\Delta_{1,d}$ n'est pas polynomiale en les
$\det W_{h,d}$.

En pensant au fait que $\Delta_{1,d}$ n'est autre que le résultant
pour $d \geqslant \delta+1$, nous nous sommes toujours dits que
l'absence pour $\Delta_{1,d}$ d'une formule déterminantale n'avait
rien d'étonnant.  Rien de formel dans cette dernière phrase qui obéit
plutôt au principe: s'il existait, pour le résultant, une formule
magique de type $\Res(\uP) = \det W_\calM(\uP)$, \og cela se saurait\fg.

\medskip

Présentons ces deux jeux de relations binomiales pour $n=6$.
Supprimons l'indice $d$ sur les objets (ici~$d$ est un entier quelconque)
et posons $b_k = \det B_{k,d}$ et $w_h = \det W_{h,d}$.
On peut résumer les relations binomiales donnant $b_k$ en fonction des $(w_h)_{h\geqslant k}$
de manière schématique par les tableaux suivants qui se lisent en ligne.
On commence par construire le tableau de gauche donnant les \emph{exposants}~$e_{k,h}$.
On en déduit les exposants~$f_{k,h}$ du tableau de droite 
grâce à la définition de~$\Delta_k$ comme quotient alterné des~$b_k$.
Ainsi la ligne de~$\Delta_k$ s'obtient 
en faisant (exposants obligent) la somme alternée des $k$ dernières lignes du tableau de gauche.
On a par exemple $b_4 = w_4 w_5^3 w_6^6$ et $\Delta_4 = w_4 w_5^2 w_6^3$.
$$
\renewcommand{\arraystretch}{1.5}
\begin{tabular}[t]{c|*{6}c}
& $w_1$ & $w_2$ & $w_3$ & $w_4$ & $w_5$ & $w_6$ \\
\hline 
$b_1$ & 1 &  & & & & \\
$b_2$ & & 1 & 1 & 1 & 1 & 1 \\
$b_3$ & &   & 1 & 2 & 3 & 4 \\	 
$b_4$ & &   &   & 1 & 3 & 6\\ 
$b_5$ & &   &   &  & 1 & 4\\ 
$b_6$ & & & & & & 1\\ 
\end{tabular}
\qquad \qquad \qquad
\begin{tabular}[t]{c|*{5}c}
& $w_2$ & $w_3$ & $w_4$ & $w_5$ & $w_6$ \\
\hline 
$\Delta_2$ & 1 & & & &  \\
$\Delta_3$ & & 1 & 1 & 1 & 1 \\
$\Delta_4$ & &   & 1 & 2 & 3 \\	 
$\Delta_5$ & &   &   & 1 & 3 \\ 
$\Delta_6$ & &   &   &  & 1  \\ 
\end{tabular}
$$
Avec ces quelques lignes, nous espérons avoir convaincu le lecteur du 
rôle fondamental joué par les déterminants des endomorphismes $B_{k,d}$, $W_{h,d}$
et par la famille de scalaires $\Delta_{k,d}$.

\subsubsection*{Les quotients alternés chez Demazure,
                \textit{Une définition constructive du résultant} \cite{Demazure1}}

Comme nos notations diffèrent de celles de Demazure, reprécisons certains choix.
Celui de l'indice $k$ pour l'endomorphisme $B_k$ est dû au fait
qu'il est issu de la différentielle $\partial_k(\uP)$ du complexe de
Koszul de~$\uP$.  Notre choix de la lettre~$B$ n'est pas profond et ne
correspond à aucun acronyme: c'est simplement une allusion à la
position de cette lettre dans une matrice $\left[\begin {smallmatrix}
    A & B\cr C & D\cr\end {smallmatrix}\right]$!
Le point fondamental est que $B_k$ est un \emph {endomorphisme} obtenu
en \emph {normalisant} l'induit-projeté $\beta_k$ de la différentielle $\partial_k(\uP)$,
qui se trouve en \og position $B$\fg{} dans la matrice $\left[\begin
    {smallmatrix} \alpha_k & \beta_k\cr \gamma_k & \delta_k\cr\end {smallmatrix}\right]$,
cf la définition générale \ref{ContexteDetCayley} et celle plus particulière~\ref{EndoBk}.
Cette dernière matrice est relative à la somme directe $\rmK_\sbullet = \Mmac_\sbullet \oplus
\Smac_\sbullet$ et l'expression ``endomorphisme pour $B_k$'' est à comprendre comme $\bfA$-endomorphisme
$\bbN^n$-gradué du sous-module $\Mmac_{k-1}$ (quitte à nous répéter, attention
au décalage d'une unité $k \leftrightarrow k-1$).

\medskip

Dans \cite{Demazure1}, nos $\det
B_{k,d}(\uP)$ sont notés~$D_k(\uP,d)$.  Voici ce que dit Demazure :
\og{} On obtient le résultant comme produit alterné de
déterminants~$D_{k,d}$, $k = 1, \dots, n$ \fg{}.

Concernant les quotients alternés, l'indexation de Demazure 
est régie par l'égalité 
$$
D_{k,d} \ =\  \Delta_k^d \,\Delta_{k-1}^d
\qquad\qquad
\hbox {\footnotesize (Demazure, Une définition constructive du résultant)}
$$
Il y a donc un décalage d'une unité par rapport à notre choix :
$$
\det B_{k,d} \ =\  \Delta_{k,d}\, \Delta_{k+1,d} 
$$
Ainsi, son $\Delta_0^d$ est notre $\Delta_{1,d}$ et son $\Delta_1^d$ est notre $\Delta_{2,d}$.  

\`A la fin de son introduction, Demazure mentionne (en 1984)
\og La définition des $D_1$ et~$\Delta_0$ et le calcul des déterminants $D_1$ en termes des
$\Delta_0$ se trouvent déjà, le 8 mai 1902, dans l'article de Macaulay
\textit{Some formulae in elimination}, article profond et méconnu \fg{}.
On ne voit pas, dans l'article de Demazure, d'égalité qui correspondrait à 
$$
\Delta_{2,d} \ =\ \det W_{2,d}
\qquad\qquad
\hbox {\footnotesize (on pourrait s'attendre à la trouver en degré $d \geqslant \delta+1$)}.
$$
que nous démontrerons en~\ref{DeltakdProdBinomialDetWhd} pour n'importe quel $d$.

\subsection{La décomposition de Macaulay tordue par une permutation $\sigma\in\fS_n$}
\label{sigmaDecompositionMacaulay}

On veut maintenant définir une $\sigma$-décomposition de Macaulay 
qui généralise la décomposition $\bfA[\uX] = \Jex_1 \oplus 
\bigoplus_{\alpha \preccurlyeq D-\mathds 1} \bfA X^\alpha$ 
à l'algèbre extérieure $\bigwedge\big(\bfA[\uX]^n \big)$
et qui fasse en sorte que la première fenêtre d'observation s'identifie
à l'endomorphisme $W_1^{\sigma}$ défini en~\ref{SigmaVersion}.
Pour réaliser la première condition (à savoir la décomposition en somme directe), on prend $\Mmac_0^\sigma := \Jex_1$. 
Par conséquent, en degré homologique $0$, rien n'est modifié par rapport 
à la définition~\ref{DefDecompositionMacaulay} : on a 
$\Mmac_0^\sigma = \Mmac_0$ et $\Smac_0^\sigma = \Smac_0$.

\smallskip

Concernant la deuxième condition (obtention d'une fenêtre
d'observation s'identifiant à $W_1^\sigma$), la définition de
$W_1^\sigma$ nous invite à définir le module $\Smac^\sigma_1$ comme
étant le $\bfA$-module libre de base les $(X^\alpha/X_i^{d_i})\,e_i$
où $X^\alpha \in \Jex_1$ et $i = \sminDiv(X^\alpha)$.  En utilisant
l'ordre $<_\sigma$ défini en section \ref{SigmaVersion} et la
convention $\sminDiv(X^\gamma) = n+1$ si $\DivSeq(X^\gamma) =
\emptyset$, ces monômes extérieurs s'écrivent $X^\gamma e_i$ avec
$\sminDiv(X^\gamma) \geqslant_{\sigma} i$.  En considérant les monômes
extérieurs de $\rmK_1$ non dans~$\Smac_1^\sigma$, on obtient le module
$\Mmac_1^\sigma$ : c'est le $\bfA$-module de base les
$X^\beta e_j$ avec $\sminDiv(X^\beta) <_\sigma j$ (ce qui force
$X^\beta \in \Jex_1$), ou encore le $\bfA[\uX]$-module engendré par
les $X_i^{d_i} e_j$ avec $i <_\sigma j$.

\medskip

Il est naturel de généraliser cela en degrés homologiques supérieurs de la façon suivante :
le module~$\Mmac_k^\sigma$ est alors le $\bfA$-module de base les $X^\beta e_J$ 
tels que $\sminDiv(X^\beta) <_\sigma \smin J$, 
ou encore le $\bfA[\uX]$-module engendré par 
les $X_i^{d_i} e_J$ avec $i <_\sigma \smin J$.
On retrouve donc une définition analogue à celle de~\ref{DefDecompositionMacaulay}
en changeant l'ordre $<$ sur $\{1..n\}$ en~${<_\sigma}$.

\begin{defn}\label{DefMksigmaSksigma} \leavevmode

On désigne par $\Mmac_k^\sigma$ le sous-$\bfA$-module de $\rmK_k$
de base les~$X^\beta e_J$ avec $X^\beta \in \langle X_i^{d_i} \mid i
<_\sigma J \rangle$ et $\#J = k$.  
C'est un sous-$\bfA[\uX]$-module de type fini
et l'appartenance $X^\beta e_J \in \Mmac_k^\sigma$ 
peut-être réalisée par $\sminDiv(X^\beta) <_{\sigma} J$ 
(nécessairement ${X^\beta \in \Jex_1})$.

\smallskip
On note $\Smac_k^\sigma$ le supplémentaire monomial de $\Mmac_k^\sigma$ dans $\rmK_k$.
Il admet pour $\bfA$-base les monômes extérieurs non dans $\Mmac_k^\sigma$ 
de sorte que $\rmK_k = \Mmac_k^\sigma \oplus \Smac_k^\sigma$.

\smallskip
Pour $k=0$, on a 
$\bfA[\uX] =\Mmac_0^\sigma \oplus \Smac_0^\sigma$
avec $\Mmac_0^\sigma = \Mmac_0 = \langle X_1^{d_1}, \dots, X_n^{d_n}\rangle$ 
et $\Smac_0^\sigma = \Smac_0 = \bigoplus_{\alpha \preccurlyeq D-\mathds 1} \bfA X^\alpha$.
Pour ${k=n}$, on a $\Mmac^\sigma_n = 0$.
\end{defn}

\label {NOTA14-MmacSmacsigma}%
%
%

\begin{rmq}
Décrivons le module $\Mmac_k^\rho$ où $\rho : i \mapsto n-i+1$
est l'involution renversante de $\{1..n\}$.
Il est clair que $\rhomin(E) = \max(E)$ et que l'ordre $<_\rho$ est exactement l'ordre $>$ usuel.
On a alors l'équivalence
$$
X^\beta e_J \in \Mmac_k^\rho\ \Leftrightarrow \ \maxDiv(X^\beta) > \max J
$$
\end{rmq}

\begin{defn}
L'application $\bfA$-linéaire 
$\beta_k^\sigma(\uP) : \Smac_k^\sigma \rightarrow \Mmac_{k-1}^\sigma$ 
est l'induit-projeté de $\partial_k(\uP)$, 
\idest{} $\beta_k^\sigma(\uP) = \pi_{\Mmac_{k-1}^\sigma} \circ \partial_k(\uP) \circ \iota_{\Smac_k^\sigma}$.
\end{defn}

Les définitions et propriétés sont pratiquement inchangées par rapport
à~\ref{DecompositionMacaulay}.  Il faut juste faire attention au fait
que $\smin I$ n'est pas inférieur (au sens classique) aux éléments
de $I$. Il s'introduit donc des signes dans les différentielles.  On
peut prendre en charge ces signes en utilisant, pour $i \in I$, le produit
intérieur droit $e_I \intd e_i$ égal à $(-1)^{\varepsilon_i(I)} e_{I
  \setminus i}$ où $\varepsilon_i(I)$ est le nombre d'éléments de $I$
strictement inférieurs à~$i$ (pour l'ordre habituel $<$ sur les
entiers). Ceci a pour effet que l'isomorphisme $\varphi^\sigma$
de la proposition suivante n'est pas nécessairement monomial
comme dans le cas où $\sigma = \id$: l'image d'un monôme extérieur
par $\varphi^\sigma$ est un monôme extérieur \emph {au signe près}.

\begin{prop}
Pour le jeu étalon $\uX^D$, l'application $\beta_k^\sigma(\uX^D) : \Smac_k^\sigma \rightarrow \Mmac_{k-1}^\sigma$ réalise
$X^\alpha e_I  \, \mapsto\,  X^\alpha X_i^{d_i} e_I \intd e_i$ 
où $i = \smin I$. C'est un isomorphisme de $\bfA$-modules dont l'inverse $\varphi^\sigma$ est donné par 
$$
\varphi^\sigma : 
\begin{array}[t]{rcl}
\Mmac_{k-1}^\sigma & \longrightarrow & \Smac_k^\sigma \\ [0.3em]
X^\beta e_J & \longmapsto & 
\dfrac{X^\beta}{X_i^{d_i}} e_i \wedge e_J \quad 
\text{où $i = \sminDiv(X^\beta)$}
\end{array}
$$
\end{prop}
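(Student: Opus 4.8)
Le plan est de reproduire pas \`a pas la preuve de la proposition~\ref{varphiIso} (le cas $\sigma = \id$), en rempla\c{c}ant partout l'ordre naturel $<$ par l'ordre tordu $<_\sigma$, le m\'ecanisme de s\'election $\min$ par $\smin$ et $\minDiv$ par $\sminDiv$, la seule nouveaut\'e \'etant le contr\^ole des signes. Comme $\smin I$ n'est plus forc\'ement le plus petit \'el\'ement de $I$ pour l'ordre usuel des entiers, des signes $(-1)^{\varepsilon_i(I)}$ vont appara\^itre, ce qui explique d'ailleurs pourquoi l'\'enonc\'e est formul\'e avec le produit int\'erieur droit $e_I \intd e_i$ plut\^ot qu'avec $e_{I\setminus i}$. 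Je commencerais par r\'e\'ecrire la diff\'erentielle du complexe de Koszul du jeu \'etalon \`a l'aide de ce produit int\'erieur, de fa\c{c}on \`a porter les signes de mani\`ere transparente :
$$
\partial(\uX^D)(X^\alpha e_I) \ = \ \sum_{i \in I} X^\alpha X_i^{d_i}\,(e_I \intd e_i),
\qquad
e_I \intd e_i = (-1)^{\varepsilon_i(I)}\, e_{I \setminus i}.
$$

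Ensuite, pour $X^\alpha e_I \in \Smac_k^\sigma$, je calculerais $\beta_k^\sigma(\uX^D)(X^\alpha e_I) = \pi_{\Mmac_{k-1}^\sigma}\big(\partial(\uX^D)(X^\alpha e_I)\big)$ en d\'eterminant, parmi les termes de la somme ci-dessus, ceux qui tombent dans $\Mmac_{k-1}^\sigma$ (les autres tombant dans $\Smac_{k-1}^\sigma$). L'affirmation combinatoire centrale est que le seul terme survivant est celui index\'e par $i = \smin I$. En posant $X^\beta = X^\alpha X_i^{d_i}$, je distinguerais deux cas exactement comme dans \ref{varphiIso}. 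Si $i = \smin I$, alors $X^\beta e_{I\setminus i} \in \Mmac_{k-1}^\sigma$ car $i <_\sigma \smin(I\setminus i)$ et $\beta_i \geqslant d_i$ fournissent un indice de divisibilit\'e strictement $<_\sigma$ au $\sigma$-minimum du support restant (et ceci quel que soit le statut de $X^\alpha e_I$). Si au contraire $i \neq \smin I$, c'est-\`a-dire $\smin I <_\sigma i$, alors $\smin(I\setminus i) = \smin I$ ; utilisant l'hypoth\`ese $X^\alpha e_I \in \Smac_k^\sigma$ (donc $\alpha_j < d_j$ pour tout $j <_\sigma \smin I$) et le fait que tout $j <_\sigma \smin(I\setminus i) = \smin I$ v\'erifie $j \neq i$, d'o\`u $\beta_j = \alpha_j < d_j$, on obtient $X^\beta e_{I\setminus i} \in \Smac_{k-1}^\sigma$. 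La projection ne conserve donc que le terme $i = \smin I$, ce qui donne la formule annonc\'ee pour $\beta_k^\sigma(\uX^D)$.

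Il resterait \`a v\'erifier que $\varphi^\sigma$ est l'inverse bilat\`ere, et c'est l\`a que r\'eside l'unique (modeste) obstacle, \`a savoir la compensation des signes. Les deux implications ensemblistes qui gouvernent tout sont les analogues tordus de celles de \ref{varphiIso} :
$$
\sminDiv(X^\alpha) \geqslant_\sigma i \ \Rightarrow\ \sminDiv(X^\alpha X_i^{d_i}) = i,
\qquad\qquad
i <_\sigma \smin J \ \Rightarrow\ \smin(i \vee J) = i.
$$
Elles assurent d'une part que $\varphi^\sigma$ est bien \`a valeurs dans $\Smac_k^\sigma$ et d'autre part que supports et exposants se correspondent, de sorte que les deux compos\'es agissent comme l'identit\'e sur les mon\^omes ext\'erieurs. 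Le signe $(-1)^{\varepsilon_i(I)}$ engendr\'e par $e_I \intd e_i$ dans $\beta_k^\sigma(\uX^D)$ est exactement annul\'e par celui que produit $\varphi^\sigma$ lorsqu'il r\'eins\`ere $e_i$, gr\^ace \`a l'identit\'e \'el\'ementaire
$$
e_i \wedge (e_I \intd e_i) = e_I, \qquad\text{\'equivalemment}\qquad e_i \wedge e_{I \setminus i} = (-1)^{\varepsilon_i(I)}\, e_I \quad (i \in I).
$$
C'est pr\'ecis\'ement cette annulation qui fait de $\varphi^\sigma$ (monomial au signe pr\`es) un inverse de $\beta_k^\sigma(\uX^D)$ : dans le cas non tordu $i = \min I$ force $\varepsilon_i(I) = 0$ et aucun signe n'appara\^it, tandis qu'ici $\smin I$ peut occuper n'importe quelle position dans $I$ pour l'ordre naturel, et ce n'est qu'apr\`es compensation des signes que la bijection entre bases monomiales devient un isomorphisme de $\bfA$-modules.
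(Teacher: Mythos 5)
Votre preuve est correcte et suit exactement la m\^eme voie que celle du papier, qui se contente de renvoyer \`a la preuve de la proposition~\ref{varphiIso} pour la formule de $\beta_k^\sigma(\uX^D)$ et d'invoquer, pour l'inversibilit\'e, les deux identit\'es $e_i \wedge (e_I \intd e_i) = e_I$, $(e_i \wedge e_J) \intd e_i = e_J$ ainsi que les deux implications sur $\sminDiv$ et $\smin$ que vous utilisez. Vous ne faites qu'expliciter les d\'etails (analyse en deux cas de la projection, compensation des signes) que le papier laisse au lecteur.
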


\begin{proof}
On laisse le soin au lecteur de vérifier la formule donnée pour $\beta_k^\sigma(\uX^D)$
en s'inspirant de la preuve de la proposition~\ref{varphiIso}.
Pour montrer que les applications $\beta^\sigma(\uX^D)$ et $\varphi^\sigma$ sont inverses l'une de l'autre, 
il suffit d'utiliser les deux formules 
$$
e_i \wedge (e_I \intd e_i) = e_I\ \text{pour $i \in I$ }
\qquad \text{et } \qquad
(e_i \wedge e_J) \intd e_i = e_J \ \text{pour $i \notin J$}
$$
ainsi que les deux implications :
$$
\sminDiv(X^\alpha) \geqslant_\sigma i \ \Rightarrow \ 
\sminDiv\big(X^\alpha X_i^{d_i} \big) = i
\qquad \qquad \text{et} \qquad \qquad 
i <_\sigma J \ \Rightarrow \ 
\smin (i \vee J) = i
$$
\end{proof}

\label {NOTA14-varphisigma}%

\medskip

Les résultats relatifs à la décomposition de Macaulay ``ordinaire'', \idest{}
celle pour $\sigma = \id$, s'adaptent aisément à la
décomposition de Macaulay tordue par n'importe quel $\sigma \in \fS_n$.
Dans les sections ultérieures, nous en fournirons quelques uns sans preuve
comme par exemple celui qui suit.

\begin{defn}\label{DefBksigma}
Le $\bfA$-endomorphisme $B_k^\sigma(\uP)$ de $\Mmac^\sigma_{k-1}$
est défini par $B_k^\sigma(\uP) = \beta_k^\sigma(\uP) \circ \varphi^\sigma$.
Pour $X^\beta e_J \in \Mmac^\sigma_{k-1}$ avec $i = \sminDiv(X^\beta)$, on a
$$
\big(B_k^\sigma(\uP)\big) (X^\beta e_J)
\ = \ 
\pi_{\Mmac^\sigma_{k-1}} 
\Bigg( 
\dfrac{X^\beta}{X_i^{d_i}}\  P_i \ e_J 
\  + \ 
\sum_{j \in J} 
\dfrac{X^\beta}{X_i^{d_i}} \ 
P_j \ (e_i \wedge e_J) \intd e_j
\Bigg)
$$
En particulier, cet endomorphisme est normalisé au sens où pour le jeu étalon, 
on a $B_k^\sigma(\uX^D) = \Id$.
De plus, l'endomorphisme $B_1^\sigma$ de $\Mmac_0$ 
coïncide avec l'endomorphisme $W_1^\sigma$ de $\Jex_1 = \Mmac_0$.
\end{defn}

\index{B@les endomorphismes!2@$B^\sigma_k(\uP)$ de $\Mmac^\sigma_{k-1}$ où $\sigma\in\fS_n$}%
\index{B@les endomorphismes!2@$B^\sigma_{k,d}(\uP)$ de $\Mmac^\sigma_{k-1,d}$}%

\subsubsection{Les scalaires $\Delta^\sigma_{k,d}(\protect\uP)$ pour $\sigma\in\fS_n$}

A partir de la $\sigma$-décomposition de Macaulay
$(\Mmac^\sigma_{\sbullet,d}, \Smac^\sigma_{\sbullet,d})$ de
$\rmK_{\sbullet,d}(\uX^D)$ que nous venons de mettre en place,
ces scalaires sont définis de manière analogue aux $\Delta_{k,d}(\uP)$.
Le résultat suivant est le pendant de la proposition~\ref{DeltakdDiviseMineurs}
et, en ce qui concerne sa justification, le lecteur pourra consulter
le texte qui précède la proposition référencée.

\begin{prop} \label{DeltakdTorduDiviseMineurs}
Le scalaire $\Delta_{k+1,d}^\sigma$ divise tous les mineurs d'ordre $r_{k,d}$ de $\partial_{k,d}$
indexés par les colonnes de $\Smac_{k,d}^\sigma$.
En particulier, $\Delta_{2,d}^\sigma$ divise tous les mineurs d'ordre $s_d$ de 
$\Syl_d$ indexés par les colonnes de $\Smac_{1,d}^\sigma$.
\end{prop}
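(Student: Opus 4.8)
La preuve reprend \emph{mutatis mutandis} celle de la proposition~\ref{DeltakdDiviseMineurs}, en rempla\c{c}ant partout la d\'ecomposition de Macaulay ordinaire par sa version tordue $(\Mmac^\sigma_{\sbullet,d}, \Smac^\sigma_{\sbullet,d}, \varphi^\sigma)$, \idest{} l'ordre $<$ par $<_\sigma$ et le m\'ecanisme $\minDiv$ par $\sminDiv$. Comme \`a l'accoutum\'ee, il suffit d'\'etablir la divisibilit\'e comme identit\'e alg\'ebrique en les coefficients des $P_i$; on peut donc supposer $\uP$ g\'en\'erique, auquel cas le complexe $\rmK_{\sbullet,d}(\uP)$ est exact, a fortiori de Cayley, puis revenir au cas g\'en\'eral par sp\'ecialisation. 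Le point de d\'epart est de constater que le triplet $(\Mmac^\sigma_{\sbullet,d}, \Smac^\sigma_{\sbullet,d}, \varphi^\sigma)$ est bien une d\'ecomposition de Cayley au sens abstrait de la section~\ref{SousSectionLesDeltak}: c'est exactement le contenu de la proposition pr\'ec\'edente, qui affirme que $\beta^\sigma_k(\uX^D) : \Smac^\sigma_k \to \Mmac^\sigma_{k-1}$ est un isomorphisme d'inverse $\varphi^\sigma$. Toute la machinerie de la structure multiplicative s'applique alors sans retouche, car elle est \'enonc\'ee pour une d\'ecomposition quelconque et non seulement pour $\sigma = \id$.

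Ensuite, je fixerais un syst\`eme monomial d'orientations $\bfe$ de la $\sigma$-d\'ecomposition. D'apr\`es~\ref{Factorisation}, il d\'efinit de mani\`ere unique une suite de vecteurs $\Theta^\sigma_{k,d} \in \BW^{r_{k-1,d}}(\rmK_{k-1,d})$ avec $\Theta^\sigma_{n+1,d} = 1$ et la factorisation $\BW^{r_{k,d}}(\partial_{k,d}) = \Theta^\sigma_{k,d}\,(\Theta^\sigma_{k+1,d})^\sharp$. L'\'evaluation en $\bfe_{\Smac^\sigma_{k,d}}$ de cette \'egalit\'e matricielle fournit alors, via le point ii) de~\ref{SuiteDeltak} appliqu\'e \`a $\Delta^\sigma_{k+1,d}$, l'\'egalit\'e vectorielle
\[
\BW^{r_{k,d}}(\partial_{k,d})\big(\bfe_{\Smac^\sigma_{k,d}}\big) \ = \ \Theta^\sigma_{k,d}\,\Delta^\sigma_{k+1,d}.
\]

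De cette \'egalit\'e vectorielle se lit directement la divisibilit\'e cherch\'ee. D\'ecomposons en effet le membre de gauche sur la base monomiale $(e_S)_{\#S = r_{k,d}}$ de $\BW^{r_{k,d}}(\rmK_{k-1,d})$: par d\'efinition de la puissance ext\'erieure $\BW^{r_{k,d}}(\partial_{k,d})$, la coordonn\'ee sur $e_S$ du vecteur $\BW^{r_{k,d}}(\partial_{k,d})(\bfe_{\Smac^\sigma_{k,d}})$ est, au signe pr\`es, le mineur d'ordre $r_{k,d}$ de $\partial_{k,d}$ form\'e des colonnes index\'ees par $\Smac^\sigma_{k,d}$ et des lignes index\'ees par $S$. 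L'\'egalit\'e ci-dessus dit que chacune de ces coordonn\'ees vaut $\Delta^\sigma_{k+1,d}$ fois la coordonn\'ee correspondante de $\Theta^\sigma_{k,d}$; chaque tel mineur est donc multiple de $\Delta^\sigma_{k+1,d}$. Le cas particulier $k=1$, o\`u $r_{1,d} = s_d$ et $\partial_{1,d} = \Syl_d$, fournit l'assertion finale concernant $\Delta^\sigma_{2,d}$.

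Le seul point \`a surveiller est la pr\'esence de signes. Contrairement au cas $\sigma = \id$, l'isomorphisme $\varphi^\sigma$ n'est monomial qu'\emph{au signe pr\`es}, et les diff\'erentielles tordues font appara\^itre des $\pm 1$ via les produits int\'erieurs $e_I \intd e_i$. Ces signes modifient la valeur de chaque mineur et de chaque coordonn\'ee de $\Theta^\sigma_{k,d}$, mais non la relation de divisibilit\'e, qui y est insensible; c'est pourquoi l'argument passe sans changement. La v\'eritable substance de la preuve se r\'eduit ainsi \`a v\'erifier que la $\sigma$-d\'ecomposition est une d\'ecomposition de Cayley licite, la conclusion de divisibilit\'e s'en d\'eduisant m\'ecaniquement.
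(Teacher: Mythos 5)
Votre preuve est correcte et suit exactement la d\'emarche que le texte adopte : le papier se contente d'ailleurs de renvoyer, pour la justification, au texte pr\'ec\'edant la proposition~\ref{DeltakdDiviseMineurs} (factorisation de~\ref{Factorisation}, \'evaluation en $\bfe_{\Smac^\sigma_{k,d}}$ donnant l'\'egalit\'e vectorielle, lecture des mineurs comme coordonn\'ees), et c'est pr\'ecis\'ement ce que vous explicitez, mutatis mutandis, pour la d\'ecomposition tordue. Vos pr\'ecautions suppl\'ementaires (passage au g\'en\'erique puis sp\'ecialisation, insensibilit\'e de la divisibilit\'e aux signes introduits par $\varphi^\sigma$) sont pertinentes et compatibles avec le traitement du papier.
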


Voici un résultat important spécifique à $\Delta^\sigma_{1,d}$ qui n'a
pas d'équivalent pour les $\Delta^\sigma_{k,d}$ lorsque $k \ge 2$.

\begin {lem}
\label{EgaliteDelta1dDelta1dsigma}
Pour $\sigma \in \fS_n$ et $d \in \bbN$, on a $\Delta^\sigma_{1,d}(\uP) = \Delta_{1,d}(\uP)$.
\end {lem}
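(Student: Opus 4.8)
The plan is to reduce the identity to the generic case and then invoke the \emph{intrinsic} (decomposition-free) character of $\Delta_{1,d}$. First I would note that, for an arbitrary system $\uP$, both $\Delta_{1,d}(\uP)$ and $\Delta^\sigma_{1,d}(\uP)$ are by construction specializations of algebraic identities in the coefficients of $\uP$ (they are \emph{defined} as specializations of the generic situation, as discussed in the paragraph on the status of $\uP$). It therefore suffices to prove $\Delta^\sigma_{1,d}(\uP^\gen) = \Delta_{1,d}(\uP^\gen)$ for the generic system over $\bbZ$, the general case following by specializing $\uP^\gen$ to $\uP$. For generic $\uP$ the system is regular (cf.~\ref{SuiteGeneriqueReguliere}), so $\rmK_{\sbullet,d}(\uP)$ is exact, hence a Cayley complex to which the structure multiplicative applies.

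The heart of the argument is that in homological degree $0$ the Macaulay decomposition does not depend on $\sigma$: by Definition~\ref{DefMksigmaSksigma} one has $\Mmac^\sigma_{0,d} = \Mmac_{0,d} = \Jex_{1,d}$ and $\Smac^\sigma_{0,d} = \Smac_{0,d} = \Smac_0$. Consequently the $\id$-decomposition (via $\mu_d = \Theta_{1,d}^\sharp$, Proposition~\ref{mudDelta2dExpression}) and the $\sigma$-decomposition (via $\mu^\sigma_d = (\Theta^\sigma_{1,d})^\sharp$) produce two generators of one and the same intrinsic rank-$1$ module $\CayleyVect(\rmK_{\sbullet,d})$ defined in~\ref{CayleyDetVectoriel}; moreover the normalization of each generator is rigidified by the orientation of its degree-$0$ supplement $\Smac_{0,d} = \Smac^\sigma_{0,d}$ alone (Lemma~\ref{MultiplicateursOrientationsDecomposition}), a datum common to both decompositions. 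Hence $\mu_d$ and $\mu^\sigma_d$ differ by a unit $\varepsilon$ of the coefficient ring $\bfA^\gen = \bbZ[\indetsPi]$; as this ring is a polynomial ring over $\bbZ$, we get $\varepsilon = \pm 1$. Since $\Delta_{1,d}$ and $\Delta^\sigma_{1,d}$ are read off from $\mu_d$ and $\mu^\sigma_d$ by the same intrinsic recipe $\Det_d(\iota)$, with $\iota : \calS_{0,d} \hookrightarrow \bfA[\uX]_d \twoheadrightarrow \bfB_d$ and no mention of $\sigma$ (cf.~\ref{IntroAproposDetd}), they too differ by $\varepsilon = \pm 1$.

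To pin down the sign I would specialize $\uP^\gen$ to the jeu \'etalon $\uX^D$. There $\det B_{k,d}(\uX^D) = \det B^\sigma_{k,d}(\uX^D) = 1$ (Proposition~\ref{BkPropriete} and Definition~\ref{DefBksigma}), so descending from $\Delta_{n+1,d} = 1$ through $\det B_{k,d} = \Delta_{k,d}\,\Delta_{k+1,d}$ and its $\sigma$-analogue yields $\Delta_{1,d}(\uX^D) = \Delta^\sigma_{1,d}(\uX^D) = 1$. This forces $\varepsilon = 1$, whence $\Delta^\sigma_{1,d}(\uP^\gen) = \Delta_{1,d}(\uP^\gen)$, and the lemma follows.

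I expect the main obstacle to be the middle step: rigorously identifying $\mu_d$ and $\mu^\sigma_d$ as generators of the \emph{same} intrinsic Cayley module and verifying that their normalization feels only the $\sigma$-independent degree-$0$ layer. Once that is secured, the reduction to the generic case and the final specialization to the jeu \'etalon are exactly the routine mechanism already used in the proofs of Proposition~\ref{mu_d=R_d} ($\Delta_{1,d} = \calR_d$) and Proposition~\ref{mu_delta=omegares} ($\mu_\delta = \omegares$).
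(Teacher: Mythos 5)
Your proof is correct, but it follows a genuinely different route from the paper's. The paper disposes of the lemma in two lines: the Macaulay decomposition and its $\sigma$-twist are two decompositions of $\rmK_{\sbullet,d}(\uP)$ that coincide in homological degree $0$ (both equal $\rmK_{0,d} = \Jex_{1,d} \oplus \Smac_{0,d}$), and the rigidity lemma~\ref{RigidityLemma} then yields $\Delta^\sigma_{1,d} = \Delta_{1,d}$ directly --- no genericity, no sign-chasing, valid for any regular $\uP$ over any coefficient ring. You instead bypass the rigidity lemma: after reducing to the generic case, you observe that $\mu_d$ and $\mu^\sigma_d$ are two generators of the same intrinsic module $\CayleyVect(\rmK_{\sbullet,d})$, hence differ by a unit $\varepsilon = \pm 1$ of $\bbZ[\indetsPi]$; that the scalars $\Delta_{1,d}$ and $\Delta^\sigma_{1,d}$ are obtained by evaluating these forms at the same degree-$0$ datum (since $\Smac^\sigma_{0,d} = \Smac_{0,d}$); and you pin down the sign by specializing to the jeu \'etalon, where both scalars equal $1$ --- exactly the mechanism of Propositions~\ref{mu_d=R_d} and~\ref{mu_delta=omegares}, as you note yourself. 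What each approach buys: the paper's argument is shorter and uniform once the rigidity lemma is available; yours trades that lemma for the intrinsic character of the Cayley/MacRae invariant plus the generic/specialization machinery already running throughout the chapter, at the cost of requiring the coefficient ring $\bbZ[\indetsPi]$ and an extra normalization step. Both proofs hinge on the same key observation, namely the $\sigma$-independence of the degree-$0$ layer of the decomposition. One small polish: your phrase ``read off by the intrinsic recipe $\Det_d(\iota)$'' is a forward reference to section~\ref{CanonicalCayleyDetSection}, which itself quotes the present lemma; to avoid any appearance of circularity, cite instead the equality $\mu_d(\bfe_{\Smac_{0,d}}) = \Delta_{1,d}$, i.e.\ point ii) of Proposition~\ref{SuiteDeltak}, which is what your evaluation argument actually uses.
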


\begin {proof}
On dispose de deux décompositions du complexe $\rmK_{\sbullet,d}(\uP)$: celle de Macaulay et celle
de Macaulay tordue par $\sigma$. Elles coïncident en degré homologique 0 puisqu'égales à
$\rmK_{0,d} = \Jex_{1,d} \oplus \Smac_{0,d}$, ce qui permet d'appliquer le lemme de
rigidité~\ref{RigidityLemma}.
\end {proof}

\subsubsection*{L'exemple $D=(4,4,4)$, $d=8$ permettant d'illustrer les $\sigma$-décompositions de Macaulay}

Nous avons choisi cet exemple qui nous semble pertinent pour plusieurs raisons. D'une part,
$d=8$ est juste en dessous de $\delta=9$ et d'autre part, on a $d = d_i + d_j$ de sorte que
$(e_{12}, e_{13}, e_{23})$ est une base~$\rmK_{2,d}$. Ce dernier module est donc de dimension 3.
Voici d'autres dimensions utiles:
$$
\dim \bfA[\uX]_d = \binom{n+d-1}{d} = \binom{10}{8} = \binom{10}{2} = 45
$$
La matrice de Sylvester $\Syl_d(\uP)$ est carrée puisque:
$$
\binom{n+d-d_i-1}{d-d_i} = \binom{3+4-1}{4} = \binom{6}{4} = 15,
\qquad
\dim \rmK_{1,d} = \sum_{i=1}^n \binom{n+d-d_i-1}{d-d_i} = 3 \times 15 = 45
$$
Comme $d=\delta-1$, la base monomiale du supplémentaire monomial $\Smac_{0,d}$
de $\Jex_{1,d}$ est constituée des $X^\emouton/X_i$ pour $i=1,2,3$.

\smallskip

Dans le schéma, chaque nombre indique la dimension de l'espace en dessous ou en dessus:
$$
\def \egal{\rotatebox{90}{=}}
\xymatrix @H=0pt @R=2pt @C=4pc {
3                     &45                                  &45  \\  
\rmK_{2,d}\ar[r]^-{\partial_{2,d}(\uP)}  &\rmK_{1,d}\ar[r]^-{\partial_{1,d}(\uP)} &\rmK_{0,d} = \bfA[\uX]_d  \\
\egal                 &\egal                                &\egal \\
\Mmac_{2,d}=0  &\Mmac_{1,d}\ar[ddl]_{\textstyle\varphi}^{\simeq} &\Mmac_{0,d}=\Jex_{1,d}\ar[ddl] _{\textstyle\varphi}^{\simeq} \\
\oplus                &\oplus                               &\oplus \\
\Smac_{2,d}            &\Smac_{1,d}                          &\Smac_{0,d} \\
(e_{12},e_{13},e_{23})  &42                                   &(X_1^3X_2^3X_3^2,\,X_1^3X_2^2X_3^3,\,X_1^2X_2^3X_3^3) \\
}
$$
La base monomiale de $\Mmac_{1,d}$ est constituée des 3 monômes $X_i^4 e_j$ avec $i < j$
et $\varphi(X_i^4 e_j) = e_{ij}$.

\medskip

Examinons les $\sigma$-décompositions tordues par $\sigma \in \fS_3$. Tout d'abord,
$\Smac^\sigma_{0,d} = \Smac_{0,d}$ pour tout $\sigma$. Quant à $\Mmac^\sigma_{1,d}$, sa base
monomiale est formée des $X_i^4e_j$ avec $i <_\sigma j$ où l'on  rappelle que
$\sigma(1) <_\sigma \sigma(2) <_\sigma \sigma(3)$. Prenons par exemple $\sigma = (1,2,3)$:
$$
2 <_\sigma 3 <_\sigma 1, \qquad
\xymatrix {
(X_2^4e_1, X_3^4e_1, X_2^4e_3) \ar[r]^-{\textstyle\varphi^\sigma} & (-e_{12},\  -e_{13}, e_{23})
}
$$
Un autre exemple pour $\sigma = (1,3,2)$:
$$
3 <_\sigma 1 <_\sigma 2, \qquad
\xymatrix {
(X_3^4e_1, X_1^4e_2, X_3^4e_2) \ar[r]^-{\textstyle\varphi^\sigma} & (-e_{13},\  e_{12}, -e_{23})
}
$$
Faisons maintenant débarquer un système $\uP$ de format $D = (4,4,4)$ dont nous
numérotons les coefficients sur le modèle
$$
P_1 = a_1X_1^4 + a_2X_2^4 + a_3X_3^4 + \cdots
$$
Idem pour $P_2$ avec $b_\sbullet$ et $P_3$ avec $c_\sbullet$.

L'endomorphisme $B_{3,d}$ opère sur $\Mmac_{2,d}=0$, donc est de déterminant 1. Idem pour
la version tordue par $\sigma \in \fS_3$, donc $\Delta^\sigma_{3,d} = \det B^\sigma_{3,d} = 1$.
D'où:
$$
\det B^\sigma_{2,d} = \Delta^\sigma_{2,d} \Delta^\sigma_{3,d} = \Delta^\sigma_{2,d}
$$
Les $\Delta^\sigma_{2,d}$ se calculent donc comme le déterminant de
l'endomorphisme $B^\sigma_{2,d}$ de dimension 3.  Notons $\partial_2$
la différentielle du complexe de Koszul de $\uP$ définie par
$\partial_2(e_{ij}) = P_ie_j-P_je_i$.  Pour déterminer $B_{2,d}$ sur
les $X_i^4e_j$ de $\Mmac_{1,d}$, on calcule d'abord $\varphi(X_i^4e_j)
= e_{ij}$ puis
$$
\pi_{\Mmac_{1,d}} (P_ie_j-P_je_i)
$$
Par exemple:
$$
B_{2,d}(X_1^4e_2) = \pi_{\Mmac_{1,d}} (P_1e_2-P_2e_1) = a_1X_1^4 e_2, \quad
B_{2,d}(X_1^4e_3) = \pi_{\Mmac_{1,d}} (P_1e_3-P_3e_1) = (a_1X_1^4 + a_2X_2^4) e_3
$$
Idem pour les $\sigma$-versions. On voit ainsi que $B^\sigma_{2,d}$ ne dépend que des
coefficients en les $X_j^4$ des $P_i$.

Ainsi pour:
$$
\sigma = \Id_3,\qquad\qquad
\sigma = (1,2,3),\qquad\qquad
\sigma = (1,3,2)
$$
on trouve dans l'ordre pour les $B^\sigma_{2,d}$:
$$
\EastBordermatrix{
a_{1} & . & -c_{1} & \Heti{X_{1}^{4}\,e_{2}} \\ 
. & a_{1} & b_{1} & \Heti{X_{1}^{4}\,e_{3}} \\ 
. & a_{2} & b_{2} & \Heti{X_{2}^{4}\,e_{3}} \\ 
}
\qquad
\EastBordermatrix{
b_{2} & c_{2} & . & \Heti{X_{2}^{4}\,e_{1}} \\ 
b_{3} & c_{3} & . & \Heti{X_{3}^{4}\,e_{1}} \\ 
. & -a_{2} & b_{2} & \Heti{X_{2}^{4}\,e_{3}} \\ 
}
\qquad
\EastBordermatrix{
c_{3} & -b_{3} & . & \Heti{X_{3}^{4}\,e_{1}} \\ 
. & a_{1} & c_{1} & \Heti{X_{1}^{4}\,e_{2}} \\ 
. & a_{3} & c_{3} & \Heti{X_{3}^{4}\,e_{2}} \\ 
}
$$
Voici une \emph {recette} pour calculer $\Delta^\sigma_{2,d}$. On définit
$$
U = (u_{ij}) = \begin {bmatrix} a_1 & b_1 &c_1 \\ a_2 & b_2 &c_2 \\ a_3 & b_3 &c_3 \\ \end {bmatrix}
\qquad
\delta_{i,j} = u_{ii} \begin {vmatrix} u_{ii} & u_{ij} \\ u_{ji} & u_{jj} \\ \end {vmatrix}
\qquad
D_\sigma = \delta_{\sigma(1),\sigma(2)}
$$
Par exemple:
$$
D_{\Id_3} =
\delta_{1,2} = a_1 \begin {vmatrix} a_1 & b_1 \\ a_2 & b_2 \\ \end {vmatrix}
\qquad
D_{(1,2,3)} =
\delta_{2,3} = b_2 \begin {vmatrix} b_2 & c_2 \\ b_3 & c_3 \\ \end {vmatrix}
\qquad
D_{(1,3,2)} =
\delta_{3,1} = c_3 \begin {vmatrix} c_3 & a_3 \\ c_1 & a_1 \\ \end {vmatrix}
$$
On vérifie sur les 3 exemples que $\Delta^\sigma_{2,d} = D_\sigma$ et c'est également vrai pour les 3 transpositions
de $\fS_3$.

\bigskip

Quid de $\Delta_{1,d}$? On verra en \ref{poidsDelta1} que, de manière générale,  il est homogène en $P_i$ de poids
$\dim \Jex_{1\setminus2,d}^{(i)}$. Ici, dans le cadre du format $D = (4,4,4)$, ces dimensions sont indépendantes de $i$.
La base monomiale de $\Jex_{1\setminus2,d}^{(3)}$ est constituée des
$$
X_1^{i_1} X_2^{i_2} X_3^{d-(i_1+i_2)}  \qquad   0 \le i_1,i_2 \le 4-1, \quad  i_1+i_2 \le d-4
$$
Pour $d=8$, parmi les $(i_1,i_2) \in \{0,\dots,4-1\}^2$, il faut
enlever les 3 couples $(2,3), (3,2), (3,3)$ tels que $i_1 + i_2 > d-4$. Ce qui donne
$\dim\Jex_{1\setminus2,d}^{(3)} = 4^2 - 3 = 13$. Bilan: $\Delta_{1,d}$ est homogène de poids 13 en chaque~$P_i$.

\medskip

En ce qui concerne le calcul, de manière générale, on a $\det
W^\sigma_{1,d} = \Delta_{1,d} \Delta^{\sigma}_{2,d}$ ce qui permet de
déterminer $\Delta_{1,d}$ via plusieurs expressions indexées par
$\sigma \in \fS_n$:
$$
\Delta_{1,d} = \dfrac{\det W^\sigma_{1,d}}{\Delta^\sigma_{2,d}}
$$
\emph{Ici}, en générique, les $3! = 6$ dénominateurs sont distincts. Que dire des numérateurs?
Pas question de montrer ni $W^\sigma_{1,d}$ de dimension 42,  ni son déterminant y compris quand
les $P_i$ sont réduits à leurs termes en $X_j^4$ et les $a_\sbullet, b_\sbullet, c_\sbullet$ 
génériques. Par contre, en spécialisant beaucoup de coefficients à 0, par exemple:
$$
\uP = [X_1^4,\, X_2^4,\, X_3^4]\,U  \qquad \text{où} \qquad
U = \begin {bmatrix} 0 & b_1 &c_1 \\ 0 & b_2 &0 \\ a_3 & b_3 &c_3 \\ \end {bmatrix}
$$
on obtient à l'aide d'un système de Calcul Formel les expressions suivantes pour $\Delta_{1,d}$:
$$
\dfrac{0}{D_{\Id_3}=0} =
\dfrac{-a_3^{13}b_2^{15}c_1^{13}c_3}{D_{(1,2,3)}= b_2^2c_3} =
\dfrac{-a_3^{14}b_2^{13}c_1^{14}c_3}{D_{(1,3,2)}= -a_3c_1c_3} =
\dfrac{0}{D_{(2,3)}=0} =
\dfrac{0}{D_{(1,2)}=0} =
\dfrac{-a_3^{14}b_2^{14}c_1^{13}c_3^2}{D_{(1,3)}= b_2c_3^2}
$$
Les formes non indéterminées fournissent $\Delta_{1,d} = (-a_3b_2c_1)^{13} = \det(U)^{13}$, conforme au poids.
Et c'est ce que l'auteur souhaitait obtenir: la nullité de certains dénominateurs (en particulier pour
$\sigma = \Id_3$) et le fait que les autres soient distincts.

\smallskip

Ce qui n'était absolument pas prévu c'est que ce petit exemple
conduise à faire apparaître, pour tout système $\uP$ de format $D =
(q,q,\dots,q)$ de la forme
$$
\uP = [X_1^q,\, X_2^q,\, \cdots,\, X_n^q]\, U \qquad U \in \bbM_n(\bfA)
$$
une formule close pour $\Delta_{1,d}(\uP)$:
$$
\Delta_{1,d}(\uP) = \det(U)^{e_{q,d}} \qquad
\text{où $e_{q,d}$ est la dimension commune des $\Jex_{1\setminus2,d}^{(i)}(D)$}
$$
Et cette dimension commune $e_{q,d}$ est le nombre de points entiers du polytope de dimension $n-1$
défini par les inégalités:
$$
0 \le i_1 \le q-1,\  0 \le i_2 \le q-1,\ \dots,\   0 \le i_{n-1} \le q-1, \qquad
i_1 + \cdots + i_{n-1} \le d-q
$$
Pour $d \ge \delta(D)+1$, ce polytope est le cube
$[0,\dots,q-1]^{n-1}$ de dimension $n-1$ donc $e_{q,d} = q^{n-1}$.
On peut également utiliser le fait que pour $d \ge \delta(D) + 1$,
on a $\dim\Jex_{1\setminus2,d}^{(i)}(D) = \widehat d_i \overset{\rm ici}{=}
q^{n-1}$.

\smallskip

De manière plus générale, en remplaçant $(X_1^q, \dots, X_n^q)$ par un
système $\uQ$ où les $Q_i$ sont de même degré~$q$, nous disposons
d'une règle de transformation de $\Delta_{1,d}$ qui généralise celle
du résultant de la proposition \ref{PtimesU}.
Nous l'énonçons sans preuve et nous laissons le soin au
lecteur d'y \emph{réfléchir} en commençant par $U$ diagonale.

\begin {prop}
Soit $\uQ$ un système de format $D = (q,\dots,q)$ et $U \in \bbM_n(\bfA)$.
Notons $\uQ\,U$ le système de même format défini par $[Q_1,\dots,Q_n]\,U$.
Alors pour tout $d$, en conservant la notation~$e_{q,d}$:
$$
\Delta_{1,d}(\uQ\, U) = \det(U)^{e_{q,d}}\ \Delta_{1,d}(\uQ)  
$$
\end {prop}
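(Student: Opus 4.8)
The plan is to recognise $\Delta_{1,d}$ as an intrinsic invariant of the complex $\rmK_{\sbullet,d}(\uP)$ and to transport it by a linear change of the Koszul generators $e_i$. First I would pass to the generic situation: take $\uQ=\uQ^{\gen}$ generic of format $D=(q,\dots,q)$ over $\bbZ$ and $U=(u_{ij})$ a matrix of $n^2$ independent indeterminates. Both sides of the asserted equality are then polynomials in these data, and $\bfA$ is a domain, so it suffices to prove the identity on the dense locus where $\det U$ is invertible; it then specialises to arbitrary $\uQ$ and $U$, since both $\Delta_{1,d}(\uQ)$ and $\Delta_{1,d}(\uQ\,U)$ are, by construction, specialisations of the generic $\Delta_{1,d}$.

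For $U$ invertible, introduce the $\bfA[\uX]$-linear automorphism $\phi_U$ of $\bfA[\uX]^n$ with matrix $U$, namely $\phi_U(e_j)=\sum_i u_{ij}e_i$, which is homogeneous of degree $0$ once each $e_i$ is placed in degree $q$. Writing $\mu_{\uP}=\sum_i P_i e_i^\star$ for the Koszul form, a direct computation gives $\mu_{\uQ}\circ\phi_U=\mu_{\uQ\,U}$; combined with the naturality of the right interior product, $\BW(\phi_U)(\bfx)\intd\mu=\BW(\phi_U)\big(\bfx\intd(\mu\circ\phi_U)\big)$, this shows that $\BW(\phi_U)$ is an isomorphism of complexes
$$\BW(\phi_U):\ \rmK_\sbullet(\uQ\,U)\ \xrightarrow{\ \sim\ }\ \rmK_\sbullet(\uQ).$$
Taking the homogeneous component of degree $d$ and identifying $\rmK_{k,d}=\bfA[\uX]_{d-kq}\otimes\BW^k(\bfA^n)$, the comparison map in homological degree $k$ is $\id_{\bfA[\uX]_{d-kq}}\otimes\BW^k(U)$, whose determinant is $(\det U)^{\binom{n-1}{k-1}\,\dim\bfA[\uX]_{d-kq}}$, a pure power of $\det U$.

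Since $\Delta_{1,d}$ is intrinsic (independent of any selection mechanism), it is, up to its normalisation, the MacRae invariant of the exact complex $\rmK_{\sbullet,d}(\uQ)$. The structure-multiplicative theory of chapter \ref{ChapStructureMultiplicative} governs how this invariant transforms under an isomorphism of complexes: by the alternating product $\prod_k(\det\psi_{k,d})^{(-1)^k}$ of the determinants of the comparison maps $\psi_{k,d}$. Each factor being a power of $\det U$, I obtain $\Delta_{1,d}(\uQ\,U)=(\det U)^E\,\Delta_{1,d}(\uQ)$ for a single integer $E$ depending only on $n,q,d$. Rather than evaluate the binomial sum $E=\sum_k(-1)^k\binom{n-1}{k-1}\dim\bfA[\uX]_{d-kq}$, I would read off $E$ from the diagonal specialisation $U=\diag(t,1,\dots,1)$: there $\uQ\,U=(tQ_1,Q_2,\dots,Q_n)$, and by the homogeneity of $\Delta_{1,d}$ in $P_1$ of weight $\dim\Jex_{1\setminus2,d}^{(1)}=e_{q,d}$ (proposition \ref{poidsDelta1}) the left side equals $t^{e_{q,d}}\Delta_{1,d}(\uQ)$, while $\det U=t$; hence $E=e_{q,d}$. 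Specialising back yields the claim for all $\uQ$ and $U$, consistently with the étalon case $\uQ=\uX^D$ (where $\Delta_{1,d}(\uX^D)=1$ recovers $\Delta_{1,d}([X_1^q,\dots,X_n^q]U)=(\det U)^{e_{q,d}}$ observed before the statement) and generalising the resultant rule \ref{PtimesU}.

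The hard part will be the precise transformation law of the MacRae/Cayley invariant under a chain isomorphism, together with its normalisation: one must verify that transporting the complex by $\BW(\phi_U)$ — a map that respects neither the monomial Macaulay decomposition nor the selection mechanism $\minDiv$ — nevertheless multiplies the normalised intrinsic invariant exactly by the alternating product of comparison determinants, compatibly with $\Delta_{1,d}(\uX^D)=1$. A pleasant feature of the weight specialisation is that it determines $E$, and even settles its sign, without any need to compute the alternating binomial sum, so the argument is insensitive to the precise sign convention in that transformation law.
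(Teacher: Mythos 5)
The paper itself states this proposition \emph{without proof} --- it is explicitly left to the reader, with the hint to begin with $U$ diagonal --- so there is no proof of record to compare yours against; what you propose would fill that gap, and its architecture is sound. The reduction to the generic case localised at $\det U$ is legitimate (both sides are polynomials, $\det U$ is a regular element of a polynomial ring over $\bbZ$, and $\Delta_{1,d}$ of an arbitrary system is, by definition, the specialisation of the generic one); the chain isomorphism $\BW(\phi_U) : \rmK_{\sbullet,d}(\uQ\,U) \to \rmK_{\sbullet,d}(\uQ)$ is correct and is exactly where the hypothesis $D=(q,\dots,q)$ enters, since otherwise $\phi_U$ is not graded of degree $0$; and your use of the weight theorem~\ref{poidsDelta1} on $U=\diag(t,1,\dots,1)$ to identify the exponent is precisely the paper's hint, upgraded so as to settle the general case.

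The step you flag as the hard part --- the transformation law of the normalised invariant under a chain isomorphism --- is indeed the crux, and you do not prove it; but it is true, and it can be closed in two ways. Directly: take the same monomial orientations on both complexes and use uniqueness of the factorisations $\BW^{r_{k,d}}(\partial_{k,d}) = \Theta_{k,d}\,\Theta_{k+1,d}^\sharp$ (available since $\Gr(\Theta_{k+1,d})\geqslant 1$ in the generic localised setting); a descending induction from $\Theta_{n+1,d}=1$ gives $\Theta'_{k,d} = c_k\,\BW^{r_{k,d}}(\psi_{k-1,d})^{-1}(\Theta_{k,d})$ with $c_{n+1}=1$ and $c_k = \det(\psi_{k,d})/c_{k+1}$; since $\psi_{0,d} = \id_{\bfA[\uX]_d}$, reading the coordinate on $\bfe_{\Jex_{1,d}}$ yields $\Delta_{1,d}(\uQ\,U) = c_1\,\Delta_{1,d}(\uQ)$ with $c_1 = \prod_{k\geqslant 1}\det(\psi_{k,d})^{(-1)^{k-1}}$, as you assert. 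Alternatively, and more cheaply, you can bypass the transformation law altogether: for $U$ invertible one has $\langle \uQ\,U\rangle_d = \langle \uQ\rangle_d$, so the two complexes resolve the \emph{same} module $\bfB_d$; by the intrinsic character of $\MacRaeVect(\bfB_d)$, the two normalised Cayley forms generate the same rank-one submodule of $\BW^{\chi_d}(\bfA[\uX]_d)^\star$, hence differ by a unit of $\bbZ[\coeff(\uQ),u_{ij}][1/\det U]$, i.e.\ by $\pm(\det U)^m$ ($\det U$ being irreducible); your diagonal specialisation then fixes both the sign and $m=e_{q,d}$. As a consistency check, the explicit exponent produced by the first route,
$$
\sum_{k\geqslant 1}(-1)^{k-1}\binom{n-1}{k-1}\,\dim\bfA[\uX]_{d-kq}\,,
$$
does equal $e_{q,d}$: this is the coefficient extraction in the series of $\Jex^{(i)}_{1\setminus 2}$ of~\ref{SerieJ1minusJ2i} when all $d_j=q$, so the two determinations of the exponent agree.
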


\bigskip

Revenons à l'exemple en anticipant sur le prochain chapitre.
Dans la base $(X_1^4X_2^4, X_1^4X_3^4, X_2^4X_3^4)$ de~$\Jex_{2,d}$,
voici les $W^\sigma_{2,d}$ pour les trois permutations $\sigma$ utilisées
précédemment:
$$
\left[
\begin{array}{*{3}{c}}
a_{1}& .& . \\ 
.& a_{1}& b_{1} \\ 
a_{3}& a_{2}& b_{2} \\ 
\end{array}
\right]
\qquad
\left[
\begin{array}{*{3}{c}}
b_{2}& c_{2}& . \\ 
b_{3}& c_{3}& b_{1} \\ 
.& .& b_{2} \\ 
\end{array}
\right]
\qquad
\left[
\begin{array}{*{3}{c}}
a_{1}& c_{2}& c_{1} \\ 
.& c_{3}& . \\ 
a_{3}& .& c_{3} \\ 
\end{array}
\right]
$$
En ce qui concerne $W^\sigma_{2,d}$ versus $B^\sigma_{2,d}$, il y a quelque chose à remarquer: quoi?

\subsection {Structure triangulaire de $B_{k,d}(\protect\uQ)$ sur le jeu circulaire $\protect\uQ$ lorsque $k\geqslant 2$}
\label{sousSectionBkdJeuCirculaire}

L'objectif est de démontrer, pour $k \ge 2$ et pour tout $d$, que $\det B_{k,d}(\uQ) = 1$ où
$\uQ$ est le jeu circulaire mis en place dans le chapitre \ref{ChapJeuCirculaire} et défini
par $Q_i = X_i^{d_i} - X_i^{d_i-1} X_{i+1}$. Ceci permettra d'en déduire que $\Delta_{k,d}(\uQ) = 1$
puis d'en tirer, pour le jeu générique $\uP$, le fait que $\Delta_{k,d}(\uP)$
est régulier sur $\bfA[\uX]/\uPsat$.

\medskip

Cela va être commode de généraliser légèrement la définition
$B_{k,d}(\uP)$ en faisant intervenir n'importe quel sous-module
monomial $\calM$ de $\Mmac_{k-1,d}$. Chaque terme
$\rmK_k(\uP)$ du complexe de Koszul de~$\uP$ est un $\bfA$-module
libre de base les monômes extérieurs $X^\alpha e_I$ avec $\#I = k$,
ce qui permet de définir le projecteur~$\pi_\calN$ de $\rmK_k(\uP)$ pour
n'importe quel sous-module monomial $\calN$ de $\rmK_k(\uP)$:
$$
\pi_\calN (X^\alpha e_I) = \begin {cases}
X^\alpha e_I &\text{si $X^\alpha e_I \in \calN$} \\
0           &\text{sinon} \\
\end{cases}
$$

\begin {defn}

Pour un sous-module monomial $\calM$ de $\Mmac_{k-1}$, on définit l'endomorphisme
$B_\calM(\uP)$ de $\calM$ de la manière suivante:
$$
B_\calM(\uP) = \pi_\calM \circ B_k(\uP)_{|\calM} \overset{\rm def.} =
\pi_\calM \circ \partial_k(\uP) \circ \varphi_{|\calM}
$$
où l'on rappelle d'une part que $X^\beta
e_J\in \Mmac_{k-1} \Rightarrow X^\beta \in \Jex_1$ et d'autre part
que $\varphi : \Mmac_{k-1} \rightarrow \Smac_k$ est l'isomorphisme
monomial de référence donné par $\varphi(X^\beta e_J)
= \frac{X^\beta}{X_i^{d_i}} e_i \wedge e_J$ avec $i
= \minDiv(X^\beta)$.  Et comme $i < J$
par définition de $\Mmac_{k-1}$, on obtient un résultat
analogue à celui donné pour $B_{k,d}(\uP)$ dans la
proposition \ref{ExpressionBkXbetaeJ}:
$$
B_\calM(\uP)(X^\beta e_J)
\ = \ 
\pi_{\calM} 
\Bigg( 
\dfrac{X^\beta}{X_i^{d_i}}\  P_i \ e_J 
\quad + \quad 
\sum_{j \in J} 
\pm\, \dfrac{X^\beta}{X_i^{d_i}} \ 
P_j \ e_{i \vee J \setminus j}
\Bigg)
\qquad 
\text{où $i = \minDiv(X^\beta)$}
$$
\end {defn}

\index{B@les endomorphismes!3@$B_\calM(\uP)$ de $\calM$, monomial de Macaulay}%

\medskip

Au lieu de considérer le jeu circulaire, cela ne coûte pas plus cher
de le remplacer par le jeu circulaire généralisé intervenant dans le chapitre
\ref{ChapJeuCirculaire}, désigné par le même symbole $\uQ$ et
défini au dessus de $\bbZ[p_1, \dots, p_n, q_1, \dots, q_{n}]$ 
par 
$$
Q_i \ =\ p_i X_i^{d_i} \, + \, q_i X_i^{d_i-1}X_{i+1}
$$ 
Pour $i \in \DivSeq(X^\alpha)$:
$$
\dfrac{X^\alpha}{X_i^{d_i}} \, Q_i \ = \ 
p_i\, X^\alpha \, + \, 
q_i\,\dfrac{X^\alpha}{X_i} \,X_{i+1} 
$$
De manière à pouvoir utiliser le cadre de la
définition~\ref{DecompositionTriangulaireDef}, on a
besoin de spécifier des structures d'ordre sur les $X^\beta e_J$
avec $\#J = k-1$. Tout d'abord, on fixe 
un ordre monomial sur $\bfA[\uX]$ tel que $X_1 > \cdots > X_n$.

Concernant les parties $J$ de cardinal $k-1$ de $\{1..n\}$, on
assimile $J$ à la suite croissante de ses éléments, ce qui permet de
définir deux relations d'ordre. Une première d'ordre partiel, notée
$J' \preceq J$, a la signification que $J'$ est inférieure \emph
{terme à terme} à la suite $J$.  Par exemple,
$\{1,3,4\} \prec \{2,3,5\}$.  La seconde est la relation d'ordre
(total) lexicographique notée $J' \preceq_{\rm lex} J$. Il est clair que
$J' \preceq J \Rightarrow J' \preceq_{\rm lex} J$.

Et on décrète que:
$$
X^{\beta'}e_{J'} \leqslant \ X^\beta e_J 
\quad \text{si} \quad
\left\{
\begin{array} {l}
J' \prec J \text{ où $\prec$ est la relation d'ordre partiel ci-dessus}
\\
\text {ou bien} \\
J' = J \text{ et $X^{\beta'} \leqslant X^\beta$  au sens de l'ordre monomial fixé}
\end{array}
\right.
$$
Cette relation d'ordre partiel est couverte par la relation d'ordre total
$X^{\beta'}e_{J'} \leqslant_{\rm lex} \ X^\beta e_J$ définie par:
$$
X^{\beta'}e_{J'} \leqslant_{\rm lex} \ X^\beta e_J 
\quad \text{si} \quad
\left\{
\begin{array} {l}
J' \prec_{\rm lex} J
\\
\text {ou bien} \\
J' = J \text{ et $X^{\beta'} \leqslant X^{\beta}$}
\end{array}
\right.
$$
On peut alors énoncer l'analogue du résultat \ref{WkdQTriangSup} consacré à
$W_\calM$ pour $\calM \subset \Jex_{2,d}$.

\begin{prop}\label{BkdQTriangSup}
Soit un sous-module monomial $\calM \subset \Mmac_{k-1,d}$ avec $k \geqslant 2$.

\noindent
L'endomorphisme $B_{\calM}(\uQ)$ de $\calM$ est triangulaire relativement à l'ordre $\leqslant$
au sens suivant:
$$
B_{\calM}(\uQ)(X^\beta e_J) 
\ \in \ 
\bigoplus_{X^{\beta'} e_{J'} \leqslant X^\beta e_J} \bfA X^{\beta'} e_{J'}
$$
En conséquence, dans la base monomiale de $\calM$ rangée de manière
croissante pour l'ordre $\leqslant_{\rm lex}$, sa matrice est
triangulaire supérieure avec des $p_i$ sur la diagonale.  A fortiori,
$\det B_\calM(\uQ)$ est un produit de $p_i$.

\noindent
En particulier, pour le jeu circulaire $Q_i = X_i^{d_i} -
X_i^{d_i-1}X_{i+1}$, on a $\det B_{k,d}(\uQ) = 1$ pour tout $d$.
\end{prop}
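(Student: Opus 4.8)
La proposition à démontrer (Proposition \ref{BkdQTriangSup}) affirme que pour le jeu circulaire généralisé $\uQ$ où $Q_i = p_i X_i^{d_i} + q_i X_i^{d_i-1}X_{i+1}$, et pour tout sous-module monomial $\calM \subset \Mmac_{k-1,d}$ avec $k \geqslant 2$, l'endomorphisme $B_\calM(\uQ)$ est triangulaire supérieur (dans la base monomiale rangée selon $\leqslant_{\rm lex}$) avec des $p_i$ sur la diagonale.

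Le plan est de reprendre la démarche de la proposition~\ref{WkdQTriangSup} relative à $W_\calM$, en l'adaptant aux degrés homologiques $k \geqslant 2$. Je partirais de la formule explicite donnant, pour $X^\beta e_J \in \calM$ et $i = \minDiv(X^\beta)$ (avec $i < \min J$ puisque $X^\beta e_J \in \Mmac_{k-1}$),
$$
B_\calM(\uQ)(X^\beta e_J) \ = \ \pi_\calM\Bigg(\dfrac{X^\beta}{X_i^{d_i}}\, Q_i\, e_J \ + \ \sum_{j \in J} \pm\, \dfrac{X^\beta}{X_i^{d_i}}\, Q_j\, e_{i \vee J \setminus j}\Bigg)
$$
et j'analyserais séparément le terme à exposant extérieur $e_J$ inchangé et les termes où cet exposant devient $J' = i \vee J \setminus j$.

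Pour le premier terme, j'utiliserais la structure circulaire sous la forme $\frac{X^\beta}{X_i^{d_i}} Q_i = p_i X^\beta + q_i \frac{X^\beta}{X_i} X_{i+1}$. Il se scinde ainsi en $p_i X^\beta e_J$, qui est exactement le terme diagonal (conservé par $\pi_\calM$ car $X^\beta e_J \in \calM$), et en $q_i X^{\beta'} e_J$ avec $\beta' = \beta - \varepsilon_i + \varepsilon_{i+1}$. Comme $i < i+1$, ce second monôme remplace un facteur $X_i$ par $X_{i+1}$, d'où $X^{\beta'} < X^\beta$ pour l'ordre monomial fixé ($X_1 > \cdots > X_n$); à exposant extérieur $J$ inchangé, on obtient bien $X^{\beta'} e_J \leqslant X^\beta e_J$.

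L'étape principale concerne les termes à exposant extérieur modifié. En écrivant $J = \{j_1 < \cdots < j_{k-1}\}$ et en fixant $j = j_\ell$, on a $J' = (J \setminus \{j_\ell\}) \cup \{i\}$; puisque $i < \min J = j_1$, la suite croissante de $J'$ est $\{i, j_1, \ldots, j_{\ell-1}, j_{\ell+1}, \ldots, j_{k-1}\}$. La comparaison terme à terme avec $J$ fournit $i < j_1$ en première position, $j_{m-1} < j_m$ en position $m$ pour $2 \leqslant m \leqslant \ell$, et l'égalité aux positions suivantes: d'où $J' \prec J$ strictement. Que $\pi_\calM$ conserve ou annule le monôme issu de $\frac{X^\beta}{X_i^{d_i}} Q_j$, le terme correspondant a un exposant extérieur $J' \prec J$, donc vérifie $\leqslant X^\beta e_J$. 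Ce point combinatoire, bien que peu coûteux grâce à l'inégalité $i < \min J$ garantie par l'appartenance à $\Mmac_{k-1}$, constitue le cœur de la preuve.

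Il resterait à conclure. Les trois analyses précédentes montrent que $B_\calM(\uQ)(X^\beta e_J)$ appartient à $\bigoplus_{X^{\beta'} e_{J'} \leqslant X^\beta e_J} \bfA X^{\beta'} e_{J'}$, le seul terme sur la diagonale étant $p_i X^\beta e_J$. Comme l'ordre total $\leqslant_{\rm lex}$ couvre l'ordre partiel $\leqslant$, ranger la base monomiale de $\calM$ par ordre $\leqslant_{\rm lex}$ croissant rend la matrice de $B_\calM(\uQ)$ triangulaire supérieure, avec les $p_i$ sur la diagonale; son déterminant est donc un produit de $p_i$. Pour le jeu circulaire proprement dit ($p_i = 1$, $q_i = -1$), tous les coefficients diagonaux valent $1$, d'où $\det B_{k,d}(\uQ) = 1$ pour tout $d$, ce qu'on voulait.
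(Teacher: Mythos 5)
Votre d\'emonstration suit pour l'essentiel la m\^eme d\'emarche que la preuve du texte : m\^eme formule de d\'epart pour $B_\calM(\uQ)$, m\^eme scission du terme $\frac{X^\beta}{X_i^{d_i}}\,Q_i$ en $p_i X^\beta + q_i X^{\beta'}$, m\^eme comparaison terme \`a terme donnant $J' \prec J$ pour les termes \`a exposant ext\'erieur modifi\'e. Il y a cependant une lacune r\'eelle dans la justification de l'in\'egalit\'e $X^{\beta'} < X^\beta$. Vous \'ecrivez \og comme $i < i+1$, ce mon\^ome remplace un facteur $X_i$ par $X_{i+1}$, d'o\`u $X^{\beta'} < X^\beta$\fg. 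Or le jeu circulaire est d\'efini avec la convention $X_{n+1} = X_1$ : si $i = n$, le mon\^ome $X^{\beta'} = \frac{X^\beta}{X_n}\,X_1$ est strictement \emph{sup\'erieur} \`a $X^\beta$ pour l'ordre monomial fix\'e ($X_1 > \cdots > X_n$), et la triangularit\'e serait mise en d\'efaut. Sympt\^ome de cette lacune : votre argument, pris tel quel, s'appliquerait aussi au cas $k = 1$ (o\`u $J = \emptyset$, donc aucun terme \`a exposant ext\'erieur modifi\'e) et \og d\'emontrerait\fg{} la triangularit\'e de $W_{1,d}(\uQ)$ dans la base monomiale lexicographique --- ce qui est faux en g\'en\'eral, et c'est pr\'ecis\'ement toute la difficult\'e trait\'ee au chapitre~\ref{ChapJeuCirculaire} via le th\'eor\`eme du puits et la fonction hauteur.

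La r\'eparation est imm\'ediate, et c'est exactement l\`a que l'hypoth\`ese $k \geqslant 2$ --- que vous n'invoquez jamais \`a cet endroit --- doit intervenir : comme $X^\beta e_J \in \Mmac_{k-1}$, on a $i = \minDiv(X^\beta) < \min J$, et $J$ est non vide puisque $\#J = k-1 \geqslant 1$ ; donc $i < \min J \leqslant n$, d'o\`u $i < n$. Il n'y a alors pas de bouclage circulaire : $i+1 \leqslant n$ est un indice ordinaire, $X_{i+1} < X_i$, et en multipliant par $X^\beta/X_i$ on obtient bien $X^{\beta'} < X^\beta$. C'est le point que la preuve du texte prend soin de d\'egager explicitement. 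Le reste de votre argumentation (terme diagonal $p_i X^\beta e_J$ conserv\'e par $\pi_\calM$, comparaison $J' \prec J$ ind\'ependante de toute comparaison monomiale, couverture de l'ordre partiel $\leqslant$ par l'ordre total $\leqslant_{\rm lex}$, lecture du d\'eterminant comme produit de $p_i$, puis sp\'ecialisation $p_i = 1$, $q_i = -1$ pour le jeu circulaire) est correct.
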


\begin{proof}

Soit $X^\beta e_J \in \Mmac_{k-1}$. En posant $i = \minDiv(X^\beta)$
et $X^{\beta'} = \dfrac{X^\beta}{X_i}X_{i+1}$:
$$
\begin {array}{rclll}
B_\calM(\uQ)(X^\beta e_J) &=& \pi_\calM \Big(\dfrac{X^\beta}{X_i^{d_i}}Q_i e_J\Big) &+&
\text{combinaison $\bfA[X]$-linéaire de $e_{i \vee J \setminus j}$ où $j \in J$}
\\
&=& p_i\,X^\beta e_J \ + \ q_i\,\pi_\calM (X^{\beta'}e_J) &+& 
\text{combinaison $\bfA[X]$-linéaire de $e_{i \vee J \setminus j}$ où $j \in J$}
\\
\end {array}
$$
$\rhd$
En notant $J' = i \vee J \setminus j$, on a $J' \prec J$ comme le
montre le schéma ci-dessous où $\ell$ est l'indice de $j$ dans~$J$
\idest{} $j = j_\ell$:
$$
\begin{array} {*{19}c}
J : && j_1 & < & j_2 & < & \cdots & < &
   j_{\ell-1} & < & j_{\ell} & < & j_{\ell+1} & < & \cdots & < & j_{k-2} & < & j_{k-1}
\\ [0.5em]
J' : && i & < & j_1 & < & \cdots & < & 
   j_{\ell-2} & < & j_{\ell-1} & < & j_{\ell+1} & < &\cdots & < & j_{k-2} & < & j_{k-1}
\\
\end{array}
$$
$\rhd$
Comparons $X^{\beta'}$ et $X^\beta$. On a $i < \min J$ et comme $J$
est non vide (puisque $\#J = k-1$ et $k \ge 2$), on~a $i < n$ donc
$X_{i+1} < X_i$; en multipliant par $\frac{X^\beta}{X_i}$, on obtient
$X^{\beta'} < X^\beta$.

\medskip
On a donc obtenu, par définition de la relation d'ordre $\leqslant$ sur les $X^\beta e_J$
$$
B_{\calM}(\uQ)(X^\beta e_J) 
\ \in \ 
\bigoplus_{X^{\beta'} e_{J'} \leqslant X^\beta e_J} \bfA X^{\beta'} e_{J'}
$$
ce qui termine la preuve.
\end{proof}

\subsubsection*{Exemple avec $D = (1,1,2)$, $k=2$ et $d=4$}

On considère  l'ordre lexicographique sur $\bfA[X_1,X_2,X_3]$ tel que $X_1 > X_2 > X_3$
et on range la base monomiale de $\Mmac_{1,4}$ de manière croissante pour l'ordre
$\leqslant_{\rm lex}$ sur les $X^\beta\,e_J$:
$$
B_{2,4}(\uQ) \ = \ 
\EastBordermatrix{
p_{1} & . & . & . & . & . & \VR -q_{3} & . & . & . & . & \Heti{X_{1}X_{3}^{2}\,e_{2}} \\ 
. & p_{1} & . & q_{1} & . & . & \VR . & -q_{3} & . & . & . & \Heti{X_{1}X_{2}X_{3}\,e_{2}} \\ 
. & . & p_{1} & . & q_{1} & . & \VR . & . & . & . & . & \Heti{X_{1}X_{2}^{2}\,e_{2}} \\ 
. & . & . & p_{1} & . & . & \VR . & . & . & . & . & \Heti{X_{1}^{2}X_{3}\,e_{2}} \\ 
. & . & . & . & p_{1} & q_{1} & \VR . & . & . & . & . & \Heti{X_{1}^{2}X_{2}\,e_{2}} \\ 
. & . & . & . & . & p_{1} & \VR . & . & . & . & . & \Heti{X_{1}^{3}\,e_{2}} \\ 
\HR{11} 
. & . & . & . & . & . & \VR p_{2} & q_{2} & q_{1} & . & . & \Heti{X_{2}X_{3}\,e_{3}} \\ 
. & . & . & . & . & . & \VR . & p_{2} & . & q_{1} & . & \Heti{X_{2}^{2}\,e_{3}} \\ 
. & . & . & . & . & . & \VR . & . & p_{1} & . & . & \Heti{X_{1}X_{3}\,e_{3}} \\ 
. & . & . & . & . & . & \VR . & . & . & p_{1} & q_{1} & \Heti{X_{1}X_{2}\,e_{3}} \\ 
. & . & . & . & . & . & \VR . & . & . & . & p_{1} & \Heti{X_{1}^{2}\,e_{3}} \\ 
}
$$

\bigskip

Puisque $\det B_{k,d} = \Delta_{k+1,d}\,\Delta_{k,d}$ et $\Delta_{n+1,d} = 1$, on en
déduit, pour le jeu circulaire $\uQ$, que $\Delta_{k,d}(\uQ) = 1$ pour $k \ge 2$
et pour tout~$d$. L'application du théorème~\ref{PsatReg} fournit le corollaire
suivant.

\begin {coro} \label{DeltakdPsatreg}
Soit $\uP$ le système générique.
Pour $k \ge 2$ et pour tout $d$, les scalaires $\det B_{k,d}(\uP)$ et $\Delta_{k,d}(\uP)$
sont réguliers et réguliers sur $\bfA[\uX]/\uPsat$.
\end {coro}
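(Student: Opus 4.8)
The plan is to reduce everything to the circular game $\uQ$ and then invoke the general regularity criterion~\ref{PsatReg}. The starting point is already secured: Proposition~\ref{BkdQTriangSup} shows that, for $k\geqslant 2$ and any $d$, the endomorphism $B_{k,d}(\uQ)$ is triangular with the $p_i$ on its diagonal, whence $\det B_{k,d}(\uQ)=1$ for the genuine circular game $Q_i=X_i^{d_i}-X_i^{d_i-1}X_{i+1}$ (where all $p_i=1$).

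First I would pass from the determinants $\det B_{k,d}$ to the scalars $\Delta_{k,d}$. Recall the structural relation $\det B_{k,d}(\uP)=\Delta_{k,d}(\uP)\,\Delta_{k+1,d}(\uP)$ together with the normalization $\Delta_{n+1,d}(\uP)=1$. Specializing to $\uQ$ and arguing by descending recurrence on $k$ (from $k=n$ down to $k=2$), one gets first $\Delta_{n,d}(\uQ)=\det B_{n,d}(\uQ)=1$, and then at each step $\Delta_{k,d}(\uQ)=\det B_{k,d}(\uQ)/\Delta_{k+1,d}(\uQ)=1$. No division difficulty occurs precisely because every factor involved equals~$1$; this is also why the argument halts at $k=2$, matching the fact that $\Delta_{1,d}(\uQ)$ (a resultant-type quantity) is not expected to be a unit. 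Thus $\Delta_{k,d}(\uQ)=1$ for all $k\geqslant 2$ and all $d$.

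Second, I would feed this into~\ref{PsatReg}. For the generic system, both $\det B_{k,d}(\uP)$ and $\Delta_{k,d}(\uP)$ are polynomial expressions in the coefficients of~$\uP$ (the latter being well defined as a specialization of the generic case, as discussed earlier in this section), and the circular game $\uQ$ is a specialization of format $D$ admitting $\Un=(1:\cdots:1)$ as a common projective zero, since $Q_i(\Un)=1-1=0$. Because these scalars specialize to the unit $1$ at $\uQ$, the hypotheses of the general statement~\ref{PsatReg} are fulfilled, and it yields the desired conclusion: $\det B_{k,d}(\uP)$ and $\Delta_{k,d}(\uP)$ are regular on $\bfA[\uX]/\uPsat$. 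Plain regularity in $\bfA$ follows at once as well, since a polynomial over the domain $\bfA=\bbZ[\indetsPi]$ that specializes to $1$ is nonzero, hence a regular element.

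The only point requiring care is confirming that $\uQ$ sits inside the admissibility frame of~\ref{PsatReg} and that the quantities at hand are genuinely of the form ``coefficient-polynomial specializing to a unit at a system with projective zero $\Un$'' to which that criterion applies. Once Proposition~\ref{BkdQTriangSup} is granted, this verification is essentially bookkeeping, so I anticipate no real obstacle: the substance of the corollary has already been absorbed into the triangularity result and into the general theorem~\ref{PsatReg}.
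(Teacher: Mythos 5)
Your proof follows essentially the same route as the paper's: Proposition~\ref{BkdQTriangSup} gives $\det B_{k,d}(\uQ)=1$ for the circular game, the relation $\det B_{k,d}=\Delta_{k,d}\,\Delta_{k+1,d}$ together with $\Delta_{n+1,d}=1$ yields $\Delta_{k,d}(\uQ)=1$ for $k\ge 2$ by descending recursion, and Theorem~\ref{PsatReg} then delivers regularity on $\bfA[\uX]/\uPsat$ exactly as in the paper. The only cosmetic difference is your argument for plain regularity via the domain $\bbZ[\indetsPi]$: the paper's mechanism is rather \og primitif par valeur\fg{} plus le lemme de McCoy (cf.~\ref{McCoyPolyLemma} et~\ref{BkPropriete}), which works over an arbitrary base ring $\bfk$, but this does not affect correctness.
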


\subsection {La forme $r_{0,d}$-linéaire alternée canonique
$\Det_{d,\protect\uP}:\bfA[\protect\uX]_d^{\calS_{0,d}} \to \bfA$ de $\rmK_{\sbullet,d}(\protect\uP)$}
\label{CanonicalCayleyDetSection}


Nous devons apporter une précision importante concernant le statut \og
à un inversible près\fg{} de ``la'' forme $\mu_d :
\BW^{r_{0,d}}(\bfA[\uX]_d) \to \bfA$.
Ou encore si l'on veut de ``la'' forme $r_{0,d}$-linéaire alternée
$$
\mu_d : \bfA[\uX]_d^{r_{0,d}} = \bfA[\uX]_d \times\cdots\times \bfA[\uX]_d \to \bfA
$$
Pour l'instant, nous sommes obligés de prendre des pincettes dans
cette écriture ou plutôt ne jamais oublier qu'une telle forme $\mu_d$
nécessite le choix d'un système d'orientations de
$\rmK_{\sbullet,d}(\uP)$ et que deux choix différents introduisent un
inversible.  Procédant ainsi, nous ne disposons pas d'une
définition canonique sauf dans le cas particulier où $r_{0,d} \le 1$
(voir la suite).

\medskip
\centerline{
\fbox{\parbox{0.95\linewidth}{
Nous allons voir que tout s'arrange si on remplace
l'entier~$r_{0,d}$ par la base monomiale $\calS_{0,d}$ de~$\Smac_{0,d}$
dont $r_{0,d}$ est le cardinal.}
}}

\medskip


C'est essentiellement pour cette raison que nous avons tenu à utiliser
ici $r_{0,d}$ au lieu du symbole~$\chi_d$ parfois utilisé auparavant (qui lui est égal
mais évocateur de la caractéristique d'Euler-Poincaré).
Nous n'oublions pas non plus (cf.~\ref{PassageQuotientFormeAlternee})
que cette forme $\mu_d$ passe au quotient modulo $\uP$ pour induire
une forme $r_{0,d}$-linéaire alternée sur $\bfB_d =
\bfA[\uX]_d/\langle\uP\rangle_d$. Mais ici, nous avons choisi de
rester au niveau $\bfA[\uX]_d$.

\medskip
Remplacer $r_{0,d}$ par $\calS_{0,d}$ signifie qu'il faut voir
$\bfA[\uX]_d^{r_{0,d}}$ comme le $\bfA$-module libre de rang~$r_{0,d}$
constitué des familles indexées par la base monomiale $\calS_{0,d}$ de
$\Smac_{0,d}$, ou encore des fonctions $\calS_{0,d} \to \bfA[\uX]_d$,
module que nous notons~$\AXdSod$.  En quelque sorte,
on a remplacé l'entier~$r_{0,d}$ par sa \og vraie nature\fg{} qui est
la base monomiale de $\Smac_{0,d}$. Et nous allons oeuvrer pour
obtenir une forme $r_{0,d}$-linéaire alternée \emph{canonique}:
$$
\Det_d : \AXdSod \to \bfA
$$
qui bien entendu \og passe au quotient modulo $\uP$ \fg{}
pour induire une forme $\Det_d : \bfB_d^{\calS_{0,d}} \to \bfA$.

\medskip

Le $\bfA$-module $\AXdSod$ contient un
élément privilégié noté~$\iota$: c'est l'injection canonique 
de $\calS_{0,d}$ dans~$\bfA[\uX]_d$. Une fois la forme
$\Det_d$ mise en place (c'est l'objet de cette section), on
pourra l'évaluer en~$\iota$ pour obtenir l'égalité correspondant à
$\mu_d(\bfe_{\Smac_{0,d}}) = \Delta_{1,d}$
(cf. le point ii) de la proposition~\ref{SuiteDeltak}):

$$
\Det_{d,\uP}(\iota) = \Delta_{1,d}(\uP) = \Delta^\sigma_{1,d}(\uP) \qquad
\text{pour n'importe quelle $\sigma \in \fS_n$}
$$

\subsubsection*{Base versus base ordonnée}

Les décompositions du complexe $\rmK_{\sbullet,d}(\uX^D)$ qui nous intéressent sont celles
coïncidant en degré homologique $0$ avec la décomposition
$\rmK_{0,d} \overset{\rm def}{=} \bfA[\uX]_d = \Jex_{1,d} \oplus \Smac_{0,d}$.
C'est le cas de la décomposition de Macaulay ordinaire et de celles
tordues par $\sigma \in \fS_n$.

\emph {Fixons} une telle décomposition
$(M_{\sbullet,d}, S_{\sbullet,d}, \varphi)$ et montrons que l'on peut définir une
forme $r_{0,d}$-linéaire alternée précise $\AXdSod
\to \bfA$, précise étant en opposition avec ``à un inversible près''.
Nous constaterons ensuite qu'elle ne dépend pas de la décomposition choisie
et nous pourrons alors décréter qu'elle est \emph {canonique}.

\medskip

Nous allons utiliser le fait que le $\bfA$-module libre $\Smac_{0,d}$
de dimension $r_{0,d}$ possède une base canonique, à savoir sa base
monomiale $\calS_{0,d}$.  Mais celle-ci n'est pas ordonnée sauf si bien sûr
$r_{0,d} \le 1$ (c'est le cas par exemple si $d \ge \delta$).
C'est cette nuance subtile entre ``base'' et ``base
ordonnée'' qui fait toute la différence entre 
$\AXdSod$ et  $\bfA[\uX]_d^{r_{0,d}}$
et va nécessiter de prendre quelques précautions.

\label {NOTA14-S0d}%

\medskip

Nous nous limitons dans la suite aux systèmes d'orientations
de la décomposition choisie de~$\rmK_{\bullet,d}(\uX^D)$ ayant la particularité suivante:
l'orientation $\bfe_{\Smac_{0,d}}$ est \emph {monomiale} i.e. 
$\bfe_{\Smac_{0,d}}$ est le produit extérieur, dans un certain ordre, de
la base monomiale $\calS_{0,d}$:
$$
\bfe_{\Smac_{0,d}} = m_1 \wedge m_2 \wedge \cdots  \qquad\qquad
\calS_{0,d} = \{m_1, m_2, \dots\}
$$
Une numérotation $\{m_1, m_2, \cdots\}$ de $\calS_{0,d}$
met en bijection $\calS_{0,d}$ et l'ensemble d'entiers $\{1..r_{0,d}\}$.
Elle permet donc de définir un isomorphisme:
$$
\theta : \AXdSod \overset{\simeq}{\longrightarrow}
\bfA[\uX]_d^{r_{0,d}}
$$
que l'on peut décrire de la manière suivante avec des notations
fonctionnelles: pour $f \in \AXdSod$ \idest{} une
fonction $f : \calS_{0,d} \to \bfA[\uX]_d$,
l'image $\theta(f)$ est la fonction $\{1..r_{0,d}\} \to \bfA[\uX]_d$
définie par $i \mapsto f(m_i)$.

Considérons maintenant deux systèmes d'orientations
de la décomposition choisie de $\rmK_{\bullet,d}(\uX^D)$ induisant:
$$
\bfe_{\Smac_{0,d}} = m_1 \wedge m_2 \wedge \cdots, \qquad\qquad
\bfe'_{\Smac_{0,d}} = m'_1 \wedge m'_2 \wedge \cdots
$$
Chacun des deux systèmes donne naissance à un système d'orientations
$\bfe_{M_{\sbullet,d}} \wedge \bfe_{S_{\sbullet,d}}$ de
$\rmK_{\sbullet,d}(\uX^D)$, par conséquent fournit une forme
$r_{0,d}$-linéaire alternée $\bfA[\uX]_d^{r_{0,d}} \to \bfA$.  Notons
$\mu_d$ et $\mu'_d$ les deux formes obtenues.  Ce que nous affirmons, c'est que $\mu_d
\circ\theta = \mu'_d\circ\theta'$: c'est la forme $\Det_d = \Det_{d,\uP}$
convoitée. Nous l'énonçons en utilisant un diagramme pour clarifier.

\begin {theo} \leavevmode

Dans le contexte ci-dessus, avec des notations implicites pour le système
$\uP$, on a l'égalité encadrée:  
$$
\xymatrix @R=1.5cm @C=2.5cm{
\AXdSod\ar[dr]|{\boxed{\mu_d\circ\theta=\mu'_d\circ\theta'}}
   \ar@/^8pt/[r]^{\theta}|{\simeq} \ar@/_8pt/[r]_{\theta'}|{\simeq}
   &\bfA[\uX]_d^{r_{0,d}}\ar@/_5pt/[d]_{\mu} \ar@/^5pt/[d]^{\mu'}  \\
   & \bfA \\
}   
$$
De plus, la forme $r_{0,d}$-linéaire alternée $\mu_d\circ\theta : \AXdSod
\to \bfA$ est indépendante de la décomposition du complexe $\rmK_{\sbullet,d}(\uX^D)$ pourvu
que cette décomposition induise $\bfA[\uX]_d = \Jex_{1,d}\oplus \Smac_{0,d}$
en degré homologique~0. Nous donnons le nom $\Det_d$ à cette forme:
$$
\Det_d = \Det_{d,\uP} : \AXdSod \to \bfA
$$
En désignant par $\iota \in \AXdSod$ l'inclusion $\calS_{0,d} \hookrightarrow
\bfA[\uX]_d$, elle vérifie:
$$
\Det_d(\iota) = \Delta_{1,d}
$$
\end {theo}

\label {NOTA14-Detd}%
%
%

\begin {proof} \leavevmode

$\rhd$  
Soit $\tau$ la permutation de l'ensemble $\{1..r_{0,d}\}$ telle que
$m'_i = m_{\tau(i)}$ de sorte que $\bfe'_{\Smac_{0,d}} =
\varepsilon(\tau)\, \bfe_{\Smac_{0,d}}$.  D'après le point ii) du
lemme \ref{MultiplicateursOrientationsDecomposition}:
$$
\mu'_d = \varepsilon(\tau)^{-1}\,\mu_d
$$
Alors pour $f \in \AXdSod$:
$$
\begin {array} {ccl}
(\mu'_d \circ \theta')(f) &=& \mu'_d\big(f(m'_1), f(m'_2), \dots \big) =
                        \mu'_d\big(f(m_{\tau(1)}), f(m_{\tau(2)}), \dots \big) \\
           &=& \varepsilon(\tau)\, \mu'_d\big(f(m_1), f(m_2), \dots \big) \\
           &=& \varepsilon(\tau)\, \varepsilon(\tau)^{-1} \mu_d\big(f(m_1), f(m_2), \dots \big) 
           = (\mu_d \circ\theta)(f) \\                     
\end {array}
$$

\medskip
$\rhd$
L'indépendance vis-à-vis de la décomposition provient essentiellement
du fait que, pour un système d'orientations $\bfe := (\bfe_0, \dots, \bfe_n)$
de~$\rmK_{\sbullet,d}(\uX^D)$ \emph{fixé}, il existe, pour n'importe quelle
décomposition de~$\rmK_{\sbullet,d}(\uX^D)$, un système d'orientations
de cette décomposition adapté à $\bfe$, cf. le
lemme \ref{ExistenceSystemeAdapte}.  Ainsi, en désignant par $\mu_d :
\bfA[\uX]_d^{r_{0,d}} \to \bfA$ la forme $r_{0,d}$-linéaire alternée
définie par $\bfe$, toutes ces décompositions $\bfe$-orientées
ont, \fbox{au niveau $\bfA[\uX]_d^{r_{0,d}} \to \bfA$}, même forme $\mu_d$.

Pour conclure, nous pouvons supposer~$\bfe_0$ de la forme
$\bfe_{\Jex_{1,d}} \wedge \bfe_{\Smac_{0,d}}$ avec
$\bfe_{\Smac_{0,d}}$ monomiale i.e. produit extérieur de la base
monomiale $\calS_{0,d}$ énumérée dans un certain ordre.  Cette
numérotation fournit un isomorphisme totalement indépendant de toute
décomposition de $\rmK_{\sbullet,d}(\uX^D)$:
$$
\theta : \AXdSod \overset{\simeq}{\longrightarrow}
\bfA[\uX]_d^{r_{0,d}}
$$
Ainsi \fbox{au niveau $\AXdSod \to \bfA$},
$\mu_d\circ\theta$ est la forme $r_{0,d}$-linéaire alternée de toute décomposition $\bfe$-orientée.

\medskip
$\rhd$
Fixons une décomposition $\bfe$-orientée, notons $\mu_d$ sa forme, $\{m_1, m_2, \dots\}$ une numérotation
de $\calS_{0,d}$ vérifiant $\bfe_{\Smac_{0,d}} = m_1 \wedge m_2 \wedge \cdots$ et
$\theta : \AXdSod \simeq \bfA[\uX]_d^{r_{0,d}}$ l'isomorphisme correspondant.
Alors:
$$
\Delta_d(\iota) \overset{\rm def}{=}
(\mu_d \circ \theta)(\iota) = \mu_d(m_1, m_2, \dots) =
\mu_d(\bfe_{\Smac_{0,d}}) = \Delta_{1,d}
$$
la dernière égalité provenant du point ii) de la proposition~\ref{SuiteDeltak}.

\end {proof}

\subsubsection*{Homogénéité de $\Det_{d,\uP}(\sbullet)$ en $P_i$ et le cas du jeu étalon généralisé}

$\rhd$
Puisque $\Det_{d,\uP}(\iota) = \Delta_{1,d}(\uP)$, la forme $\Det_{d,\uP}$ a le même degré d'homogénéité
en $P_i$ que le scalaire~$\Delta_{1,d}(\uP)$. Celui-ci sera déterminé plus loin (théorème~\ref{poidsDelta1}).
Cela signifie donc, pour des scalaires quelconques $a_1, \dots, a_n$, que l'on a:
$$
\forall\, f \in \AXdSod, \qquad \Det_{d,(a_1P_1, \dots, a_nP_n)}(f) = \prod_{i=1}^n a_i^{\dim \Jex_{1\setminus2,d}^{(i)}} \times
\Det_{d,(P_1, \dots, P_n)}(f)   
$$
où l'on rappelle que $\Jex_{1\setminus2,d}^{(i)}$ est le sous-module monomial
de $\bfA[\uX]_d$ de base les monômes ayant $i$ comme seul indice de divisibilité.

\medskip
$\rhd$
La forme $\Det_{d,\uX^D}$ est étroitement liée à la forme $\det
: \End_\bfA(\Smac_{0,d}) \to \bfA$.  Notons $\pi = \pi_{\Smac_{0,d}} :
\bfA[\uX]_d \to \Smac_{0,d}$ la projection parallèlement à
$\Jex_{1,d}$ et pour chaque $f \in \AXdSod$, notons
$\widetilde f : \Smac_{0,d} \to \bfA[\uX]_d$ son prolongement
$\bfA$-linéaire.  Alors $\pi \circ \widetilde f$ est un endomorphisme
de $\Smac_{0,d}$ et:
$$
\Det_{d,\uX^D}(f) = \det(\pi \circ \widetilde f) 
$$
De manière plus terre à terre, en notant $S_f$ la matrice strictement carrée
indexée par $\calS_{0,d}$:
$$
S_f = \big(\coeff_{X^\beta}f(X^\alpha)\big)_{X^\beta,X^\alpha \in \calS_{0,d}}
$$
on a
$$
\Det_{d,\uX^D}(f) = \det(S_f)
$$
On en déduit, pour le jeu étalon généralisé $\bsp\uX^D = (p_1X_1^{d_1}, \dots, p_nX_n^{d_n})$,
la formule close
$$
\Det_{d,\bsp\uX^D}(f) =\prod_{i=1}^n p_i^{\dim \Jex_{1\setminus2,d}^{(i)}} \times \det(S_f) 
$$

\subsubsection*{La détermination effective de la forme $\Det_{d,\uP}$}

Nous concrétisons ici la proposition \ref{muDelta2Expression}
concernant l'obtention de la forme alternée intervenant au numérateur de $\Det_d$:
$$
\Det_d = \frac{\text{$r_{0,d}$-forme linéaire alternée}}{\Delta_{2,d}}
\qquad \text{version intrinsèque de} \qquad
\mu_d = \frac{\big(\bigwedge^{r_{1,d}}(\Syl_d)(\bfe_{S_{1d}})\big)^\sharp}{\Delta_{2,d}}
$$
Ces égalités utilisent un certain nombre de notations implicites qu'il convient
de préciser. Il y a bien sûr le système $\uP$, mais nous avons l'habitude
d'écrire $\Delta_{2,d}$ pour $\Delta_{2,d}(\uP)$, $\Syl_d$ pour $\Syl_d(\uP)$,
etc. Plus subtil est l'utilisation implicite, que ce soit
aux numérateurs et aux dénominateurs, de la décomposition de Macaulay
$(\Mmac_{\sbullet,d}, \Smac_{\sbullet,d}, \varphi)$
du complexe $\rmK_{\sbullet,d}(\uX^D)$, décomposition
pilotée par $\minDiv$. Utiliser la version tordue par $\sigma \in \fS_n$
implique d'aménager en conséquence numérateurs et dénominateurs,
sans changer pour autant $\Det_d$. 

\medskip

Venons-en à $\Det_d$ versus $\mu_d$. Contrairement à $\mu_d$,
qui prend comme argument un $r_{0,d}$-uplet de polynômes homogènes
de~$\bfA[\uX]_d$ et qui nécessite des choix d'orientations, la forme
$\Det_d$ prend comme argument une fonction $f
: \calS_{0,d} \to \bfA[\uX]_d$ et n'a besoin de rien d'autre.
La forme $r_{0,d}$-linéaire alternée qui intervient au numérateur de l'expression
ci-dessus de $\Det_d$ est définie comme le déterminant d'un certain endomorphisme
de $\bfA[\uX]_d$ de la manière suivante.

\begin {defn}[La construction $\bsOmega_d : \AXdSod \to \End(\bfA[\uX\rbrack_d)$
pilotée par $\minDiv$] \leavevmode   
\label{OmegaDetNumerator}
    
Pour  $f \in \AXdSod$,  on construit un endomorphisme $\bsOmega_d(f) = \bsOmega_{d,\uP}(f)$ de~$\bfA[\uX]_d$
défini sur sa base monomiale par:
$$
\forall\, |\alpha| = d, \qquad
\bsOmega_d(f) :  X^\alpha \mapsto \begin {cases}
f(X^\alpha)                   &\text{si $X^\alpha \in \Smac_{0,d}$} \\[3mm]
\dfrac{X^\alpha}{X_i^{d_i}}P_i &\text{si $X^\alpha \in \Jex_{1,d}$ où $i = \minDiv(X^\alpha)$} \\
\end {cases}
$$
Pour $\sigma \in \fS_n$, en remplaçant $\minDiv$ par $\sminDiv$, on obtient
une version dite tordue par $\sigma$, que l'on note $\bsOmega^\sigma_d$.
Et la proposition \ref{muDelta2Expression} s'énonce ainsi:
$$
\Det_d(f) = \frac{\det\big(\bsOmega_d(f)\big)}{\Delta_{2,d}} =
\frac{\det\big(\bsOmega^\sigma_d(f)\big)}{\Delta^\sigma_{2,d}}
\qquad \forall\ \sigma \in \fS_n
$$
Bien entendu, c'est une manière symbolique d'écrire les égalités sans dénominateurs:
$$
\det\big(\bsOmega_d(\sbullet)\big) = \Delta_{2,d} \Det_d(\sbullet), \qquad
\det\big(\bsOmega^\sigma_d(\sbullet)\big) = \Delta^\sigma_{2,d} \Det_d(\sbullet)
$$
\end {defn}

\label {NOTA14-Omegad}%
\index{O@Omega-constructeur à valeurs dans!$\End_\bfA(\bfA[\uX]_d)$!$\bsOmega_d=\bsOmega_{d,\uP}$,\quad$\bsOmega^\sigma_d=\bsOmega^\sigma_{d,\uP}$}%

\begin {rmq} [Le cas particulier $d \ge \delta$] \leavevmode

Pour $d \ge \delta$, on a $r_{0,d} \le 1$ et la base monomiale du
$\bfA$-module $\Smac_{0,d}$, libre de dimension $r_{0,d}$, se trouve
être ordonnée.

\medskip
$\rhd$ Cas $d \ge \delta+1$.
On a $\calS_{0,d}=\emptyset$ donc $\AXdSod$ est de
cardinal 1.  Ainsi $\bsOmega_d : \bfA[\uX]_d^0 \to \End(\bfA[\uX]_d)$
s'identifie à la valeur prise en l'unique élément de $\AXdSod$, valeur
qui est l'endomorphisme~$W_{1,d}$ de $\Jex_{1,d}$ défini
en~\ref{DefW1}, en cohérence avec le fait que 
$\bfA[\uX]_d = \Jex_{1,d}$.
A fortiori $\det\bsOmega_d(\iota) = \det W_{1,d}$.

\medskip
$\rhd$ Cas $d=\delta$.
On a $\calS_{0,\delta}=\{X^\emouton\}$ de sorte que
$\bfA[\uX]_\delta^{\calS_{0,\delta}}$ s'identifie à
$\bfA[\uX]_\delta$ et $\iota$ à $X^\emouton$.  Ainsi $\bsOmega_\delta
: \bfA[\uX]_\delta \to \End(\bfA[\uX]_\delta)$ n'est autre que la
construction~$\Omega$ de la section~\ref{DefEndoOmega}
et $\det\bsOmega_\delta(\iota) = \det W_{1,\delta}$.

\medskip

Dans les deux cas, en évaluant $\det\bsOmega_d(\sbullet) = \Delta_{2,d} \Det_d(\sbullet)$ en $\iota$,
on retrouve l'égalité:
$$
\det W_{1,d} = \Delta_{2,d} \Delta_{1,d}
$$
Précisons que l'égalité $\det\bsOmega_d(\iota) =\det W_{1,d}$ est
vérifiée pour tout $d$.  Elle n'est pas difficile mais nous préférons
la reporter en~\ref {DetOmegaiota=DetW1d} après les exemples et
surtout après les deux dessins évocateurs de la page~\pageref
{DessinOmegad}. Précisons également que cette égalité et celles
qui précèdent possèdent une version tordue par $\sigma \in \fS_n$.

Dans tous les exemples suivants qui vont nous servir
à illustrer $\bsOmega_d$ et $\Det_d$, nous prendrons donc $d < \delta$
\og pour y voir quelque chose\fg{}, le cas $d \ge \delta$ venant
d'être réglé par cette remarque et par ailleurs largement traité dans
les chapitres antérieurs.
\end {rmq}

\medskip

Voici un petit résultat général, certes élémentaire, mais bien utile pour
la production d'exemples (tels ceux montrés ci-après).

\begin {lem} [Détermination de $\Smac_{0,\delta-1}$ et de sa dimension $r_{0,\delta-1}$]
\leavevmode

La base monomiale de $\Smac_{0,\delta-1}$ est indexée par l'ensemble $I$ des indices
$i$ tels que $d_i > 1$. Elle est constituée des $(X^\emouton/X_i)_{i \in I}$ et en conséquence, $r_{0,\delta-1} = \#I$.  
\end {lem}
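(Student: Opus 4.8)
Le plan est de décrire explicitement l'ensemble $\calS_{0,\delta-1}$, base monomiale de $\Smac_{0,\delta-1}$. Par définition de ce supplémentaire monomial de $\Jex_{1,\delta-1}$, il s'agit des monômes $X^\alpha$ avec $|\alpha| = \delta - 1$ et $\alpha_i \le d_i - 1$ pour tout $i$ (autrement dit $X^\alpha \notin \Jex_1$). D'abord je poserais, pour un tel $\alpha$, le « codegré » $\beta_i = (d_i - 1) - \alpha_i$, de façon à transformer les contraintes portant sur $\alpha$ en contraintes portant sur $\beta \in \bbN^n$.

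Les trois conditions se traduisent alors ainsi : l'inégalité $\alpha_i \le d_i - 1$ devient $\beta_i \ge 0$ ; l'inégalité $\alpha_i \ge 0$ devient $\beta_i \le d_i - 1$ ; et la condition de degré $|\alpha| = \delta - 1$ devient, en utilisant l'égalité $\delta = \sum_i (d_i - 1)$,
$$\sum_i \beta_i = \sum_i (d_i - 1) - |\alpha| = \delta - (\delta - 1) = 1.$$
L'observation clé est que l'égalité $\sum_i \beta_i = 1$ avec des entiers $\beta_i \ge 0$ force $\beta = \varepsilon_i$ pour un unique indice $i$, où $\varepsilon_i$ désigne le $i$-ème vecteur de la base canonique de $\bbZ^n$.

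La contrainte résiduelle $\beta_i \le d_i - 1$ s'écrit alors $1 \le d_i - 1$, c'est-à-dire $d_i \ge 2$, soit encore $i \in I$. Revenant à $\alpha$, cela signifie $\alpha = \emouton - \varepsilon_i$, donc $X^\alpha = X^\emouton / X_i$ (le passage du $(d_i-1)$-ème au $(d_i-2)$-ème exposant en la variable $X_i$, licite précisément parce que $d_i > 1$). Réciproquement, pour chaque $i \in I$, le monôme $X^\emouton/X_i$ est bien de degré $\delta - 1$ et n'est divisible par aucun $X_j^{d_j}$, donc appartient à $\calS_{0,\delta-1}$. On obtient ainsi une bijection $I \to \calS_{0,\delta-1}$, $i \mapsto X^\emouton/X_i$, d'où $r_{0,\delta-1} = \# \calS_{0,\delta-1} = \#I$.

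Cette preuve est purement combinatoire et ne présente pas de réelle difficulté ; le seul point auquel il faut veiller est de ne pas oublier la contrainte $\beta_i \le d_i - 1$ (équivalente à $\alpha_i \ge 0$), qui est exactement ce qui élimine les indices $i$ tels que $d_i = 1$ et dont l'omission conduirait à inclure à tort des « monômes » $X^\emouton/X_i$ à exposant négatif.
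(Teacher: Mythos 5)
Votre preuve est correcte et suit essentiellement la m\^eme d\'emarche que celle du papier~: le changement de variables $\beta = (D - \Un) - \alpha$ que vous introduisez n'est autre que l'involution \og mouton-swap\fg{} $X^\alpha \mapsto X^{D-\Un-\alpha}$, que le papier invoque comme r\'esultat d\'ej\`a \'etabli pour identifier $\Smac_{0,\delta-1}$ \`a $\Smac_{0,1}$, dont la base monomiale est $(X_i)_{i\in I}$. Vous r\'einstanciez simplement cette involution \`a la main en degr\'e $\delta-1$, ce qui revient exactement au m\^eme argument combinatoire.
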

  
\begin {proof}

On rappelle que pour chaque $0 \le d \le \delta$, en posant $d' = \delta -d$, on a
$\Smac_{0,d} \simeq \Smac_{0,d'}$  via l'involution ``mouton-swap'' $X^\alpha \mapsto X^{D-\Un-\alpha}$.
En prenant $d = \delta-1$, on obtient $\Smac_{0,\delta-1} \simeq \Smac_{0,1}$. La
base monomiale de~$\Smac_{0,1}$ est constituée des $(X_i)_{i \in I}$, d'où le résultat.
\end {proof}

\subsubsection*{L'exemple du format $D = (1,2,3)$ de degré critique $\delta=3$,
de mouton-noir $YZ^2$ et de $d=2$}

Le système $\uP$ de format $D$ est le suivant:
$$
P_1 = a_1X_1 + a_2X_2 + a_3X_3,
\qquad\quad
P_2 = b_1X_1^2 + b_2X_1X_2 + b_3X_1X_3 + b_4X_2^2 + b_5X_2X_3 + b_6X_3^2
$$
Il est inutile de \og voir\fg{} $P_3$ car il n'est pas visualisé dans la matrice ci-dessous.
Nous faisons le choix de $d=2 < \delta=3$. Pour ce $d$, on a $r_{1,d}\overset{\rm def}{=}
\dim \Jex_{1,d} = 4$ et $r_{0,d} \overset{\rm def}{=}\dim\Smac_{0,d} = 2$ d'où
$$
r_{1,d}+ r_{0,d} = \dim \bfA[\uX]_d = \binom {d+{n-1}}{n-1} = \binom {d+2}{2}
\overset{\rm ici}{=} 6
$$

$\bullet$
Commençons par visualiser la construction $f \mapsto \bsOmega_d(f)$ via la matrice
strictement carrée ci-dessous:
$$
\overbrace{X^2,\ XY,\ XZ,\ Y^2}^{\Jex_{1,2}}\ ,\ 
\overbrace{YZ,\, Z^2}^{\Smac_{0,2}}
\qquad \qquad
\bsOmega_{d,\uP}(f) = 
\NorthEastBordermatrix{
\Veti{X\,e_{1}} & \Veti{Y\,e_{1}} & \Veti{Z\,e_{1}} & \Veti{e_{2}} &
\Veti{YZ} & \Veti{Z^2} & \\
a_{1} & . & . & b_{1}      &f(YZ)_{X^2}  &\vdots    & \Heti{X^{2}} \\
a_{2} & a_{1} & . & b_{2}  &f(YZ)_{XY}   &          & \Heti{XY} \\
a_{3} & . & a_{1} & b_{3}  &\vdots      &           & \Heti{XZ} \\
. & a_{2} & . & b_{4}      &            &\vdots     & \Heti{Y^{2}} \\
. & a_{3} & a_{2} & b_{5}  &            &f(Z^2)_{YZ} & \Heti{YZ} \\
. & . & a_{3} & b_{6}      &\vdots     &f(Z^2)_{Z^2} & \Heti{Z^{2}} \\
}
$$
Précisons que $f(YZ)$ et $f(Z^2)$ sont $r_{0,d}=2$ polynômes homogènes de degré
$d=2$ et que l'indice indique la coordonnée sur la base monomiale de $\bfA[X,Y,Z]_2$.
Pour des raisons typographiques, nous n'avons pas voulu surcharger
l'indexation des 2 colonnes-argument de droite.

$\bullet$ Enfin, comme $d < \min_{i\ne j}(d_i+d_j) = 3$, on a $\Delta_{k,d} = 1$
pour $k \ge 2$, a fortiori $\Delta_{2,d} = 1$ au dénominateur.  En voici
la justification:
$$
d < \min_{i\ne j}(d_i+d_j)  \iff (\forall k \ge 2,\ \Jex_{k,d} = 0) \iff
(\forall k\ge 2,\ \rmK_{k,d} = 0)
$$
Soit $k \ge 2$. A fortiori, on a $\Smac_{k,d} = 0$, ce qui équivaut à $\Mmac_{k-1,d} = 0$
donc $\det B_{k,d} = 1$. En utilisant $\det B_{k,d} = \Delta_{k,d} \Delta_{k+1,d}$, il vient
de proche en proche $\Delta_{k,d} = 1$.

\subsubsection*{L'exemple $D = (1,1,4)$ de degré critique $\delta=3$,
 de mouton-noir $Z^3$ et de $d=2 < \delta$}


Ici, $d = \min_{i \ne j}(d_i + d_j)$, ce qui va permettre d'illustrer la détermination
des dénominateurs $\Delta_{2,d}$ et~$\Delta^\sigma_{2,d}$ pour $\sigma = (1,2,3)$.
Et, en passant, le fait que l'on peut avoir $r_{0,d} = 1$ pour $0 < d < \delta$.
Voici les bases monomiales de $\Mmac^\sigma_{k,d}$, $\Smac^\sigma_{k,d}$ pour $k=0,1,2$
($k=3$ n'intervient pas car, de manière générale, $\rmK_{n,d} = 0$ pour $d < \sum_i d_i$): 
$$
\Mmac_{0,2} = \Jex_{1,2} : (X^2, XY, XZ, Y^2, YZ), \qquad
\Smac_{0,2} : Z^2
$$
Pour $k=1$, dans $\rmK_{1,2} = \bfA[X,Y,Z]_1\,e_1 \oplus \bfA[X,Y,Z]_1\,e_2$,
en rappelant que $\Mmac^\sigma_1$ est défini par les contraintes $X_i^{d_i} e_j$ avec $i <_\sigma j$.
$$
\begin {array}{ccc}
\Mmac_{1,2} : Xe_2,       &\qquad \Smac_{1,2} : (Xe_1, Ye_1, Ze_1, Ye_2, Ze_2) \\ [2mm]
\Mmac_{1,2}^\sigma : Ye_1, &\qquad \Smac^\sigma_{1,2} : (Xe_1, Xe_2, Ze_1, Ye_2, Ze_2) \\
\end{array}
$$
Et enfin pour $k=2$:
$$
\Mmac_{2,2} =\Mmac^\sigma_{2,2} = 0, \qquad \Smac_{2,2} = \Smac^\sigma_{2,2} = \bfA e_1\wedge e_2
$$
La forme $\Det_2$ est indépendante du polynôme $P_3$. On note:
$$
P_1 = a_1X + a_2Y + a_3Z, \qquad  P_2 = b_1X + b_2Y + b_3Z
$$
Interviennent dans $\Det_2$, les numérateurs suivants avec une seule colonne-argument
($r_{0,2} = 1$):
$$
\bsOmega_2 =
\NorthEastBordermatrix{
\Veti{X\,e_{1}} & \Veti{Y\,e_{1}} & \Veti{Z\,e_{1}} & \Veti{Y\,e_{2}} & \Veti{Z\,e_{2}} & \Veti{Z^2} & \\
a_{1} & . & . & . & .           &v_{11} & \Heti{X^{2}} \\
a_{2} & a_{1} & . & b_{1} & .    &v_{12} & \Heti{XY} \\
a_{3} & . & a_{1} & . & b_{1}    &v_{13} & \Heti{XZ} \\
. & a_{2} & . & b_{2} & .        &v_{22} & \Heti{Y^{2}} \\
. & a_{3} & a_{2} & b_{3} & b_{2} &v_{23}& \Heti{YZ} \\
. & . & a_{3} & . & b_{3}        &v_{33} & \Heti{Z^{2}} \\
}
\qquad\qquad
\bsOmega^\sigma_2 =
\NorthEastBordermatrix{
\Veti{X\,e_{1}} & \Veti{X\,e_{2}} & \Veti{Z\,e_{1}} & \Veti{Y\,e_{2}} & \Veti{Z\,e_{2}} & \Veti{Z^2} & \\
a_{1} & b_{1} & . & . & .        &v_{11} & \Heti{X^{2}} \\
a_{2} & b_{2} & . & b_{1} & .    &v_{12} & \Heti{XY} \\
a_{3} & b_{3} & a_{1} & . & b_{1} &v_{13} & \Heti{XZ} \\
. & . & . & b_{2} & .            &v_{22} & \Heti{Y^{2}} \\
. & . & a_{2} & b_{3} & b_{2}     &v_{23} & \Heti{YZ} \\
. & . & a_{3} & . & b_{3}         &v_{33} & \Heti{Z^{2}} \\
}
$$
En ce qui concerne les dénominateurs $\Delta_{2,2}^\sigma$, les matrices
$B^\sigma_{2,2}$ sont $1 \times 1$:
$$
B_{2,2} = \big[\coeff_{X^2}(P_1)\big], \qquad
B^\sigma_{2,2} = \big[\coeff_{Y^2}(P_2)\big]
$$  
D'où la détermination de la forme linéaire $\Det_2 : \bfA[X,Y,Z]_2\to \bfA$:
$$
\Det_2 = \frac{\det\bsOmega_2}{\Delta_{2,2}} =
\frac{\det\bsOmega^\sigma_2}{\Delta^\sigma_{2,2}},  \qquad
\Delta_{2,2} = a_1,\qquad \Delta^\sigma_{2,2} = b_2, \qquad
\sigma = (1,2,3)
$$
En notant $\uxi = (\xi_1,\xi_1,\xi_3)$ le vecteur des mineurs signés des 2 polynômes linéaires
$P_1,P_2$ \idest{} défini par:
$$
\begin {vmatrix}
a_1 & b_1 & T_1 \\
a_2 & b_2 & T_2 \\
a_3 & b_3 & T_3 \\
\end  {vmatrix}
= \xi_1 T_1 + \xi_2 T_2 + \xi_3 T_3
$$
le lecteur pourra vérifier que la forme linéaire $\Det_2$ s'identifie à l'évaluation en $\uxi$:
$$
\Det_2(v) = \sum_{1 \le i \le j \le 3} v_{ij}\xi_i \xi_j
$$
En fait, c'est un cas particulier du cas plus général $D = (1,\dots,1,e)$ et $d \le \delta = e-1$.
On a $\Smac_{0,d} = \bfA X_n^d$ donc $r_{0,d} = 1$. Ainsi $\AXdSod$ s'identifie à $\bfA[\uX]_d$
et $\Det_d : \bfA[\uX]_d \to \bfA$ est une forme linéaire: c'est l'évaluation en $\uxi$
où $\uxi = (\xi_1, \dots, \xi_n)$ est le vecteur des mineurs signés des $n-1$ premiers
polynômes linéaires, cf la preuve en~\ref{DetdCasEcoleI}.

\subsubsection*{L'exemple $D = (2,2,3)$ de degré critique $\delta=4$ et $d=3 < \delta$}


Soit $\uP$ le système de format $D = (2,2,3)$ de degré critique $\delta =4$, de mouton-noir $XYZ^2$:
$$
\setlength{\tabcolsep}{2pt}
\left\{
\begin{tabular}{rcp{15cm}} 
$P_{1}$ & $=$ & $a_{1}X^{2} + a_{2}XY + a_{3}XZ + a_{4}Y^{2} + a_{5}YZ + a_{6}Z^{2}$\\ [0.1cm] 
$P_{2}$ & $=$ & $b_{1}X^{2} + b_{2}XY + b_{3}XZ + b_{4}Y^{2} + b_{5}YZ + b_{6}Z^{2}$\\ [0.1cm] 
$P_{3}$ & $=$ & $c_{1}X^{3} + c_{2}X^{2}Y + c_{3}X^{2}Z + c_{4}XY^{2} + c_{5}XYZ + c_{6}XZ^{2} + c_{7}Y^{3} + c_{8}Y^{2}Z + c_{9}YZ^{2} + c_{10}Z^{3}$\\ [0.1cm] 
\end{tabular}
\right.
$$
Dans cet exemple, puisque $d = 3 < \min_{i\ne j} (d_i+d_j) = 4$, on a $\Delta_{2,d}(\uP) = 1$.
Voici les 10 monômes de degré~$3$ avec la visualisation du fait que $r_{0,3}=3$:
$$
\overbrace{X^3,\ X^2Y,\ X^2Z,\ XY^2,\ Y^3,\ Y^2Z,\ Z^3}^{\Jex_{1,3}}\ ,\ 
\overbrace{XYZ,\, XZ^2,\, YZ^2}^{\Smac_{0,3}}
$$
Il n'y a pas de dénominateur \idest{} $\Det_3 = \det\bsOmega_3$ où $\bsOmega_3$
est la matrice $10 \times 10$ suivante:
$$
\bsOmega_{3,\uP} = \
\NorthEastBordermatrix{
\Veti{X\,e_{1}} & \Veti{Y\,e_{1}} & \Veti{Z\,e_{1}} & \Veti{X\,e_{2}} & \Veti{Y\,e_{2}} & \Veti{Z\,e_{2}} & \Veti{e_{3}} &
\Veti{XYZ\leftrightarrow Z} & \Veti{XZ^2\leftrightarrow Y} & \Veti{YZ^2\leftrightarrow X} & \\
a_{1} & . & . & b_{1} & . & . & c_{1} &\sbullet &\sbullet &\sbullet & \Heti{X^{3}} \\
a_{2} & a_{1} & . & b_{2} & b_{1} & . & c_{2} &   &   &   & \Heti{X^{2}Y} \\
a_{3} & . & a_{1} & b_{3} & . & b_{1} & c_{3} &   &   &   & \Heti{X^{2}Z} \\
a_{4} & a_{2} & . & b_{4} & b_{2} & . & c_{4} &   &   &   & \Heti{XY^{2}} \\
. & a_{4} & . & . & b_{4} & . & c_{7} &   &   &   & \Heti{Y^{3}} \\
. & a_{5} & a_{4} & . & b_{5} & b_{4} & c_{8} &   &   &   & \Heti{Y^{2}Z} \\
. & . & a_{6} & . & . & b_{6} & c_{10} &   &   &   & \Heti{Z^{3}} \\
a_{5} & a_{3} & a_{2} & b_{5} & b_{3} & b_{2} & c_{5} &   &   &   & \Heti{XYZ} \\
a_{6} & . & a_{3} & b_{6} & . & b_{3} & c_{6} &   &   &   & \Heti{XZ^{2}} \\
. & a_{6} & a_{5} & . & b_{6} & b_{5} & c_{9} &\sbullet &\sbullet &\sbullet & \Heti{YZ^{2}} \\
}
\qquad
\begin {array}{c}
\left[
\begin {smallmatrix}
a_{1} & . & . & b_{1} & . & .               & c_{1} &. &. &. \\
a_{2} & a_{1} & . & b_{2} & b_{1} & .        & c_{2} &. &. &.   \\
a_{3} & . & a_{1} & b_{3} & . & b_{1}        & c_{3} &. &. &.   \\
a_{4} & a_{2} & . & b_{4} & b_{2} & .        & c_{4} &. &. &.   \\
. & a_{4} & . & . & b_{4} & .               & c_{7} &. &. &.   \\
. & a_{5} & a_{4} & . & b_{5} & b_{4}        & c_{8} &. &. &. \\
. & . & a_{6} & . & . & b_{6}               & c_{10}&. &. &.   \\
a_{5} & a_{3} & a_{2} & b_{5} & b_{3} & b_{2} & c_{5} &1 &. &. \\
a_{6} & . & a_{3} & b_{6} & . & b_{3}        & c_{6} &. &1 &.  \\
. & a_{6} & a_{5} & . & b_{6} & b_{5}        & c_{9} &. &. &1\\
\end{smallmatrix}
\right]
\\ [15mm]
\text{visualiser $\bsOmega_3(\iota)$} \\
\end {array}
$$
Cette matrice est strictement carrée et la correspondance entre
lignes/colonnes repose sur la correspondance biunivoque $\Smac_{1,d}
\leftrightarrow \Jex_{1,d}$, pilotée par $\minDiv$, fournie dans un
sens par $X^\beta e_i \mapsto X^\beta X_i^{d_i}$ et dans l'autre sens
par $X^\alpha \mapsto (X^\alpha/X_i^{d_i})\, e_i$ avec $i =
\minDiv(X^\alpha)$.

\medskip

Les 3 dernières colonnes, indexées par la base monomiale $\calS_{0,3}$
de $\Smac_{0,3}$, sont les \og colonnes-joker\fg.  En présence d'une
fonction $f : \calS_{0,3} \to \bfA[\uX]_3$, en y reportant dans
l'ordre indiqué les coordonnées des 3 polynômes $f(XYZ), f(XZ^2),
f(YZ^2)$ de $\bfA[X,Y,Z]_3$, on a $\Det_3(f) = \det \bsOmega_3(f)$.
Ceci achève la description de la forme $3$-linéaire alternée $\Det_3$.

\bigskip

Signalons une propriété supplémentaire, non vérifiée par les exemples
précédents, qui fera l'objet du chapitre ultérieur~\ref{ChapSylvesterHybride}.
Il s'agit d'une évaluation spéciale de $\Det_3$ qui fournit $\Res(\uP)$.

\medskip

En notant $d' = \delta-d \overset{\rm ici}{=} 1$, l'involution
mouton-swap $\calS_{0,d} \simeq \calS_{0,d'}$ réalise $X^\alpha
\leftrightarrow X^{\alpha'} := X^{D-\Un-\alpha}$ (le produit de
$X^\alpha$ et $X^{\alpha'}$ est le mouton-noir). Nous l'avons
schématisée dans les 3 dernières colonnes sous la forme
$XYZ\leftrightarrow Z$, $XZ^2 \leftrightarrow Y$ et
$YZ^2\leftrightarrow X$: pour $m \leftrightarrow m'$, le
produit $mm'$ est le mouton-noir $XYZ^2$.

On verra qu'il y a une fonction ``déterminant bezoutien''
$\bsnabla(\uP) : \calS_{0,d'} \to \bfA[\uX]_d$ qui associe à un monôme
$X^\beta$ de degré $d'$ un polynôme homogène $\bsnabla_\beta(\uP)$ de degré $d$.
On montrera, en composant cette fonction par $\calS_{0,d}
\overset{\simeq}{\longrightarrow} \calS_{0,d'}$, que $f :
\calS_{0,d} \to \bfA[\uX]_d$ définie par $X^\alpha \mapsto \bsnabla_{D-\Un-\alpha}(\uP)$
vérifie:
$$
\Det_{d,\uP}(f) = \Res(\uP)
$$
Assimilons $\bsnabla(\uP)$ à la famille
$\big(\bsnabla_\beta(\uP)\big)_{|\beta|=d'}$. Ici, on a donc une
famille $\big(\bsnabla_Z(\uP), \bsnabla_Y(\uP), \bsnabla_X(\uP)\big)$
qui vérifie: 
$$
\Res(\uP) = \det \bsOmega_{3,\uP}(\bsnabla_Z,\ \bsnabla_Y,\ \bsnabla_X)
$$
Cette égalité constitue donc une formule déterminantale via un
déterminant d'ordre 10 . A comparer
avec la formule $\Res(\uP) = \omegares(\nabla)$ qui fait intervenir le
quotient de 2 déterminants, d'ordre $\dim \bfA[\uX]_\delta = 15$ au
numérateur et d'ordre $\dim \Jex_{2,\delta} = 1$ au dénominateur
($\Jex_{2,\delta} = \bfA X^2Y^2$).
Quant à la formule $\Res(\uP) = \det W_{1,\delta+1}/\det
W_{2,\delta+1}$, le déterminant est d'ordre 21 au numérateur et
d'ordre 5 au dénominateur.

Une petite vérification sur le poids ne peut pas faire de mal. On a
$\widehat d_1 + \widehat d_2 + \widehat d_3 = 6+6+4 = 16$ et dans
l'ordre inverse des formules évoquées:
$$
16 = 21-5 = (15-1) + 3 - 1 = (10-3) + (3+3+3)
$$
Au milieu, le $-1$ dans $15-1$ correspond à la colonne-argument, $3$
au poids du déterminant bezoutien~$\nabla(\uP)$ et $-1$ au poids du
dénominateur.  Tandis qu'à droite, $-3$ dans $10-3$ correspond aux 3
colonnes-argument; il faut savoir que chaque déterminant bezoutien est
de poids $n=3$ en les coefficients des $P_i$, ce qui explique le
$3+3+3$ correspondant aux poids de $\bsnabla_Z,\ \bsnabla_Y,\ \bsnabla_X$.
Note: il y a beaucoup de 3 ici, qu'il ne faut pas confondre
$$
n=3\quad (\text{poids des bezoutiens}), \qquad d=3, \qquad r_{0,d} = 3
$$
Nous verrons dans le chapitre~\ref{ChapSylvesterHybride} que
cette propriété supplémentaire/particulière est due au fait que~$d$ appartient
à la fourchette:
$$
\delta - \min(D) < d \le \delta  \qquad \text{ce qui équivaut à} \qquad
0 \le d' < \min(D)
$$
La contrainte sur $d'$ a comme conséquence que $\Jex_{1,d'} = 0$, d'où
l'égalité $\Smac_{0,d'} = \bfA[\uX]_{d'}$.

\subsubsection*{A propos d'un glissement notationel et du risque de notations cryptiques}

Nous avons rencontré les objets suivants:
$$
\BW^{r_{0,d}}(\bfA[\uX]_d), \qquad
\bfA[\uX]_d^{r_{0,d}}, \qquad
\AXdSod
$$
Bien entendu, nous avons une flèche du second vers le premier qui
consiste à associer à un $r_{0,d}$-uplet $(F_1, F_2, \dots)$ de
polynômes homogènes de degré $d$ leur produit extérieur $F_1 \wedge
F_2 \wedge \cdots$ dans $\bigwedge^{r_{0,d}}(\bfA[\uX]_d)$ et qui
permet de considérer comme équivalente la donnée d'une forme
$r_{0,d}$-linéaire alternée sur $\bfA[\uX]_d$ et celle d'une forme
linéaire sur $\BW^{r_{0,d}}(\bfA[\uX]_d)$.

Pour des raisons de normalisation de signes, nous avons dû opérer avec
l'objet de droite plutôt qu'avec celui du milieu, remplaçant un
$r_{0,d}$-uplet par une fonction $f : \calS_{0,d} \to \bfA[\uX]_d$
qu'il est parfois préférable de voir comme une famille
$\big(f(X^\alpha)\big)_{X^\alpha \in \calS_{0,d}}$ de $\bfA[\uX]_d$.
Mais, en présence d'une telle $f \in \AXdSod$, rien n'empêche de
penser à une espèce de produit extérieur (mal défini lorsque
$\calS_{0,d}$ n'est pas ordonné)
$$
\bigwedge_{X^\alpha\in\calS_{0,d}} \kern -5pt f(X^\alpha) \in \BW^{r_{0,d}}(\bfA[\uX]_d)
$$
Nous essaierons dans la suite de déjouer les écritures trop cryptiques.
Dans le schéma ci-dessous, on a $i = \minDiv(X^\alpha)$. Pour la version
tordue par $\sigma \in \fS_n$, il faut remplacer $\bsOmega_d$ par
$\bsOmega^\sigma_d$, $i = \minDiv(X^\alpha)$ par $i = \sminDiv(X^\alpha)$
et $\Smac_{1,d}$ par $\Smac^\sigma_{1,d}$.

\centerline{
\begin{tikzpicture}[scale = 0.8]
\draw[fill=gray!20]  (0,0) rectangle (4,6) ;
\draw[fill=gray!60]  (4,0) rectangle (6,6) ;
\path (0,6) -- (4,6) node[midway, above] {$\Jex_{1,d} \simeq \Smac_{1,d}$} ;
\path (4,6) -- (6,6) node[midway, above] {$\Smac_{0,d}$} ;
\path (6,0) -- (6,6) node[midway,right] {$\bfA[\uX]_d$} ;
\path (0,0) -- (4,6) node[midway] {$\Syl_d(\uP)_{|\Smac_{1,d}}$} ;
\path (4,0) -- (6,6) node[midway] {$f$} ;
\path (0,0) -- (0, 6) node[midway, left] {$\bsOmega_d(f) \ : \ \quad$} ;
\end{tikzpicture}
\qquad
\begin{tikzpicture}[scale = 0.8]
\draw [thick] (-0.1, -0.3) -- (-0.4, -0.3) -- (-0.4, 6.3) -- (-0.1, 6.3) ;
\draw [thick] (6+0.1, -0.3) -- (6+0.4, -0.3) -- (6+0.4, 6.3) -- (6+0.1, 6.3) ;
\draw [dashed, rounded corners=4pt, fill=gray!20] (1.8, -0.3) rectangle (2.2, 6.3) ;
\draw (2,6.3) node[above] {\footnotesize $\dfrac{X^\alpha}{X_i^{d_i}} P_i$} ;
\draw [dotted] (2,4) -- (6.6,4) node[right] {\footnotesize $X^\alpha \in \Jex_{1,d}$} ;
\draw [dashed, rounded corners=4pt, fill=gray!60] (4.8,-0.3) rectangle (5.2,6.3) ;
\draw (5,6.3) node[above] {\footnotesize $f(X^\gamma)$} ;
\draw [dotted] (5,1) -- (6.6,1) node[right] {\footnotesize $X^{\gamma} \in \Smac_{0,d}$} ;
\foreach \r in {0,1,...,6} \draw (\r, 6-\r) node {\tiny $\bullet$} ;
\end{tikzpicture}
}

\label{DessinOmegad}

\bigskip

Rappelons que l'on a désigné par $\iota : \calS_{0,d} \hookrightarrow \bfA[\uX]_d$ 
l'injection canonique.  La proposition suivante relie l'endomorphisme
$\bsOmega_d(\iota)$ de $\bfA[\uX]_d$ et l'endomorphisme $W_{1,d}$ de
$\Jex_{1,d}$.

\begin {prop}
\label {DetOmegaiota=DetW1d}
Ces deux endomorphismes ont même déterminant:
$$
\det \bsOmega_d(\iota) = \det W_{1,d}
$$  
Idem pour la version tordue par une permutation $\sigma \in \fS_n$.
\end {prop}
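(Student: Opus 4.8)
Le plan est de lire la structure par blocs de l'endomorphisme $\bsOmega_d(\iota)$ relativement � la d�composition monomiale $\bfA[\uX]_d = \Jex_{1,d} \oplus \Smac_{0,d}$, puis d'invoquer la triangularit� par blocs. Je commencerais par expliciter $\bsOmega_d(\iota)$ sur la base monomiale en �valuant la d�finition~\ref{OmegaDetNumerator} en $\iota$, l'injection canonique $\calS_{0,d} \hookrightarrow \bfA[\uX]_d$. Pour un mon�me $X^\gamma \in \Smac_{0,d}$, on a $\iota(X^\gamma) = X^\gamma$, donc $\bsOmega_d(\iota)(X^\gamma) = X^\gamma$: autrement dit $\bsOmega_d(\iota)$ agit comme l'identit� sur le facteur $\Smac_{0,d}$, qu'il pr�serve. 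Pour un mon�me $X^\alpha \in \Jex_{1,d}$, on a $\bsOmega_d(\iota)(X^\alpha) = \frac{X^\alpha}{X_i^{d_i}}P_i$ o� $i = \minDiv(X^\alpha)$, \emph{sans projection} (et cette valeur est d'ailleurs ind�pendante de l'argument).

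Le point-cl� est alors de comparer ceci � la d�finition de $W_{1,d}$. D'apr�s la remarque~\ref{FenetreObservationPourSyl}, on a $W_{1,d}(\uP) = \pi_{\Jex_{1,d}} \circ \Syl_d(\uP) \circ \varphi$, et comme $\Syl_d(\uP)\big(\varphi(X^\alpha)\big) = \frac{X^\alpha}{X_i^{d_i}}P_i$ avec $i = \minDiv(X^\alpha)$, il vient $W_{1,d}(X^\alpha) = \pi_{\Jex_{1,d}}\big(\frac{X^\alpha}{X_i^{d_i}}P_i\big)$. En d�composant $\frac{X^\alpha}{X_i^{d_i}}P_i = \pi_{\Jex_{1,d}}\big(\frac{X^\alpha}{X_i^{d_i}}P_i\big) + \pi_{\Smac_{0,d}}\big(\frac{X^\alpha}{X_i^{d_i}}P_i\big)$, on obtient pour tout $X^\alpha \in \Jex_{1,d}$:
$$
\bsOmega_d(\iota)(X^\alpha) = W_{1,d}(X^\alpha) + \pi_{\Smac_{0,d}}\Big(\tfrac{X^\alpha}{X_i^{d_i}}P_i\Big).
$$

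Je conclurais en assemblant ces calculs dans la matrice par blocs de $\bsOmega_d(\iota)$ relative � $\bfA[\uX]_d = \Jex_{1,d} \oplus \Smac_{0,d}$ (source et but dans cet ordre):
$$
\bsOmega_d(\iota) = \begin{bmatrix} W_{1,d} & 0 \\ \star & \Id_{\Smac_{0,d}} \end{bmatrix}.
$$
Le bloc sup�rieur droit est nul car $\bsOmega_d(\iota)$ envoie $\Smac_{0,d}$ dans lui-m�me; le bloc diagonal inf�rieur est l'identit�; le bloc diagonal sup�rieur est $W_{1,d}$. Cette matrice �tant triangulaire inf�rieure par blocs, on a $\det \bsOmega_d(\iota) = \det(W_{1,d})\,\det(\Id) = \det W_{1,d}$.

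Pour la version tordue par $\sigma \in \fS_n$, le m�me raisonnement s'applique: $\bsOmega^\sigma_d(\iota)$ co�ncide avec l'identit� sur $\Smac_{0,d} = \Smac^\sigma_{0,d}$ et, sur $\Jex_{1,d}$, envoie $X^\alpha$ sur $\frac{X^\alpha}{X_i^{d_i}}P_i$ avec cette fois $i = \sminDiv(X^\alpha)$; son bloc diagonal sup�rieur est donc exactement $W^\sigma_{1,d}$, dont la d�finition (induit-projet� de $\partial_1 = \Syl_d$, cf.~\ref{DefBksigma}) conduit pr�cis�ment � $X^\alpha \mapsto \pi_{\Jex_{1,d}}(\frac{X^\alpha}{X_i^{d_i}}P_i)$. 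L'unique obstacle, b�nin, est de s'assurer de l'absence de signe parasite dans ce recollement: or le cas $k=1$ correspond � $J=\emptyset$ dans $\varphi^\sigma$, qui n'introduit alors aucun signe, et $\bsOmega^\sigma_d$ est de toute fa�on d�fini directement sur la base monomiale. La m�me triangularit� par blocs donne $\det \bsOmega^\sigma_d(\iota) = \det W^\sigma_{1,d}$. En r�sum�, la preuve se r�duit � l'observation structurelle par blocs, le seul soin requis �tant le contr�le pr�cis des d�finitions (projection versus absence de projection) et le bloc identit� sur $\Smac_{0,d}$.
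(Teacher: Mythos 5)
Votre preuve est correcte et suit exactement la m�me d�marche que celle du papier: �criture de $\bsOmega_d(\iota)$ par blocs dans la base monomiale relative � $\bfA[\uX]_d = \Jex_{1,d} \oplus \Smac_{0,d}$, avec $W_{1,d}$ en haut � gauche, $0$ en haut � droite, $\Id_{\Smac_{0,d}}$ en bas � droite, puis triangularit� par blocs. Vous explicitez simplement davantage (action identit� sur $\Smac_{0,d}$, d�composition de $\frac{X^\alpha}{X_i^{d_i}}P_i$, cas tordu par $\sigma$) ce que le papier laisse implicite dans sa preuve de deux lignes.
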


\begin {proof}
Ecrivons $\bfA[\uX]_d = \Jex_{1,d} \oplus \Smac_{0,d}$. Dans la base monomiale de $\bfA[\uX]_d$,
réunion de celle de $\Jex_{1,d}$ et de celle de $\Smac_{0,d}$, l'endomorphisme $\bsOmega_d(\iota)$ a
pour matrice:
$$
\bsOmega_d(\iota) = \begin {bmatrix}
W_{1,d} & 0  \\
*      & \Id_{\Smac_{0,d}} \\
\end {bmatrix}
$$
d'où le résultat.
\end {proof}

\subsubsection*{Les propriétés de type \og Cramer\fg{} des formes $\det\bsOmega_{d,\uP}$ et $\Det_{d,\uP}$}

\begin {defn}
Pour un système $\uP$ et $f \in \AXdSod$, on note:
$$
\bsomega_{d,\uP}(f) = \det \bsOmega_{d,\uP}(f)
\qquad \text{ou, en omettant $\uP$,} \qquad
\bsomega_d(f) = \det \bsOmega_d(f)
$$
Notation analogue $\bsomega^\sigma_d(f) = \det \bsOmega^\sigma_d(f)$ pour $\sigma \in \fS_n$
de sorte que:
$$
\bsomega^\sigma_d(\sbullet) = \Delta^\sigma_{2,d}\, \Det_d(\sbullet)
$$
En évaluant ces formes en $\iota$, on retrouve l'égalité:
$$
\det W^\sigma_{1,d} = \Delta^\sigma_{2,d}\,\Delta_{1,d}
$$
\end {defn}

\medskip

Pour $F,G \in \bfA[\uX]_\delta$, on a souvent utilisé la propriété
$\omega(F)G - \omega(G)F \in \langle\uP\rangle_\delta$ avec comme
conséquence le fait que $\omega(F)G \in \langle\uP\rangle_\delta +
\bfA F$. Et en particulier, l'implication $F \in \uPsat_\delta \Rightarrow
\omega(F) \in \ElimIdeal$ (prendre pour $G$ les $X^\alpha$ avec
$|\alpha|= \delta$ ou au choix les $X_i^{\delta}$). Voici un premier
résultat, sorte de pendant pour $\bsomega_d$.

\label {NOTA14-bsomega}%
%
%

\begin {lem}   
Soit $f \in \AXdSod$.

\begin{enumerate}[\rm i)]
\item    
Pour $G \in \bfA[\uX]_d$, on a:
$$
\bsomega_{d,\uP}(f)\,G  \ \in\ \langle\uP\rangle_d + \sum_{|\alpha| = d} \bfA f(X^\alpha) 
$$  
\item
En particulier, si \emph{chaque} $f(X^\alpha) \in \uPsat_d$, alors $\bsomega_d(f) \in \ElimIdeal$.

\item
Si $f(X^\alpha) \in \langle \uP\rangle_d$ pour \emph{un} $X^\alpha$, alors $\bsomega_d(f) = 0$. Dit autrement,
$\bsomega_d$ passe au quotient modulo $\uP$, induisant une forme $r_{0,d}$-linéaire
alternée $\bfB_d^{\calS_{0,d}} \to \bfA$.
\end {enumerate}  
\end {lem}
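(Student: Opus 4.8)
The plan is to read everything off the matrix Cramer formula recalled just before proposition~\ref{CramerSymetrie}, applied to the square matrix $A=\bsOmega_{d,\uP}(f)$, and then to push the resulting membership through saturation. Recall that the columns of $A$ are indexed by the degree-$d$ monomials: for $X^\mu\in\Jex_{1,d}$ the corresponding column is the coordinate vector of $\tfrac{X^\mu}{X_i^{d_i}}P_i$ (with $i=\minDiv(X^\mu)$), hence an element of $\langle\uP\rangle_d$, while for $X^\mu\in\calS_{0,d}$ the column is the coordinate vector of $f(X^\mu)$. For i) I would apply the Cramer expansion to $A$ and to the coordinate vector $y$ of $G$:
\[
\det(A)\,y \ = \ \sum_{\mu}\det(A_1,\dots,y,\dots,A_N)\,A_\mu ,
\qquad N=\dim\bfA[\uX]_d .
\]
Translating back to $\bfA[\uX]_d$ and grouping the columns according to the two families above, the right-hand side is a scalar combination of the $\tfrac{X^\mu}{X_i^{d_i}}P_i\in\langle\uP\rangle_d$ plus a scalar combination of the $f(X^\alpha)$, whereas the left-hand side is $\bsomega_{d,\uP}(f)\,G$ since $\det(A)=\bsomega_{d,\uP}(f)$. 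This is exactly the asserted membership.

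For ii) I would feed i) into the very definition of $\uPsat$. As each $f(X^\alpha)\in\uPsat_d$, a common exponent $e$ gives $X_k^e f(X^\alpha)\in\langle\uP\rangle$ for every $k$ and every $\alpha$. Taking $G=X_k^d$ in i) and multiplying the membership by $X_k^e$ sends $\langle\uP\rangle_d$ into $\langle\uP\rangle$ and each $X_k^e f(X^\alpha)$ into $\langle\uP\rangle$, whence $X_k^{d+e}\,\bsomega_{d,\uP}(f)\in\langle\uP\rangle$ for all $k$. Since $\bsomega_{d,\uP}(f)$ is a scalar, this says precisely $\bsomega_{d,\uP}(f)\in\uPsat\cap\bfA=\ElimIdeal$.

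For iii) the clean route is through the factorisation $\bsomega_{d,\uP}(\sbullet)=\Delta_{2,d}\,\Det_{d,\uP}(\sbullet)$ of~\ref{OmegaDetNumerator}, together with the fact that the canonical form $\Det_{d,\uP}$ descends to $\bfB_d^{\calS_{0,d}}$ (cf.~\ref{PassageQuotientFormeAlternee}), i.e. vanishes as soon as one argument $f(X^\alpha)$ lies in $\langle\uP\rangle_d$. Granting these, $f(X^\alpha)\in\langle\uP\rangle_d$ forces $\Det_{d,\uP}(f)=0$, hence $\bsomega_{d,\uP}(f)=0$. The delicate point is that for a general $\uP$ the scalar $\Delta_{2,d}$ need not be regular (it may even vanish), so both the equality $\bsomega_d=\Delta_{2,d}\Det_d$ and the descent of $\Det_d$ are understood as identities obtained over the generic system. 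I would therefore make \emph{both} $\uP$ and $f$ generic over $\bbZ$: take $\uP=\uP^\gen$, which is a regular sequence by~\ref{SuiteGeneriqueReguliere}, let the values $f(X^\alpha)$ for $\alpha\neq\alpha_0$ be indeterminate, and set $f(X^{\alpha_0})=\sum_i U_iP_i$ with indeterminate $U_i$ of degree $d-d_i$; prove $\bsomega_{d,\uP}(f)=0$ in this setting using the exactness of $\rmK_{\sbullet,d}(\uP^\gen)$ and the two facts above, then specialise. The vanishing is a polynomial identity over the domain $\bbZ[\dots]$, so it persists for every $\uP$ and every $f$ having some $f(X^\alpha)\in\langle\uP\rangle_d$, which is the content of iii).

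The main obstacle is precisely this genericity bookkeeping in iii), and the reason one cannot avoid it is that the naive column-dependence argument fails: the $\Jex_{1,d}$-columns of $\bsOmega_{d,\uP}(f)$ span only $\Syl_d(\Smac_{1,d})$, which is in general a proper subspace of $\langle\uP\rangle_d=\Im\Syl_d$ (it misses $\Syl_d(\Mmac_{1,d})$), so the mere membership $f(X^{\alpha_0})\in\langle\uP\rangle_d$ does not by itself produce a linear relation among the columns; the vanishing of the determinant genuinely needs the structural input $\bsomega_d=\Delta_{2,d}\Det_d$ coming from the exactness of the Koszul complex in the generic case.
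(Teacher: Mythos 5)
Your points i) and ii) are exactly the paper's proof: the same Cramer expansion of the determinant $\bsomega_{d,\uP}(f)=[Q_1,\dots,Q_m]$ against $G$, and the same choice $G=X_k^d$ followed by saturation (the paper phrases ii) more briskly via ``$\bsomega_d(f)X_i^d\in\uPsat$ pour tout $i$'', but that is the same unwinding you do with the common exponent $e$). For iii) you take a genuinely different route, and in fact precisely the one the paper mentions in the remark \emph{after} its proof, only to set it aside: deduce the vanishing from $\bsomega_{d,\uP}=\Delta_{2,d}(\uP)\,\Det_{d,\uP}$ and from the descent of $\Det_{d,\uP}$ to $\bfB_d^{\calS_{0,d}}$. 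Your genericity bookkeeping is sound: adjoining indeterminate coefficients for the $f(X^\alpha)$, $\alpha\neq\alpha_0$, and writing $f(X^{\alpha_0})=\sum_i U_iP_i$ with indeterminate $U_i$ keeps $\uP^\gen$ a regular sequence over the enlarged coefficient ring, the structural identities apply there, and the vanishing is a polynomial identity in all these coefficients, hence specialises. So your iii) is correct, but it invokes the whole multiplicative structure where the paper needs much less.

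The flaw is your closing claim that the structural input is unavoidable. You are right on the narrow point: the $\Jex_{1,d}$-columns of $\bsOmega_{d,\uP}(f)$ span only $\Syl_d(\Smac_{1,d})$, so $f(X^{\alpha_0})\in\langle\uP\rangle_d$ yields no linear relation among the columns of the matrix itself. But vanishing of the determinant does not require such a relation, and the paper's proof of iii) is exactly the ``naive'' argument you rule out, done correctly. Namely: the $r_{1,d}$ columns indexed by $\Jex_{1,d}$ \emph{are} columns of $\Syl_d(\uP)$ (those indexed by the monomial basis of $\Smac_{1,d}$); if moreover $f(X^{\alpha_0})\in\langle\uP\rangle_d=\Im\Syl_d(\uP)$, expand the $\alpha_0$-column over all columns of $\Syl_d(\uP)$ by multilinearity: each resulting $m\times m$ determinant contains $r_{1,d}+1$ columns of $\Syl_d(\uP)$, hence vanishes by generalized Laplace expansion along these columns, because every minor of order $r_{1,d}+1$ of $\Syl_d(\uP)$ is zero. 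The only input is therefore the rank bound $\calD_{r_{1,d}+1}\big(\Syl_d(\uP)\big)=0$, valid for \emph{every} $\uP$ by specialisation from the generic exact case --- a far weaker fact about the first differential alone, not the factorisation $\bsomega_d=\Delta_{2,d}\Det_d$ nor the descent of $\Det_d$. This is the very reason the paper states it prefers the direct proof, to stress that $\Syl_d(\uP)$ is of rank $\le r_{1,d}$.
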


\begin {proof} \leavevmode

i)  
Par définition, $\bsomega_{d,\uP}(f)$ est un déterminant d'ordre $m := \dim \bfA[\uX]_d$,
que nous écrivons avec des crochets $[Q_1, \dots, Q_m]$, chaque $Q_j \in \bfA[\uX]_d$
étant, ou bien un multiple d'un $P_i$ ou bien un $f(X^\alpha)$. En utilisant la
formule de Cramer (voir le début du chapitre \ref{ChapAC1}), on a:
$$
[Q_1, \dots, Q_m]\,G \ =\ [G,Q_2,\dots, Q_m]\,Q_1 + [Q_1,G,\dots, Q_m]\,Q_2 + \cdots +
[Q_1,Q_2,\dots, G]\,Q_m  
$$  
d'où l'appartenance annoncée.

\medskip
ii)
En prenant $G = X_i^d$, on obtient  $\bsomega_d(f)\,X_i^{d} \in \uPsat$ pour tout $i$ donc
$\bsomega_d(f) \in \ElimIdeal$.

\medskip
iii) De manière générale, dans le déterminant $[Q_1, \cdots, Q_m]$, il
y a (au moins) $r_{1,d}$ indices $j$ tel que $Q_j$ soit multiple d'un
$P_i$, où l'on rappelle que $r_{1,d} \overset{\rm def}{=} \dim
\Jex_{1,d}$.  Si de plus, il y a un $X^\alpha$ tel que
$f(X^\alpha) \in \langle\uP\rangle_d$, on dispose de $r_{1,d}+1$ indices $j$ tels que
$Q_j \in \langle \uP\rangle_d$. D'où la nullité annoncée puisque
$\Syl_d(\uP)$ est de rang~$\le r_{1,d}$.
\end {proof}

\medskip

Remarquons que l'on aurait pu déduire le point iii) de l'égalité
$$
\bsomega_{d,\uP}(f) = \Delta_{2,d}(\uP)\, \Det_{d,\uP}(f)
$$
et du fait que $\Det_{d,\uP}(f)$ passe au quotient modulo $\uP$. Nous
avons préféré le démontrer directement pour insister sur l'importance
du fait que~$\Syl_d(\uP)$ est de rang $\le r_{1,d}$.

\medskip
Avouons que ce lemme n'est qu'une version provisoire: il ne fournit
pas le pendant pour $\bsomega_d$ de l'appartenance $\omega(F)G -
\omega(G)F \in \langle\uP\rangle_\delta$. Nous l'avons donné car il
évitait les notations cryptiques que nous devons affronter maintenant.
Supposons par exemple que $r_{0,d}=2$ si bien que $\bsomega_d$ est une forme
$2$-linéaire alternée sur $\bfA[\uX]_d$. Nous avons l'intention d'écrire
quelque chose du style
$$
\forall\, F_1,F_2,G \in \bfA[\uX]_d, \quad 
\bsomega_d(F_1,F_2)G \ \equiv\ \bsomega_d(G,F_2)F_1 + \bsomega_d(F_1,G)F_2
\quad \bmod \langle\uP\rangle_d
$$
Pour y parvenir, nous devons réaliser une certaine gymnastique, à
savoir pour $f \in \AXdSod$, $X^\alpha \in \calS_{0,d}$ et $G
\in \bfA[\uX]_d$, définir une fonction $\fSwap{\alpha}{G}$ consistant à
remplacer la valeur de $f(X^\alpha)$ par $G$. Voici sa définition:
$$
\begin {array}{cccl}
\fSwap{\alpha}{G} : &\calS_{0,d}& \to &\bfA[\uX]_d  \\[2mm]
             &X^{\alpha'}& \longmapsto &
\begin {cases} f(X^{\alpha'}) &\text{si $\alpha'\ne\alpha$}\\ G&\text{si $\alpha'=\alpha$} \\ \end {cases}  
\end {array}
$$
Cette définition cryptique permet maintenant d'énoncer la proposition suivante dont la seule
difficulté réside dans les notations. En clair, nous n'avons pas à fournir de preuve.

\begin {prop} [Propriété \og Cramer\fg{} pour $\bsomega_d$]
Dans le contexte ci-dessus, on dispose de la congruence:
$$
\bsomega_d(f)G \ \equiv \sum_{X^\alpha \in \calS_{0,d}}
\bsomega_d(\fSwap{\alpha}{G})\, f(X^\alpha) \quad \bmod \langle\uP\rangle_d
$$
\end {prop}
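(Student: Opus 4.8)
Le plan est de dérouler la formule de Cramer matricielle du début du chapitre~\ref{ChapAC1}, appliquée au déterminant $\bsomega_d(f) = \det\bsOmega_d(f)$, dans le même esprit que la preuve du point~i) du lemme provisoire qui précède. Je commencerais par expliciter la matrice de $\bsOmega_{d,\uP}(f)$ dans la base monomiale de $\bfA[\uX]_d$: ses colonnes $Q_1, \dots, Q_m$, où $m = \dim\bfA[\uX]_d$, sont les images des monômes $X^\alpha$ de degré~$d$, vues comme éléments de $\bfA[\uX]_d$. Par définition du constructeur~$\bsOmega_d$, ces colonnes se répartissent en deux familles: les colonnes indexées par un monôme de la base de $\Jex_{1,d}$, qui valent $\frac{X^\alpha}{X_i^{d_i}}\,P_i$ avec $i = \minDiv(X^\alpha)$ et appartiennent donc à $\langle\uP\rangle_d$; et les colonnes \emph{joker} indexées par un monôme $X^\alpha \in \calS_{0,d}$ (base de $\Smac_{0,d}$), qui valent $f(X^\alpha)$.

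J'appliquerais ensuite la formule de Cramer $\det(A)\,G = \sum_j \det(Q_1, \dots, Q_{j-1}, G, Q_{j+1}, \dots, Q_m)\,Q_j$ au vecteur $G \in \bfA[\uX]_d$, puis je scinderais la somme selon ces deux familles d'indices. Dans les termes où la colonne remplacée est une colonne de Sylvester, le facteur restant $Q_j$ est un multiple d'un $P_i$, donc appartient à $\langle\uP\rangle_d$: ces termes disparaissent modulo $\langle\uP\rangle_d$. Il ne subsiste que les termes où $j$ correspond à un $X^\alpha \in \calS_{0,d}$; le facteur est alors $Q_j = f(X^\alpha)$, tandis que le déterminant $\det(Q_1, \dots, G, \dots, Q_m)$, dans lequel on a substitué $G$ à la colonne joker d'indice $X^\alpha$, coïncide par construction avec $\det\bsOmega_d(\fSwap{\alpha}{G}) = \bsomega_d(\fSwap{\alpha}{G})$.

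La seule véritable difficulté, comme annoncé dans le texte, est d'ordre notationnel: il faut reconnaître que substituer $G$ à la colonne $f(X^\alpha)$ dans $\bsomega_d(f)$ revient exactement à évaluer $\bsomega_d$ en la fonction $\fSwap{\alpha}{G}$, laquelle ne diffère de~$f$ qu'en la valeur prise en~$X^\alpha$. Une fois cette identification posée, la congruence
$$
\bsomega_d(f)\,G \ \equiv \sum_{X^\alpha \in \calS_{0,d}} \bsomega_d(\fSwap{\alpha}{G})\, f(X^\alpha) \quad \bmod \langle\uP\rangle_d
$$
est immédiate. Je noterais pour finir que cet énoncé raffine le point~i) du lemme provisoire, qui se contentait de l'appartenance $\bsomega_d(f)\,G \in \langle\uP\rangle_d + \sum_\alpha \bfA\,f(X^\alpha)$, en explicitant les coefficients de la combinaison des $f(X^\alpha)$ comme les déterminants « swappés » $\bsomega_d(\fSwap{\alpha}{G})$.
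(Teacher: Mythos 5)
Votre preuve est correcte et suit exactement la d�marche que le papier sous-entend : appliquer la formule de Cramer matricielle au d�terminant $\bsomega_d(f) = \det\bsOmega_d(f)$, faire dispara�tre modulo $\langle\uP\rangle_d$ les termes dont le facteur restant est une colonne de Sylvester (multiple d'un $P_i$), et reconna�tre dans les d�terminants restants, o� $G$ remplace une colonne-joker, les $\bsomega_d(\fSwap{\alpha}{G})$. Le papier ne r�dige d'ailleurs pas cette preuve, estimant que \og la seule difficult� r�side dans les notations\fg{} puisque l'argument est d�j� contenu dans la preuve du point i) du lemme qui pr�c�de ; votre r�daction explicite pr�cis�ment cet argument.
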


Voilà maintenant le résultat important qui mérite le nom de théorème
(et une preuve!): nous allons remplacer $\bsomega_d$ par $\Det_d$,
pendant du passage, en degré $\delta$, de $\omega$ à $\omegares$.

\index{propriété Cramer!3@de la forme $\chi_d$-linéaire alternée $\Det_d$}%

\begin {theo} [Propriété \og Cramer\fg{} pour $\Det_d$]
\label{DetdIsCramer}
En conservant les notations précédentes:
$$
\Det_{d,\uP}(f)\,G \ \equiv \sum_{X^\alpha \in \calS_{0,d}}
\Det_{d,\uP}(\fSwap{\alpha}{G})\,f(X^\alpha) \quad \bmod \langle\uP\rangle_d
$$
En conséquence, si $f(X^\alpha) \in \uPsat_d$ pour chaque $X^\alpha \in \calS_{0,d}$,
alors $\Det_{d,\uP}(f) \in \ElimIdeal$.
\end {theo}  

\begin {proof}
Pour démontrer le théorème, nous pouvons supposer $\uP$ générique. Reprenons l'égalité:
$$
\Delta_{2,d}(\uP)\, \Det_{d,\uP}(f) = \bsomega_{d,\uP}(f)
$$
et notons $H \in \bfA[\uX]_d$ la différence des deux membres de l'énoncé.
D'après la proposition précédente, en travaillant dans $\bfB_d =
\bfA[\uX]_d/\langle\uP\rangle_d$:
$$
\Delta_{2,d}(\uP)\, H = 0  \qquad \text{dans $\bfB_d$}
$$
Nous n'avons plus qu'à justifier le fait que $\Delta_{2,d}(\uP)$ est
$\bfB_d$-régulier.  Il résulte du corollaire~\ref{DeltakdPsatreg}
qu'il est régulier sur $\bfA[\uX]/\uPsat$ et la remarque qui suit
le corollaire~\ref{corBdeltaReg} assure qu'il est $\bfB_d$-régulier.

\medskip
En ce qui concerne la deuxième assertion, prendre pour $G$ les $X_i^d$ pour $1 \le i \le n$.
\end {proof}

\subsection{Homogénéité/poids en $P_i$ des différentielles de Koszul
     $\partial_{k,d}(\protect\uP)$ et des $\det B_{k,d}(\protect\uP)$}
\label{PoidsEnPiBkd}

Ici la suite $\uP$ est générique de sorte que $\bfA = \bfk[\indetsPi]$.
L'objectif est l'étude des degrés d'homogénéité en $P_i$ des mineurs de
$\partial_{k,d}(\uP) : \rmK_{k,d} \to \rmK_{k-1,d}$, en particulier
celle des déterminants des endomorphismes~$B_{k,d}(\uP)$.  Il s'agit
donc de prolonger les résultats de la proposition~\ref{PoidsDetW}
consacrée au cas $k = 1$ et au poids des $\det W_\calM(\uP)$,
mineurs extraits de $\Syl_d(\uP) = \partial_{1,d}(\uP)$.

Le cas $k$ quelconque est un peu plus compliqué  que le cas $k=1$
et nécessite de (pouvoir) graduer convenablement les deux $\bfA$-modules libres
de rang fini $\rmK_{k,d}$ et $\rmK_{k-1,d}$, en cohérence avec la
structure graduée de l'anneau de polynômes $\bfA$, et de manière à ce que
$\partial_{k,d}(\uP)$ soit graduée de poids~$0$.
La graduation de chaque $\rmK_{k,d}$ va être mise en place par l'intermédiaire
de sa base monomiale:
$$
\rmK_{k,d} = \bigoplus_{\substack{|\alpha| + d_I=d\\\#I=k}} \bfA X^\alpha e_I
\qquad \qquad
d_I = \sum_{i \in I} d_i
$$

\medskip
On peut considérer que ce qui suit repose sur le lemme
général ci-dessous dont la preuve élémentaire est laissée au
lecteur. A noter que dans cet énoncé, $a_{t,s}$ ou $\det(A)$ peut
être nul tout en possédant un poids spécifié.

\begin{lem} \label{PoidsDeterminant}
Soit une matrice carrée $A = (a_{t,s})_{t \in T, \,s \in S}$ 
($S$~pour Source, $T$ pour Target)
à coefficients dans un anneau gradué par un monoïde commutatif $\Delta$
noté additivement, comme $\bbN$, $\bbN^n$ ou~$\bbZ^n$.
On suppose disposer de $p_S : S \rightarrow \Delta$ 
et $p_T : T \rightarrow \Delta$ telles que chaque
$a_{t,s}$ soit homogène de poids $p_S(s) - p_T(t)$.
Alors $\det(A)$, défini au signe près selon la manière d'ordonner $S$,
$T$, est homogène de poids $\sum_{s \in S} p_S(s) - 
\sum_{t \in T} p_T(t)$. 
\end{lem}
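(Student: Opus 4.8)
Lemma \ref{PoidsDeterminant} asserts that if a square matrix $A = (a_{t,s})_{t \in T,\,s \in S}$ over a $\Delta$-graded ring has entries $a_{t,s}$ homogeneous of weight $p_S(s) - p_T(t)$, then $\det(A)$ is homogeneous of weight $\sum_{s \in S} p_S(s) - \sum_{t \in T} p_T(t)$.

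The plan is to start from the Leibniz expansion of the determinant and track the weight of a single monomial term, since homogeneity of a sum follows once every summand carries the same weight. Because $S$ and $T$ have the same cardinality (the matrix is square), I would fix a bijection realizing the chosen orderings of $S$ and $T$, so that the determinant can be written as
$$
\det(A) = \sum_{\pi} \varepsilon(\pi) \prod_{s \in S} a_{\pi(s),\,s}
$$
where $\pi$ ranges over the bijections $S \to T$ and $\varepsilon(\pi)$ is the associated sign (this is where the \textit{au signe près} caveat in the statement enters: reordering $S$ or $T$ only changes the global sign, never the weight). The sign is a scalar of weight $0$, so it is irrelevant to the weight computation.

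The key step is then purely additive. For a fixed bijection $\pi$, each factor $a_{\pi(s),s}$ is homogeneous of weight $p_S(s) - p_T(\pi(s))$ by hypothesis, so the product $\prod_{s \in S} a_{\pi(s),s}$ is homogeneous of weight
$$
\sum_{s \in S} \big(p_S(s) - p_T(\pi(s))\big)
= \sum_{s \in S} p_S(s) \; - \; \sum_{s \in S} p_T(\pi(s)).
$$
Here I would use that $\pi$ is a bijection: as $s$ runs over $S$, the image $\pi(s)$ runs over all of $T$ exactly once, so $\sum_{s \in S} p_T(\pi(s)) = \sum_{t \in T} p_T(t)$, a quantity independent of $\pi$. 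Hence every term in the Leibniz sum has the same weight $\sum_{s \in S} p_S(s) - \sum_{t \in T} p_T(t)$, and a sum of homogeneous elements of equal weight is homogeneous of that weight (a zero summand, or a zero determinant, still ``carries'' this weight formally, which is why the statement allows $a_{t,s}$ or $\det(A)$ to vanish).

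There is essentially no obstacle here; the only point requiring a word of care is the bookkeeping of the sign under the identification of the index sets $S$ and $T$ with $\{1,\dots,\#S\}$. Since the paper works throughout with determinants defined ``au signe près selon la manière d'ordonner'', the cleanest formulation is to observe that changing the chosen orderings multiplies $\det(A)$ by a sign $\pm 1$, which is homogeneous of weight $0$ and therefore does not affect the claimed weight. This is precisely the content signalled in the statement, so the proof reduces to the additive count above and can be left to the reader, as the paper indicates.
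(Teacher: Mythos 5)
Votre preuve est correcte et constitue exactement l'argument élémentaire que le papier laisse au lecteur (\og dont la preuve élémentaire est laissée au lecteur\fg): développement de Leibniz, poids constant de chaque terme $\varepsilon(\pi)\prod_{s\in S} a_{\pi(s),s}$ grâce à la bijectivité de $\pi$, et neutralité du signe (poids $0$) pour la clause \og au signe près\fg. Rien à redire.
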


\medskip

Il va être commode de munir $\bfA = \bfk[\indetsPi]$ d'une structure $\bbN^n$-graduée en
convenant de $\poids(\text{n'importe quel coefficient de $P_i$}) = \varepsilon_i$, où $\varepsilon_i$
désigne le $i$-ème vecteur de la base canonique de~$\bbN^n$. Ainsi, dire 
qu'un élément $a$ de $\bfA$ (donc un polynôme en les coefficients des $P_i$) est homogène de poids
$m_1\varepsilon_1 + \cdots + m_n\varepsilon_n$, où les $m_i$ sont dans $\bbN$, signifie
tout simplement que $a$ est homogène en $P_i$ de poids~$m_i$.

\smallskip

On attribue alors à la $\bfA$-base monomiale $(X^\alpha\,e_I)$ de $\rmK_\sbullet$ le poids suivant
$$
\poids(X^\alpha\,e_I) = \sum_{i\in I} \varepsilon_i
$$
Le bien-fondé de ce choix, qui n'est pas arbitraire, repose
sur le lemme suivant et sa preuve.  On conseille au lecteur d'examiner
tout d'abord l'exemple qui suit le lemme. La preuve de celui-ci est
reportée après l'exemple.

\index{poids en $P_i$ et $\bbN^n$-graduation de $\rmK_\sbullet(\uP)$}%

\begin {lem} \label{PoidsMineurKoszul}  
\leavevmode

\begin {enumerate}[\rm i)]
\item
L'application $\partial_{k,d}(\uP) : \rmK_{k,d} \to \rmK_{k-1,d}$ est graduée
de poids~$0$.

\item  
Soient une partie $S$ de la base monomiale de $\rmK_{k,d}$ et
une partie $T$ de même cardinal de la base monomiale de $\rmK_{k-1,d}$.
Alors le mineur de $\partial_{k,d}(\uP)$ extrait sur $T \times S$
est homogène en les $P_i$ de poids 
$\poids(S) - \poids(T) \overset{\rm def.}{=} \sum_{s \in S} \poids(s) - \sum_{t \in T} \poids(t)$.
\end {enumerate}
\end {lem}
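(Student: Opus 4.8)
The plan is to reduce point ii) to the general determinant-weight lemma~\ref{PoidsDeterminant}, after first settling point i), which amounts to checking that the monomial basis weight $\poids(X^\alpha e_I) = \sum_{i \in I}\varepsilon_i$ is precisely tailored so that the Koszul differential becomes homogeneous of weight $0$ in the $\bbN^n$-grading of $\bfA = \bfk[\indetsPi]$.

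First I would expand the differential on a source basis element. Since $\partial_{k,d}(\uP)$ is the degree-$d$ component of the Koszul differential of $\uP$, one has
$$
\partial_{k,d}(\uP)(X^\alpha e_I) \ = \ \sum_{i \in I} (-1)^{\varepsilon_i(I)}\, P_i X^\alpha\, e_{I \setminus i},
$$
and I would fix a term indexed by $i \in I$ and expand $P_i X^\alpha = \sum_\beta c_\beta X^\beta$ on the monomial basis of $\bfA[\uX]$. The key observation is that each scalar $c_\beta$ equals a coefficient of $P_i$ (multiplication by the monomial $X^\alpha$ merely relabels monomials, not coefficients), hence is homogeneous of weight $\varepsilon_i$; meanwhile the target monomial $X^\beta e_{I\setminus i}$ carries the module weight $\poids(e_{I\setminus i}) = \sum_{j \in I}\varepsilon_j - \varepsilon_i$. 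Adding the two contributions gives $\varepsilon_i + \big(\sum_{j \in I}\varepsilon_j - \varepsilon_i\big) = \sum_{j \in I}\varepsilon_j = \poids(X^\alpha e_I)$, independently of $i$ and of $\beta$. Thus every term of the image has the same weight as the source element, which is exactly point i).

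Having i), point ii) follows at once. I would read off the matrix of $\partial_{k,d}(\uP)$ in the monomial bases of $\rmK_{k,d}$ (columns, the source $S$) and $\rmK_{k-1,d}$ (rows, the target $T$): the entry at position $(X^\beta e_J,\, X^\alpha e_I)$ is the coordinate of $X^\beta e_J$ in $\partial_{k,d}(\uP)(X^\alpha e_I)$. By i) this coordinate is homogeneous of weight $\poids(X^\alpha e_I) - \poids(X^\beta e_J)$ (a nonzero entry forces this equality of weights, and a zero entry may be assigned any weight, as noted in~\ref{PoidsDeterminant}). Taking $p_S = p_T = \poids$, the hypotheses of the determinant-weight lemma~\ref{PoidsDeterminant} are met, so for any subsets $S, T$ of equal cardinality the minor extracted on $T \times S$ is homogeneous of weight $\sum_{s \in S}\poids(s) - \sum_{t \in T}\poids(t) = \poids(S) - \poids(T)$.

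The only genuinely delicate point is the bookkeeping in i): one must keep separate the two gradings at play, namely the $\uX$-grading (which fixes $|\alpha| + d_I = d$ throughout and plays no role in the weight, since the $X_i$ are not coefficients of the $P_i$) and the $\bbN^n$-weight in the $P_i$. The whole content of the lemma is that the chosen basis weight $\sum_{i \in I}\varepsilon_i$ absorbs exactly the weight $\varepsilon_i$ produced by multiplication by $P_i$ together with the index drop $e_I \to e_{I\setminus i}$, so that nothing ends up depending on $\alpha$ or $\beta$.
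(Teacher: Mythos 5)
Your proof is correct and follows essentially the same route as the paper's: you check point i) term by term (each nonzero matrix entry being a coefficient of $P_i$, hence of weight $\varepsilon_i = \poids(\text{source}) - \poids(\text{target})$), then conclude point ii) by applying the determinant-weight lemma~\ref{PoidsDeterminant} with $p_S = p_T = \poids$, exactly as in the paper.
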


\begin {exemple}
Prenons $D = (2,1,1)$, $k=2$, $d=3$.  Le système $\uP$ est le suivant:
$$
\setlength{\tabcolsep}{2pt}
\left\{
\begin{tabular}{rcp{13cm}} 
$P_{1}$ & $=$ & $a_{1}X^{2} + a_{2}XY + a_{3}XZ + a_{4}Y^{2} + a_{5}YZ + a_{6}Z^{2}$\\ [0.1cm] 
$P_{2}$ & $=$ & $b_{1}X + b_{2}Y + b_{3}Z$\\ [0.1cm] 
$P_{3}$ & $=$ & $c_{1}X + c_{2}Y + c_{3}Z$\\ [0.1cm] 
\end{tabular}
\right.
$$
et $\partial_{2,d} : \rmK_{2,d} \to \rmK_{1,d}$ est donnée par:
$$
\partial_{2,3}(\uP) =
\NorthEastBordermatrix{
\Veti{e_{12}} & \Veti{e_{13}} & \Veti{X\,e_{23}} & \Veti{Y\,e_{23}} & \Veti{Z\,e_{23}} &\\
-b_{1} & -c_{1} & . & . & . & \Heti{X\,e_{1}} \\
-b_{2} & -c_{2} & . & . & . & \Heti{Y\,e_{1}} \\
-b_{3} & -c_{3} & . & . & . & \Heti{Z\,e_{1}} \\
a_{1} & . & -c_{1} & . & . & \Heti{X^{2}\,e_{2}} \\
a_{2} & . & -c_{2} & -c_{1} & . & \Heti{XY\,e_{2}} \\
a_{3} & . & -c_{3} & . & -c_{1} & \Heti{XZ\,e_{2}} \\
a_{4} & . & . & -c_{2} & . & \Heti{Y^{2}\,e_{2}}  \\
a_{5} & . & . & -c_{3} & -c_{2} & \Heti{YZ\,e_{2}} \\
a_{6} & . & . & . & -c_{3} & \Heti{Z^{2}\,e_{2}} \\
. & a_{1} & b_{1} & . & . & \Heti{X^{2}\,e_{3}} \\
. & a_{2} & b_{2} & b_{1} & . & \Heti{XY\,e_{3}} \\
. & a_{3} & b_{3} & . & b_{1} & \Heti{XZ\,e_{3}} \\
. & a_{4} & . & b_{2} & . & \Heti{Y^{2}\,e_{3}} \\
. & a_{5} & . & b_{3} & b_{2} & \Heti{YZ\,e_{3}} \\
. & a_{6} & . & . & b_{3} & \Heti{Z^{2}\,e_{3}}  \\
}
\
\let\s=\scriptstyle
\begin {array}{l}
\s1\\\s2\\\s3\\\s4\\\s5\\\s6\\\s7\\\s8\\\s9\\\s10\\\s11\\\s12\\\s13\\\s14\\\s15\\ 
\end {array}
$$
Examinons la première colonne, vecteur du $\bfA$-module d'arrivée $\rmK_{1,d}$, libre de dimension 15
$$
-b_1 Xe_1 - b_2Ye_1 - \cdots + a_1X^2e_2 + a_2XYe_2 + \cdots
$$
Relativement à la graduation de $\rmK_{1,d}$, cette première colonne est homogène de poids
$\varepsilon_1 + \varepsilon_2$; par exemple, dans $-b_1Xe_1$, le coefficient $b_1$ de $P_2$, est homogène de poids
$\varepsilon_2$ et $e_1$ de poids $\varepsilon_1$. Les indéterminées $X,Y,Z$ ne contribuent à aucun
poids car on a décrété que $\poids(X^\alpha\,e_I) = \sum_{i\in I} \varepsilon_i$.

Ce poids d'homogénéité $\varepsilon_1 + \varepsilon_2$ de la
première colonne est celui du premier élément $e_{12}$ de la base de
l'espace $\rmK_{2,d}$ de départ. Idem pour les autres colonnes.
C'est en ce sens là que l'application $\partial_{2,d}(\uP)$ est graduée de poids $0$, car
elle transforme un vecteur homogène de l'espace de départ en un vecteur
homogène \textit{de même poids} de l'espace d'arrivée.

\medskip
\'Ecrire $\rmK_{2,d} \simeq \bfA^5$ est insuffisant pour rendre compte
des graduations:
$$
\rmK_{2,d} = \bfA\,e_{12} \oplus \bfA\,e_{13} \oplus \bfA\,Xe_{23} \oplus
\bfA\,Ye_{23} \oplus \bfA\,Ze_{23} 
$$
Il est classique d'écrire l'information graduée sous forme de twists de l'anneau gradué $\bfA$:
$$
\rmK_{2,d} \simeq \bfA(-\varepsilon_1-\varepsilon_2) \oplus \bfA(-\varepsilon_1-\varepsilon_3)
\oplus \big(\bfA(-\varepsilon_2-\varepsilon_3)\big)^3
$$
mais nous n'en dirons pas plus.

\medskip

Illustrons maintenant le deuxième point du lemme.
Considérons un exemple de mineurs $T \times S$
en fournissant la partie $S$ des colonnes et la partie $T$ des lignes, sous forme de parties ordonnées des indices :
$$
S = (1,2,3,4), \qquad T = (1,2,5,6)
$$
Autrement dit, on a 
$$
S = (e_{12}, e_{13}, X e_{23}, Y e_{23})
\qquad \text{ et } \qquad 
T = (Xe_1, Ye_1, XY e_2, XZe_2)
$$
Alors:
$$
\poids(S) =
\sum_{s \in S} \poids(s) = (\varepsilon_1 + \varepsilon_2) +
(\varepsilon_1 + \varepsilon_3) + (\varepsilon_2 + \varepsilon_3) + (\varepsilon_2 + \varepsilon_3) =
2\varepsilon_1 + 3\varepsilon_2 + 3\varepsilon_3
$$
tandis que $\poids(T) = 2\varepsilon_1 + 2\varepsilon_2$. On a
$\poids(S)-\poids(T) = \varepsilon_2 + 3\varepsilon_3$. Et on peut vérifier que
le mineur en question:
$$
\begin {vmatrix}
-b_{1} & -c_{1} & . & . \\
-b_{2} & -c_{2} & . & . \\
a_{2} & . & -c_{2} & -c_{1} \\
a_{3} & . & -c_{3} & . \\
\end {vmatrix}
\quad
\text {est bien de poids $0$ en $P_1$, de poids $1$ en $P_2$ et de poids $3$ en $P_3$}
$$
Un lecteur courageux pourra réaliser les calculs de mineurs ci-dessous dont
voici les poids:
$$
\let\vep=\varepsilon
\begin {array}{c|c|c|c|}
S,T                   &(1,2,3,4),(1,4,5,6) &(1,2,3,4),(3,11,12,14) &(2,3,4,5),(3,4,7,8)\\[1mm]
\hline
\vrule height12pt depth3pt width0pt
\poids(S)-\poids(T) &\vep_1+3\vep_3        &\vep_1+3\vep_2       &4\vep_3\\
\end {array}
$$
\end {exemple}


\begin {proof} [Preuve du lemme \ref{PoidsMineurKoszul}]
 \leavevmode

\medskip
i)
L'application $\partial_{k,d} : \rmK_{k,d} \rightarrow \rmK_{k-1, d}$ réalise
$X^\alpha e_I \mapsto \sum_{i \in I} \pm P_i X^\alpha\,e_{I \setminus i}$.
Par définition de la graduation, chaque terme de $P_i$ 
est homogène de poids $\varepsilon_i$ (ne pas oublier qu'un
monôme en $\uX$ de $P_i$ n'apporte aucune contribution au poids) ; donc
$\pm P_i X^\alpha\,e_{I \setminus i}$ est homogène de poids $\sum_{i\in I} \varepsilon_i$,
égal au poids de $X^\alpha e_I$.
Ainsi l'application $\partial_{k,d}$ est graduée de poids~$0$.

\smallskip
Examinons la matrice de $\partial_{k,d}$ dans les bases monomiales
rangées dans n'importe quel ordre.  Le coefficient en position
$(X^{\alpha'} e_{I'}, X^\alpha e_I) \in \rmK_{k-1,d} \times
\rmK_{k,d}$ est donné par :
$$
\text{coefficient de } \partial_{k,d}(X^\alpha e_I) \text{ sur } X^{\alpha'} e_{I'} 
\ = \ 
\left \{
\begin{array}{ll}
\pm \,\text{coeff($P_i$)} & \text{si $X^\alpha \mid X^{\alpha'}$
et $I' = I \setminus i$} \vspace{0.2cm}\\
0 & \text{sinon} 
\end{array}
\right.
$$
Nous affirmons que ce coefficient est homogène de poids égal à $\poids
(X^\alpha e_I)- \poids(X^{\alpha'} e_{I'})$.  En effet, ce coefficient
est nul à l'exception du cas où $I' = I \setminus i$ et $X^\alpha \mid
X^{\alpha'}$ pour lequel il vaut $\pm {\rm coeff}(P_i)$.  Dans les
deux cas, le coefficient est homogène de poids $\varepsilon_i$ qui est
bien égal au poids annoncé $\sum_{j \in I} \varepsilon_j - \sum_{j\in
  I'} \varepsilon_{j}$.

\medskip
ii) En ce qui concerne l'homogénéité et le poids du mineur $T \times S$, on applique
le résultat général du lemme~\ref{PoidsDeterminant} en prenant $p_S = p_T = \poids$.
\end {proof}

\begin{prop}\label{PoidsDetBkdSum}
Le déterminant $\det B_{k,d}$ est homogène de poids 
$$ 
p \ = \
\sum_{X^\beta e_J \, \in \, \Mmac_{k-1,d}} 
\!\!\!\!\! \varepsilon_{\minDiv(X^\beta)}
$$
Autrement dit, le poids en $P_i$ de $\det B_{k,d}$ est égal à $\dim \Mmac_{k-1,d}^{(i)}$ où
l'indice $(i)$ spécifie $\minDiv = i$:
$$
\poids_{P_i}(\det B_{k,d}) 
\ = \ 
\#\big\{X^\beta e_J \in \Mmac_{k-1,d} \ \big | \ \minDiv(X^\beta) = i \big\}
$$
On dispose également d'une seconde égalité:
$$
\poids_{P_i}(\det B_{k,d}) 
\ = \ 
\#\big\{ X^\alpha e_I \in \Smac_{k,d} \ \big | \ \min I = i \big\}
$$
En particulier, $\det B_{k,d}$ ne dépend pas des $k-1$ derniers polynômes~$P_i$.
\end{prop}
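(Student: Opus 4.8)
The plan is to identify $\det B_{k,d}(\uP)$ with a minor of the Koszul differential $\partial_{k,d}(\uP)$ and then to invoke the weight lemma~\ref{PoidsMineurKoszul}. Recall from~\ref{EndoBk} that $B_{k,d}(\uP) = \beta_{k,d}(\uP) \circ \varphi$, where $\beta_{k,d}(\uP) = \pi_{\Mmac_{k-1,d}} \circ \partial_{k,d}(\uP) \circ \iota_{\Smac_{k,d}}$ and $\varphi : \Mmac_{k-1,d} \to \Smac_{k,d}$ is the monomial isomorphism of~\ref{varphiIso}. In the monomial bases, the matrix of $\beta_{k,d}(\uP)$ is exactly the submatrix of $\partial_{k,d}(\uP)$ obtained by keeping the columns indexed by $\Smac_{k,d}$ and the rows indexed by $\Mmac_{k-1,d}$; so $\det \beta_{k,d}(\uP)$ is, up to sign, the minor of $\partial_{k,d}(\uP)$ extracted on $\Mmac_{k-1,d} \times \Smac_{k,d}$ (the two index sets have the same cardinal $r_{k,d}$ precisely because $\varphi$ is an isomorphism). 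Since $\varphi$ sends each basis monomial $X^\beta e_J$ to the basis monomial $(X^\beta/X_i^{d_i})\,e_i\wedge e_J$ with no $P_i$ occurring, its matrix is a $\{0,1\}$-permutation matrix with $\det\varphi = \pm 1$; hence $\det B_{k,d}(\uP) = \pm \det \beta_{k,d}(\uP)$, and the two polynomials have the same weight in the $P_i$. The sign is immaterial for the weight; in any case it is pinned down by $\det B_{k,d}(\uX^D) = 1$, cf.~\ref{BkPropriete}.

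By lemma~\ref{PoidsMineurKoszul} this minor is homogeneous in the $P_i$ of weight $\poids(\Smac_{k,d}) - \poids(\Mmac_{k-1,d})$, the weight of a set of monomials being the sum over its elements of $\poids(X^\alpha e_I) = \sum_{i\in I}\varepsilon_i$. It remains to compute this difference, and here I would use $\varphi$ a second time, now as a bijection of monomial bases. For $X^\beta e_J \in \Mmac_{k-1,d}$ with $i := \minDiv(X^\beta)$, one has $i < \min J$, so $\{i\}\cup J = \{i\}\sqcup J$ and $e_i\wedge e_J = e_{\{i\}\cup J}$ carries no sign, whence
$$
\poids\big(\varphi(X^\beta e_J)\big) = \varepsilon_i + \sum_{j \in J}\varepsilon_j = \varepsilon_{\minDiv(X^\beta)} + \poids(X^\beta e_J).
$$
Summing over $\varphi$, the term $\poids(X^\beta e_J)$ telescopes against $\poids(\Mmac_{k-1,d})$ and leaves exactly
$$
\poids(\Smac_{k,d}) - \poids(\Mmac_{k-1,d}) = \sum_{X^\beta e_J \in \Mmac_{k-1,d}} \varepsilon_{\minDiv(X^\beta)},
$$
which is the asserted weight $p$. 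Reading off the coefficient of $\varepsilon_i$ yields the first count formula, and transporting the condition $\minDiv(X^\beta)=i$ through $\varphi$ — under which $\minDiv(X^\beta) = \min(\{i\}\cup J) = \min I$ — gives the second count formula via the same bijection.

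The independence on the last $k-1$ polynomials then follows by a support argument on the indices that can occur as $\minDiv(X^\beta)$. If $X^\beta e_J \in \Mmac_{k-1,d}$ then $\#J = k-1$ and $i = \minDiv(X^\beta)$ satisfies $i < \min J$, so all elements of $J$ lie in $\{i+1,\dots,n\}$; this set must contain a $(k-1)$-element subset, forcing $n-i \geqslant k-1$, i.e. $i \leqslant n-k+1$. Hence $\poids_{P_i}(\det B_{k,d}) = 0$ for every $i > n-k+1$, so $\det B_{k,d}$ does not involve $P_{n-k+2},\dots,P_n$.

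I do not foresee a genuine obstacle: the whole argument is a bookkeeping of weights resting on the already-established lemma~\ref{PoidsMineurKoszul}. The one point deserving care is the cancellation in the second paragraph — it is exactly the fact that $\varphi$ raises the weight of each basis monomial by $\varepsilon_{\minDiv}$ and by nothing else that makes the baseline term $\poids(\Mmac_{k-1,d})$ vanish; the remainder is routine.
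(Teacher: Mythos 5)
Your proof is correct and follows essentially the same route as the paper's: identify $\det B_{k,d}$ (up to the irrelevant permutation sign coming from $\varphi$) with the minor of $\partial_{k,d}$ on $\Mmac_{k-1,d}\times\Smac_{k,d}$, apply the weight lemma~\ref{PoidsMineurKoszul} (itself an instance of~\ref{PoidsDeterminant}), and compute $\poids(\Smac_{k,d})-\poids(\Mmac_{k-1,d})$ by the telescoping identity $\poids(\varphi(X^\beta e_J))-\poids(X^\beta e_J)=\varepsilon_{\minDiv(X^\beta)}$. Your handling of the second count formula via $\minDiv(X^\beta)=\min I$ and of the final assertion (any $\minDiv$ index satisfies $i<\min J$ with $\#J=k-1$, hence $i\leqslant n-k+1$) also matches the paper's argument.
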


\begin{proof} 
Le fait que les différentielles de Koszul soient graduées de degré $0$ permet 
d'appliquer le résultat général~\ref{PoidsDeterminant} à 
$S = \{ X^\alpha e_I \in \Smac_{k,d} \}$, $T = \{ X^\beta e_J \in \Mmac_{k-1,d} \}$ 
et aux fonctions $p_S$ et $p_T$, 
restrictions de la fonction $\poids$.

On obtient alors que $\det B_{k,d}$ est homogène de poids $p$:
$$
p = \sum_{s \in S} \poids(s) - \sum_{t \in T} \poids(t)
$$
Faisons intervenir l'isomorphisme monomial $\varphi$, 
figurant dans la définition de l'endomorphisme $B_{k,d}$ : 
$$
\vcenter{
\xymatrix @M=0.4pc @R=0.2pc @C=5pc{
\Mmac_k & \Mmac_{k-1} \ar@<0.5ex>@{-->}[ddl]^{\varphi} \\
\oplus & \oplus \\
\Smac_k \ar@<0.5ex>[ruu]^-{\beta_k} & \Smac_{k-1} \\
}
}
\qquad \qquad
\varphi(X^\beta e_J) \ = \ \dfrac{X^\beta}{X_i^{d_i}} e_i \wedge e_J 
\quad \hbox{avec $i = \minDiv(X^\beta)$}
$$
On obtient alors 
$p = \sum_{t\in T} \big(\poids\big(\varphi(t)\big) - \poids(t)\big)$; or avec les
notations ci-dessus, on a
$$
\poids\big(\varphi(X^\beta e_J)\big) - \poids(X^\beta e_J) \ = \ 
\varepsilon_i 
\ = \ 
\varepsilon_{\minDiv(X^\beta)}
$$
D'où l'égalité donnée pour $p$. 
La seconde égalité résulte de la remarque suivante :
lorsque $X^\beta e_J \in \Mmac_{k-1,d}$ et $X^\alpha e_I \in \Smac_{k,d}$
se correspondent par $\varphi$, alors $\minDiv(X^\beta) = \min I$.

\medskip
En ce qui concerne la dernière assertion, supposons $\poids_{P_i}(\det B_{k,d}) \geqslant 1$
\idest{} il existe un ${X^\beta\,e_J \in \Mmac_{k-1}}$ tel que $i = \minDiv(X^\beta)$.
Comme $X^\beta\,e_J \in \Mmac_{k-1}$, on a $i < J$ et $\#J = k-1$ donc $i$ ne
figure pas parmi les $k-1$ derniers éléments de $\{1..n\}$.
\end{proof}

\medskip

On peut utiliser une stratégie analogue pour les
déterminants~$W_{h,d}$ et retrouver le résultat déjà obtenu dans la
proposition \ref{PoidsDetW}. Ne pas oublier que l'énoncé formel
suivant dit simplement que le déterminant de $W_{h,d}$ est homogène
en~$P_i$, de poids le nombre de colonnes de type~$P_i$.

\begin{prop}
Le déterminant $\det W_{h,d}$ est homogène de poids 
$$ 
p \ = \
\sum_{X^\alpha \, \in \, \Jex_{h,d}}
\!\! \varepsilon_{\minDiv(X^\alpha)} 
$$
Autrement dit, le poids en $P_i$ de $\det W_{h,d}$ vaut
$$
\# \big\{ 
X^\alpha \in \Jex_{h,d} \ \big | \ \minDiv(X^\alpha) = i
\big\}
\ = \ 
\dim \Jex_{h,d}^{(i)}
$$
En particulier, $\det W_{h,d}$ ne dépend pas des $h-1$ derniers polynômes.
Plus généralement, $\det W_{h,d}^\sigma$ ne dépend pas de $P_i$ où $i$ parcourt les $h-1$ 
derniers éléments de $\big(\sigma(1), \dots, \sigma(n)\big)$.
\end{prop}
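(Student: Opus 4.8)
The plan is to transcribe, almost verbatim, the strategy used for $\det B_{k,d}$ in the proof of Proposition~\ref{PoidsDetBkdSum}, but applied to the \emph{first} Koszul differential $\partial_{1,d}(\uP) = \Syl_d(\uP)$ and to the monomial submodule $\Jex_{h,d}$ in place of $\Mmac_{k-1,d}$. The decisive preliminary observation is that, by the construction recalled in Remark~\ref{RmqConstructionWh}, the endomorphism $W_{h,d}(\uP) = \pi_{\Jex_{h,d}} \circ \Syl_d(\uP) \circ \varphi_{|\Jex_{h,d}}$ is, once expressed in the monomial bases, exactly the square minor of $\Syl_d(\uP) = \partial_{1,d}(\uP)$ extracted on $T \times S$, where $T = \Jex_{h,d}$ indexes the rows (a subset of the monomial basis of $\rmK_{0,d} = \bfA[\uX]_d$) and $S = \varphi(\Jex_{h,d})$ indexes the columns (a subset of the monomial basis of $\rmK_{1,d}$). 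Indeed the $(X^\beta, X^\alpha)$-entry is the coefficient of $X^\beta$ in $\frac{X^\alpha}{X_i^{d_i}}P_i$ with $i = \minDiv(X^\alpha)$, which is precisely the $(X^\beta, \varphi(X^\alpha))$-entry of $\Syl_d(\uP)$; matching the orderings of $T$ and $S$ via the bijection $\varphi$, the sign is irrelevant for a weight computation.

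Next I would invoke the grading results already in place. Since $\partial_{1,d}(\uP)$ is graded of weight $0$ by Lemma~\ref{PoidsMineurKoszul}~i), point~ii) of that lemma (which itself rests on the general determinant weight Lemma~\ref{PoidsDeterminant}) asserts that this minor is homogeneous in the $P_i$ of weight $\poids(S) - \poids(T)$. It then remains to evaluate the two sums through $\varphi$. A monomial $X^\alpha \in \Jex_{h,d}$, seen inside $\rmK_{0,d}$, carries the exterior index $I = \emptyset$, hence $\poids(X^\alpha) = 0$ and $\poids(T) = 0$; its image $\varphi(X^\alpha) = \frac{X^\alpha}{X_i^{d_i}} e_i$ with $i = \minDiv(X^\alpha)$ carries the index $I = \{i\}$, hence weight $\varepsilon_i = \varepsilon_{\minDiv(X^\alpha)}$. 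Summing over the basis of $T = \Jex_{h,d}$ gives
$$
\poids(\det W_{h,d}) \ = \ \poids(S) - \poids(T) \ = \ \sum_{X^\alpha \in \Jex_{h,d}} \varepsilon_{\minDiv(X^\alpha)}
$$
which is the announced formula; reading off the coefficient of $\varepsilon_i$ yields $\poids_{P_i}(\det W_{h,d}) = \#\{X^\alpha \in \Jex_{h,d} \mid \minDiv(X^\alpha) = i\} = \dim \Jex_{h,d}^{(i)}$.

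For the independence assertions I would argue combinatorially. If $X^\alpha \in \Jex_{h,d}$ then $\#\DivSeq(X^\alpha) \ge h$, and all these divisibility indices are $\ge \minDiv(X^\alpha) = i$; being $h$ distinct elements of $\{i, i+1, \dots, n\}$, they force $n - i + 1 \ge h$, i.e. $i \le n - h + 1$. Hence no monomial of $\Jex_{h,d}$ has $\minDiv$ among the last $h-1$ indices, the corresponding weights vanish, and $\det W_{h,d}$ is free of the last $h-1$ polynomials. The $\sigma$-twisted statement is identical after replacing $\minDiv$ by $\sminDiv$ and the usual order by $<_\sigma$: the $<_\sigma$-minimum of a set of at least $h$ elements cannot lie among the $h-1$ largest for $<_\sigma$, i.e. among the last $h-1$ entries of $(\sigma(1), \dots, \sigma(n))$.

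Since the whole argument is a faithful copy of Proposition~\ref{PoidsDetBkdSum}, I expect essentially no obstacle. The only two points deserving care are the identification of $\det W_{h,d}$ with a genuine minor of $\Syl_d(\uP)$ (rather than with an abstract projected-restricted operator whose determinant could a priori differ), which is settled by the explicit entry computation above, and the harmless inequality $\minDiv(X^\alpha) \le n-h+1$ underlying the independence claim. Everything else is a direct application of the weight machinery of the preceding lemmas.
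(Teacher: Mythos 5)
Votre preuve est correcte et suit essentiellement la m�me voie que celle du papier: identifier $\det W_{h,d}$ comme mineur de $\Syl_d(\uP)=\partial_{1,d}(\uP)$ extrait sur $T=\Jex_{h,d}$ et $S=\varphi(\Jex_{h,d})$, puis appliquer le lemme g�n�ral~\ref{PoidsDeterminant} (via le caract�re gradu� de poids $0$ de la diff�rentielle) et l'�galit� $\poids(\varphi(X^\alpha))-\poids(X^\alpha)=\varepsilon_{\minDiv(X^\alpha)}$. Votre argument combinatoire $\minDiv(X^\alpha)\le n-h+1$ pour les assertions d'ind�pendance est un compl�ment valable que le papier ne d�taille pas (il renvoie plut�t � la proposition~\ref{ProprieteWh} sur l'endomorphisme lui-m�me), mais il d�coule bien, comme vous le faites, de la nullit� du poids correspondant.
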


\begin{proof} \leavevmode

On applique le résultat général~\ref{PoidsDeterminant} avec
$T = \{ X^\alpha  \in \Jex_{h,d} \}$, $S = \varphi(T)$ et
$\varphi(X^\alpha) = (X^\alpha/X_i^{d_i})\,e_i$ où $i = \minDiv(X^\alpha)$.
Puisque $\poids\big(\varphi(X^\alpha)\big) - \poids(X^\alpha) = \varepsilon_i$,
le poids $p$ de $\det W_{h,d}$ est donné par :
$$
p = \sum_{t \in T} \poids\big( \varphi(t) \big) - \sum_{t \in T} \poids(t) =
   \sum_{X^\alpha \, \in \, \Jex_{h,d}} \!\! \varepsilon_{\minDiv(X^\alpha)} 
$$
\end {proof}

\subsubsection*{A propos de la \og non dépendance en les derniers polynômes\fg{} des deux
familles de déterminants}

On a vu en~\ref{ProprieteWh} que~$W_{h,d}$ ne dépend pas des $h-1$
derniers polynômes~$P_i$, a fortiori, il en est de même de son
déterminant.  Quant à $\det B_{k,d}$, bien qu'il ne dépende pas des
$k-1$ derniers polynômes, la matrice $B_{k,d}$ peut, elle, en
dépendre.  En voici un petit exemple pour $D = (1,1,1)$, $P_1 = aX$,
$P_2 = b' Y$ et $P_3 = cX$. Alors $B_{2,2} = \left [
\begin{smallmatrix}
a & . & -c \\
. & a & . \\
. & . & b' \\
\end{smallmatrix}
\right ]$
dépend du troisième polynôme mais pas son déterminant.
Idem avec les polynômes génériques :
$$
P_1 = aX + a' Y + a'' Z, \qquad 
P_2 = bX + b' Y + b'' Z, \qquad 
P_3 = cX + c' Y + c'' Z
$$ 
conduisant à un endomorphisme légèrement plus compliqué 
$$
B_{2,2} \ = \ 
\EastBordermatrix{
a  & . & -c & \Heti{X e_2} \\ 
. & a  & b & \Heti{X e_3} \\ 
. & a' & b' & \Heti{Y e_3} \\ 
}
$$
On voit cependant aisément que  $\det B_{2,2}$ ne dépend pas du dernier polynôme.

Voici un exemple plus complexe pour $D = (3,1,2)$, $k=2$ et $d=5$ :
$$
B_{2,5} \ =  \ 
\EastBordermatrix{
a_{1} & . & . & . & -c_{1} & . & . & . & . & . & \Heti{X_{1}^{4}\,e_{2}} \\ 
a_{2} & a_{1} & . & . & -c_{2} & -c_{1} & . & . & . & . & \Heti{X_{1}^{3}X_{2}\,e_{2}} \\ 
a_{3} & . & a_{1} & . & -c_{3} & . & -c_{1} & . & . & . & \Heti{X_{1}^{3}X_{3}\,e_{2}} \\ 
. & . & . & a_{1} & b_{1} & . & . & . & . & . & \Heti{X_{1}^{3}\,e_{3}} \\ 
. & . & . & a_{2} & b_{2} & b_{1} & . & . & . & . & \Heti{X_{1}^{2}X_{2}\,e_{3}} \\ 
. & . & . & a_{4} & . & b_{2} & . & b_{1} & . & . & \Heti{X_{1}X_{2}^{2}\,e_{3}} \\ 
. & . & . & a_{5} & . & b_{3} & b_{2} & . & b_{1} & . & \Heti{X_{1}X_{2}X_{3}\,e_{3}} \\ 
. & . & . & a_{7} & . & . & . & b_{2} & . & . & \Heti{X_{2}^{3}\,e_{3}} \\ 
. & . & . & a_{8} & . & . & . & b_{3} & b_{2} & . & \Heti{X_{2}^{2}X_{3}\,e_{3}} \\ 
. & . & . & a_{9} & . & . & . & . & b_{3} & b_{2} & \Heti{X_{2}X_{3}^{2}\,e_{3}} \\ 
}
$$
Vu le caractère hétéroclite de certaines colonnes, ce n'est pas
immédiat a priori que $\det B_{k,d}$ soit homogène en $P_i$. 
Dans cet exemple, $\det B_{2,d}$ ne dépend pas du
dernier polynôme $P_3 = c_{1}X_{1}^{2} + c_{2}X_{1}X_{2} +
c_{3}X_{1}X_{3} + c_{4}X_{2}^{2} + c_{5}X_{2}X_{3} + c_{6}X_{3}^{2}$:
cela se voit d'ailleurs à l'oeil nu. Pourquoi?

En guise de $B$-$W$-devinette: pour le même jeu $(P_1, P_2, P_3)$
et même degré $d=5$, on a
$$
W_{2,5} \ = \ 
\EastBordermatrix{
a_{1} & . & . & . & . & . & . & . & . & . & \Heti{X_{1}^{4}X_{2}} \\ 
a_{2} & a_{1} & . & . & . & . & . & . & . & . & \Heti{X_{1}^{3}X_{2}^{2}} \\ 
a_{3} & . & a_{1} & . & . & . & . & . & . & . & \Heti{X_{1}^{3}X_{2}X_{3}} \\ 
. & . & . & a_{1} & b_{1} & . & . & . & . & . & \Heti{X_{1}^{3}X_{3}^{2}} \\ 
a_{6} & . & a_{3} & a_{2} & b_{2} & b_{1} & . & . & . & . & \Heti{X_{1}^{2}X_{2}X_{3}^{2}} \\ 
a_{9} & a_{6} & a_{5} & a_{4} & . & b_{2} & . & b_{1} & . & . & \Heti{X_{1}X_{2}^{2}X_{3}^{2}} \\ 
a_{10} & . & a_{6} & a_{5} & . & b_{3} & b_{2} & . & b_{1} & . & \Heti{X_{1}X_{2}X_{3}^{3}} \\ 
. & a_{9} & a_{8} & a_{7} & . & . & . & b_{2} & . & . & \Heti{X_{2}^{3}X_{3}^{2}} \\ 
. & a_{10} & a_{9} & a_{8} & . & . & . & b_{3} & b_{2} & . & \Heti{X_{2}^{2}X_{3}^{3}} \\ 
. & . & a_{10} & a_{9} & . & . & . & . & b_{3} & b_{2} & \Heti{X_{2}X_{3}^{4}} \\ 
}
$$
Les deux matrices (carrées) $B_{2,5}$ et $W_{2,5}$ ont même taille et même
déterminant, en fait même polynôme caractéristique. Pourquoi?
D'après la section~\ref{SectionDeltakdMacaulay}, pour $n=3$ et $d \ge \delta+1$:
$$
\det B_{2,d} = \Delta_{2,d}\,\Delta_{3,d} = \det W_{2,d}\, \det W_{3,d}
$$
Ici pour le format $D = (3,1,2)$ et le degré $d = 5$, on a $\delta + 1 \leqslant d < d_1 + d_2 + d_3$
donc $\Jex_{3,d} = 0$ donc~\dots{}
Il s'agit ici d'un cas particulier de l'expression binomiale de
$\det B_{k,d}$ en fonction des $\big(\det W_{h,d}\big)_{h \geqslant k}$.
On verra dans le chapitre~\ref{ChapBW}
qu'en organisant correctement les bases monomiales, on obtient des structures triangulaires remarquables,
cf par exemple l'illustration de la page~\pageref{ExempleD312R1}.

\begin {rmq} [Le statut de l'anneau $\bfA$ des coefficients en ce qui concerne le poids en $P_i$]

Nous avions annoncé que l'étude du poids nécessitait un terrain
générique pour permettre d'étudier le degré en les coefficients de
chaque $P_i$ des divers déterminants.  Nous avons décrit une structure
$\bbN^n$-graduée de $\BW^\sbullet(\bfA[\uX]^n)$ comme $\bfA$-module en
tenant compte de la nature de~$\bfA$ comme anneau de polynômes sur un
petit anneau~$\bfk$.  Que constate-t-on dans les propositions
précédentes? Que le poids en~$P_i$ ne dépend que du format de degrés~$D$,
format permettant d'élaborer la primitive $\minDiv$, la décomposition de
Macaulay.  En fin de compte, tout se passe au niveau des monômes
extérieurs $X^\beta e_J$ et pas du tout du côté des coefficients
des~$P_i$, et donc l'anneau $\bfA$ joue un rôle mineur.
\end {rmq}

\subsubsection*{Décomposition de Macaulay tordue par $\sigma\in\fS_n$ : poids en $P_i$ des $\det B^\sigma_{k,d}(\uP)$} 

Le pendant de la proposition~\ref{PoidsDetBkdSum} concernant le poids en $P_i$ des
$\det B_{k,d}$ est le suivant:

\begin{prop}
Pour $\sigma \in \fS_n$, le déterminant $\det B^\sigma_{k,d}$ est homogène en $P_i$ et
$$
\poids_{P_i}(\det B^\sigma_{k,d}) 
\ = \ 
\#\big\{X^\beta e_J \in \Mmac^\sigma_{k-1,d} \ \big | \ \sminDiv(X^\beta) = i \big\}
\ =\ 
\#\big\{ X^\alpha e_I \in \Smac^\sigma_{k,d} \ \big | \ \smin I = i \big\}
$$
En particulier, $\det B^\sigma_{k,d}$ ne dépend pas des $P_i$ pour les $i$
figurant parmi les $k-1$ derniers éléments de $\big(\sigma(1), \dots, \sigma(n)\big)$.
\end{prop}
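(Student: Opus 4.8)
The plan is to transpose the proof of Proposition~\ref{PoidsDetBkdSum} almost word for word, substituting $\minDiv$ by $\sminDiv$, the usual order $<$ on $\{1..n\}$ by $<_\sigma$, the operator $\min$ by $\smin$, and the monomial isomorphism $\varphi$ by $\varphi^\sigma$. We remain in the generic setting of the section, so that weights in $P_i$ make sense. The crucial point legitimizing this transposition is that the weight analysis of Lemma~\ref{PoidsMineurKoszul} bears only on the Koszul differential $\partial_{k,d}(\uP)$ itself: the facts that $\partial_{k,d}(\uP)$ is graded of weight~$0$ and that a mineur extracted on $T \times S$ is homogeneous of weight $\poids(S) - \poids(T)$ are completely independent of any selection mechanism, hence of $\sigma$. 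Only the choice of the source set $S$ and the target set $T$ is $\sigma$-dependent.

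First I would apply Lemma~\ref{PoidsDeterminant} to the sets $S = \{X^\alpha e_I \in \Smac^\sigma_{k,d}\}$ and $T = \{X^\beta e_J \in \Mmac^\sigma_{k-1,d}\}$, taking $p_S, p_T$ to be the restrictions of $\poids$. Since $\beta^\sigma_{k,d}(\uP)$ is, in the monomial bases, exactly the sub-matrix of $\partial_{k,d}(\uP)$ indexed by $T \times S$, and since $B^\sigma_{k,d}(\uP) = \beta^\sigma_{k,d}(\uP) \circ \varphi^\sigma$ with $\varphi^\sigma$ monomial up to sign (so $\det \varphi^\sigma = \pm 1$), the scalar $\det B^\sigma_{k,d}$ equals that mineur up to sign. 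It is therefore homogeneous of weight
$$
p \ = \ \sum_{s \in S} \poids(s) \ - \ \sum_{t \in T} \poids(t).
$$
Because $\varphi^\sigma$ realises a bijection $T \to S$ on the underlying monomials, I can regroup this as $p = \sum_{t \in T}\big(\poids(\varphi^\sigma(t)) - \poids(t)\big)$ and then evaluate each term: for $X^\beta e_J \in \Mmac^\sigma_{k-1,d}$ with $i = \sminDiv(X^\beta)$ one has $\varphi^\sigma(X^\beta e_J) = \pm \frac{X^\beta}{X_i^{d_i}} e_i \wedge e_J$, whence $\poids(\varphi^\sigma(X^\beta e_J)) - \poids(X^\beta e_J) = \varepsilon_i = \varepsilon_{\sminDiv(X^\beta)}$. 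This yields the first equality of the proposition.

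For the second equality I would invoke the implications recorded in the construction of $\varphi^\sigma$, namely $\sminDiv(X^\alpha) \ge_\sigma i \Rightarrow \sminDiv(X^\alpha X_i^{d_i}) = i$ and $i <_\sigma J \Rightarrow \smin(i \vee J) = i$, which together show that when $X^\beta e_J \in \Mmac^\sigma_{k-1,d}$ and $X^\alpha e_I \in \Smac^\sigma_{k,d}$ correspond under $\varphi^\sigma$ one has $\smin I = \sminDiv(X^\beta)$; re-indexing the sum over $S$ via $\varepsilon_{\smin I}$ instead of over $T$ via $\varepsilon_{\sminDiv(X^\beta)}$ gives the claim. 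The last assertion then follows as in the untwisted case: if $\poids_{P_i}(\det B^\sigma_{k,d}) \ge 1$ there is $X^\beta e_J \in \Mmac^\sigma_{k-1,d}$ with $i = \sminDiv(X^\beta) <_\sigma \smin J$, \idest{} $i <_\sigma j$ for each of the $k-1$ elements $j$ of $J$; hence $i$ lies strictly below $k-1$ indices for $<_\sigma$, so it occupies one of the first $n-k+1$ positions in the list $\big(\sigma(1), \dots, \sigma(n)\big)$ and is not among its $k-1$ last elements.

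No step presents a genuine difficulty, and the only point requiring care — the one I expect to be the main (mild) obstacle — is the bookkeeping of the signs produced by $\varphi^\sigma$, which, unlike $\varphi$, is only monomial up to sign. This is harmless here: homogeneity and weight are insensitive to an overall sign, so $\det \varphi^\sigma = \pm 1$ leaves the weight computation untouched, and the combinatorial identities for $\smin$ and $\sminDiv$ in the order $<_\sigma$ that support the two equalities are already available alongside the definition of $\varphi^\sigma$.
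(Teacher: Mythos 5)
Your proof is correct and follows precisely the route the paper intends: it is the word-for-word transposition of the proof of Proposition~\ref{PoidsDetBkdSum} to the $\sigma$-twisted Macaulay decomposition, which is exactly the adaptation the paper leaves to the reader when it states this proposition without proof. The one genuine subtlety, namely that $\varphi^\sigma$ is monomial only up to sign so that $\det\varphi^\sigma=\pm1$, is correctly dispatched by observing that homogeneity and weight in $P_i$ are insensitive to signs, and your argument for the final assertion (an $i$ with $\poids_{P_i}\ge 1$ admits $k-1$ indices strictly above it for $<_\sigma$) is the right one.
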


\index{de@décomposition de Macaulay de $\rmK_\sbullet(\uP)$!tordue par $\sigma\in\fS_n$}%

\begin {exemple}

Soit $\uP$ le système suivant de format $D = (2,2,2,1)$:
$$
\setlength{\tabcolsep}{2pt}
\left\{
\begin{tabular}{rcp{14cm}}
$P_{1}$ & $=$ & $a_{1}X_{1}^{2} + a_{2}X_{1}X_{2} + a_{3}X_{1}X_{3} + a_{4}X_{1}X_{4} + a_{5}X_{2}^{2} + a_{6}X_{2}X_{3} +
a_{7}X_{2}X_{4} + a_{8}X_{3}^{2} + a_{9}X_{3}X_{4} + a_{10}X_{4}^{2}$\\ [0.1cm]
$P_{2}$ & $=$ & $b_{1}X_{1}^{2} + b_{2}X_{1}X_{2} + b_{3}X_{1}X_{3} + b_{4}X_{1}X_{4} + b_{5}X_{2}^{2} + b_{6}X_{2}X_{3} +
b_{7}X_{2}X_{4} + b_{8}X_{3}^{2} + b_{9}X_{3}X_{4} + b_{10}X_{4}^{2}$\\ [0.1cm]
$P_{3}$ & $=$ & $c_{1}X_{1}^{2} + c_{2}X_{1}X_{2} + c_{3}X_{1}X_{3} + c_{4}X_{1}X_{4} + c_{5}X_{2}^{2} + c_{6}X_{2}X_{3} +
c_{7}X_{2}X_{4} + c_{8}X_{3}^{2} + c_{9}X_{3}X_{4} + c_{10}X_{4}^{2}$\\ [0.1cm]
$P_{4}$ & $=$ & $d_{1}X_{1} + d_{2}X_{2} + d_{3}X_{3} + d_{4}X_{4}$\\ [0.1cm]
\end{tabular}
\right.
$$
Voici une sous-matrice carrée de $\partial_{2,d}(\uP)$, avec $d=4$, extraite sur $T \times S$,
$S$ pour source (colonnes), $T$ pour target (lignes) où l'on note $\overline c_i$ pour $-c_i$ et
$\overline d_i$ pour $-d_i$:
$$
\let\ov=\overline
\NorthEastBordermatrix{
\Veti{e_{12}} & \Veti{e_{13}} & \Veti{X_{1}\,e_{14}} & \Veti{X_{2}\,e_{14}} & \Veti{X_{3}\,e_{14}} & \Veti{X_{4}\,e_{14}} &
\Veti{e_{23}} & \Veti{X_{1}\,e_{24}} & \Veti{X_{2}\,e_{24}} & \Veti{X_{3}\,e_{24}} & \Veti{X_{4}\,e_{24}} & \Veti{X_{1}\,e_{34}}
& \Veti{X_{2}\,e_{34}} & \Veti{X_{3}\,e_{34}} & \Veti{X_{4}\,e_{34}} & \\
a_{1} & . & . & . & . & . & \ov c_{1} & \ov d_{1} & . & . & . & . & . & . & . & \Heti{X_{1}^{2}\,e_{2}} \\
a_{4} & . & . & . & . & . & \ov c_{4} & \ov d_{4} & . & . & \ov d_{1} & . & . & . & . & \Heti{X_{1}X_{4}\,e_{2}} \\
a_{7} & . & . & . & . & . & \ov c_{7} & . & \ov d_{4} & . & \ov d_{2} & . & . & . & . & \Heti{X_{2}X_{4}\,e_{2}} \\
a_{8} & . & . & . & . & . & \ov c_{8} & . & . & \ov d_{3} & . & . & . & . & . & \Heti{X_{3}^{2}\,e_{2}} \\
a_{9} & . & . & . & . & . & \ov c_{9} & . & . & \ov d_{4} & \ov d_{3} & . & . & . & . & \Heti{X_{3}X_{4}\,e_{2}} \\
a_{10} & . & . & . & . & . & \ov c_{10} & . & . & . & \ov d_{4} & . & . & . & . & \Heti{X_{4}^{2}\,e_{2}} \\
. & a_{1} & . & . & . & . & b_{1} & . & . & . & . & \ov d_{1} & . & . & . & \Heti{X_{1}^{2}\,e_{3}} \\
. & . & a_{1} & . & . & . & . & b_{1} & . & . & . & c_{1} & . & . & . & \Heti{X_{1}^{3}\,e_{4}} \\
. & . & a_{2} & a_{1} & . & . & . & b_{2} & b_{1} & . & . & c_{2} & c_{1} & . & . & \Heti{X_{1}^{2}X_{2}\,e_{4}} \\
. & . & a_{3} & . & a_{1} & . & . & b_{3} & . & b_{1} & . & c_{3} & . & c_{1} & . & \Heti{X_{1}^{2}X_{3}\,e_{4}} \\
. & . & a_{4} & . & . & a_{1} & . & b_{4} & . & . & b_{1} & c_{4} & . & . & c_{1} & \Heti{X_{1}^{2}X_{4}\,e_{4}} \\
. & . & a_{8} & . & a_{3} & . & . & b_{8} & . & b_{3} & . & c_{8} & . & c_{3} & . & \Heti{X_{1}X_{3}^{2}\,e_{4}} \\
. & . & . & a_{8} & a_{6} & . & . & . & b_{8} & b_{6} & . & . & c_{8} & c_{6} & . & \Heti{X_{2}X_{3}^{2}\,e_{4}} \\
. & . & . & . & a_{8} & . & . & . & . & b_{8} & . & . & . & c_{8} & . & \Heti{X_{3}^{3}\,e_{4}} \\
. & . & . & . & a_{9} & a_{8} & . & . & . & b_{9} & b_{8} & . & . & c_{9} & c_{8} & \Heti{X_{3}^{2}X_{4}\,e_{4}} \\
}
$$
Nous allons donner plusieurs manières de déterminer le poids de ce déterminant. La première, sans
renseignement supplémentaire, est générale et consiste à utiliser le lemme~\ref{PoidsDeterminant}.
Puisque:
$$
\poids(S) = 6\varepsilon_1 + 6\varepsilon_2 + 6\varepsilon_3  + 12\varepsilon_4,
\qquad\qquad
\poids(T) = 6\varepsilon_2 + \varepsilon_3 + 8\varepsilon_4,
$$
le déterminant est de poids
$$
\poids(S) - \poids(T) = 6\varepsilon_1 + 5\varepsilon_3 + 4\varepsilon_4
$$
En particulier, il ne dépend pas de $P_2$. Si on utilise sur les
coefficients des $P_i$ l'ordre lexicographique vérifiant $a_1 > a_2 >
\cdots > b_1 > b_2 > \cdots > d_4$, le terme dominant du déterminant est
$a_1^6 c_8^5 d_4^4$.

\medskip
Mais en fait, renseignement supplémentaire, la sous-matrice en question est
$\beta^\sigma_{2,4}(\uP) : \Smac^\sigma_{2,4} \to \Mmac^\sigma_{1,4}$
avec $\sigma = (2,3,4)$. On peut donc commencer par affirmer, le dernier indice de
$$
\big(\sigma(1), \sigma(2), \sigma(3), \sigma(4)\big) = (1,3,4,2)
$$
étant 2, que le déterminant ne dépend pas de $P_2$ (comme prévu). La
proposition précédente donne le poids de $\det B^\sigma_{2,4}(\uP)$,
qui est aussi celui de $\det \beta^\sigma_{2,4}(\uP)$. Elle permet de
déterminer le poids en $P_i$ : soit par utilisation de la base monomiale
de $\Smac^\sigma_{2,4}$ (``bordure horizontale'') en comptant le
nombre de $X^\alpha\,e_I \in \Smac^\sigma_{2,4}$ tels que $\smin(I) =
i$ ; soit par utilisation de la base monomiale de $\Mmac^\sigma_{1,4}$
(``bordure verticale'') en comptant le nombre de $X^\beta\,e_J \in
\Mmac^\sigma_{1,4}$ tels que $\sminDiv(X^\beta) = i$.  On laisse le
soin au lecteur de réaliser les vérifications.
\end {exemple}

\subsection {Poids en $P_i$ des $\Delta_{k,d}(\protect\uP)$
             pour $k=1,2$ et quelques séries dimensionnelles}

On s'intéresse dans cette section au poids en $P_i$ de $\Delta_{k,d} =
\Delta_{k,d}(\uP)$ c.a.d. à l'étude d'égalités du type:
$$
\Delta_{k,d}(a_1P_1, \dots, a_nP_n) = a_1^{e_1} \cdots a_n^{e_n}\,
\Delta_{k,d}(P_1, \dots, P_n)
$$
pour des scalaires quelconques $a_1, \dots, a_n$, les $e_i$ étant des
entiers dépendant du format $D$ et de $k,d$. Il revient au même de
supposer $\uP$ générique et d'étudier le degré d'homogénéité en $P_i$
du polynôme $\Delta_{k,d}(\uP)$ de l'anneau $\bfA=\bfk[\indetsPi]$ des
coefficients du système $\uP$.

Comme on a l'égalité
$$
\Delta_{k,d} \times \big(\det B_{k+1,d}\, \det B_{k+3,d} \cdots\big) =
\det B_{k,d}\, \det B_{k+2,d} \cdots
$$
et que chaque $\det B_{k',d}$ est homogène en $P_i$ (proposition~\ref{PoidsDetBkdSum}),
il est clair que $\Delta_{k,d}$ est homogène en~$P_i$ et que son poids en $P_i$ est donné par
$$
\begin {array} {rcl}
\poids_{P_i}(\Delta_{k,d}) &=&
\poids_{P_i}(\det B_{k,d})  - \poids_{P_i}(\det B_{k+1,d}) +  \poids_{P_i}(\det B_{k+2,d}) - \cdots
\\ [3mm]
&=&\displaystyle \sum_{k'=k}^n (-1)^{k'-k}\,\poids_{P_i}(\det B_{k',d})
\\
\end {array}
$$
L'objectif de cette section est de déterminer ce poids pour $k = 1,2$.
Pour y parvenir, au lieu de fixer~$d$, nous allons au contraire, $k$ et $i$
étant fixés, considérer la famille $\big(\poids_{P_i}(\Delta_{k,d})\big)_d$ avec $d$ variant dans $\bbN$ pour former certaines séries de $\bbZ[[t]]$ et les
identifier en tant que fractions rationnelles explicites. 

\bigskip

On se propose de commencer par déterminer les séries des modules $E$ où $E = \Jex_{1\setminus 2}^{(i)}$, $\Jex_1^{(i)}$, $\Jex_2^{(i)}$. 
Par définition, nous noterons $\Serie(E)$ la série  $\sum_{d \ge 0} \dim E_d \, t^d$.

Nous poursuivrons avec la série du module $\Mmac_{k-1}^{(i)}$ 
(rappelons que pour $k = 1$, on a $\Mmac_{0} = \Jex_1$), qui sera définie par :
$$
\sum_{d \ge 0} \poids_{P_i}(\det B_{k,d})\,t^d
$$

Enfin, nous nous occuperons des séries 
$$
\sum_{d \ge 0} \poids_{P_i}(\Delta_{1,d})\,t^d, 
\qquad \text{ et } \qquad 
\sum_{d \ge 0} \poids_{P_i}(\Delta_{2,d})\,t^d 
$$

\subsubsection*{\fbox{Les séries de $\Jex_{1\setminus 2}^{(i)}$, $\Jex_1^{(i)}$, $\Jex_2^{(i)}$}}

Comme annoncé dans l'introduction, nous notons 
$$
\Serie(\Jex^{(i)}_{1\setminus2}) = \sum_{d \ge 0} \dim \Jex^{(i)}_{1\setminus2,d}\, t^d
$$
et l'on rappelle que $\Jex_{1\setminus2,d}^{(i)}$ est le sous-module monomial de
$\bfA[\uX]_d$ de base les monômes ayant $i$ comme seul indice de
divisibilité.

\begin {prop} [Série de $\Jex^{(i)}_{1\setminus2}$] \leavevmode
\label {SerieJ1minusJ2i}
  
\begin {enumerate}[\rm i)]
\item    
La série de $\Jex^{(i)}_{1\setminus2}$ est donnée par la fraction rationnelle suivante:
$$
\Serie\big(\Jex^{(i)}_{1\setminus2}\big)
\ = \ 
\dfrac {N_i(t)} {(1-t)^n}
\qquad \text{où} \qquad
N_i(t) = t^{d_i} \times \prod_{j\ne i} (1 - t^{d_j})
$$

\item
Notons $U_i(t) = t^{d_i}\times \prod_{j \ne i} (1-t^{d_j})/(1-t)$. C'est un polynôme
unitaire de degré $\delta+1$, à coefficients entiers $\ge 0$, certifiant que $t=1$
est un pôle d'ordre 1 de la fraction rationnelle ci-dessus via l'écriture:
$$ 
\Serie\big(\Jex^{(i)}_{1\setminus2}\big)
\ = \ 
\dfrac{U_i(t)}{1-t}, \qquad  U_i(1) = \widehat d_i \ne 0
$$
\end{enumerate}
\end {prop}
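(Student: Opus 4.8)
The plan is to compute the Hilbert series directly from a combinatorial description of the monomial basis of $\Jex^{(i)}_{1\setminus2}$, exploiting the fact that the defining conditions decouple variable by variable so that the series factors as a product of one-variable series.

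First I would make the basis explicit. By definition a monomial $X^\alpha$ belongs to $\Jex^{(i)}_{1\setminus2}$ precisely when $\DivSeq(X^\alpha) = \{i\}$, that is when
\[
\alpha_i \ge d_i \qquad\text{and}\qquad \alpha_j < d_j \ \ \forall\, j \ne i .
\]
These constraints involve the coordinates $\alpha_1,\dots,\alpha_n$ independently, so summing $t^{|\alpha|} = \prod_k t^{\alpha_k}$ over all such $\alpha$ turns the series into a product of independent sums:
\[
\Serie\big(\Jex^{(i)}_{1\setminus2}\big)
= \Big(\sum_{\alpha_i \ge d_i} t^{\alpha_i}\Big)\ \prod_{j \ne i}\Big(\sum_{\alpha_j = 0}^{d_j-1} t^{\alpha_j}\Big).
\]
The two types of factors are classical: $\sum_{m \ge d_i} t^m = t^{d_i}/(1-t)$ and $\sum_{m=0}^{d_j-1} t^m = (1-t^{d_j})/(1-t)$. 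Multiplying the $n$ factors gives exactly $N_i(t)/(1-t)^n$ with $N_i(t) = t^{d_i}\prod_{j\ne i}(1-t^{d_j})$, which is point i).

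For point ii) I would regroup the factors differently, keeping a single factor $1-t$ in the denominator and absorbing the remaining $n-1$ copies into the numerator. Writing each $j \ne i$ factor as the polynomial $(1-t^{d_j})/(1-t) = 1 + t + \cdots + t^{d_j-1}$, one obtains
\[
\Serie\big(\Jex^{(i)}_{1\setminus2}\big) = \frac{U_i(t)}{1-t},
\qquad
U_i(t) = t^{d_i}\prod_{j\ne i}\big(1 + t + \cdots + t^{d_j-1}\big).
\]
It then remains to read off the stated properties of $U_i$. It is a product of polynomials with nonnegative integer coefficients, hence has nonnegative integer coefficients; each factor is monic, so $U_i$ is monic; and its degree is $d_i + \sum_{j\ne i}(d_j-1) = \sum_k d_k - (n-1) = \delta + 1$, using $\delta = \sum_k(d_k - 1)$. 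Finally, evaluating at $t=1$ each factor $1 + \cdots + t^{d_j-1}$ becomes $d_j$, so $U_i(1) = \prod_{j\ne i} d_j = \widehat d_i \ne 0$, which certifies that $t = 1$ is a pole of order exactly $1$.

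There is essentially no serious obstacle here: the whole computation rests on the observation that the condition $\DivSeq(X^\alpha) = \{i\}$ splits into independent per-coordinate inequalities. The only point requiring care is the faithful translation of the set-theoretic condition ``$i$ is the unique divisibility index'' into the exponent bounds $\alpha_i \ge d_i$ and $\alpha_j \le d_j - 1$ for $j \ne i$; once this is in place, the factorization of the generating function and the identification of $N_i$ and $U_i$ are routine.
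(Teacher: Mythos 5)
Your proof is correct and follows essentially the same route as the paper: both exploit the fact that the condition $\DivSeq(X^\alpha)=\{i\}$ decouples coordinate by coordinate, so the series factors as $t^{d_i}/(1-t)$ times the product of the polynomials $(1-t^{d_j})/(1-t)$ for $j\ne i$. Your treatment of point ii) (monicity, nonnegative coefficients, degree $\delta+1$, and $U_i(1)=\widehat d_i$) simply writes out the details that the paper dismisses as presenting no difficulty, and they check out.
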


\begin {proof}
Pour $\alpha = (\alpha_1, \dots, \alpha_n)$, 
l'appartenance $X^\alpha \in \Jex_{1\setminus 2,d}^{(i)}$ est équivalente aux contraintes:
$$
\alpha_1 < d_1, \dots, \alpha_{i-1} < d_{i-1}, \qquad \alpha_i \ge d_i,  \qquad 
\alpha_{i+1} < d_{i+1}, \dots, \alpha_{n} < d_{n}, 
\qquad  |\alpha| = d
\leqno(\star)
$$
Nous affirmons que le nombre de $X^\alpha$, c'est-à-dire le nombre de $\alpha$, obéissant aux contraintes $(\star)$
ci-dessus est le coefficient en~$t^d$ de la série suivante:
$$
\overbrace{\dfrac{1-t^{d_1}}{1-t} \cdots \dfrac{1-t^{d_{i-1}}}{1-t}}^{\pi_1} \times
\overbrace{\dfrac{t^{d_i}}{1-t}}^{\pi_2} \times
\overbrace{\dfrac{1-t^{d_{i+1}}}{1-t} \cdots \dfrac{1-t^{d_{n}}}{1-t}}^{\pi_3}
$$
En effet, développons $\pi_1, \pi_2, \pi_3$ en séries formelles
($\pi_1$ et $\pi_3$ sont des polynômes):
$$
\left\{
\begin {array}{rcl}
\pi_1 &=& (1 + t + \cdots + t^{d_1-1}) \cdots (1 + t + \cdots + t^{d_{i-1}-1})
\\
\pi_2 &=& t^{d_i}  + t^{d_i+1}  + t^{d_i+2}  + \cdots
\\
\pi_3 &=& (1 + t + \cdots + t^{d_{i+1}-1}) \cdots (1 + t + \cdots + t^{d_{n}-1})
\\
\end {array}
\right.
$$
Pour identifier le coefficient en $t^d$ dans le développement du produit $\pi_1\pi_2\pi_3$,
on doit considérer une puissance de $t$ dans chaque facteur de $\pi_1$, puis une puissance de $t$ dans $\pi_2$, puis une puissance de $t$ dans chaque facteur de $\pi_3$ avec la condition que 
la somme des exposants de ces puissances fasse $d$ ;
autrement dit, on doit considérer tous les $\alpha = (\alpha_1, \dots, \alpha_n)$ avec  $|\alpha|= d$ et 
$$
\begin {array}{*{8}c}
t^{\alpha_1} &\cdots   &t^{\alpha_{i-1}} & t^{\alpha_i} & t^{\alpha_{i+1}} & \cdots & t^{\alpha_n} 
\\[2mm]
\text{pris dans le} &\cdots &\text{pris dans le} &\text{pris dans $\pi_2$} &
\text{pris dans le} &\cdots& \text{pris dans le}
\\
\text{premier} &\cdots &\text{dernier} &  &
\text{premier} &\cdots& \text{dernier}
\\
\text{facteur de $\pi_1$} &\cdots &\text{facteur de $\pi_1$} & &
\text{facteur de $\pi_3$} &\cdots& \text{facteur de $\pi_3$}
\end {array}
$$
Ces contraintes sur $\alpha$ sont exactement celles de~$(\star)$.

\medskip
Bilan: le nombre de $X^\alpha \in \Jex_{1,d}^{(i)}$ est donc le
coefficient en $t^d$ de la série
$$
\pi_1\pi_2\pi_3 
\ \overset{\rm def.}{=} \ \dfrac{N_i(t)}{(1-t)^n}
\qquad \text{avec} \qquad
N_i(t) =  t^{d_i} \times \prod_{j\neq i} (1-t^{d_j})
$$
Ceci démontre le point i). Et le point ii) n'offre pas de difficulté.
\end {proof}

On va maintenant déterminer la série de $\Jex_1^{(i)}$ 
et en déduire celle de $\Jex_{2}^{(i)}$. 
Ces deux modules sont implicitement pilotés
par $\minDiv$ (cela ne se voit pas dans la notation) ; $\Jex_h^{(i)}$ est le
sous-module monomial de $\Jex_{h}$ de base les monômes $X^\alpha
\in \Jex_{h}$ tels que $\minDiv(X^\alpha) = i$.
L'utilisation de $\dim\Jex^{(\minDiv = i)}_{h}$ aurait été plus adéquate
mais un tantinet trop lourde. 
Signalons que ce qui vient d'être dit ne concerne pas (directement)
$\Jex_{1\setminus 2}^{(i)}$ qui, bien qu'il puisse être défini à l'aide de $\minDiv$,
possède une définition intrinsèque, sans aucun recours à un mécanisme de sélection,
cf. le rappel avant l'énoncé de la proposition précédente.

\begin {prop} [Séries de $\Jex_1^{(i)}$ et $\Jex_2^{(i)}$]\leavevmode 
\label{J1iJ2iSeries}
Elles sont données par les fractions rationnelles:
$$
\Serie(\Jex^{(i)}_1) = \dfrac {\Niun(t)}{(1-t)^n}
\qquad \text{avec} \qquad
\Niun(t) = t^{d_i} \times \prod_{j<i} (1-t^{d_j})
$$
et
$$
\Serie(\Jex^{(i)}_2) = \dfrac {\Nideux(t)}{(1-t)^n} 
\qquad \text{avec} \qquad
\Nideux(t) = t^{d_i} \times \prod_{j<i} (1-t^{d_j}) \times
\Big(1 - \prod_{j>i} (1-t^{d_j}) \Big)
$$
\end {prop}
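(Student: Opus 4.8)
The plan is to reduce both series to the generating-function mechanics already employed in the proof of Proposition~\ref{SerieJ1minusJ2i}, together with one elementary decomposition of monomial modules.

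First I would treat $\Jex^{(i)}_1$. By definition $X^\alpha \in \Jex^{(i)}_1$ means $\minDiv(X^\alpha) = i$, which translates into the constraints $\alpha_j < d_j$ for every $j < i$ and $\alpha_i \geqslant d_i$, with no condition whatsoever on the exponents $\alpha_j$ for $j > i$ (such a monomial automatically lies in $\Jex_1$, since it carries the divisibility index $i$). Counting the admissible $\alpha$ of total degree $d$ then amounts, exactly as in the proof of~\ref{SerieJ1minusJ2i}, to reading off the coefficient of $t^d$ in the product of elementary series
$$
\prod_{j<i}\frac{1-t^{d_j}}{1-t}\ \times\ \frac{t^{d_i}}{1-t}\ \times\ \prod_{j>i}\frac{1}{1-t},
$$
where the factor $\frac{1-t^{d_j}}{1-t}$ enumerates $0\leqslant\alpha_j<d_j$ for $j<i$, the factor $\frac{t^{d_i}}{1-t}$ enumerates $\alpha_i\geqslant d_i$, and the unbounded factor $\frac{1}{1-t}$ enumerates $\alpha_j\geqslant 0$ for $j>i$. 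This product equals $\Niun(t)/(1-t)^n$ with $\Niun(t)=t^{d_i}\prod_{j<i}(1-t^{d_j})$, which is the first assertion.

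For $\Jex^{(i)}_2$ I would avoid any fresh enumeration and instead exploit a splitting of monomial bases. A monomial with $\minDiv(X^\alpha)=i$ has a divisibility set $\DivSeq(X^\alpha)$ whose minimum is $i$; either this set is reduced to $\{i\}$, in which case $X^\alpha$ lies in $\Jex^{(i)}_{1\setminus 2}$, or it contains a further index (necessarily $>i$, since all $j<i$ are excluded by the condition $\minDiv=i$), in which case $|\DivSeq(X^\alpha)|\geqslant 2$ and $X^\alpha$ lies in $\Jex^{(i)}_2$. These two possibilities are mutually exclusive and exhaustive, so the monomial module $\Jex^{(i)}_1$ is the direct sum $\Jex^{(i)}_{1\setminus 2}\oplus\Jex^{(i)}_2$; degree by degree this yields the additive relation
$$
\Serie(\Jex^{(i)}_1)=\Serie(\Jex^{(i)}_{1\setminus 2})+\Serie(\Jex^{(i)}_2).
$$
Subtracting and feeding in the value $\Serie(\Jex^{(i)}_{1\setminus 2})=N_i(t)/(1-t)^n$ from Proposition~\ref{SerieJ1minusJ2i} gives $\Serie(\Jex^{(i)}_2)=(\Niun(t)-N_i(t))/(1-t)^n$.

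It only remains to recognize $\Niun(t)-N_i(t)$ as $\Nideux(t)$, and this is a one-line factorization: since $N_i(t)=t^{d_i}\prod_{j<i}(1-t^{d_j})\prod_{j>i}(1-t^{d_j})$, pulling out the common factor $t^{d_i}\prod_{j<i}(1-t^{d_j})$ gives
$$
\Niun(t)-N_i(t)=t^{d_i}\prod_{j<i}(1-t^{d_j})\,\Big(1-\prod_{j>i}(1-t^{d_j})\Big)=\Nideux(t).
$$
There is no genuine obstacle in this argument; the entire content is combinatorial. The two points deserving a little care are the correct reading of the condition $\minDiv(X^\alpha)=i$ as a list of inequalities on the $\alpha_j$ (in particular that the indices $j>i$ are left completely free), and the verification that $\Jex^{(i)}_1=\Jex^{(i)}_{1\setminus 2}\oplus\Jex^{(i)}_2$ is really a partition of monomial bases and not merely an inclusion of modules.
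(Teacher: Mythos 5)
Your proposal is correct and follows essentially the same route as the paper: the same elementary-series enumeration (bounded factors for $j<i$, the factor $t^{d_i}/(1-t)$ for the index $i$, free factors for $j>i$) for $\Serie(\Jex^{(i)}_1)$, then the decomposition $\Jex^{(i)}_1 = \Jex^{(i)}_{1\setminus 2}\oplus\Jex^{(i)}_2$, subtraction of the series from Proposition~\ref{SerieJ1minusJ2i}, and the same one-line factorization of $\Niun - N_i$. The only difference is that you take care to justify the direct-sum decomposition via divisibility sets, which the paper simply asserts; that is a harmless (and welcome) addition, not a different method.
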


\begin{proof}
\leavevmode

$\rhd$ 
Pour $\alpha = (\alpha_1, \dots, \alpha_n)$, 
l'appartenance $X^\alpha \in \Jex_{1,d}^{(i)}$ est équivalente aux contraintes:
$$
\alpha_1 < d_1, \dots, \alpha_{i-1} < d_{i-1}, \qquad \alpha_i \ge d_i,  \qquad  |\alpha| = d
\leqno(\star)
$$
Nous affirmons que le nombre de $X^\alpha$, c'est-à-dire le nombre de $\alpha$, obéissant aux contraintes $(\star)$
ci-dessus est le coefficient en~$t^d$ de la série suivante:
$$
\overbrace{\dfrac{1-t^{d_1}}{1-t} \cdots \dfrac{1-t^{d_{i-1}}}{1-t}}^{\pi_1} \times
\overbrace{\dfrac{t^{d_i}}{1-t}}^{\pi_2} \times
\overbrace{\dfrac{1}{(1-t)^{n-i}}}^{\pi_3}
$$
En effet, développons $\pi_1, \pi_2, \pi_3$ en séries formelles
($\pi_1$ est un polynôme):
$$
\left\{
\begin {array}{rcl}
\pi_1 &=& (1 + t + \cdots + t^{d_1-1}) \cdots (1 + t + \cdots + t^{d_{i-1}-1})
\\
\pi_2 &=& t^{d_i}  + t^{d_i+1}  + t^{d_i+2}  + \cdots
\\
\pi_3 &=& (1 + \cdots + t^{\alpha_{i+1}} + \cdots) \cdots (1 + \cdots + t^{\alpha_{n}} + \cdots)
\\
\end {array}
\right.
$$
Pour identifier le coefficient en $t^d$ dans le développement du produit $\pi_1\pi_2\pi_3$,
on doit considérer une puissance de $t$ dans chaque facteur de $\pi_1$, puis une puissance de $t$ dans $\pi_2$, puis une puissance de $t$ dans chaque facteur de $\pi_3$ avec la condition que 
la somme des exposants de ces puissances fasse $d$ ;
autrement dit, on doit considérer tous les $\alpha = (\alpha_1, \dots, \alpha_n)$ avec  $|\alpha|= d$ et 
$$
\begin {array}{*{8}c}
t^{\alpha_1} &\cdots   &t^{\alpha_{i-1}} & t^{\alpha_i} & t^{\alpha_{i+1}} & \cdots & t^{\alpha_n} 
\\[2mm]
\text{pris dans le} &\cdots &\text{pris dans le} &\text{pris dans $\pi_2$} &
\text{pris dans le} &\cdots& \text{pris dans le}
\\
\text{premier} &\cdots &\text{dernier} &  &
\text{premier} &\cdots& \text{dernier}
\\
\text{facteur de $\pi_1$} &\cdots &\text{facteur de $\pi_1$} & &
\text{facteur de $\pi_3$} &\cdots& \text{facteur de $\pi_3$}
\end {array}
$$
Ces contraintes sur $\alpha$ sont exactement celles de~$(\star)$.

\medskip
Bilan: le nombre de $X^\alpha \in \Jex_{1,d}^{(i)}$ est donc le
coefficient en $t^d$ de la série
$$
\pi_1\pi_2\pi_3 
\ \overset{\rm def.}{=} \ \dfrac{\Niun(t)}{(1-t)^n}
\qquad \text{avec} \qquad
\Niun(t) =  t^{d_i} \times \prod_{j<i} (1-t^{d_j})
$$

\bigskip

$\rhd$ Passons à la preuve du second point.
Comme $\Jex_1^{(i)} = \Jex_2^{(i)} \oplus \Jex_{1\setminus 2}^{(i)}$, on a 
$$
\Serie(\Jex^{(i)}_2) = \Serie(\Jex^{(i)}_1) - \Serie(\Jex^{(i)}_{1\setminus2})
$$
Avec le point précédent et la proposition~\ref {SerieJ1minusJ2i}, on en déduit :
$$
\Serie(\Jex^{(i)}_2)
= \dfrac {\Niun - N_i}{(1-t)^n}
\qquad \text{avec} \qquad
\Niun = t^{d_i} \times \prod_{j<i} (1-t^{d_j}), \quad
N_i = t^{d_i} \times \prod_{j \ne i} (1 - t^{d_j})
$$
Dans le polynôme $N_i$, le produit peut s'écrire :
$$
\prod_{j \ne i} (1 - t^{d_j}) =  \prod_{j<i} (1 - t^{d_j}) \times \prod_{j>i} (1 - t^{d_j})
$$
D'où :
$$
\Niun - N_i = t^{d_i} \times \prod_{j<i} (1-t^{d_j}) \times \Big(1 - \prod_{j>i} (1-t^{d_j}) \Big)
$$
La formule annoncée est donc démontrée !
\end {proof}

\subsubsection{Compléments}

Avec ce qui précède, on peut facilement obtenir les séries de $\Jex_1$, $\Jex_2$ et $\Jex_{1\setminus2}$.
Il suffit de sommer pour $i \in \llbracket 1,n \rrbracket$ 
les séries précédentes. 
Par exemple, 
$$
\Serie(\Jex_1) = \dfrac {\sum_{i=1}^n \Niun(t)}{(1-t)^n}  
\qquad \text{ et } \qquad 
\Serie(\Jex_2) = \dfrac {\sum_{i=1}^{n-1} \Nideux(t)}{(1-t)^n}  
$$
En ce qui concerne la série de $\Jex_2$, on pourra remarquer la sommation jusqu'à $n-1$ au
lieu de $n$, ceci étant dû au fait que $\Nndeux = 0$ (dans l'expression de $\Nndeux$, le troisième facteur
est nul), en accord avec $\Jex_2^{(n)} = 0$.

\subsubsection{\fbox{La série du poids en $P_i$ de $\det B_{k,\sbullet}$, ou encore du module $\Mmac_{k-1}^{(i)}$}}

Suite au résultat de la proposition~\ref{PoidsDetBkdSum} affirmant:
$$
\poids_{P_i}(\det B_{k,d}) 
\ = \ 
\#\big\{X^\beta e_J \in \Mmac_{k-1,d} \ \big | \ \minDiv(X^\beta) = i \big\}
\ = \ 
\#\big\{ X^\alpha e_I \in \Smac_{k,d} \ \big | \ \min(I) = i \big\}
$$
et au fait que $B_k$ est un endomorphisme de $\Mmac_{k-1}$, 
nous définissons un sous-$\bfA$-module monomial de~$\Mmac_{k-1}$,
noté $\Mmac_{k-1}^{(i)}$:
$$
\Mmac_{k-1}^{(i)} = \text{sous-module de base les 
$X^\beta e_J \in \Mmac_{k-1}$ tels que $\minDiv(X^\beta) = i$}
$$
Sa série est par définition:
$$
\Serie\big(\Mmac_{k-1}^{(i)}\big) \overset{\rm def.}{=}
\sum_{d \ge 0} \dim \Mmac_{k-1,d}^{(i)}\, t^d  =
\sum_{d \ge 0} \poids_{P_i}(\det B_{k,d})\, t^d 
$$
Nous l'identifions ci-après en tant que fraction rationnelle.

\label {NOTA14-Mk-1i}%

Puisque $\Jex_1 = \Mmac_0$, nous allons retrouver des éléments des preuves précédentes dans ce qui suit.

\begin {prop} [La série du poids en $P_i$ de $\det B_{k,\sbullet}$ ou encore du module $\Mmac_{k-1}^{(i)}$] \leavevmode
\label{BkdPoidsPiSerie}

Pour $L \subset \{1..n\}$, posons $d_L = \sum_{\ell \in L} d_\ell$ et pour $k \ge 1$, considérons 
le polynôme $\Aik(t) \in \bbZ[t]$ défini de deux manières différentes:
$$
\Aik(t) 
\ = \ 
t^{d_i}\!\!\! \sum_{J\subset\{i+1..n\}\atop\#J=k-1} \kern -8pt t^{d_J} \ =\ 
\sum_{\min(I)=i\atop\#I=k} \kern -6pt t^{d_I}
$$
Alors:
$$
\Serie\big(\Mmac_{k-1}^{(i)}\big) = \dfrac{N_{i,k}(t)}{(1-t)^n}
\qquad \text{avec} \qquad
N_{i,k}(t) = \Aik(t) \times \prod_{j<i} (1-t^{d_j})
$$
\end {prop}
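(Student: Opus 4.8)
Le plan est de d�crire explicitement une $\bfA$-base monomiale du module $\Mmac_{k-1}^{(i)}$ et d'observer que sa s�rie dimensionnelle se factorise en produit de deux s�ries, l'une d�j� identifi�e dans la proposition~\ref{J1iJ2iSeries}. Par d�finition, la base de $\Mmac_{k-1}^{(i)}$ est form�e des mon�mes ext�rieurs $X^\beta e_J$ avec $\#J = k-1$, $\minDiv(X^\beta) = i$, et la contrainte $\minDiv(X^\beta) < \min J$ issue de l'appartenance � $\Mmac_{k-1}$. Une fois pos� $\minDiv(X^\beta) = i$, cette derni�re contrainte se lit $i < \min J$, c'est-�-dire $J \subset \{i+1..n\}$. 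La donn�e d'un tel mon�me ext�rieur �quivaut donc � celle d'un couple $(X^\beta, J)$ o� $X^\beta \in \Jex_1^{(i)}$ et $J \subset \{i+1..n\}$ avec $\#J = k-1$, les deux composantes variant ind�pendamment l'une de l'autre.

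Je m'appuierais ensuite sur la graduation, en rappelant que le degr� du mon�me ext�rieur $X^\beta e_J$ dans $\rmK_{\sbullet,d}$ vaut $|\beta| + d_J$ avec $d_J = \sum_{j \in J} d_j$. L'additivit� du degr� vis-�-vis de la d�composition en couples entra�ne que la s�rie de $\Mmac_{k-1}^{(i)}$ est le produit
$$
\Serie\big(\Mmac_{k-1}^{(i)}\big) =
\Bigg(\sum_{X^\beta \in \Jex_1^{(i)}} t^{|\beta|}\Bigg)
\times
\Bigg(\sum_{\substack{J \subset \{i+1..n\} \\ \#J = k-1}} t^{d_J}\Bigg)
= \Serie\big(\Jex_1^{(i)}\big)\times \frac{\Aik(t)}{t^{d_i}},
$$
la derni�re �galit� d�coulant de la premi�re des deux expressions de $\Aik(t)$ donn�es dans l'�nonc�. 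L'�quivalence des deux expressions de $\Aik(t)$ s'obtient d'ailleurs via la bijection $I \leftrightarrow J = I \setminus \{i\}$ entre les parties $I$ de cardinal $k$ de minimum $i$ et les parties $J \subset \{i+1..n\}$ de cardinal $k-1$, bijection sous laquelle $d_I = d_i + d_J$.

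Il ne resterait plus qu'� reporter la valeur $\Serie(\Jex_1^{(i)}) = \Niun(t)/(1-t)^n$, avec $\Niun(t) = t^{d_i}\prod_{j<i}(1-t^{d_j})$, fournie par la proposition~\ref{J1iJ2iSeries}. Le facteur $t^{d_i}$ se simplifie et l'on obtient
$$
\Serie\big(\Mmac_{k-1}^{(i)}\big)
= \frac{t^{d_i}\prod_{j<i}(1-t^{d_j})}{(1-t)^n}\cdot\frac{\Aik(t)}{t^{d_i}}
= \frac{\Aik(t)\prod_{j<i}(1-t^{d_j})}{(1-t)^n}
= \frac{N_{i,k}(t)}{(1-t)^n},
$$
soit la formule annonc�e.

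Je ne pr�vois pas d'obstacle s�rieux: toute la preuve repose sur la factorisation de la base de $\Mmac_{k-1}^{(i)}$ en un produit cart�sien \og mon�me de $\Jex_1^{(i)}$\fg{} $\times$ \og partie de $\{i+1..n\}$\fg{}, factorisation compatible avec la graduation. Le seul point demandant de la vigilance est l'identification de la contrainte d'appartenance � $\Mmac_{k-1}$ (� savoir $\minDiv(X^\beta) < \min J$) avec $i < \min J$ une fois fix� $\minDiv(X^\beta) = i$, ainsi que le maniement coh�rent de la convention $\min\emptyset = n+1$ dans les cas de bord. Pour $k=1$ par exemple, on a $J = \emptyset$, $A_{i,1}(t) = t^{d_i}$, $N_{i,1}(t) = \Niun(t)$, et l'on retrouve bien $\Mmac_0^{(i)} = \Jex_1^{(i)}$.
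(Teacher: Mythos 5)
Votre preuve est correcte et suit essentiellement la m�me d�marche que celle du papier : d�composer la base monomiale de $\Mmac_{k-1}^{(i)}$ en couples $(X^\beta, J)$ avec $X^\beta \in \Jex_1^{(i)}$ et $J \subset \{i+1..n\}$ de cardinal $k-1$, puis compter par s�ries g�n�ratrices. La seule diff�rence est d'organisation : vous factorisez la s�rie en produit et r�utilisez directement la proposition~\ref{J1iJ2iSeries}, tandis que le papier refait � $J$ fix� le calcul des contraintes sur $\beta$ via $\pi_1\pi_2\pi_3\,t^{d_J}$ puis somme sur $J$ --- les deux calculs sont identiques au fond.
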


\begin {proof}
\leavevmode

\medskip
$\rhd$
Soit $I \subset \{1..n\}$ tel que $\min(I) = i$ et $\#I = k$; alors $J := I \setminus \{i\}$
est contenu dans $\{i+1..n\}$ et est de cardinal $k-1$. Réciproquement, soit $J \subset
\{i+1..n\}$ tel que $\#J = k-1$; alors $I := \{i\} \sqcup J$ est de cardinal~$k$
et vérifie $\min(I) = i$. Et l'égalité
$$
t^{d_I} = t^{d_i}\, t^{d_J}
$$
prouve celle des deux expressions définissant le polynôme $\Aik$.

\medskip
$\rhd$
Pour $\beta = (\beta_1, \dots, \beta_n)$ et $J \subset \{1..n\}$, 
l'appartenance $X^\beta e_J \in \Mmac_{k-1,d}^{(i)}$ est équivalente aux contraintes:
$$
\beta_1 < d_1, \dots, \beta_{i-1} < d_{i-1}, \qquad \beta_i \ge d_i, \qquad
J\subset\{i+1..n\},\quad \#J = k-1, \qquad  |\beta| + d_J = d
\leqno(\star)
$$
\emph{Fixons} $J$ vérifiant $J\subset\{i+1..n\}$ et $\#J = k-1$.  Nous
affirmons que le nombre de $X^\beta e_J$, c'est-à-dire le nombre de
$\beta$, obéissant aux contraintes $(\star)$ ci-dessus est le
coefficient en~$t^d$ de la série suivante:
$$
\overbrace{\dfrac{1-t^{d_1}}{1-t} \cdots \dfrac{1-t^{d_{i-1}}}{1-t}}^{\pi_1} \times
\overbrace{\dfrac{t^{d_i}}{1-t}}^{\pi_2} \times
\overbrace{\dfrac{1}{(1-t)^{n-i}}}^{\pi_3} \times\ t^{d_J}
$$
En effet, développons $\pi_1, \pi_2, \pi_3$ en séries formelles
($\pi_1$ est un polynôme):
$$
\left\{
\begin {array}{rcl}
\pi_1 &=& (1 + t + \cdots + t^{d_1-1}) \cdots (1 + t + \cdots + t^{d_{i-1}-1})
\\
\pi_2 &=& t^{d_i}  + t^{d_i+1}  + t^{d_i+2}  + \cdots
\\
\pi_3 &=& (1 + \cdots + t^{\beta_{i+1}} + \cdots) \cdots (1 + \cdots + t^{\beta_{n}} + \cdots)
\\
\end {array}
\right.
$$
Pour identifier le coefficient en $t^d$ dans le développement du
produit $\pi_1\pi_2\pi_3\,t^{d_J}$, on doit considérer une puissance
de $t$ dans chaque facteur de $\pi_1$, puis une puissance de $t$ dans
$\pi_2$, puis une puissance de $t$ dans chaque facteur de $\pi_3$ avec
la condition que la somme des exposants de ces puissances et de $d_J$
fasse $d$ ; autrement dit, on doit considérer tous les $\beta =
(\beta_1, \dots, \beta_n)$ avec $|\beta| + d_J = d$ et
$$
\begin {array}{*{8}c}
t^{\beta_1} &\cdots   &t^{\beta_{i-1}} & t^{\beta_i} & t^{\beta_{i+1}} & \cdots & t^{\beta_n} 
\\[2mm]
\text{pris dans le} &\cdots &\text{pris dans le} &\text{pris dans $\pi_2$} &
\text{pris dans le} &\cdots& \text{pris dans le}
\\
\text{premier} &\cdots &\text{dernier} &  &
\text{premier} &\cdots& \text{dernier}
\\
\text{facteur de $\pi_1$} &\cdots &\text{facteur de $\pi_1$} & &
\text{facteur de $\pi_3$} &\cdots& \text{facteur de $\pi_3$}
\end {array}
$$
Ces contraintes sur $\beta$ (toujours à $J$ fixé) sont exactement celles de~$(\star)$.

Avec des notations qui se devinent (sinon, le lecteur  attendra le prochain chapitre !), on a donc, à~$J$ fixé :
$$
\Serie\big(\Mmac_{k-1}^{(i)}[J] \big) 
\ = \ 
\pi_1 \pi_2 \pi_3 t^{d_J}
\ = \ 
\dfrac{\text{num}_J}{(1-t)^n}
\qquad \text{avec} \qquad
\text{num}_J \ = \ 
t^{d_i} \, t^{d_J} \times \prod_{j<i} (1-t^{d_j})
$$
Pour obtenir le résultat, il suffit maintenant de sommer sur les 
parties $J$ de $\{i+1,\dots, n\}$  de cardinal~\mbox{$k-1$}.
Par définition du polynôme $\Aik$, on a donc :
$$
\Serie\big(\Mmac_{k-1}^{(i)}\big)
\ = \ 
\dfrac{\Aik(t)\times \prod\limits_{j<i} (1-t^{d_j})}{(1-t)^n}
\ \overset{\rm def.}{=} \ \dfrac{N_{i,k}(t)}{(1-t)^n}
$$
\end {proof}

\subsubsection*{\fbox{Séries du poids en $P_i$ de $\Delta_{1,\sbullet}$ et $\Delta_{2,\sbullet}$}}

Nous sommes en mesure d'énoncer le résultat principal de cette section.
Pour cela, nous aurons besoin du lemme élémentaire (voire trivial) ci-dessous que nous énonçons sans preuve.

\begin {lem} \leavevmode

Pour un ensemble fini $F$ et deux familles $(a_j)_{j \in F}$,   $(b_j)_{j \in F}$,
à valeurs dans un anneau commutatif:
$$
\prod_{j \in F}(a_j + b_j) = \sum_{J \subset F}  \Big(\prod_{j'\notin J} a_{j'} \prod_{j\in J} b_j\Big)
$$  
\end  {lem}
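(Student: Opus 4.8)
Le lemme final affirme une identité
$$
\prod_{j \in F}(a_j + b_j) = \sum_{J \subset F}  \Big(\prod_{j'\notin J} a_{j'} \prod_{j\in J} b_j\Big),
$$
qui est simplement la forme développée (« distributivité totale ») du produit. C'est l'un des faits les plus élémentaires de l'algèbre commutative, et le texte lui-même le qualifie de « trivial » et l'énonce « sans preuve ». La vraie question pour rédiger une preuve-plan est donc : quelle formalisation rend l'argument à la fois court et rigoureux, et où se cache le seul point qui mérite qu'on s'y arrête.

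Le lemme parle de produits de binômes indexés par un ensemble fini. La version la plus économique est une récurrence sur le cardinal de $F$. Mais je dois réfléchir à la structure bijective sous-jacente : développer $\prod_{j\in F}(a_j+b_j)$ consiste à choisir, dans chaque facteur, soit $a_j$ soit $b_j$, ce qui correspond exactement à choisir un sous-ensemble $J\subseteq F$ (les indices où l'on prend $b_j$). Le membre de droite est précisément la somme sur ces choix. L'obstacle n'est pas conceptuel mais typographique/notationnel : il faut manier proprement les produits indexés par $F\setminus J$ et par $J$, et dans la récurrence gérer la partition des sous-ensembles de $F$ selon qu'ils contiennent ou non l'élément qu'on retire. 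Voyons comment je rédigerais cela.
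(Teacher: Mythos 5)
Le papier \'enonce ce lemme explicitement \emph{sans preuve} (il le qualifie de \og lemme \'el\'ementaire (voire trivial) [\dots] que nous \'enon\c{c}ons sans preuve\fg{}) : il n'y a donc pas de preuve de r\'ef\'erence \`a laquelle comparer la v\^otre. Sur le fond, votre analyse est correcte : l'identit\'e est la distributivit\'e totale, et les deux voies que vous \'evoquez (r\'ecurrence sur $\#F$, ou correspondance bijective entre les termes du d\'eveloppement et les parties $J\subset F$ --- $J$ \'etant l'ensemble des indices o\`u l'on choisit $b_j$) sont toutes deux viables et standard.

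Le probl\`eme est que votre texte s'arr\^ete exactement l\`a o\`u la preuve devrait commencer : la phrase \og Voyons comment je r\'edigerais cela\fg{} n'est suivie d'aucune r\'edaction, si bien qu'aucun argument n'est effectivement donn\'e --- c'est un plan, pas une preuve. La lacune est purement ex\'ecutive et se comble en quelques lignes par la r\'ecurrence que vous annoncez. Si $F=\emptyset$, les deux membres valent $1$ (produit vide \`a gauche ; \`a droite, la seule partie est $J=\emptyset$ et son terme est le produit vide). Sinon, fixons $k\in F$ et posons $F'=F\setminus\{k\}$ ; par hypoth\`ese de r\'ecurrence,
$$
\prod_{j\in F}(a_j+b_j)\ =\ (a_k+b_k)\sum_{J'\subset F'}\Big(\prod_{j'\in F'\setminus J'}a_{j'}\,\prod_{j\in J'}b_j\Big).
$$
En d\'eveloppant, le terme en $a_k$ fournit la somme $\sum_{J}\prod_{j'\notin J}a_{j'}\prod_{j\in J}b_j$ \'etendue aux parties $J\subset F$ ne contenant pas $k$ (prendre $J=J'$), et le terme en $b_k$ fournit la m\^eme somme \'etendue aux parties contenant $k$ (prendre $J=J'\cup\{k\}$, de sorte que $F\setminus J=F'\setminus J'$). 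Toute partie de $F$ \'etant d'exactement un de ces deux types, la somme des deux contributions est le membre de droite de l'\'enonc\'e. C'est pr\'ecis\'ement la partition des parties de $F$ selon l'appartenance de $k$ que vous aviez identifi\'ee comme le seul point \`a g\'erer --- il restait \`a l'\'ecrire.
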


\begin {theo} [La série du poids en $P_i$ de $\Delta_{1,\sbullet}$] \leavevmode 
\label{poidsDelta1}

\begin {enumerate} [\rm i)]
\item
Voici la série du poids en $P_i$ de $\Delta_{1,\sbullet}$ sous forme de fraction rationnelle:
$$
\sum_{d \ge 0} \poids_{P_i}(\Delta_{1,d})\, t^d = \dfrac {N_i(t)} {(1-t)^n}
\qquad \text{où} \qquad
N_i(t) = t^{d_i} \times \prod_{j\ne i} (1 - t^{d_j})
$$
Ainsi, 
$$
\sum_{d \ge 0} \poids_{P_i}(\Delta_{1,d})\, t^d 
\ =\ 
\Serie\big(\Jex^{(i)}_{1\setminus2}\big)
$$

\item  
Le scalaire $\Delta_{1,d} = \Delta_{1,d}(\uP)$ est
homogène en $P_i$ de poids $\dim\Jex^{(i)}_{1\setminus2,d}$.  En
particulier:
$$
\poids_{P_i}(\Delta_{1,d}) = \begin {cases}
\widehat d_i &\text{si $d \ge \delta+1$} \\[0.2cm]
\widehat d_i -1 &\text{si $d=\delta$} \\
\end{cases}
$$
\end {enumerate}
\end {theo}

\begin {proof} 
\leavevmode

\noindent 
i) 
En utilisant l'égalité suivante (expliquée au début de cette section) :
$$
\poids_{P_i}(\Delta_{1,d}) = \sum_{k=1}^n (-1)^{k-1} \poids_{P_i}(\det B_{k,d})
$$
il vient par définition:
$$
\sum_{d \ge 0} \poids_{P_i}(\Delta_{1,d})\, t^d = \sum_{k=1}^n (-1)^{k-1} \Serie\big(\Mmac_{k-1}^{(i)}\big)
$$
D'après la proposition~\ref{BkdPoidsPiSerie}, la somme dans le membre droit ci-dessus est égale à:
$$
\dfrac{N'_i(t)}{(1-t)^n}
\qquad \text{avec} \qquad
N'_i(t) = \left(\sum_{k=1}^n (-1)^{k-1} \Aik(t)\right) \times \prod_{j<i} (1-t^{d_j})
$$
Pour obtenir l'expression convoitée de la série, il suffit de prouver que
$$
\sum_{k=1}^n (-1)^{k-1} \Aik(t) \ = \ t^{d_i} \, \prod_{j > i} (1 - t^{d_j})
\leqno (\star)
$$
car il en résultera que $N'_i = N_i$.

L'application du lemme élémentaire précédent à $F = \{i+1..n\}$, $a_j=1$ et $b_j = -t^{d_j}$ fournit
$$
\prod_{j > i} (1 - t^{d_j}) \ = \sum_{J \subset \{i+1..n\}} \kern -7pt (-1)^{\#J} t^{d_J} 
$$
Dans la somme de droite, en regroupant les parties $J$ de même cardinal $k-1$:
$$
\prod_{j > i} (1 - t^{d_j}) \ =\ 
\sum_{k=1}^n (-1)^{k-1} \kern -8pt \sum_{J\subset\{i+1..n\}\atop\#J=k-1} t^{d_J}
$$
D'où:
$$
t^{d_i}\, \prod_{j > i} (1 - t^{d_j}) \ = \ 
\sum_{k=1}^n (-1)^{k-1} t^{d_i} \kern -8pt \sum_{J\subset\{i+1..n\}\atop\#J=k-1} t^{d_J} \ \overset{\rm def.}{=}\ 
\sum_{k=1}^n (-1)^{k-1} \Aik(t)
$$
ce qui prouve $(\star)$ et termine l'obtention de la formule annoncée.

\noindent
Une fois établie l'égalité des deux séries, on obtient le point~ii), ce qui termine la preuve du théorème.
\end {proof}

\begin {coro} \leavevmode
\label{poidsDelta2}
  
Le scalaire $\Delta_{2,d} = \Delta_{2,d}(\uP)$ est homogène en $P_i$ de poids $\dim\Jex^{(i)}_{2,d}$.
Ou encore:
$$
\poids_{P_i}(\Delta_{2,d}) = \poids_{P_i}(\det W_{2,d}) 
$$
En particulier, $\Delta_{2,d}(\uP)$ ne dépend pas de $P_n$.

\medskip
Plus généralement, pour $\sigma \in \fS_n$:
$$  
\poids_{P_i}(\Delta^\sigma_{2,d}) = \dim\Jex^{(\sminDiv = i)}_{2,d} =
\poids_{P_i}(\det W^\sigma_{2,d}) 
$$
En particulier, $\Delta^\sigma_{2,d}(\uP)$ ne dépend pas de $P_{\sigma(n)}$.
\end {coro}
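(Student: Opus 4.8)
Quiero establecer el corolario \ref{poidsDelta2} a partir del teorema \ref{poidsDelta1} que acabo de probar, usando la relación multiplicativa fundamental $\det B_{k,d} = \Delta_{k,d}\,\Delta_{k+1,d}$ junto con las expresiones de las series dimensionales obtenidas en las proposiciones \ref{BkdPoidsPiSerie} y \ref{J1iJ2iSeries}. La idea central es que, una vez conocido el peso en $P_i$ de $\Delta_{1,d}$, el peso de $\Delta_{2,d}$ se deduce \emph{aritméticamente} de la relación $\det B_{1,d} = \Delta_{1,d}\,\Delta_{2,d}$, puesto que $B_{1,d} = W_{1,d}$ (punto ii de \ref{BkPropriete}). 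En efecto, tomando pesos en $P_i$ en esta igualdad se obtiene
$$
\poids_{P_i}(\Delta_{2,d}) = \poids_{P_i}(\det W_{1,d}) - \poids_{P_i}(\Delta_{1,d}).
$$
Por el teorema \ref{poidsDelta1}, el segundo término es $\dim\Jex^{(i)}_{1\setminus2,d}$, mientras que el primero vale $\dim\Jex^{(i)}_{1,d}$ según la proposición final sobre el peso de $\det W_{h,d}$ (caso $h=1$). Como $\Jex^{(i)}_1 = \Jex^{(i)}_2 \oplus \Jex^{(i)}_{1\setminus2}$, la diferencia es exactamente $\dim\Jex^{(i)}_{2,d}$, que es precisamente $\poids_{P_i}(\det W_{2,d})$ por la misma proposición aplicada a $h=2$.

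**Ejecución a nivel de series.** Para evitar manipular cada $d$ por separado, primero traduciría todo a series generadoras en $\bbZ[[t]]$, donde las identidades son igualdades de fracciones racionales explícitas. La serie del peso en $P_i$ de $\Delta_{2,\sbullet}$ se escribe, por la relación anterior, como
$$
\sum_{d\ge0}\poids_{P_i}(\Delta_{2,d})\,t^d = \Serie(\Jex^{(i)}_1) - \Serie(\Jex^{(i)}_{1\setminus2}) = \Serie(\Jex^{(i)}_2),
$$
y esta última igualdad es justamente la que se demostró en la segunda parte de la prueba de \ref{J1iJ2iSeries}, vía $\Niun - N_i = \Nideux$. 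Al coincidir las dos series término a término, concluyo $\poids_{P_i}(\Delta_{2,d}) = \dim\Jex^{(i)}_{2,d} = \poids_{P_i}(\det W_{2,d})$ para todo $d$. La no dependencia de $\Delta_{2,d}$ respecto de $P_n$ sigue inmediatamente: el factor $\Nideux$ contiene $\prod_{j<i}(1-t^{d_j})\cdot\big(1-\prod_{j>i}(1-t^{d_j})\big)$, cuyo tercer factor se anula para $i=n$, de modo que $\Jex^{(n)}_2 = 0$ y el peso en $P_n$ es nulo.

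**La versión torcida y el obstáculo principal.** Para el caso general $\sigma\in\fS_n$ la estrategia es idéntica, sustituyendo $\minDiv$ por $\sminDiv$ y $\Mmac_{k-1}$ por $\Mmac^\sigma_{k-1}$ en todo el argumento, apoyándome en $\det B^\sigma_{1,d} = \det W^\sigma_{1,d}$ y en la identidad $\det B^\sigma_{1,d} = \Delta^\sigma_{1,d}\,\Delta^\sigma_{2,d}$. El punto delicado es que el teorema \ref{poidsDelta1} está enunciado para $\Delta_{1,d}$, pero el lema \ref{EgaliteDelta1dDelta1dsigma} garantiza $\Delta^\sigma_{1,d} = \Delta_{1,d}$, con lo que $\poids_{P_i}(\Delta^\sigma_{1,d}) = \dim\Jex^{(i)}_{1\setminus2,d}$ \emph{independientemente de $\sigma$}. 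Así que la única pieza genuinamente $\sigma$-dependiente es $\poids_{P_i}(\det W^\sigma_{1,d})$, que por la proposición de pesos de $\det W^\sigma_{h,d}$ vale $\dim\Jex^{(\sminDiv=i)}_{1,d}$. La verdadera dificultad que anticipo no es conceptual sino contable: verificar que la versión $\sigma$-torcida de la descomposición $\Jex^{(\sminDiv=i)}_1 = \Jex^{(\sminDiv=i)}_2 \oplus \Jex^{(\sminDiv=i)}_{1\setminus2}$ sigue siendo válida, es decir, que el predicado ``$i$ es el único índice de divisibilidad'' no interacciona con el mecanismo de selección $\sminDiv$. Esto se resuelve observando que $\Jex_{1\setminus2}$ tiene una definición \emph{intrínseca}, sin recurso a ningún mecanismo de selección (como se recuerda antes de \ref{SerieJ1minusJ2i}), de modo que $\Jex^{(\sminDiv=i)}_{1\setminus2} = \Jex^{(i)}_{1\setminus2}$ para todo $\sigma$; la resta $\Jex^{(\sminDiv=i)}_1 \setminus \Jex^{(i)}_{1\setminus2}$ da entonces $\Jex^{(\sminDiv=i)}_2$, lo que cierra la demostración de la no dependencia respecto de $P_{\sigma(n)}$.
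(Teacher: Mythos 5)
Your proof is correct and follows essentially the same route as the paper: both deduce the weight of $\Delta_{2,d}$ from the relation $\det B_{1,d} = \Delta_{1,d}\,\Delta_{2,d}$ combined with $B_{1,d}=W_{1,d}$, the theorem~\ref{poidsDelta1} and the decomposition $\Jex_1 = \Jex_{1\setminus2}\oplus\Jex_2$, and both settle the twisted case via $\Delta^\sigma_{1,d}=\Delta_{1,d}$ (lemme~\ref{EgaliteDelta1dDelta1dsigma}). Your detour through generating series and your explicit check that $\Jex^{(\sminDiv=i)}_{1\setminus2}=\Jex^{(i)}_{1\setminus2}$ (intrinsic character of $\Jex_{1\setminus2}$, since a monomial with a single divisibility index is assigned that index by every selection mechanism) merely spell out details the paper leaves implicit.
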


\begin {proof}
On a par définition:
$$
\det B_{1,d} = \Delta_{1,d}\,\Delta_{2,d}
$$  
Or $\det B_{1,d} = \det W_{1,d}$ est homogène en $P_i$ de poids $\dim \Mmac_{0,d}^{(i)} = \dim \Jex^{(i)}_{1,d}$ (d'après~\ref{BkdPoidsPiSerie} ou d'après~\ref{PoidsDetW}), 
et $\Delta_{1,d}$ est homogène de poids $\dim \Jex^{(i)}_{1\setminus2,d}$ 
d'après~\ref{poidsDelta1}.
Comme $\Jex_1 = \Jex_{1\setminus2} \oplus \Jex_2$,  on a l'égalité dimensionnelle :
$$
\dim \Jex_{1,d}^{(i)} 
\ = \ 
\dim \Jex^{(i)}_{1\setminus2,d}
+ 
\dim \Jex^{(i)}_{2,d}
$$
Ainsi, $\Delta_{2,d}$ est homogène en $P_i$ de poids $\dim \Jex^{(i)}_{2,d}$.
Comme $\dim \Jex^{(n)}_{2,d} = 0$, on en déduit que $\Delta_{2,d}$
ne dépend pas de $P_n$.

\medskip
Pour $\sigma \in \fS_n$ quelconque, on utilise l'égalité analogue 
$$
\det B^\sigma_{1,d} = \Delta^\sigma_{1,d}\,\Delta^\sigma_{2,d} 
$$
et le fait que $\Delta_{1,d} = \Delta^\sigma_{1,d}$
(cf.~\ref{EgaliteDelta1dDelta1dsigma}).
\end {proof}

La vraie vérité sera révélée ultérieurement. Elle consiste en l'égalité évoquée à plusieurs reprises:
$$
\forall d \ge 0, \quad \forall \sigma \in \fS_n, \qquad 
\Delta^\sigma_{2,d}(\uP) = \det W^\sigma_{2,d}(\uP) $$
et demande beaucoup plus de travail.

\medskip

Le contrôle du poids en $P_i$ de $\Delta_{1,d}(\uP)$ permet la détermination de la forme $\Det_d$
du premier cas d'école étudié en~\ref{soussectionPremierCasEcole}.

\begin {prop} [La forme $\Det_d$ du format $D = (1,\dots, 1,e)$]
\label{DetdCasEcoleI}  
\leavevmode
  
Soit $\uP = (L_1, \cdots, L_{n-1}, P_n)$ un système de format $D = (1,\dots,1,e)$
(les $L_i$ sont donc des formes linéaires) et
$\uxi = (\xi_1, \cdots, \xi_n)$ défini en \ref{soussectionPremierCasEcole} par
le déterminant dans la base $(X_1, \dots, X_n)$ de $\bfA[\uX]_1$
$$
\xi_i = \det(L_1, \cdots, L_{n-1}, X_i)
$$
Pour $d \le \delta$:

\begin {enumerate}[\rm i)]
\item  
On a $\Smac_{0,d} = \bfA X_n^d$ et la forme $\Det_d$ s'identifie
à une forme linéaire $\bfA[\uX]_d \to \bfA$.

\item
La forme $\Det_d$ est la restriction à $\bfA[\uX]_d$ de la forme d'évaluation en $\xi$:
$$
\Det_d = (\evalxi)_{|\bfA[\uX]_d}
$$
A fortiori $\Delta_{1,d} = \xi_n^d$. 
\end {enumerate}
\end {prop}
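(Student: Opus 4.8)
The plan is to combine three ingredients already available in the excerpt: the explicit description of the Macaulay decomposition for the format $D = (1,\dots,1,e)$, the formula $\Det_{d,\uP}(f) = \det\bsOmega_{d,\uP}(f)/\Delta_{2,d}(\uP)$, and the weight/homogeneity result for $\Delta_{1,d}(\uP)$. First I would establish point i). For the format $D = (1,\dots,1,e)$ we have $d_1 = \cdots = d_{n-1} = 1$ and $d_n = e$, so $\delta = \sum_i(d_i-1) = e-1$. A monomial $X^\alpha$ of degree $d \le \delta = e-1$ lies in $\Smac_{0,d}$ iff $\alpha_i < d_i$ for all $i$, i.e. $\alpha_1 = \cdots = \alpha_{n-1} = 0$ and $\alpha_n < e$. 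Since $|\alpha| = d \le e-1 < e$, the only such monomial is $X_n^d$ itself. Hence $\Smac_{0,d} = \bfA X_n^d$, so $r_{0,d} = 1$ and $\AXdSod$ is canonically identified with $\bfA[\uX]_d$, under which $\Det_d$ becomes a genuine linear form $\bfA[\uX]_d \to \bfA$. This matches the general remark in the excerpt (the case $d \le \delta$ for $D = (1,\dots,1,e)$ gives $\Smac_{0,d} = \bfA X_n^d$).

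For point ii), the strategy is to show that the linear form $\Det_d$ and the evaluation $\evalxi$ agree on $\bfA[\uX]_d$, and the cleanest route is to reduce to the generic system and then to exploit the Cramer property. I would first note that $\uxi$ is, by construction (see the discussion of $\xi_i = \det(L_1,\dots,L_{n-1},X_i)$ and the module of signed minors in Proposition~\ref{CramerSymetrie} and Remark~\ref{rmqProportionnalite}), a nonzero common projective zero of $L_1,\dots,L_{n-1}$, so that $L_j(\uxi) = 0$ for $1 \le j \le n-1$. Consequently, for any monomial $X^\alpha$ with $X^\alpha \in \Jex_{1,d}$ (i.e. divisible by some $X_i^{d_i}$ with $i \le n-1$, or by $X_n^e$), the corresponding column $\tfrac{X^\alpha}{X_i^{d_i}}P_i$ of $\bsOmega_d$, which is a multiple of $L_i$ (for $i \le n-1$), vanishes upon evaluation at $\uxi$. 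The plan is then to evaluate the determinant $\det\bsOmega_d(f)$ at $\uxi$: because the columns indexed by $\Jex_{1,d}$ become, after evaluation, multiples of the $L_i(\uxi) = 0$, the structure of the determinant collapses, leaving precisely a multiple of $f(X^\alpha)(\uxi)$ times a power of $\xi_n$ (the surviving factor coming from the $X_n^e$-columns, which involve $P_n$ but not the linear forms).

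The cleanest way to fix the constant and handle the denominator $\Delta_{2,d}$ is to pass to the generic system over $\bbZ$ and use the normalization at the jeu étalon, exactly as in the proofs of Propositions~\ref{mu_d=R_d} and~\ref{mu_delta=omegares}. Since $\Det_d$ and $\evalxi$ are both $\bbZ$-polynomial linear forms on $\bfA[\uX]_d$, it suffices to check they agree up to sign and then pin down the sign by specializing $\uP$ to the jeu étalon $\uX^D$, for which $L_i = X_i$, $\xi_i = \det(X_1,\dots,X_{n-1},X_i) = \delta_{i,n}$ up to sign, so $\uxi$ is (a multiple of) $\varepsilon_n$ and $\evalxi(X^\alpha) = 0$ unless $X^\alpha = X_n^d$. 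On the $\Det_d$ side, $\Det_{d,\uX^D}(f) = \det(S_f)$ by the explicit $\bsOmega_{d,\uX^D}$ formula, and one reads off $\Det_{d,\uX^D} = (X_n^d)^\star$, matching. The assertion $\Delta_{1,d} = \xi_n^d$ then follows by evaluating at $\iota$: $\Delta_{1,d} = \Det_d(\iota) = \evalxi(X_n^d) = \xi_n^d$.

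\textbf{The main obstacle} will be the bookkeeping in the determinant-collapse step: showing rigorously that evaluating $\det\bsOmega_d(f)$ at $\uxi$ produces exactly $\xi_n^{\,d}\,f(X^\alpha)(\uxi)$ (for the single surviving index $X^\alpha = X_n^d \in \calS_{0,d}$) and controlling the compatibility with $\Delta_{2,d}$. I expect this to be handled most safely not by a direct column expansion but by the weight argument: Theorem~\ref{poidsDelta1}\,ii) gives $\poids_{P_i}(\Delta_{1,d}) = \dim\Jex^{(i)}_{1\setminus2,d}$, and for $i = n$ with $d \le \delta$ this dimension is $d$ (the monomials $X_n^{d}$ through the appropriate range), so that $\Delta_{1,d}$ is homogeneous of weight $d$ in $P_n$ and of weight $0$ in the linear forms $P_i$, $i < n$, forcing $\Delta_{1,d} = \xi_n^d$ up to a $\bbZ$-constant fixed by the jeu étalon. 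This weight computation is the linchpin that removes any genuine combinatorial difficulty and makes the identification with $\evalxi$ a formal consequence of the Cramer property of $\Det_d$ (Theorem~\ref{DetdIsCramer}) together with the already-established case-école identity $\Res(\uP) = \xi_n^{\delta+1}$ from Section~\ref{soussectionPremierCasEcole}.
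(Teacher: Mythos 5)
Your point i) is correct and coincides with the paper's. Point ii), however, has a genuine gap at its center: you never establish that $\Det_d$ and $(\evalxi)_{|\bfA[\uX]_d}$ are \emph{proportional}, and neither of the two tools you invoke can deliver that. The ``determinant collapse'' is ill-posed: $\det\bsOmega_{d}(f)$ is already a scalar of $\bfA$, so there is nothing to ``evaluate at $\uxi$''; knowing that the columns indexed by $\Jex_{1,d}$ are multiples of the $L_i$, hence lie in the kernel of the linear form $\evalxi$, says nothing by itself about the value of a determinant computed from coordinates in the monomial basis (and the denominator $\Delta_{2,d}$ is nontrivial here as soon as $d\ge 2$, since $\min_{i\ne j}(d_i+d_j)=2$). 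As for the weight argument, prescribed homogeneity in each $P_i$ does not determine a polynomial up to a constant --- $\xi_1^d$ and $\xi_n^d$ have exactly the same weights --- so Theorem \ref{poidsDelta1} cannot ``force'' $\Delta_{1,d}=\xi_n^d$, let alone an equality of linear forms. Worse, your weight computation is backwards: for $d\le\delta=e-1$ no monomial of degree $d$ is divisible by $X_n^{e}$, so $\dim\Jex^{(n)}_{1\setminus 2,d}=0$, i.e. $\poids_{P_n}(\Delta_{1,d})=0$, while for $i<n$ the monomial basis of $\Jex^{(i)}_{1\setminus2,d}$ is $(X_i^kX_n^{d-k})_{1\le k\le d}$, so $\poids_{L_i}(\Delta_{1,d})=d$. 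With the weights as you state them ($d$ in $P_n$, $0$ in the $L_i$), the conclusion $\Delta_{1,d}=\xi_n^d$ would in fact be impossible, since $\xi_n^d$ has weight $d$ in each $L_i$ and $0$ in $P_n$. (A side error of the same kind: the cas d'\'ecole identity is not $\Res(\uP)=\xi_n^{\delta+1}$, which has weight $0$ in $P_n$ whereas the resultant has weight $\widehat d_n=1$ in $P_n$.)

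The missing idea --- the heart of the paper's proof --- is a profondeur-$2$ argument that produces proportionality \emph{first}. Assuming $\uL$ generic, one has $\Gr(\xi_1,\xi_n)\ge 2$ (point iii) of the proposition on $\evalxi$ in the cas d'\'ecole chapter), hence $\Gr(\xi_1^d,\xi_n^d)\ge 2$ by exponentiation (proposition \ref{PropProfondeur2}), hence $\Gr\big((\evalxi)_{|\bfA[\uX]_d}\big)\ge 2$. The Hilbert--Burch-type proposition that the paper cites as \emph{FormeLineaireGr2} then says that $(\evalxi)_{|\bfA[\uX]_d}$ is an $\bfA$-basis of the module of linear forms on $\bfA[\uX]_d$ vanishing on $\langle\uL\rangle_d$. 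Since $d\le\delta$ gives $\langle\uP\rangle_d=\langle\uL\rangle_d$, and since $\Det_d$ passes to the quotient, i.e. vanishes on $\langle\uP\rangle_d$, one gets $\Det_d=\lambda\,(\evalxi)_{|\bfA[\uX]_d}$ for some $\lambda$. Only at this point do the (correctly computed) weights enter, to show that $\lambda$ has weight $0$ in every $L_i$ and in $P_n$, i.e. $\lambda\in\bfk$; the specialization at the jeu \'etalon $(X_1,\dots,X_{n-1},X_n^e)$ then gives $\lambda=1$. Your final normalization step and the evaluation at $\iota\simeq X_n^d$ are fine, but without the proportionality statement they prove nothing.
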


\begin {proof} \leavevmode

Le premier point ne présente pas de difficulté. Remarquons, puisque $d \le \delta=e-1$,
que:
$$
\Im\Syl_d(\uP) \overset{\rm def.}{=} \langle \uP\rangle_d = \langle \uL\rangle_d
$$
Comme $\calS_{0,d}=\{X_n^d\}$, remarquons également que, dans
l'isomorphisme $\AXdSod \simeq \bfA[\uX]_d$, l'injection canonique
$\iota: \calS_{0,d} \to \bfA[\uX]_d$ s'identifie à $X_n^d$.

\medskip

En ce qui concerne le second point, on peut supposer $\uL$ générique.
Donc, d'après le point iii) de~\ref{evalxiProperties}, on a
$\Gr(\xi_1, \xi_n) \ge 2$. On en déduit $\Gr(\xi_1^d, \xi_n^d) \ge 2$,
a fortiori
$$
\Gr\big((\evalxi)_{|\bfA[\uX]_d}\big) \ge 2
$$
D'après la proposition~\ref{FormeLineaireGr2},
la forme $(\evalxi)_{|\bfA[\uX]_d}$ est donc une $\bfA$-base du sous-module
des formes linéaires sur $\bfA[\uX]_d$ nulles sur $\langle\uP\rangle_d =
\langle\uL\rangle_d$.

\smallskip

Or la forme $\Det_d$ passe au quotient sur $\bfB_d$ i.e. est nulle sur $\langle \uP\rangle_d$.
En conséquence, il existe $\lambda \in \bfA = \bfk[\indetsPi]$, homogène en poids, tel que:
$$
\Det_d = \lambda \times (\evalxi)_{|\bfA[\uX]_d}
$$
Evaluons ces formes en $\iota \simeq X_n^d$:
$$
\Delta_{1,d} = \lambda \times \xi_n^d
$$
Examinons le poids en $L_i$ et $P_n$. A gauche, celui de
$\Delta_{1,d}$ est $\dim \Jex_{1\setminus2,d}^{(i)}$; or, la base
monomiale de $\Jex_{1\setminus2,d}^{(i)}$ est $(X_i^k X_n^{d-k})_{1\le
  k\le d}$.  Donc $\poids_{L_i}(\Delta_{1,d}) = \dim
\Jex_{1\setminus2,d}^{(i)} = d$ tandis que $\poids_{P_n}(\Delta_{1,d})
= \dim \Jex_{1\setminus2,d}^{(n)} = 0$.  A droite, le poids de $\xi_n$
en $L_i$ est 1 et celui en $P_n$ est~0.  Bilan: $\lambda$ est homogène
de poids~$0$ en $L_i$ et $P_n$ i.e. $\lambda \in \bfk$.  La
spécialisation en le jeu étalon $(X_1, \dots, X_{n-1}, X_n^e)$ de
$\Delta_{1,d}$ est 1 et celle de $\xi_n$ est 1. Bilan: $\lambda=1$, ce
qui termine la preuve.
\end {proof}

\medskip

A l'aide de cette proposition, il est facile de construire des suites \emph {régulières}
$\uP = (L_1, \dots, L_{n-1}, P_n)$ de format $(1, \dots, 1,e)$ telles que
$\Delta_{1,d}(\uP) = 0$ pour $1 \le d \le \delta$: il suffit de forcer $\xi_n = 0$.
Prenons par exemple $n=3$, $L_1 = a_1X_1 + a_3X_3$ et $L_2 = b_1X_1 + b_3X_3$. Les
$\xi_i$ sont donnés par
$$
\begin {vmatrix}
a_1 &b_1 &T_1 \\
.   &.   &T_2 \\  
a_3 &b_3 &T_3 \\
\end {vmatrix} =
\xi_1 T_1 + \xi_2 T_2 + \xi_3 T_3,
\qquad\qquad
\xi_1 = \xi_3 = 0, \quad
\xi_2 = -\begin {vmatrix} a_1 &b_1\\ a_3 &b_3 \\ \end{vmatrix}
$$
C'est cette construction que nous avons utilisée dans la section \ref{SectionDeltakdMacaulay}
pour produire des suites régulières~$\uP$ telles que
$\Delta_{1,d}(\uP) = 0$ pour tout $1 \le d \le \delta$.

\subsection{L'inégalité $\Gr\big(\det B_{k,d}^\sigma(\protect\uP), \sigma\in\fS_n\big) \geqslant k$ et
son impact sur les $\Delta_{k,d}(\protect\uP)$}
\label{sectionProfondeurBk}

Pour un système $\uP$ couvrant le jeu étalon généralisé $\pXD$
(cf.~\ref{JeuCouvrantEtalon} pour la définition), un premier objectif
est d'établir l'inégalité de profondeur:
$$
\Gr \big(
\det B_{k,d}^\sigma(\uP), \ 
\sigma \in \fS_n
\big) 
\ \geqslant \ k
$$
Ce résultat est totalement indépendant des autres sections et ne nécessite que la définition
de $B_{k,d}^\sigma$. Nous en déduirons la même inégalité sur les
$\Delta^\sigma_{k,d}(\uP)$ et retrouverons l'égalité $\Delta_{1,d}(\uP) = \Delta_{1,d+1}(\uP)$
pour $d \ge \delta+1$.

\medskip

Un commentaire avant d'en fournir la preuve.
D'après~\ref{JeuCouvrantEtalonRegularite}, la suite $\uP$ est
régulière donc (cf. \ref{ExactitudeKd}) le complexe de $\bfA[\uX]$-modules
$\rmK_{\sbullet}(\uP)$ est exact.  Il en est alors de même de chaque
composante homogène \idest{} de chaque complexe de $\bfA$-modules
libres $\rmK_{\sbullet, d}(\uP)$.
Un des théorèmes fondamentaux de la théorie des résolutions libres
finies, le théorème~\ref{WhatMakesAComplexExact}
(What makes a complex exact?), fournit, pour l'idéal
déterminantiel~$\fD_{k,d}(\uP)$ d'ordre $r_{k,d}$ de la
différentielle~$\partial_{k,d}(\uP)$, l'inégalité
$\Gr\big(\fD_{k,d}(\uP) \big) \geqslant k$.  Or d'après la définition
de $B^\sigma_{k,d}$, chaque $\det B_{k,d}^\sigma(\uP)$ est un mineur
d'ordre $r_{k,d}$ de $\partial_{k,d}(\uP)$ de sorte que:
$$
\langle \det B_{k,d}^\sigma(\uP), \sigma \in \fS_n \rangle \subset \fD_{k,d}(\uP)
$$
En conséquence, l'inégalité visée est plus précise que
celle donnée par le théorème~\ref{WhatMakesAComplexExact}. Il faut cependant
noter que, d'une part, ce dernier théorème est bien plus général et que
d'autre part, l'hypothèse requise ici sur $\uP$ est plus forte
(couvrir le jeu étalon généralisé versus régulière).

Précisons que la preuve du théorème qui vient est indépendante de la
théorie des résolutions libres finies. Elle pourrait donc permettre
la mise en place de la structure multiplicative du complexe
$\rmK_{\sbullet,d}(\uP)$, du moins lorsque $\uP$ couvre le jeu étalon
généralisé.

\medskip

Nous reprenons des notations analogues à celles de la
section~\ref{sectionProfondeurWh} en notant $\kderniers$
l'ensemble des $k-1$ derniers éléments de $\{1..n\}$, de cardinal
$n-(k-1)$, et $\Sigma_k$ n'importe quel sous-ensemble de $\fS_n$
vérifiant:
$$
\hbox {toute partie de $\{1..n\}$ de cardinal $\#\kderniers$ est de la forme $\sigma(\kderniers)$ pour
  un certain $\sigma \in \Sigma_k$}
$$
Nous y avions montré que:
$$
\Gr \big(
\det W_{k,d}^\sigma(\uP), \ 
\sigma \in \Sigma_k
\big) 
\ \geqslant \ k
$$
L'objectif est de montrer le résultat analogue suivant:

\begin{theo}\label{ProfondeurBk}
Soit $\uP$ un système couvrant le jeu étalon généralisé (comme par exemple le système générique).
Alors, pour chaque $d$:
$$
\Gr \big(
\det B_{k,d}^\sigma(\uP), \ 
\sigma \in \Sigma_k
\big) 
\ \geqslant \ k
$$
\end{theo}

\begin{proof} \leavevmode

L'anneau $\bfA$ des coefficients de $\uP$ est un anneau de polynômes
de la forme $\bfA=\bfR[p_1, \ldots, p_n]$ où les~$p_i$ sont les
coefficients du jeu étalon généralisé $\pXD = (p_1X_1^{d_1}, \dots,
p_nX_n^{d_n})$.

\index{idéal!de Veronese}%

\medskip
$\bullet$
Nous traitons d'abord le cas du jeu étalon généralisé $\pXD$ lui-même.
On rappelle que, d'une part, on a noté~$\scrV_r$ l'idéal monomial de
Veronese de $\bfR[p_1, \ldots, p_n]$ engendré par les monômes sans
facteur carré de degré~$r$ et que d'autre part, on a montré en
\ref{ProfondeurVeronese} l'inégalité:
$$
\Gr \big(\scrV_{n-(k-1)}\big) \, \geqslant\, k
$$
Par définition, ce $n-(k-1)$, c'est $\#\kderniers$ et l'ensemble $\{
\sigma(\kderniers),\ \sigma \in \Sigma_k \}$ est celui des parties
$\calP_{\#\kderniers}\big(\{1..n\}\big)$. On peut donc écrire:
$$
\scrV_{\#\kderniers} 
\ = \ 
\langle \, p_{\sigma(\kderniers)},\ \sigma \in \Sigma_k \, \rangle \qquad \text{où} \qquad
p_{\sigma(\kderniers)} \ =\ \prod_{i\in \sigma(\kderniers)} p_i
$$
Soit $\sigma \in \fS_n$.  Puisque la matrice de $B_{k,d}^\sigma(\pXD)$
est diagonale avec des $p_j$ sur la diagonale:
$$
\det B_{k,d}^\sigma(\pXD) 
\ = \ 
\prod_{i \in \sigma(\kderniers)} p_i^{\nu_{i,d}}  
$$
où $\nu_{i,d}$ est la dimension du $\bfA$-module 
de base les $X^\beta e_J \in \Mmac_{k-1,d}$ tels que $\sminDiv(X^\beta) = i$.

En prenant $e$ plus grand que tous les $\nu_{i,d}$, $i$ variant dans $\sigma(\kderniers)$
et $\sigma$ dans $\Sigma_k$, on a donc
$$
\langle \,  p_{\sigma(\kderniers)}^e,\ \sigma \in \Sigma_k \, \rangle
\subset
\langle  \det B_{k,d}^\sigma(\pXD), \  \sigma \in \Sigma_k \rangle 
$$
D'après~\ref{ProfondeurVeronese}, on a $\Gr(p_{\sigma(\kderniers)},\ \sigma \in \Sigma_k) \ge k$,
donc (corollaire~\ref{GrCompatibleInclusionRacine}),
$\Gr(p_{\sigma(\kderniers)}^e,\ \sigma \in \Sigma_k) \ge k$. 
On termine en utilisant $\fb \subset \fa \Rightarrow \Gr(\fa) \ge \Gr(\fb)$.

\medskip
$\bullet$
Traitons maintenant le cas où $\uP$ couvre $\pXD$.  Examinons la
composante homogène dominante de $\det B_{k,d}^\sigma(\uP)$, polynôme
de $\bfR[p_1, \dots, p_n]$.  Un coefficient de $B_{k,d}^\sigma(\uP)$
est de degré $1$ s'il est sur la diagonale, et de degré $0$ sinon (un
coefficient de la diagonale est un certain $p_i$ et les coefficients
hors de la diagonale sont dans $\bfR$).  Par conséquent, dans le
développement de $\det B^\sigma_{k,d}(\uP)$, le terme indexé par une
permutation $\tau$ est, dans $\bfR[p_1, \dots, p_n]$, un monôme de
degré le nombre de points fixes de $\tau$.  La composante homogène
dominante de $\det B_{k,d}^\sigma(\uP)$ est donc réduite au terme
correspondant à $\tau = \Id$.  C'est exactement $\det
B_{k,d}^\sigma(\pXD)$.  On conclut en utilisant le contrôle de la profondeur par
les composantes homogènes dominantes, cf. le théorème~\ref{ControleProfondeur}.
\end{proof}

\begin {coro}
Soit $\uP$ un système couvrant le jeu étalon généralisé (comme par exemple le système générique).
Alors, pour tout $k \ge 1$ et tout $d \in \bbN$:
$$
\Gr \big(\det \Delta_{k,d}^\sigma(\uP),\ \sigma \in \Sigma_k\big) 
\ \geqslant \ k
$$
En particulier, $\Gr \big(\det \Delta_{2,d}^\sigma(\uP),\ \sigma \in \Sigma_2\big) \ \geqslant \ 2$.
\end {coro}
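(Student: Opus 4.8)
L'id\'ee est de d\'eduire ce corollaire directement du th\'eor\`eme~\ref{ProfondeurBk} en transf\'erant l'in\'egalit\'e de profondeur portant sur les d\'eterminants $\det B_{k,d}^\sigma(\uP)$ vers les scalaires $\Delta_{k,d}^\sigma(\uP)$ gr\^ace \`a une simple relation de divisibilit\'e. Puisque $\uP$ couvre le jeu \'etalon g\'en\'eralis\'e, elle est r\'eguli\`ere (cf.~\ref{JeuCouvrantEtalonRegularite}), de sorte que le complexe $\rmK_{\sbullet,d}(\uP)$ est exact, a fortiori de Cayley, et que les scalaires $\Delta_{k,d}^\sigma(\uP)$ sont bien d\'efinis pour tout $\sigma \in \fS_n$ et tout $d$. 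Ils v\'erifient, comme dans le cas non tordu, la factorisation
$$
\det B_{k,d}^\sigma(\uP) = \Delta_{k,d}^\sigma(\uP)\,\Delta_{k+1,d}^\sigma(\uP),
\qquad
\Delta_{n+1,d}^\sigma(\uP) = 1.
$$

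Le point-cl\'e est l'inclusion d'id\'eaux qui en r\'esulte. L'\'egalit\'e ci-dessus montre que $\Delta_{k,d}^\sigma(\uP)$ divise $\det B_{k,d}^\sigma(\uP)$, d'o\`u l'appartenance $\det B_{k,d}^\sigma(\uP) \in \langle \Delta_{k,d}^\sigma(\uP) \rangle$ pour chaque $\sigma$. En regroupant sur $\sigma \in \Sigma_k$, on obtient
$$
\big\langle \det B_{k,d}^\sigma(\uP),\ \sigma \in \Sigma_k \big\rangle
\ \subseteq\
\big\langle \Delta_{k,d}^\sigma(\uP),\ \sigma \in \Sigma_k \big\rangle.
$$
Il convient d'insister sur le sens de cette inclusion, qui peut para\^itre \`a premi\`ere vue contre-intuitif : la divisibilit\'e $\Delta_{k,d}^\sigma \mid \det B_{k,d}^\sigma$ place les $\det B$ \emph{dans} l'id\'eal engendr\'e par les $\Delta$, de sorte que c'est l'id\'eal des $\Delta_{k,d}^\sigma$ qui est le plus gros des deux.

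Pour conclure, j'invoquerais le th\'eor\`eme~\ref{ProfondeurBk}, qui fournit $\Gr\big(\det B_{k,d}^\sigma(\uP),\ \sigma \in \Sigma_k\big) \geqslant k$, ainsi que la croissance de la profondeur pour l'inclusion d'id\'eaux ($\fb \supseteq \fa \Rightarrow \Gr(\fb) \geqslant \Gr(\fa)$, cf.~\ref{GrCompatibleInclusionRacine}, et~\ref{Gr2ProprietesElementaires} dans le cas particulier $k=2$). Appliqu\'ee \`a l'inclusion ci-dessus, elle donne
$$
\Gr\big(\Delta_{k,d}^\sigma(\uP),\ \sigma \in \Sigma_k\big)
\ \geqslant\
\Gr\big(\det B_{k,d}^\sigma(\uP),\ \sigma \in \Sigma_k\big)
\ \geqslant\ k,
$$
soit l'\'enonc\'e ; le cas $k=2$ signal\'e \`a part n'en est que l'instance $\Sigma_2$. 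Il n'y a pas de v\'eritable obstacle dans cet argument : tout le contenu r\'eside dans le th\'eor\`eme~\ref{ProfondeurBk} et dans la relation de factorisation. Le seul point demandant un peu de soin est de s'assurer que les $\Delta_{k,d}^\sigma(\uP)$ sont effectivement d\'efinis --- ce que garantit ici la r\'egularit\'e issue de l'hypoth\`ese de couverture --- et de maintenir l'inclusion d'id\'eaux orient\'ee dans le bon sens.
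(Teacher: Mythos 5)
Votre preuve est correcte et suit essentiellement le m\^eme chemin que celle du texte : la factorisation $\det B^\sigma_{k,d} = \Delta^\sigma_{k,d}\,\Delta^\sigma_{k+1,d}$ donne l'inclusion $\langle \det B_{k,d}^\sigma,\ \sigma \in \Sigma_k\rangle \subseteq \langle \Delta_{k,d}^\sigma,\ \sigma \in \Sigma_k\rangle$, puis on conclut par le th\'eor\`eme~\ref{ProfondeurBk} et la croissance de $\Gr$ pour l'inclusion d'id\'eaux. Les pr\'ecisions que vous ajoutez (bonne d\'efinition des $\Delta^\sigma_{k,d}$ via la r\'egularit\'e, sens de l'inclusion) sont justes mais ne changent rien \`a l'argument.
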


\begin {proof}
Par définition, en omettant $\uP$ des notations:
$$
\det B^\sigma_{k,d} = \Delta^\sigma_{k,d} \Delta^\sigma_{k+1,d}
\qquad \text{d'où} \qquad
\fa := \big\langle\det B_{k,d}^\sigma, \ \sigma \in \Sigma_k\big\rangle
\subset
\fb := \big\langle \Delta_{k,d}^\sigma, \ \sigma \in \Sigma_k\big\rangle
$$
On termine en utilisant $\Gr(\fa) \geqslant k$ (théorème précédent) et 
$\fa \subset \fb \Rightarrow \Gr(\fb) \ge \Gr(\fa)$.
\end {proof}

\subsubsection*{Une \og troisième\fg{} preuve de l'égalité $\Delta_{1,d}=\Delta_{1,d+1}$ pour $d\ge\delta+1$}

Pour $d \ge\delta+1$, nous montrons maintenant l'égalité $\Delta_{1,d}
= \Delta_{1,d+1}$ \emph {de manière autonome}.  Nous n'utilisons pas
le fait que $\Delta_{1,d}$ est l'invariant de MacRae de $\bfB_d$, ni,
a fortiori, le résultat~\ref{MacRaeEqualities} affirmant que $\bfB_d$,
$\bfB_{d+1}$ ont même invariant de MacRae.
Nous allons utiliser l'égalité $\Delta^\sigma_{1,d} = \Delta_{1,d}$,
valide pour tout $d \in \bbN$ et tout $\sigma \in \fS_n$ (cf.~\ref{EgaliteDelta1dDelta1dsigma}
qui pointe en fait sur le lemme de rigidité~\ref{RigidityLemma}).
Il s'agit donc d'une
justification interne à la machinerie ``décompositions de Macaulay''.
Nous avons mis un s à décomposition car nous allons utiliser les
décompositions de Macaulay tordues par $\sigma \in \fS_n$.
Nous pouvons considérer que l'égalité $\Delta_{1,d} = \Delta_{1,d+1}$
repose d'une part sur
$$
\det B^\sigma_{k,d} = \Delta^\sigma_{k,d} \Delta^\sigma_{k+1,d}
\quad \text{pour $k=1,2$}
\qquad\qquad
\Gr \big(\det \Delta_{2,d}^\sigma,\ \sigma \in \Sigma_2\big) \ \geqslant \ 2
\text{ (en générique)}
$$
et d'autre part sur le contrôle du poids de $\Delta_{1,d}$ en $P_i$.

\smallskip

Notre justification diffère de celle de Demazure dans son
article \textit{Une définition constructive du
  résultant}~\cite{Demazure1}. Ce dernier utilise une récurrence sur $n$
(il ne dispose pas des décompositions de Macaulay tordues par
une permutation).
Certes, cette récurrence n'est pas notre tasse de thé. Cependant,
c'est un \emph {pur calcul} tandis que notre approche \emph {ici} utilise
un \emph {raisonnement} (profondeur 2, contrôle du poids) et 
présente donc un caractère indirect.

Néanmoins, l'utilisation de l'égalité $\Delta_{2,d} = \det W_{2,d}$
(cf \ref{sousSectionMacaulayRecurrence} pour $d \ge \delta$ et le
théorème~\ref{DeltakdProdBinomialDetWhd} pour tout $d$) change la
vision du scalaire $\Delta_{1,d}$ que l'on peut exprimer alors sous la
forme $\Delta_{1,d} = \det W_{1,d} / \det W_{2,d}$. Auquel cas, la
section \ref{sectionEgaliteStabiliteMacRae} (Stabilité de l'invariant
de MacRae) fournit une autre justification de type ``pur calcul''
(sans récurrence sur $n$) de l'égalité $\Delta_{1,d} = \Delta_{1,d+1}$
pour $d \ge \delta+1$.

\begin {theo}
\label{StabiliteDelta1d}  
Pour $d \ge\delta+1$, on a $\Delta_{1,d} = \Delta_{1,d+1}$.
\end {theo}

\begin {proof}
Nous pouvons supposer $\uP$ générique.  

$\bullet$
Montrons d'abord que $\Delta_{1,d} \mid \Delta_{1,d+1}$. Nous disposons de la relation de divisibilité
prouvée en~\ref{DetW1dDiviseDetW1dplus1}:
$$
\det W^\sigma_{1,d}  \mid  \det W^\sigma_{1,d+1} 
$$
Ecrivons alors (la dernière égalité utilisant $\Delta_{1,d}=\Delta^\sigma_{1,d}$ vue en~\ref{EgaliteDelta1dDelta1dsigma}):
$$
\det W^\sigma_{1,d} = \det B^\sigma_{1,d} = \Delta^\sigma_{1,d} \Delta^\sigma_{2,d} =
\Delta_{1,d} \Delta^\sigma_{2,d}
$$
et l'égalité analogue pour $d+1$. La relation de divisibilité initiale s'écrit alors:
$$
\Delta_{1,d} \Delta^\sigma_{2,d}  \mid \Delta_{1,d+1} \Delta^\sigma_{2,d+1}
\qquad \text{a fortiori (sans s'inquiéter de faire relâche!)} \qquad
\Delta_{1,d}  \mid \Delta_{1,d+1} \Delta^\sigma_{2,d+1}
$$
Comme $\Gr\big(\Delta^\sigma_{2,d+1},\ \sigma\in \fS_n) \ge 2$, on en déduit
$\Delta_{1,d}  \mid \Delta_{1,d+1}$.

\medskip
$\bullet$
Ecrivons $\Delta_{1,d+1} = \lambda\,\Delta_{1,d}$ où $\lambda \in \bfA = \bfk[\indetsPi]$ est
homogène en chaque~$P_i$. Comme $\poids_{P_i}(\Delta_{1,d}) = \poids_{P_i}(\Delta_{1,d+1}) = \widehat d_i$
d'après le théorème~\ref{poidsDelta1}, $\lambda$ est en fait dans $\bfk$. En spécialisant~$\uP$
en le jeu étalon, on obtient $\lambda = 1$.
\end {proof}

\smallskip

Suite à l'égalité $\det W^\sigma_{1,d} = \Delta_{1,d} \Delta^\sigma_{2,d}$,
nous pouvons énoncer, lorsque $\uP$ couvre le jeu étalon généralisé,
que $\Delta_{1,d}$ est le pgcd fort des $\big(\det W^\sigma_{1,d}\big)_{\sigma \in \Sigma_2}$.
Cela permet une légère variante dans la justification 
de $\Delta_{1,d} \mid \Delta_{1,d+1}$: considérer les relations de divisibilité
$\det W^\sigma_{1,d} \mid \det W^\sigma_{1,d+1}$ et \og en prendre le pgcd fort\fg. 

\medskip

Le théorème précédent permet de retrouver le résultat de~\ref{MacRaeEqualities} en degré $\ge \delta+1$.

\begin {coro}
Lorsque $\uP$ est régulière, les $\bfA$-modules $(\bfB_d)_{d \ge \delta+1} $ de MacRae
de rang 0 ont même invariant de MacRae.
\end {coro}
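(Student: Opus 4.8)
Le corollaire final affirme que les $\bfA$-modules $(\bfB_d)_{d \ge \delta+1}$, supposés de MacRae de rang $0$ sous l'hypothèse $\uP$ régulière, possèdent tous le même invariant de MacRae. Le plan consiste à relier cet invariant de MacRae au scalaire $\Delta_{1,d}(\uP)$ puis à invoquer le théorème de stabilité~\ref{StabiliteDelta1d} qui vient d'être établi.

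D'abord je rappellerais que, $\uP$ étant régulière, le complexe $\rmK_{\sbullet,d}(\uP)$ est exact et résout librement le $\bfA$-module $\bfB_d$. D'après le résultat de structure multiplicative évoqué en~\ref{SectionDeltakdMacaulay} (cas $k=1$, $d \ge \delta+1$, où $\chi_d = 0$), le scalaire $\Delta_{1,d}(\uP)$ est précisément un générateur de l'invariant de MacRae de $\bfB_d$. C'est le pont essentiel: l'objet intrinsèque ``invariant de MacRae de $\bfB_d$'' (défini à un inversible près comme sous-module libre de rang $1$ de $\bfA$, soit $\MacRaeVect(\bfB_d) = \bfA\,\Delta_{1,d}$) s'identifie au scalaire $\Delta_{1,d}(\uP)$ dont on sait déjà qu'il est régulier pour $\uP$ générique.

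Ensuite j'appliquerais directement le théorème~\ref{StabiliteDelta1d}: pour $d \ge \delta+1$, on a l'égalité $\Delta_{1,d}(\uP) = \Delta_{1,d+1}(\uP)$. Par récurrence immédiate sur $d$, on obtient que tous les $\Delta_{1,d}(\uP)$ pour $d \ge \delta+1$ sont égaux à une valeur commune. Comme chacun engendre l'invariant de MacRae de $\bfB_d$, les idéaux de MacRae $\MacRaeVect(\bfB_d) = \bfA\,\Delta_{1,d}$ coïncident tous, ce qui est exactement l'énoncé du corollaire.

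La seule subtilité tient à la distinction entre ``générateur défini à un inversible près'' et ``égalité sur le nez''. Le théorème~\ref{StabiliteDelta1d} est démontré en terrain générique ($\bfA = \bfk[\indetsPi]$), où les seuls inversibles sont dans $\bfk$, et la normalisation via la spécialisation au jeu étalon fixe l'ambiguïté de signe; par spécialisation l'identité algébrique $\Delta_{1,d}(\uP) = \Delta_{1,d+1}(\uP)$ reste valide pour n'importe quel système $\uP$ régulier. L'invariant de MacRae étant de toute façon défini seulement à un inversible près, l'égalité des idéaux $\bfA\,\Delta_{1,d}$ suffit amplement. Il n'y a donc pas d'obstacle majeur ici: la difficulté a été entièrement absorbée par la preuve du théorème~\ref{StabiliteDelta1d}, le corollaire n'étant qu'une traduction de l'égalité scalaire en termes d'invariants de MacRae. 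Je prendrais néanmoins soin de signaler que la régularité de $\uP$ est indispensable pour garantir l'exactitude de $\rmK_{\sbullet,d}(\uP)$ et donc l'existence même de l'invariant de MacRae de $\bfB_d$.
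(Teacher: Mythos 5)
Votre démonstration est correcte et suit exactement la voie du papier : identifier $\Delta_{1,d}(\uP)$ comme générateur de MacRae de $\bfB_d$ pour $d\ge\delta+1$ (cas $\chi_d=0$ de l'interprétation structurelle, page~\pageref{Delta1dMacRaeBd}), puis appliquer le théorème~\ref{StabiliteDelta1d} et conclure par spécialisation du cas générique. Les précautions que vous signalez (invariant défini à un inversible près, régularité de $\uP$ indispensable à l'exactitude de $\rmK_{\sbullet,d}(\uP)$) sont précisément celles que le texte absorbe implicitement.
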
  

\cleardoublepage

\section{Relations binomiales entre les familles $\big(\det B_{k,d}(\protect\uP)\big)_k$ et
  $\big(\det W_{h,d}(\protect\uP)\big)_h$}
\label{ChapBW}

Soit $D = (d_1, \dots, d_n)$ un format de degrés. 
Un des objectifs est de décomposer le module de Macaulay~$\Mmac_{k-1}$ pour $k \ge 1$
en sous-modules monomiaux isomorphes à l'idéal monomial $\Jex_h$ pour $h \ge k$. Précisément,
on va obtenir l'isomorphisme de $\bfA$-modules :
$$
\Mmac_{k-1} \ \simeq \ 
\bigoplus\limits_{h = k}^n  \ \Jex_h^{\oplus e_{k,h}}
\qquad 
\text{où \ $e_{k,h} = \binom{h-2}{h-k}$}\qquad\qquad
\text{\scriptsize égal à $\scriptstyle\binom{h-2}{k-2}$ sauf pour $k=1$}
$$
On souhaite également et surtout, pour tout système $\uP$ de format $D$, 
que l'induit-projeté de $B_k(\uP)$ sur chaque exemplaire de $\Jex_h$
soit quasi-conjugué à l'endomorphisme $W_h(\uP)$ de $\Jex_h$. 

\index{quasi-conjugués (endomorphismes)}%

\bigskip

Le cas $k=1$, qui se résume à $\Mmac_0 = \Jex_1$, est de peu
d'intérêt.  L'exemple qui vient ensuite pour $k=2$ nous semble simple
et pertinent: c'est celui du sous-module de Macaulay $\Mmac_1$. Il
s'agit en un premier temps d'en décrire une décomposition
$\bfA$-monomiale (les modules qui interviennent sont donc en
particulier des $\bfA$-modules $\bbN^n$-gradués)
$$
\Mmac_1 \simeq \Jex_2 \oplus \Jex_3 \oplus \cdots \oplus \Jex_n
\leqno (\star)
$$
Les objets qui interviennent pour l'instant ne dépendent que du format de 
degrés $D = (d_1, \ldots, d_n)$. 
En un deuxième temps, nous ferons intervenir un système homogène quelconque $\uP =
(P_1, \ldots, P_n)$ de format~$D$; en désignant très provisoirement
$B_{2}^{(h)}$ l'induit-projeté de $B_{2}$ sur
l'exemplaire de~$\Jex_h$ dans $\Mmac_1$ pour
$2 \leqslant h \leqslant n$, nous montrerons alors que
les endomorphismes $B_{2}^{(h)}$ et $W_{h}$ sont quasi-conjugués et en particulier 
$\det B_{2,d}^{(h)} = \det W_{h,d}$. Nous serons alors en mesure de prouver:
$$
\det B_{2,d} =  \det W_{2,d}\,\det W_{3,d}\, \cdots\, \det W_{n,d} 
$$

\medskip

Commençons par décrire~$(\star)$ en considérant les injections monomiales
$\Jex_h \to \rmK_1$ pour $h \geqslant 2$ sur le modèle :
$$
\begin {array}{l}
\Jex_2 \to \rmK_1, \quad X^\alpha \mapsto \dfrac {X^\alpha}{X_j^{d_j}}\, e_j
\hbox { où $j$ est le dernier élément de $\DivSeq(X^\alpha)$}
\\ [0.5cm]
\Jex_3 \to \rmK_1, \quad X^\alpha \mapsto \dfrac {X^\alpha}{X_j^{d_j}}\, e_j
\hbox { où $j$ est l'avant-dernier élément de $\DivSeq(X^\alpha)$}
\\
\end {array}
$$
Le lecteur doit comprendre que l'injection $\Jex_h \to \rmK_1$ est
donnée par $X^\alpha \mapsto \frac {X^\alpha}{X_j^{d_j}}\, e_j$ où $j$
est, dans $\DivSeq(X^\alpha)$, le \hbox{$(h\moins 1)$-ième} élément
\textit{en partant du plus grand}.
Prenons $h=n$ par exemple. Soit $X^\alpha \in \Jex_n$. Dans
$\DivSeq(X^\alpha) = \{1..n\}$, le $(n-1)$-ième élément en partant du
plus grand est $j = 2$ et l'injection $\Jex_n \to \rmK_1$ est
$X^\alpha \mapsto \frac {X^\alpha}{X_2^{d_2}}\, e_2$.

\smallskip

La première propriété, facile à constater, est que l'image de chaque
injection $\Jex_h \to \rmK_1$ est contenue dans~$\Mmac_1$. En effet,
en notant $i$ le $h$-ième élément (en partant du plus grand) de
$\DivSeq(X^\alpha)$, 
on a $i < j$ et le monôme $X^\beta = X^\alpha/X_j^{d_j}$ est divisible par $X_i^{d_i}$. 
D'où $X^\beta e_j \in \Mmac_1$.

\smallskip

La deuxième propriété est qu'il n'y a pas de \og collisions dans les images \fg{}. 
En effet, si $X^\beta e_j  \in \Mmac_1$ s'écrit 
$X^\alpha/X_j^{d_j} e_j$, 
il est clair que l'on retrouve $X^\alpha$ par l'égalité $X^\alpha = X_j^{d_j} X^\beta$ 
et on retrouve également~$h$ grâce à la position de $j$ dans $\DivSeq(X^\alpha)$. 
De manière précise, si $X^\beta e_j$ est un monôme extérieur de $\Mmac_1$, 
il est dans l'image d'une et d'une seule injection $\Jex_h \to \Mmac_1$, 
à savoir celle correspondant à $h$
obtenu en ajoutant~$1$ à la position de $j$ dans $\DivSeq(X_j^{d_j} X^\beta)$ ; 
\og position \fg{} est toujours à prendre dans le sens \textit{en partant du plus grand}.
On verra que les notations générales conduisent à écrire la décomposition $(\star)$ sous la forme :
$$
\Mmac_1 \ =\  
\Mmac_1^{\{1\}} \oplus \Mmac_1^{\{2\}} \oplus \cdots \oplus \Mmac_1^{\{n-1\}}
$$
$R= \{1\}$ étant un indicateur du dernier élément, $R = \{2\}$ de l'avant-dernier \dots{}
Se termine ainsi un aperçu de la description de $(\star)$. 

\bigskip

De manière analogue, on dispose d'une décomposition monomiale :
$$
\Mmac_2 \ \simeq \ 
\Jex_3 \oplus \Jex_4^{\oplus 2} \oplus 
\Jex_5^{\oplus 3} \oplus \cdots \oplus 
\Jex_n^{\oplus (n-2)}
$$
Prenons $n =4$ et évoquons l'injection $\Jex_3 \to \Mmac_2$ et les deux injections de $\Jex_4 \to \Mmac_2$.
Elles sont toutes de la même forme 
$$
X^\alpha \ \longmapsto \ \dfrac{X^\alpha}{X_i^{d_i}X_j^{d_j}} \ e_i \wedge e_j 
\qquad 
\hbox{avec $i <j$}
$$
où il convient de préciser les deux éléments $i$ et $j$ de $\DivSeq(X^\alpha)$, 
ou encore leur position $R$ dans la suite renversée $\DivG(X^\alpha)$.
$$
\begin{array}{l|l|l}
\hbox{L'injection $\Jex_3 \to \Mmac_2$} & 
\hbox{$i =$ avant-dernier et $j =$ dernier} & R = \{2,1\} 
\\ [0.2cm] 
\hbox{La première injection $\Jex_4 \to \Mmac_2$} & 
\hbox{$i =$ avant-avant-dernier et $j =$ dernier} & R = \{3,1\}
\\ [0.2cm] 
\hbox{La deuxième injection $\Jex_4 \to \Mmac_2$} & 
\hbox{$i =$ avant-avant-dernier et $j =$ avant-dernier} & R = \{3,2\}
\end{array}
$$
Plus généralement, pour décrire notre décomposition monomiale de $\Mmac_{k'}$
pour $k' \geqslant 1$, 
il est impératif d'être plus formel et d'introduire quelques notions élémentaires
relatives aux ensembles finis totalement ordonnés, à savoir les primitives $\Ind$ et $\Elt$, 
cf.~\ref{soussectionEltInd}.

\medskip

Il est important de signaler que, dans la suite de ce chapitre, la primitive $\minDiv$
intervient de manière presque invisible: les objets principaux en scène sont
$B_{k,d}(\uP)$ et $W_{h,d}(\uP)$, pilotés de manière implicite par $\minDiv$.

\medskip

\centerline{\fbox{\parbox{0.95\linewidth}
{
En ce qui concerne les notations, nous avons choisi l'indice $h$ pour le monde
excédentaire: $\Jex_h$, $\Jex_{h,d}$, $W_h$, $W_{h,d}$ et ses dérivés. Et
nous avons réservé l'indice $k$
pour les endomorphismes liés à la décomposition de Macaulay tels que
$B_k$, $B_{k,d}$ et leurs induits. Il faut compter avec le petit désagrément
résidant dans le fait que $B_k$ est un endomorphisme du sous-module
de Macaulay~$\Mmac_{k-1}$. 
Pour contrer ce pataquès $k \leftrightarrow k-1$, nous utiliserons
parfois le nom $\Mmac_{k'}$ quand ce dernier module est étudié pour
lui-même sans référence à $B_\sbullet$.
}
}}

\newpage


\subsubsection{Tableau résumé $B_k \leftrightsquigarrow W_h$, relations binomiales} 

Voici une présentation des divers ingrédients que nous allons mettre en place
dans l'objectif de montrer la relation binomiale encadrée (en bas). Ce n'est pas l'objet
dans ce résumé de définir les notations utilisées: cela sera
réalisé au fur et à mesure. On peut cependant préciser
que $J,R$ sont des parties de $\{1,\dots,n\}$ de même cardinal $k-1$.
Via cette page à laquelle le lecteur pourra se reporter en avançant dans sa lecture,
nous espérons ainsi offrir une vue d'ensemble.
$$
\psi :\ 
\begin{array}[t]{rcl}
\Mmac_{k-1} & \longrightarrow & \Jex_k \\[0.4em]
X^\beta e_J & \longmapsto & X^\beta X^{D(J)}
\qquad
\text{où \ $X^{D(J)} \,= \, \prod_{j \in J} X_j^{d_j}$}
\end{array}
$$
$$
\begin{array}{l|r}
k \ =\  1+\#J \ =\  1+\#R & h \ =\ 1+\max(R) \\  [0.6cm]
\Mmac_{k-1} =  \bigoplus\limits_R \Mmac^R  = 
\bigoplus\limits_R \bigoplus\limits_J \Mmac^R[J] & 
\bigoplus\limits_{h \geqslant k}  \Jex_h^{\oplus e_{k,h}} = 
\bigoplus\limits_R \Jex^R = \bigoplus\limits_R \bigoplus\limits_J \Jex^R[J] \\ [0.6cm]
X^\beta e_J \in \Mmac^R \ \Leftrightarrow \ \Ind\big( J \,; \DivSeq(\XbJ) \big) = R 
&\Jex^R\overset{\rm def.}{=} \Jex_{1+\max(R)},\quad W^R \overset{\rm def.}{=}W_{1+\max(R)}
\\ [0.6cm]
X^\beta e_{I} \in \Mmac^R[J] \ \Leftrightarrow \ I = J \text{ et } X^\beta e_I \in \Mmac^R
& X^\alpha \in \Jex^R[J] \ \Leftrightarrow \ \Elt\big( R \,; \DivSeq(X^\alpha) \big) = J \\ [0.6cm]
\hbox{$B_k(\uP)$ triangulaire pour $\Mmac_{k-1} = \bigoplus\limits_R \Mmac^R$ 
et $\Mmac_{k-1} = \bigoplus\limits_J \Mmac[J]$} & \\ [0.6cm]
\hbox{$B^R(\uP)$ triangulaire pour $\Mmac^R = \bigoplus\limits_J \Mmac^R[J]$} & 
\hbox{$W^R(\uP)$ triangulaire pour $\Jex^R = \bigoplus\limits_J \Jex^R[J]$}
\end{array}
$$

\medskip
\noindent
Au niveau $R$, entre $\Mmac^R$ et $\Jex^R = \Jex_h$,
la restriction $\psi^R$ du pont $\psi$ établit un isomorphisme:
$$
\psi^R : \begin{array}[t]{rcl}
\Mmac^R & \overset{\simeq}{\longrightarrow} & \Jex^R \\ [0.2cm]
X^\beta e_J & \longmapsto & X^\beta X^{D(J)}
\end{array}
\quad \text{d'inverse} \quad 
(\psi^R)^{-1} : \begin{array}[t]{rcl}
\Jex^R & \overset{\simeq}{\longrightarrow} & \Mmac^R \\ [0.2cm]
X^\alpha & \longmapsto & \dfrac{X^\alpha}{X^{D(J)}}e_J 
\quad \text{où $J = \Elt\big(R \,; \DivSeq(X^\alpha) \big)$}
\end{array}
$$
Tenons maintenant compte du système $\uP$. Au niveau $(R,J)$-induit-projeté, un rectangle commutatif:
$$
\def \Binduitproj{\begin{array}{c}
     \hbox {$B^R[J]$ induit-proj.} \\ \hbox {de $B_k(\uP)$} \end {array}}
\def \Winduitproj{\begin{array}{c}
     \hbox {$W^R[J]$ induit-proj.} \\ \hbox {de $W_h(\uP)$} \end {array}}
\xymatrix @R = 2cm @C = 5cm @M=0.4pc{
\Mmac^R[J] \ar[d]|-{\Binduitproj} \ar[r]^{\psi^R_J}_{\simeq}
& \Jex^R[J] \ar[d]|-{\Winduitproj}
\\
\Mmac^R[J] \ar[r]^{\psi^R_J}_{\simeq} & \Jex^R[J]
\\
}
$$
Dans la suite, on omet $\uP$ des notations. Du côté des déterminants, à $d$ fixé :
$$
\det B^R_{k,d}[J] = \det W^R_{h,d}[J]
$$
En réalisant le produit sur les $J$
$$
\det B^R_{k,d} = \det W^R_{h,d}
$$
En réalisant le produit sur les $J$ puis sur les $R$
$$
\det B_{k,d} \ =\  \prod\limits_R \prod\limits_J \det B_{k,d}^R[J] \ =\
\prod\limits_R \prod\limits_J \det W_{h,d}^R[J]  \ =\  
\prod\limits_R \det W_{h,d}^R \ =\  
\prod\limits_{h \geqslant k} \big(\det W_{h,d}\big)^{e_{k,h}}
$$
Cette dernière égalité mérite d'être encadrée:
$$
\boxed{    
\det B_{k,d} \ = \  
\prod_{h\geqslant k} \big(\det W_{h,d}\big)^{e_{k,h}}
\qquad \hbox{\ où \  }
e_{k,h} = \binom{h-2}{h-k}
}
$$

\index{relations binomiales}%


%
%
\label{NOTA15-ekh}%
%
%

\subsection{Le pont $\psi$ du complexe de Koszul $\rmK_\sbullet(\protect\uX^D)$ vers le degré homologique $0$}

Ce pont $\psi : \rmK_\sbullet(\uX^D) \to \rmK_0 = \bfA[\uX]$ est le $\bfA[\uX]$-morphisme gradué qui réalise
$$
\psi : X^\beta e_J \longmapsto \XbJ \qquad 
\text{où \ $X^{D(J)} \,= \, \prod_{j \in J} X_j^{d_j}$}
$$
Le lecteur comprendra l'utilisation du mot ``pont'' une fois pris connaissance du lemme~\ref{psiRiso}.

\begin{prop}
Le pont $\psi$ est un $\bfA[\uX]$-morphisme gradué qui envoie le
sous-module de Macaulay~$\Mmac_{k-1}$ sur l'idéal excédentaire~$\Jex_k$
$$
\psi :\ 
\begin{array}[t]{rcl}
\Mmac_{k-1} & \longrightarrow & \Jex_k \\[0.4em]
X^\beta e_J & \longmapsto & \XbJ
\end{array}
$$
\end{prop}

\label {NOTA15-Pontpsi}%
%
%

\begin{proof}
Montrons que $X^\alpha := \XbJ$ est bien un monôme de $\Jex_k$.
Par construction, $\DivSeq(X^\alpha)$ contient~$J$ de cardinal~\mbox{$k-1$}
ainsi que l'entier $i := \minDiv(X^\beta)$ n'appartenant pas à $J$ 
(on a $i < \min J$ car $X^\beta e_J$ est dans le module de Macaulay). 
Ainsi $\#\DivSeq(X^\alpha) \geqslant (k-1) + 1 = k$.

Réciproquement, soit $X^\alpha \in \Jex_k$. Notons $J$ l'ensemble de
ses $k-1$ derniers indices de divisibilité et posons $X^\beta = X^\alpha/X^{D(J)}$.
Alors $X^\beta e_J \in \Mmac_{k-1}$ et son image par $\psi$ est $X^\alpha$.
\end{proof}

Dans la suite, on va restreindre $\psi$ à un sous-$\bfA$-module
monomial $\calM$ de $\Mmac_{k'}$ qui n'est pas nécessairement un
$\bfA[\uX]$-module ; ainsi $\psi$ agit comme morphisme de
$\bfA$-modules.

\subsection{La $J$-décomposition et ses conséquences}

Le morphisme $\psi$ n'est pas injectif. 
Cependant, il le devient si, pour un $J \subset\{1..n\}$ fixé, on le restreint au sous-module de base
les monômes extérieurs \og ayant même $e_J$\fg{}, 
ce qui nous conduit à la définition suivante.
Dans cette section, $\calM$ désigne un sous-$\bfA$-module monomial de $\Mmac_{k'}$. 

\begin{defn} \label{DefMJ}
Pour $J \subset \{1..n\}$, notons $\calM[J]$ le sous-module monomial de $\calM$ 
de base les monômes extérieurs $X^\beta e_J$ appartenant à $\calM$.
On dispose ainsi d'une décomposition
$$
\calM = \bigoplus\limits_{\#J=k'} \calM[J] \qquad
\text{appelée {\rm \Jdecomposition}}
$$
En prenant $\calM = \Mmac_{k'}$ tout entier, on peut se permettre,
pour une partie $J \subset \{1..n\}$ de cardinal $k'$, 
de noter $\Mmac[J]$ au lieu de $\Mmac_{k'}[J]$ puisque
l'information du cardinal $k'$ de $J$ est ancrée dans $J$.
En bref, la construction $\Mmac[J]$ a le sens de $\Mmac_{\#J}[J]$.
\end{defn}

\label {NOTA15-MJ}%
\label {NOTA15-MmacJ}%

\medskip

Sur les parties de $\{1..n\}$ de même cardinal, on note $J
\preccurlyeq J'$ la relation d'ordre terme à terme lorsque l'on
ordonne ces parties de manière croissante (cf. la
section~\ref{sousSectionBkdJeuCirculaire} et la proposition
\ref{BkdQTriangSup} dont on reprend ci-dessous des éléments).
On rappelle également que dans cette section~\ref{sousSectionBkdJeuCirculaire},
nous avons défini l'endomorphisme~$B_\calM$ d'un sous-module monomial
$\calM$ de $\Mmac_{k'}$.

\label {NOTA15-BPcalM}%
%
%

\begin{prop} \label{JDecompositionEndo}
Soit $J \subset \{1..n\}$.

\begin{enumerate}[\rm i)]
\item
L'application $\psi_J$, restriction du pont $\psi$ à $\calM[J]$, est injective,
d'application réciproque:
$$
\psi_J^{-1} : \begin{array}[t]{rcl}
\psi(\calM[J]) & \longrightarrow & \calM[J] \\ [0.2cm]
X^\alpha & \longmapsto & \dfrac{X^\alpha}{X^{D(J)}}e_J
\qquad \text{où } X^{D(J)} \,= \, \prod_{j\in J} X_j^{d_j}
\end{array}
$$

\item
L'endomorphisme $B_\calM$ est triangulaire relativement à la \Jdecomposition :
$$
B_{\calM}\big( \calM[J] \big) 
\ \subset \ 
\bigoplus_{J' \preccurlyeq J} \calM[J']
$$ 

\item 
L'endomorphisme $B_{\calM[J]}$ est conjugué par $\psi_J$ à l'endomorphisme $W_{\psi(\calM[J])}$.
\end{enumerate}
\end{prop}

\begin{proof} \leavevmode

i) Il s'agit d'une simple vérification.

\medskip
ii) Soit $X^\beta e_J \in \calM[J]$ et $i := \minDiv(X^\beta) < \min J$.
D'après~\ref{ExpressionBkXbetaeJ}, 
$B_\calM(X^\beta e_J)$ est une combinaison $\bfA$-linéaire de monômes 
du type $X^{\beta'}e_{J'} \in \calM[J']$ avec
ou bien $J' = J$ ou bien $J' = i \vee J \setminus j$ pour $j \in J$.
Dans ce deuxième cas, on~a $J \succcurlyeq J'$ comme le montre le schéma ci-dessous 
(où $\ell$ est l'indice de $j$ dans $J$ i.e. $j = j_\ell$) :
$$
\begin{array} {*{18}c}
J : &  
j_1 & < & j_2 & < & \cdots & < &
   j_{\ell-1} & < & j_{\ell} & < & j_{\ell+1} & < & \cdots & < & j_{k-2} & < & j_{k-1}
\\ [0.5em]
J' : &
i & < & j_1 & < & \cdots & < & 
   j_{\ell-2} & < & j_{\ell-1} & < & j_{\ell+1} & < &\cdots & < & j_{k-2} & < & j_{k-1}
\\
\end{array}
$$

iii)
Soit $X^\beta e_J \in \calM[J]$. 
Posons $i = \minDiv(X^\beta)$. Comme $i < \min J$, on a $i = \minDiv(\XbJ)$.
Grâce à la formule donnée en~\ref{ExpressionBkXbetaeJ}, 
on a $B_{\calM[J]}(X^\beta e_J) = 
\pi_{\calM}\big(\frac{X^\beta}{X_i^{d_i}}\, P_i \, e_J\big)$.
On a donc le diagramme suivant dans lequel il reste à vérifier que la flèche $\psi_J$, 
en pointillé en bas, 
envoie l'élément de gauche sur l'élément de droite.
C'est bien le cas : si on applique $\psi_J^{-1}$ à l'élément de droite, 
on obtient l'élément de gauche.
$$
\xymatrix @M=0.6pc @R =1cm @C=1.5cm{
X^\beta e_J \ar[r]^-{\psi_J} \ar[d]_-{B_{\calM[J]}} &
\XbJ  \ar[d]^-{W_{\psi(\calM[J])}} \\
\pi_{\calM} \Big( 
\dfrac{X^\beta}{X_i^{d_i}}\, P_i \, e_J
\Big)
\ar@{-->}[r]^-{\psi_J} & 
\pi_{\psi(\calM[J])} \Big(\dfrac{\XbJ}{X_i^{d_i}} \,P_i\Big) \\
}
$$
\end{proof}

\begin{rmq}
Appliquons la définition~\ref{DefMJ} à $\calM = \Mmac_{k'}$ tout entier.
Le module $\Mmac_{k'}[J]$ est décrit en tant que $\bfA$-module.
Mais c'est en fait un $\bfA[\uX]$-module : il est engendré par la famille des monômes extérieurs
$(X_i^{d_i} e_J)_{i < \min J}$.
Comme $\psi$ est $\bfA[\uX]$-linéaire, l'image de $\Mmac_{k'}[J]$ par $\psi$
est également un $\bfA[\uX]$-module,  c'est-à-dire un idéal de $\bfA[\uX]$ ; 
c'est l'idéal monomial engendré par les $X_i^{d_i}X^{D(J)}$ avec $i < \min J$.

\medskip
Pour $J = \emptyset$, on a $\Mmac_0[\emptyset] = \Mmac_0 = \Jex_1$, la restriction
de $\psi$ à $\Mmac_0$ est l'identité en cohérence avec le fait que
les endomorphismes $B_1$ et $W_1$ coïncident.

\smallskip
Pour $J = \{2..n\}$, on a $\Mmac_{n-1}[J] = \Mmac_{n-1}$ et l'image de $\Mmac_{n-1}$ par $\psi$ 
est l'idéal $\Jex_n$.
Par conséquent, $B_n$ et $W_n$ sont conjugués par $\psi$, résultat déjà
développé en~\ref{BnEgalWn}.
\end{rmq}

\begin{rmq}
Pour un sous-module monomial $\calM$ \textit{quelconque}, 
les endomorphismes $B_\calM$ et $W_{\psi(\calM)}$ ne sont pas nécessairement conjugués par $\psi$, 
même lorsque la restriction de $\psi$ à $\calM$ est injective.
Il est donc indispensable que $\calM$ soit inclus dans un $\Mmac_{k'}[J]$.

\smallskip

Considérons $\calM = (X_1^{d_1} X_3^{d_3} e_2, \, X_2^{d_1+d_2} e_3)$
avec la contrainte $d_2 \geqslant d_1$. 
L'application $\psi : \calM \rightarrow \bfA[\uX]$ 
est injective car $\psi(\calM) = (X_1^{d_1} X_2^{d_2} X_3^{d_3}, \,
 X_2^{d_1+d_2} X_3^{d_3})$. 
Examinons maintenant la propriété de commutativité du diagramme 
en partant du monôme extérieur $X^\beta e_J = X_1^{d_1} X_3^{d_3} e_2$ 
(la démonstration est identique pour l'autre monôme extérieur $X_2^{d_1+d_2} e_3$).
On a $B_\calM(X^\beta e_J) = \pi_{\calM} \big( 
X_3^{d_3} P_1 e_2 - X_3^{d_3} P_2e_1
\big)
=
\pi_{\calM} \big( 
X_3^{d_3} P_1 e_2
\big)$,
la dernière égalité étant due au fait que $\calM$ ne contient pas de monôme extérieur en $e_1$.
Le diagramme s'écrit donc :
$$
\xymatrix @M=0.6pc @R =1cm @C=1.5cm{
 X_1^{d_1} X_3^{d_3} e_2 \ar[r]^-{\psi} \ar[d]_-{B_\calM} &
 X_1^{d_1} X_2^{d_2} X_3^{d_3}   \ar[d]^-{W_{\psi(\calM)}} \\
\pi_{\calM} \big( X_3^{d_3} P_1 e_2\big)
\ar@{-->}[r]^-{\psi} & 
\pi_{\psi(\calM)}\big(X_2^{d_2}X_3^{d_3} P_1\big) \\
}
$$
Il serait tentant de dire que la flèche en pointillé du bas envoie l'élément de gauche 
sur l'élément de droite. Mais il n'en est rien : à gauche, il n'y a qu'un seul monôme 
et à droite, il y en a deux, comme le montre la première colonne des matrices 
ci-dessous
$$
B_\calM =
\EastBordermatrix{
p_1 & . & \Heti{X_1^{d_1}X_3^{d_3}e_2} \\ 
\noalign{\vskip4pt} 
. & p_2 & \Heti{X_2^{d_1+d_2}e_3} \\ 
}
\qquad \qquad \qquad
W_{\psi(\calM)} =
\EastBordermatrix{
p_1 & q_2 & \Heti{X_1^{d_1} X_2^{d_2} X_3^{d_3}} \\ 
\noalign{\vskip4pt} 
q_1 & p_2 & \Heti{X_2^{d_1+d_2}X_3^{d_3}} \\ 
}$$
où on a posé
$p_1 = \coeff_{X_1^{d_1}}(P_1), \ 
q_1 = \coeff_{X_2^{d_1}}(P_1),\ 
q_2 = \coeff_{X_1^{d_1}X_2^{d_2-d_1}}(P_2), \ 
p_2 =  \coeff_{X_2^{d_2}}(P_2)$.
C'est au niveau de $q_2$ qu'intervient la contrainte $d_2 \ge d_1$.

\smallskip
Non seulement le diagramme n'est pas commutatif, mais 
les endomorphismes $B_\calM$ et $W_{\psi(\calM)}$ ne sont pas en général conjugués 
comme on le voit sur les matrices (celle de $B_\calM$ est diagonale).
Quelques détails sur l'obtention de ces matrices. 
Pour la matrice de $B_\calM$ relativement à la base monomiale $\calM$ :
par définition, la première colonne est 
$\pi_\calM\big(X_3^{d_3}P_{1}\, e_2 - X_3^{d_3} P_2\,e_1\big)$ 
et la deuxième colonne est 
$\pi_\calM\big(X_2^{d_1}P_2 \, e_3 - X_2^{d_1} P_3\, e_2\big)$.
Pour la matrice de $W_{\psi(\calM)}$  :
la première colonne est $\pi_{\psi(\calM)}\big(X_2^{d_2}  X_3^{d_3} P_{1}\big)$
et la deuxième colonne est $\pi_{\psi(\calM)}\big(X_2^{d_1} X_3^{d_3} P_2\big)$.
\end{rmq}

\subsection{Les primitives de positionnement $\Elt$ et $\Ind$ dans un ensemble totalement ordonné}
\label{soussectionEltInd}

On a pour l'instant dégagé la décomposition \og en $J$ \fg{} 
du module de Macaulay, à savoir $\Mmac_{k-1} = \bigoplus_{\#J=k-1} \Mmac[J]$.
La restriction du pont $\psi$ à chaque facteur $\Mmac[J]$ est une
injection ; l'image $\psi(\Mmac[J])$ est l'idéal engendré par les
$X_i^{d_i} X^{D(J)}$ avec $i < J$, idéal contenu dans $\Jex_k$. 
On va envisager une autre décomposition monomiale de $\Mmac_{k-1}$ pour 
laquelle la restriction du pont $\psi$ à un facteur est encore injective
mais cette fois établit un isomorphisme entre ce facteur et un idéal $\Jex_h$ avec $h \geqslant k$. 
Dit autrement, dans cette décomposition, vont figurer un certain nombre
d'exemplaires de $\Jex_h$ pour $h$ variable dans $\{k..n\}$.
Ceci revient à définir des injections monomiales $\Jex_h \to \Mmac_{k-1}$, qui, 
comme elles sont censées être des ``inverses'' du pont $\psi$, doivent être de la forme :
$$
\Jex_h \, \ni \, X^\alpha 
\ \longmapsto \ 
\frac{X^\alpha}{X^{D(J_\alpha)}}\, e_{J_\alpha}
\, \in\,  \Mmac_{k-1}, \quad
\hbox {pour un $J_\alpha$ fonction de $\alpha$, de cardinal $k-1$, à préciser.}
$$
Afin d'assurer l'exactitude du quotient, on doit imposer 
$J_\alpha \subset \DivSeq(X^\alpha)$ ; ainsi $J_\alpha$ apparaît comme une partie de 
$\DivSeq(X^\alpha)$. 
La contrainte d'être un monôme extérieur de Macaulay force l'inégalité stricte 
$\minDiv(X^\alpha) < \min(J_\alpha)$. 
Toute la subtilité consiste à imposer la position de $J_\alpha$ dans $\DivSeq(X^\alpha)$ 
(cette position est un paramètre noté $R$ dans la suite). 
Vu l'inégalité stricte ci-dessus, $J_\alpha$ se situe plutôt à
la fin de $\DivSeq(X^\alpha)$ et le positionnement va avoir lieu en
ordonnant la suite $\DivSeq(X^\alpha)$ de manière décroissante.

\bigskip

Il est maintenant temps de définir, pour un ensemble totalement
ordonné quelconque, deux primitives de positionnement Ind, Elt,
réciproques l'une de l'autre et d'en montrer quelques propriétés fondamentales. 
Les définitions qui viennent risquent de paraître déroutantes
mais le lecteur doit comprendre qu'il y a un certain prix à payer pour
obtenir le phénomène de quasi-conjugaison entre des endomorphismes
\og de Macaulay\fg{} $W_{h}(\uP)$ et des endomorphismes \og de Cayley\fg{}
de type $B_{k}(\uP)$.

\index{quasi-conjugués (endomorphismes)}%

\begin{defn}
Pour une partie finie $E$ d'un ensemble totalement ordonné, 
désignons par $\RevE$ la \textit{suite} obtenue en ordonnant l'\textit{ensemble}~$E$ 
de manière \textit{décroissante}.

\smallskip
\noindent
Pour $J \subset E$ et $R \subset \{1..\#E\} \subset \bbN$, on note :

\begin{itemize}
\item  $\Ind(J \,; E)$ l'ensemble des indices de $J$ dans la suite~$\RevE$
\item $\Elt(R \,; E)$ la partie de $E$ constituée des éléments d'indice $R$ dans la suite~$\RevE$
\end{itemize}

\smallskip
\noindent
Les primitives $\Ind$ et $\Elt$ sont duales l'une de l'autre dans le sens où 
$$
R = \Ind( J \,; E) \quad \Longleftrightarrow \quad J = \Elt(R \,; E)
$$
\end{defn}

\label {NOTA15-RevE}%
\label {NOTA15-Ind}%
\label {NOTA15-Elt}%
%
%

\begin{lem} \label{EltVersusInd}
Dans le contexte de la définition ci-dessus, on note $\preccurlyeq$ la
relation d'ordre sur les parties de même cardinal, définie par
l'inégalité terme à terme lorsque l'on ordonne lesdites parties (de
manière croissante ou de manière décroissante !).

\smallskip
\noindent
Pour une partie finie $E$, la primitive $\Ind$ possède les 3 propriétés suivantes : 

\begin{enumerate}[\rm (A)]
\item Pour toute partie finissante $F$ de $E$ contenant $J$, on a
$\Ind(J \,; E) = \Ind(J \,; F)$; finissante signifiant que $x \in F$ et
$y \geqslant x$ implique $y \in F$.

\item Soit $J \subset E' \subset E$. 
Alors $\Ind(J \,; E') \preccurlyeq \Ind(J \,; E)$.

\item Soient $J \subset E$ et $i$ un élément de l'ensemble totalement ordonné 
tel que $i < \min E$. 
Pour tout $j \in J$, 
on a 
$\Ind(J \,; E) \preccurlyeq\Ind(i\vee J \setminus j \, \,;  \, i \vee E \setminus j)$.
\end{enumerate}

\noindent
La primitive $\Elt$ possède la propriété :
$$
\big[\, E' \subset E, \quad R \subset \{1..\#E'\} \, \big]
\quad \Longrightarrow \quad
\Elt(R \,; E') \, \preccurlyeq \, \Elt(R \,; E)
$$
\end{lem}

\begin{proof}
On se permet de noter $r = \Ind( x \,; E)$ au lieu de $\{r\} = \Ind(\{x\} \,; E)$.
Commençons par une remarque élémentaire mais cruciale : 
dire que $r = \Ind(j \,; E)$, 
c'est dire que le nombre d'éléments de $E_{\geqslant j}$ est exactement $r$, 
autrement dit $\Ind(j \,; E) = \# E_{\geqslant j}$. 

Pour le point (A), il s'agit de montrer que les deux fonctions $\Ind(\sbullet \,; E)$ 
et $\Ind(\sbullet \,; F)$ sont égales sur~$J$. 
Mais puisque $F$ est finissante, on a évidemment $\#E_{\geqslant j} =\#F_{\geqslant j}$ 
donc $\Ind(j \,; E) = \Ind(j \,; F)$ et ceci est valable pour tout $j \in J$.

Pour (B), on remarque que $E'_{\geqslant j} \subset E$ donc 
$E'_{\geqslant j} \subset E_{\geqslant j}$.
En donnant un coup de $\#$, on obtient directement $\Ind(j \,; E') \leqslant \Ind(j \,; E)$ 
pour tout $j \in J$.

Pour (C), notons $J = (j_1 < j_2 < \cdots < j_m)$ et supposons que $j = j_\ell$.
Représentons $J \subset E$ et en-dessous $i \vee J \setminus j \ \subset \ i \vee E \setminus j$.
$$
\includegraphics[scale = 0.8]{AboutIndAndEltIn15.mps}     
$$
Posons $r_p = \Ind(j_p \,; E)$. Avec le schéma ci-dessus, on en déduit 
les indices de $J$ dans $E$ puis de $i \vee J \setminus j$ dans $i \vee E \setminus j$ :
$$
\begin {array} {*{17}c}
r_1 & > & r_2 & > & \cdots & > &
   r_{\ell-1} & > & r_{\ell} & > & r_{\ell+1} & > & \cdots & > & r_{m-1} & > & r_m
\\ [0.5em]
\#E & > & r_1 -1 & > & \cdots & > & 
   r_{\ell-2}-1 & > & r_{\ell-1} -1& > & r_{\ell+1} & > &\cdots & > & r_{m-1} & > & r_m
\\
\end {array}
$$
Comme $r_{p-1} > r_p$, on a $r_p \leqslant r_{p-1} - 1$.
De plus $r_1 \leqslant \#E$.
On a donc 
$\Ind(J \,; E) \preccurlyeq\Ind(i\vee J \setminus j \, \,;  \, i \vee E \setminus j)$.

\bigskip
La propriété concernant $\Elt$. 
Remarquons tout d'abord l'équivalence $j = \Elt(r \,; E) \Leftrightarrow \# E_{\geqslant j} = r$
qui est duale de  $r = \Ind(j \,; E) \Leftrightarrow \# E_{\geqslant j} = r$.
Soit $r \in R$ et notons $j = \Elt(r \,; E)$ et $j' = \Elt(r \,; E')$. 
On a $E' \subset E$ donc $E'_{\geqslant j'} \subset E_{\geqslant j'}$. 
En donnant un coup de $\#$, on a $r \leqslant \# E_{\geqslant j'}$.
Or $r = \# E_{\geqslant j}$.
On obtient donc $\# E_{\geqslant j}\, \leqslant\, \# E_{\geqslant j'}$
ce qui force $j' \leqslant j$ puisque $j$ et $j'$ sont dans $E$.
\end{proof}

\subsection{La $R$-décomposition et ses conséquences}

Nous avons conscience que la définition qui vient puisse paraître
intriguante : les objets qui interviennent ne sont pas d'une nature
algébrique courante et relèvent plutôt d'une combinatoire
(élémentaire) sur les ensembles totalement ordonnés finis.  Elle se
révèle primordiale dans la suite de cette section, et pour cette
raison, en introduction, nous en avons traité un cas particulier
auquel la lectrice pourra se reporter en cas de difficulté.  Il faut
noter que $R \subset \{1..n\}$ est vu comme un ensemble d'indices et,
en un certain sens, il va être question de situer $J \subset \{1..n\}$
par rapport à $R$ dans l'ensemble de divisibilité $\DivSeq(\XbJ)$, ce
qui est licite puisque l'on a toujours $J \subset \DivSeq(\XbJ)$.

\begin{defn}
Pour $R \subset \{1..n\}$, notons $\Mmac^R$ le sous-module monomial de
$\Mmac_{\#R}$ de base les monômes extérieurs $X^\beta e_J$ de
$\Mmac_{\#R}$ tels que $\#J = \#R$ et $\Ind \big( J \,; \DivSeq(\XbJ)
\big) = R$.  On dispose ainsi, pour chaque $k' \ge 0$, d'une
décomposition monomiale $\Mmac_{k'}=\bigoplus\limits_{\#R=k'}
\Mmac^R$, appelée {\rm \Rdecomposition}.
\end{defn}

\label {NOTA15-MmacR}%

\medskip

Il s'agit avant toute chose de \og déjouer\fg{} rapidement cette définition en
manipulant les ingrédients dans des cas simples.

\smallskip
$\rhd$
Pour $R = \emptyset$, seule partie de cardinal $0$, on a $\Mmac^{\emptyset} = \Mmac_0$,
identique à $\Jex_1$.

$\rhd$
Soit $R = \{1\}$. Que signifie l'égalité $\Ind \big(\{j\} \,;
\DivSeq(X^\beta X_j^{d_j})\big) = R$ pour un monôme de
Macaulay~$X^\beta\,e_j$?  Tout simplement que $j = \maxDiv(X^\beta
X_j^{d_j})$. Et comme $X^\beta e_j$ est de Macaulay, en notant $i =
\minDiv(X^\beta)$, on a $i < j$. En conséquence, en posant $X^\alpha = X^\beta
X_j^{d_j}$, on a $X^\alpha \in \Jex_2$ et $\maxDiv(X^\alpha) =
j$.

\smallskip
$\rhd$
Montrons l'implication $n \in R \Longrightarrow \Mmac^R = 0$, ce qui fera
\og dans la pratique \fg{} que l'on peut se restreindre à $R \subset \{1..n-1\}$.

Soit $X^\beta e_J$ un monôme de Macaulay.  On a $J \subset \{2..n\}$
car $\minDiv(X^\beta) < \min J$.  De manière duale, pour $R =
\Ind\big(J \,; \DivSeq(\XbJ)\big)$, on a $R \subset \{1..n\moins1\}$ :
ceci est dû au fait que les indices sont considérés relativement à la
suite décroissante $\DivG\big(\XbJ\big)$.  Par conséquent, si $R \not
\subset \{1..n\moins1\}$ (\idest{} si $n \in R$), alors $\Mmac^R = 0$.

\smallskip
$\rhd$
Soit $X^\beta e_J \in \Mmac^R$.  Notons $X^\alpha = X^\beta X^{D(J)}$.
Alors, par définition de l'appartenance à $\Mmac^R$, l'entier $\min(J)$
est en position $\max(R)$ dans $\DivG(X^\alpha)$.  De plus, comme
$X^\beta e_J$ est un monôme de Macaulay, on a $\minDiv(X^\beta) < \min
J$, donc $\DivSeq(X^\alpha)$ est au moins de cardinal $1 + \max(R)$.
Ainsi, en posant $h = 1+\max(R)$, on a $X^\alpha \in \Jex_h$.

\begin{exemple}[Maniement des primitives $\Ind$ et $\Elt$]

Ce qui suit peut sans doute permettre au lecteur de se familiariser
avec les primitives $\Ind$ et $\Elt$.  Partons de
$R = \{2,3,5,7\}$ ; ici $k' = \#R = 4$ et $h = 1 + \max(R) = 8$.  Fixons $n
= 13$.  L'objectif est de construire un monôme de Macaulay $X^\beta
e_J \in \Mmac_{k'}$ tel que $J = \Ind\big(R \,; \DivSeq(\XbJ)\big)$.  On a
nécessairement $\#\DivSeq(\XbJ) \geqslant h=8$, par exemple :
$$
\DivSeq(X^\beta X^{D(J)}) =
\{\ 1,2,4, \overset{(7)}{5},6, \overset{(5)}{8},9,
\overset{(3)}{10},\overset{(2)}{12}, 13\ \}
$$
Nous avons indiqué les éléments de $J$ imposés par $R$. Le
complémentaire de $\DivSeq(X^\beta X^{D(J)})$ dans $\{1..n\}
= \{1..13\}$ étant constitué des $i \in \{3,7,11\}$, pour chacun de
ces $i$, nous avons pris $\beta_i = d_i-1$ pour forcer $i \notin
\DivSeq(X^\beta X^{D(J)})$. Voici un monôme qui convient :
$$
X^\beta e_J \ = \
X_1^{d_1}\,X_2^{d_2}\,X_3^{d_3-1}X_4^{d_4}\,X_5^{\beta_5}\,X_6^{d_6}\,X_7^{d_7-1}
X_8^{\beta_8}\,
X_9^{d_9}\,X_{10}^{\beta_{10}}\,X_{11}^{d_{11}-1}X_{12}^{\beta_{12}}\,X_{13}^{d_{13}}\ 
e_{\{5,8,10,12\}}
$$
où $\beta_5$, $\beta_8$, $\beta_{10}$ et $\beta_{12}$ sont quelconques.

\medskip
\noindent
Notons $X^\alpha = X^\beta X^{D(J)}$. Puisque $X^\beta e_J$
est de Macaulay, on a $\minDiv(X^\alpha) = \minDiv(X^\beta)
< \min(J)$. En adéquation avec l'exemple,
nous obtenons le schéma suivant dans lequel $j_1 = \min(J)$.
$$
\DivSeq(X^\alpha)\ : \qquad
\vcenter{
\xymatrix @R=3pt @C=0.5cm{
        &      &      &\ar@{<->}[rrrrrr]^-{\#=\max(R)=h-1} &  &
&       &              &             &        
\\
\bullet &\circ &\circ &\blacklozenge &\circ &\blacklozenge
&\circ &\blacklozenge  &\blacklozenge  &\circ  
\\
\scriptstyle\minDiv(X^\alpha)  &      &      &j_1           &      &
&      &               &j_{k'}        
\\
}
}
$$
\label{SchemaJR}
\end{exemple}

\bigskip

La \Rdecomposition{} du module de Macaulay possède deux bonnes propriétés qui font
l'objet du lemme suivant:
d'une part la restriction du pont~$\psi$ à $\Mmac^R$ est injective 
et d'autre part l'image $\psi(\Mmac^R)$ est un idéal excédentaire \og en entier \fg{}, 
à savoir l'idéal $\Jex_{1+\max(R)}$.
Cette dernière propriété n'était pas vérifiée par $\Mmac[J]$, 
puisque $\psi(\Mmac[J])$ est l'idéal engendré par les $X_i^{d_i}X^{D(J)}$ avec $i < J$, 
ce qui ne correspond pas à un idéal excédentaire particulier.

A noter que l'égalité $\psi(\Mmac^R) = \Jex_{1+\max(R)}$ est valide pour 
$R = \emptyset$ car, avec notre convention habituelle, on a
$\max(\emptyset) = 0$ pour la partie vide de $\{1..n\}$; cette égalité valide
raconte que $\Mmac^R = \Mmac_0 = \Jex_1$.

\medskip

Dans le lemme suivant, nous allons établir que la restriction de $\psi$ à
$\Mmac^R$ est injective d'image $\Jex_{1+\max(R)}$. En conséquence:
$$
\Mmac^R = 0 \iff \Jex_{1+\max(R)} = 0 \iff 1 + \max(R) > n  \iff n \in R
$$

\begin{lem}[L'isomorphisme $\psi^R : \Mmac^R \simeq \Jex_{1+\max(R)}$ induit par le pont]
\label{psiRiso}  
\leavevmode
  
Soit $R \subset \{1..n\}$ et $h = 1+\max R$. 

\begin{enumerate}[\rm i)]
\item L'application $\psi^R$, restriction du pont $\psi$ à $\Mmac^R$, est injective,
d'image $\Jex_h$:
$$
\psi^R : \Mmac^R  \overset{\simeq}{\longrightarrow} \Jex_h
$$  
L'application réciproque est donnée par 
$$
(\psi^R)^{-1} : \begin{array}[t]{rcl}
\Jex_h & \longrightarrow & \Mmac^R \\ [0.2cm]
X^\alpha & \longmapsto & \dfrac{X^\alpha}{X^{D(J)}}e_J 
\quad \text{avec $J = \Elt\big(R \,; \DivSeq(X^\alpha) \big)$}
\end{array}
$$

\item
Soit $J \subset \{1..n\}$ avec $\#J = \#R$.  
L'image $\psi\big(\Mmac^R[J]\big)$ est le sous-module monomial de $\Jex_h$ de base
les $X^\alpha \in \Jex_h$ tels que $\Elt\big(R \,; \DivSeq(X^\alpha)\big) = J$. 

\end{enumerate}
\end {lem}

\label {NOTA15-JexR}%
\label {NOTA15-psiR}%
%
%

\begin {proof} \leavevmode

i)
Pour l'inclusion $\psi(\Mmac^R) \subset \Jex_h$, soit $X^\alpha = \XbJ \in \psi(\Mmac^R)$.
On a $\# \DivSeq(X^\alpha)_{\geqslant \min J} = \max R = h-1$ par définition de $R$.
De plus, $\DivSeq(X^\alpha)$ contient un indice $i$ strictement inférieur à $\min J$ 
à savoir $i = \minDiv(X^\beta)$. Donc $\#\DivSeq(X^\alpha) \geqslant h$.

\medskip

Dans l'autre sens, montrons qu'un $X^\alpha \in \Jex_h$ est l'image par $\psi$ 
d'un monôme extérieur $X^\beta e_J$ de $\Mmac^R$, 
c'est-à-dire vérifiant $\minDiv(X^\beta) < \min J$ et $\Ind \big( J \,; \DivSeq(\XbJ) \big) = R$.
On prend
$$
J = \Elt\big(R \,; \DivSeq(X^\alpha) \big), \qquad
X^\beta = \frac{X^\alpha}{X^{D(J)}}
$$
A gauche, le $\Elt$ est licite car $\# \DivSeq(X^\alpha) \geqslant \max R$ et à droite
le quotient est exact puisque $J \subset \DivSeq(X^\alpha)$.
Montrons que $X^\beta e_J \in \Mmac^R$, prouvant $\Jex_h \subset \psi(\Mmac^R)$
puisque $\psi(X^\beta e_J) = X^\alpha$.

\smallskip
\noindent
$\rhd$
Le monôme $X^\beta e_J$ est de Macaulay. 
En effet, $\max R = h-1$, donc ${\min J = \Elt\big(h \moins 1, \DivSeq(X^\alpha) \big)}$. 
De plus $\#\DivSeq(X^\alpha) \geqslant h$, donc $\minDiv(X^\alpha) < \min J$ puis 
$\minDiv(X^\beta) = \minDiv(X^\alpha) < \min J$.

\smallskip
\noindent
$\rhd$
Vérifions enfin que $X^\beta e_J$ est dans~$\Mmac^R$. 
Cela repose sur la dualité entre $\Ind$ et $\Elt$ :
puisque $J = \Elt\big(R \,; \DivSeq(X^\alpha) \big)$, on~a 
$R = \Ind \big( J \,; \DivSeq(X^\alpha) \big)$, donc
$R = \Ind \big( J \,; \DivSeq(\XbJ) \big)$.

\smallskip

En utilisant le fait que les primitives $\Ind$ et $\Elt$ sont
inverses l'une de l'autre, on vérifie que les correspondances
$X^\beta e_J \mapsto X^\alpha$ et $X^\alpha \mapsto X^\beta e_J$ sont
réciproques l'une de l'autre.

\medskip

ii) Résulte de la définition de $\Mmac^R[J]$ et du point i).
\end {proof}

\begin{rmqs} \label{BigRmq}
\leavevmode 

\smallskip
$\rhd$
Pour $R = \{1..n\moins1\}$, on a $\Mmac^R = \Mmac_{n-1}$ qui est
isomorphe à $\Jex_n$ en tant que $\bfA[\uX]$-module.  Donnons à
nouveau quelques détails sur cet isomorphisme.  Tout d'abord, $\Jex_n$
est l'idéal principal engendré par $X^D = X_1^{d_1} \cdots X_n^{d_n}$.
Et $\Mmac_{n-1}$ est le $\bfA[\uX]$-module engendré par $X_1^{d_1} e_2
\wedge \dots \wedge e_n$.  La correspondance bijective $\psi^R$ se
réduit donc ici à :
$$
\Mmac_{n-1} \, \ni\, 
X^\gamma X_1^{d_1} e_2 \wedge \dots \wedge e_n 
\ \longleftrightarrow \ 
X^\gamma X_1^{d_1} \cdots X_n^{d_n} \, \in \, \Jex_n
$$

\smallskip
$\rhd$
Soit $R\subset\{1..n\}$ et $h = 1+ \max R$.  On peut se demander
quelle est l'image par $(\psi^R)^{-1}$ dans $\Mmac^R$ de $X^D =
X_1^{d_1} \cdots X_n^{d_n} \in \Jex_h$.  Comme $\DivSeq(X^D) =
\{1..n\}$, cette image vaut :
$$
\dfrac{X^D}{X^{D(J)}}\, e_J 
\qquad 
\text{avec $J = \Elt \big(R \,; \{1..n\}\big)$}
$$
Mais il faut prendre garde au fait que $\Elt \big(R \,; \{1..n\}\big)$
n'est pas égal à $R$.  Cela vient du fait que $\Elt(R \,; E)$ désigne
l'ensemble des éléments d'indice $R$ dans la suite décroissante
$\RevE$, de sorte que $\Elt\big(R \,; \{1..n\}\big)$ est l'image de
$R$ par l'involution renversante $\rho$ de $\{1..n\}$ définie par
$\rho : i \mapsto n-i+1$.  Autrement dit, l'image de $X^D \in \Jex_h$
dans $\Mmac^R$ est exactement $X^{D(\overline J)} e_J$ avec $J =
\rho(R)$. On peut remarquer que pour $R = \emptyset$, on trouve bien
ce à quoi on s'attend, à savoir $X^D$ !

\smallskip
$\rhd$
Pour $R = \{r\} \subset \{1..n\moins1\}$, l'isomorphisme $\psi^R$ du
lemme \ref{psiRiso} et son inverse sont:
$$
\begin{array}[t]{rcl}
\Mmac^{\{r\}} & \longrightarrow & \Jex_{1+r} \\ [0.5em]
X^\beta e_j & \longmapsto & X^\beta X_j^{d_j}
\end{array}
\qquad \text{et} \qquad
\begin{array}[t]{rcl}
\Jex_{1+r} & \longrightarrow & \Mmac^{\{r\}} \\ [0.4em]
X^\alpha & \longmapsto & \dfrac{X^\alpha}{X_j^{d_j}} e_j
\qquad \text{avec 
$j = \Elt\big(r \,; \DivSeq(X^\alpha)\big)$} 
\end{array}
$$
En particulier, pour $r=1$, l'élément $j$ à droite vaut $\maxDiv(X^\alpha)$.
\end{rmqs}

\subsubsection*{Où l'on rapproche $\det W_{h,d}$ d'un \og sous-déterminant de Cayley\fg{} $\det B^R_{k,d}$
de $\det B_{k,d}$}

Maintenant, nous allons réaliser un mélange que nous jugeons \og explosif\fg{} :
nous allons faire intervenir les deux décompositions : celle en $R$, raffinée par
celle en $J$.
Précisément, nous allons appliquer la \Jdecomposition{} au module $\Mmac^R$, 
ce qui fournira les facteurs $\Mmac^R[J]$ où $J$ varie dans l'ensemble des
parties de $\{1..n\}$ de cardinal $\#R$. Et nous allons également faire
\og opérer\fg{} $\uP$ sur ces divers sous-modules monomiaux par l'intermédiaire
d'endomorphismes attachés à $\uP$.

\begin {defn} [L'endomorphisme induit-projeté $B^R = B^R(\uP)$ sur $\Mmac^R$]
\leavevmode


Soit $k = 1+\#R$. Rappelons encore une fois que $B_k = B_k(\uP)$ est
un endomorphisme de $\Mmac_{k-1} = \Mmac_{\#R}$. Bien évidemment,
$\Mmac^R \subset \Mmac_{\#R} = \Mmac_{k-1}$.  Par définition, $B^R$
est l'induit-projeté de $B_k$ sur~$\Mmac^R$.  Par \og sécurité\fg{},
nous noterons $B^R_{k,d}$ la composante homogène de degré $d$ de
$B^R$, l'indice $k$ ne figurant que pour des raisons esthétiques.  Dit
autrement, en présence de la construction $B^R_{k,d}$, l'indice $k$
vaut obligatoirement $1+\#R$, mais il nous arrivera de le répéter.
\end {defn}  

\label {NOTA15-BPR}%
%
%

\begin{prop} \label{BRquasi-conjWh}
Soient $R \subset \{1..n\}$ et $h = 1+\max R$.
Dans l'énoncé et la preuve, on adopte la convention
suivante: $J \subset \{1..n\}$
vérifie $\#J = \#R$ et on lui associe $\Jex^R[J] \overset{\rm def}{=}
\psi\big(\Mmac^R[J]\big) \subset \Jex_h$.

\begin{enumerate}[\rm i)]
\item 
L'endomorphisme $B^R$ est triangulaire relativement à la décomposition
$\Mmac^R = \bigoplus\limits_J \Mmac^R[J]$.

L'endomorphisme $W_h$ est triangulaire relativement à la décomposition
$\Jex_h = \bigoplus\limits_J \Jex^R[J]$.

Précision: pour $J$ fixé, on dispose des inclusions suivantes
avec un renversement de l'ordre $\preccurlyeq$ dans la somme directe de droite
par rapport à celle de gauche:
$$
B^R\big(\,\Mmac^R[J]\,\big) 
\ \subset \ 
\bigoplus_{J' \preccurlyeq J} \Mmac^R[J']
\qquad 
\text{et }
\qquad 
W_h \big(\JexRJ \big) 
\ \subset \ 
\bigoplus_{J' \succcurlyeq J} \Jex^R[J']
$$

\item
Les endomorphismes $B^R$ et $W_h$ sont quasi-conjugués relativement à la \Jdecomposition.
Autrement dit, pour tout $J$ avec $\#J=\#R$, 
on dispose du diagramme commutatif de $\bfA$-modules :
$$
\def \Binduitproj{\begin{array}{c}
     \hbox {induit-proj. de $B^R$} \\[1mm] \hbox {noté $B^R[J]$} \end {array}}
\def \Winduitproj{\begin{array}{c}
     \hbox {induit-proj. de $W_h$} \\[1mm] \hbox {noté $W_h^R[J]$} \end {array}}
\xymatrix @R = 2cm @C = 4cm @M=0.4pc{
\Mmac^R[J] \ar[d]|-{\Binduitproj} \ar[r]^{\psi^R_J}_{\simeq}
& 
\Jex^R[J]
\ar[d]|-{\Winduitproj}
\\
\Mmac^R[J] \ar[r]^{\psi^R_J}_{\simeq} & 
\Jex^R[J]
\\
}
$$

\item 
En particulier, pour $k = 1 + \#R$, on a $\det B_{k,d}^R = \det W_{h,d}$. Ou encore,
en exprimant cette égalité directement en fonction de $R$:
$$
\det B_{1+\#R,d}^R = \det W_{1+\max(R),d}
$$
\end{enumerate}
\end{prop}

\index{quasi-conjugués (endomorphismes)}%
\label {NOTA15-JexRJ}%
%
%

\begin{proof} \leavevmode
  
i) En utilisant~\ref{JDecompositionEndo}-ii) avec $\calM = \Mmac^R$, on obtient 
l'aspect triangulaire de $B^R$.

\smallskip
Occupons-nous de $W_h$.
Soit $X^\alpha \in \JexRJ$ c'est-à-dire $X^\alpha \in \Jex_h$ et 
$\Elt\big(R \,; \DivSeq(X^\alpha)\big) = J$. 
Ci-dessous, on va profiter du fait que l'on a également
$J = \Elt\big(R \,; \DivSeq(X^\alpha)_{\geqslant \min J} \big)$. 
Rappelons que dans ces conditions, on a 
$i := \minDiv(X^\alpha) < \min J$ (voir le schéma page~\pageref{SchemaJR}).
L'image $W_h(X^\alpha)$ est une $\bfA$-combinaison linéaire de monômes $X^{\alpha'}$ de la forme 
$$
X^{\alpha'} \ = \ 
\dfrac{X^\alpha}{X_i^{d_i}} X^\gamma
\qquad \text{où $X^\gamma$ est un monôme de $P_i$}
$$
Chaque $X^{\alpha'}$ est dans $\Jex^R[J']$ où $J' = \Elt\big(R \,; \DivSeq(X^{\alpha'})\big)$.
Par définition de $X^{\alpha'}$, 
on a ${\DivSeq(X^\alpha) \setminus i \subset \DivSeq(X^{\alpha'})}$.
Comme $i < \min J$, on en déduit que 
$\DivSeq(X^\alpha)_{\geqslant \min J} \subset \DivSeq(X^{\alpha'})$.
On donne un coup de $\Elt(R \,; \sbullet)$ et avec le lemme~\ref{EltVersusInd}, 
on obtient $J \preccurlyeq J'$.

\medskip
ii) Utiliser~\ref{JDecompositionEndo}-iii) avec $\calM = \Mmac^R$.

\medskip

iii)
Bien entendu, l'esprit des \og stratégies triangulaires\fg{} souffle ici (cf. la section
\ref{subsectionDecompositionTriangulaire} et en particulier
la proposition~\ref{EndosQuasiConjugues}). On obtient, en utilisant
le point i) ci-dessus:
$$
\det B^R_{k,d} = \prod_{\#J = \#R} \det B_{k,d}^R[J]
\qquad\qquad
\det W_{h,d} = \prod_{\#J = \#R} \det W_{h,d}^R[J]
$$
Et d'après le point ii), on a $\det B_{k,d}^R[J] = \det W_{h,d}^R[J]$,
ce qui permet de conclure. 
\end{proof}

\subsubsection*{Comment illustrer cette proposition \ref{BRquasi-conjWh}?}

Concentrons-nous du côté de $\Jex_h$ en fixant un $R$ tel que $h =
1+\max(R)$ et oublions (provisoirement) le reste.  La décomposition monomiale de
l'idéal~$\Jex_h$ en la somme directe des $\big(\Jex^R[J]\big)_{\#J=\#R}$
peut être racontée de manière plus terre à terre, disons intrinsèque à $\bfA[\uX]$,
sans référence aux sous-modules de Macaulay $\Mmac_\sbullet$.

\medskip

Cette description s'appuie sur le fait que $\Jex^R[J]$ est le
sous-module monomial de $\Jex_h$ de base les $X^\alpha \in \Jex_h$
tels que $\Elt\big(R \,; \DivSeq(X^\alpha)\big) = J$. On réalise ainsi
une partition des monômes $X^\alpha \in \Jex_h$ \og par égalité des $J
= \Elt\big(R \,; \DivSeq(X^\alpha)\big)$\fg{} et ce phénomène de
partition des monômes nous est familier.

\medskip

Quels paramètres choisir pour \emph {montrer} des exemples pertinents?  Le cas
$h=2$ force $R=\{1\}$ et la partition associée des monômes de $\Jex_2$
n'est autre que celle obtenue par égalité des~$\maxDiv$. 
Nous en avons parlé dans un chapitre
antérieur (cf. la section \ref{SousSectionMaxDivDecomposition} dans
laquelle nous avons utilisé, pour un sous-module monomial $\calM$ de
$\bfA[\uX]$, la notation $\calM\double{j}$ pour désigner le
sous-module monomial de $\calM$ de base les monômes de $\maxDiv = j$).

\smallskip

Nous fournissons un tel exemple avec $h=2$ un peu plus loin (cf. la
page~\pageref{ExempleD312R1}) mais nous préférons commencer par le cas
qui suit i.e. par le choix de \boxed{h=3} donc $\max(R) = 2$ i.e. $R =
\{2\}$ ou $R = \{1,2\}$. Après un certain nombre de tâtonnements, nous
nous sommes fixés sur:

\subsubsection*{Un premier exemple $n = 4$, $D = (1,2,1,2)$, $d = 5$ pour illustrer
la proposition \ref{BRquasi-conjWh}}


Dans cet exemple, $\dim \Jex_{h,d} = 10$.  Nous ne montrons
pas le système $\uP$ mais disons seulement que les noms des
coefficients de $P_1, P_2, P_3, P_4$ sont dans l'ordre $a_\sbullet,
b_\sbullet, c_\sbullet, d_\sbullet$. Dans les matrices, $\overline
{c_1}$ désigne $-c_1$. Enfin, pour des raisons typographiques nous
utilisons $x_i$ au lieu de $X_i$.

\medskip
$\bullet$
Commençons par $R = \{2\}$. La partition associée est constituée de
2 morceaux de même taille 5, un pour $J=\{2\}$, l'autre pour $J=\{3\}$:
$$
\overbrace{
x_{1}^{2}x_{2}^{2}x_{3},\ x_{1}x_{2}^{3}x_{3},\ x_{1}x_{2}^{2}x_{3}^{2},\ x_{1}x_{2}^{2}x_{3}x_{4},\  x_{1}x_{2}^{2}x_{4}^{2}}
^{J=\{2\}}
\qquad
\overbrace{
x_{1}^{2}x_{3}x_{4}^{2},\ x_{1}x_{2}x_{3}x_{4}^{2},\ x_{1}x_{3}^{2}x_{4}^{2},\
x_{1}x_{3}x_{4}^{3},\ x_{2}^{2}x_{3}x_{4}^{2}}
^{J=\{3\}}
$$
Ce qui fournit pour $W_{h,d}(\uP)$ (matrice de droite) une structure triangulaire \emph{inférieure}  en 2 blocs.
Ceci est en cohérence avec le point i) de la proposition \ref{BRquasi-conjWh} et la mention 
$W_h \big(\JexRJ \big) \subset \bigoplus_{J' \succcurlyeq J} \Jex^R[J']$.
$$
\def\ov{\overline}
\left[
\begin{array}{*{5}{c}|*{5}{c}}
a_{1}& .& .& .& .& .& .& .& .& . \\ 
a_{2}& a_{1}& .& .& .& .& .& .& .& . \\ 
a_{3}& .& a_{1}& .& .& .& .& .& .& . \\ 
a_{4}& .& .& a_{1}& .& .& .& .& .& . \\ 
.& .& .& .& a_{1}& .& .& .& .& \ov{c_{1}} \\ 
\cline {1-10}
.& .& .& .& .& a_{1}& .& .& .& b_{1} \\ 
.& .& .& .& .& a_{2}& a_{1}& .& .& b_{2} \\ 
.& .& .& .& .& a_{3}& .& a_{1}& .& b_{3} \\ 
.& .& .& .& .& a_{4}& .& .& a_{1}& b_{4} \\ 
.& .& .& .& .& .& a_{2}& .& .& b_{5} \\ 
\end{array}
\right]
\qquad
\left[
\begin{array}{*{5}{c}|*{5}{c}}
a_{1}& .& .& .& .& .& .& .& .& . \\ 
a_{2}& a_{1}& .& .& .& .& .& .& .& . \\ 
a_{3}& .& a_{1}& .& .& .& .& .& .& . \\ 
a_{4}& .& .& a_{1}& .& .& .& .& .& . \\ 
.& .& .& .& a_{1}& .& .& .& .& . \\ 
\cline {1-10}
.& .& .& .& .& a_{1}& .& .& .& b_{1} \\ 
.& .& .& .& .& a_{2}& a_{1}& .& .& b_{2} \\ 
.& .& .& .& .& a_{3}& .& a_{1}& .& b_{3} \\ 
.& .& .& .& .& a_{4}& .& .& a_{1}& b_{4} \\ 
.& .& .& a_{4}& a_{3}& .& a_{2}& .& .& b_{5} \\ 
\end{array}
\right]
$$
Faisons maintenant intervenir $k= 1+\#R = 2$ et le pont $\psi^R : \Mmac^R \overset{\simeq}{\longmapsto}
\Jex_h=\Jex_3$ où $\Mmac^R \subset \Mmac_{k-1} = \Mmac_1$. Par définition, $\psi^R$
est la restriction de $\psi$ et $\psi$ sur $\rmK_1$ est
le $\bfA[\uX]$-morphisme réalisant $e_j \mapsto X_j^{d_j}$:
$$
\psi_{|\rmK_1} : \qquad e_1 \mapsto x_1, \qquad e_2 \mapsto x_2^2, \qquad e_3 \mapsto x_3,\qquad e_4 \mapsto x_4^2
$$
La base monomiale de la composante homogène de degré $d=5$ de $\Mmac^R$ est l'image
par $(\psi^R)^{-1}$ de la base monomiale de $\Jex_{h,d}$:
$$
x_{1}^{2}x_{3}\,e_{2},\ x_{1}x_{2}x_{3}\,e_{2},\ x_{1}x_{3}^{2}\,e_{2},\ x_{1}x_{3}x_{4}\,e_{2},
x_{1}x_{4}^{2}\,e_{2},
\qquad
x_{1}^{2}x_{4}^{2}\,e_{3},\ x_{1}x_{2}x_{4}^{2}\,e_{3},\ 
x_{1}x_{3}x_{4}^{2}\,e_{3},\ x_{1}x_{4}^{3}\,e_{3},\ x_{2}^{2}x_{4}^{2}\,e_{3} 
$$
Ceci conduit à la matrice de gauche $B^R_{k,d}(\uP)$, qui est triangulaire \emph{supérieure} en
2 blocs, en accord avec le point i) de la proposition \ref{BRquasi-conjWh}.
Et comme annoncé dans cette proposition, les blocs diagonaux des 2 matrices sont identiques.
On rappelle que $W_{h,d}$ ne dépend pas des $h-1 \overset{\rm ici}{=} 2$ derniers polynômes.
Par contre, $B^R_{k,d}(\uP)$ dépend de $P_3$ (mais pas son déterminant).

\medskip
$\bullet$
Passons à $R = \{1,2\}$. Il y a trois blocs, pour $J =
\{2,3\}$, $\{2,4\}$, $\{3,4\}$, de tailles respectives $4,1,5$.
$$
\overbrace {
  x_{1}^{2}x_{2}^{2}x_{3},\ x_{1}x_{2}^{3}x_{3},\ x_{1}x_{2}^{2}x_{3}^{2},\ x_{1}x_{2}^{2}x_{3}x_{4}
}^{J = \{2,3\}}  
\qquad
\overbrace {
  x_{1}x_{2}^{2}x_{4}^{2}
}^{J = \{2,4\}}  
\qquad
\overbrace {
x_{1}^{2}x_{3}x_{4}^{2},\ x_{1}x_{2}x_{3}x_{4}^{2},\ x_{1}x_{3}^{2}x_{4}^{2},\
x_{1}x_{3}x_{4}^{3},\ x_{2}^{2}x_{3}x_{4}^{2}
}^{J = \{3,4\}}  
$$
Evidemment, \emph {l'endomorphisme} $W_{h,d}$ n'a pas changé entre temps. Ce qui a changé, c'est
la partition de~$\Jex_{h,d}$. Mais, coïncidence, il se trouve que les 2 bases monomiales de
$\Jex_{h,d}$ sont les mêmes, ce qui explique que les matrices de $W_{h,d}$ sont identiques.
$$
\def\ov{\overline}
\left[
\begin{array}{*{4}{c}|*{1}{c}|*{5}{c}}
a_{1}& .& .& .& .& .& .& .& .& d_{1} \\ 
a_{2}& a_{1}& .& .& .& .& .& .& .& d_{2} \\ 
a_{3}& .& a_{1}& .& .& .& .& .& .& d_{3} \\ 
a_{4}& .& .& a_{1}& .& .& .& .& .& d_{4} \\ 
\cline {1-10}
.& .& .& .& a_{1}& .& .& .& .& \ov{c_{1}} \\ 
\cline {1-10}
.& .& .& .& .& a_{1}& .& .& .& b_{1} \\ 
.& .& .& .& .& a_{2}& a_{1}& .& .& b_{2} \\ 
.& .& .& .& .& a_{3}& .& a_{1}& .& b_{3} \\ 
.& .& .& .& .& a_{4}& .& .& a_{1}& b_{4} \\ 
.& .& .& .& .& .& a_{2}& .& .& b_{5} \\ 
\end{array}
\right]
\qquad
\left[
\begin{array}{*{4}{c}|*{1}{c}|*{5}{c}}
a_{1}& .& .& .& .& .& .& .& .& . \\ 
a_{2}& a_{1}& .& .& .& .& .& .& .& . \\ 
a_{3}& .& a_{1}& .& .& .& .& .& .& . \\ 
a_{4}& .& .& a_{1}& .& .& .& .& .& . \\ 
\cline {1-10}
.& .& .& .& a_{1}& .& .& .& .& . \\ 
\cline {1-10}
.& .& .& .& .& a_{1}& .& .& .& b_{1} \\ 
.& .& .& .& .& a_{2}& a_{1}& .& .& b_{2} \\ 
.& .& .& .& .& a_{3}& .& a_{1}& .& b_{3} \\ 
.& .& .& .& .& a_{4}& .& .& a_{1}& b_{4} \\ 
.& .& .& a_{4}& a_{3}& .& a_{2}& .& .& b_{5} \\ 
\end{array}
\right]
$$
Maintenant, c'est $k=1+\#R=3$ qui entre en scène et
le pont $\psi^R : \Mmac^R \overset{\simeq}{\longmapsto}
\Jex_h$ où $\Mmac^R \subset \Mmac_{k-1} = \Mmac_2$,
restriction de $\psi : \rmK_2 \to \bfA[\uX]$:
$$
\psi_{|\rmK_2} : e_{ij} \mapsto x_i^{d_i} x_j^{d_j} 
\qquad \text{en particulier} \qquad
e_{23} \mapsto x_2^2x_3, \qquad  e_{24} \mapsto x_2^2x_4^2, \qquad  e_{34} \mapsto x_3x_4^2
$$
La base monomiale de la composante homogène $(\Mmac^R)_{d=5}$ est l'image
par $(\psi^R)^{-1}$ de la base monomiale de $\Jex_{h,d}$:
$$
x_{1}^{2}\,e_{23},\ x_{1}x_{2}\,e_{23},\ x_{1}x_{3}\,e_{23},\ x_{1}x_{4}\,e_{23},\quad
x_{1}\,e_{24},\quad 
x_{1}^{2}\,e_{34},\ x_{1}x_{2}\,e_{34},\ x_{1}x_{3}\,e_{34},\ x_{1}x_{4}\,e_{34},\ x_{2}^{2}\,e_{34}
$$
On obtient ainsi la matrice $B^R_{k,d}(\uP)$ de gauche qui dépend des 4 polynômes.
Et encore une fois, on peut constater de visu les miracles triangulaires de la
proposition \ref{BRquasi-conjWh}. Bien entendu, le degré $d=5$ n'a aucun
rôle particulier mais il fallait bien en choisir un pour l'illustration.


\subsubsection{Un second exemple $D=(3,1,2)$, $R= \{1\}$, $d=5$, $h=2$
pour illustrer $\det B_{k,d}^R = \det W_{h,d}$ dans la proposition~\ref{BRquasi-conjWh}}

Cet exemple a été écrit avant le précédent avec l'alibi suivant:
que se passe-t-il lorsque nous permutons les degrés d'un format?
Fixons $k=2$ donc $\#R=k-1=1$ i.e. $R \in \big \{  \{1\}, \{2\}, \cdots, \{n-1\} \big\}$.
Prenons $R = \{1\}$, donc $h = 1 + \max(R) = 2$. Dans ce cas, 
on a $\Elt(R \,; E) = \{\max E\}$ pour tout $E$.
L'isomorphisme $\psi^R : \Mmac^{\{1\}} \to \Jex_2$ et son inverse ont été explicités en fin 
de la remarque~\ref{BigRmq}:
$$
\begin{array}[t]{rcl}
\Mmac^{\{1\}} & \longrightarrow & \Jex_{2} \\ [0.5em]
X^\beta e_j & \longmapsto & X^\beta X_j^{d_j}
\end{array}
\qquad \text{et} \qquad
\begin{array}[t]{rcl}
\Jex_{2} & \longrightarrow & \Mmac^{\{1\}} \\ [0.4em]
X^\alpha & \longmapsto & \dfrac{X^\alpha}{X_j^{d_j}} e_j
\qquad \text{avec $j = \maxDiv(X^\alpha)$} 
\end{array}
$$
Choisissons maintenant $n=3$, $D=(3,1,2)$ et $d=5$. 
Explicitons la correspondance entre les monômes de $\Mmac_{1,5}^{\{1\}}$ et $\Jex_{2,5}$.
Ces monômes sont au nombre de $10$. Voici pour commencer ceux de $\Mmac_{1,5}^{\{1\}}$ que l'on 
regroupe par égalité de $J$ de manière à pouvoir observer la structure triangulaire de $B_{2,5}^{\{1\}}$
$$
X^4e_2,\   X^3Ye_2,\   X^3Ze_2,\
\qquad
X^3e_3,\   X^2Ye_3,\   XY^2e_3,\   XYZe_3,\    Y^3e_3,\   Y^2Ze_3,\  YZ^2e_3
$$
Et leurs correspondants par $\psi^R$ dans $\Jex_{2,5}$ : 
$$
X^4Y,\  X^3Y^2,\  X^3YZ,
\qquad
X^3Z^2,\  X^2YZ^2,\  XY^2Z^2,\  XYZ^3,\   Y^3Z^2,\  Y^2Z^3,\  YZ^4
$$
On va utiliser le système $\uP$ de format $D = (3,1,2)$ :
$$
\begin{tabular}{rcp{14cm}} 
$P_{1}$ & $=$ & $a_{1}X^{3} + a_{2}X^{2}Y + a_{3}X^{2}Z + a_{4}XY^{2} + a_{5}XYZ + a_{6}XZ^{2} + a_{7}Y^{3} + a_{8}Y^{2}Z + a_{9}YZ^{2} + a_{10}Z^{3}$\\ [0.1cm] 
$P_{2}$ & $=$ & $b_{1}X + b_{2}Y + b_{3}Z$\\ [0.1cm] 
$P_{3}$ & $=$ & $c_{1}X^{2} + c_{2}XY + c_{3}XZ + c_{4}Y^{2} + c_{5}YZ + c_{6}Z^{2}$\\ [0.1cm] 
\end{tabular} 
$$

Ci-dessous, voici les matrices des endomorphismes $B_{k,d}^R$ et $W_{h,d}$ dans les bases 
monomiales ci-dessus, dans l'ordre indiqué, avec $\overline {c_i} = -c_i$.
On y voit que les blocs carrés diagonaux 
sont les mêmes, qu'à gauche la matrice est triangulaire supérieure par blocs, et à droite
triangulaire inférieure par blocs, comme prévu par la proposition~\ref{BRquasi-conjWh}
\noindent
\label{ExempleD312R1}
$$
\def\ov{\overline}
\left[ 
\begin{array}{*{3}{c}|*{7}{c}}
a_{1} & . & . & . & \ov{c_{1}} & . & . & . & . & . \\ 
a_{2} & a_{1} & . & . & \ov{c_{2}} & \ov{c_{1}} & . & . & . & . \\ 
a_{3} & . & a_{1} & . & \ov{c_{3}} & . & \ov{c_{1}} & . & . & . \\ 
\cline {1-10}
. & . & . & a_{1} & b_{1} & . & . & . & . & . \\ 
. & . & . & a_{2} & b_{2} & b_{1} & . & . & . & . \\ 
. & . & . & a_{4} & . & b_{2} & . & b_{1} & . & . \\ 
. & . & . & a_{5} & . & b_{3} & b_{2} & . & b_{1} & . \\ 
. & . & . & a_{7} & . & . & . & b_{2} & . & . \\ 
. & . & . & a_{8} & . & . & . & b_{3} & b_{2} & . \\ 
. & . & . & a_{9} & . & . & . & . & b_{3} & b_{2} \\ 
\end{array}
\right]
\quad 
\left[ 
\begin{array}{*{3}{c}|*{7}{c}}
a_{1} & . & . & . & . & . & . & . & . & . \\ 
a_{2} & a_{1} & . & . & . & . & . & . & . & . \\ 
a_{3} & . & a_{1} & . & . & . & . & . & . & . \\ 
\cline {1-10}
. & . & . & a_{1} & b_{1} & . & . & . & . & . \\ 
a_{6} & . & a_{3} & a_{2} & b_{2} & b_{1} & . & . & . & . \\ 
a_{9} & a_{6} & a_{5} & a_{4} & . & b_{2} & . & b_{1} & . & . \\ 
a_{10} & . & a_{6} & a_{5} & . & b_{3} & b_{2} & . & b_{1} & . \\ 
. & a_{9} & a_{8} & a_{7} & . & . & . & b_{2} & . & . \\ 
. & a_{10} & a_{9} & a_{8} & . & . & . & b_{3} & b_{2} & . \\ 
. & . & a_{10} & a_{9} & . & . & . & . & b_{3} & b_{2} \\ 
\end{array}
\right]
$$
Précisément,
les blocs diagonaux de la matrice de gauche sont les 
matrices des $B_{k,d}^{R}[J]$.
Le bloc NO correspond à $J= \{2\}$ et le bloc SE correspond à $J = \{3\}$.
Du côté de $W_{h,d}$, les blocs diagonaux sont indexés par les monômes $X^\alpha$
de même $J = \Elt\big(R \,; \DivSeq(X^\alpha)\big)$ (ici de même $\maxDiv$ car $R = \{1\}$).

\bigskip

$\sbullet$
Pour préparer un tel exemple, il est bon de contrôler le nombre de
monômes dans $\Jex_{2,d}$. Nous avons déterminé (cf. compléments après
la preuve de la proposition~\ref{J1iJ2iSeries}) la série de $\Jex_2$
en tant que fraction rationnelle. Pour $n=3$, $D= (d_1,d_2,d_3)$, avec
les notations du résultat référencé:
$$
\sum_{d\ge 0} \dim \Jex_{2,d}\,t^d = \dfrac{\mathscr N_{1,2}(t) + \mathscr N_{2,2}(t)}{(1-t)^3}
$$
où
$$
\left\{
\begin {array} {rcl}
\mathscr N_{1,2}(t) &=& t^{d_1}\big(1 - (1-t^{d_2})(1-t^{d_3})\big) =
t^{d_1+d_2} + t^{d_1+d_3} - t^{d_1+d_2+d_3}  \\
\mathscr N_{2,2}(t) &=& t^{d_2} (1-t^{d_1}) \big(1 - (1-t^{d_3})\big) =
t^{d_2+d_3}  - t^{d_1+d_2+d_3}  \\
\end {array}
\right.
$$
Ainsi, la série de $\Jex_2$ est la fraction rationnelle :
$$
\sum_{d\ge 0} \dim \Jex_{2,d}\,t^d = \dfrac{t^{d_1+d_2} + t^{d_1+d_3} + t^{d_2+d_3} - 2t^{d_1+d_2+d_3}}{(1-t)^3}
$$
ce qui donne pour $D = (3,1,2)$ :
$$
\dfrac{t^3 (1 + t + t^2 - 2t^3)}{(1-t)^3} \ =\ 
t^3 + 4t^4 + 10t^5 + 17t^6 + 25t^7 + 34t^8 + 44t^9 + 55t^{10} + 67t^{11} + \cdots
$$
Le terme $10t^5$ signifie donc que la base monomiale de $\Jex_{2,5}$ est de cardinal 10.

\bigskip

$\sbullet$
Que se passerait-il si on permutait $D = (3,1,2)$ en $D=(1,2,3)$ ?  
Le nombre de monômes de $\Jex_{2,5}$ est inchangé puisque de manière générale l'idéal
$\Jex_h$ est symétrique en $(d_1, \ldots, d_n)$. En revanche, ce qui change c'est la combinatoire 
au niveau des monônes extérieurs de $\Mmac^R$. Voici ces $10$ monômes extérieurs toujours 
rangés par égalité des $J$ : 
$$
X^3e_2,\  X^2Ye_2,\  X^2Ze_2,\  XY^2e_2,\  XYZe_2,\  XZ^2e_2,\qquad
X^2e_3,\   XYe_3,\    XZe_3,\     Y^2e_3
$$
On remarquera que les monômes de $\Jex_{2,5}$, images des monômes extérieurs ci-dessus par
l'isomorphisme monomial~$\psi^R$, ne sont pas rangés comme dans l'exemple précédent $D = (3,1,2)$ :
$$
X^3Y^2,\  X^2Y^3,\  X^2Y^2Z,\  XY^4,\  XY^3Z,\  XY^2Z^2,\qquad
X^2Z^3,\  XYZ^3,\  XZ^4,\  Y^2Z^3
$$
Pour le système $\uP$ de format $D=(1,2,3)$ : 
$$
\begin{tabular}{rcp{14cm}} 
$P_{1}$ & $=$ & $a_{1}X + a_{2}Y + a_{3}Z$\\ [0.1cm] 
$P_{2}$ & $=$ & $b_{1}X^{2} + b_{2}XY + b_{3}XZ + b_{4}Y^{2} + b_{5}YZ + b_{6}Z^{2}$\\ [0.1cm] 
$P_{3}$ & $=$ & $c_{1}X^{3} + c_{2}X^{2}Y + c_{3}X^{2}Z + c_{4}XY^{2} + c_{5}XYZ + c_{6}XZ^{2} + c_{7}Y^{3} + c_{8}Y^{2}Z + c_{9}YZ^{2} + c_{10}Z^{3}$\\ [0.1cm] 
\end{tabular} 
$$
Voici la matrice de $B_{k,d}^R$ à gauche et celle de $W_{h,d}$ à droite, relativement 
aux bases monomiales ci-dessus :
$$
\def\ov{\overline}
\left[ 
\begin{array}{*{6}{c}|*{4}{c}}
a_{1} & . & . & . & . & . & . & . & . & \ov{c_{1}} \\ 
a_{2} & a_{1} & . & . & . & . & . & . & . & \ov{c_{2}} \\ 
a_{3} & . & a_{1} & . & . & . & . & . & . & \ov{c_{3}} \\ 
. & a_{2} & . & a_{1} & . & . & . & . & . & \ov{c_{4}} \\ 
. & a_{3} & a_{2} & . & a_{1} & . & . & . & . & \ov{c_{5}} \\ 
. & . & a_{3} & . & . & a_{1} & . & . & . & \ov{c_{6}} \\ 
\cline {1-10}
. & . & . & . & . & . & a_{1} & . & . & b_{1} \\ 
. & . & . & . & . & . & a_{2} & a_{1} & . & b_{2} \\ 
. & . & . & . & . & . & a_{3} & . & a_{1} & b_{3} \\ 
. & . & . & . & . & . & . & a_{2} & . & b_{4} \\ 
\end{array}
\right]
\qquad
\left[ 
\begin{array}{*{6}{c}|*{4}{c}}
a_{1} & . & . & . & . & . & . & . & . & . \\ 
a_{2} & a_{1} & . & . & . & . & . & . & . & . \\ 
a_{3} & . & a_{1} & . & . & . & . & . & . & . \\ 
. & a_{2} & . & a_{1} & . & . & . & . & . & . \\ 
. & a_{3} & a_{2} & . & a_{1} & . & . & . & . & . \\ 
. & . & a_{3} & . & . & a_{1} & . & . & . & . \\ 
\cline {1-10}
. & . & . & . & . & . & a_{1} & . & . & b_{1} \\ 
. & . & . & . & . & . & a_{2} & a_{1} & . & b_{2} \\ 
. & . & . & . & . & . & a_{3} & . & a_{1} & b_{3} \\ 
. & . & . & . & . & a_{3} & . & a_{2} & . & b_{4} \\ 
\end{array}
\right]
$$
Le miracle triangulaire des blocs a toujours lieu\dots

\subsection{Décomposition du sous-module de Macaulay en idéaux excédentaires}

En accord avec la détermination du poids en $P_i$ de $\det
B_{k,d}(\uP)$ (cf. la proposition \ref{BkdPoidsPiSerie} et aux
alentours), nous avons défini un sous-$\bfA$-module monomial de
$\Mmac_{k-1}$, noté $\Mmac_{k-1}^{(i)}$:
$$
\Mmac_{k-1}^{(i)} = \text{sous-module de base les 
$X^\beta e_J \in \Mmac_{k-1}$ tels que $\minDiv(X^\beta) = i$}
$$
La vocation de ce sous-module monomial est de vérifier:
$$
\begin {array} {rcl}
\poids_{P_i}(\det B_{k,d}) &=& \dim \Mmac^{(i)}_{k-1,d} 
\\
&=& \#\big\{X^\beta e_J \in \Mmac_{k-1,d} \ \big | \ \minDiv(X^\beta) = i \big\}
\\
&=& 
\#\big\{ X^\alpha e_I \in \Smac_{k,d} \ \big | \ \min(I) = i \big\}
\\
\end {array}
$$

\begin{prop}\label{DecompositionMkminus1ParJh}
Pour $k \geqslant 1$, le sous-module de Macaulay $\Mmac_{k-1}$ est monomialement
isomorphe à la somme directe d'idéaux $\Jex_h$ pour $h \geqslant k$ :
$$
\Mmac_{k-1} \ \simeq \ 
\bigoplus\limits_{h = k}^n  \ \Jex_h^{\oplus e_{k,h}}
\qquad 
\text{où \  $e_{k,h} = \binom{h-2}{h-k}$}
$$
Pour se souvenir de l'ordre des indices dans $e_{\sbullet,\sbullet}$:
le second indice est moralement plus grand que le premier et, dans les
formules, le premier est fixe, le second variable.

\noindent
Par exemple, 
$$
\Mmac_1 \ \simeq \
\Jex_2 \oplus \cdots \oplus \Jex_n 
\qquad 
\Mmac_2 \ \simeq \ 
\Jex_3 \oplus \Jex_4^{\oplus 2} \oplus  \cdots \oplus \Jex_n^{\oplus(n-2)} 
\qquad 
\Mmac_{n-2} \ \simeq \ \Jex_{n-1} \oplus \Jex_n^{\oplus(n-2)} 
\qquad 
\Mmac_{n-1} \, \simeq \, \Jex_n
$$
De plus, pour tout $i \in \{1..n\}$ et tout $d$, on a l'égalité des dimensions des $\bfA$-modules :
$$
\dim \Mmac_{k-1,d}^{(i)} \ = \ 
\sum\limits_{h = k}^n  \ e_{k,h} \, \dim \Jex_{h,d}^{(i)} 
\qquad \qquad 
\dim \Jex_{h,d}^{(i)} \ = \ 
\sum_{k=h}^n \ (-1)^{k-h} e_{h,k} \dim \Mmac_{k-1,d}^{(i)} 
$$
\end{prop}

\begin{proof}
Rappelons que l'on a 
$$
\Mmac_{k-1} = 
\bigoplus_{\#R = k-1} \Mmac^R
\qquad \text{et, \ pour $R \subset \{1..n\}$,} \quad 
\Mmac^R \ \simeq \ \Jex_h \quad \text{avec $h = 1+\max R$}
$$
Pour $R \not \subset \{1..n\moins1\}$, on a $\Mmac^R = 0$,
ce qui est dû au fait que $n\in R$ d'où $h = n+1$ donc $\Jex_h = 0$.
Pour les parties $R \subset \{1..n\moins1\}$, on est invité 
à les ranger suivant la valeur de leur maximum.
Or pour un entier $h \geqslant k$ fixé, 
il y a $\binom{h-2}{k-2} = \binom{h-2}{h-k}$ parties $R \subset \{1..n\moins1\}$
de cardinal $k-1$ et de maximum $h-1$.
De manière pédante, cela se traduit par la correspondance :
$$
\begin{array}{rcl}
\calP_{k-1}\big(\{1..n\moins1\}\big) & \longrightarrow & 
\bigvee\limits_{h=k}^n \ \Big( \calP_{k-2}\big(\{1..h\moins2\}\big) \times \{h\} \Big)\\ [1em]
R & \longmapsto &  \big(R \setminus \max R,\ \, 1+\max R\big) \\ [0.5em]
Q \vee \{h \moins1\} & \longleftmapsto & (Q,h)
\end{array}
$$
Ainsi
$$
\Mmac_{k-1} \ = \ 
\bigoplus_{h = k}^n \bigoplus_{\#Q = k-2 \atop Q \subset \{1..h-2\}} 
\Mmac^{Q \vee\{h-1\}}
\ \ \simeq \ \ 
\bigoplus\limits_{h = k}^n  \ \Jex_h^{\oplus \binom{h-2}{h-k}}
$$
Prouvons les égalités sur les dimensions.
L'isomorphisme $\psi^R : \Mmac^R \simeq \Jex_h$ explicité en~\ref{psiRiso} 
conserve $\minDiv$ et le degré.
Ainsi 
$\Mmac_{k-1,d}^{(i)} \simeq  \bigoplus\limits_{h = k}^n  \big(\Jex_{h,d}^{(i)}\big)^{\oplus e_{k,h}}$.
On en déduit la première égalité sur les dimensions.
La deuxième égalité sur les dimensions provient de l'inversion de la matrice binomiale 
(pour les détails, confer la preuve de~\ref{DetWhdQuotBinomialDetBkd}).
\end{proof}

\begin{rmq}
Les isomorphismes entre $\bfA[\uX]$-modules qui interviennent dans
cette proposition, qui sont des isomorphismes de $\bfA$-modules
$\bbN^n$-gradués, ne sont en aucun cas des isomorphismes de $\bfA[\uX]$-modules,
sauf cas exceptionnel. Vérifions par exemple que $\Mmac_1$ et 
$\Jex_2 \oplus \cdots \oplus \Jex_n$ ne sont pas $\bfA[\uX]$-isomorphes sauf dans le cas très
particulier de $n= 2$. Le $\bfA[\uX]$-module $\Mmac_1$ est
minimalement engendré par les $\binom{n}{2}$ générateurs 
$X_i^{d_i}\, e_j$ avec $i < j$, \og minimal \fg{} ayant le sens très précis suivant : 
le $\bfA$-module $\Mmac_1/(\bfA[\uX]_+ \Mmac_1)$ est libre de rang~$\binom{n}{2}$,
avec comme base les classes des générateurs ci-dessus. 

Quant au $\bfA[\uX]$-module $\Jex_h$, il est minimalement engendré
par les~$\binom{n}{h}$ générateurs $X^{D(I)}$ avec ${\#I=h}$. 
Avec la précision sur la minimalité, le $\bfA$-module $\Jex_h/(\bfA[\uX]_+\Jex_h)$ est libre de rang
$\binom{n}{h}$, avec comme base les classes des~$X^{D(I)}$. 
Ainsi :
$$
\dim_\bfA \dfrac{\Mmac_1}{\bfA[\uX]_+ \Mmac_1} = \binom{n}{2}
\qquad \text{et} \qquad
\dim_\bfA \frac{\Jex_2\oplus\cdots\oplus\Jex_n}{\bfA[\uX]_+(\Jex_2\oplus\cdots\oplus\Jex_n)} =
\binom{n}{2} + \binom{n}{3} + \cdots + \binom{n}{n}
$$
Par conséquent, les deux $\bfA[\uX]$-modules en cause sont
$\bfA[\uX]$-isomorphes seulement dans le cas $n=2$.
\end{rmq}

\begin{prop}\label{DecompositionKkdParJhd}
On dispose d'un isomorphisme de $\bfA$-modules : 
$$
\rmK_{k,d}
\ \simeq \ 
\bigoplus\limits_{h = k}^n  \ \Jex_{h,d}^{\oplus e'_{k,h}}
\qquad 
\text{où \  $e'_{k,h} = \binom{h-1}{h-k}$}
$$
\end{prop}

\begin{proof}
En utilisant que $\rmK_{k,d} = \Mmac_{k,d} \oplus \Smac_{k,d} 
\simeq \Mmac_{k,d} \oplus \Mmac_{k-1,d}$, 
on obtient :
$$
\rmK_{k,d} 
\quad \simeq \quad 
\bigoplus\limits_{h = k+1}^n  \ \Jex_{h,d}^{\oplus e_{k+1,h}}
\ \oplus \ 
\bigoplus\limits_{h = k}^n  \ \Jex_{h,d}^{\oplus e_{k,h}}  
\quad = \quad 
\bigoplus\limits_{h = k}^n  \ \Jex_{h,d}^{\oplus e_{k+1,h}}
\ \oplus \ 
\bigoplus\limits_{h = k}^n  \ \Jex_{h,d}^{\oplus e_{k,h}} \\
$$
L'égalité de droite provient du fait que $e_{k+1,k} = 0$.
On conclut en remarquant que $e_{k+1,h} + e_{k,h} = e'_{k,h}$.
\end{proof}

\subsection{Relations binomiales déterminantales}

Il est temps de récolter le fruit de notre travail.
Ici, on fait intervenir l'endomorphisme~$B_k = B_k(\uP)$ uniquement sur 
la \Rdecomposition{} de $\Mmac_{k-1}$.

\begin{prop} \label{BestTriangulaireEnR}
L'endomorphisme $B_k$ est triangulaire relativement à 
$\Mmac_{k-1} = \bigoplus\limits_{\#R=k-1} \Mmac^R$, le sens de la relation d'ordre
étant le suivant:
$$
B_k\big(\Mmac^R\big) 
\ \subset \ 
\bigoplus_{R' \succcurlyeq R}\,  \Mmac^{R'}
$$
En conséquence:
$$
\det B_{k,d} =  \prod_{\#R = k-1} \det B^R_{k,d}
$$
\end{prop}

\begin{proof}
Commençons par la fin i.e. par le \og En conséquence \fg{}.
Est-il utile de préciser que la stratégie triangulaire a encore frappé
et que l'égalité déterminantale découle de~\ref{DecompositionTriangulaireDef}?

\medskip

Occupons-nous maintenant de la structure triangulaire.
Soit ${X^\beta e_J \in \Mmac^R}$. 
Notons $i = \minDiv(X^\beta)$ de sorte que  $i < \min J$.
D'après~\ref{ExpressionBkXbetaeJ}, $B_k(X^\beta e_J)$ est 
une combinaison $\bfA$-linéaire de monômes extérieurs de $\Mmac_{k-1}$ du type 
$$
X^{\beta'} e_{J'} 
\ \overset{\rm def}{=} \ 
\dfrac{X^\beta}{X_i^{d_i}} \ X^\gamma \ 
e_{(i \vee J)\setminus j} \
\qquad 
\text{avec \quad $j \in i \vee J$ \quad et \quad $X^\gamma$ un certain monôme de $P_j$}
$$
Prenons $X^{\beta'} e_{J'}$ un tel monôme ; il est dans un unique $\Mmac^{R'}$ et 
l'objectif est de montrer que $R' \succcurlyeq R$.
Rappelons que 
$$
R \ = \ \Ind\big(J \,; \DivSeq(X^{\alpha})\big) 
\qquad 
R' \ = \ \Ind\big(J' \,; \DivSeq(X^{\alpha'})\big) 
\qquad 
\text{%
où $X^\alpha 
\ \overset{\rm def}{=} \ 
X^{\beta} X^{D(J)}$ 
et 
$X^{\alpha'} 
\ \overset{\rm def}{=} \ 
X^{\beta'} X^{D(J')}$ 
}
$$
Dans la définition de $X^{\alpha'}$, en remplaçant $X^{\beta'}$ par sa valeur puis $X^\beta$ par $X^\alpha/X^{D(J)}$:
$$
X^{\alpha'} \ = \ \dfrac{X^{D(J')} X^\alpha}{X^{D(J)}\,X_i^{d_i}} X^\gamma
\qquad \text{qui conduit à} \qquad
X^{\alpha'} \overset {\textstyle (\star)}{=}
\left\{
\begin {array}{cl}
\dfrac{X^\alpha}{X_i^{d_i}} X^\gamma   &\text{si $j=i$ \idest{}  $J'=J$} \\[5mm]
\dfrac{X^\alpha}{X_j^{d_j}} X^\gamma   &\text{si $j\in J$ \idest{} $J'= i\vee J \setminus j$} \\
\end {array}
\right.
$$
On veut montrer que $R \preccurlyeq R'$. Pour cela, on introduit 
la partie finissante $F = \DivSeq(X^\alpha)_{\geqslant \min J}$ de sorte que $R = \Ind(J \,; F)$ 
d'après la propriété (A) du lemme~\ref{EltVersusInd}. Idem avec des primes. 
Il ne reste plus qu'à montrer que $\Ind(J \,; F) \preccurlyeq \Ind(J' \,; F')$. On distingue deux cas 
suivant la valeur de $j \in i \vee J$.

\smallskip
\noindent
$\rhd$
Le cas $j= i$. 
Grâce à l'égalité $(\star)$, on a $\DivSeq(X^\alpha) \setminus i \subset \DivSeq(X^{\alpha'})$.
Comme $i < \min J$, on obtient $F \subset \DivSeq(X^{\alpha'})$ donc $F \subset F'$.
On donne un coup de $\Ind(J \,; \sbullet)$ et on obtient, grâce à la propriété (B) du lemme~\ref{EltVersusInd}, 
$\Ind(J \,; F) \preccurlyeq \Ind(J' \,; F')$.

\smallskip
\noindent
$\rhd$
Le cas $j \in J$.
Dans ce cas, $i < \min F$ (car $\min F = \min J$).
Le point~(C) du lemme~\ref{EltVersusInd} donne donc
$$
\Ind(J \,; F) \preccurlyeq \Ind(J' \,; i \vee F \setminus j)
$$ 
Ensuite, en reprenant l'égalité $(\star)$, on a 
$\DivSeq(X^\alpha) \setminus j \subset \DivSeq(X^{\alpha'})$.
Comme $i \in \DivSeq(X^\alpha) \setminus j$ et $\min J' = i < \min J$, on en déduit 
$i \vee F \setminus j \subset F'$.
Le point (B) du lemme~\ref{EltVersusInd} fournit alors
$$
\Ind(J' \,; i \vee F \setminus j) \preccurlyeq \Ind(J' \,; F')
$$
En combinant les deux $\preccurlyeq$-inégalités, on récupère donc $\Ind(J \,; F) \preccurlyeq \Ind(J' \,; F')$.
\end{proof}

\begin{theo}[$\det B_{k,d}$ comme produit binomial des $(\det W_{h,d})_{h\ge k}$]
\label{DetBkdProdBinomialDetWhd}
Soit $d \in \bbN$.

\begin {enumerate} [\rm i)]
\item
On dispose de l'identité algébrique :
$$
\det B_{k,d} \ = \ 
\prod_{h = k}^n \ \big(\det W_{h,d}\big)^{e_{k,h}}
\qquad 
\text{où \  $e_{k,h} = \binom{h-2}{h-k}$} 
$$
En particulier pour $k=1,2$:
$$
\det B_{1,d} = \det W_{1,d}, \qquad\qquad
\det B_{2,d} = \det W_{2,d}\,\det W_{3,d}\,\cdots \det W_{n,d}
$$
et pour $k = 3$:
$$
\det B_{3,d} = \prod_{h=3}^n \ \big(\det W_{h,d}\big)^{h-2} =
(\det W_{3,d})^1\, (\det W_{4,d})^2\, (\det W_{5,d})^3 \cdots
$$

\item
On a une égalité analogue dans $\bfA[t]$ pour les polynômes caractéristiques 
$$
\chi_{_{B_{k,d}}}(t) = 
\prod_{h = k}^n  \Big(\chi_{_{W_{h,d}}}(t)\Big)^{e_{k,h}}
$$
En considérant les degrés des polynômes caractéristiques, on obtient l'égalité dimensionnelle
$$
\dim \Mmac_{k-1,d} = \sum_{h=k}^n e_{k,h}\,\dim \Jex_{h,d}
$$
\end {enumerate}
\end{theo}

\index{relations binomiales!entre $\det B_{k,d}$ et les $(\det W_{h,d})_{h\ge k}$}%

\begin{proof} \leavevmode

i)  
D'après~\ref{BestTriangulaireEnR}, on a 
$$
\det B_{k,d} \ = \ 
\prod_{\#R = k-1} \det B_{k,d}^R 
$$
D'après~\ref{BRquasi-conjWh}, on a
$\det B_{k,d}^R = \det W_{h,d}$ où $h = 1+\max R \in \{k..n\}$.
Supposons $k \geqslant 2$.
Pour $h$ fixé, il y a $\binom{h-2}{k-2}$ parties $R \subset \{1..n\moins1\}$ de cardinal $k-1$ et 
de maximum $h-1$. Ce coefficient binomial vaut aussi $e_{k,h} = \binom{h-2}{h-k}$.
Pour $k = 1$, combien de telles parties $R$ ?
On doit avoir $R = \emptyset$ et $h = 1 + \max R$ vaut $1 + 0$ par convention.
Cela donne $1$ pour $h = 1$ et $0$ sinon, ce qui est exactement le coefficient binomial~$e_{1,h}$.

\medskip
ii) Considérer $t\,X_i^{d_i} - P_i$ au lieu de $P_i$.
\end{proof}

La formule suivante est symbolique (comme pratiquement toutes celles de cette étude) : 
l'écriture $a = \dfrac{b}{c}$ code l'égalité $a \times c = b$.
%
\begin{prop} [$\det W_{h,d}$ comme quotient alterné binomial des $(\det B_{k,d})_{k \ge h}$]
\label{DetWhdQuotBinomialDetBkd}
Pour tout entier $d$, on a l'identité algébrique :
$$
\det W_{h,d} \ = \ 
\prod_{k \geqslant h} \ \big(\det B_{k,d}\big)^{(-1)^{k-h} e_{h,k}}
$$
En particulier, pour $h = 1,2$: 
$$
\det W_{1,d} \ =\  \det B_{1,d},
\qquad \quad 
\det W_{2,d}
\ = \ 
\dfrac{\det B_{2,d} \,\det B_{4, d} \, \cdots}
{\det B_{3, d} \, \det B_{5, d} \, \cdots}
$$
Et pour $h=3$:
$$
\det W_{3,d}
\ = \
\prod_{k \geqslant 3} \ \big(\det B_{k,d}\big)^{(-1)^{k-3} (k-2)}
\ = \
\dfrac{(\det B_{3,d})^1 \, (\det B_{5, d})^3 \, \cdots}
{(\det B_{4, d})^2 \, (\det B_{6, d})^4 \, \cdots}
$$
\end{prop}

\index{relations binomiales!entre $\det W_{h,d}$ et les $(\det B_{k,d})_{k\ge h}$}%

\begin{proof}
Nous allons utiliser la matrice dite binomiale qui figure dans
la proposition~\ref{LaMatriceBinomiale} suivante.  C'est la
matrice carrée indexée par $\{-1..n-2\}$ dont le coefficient
$(i,j)$ est $\binom{j}{j-i}$ et dont l'inverse a pour terme
$(-1)^{j-i}\binom{j}{j-i}$.

Ici, la matrice carrée qui intervient est indexée par $\{1..n\}$ et son
coefficient $(k,h)$ est $e_{k,h} = \binom{h-2}{h-k}$.  Il suffit
d'effectuer la translation $j=h-2$ et $i=k-2$.
Le terme $(h,k)$ de l'inverse est donc $(-1)^{k-h} e_{h,k}$.
\end{proof}

\subsubsection{La matrice binomiale}

Rappelons (cf. le chapitre~\ref{ObjetsSuiteP}, avant la proposition~\ref{dminKk})
que nous avons adopté  la définition suivante pour le coefficient binomial, cf.~\cite[p. 154]{GKP}.
Pour $u,x \in \bbZ$:
$$
\binom{x}{u} \ = \ 
\left\{
\begin{array}{ll}
\dfrac{x(x-1)\cdots(x-u+1)}{u(u-1) \cdots1} & \text{si $u \geqslant 0$} \\
0 & \text{si $u<0$} \\
\end{array}
\right.
$$
En particulier, pour tout $x \in \bbZ$, on a  $\binom{x}{0} = 1$, $\binom{x}{1} = x$ et $\binom{x}{x} = 0$
si $x < 0$, $1$ si $x \ge 0$.
Pour $u \in \bbN$ et $x$ une indéterminée sur $\bbQ$, la formule ci-dessus
définit le polynôme $\binom{x}{u} \in \bbQ[x]$, qui est un polynôme en $x$ de degré~$u$.

\begin{prop}\label{LaMatriceBinomiale}
Soient $a, b \in \mathbb Z$ tels que $a \leqslant b$ et $\calE$ l'intervalle $\{a,\dots,b\}$.
La matrice $\calE \times \calE$ dont le coefficient $(i,j)$ vaut $\binom{j}{j-i}$ est inversible, 
et le coefficient $(i,j)$ de son inverse vaut $(-1)^{j-i} \binom{j}{j-i}$.
\end{prop}

Avant de proposer la preuve dans le cas où $a \geqslant -1$, montrons plusieurs exemples avec $b=6$.
Ci-dessous, respectivement $a=-1$, $a=0$ et $a=2$ :
$$
\left[
\begin{array}{*{8}{c}}
1& .& .& .& .& .& .& . \\ 
.& 1& 1& 1& 1& 1& 1& 1 \\ 
.& .& 1& 2& 3& 4& 5& 6 \\ 
.& .& .& 1& 3& 6& 10& 15 \\ 
.& .& .& .& 1& 4& 10& 20 \\ 
.& .& .& .& .& 1& 5& 15 \\ 
.& .& .& .& .& .& 1& 6 \\ 
.& .& .& .& .& .& .& 1 \\ 
\end{array}
\right]
\qquad
\left[
\begin{array}{*{7}{c}}
1& 1& 1& 1& 1& 1& 1 \\ 
.& 1& 2& 3& 4& 5& 6 \\ 
.& .& 1& 3& 6& 10& 15 \\ 
.& .& .& 1& 4& 10& 20 \\ 
.& .& .& .& 1& 5& 15 \\ 
.& .& .& .& .& 1& 6 \\ 
.& .& .& .& .& .& 1 \\ 
\end{array}
\right]
\qquad 
\left[
\begin{array}{*{5}{c}}
1& 3& 6& 10& 15 \\ 
.& 1& 4& 10& 20 \\ 
.& .& 1& 5& 15 \\ 
.& .& .& 1& 6 \\ 
.& .& .& .& 1 \\ 
\end{array}
\right]
$$
Les matrices de droite sont des sous-matrices de la matrice de gauche.
L'inverse pour $a=-1$ est donné ci-dessous avec la convention $\overline {m} = -m$.
On en déduit l'inverse pour $a=0$ et pour $a=2$ en prenant la sous-matrice ad-hoc.
$$
\def\ov{\overline}
\left[
\begin{array}{*{8}{c}}
1& .& .& .& .& .& .& . \\ 
.& 1& \ov1& 1& \ov1& 1& \ov1& 1 \\ 
.& .& 1& \ov2& 3& \ov4& 5& \ov6 \\ 
.& .& .& 1& \ov3& 6& \ov{10}& 15 \\ 
.& .& .& .& 1& \ov4& 10& \ov{20} \\ 
.& .& .& .& .& 1& \ov5& 15 \\ 
.& .& .& .& .& .& 1& \ov6 \\ 
.& .& .& .& .& .& .& 1 \\ 
\end{array}
\right]
$$

\begin{proof}[Preuve pour $a \geqslant -1$]
Le cas où $a = 0$ correspond à la matrice de 
l'endomorphisme $P(X) \mapsto P(X+1)$ dans la base canonique $(1,X, \dots, X^{b})$:
$$
(X+1)^j = \sum_{i=0}^j \binom {j}{i} X^i = \sum_{i=0}^b \binom {j}{j-i} X^i,
\qquad  0 \le j \le b
$$
Cet endomorphisme est bijectif d'inverse $P(X) \mapsto P(X-1)$. 
La formule $(X-1)^j = \sum_{i=0}^j \binom{j}{j-i} (-1)^{j-i} X^i$ permet 
d'obtenir le coefficient $(i,j)$ de l'inverse.

La matrice pour $a = -1$ est obtenue en rajoutant à la matrice précédente
un bloc diagonal de taille $1 \times 1$ constitué de $\binom{-1}{0} = 1$, donc 
cela ne modifie pas l'inversibilité et l'inverse est facile à calculer à partir 
de l'inverse de la matrice précédente.
\end{proof}

Nous avons limité la preuve au cas $a \ge -1$ car nous n'avons besoin que de ce cas. Nous
laissons le soin au lecteur de traiter le cas où $a$ est \og très négatif\fg{} en lui montrant
la matrice pour $a=-5$ ($b$ étant toujours égal à $6$):
$$
\def\ov{\overline}
\left[
\begin{array}{*{12}{c}}
1& \ov4& 6& \ov4& 1& .& .& .& .& .& .& . \\ 
.& 1& \ov3& 3& \ov1& .& .& .& .& .& .& . \\ 
.& .& 1& \ov2& 1& .& .& .& .& .& .& . \\ 
.& .& .& 1& \ov1& .& .& .& .& .& .& . \\ 
.& .& .& .& 1& .& .& .& .& .& .& . \\ 
.& .& .& .& .& 1& 1& 1& 1& 1& 1& 1 \\ 
.& .& .& .& .& .& 1& 2& 3& 4& 5& 6 \\ 
.& .& .& .& .& .& .& 1& 3& 6& 10& 15 \\ 
.& .& .& .& .& .& .& .& 1& 4& 10& 20 \\ 
.& .& .& .& .& .& .& .& .& 1& 5& 15 \\ 
.& .& .& .& .& .& .& .& .& .& 1& 6 \\ 
.& .& .& .& .& .& .& .& .& .& .& 1 \\ 
\end{array}
\right]
$$



Le lemme ci-dessous va être utilisé dans le théorème suivant pour
fournir une expression binomiale multiplicative de $\Delta_{k,d}$ (où
$k \ge 1$) en fonction des $(\det W_{h,d})_{h \ge k}$.

\begin{lem} \label{JolieEgaliteCoeffsBinomiaux} 
Pour $p \in \bbN$, on a l'identité polynomiale en $x$
$$
\binom {x}{p} \ =\ (-1)^p \sum_{i=0}^p (-1)^i\binom {x+1}{i}
$$
\end{lem}

\begin{proof}
En utilisant la relation de Pascal, on obtient une somme télescopique :
$$
\sum_{i=0}^p (-1)^i \binom{x+1}i
\ = \ 
\sum_{i=0}^p (-1)^i\binom{x}i
- 
\sum_{i=0}^p (-1)^{i-1} \binom{x+1}{i-1}
\ = \ 
(-1)^p \binom{x}p
$$
\end{proof}

Nous pouvons enfin énoncer un des résultats les plus importants de ce
chapitre.  Il convient d'y corriger un petit mensonge dans le titre
car l'expression correspondant à $k=1$ est $\Delta_{1,d} = \det
W^\sigma_{1,d} / \det W^\sigma_{2,d}$, que l'on ne peut pas vraiment
qualifier de produit comme dans le cas $k \ge 2$.

\index{relations binomiales!z@entre $\Delta_{k,d}$ et les $(\det W_{h,d})_{h\ge k}$}%

\begin{theo} [$\Delta_{k,d}$ comme produit binomial des $(\det W_{h,d})_{h \ge k}$]
\label{DeltakdProdBinomialDetWhd}
Pour $\sigma \in \mathfrak S_n$ et $k \geqslant 1$, on dispose de la formule close suivante :
$$
\Delta_{k,d}^\sigma \ = \ 
\prod_{h \geqslant k} \ \big(\!\det W_{h,d}^\sigma\big)^{f_{k,h}}
\qquad 
\text{où \  $f_{k,h} = \binom{h-3}{h-k}$}
$$
En particulier pour $k=1$, on a:
$$
\Delta_{1,d} = \det W^\sigma_{1,d} (\det W^\sigma_{2,d})^{-1}
\qquad \text{ou encore, sans division} \qquad
\det W^\sigma_{1,d} = \Delta_{1,d}\,\det W^\sigma_{2,d}
$$
Un autre cas particulier est extrêment important, c'est celui de $k =
2$ : $\boxed{\Delta_{2,d}^\sigma = \det W_{2,d}^\sigma}$.

Et pour $k=3$, nous avons:
$$
\Delta_{3,d}^\sigma =
\det W_{3,d}^\sigma\, \det W_{4,d}^\sigma \cdots \det W_{n,d}^\sigma 
$$
Note: la première formule, quitte à assurer un petit décalage, peut
être écrite à l'aide du coefficient binomial habituel $e_{k,h}$ de la
manière suivante, avec cette fois $k \geqslant 0$:
$$
\Delta_{k+1,d}^\sigma \ = \ 
\prod_{h \geqslant k} \ \big(\!\det W_{h+1,d}^\sigma\big)^{e_{k,h}}
\qquad 
\text{où \  $e_{k,h} = \binom{h-2}{h-k}$}
$$
\end{theo}

\label {NOTA15-fkh}%
%
%

\begin{proof}
Presque par définition (cf chapitre~\ref{ComplexeDecompose},
en particulier la proposition~\ref{SuiteDeltak} et la section~\ref{SectionDeltakdMacaulay})
le scalaire $\Delta_{k,d}$ est un quotient alterné de $\det B_{k',d}$ avec $k' \geqslant k$
$$
\Delta_{k,d}
\ = \ 
\dfrac{\det B_{k,d}\,\det B_{k+2,d}\, \cdots}
{\det B_{k+1,d}\, \det B_{k+3,d}\, \cdots}
$$
Or, d'après~\ref{DetBkdProdBinomialDetWhd}, on a 
$\det B_{k,d} = \prod\limits_{h \geqslant k} (\det W_{h,d})^{e_{k,h}}$ avec 
$e_{k,h} = \binom{h-2}{h-k}$.
On obtient donc:
$$
\Delta_{k,d} = \prod\limits_{h \geqslant k} (\det W_{h,d})^{f_{k,h}}
\qquad \text{avec} \qquad
f_{k,h} \ =\  (e_{k,h} + e_{k+2,h} + \cdots ) \ - \ 
(e_{k+1,h} + e_{k+3,h} + \cdots )
$$
Réécrivons $f_{k,h}$:
$$
f_{k,h} \ = \ 
\sum_{j = k}^h (-1)^{j-k} \, \binom{h-2}{h-j}
\ = \ 
(-1)^{h-k} 
\sum_{i = 0}^{h-k} (-1)^{i} \, \binom{h-2}{i}
$$
Or la somme de droite vaut $\binom{h-3}{h-k}$, 
d'après le lemme~\ref{JolieEgaliteCoeffsBinomiaux} 
dans lequel on a effectué la substitution $p = h-k$ et $x = h-3$.
Bilan: $f_{k,h} = \binom{h-3}{h-k}$,  ce qui termine la preuve
de la première égalité.

\medskip

Pour $k = 1$, voici les $(f_{k,h})_{h\ge k}$ 
$$  
f_{1,1} = 1, \qquad  f_{1,2} = -1, \qquad f_{1,h} = 0 \quad \forall h \ge 3
$$
Ceci conduit à l'égalité annoncée à condition d'utiliser l'égalité
$\Delta^\sigma_{1,d} = \Delta_{1,d}$, valide pour tout $d \in \bbN$ et
tout $\sigma \in \fS_n$ (cf.~\ref{EgaliteDelta1dDelta1dsigma}).

\smallskip

Le cas particulier $k=2$ résulte du fait que $f_{2,2} = 1$ et $f_{2,h} = 0$ pour $h \ge 3$.
Quant à $k=3$, on a $f_{3,h} = 1$ pour $h \ge 3$.

\medskip
La dernière égalité, assez anecdotique, s'obtient en posant dans la première $k=k'+1$, $h=h'+1$
et en utilisant $f_{k'+1,h'+1} = e_{k',h'}$.
\end{proof}

\medskip

On remarque que cette formule close ne présente pas de dénominateur
pour $k \geqslant 2$ et permet donc de
déterminer~$\Delta_{k,d}^\sigma$ pour toute suite $\uP$, sans problème
de régularité et/ou division par zéro.

\smallskip
En revanche, pour $k=1$, intervient une division; nous pouvons
cependant formuler le résultat sans division comme nous l'avons fait
dans le théorème.  Vu l'importance du cas $k=1$, nous lui
consacrons le corollaire suivant ne présentant rien de nouveau vu
qu'il est contenu dans le théorème précédent.  Nous insistons encore
sur le fait d'exprimer symboliquement $\Delta_{1,d}$ à l'aide d'une
famille de quotients indexée par $\fS_n$.  Enfin, pour ceux qui
n'ont pas apprécié le coup du coefficient binomial $f_{k,h}$ poussé dans
ses retranchements pour $k=1$, nous fournissons un autre argument
d'une ligne.

\medskip

\begin {coro}
\label{DetW2diviseDetW1}
Pour tout $d$:
$$
\det W_{1,d} = \Delta_{1,d}\, \det W_{2,d}
$$
Plus généralement, pour tout $\sigma \in \fS_n$:
$$
\det W^\sigma_{1,d} = \Delta_{1,d}\, \det W^\sigma_{2,d}
$$
Ce que l'on écrit de manière symbolique:
$$
\Delta_{1,d} = \frac {\det W_{1,d}}{\det W_{2,d}} = \frac{\det W^\sigma_{1,d}} {\det W^\sigma_{2,d}}
$$  
En particulier, $\det W^\sigma_{2,d} \mid \det W^\sigma_{1,d}$.
\end {coro}

\begin {proof}
Résulte de $\det W_{1,d} = \det B_{1,d} = \Delta_{1,d} \Delta_{2,d}$ et de l'égalité
$\Delta_{2,d} = \det W_{2,d}$.
Idem pour la $\sigma$-version.
\end {proof}

\medskip

On peut désormais remplacer $\Delta_{2,d}$ par $\det W_{2,d}$ dans les
énoncés antérieurs, par exemple dans la définition/proposition
\ref{OmegaDetNumerator} relative à $\Det_d$.  L'énoncé ci-dessous
permet de mettre en veilleuse la section
\ref{sousSectionMacaulayRecurrence}. Encore une fois (ce n'est
probablement pas la dernière), nous utilisons, à propos des quotients,
le petit couplet \og écriture symbolique\fg.

\begin {theo} [\og Formule de Macaulay généralisée\fg]
\label {MegaMacaulayTheorem}
\leavevmode

Soit $\sigma \in \fS_n$. Pour $d \in \bbN$:
$$
\Det_d(f) = \frac{\det\big(\bsOmega_d(f)\big)}{\det W_{2,d}} =
\frac{\det\big(\bsOmega^\sigma_d(f)\big)}{\det W^\sigma_{2,d}}
\qquad \forall\ f \in \AXdSod
$$
manière symbolique d'écrire les égalités sans dénominateurs:
$$
\det\big(\bsOmega_d(\sbullet)\big) = \det W_{2,d} \Det_d(\sbullet), \qquad
\det\big(\bsOmega^\sigma_d(\sbullet)\big) = \det W^\sigma_{2,d} \Det_d(\sbullet)
$$
Bien entendu, pour $d=\delta$, les paroles relatives à $\omegares :
\bfA[\uX]_\delta \to \bfA$ sont les suivantes:
$$
\omegares = \dfrac{\omega}{\det W_{2,\delta}} =
\dfrac{\omega^\sigma}{\det W^\sigma_{2,\delta}}
$$
Et en degré $d \ge \delta+1$, on commence à connaître la musique concernant l'expression $\calR_d$
de $\Res(\uP)$:
$$
\calR_d = \dfrac{\det W_{1,d}}{\det W_{2,d}}  = \dfrac{\det W^\sigma_{1,d}}{\det W^\sigma_{2,d}}
$$
\end {theo}  

\index{formules de Macaulay}%

\cleardoublepage
\section {Les déterminants de Sylvester en degré $\delta - \min(D) < d \leqslant \delta$}
\label{ChapSylvesterHybride}

Dans ce chapitre, sauf mention explicite du contraire, $d$ est un
entier fixé dans la fourchette:
$$
\boxed {\delta - \min(D) < d \leqslant \delta}
$$
Nous notons $d' = \delta-d$ l'entier complémentaire de
$d$ à $\delta$ de sorte que la contrainte encadrée sur $d$ équivaut à
$0 \leqslant d' < \min(D)$.

\medskip

Nous allons en particulier exhiber une famille $\bsnabla_{d'}(\uP) =
\big(\nabla_\beta(\uP)\big)_{|\beta| = d'}$ de déterminants homogènes de degré~$d$,
définis modulo $\langle\uP\rangle$, qui sont des formes d'inertie de
$\langle \uP \rangle$.  Cette terminologie ``forme d'inertie'', qui
désigne un polynôme homogène de $\uPsat$, est celle utilisée par
Jouanolou, en particulier dans le titre de son article \cite{J7}.
Nous disposons ainsi d'une famille bien définie d'éléments de
$$
\vH^0_\uX(\bfB)_d = \uPsat_d /\langle\uP\rangle_d \quad\subset\quad
\bfB_d = \bfA[\uX]_d/\langle\uP\rangle_d
$$
Lorsque $\uP$ est régulière, nous montrerons que cette famille est une
$\bfA$-base de $\vH^0_\uX(\bfB)_d$.  Pour $d=\delta$, on verra que
l'on retrouve les ingrédients du monde en degré $\delta$ puisque 
$\nabla_{\mathbf 0}$ coïncide exactement avec notre bon vieux
déterminant bezoutien $\nabla \in \bfB_\delta$.

\medskip

L'inégalité sur $d'$ a comme conséquence que $\Jex_{1,d'} = 0$, d'où l'égalité
$\Smac_{0,d'} = \bfA[\uX]_{d'}$. L'utilisation de l'isomorphisme mouton-swap
$$
\begin{array}{rcl}
\Smac_{0,d} & \overset{\simeq}{\longrightarrow} & \Smac_{0,d'} \\ [1.5mm]
X^\alpha & \longmapsto & X^\beta = X^{\emouton - \alpha}
\end{array}
$$
va nous permettre de passer d'une famille indexée par 
la base monomiale $\calS_{0,d}$ de $\Smac_{0,d}$ à 
une famille indexée par la base monomiale $\calS_{0,d'}$
de $\bfA[\uX]_{d'}$ et réciproquement.

\medskip

Par ailleurs, nous rappelons que, pour tout $d$, nous avons défini
en~\ref{CanonicalCayleyDetSection}, la forme $r_{0,d}$-linéaire
canonique du complexe $\rmK_{\sbullet,d}(\uP)$, que nous avons notée
$\Det_{d,\uP} : \BdSod \to \bfA$ et qui va intervenir de manière
primordiale dans ce chapitre.
Sous la contrainte imposée sur $d$ (l'inégalité encadrée au début), nous montrerons que:
$$
\boxed{\Res(\uP) = \swDet_{d,\uP}(\bsnabla_{d'})}
$$
A noter la présence de sw, acronyme de swap qui sera précisé en~\ref{swNotation}
et dans la définition \ref{swbsOmega}. Il s'agit
de signifier que nous évaluons $\Det_d$ en la fonction:
$$
\begin{array}{rcl}
\calS_{0,d} & \longrightarrow & \bfB_d \\ [1.5mm]
X^\alpha & \longmapsto & \nabla_{\emouton-\alpha}(\uP)
\end{array} 
$$
La détermination effective de $\Det_{d,\uP}$, valide pour n'importe
quel $d$, a été étudiée dans la section
\ref{CanonicalCayleyDetSection}.  Basée sur la décomposition de
Macaulay de $\rmK_{\sbullet,d}(\uP)$, elle a fait intervenir un quotient
avec un dénominateur $\Delta_2(\uP)$ qui s'est révélé être
égal à $\det W_{2,d}(\uP)$ (cf. le
théorème~\ref{DeltakdProdBinomialDetWhd}).
Idem pour la version tordue par $\sigma \in \fS_n$, 
et avec bien entendu un numérateur déterminantal explicite,
ce qui permet d'écrire ici:
$$
\forall\,\sigma \in \fS_n, \qquad 
\Res(\uP) \ =\ \frac{\det \swbsOmega_{d,\uP}(\bsnabla_{d'})}{\det W_{2,d}(\uP)} \ =\ 
\dfrac{\det\bsOmega^{\sigma\,\sw}_{d,\uP}(\bsnabla_{d'})}{\det W^\sigma_{2,d}(\uP)}
$$
Notre étude s'inspire de la section~3.10
\og Formes de Sylvester et applications\fg{} de~\cite{J7}, 
dont la lecture a été très bénéfique.  Il faut cependant noter que
notre étude va plus loin que celle de Jean-Pierre Jouanolou qui n'a
pas déterminé le multiplicateur $c_\nu(\uP)$ défini dans 3.10.14.c.
Ici nous voyons que ce multiplicateur est $\det W_{2,d}(\uP)$.
Attention, l'entier $\nu$ de Jouanolou 
correspond à notre $d'$, et non pas à notre $d$.

D'autre part, nous estimons que certaines de nos preuves sont plus
simples que celles figurant chez cet auteur.  Les déterminants
$\nabla_\beta$ et $\det\big(\swbsOmega_d(\bsnabla_{d'})\big)$ qui
interviennent dans ce chapitre sont attribués à Sylvester
(cf. l'introduction de~\cite{J7}).

\subsection{Matrices $\beta${}-bezoutiennes pour $|\beta| < \min(D)$}

Notons $\uX^{\beta+\Un}$ la suite $(X_1^{\beta_1+1}, \dots, X_n^{\beta_n+1})$
où $\beta \in \bbN^n$. On s'intéresse ici à l'existence d'une
matrice $\dsV \in \bbM_n(\bfA[\uX])$ telle que $[\uP] = [\uX^{\beta+\Un}].\dsV$,
ce qui équivaut à l'inclusion pour chaque $i$:
$$
\bfA[\uX]_{d_i} \subset \langle\uX^{\beta+1}\rangle
$$
Or pour un entier $c$ donné, puisque $\bfA[\uX]_{\geqslant c}$ est l'idéal
engendré par le $\bfA$-module $\bfA[\uX]_c$, l'inclusion
$\bfA[\uX]_c \subset \langle\uX^{\beta+\Un}\rangle$ est équivalente à
$\bfA[\uX]_{\geqslant c} \subset \langle\uX^{\beta+\Un}\rangle$.  Supposons
qu'elle ait lieu: comme $X^\beta \notin \langle
\uX^{\beta+\Un} \rangle$, on tire
$X^\beta \notin \bfA[\uX]_{\geqslant c}$ i.e. $|\beta| < c$.
La proposition suivante étudie et précise la réciproque.

\label{NOTA16-XbetaPlusUn}%
%
%

\begin{prop}\label{MatriceBetaBezoutienne}
\leavevmode

\medskip
Soit $\beta \in \bbN^n$ vérifiant $d' := |\beta| < \min(D)$ et $d = \delta-d'$.

\begin{enumerate}[\rm i)]
\item 
On a l'inclusion pour tout $i$:
$$
\bfA[\uX]_{d_i} \ \subset\  \langle \uX^{\beta+\Un} \rangle
$$

\item 
Il existe une matrice homogène $\dsV_\beta \in
\bbM_n(\bfA[\uX])$ transformant la {\rm ligne} $\uX^{\beta+\Un}$
en la {\rm ligne} $\uP$ au sens suivant:
$$
\begin{bmatrix}
P_1 & \cdots &  P_n
\end{bmatrix} 
\ = \ 
\begin{bmatrix}
X_1^{\beta_1+1} & \cdots &  X_n^{\beta_n+1}
\end{bmatrix}
\dsV_{\beta}
$$
\og homogène \fg{} signifiant que $(\dsV_\beta)_{ij}$ est un polynôme homogène de degré $d_j - (\beta_i+1)$.

Une telle matrice sera appelée {\rm matrice $\beta$-bezoutienne} de $\uP$.
Elle peut toujours être prise à coefficients dans le
sous-anneau engendré (sur $\bbZ$) par les coefficients des $P_i$.

\item 
On note $\nabla_\beta = \det\dsV_\beta$. 
C'est un polynôme homogène de $\uPsat_d$ dont la classe modulo $\langle\uP\rangle_d$
est indépendante du choix de $\dsV_\beta$.

\item
Pour le jeu étalon $\uX^D$, on peut prendre pour $\dsV_\beta$ la
matrice diagonale $\diag(\uX^{\emouton - \beta})$. Pour ce choix, on a
$\nabla_\beta(\uX^D) = X^{\emouton-\beta}$.

\item
Pour $\uP$ générique, il existe une matrice $\beta$-bezoutienne
$\dsV_\beta$ pour $\uP$ qui se spécialise en $\diag(\uX^{\emouton -
\beta})$ lorsque l'on spécialise~$\uP$ en le jeu étalon~$\uX^D$,
de sorte que $\det(\dsV_\beta)$ se spécialise en $X^{\emouton-\beta}$.
\end{enumerate}
\end{prop}

\label{NOTA16-Vbeta}%
\label{NOTA16-nablabeta}%
%
%

\begin{proof} \leavevmode

Montrons l'implication générale, valable pour tout $\beta \in \bbN^n$ 
et tout $c \in \bbN$ 
$$
|\beta| < c 
\qquad \Longrightarrow \qquad 
\bfA[\uX]_{c} \ \subset \ \langle \uX^{\beta+\Un} \rangle
$$
Soit $X^\gamma \in \bfA[\uX]_c$.
Comme $|\beta| < |\gamma| = c$, il existe 
un indice $j$ tel que $\beta_j < \gamma_j$ donc 
$X^\gamma \in \langle \uX^{\beta+\Un}\rangle$.
Pour montrer le point i) proprement dit, 
il suffit d'appliquer ce qui précède à $c = d_i$ car $|\beta| < \min(D)$.

\medskip
iii)
Puisque $(X_1^{\beta_1+1}, \dots, X_n^{\beta_n+1})$ est une suite régulière,
elle est 1-sécante, donc pour deux choix $\dsV_\beta$ et $\dsV'_\beta$, on
a $\det \dsV_\beta - \det \dsV'_\beta \in \langle \uP\rangle$
d'après le résultat \ref{IndependanceNabla}.

\medskip
v) Posons $\uP' = \uP - (p_1X_1^{d_1}, \dots, p_nX_n^{d_n})$ où $p_i =
\coeff_{X_i^{d_i}}(P_i)$ et notons $\dsV'_\beta$ une matrice
$\beta$-bezoutienne pour~$\uP'$ à coefficients dans l'anneau
engendré par les coefficients de $\uP'$ (en particulier de
diagonale nulle). La matrice suivante convient:
$$
\dsV_\beta = \dsV'_\beta + \diag(p_1X_1^{d_1-(\beta_1+1)}, \cdots, p_nX_n^{d_n-(\beta_n+1)}) 
$$
Les autres points sont laissés au lecteur.
\end{proof}

\begin {rmq} [Le choix douloureux de la terminologie et des notations] \leavevmode
\label{swNotation}

\medskip
$\rhd$
La terminologie ``matrice $\beta$-bezoutienne'' n'est certainement pas
habituelle. Nous l'avons utilisée ici de manière à pouvoir nous
exprimer plus facilement. Lorsque $\beta = 0$, on retrouve la notion
de matrice bezoutienne employée depuis le chapitre~\ref{ObjetsSuiteP}
(cf. la définition~\ref{DefNabla}), terminologie qui n'est
probablement pas classique non plus.  Mais ces matrices sont tout
de même en rapport avec les matrices bezoutiennes \og classiques\fg,
celles en deux jeux $\uX, \uY$ de variables, cf. le chapitre
ultérieur~\ref{SectionBezoutMorley}.

\medskip
$\rhd$
En ce qui concerne $\nabla_\beta := \det(\dsV_\beta)$, la littérature utilise
parfois le vocabulaire ``forme de Sylvester'', forme ayant le sens de
polynôme homogène. Ici, nous avons choisi de parler de déterminant de
Sylvester, le mot \textit{forme} étant déjà utilisé dans d'autres sens (forme linéaire,
forme $r$-linéaire alternée).

\medskip
$\rhd$
Indexation par $d$ versus indexation par $d'$.
En utilisant l'isomorphisme mouton-swap, nous allons pouvoir évaluer la
forme $r_{0,d}$-linéaire $\Det_d$ sur $\bfB_d$ définie en
\ref{CanonicalCayleyDetSection} en des familles indexées par les
monômes de degré $d'$, par exemple en la famille $\bsnabla_{d'} =
(\nabla_\beta)_{|\beta|= d'}$ des déterminants de Sylvester de $\uP$.
Ce n'est pas que nous aimons le formalisme excessif, mais nous avons
vraiment eu besoin de notations pour passer librement d'un objet à l'autre:
$$
\Smac_{0,d'} \simeq \Smac_{0,d}, \qquad 
\BdSodprime \simeq \BdSod, \qquad
\AXdSodprime \simeq \AXdSod
$$
Nous avons ainsi choisi de faire de $\Det_d$ une forme $\swDet_d$
qui est une copie de $\Det_d$ au sens suivant :
$$
\swDet_d : \BdSodprime \to \bfA \qquad
g \mapsto \Det_d(g^\sw) \quad \text{où} \quad
g^\sw : X^\alpha \mapsto g(X^{\emouton-\alpha})
$$
permettant d'écrire par exemple:
$$
\Res(\uP)  = \swDet_{d,\uP}\big((\nabla_\beta)_{|\beta|=d'}\big)
$$

D'autres objets subiront les foudres de mouton-swap, par exemple les
constructeurs matriciels strictement carrés de la section~\ref{CanonicalCayleyDetSection}
$$
g \mapsto \swbsOmega_d(g), \qquad\qquad
g \mapsto \bsOmega^{\sigma\,\sw}_d(g)
$$
cf. la définition suivante.

\medskip
$\rhd$
Mais ce n'est pas tout! La forme $\Det_d$ (ou sa copie $\swDet_d$) est une forme
$r_{0,d}$-linéaire alternée dont l'argument n'est pas un $r_{0,d}$-uplet d'éléments
de $\bfB_d$ mais une famille d'éléments de $\bfB_d$ indexée par $\calS_{0,d}$ considérée
parfois comme une  fonction sur $\calS_{0,d}$ à valeurs dans $\bfB_d$.
Il faudra donc réaliser un petit effort pour pouvoir singer par exemple 
le remplacement, dans un $r_{0,d}$-uplet, d'une composante par un élément de $\bfB_d$.

\end {rmq}

\label {NOTA16-sw-fonction}%
\label {NOTA16-swDetd}%
%
%

\begin{defn}
\label{swbsOmega}
  
Ici $d$ est quelconque et $d' = \delta-d$. 
Etant donné $g \in \AXdSodprime$, on définit $g^\sw \in \AXdSod$ par:
$$
g^\sw(X^\alpha) = g(X^{\emouton-\alpha})
$$
Cette définition est valide car $X^{\emouton-\alpha}$ appartient à 
$\Smac_{0,d'}$ lorsque $X^\alpha \in \Smac_{0,d}$ et vice versa.
On réalise ainsi un isomorphisme $\AXdSodprime \simeq \AXdSod$ qui ne dépend que
du format $D$ et permet, par composition, de transformer toute application définie sur
$\AXdSod$ en une application définie sur $\AXdSodprime$.

\medskip
Considérons par exemple, pour un système $\uP$ de format $D$, la construction
$\bsOmega_d = \bsOmega_{d,\uP}$ définie en~\ref{OmegaDetNumerator}.
On obtient ainsi, pour $g \in \AXdSodprime$, un endomorphisme
$\swbsOmega_d(g) = \swbsOmega_{d,\uP}(g)$ de~$\bfA[\uX]_d$
$$
\vcenter{
\xymatrix @C = 1.5cm {
\AXdSodprime\ar[r]^-{g \mapsto g^\sw}_{\simeq} \ar[rd]_{\swbsOmega_d}
              &\AXdSod \ar[d]^{\bsOmega_d}  \\
              &\End_\bfA(\bfA[\uX]_d\big) \\
}}
\qquad\qquad
\swbsOmega_d(g) = \bsOmega_d(g^\sw)
$$
Pour $\sigma\in\fS_n$, définition analogue utilisant
$\bsOmega_d^\sigma$ au lieu de $\bsOmega_d$, produisant une
construction notée~$\bsOmega_d^{\sigma\,\sw}$.


\end{defn}

\noindent
Commentons cette définition en prenant $X^\alpha \in \bfA[\uX]_d$ et en décrivant
son image par $\swbsOmega_d(g)$.

$\rhd$
Si $X^\alpha \in \Smac_{0,d}$, alors l'image est $g^\sw(X^\alpha)$.

\medskip

$\rhd$
Si $X^\alpha \in \Jex_{1,d}$, en notant $i = \minDiv(X^\alpha)$, son image est exactement
celle par l'application de Sylvester $\Syl_d$ du monôme extérieur
$\dfrac{X^\alpha}{X_i^{d_i}}e_i$ de~$\Smac_{1,d}$, ou encore l'image
de $X^\alpha$ par $\Syl_d \circ \varphi$ où $\varphi : \Jex_{1,d}
\rightarrow \Smac_{1,d}$ est l'isomorphisme monomial défini par
$X^\alpha \mapsto \dfrac{X^\alpha}{X_i^{d_i}}e_i$.

A gauche, un schéma donnant un aperçu de la définition de
l'endomorphisme $\swbsOmega_d(g)$.  \og Aperçu \fg{}, car sur ce
dessin, la base monomiale de l'espace d'arrivée est rangée n'importe
comment.  Pour obtenir, la vraie définition de l'endomorphisme, il
faut penser au dessin de droite, pour lequel la base monomiale de
l'espace d'arrivée est scindée en deux : celle de $\Jex_{1,d}$, puis
celle de $\Smac_{0,d}$.

{
\newcommand \absM {3.7}
\newcommand \ordM {2.4}
\newcommand \taille {8}
\newcommand \NorthUn {(0,\taille) -- (\absM, \taille)} 
\newcommand \NorthDeux {(\absM,\taille)--(\taille,\taille)}  
\newcommand \EastUn {(\taille,\taille) -- (\taille, \ordM)} 
\newcommand \East {(\taille,\taille) -- (\taille, 0)} 
\newcommand \EastDeux {(\taille, \ordM) -- (\taille,0)}
\newcommand \RestSyl[1]  {${\mathrm{restr.} \atop \mathrm{de}\,\Syl_{#1}}$} 

\centerline{
\begin{tikzpicture}[scale = 0.5]
\draw[fill=gray!20]  (0,\taille) rectangle (\absM, 0) ;
\path (0,\taille) -- (\absM, 0) node[midway] {\RestSyl{d}} ;
\draw[fill=gray!60]  (\taille,\taille) rectangle (\absM, 0) ;
\path (\taille,\taille) -- (\absM, 0) node[midway] {$g$} ; 
\path \NorthUn node[midway,above] {$\Smac_{1,d}$} ;
\path \NorthDeux node[midway,above] {$\Smac_{0,d'}$} ;
\path \East node[midway,right] {$\bfA[\uX]_d$} ;
\end{tikzpicture}
\qquad \qquad \qquad 
\begin{tikzpicture}[scale = 0.5]
\draw[fill=gray!20]  (0,\taille) rectangle (\absM, 0) ;
\path (0,\taille) -- (\absM, 0) node[midway] {$\Syl_d \circ \varphi$} ;
\draw[fill=gray!60]  (\taille,\taille) rectangle (\absM, 0) ;
\path (\taille,\taille) -- (\absM, 0) node[midway] {$g^\sw$} ; 
\path \EastUn node[midway,right] {$\Jex_{1,d}$} ;
\path \EastDeux node[midway,right] {$\Smac_{0,d}$} ;
\path (0,0) -- (0, \taille) node[midway, left] {$\swbsOmega_d(g) \ = \ $} ;
\end{tikzpicture}
}

Bien sûr, comme pour tout endomorphisme en mathématique, 
on n'est pas contraint par l'ordre à mettre sur la base. 
Ici, on peut choisir de prendre une base ordonnée quelconque de $\bfA[\uX]_d$.
La seule contrainte est de respecter la définition (ah-ah), c'est-à-dire :
$$
\begin{tikzpicture}[scale = 0.8] 
\draw [thick] (-0.1, -0.3) -- (-0.4, -0.3) -- (-0.4, 6.3) -- (-0.1, 6.3) ;
\draw [thick] (6+0.1, -0.3) -- (6+0.4, -0.3) -- (6+0.4, 6.3) -- (6+0.1, 6.3) ;
\draw [dashed, rounded corners=4pt, fill=gray!20] (1.8, -0.3) rectangle (2.2, 6.3) ;
\draw (2,6.3) node[above] {\footnotesize $\dfrac{X^\alpha}{X_i^{d_i}} P_i$} ;
\draw [dotted] (2,4) -- (6.6,4) node[right] {\footnotesize $X^\alpha \in \Jex_{1,d}$} ;
\draw [dashed, rounded corners=4pt, fill=gray!60] (4.8,-0.3) rectangle (5.2,6.3) ;
\draw (5,6.3) node[above] {$g(X^{\emouton-\gamma}) \atop = g^\sw(X^\gamma)$} ;
\draw [dotted] (5,1) -- (6.6,1) node[right] {\footnotesize $X^{\gamma} \in \Smac_{0,d}$} ;
\foreach \r in {0,1,...,6} \draw (\r, 6-\r) node {\tiny $\bullet$} ;
\end{tikzpicture}
$$
}

On peut modifier très légèrement la vision de l'ensemble de définition de
$\swbsOmega_d$.  On peut en effet décréter
que $\swbsOmega_d$ prend comme argument une \textit{famille
  $\bsnabla_{d'}$ indexée par la base monomiale de~$\Smac_{0,d'}$}.
Supposons de plus $d' < \min(D)$. Cette inégalité a comme conséquence
que $\Smac_{0,d'} = \bfA[\uX]_{d'}$, donc $\swbsOmega_d$ prend comme
argument une famille $\bsnabla_{d'} = (\nabla_\beta)_{|\beta|= d'}$ de polynômes  de
$\bfA[\uX]_d$ indexée par les $\beta$ tels que $|\beta| = d'$. D'où la
définition suivante :
$$
\forall\, |\alpha| = d, \qquad
\swbsOmega_d(\bsnabla_{d'}) : \ X^\alpha \ \longmapsto \ 
\begin {cases}
\nabla_{\emouton-\alpha}                  &\text{si $X^\alpha \in \Smac_{0,d}$} \\[3mm]
\dfrac{X^\alpha}{X_i^{d_i}}P_i &\text{si $X^\alpha \in \Jex_{1,d}$ où $i = \minDiv(X^\alpha)$} \\
\end {cases}
$$

\label {NOTA16-swbsOmegad}%
\label {NOTA16-swbsOmegasigmad}%
%
%

\subsection {Le résultant via la forme $r_{0,d}$-linéaire alternée $\swDet_{d,\protect\uP} : \BdSodprime\to\bfA$}

\begin {theo}
\label {ResViaDetd}
En rappelant que $\delta - \min(D) < d \leqslant \delta$ et $d' = \delta-d$, le résultant s'obtient
comme l'évaluation de la forme $r_{0,d}$-linéaire alternée $\swDet_d$ en la
famille $\bsnabla_{d'} = (\nabla_\beta)_{|\beta| = d'}$ des déterminants de Sylvester:
$$
\Res(\uP) = \swDet_{d,\uP}\big(\bsnabla_{d'}\big)
$$
Ce qui, par définition, équivaut au quotient de deux déterminants:
$$
\Res(\uP) = \dfrac{\det \swbsOmega_d\big((\nabla_\beta)_\beta\big)}{\Delta_{2,d}}
= \dfrac{\det \swbsOmega_d\big((\nabla_\beta)_\beta\big)}{\det W_{2,d}}
$$
On dispose également d'une version tordue par une permutation $\sigma \in \fS_n$:
$$
\Res(\uP) = 
\dfrac{\det\bsOmega^{\sigma\,\sw}_d\big((\nabla_\beta)_{\beta}\big)}{\det W^\sigma_{2,d}}
$$
\end {theo}

\medskip

Pour $d = \delta$, on retrouve l'égalité $\Res(\uP) =
\omegares(\nabla)$ mais le résultat ci-dessus ne constitue pas une
nouvelle preuve pour la bonne raison qu'il a bien fallu donner une
définition du résultant, définition qui s'appuie justement sur
$\Res(\uP) = \omegares(\nabla)$.

\begin {proof} \leavevmode

On travaille en générique en désignant comme d'habitude par $\bfA
= \bfk[\indetsPi]$ l'anneau des coefficients du système $\uP$. Notons
$\calR$ l'évaluation $\swDet_{d,\uP}\big((\nabla_\beta)_{|\beta|=d'}\big)$. Chaque
$\nabla_\beta$ étant dans $\uPsat_d$, le théorème~\ref{DetdIsCramer}
fournit $\calR \in \ElimIdeal$, donc $\calR$ est multiple de
$\Res(\uP)$.  On va étudier le poids en chaque $P_i$ de $\calR$ et la
spécialisation de $\calR$ en le jeu étalon pour en déduire $\calR
= \Res(\uP)$.

\medskip
$\bullet$
\'Etude du poids. Ici on peut faire relâche avec le micmac sw et étudier $\Det_d(\sbullet)$ par l'intermédiaire
du quotient $\det \bsOmega_d(\sbullet) / \det W_{2,d}$.

Soit $\calF \in \AXdSod$ une famille quelconque. Le développement de
Laplace de $\det \bsOmega_d(\calF)$ par rapport à l'ensemble $J_0$ des
colonnes indexées par les monômes de $\Jex_{1,d}$ est une somme de
termes de la forme
$$
\pm \det \bsOmega_d(\calF)_{I \times J_0} \times \det \bsOmega_d(\calF)_{\overline I \times \overline {J_0}} 
$$
Le mineur de gauche, d'ordre $r_{1,d} = \dim\Jex_{1,d}$, ne dépend que de $\uP$ tandis que celui
de droite, d'ordre~$r_{0,d}$, ne dépend que de $\calF$ (cf. les exemples en
section \ref{CanonicalCayleyDetSection} et les deux dessins de la page~\pageref{DessinOmegad}).
De manière un peu plus précise, $\det\bsOmega_d(\calF)$ est une somme signée de termes du type:
$$
\text{mineur d'ordre $r_{1,d}$ de $\Syl_d(\uP)_{|\Smac_{1,d}}$} 
\ \times \ 
\text{mineur d'ordre $r_{0,d}$ de la matrice de colonnes $\calF$}
$$
Donc $\Det_d(\calF)$ est une somme signée de termes du type:
$$
\dfrac{\text{mineur d'ordre $r_{1,d}$ de $\Syl_d(\uP)_{|\Smac_{1,d}}$}}{\det W_{2,d}}
\ \times \ 
\text{mineur d'ordre $r_{0,d}$ de la matrice de colonnes $\calF$}
$$
A gauche, le numérateur est homogène en $P_i$ de poids $\dim\Jex_{1,d}^{(i)}$. Idem
pour le dénominateur  avec comme poids $\dim\Jex_{2,d}^{(i)}$. De sorte que
le quotient (exact) est homogène en $P_i$ de poids $\dim\Jex_{1\setminus2,d}^{(i)}$.
Particularisons maintenant $\calF$ à la famille $(\nabla_\beta)_{|\beta|=d'}$
(sans nous soucier du mouton-swap). Chaque colonne $\nabla_\beta$ est de poids~1
en $P_i$ si bien que le mineur de droite est homogène en $P_i$ de poids $r_{0,d}$.

\smallskip
Chaque produit ci-dessus est donc homogène en $P_i$ de poids $\dim \Jex_{1\setminus2,d}^{(i)} + r_{0,d}$.
Or ce nombre, d'après le lemme ci-après, est égal à $\widehat d_i$. 

\medskip
Bilan: l'évaluation $\calR$ est donc homogène en $P_i$ de poids $\widehat d_i$.
Par conséquent, $\calR = \lambda\,\Res(\uP)$ où $\lambda \in \bfA$ est
homogène de poids $0$, donc un élément du petit anneau $\bfk$.

\medskip
$\bullet$ Détermination de $\lambda$.
Rappelons qu'en désignant par $\iota : \calS_{0,d} \hookrightarrow \bfA[\uX]_d$ l'injection
canonique \mbox{$X^\alpha \mapsto X^\alpha$}, on
a $\Det_{d,\uP}(\iota) = \Delta_{1,d}(\uP)$. Ce qui fournit
$$
\swDet_{d,\uP}\big( (X^{\emouton-\beta})_{|\beta| = d'} \big) = \Delta_{1,d}(\uP)
$$
Or on peut faire en sorte d'avoir choisi les matrices $\beta$-bezoutiennes de $\uP$
de manière à ce que $\nabla_\beta$ se spécialise en $X^{\emouton-\beta}$ lorsque l'on
spécialise $\uP$ en le jeu étalon $\uX^D$.
En conséquence, la spécialisation de $\swDet_{d,\uP}\big((\nabla_\beta)_{|\beta|=d'}\big)$
en le jeu étalon fournit $\Delta_{1,d}(\uX^D)$, qui est égal à~1.
Bilan: $\lambda = 1$ et la démonstration est terminée.

\end {proof}

\begin{lem} \label{PoidsJi12dS0d}

Notons $\bfA[\widehat{X_i}] = \bfA[X_1,\dots,X_{i-1},\, X_{i+1}, \dots, X_n] \subset
\bfA[\uX]$.

\begin{enumerate}[\rm i)]
\item
Le $\bfA$-module $\Smac_0 \cap \bfA[\widehat{X_i}]$ 
a pour base les~$X^\gamma$ tels que $\gamma \preccurlyeq \emouton$ et $\gamma_i = 0$.
Sa dimension est $\widehat{d}_i$.

\item  
Sous le couvert des inégalités $\delta - \min(D) < d \leqslant \delta$,
le $\bfA$-module monomial $\Jex_{1\setminus 2,d}^{(i)} \oplus \Smac_{0,d}$ 
est isomorphe au $\bfA$-module $\Smac_0 \cap \bfA[\widehat{X_i}]$.
En particulier:
$$
\dim \Jex_{1\setminus 2,d}^{(i)} 
\ + \ 
r_{0,d}
\ = \ 
\widehat{d}_i
$$
\end{enumerate}

\end{lem}

\begin{proof} \leavevmode

\medskip
i) Le nombre de $\gamma \in \bbN^n$ tels que $\gamma \preccurlyeq \emouton$ et 
$\gamma_i = 0$ (c'est-à-dire $\gamma_j < d_j$ pour tout $j \neq i$ et $\gamma_i = 0$) 
est égal à $\widehat d_i = \prod_{j\neq i} d_j$, 
donc $\dim \big(\Smac_0 \cap \bfA[\widehat{X_i}]\big) = \widehat d_i$. 
  
\medskip  
ii)
On va vérifier que les deux applications ci-dessous sont bien définies.
Le fait qu'elles soient réciproques l'une de l'autre est laissé au lecteur.
$$
\begin{array}[t]{rcl}
\Jex_{1\setminus 2,d}^{(i)} \oplus \Smac_{0,d} 
& \longrightarrow & \Smac_0 \cap \bfA[\widehat{X_i}] \\ [2mm]
X^\alpha & \longmapsto & \dfrac{X^\alpha}{X_i^{\alpha_i}}
\end{array}
\qquad \qquad \text{et} \qquad \qquad 
\begin{array}[t]{rcl}
\Smac_0 \cap \bfA[\widehat{X_i}] & \longrightarrow & 
\Jex_{1\setminus 2,d}^{(i)} \oplus \Smac_{0,d} \\ [2mm]
X^\gamma & \longmapsto & X_i^{d - |\gamma|} X^\gamma 
\end{array}
$$
Il est clair que $\dfrac{X^\alpha}{X_i^{\alpha_i}}$, qui est dans
$\bfA[\widehat{X_i}]$, est aussi dans $\Smac_0$ (que $X^\alpha$ soit
dans $\Jex_{1\setminus 2}^{(i)}$ ou dans~$\Smac_0$).  Pour
l'application de droite, il faut d'abord s'assurer que la formule
donnée a du sens, autrement dit que $d - |\gamma | \geqslant 0$, ou
encore que le degré maximum d'un élément de
$\Smac_0 \cap \bfA[\widehat{X_i}]$ est inférieur à $d$.  Or ce degré
maximum vaut $\delta - (d_i -1)$ qui est bien $\leqslant d$ grâce à
la contrainte $\delta - \min(D) < d$.  Le fait que $X^\alpha =
X_i^{d-|\gamma|}X^\gamma$ est bien dans le module d'arrivée utilise:
$$
\forall\, j \neq i, \quad \alpha_j = \gamma_j \ < \ d_j 
\qquad \text{ et} \qquad
\alpha_i = d - |\gamma| 
$$
de sorte que $X^\alpha$ appartient à $\Jex^{(i)}_{1\setminus2,d}$ si $\alpha_i \geqslant d_i$
et à $\Smac_{0,d}$ si $\alpha_i < d_i$.
\end{proof}

\begin{prop}
La formule via $\Det_d$ est une formule déterminantale (\idest{} sans dénominateur)
pour le résultant si et seulement si 
$$
\delta - \min(D) \ < \ 
d 
\ < \ \min_{i\neq j} (d_i + d_j)
$$
Supposons $d_1 \leqslant d_2 \leqslant \cdots \leqslant d_n$.
L'intervalle ci-dessus est non vide si et seulement si 
$$
d_3 + \cdots + d_n \ \leqslant \ d_1 + n - 2
\leqno (\star)
$$
Pour $n=2$, tout format $D$ convient.

\smallskip 
\noindent
Pour $n=3$, 
le format de degrés est du type 
$$
D = (d_1, d_1, d_1) \qquad 
D = (d_1, d_1, d_1+1) \qquad 
D = (d_1, d_1+1, d_1+1)
$$
Pour chaque $n \geqslant 4$, il y a deux formats du type 
$D = (1,\dots,1,d_n)$ avec $d_n \leqslant 2$.

\smallskip
\noindent
Pour $n=4$, on a aussi $D = (2,2,2,2)$.
\end{prop}

\begin{proof}
Au dénominateur de $\Det_d$ intervient $\det W_{2,d}$.
En générique, ce terme vaut $1$ si et seulement si $\Jex_{2,d} = 0$, 
ce qui équivaut d'après~\ref{NulliteJhd} à $d < \min_{i\neq j} (d_i + d_j)$.
Si $d_1 \leqslant d_2 \leqslant \cdots \leqslant d_n$, la condition s'écrit 
$\delta - d_1 < d < d_1+d_2$, c'est-à-dire 
$$
d_2 + \cdots + d_n -n < d < d_1 + d_2
$$
Par conséquent, demander l'existence d'une formule déterminantale  
revient à imposer l'inégalité $(\star)$.

\medskip

\noindent
$\rhd$ Pour $n=2$, l'inégalité $(\star)$ s'écrit $0 \leqslant d_1$, donc il n'y a aucune condition.

\noindent
$\rhd$ Pour $n=3$, on a $d_3 \leqslant d_1 + 1$, d'où les trois types évoqués.

\medskip
\noindent
Dans l'inégalité $(\star)$, les entiers de gauche sont minorés
par $d_3$ et, à droite, $d_1$ est majoré par~$d_3$, d'où $(n-2)
d_3 \leqslant d_3 + n-2$ c'est-à-dire $(n-3) d_3 \leqslant n-2$.  Pour
$n \geqslant 4$, on a donc $d_3 \leqslant \frac{n-2}{n-3}$. Ainsi,
\begin{center}
pour $n=4$, \ $d_3 \leqslant 2$ ; \qquad 
pour $n \geqslant 5$, \ $d_3 \leqslant 1$.
\end{center} 

\noindent
$\rhd$ Pour $n = 4$, reste à traiter le cas $d_3 = 2$.
L'inégalité $(\star)$ s'écrit $d_3 + d_4 \leqslant d_1 + 2$, d'où $d_4 \leqslant d_1$. 
Ainsi $d_1=d_4$ et comme $d_3 = 2$, on obtient $D=(2,2,2,2)$.

\end{proof}

\subsection {Relations entre les formes $\Det_{d,\protect\uP}$, $\omegaRes{\protect\uP}$
    et les déterminants de Sylvester $(\nabla_\beta)_{|\beta|=d'}$}

Le théorème \ref {ResViaDetd} affirme que le résultant de $\uP$
s'obtient comme une évaluation de la forme $\swDet_d : \BdSodprime \to \bfA$
en la famille $(\nabla_\beta)_{|\beta| = d'}$. Dans cette
section, nous allons exprimer d'autres évaluations spécifiques de la forme
$r_{0,d}$-linéaire alternée $\swDet_d$ sur~$\bfB_d$ à l'aide
d'évaluations de la forme $\omegares$ sur $\bfB_\delta$.  Ce résultat
ne suppose aucune hypothèse sur $\uP$ puisque les expressions
en question sont des identités algébriques.

Ceci nous permettra d'obtenir une première preuve du fait que la
famille $(\nabla_\beta)_{|\beta|=d'}$ est une $\bfA$-base de
$\vH^0_\uX(\bfB)_d = \uPsat_d /\langle\uP\rangle_d$ lorsque~$\uP$ est
régulière. Nous insistons ici sur le fait que ce résultat ne
suppose aucunement $\uP$ générique (contrairement à \cite {J7} ou
au résumé réalisé dans la section 2 de \cite{BuseChardinNemati}) et il n'y
a aucune raison pour que l'idéal d'élimination de $\uP$ soit engendré
par~$\Res(\uP)$.
Cette approche, en particulier basée sur la propriété \og
Cramer\fg{} de la forme $\Det_d$, sera revisitée en y intégrant
la dualité de portée plus générale:
$$
\vH^0_\uX(\bfB)_d  \simeq (\bfB_{\delta-d})^\star
$$
En conservant le contexte $\delta-\min(D) < d \leqslant \delta$ et $d' = \delta-d$, nous devons
préciser ce que nous entendons par évaluations spécifiques de $\swDet_d$. Grosso-modo,
pour $|\beta| = d'$ et $F \in \bfB_d$, il s'agit dans la famille 
$(\nabla_{\beta'})_{|\beta'|=d'}$ de remplacer la composante d'indice $\beta$
par $F$. Ce qui nous amène à définir de manière formelle la fonction
$\Delta(\beta,F) \in \BdSodprime$ de la façon suivante:
$$
\begin {array}{cccl}
\Delta(\beta,F) : &\calS_{0,d'}& \to &\bfB_d  \\[2mm]
                 &X^{\beta'}& \longmapsto &
\begin {cases} \nabla_{\beta'} &\text{si $\beta'\ne\beta$}\\ F&\text{sinon} \\ \end {cases}  
\end {array}
$$
Prenons par exemple $F = \nabla_\gamma$ avec $|\gamma| = d'$. Si $\gamma\ne\beta$, alors
$\Delta(\beta,F)$ a deux composantes égales, donc $\swDet_d(\Delta(\beta,F)) = 0$. Et
si $\gamma = \beta$, alors $\Delta(\beta,F)$ n'est autre que $(\nabla_{\beta'})_{|\beta'|=d'}$
donc $\swDet_d(\Delta(\beta,F))$ est égal à $\Res(\uP) \overset{\rm aussi}{=} \omegares(\nabla)$.
Lecteur, tu suis?
Ceci permet d'énoncer le théorème principal.

\begin {theo} \leavevmode
\label{omegaresDetdRelations}  

Soient $X^\beta$ de degré $d'$ et $F \in \bfB_d$.

\begin{enumerate} [\rm i)]
\item
Les formes $\Det_d$ et $\omegares$ sont reliées par l'égalité des évaluations
suivantes:  
$$
\swDet_d\big(\Delta(\beta,F)\big) = \omegares(X^\beta F)
$$  

\item
On dispose de l'égalité (dans $\bfB_d$):
$$
\omegares(\nabla)\,F = \sum_{|\beta|=d'} \omegares(X^\beta F)\,\nabla_\beta  
$$

\item
Lorsque $\uP$ est régulière, la famille $(\nabla_\beta)_{|\beta|=d'}$ est une
$\bfA$-base de $\vH^0_\uX(\bfB)_d = \uPsat_d /\langle\uP\rangle_d$. 
\end{enumerate}    
\end {theo}

De manière à atténuer le côté hermétique de l'énoncé, nous avons
choisi de donner la preuve à l'aide d'un exemple significatif.
Nous donnons également une seconde preuve du point ii) indépendante de
la forme $\Det_d$, preuve reportée après celle du théorème d'orthogonalité
qui suit.


\medskip

Nous pouvons donner une interprétation matricielle du point ii)
via l'égalité d'une matrice scalaire comme produit de deux matrices
rectangulaires.  La matrice de gauche a pour lignes l'ensemble des
monômes $X^\beta \in \bfA[\uX]_{d'}$, et pour colonnes l'ensemble des
monômes $X^\alpha \in \bfA[\uX]_{d}$.  Et à droite, c'est inversé.
Quant à la notation $(\nabla_{\beta'})_\alpha$, elle désigne la
composante sur $X^\alpha$ du polynôme $\nabla_{\beta'}$ homogène de
degré $d$.
$$
\begin {bmatrix}
  &    &                            &   & \\
  &    &\omegares(X^{\beta + \alpha})_{\substack{|\beta|=d'\\|\alpha|=d}} &   & \\
  &    &                            &   & \\
\end {bmatrix}
\begin {bmatrix}
  &                   & \\
  &                   & \\
  &\big((\nabla_{\beta'})_\alpha\big)_{\substack{|\alpha|=d\\|\beta'|=d'}}    & \\    
  &                   & \\
  &                   & \\
\end {bmatrix}
=
\omegares(\nabla)\,\Id_{\bfA[\uX]_{d'}}
$$

\subsubsection*{\og Orthogonalité\fg{} des deux familles $(X^\gamma)_{|\gamma|=d'}$ 
et $(\nabla_\beta(\uP))_{|\beta|=d'}$}

Le résultat suivant figure chez Jean-Pierre Jouanolou en
\cite[proposition 3.10.6, pages 161-162] {J7}.
Nous en proposons une preuve élémentaire (la première preuve).

\begin {theo} [Orthogonalité]
\label{DualityJPJ3.10.6}

Soit $\nabla$ ``le'' déterminant bezoutien de $\uP$. Alors  
pour tous \mbox{$\beta,\gamma\in\bbN^n$} tels que $|\beta| = |\gamma| < \min(D)$
et tout déterminant de Sylvester $\nabla_\beta$ de $\uP$:
$$
X^\gamma \nabla_\beta = \begin {cases}
\nabla &\text {si $\beta = \gamma$} \\
0      &\text{sinon} \\  
\end {cases}
\qquad   \text{dans $\vH^0_\uX(\bfB)_\delta$}
$$
En des termes équivalents, puisque $\vH^0_\uX(\bfB)_\delta = \uPsat_\delta /\langle\uP\rangle_\delta$,
on a des congruences modulo $\langle\uP\rangle_\delta$:
$$
X^\gamma \nabla_\beta \equiv \begin {cases}
\nabla &\text {si $\beta = \gamma$} \\
0      &\text{sinon} \\  
\end {cases}
$$
\end {theo}

\noindent
{\bf Commentaire}

Pour établir ce résultat, on peut supposer $\uP$ générique auquel cas
la forme $\uomegares : \bfB_\delta \to \bfA$ est injective. Dans ce
cadre, l'énoncé est donc équivalent à l'égalité
$\omegares(X^\gamma\,\nabla_\beta) = \omegares(\nabla) \text{ ou } 0$
selon $\beta = \gamma$ ou $\beta \ne \gamma$.

Ainsi, ce résultat d'orthogonalité est étroitement lié à l'égalité du
point i) du théorème \ref{omegaresDetdRelations}. En effet, examinons
ce point i) en y prenant $F = \nabla_\gamma$. 
Comme 
$$
\Delta(\beta, \nabla_\gamma)\ 
\begin {cases}
\text{vaut } (\nabla_{\beta'})_{|\beta'| = d'} &\text {si $\beta = \gamma$} \\
\text{possède 2 composantes égales}  &\text{sinon} \\  
\end {cases}
$$
on obtient 
$$
\omegares(X^\beta\, \nabla_\gamma) = 
\swDet_d\big(\Delta(\beta, \nabla_\gamma)\big) 
= \begin {cases}
\swDet_d\big((\nabla_{\beta'})_{|\beta'|=d'}\big) &\text {si $\beta = \gamma$} \\
0     &\text{sinon} \\  
\end {cases}
$$
Vu la définition de l'évaluation de $\swDet_d$, cette expression est encore égale à 
$\omegares(\nabla)$ si $\beta = \gamma$ ou bien $0$ sinon.  C'est
exactement le résultat d'orthogonalité ci-dessus (à la permutation
$\beta \leftrightarrow \gamma$ près).

\begin {proof} [Première preuve (élémentaire)] \leavevmode

$\rhd$ Le cas $\gamma = \beta$.
Puisque deux déterminants bezoutiens de $\uP$ sont égaux modulo
$\langle\uP\rangle_\delta$, il suffit de montrer que $X^\beta\nabla_\beta$ est un
déterminant bezoutien de $\uP$. Avec des notations qui se devinent, on a:
$$
[P_1, \dots, P_n] = [X_1^{\beta_1+1}, \dots,  X_n^{\beta_n+1}]\,\dsV_\beta =
[X_1, \dots, X_n]\,\dsV  \qquad \text{où} \qquad
\dsV = \diag(X_1^{\beta_1},  \dots, X_n^{\beta_n})\,\dsV_\beta     
$$
Donc $\dsV$ est une matrice bezoutienne de $\uP$, d'où le résultat puisque
$\det\dsV$ vaut $X^\beta \det \dsV_\beta = X^\beta \nabla_\beta$.
  
\medskip  
$\rhd$
Le cas $\gamma \ne \beta$. Multiplions l'égalité  $\uP = \uX^{\beta+\Un}.\dsV_\beta$ à droite
par la cotransposée de $\dsV_\beta$:
$$
\uP.\widetilde {\dsV_\beta} = \nabla_\beta . \uX^{\beta+\Un}
$$
En conséquence, $X_i^{\beta_i + 1} \nabla_\beta \in \langle\uP\rangle$
pour chaque $i$. Or $|\gamma| = |\beta|$ et $\gamma \ne \beta$ donc il
existe un $i$ tel que $\gamma_i > \beta_i$. On en déduit
$X_i^{\gamma_i} \nabla_\beta \in \langle\uP\rangle$, a fortiori
$X^\gamma \nabla_\beta \in \langle\uP\rangle$.
\end {proof}

A titre indicatif, voici une seconde preuve, à quelque chose près
celle figurant en \cite{J7}. On y voit un détail qui semble insignifiant
mais qui est en fait capital: c'est l'existence d'un $i$ tel que
$\gamma_i > \beta_i$.

\begin {proof} [Seconde preuve]\leavevmode

  
Le cas $\gamma \ne \beta$. On peut supposer $\uP$ générique et faire
en sorte de disposer d'une matrice $\beta$-bezoutienne $\dsV_\beta$
pour $\uP$ qui se spécialise en $\diag(\uX^{\emouton - \beta})$
lorsque l'on spécialise~$\uP$ en le jeu étalon~$\uX^D$.  D'après le
cas $\gamma=\beta$ de la preuve précédente, on peut supposer $\nabla = X^\beta \nabla_\beta$ où
$\nabla_\beta = \det(\dsV_\beta)$.  Ainsi les spécialisations
respectives de $\nabla_\beta$ et $\nabla$ en le jeu étalon sont:
$$
X^{\emouton-\beta}, \qquad  X^{\emouton}
$$
Le polynôme $X^\gamma \nabla_\beta$ de degré $\delta$ est dans $\uPsat$.
Comme $\uPsat_\delta = \langle\uP\rangle_\delta \oplus \bfA\,\nabla$, 
il existe une constante homogène $c \in \bfA = \bfk[\indetsPi]$ telle que
$$
X^\gamma \nabla_\beta \equiv c\,\nabla \bmod \langle \uP\rangle
$$
On va montrer que $c=0$. Le poids en chaque $P_i$ de $X^\gamma \nabla_\beta$
et $\nabla$ étant~1, il en résulte que $c \in \bfk$.
En spécialisant la congruence ci-dessus en le jeu étalon, on obtient
$$
X^{\emouton+\gamma-\beta} \equiv c\,X^\emouton \bmod \langle\uX^D\rangle
$$
Comme $|\gamma| = |\beta|$  et $\gamma \ne \beta$, il existe un
$i$ tel que $\gamma_i > \beta_i$. Donc $X^{\emouton+\gamma-\beta} \in \langle\uX^D\rangle$.
Il vient $c\,X^\emouton \in \langle\uX^D\rangle$ d'où $c=0$.
\end {proof}

\begin {proof}[Première preuve du théorème \ref{omegaresDetdRelations} par l'exemple] \leavevmode

Soit $\nabla$ la classe dans $\bfB_\delta$ du déterminant bezoutien
de $\uP$.  Dans cet exemple qui se veut significatif, nous supposons
$r_{0,d'} = 3$. La base monomiale de $\bfA[\uX]_{d'}$ est donc
constituée de 3 monômes $X^\alpha, X^\beta, X^\gamma$ de degré $d'$,
en correspondance \og mouton-swap\fg{} avec la base monomiale
$\calS_{0,d}$ de $\Smac_{0,d}$, qui fournissent 3 déterminants de
Sylvester $\nabla_\alpha, \nabla_\beta, \nabla_\gamma$ (de degré~$d$)
de $\uP$. Nous notons les arguments de $\Det_d : \bfB_d^{\calS_{0,d}} \to \bfA$
en respectant cet ordre.

\medskip
$\bullet$
Soit $F \in \bfB_d$. La propriété \og Cramer\fg{} de $\Det_d$ s'écrit
(sur $\bfB_d$) de la manière suivante:
$$
\setlength{\arraycolsep}{0.5\arraycolsep}
\Det_d(\nabla_\alpha, \nabla_\beta, \nabla_\gamma)\,F =
a_\alpha\nabla_\alpha +  a_\beta\nabla_\beta +  a_\gamma\nabla_\gamma
\qquad \text{où} \qquad
\left\{
\begin {array} {ccl}
a_\alpha &=& \Det_d(F, \nabla_\beta, \nabla_\gamma) \\[2mm]
a_\beta &=& \Det_d(\nabla_\alpha, F, \nabla_\gamma) \\[2mm]
a_\gamma &=& \Det_d(\nabla_\alpha, \nabla_\beta, F) 
\end {array}
\right.
\leqno(\star)
$$
En utilisant le théorème \ref{ResViaDetd}, nous pouvons ré-écrire cette égalité:
$$
\omegares(\nabla)\,F = a_\alpha\nabla_\alpha +  a_\beta\nabla_\beta +  a_\gamma\nabla_\gamma
$$
En multipliant par $X^\alpha$ l'égalité ci-dessus et en utilisant le
théorème~\ref{DualityJPJ3.10.6}, nous obtenons (dans $\bfB_\delta$):
$$
\omegares(\nabla)\,X^\alpha F = a_\alpha\nabla
$$
et deux autres égalités analogues en remplaçant $X^\alpha$ par $X^\beta$ et $X^\gamma$:
$$
\omegares(\nabla)\,X^\beta F = a_\beta\nabla,\qquad    \omegares(\nabla)\,X^\gamma F = a_\gamma\nabla
\qquad \text{dans $\bfB_\delta$}
$$
Appliquons $\omegares$ aux trois égalités obtenues dans $\bfB_\delta$. Quitte
à mettre un peu de généricité assurant le fait que $\omegares(\nabla)$ est
régulier, nous pouvons simplifier par $\omegares(\nabla)$ pour obtenir
les 3 égalités scalaires:
$$
\omegares(X^\alpha F) = a_\alpha, \qquad
\omegares(X^\beta F) = a_\beta, \qquad
\omegares(X^\gamma F) = a_\gamma 
$$
Nous avons ainsi relié les évaluations de $\Det_d$ et celles de $\omegares$:
$$
\Det_d(F, \nabla_\beta, \nabla_\gamma) = \omegares(X^\alpha F),\qquad
\Det_d(\nabla_\alpha, F, \nabla_\gamma) = \omegares(X^\beta F), \qquad
\Det_d(\nabla_\alpha, \nabla_\beta, F) = \omegares(X^\gamma F) 
$$
Ceci fournit, avec l'égalité $(\star)$, la preuve des points i) et ii).

\medskip
$\bullet$
Imposons désormais à $\uP$ d'être une suite régulière de sorte que $\omegares(\nabla)$ est
un scalaire régulier.

\smallskip
$\rhd$ Aspect générateur.
Soit $F \in \uPsat_d /\langle\uP\rangle_d$ que nous voulons exprimer comme
combinaison $\bfA$-linéaire de $\nabla_\alpha, \nabla_\beta, \nabla_\gamma$.
On a $X^\alpha F \in \uPsat_\delta /\langle\uP\rangle_\delta = \bfA\,\nabla$, donc
$\omegares(X^\alpha F) \in \bfA\,\omegares(\nabla)$. Idem pour les deux autres,
d'où l'expression (dans laquelle les quotients sont exacts):
$$
F = \frac{\omegares(X^\alpha F)}{\omegares(\nabla)}\,\nabla_\alpha  +
\frac{\omegares(X^\beta F)}{\omegares(\nabla)}\,\nabla_\beta +
\frac{\omegares(X^\gamma F)}{\omegares(\nabla)}\,\nabla_\gamma
$$

\smallskip
$\rhd$ Liberté. Soit une relation $\bfA$-linéaire dans $\bfB_d$:
$$
\lambda_\alpha\,\nabla_\alpha +  \lambda_\beta\,\nabla_\beta +  \lambda_\gamma\,\nabla_\gamma = 0
$$
En évaluant $\Det_d(\sbullet, \nabla_\beta, \nabla_\gamma)$ sur cette égalité, on obtient
$\lambda_\alpha\omegares(\nabla) = 0$ d'où $\lambda_\alpha = 0$. Idem pour les deux autres.
Ceci termine la preuve du point iii).
\end {proof}

\begin {proof} [Preuve alternative indépendante des points ii) et iii) de \ref{omegaresDetdRelations}]
\leavevmode

On peut se passer complètement de la forme $\Det_d$ pour établir ces deux points mais bien
entendu, procédant ainsi, on ne relie pas $\Det_d$ et $\omegares$.

\medskip

ii) Comme c'est une identité algébrique, on peut supposer $\uP$
régulière. Notons $L$ le membre gauche et $R$ celui de droite. Pour
montrer $L=R$ dans~$\bfB_d$, il suffit d'après l'aspect \og
non-dégénéré\fg{} de la multiplication
(proposition~\ref{SemiPairingProperty}) de montrer que pour tout
monôme $X^\gamma$ de degré $d'=\delta-d$, on a $X^\gamma L = X^\gamma R$
dans $\bfB_\delta$.  En vertu du théorème d'orthogonalité \ref{DualityJPJ3.10.6}, on a
dans~$\bfB_\delta$:
$$
X^\gamma R \overset{\rm def}{=} \sum_{|\beta| = d'} \omegares(X^\beta F)\,X^\gamma \nabla_\beta =
\omegares(X^\gamma F) \nabla
$$
Versus $X^\gamma L \overset{\rm def}{=} \omegares(\nabla)\, X^\gamma F$.
L'égalité résulte alors de la propriété Cramer de $\omegares$.

\medskip
iii) On procède comme à la fin de la première preuve en utilisant, pour l'aspect
générateur, l'identité du point~ii). En ce qui concerne la liberté, considérons
une relation $\bfA$-linéaire dans $\bfB_d$:
$$
\sum_{|\beta|= d'} \lambda_\beta\, \nabla_\beta = 0
$$
En multipliant par $X^\gamma$ où $|\gamma| = d'$ et en utilisant le
résultat d'orthogonalité, on obtient $\lambda_\gamma\,\nabla= 0$
(dans~$\bfB_\delta$) donc $\lambda_\gamma = 0$.
\end {proof}

\subsection {La base de $\vH^0_{\protect\uX}(\bfB)_d$ réexaminée dans le cadre de la dualité}
\label{HCech0B}

Comme nous sommes très attachés aux problèmes de fondement et à
la problématique oeuf/poule, nous revenons encore une fois sur la preuve du fait
que $(\nabla_\beta)_{|\beta| = d'}$ est une $\bfA$-base de
$\vH^0_\uX(\bfB)_d$ lorsque $\uP$ est régulière.  Ici $\delta
- \min(D) < d \leqslant \delta$, mais nous libérerons $d$ un peu plus loin.
Dans la preuve qui vient, nul besoin ni de $\omegares$ ni de $\Det_d$
(cependant $\omegares$ pointe le bout de son nez), l'ingrédient
principal étant le résultat d'orthogonalité.

\medskip

Soit $F \in \vH^0_\uX(\bfB)_d = \uPsat_d/\langle\uP\rangle_d$ à écrire
comme combinaison $\bfA$-linéaire des $(\nabla_\beta)_{|\beta| =
d'}$. Comme $X^\beta F \in \vH^0_\uX(\bfB)_\delta = \bfA\nabla$, il existe un unique $a_\beta \in \bfA$
tel que $X^\beta F = a_\beta \nabla$. Nous affirmons alors que dans~$\bfB_d$:
$$
F = \sum_{|\beta|=d'} a_\beta\,\nabla_\beta
$$
En effet, en notant $G$ la somme de droite, pour tout $|\gamma| = d'$,
on a, d'après le théorème d'orthogonalité, $X^\gamma F = X^\gamma G$
dans $\bfB_\delta$ donc $F = G$ d'après l'aspect \og non-dégénéré\fg{} de la multiplication,
cf. la proposition~\ref{SemiPairingProperty}.
Quant à la liberté de la famille $(\nabla_\beta)_{|\beta| = d'}$, elle est conséquence
de l'orthogonalité.

\smallskip

Il faut noter cependant que nous n'obtenons pas l'identité du point ii) du
théorème~\ref{omegaresDetdRelations} car ici $F \in \vH^0_\uX(\bfB)_d$
tandis que dans le théorème $F \in \bfB_d$.
Un lecteur attentif remarquera que $\omegares$ n'est pas loin puisque
$a_\beta$ est le quotient exact $\omegares(X^\beta F)/\omegares(\nabla)$.

\bigskip

Il est important de remarquer, puisque $d' := \delta-d < \min(D)$, que
$\langle \uP\rangle_{d'} = 0$, donc $\bfB_{d'} = \bfA[\uX]_{d'}$ est
un $\bfA$-module libre avec sa base monomiale $(X^\gamma)_{|\gamma| =
d'}$.  Nous allons maintenant revenir sur l'isomorphisme $\vH^0_\uX(\bfB)_d \simeq
(\bfB_{d'})^\star$ et constater que la famille
$(\nabla_\beta)_{|\beta| = d'}$ est la base duale de cette base
monomiale de~$\bfB_{d'}$.

\subsubsection*{Dualité et l'isomorphisme $\vH^0_\uX(\bfB)_d \simeq (\bfB_{d'})^\star$
                où ici $0 \leqslant d \leqslant \delta$ et $d' = \delta-d$}

Le produit sur $\bfB = \bfA[\uX]/\langle\uP\rangle$:
$$
\bfB_{d'} \times \bfB_d \to \bfB_\delta, \qquad\qquad
\bfB_{d'} \times  \vH^0_\uX(\bfB)_d  \to  \vH^0_\uX(\bfB)_\delta
$$
fournit banalement des applications $\bfA$-linéaires:
$$
\bfB_{d} \to \Hom_\bfA(\bfB_{d'}, \bfB_\delta), \qquad\quad
\vH^0_\uX(\bfB)_d  \to \Hom_\bfA\big(\bfB_{d'},  \vH^0_\uX(\bfB)_\delta\big)
\leqno (\blacklozenge_1)
$$
Ci-dessous, nous supposons \fbox{$\uP$ régulière}. Cette hypothèse a
comme conséquence plusieurs propriétés fondamentales que nous passons
en revue.  Les deux premières, déjà répertoriées et utilisées, sont
conséquences du théorème de Wiebe, cf le chapitre~\ref{ObjetsSuiteP}.

\begin {enumerate}
\item
Le $\bfA$-module $\vH^0_\uX(\bfB)_\delta$ est libre de rang 1, avec le
déterminant bezoutient $\nabla$ comme base privilégiée, d'où un
isomorphisme quasi-canonique (qui consiste à identifier $1$
et~$\nabla$):
$$
\Hom_\bfA(\bfB_{d'}, \vH^0_\uX(\bfB)_\delta) \simeq (\bfB_{d'})^\star
$$
\item
L'aspect non-dégénéré de la multiplication (cf.~\ref{SemiPairingProperty}) affirmant
que l'application linéaire de gauche dans~$(\blacklozenge_1)$ est injective. 
\emph{Il en est donc de même de celle de droite}.

\item
Dans $(\blacklozenge_1)$, l'application de droite est un \fbox{isomorphisme},
ce qui complète de manière primordiale la deuxième partie (en italique) du point précédent.
\end {enumerate}

En conséquence, nous disposons d'isomorphismes explicites:
$$
\vH^0_\uX(\bfB)_d  \simeq \Hom_\bfA(\bfB_{d'}, \vH^0_\uX(\bfB)_\delta)
= \Hom_\bfA(\bfB_{d'}, \bfA\,\nabla) \simeq (\bfB_{d'})^\star
$$
Détaillons comment on passe de gauche à droite. Soit $b \in \vH^0_\uX(\bfB)_d$;
alors la forme linéaire $\ell_b \in (\bfB_{d'})^\star$ qui lui correspond
est définie par:
$$
\ell_b :  b' \mapsto [bb']\strut_\nabla \text { (composante de $bb'$ sur $\nabla$)}
\leqno(\blacklozenge_2)
$$
C'est pour cette raison que, dans le contexte précédent $d' < \min(D)$, nous avons
écrit que la famille $(\nabla_\beta)_{|\beta| = d'}$ est la base duale de la base
monomiale de $\bfB_{d'} = \bfA[\uX]_{d'}$.

\subsubsection{Une description de la transgression du bicomplexe $\rmK^{n-\sbullet}(\protect\uP\,;\bfA[\protect\uX])
  \otimes\vC_\protect\uX^{n-\sbullet}(\bfA[\protect\uX])$}

\index{transgression}%

Nous allons tenter ici de \emph{rendre compte} d'un certain
$\bfA[\uX]$-morphisme gradué $\tau$ de degré $0$, nommé transgression, intervenant dans
l'étude d'un système $\uP$ ne vérifiant aucune hypothèse particulière.
Notre description se veut concrète, mais la tâche n'est pas si aisée
puisque nous ne fournirons aucune preuve. Notre référence principale est l'article
\cite{J4}  de Jean-Pierre Jouanolou 
(3.12 Résultant, intersections complètes et résidus de Grothendieck),
en particulier (3.12.2.10) pour $\uP$ quelconque et (3.12.4.2) pour
$\uP$ régulière. Chez cet auteur, la trangression est mise en place 
par l'intermédiaire d'un couple de suites spectrales.  De notre côté, nous aurions pu la
mettre en oeuvre via l'étude du bicomplexe suivant, de type Koszul-{\Cech}:
$$
\vcenter{\hbox{
\begin {tikzpicture} [inner sep = 1pt]
\coordinate (O) at (0,0) ;
\node at (2.4,1.7) {$C'_{p,q}$} ;
\path (2.4, 1.4) edge [->] (2.4, 0.3) ;  
\path (2.5, 0.9) [right,->] node {$\uX$-\Cech{}} ;
\path (2.4, 0.9) [left] node {$d''$} ;
\path (1.9,1.7) edge [->] (0.3,1.7) ;  
\path (1.2,1.77) [above] node {$\uP$-Koszul} ;
\path (1.2,1.70) [below] node {$d'$} ;
\path (O) edge [->] (2.5,0) ; 
\path (0.1,-0.2) edge [below, <-] node {$X_\sbullet$-complexe} (2.2,-0.2) ;
\path (O) edge [->] (0,2.5) ; 
\path (0.1, 0.2) edge [rotate = 90, <-, above, midway] node [rotate=90] {$Y_\sbullet$-complexe} (2.1, 0.2) ;
\end {tikzpicture}
}}
\qquad\qquad\qquad
C'_{p,q} = 
\rmK^{n-p}(\uP\,;\bfA[\uX]) \otimes \vC_\uX^{n-q}(\bfA[\uX])
$$
Nous avons renoncé car cela aurait nécessité l'introduction de la
cohomologie de {\Cech}, suivie d'un développement trop long.  Ce
bicomplexe présente des analogies avec celui de type Koszul-Koszul
défini par $C_{p,q} = \rmK^{n-p}(\uP;\bfA[\uX])
\otimes \rmK^{n-q}(\uX;\bfA[\uX])$, que nous avons choisi pour
prouver le théorème de Wiebe (cf. annexe~\ref{WiebeProof}).  Par
exemple, grâce au fait que la suite $\uX$ est régulière, ses colonnes
sont exactes, sauf sur l'axe des abscisses, ce qui permet de définir
un morphisme précis de l'homologie du $X_\sbullet$-complexe vers celle
du $Y_\sbullet$-complexe, cf. la première section de
l'annexe~\ref{WiebeProof}.  A partir de ce morphisme précis, on
obtient, en degré homologique~$n$, la transgression en question
$\rmH^n(X_\sbullet) \to \rmH^n(Y_\sbullet)$.
Il convient cependant de tenir compte de la structure graduée,
ce qui finit par faire apparaitre la transgression sous la
forme complexe suivante (ci-dessous, le $-\Sigma$ est un twist)
$$
\tau : \Ann(\uP;\vH^n)(-\Sigma)  \longrightarrow  \vH^0_\uX(\bfB),
\qquad \text{où} \qquad
\vH^n = \vH^n_\uX(\bfA[\uX]), \quad
\Sigma = \sum_i d_i
\leqno(\blacklozenge_3)
$$
\label{TransgressionInitialForm}
Ce n'est évidemment pas la description que nous avons qualifiée de
concrète dont nous voulons rendre compte!  Note: le $\bfA[\uX]$-module
gradué $\vH^n = \vH^n_\uX(\bfA[\uX])$ qui intervient est le dernier
module de cohomologie de {\Cech} de $\uX$ sur $\bfA[\uX]$ (les autres
sont nuls); pas besoin d'être un expert en cohomologie de {\Cech} pour
le décrire (nous en reparlerons un peu plus loin).

\index{cohomologie de \Cech}%

\medskip

Pourquoi vouloir en rendre compte?  Pour plusieurs raisons.  Tout
d'abord, la notion de trangression s'accompagne de propriétés
permettant de montrer que l'application de droite
dans~$(\blacklozenge_1)$ est un isomorphisme lorsque $\uP$ est
régulière.  D'autre part, pendant la période 2015-2016, elle a
beaucoup intrigué les auteurs par la complexité de sa mise en oeuvre
en opposition à certaines de ses manifestations tangibles.  En voici
une.  Nous insistons ici sur le fait que $\uP$ ne vérifie aucune
hypothèse particulière. Soit $\mu : \bfA[\uX]_\delta \to \bfA$ une
forme linéaire nulle sur $\uPdelta$. On dispose alors de la propriété
pas du tout évidente
$$
\mu(\nabla)F - \mu(F)\nabla \in \uPdelta \quad \forall\ F \in \bfA[\uX]_\delta
$$
En conséquence, $\mu(\nabla) \in \ElimIdeal = \vH^0_\uX(\bfB)_0$.
Ceci permet de définir une application linéaire
$$
(\bfB_\delta)^\star \longrightarrow \vH^0_\uX(\bfB)_0,
\qquad\qquad
\mu \mapsto \mu(\nabla)
$$
Il s'agit de $\tau_0$, composante homogène de $\tau$ de degré $0$!
Et la propriété pas du tout évidente résulte de l'existence de $\tau$
et de ses propriétés.

\label{NOTA16-tau0}%

\bigskip

Bien qu'elle ne soit pas fondée sous la forme qui suit, nous pensons
qu'une manière assez simple de présenter la transgression $\tau$
consiste à la relier à la collection des $\bfA$-modules
$(\bfB_d)^\star$.  Une fois la mise en place suffisamment avancée,
$\tau$ peut être vue comme une application $\bfA[\uX]$-linéaire
graduée de degré $0$:
$$
\tau : \bfB^\vee(-\delta)  \longrightarrow  \vH^0_\uX(\bfB)
\overset{\rm def}{=} \uPsat/\langle\uP\rangle
$$
Nous allons décrire ce qu'est le $\bfB$-module $\bfB^\vee$
dit dual gradué de $\bfB$.  Quant au $(-\delta)$, il s'agit
d'un twist requis de manière à ce que $\tau$ soit de degré $0$. In fine,
nous obtiendrons la composante homogène de degré~$d$
de $\tau$ sous la forme suivante:
$$
\boxed{\tau_d : (\bfB_{\delta-d})^\star \longrightarrow  \vH^0_\uX(\bfB)_d}
$$
Il est important de noter d'une part la concordance entre l'indice $d$ de $\tau_d$ et
le degré de la composante homogène à \emph{l'arrivée} et d'autre part l'apparition
de $\delta-d$ dans le module de \emph{départ}. On peut considérer que la transgression
est la véritable explication structurelle du phénomène de couplage $(d, \delta-d)$.

\medskip

Afin de décrire $\bfB^\vee$, nous considérons de manière plus générale
un $\bfA[\uX]$-module~$E$.  Nous notons $E^\star_\bfA
= \Hom_\bfA(E,\bfA)$ l'ensemble des formes $\bfA$-linéaires définies
sur le $\bfA$-module $E$. On peut en faire un $\bfA[\uX]$-module en
définissant, pour $\mu \in E_\bfA^\star$ et $F \in \bfA[\uX]$, la
forme linéaire $F \cdot \mu$ par $x \mapsto \mu(Fx)$.  Lorsque $E$ est
un $\bfA[\uX]$-module gradué (c'est le cas de $E = \bfB$), on note
$E^\vee$ le sous-$\bfA[\uX]$-module de $E_\bfA^\star$ constitué des
formes $\bfA$-linéaires nulles sur les $E_d$ sauf pour un nombre fini
de $d$. On le gradue par:
$$
E^\vee_d = \{
\text{formes $\bfA$-linéaires $E \to \bfA$ nulles sur chaque $E_i$ avec $i \ne -d$}\}
$$
Ainsi \boxed{E^\vee_d = (E_{-d})^\star}.
Le coup du $d$ versus $-d$ s'explique par le fait que l'on veut 
$\bfA[\uX]_f \cdot E^\vee_d \subset E^\vee_{f+d}$:
$$
\vcenter {
\xymatrix @R = 0.5cm{
E_{-d} \ar[r]^\mu                   &\bfA  \\
E_{-f-d}\ar[u]^{\times F}\ar[ur]_{F\cdot\mu} \\
}}
\qquad\qquad
\begin {array}{l}
F \in \bfA[\uX]_f,\quad \mu \in E^\vee_d \\[3mm]
F\cdot\mu \in E^\vee_{f+d}
\end {array}
$$
\label{NOTA16-Evee}%
\index{dual gradué d'un $\bfA[\uX]$-module gradué}%

\smallskip
Les égalités $\big(\bfB^\vee(-\delta)\big)_d = (\bfB^\vee)_{-\delta+d} = (\bfB_{\delta-d})^\star$
sont en accord avec l'encadré relatif à $\tau_d$.

\bigskip

Voici les 4 propriétés importantes de $\tau$.

\begin {enumerate} [{P}-1]
\item
L'application $\tau :\bfB^\vee(-\delta)  \longrightarrow  \vH^0_\uX(\bfB)$
est une application $\bfB$-linéaire, graduée de degré $0$.
\item
$\tau_\delta : (\bfB_0)^\star \to \vH^0_\uX(\bfB)_\delta$ est caractérisée par
$\tau_\delta(\nu) = \nu(1)\overline\nabla$.
\item
$\tau_0 : (\bfB_{\delta})^\star \to \vH^0_\uX(\bfB)_0$ est caractérisée par $\tau_0(\mu)=\mu(\nabla)$.
\item
Lorsque $\uP$ est régulière, $\tau$ est un isomorphisme.
\end {enumerate}

\medskip

En conséquence de la propriété P-1, pour une forme linéaire $\mu\in(\bfB_{d'})^\star$
et $b_1 \in \bfB$, on a l'égalité $b_1\tau(\mu) = \tau(b_1\cdot\mu)$. Si on veut tenir
compte des degrés en supposant $b_1 \in \bfB_{d_1}$, cette égalité s'écrit:
$$
b_1\tau_d(\mu) = \tau_{d_1+d}(b_1\cdot\mu) \qquad \qquad
\text{avec $d = \delta-d'$}
$$
En particulier, pour $d_1 = d'$, en notant $b' \in \bfB_{d'}$ au lieu de $b_1$
$$
b'\tau_d(\mu) = \tau_{\delta}(b'\cdot\mu) \qquad \qquad
\text{avec $d = \delta-d'$}
$$
En utilisant la propriété P-2 concernant $\tau_\delta$, on obtient dans
$\vH^0_\uX(\bfB)_\delta \subset \bfB_\delta$, l'égalité
$$
\boxed{b'\tau_d(\mu) = \mu(b')\overline \nabla
\qquad
\forall\,\mu\in(\bfB_{d'})^\star,\ \forall\,b'\in\bfB_{d'}}
$$
On en tire le diagramme commutatif:
$$
\vcenter{
\xymatrix @M=0.4pc{
& (\bfB_{d'})^\star \ar[ld]_{\tau_d} \ar[dr]^-{\times \overline \nabla} &  \\
\vH^0_\uX(\bfB)_d \ar[rr]^-{b \mapsto \sbullet\times b} &
  &\Hom_{\bfA}\big(\bfB_{d'}, \vH^0_\uX(\bfB)_\delta\big)
}}
\hspace{1cm}
\text{avec \ } 
\times \overline \nabla : 
\ell \mapsto \{b' \mapsto \ell(b') \overline \nabla\}
$$
Lorsque $\uP$ est régulière, $\tau_d$ est un isomorphisme de sorte que
les 3 flèches dans le diagramme sont des isomorphismes. Ainsi
$\tau_d$ apparaît comme l'inverse de $b \mapsto \ell_b$ où $\ell_b$
est la forme linéaire définie en~$(\blacklozenge_2)$.

\bigskip

Enfin, dans l'égalité encadrée prenons le cas particulier $(d'=\delta,
d=0)$ et utilisons la propriété P-3 concernant $\tau_0$. On obtient
alors l'égalité dans $\vH^0_\uX(\bfB)_\delta \subset \bfB_\delta$ pour
toute forme linéaire $\mu\in(\bfB_\delta)^\star$ et tout
$b' \in \bfB_\delta$:
$$
b'\mu(\overline\nabla) = \mu(b')\overline\nabla
$$

\subsubsection*{Structure de $\vH^n$ et de l'isomorphisme
$\Ann(\uP;\vH^n)(-\Sigma) \simeq \bfB^\vee(-\delta)$ où $\vH^n = \vH^n_\uX(\bfA[\uX])$}

\label {NOTA16-HnCechuX}%

Comme nous l'avons déjà signalé, nul besoin d'être spécialiste de la
cohomologie de {\Cech} pour comprendre pourquoi les
$\bfA[\uX]$-modules gradués $\vH^n$ et $\bfA[\uX]^\vee(n)$ sont
canoniquement isomorphes où nous notons $\vH^n = \vH^n_\uX(\bfA[\uX])$
pour alléger. Nous suivons d'assez près la thèse de C. Tête en \cite[section II.4]{Tete}.
Nous avons juste besoin de la dernière différentielle du
complexe de {\Cech} de $\uX$, qui s'exprime de la manière suivante
entre localisés monomiaux de $\bfA[\uX]$:
$$
\bigoplus_{i=1}^n \bfA[\uX]_{X_1\cdots X_{i-1} \widehat{X_i} X_{i+1} \cdots X_n}
\longrightarrow  \bfA[\uX]_{X_1\cdots X_n}
\qquad \qquad
\bigoplus_{i=1}^n F_i \longmapsto
\sum_{i=1}^n (-1)^{i-1} (F_i)_{X_1\cdots X_n}
$$
Il s'ensuit que $\vH^n$ est le $\bfA[\uX]$-module gradué quotient:
$$
\vH^n = \dfrac{\bfA[\uX]_{X_1\cdots X_n}} {
\bfA[\uX]_{\widehat{X_1}X_2\cdots X_n} + \bfA[\uX]_{X_1\widehat{X_2}\cdots X_n} + \cdots +
\bfA[\uX]_{X_1X_2\cdots \widehat{X_n}}
}
$$
Pour cerner ce $\bfA[\uX]$-module, regardons-le en tant que
$\bfA$-module.  Le numérateur, qui n'est autre que $\bfA[X_1^{\pm 1},\dots,
X_n^{\pm1}]$, admet pour $\bfA$-base les monômes $X^\beta$ avec
$\beta\in\bbZ^n$ tandis que le dénominateur admet pour $\bfA$-base les
$X^\beta$ avec $\beta\in\bbZ^n$ possédant un indice $i$ tel que
$\beta_i \ge 0$. Le quotient admet donc comme $\bfA$-base les
$X^\beta$ vérifiant pour tout $i$ l'inégalité $\beta_i < 0$ ou encore
$\beta_i \le -1$. Ce que nous écrirons $\beta \preceq -\Un$ en
désignant par $\preceq$ la relation d'ordre sur $\bbZ^n$ composante à
composante.  Ces monômes $X^\beta$ peuvent s'écrire $(X_1\cdots
X_n)^{-1} X^\gamma$ avec $\gamma \preceq 0$ si bien que:
$$
\vH^n = (X_1\cdots X_n)^{-1} \bfA[X_1^{-1}, \dots, X_n^{-1}]
$$
Sa structure de $\bfA[\uX]$-module est donnée, pour $\alpha \succeq 0$
et $\beta\preceq -\Un$, par:
$$
X^\alpha \cdot X^\beta = \begin {cases}
X^{\alpha+\beta} & \text{si $\alpha+\beta \preceq -\Un$} \\
0&\text{sinon} \\
\end {cases}
$$
En particulier pour les composantes homogènes de $\vH^n$:
$$
\vH^n_{-n} = \bfA (X_1\cdots X_n)^{-1} \simeq \bfA, \qquad\qquad
\vH^n_d = 0 \quad \forall\, d > -n
$$
La multiplication de $\bfA[\uX]$ sur $\vH^n$ induit une forme bilinéaire 
$$
\Theta : \bfA[\uX]_{-n-d} \times \vH^n_d \longrightarrow \vH^n_{-n} \simeq \bfA
$$
Pour $X^\alpha\in\bfA[\uX]_{-n-d}$ et $X^\beta\in\vH^n_d$, il n'est
pas difficile de voir que $\Theta(X^\alpha,X^\beta) = 1$ si
$\alpha+\beta = -\Un$ et $0$ sinon. Cette forme bilinéaire établit donc
une dualité parfaite d'où un isomorphisme canonique 
$\vH^n_d \overset{\rm can.}{\simeq} (\bfA[\uX]_{-n-d})^\star$ défini par 
$h \mapsto \Theta(\sbullet,h)$. Pour rallier $\bfA[\uX]^\vee$
et son twist $\bfA[\uX]^\vee(n)$, on écrit:
$$
\vH^n_d \simeq (\bfA[\uX]^\vee)_{n+d} = \big(\bfA[\uX]^\vee(n)\big)_{d}
$$
En définitive $\vH^n \simeq \bfA[\uX]^\vee(n)$ via $h \mapsto \Theta(\sbullet,h)$.
On laisse le soin au lecteur d'en déduire
$$
\Ann(\uP;\vH^n)  \simeq (\bfA[\uX]/\langle\uP\rangle)^\vee(n) \overset{\rm def.}{=} \bfB^\vee(n)
$$
Il faut maintenant tenir compte du twist $-\Sigma$  intervenant
en $(\blacklozenge_3)$ page~\pageref{TransgressionInitialForm}
où $\Sigma = \sum_i d_i$. Puisque $-\Sigma + n = -\delta$,
il vient:
$$
\Ann(\uP;\vH^n)(-\Sigma) \simeq \bfB^\vee(-\delta)
$$
Ainsi s'achève notre objectif principal consistant à remplacer la
version de $\tau$ à gauche par celle de droite:
$$
\tau : \Ann(\uP;\vH^n)(-\Sigma)  \longrightarrow  \vH^0_\uX(\bfB)
\qquad\text{versus}\qquad
\tau : \bfB^\vee(-\delta)  \longrightarrow  \vH^0_\uX(\bfB)
$$

\subsection{Le cas particulier $n=3$ et $d_1=d_2=d_3$: le format $D = (e,e,e)$}

On rappelle qu'en 3 variables $X,Y,Z$:
$$
\dim \bfA[\uX]_m = \binom{2 + m}{2} = \dfrac{(m+2)(m+1)}{2}
$$
Le degré critique de $D = (e,e,e)$ est $\delta = 3(e-1)$. Comme $\min(D) = e$, la
fourchette $[\delta-\min(D)+1 .. \delta]$ des degrés $d$ est constituée
des entiers $2e-2, 2e-1, 2e, \dots, 3(e-1)$. Dans cette fourchette,
il y a deux entiers $d$ qui vérifient $\Jex_{2,d} = 0$ \idest{} $d
< \min_{i \ne j} (d_i+d_j) = 2e$ et qui fournissent donc pour le résultant
une formule déterminantale d'ordre $\dim\bfA[\uX]_d$ sans dénominateur ; il s'agit  de :
$$\
\boxed{d=2e-2},\ \dim\bfA[\uX]_d = (2e-1)e, \quad \text{ et } \quad 
d=2e-1,\ \dim\bfA[\uX]_d = (2e+1)e
$$
Après Sylvester (cf \cite[point 91]{Salmon}),  nous allons étudier le cas encadré qui
conduit à un déterminant d'ordre $(2e-1)e$ pour le résultant.

\medskip

L'entier complémentaire à $\delta$ de $d$ est \boxed{d' = e-1}. Pour chaque
exposant $\beta = (i,j,k)$ tel que $|\beta| = d'$ \idest{} $i+j+k = e-1$,
il existe une matrice homogène, non unique, que l'on note $\dsV_\beta$:
$$
\uP = [X^{i+1}, Y^{j+1}, Z^{k+1}]\, \dsV_\beta \qquad
\dsV_\beta = 
\begin {bmatrix}
V_{11} & V_{12} & V_{13} \\
V_{21} & V_{22} & V_{23} \\
V_{31} & V_{32} & V_{33} \\
\end {bmatrix}
$$
Son déterminant $\nabla_\beta = \det \dsV_\beta$ est homogène de degré $d$,
ce $d$ provenant de $\sum_\ell d_\ell - (|\beta| + n) = \delta -d'$.

\medskip

La matrice strictement carrée $\bsOmega_{d,\uP}$ indexée par les
monômes de $\bfA[\uX]_d$ est constituée de
$\dim \Jex_{1,d}$ colonnes (elles sont multiples des $P_i$, de degré $d$)
et de $\dim \Smac_{0,d} = \dim\bfA[\uX]_{d'}$ colonnes-argument
destinées à être évaluées en la famille
$(\nabla_\beta)_{|\beta|=d'}$. On a:
$$
\Jex_{1,d} = X^e\bfA[\uX]_{d-e} \oplus Y^e\bfA[\uX]_{d-e} \oplus Z^e\bfA[\uX]_{d-e} 
$$
D'où les dimensions:
$$
\dim \Jex_{1,d} = 3\dfrac{(d-e+2)(d-e+1)}{2} = 3\dfrac{e(e-1)}{2},
\qquad
\dim \Smac_{0,d} = \dfrac{(d'+2)(d'+1)}{2} = \dfrac{(e+1)e}{2} 
$$
Tout est prêt pour élaborer la matrice strictement carrée sur $\bfA[\uX]_d$
dont le déterminant est le résultant des 3 polynômes homogènes $(P_1, P_2, P_3)$ de degré $e$.
Sa spécialisation en le jeu étalon $(X^e, Y^e, Z^e)$ est $\Id_{\bfA[\uX]_d}$ à condition
d'avoir choisi $\dsV_\beta$ de manière à ce que sa spécialisation soit
$\diag(\uX^{\emouton-\beta})$.
Ci-après, on a visualisé le cas cubique $e=3$.

\bigskip

On peut préférer à cette étude algébrique une approche plus géométrique,
comme par exemple dans l'ouvrage \cite[exercice~15, chap.~3,
  \S4]{Cox}.  Voici un rapide résumé de la stratégie.  Les auteurs, en
suivant probablement \cite[point 91]{Salmon}, introduisent la famille
suivante de polynômes homogènes de degré $d$ où les~$m_i$ sont des
monômes:
$$
(m_1P_1)_{\deg m_1=d-e},\quad (m_2P_2)_{\deg m_2=d-e},\quad (m_3P_3)_{\deg m_3=d-e}, \quad
(\nabla_\beta)_{|\beta|=d'}
$$
On n'oublie pas que $d-e = e-2$ et $d' = e-1$. On vérifie que cette
famille est de cardinal $(2e-1)e$ qui est aussi le nombre de monômes
en 3 variables de degré $d = 2e-2$.  On peut alors former une matrice
carrée~$C$, de taille $(2e-1)e$, exprimant cette famille dans la base
monomiale de $\bfA[\uX]_d$.  Comme cette famille a les mêmes zéros
projectifs que $\uP = (P_1,P_2,P_3)$, $\det(C)$ est multiple de $\Res(\uP)$.
Il faut ensuite contrôler le fait que $\det(C)$ est homogène en chaque $P_i$,
de poids~$e^2$ (notre $\widehat d_i$). Ce $e^2$ provient de
$$
\dfrac{e(e-1)}{2}  + \dfrac{(e+1)e}{2}  
$$
A gauche, c'est le nombre de colonnes de type $P_i$ i.e. $\dim \Jex_{1,d}^{(i)} = \dfrac{1}{3}
\dim \Jex_{1,d}$ et à droite, c'est le nombre de colonnes-argument i.e. $\dim \Smac_{0,d}$
(chaque $\nabla_\beta$ est homogène de poids 1 en $P_i$).
Ainsi $\det(C) = \lambda\, \Res(\uP)$
où $\lambda$ est dans le corps de base. La spécialisation de $\det(C)$
en $(X^e, Y^e, Z^e)$ est $\pm 1$. Bilan: $\Res(\uP) = \pm \det(C)$.

\subsubsection*{L'exemple du format $D = (3,3,3)$ de degré critique $\delta = 6$, $d=4$, $d' := \delta-d=2$}

Voici les dimensions qui interviennent
$$
\dim \Smac_{1,d} = \dim \Jex_{1,d} = 9, \qquad \dim \Smac_{0,d} = 6, \qquad
\dim \rmK_{0,d} = 9 + 6 = 15
$$
Le tableau ci-dessous est celui de l'isomorphisme mouton-swap
$: \Smac_{0,d} \simeq \bfA[\uX]_{d'}$. La ligne supérieure est la base
monomiale de $\Smac_{0,d}$ et le produit d'un monôme par le monôme en
dessous est le mouton-noir $X^2Y^2Z^2$.
Dans l'autre sens: la ligne inférieure est la base monomiale
de $\bfA[\uX]_{d'}$,~etc.
$$
\begin {array} {c|c|c|c|c|c|c}
X^\alpha        &X^2Y^2 &X^2YZ  &X^2Z^2 &XY^2Z  &XYZ^2 &Y^2Z^2\\ [2mm]
\hline
\vrule height12pt depth3pt width0pt
X^{D-\Un-\alpha} &Z^2   &YZ      &Y^2    &XZ     &XY    &X^2 \\
\end {array}
$$

La numérotation des coefficients des 3 polynômes cubiques est la suivante
$$
\setlength{\tabcolsep}{2pt}
\left\{
\begin{tabular}{rcp{14cm}} 
$P_{1}$ & $=$ & $a_{1}X^{3} + a_{2}X^{2}Y + a_{3}X^{2}Z + a_{4}XY^{2} + a_{5}XYZ + a_{6}XZ^{2} + a_{7}Y^{3} + a_{8}Y^{2}Z + a_{9}YZ^{2} + a_{10}Z^{3}$\\ [0.1cm] 
$P_{2}$ & $=$ & $b_{1}X^{3} + b_{2}X^{2}Y + b_{3}X^{2}Z + b_{4}XY^{2} + b_{5}XYZ + b_{6}XZ^{2} + b_{7}Y^{3} + b_{8}Y^{2}Z + b_{9}YZ^{2} + b_{10}Z^{3}$\\ [0.1cm] 
$P_{3}$ & $=$ & $c_{1}X^{3} + c_{2}X^{2}Y + c_{3}X^{2}Z + c_{4}XY^{2} + c_{5}XYZ + c_{6}XZ^{2} + c_{7}Y^{3} + c_{8}Y^{2}Z + c_{9}YZ^{2} + c_{10}Z^{3}$\\ [0.1cm] 
\end{tabular}
\right.
$$
Ce qui conduit à la matrice $\bsOmega_{d,\uP}$ ci-dessous:
$$ \def \lra{\leftrightarrow}
\bsOmega_{d,\uP} = 
\NorthEastBordermatrix{
\Veti{X^4 \lra X\,e_{1}} &\Veti{X^3Y \lra Y\,e_{1}} &\Veti{X^3Z \lra Z\,e_{1}} &\Veti{XY^3 \lra X\,e_{2}} &\Veti{XZ^3 \lra X\,e_{3}} &\Veti{Y^4 \lra Y\,e_{2}} &\Veti{Y^3Z \lra Z\,e_{2}} &\Veti{YZ^3 \lra Y\,e_{3}} &\Veti{Z^4 \lra Z\,e_{3}} &
\Veti{X^2Y^2\leftrightarrow Z^{2}} &\Veti{X^2YZ\leftrightarrow YZ} &\Veti{X^2Z^2\leftrightarrow Y^{2}} &
   \Veti{XY^2Z\leftrightarrow XZ} &\Veti{XYZ^2\leftrightarrow XY} &\Veti{Y^2Z^2\leftrightarrow X^{2}} & \\
a_{1} & . & . & b_{1} & c_{1} & . & . & . & . &\sbullet &\sbullet &\sbullet &\sbullet &\sbullet &\sbullet & \Heti{X^{4}} \\
a_{2} & a_{1} & . & b_{2} & c_{2} & b_{1} & . & c_{1} & . &   &   &   &   &   &   & \Heti{X^{3}Y} \\
a_{3} & . & a_{1} & b_{3} & c_{3} & . & b_{1} & . & c_{1} &   &   &   &   &   &   & \Heti{X^{3}Z} \\
a_{7} & a_{4} & . & b_{7} & c_{7} & b_{4} & . & c_{4} & . &   &   &   &   &   &   & \Heti{XY^{3}} \\
a_{10} & . & a_{6} & b_{10} & c_{10} & . & b_{6} & . & c_{6} &   &   &   &   &   &   & \Heti{XZ^{3}} \\
. & a_{7} & . & . & . & b_{7} & . & c_{7} & . &   &   &   &   &   &   & \Heti{Y^{4}} \\
. & a_{8} & a_{7} & . & . & b_{8} & b_{7} & c_{8} & c_{7} &   &   &   &   &   &   & \Heti{Y^{3}Z} \\
. & a_{10} & a_{9} & . & . & b_{10} & b_{9} & c_{10} & c_{9} &   &   &   &   &   &   & \Heti{YZ^{3}} \\
. & . & a_{10} & . & . & . & b_{10} & . & c_{10} &   &   &   &   &   &   & \Heti{Z^{4}} \\
a_{4} & a_{2} & . & b_{4} & c_{4} & b_{2} & . & c_{2} & . &   &   &   &   &   &   & \Heti{X^{2}Y^{2}} \\
a_{5} & a_{3} & a_{2} & b_{5} & c_{5} & b_{3} & b_{2} & c_{3} & c_{2} &   &   &   &   &   &   & \Heti{X^{2}YZ} \\
a_{6} & . & a_{3} & b_{6} & c_{6} & . & b_{3} & . & c_{3} &   &   &   &   &   &   & \Heti{X^{2}Z^{2}} \\
a_{8} & a_{5} & a_{4} & b_{8} & c_{8} & b_{5} & b_{4} & c_{5} & c_{4} &   &   &   &   &   &   & \Heti{XY^{2}Z} \\
a_{9} & a_{6} & a_{5} & b_{9} & c_{9} & b_{6} & b_{5} & c_{6} & c_{5} &   &   &   &   &   &   & \Heti{XYZ^{2}} \\
. & a_{9} & a_{8} & . & . & b_{9} & b_{8} & c_{9} & c_{8} &\sbullet &\sbullet &\sbullet &\sbullet &\sbullet &\sbullet & \Heti{Y^{2}Z^{2}} \\
}
$$ 
Il ne reste plus qu'à reporter dans les 6 colonnes de droite les
coefficients des déterminants $(\nabla_\beta)_{|\beta|=d'}$ (ce sont
des polynômes homogènes de degré $d=4$), en respectant l'ordre bien
entendu. Le déterminant obtenu est le résultant de $\uP = (P_1,P_2,P_3)$.

\subsubsection*{Le cas particulier $D = (2,2,2)$ de degré critique $\delta=3$, $d=2$, $d'=1$
                 : la formule de Sylvester}

La numérotation des coefficients de $P_1$ est la suivante (idem pour $P_2, P_3$ avec les lettres $b,c$):
$$
P_1 = a_1X^2 + a_2XY + a_3XZ + a_4Y^2 + a_5YZ + a_6Z^2
$$
Déterminons une famille $(\nabla_\beta)_{|\beta|=d'}$. Pour cela, il nous faut définir (de manière non unique) 3 matrices homogènes $\dsV_X, \dsV_Y, \dsV_Z$ 
vérifiant:
$$
\uP = [X^2,Y,Z]\,\dsV_X = [X,Y^2,Z]\,\dsV_Y = [X,Y,Z^2]\,\dsV_Z
$$
Par exemple (on montre uniquement la première colonne, les autres sont analogues):
$$
\text{col}_1(\dsV_X) =
\begin {bmatrix}
a_1 \\
a_2X + a_4Y + a_5Z \\
a_3X + a_6Z \\
\end {bmatrix}
\quad
\text{col}_1(\dsV_Y) =
\begin {bmatrix}
a_1X + a_2Y \\
a_4 \\
a_3X + a_5Y + a_6Z \\
\end {bmatrix}
\quad
\text{col}_1(\dsV_Z) =
\begin {bmatrix}
a_1X + a_2Y + a_3Z\\
a_4Y + a_5Z \\
a_6 \\
\end {bmatrix}
$$
Notons $\nabla_\sbullet=\det\dsV_\sbullet$ (polynôme homogène de degré 2).
Utilisons la base monomiale visible sur $P_1$ ; ci-dessous, les monômes de $\Jex_{1,d}$ sont soulignés 
et les monômes de $\Smac_{0,d}$ sont transformés par le mouton-swap :
$$
\newcommand \vetibis[1] {\rotatebox{-90}{\mbox{\!\!\!\!$\scriptstyle#1$}}}
\begin{array}{cccccc}
\underline{X^2} & XY & XZ & \underline{Y^2} & YZ & \underline{Z^2} \\
& \vetibis{\leftrightsquigarrow} & \vetibis{\leftrightsquigarrow} & & \vetibis{\leftrightsquigarrow} & \\[0.1cm]
 & Z  & Y  &  & X  & 
\end{array}
$$
Le résultant s'obtient comme un déterminant d'ordre 6:
$$
\Res(\uP) \ =\  \big[P_1, \nabla_Z, \nabla_Y, P_2, \nabla_X, P_3\big]
\qquad\qquad
\leqno (\heartsuit)
$$
{\bf Lien avec le jacobien $J$ de $\uP$}.
Les $\nabla_\sbullet$ sont reliés modulo $\langle\uP\rangle$ aux dérivées partielles du jacobien:
$$
2^3\nabla_X \equiv J'_X,\qquad 2^3\nabla_Y \equiv J'_Y,\qquad 2^3\nabla_Z \equiv J'_Z
$$
Donc, lorsque cela a du sens de diviser par $2$, on peut aménager $(\heartsuit)$ pour tomber
sur la formule de Sylvester:
$$
\Res(\uP) \ =\  \frac{1}{2^9} \big[P_1, J'_Z, J'_Y,  P_2, J'_X, P_3\big]
$$
Les congruences modulo $\uP$ ci-dessus sont conséquences des identités suivantes:
$$
2^3\nabla_X - J'_X = 2[23\uP] + 4[15\uP], \qquad
2^3\nabla_Y - J'_Y = 2[52\uP] + 4[43\uP],\qquad
2^3\nabla_Z - J'_Z = 2[35\uP] + 4[62\uP]
$$
où l'on a posé:
$$
[ij\uP] = \begin {vmatrix}
a_i & a_j & P_1 \\
b_i & b_j & P_2 \\
c_i & c_j & P_3 \\
\end {vmatrix}
\qquad
\text{qui appartient à $\bfA P_1 + \bfA P_2 + \bfA P_3$}
$$
{\bf Variante.}
En appliquant la formule d'Euler une première fois à $P_i$ homogène de degré 2:
$$
2P_i = X(\partial P_i/\partial X) + Y(\partial P_i/\partial Y) + Z(\partial P_i/\partial Z)
$$
puis une seconde fois à $\partial P_i/\partial X$ de degré 1, on obtient
$$
2P_i = X^2 \dfrac{\partial^2 P_i}{\partial^2 X} +
Y \left(X\dfrac{\partial^2 P_i}{\partial X\partial Y} + \dfrac{\partial P_i}{\partial Y}\right) +
Z \left(X\dfrac{\partial^2 P_i}{\partial X\partial Z} + \dfrac{\partial P_i}{\partial Z}\right) 
$$
Ceci permet d'expliciter une matrice homogène $\dsV'_X$ telle que $2\,\uP = [X^2,Y,Z]\, \dsV'_X$
(attention à ne pas oublier le facteur 2).  En voici la première colonne:
$$
\text{col}_1(\dsV'_X) = \begin {bmatrix}
2a_1 \\
2a_2X + 2a_4Y + a_5Z \\
2a_3X + a_5Y + 2a_6Z \\
\end {bmatrix}
$$
On a bien sûr deux autres matrices analogues $\dsV'_Y$, $\dsV'_Z$ pour
$[X,Y^2,Z]$ et $[X,Y,Z^2]$.
$$
\text{col}_1(\dsV'_Y) = \begin {bmatrix}
2a_1X + 2a_2Y + a_3Z \\
2a_4 \\
a_3X + 2a_5Y + 2a_6Z \\
\end {bmatrix},
\qquad\qquad
\text{col}_1(\dsV'_Z) = \begin {bmatrix}
2a_1X + a_2Y + 2a_3Z \\
a_2X + 2a_4Y + 2a_5Z \\
2a_6 \\
\end {bmatrix}
$$
Les déterminants de ces 3 matrices sont égaux modulo $\uP$ aux dérivées partielles
du jacobien $J$ de $\uP$, car :
$$
\det \dsV'_X = J'_X  + 2[23\uP], \qquad
\det \dsV'_Y = J'_Y  + 2[52\uP], \qquad
\det \dsV'_Z = J'_Z  + 2[35\uP]
$$
Pour essayer de s'y retrouver dans ces identités, voici une petite correspondance : 
pour la première identité liée à $X$, faire le rapprochement $2[23\uP] \leftrightarrow (2a_2X,
2a_3X)$ que l'on voit dans la colonne de $\dsV'_X$.  Idem pour la
seconde liée à $Y$: $2[52\uP] \leftrightarrow (2a_5Y, 2a_2Y)$ et la
dernière à $Z$ : $2[35\uP] \leftrightarrow (2a_3Z,2a_5Z)$.
Bref, en tenant compte du facteur 2
dans les matrices (ce qui introduit $2^3$ dans les déterminants), et
en prenant les mêmes précautions que pour~$(\heartsuit)$, voici de nouveau
la formule de Sylvester utilisant le jacobien:
$$
\Res(\uP) \ = \ \frac{1}{2^9} \big[P_1, \det\dsV'_Z, \det\dsV'_Y,  P_2,\det\dsV'_X, P_3\big] \ =\ 
\frac{1}{2^9} \big[P_1, J'_Z, J'_Y,  P_2, J'_X, P_3\big]
$$

\cleardoublepage


\section {Le bezoutien et la méthode de Morley}
\label{SectionBezoutMorley}

Comme dans presque chaque chapitre, $D = (d_1, \ldots, d_n)$ désigne
un format fixé de degrés que, volontairement, nous ne faisons pas
figurer dans les notations, comme par exemple dans le degré critique
$\delta = \sum_i(d_i-1)$ ou bien la caractéritique d'Euler-Poincaré du
complexe $\rmK_{\sbullet,d}(\uX^D)$ notée $\chi_d$:
$$
\chi_d = \dim\bfA[\uX]_d/\Jex_{1,d} \overset{\rm def.}{=}
\dim \bfA[\uX]_d/\langle X_1^{d_1}, \ldots, X_n^{d_n}\rangle_d =
\#\{\alpha \mid \alpha \preccurlyeq \emouton  \text{ et } |\alpha| = d \}
$$
Comme d'habitude, nous allons construire, à partir d'un système $\uP =
(P_1, \ldots, P_n)$ de polynômes homogènes de format de degrés $D$, un
certain nombre d'objets \og relevant de la théorie de l'élimination\fg.
Ce qui demande parfois un peu de réflexion, c'est la dépendance
en $D$ ou en $\uP$, mais nous ferons en sorte de préciser.

\subsubsection*{Préambule sur la bihomogénéité dans $\bfA[\uX,\uY]$}

La nouveauté ici, par rapport aux
chapitres antérieurs, c'est l'utilisation d'un second jeu
$\uY = (Y_1, \ldots, Y_n)$ d'indéterminées, de  l'algébre $\bfA[\uX,\uY]$
ainsi que des idéaux $\langle \uP(\uX) - \uP(\uY)\rangle$ et
$\langle \uP(\uX), \uP(\uY)\rangle$. Une notion importante est
la bihomogénéité. Un polynôme de $\bfA[\uX,\uY]$ est bihomogène
de bidegré $(p,q)$ s'il est combinaison $\bfA$-linéaire de monômes
$X^\alpha Y^\beta$ avec $|\alpha| = p$, $|\beta| = q$, auquel cas
il est homogène de degré~$p+q$. On note $\bfA[\uX,\uY]_{p,q}$
le $\bfA$-module composante bihomogène de bidegré $(p,q)$:
$$
\bfA[\uX,\uY]_{p,q} \ = \ \bigoplus_{|\alpha|=p \atop |\beta|=q} \bfA X^\alpha Y^\beta
$$
On a donc
$$
\bfA[\uX,\uY] = \bigoplus_{r \in \bbN} \bfA[\uX,\uY]_r, \qquad
\bfA[\uX,\uY]_r = \bigoplus_{p+q=r} \bfA[\uX,\uY]_{p,q}
$$
Les polynômes $P_i(\uX), P_i(\uY)$ sont bihomogènes, le premier de bidegré $(d_i,0)$,
le second de bidegré~$(0,d_i)$. En conséquence,
l'idéal $\langle\uPX,\uPY \rangle$ de $\bfA[\uX,\uY]$ est bihomogène au sens
où pour tout $F$ de cet idéal, chaque composante
bihomogène $F_{p,q}$ de bidegré $(p,q)$ de $F$ est encore dans l'idéal.
Un polynôme bihomogène de $\bfA[\uX,\uY]$ appartient au $\bfA$-module 
$\langle \uPX, \uPY \rangle_{p,q}$ s'il est combinaison-$\bfA$-linéaire 
de polynômes de deux types 
$$
\left\{
\begin{tabular}{ll}
$H_{\beta}(\uX)\, Y^\beta$ & 
avec $H_\beta(\uX) \in \langle \uPX \rangle_p$ et $|\beta | = q$ \\ [0.2cm]
$X^\alpha \, H_{\alpha}(\uY)$ & avec $H_\alpha(\uY) \in \langle \uPY \rangle_{q}$ et $|\alpha | = p$ 
\end{tabular}
\right.
$$
Pour $q < \min(D)$,  on en déduit, puisque $\langle \uPY \rangle_{q} = 0$, que:
$$
\langle \uPX,\uPY \rangle_{p,q} = \bigoplus_{|\beta|=q} \langle \uP(\uX)\rangle_p\, Y^\beta
$$
On utilisera ce résultat à plusieurs reprises.

\label{NOTA17-AXY}%
\label{NOTA17-AXYpq}%

\medskip
En revanche, le polynôme $P_i(\uX) - P_i(\uY)$ n'est \emph{pas}
bihomogène et l'idéal $\langle \uPX-\uPY \rangle$ non plus.
Cela explique parfois la \og relâche\fg{} consistant à utiliser l'inclusion 
$\langle \uPX-\uPY \rangle \subset \langle \uPX,\uPY \rangle$ de manière à en déduire
certaines appartenances de composantes bihomogènes.

\subsection{Matrices et déterminants bezoutiens
  $\Bez(\protect\uX,\protect\uY)$ d'un système $\protect\uP$}

\`A partir de maintenant, $d$ et $d'$ sont deux entiers complémentaires à $\delta$, 
c'est-à-dire $d+d' = \delta$.

\begin{defn}\label{DefBezoutien}
On appelle {\rm matrice bezoutienne} d'un système $\uP$ toute matrice homogène
$\dsV := \dsV(\uX,\uY)$ transformant la {\rm ligne} $\uX - \uY$ en la {\rm ligne} $\uPX - \uPY$ au sens suivant:
$$
\begin{bmatrix}
P_1(\uX)-P_1(\uY) & \cdots &  P_n(\uX)-P_n(\uY)
\end{bmatrix} 
\ = \ 
\begin{bmatrix}
X_1-Y_1 & \cdots &  X_n-Y_n
\end{bmatrix} \dsV
$$
homogène signifiant que $\dsV_{ij}$ est polynôme homogène de degré $d_j-1$ en les indéterminées $\uX, \uY$.

\index{matrice!bezoutienne2@$(\uX,\uY)$-bezoutienne}%
\index{matrice!bezoutienne2@$(\uX,\uY)$-bezoutienne!rigidifiée}%

Le déterminant de $\dsV$, noté $\Bez := \Bez(\uX, \uY)$, est appelé
{\rm bezoutien} de $\uP$.  C'est un polynôme homogène de degré
$\delta$ en les indéterminées~$\uX,\uY$.  On note $\Bez_{d,d'}$ sa
composante bihomogène de bidegré $(d,d')$. Comme $\Bez \in
\bfA[\uX,\uY]_\delta$, on a $\Bez = \sum_{d+d' = \delta} \Bez_{d,d'}$.

On désigne par $\Bez_\beta(\uX)$ le coefficient en~$Y^\beta$ de~$\Bez$ 
vu comme polynôme en $\uY$ de sorte que:
$$
\Bez(\uX,\uY) = \sum_{\beta} \Bez_\beta(\uX) Y^\beta
$$
On appelle {\rm matrice bezoutienne rigidifiée} la matrice $\dsV \in \bbM_n(\bfA[\uX,\uY])$ définie par 
$$
\dsV_{i,j} \ = \ 
\dfrac{P_j(Y_1, \dots, Y_{i-1}, X_i, X_{i+1},\dots,X_n) - 
P_j(Y_1,\dots,Y_{i-1}, Y_i, X_{i+1}, \dots, X_n)}{X_i-Y_i}
$$
On a ainsi $\dsV_{i,j} \in \bfA[X_i, \dots, X_n, Y_1, \dots, Y_i]$.
\end{defn}

\smallskip

\label{NOTA17-Bez}%

Chez certains auteurs, la matrice bezoutienne rigidifiée est
\emph{transposée} de la nôtre. Histoire d'enfoncer le clou, nous
montrons la nôtre pour $n=2$ et partiellement pour $n=3$, pour bien
voir que chaque $P_j$ apparaît en colonne et $X_i-Y_i$ en ligne:
$$
\begin{bmatrix}
\dfrac{P_1(X_1,X_2) - P_1(Y_1,X_2)}{X_1-Y_1} & \dfrac{P_2(X_1,X_2) - P_2(Y_1,X_2)}{X_1-Y_1} \\
\noalign {\medskip}
\dfrac{P_1(Y_1,X_2) - P_1(Y_1,Y_2)}{X_2-Y_2} & \dfrac{P_2(Y_1,X_2) - P_2(Y_1,Y_2)}{X_2-Y_2} \\
\end{bmatrix}
\qquad
\begin{bmatrix}
\dfrac{P_j(X_1,X_2,X_3) - P_j(Y_1,X_2,X_3)}{X_1-Y_1} \\
\noalign {\medskip}
\dfrac{P_j(Y_1,X_2,X_3) - P_j(Y_1,Y_2,X_3)}{X_2-Y_2} \\
\noalign {\medskip}
\dfrac{P_j(Y_1,Y_2,X_3) - P_j(Y_1,Y_2,Y_3)}{X_3-Y_3} \\
\end{bmatrix}
$$

\begin{rmqs} \leavevmode
\label{BezoutianRemarks}
  
\medskip  
$\rhd$
Afin d'éviter les confusions entre les notions \og bezoutiennes\fg{} de $\bfA[\uX]$
et celles de $\bfA[\uX,\uY]$, nous qualifierons les premières à l'aide d'un préfixe
$\uX$ comme annoncé dans la définition~\ref{DefNabla}.

Ainsi, la spécialisation $\uY := 0$ dans $\dsV$ conduit à une matrice
$\uX$-bezoutienne pour $\uP$ (une matrice transformant la \emph{ligne} $\uX$
en la \emph{ligne} $\uP$) de sorte que $\Bez(\uX,0) = \Bez_{\delta,
 0}$ est un déterminant $\uX$-bezoutien de~$\uP$. Donc 
$\Bez(\uX,0) \equiv \nabla(\uX) \bmod \uPdelta$ pour n'importe
quel déterminant $\uX$-bezoutien $\nabla(\uX)$ de~$\uP$.

\medskip
$\rhd$
Donnons des précisions sur la matrice bezoutienne rigidifiée.
Analysons le cas où $P_j$ est un monôme $X^\alpha$.
En notant $Y_{[1..i[}^\alpha = Y_1^{\alpha_1} \cdots Y_{i-1}^{\alpha_{i-1}}$ 
et $X_{]i..n]}^\alpha = X_{i+1}^{\alpha_{i+1}} \cdots X_n^{\alpha_n}$,
alors la colonne $j$ de l'égalité matricielle~\ref{DefBezoutien}
fournit l'égalité télescopique:
$$
X^\alpha - Y^\alpha = \sum_i Q_{i,\alpha} \times (X_i - Y_i)
$$
où
$$
Q_{i,\alpha} \ = \ 
\dfrac{Y_{[1..i[}^\alpha\,X_i^{\alpha_i}\,X_{]i..n]}^\alpha 
- 
Y_{[1..i[}^\alpha\,Y_i^{\alpha_i}\,X_{]i..n]}^\alpha}{X_i-Y_i}
\ = \ 
Y_{[1..i[}^\alpha \ \dfrac{X_i^{\alpha_i}-Y_i^{\alpha_i}}{X_i-Y_i} \ X_{]i..n]}^\alpha
$$
Ce qui coïncide avec la formule donnée par Jouanolou en 3.11.19.12 de~\cite{J7}.
Par combinaison linéaire, on obtient alors la formule close suivante :
$$
\dsV_{i,j} \ = \ 
\sum_{|\alpha|=d_j} p_{j,\alpha} \ 
Y_{[1..i[}^\alpha \ \dfrac{X_i^{\alpha_i}-Y_i^{\alpha_i}}{X_i-Y_i} \ X_{]i..n]}^\alpha
\qquad \text{où } \quad 
P_j = \sum_{|\alpha| = d_j} p_{j,\alpha} X^\alpha
$$
\medskip
$\rhd$
On peut tordre la définition de $Q_{i,\alpha}$ à l'aide d'une permutation
$\sigma \in \fS_n$ en remplaçant les intervalles
de la relation d'ordre $<$ habituelle qui y interviennent par ceux de
la relation d'ordre $<_\sigma$. Par exemple:
$$
[1..i[_\sigma = \{k \mid  1 \le_\sigma k <_\sigma i\} =
\big\{k \mid  \sigma^{-1}(1) \leqslant \sigma^{-1}(k) < \sigma^{-1}(i)\big\} =
\sigma\big([\sigma^{-1}(1) .. \sigma^{-1}(i)[\big)
$$
En prenant $n$ permutations $\sigma_1, \dots, \sigma_n \in\fS_n$, la permutation $\sigma_j$ étant associée
à $P_j$, on obtient d'autres matrices bezoutiennes et bezoutiens de $\uP$.

\medskip
$\rhd$
La matrice diagonale de $i$-ème terme $\sum_{\alpha_i + \beta_i =
  d_i-1} X_i^{\alpha_i} Y_i^{\beta_i}$ est une matrice bezoutienne du
jeu étalon $\uX^D$.  Son déterminant est le polynôme $\Bez(\uX,\uY) = \sum_{\alpha +
  \beta = \emouton} X^\alpha Y^\beta$.

\end{rmqs}

\begin{prop}[Quelques propriétés du bezoutien] \label{ProprietesMorley} 
\leavevmode
\begin{enumerate} [\rm i)]
\item
Pour tout $i$, on a 
$(X_i - Y_i) \Bez(\uX,\uY) \, \in \, \langle \PXminusY \rangle$.

\item
Pour tout $i$, on a 
$X_i \Bez_{d,d'} - Y_i \Bez_{d + 1, d' - 1} \in \langle \uPX, \uPY \rangle_{d+1,d'}$.

Plus généralement, pour tout $\gamma$, on a 
$X^\gamma \Bez_{d,d'} - Y^\gamma \Bez_{d + |\gamma |, d' - |\gamma |} 
\in \langle \uPX,\uPY \rangle_{d+|\gamma|,d'}$.

\item
Pour tout $|\gamma| = d'+1$, on a 
$X^\gamma \, \Bez_{d,d'} \, \in \, \langle \uPX, \uPY \rangle_{\delta +1,d'}$.

De la même façon, pour tout $|\gamma| = d+1$, on a 
$Y^\gamma \, \Bez_{d,d'} \, \in \, \langle \uPX, \uPY \rangle_{d,\delta +1}$.

En particulier, pour tout $|\gamma| = \delta+1$, les polynômes
$X^\gamma \Bez$ et $Y^\gamma \Bez$ sont dans $\langle \uPX, \uPY \rangle$.

Ainsi, 
$$
\Bez_{d,d'} \ \in \ 
\langle \uPX, \uPY \rangle_{d,d'}^\sat 
\qquad \text{et} \qquad 
\Bez \ \in \ 
\langle \uPX, \uPY \rangle^\sat 
$$

\item
Pour deux bezoutiens $\Bez^{(1)}$, $\Bez^{(2)}$ de $\uP$,
on a $\Bez^{(1)} - \Bez^{(2)} \in \langle \PXminusY \rangle$.

A fortiori $\Bez^{(1)}(\uX,\uX) = \Bez^{(2)}(\uX,\uX)$ et
$\Bez_{d,d'}^{(1)} - \Bez_{d,d'}^{(2)} \in 
\langle \uPX, \uPY \rangle_{d,d'}$. 

De plus, si $|\beta|=d' < \min(D)$, alors, dans $\bfA[\uX]$, on a
$\Bez^{(1)}_\beta - \Bez^{(2)}_\beta \in \langle \uP \rangle_d$.
\end{enumerate}
\end{prop}

\begin{proof} \leavevmode
  
Pour le point i), il suffit de multiplier l'égalité suivante 
par la transposée de la comatrice de $\dsV$ :
$$
\begin{bmatrix}
P_1(\uX)-P_1(\uY) & \cdots &  P_n(\uX)-P_n(\uY)
\end{bmatrix} 
\ = \ 
\begin{bmatrix}
X_1-Y_1 & \cdots &  X_n-Y_n
\end{bmatrix} \dsV
$$
On obtient donc 
$(X_i - Y_i) \Bez \, \in \, \langle \PXminusY \rangle$,
a fortiori $X_i \Bez - Y_i\Bez \, \in \, \langle \uPX, \uPY \rangle$.

\medskip
Le point ii) se démontre en prenant la composante bihomogène de bidegré $(d+1,d')$ 
de cette dernière appartenance. En particulier,
en réalisant les produits des égalités modulo $\langle \uPX, \uPY\rangle$ suivantes:
$$
X_j \times \Big(X_i \Bez_{d,d'} \equiv Y_i \Bez_{d+1, d'-1} \Big)
\qquad \text{et } \qquad 
Y_i \times \Big( X_j \Bez_{d+1,d'-1} \equiv Y_j \Bez_{d+2, d'-2} \Big)
$$
on obtient $X_i X_j \Bez_{d,d'} \equiv Y_i Y_j \Bez_{d+2,d'-2}$ et ainsi de suite.

\medskip
Le point iii) se déduit directement du point ii) en remarquant que $d'-|\gamma| = -1$.

\medskip
Le point~iv) : pour tout $i$, on a 
$(X_i-Y_i)\big(\Bez^{(1)} - \Bez^{(2)}\big) \in \langle \PXminusY \rangle$. 
La suite $\uX- \uY$ est $1$-sécante, d'où le résultat d'après~\ref{IndependanceNabla}.

La dernière appartenance découle de $d' < \min(D)$ et de l'égalité vue dans le préambule:
$$
\langle \uPX,\uPY \rangle_{d,d'} = \bigoplus_{|\beta|=d'} \langle \uP(\uX)\rangle_d\, Y^\beta
$$
\end{proof}

\subsubsection*{Relations entre jacobien et bezoutiens}

On rappelle la convention prise dans le chapitre
\ref{ChapJeuCirculaire} pour la matrice jacobienne d'un système
$\uP$ où $\partial_j$ est l'opération de dérivée partielle $\frac{\partial}{\partial X_j}$
(ici les $P_i$ sont en ligne):
$$
\begin{array}{c}
\\
\Jac(\uP) =
\end{array}
\setlength{\arraycolsep}{0.5\arraycolsep}
\begin {array}{cc}
   &\partial_1\quad\cdots\quad\partial_n 
\\
\begin {array}{c} P_1\\[1mm] \vdots\\[1mm] P_n\end{array}
  &\left[\begin {array}{c}  
      \\[1mm]
      \quad\partial_j(P_i) \\[1mm]
      \\[1mm]
  \end {array}\quad\right]
\\
\end {array}
$$
Son déterminant, noté $J(\uX)$, est le jacobien de $\uP$, c'est un polynôme homogène
de degré $\delta$.

\begin {prop} \leavevmode
\label {LienJacobienBezoutien}  
Soit $\Bez = \Bez(\uX,\uY)$ un bezoutien quelconque de $\uP$
et $J(\uX)$ le jacobien.

\begin {enumerate} [\rm i)]
\item
On a $J(\uX) = \Bez(\uX,\uX)$.

\item
Pour tout déterminant $\uX$-bezoutien $\nabla(\uX)$ de $\uP$, on dispose de la congruence:  
$$
J(\uX) \equiv d_1\cdots d_n\, \nabla(\uX)  \bmod \uPdelta
$$  
\end {enumerate}  
  
\end {prop}  

\begin {proof} \leavevmode

\medskip  
i) Soit $\dsV^{\rm rigid.}(\uX,\uY)$ la matrice bezoutienne rigidifiée de $\uP$
et $\Bez^{\rm rigid.}(\uX,\uY)$ son déterminant. Lorsque l'on y réalise $\uY := \uX$,
on obtient la matrice jacobienne \emph {à transposée près} à cause des conventions choisies:
$$
\dsV^{\rm rigid.}(\uX,\uX) =  \transpose{\Jac(\uP)}
$$
D'où, en prenant les déterminants, $\Bez^{\rm rigid.}(\uX,\uX) = J(\uX)$. 
On passe à un bezoutien quelconque en utilisant
$\Bez(\uX,\uX) = \Bez^{\rm rigid.}(\uX,\uX)$ d'après le point iv) de \ref{ProprietesMorley}.

\medskip
ii) Les relations d'Euler pour les polynômes homogènes $P_i$ de degré $d_i$ s'écrivent:
$$
\Jac(\uP) \begin {bmatrix} X_1 \\ \vdots \\ X_n \end {bmatrix} =
\begin {bmatrix} d_1P_1 \\ \vdots \\ d_nP_n \end {bmatrix}
$$
On en déduit $X_iJ \in \langle d_1P_1, \dots, d_nP_n\rangle$, a fortiori $J \in \uPsat_\delta$.
Comme le résultat à montrer est une identité algébrique, on peut supposer
$\uP$ générique.
D'après le théorème de Wiebe (cf.~\ref{MiniWiebe}),
il existe alors $c \in \bfA = \bfk[\indetsPi]$ tel que $J \equiv c\nabla
\bmod \uPdelta$. Comme $J$ et $\nabla$ sont homogènes en chaque $P_i$ de poids~1, on a
$c \in \bfk$. La spécialisation en le jeu étalon $\uX^D$ de la matrice jacobienne
est $\diag(d_1X_1^{d_1-1}, \dots, d_nX_n^{d_n-1})$ de déterminant $d_1\cdots d_n X^\emouton$.
Celle de $\nabla$ est $X^\emouton$ modulo $\langle\uX^D\rangle_\delta$. Il vient:
$$
d_1\cdots d_n X^\emouton \equiv c X^\emouton \bmod \langle\uX^D\rangle_\delta
$$
d'où $c = d_1 \cdots d_n$.
\end {proof}

\medskip

On déduit de~\ref{ProprietesMorley} que le polynôme
$\Bez_{d,d'}(\uX,\uX)$ est dans $\uPdelta^\sat$.  La proposition
suivante identifie sa classe dans $\vH^0_{\uX}(\bfB)_\delta$ comme un
multiple entier du déterminant $\uX$-bezoutien de $\uP$.

\begin{prop} \label{LienBezoutienNabla}
Désignons par $\chi_{d,d'}$ l'entier commun $\chi_d = \chi_{d'}$.
Considérons un bezoutien $\Bez(\uX,\uY)$ et  un déterminant $\uX$-bezoutien $\nabla(\uX)$.
Alors dans $\bfA[\uX]$:
$$
\Bez_{d,d'}(\uX,\uX) \equiv \chi_{d,d'} \nabla(\uX) \bmod \uPdelta
$$
A foriori $\Bez_{d,d'}(\uX,\uX) \equiv \Bez_{d',d}(\uX,\uX) \bmod \uPdelta$.
\end{prop}

\begin{proof}

On peut supposer $\uP$ générique d'anneau des coefficients $\bfA
= \bfk[\indetsPi]$.  On sait (depuis \ref{MiniWiebe}) que
$\bfA[\uX]_1\uPsat_\delta \subset \langle\uP\rangle_{\delta + 1}$.  La
vérification qui vient est donc inutile mais ne peut pas faire de mal.
En faisant $\uY := \uX$ dans le point ii) de \ref{ProprietesMorley}:
$$
X_i \Bez_{d,d'}(\uX,\uX) - X_i \Bez_{d+1,d'-1}(\uX,\uX) 
\ \in \ 
\langle \uPX \rangle_{\delta+1}
$$
En itérant, on obtient 
$$
X_i \Bez_{d,d'}(\uX,\uX) \in \langle\uP \rangle_{\delta+1}
$$
Nous voilà rassurés. Il existe donc $c \in \bfA$ tel que
$$
\Bez_{d,d'}(\uX,\uX) \equiv c\,\nabla(\uX)  \bmod \uPdelta
$$
On commence à connaître la musique: chacun des polynômes est homogène
de poids $1$ en les coefficients de $P_i$, donc $c \in \bfk$.  Pour
l'identifier, on spécialise en le jeu étalon (air connu).  Le polynôme
$\nabla(\uX)$ se spécialise en $X^\emouton$ modulo
$\langle\uX^D\rangle_\delta$.
Quant à $\Bez_{d,d'}(\uX,\uX)$, d'après le point iv)
de~\ref{ProprietesMorley} et la dernière remarque
de~\ref{BezoutianRemarks},  il se spécialise en un
polynôme $F \in \bfA[\uX]_\delta$ tel que
$$
F \equiv \sum_{\alpha \preccurlyeq \emouton \atop |\alpha| =d} 
X^\alpha X^{\emouton-\alpha} \bmod \langle\uX^D\rangle_\delta
$$ 
La somme à droite vaut $\chi_d\,X^\emouton$, d'où $c = \chi_d$.
\end{proof}

\begin {rmq}

Lorsque l'on somme ces congruences, on obtient la congruence modulo $\uPdelta$:
$$
\sum_{d+d' = \delta} \Bez_{d,d'}(\uX,\uX) \equiv  \Big(\sum_{d=0}^\delta \chi_d\Big)\,\nabla(\uX)
$$
Le membre de gauche n'est autre que $\Bez(\uX,\uX)$. Quant au membre de droite, la somme des
caractérisiques d'Euler-Poincaré vaut $d_1 \cdots d_n$. En effet:
$$
\sum_{d=0}^\delta \chi_d  = \dim \bfA[\uX]/\langle \uX^D\rangle
$$
Et $\bfA[\uX]/\langle \uX^D\rangle$ est un $\bfA$-module libre de
base les $(x^\alpha)_{\alpha \preccurlyeq \emouton}$, base de cardinal $d_1
\cdots d_n$.

En définitive, on obtient:
$$
\Bez(\uX,\uX) \equiv d_1 \cdots d_n\,\nabla(\uX) \bmod \uPdelta
$$
On retrouve ainsi une partie du résultat \ref{LienJacobienBezoutien}.
\end {rmq}


La proposition suivante relie les déterminants de Sylvester du chapitre précédent et
les bezoutiens.

\begin{prop} \label{LienSylvesterBezoutien}

Soit $\beta \in \bbN^n$ vérifiant $d' := |\beta| < \min(D)$.
Notons $\nabla_\beta \in \bfA[\uX]_d$ le déterminant d'une matrice $\beta$-bezoutienne
de $\uP$ (cf la définition en~\ref{MatriceBetaBezoutienne}) et
$\Bez(\uX,\uY)$ un bezoutien de~$\uP$.

\medskip
En désignant par $\Bez_{\beta'}(\uX) \in \bfA[\uX]_{\delta-|\beta'|}$ le coefficient
en $Y^{\beta'}$ de $\Bez$ vu dans $\bfA[\uX][\uY]$, on a:
$$
\Bez_\beta \equiv \nabla_\beta \bmod \langle \uP \rangle_d
$$
\end{prop}

\begin {proof} \leavevmode

On peut supposer $\uP$ générique (régulière suffit). D'après l'aspect non-dégénéré
de la multiplication (cf. \ref{SemiPairingProperty}), il suffit de montrer que pour
tout $|\gamma| = d'$, on a $X^\gamma\Bez_\beta \equiv X^\gamma \nabla_\beta \bmod \uPdelta$.

Nous savons que $\Bez(\uX,0) = \Bez_{\delta,0}$ est un déterminant $\uX$-bezoutien de~$\uP$
(cf. le premier point de la remarque \ref{BezoutianRemarks}); nous le notons $\Bez_{\underline 0}$
en cohérence avec les notations de l'énoncé.
En utilisant le théorème d'orthogonalité \ref{DualityJPJ3.10.6}, la congruence
modulo $\uPdelta$ revient à prouver celle-ci:
$$
X^\gamma \Bez_\beta \equiv \begin {cases}
\Bez_{\underline 0} &\text {si $\beta = \gamma$} \\
0      &\text{sinon} \\  
\end {cases}
$$
En utilisant le point ii) de \ref{ProprietesMorley}, les égalités
$d+|\gamma| = \delta$ et $d'-|\gamma| = 0$ et la notation $\Bez_{\underline 0}$
pour $\Bez_{\delta,0}$:
$$
X^\gamma \Bez_{d,d'} - Y^\gamma \Bez_{\underline 0} \in
\langle \uP(\uX), \uP(\uY)\rangle_{\delta,d'}
$$
Comme $d' < \min(D)$, d'après le préambule sur la bihomogénéité,
cette appartenance n'est autre que:
$$
F :=  X^\gamma \Bez_{d,d'} - Y^\gamma \Bez_{\underline 0} \in
\bigoplus_{|\beta'| = d'} \langle\uP(\uX)\rangle_\delta\, Y^{\beta'}
$$
En conséquence, chaque coefficient $F_{\beta'}$ en $Y^{\beta'}$  du polynôme $F$ vu dans $\bfA[\uX][\uY]$
appartient à $\langle\uP(\uX)\rangle_\delta$, en particulier $F_\beta \in \langle\uP(\uX)\rangle_\delta$.
Ce coefficient $F_\beta$ est  ou bien $X^\gamma \Bez_\beta - \Bez_{\underline 0}$ si $\beta = \gamma$
ou bien $X^\gamma \Bez_\beta$ si $\beta \ne \gamma$. 
L'appartenance $F_\beta \in \langle\uP(\uX)\rangle_\delta$ achève
la démonstration.
\end {proof}

\medskip

Nous allons fournir une seconde preuve plus élémentaire et, en un certain sens plus effective,
en utilisant la proposition suivante.

\begin{prop}\label{LienEffectif}
Soit $\beta$ vérifiant $|\beta| < \min(D)$ et $\dsV_\beta(\uX)$ une
matrice $\beta$-bezoutienne de~$\uP$ de déterminant $\nabla_\beta$.
Il existe une matrice bezoutienne $ \dsV(\uX,\uY)$ de~$\uP$ dont le
déterminant $\Bez(\uX,\uY)$ vérifie
$$
\Bez_\beta \, =\,  \nabla_\beta \qquad \text{dans $\bfA[\uX]$}
$$
Note: $\Bez_\beta = \Bez_\beta(\uX)$ est le coefficient en $Y^\beta$ de $\Bez$ vu dans $\bfA[\uX][\uY]$:
$$
\Bez(\uX,\uY) = \sum_{|\beta'| \leqslant \delta} \Bez_{\beta'}(\uX)Y^{\beta'}
$$
On peut prendre $\dsV$ de la forme
$$
\dsV \ = \ 
\begin{bmatrix}
\sum\limits_{p+q=\beta_1} X_1^p Y_1^q &  &  \\
 & \ddots & \\
& & \sum\limits_{p+q=\beta_n} X_n^p Y_n^q
\end{bmatrix} 
\!\times \dsV_\beta 
\ + \ \dsU
$$
où $\dsU := \dsU(\uX,\uY)$ est une matrice telle que 
$\dsU_{i,j} \in \langle \uY^{\beta+\Un} \rangle$.
\end{prop}

\begin{proof} \leavevmode

Dans le calcul suivant, la première égalité utilise la définition de $\dsV_\beta$ et la deuxième égalité utilise
l'identité $uv - u'v' = (u-u')v + (v-v')u'$:
$$
\begin{array}{rcl}
P_j(\uX) - P_j(\uY) & = & 
\displaystyle \sum_i \big( X_i^{\beta_i+1} \dsV_\beta(\uX)_{i,j} 
\ - \  Y_i^{\beta_i+1} \dsV_\beta(\uY)_{i,j}\big) \\ [0.4cm]
& = & 
\displaystyle \sum_i \big(X_i^{\beta_i+1} - Y_i^{\beta_i+1}\big) \dsV_\beta(\uX)_{i,j} 
+ \sum_\ell
\big(\dsV_\beta(\uX)_{\ell,j} - \dsV_\beta(\uY)_{\ell,j} \big) Y_\ell^{\beta_\ell+1}
\end {array}
$$
Désignons par $U_{i,\ell,j}(\uX,\uY)$ des polynômes homogènes de degré $d_j-(\beta_\ell+1)-1$ tels que
$$
\dsV_\beta(\uX)_{\ell,j} - \dsV_\beta(\uY)_{\ell,j} =
\sum_i (X_i - Y_i) \ U_{i,\ell,j}(\uX,\uY)
$$
En utilisant l'identité $a^{m+1} - b^{m+1} = (a-b) S_m(a,b)$ où
$S_m(a,b) := \sum_{p+q=m} a^pb^q$, il vient alors:
$$
P_j(\uX) - P_j(\uY) = \sum_i (X_i- Y_i) 
\big(
\, S_{\beta_i}(X_i,Y_i) \, \dsV_\beta(\uX)_{i,j}
+ \sum_{\ell} U_{i,\ell,j}(\uX,\uY) \, Y_\ell^{\beta_\ell+1}
\big)
$$
Posons $\dsU_{i,j} := \sum\limits_{\ell} U_{i,\ell,j}(\uX,\uY) \, Y_\ell^{\beta_\ell+1}$,
polynôme homogène de degré $d_j-1$. Ceci définit la matrice $\dsU$ 
et par suite la matrice $\dsV$ (via l'égalité de l'énoncé). Cette dernière est homogène
et vérifie
$$
[\uP(\uX) - \uP(\uY)] = [\uX - \uY]\,\dsV
$$
Reste à voir que le coefficient en $Y^\beta$ de $\det(\dsV)$ est $\det(\dsV_\beta)$.
Dans $\bfA[\uX][\uY]$, modulo $\langle \uY^{\beta+\Un}\rangle$:
$$
\det(\dsV) \ \equiv \  \prod_i S_{\beta_i}(X_i,Y_i) \ \times \ \det (\dsV_\beta)
\bmod \langle \uY^{\beta+\Un}\rangle
$$
En utilisant:
$$
S_{\beta_i}(X_i,Y_i) \equiv Y_i^{\beta_i} \bmod\langle\uY^{\beta+\Un}\rangle
\qquad \text{donc} \qquad
\prod_i S_{\beta_i}(X_i,Y_i) \equiv Y^\beta   \bmod\langle\uY^{\beta+\Un}\rangle
$$
on en déduit que le \emph{terme} en $Y^\beta$ de $\det(\dsV)$ 
est $\det(\dsV_\beta)\,Y^\beta$, ce qu'il fallait démontrer.
\end{proof}

\begin{proof}[Autre preuve de~\ref{LienSylvesterBezoutien}]
Soit $\dsV$ la matrice bezoutienne de la proposition~\ref{LienEffectif} 
construite à partir de la matrice $\beta$-bezoutienne $\dsV_\beta$,
qui vérifie $(\det \dsV)_\beta = \nabla_\beta$.
D'après~\ref{ProprietesMorley}-iv) et l'inégalité $|\beta| < \min(D)$, 
on a $\Bez_\beta \equiv (\det \dsV)_\beta\bmod \langle\uP\rangle_d$
c'est-à-dire $\Bez_\beta \equiv \nabla_\beta \bmod \langle\uP\rangle_d$, ce qu'il fallait démontrer.
\end{proof}


Voici une première utilisation du bezoutien: construire des
habitants de $\uPsat_d$, qui en général ne sont pas triviaux au sens
où ils n'appartiennent ni à $\langle\uP\rangle_d$ ni à
$(\ElimIdeal)\bfA[\uX]_d$ (cf l'exemple à venir).

\begin{prop}[Des formes d'inerties de degré $d$]
\label{FormesInertieDegred}
\leavevmode
\smallskip

Soit $\Bez=\Bez(\uX,\uY)$ un bezoutien de $\uP$. On écrit dans $\bfA[\uX][\uY]$:
$$
\Bez_{d,d'}(\uX,\uY) = \sum_{|\beta|=d'} \Bez_\beta(\uX)Y^\beta
\qquad \text{avec} \qquad
\Bez_\beta(\uX) \in \bfA[\uX]_d
$$
On rappelle que $s_{d'} := \dim \Jex_{1,d'}$ \og est le rang\fg{} 
de l'application de Sylvester $\Syl_{d'}=\Syl_{d'}(\uP)$.
On le note ici $r'$.  Va intervenir la transposée de $\Syl_{d'}$
que l'on peut voir de la manière suivante
$$
\transpose{\Syl_{d'}} : \bfA[\uY]_{d'} \to \bigoplus_i \bfA[\uY]_{d'-d_i} e_i
$$
\noindent
\parbox{0.7\linewidth}{   
On considère la matrice mixte ci-contre. La première ligne est
constituée des coefficients du polynôme $\Bez_{d,d'}\in \bfA[\uX][\uY]$:
ces coefficients sont dans~$\bfA[\uX]_d$.
Les autres lignes extraites de $\transpose{\Syl_{d'}}$ sont au nombre de $r'$ :
les coefficients sont dans~$\bfA$.

Alors tout mineur $\upsilon(\uX)$ d'ordre $r'+1$ de cette matrice 
appartient à $\langle \uP \rangle_d^\sat$.
Plus précisément, il vérifie:
$$
\forall\, |\gamma'|=d'+1, \quad 
X^{\gamma'}\,\upsilon(\uX)
\ \in\ \langle \uP \rangle_{\delta+1}
$$
}
\hspace{0.4cm}
\parbox{0.25\linewidth}{ 
\begin{tikzpicture}[scale = 1]
\draw[fill=gray!20]  (0,0) rectangle (4,2) ;
\path (0,0) -- (4,2) node[midway] 
        {${r' \, \mathrm{lignes} \atop \mathrm{de}\, \transpose {\Syl_{\!d'}} }$} ;
\draw[fill=gray!60]  (0,2) rectangle (4,2.5) ;
\path (0,2) -- (4,2.5) node[midway] {$\scriptstyle\big(\Bez_{\beta}(\uX)\big)_{|\beta|=d'}$} ;
\path (0,2.8) -- (4,2.8) node[midway] {$\scriptstyle \bfA[\uY]_{d'}$} ;
\end{tikzpicture}
}
\end{prop}

\begin{proof}
On commence par prouver un résultat un peu plus général qui n'est pas lié au bezoutien $\Bez$.

\smallskip

\noindent
\parbox{0.7\linewidth}{

Soit $F(\uX, \uY)$ un polynôme bihomogène appartenant à $\langle \uPX,
\uPY \rangle_{c,d'}$ que l'on écrit $F = \sum_{|\beta | = d'}
F_{\beta}(\uX) Y^\beta$.  Nous allons montrer que les mineurs maximaux
(d'ordre $r'+1$) de la matrice ci-contre sont dans $\langle \uPX
\rangle_c$.

Le polynôme $F$ est une somme de polynômes élémentaires du type 
$$
\left\{
\begin{tabular}{ll}
$H_{\beta}(\uX)\, Y^\beta$ & 
avec $H_\beta(\uX) \in \langle \uPX \rangle_c$ et $|\beta | = d'$ \\ [0.2cm]
$X^\gamma \, H_\gamma(\uY)$ & 
avec $H_\gamma(\uY) \in \langle \uPY \rangle_{d'}$ et $|\gamma| = c$ 
\end{tabular}
\right.
$$
}
\hspace{0.4cm}
\parbox{0.25\linewidth}{ 
\begin{tikzpicture}[scale = 1]
\draw[fill=gray!20]  (0,0) rectangle (4,2) ;
\path (0,0) -- (4,2) node[midway] 
        {${r' \, \mathrm{lignes} \atop \mathrm{de}\, \transpose {\Syl_{\!d'}} }$} ;
\draw[fill=gray!60]  (0,2) rectangle (4,2.5) ;
\path (0,2) -- (4,2.5) node[midway] {$\scriptstyle\big(F_{\beta}(\uX)\big)_{|\beta|=d'}$} ;
\path (0,2.8) -- (4,2.8) node[midway] {$\scriptstyle\bfA[\uY]_{d'}$} ;
\end{tikzpicture}
}

Par linéarité du déterminant par rapport à la première ligne, il suffit de montrer 
le résultat pour $F$ polynôme élémentaire ci-dessus.

\noindent
$\rhd$ 
Dans le premier cas, la première ligne est $(0, \dots, 0, H_\beta(\uX), 0, \dots, 0)$ 
avec $H_\beta(\uX) \in \langle \uP \rangle_c$.
Par conséquent, un mineur d'ordre $r'+1$ est nul ou multiple de 
$H_\beta(\uX) \in \langle \uPX \rangle_c$.

\noindent
$\rhd$ 
Dans le deuxième cas, la première ligne est, à $X^\gamma$ près, une combinaison-$\bfA$-linéaire 
de lignes de $\transpose \Syl_{d'}$, car $H_\gamma(\uY) \in \langle \uPY \rangle_{d'}
= \Im \Syl_{d'}$. Or tous les mineurs d'ordre $r'+1$ de $\transpose \Syl_{d'}$ sont nuls.

\smallskip
Dans les deux cas, tout mineur d'ordre $r'+1$ de la matrice en question est dans 
$\langle \uPX \rangle_c$.

\smallskip

Il suffit d'appliquer ce résultat en prenant $c=\delta+1$ et $F = X^{\gamma'} \Bez_{d,d'}$, pour $\gamma'$ fixé 
tel que $|\gamma'|=d'+1$.
C'est licite, car $X^{\gamma'} \Bez_{d,d'} \in \langle \uPX, \uPY \rangle_{\delta+1,d'}$
d'après~\ref{ProprietesMorley}.
\end{proof}

\medskip

\noindent
{\bf Exemple du format $D = (1,2,2)$ de degré critique $\delta=2$, $d=1$ (donc $d'=1$)}

\medskip
Soit $\uP = (P_1, P_2, P_3)$ le système de format $D$:
$$
P_1 = a_1X_1 + a_2X_2 + a_3 X_3, \qquad
P_2 = b_1X_1^2 + b_2X_1X_2 + b_3X_1X_3 + b_4X_2^2 + b_5X_2X_3 + b_6X_3^2
$$
Le polynôme $P_3$ est analogue à $P_2$ avec des coefficients $c_\bullet$ au lieu de $b_\bullet$.
La matrice bezoutienne rigidifiée de $\uP$ est la suivante:
$$
\dsV = 
\begin {bmatrix}
a_1 & b_1(X_1+Y_1) + b_2X_2 + b_3X_3  & c_1(X_1+Y_1) + c_2X_2 + c_3X_3 \\
a_2 & b_2Y_1 + b_4(X_2+Y_2) + b_5X_3  & c_2Y_1 + c_4(X_2+Y_2) + c_5X_3 \\
a_3 & b_3Y_1 + b_5Y_2 + b_6(X_3+Y_3)  & c_3Y_1 + c_5Y_2 + c_6(X_3+Y_3) \\
\end {bmatrix}
$$
Pour les coefficients en $\uY$ de son déterminant $\Bez = \Bez(\uX,\uY)$
(ce sont des polynômes en $\uX$), on adopte
une \og indexation par le bas\fg{} à ne pas confondre avec l'indexation des
composantes bihomogènes de $\Bez$:
$$
\Bez = \Bez_{2,0} + \Bez_{1,1} + \Bez_{0,2}
\qquad
\left\{
\begin {array} {lcl} 
\Bez_{2,0}(\uX) &=& \Bez_0(\uX) \\
\Bez_{1,1}(\uX,\uY) &=& \sum_i \Bez_i(\uX)Y_i \\
\Bez_{0,2}(\uY) &=& \sum_{i\leqslant j} \Bez_{ij} Y_iY_j \\
\end {array}
\right.
$$
Quelques précisions. Tout d'abord, une propriété générale découlant de
$[\uP(\uX) - \uP(\uY)] = [\uX-\uY]\,\dsV$: les polynômes
$\Bez_{\delta,0}(\uX)$ et $\Bez_{0,\delta}(\uX)$, homogènes de degré
$\delta$, sont des déterminants $\uX$-bezoutien de $\uP$, donc
$$
\Bez_{0,\delta}(\uX) \equiv \Bez_{\delta,0}(\uX)  \bmod \langle\uP\rangle_\delta,
$$
Cette congruence générale peut aussi se déduire du fait que $\Bez(\uY,\uX)$
est également un bezoutien de $\uP$ et du point iv) de \ref{ProprietesMorley}, donc:
$$
\Bez_{d,d'}(\uY,\uX) \equiv \Bez_{d',d}(\uX,\uY) \bmod \langle \uP(\uX),\uP(Y) \rangle_{d,d'}
$$
En prenant $d=0$ donc $d'=\delta$, on retrouve la congruence précédente.

\medskip

Dans l'exemple, les $\Bez_i$ sont des formes linéaires en $X_1, X_2, X_3$ et
on a les congruences:
$$
\Bez_{0,2}(\uX) \equiv \Bez_{2,0}(\uX)  \bmod \langle\uP\rangle_2,
\qquad\quad
\Bez_{1,1}(\uX,\uX) \equiv 2\Bez_{2,0}(\uX) \bmod \langle\uP\rangle_2
$$
Avec $d=1$, on a $\Jex_{1,d} = \bfA X_1$ donc $\chi_d = 2$, ce qui
explique le facteur~$2$ à droite, en vertu
de~\ref{LienBezoutienNabla}.  Comme $d'=1$, on a $r' \overset{\rm
def}{=} \dim \Jex_{1,d'} = 1$.  La matrice mixte dont il est question
en~\ref{FormesInertieDegred} est:
$$
\begin {bmatrix}
\Bez_1    &  \Bez_2     & \Bez_3 \\ 
a_1       &     a_2     & a_3  \\
\end {bmatrix}
$$
d'où l'obtention, via ses mineurs d'ordre 2, de formes linéaires
$\ell_{i,j} := a_i\Bez_j - a_j\Bez_i \in \uPsat_d$.  On donne le
résultat suivant (dans le cas où $\uP$ est générique) sans justifier
l'égalité du milieu (en degré 1):
$$
\vH^0_{\uX}(\bfB)_0 = \bfA\Res(\uP), \qquad
\vH^0_{\uX}(\bfB)_1 = \bfA\ell_{12} + \bfA\ell_{13} + \bfA\ell_{23}, \qquad
\vH^0_{\uX}(\bfB)_2 = \bfA\Bez_0
$$

\newcommand \vabsM {3.7}
\newcommand \vordM {2.4}
\newcommand \absM {3.7}
\newcommand \ordM {2.4}
\newcommand \epaisseur {0.4}
\newcommand \taille {8}

\newcommand \milieu {(\absM, \ordM)}
\newcommand \TransRestSyl[1] {\hbox{${}^t\Big({\mathrm{restr.} \atop \mathrm{de}\,\Syl_{#1}}\Big)$}}
\newcommand \RestSyl[1]  {${\mathrm{restr.} \atop \mathrm{de}\,\Syl_{#1}}$}
\newcommand \coinH {(\absM, \taille-\absM)} 
\newcommand \coinB {(\taille-\ordM ,\ordM)}

\newcommand \NorthUn {(0,\taille) -- (\absM, \taille)} 
\newcommand \NorthDeux {(\absM,\taille)--(\taille,\taille)} 
\newcommand \NorthMilieu {(\absM,\taille) -- (\taille-\ordM, \taille)} 
\newcommand \NorthDroite {(\taille-\ordM, \taille) -- (\taille, \taille)}
\newcommand \EastUn {(\taille,\taille) -- (\taille, \ordM)} 
\newcommand \EastHaut {(\taille, \taille) -- (\taille, \taille-\absM)}
\newcommand \EastMilieu {(\taille, \taille-\absM) -- (\taille, \ordM)}
\newcommand \EastDeux {(\taille, \ordM) -- (\taille,0)}
\subsection{Construction $\mho_{d,\protect\uP} : \bfA[\protect\uX,\protect\uY]_{d,d'}
   \rightarrow \End\big(\bfA[\protect\uX]_d\oplus\Jex_{1,d'}^\star\big)$ pilotée par $\minDiv$}

Sous le couvert d'un système $\uP$ de format $D$, 
les auteurs ont maintenant l'intention d'associer à un polynôme bihomogène $F \in \bfA[\uX,\uY]_{d,d'}$ 
un \textit{endomorphisme} du $\bfA$-module monomial $\bfA[\uX]_d \oplus \Jex_{1,d'}^\star$
ce qui leur permettra de considérer un scalaire \textit{très précis} (et pas défini au signe près)
à savoir le déterminant de cet endomorphisme.
Certains auteurs se contentent de définir des matrices
dans des bases monomiales parfois non précisées ou alors très approximatives 
rendant les déterminants définis à un signe près, 
ce qui est bien sûr problématique dans le cadre du résultant.
Cette volonté de définir un \textit{endomorphisme} nécessite la petite gymnastique qui vient.
Afin de ne pas effrayer les lecteurs, nous avons tenu à accompagner cette gymnastique 
de quelques jolis dessins.

\medskip

Quelques rappels sont nécessaires pour mieux comprendre qui dépend de
$D$, de $\uP$ et du mécanisme de sélection $\minDiv$.  Tout d'abord,
on dispose de la décomposition monomiale $\bfA[\uX]_d = \Jex_{1,d}
\oplus \Smac_{0,d}$ qui ne dépend que de $D$.  Par ailleurs, lié à
$\minDiv$, il y a l'isomorphisme $\varphi$ de $\Jex_{1,d}\subset
\bfA[\uX]_d$ sur $\Smac_{1,d} \subset \rmK_{1,d}$:
$$
\varphi :\ \Jex_{1,d} \ \overset{\simeq}{\longrightarrow} \  \Smac_{1,d}, 
\qquad 
X^\alpha \ \longmapsto\ \dfrac{X^\alpha}{X_i^{d_i}} e_i 
\quad \text{où $i = \minDiv(X^\alpha)$}
$$
Pour mettre en place l'endomorphisme $\mho_{d}(F) = \mho_{d,\uP}(F)$,  
trois modules monomiaux isomorphes vont coopérer. Nous leur donnons le nom
\emph{local} ci-dessous; au milieu, nous avons placé $B$ (pour but): nous visons à
définir un endomorphisme de $B$, les modules $A,C$ étant en quelque sorte
des auxiliaires
$$
A = \Smac_{1,d} \oplus \bfA[\uY]_{d'}^\star, \qquad
B = \bfA[\uX]_d \oplus \Jex_{1,d'}^\star, \qquad
C = \bfA[\uX]_d \oplus \Smac_{1,d'}^\star
$$
Nous allons d'abord définir $\bbS_d(F) = \bbS_{d,\uP}(F)$ comme schématisé ci-dessous
et ensuite $\mho_d(F)$ sera obtenu en composant par des isomorphismes monomiaux qui ne
dépendent que de $(D,\minDiv)$, pas de $\uP$:
$$
\mho_d(F) : 
\xymatrix @C=2cm {
B \ar[r]^-{\textstyle \Phi^{\mathrm{right}}}_{\simeq} & A\ar[r]^{\textstyle \bbS_d(F)} &
C \ar[r]^-{\textstyle \Phi^{\mathrm{left}}}_{\simeq} & B
}
$$
Dans la définition géante qui vient, il sera mention de $\Phi = \Phi^{\mathrm{right}} \circ
\Phi^{\mathrm{left}} : C \to A$. Interviendront également
$\Syl_d = \Syl_d(\uP) : \rmK_{1,d} \to \bfA[\uX]_d$, sa restriction à $\Smac_{1,d}$,
sa co-restriction $\beta_{1,d} = \beta_{1,d}(\uP) : \Smac_{1,d} \to \Jex_{1,d}$,
l'endomorphisme $W_{1,d} = W_{1,d}(\uP)$ de $\Jex_{1,d}$
et, \emph {à transposée près}, les analogues pour $d'$ à la place de $d$.

\begin{defn} [Dite \og définition géante\fg]
\label{DefGeante}
Pour $F(\uX,\uY) = 
\displaystyle \sum_{|\alpha | = d \atop |\beta| =d'} a_{\alpha\beta}X^\alpha Y^\beta 
\in \bfA[\uX, \uY]_{d,d'}$, 
on note $F^\dagger$ l'application linéaire définie sur
la base monomiale par 
$$
F^\dagger : 
\bfA[\uY]_{d'}^\star \ \longrightarrow \  \bfA[\uX]_d  
\qquad \qquad
(Y^\beta)^\star \ \longmapsto \ \sum_{|\alpha| = d} a_{\alpha \beta}X^\alpha
$$
\noindent
L'application linéaire $\bbS_d(F)$, premier maillon, peut ainsi se mettre en place ; sa définition est schématisée ci-dessous :
$$
\bbS_d(F) : \ \Smac_{1,d} \oplus \bfA[\uY]_{d'}^\star \ \longrightarrow \ 
\bfA[\uX]_d \oplus \Smac_{1,d'}^\star
$$

\centerline{
\begin{tikzpicture}[scale = 0.5]
\draw[fill=gray!20]  (0,\taille) rectangle \milieu ;
\path (0,\taille) -- \milieu node[midway] {\RestSyl{d}} ;
\draw[fill=gray!60]  (\taille,\taille) rectangle \milieu ;
\path (\taille,\taille) -- \milieu node[midway] {$F^\dagger$} ; 
\draw[fill=gray!20]  (\taille, 0) rectangle \milieu ;
\path (\taille,0) -- \milieu node[midway] {\TransRestSyl{d'}} ;
\draw  (0,0) rectangle \milieu ;
\path \NorthUn node[midway,above] {$\Smac_{1,d}$} ;
\path \NorthDeux node[midway,above] {$\bfA[\uY]_{d'}^\star$} ;
\path \EastUn node[midway,right] {$\bfA[\uX]_d$} ;
\path \EastDeux node[midway,right] {$\Smac_{1,d'}^\star$} ;
\path (0,0) -- (0, \taille) node[midway, left] {$\bbS_d(F) \ = \ $} ;
\end{tikzpicture}
\begin{tikzpicture}[scale = 0.5]
\draw[fill=gray!20]  (0,\taille) rectangle \milieu ;
\draw[fill=gray!60]  (\taille,\taille) rectangle \milieu ;
\path (\taille,\taille) -- \milieu node[midway] {$F^\dagger$} ; 
\draw[fill=gray!20]  (\taille, 0) rectangle \milieu ;
\draw  (0,0) rectangle \milieu ;
\draw[thick, dashed] (0,\taille) rectangle \coinH ;
\path (0,\taille) -- \coinH node[midway] {$\scriptstyle\beta_{1,d}$} ; 
\draw[thick, dashed] \coinB rectangle (\taille, 0) ;
\path \coinB -- (\taille, 0) node[midway] {$\scriptstyle\transpose\beta_{1,d'}$} ; 
\path \NorthUn node[midway,above] {$\Smac_{1,d}$} ;
\path \NorthMilieu node[midway,above] {$\Smac_{0,d'}^\star$} ;
\path \NorthDroite node[midway,above] {$\Jex_{1,d'}^\star$} ;
\path \EastHaut node[midway,right] {$\Jex_{1,d}$} ;
\path \EastMilieu node[midway,right] {$\Smac_{0,d}$} ;
\path \EastDeux node[midway,right] {$\Smac_{1,d'}^\star$} ;
\path (0,0) -- (0, \taille) node[midway, left] {
\begin{tabular}{l} \hspace{0.3cm} en raffinant \\ la décomposition,\hspace{1cm} \end{tabular}} ;
\end{tikzpicture}
}

\noindent
\parbox{0.63\linewidth}{
L'isomorphisme monomial $\Phi = \varphi \oplus {\mouton}^{\rm sw} \oplus \transpose \varphi$
réalise un isomorphisme de l'espace d'arrivée de $\bbS_d(F)$ sur son espace de départ
$$
\Phi : \ 
(\Jex_{1,d} \oplus \Smac_{0,d}) \oplus \Smac_{1,d'}^\star
\ \longrightarrow \ 
\Smac_{1,d} \oplus (\Smac_{0,d'}^\star \oplus \Jex_{1,d'}^\star)
$$ 
}
\parbox{0.35\linewidth}{  
\begin{tikzpicture}[scale = 0.4]
\draw  (0,0) rectangle (\taille, \taille) ;
\draw[thick, dashed, fill=gray!20] (0,\taille) rectangle \coinH ;
\path (0,\taille) -- \coinH node[midway] {\hbox{\scriptsize $\varphi$}} ;
\draw \coinH rectangle \coinB ;
\path \coinH -- \coinB node[midway] {$\mouton^{\rm sw}$} ;
\draw[thick, dashed,fill=gray!20] \coinB rectangle (\taille, 0) ;
\path \coinB -- (\taille, 0) node[midway] {\hbox{\scriptsize $\transpose \varphi$}} ;
\path \NorthUn node[midway,above] {$\Jex_{1,d}$} ;
\path \NorthMilieu node[midway,above]  {$\Smac_{0,d}$} ;
\path \NorthDroite node[midway,above] {$\Smac_{1,d'}^\star$} ;
\path \EastHaut node[midway,right] {$\Smac_{1,d}$} ;
\path \EastMilieu node[midway,right] {$\Smac_{0,d'}^\star$} ;
\path \EastDeux node[midway,right] {$\Jex_{1,d'}^\star$} ;
\path (0,0) -- (0, \taille) node[midway, left] {$\Phi \ = \ $} ;
\end{tikzpicture}
}

\bigskip

Comme annoncé, $\Phi$ est la composée des deux isomorphismes monomiaux:
$$
\Phi^{\mathrm{right}} : \ 
(\Jex_{1,d}\oplus\Smac_{0,d}) \oplus \Jex_{1,d'}^\star
\, \rightarrow \, 
\Smac_{1,d} \oplus (\Smac_{0,d'}^\star \oplus \Jex_{1,d'}^\star),
\quad 
\Phi^{\mathrm{left}} : \ 
(\Jex_{1,d} \oplus \Smac_{0,d}) \oplus \Smac_{1,d'}^\star
\, \rightarrow \,
(\Jex_{1,d} \oplus \Smac_{0,d}) \oplus \Jex_{1,d'}^\star
$$ 

\begin{tikzpicture}[scale = 0.5]
\draw  (0,0) rectangle (\taille, \taille) ;
\draw[thick, dashed, fill=gray!20] (0,\taille) rectangle \coinH ;
\path (0,\taille) -- \coinH node[midway] {\hbox{\scriptsize $\varphi$}} ;
\draw \coinH rectangle \coinB ;
\path \coinH -- \coinB node[midway] {$\mouton^{\rm sw}$} ;
\draw[thick, dashed] \coinB rectangle (\taille, 0) ;
\path \coinB -- (\taille, 0) node[midway] {\hbox{\scriptsize $\id$}} ;
\path \NorthUn node[midway,above] {$\Jex_{1,d}$} ;
\path \NorthMilieu node[midway,above]  {$\Smac_{0,d}$} ;
\path \NorthDroite node[midway,above] {$\Jex_{1,d'}^\star$} ;
\path \EastHaut node[midway,right] {$\Smac_{1,d}$} ;
\path \EastMilieu node[midway,right] {$\Smac_{0,d'}^\star$} ;
\path \EastDeux node[midway,right] {$\Jex_{1,d'}^\star$} ;
\path (0,0) -- (0, \taille) node[midway, left] {$\Phi^{\mathrm {right}}  \ = \ $} ;
\end{tikzpicture}
\hspace{1cm}
\begin{tikzpicture}[scale = 0.5]
\draw  (0,0) rectangle (\taille, \taille) ;
\draw[thick, dashed] (0,\taille) rectangle \coinH ;
\path (0,\taille) -- \coinH node[midway] {\hbox{\scriptsize $\id$}} ;
\draw \coinH rectangle \coinB ;
\path \coinH -- \coinB node[midway] {\hbox{\scriptsize $\id$}} ;
\draw[thick, dashed,fill=gray!20] \coinB rectangle (\taille, 0) ;
\path \coinB -- (\taille, 0) node[midway] {\hbox{\scriptsize $\transpose \varphi$}} ;
\path \NorthUn node[midway,above] {$\Jex_{1,d}$} ;
\path \NorthMilieu node[midway,above]  {$\Smac_{0,d}$} ;
\path \NorthDroite node[midway,above] {$\Smac_{1,d'}^\star$} ;
\path \EastHaut node[midway,right] {$\Jex_{1,d}$} ;
\path \EastMilieu node[midway,right] {$\Smac_{0,d}$} ;
\path \EastDeux node[midway,right] {$\Jex_{1,d'}^\star$} ;
\path (0,0) -- (0, \taille) node[midway, left] {$\Phi^{\mathrm{left}} \ = \ $} ;
\end{tikzpicture}

\bigskip

\noindent
\parbox{0.63\linewidth}{
Ils  permettent d'obtenir l'\textrm{endomorphisme} convoité $\mho_d(F)$
de $\bfA[\uX]_d \oplus \Jex_{1,d'}^\star$ par la formule :
$$
\mho_d(F) \ = \ 
\Phi^{\mathrm{left}}\,\circ\, \bbS_d(F) \,\circ \, \Phi^{\mathrm{right}}
$$
Coucou les matrices $W_{1,d}$ et $\transpose{W_{1,d'}}$!
}
\parbox{0.35\linewidth}{  
\begin{tikzpicture}[scale = 0.4]
\draw[fill=gray!20]  (0,\taille) rectangle \milieu ;
\draw[fill=gray!60]  (\taille,\taille) rectangle \milieu ;
\draw[fill=gray!20]  (\taille, 0) rectangle \milieu ;
\draw  (0,0) rectangle \milieu ;
\draw[thick, dashed, color=blue] (0,\taille) rectangle \coinH ;
\path (0,\taille) -- \coinH node[midway] {\hbox{\scriptsize $W_{1,d}$}} ;
\draw[thick, dashed, color=blue] \coinB rectangle (\taille, 0) ;
\path \coinB -- (\taille, 0) node[midway] {\hbox{\scriptsize $\transpose W_{1,d'}$}} ;
\draw[thick, dashed, color=blue] \coinH rectangle \coinB ;
\path \coinH rectangle \coinB node [midway] {$F_{\hbox{\tiny\mouton}}^\dagger$} ;
\path \EastHaut node[midway,right] {$\Jex_{1,d}$} ;
\path \EastMilieu node[midway,right] {$\Smac_{0,d}$} ;
\path \EastDeux node[midway,right] {$\Jex_{1,d'}^\star$} ;
\path (0,0) -- (0, \taille) node[midway, left] {$\mho_d(F) = $} ;
\end{tikzpicture}
}

\noindent
Précisément, $\mho_d(F)$ réalise sur la base monomiale:
$$
\begin{array}[t]{rcl}
(\Jex_{1,d} \oplus \Smac_{0,d}) \oplus \Jex_{1,d'}^\star 
& \longrightarrow & \bfA[\uX]_d \,\oplus\, \Jex_{1,d'}^\star
\\ [0.4cm]
(X^\alpha \oplus X^\gamma) \oplus (Y^\beta)^\star &\longmapsto & 
\big(\Syl_d \circ \varphi\big)(X^\alpha) + F^\dagger(Z)
\ \oplus \ 
\big(\transpose \varphi \circ \transpose \Syl_{d'}\big)(Z)
\\ [0.4cm]
         &&\hspace {1cm} \text{avec } Z = (Y^{\emouton-\gamma})^\star + (Y^\beta)^\star \in \bfA[\uY]_{d'}^\star
\end{array}
$$
Enfin, pour $\sigma \in \fS_n$, on peut définir de manière analogue $\mho_d^\sigma$, à l'aide de la 
décomposition de Macaulay tordue par $\sigma$.
\end{defn}

\label{NOTA17-mho}%

\medskip

\begin{rmq} [Pour les amateurs d'isomorphismes monomiaux]

Après cette définition géante, on peut en ajouter une au panier en notant
$\calL_{d,d'}$ le module monomial $\bfA[\uX]_d \oplus
\Jex_{1,d'}^\star$ sur lequel opère~$\mho_d(F)$. On dispose alors
d'un isomorphisme monomial canonique
$$
\calL_{d,d'} \ \simeq \ (\calL_{d',d})^\star
$$
A cet effet, on écrit:
$$
\calL_{d,d'} = \Jex_{1,d} \oplus \Smac_{0,d} \oplus \Jex_{1,d'}^\star,
\qquad
\calL_{d',d} = \Jex_{1,d'} \oplus \Smac_{0,d'} \oplus \Jex_{1,d}^\star,
\qquad
(\calL_{d',d})^\star = \Jex_{1,d'}^\star \oplus \Smac_{0,d'}^\star \oplus \Jex_{1,d}
$$
En respectant l'ordre des facteurs ci-dessus, l'isomorphisme canonique est :
$$
X^\alpha \oplus X^\gamma \oplus (Y^\beta)^\star \longmapsto
(Y^\beta)^\star  \oplus (Y^{\emouton-\gamma})^\star \oplus X^\alpha
$$
Cet isomorphisme aura son importance dans l'histoire $(\uX,\uY)$ versus $(\uY,\uX)$,
cf la proposition~\ref{PermutationXYdansMho}, et pourquoi pas toute la section
contenant ce résultat.
\end{rmq}

\subsubsection*{Le cas particulier de $\delta-\min(D) < d \leqslant \delta$: adéquation entre
$\mho_d$ et $\swbsOmega_d$}

Pour un entier quelconque $d$, en notant $\calS_{0,d}$ la base monomiale de $\Smac_{0,d}$, nous avons défini
dans la section~\ref{CanonicalCayleyDetSection} un mécanisme, piloté par $\minDiv$, permettant d'élaborer
des \emph{endomorphismes} de $\bfA[\uX]_d$ (voir la définition~\ref{OmegaDetNumerator}):
$$
\bsOmega_d = \bsOmega_{d,\uP}  : \AXdSod \to \End(\bfA[\uX]_d)
$$
Nous l'avons accompagné de plusieurs exemples et de deux dessins
grisés (cf. page~\pageref{DessinOmegad}) analogues à ceux figurant dans la
définition géante, à l'exception près que dans ces deux dessins, il n'y
pas de \og bande horizontale\fg{} correspondant à $\Jex_{1,d'}^\star$.
Nous avons également fourni la formule suivante exprimant
la forme $r_{0,d}$-linéaire alternée canonique $\Det_d : \AXdSod \to
\bfA$ à l'aide de~$\bsOmega_d$:
$$
\Det_d(f) = \dfrac{\det\bsOmega_{d}(f)}{\Delta_{2,d}} =
\dfrac{\det\bsOmega_{d}(f)}{\det W_{2,d}}
$$
Cette forme passe au quotient modulo $\langle\uP\rangle_d$ pour définir
$\Det_d : \BdSod \to \bfA$.

\medskip
Dans le chapitre précédent, nous avons étudié le cas où $d$ est dans
la fourchette $\delta - \min(D) < d \leqslant \delta$ ce qui équivaut à $0 \leqslant d' < \min(D)$. On
a alors $\Jex_{1,d'} =0$ donc $\Smac_{0,d'} = \bfA[\uX]_{d'}$. En composant avec l'isomorphisme
mouton-swap $\bfA[\uX]_d^{\calS_{0,d'}} \overset{\simeq}{\longrightarrow} \AXdSod$, nous avons réalisé
(cf. \ref{swNotation}) une \og copie\fg{} $\swbsOmega_d$ de~$\bsOmega_d$:
$$
\swbsOmega_d = \swbsOmega_{d,\uP}  : \bfA[\uX]_d^{\calS_{0,d'}} \to \End(\bfA[\uX]_d)
$$
Et nous avons obtenu (cf. théorème \ref{ResViaDetd}) une expression du résultant à l'aide
de la famille $\bsnabla_{d'} = (\nabla_\beta)_{|\beta| = d'}$ des déterminants de Sylvester
de $\uP$:
$$
\Res(\uP) = \swDet_d\big(\bsnabla_{d'})  =
\dfrac{\det\swbsOmega(\bsnabla_{d'})}{\det W_{2,d}}
\leqno(\star)
$$
L'examen des définitions de $\bsOmega_d$ et $\mho_d$ dans la
fourchette $\delta - \min(D) < d \leqslant \delta$, contrainte pour laquelle
on a $\bfA[\uX]_d \oplus \Jex_{1,d'}^\star = \bfA[\uX]_d$, conduit à
l'égalité \boxed{\swbsOmega_d = \mho_d} modulo l'identification
$$
\bfA[\uX]_d^{\calS_{0,d'}}  \simeq \bfA[\uX,\uY]_{d,d'}
$$
Cette identification, qui  utilise le fait que $\calS_{0,d'}$ est la base monomiale
de $\bfA[\uX]_{d'}$, consiste à rapprocher $f \in \bfA[\uX]_d^{\calS_{0,d'}}$ du polynôme
$F \in \bfA[\uX,\uY]_{d,d'}$ défini par:
$$
F(\uX,\uY) = \sum_{|\beta| = d'} f(X^\beta)\,Y^\beta
$$
Enfin, dans la proposition \ref{LienSylvesterBezoutien}, 
pour $|\beta| < \min(D)$, nous avons relié déterminant de Sylvester~$\nabla_\beta(\uX)$ et
composante $\Bez_\beta(\uX)$ de n'importe quel bezoutien $\Bez(\uX,\uY)$ de $\uP$:
$$
\nabla_\beta(\uX) \equiv \Bez_\beta(\uX)  \bmod \langle\uP\rangle_d
$$
Vu l'adéquation entre $\swbsOmega_d$ et $\mho_d$ et le fait que
$\swDet_d$ passe au quotient modulo $\langle\uP\rangle_d$, on peut reformuler
l'égalité~$(\star)$ à l'aide des nouvelles notations:
$$
\Res(\uP) = \dfrac{\det\mho_d(\Bez_{d,d'})}{\det W_{2,d}}
$$
Elle est pas belle la vie?

\begin{exemple}

Ici $n=3$, $D=(5,1,2)$, $\delta=5$, $d=3$ donc $d'=2$. Avouons que nous tenons
à voir avec nos yeux les matrices $W_{1,d}$ et $\transpose{W_{1,d'}}$
schématisées dans la définition géante.  

Pour des raisons typographiques, nous allons utiliser $\bfA[X,Y,Z]$ au
lieu de $\bfA[\uX] = \bfA[X_1,X_2,X_3]$ et $\bfA[x,y,z]$ au lieu de
$\bfA[\uY] = \bfA[Y_1,Y_2,Y_3]$. Un polynôme $F \in \bfA[X,Y,Z;x,y,z]_{3,2}$
est écrit sous la forme
$$
F = F_{x^2} x^2 + F_{xy} xy + \cdots + F_{z^2}z^2, \qquad
F_{x^2}, F_{xy}, \dots \in \bfA[X,Y,Z]_3
$$
Le système $\uP$ de format $D$ est:
$$
P_2 = b_1X_1 + b_2X_2 + b_3X_3,
\qquad\quad
P_3 = c_1X_1^2 + c_2X_1X_2 + c_3X_1X_3 + c_4X_2^2 + c_5X_2X_3 + c_6X_3^2
$$
Inutile de montrer $P_1$, il n'intervient pas dans $\bbS_3(\sbullet)$.
Voici, dans les bases indiquées, la matrice de $\bbS_3(F)$  où $F^\dagger$ n'est visualisé
que partiellement, par exemple:
$$
F_{z^2} = aX^3 + bX^2Y + \cdots + jZ^3
$$
$$
\bbS_3(F) =
\NorthEastBordermatrix{
\Veti{X^{2}\,e_{2}} &\Veti{XY\,e_{2}} &\Veti{XZ\,e_{2}} &\Veti{Y^{2}\,e_{2}} &\Veti{YZ\,e_{2}} &\Veti{Z^{2}\,e_{2}} &\Veti{X\,e_{3}} &\Veti{Z\,e_{3}}
   &\Veti{x^{2}} & \Veti{xy} & \Veti{xz} & \Veti{y^{2}} & \Veti{yz} & \Veti{z^{2}} & \\
b_{1} & . & . & . & . & . & c_{1} & . &\sbullet   &   &   &   &\sbullet  & a & \Heti{X^{3}} \\
b_{2} & b_{1} & . & . & . & . & c_{2} & . &   &   &   &   &          & b & \Heti{X^{2}Y} \\
b_{3} & . & b_{1} & . & . & . & c_{3} & c_{1} &   &   &   &   &      & c & \Heti{X^{2}Z} \\
. & b_{2} & . & b_{1} & . & . & c_{4} & . &   &   &   &   &          & d & \Heti{XY^{2}} \\
. & b_{3} & b_{2} & . & b_{1} & . & c_{5} & c_{2} &   &   &   &   &  & e & \Heti{XYZ} \\
. & . & b_{3} & . & . & b_{1} & c_{6} & c_{3} &   &   &   &   &      & f & \Heti{XZ^{2}} \\
. & . & . & b_{2} & . & . & . & . &   &   &   &   &                & g & \Heti{Y^{3}} \\
. & . & . & b_{3} & b_{2} & . & . & c_{4} &   &   &   &   &         & h & \Heti{Y^{2}Z} \\
. & . & . & . & b_{3} & b_{2} & . & c_{5} &   &   &   &   &         & i & \Heti{YZ^{2}} \\
. & . & . & . & . & b_{3} & . & c_{6} &\sbullet   &   &   &   &\sbullet     & j & \Heti{Z^{3}} \\
. & . & . & . & . & . & . & . & b_{1} & b_{2} & b_{3} & . & . & . & \Heti{x\,e_{2}} \\
. & . & . & . & . & . & . & . & . & b_{1} & . & b_{2} & b_{3} & . & \Heti{y\,e_{2}} \\
. & . & . & . & . & . & . & . & . & . & b_{1} & . & b_{2} & b_{3} & \Heti{z\,e_{2}} \\
. & . & . & . & . & . & . & . & c_{1} & c_{2} & c_{3} & c_{4} & c_{5} & c_{6} & \Heti{e_{3}} \\
}
$$
\noindent 
En haut de la matrice, on voit une base monomiale de l'espace de départ de $\bbS_3$ à savoir $\Smac_{1,d} \oplus \bfA[x,y,z]_{d'}^\star$, et à droite, on voit une base monomiale de l'espace d'arrivée $\bfA[X,Y,Z]_d \oplus \Smac_{1,d'}^\star$.
On identifie ces deux espaces avec $\Phi$, et on obtient la matrice de l'endomorphisme $\mho_3(F)$ dans la base monomiale de $\Jex_{1,d} \oplus \Smac_{0,d} \oplus \Jex_{1,d'}^\star$:
$$
    X^2Y,\ 
    XY^2,\ 
    XYZ,\ 
    XZ^2,\ 
    Y^3,\ 
    Y^2Z,\ 
    YZ^2,\ 
    Z^3\qquad | \qquad
    X^3,\ 
    X^2Z\qquad | \qquad
    xy,\ y^2,\ yz, \ z^2 
$$
Les 8 premiers monômes forment une base de $\Jex_{1,d}$ 
et les 4 derniers une base de $\Jex_{1,d'}^\star$. 
Et les 2 monômes au milieu ($X^3$ et $X^2Z$) indexent le bloc \MoutonNoir{}, ils forment une base de $\Smac_{0,d}$.

$$
\def\blz{\blacklozenge}
\def\lra{\leftrightarrow} \def\lrsa{\leftrightsquigarrow}
\mho_3(F) = 
\NorthEastBordermatrix{
\Veti{X^2Y\lra X^{2}\,e_{2}} &\Veti{XY^2\lra XY\,e_{2}} &\Veti{XYZ\lra XZ\,e_{2}} &\Veti{XZ^2\lra Z^2\,e_{3}}
&\Veti{Y^3\lra Y^{2}\,e_{2}} &\Veti{Y^2Z\lra YZ\,e_{2}} &\Veti{YZ^2\lra Z^{2}\,e_{2}} &\Veti{Z^3\lra Z\,e_{3}}
  &\Veti{xz \lrsa{X^3}} & \Veti{x^{2}\lrsa{X^2Z}} & \Veti{xy} & \Veti{y^{2}} & \Veti{yz} & \Veti{z^{2}} & \\
b_{2} & b_{1} & . & c_{2} & . & . & . & . &\sbullet & & & &\sbullet   &b & \Heti{X^{2}Y} \\
. & b_{2} & . & c_{4} & b_{1} & . & . & . &   &   &   &   &           &d & \Heti{XY^{2}} \\
. & b_{3} & b_{2} & c_{5} & . & b_{1} & . & c_{2} &   &   &   &   &    &e & \Heti{XYZ} \\
. & . & b_{3} & c_{6} & . & . & b_{1} & c_{3} &   &   &   &   &        &f & \Heti{XZ^{2}} \\
. & . & . & . & b_{2} & . & . & . &   &   &   &   &                   &g & \Heti{Y^{3}} \\
. & . & . & . & b_{3} & b_{2} & . & c_{4} &   &   &   &   &            &h & \Heti{Y^{2}Z} \\
. & . & . & . & . & b_{3} & b_{2} & c_{5} &   &   &   &   &            &i & \Heti{YZ^{2}} \\
. & . & . & . & . & . & b_{3} & c_{6} &   &   &   &   &                &j & \Heti{Z^{3}} \\
b_{1} & . & . & c_{1} & . & . & . & . &\blz & &   &   &                &a & \Heti{X^{3} \lrsa xz} \\
b_{3} & . & b_{1} & c_{3} & . & . & . & c_{1} & &\blz &   &   &\sbullet &c & \Heti{X^{2}Z \lrsa x^2} \\
. & . & . & . & . & . & . & . & b_{3} & b_{1} & b_{2} & . & . & . & \Heti{xy\leftrightarrow x\,e_{2}} \\
. & . & . & . & . & . & . & . & . & . & b_{1} & b_{2} & b_{3} & . & \Heti{y^2\leftrightarrow y\,e_{2}} \\
. & . & . & . & . & . & . & . & b_{1} & . & . & . & b_{2} & b_{3} & \Heti{yz \leftrightarrow z\,e_{2}} \\
. & . & . & . & . & . & . & . & c_{3} & c_{1} & c_{2} & c_{4} & c_{5} & c_{6} & \Heti{z^2 \leftrightarrow e_{3}} \\
}
$$
\noindent 
Expliquons les doubles flèches $\leftrightarrow$. 
En haut à gauche, elles désignent la correspondance $\Jex_{1,d} \leftrightarrow \Smac_{1,d}$, 
ce qui permet de remplir les colonnes en calculant 
directement $\Syl_d(\Smac_{1,d})$. 
En bas à droite, ces flèches désignent la 
correspondance $\Jex_{1,d'}^\star \leftrightarrow \Smac_{1,d'}^\star$ : 
on calcule $\Syl_{d'}(\Smac_{1,d'})$ et on dispose en ligne le résultat.
Quant au symbole~$\leftrightsquigarrow$, 
il désigne le mouton-swap qui indexe le bloc diagonal du milieu 
(qui est ici de taille 2 et dont la diagonale est matérialisée par des losanges).
Les deux blocs diagonaux extrêmes sont respectivement~$W_{1,d}$, de
taille 8 et la \emph {transposée} de $W_{1,d'}$, de taille 4 :
$$
W_{1,d} \ = \ 
\EastBordermatrix{
b_{2} & b_{1} & . & c_{2} & . & . & . & . & \Heti{X^{2}Y} \\ 
. & b_{2} & . & c_{4} & b_{1} & . & . & . & \Heti{XY^{2}} \\ 
. & b_{3} & b_{2} & c_{5} & . & b_{1} & . & c_{2} & \Heti{XYZ} \\ 
. & . & b_{3} & c_{6} & . & . & b_{1} & c_{3} & \Heti{XZ^{2}} \\ 
. & . & . & . & b_{2} & . & . & . & \Heti{Y^{3}} \\ 
. & . & . & . & b_{3} & b_{2} & . & c_{4} & \Heti{Y^{2}Z} \\ 
. & . & . & . & . & b_{3} & b_{2} & c_{5} & \Heti{YZ^{2}} \\ 
. & . & . & . & . & . & b_{3} & c_{6} & \Heti{Z^{3}} \\ 
}
\hspace{2cm}
W_{1,d'} \ = \ 
\EastBordermatrix{
b_{2} & b_{1} & . & c_{2} & \Heti{xy} \\ 
. & b_{2} & . & c_{4} & \Heti{y^{2}} \\ 
. & b_{3} & b_{2} & c_{5} & \Heti{yz} \\ 
. & . & b_{3} & c_{6} & \Heti{z^{2}} \\ 
}
$$
\end{exemple}

\subsubsection*{Mais où allons-nous avec $\mho_d$? Ou ce qui nous (vous?) attend}

Ici $D = (1,2,3)$ de degré critique $\delta=3$ avec:
$$
P_1 = a_1X_1 + a_2X_2 + a_3X_3,
\qquad\quad
P_2 = b_1X_1^2 + b_2X_1X_2 + b_3X_1X_3 + b_4X_2^2 + b_5X_2X_3 + b_6X_3^2
$$
Il est inutile de \og voir\fg{} $P_3$ car il n'est pas visualisé dans la matrice ci-dessous; mais bien sûr
il intervient (comme $P_1,P_2)$ dans les $\Bez_\ell$. 

\medskip
On fait le choix de $d=2$ donc $d' = 1$.
A quoi va servir le fourbi de $\mho_d$ ? Soit $\Bez(\uX,\uY)$ un bezoutien de $\uP$.
On écrit sa composante bihomogène $\Bez_{d,d'} \in \bfA[\uX,\uY]_{2,1}$ sous la forme:
$$
\Bez_{d,d'} = \Bez_1(\uX)Y_1 + \Bez_2(\uX)Y_2 + \Bez_3(\uX)Y_3, \qquad
\Bez_\ell(\uX) \in \bfA[\uX]_d
$$
Chaque $\Bez_\ell$ est un polynôme homogène de degré $d=2$ en $\uX$, et
chacun de ses coefficients est homogène de poids~1 en $P_i$.  On
reporte ces 3 polynômes $\Bez_\ell$ dans les dernières colonnes \og en $Y_\ell$ \fg{}. 

Concrètement, pour obtenir $\mho_{d}(\Bez_{d,d'})$, voici comment on
procède.  On calcule $\Syl_d(\Smac_{1,d})$ permettant d'obtenir les
premières colonnes, et la correspondance
$\Jex_{1,d} \leftrightarrow \Smac_{1,d}$ assure le bon appariement
ligne/colonne. On détermine ensuite $\Smac_{0,d}$ via sa base
monomiale. Ici c'est $(X_2X_3, X_3^2)$ dont l'image par le mouton-swap
est $(Y_3, Y_2)$. On détermine ensuite $\Syl_{d'}(\Smac_{1,d'})$
permettant de remplir le bloc en bas à droite, la correspondance
monomiale $\Smac_{1,d'} \leftrightarrow \Jex_{1,d'}$ assurant le fait
que colonnes et lignes de $\mho_d$ sont bien agencées.  On obtient
alors un endomorphisme de $\bfA[\uX]_d \oplus \Jex_{1,d'}^\star$,
schématisé à gauche ; à droite il s'agit de $\mho_d(F_{d,d'})$ où $F$ est le
bezoutien rigidifié du jeu étalon.
$$ 
\def\lra{\leftrightarrow} \def\lrsa{\leftrightsquigarrow}
\newcommand \veti[1] {\rotatebox{-90}{\mbox{$\scriptstyle#1$}}}
\mho_{d,\uP}(\Bez_{d,d'}) =
\NorthEastBordermatrix{
\Veti{X_1^2 \lra X_1\,e_1} & \Veti{X_1X_2 \lra X_2\,e_1} & \Veti{X_1X_3 \lra X_3\,e_1} & \Veti{X_2^2 \lra e_2} & \Veti{Y_3} & \Veti{Y_2} & \Veti{Y_1} & \\
a_1 & . & . & b_1     &              &              &               &\Heti{X_1^2} \\
a_2 & a_1 & . & b_2   &              &              &               &\Heti{X_1X_2} \\
a_3 & . & a_1 & b_3   &              &              &               &\Heti{X_1X_3} \\
. & a_2 & . & b_4     &\veti{\!\!\!\!\Bez_3} &\veti{\!\!\!\!\Bez_2} &\veti{\!\!\!\!\Bez_1}  &\Heti{X_2^2} \\
. & a_3 & a_2 & b_5   &              &              &               &\Heti{X_2X_3} \\
. & . & a_3 & b_6     &              &              &               &\Heti{X_3^2} \\
. & . & . & .         & a_3          & a_2          & a_1           &\Heti{Y_1 \leftrightarrow e_1} \\
}
\quad\quad
\EastBordermatrix{
a_1 & . & . & b_1     &.              &.              &.               &\Heti{X_1^2} \\
a_2 & a_1 & . & b_2   &.              &.              &.               &\Heti{X_1X_2} \\
a_3 & . & a_1 & b_3   &.              &.              &.               &\Heti{X_1X_3} \\
. & a_2 & . & b_4     &.              &.              &.          &\Heti{X_2^2} \\
. & a_3 & a_2 & b_5   &1              &.              &.               &\Heti{X_2X_3} \\
. & . & a_3 & b_6     &.              &1             &.               &\Heti{X_3^2} \\
. & . & . & .         & a_3          & a_2          & a_1           &\Heti{Y_1} \\
}
\qquad
$$
Notons:
$$
\calR = \calR(\uP) = \det\big(\mho_d(\Bez_{d,d'})\big)
$$
L'objectif va être de montrer que \boxed{\calR = \Res(\uP)}. Ce résultat sera obtenu en montrant que
$\calR \in \ElimIdeal$, en contrôlant son poids en chaque $P_i$ et en vérifiant que pour
le jeu étalon $\calR(\uX^D) = 1$. Ici on peut contrôler le poids en $P_i$ en développant
le déterminant par rapport à la dernière ligne:
$$
\poids_{P_1}(\calR) = 1+2+3 = 6, \qquad
\poids_{P_2}(\calR) = 0+2+1 = 3, \qquad
\poids_{P_3}(\calR) = 0+2+0 = 2
$$
On trouve $\poids_{P_i}(\calR) = \widehat{d}_i$! On comprendra plus tard que cela est dû à 
$d,d' < \min_{j\ne k}(d_j+d_k)=3$. La normalisation en le jeu étalon $\calR(\uX^D) = 1$ n'est pas compliquée
car non seulement le déterminant est 1 mais $\mho_{d,\uX^D}(\Bez_{d,d'}) = \Id$. En effet, de manière
générale, pour le jeu étalon
$$
\Bez^{\uX^D}(\uX,\uY) = \sum_\beta \Bez_\beta(\uX)Y^\beta  \qquad \text{avec} \qquad
\Bez_\beta = \begin {cases}
X^{\emouton-\beta} &\text{si $\beta \preccurlyeq \emouton$} \\
0                &\text{sinon} \\
\end {cases}
$$
Ce qui donne ici, dans l'ordre des colonnes, pour la spécialisation
de $\Bez_{d,d'}$ en le jeu étalon:
$$
\Bez_3 = X_2X_3, \qquad \Bez_2 = X_3^2, \qquad  \Bez_1 = 0
$$
Dans la matrice de droite, volontairement, on n'a pas spécialisé $\uP$ en
le jeu étalon mais seulement l'argument $\Bez_{d,d'}$, histoire de mieux
visualiser le bloc mouton-noir $\Smac_{0,d}$.
Il reste bien sûr un point important à montrer: $\calR \in \ElimIdeal$
(cela sera l'objet de la proposition~\ref{varpiElimIdeal}).
Voilà le programme qui nous attend et qui nous conduira en particulier à:
$$
\Res(\uP) = \dfrac{\det \mho_d(\Bez_{d,d'})}{\det W_{2,d}\det W_{2,d'}}
$$

\bigskip

L'exemple précédent illustre en particulier le comportement de $\mho_d$ pour
le jeu étalon. Nous formalisons le cas général dans la proposition suivante,
proposition dont la vérification est laissée au lecteur.

\begin{prop} [$\mho_d$ et le jeu étalon]
\label{MhoEtalon}
  
Le bezoutien rigidifié du jeu étalon $\uX^D$ est le le polynôme
$$
\Bez(\uX,\uY) = (XY)^\emouton \overset{\rm def}{=} \sum_{\alpha + \beta = \emouton} X^\alpha Y^\beta
$$
L'application linéaire $\Bez_{d,d'}^\dagger : \bfA[\uY]_{d'}^\star \to
\bfA[\uX]_d$ a pour restriction à $\Smac_{0,d'}^\star$ l'isomorphisme
mouton-swap $\Smac_{0,d'}^\star \overset{\simeq}{\longrightarrow}
\Smac_{0,d}$ et est nulle sur son supplémentaire monomial $\Jex_{1,d'}^\star$.

\medskip
En conséquence, pour le jeu étalon, on~a $\mho_d\big((XY)^\emouton_{d,d'}\big) = \Id$.
\end{prop}

\begin{prop}\label{ZigZagNul}
  
On rappelle la notation $s_d = \dim \Jex_{1,d}$ (rang de l'application de Sylvester $\Syl_d$).
La matrice de l'endomorphisme $\mho_d(F)$ est de taille $N = s_d + \chi_{d,d'} + s_{d'}$ et 
$$
N \ <\  
\dim \bfA[\uX]_d \,+\, \dim \bfA[\uY]_{d'}^\star
$$
En particulier, pour le polynôme nul $F = 0$, tout zig-zag de la matrice $\mho_d(0)$ est nul.
Note: nous désignons par zig-zag d'une matrice carrée $M$ de taille $N$, tout produit $m_{1,\tau(1)}
\cdots m_{N,\tau(N)}$ indexé par $\tau \in \fS_N$, terme qui 
apparaît dans le développement classique de $\det(M)$.
\end{prop}

\begin{proof} \leavevmode

L'espace sur lequel opère $\mho_d(F)$ est  
$$
\bfA[\uX]_d \oplus \Jex_{1,d'}^\star = \Jex_{1,d} \oplus \Smac_{0,d} \oplus  \Jex_{1,d'}^\star
\qquad \text{de dimension} \quad s_d + \chi_{d,d'} + s_{d'}
$$
D'où la formule donnée pour $N$.
Comme $d \leqslant \delta$, on a $s_d < \dim \bfA[\uX]_d$ d'où l'inégalité stricte annoncée pour $N$.

\medskip

Considérons le bloc rectangulaire \og en haut à droite\fg, indexé par $\bfA[\uY]_{d'}^\star 
\times \bfA[\uX]_d$, correspondant à $F^\dagger : \bfA[\uY]_{d'}^\star \to \bfA[\uX]_d$.
Si $F=0$, ce bloc est nul.
Notons $I$ l'ensemble des lignes indexées par $\bfA[\uX]_d$ et $J$ 
celui des colonnes indexées par $\bfA[\uY]_{d'}^\star$.
Comme  $N < \#I + \#J$, pour toute permutation $\tau \in \fS_N$, 
on ne peut pas avoir, pour des raisons de cardinaux,
$\tau(I) \subset \{1..N\} \setminus J$. 
Ainsi il existe $i \in I$ tel que $\tau(i) \in J$
si bien que le zig-zag de $\mho_d(0)$ indexé par $\tau$ est nul.

\end{proof}

\renewcommand \absM {3}    
\renewcommand \ordM {2}    
\renewcommand \taille {7}  

\subsection{L'application déterminant $\varpi_d = \varpi_{d,\protect\uP} :
\bfA[\protect\uX,\protect\uY]_{d,d'} \rightarrow \bfA$}

Comme auparavant, $d,d'$ sont deux entiers de somme le degré critique $\delta$.
Dans la section précédente est intervenue la construction:
$$
\mho_d = \mho_{d,\uP} : \bfA[\uX,\uY]_{d,d'} \ \longrightarrow \ 
\End\big(\bfA[\uX]_d \oplus \Jex_{1,d'}^\star\big)
$$
On peut alors définir $\varpi_d : \bfA[\uX,\uY]_{d,d'} \rightarrow \bfA$ 
par $\varpi_d(F) = \det\big(\mho_d(F)\big)$. 

\label{NOTA17-varpi}%

\begin{rmq}

Dans les trois pages suivantes, intervient le déterminant $\varpi_d(F)
= \det \mho_d(F)$ pour $F \in \bfA[\uX,\uY]_{d,d'}$.  Puisque $\det
\bbS_d(F) = \pm\varpi_d(F)$ avec un signe $\pm$ fixe indépendant de
$F$ (ce signe ne dépend que des bases monomiales dans lesquelles
$\bbS_d(F)$ est exprimé), on peut se permettre de remplacer
$\mho_d(F)$ par $\bbS_d(F)$. Ceci est sans influence sur la nature des
résultats comme le lecteur pourra le vérifier.  La considération de
$\bbS_d(F)$ présente un avantage : celui de pouvoir facilement
schématiser sa définition par une \og matrice \fg{}.

\end{rmq}

\subsubsection*{Un nouveau scalaire dans l'idéal d'élimination}

\begin{prop}\label{varpiElimIdeal}
Soit $\Bez = \Bez(\uX,\uY)$ un bezoutien du système $\uP$.  Alors
$\varpi_d(\Bez_{d,d'}) \in \ElimIdeal$.  Plus précisément, pour tout
$|\gamma| = \delta+1$, on a $X^\gamma\,\varpi_d(\Bez_{d,d'}) \in
\langle\uP\rangle_{\delta+1}$.
\end{prop}

Une remarque à propos de ``Plus précisément''.  Cela a du sens de se
placer en terrain générique, supposons-le un instant (cela ne sera pas
le cas dans la preuve) et notons $a$ le scalaire
$\varpi_d(\Bez_{d,d'})$.  Une fois acquis $a\in \ElimIdeal$, on a $H
a \in \uPsat_\delta$ pour tout $H \in \bfA[\uX]_\delta$ et, $\uP$
étant régulière, on sait alors depuis~\ref{MiniWiebe} que
$X_iHa \in \langle\uP\rangle_{\delta+1}$, rendant ainsi la précision
inutile.  Mais ici, dans la preuve, nous n'utilisons aucunement le
chapitre~\ref{ObjetsSuiteP}, seulement les propriétés spécifiques au
bezoutien $\Bez$ et à $\mho_d$.

\begin{proof}
Soit $X^\alpha$ avec $|\alpha| = d$. Nous allons montrer que
$X^\alpha\,\det\big(\bbS_d(\Bez_{d,d'})\big)$, égal à
$\pm X^\alpha \varpi_d(\Bez_{d,d'})$, 
appartient à $\uPsat_d$, ce
qui prouvera $\varpi_d(\Bez_{d,d'}) \in \ElimIdeal$.
Nous notons
$$
r = \dim\Jex_{1,d} = \dim \Smac_{1,d}, \qquad
r' = \dim\Jex_{1,d'} = \dim \Smac_{1,d'}
$$
Dans la matrice $\bbS_d(\Bez_{d,d'})$, à coefficients dans $\bfA$, 
chaque colonne $j$ parmi les $r$ premières est la vectorisation
d'un polynôme homogène $F_j$ de degré $d$, multiple d'un certain $P_i$,
donc dans $\langle\uP\rangle_d$.

\smallskip
\noindent
\parbox{0.65\linewidth}{ 

Dans cette matrice, on effectue l'opération élémentaire 
$$
\mathrm{Ligne}(\alpha) \ \leftarrow \ 
\sum_{|\alpha'| = d} X^{\alpha'} \mathrm{Ligne}(\alpha')
$$
Le déterminant de $\bbS_d(\Bez_{d,d'})$ est donc multiplié par $X^\alpha$ 
et égal au déterminant de la matrice ci-contre.
\`A présent, tout se passe dans cette nouvelle matrice dont la ligne $X^\alpha$ 
est à coefficients dans $\bfA[\uX]_d$ : les~$r$ premiers coefficients de cette ligne 
sont les polynômes $F_1, \dots, F_r$ et les coefficients suivants 
sont les $\Bez_\beta(\uX)$ pour $|\beta|=d'$ 
où $\Bez_{d,d'} = \sum_{|\beta| = d'}  \Bez_{\beta}(\uX) Y^\beta$.
}
\hspace{0.4cm}
\noindent
\parbox{0.33\linewidth}{
$$
\begin{tikzpicture}[scale = 0.6] 
\newcommand \ordL {3} 
\renewcommand \epaisseur {0.7}
\draw[fill=gray!15]  (0,\taille) rectangle \milieu ;
\path (0,\taille) -- \milieu node[midway] {\RestSyl{d}} ;
\draw[fill=gray!75]  (\taille,\taille) rectangle \milieu ;
\path (\taille,\taille) -- \milieu node[midway] {\hbox{$\Bez_{d,d'}^\dagger$}} ;
\draw[fill=gray!45]  (\taille, 0) rectangle \milieu ;
\path (\taille,0) -- \milieu node[midway] {\TransRestSyl{d'}} ;
\draw  (0,0) rectangle \milieu ;
\draw[fill=white] (0, \ordL) rectangle (\absM, \ordL + \epaisseur) ;
\path (0, \ordL) -- (\absM, \ordL + \epaisseur) node[midway] 
            {\hbox{\scriptsize $(F_j)_{1 \leqslant j \leqslant r}$}} ;
\draw[fill=gray!45]  (\absM, \ordL) rectangle (\taille, \ordL + \epaisseur) ;
\path (\absM, \ordL) -- (\taille, \ordL + \epaisseur) node[midway] 
            {\hbox{\scriptsize $(\Bez_\beta)_\beta$}} ;
\draw (\taille, \ordL + \epaisseur*0.5) node[right] {\scriptsize $X^\alpha$} ;
\end{tikzpicture}
$$
}

Utilisons un développement de Laplace en prenant pour ensemble $I_0$ de lignes 
la ligne $X^\alpha$ et les~$r'$ dernières lignes. On va montrer que tout
mineur $\Delta$ d'ordre $r'+1$ s'appuyant sur $I_0 \times J$ est dans $\uPsat_d$.
Il y a deux cas à distinguer en fonction de $J$.
Si $J$ s'appuie exclusivement sur les colonnes de droite, alors 
d'après~\ref{FormesInertieDegred}, le mineur $\Delta$ est dans $\uPsat_d$.
Si $J$ a une colonne qui \og mord \fg{} sur le bloc nul en bas à gauche, alors 
en développant $\Delta$ par rapport à cette colonne 
(qui est du type $\transpose (F_j, 0, \dots, 0)$ avec~$F_j$ dans~$\langle \uP \rangle_d$), 
on obtient $\Delta \in \langle\uP\rangle_d$, 
a fortiori dans $\uPsat_d$.

\medskip

Pour contrôler plus précisément l'appartenance de $\det\bbS_d(\Bez_{d,d'})$ 
à l'idéal d'élimination $\ElimIdeal$, il 
suffit de multiplier la ligne $X^\alpha$ par un monôme quelconque $X^{\gamma'}$ de degré $d'+1$
et d'utiliser~\ref{FormesInertieDegred}.
\end{proof}

\begin{rmq}
En degré $\delta$, on a $\omega(F) G - \omega(G) F \in \uPdelta$, si bien 
que $\omega(F)$ appartient à $\ElimIdeal$ dès que $F$ appartient à $\uPdelta^\sat$.
Il en va de même dans ce contexte $(d,d')$ ; 
autrement dit, $\varpi_d(F) \in \ElimIdeal$ dès que 
$F \in \langle \uPX, \uPY \rangle^\sat_{d,d'}$.
Pour s'en convaincre, reprendre 
la preuve précédente et constater que la seule propriété utilisée de $\Bez$
est $\Bez_{d,d'} \in \langle \uPX, \uPY \rangle^\sat_{d,d'}$.
\end{rmq}

\subsubsection*{Pseudo-linéarité de $\varpi_d$}

En degré $\delta$, on a $\omega(F + H) = \omega(F)$ pour $H \in
\uPdelta$ ; cette égalité est immédiate une fois que l'on sait que
$\omega$ est une forme linéaire nulle sur $\uPdelta$.  On a le même
type d'égalité pour $\varpi_d$.  Cependant, on ne peut pas utiliser le
même argument, car $\varpi_d$ n'est pas linéaire !  On s'en sort quand
même grâce à de petites perturbations élémentaires.  Le résultat
repose alors sur la multilinéarité-alternée du déterminant et sur le
rang de l'application de Sylvester $\Syl_d = \Syl_d(\uP)$.
Concernant le rang, faisons la remarque banale suivante:
soit $\uu$ une famille finie de vecteurs de $\bfA^m$ de rang $< p$
au sens habituel de la nullité de l'idéal déterminantiel $\calD_p(\uu)$.
Alors, pour des vecteurs quelconques $v_1, \dots, v_q \in \bfA^m$,
la famille $(\uu, v_1, \dots, v_q)$ est de rang $<p+q$, comme on le
voit facilement par récurrence sur $q$.
Bien entendu, l'algèbre extérieure trivialise
cette observation car elle permet d'écrire:
$$
u_{i_1}\wedge\cdots\wedge u_{i_p} = 0   \quad\Longrightarrow\quad
u_{i_1}\wedge\cdots\wedge u_{i_p} \wedge v_1 \wedge \cdots\wedge v_q = 0
$$
Dans la preuve du lemme ci-dessous, en notant $r=\dim\Jex_{1,d}$ le
rang de $\Syl_d$, on appliquera cette remarque à $r+1$ vecteurs de $\Im\Syl_d =
\langle\uP\rangle_d$. Idem avec~$\Syl_{d'}$ en notant
$r'=\dim\Jex_{1,d'}$ son rang.

\begin{lem} 
Soit $F \in \bfA[\uX,\uY]_{d,d'}$.
\begin{enumerate}[\rm i)]
\item 
Pour $|\beta| = d'$ et $H_\beta(\uX) \in \langle \uPX \rangle_d$, on a 
$\varpi_d\big(F + H_\beta(\uX) Y^\beta\big) \, = \, \varpi_d(F)$.
\item 
Pour $|\alpha| = d$ et $H_\alpha(\uY) \in \langle \uPY \rangle_{d'}$, on a 
$\varpi_d\big(F + X^\alpha H_\alpha(\uY)\big) \, = \, \varpi_d(F)$.
\item
Pour $H \in \big\langle \uPX, \uPY \big\rangle_{d,d'}$, 
on a $\varpi_d(F+H) \, = \, \varpi_d(F)$.
\end{enumerate}
\end{lem}

\begin{proof}\leavevmode

\noindent
i) Soit $H = H_\beta(\uX)Y^\beta = \big(\sum_{|\alpha|=d} a_\alpha X^\alpha \big) Y^\beta$.
Examinons la colonne indexée par $Y^\beta$ de $\bbS_d(F+H)$, 
les autres colonnes étant identiques à celles de $\bbS_d(F)$.
$$
\mathrm{Col}_\beta\big(\bbS_d(F+H)\big)
\ = \ 
\mathrm{Col}_\beta\big(\bbS_d(F)\big) 
\ + \ 
\EastBordermatrix{
& \\ 
                   &\bfA[\uX]_d \\
(a_{\alpha})_{\alpha} \\
\hligne{0.6pt}\\
0                  &\Smac_{1,d'}^\star
}
$$
\parbox{0.7\linewidth}{
Par multilinéarité du déterminant, 
$\varpi_d(F+H) = \varpi_d(F) + \det\big(\bbS^\beta_d(F)\big)$, 
où $\bbS^\beta_d(F)$ est la matrice obtenue à partir de $\bbS_d(F)$ en remplaçant sa 
colonne~$Y^\beta$ par la colonne ci-dessus. Pour conclure, il suffit de montrer que 
son déterminant est nul.
Or cette matrice $\bbS^\beta_d(F)$ est du type de la matrice ci-contre.
Par hypothèse, le vecteur colonne $(a_\alpha)_\alpha$ 
est combinaison linéaire des colonnes de $\Syl_d$.
Donc la colonne $Y^\beta$ se retrouve être du même type que les $r$ premières colonnes 
(d'où les couleurs !).
Maintenant, on peut constater 
que les mineurs d'ordre $r+1$ extraits sur les colonnes \og gris-blanc \fg{} 
sont nuls (ceci est dû au fait que les mineurs d'ordre $r+1$ de~$\Syl_d$ sont nuls, 
et à la présence des zéros dans le bas de la colonne~$Y^\beta$).
En utilisant la remarque précédant le lemme, on obtient
$\det\big(\bbS^\beta_d(F)\big) = 0$.
}
\hspace{0.5cm}
\parbox{0.28\linewidth}{ 
\begin{tikzpicture}[scale = 0.5]
\newcommand \absC {3.9}  
\draw[fill=gray!15]  (0,\taille) rectangle \milieu ;
\path (0,\taille) -- \milieu node[midway] 
        {${r \, \mathrm{col.} \atop \mathrm{de}\,\Syl_d}$} ;
\draw[fill=gray!75]  (\taille,\taille) rectangle \milieu ;
\path (\taille,\taille) -- \milieu node[midway] {\hbox{$F^\dagger$}} ;
\draw[fill=gray!45]  (\taille, 0) rectangle \milieu ;
\path (0, \ordM*0.5) -- (\taille, \ordM*0.5)  node[near end] 
        {${r' \, \mathrm{lig.} \atop \mathrm{de}\, \transpose {\Syl_{\!d'}} }$} ;
\draw  (0,0) rectangle \milieu ;
\draw[fill=gray!15]  (\absC, \taille) rectangle (\absC + \epaisseur, \ordM) ;
\draw[fill=white] (\absC, \ordM) rectangle (\absC + \epaisseur, 0) ;
\draw (\absC + \epaisseur, \taille) node[above] {\scriptsize $Y^\beta$} ;
\end{tikzpicture}
}

\medskip
\noindent
\parbox{0.7\linewidth}{
ii) 
Soit $H = X^\alpha H_\alpha(\uY) = X^\alpha \big( \sum_{|\beta| = d'} a_\beta Y^\beta \big)$. 
La démonstration est identique au point i), à transposée près. 
Toutes les lignes de $\bbS_d(F+H)$ sont identiques à celles de $\bbS_d(F)$, 
sauf la ligne indexée par~$X^\alpha$ : 
$$
\mathrm{Ligne}_\alpha\big(\bbS_d(F+H)\big) \ = \ 
\mathrm{Ligne}_\alpha\big(\bbS_d(F)\big) \ + \ 
\NorthEastBordermatrix{
\ \Smac_{1,d} &&  \bfA[\uY]_{d'}^\star &  \\
 0  &  \VR  (a_\beta)_{\beta} &  \\
}
$$
Pour conclure, il faut et il suffit de montrer que le déterminant de la matrice ci-contre 
est nul. La ligne \og blanc-gris \fg{} indexée par $X^\alpha$ est le vecteur ligne 
à droite dans l'égalité.
Tous les mineurs d'ordre $r'+1$ qui s'appuient sur les 
lignes blanc-gris sont nuls: en effet, par hypothèse 
$H_\alpha(\uY) \in \langle \uPY \rangle_{d'}$, donc 
le vecteur ligne $(a_\beta)_\beta$
est combinaison linéaire des lignes de $\transpose \Syl_{d'}$ ; de plus, 
$\Syl_{d'}$ est de rang~$r'$.
Finalement, le déterminant de cette matrice est bien nul.
}
\hspace{0.5cm}
\parbox{0.28\linewidth}{ 
\begin{tikzpicture}[scale = 0.5]
\newcommand \ordL {5.2} 
\draw[fill=gray!15]  (0,\taille) rectangle \milieu ;
\path (0,\taille) -- \milieu node[midway] 
        {${r \, \mathrm{col.} \atop \mathrm{de}\,\Syl_d}$} ;
\draw[fill=gray!75]  (\taille,\taille) rectangle \milieu ;
\path (\taille,\taille) -- \milieu node[midway] {\hbox{$F^\dagger$}} ;
\draw[fill=gray!45]  (\taille, 0) rectangle \milieu ;
\path (\taille,0) -- \milieu node[midway] 
        {${r' \, \mathrm{lig.} \atop \mathrm{de}\, \transpose {\Syl_{\!d'}} }$} ;
\draw  (0,0) rectangle \milieu ;
\draw[fill=white] (0, \ordL) rectangle (\absM, \ordL + \epaisseur) ;
\draw[fill=gray!45]  (\absM, \ordL) rectangle (\taille, \ordL + \epaisseur) ;
\draw (\taille, \ordL + \epaisseur*0.5) node[right] {\scriptsize $X^\alpha$} ;
\end{tikzpicture}
}

\medskip
\noindent
iii) 
En itérant l'égalité i), on obtient $\varpi_d(F+H^{(1)}) = \varpi_d(F)$ 
pour tout $H^{(1)} \in \langle \uPX \rangle_d \, \bfA[\uY]_{d'}$. 
De la même manière, l'égalité ii) fournit 
$\varpi_d(F+H^{(2)}) = \varpi_d(F)$ pour tout $H^{(2)} \in \langle \uPY \rangle_{d'} \, \bfA[\uX]_{d}$. 
Or un polynôme $H \in \big\langle \uPX, \uPY \big\rangle_{d,d'}$ 
s'écrit comme la somme $H^{(1)} + H^{(2)}$, avec des polynômes comme ci-dessus.
On a donc 
$$
\varpi_d(F + H) \ =\  
\varpi_d(F + H^{(1)} + H^{(2)}) \ =\  
\varpi_d(F + H^{(1)}) \ =\ 
 \varpi_d(F)
$$
\end{proof}

\begin{prop} \label{PseudoLineariteVarpi}
L'application $\varpi_d$ passe au quotient modulo 
$\big\langle \uPX, \uPY \big\rangle_{d,d'}$ ; autrement dit, 
pour $F, G \in \bfA[\uX,\uY]_{d,d'}$, on a 
$$
F \equiv G \, \bmod \big\langle\uPX,\uPY\big\rangle_{d,d'} 
\qquad \Longrightarrow \qquad 
\varpi_d(F) = \varpi_d(G)
$$
De plus, $\varpi_d$ est nulle sur $\big\langle \uPX, \uPY \big\rangle_{d,d'}$.
\end{prop}

\begin{proof}
C'est le contenu du point iii) du lemme précédent.
La nullité de $\varpi_d$ sur $\big\langle \uPX, \uPY \big\rangle_{d,d'}$ 
est due au fait que $\varpi_d(0) = 0$, cf.~\ref{ZigZagNul}.
\end{proof}

\subsubsection*{Indépendance du choix de la matrice bezoutienne}

La propriété d'invariance du choix de la matrice bezoutienne n'est
nullement une évidence et n'est pas abordée dans l'article~\cite{AD}
où les auteurs utilisent uniquement le bezoutien \og rigidifié\fg.  Elle n'est
pas prouvée non plus dans~\cite{J7} et pourtant, Jean-Pierre Jouanolou
a réalisé un travail important dans ce domaine avec l'utilisation de
techniques avancées et d'un langage sophistiqué.  Il affirme, au début
de~{3.11.19.11}~: \og Faute de pouvoir maîtriser la dépendance de
l'élément $L_\nu([h_{ij}]; \varphi_1,\dots,\varphi_n)$ de la décomposition
$[h_{ij}]$ choisie, nous allons nous limiter aux décompositions qui proviennent
\dots \fg{}.  On peut même penser que cette propriété
d'indépendance a été l'une de ses préoccupations puisque l'on constate
qu'il a étudié plusieurs constructions de matrices bezoutiennes dans
la section~{3.11}, confer les exemples~{3.11.7}, la
sous-section~{3.11.18}, ainsi que les premiers paragraphes
de~{3.11.19.11.}

Quant à nous, avouons notre surprise : la propriété
d'indépendance du choix de la matrice bezoutienne n'est pratiquement
pas liée aux matrices bezoutiennes ! 
Elle repose essentiellement sur la proposition précédente.

\begin{theo}[Indépendance du choix de la matrice bezoutienne] \label{IndependanceMatBez}
Pour deux bezoutiens $\Bez^{(1)}$,  $\Bez^{(2)}$ d'un même système $\uP$, 
on~a $\varpi_d(\Bez_{d,d'}^{(1)}) \, = \, \varpi_d(\Bez_{d,d'}^{(2)})$.
\end{theo}

\begin{proof}
D'après~\ref{ProprietesMorley}, 
on a $\Bez_{d,d'}^{(1)} \equiv \Bez_{d,d'}^{(2)} 
\bmod \big\langle \uPX, \uPY \big\rangle_{d,d'}$.
On conclut avec~\ref{PseudoLineariteVarpi}.
\end{proof}

\subsubsection*{Structure de $\varpi_d$, comme somme de produit de trois mineurs signés}

\begin{prop} \label{varpiStructure}
L'endomorphisme $\mho_d(F)$ opère sur le module $\Jex_{1,d} \oplus \Smac_{0,d} \oplus \Jex_{1,d'}^\star$
de dimension $s_d + \chi_d + s_{d'}$.  
Son déterminant $\varpi_d(F)$ est une somme signée de termes du type :
$$
\text{mineur d'ordre $s_d$ de $\Syl_d(\uP)_{|\Smac_{1,d}}$} 
\ \times \ 
\text{mineur d'ordre $\chi_d$ de $F^{\dagger}$}
\ \times \ 
\text{mineur d'ordre $s_{d'}$ de $\transpose \Syl_{d'}(\uP)_{|\Smac_{1,d'}}$}
$$
\end{prop}

\begin{proof}
Commençons par énoncer et démontrer le lemme suivant:

\parbox{0.6\linewidth}{
\itshape
Considérons une matrice carrée $A$ découpée de la façon suivante, 
avec $\#J_0=r$ et $\#I_0 = r'$. 

On a alors 
$$
\det A \ =\ 
\sum_{\#I=r \atop I \subset \overline {I_0}} 
\sum_{\#J = r' \atop J \subset \overline{J_0}}
\pm \  
\det A_{I \times J_0} \ 
\det A_{\overline {I_0} \setminus I \,\times\, \overline {J_0} \setminus J} \ 
\det A_{I_0 \times J} \ 
$$
avec $\pm \, = \, 
\varepsilon(J_0, \overline{J_0}) \ 
\varepsilon(I, \overline{I})\ 
\varepsilon(I_0, \,\overline {I_0} \setminus I) \ 
\varepsilon(J, \, \overline {J_0} \setminus J)
$.
}
\parbox{0.3\linewidth}{
\begin{tikzpicture}[scale = 0.5]
\clip (-1,0) rectangle (8.5,8.5) ;
\draw[fill=gray!20]  (0,\taille) rectangle \milieu ;
\draw[fill=gray!60]  (\taille,\taille) rectangle \milieu ;
\path (0,0) -- \milieu node[midway] {\scriptsize bloc nul} ;
\draw[fill=gray!20]  (\taille, 0) rectangle \milieu ;
\draw  (0,0) rectangle \milieu ;
\path \NorthUn node[midway,above] {$J_0$} ;
\path \EastDeux node[midway,right] {$I_0$} ;
\end{tikzpicture}
}

\medskip
\noindent
La formule du développement de Laplace suivant les colonnes de $J_0$ fournit :
$$
\det A  \ = \ 
\varepsilon(J_0, \overline{J_0}) 
\sum_{\#I=r} 
\varepsilon(I, \overline I) \ 
\det A_{I \times J_0} \ 
\det A_{\overline I \times \overline {J_0}} \ 
$$
On peut se restreindre aux $I$ tels que $I \subset \overline{I_0}$,
sinon la matrice $A_{I \times J_0}$ possède une ligne de zéros et
fournit une contribution nulle à la somme.  Développons maintenant
avec Laplace le déterminant de chaque matrice carrée $B :=
A_{\overline I \times \overline {J_0}}$ relativement à l'ensemble de
lignes $I_0 \subset \overline I$; par conséquent, la sommation porte
sur les $J$ tels que $J \subset \overline{J_0}$ et $\#J = r'$.
Notons $\widetilde I_0 = \overline I \setminus I_0$ le complémentaire de $I_0$ dans $\overline I$, 
et de manière analogue $\widetilde J = \overline {J_0} \setminus J$. Alors:
$$
\det A_{\overline I \times \overline {J_0}} \overset{\rm def.}{=}
\det B  \ = \ 
\varepsilon(I_0, \widetilde{I_0}) 
\sum_{\#J=r' \atop J \subset \overline{J_0}} 
\varepsilon(J, \widetilde J) \
\det B_{\widetilde I_0 \times \widetilde J} \ 
\det B_{I_0 \times J}
$$
On reporte cette valeur dans le développement de $\det A$ en utilisant
$\widetilde {I_0} \overset {\rm def.}{=} \overline I\setminus I_0 = \overline {I_0}\setminus I$,
ce qui termine la preuve du lemme.

\medskip
Revenons au contexte de l'énoncé en notant $J_0$ les colonnes indexées
par $\Jex_{1,d}$ issues de $\Syl_d(\uP)$, et $I_0$ les lignes indexées
par~$\Jex_{1,d'}$ issues de~${\transpose \Syl_{d'}(\uP)}$.
D'après le lemme, on voit que $\varpi_d(F)$ est une somme sur $I$ et $J$ de termes du type :
$$
\pm \quad  
\det \mho_d(F)_{I \times J_0} \ \times \ 
\det \mho_d(F)_{\overline{I_0} \setminus I \,\times\, \overline{J_0} \setminus J} 
\ \times \ 
\det \mho_d(F)_{I_0 \times J}
$$
avec $I \subset \overline{I_0}$ de cardinal $s_d$ et $J \subset
\overline{J_0}$ de cardinal $s_{d'}$.  Les mineurs figurant aux
termes extrêmes ne dépendent que de la suite $\uP$ et pas de $F$,
tandis que le mineur du milieu ne dépend lui que de $F$.
\end{proof}

\begin{prop}[Normalisation] \label{varpiEtalon}
Soit $\Bez = \Bez(\uX,\uY)$ un bezoutien quelconque du jeu étalon~$\uX^D$.
Alors $\varpi_{d,\uX^D}(\Bez_{d,d'}) = 1$.
\end{prop}

\begin{proof}

En un premier temps, considérons la matrice bezoutienne \og
rigidifiée\fg{} du jeu étalon figurant en fin de remarque~\ref{BezoutianRemarks}
dont le déterminant vaut $(XY)^\emouton = \sum_{\alpha + \beta =
  \emouton} X^\alpha Y^\beta$. On a vu en~\ref{MhoEtalon} que
$\mho_{d,\uX^D}\big((XY)^\emouton_{d,d'}\big) = \Id$, a fortiori de
déterminant~1.  Pour un bezoutien quelconque, on invoque le théorème
d'indépendance (théorème~\ref{IndependanceMatBez}).
\end{proof}

\begin{prop}\label{varpiEvalEtalon}
On a $\varpi_d\big((XY)^\emouton_{d,d'}\big) = \det W_{1,d} \times \det W_{1,d'}$. 
\end{prop}

\begin{proof}

Posons $F = (XY)^\emouton_{d,d'}$ de sorte que $F^\dagger : \bfA[\uY]_{d'}^\star \to \bfA[\uX]_d$
est l'isomorphisme mouton-swap sur~$\Smac_{0,d'}^\star$ et $0$ sur $\Jex_{1,d'}^\star$.
Reprenons les notations de la formule figurant à la fin de la définition dite géante~\ref{DefGeante}:
$$ 
(X^\alpha\oplus X^\gamma)\oplus (Y^\beta)^\star \in
\bfA[\uX]_d\oplus\Jex_{1,d'}^\star = (\Jex_{1,d}\oplus\Smac_{0,d}) \oplus\Jex_{1,d'}^\star,
\qquad
Z = (Y^{\emouton-\gamma})^\star + (Y^\beta)^\star \in \bfA[\uY]_{d'}^\star
$$
Alors $F^\dagger(Z) = X^\gamma$ et $\mho_d(F)$ réalise
$$
(X^\alpha \oplus X^\gamma) \oplus (Y^\beta)^\star \mapsto
\big((\Syl_d \circ \varphi)(X^\alpha) + X^\gamma\big) \ \oplus \ 
\big(\transpose \varphi \circ \transpose \Syl_{d'}\big)(Z)
$$ 
Sa matrice, relativement à la somme directe des 3 facteurs, est triangulaire supérieure en 3 blocs:
$$
\mho_d(F) : 
\begin {bmatrix}
W_{1,d} &0              &0 \\
\star  &\Id_{\Smac_{0,d}} &0 \\  
0      &\star          &\transpose{W_{1,d'}} \\
\end {bmatrix}
$$
D'où la formule annoncée pour le déterminant.
\end{proof}

\begin{prop}[\'Epuration] \label{varpiDivisibilite}
Le déterminant $\varpi_d(F)$ est divisible par
$\Delta_{2,d} \times \Delta_{2,d'}$, c'est-à-dire
par $\det W_{2,d} \times \det W_{2,d'}$.
\end{prop}

\begin{proof}
D'après \ref{varpiStructure}, $\varpi_d(F)$ est une somme signée de termes du type 
$$
\text{mineur d'ordre $s_d$ de $\Syl_d(\uP)_{|\Smac_{1,d}}$} 
\ \times \ 
\text{mineur d'ordre $\chi_d$ de $F^{\dagger}$}
\ \times \ 
\text{mineur d'ordre $s_{d'}$ de $\transpose \Syl_{d'}(\uP)_{|\Smac_{1,d'}}$}
$$
Or, par définition ou presque, le scalaire $\Delta_{2,d}$ divise tous les mineurs d'ordre $s_d$ de la restriction
$(\Syl_d)_{|\Smac_{1,d}}$, cf la proposition \ref{DeltakdDiviseMineurs}
et le point iv) de la proposition~\ref{SuiteDeltak} qui en fournit un certificat:
$$
\calD_{s_d}\big((\Syl_d)_{|{\Smac_{1,d}}}\big) = \Delta_{2,d}\,\rmc(\Theta_{1,d})
$$
De plus $\Delta_{2,d} = \det W_{2,d}$ (cf théorème \ref{DeltakdProdBinomialDetWhd}).
Idem avec $d'$, d'où le résultat annoncé.
\end{proof}

\subsection{Symétrie quand tu me tiens}

Soit $\dsV(\uX,\uY)$ une matrice bezoutienne de $\uP$, de déterminant
$\Bez(\uX,\uY)$; on se gardera de croire que $\Bez(\uX,\uY)$ est symétrique en $(\uX,\uY)$. 
En voici un exemple ultra-simple, de format $D = (1,2)$ :
$$
P_1 = aX_1 \qquad P_2 = bX_1X_2
$$
La matrice bezoutienne, celle spécifiée dans la définition~\ref{DefBezoutien}, se détermine
aisément ainsi que son déterminant :
$$
\dsV(\uX,\uY) = \begin {bmatrix} a & bX_2\\ 0 & bY_1\\ \end {bmatrix}, \qquad
\Bez(\uX,\uY) = abY_1
$$
Il est clair que le bezoutien n'est pas symétrique en ($\uX, \uY$).
Cette propriété (erronée) de symétrie du bezoutien est cependant
affirmée et utilisée dans l'article~\cite{AD} à la première ligne de la preuve du lemma~2.1.
Les auteurs vont alors déduire des égalités
matricielles, qui se révèlent fausses. 
Mettons par exemple en défaut leur remark 2.6 dans laquelle est mentionné
le fait que deux matrices sont transposées l'une de l'autre modulo un mystérieux
\og with a careful inspection, we have that ordering properly their rows and 
columns\dots\fg{}. 
Choisissons le format $D = (2,3)$ pour lequel $\delta = 3$ :
$$
\setlength{\tabcolsep}{2pt}
\left\{
\begin{tabular}{rcp{12cm}} 
$P_{1}$ & $=$ & $a_{1}X_{1}^{2} + a_{2}X_{1}X_{2} + a_{3}X_{2}^{2}$\\ [0.1cm] 
$P_{2}$ & $=$ & $b_{1}X_{1}^{3} + b_{2}X_{1}^{2}X_{2} + b_{3}X_{1}X_{2}^{2} + b_{4}X_{2}^{3}$\\ [0.1cm] 
\end{tabular}
\right.
$$
Prenons \boxed{(d,d') = (0,\delta) = (0,3)} ; les matrices en question, qui devraient
être transposées l'une de l'autre, \emph{à permutation près des lignes et colonnes},
sont $\mho_\delta(\Bez_{\delta,0})$ et $\mho_0(\Bez_{0,\delta})$. 
Ce qui n'est pas possible puisque l'on~a~:
{\small
$$
\hbox{\normalsize $\mho_\delta(\Bez_{\delta,0})$} =
\EastBordermatrix{
a_{1}&  . & b_{1}&  .  & \Heti{X_1^3} \\ 
a_{2}& a_{1}& b_{2}& -a_{3}b_{1} & \Heti{X_1^2X_2} \\ 
 . & a_{3}& b_{4}& a_{2}b_{4} -a_{3}b_{3} & \Heti{X_2^3} \\ 
a_{3}& a_{2}& b_{3}& a_{1}b_{4} -a_{3}b_{2} & \Heti{X_1X_2^2}
}
\ 
\hbox{\normalsize $\mho_0(\Bez_{0,\delta})$} = 
\EastBordermatrix{
a_1b_4 & a_{1}b_{2} -a_{2}b_{1}  & a_{1}b_{3} -a_{3}b_{1} &. & \Heti{1}\\ 
a_3 & a_{1}& a_2 & . & \Heti{(X_1^3)^\star} \\ 
a_2 & . & a_{1}& a_3 & \Heti{(X_1^2X_2)^\star}\\ 
b_3 & b_1 & b_2 & b_{4} & \Heti{(X_2^3)^\star} \\ 
}
$$
}

\medskip
\noindent
De notre côté, nous n'utilisons pas bien sûr la prétendue symétrie des
bezoutiens, et nous n'avons pratiquement jamais besoin de telles
comparaisons dans lesquelles interviendraient d'une part~$(\uX,\uY)$ 
et d'autre part~$(\uY,\uX)$. 
Cependant, suite à l'article~\cite{AD}, nous avons enquêté 
sur cette histoire. Nous en avons déduit une belle égalité de
symétrie (cf.~\ref{SymetrieVarpi}),  
qui affirme en quelque sorte qu'il y a moitié moins de 
formes d'inertie de degré $0$ du type $\varpi_d(\Bez_{d,d'})$ 
que de couples $(d,d')$ tels que $d+d'=\delta$, 
propriété nullement visible dans~\ref{varpiElimIdeal}.
Essayons de décrire, à l'aide de l'exemple, les symétries et non-symétries, 
et par la même occasion, l'architecture de la preuve de ce résultat.
Il est important de noter que dès le départ, 
nous n'utilisons pas de matrice bezoutienne spécifique, 
même si nous avons un petit faible pour celle spécifiée en~\ref{DefBezoutien} car
c'est en partie sur ce choix que repose notre implémentation et les
exemples qui en sont tirés (notre implémentation a cependant la
possiblité d'utiliser n'importe quelle matrice bezoutienne). 
Pour n'importe quelle matrice bezoutienne $\dsV(\uX,\uY)$ associée à $\uP$, 
il est facile de constater que la
matrice $\dsV^*(\uX,\uY) \overset {\rm def}{=} \dsV(\uY,\uX)$ est
également une matrice bezoutienne, de déterminant $\Bez^*(\uX,\uY) = \Bez(\uY,\uX)$. 
La prétendue symétrie se redresse en une vraie congruence:
$$
\Bez(\uX,\uY) \equiv \Bez^*(\uX,\uY) \ \bmod\ \langle\uP(\uX) - \uP(\uY)\rangle
$$
Cependant, malgré cette symétrie dans le quotient,
les \emph{endomorphismes} $\mho_{d'}(\Bez_{d',d})$ et $\mho_{d'}(\Bez^*_{d',d})$ ne
sont pas du tout de même nature.
Dans l'exemple, il est facile de constater que $\mho_\delta(\Bez_{\delta,0})$ et $\mho_\delta(\Bez^*_{\delta,0})$ 
(voir plus bas la représentation matricielle de ce dernier)
n'ont quasiment rien en commun (ils n'ont pas même trace). 
Mais leurs déterminants sont égaux (cf.~\ref{IndependanceMatBez}) 
et ceci n'a rien à voir avec la permutation $(\uX,\uY)$ !
En revanche, les deux endomorphismes $\mho_{d'}(\Bez^*_{d',d})$ et $\transpose{\mho_d(\Bez_{d,d'})}$
sont fortement liés puisqu'ils sont conjugués de manière explicite par 
une matrice de permutation.
Dans l'exemple, il s'agit des endomorphismes fournis sous forme matricielle :
$$
\mho_\delta(\Bez^*_{\delta,0}) =
\EastBordermatrix{
a_{1}&  . & b_{1}&  a_{1}b_{2} -a_{2}b_{1}  & \Heti{X_1^3} \\ 
a_{2}& a_{1}& b_{2}& a_{1}b_{3} -a_{3}b_{1} & \Heti{X_1^2X_2} \\ 
 . & a_{3}& b_{4}& . & \Heti{X_2^3} \\ 
a_{3}& a_{2}& b_{3}& a_1b_4 & \Heti{X_1X_2^2}
}
\qquad
\transpose{\,\mho_0(\Bez_{0,\delta})} =
\EastBordermatrix{
a_1b_4 & a_3 & a_2 & b_3 & \Heti{1^\star} \\
a_1b_2 - a_2b_1 & a_{1} &  . & b_{1} & \Heti{X_1^3} \\
a_1b_3 - a_3b_1 & a_{2} &  a_{1} & b_{2} & \Heti{X_1^2X_2} \\
. & . & a_3 & b_4 & \Heti{X_2^3} 
}
$$
Le résultat précis qui figure ci-dessous en~\ref{PermutationXYdansMho} ainsi que sa preuve, 
nous permettent d'affirmer qu'une conjugaison possible est :
$$
\mho_\delta(\Bez^*_{\delta,0}) = P_\tau^{-1}\, \transpose{\,\mho_0(\Bez_{0,\delta})}\, P_\tau
\qquad 
\text{avec } \quad
\tau = (1,2,3,4),
\qquad
P_\tau =
\left[
\begin{array}{*{4}{c}}
 . &  . &  . & 1 \\ 
1&  . &  . &  .  \\ 
 . & 1&  . &  .  \\ 
 . &  . & 1&  .  \\ 
\end{array}
\right]
$$
Insistons sur le fait que rien n'est matriciel, tout est endomorphisme. Ainsi la
matrice de permutation~$P_\tau$ ci-dessus s'obtient via un isomorphisme monomial :
$$
\Jex_{1,d'} \oplus \Smac_{0,d'} \oplus \Jex_{1,d}^\star 
\quad \longrightarrow \quad 
\Jex_{1,d}^\star \oplus \Smac_{0,d}^\star \oplus \Jex_{1,d'}
$$
où ici (en rappelant que $D = (2,3)$, $d=0$, $d'=3$):
$$
\Jex_{1,d'} = \bfA X_1^3 \oplus \bfA X_1^2 X_2 \oplus \bfA X_2^3, \qquad
\Smac_{0,d'} = \bfA X_1X_2^2, \qquad
\Jex_{1,d} = 0, \qquad
\Smac_{0,d} = \bfA.1
$$
Concluons. Indépendamment de la permutation $(\uX,\uY)$, on obtient 
$\varpi_\delta(\Bez_{\delta,0}) = \varpi_\delta(\Bez^*_{\delta,0})$ ;
puis, à l'aide de la conjugaison, $\varpi_\delta(\Bez^*_{\delta,0}) = \varpi_0(\Bez_{0,\delta})$.
Finalement, on récupère la jolie égalité 
${\varpi_\delta(\Bez_{\delta,0}) = \varpi_0(\Bez_{0,\delta})}$. 
La suite est donc consacrée à prouver cette formule avec $(d,d')$ quelconque
où bien sûr $d+d' = \delta$.

\subsubsection*{Les choses de la vie: $(\uX,\uY)$ versus $(\uY,\uX)$}

Dans la définition~\ref{DefGeante}, étant donné $F
= \sum_{|\alpha|=d\atop|\beta|=d'} a_{\alpha\beta}X^\alpha Y^\beta
\in \bfA[\uX,\uY]_{d,d'}$, on a fait un choix pour définir l'application linéaire $F^\dagger
: \bfA[\uY]_{d'}^\star \to \bfA[\uX]_d$. On aurait très bien pu prendre
$$
F^{\dagger\dagger} : \  
\bfA[\uX]_{d}^\star \ \longrightarrow \  \bfA[\uY]_{d'}
\qquad \qquad
(X^\alpha)^\star \ \longmapsto \ \sum_{|\beta| = d'} a_{\alpha \beta} Y^\beta
$$
D'ailleurs, cette application $F^{\dagger \dagger}$ n'est ni plus ni moins que 
la transposée de $F^\dagger$, \idest{} $F^{\dagger \dagger} = \transpose F^\dagger$.
Un autre visage de $F^{\dagger\dagger}$ est $G^\dagger$ où 
$G(\uX,\uY) = F(\uY,\uX) \in \bfA[\uX,\uY]_{d',d}$ ; en effet, on a 
$$
G^\dagger : \  
\bfA[\uY]_{d}^\star \ \longrightarrow \  \bfA[\uX]_{d'}
\qquad \qquad
(Y^\alpha)^\star \ \longmapsto \ \sum_{|\beta| = d'} a_{\alpha \beta} X^\beta
$$
ce qui la même application que $F^{\dagger\dagger}$ (modulo l'échange $\uX\leftrightarrow\uY$).

\begin{prop} \label{PermutationXYdansMho}
On reprend les notations de la remarque ``Pour les amateurs
d'isomorphismes monomiaux'' figurant après la définition
géante~\ref{DefGeante} en notant $\calL_{d,d'}
= \Jex_{1,d} \oplus \Smac_{0,d} \oplus \Jex^\star_{1,d'}$ le
$\bfA$-module monomial sur lequel opère tout endomorphisme $\mho_d(F)$
pour $F \in \bfA[\uX,\uY]_{d,d'}$.

Pour $F =F(\uX, \uY) \in \bfA[\uX, \uY]_{d,d'}$ et son cousin $G =
F(\uY,\uX) \in \bfA[\uX,\uY]_{d',d}$, on dispose du diagramme commutatif
suivant :
$$
\xymatrix @C=1.5cm @M=0.4pc {
\calL_{d',d} \ar[d]_{\rm iso} \ar[r]^{\mho_{d'}(G)} & \calL_{d',d} \ar[d]^{\rm iso} \\
(\calL_{d,d'})^\star \ar[r]_{{}^{\rm t} \mho_d(F)} & (\calL_{d,d'})^\star \\
}
$$
où $\mathrm{iso} : \calL_{d',d} \rightarrow (\calL_{d,d'})^\star = 
\Jex_{1,d}^\star \oplus \Smac_{0,d}^\star \oplus \Jex_{1,d'}$ 
est défini par 
$$
X^\alpha \oplus X^\gamma \oplus (Y^\beta)^\star \longmapsto
(Y^\beta)^\star  \oplus (Y^{\emouton-\gamma})^\star \oplus X^\alpha
$$
En particulier, \boxed{\varpi_{d'}(G) = \varpi_{d}(F)}.

\medskip

De manière matricielle, fixons des bases monomiales, $\calB_{1,d}$ de
$\Jex_{1,d}$ et $\calS_{0,d}$ de $\Smac_{0,d}$, idem pour $d'$.
Notons $\calB = \calB_{1,d} \vee \calS_{0,d} \vee \calB_{1,d'}^\star$
la base de $\calL_{d,d'}$ et
$\calB' = \calB_{1,d'} \vee \calS_{0,d'} \vee \calB_{1,d}^\star$ celle
de $\calL_{d',d}$ (une
étoile désigne la base duale).

\medskip

On dispose alors de la  conjugaison :
$$
\mathrm{Mat}_{\calB'}\big(\mho_{d'}(G)\big)
\ = \ P^{-1} \ \transpose{\mathrm{Mat}}_{\calB}\big(\mho_d(F)\big) \, P
$$
où $P$ est la matrice de $\mathrm{iso}$ dans les bases $\calB'$ et $\calB^\star$ ; 
c'est une matrice anti-diagonale en 3 blocs où les blocs extrêmes sont l'identité 
et le bloc du milieu est la matrice du mouton-swap exprimée dans les bases~$\calS_{0,d'}$ 
et $\calS_{0,d}^\star$ (c'est une matrice de permutation).
La matrice $P$ est elle-même une matrice de permutation.
\end{prop}


\begin{proof}

Le lecteur contemplera les schémas suivants. D'abord les endomorphismes:
$$
\def \taille {8}
\def \absM {2.4} 
\def \ordM {3.7} 
\def \milieu {(\absM, \ordM)}
\begin{tikzpicture}[scale = 0.5]
\draw[fill=gray!20]  (0,\taille) rectangle \milieu ;
\draw[fill = white]  (0,0) rectangle \milieu ;
\draw[fill=gray!20]  (\taille, 0) rectangle \milieu ;
\draw[fill=gray!60]  (\taille,\taille) rectangle \milieu ;
\draw[thick, dashed] (0,\taille) rectangle (\absM,\taille-\absM) node [midway] {$W_{1,d'}$} ;
\draw[thick, dashed] (\taille-\ordM, \ordM) rectangle (\absM,\taille-\absM) 
node [midway] {$G_{\hbox{\tiny\mouton}}^{\dagger}$} ; 
\draw[thick, dashed] (\taille-\ordM, \ordM) rectangle (\taille,0) node [midway] {$\transpose{W_{1,d}}$} ;
\path (\taille, \taille) -- (\taille,\taille-\absM) node[midway,right] {$\Jex_{1,d'}$} ;
\path (\taille, \ordM) -- (\taille,\taille-\absM) node[midway,right] {$\Smac_{0,d'}$} ;
\path (\taille, 0) -- (\taille,\ordM) node[midway,right] {$\Jex_{1,d}^\star$} ;
\path (0,0) -- (0, \taille) node[midway, left] {$\mho_{d'}(G) \ = \ $} ;
\end{tikzpicture}
\qquad
\def \absM {5.6} 
\def \ordM {4.3} 
\def \milieu {(\absM, \ordM)}
\begin{tikzpicture}[scale = 0.5]
\draw[fill=gray!20]  (0,\taille) rectangle \milieu ;
\draw[fill=gray!60]  (0,0) rectangle \milieu ;
\draw[fill=gray!20]  (\taille, 0) rectangle \milieu ;
\draw[fill = white]  (\taille,\taille) rectangle \milieu ;
\draw[thick, dashed] (0,\taille) rectangle (\taille-\ordM, \ordM) node [midway] {$\transpose{W_{1,d}}$} ;
\draw[thick, dashed] (\taille-\ordM, \ordM) rectangle (\absM,\taille-\absM) 
node [midway] {$\transpose F_{\hbox{\tiny\mouton}}^{\dagger}$} ; 
\draw[thick, dashed] (\taille, 0) rectangle (\absM,\taille-\absM) node [midway] {$W_{1,d'}$} ;
\path (\taille, \ordM) -- (\taille, \taille) node[midway,right] {$\Jex_{1,d}^\star$} ;
\path (\taille, \ordM) -- (\taille,\taille-\absM) node[midway,right] {$\Smac_{0,d}^\star$} ;
\path (\taille, 0) -- (\taille, \taille-\absM) node[midway,right] {$\Jex_{1,d'}$} ;
\path (0,0) -- (0, \taille) node[midway, left] {$\transpose{\mho_d(F)} \ = \ $} ;
\end{tikzpicture}
$$
Quant à l'isomorphisme
$$
{\rm iso} : \calL_{d',d} = \Jex_{1,d'} \oplus \Smac_{0,d'} \oplus \Jex_{1,d}^\star
\longmapsto
(\calL_{d,d'})^\star = \Jex_{1,d}^\star \oplus \Smac_{0,d}^\star \oplus \Jex_{1,d'}
$$
voici sa matrice dans les bases spécifiées:
$$
\def \taille {8}
\def \absM {2.4} 
\def \ordM {3.7} 
\begin{tikzpicture}[scale = 0.5]
\draw (0,0) rectangle (\taille,\taille) ;
\draw (0,0) rectangle (\absM,\absM) ;
\path (0,0) -- (\absM,\absM) node[midway] {$\mathrm{id}$} ;
\path (0,\taille) -- (\absM,\taille) node[midway,above] {$\Jex_{1,d'}$} ;
\path (\taille,0) -- (\taille,\absM) node[midway,right] {$\Jex_{1,d'}$} ;
\draw (\absM,\absM) rectangle (\taille-\ordM, \taille-\ordM) ;
\path (\absM,\absM) rectangle (\taille-\ordM, \taille-\ordM) node[midway] {$\mouton^{\rm sw}$} ;
\path (\absM,\taille) rectangle (\taille-\ordM, \taille) node[midway,above] {$\Smac_{0,d'}$} ;
\path (\taille,\absM) rectangle (\taille, \taille-\ordM) node[midway,right] {$\Smac_{0,d}^\star$} ;
\draw (\taille-\ordM,\taille-\ordM) rectangle (\taille, \taille) ;
\path (\taille-\ordM,\taille-\ordM) rectangle (\taille, \taille) node[midway] {$\mathrm{id}$} ;
\path (\taille-\ordM,\taille) rectangle (\taille, \taille) node[midway,above] {$\Jex_{1,d}^\star$} ;
\path (\taille,\taille-\ordM) rectangle (\taille, \taille) node[midway,right] {$\Jex_{1,d}^\star$} ;
\path (0,0) -- (0, \taille) node[midway, left] {$\mathrm{iso} \ = \ $} ;
\end{tikzpicture}
$$
\end{proof}

\begin{prop} \label{SymetrieVarpi}
Pour n'importe quel bezoutien $\Bez = \Bez(\uX,\uY)$ d'un système $\uP$,
on a l'égalité $\varpi_d(\Bez_{d,d'}) \, = \, \varpi_{d'}(\Bez_{d',d})$.
\end{prop}

\begin{proof}
Soit $\dsV(\uX,\uY)$ une matrice bezoutienne de déterminant $\Bez(\uX,\uY)$.
Alors $\dsV^*(\uX, \uY) = \dsV(\uY, \uX)$ est aussi une matrice bezoutienne de 
déterminant $\Bez^*(\uX,\uY) = \Bez(\uY, \uX)$.
On a $\Bez^*_{d,d'}(\uX,\uY) = \Bez_{d',d}(\uY, \uX)$, en insistant lourdement
sur le fait que les bidegrés sont en permanence relatifs à $\bfA[\uX,\uY]$.
D'après le théorème~\ref{IndependanceMatBez} d'indépendance du choix de la matrice bezoutienne,
on a donc
$$
\varpi_d\big(\Bez_{d,d'}(\uX, \uY)\big) 
\ = \ 
\varpi_d\big(\Bez_{d',d}(\uY,\uX)\big)
$$
En appliquant le trick $(\uX,\uY)$ versus $(\uY,\uX)$ de la
proposition~\ref{PermutationXYdansMho} à $F
= \Bez_{d',d}(\uY,\uX) \in \bfA[\uX,\uY]_{d,d'}$ et son cousin $G
= \Bez_{d',d}(\uX,\uY) \in \bfA[\uX,\uY]_{d',d}$, on en déduit
$$
\varpi_{d}\big(\Bez_{d',d}(\uY,\uX)\big) = \varpi_{d'}\big(\Bez_{d',d}(\uX,\uY)\big)
$$
En combinant ces deux égalités, on obtient l'égalité souhaitée.
\end{proof}

\subsection{L'application $\varpidres = \varpidRes{\protect\uP} : \bfA[\protect\uX,\protect\uY]_{d,d'} \to \bfA$}

\smallskip

Comme souvent, le système $\uP = (P_1, \dots, P_n)$ est implicite et
nous le mentionnerons très peu.  Faisons-le cependant une (seule)
fois, en utilisant la proposition d'\og \'Epuration\fg{}
(cf.~\ref{varpiDivisibilite}) qui permet, pour
$F \in \bfA[\uX,\uY]_{d,d'}$, de définir le quotient exact:
$$
\varpidRes{\uP}(F) \ := \ \dfrac{\varpi_{d,\uP}(F)}{\Delta_{2,d}(\uP)\,\Delta_{2,d'}(\uP)}
\ =\  \dfrac{\varpi_{d,\uP}(F)}{\det W_{2,d}(\uP) \times \det W_{2,d'}(\uP)}
$$
On perçoit ainsi la lourdeur de telles notations en comparaison de:
$$
\varpidres(F) \ =\ \dfrac{\varpi_d(F)}{\Delta_{2,d}\,\Delta_{2,d'}}
\ = \ \dfrac{\varpi_d(F)}{\det W_{2,d} \times \det W_{2,d'}}
$$
Précisons que nous allons nous placer en terrain générique évitant
ainsi tout souci dans les divisions (exactes).  Bien entendu,
$\varpidRes{\uP}(F)$ est défini pour n'importe quel système $\uP$ par
spécialisation du cas générique. Par ailleurs, comme nous devons
aborder l'étude du poids en $P_i$ de $\varpidres$, le terrain
générique nous sera utile même s'il n'est pas indispensable.

\label{NOTA17-varpidres}%

\smallskip

D'après~\ref{varpiStructure}, ce scalaire $\varpidres(F)$ est une somme signée de termes du type :
$$
\dfrac{\text{mineur d'ordre $s_d$ de ${\Syl_d}_{|\Smac_{1,d}}$}}{\det W_{2,d}} 
\times 
\begin {array}{c}
\text{mineur d'ordre}\\
\text{$\chi_d$ de $F^{\dagger}$}\\
\end {array}
\times 
\dfrac{\text{mineur d'ordre $s_{d'}$ de $\transpose{\big({\Syl_{d'}}_{|\Smac_{1,d'}}}\big)$}}
{\det\transpose{W_{2,d'}}}
\leqno (\diamond)
$$

\begin{prop} \label{varpiresPropriete}
\leavevmode
\begin{enumerate}[\rm i)]
\item
On a $\varpidres\big((XY)^\emouton_{d,d'}\big) = \Delta_{1,d} \times \Delta_{1,d'}$.
Ce scalaire est homogène en $P_i$ de poids $\dim\Jex_{1\setminus 2, d}^{(i)} +
\dim\Jex_{1\setminus 2, d'}^{(i)}$.

\item
Soit $\Bez = \Bez(\uX,\uY)$ un bezoutien de $\uP$.
Alors $\varpidres(\Bez_{d,d'})$ est homogène en $P_i$ de poids $\widehat d_i$.

\item
Pour le jeu étalon $\uX^D$, on a $\varpidRes{\uX^D}(\Bez_{d,d'}) = 1$.
\end{enumerate}
\end{prop}

\begin{proof}\leavevmode

\medskip  
i) La proposition \ref{varpiEvalEtalon} fournit 
$\varpi_d\big((XY)^\emouton_{d,d'}\big) = \det W_{1,d} \times \det W_{1,d'}$.
D'où le résultat en utilisant $\Delta_{1,d} = \det W_{1,d} / \det W_{2,d}$ pour
tout $d$ et la définition de $\varpidres$.

\medskip
ii) Grâce à l'égalité $(\diamond)$ et au point i), il suffit de
déterminer le poids de~$\Bez_{d,d'}^\dagger$.  Par construction, une
matrice bezoutienne possède une unique colonne relative à $P_i$ et
pour laquelle chaque coefficient est homogène en $P_i$ de poids~1.
Par conséquent, chaque coefficient de la matrice~$\Bez_{d,d'}^\dagger$
(qui est le coefficient d'un certain $X^\alpha Y^\beta$ de $\Bez$) est
homogène en $P_i$ de poids $1$.  Bref, $\varpidres(\Bez_{d,d'})$ est
homogène en $P_i$ de poids $\dim \Jex_{1\setminus 2,d}^{(i)} + \chi_d
+ \dim \Jex_{1\setminus 2,d'}^{(i)}$ et cette somme vaut
exactement~$\widehat{d_i}$ en vertu du lemme qui suit.

\medskip
iii)
Utiliser~\ref{varpiEtalon} ou bien le point i) sachant que $\Delta_{1,d}(\uX^D) = 1$
pour tout $d$.
\end{proof}

\begin{lem}\label{varpiresPoids}

Notons $\bfA[\widehat{X_i}] = \bfA[X_1,\dots,X_{i-1},\,
X_{i+1}, \dots, X_n] \subset \bfA[\uX]$. Alors
le $\bfA$-module $\Jex_{1\setminus 2,d}^{(i)} \oplus \Smac_{0,d} \oplus \Jex_{1\setminus 2,d'}^{(i)}$ 
est monomialement isomorphe au $\bfA$-module $\Smac_0 \cap \bfA[\widehat{X_i}]$.
En conséquence:
$$
\dim \Jex_{1\setminus 2,d}^{(i)} 
\ + \
\chi_{d} 
\ + \ 
\dim \Jex_{1\setminus 2,d'}^{(i)} 
\ = \ 
\widehat{d_i}
$$
\end{lem}

\begin{proof}
Posons $T_d = \Jex_{1\setminus 2,d}^{(i)} \oplus \Smac_{0,d}$ et considérons
les deux applications monomiales suivantes:
$$
\begin{array}[t]{rcl}
T_d \oplus \Jex_{1\setminus 2,d'}^{(i)}
& \longrightarrow & \Smac_0\cap\bfA[\widehat{X_i}] \\ [0.5cm]
X^\alpha \oplus 0
& \longmapsto & \dfrac{X^\alpha}{X_i^{\alpha_i}} \\ [0.5cm]
0 \oplus X^{\alpha'} & \longmapsto & \dfrac{X^{\emouton-\alpha'}}{X_i^{d_i-1-\alpha'_i}} \\
\end{array}
\hspace{1cm}
\begin{array}[t]{rcl}
\Smac_0\cap\bfA[\widehat{X_i}] &\longrightarrow &T_d \oplus \Jex_{1\setminus 2,d'}^{(i)}
\\[0.5cm]
X^\gamma & \longmapsto & 
\begin{cases}
X_i^{d - |\gamma|} X^\gamma \ \oplus \ 0 
& \text{si } d-|\gamma| \geqslant 0 \\
\\
0 \ \oplus \ X_i^{|\gamma|-d} \, X^{\emouton-\gamma}
& \text{si } d - |\gamma| < 0 \\
\end{cases}
\end{array}
$$
Nous affirmons qu'elles sont bien définies. Par exemple, à gauche, en bas, on a $|\alpha'| \leqslant \delta$ donc
$$
\alpha'_i \leqslant d_i-1 + \sum_{j\ne i} \big((d_j-1)-\alpha'_j\big)  \leqslant d_i-1
$$
l'inégalité de droite étant due au fait que $\alpha'_j \leqslant d_j-1$ pour $j \ne i$ 
puisque $X^{\alpha'} \in \Jex_{1\setminus2,d'}^{(i)}$.
Nous laissons le soin au lecteur de vérifier les autres détails et le fait 
que les deux applications ci-dessus sont bien réciproques l'une de l'autre. 

\medskip
En ce qui concerne $\dim\big(\Smac_0\cap \bfA[\widehat{X_i}]\big) = \widehat{d}_i$, cf le premier
point de~\ref{PoidsJi12dS0d}.
\end{proof}

\begin{proof}[Autre preuve de l'égalité dimensionnelle sans utiliser $\bfA\lbrack\widehat{X_i}\rbrack$]
\leavevmode

\smallskip
C'est l'occasion de faire joujou avec les séries, les fractions rationnelles et les polynômes
en une indéterminée $t$, à coefficients entiers.

\smallskip
$\rhd$
Définissons le \emph{polynôme} $\chi \in \bbZ[t]$, unitaire de degré $\delta$:
$$
\chi = \chi(t)
\ = \ 
\prod_{j=1}^n \dfrac{1-t^{d_j}}{1-t}
\ = \ 
\prod_{j =1}^n (1+t +t^2 +\cdots + t^{d_j-1})
$$
Par définition, le coefficient en $t^d$ de $\chi$ est la dimension de $\Smac_{0,d}$, 
ou encore la caractéristique d'Euler-Poincaré $\chi_{d}$ du complexe $\rmK_{\sbullet,d}(\uX^D)$.

\smallskip
$\rhd$
Quant à la série $S^{(i)}$ de $\Jex_{1\setminus 2}^{(i)}$, elle est égale à la fraction rationnelle
(cf.~\ref{SerieJ1minusJ2i}):
$$
S^{(i)} = S^{(i)}(t)
\ = \ 
\dfrac{t^{d_i}}{1-t} \ \prod_{j \neq i} \dfrac{1-t^{d_j}}{1-t}
\ = \
\dfrac{U_i(t)}{1-t}
\qquad \text{où} \qquad
U_i(t) = t^{d_i} \,\prod_{j \neq i} (1+t +t^2 +\cdots + t^{d_j-1})
$$
En ayant remarqué que $\deg U_i=\delta+1$ et $U_i(1) = \widehat d_i$, écrivons:
$$
S^{(i)} = S^{(i)}_{\leqslant\delta} \, +\,  S^{(i)}_{\geqslant\delta+1}
\qquad \text{avec} \qquad
S^{(i)}_{\leqslant \delta} = \dfrac{U_i(t) - \widehat d_i\, t^{\delta+1}} {1-t},
\qquad
S^{(i)}_{\geqslant\delta+1} =  \widehat{d_i}\,\dfrac{t^{\delta+1}}{1-t}
$$
Cette écriture montre que $S^{(i)}_{\leqslant \delta}$ est un
\textit{polynôme} de degré $\leqslant \delta$ (le coefficient en
$t^\delta$ de ce polynôme est~$\widehat {d_i} - 1$) et que les
coefficients d'ordre $\geqslant \delta+1$ de la série $S^{(i)}$ sont
égaux à $\widehat d_i$.
Pour $0 \leqslant d \leqslant \delta$,
l'entier $\dim \Jex_{1\setminus 2,d}^{(i)}$ est le coefficient en
$t^d$ de $S^{(i)}_{\leqslant\delta}$.

\smallskip
$\rhd$
Il nous faut maintenant un \emph{truc} pour accéder au coefficient en
$t^{\delta-d}$ de $S^{(i)}_{\leqslant\delta}$, un truc du genre
polynôme réciproque.  A cet effet, on introduit une involution sur les
fractions rationnelles $F = F(t)$ en posant $F^\star(t) =
t^\delta\,F(1/t)$.  Et effectivement, si $F$ est un polynôme de degré $\leqslant \delta$, il
en est de même de $F^\star$, polynôme réciproque de $F$. Ainsi,
$\dim \Jex_{1\setminus 2,\delta-d}^{(i)}$ est le coefficient
en $t^d$ du polynôme $\big(S^{(i)}_{\leqslant\delta}\big)^\star$.

\smallskip
$\rhd$
Pour montrer l'égalité dimensionnelle, il suffit d'établir l'égalité polynomiale:
$$
S^{(i)}_{\leqslant \delta} \ +\  \chi \ +\  
\big(S^{(i)}_{\leqslant \delta}\big)^\star \ = \ 
\sum_{d=0}^\delta \ \widehat{d_i} \, t^d
$$
Or on peut exprimer les trois \textit{polynômes} de gauche en fonction 
de la \textit{fraction rationnelle} $S^{(i)}$ :
$$
S^{(i)}_{\leqslant \delta} \ =\  
S^{(i)} \,-\, \widehat{d_i}\,\dfrac{t^{\delta+1}}{1-t}, 
\qquad 
\chi \ = \ 
\dfrac{1 -t^{d_i}}{t^{d_i}}\,S^{(i)}
\qquad \text{et} \qquad 
\big(S^{(i)}_{\leqslant \delta}\big)^\star 
\ = \ 
-\dfrac{S^{(i)}}{t^{d_i}}  \,+\, \widehat{d_i} \dfrac{1}{1-t}
$$
Dans leur somme, les contributions des $S^{(i)}$ s'éliminent, 
et il reste $\widehat{d_i} \dfrac{1-t^{\delta+1}}{1-t}$ comme voulu.

\smallskip
$\rhd$
Une remarque concernant la formule donnée pour $\big(S^{(i)}_{\leqslant \delta}\big)^\star$.
Elle peut être obtenue à partir de l'égalité donnée pour $S^{(i)}_{\leqslant \delta}$,
des formules:
$$
\big(S^{(i)})^\star = -\dfrac{S^{(i)}}{t^{d_i}}, \qquad
\left(\dfrac{t^{\delta+1}}{1-t}\right)^\star = \dfrac{-1}{1-t}
\leqno(\diamond)
$$
et du fait que l'involution $F \mapsto F^\star$ est additive. Dans $(\diamond)$, l'égalité
à droite est immédiate; pour celle de gauche, utilisons la propriété de \og multiplicativité\fg{}
de l'involution $F \mapsto F^\star$:
$$
(FG)^\star = F(1/t)\,G^\star
$$
et le fait qu'elle laisse invariant $\chi$ (ce qui traduit $\chi_d = \chi_{\delta-d}$ pour $0\leqslant d\leqslant\delta$).
En écrivant $\chi = (t^{-d_i}- 1)\,S^{(i)}$
$$
\chi^\star = (t^{d_i} - 1)\,(S^{(i)})^\star
\qquad \text{d'où} \qquad
(S^{(i)})^\star = \dfrac{\chi^\star}{t^{d_i}-1} = \dfrac{\chi}{t^{d_i}-1} =
-\dfrac{S^{(i)}}{t^{d_i}}
$$
Bon, assez joué, la récréation est terminée.
\end{proof}

\begin{rmq}

Il est amusant de noter que $\varpidres(F)$ est le quotient du
déterminant de $\mho_d(F)$ par un certain mineur de $\mho_d(F)$.  Pour
le justifier, notons $W_{2,(d,d')}(F)$ l'induit-projeté de $\mho_d(F)$
sur le sous-module monomial $\Jex_{2,d} \oplus \Jex_{2,d'}^\star$ de
$\bfA[\uX]_d \oplus \Jex_{1,d'}^\star$. Le mineur en question
est le déterminant de cet induit-projeté.

Ce qui est un peu \og dommage\fg{}, c'est que $W_{2,(d,d')}(F)$ dépend
de $\uP$ mais également de $F$; mais son déterminant $\det
W_{2,(d,d')}(F)$, n'en dépend pas (de $F$) puisqu'égal à $\det
W_{2,d} \times \det W_{2,d'}$. On peut donc se permettre d'écrire:
$$
\varpidres(F) = \dfrac{\det\mho_d(F)}{\det W_{2,(d,d')}}
$$
L'aspect amusant est une allusion à la formule classique de Macaulay dont
nous allons voir la généralisation:
$$
\Res(\uP) = \dfrac{\det\mho_d(\Bez_{d,d'})}{\det W_{2,(d,d')}}
$$
Mais peut-être que ``amusant'' est un peu excessif? Comme nous ne sommes pas persuadés
que cela amusera le lecteur, nous lui glissons un petit schéma.

\medskip

\renewcommand \absM {\vabsM}
\renewcommand \ordM {\vordM}
\newcommand \absN {2.3}
\newcommand \ordN {1.3}
\newcommand \Wdeux {(\absN, \taille-\absN)} 
\newcommand \tWdeux {(\taille-\ordN ,\ordN)}

\begin{tikzpicture}[scale = 0.6]
\draw[fill=gray!20]  (0,\taille) rectangle \milieu ;
\draw[fill=gray!60]  (\taille,\taille) rectangle \milieu ;
\draw[fill=gray!20]  (\taille, 0) rectangle \milieu ;
\draw  (0,0) rectangle \milieu ;
\draw[thick, dashed] (0,\taille) rectangle \Wdeux ;
\path (0,\taille) -- \Wdeux node[midway] {\hbox{\tiny $W_{2,d}$}} ;
\draw[thick, dashed] \tWdeux rectangle (\taille, 0) ;
\path \tWdeux -- (\taille, 0) node[midway] {\hbox{\tiny $\transpose W_{2,d'}$}} ;
\path (0,0) -- (0, \taille) node[midway, left] {$\mho_d(F) \ = \ $} ;
\end{tikzpicture}
\hspace{2cm}
\begin{tikzpicture}[scale = 0.7]
\draw[fill=gray!60] (\absN+\ordN, \absN+\ordN) rectangle (\absN, \ordN) ;
\draw  (0,0) rectangle (\absN, \ordN) ;
\draw[thick, dashed, fill=gray!20] (0,\absN+\ordN) rectangle (\absN, \ordN)  ;
\path (0,\absN+\ordN) -- (\absN, \ordN) node[midway] {\hbox{\tiny $W_{2,d}$}} ;
\draw[thick, dashed,fill=gray!20] (\absN, \ordN)  rectangle (\absN+\ordN, 0) ;
\path (\absN, \ordN) -- (\absN+\ordN, 0) node[midway] {\hbox{\tiny $\transpose W_{2,d'}$}} ;
\path (\absN+\ordN, \absN+\ordN) -- (\absN+\ordN, \ordN) node[midway,right] 
{\  \footnotesize $ \Jex_{2,d}$} ;
\path (\absN+\ordN, \ordN) -- (\absN+\ordN, 0) node[midway,right] 
{\  \footnotesize $\Jex_{2,d'}^\star$} ;
\path (0,0) -- (0, \absN+\ordN) node[midway, left] {$W_{2,(d,d')}(F) \ = \ $} ;
\end{tikzpicture}

\end{rmq}

\subsection{La formule Graal pour le résultant}

Nous continuons à désigner par $\Bez = \Bez(\uX,\uY)$ un bezoutien quelconque
du système $\uP = (P_1, \cdots, P_n)$.

\begin{theo}\label{FormuleGraal}

Pour tout $d+d' = \delta$, on dispose de l'égalité que nous qualifions de \og formule Graal \fg{}:
$$
\Res(\uP) = \varpidres(\Bez_{d,d'})
$$
C'est-à-dire :
$$
\Res(\uP) \ = \  \dfrac{\varpi_d(\Bez_{d,d'})}{\Delta_{2,(d,d')}}
\overset {\rm def.}{=}
\dfrac{\det \mho_d\big(\Bez_{d,d'}\big)}{\det W_{2,d} \times \det W_{2,d'}}
$$
Plus généralement:
$$
\Res(\uP) = \dfrac{\det\mho_d^\sigma\big(\Bez_{d,d'}\big)}
                  {\det W^\sigma_{2,d} \det W^\sigma_{2,d'}} \qquad
\forall\ \sigma\in\fS_n
$$
\end{theo}

\begin{proof}
Il suffit de faire la preuve en générique. 
D'après~\ref{varpiElimIdeal}, 
$$
\det W_{2,d}\, \det W_{2,d'} \, \varpidres(\Bez_{d,d'}) 
\ \in \ \ElimIdeal = \langle \Res(\uP) \rangle
$$
Mais $\det W_{2,d}$, pour n'importe quel $d$, fait partie des scalaires réguliers
modulo~$\uPsat$ (cf le théorème~\ref{GenPsatRegScalars}).
Ainsi il existe $\lambda \in \bfA = \bfk[\indetsPi]$ tel que 
$\varpidres(\Bez_{d,d'})= \lambda \, \Res(\uP)$.
Et dans la section prédédente, très exactement
en~\ref{varpiresPropriete}, nous avons préparé le terrain en montrant
que $\varpidres(\Bez_{d,d'})$ est homogène en $P_i$ de poids $\widehat
d_i$.  Il en est de même de $\Res(\uP)$: ce n'est pas une surprise car
nous l'avons utilisé un certain nombre de fois depuis le
chapitre~\ref{ChapMacRaeForP} (en particulier depuis le théorème de
fondement~\ref{PoidsNormalisationMacRae}).  Enfin, comme il fallait
s'en douter, ces deux scalaires sont normalisés en le jeu étalon, donc
$\lambda = 1$.

\end{proof}

\begin{rmq}
La formule donnée ci-dessus fonctionne même si $d$, $d'$ ne sont pas
coincés entre~$0$ et $\delta$, étant entendu que l'on continue à imposer
$d+d' = \delta$.

\medskip
$\rhd$
Par exemple pour $d'=-1$ (donc $d = \delta+1$), on retrouve la formule à la Macaulay 
en degré~$\delta+1$: 
$$
\varpi^\res_{\delta+1}(\Bez_{\delta+1,\, -1}) \ = \ 
\dfrac{\det W_{1,\delta+1}}{\det W_{2,\delta+1}}
$$
Cf~\ref{MegaMacaulayTheorem} en remarquant qu'ici l'argument
$F$ de $\mho_{\delta+1}$ vaut 0 parce que $F \in
\bfA[\uX,\uY]_{\delta+1,-1} = \{0\}$ et que l'on l'égalité
d'endomorphismes:
$$
\mho_{\delta+1}(F) = W_{1,\delta+1}
$$

\medskip
$\rhd$
Pour $d' = 0$ (donc $d = \delta$), comme $\Bez_{\delta,0}$ \fbox{\emph{est}}
un déterminant $\uX$-bezoutien de $\uP$ (cf la première remarque
en \ref{BezoutianRemarks}), on peut l'affubler du nom $\nabla$
(nom de baptême depuis le chapitre~\ref{ObjetsSuiteP}), si bien que l'on retrouve la formule en degré $\delta$
$$
\varpi^\res_\delta(\Bez_{\delta,0}) \ = \ 
\dfrac{\omega(\nabla)}{\det W_{2,\delta}}
$$
On peut invoquer cf~\ref{MegaMacaulayTheorem} sans oublier 
que l'on l'égalité d'endomorphismes de $\bfA[\uX]_\delta$:
$$
\mho_\delta(F) = \Omega(F) \qquad  \forall F \in \bfA[\uX]_\delta
$$

\medskip
$\rhd$
Pour $d' < \min (D)$, en utilisant le lien entre bezoutien et
déterminant de Sylvester
i.e. $\Bez_\beta \equiv \bsnabla_\beta \bmod \langle\uP\rangle_d$ pour
$|\beta| = d'$ (notations et contenu de~\ref{LienSylvesterBezoutien}),
on retrouve la formule donnée en~\ref{ResViaDetd}
$$
\varpidres(\Bez_{d,\, d'}) \ = \ 
\dfrac{\det \swbsOmega_d\big((\bsnabla_\beta)_\beta\big)}{\det W_{2,d}}
$$
Mentionnons également l'égalité d'endomorphismes de $\bfA[\uX]_d$ pour
$F=\sum_{|\beta|=d'} F_\beta(\uX)Y^{\beta} \in \bfA[\uX,\uY]_{d,d'}$:
$$
\mho_d(F) = \bsOmega_d(f)  \qquad
f \in \AXdSod \qquad
f(X^\alpha) = F_{\emouton-\alpha}(\uX) \in \bfA[\uX]_d
$$
Cette section peut donc être vue comme une géante généralisation de tout 
ce qui précède.
\end{rmq}

\begin{prop}
La dimension de $\calL_{d,d'}$, module sur lequel opère $\mho_d(F)$ pour $F \in \bfA[\uX,\uY]_{d,d'}$, 
est minimale pour $d = \lfloor \frac{\delta}{2} \rfloor$.
Si $\delta$ est impair, ce minimum est également atteint en 
$d = \lfloor \frac{\delta}{2} \rfloor + 1$.
\end{prop}

\begin{proof}

L'espace en question est $\bfA[\uX]_d \oplus \Jex_{1,d'}^\star
= \Jex_{1,d} \oplus \Smac_{0,d} \oplus \Jex_{1,d'}^\star$, de dimension
$$
s_d + \chi_{d} + s_{d'} 
\ = \ 
(\dim \bfA[\uX]_d - \chi_{d})
\ + \ 
\chi_{d}
\ +\ 
(\dim \bfA[\uX]_{d'} -  \chi_{d'}\big)
$$
c'est-à-dire, en notant $b_d = \dim \bfA[\uX]_d$ et en utilisant $\chi_d = \chi_{d'}$, de dimension 
$$
b_d + b_{d'} - \chi_{d}
$$
On a vu en \ref{chiProprieteComplements} que $d \mapsto - \chi_{d}$
est minimale pour $d = \lfloor\frac{\delta}{2}\rfloor$.  Reste à
étudier les variations de $d \mapsto b_d + b_{\delta -d}$.

On rappelle que 
$$
b_d = \binom{n-1+d}{n-1} 
= \dfrac{1}{(n-1)!} \prod_{j=1}^{n-1} (d+j)
$$
ce qui conduit à considérer le polynôme $b = b(X) \in \bbQ[X]$:
$$
b = \dfrac{1}{(n-1)!} \prod_{j=1}^{n-1} (X+j)
$$
C'est un polynôme à coefficients positifs. Il en est donc de même de toutes ses dérivées.
En particulier, la fonction polynomiale $b'$ est croissante sur $\bbR^+$.

Introduisons le polynôme $g$ défini par $g(X) = b(X) + b(\delta-X)$,
de dérivée $g'(X) = b'(X) - b'(\delta-X)$. Pour $x$ dans l'intervalle
$[\delta/2\, , \, {+\infty}[$, ce qui équivaut à $x \ge \delta-x$, on a:
$$
g'(x) = b'(x) - b'(\delta-x) \ge 0   \quad \text{car $b'$ est croissante sur $\bbR^+$}
$$
Ainsi la fonction polynomiale $g$ est croissante sur $[\delta/2\, , \, {+\infty}[$.
Comme elle est invariante par $x \mapsto \delta-x$, elle est décroissante sur
$]{-\infty}\, , \,\delta/2]$. Bilan: elle admet un minimum en $x = \delta/2$.

\medskip
\noindent
Conclusion sur les variations de $d \mapsto b_d + b_{\delta - d}$.

Si $\delta$ est pair, alors le minimum de $d \mapsto b_d + b_{\delta - d}$ est atteint en 
$\frac{\delta}2$ qui est aussi $\lfloor\frac{\delta}{2}\rfloor$.

Si $\delta$ est impair, les deux entiers 
$\lfloor\frac{\delta}{2}\rfloor$ et 
$\lfloor\frac{\delta}{2}\rfloor + 1$ sont symétriques par rapport à 
$x = \frac{\delta}{2}$ et alors le minimum de $d \mapsto b_d + b_{\delta - d}$ est atteint en ces deux entiers.
\end{proof}

\medskip

La proposition qui suit fait écho à~\cite[3.11.19.22]{J7}.
Rappelons (cf.~\ref{sectionProfondeurWh}) que $\Sigma_2$
désigne n'importe quelle partie de $\mathfrak S_n$ telle que 
$$
\big\{ \sigma(n), \sigma \in \Sigma_2\big\} = \{1..n\}
$$
Par exemple, pour $\Sigma_2$ on peut prendre la famille des transpositions $(i,n)_{1 \leqslant i 
\leqslant n}$.

\begin{prop}
Pour $\uP$ couvrant le jeu étalon généralisé, on a :
$$
\Gr\big(\Delta_{2,(d,d')}^\sigma \,; \sigma \in \Sigma_2 \big) \ \geqslant \ 2
$$
Par conséquent, le résultant est le pgcd fort des $\varpi^\sigma_d(\Bez_{d,d'})$ pour 
$\sigma \in \Sigma_2$.
\end{prop}

\begin{proof} \leavevmode

Par définition, $\Delta^\sigma_{2,(d,d')} = \det W^\sigma_{2,d}\,\det
W^\sigma_{2,d'}$. On ne peut pas utiliser directement le
théorème~\ref{ProfondeurWh} qui stipule que
$$
\Gr\big (\det W_{2,d}^\sigma \,; \sigma \in \Sigma_2\big) \ge 2
$$
On peut cependant reprendre les ingrédients de la preuve (ou celle analogue de~\ref{ProfondeurBk}).
Cela consiste à considérer dans un premier temps le jeu étalon généralisé, puis à utiliser le 
contrôle de la profondeur par les composantes homogènes dominantes.

\medskip

\index{idéal!de Veronese}%

Du fait que $\uP$ couvre le jeu étalon généralisé $\pXD = (p_1X_1^{d_1}, \dots, p_nX_n^{d_n})$,
l'anneau $\bfA$ des coefficients de $\uP$ est un anneau de polynômes
de la forme $\bfA = \bfR[p_1, \cdots, p_n]$. Nous allons faire intervenir
l'idéal de Veronese $\scrV_{n-1}$ engendré par les monômes en $p_1,\dots,p_n$,
sans facteur carré, de degré $n-1$:
$$
\scrV_{n-1} = \langle p_J  \mid \#J = n-1\rangle \qquad\quad
p_J = \prod_{j \in J} p_j
$$
Notons $I = \{1 .. n-1\}$. D'après la propriété vérifiée par $\Sigma_2$:
$$
\scrV_{n-1} = \langle p_{\sigma(I)}  \mid \sigma \in \Sigma_2\rangle
$$
$\rhd$ Pour le jeu étalon généralisé $\pXD$:
$$
\Delta_{2,(d,d')}^\sigma (\pXD)
\ \overset{\rm def}{=} \ 
\det W_{2,d}^\sigma(\pXD) \times \det W_{2,d'}^\sigma (\pXD)
\ = \ 
\prod_{i \in \sigma(I)} p_i^{\nu_{i,d}+\nu_{i,d'}}  
$$
où $\nu_{i,d}$ est la dimension du $\bfA$-module 
de base les $X^\alpha \in \Jex_{2,d}$ tels que $\sminDiv(X^\alpha) = i$.
En prenant $e$ plus grand que tous les $\nu_{i,d} + \nu_{i,d'}$,
$i$ variant dans $\sigma(I)$ et $\sigma \in \Sigma_2$, on a l'inclusion:
$$
\fa := \big\langle p_{\sigma(I)}^e \mid \sigma \in \Sigma_2 \big\rangle
\quad\subset\quad
\fb := \big\langle \Delta_{2,(d,d')}^\sigma (\pXD) \mid \sigma \in \Sigma_2 \big\rangle
$$
D'après le théorème~\ref{ProfondeurVeronese}, on a l'inégalité $\Gr( \scrV_{n-1} ) \geqslant 2$.
En utilisant le corollaire~\ref{GrCompatibleInclusionRacine}, on en déduit que
$\Gr(\fa) \ge 2$, a fortiori $\Gr(\fb) \ge 2$.

\medskip
$\rhd$
Pour $\uP$ couvrant $\pXD$, la composante homogène dominante de $\Delta_{2,(d,d')}^\sigma(\uP)$
est $\Delta_{2,(d,d')}^\sigma(\pXD)$, ce qui permet de conclure en utilisant le contrôle de la profondeur par
les composantes homogènes dominantes, cf. le théorème~\ref{ControleProfondeur}.
\end{proof}

\begin{prop}
La formule Graal $\varpidres(\Bez_{d,d'})$ 
est une formule déterminantale (\idest{} \og sans dénominateur \fg{}) pour le résultant
si et seulement si
$$
\delta - m < d < m  \qquad \text{où} \quad m = \min_{i\neq j} (d_i + d_j)
$$
Supposons $d_1 \leqslant d_2 \leqslant \cdots \leqslant d_n$.
L'intervalle des $d$ ci-dessus est non vide  si et seulement si 
$$
d_3 + \cdots + d_n \ \leqslant \ d_1 + d_2 + n - 2
\leqno (\star)
$$
Pour $n=2$, tout format $D$ convient.

\smallskip 
\noindent
Pour $n=3$, le format de degrés est du type $D = (d_1, d_2, d_3)$ avec $d_3 \leqslant d_1+d_2+1$.

\smallskip
\noindent
Pour $n = 4$, il y a quatre cas :
$$
D = (d_1, d_1, d_1+1, d_1+1) 
\quad 
D = (d_1, d_1, d_1, d_1+2) 
\quad
\left \{
\begin{array}[c]{l}
D = (d_1, d_2, d_2, d_2+1) \\
d_2 \leqslant d_1 + 1
\end{array}
\right .
\quad
\left \{
\begin{array}[c]{l}
D = (d_1, d_2, d_2, d_2) \\
d_2 \leqslant d_1 + 2
\end{array}
\right .   
$$
Pour chaque $n \geqslant 5$, il y a quatre formats :
$$
D = (1,\dots,1,d_n) \text{\quad avec $d_n \leqslant 3$,} \qquad \qquad D= (1,\dots,1,2,2) 
$$
auxquels il faut ajouter pour $n=5$:
$$
D=(1,2,2,2,2), \quad D=(2,2,2,2,2), \quad D = (2, 2, 2, 2, 3), \quad D = (3,3,3,3,3)
$$
et pour $n=6$ le format $D = (2,2,2,2,2,2)$.
\end{prop}

\begin{proof}
Au dénominateur de la formule $\varpidres(\Bez_{d,d'})$ intervient
$\det W_{2,d} \times \det W_{2,d'}$.  En générique, ce terme vaut $1$
si et seulement si les matrices sont vides
\idest{} $\Jex_{2,d} = 0$ et $\Jex_{2,d'} = 0$.
Or d'après~\ref{NulliteJhd} cela équivaut à $d < m$ et $d' < m$, 
c'est-à-dire à $\delta - m < d < m$.
Si $d_1 \leqslant d_2 \leqslant \cdots \leqslant d_n$, la condition s'écrit 
$d_3 + \cdots + d_n -n < d < d_1 + d_2$. 
Par conséquent, l'intervalle des $d$ est non vide si et seulement si
l'inégalité~$(\star)$ est vérifiée.

\medskip

\noindent
$\rhd$ Pour $n=2$ et $n=3$, on obtient directement ce qui est indiqué.

\noindent
$\rhd$ Pour $n=4$, l'inégalité $(\star)$ est  $d_3 + d_4 \leqslant d_1 + d_2 + 2$.
On en déduit $d_2 + d_2 \leqslant d_3 + d_4 \leqslant d_2 + d_2 + 2$ donc
la somme $d_3 + d_4$ vaut soit $2d_2$, soit $2d_2 +1$ soit $2d_2+2$.
En étudiant ces cas, on tombe sur les formats annoncés.

\medskip
\noindent
Quelques inégalités générales avant de continuer. Reprenons 
l'inégalité $(\star)$ : $d_3 + \cdots + d_n \leqslant d_1 + d_2 + n-2$.
Les entiers de gauche peuvent être minorés par $d_3$ 
et à droite $d_1 + d_2 \leqslant 2d_3$, donc
$(n-2) d_3 \leqslant 2d_3 + n-2$ c'est-à-dire 
$(n-4) d_3 \leqslant n-2$.
Pour $n \geqslant 5$, on a $d_3 \leqslant \frac{n-2}{n-4}$. 
Ainsi,
\begin{center}
pour $n=5$, $d_3 \leqslant 3$ ;
\qquad 
pour $n=6$, $d_3 \leqslant 2$ ;
\qquad 
pour $n\geqslant 7$, $d_3 \leqslant 1$ ;
\end{center} 

\noindent
Supposons $d_3=1$. Donc $d_1=d_2=1$ et l'inégalité $(\star)$ est $d_4
+ \cdots + d_n \leqslant n-1$.  La somme de gauche étant $\geqslant
n-3$, tous les $d_i$ valent $1$, sauf éventuellement deux d'entre eux
qui valent $2$ ou un d'entre eux qui vaut $3$.

\noindent
Supposons $d_3 = 2$. Donc d'une part $d_1,d_2 \le 2$ et d'autre part
$d_i \ge 2$ pour $i \geqslant 4$. L'inégalité $(\star)$ a comme conséquence
$2(n-3) \leqslant d_4 + \cdots + d_n \leqslant d_1 + d_2 + n-4 \leqslant n$.

\medskip
\noindent
$\rhd$ Pour $n \geqslant 7$, on a $d_3 = 1$ et le cas est déjà traité.

\medskip
\noindent
Reste à étudier les cas $n=6$ et $n=5$ lorsque $d_3 \neq 1$.

\medskip
\noindent
$\rhd$ Pour $n = 5$, on a $d_3 \leqslant 3$. 

\noindent
Pour $d_3 = 3$, on obtient $d_3 + d_3 \leqslant d_4 + d_5 \leqslant d_1 + d_2 \leqslant d_3 + d_3$, 
d'où $d_1 = d_2 = d_4 = d_5 = d_3 = 3$.

\noindent
Pour $d_3 = 2$, on obtient $4 \leqslant d_4 + d_5 \leqslant d_1 + d_2 + 1 \leqslant 5$. 
Ainsi ou bien $d_4 + d_5 = 5$ auquel cas $D = (2, 2, 2, 2, 3)$ ou bien $d_4 + d_5 = 4$ auquel cas 
$D=(1,2,2,2,2)$ ou $D=(2,2,2,2,2)$.

\medskip

\noindent
$\rhd$ Pour $n=6$, on a $d_3 \leqslant 2$ donc $d_3 = 2$.

\noindent
On obtient $6 \leqslant d_4 + d_5 + d_6 \leqslant d_1 + d_2 + 2 \leqslant 6$, 
donc $d_1 = d_2 = d_4 = d_5 = d_6 = d_3 = 2$.
\end{proof}

\begin{rmq}
Pour un format de degrés du type $D = (e, \dots, e)$ avec $e \geqslant 1$, 
il existe une formule Graal déterminantale dans les cas suivants 
$$
n=2, \qquad 
n=3, \qquad 
n=4, \qquad 
n=5 \text{ et $e\leqslant 3$}, \qquad 
n=6 \text{ et $e \leqslant 2$}, \qquad 
n\geqslant 7 \text{ et $e = 1$}
$$
\end{rmq}

\subsection{Le bezoutien affine et une formule \og à la Bezout\fg{}}

Pour $n$ polynômes \og affines\fg{} $p_1, \cdots, p_n$ en $n-1$ variables $\ux = (x_1, \cdots, x_{n-1})$,
il y a une notion de \og bezoutien affine\fg{}, qui est un polynôme $\bez(\ux,\uy)$ de $\bfA[\ux,\uy]$,
défini sur le modèle suivant pour $n=3$
$$
\bez(\ux,\uy) =
\dfrac{
\begin {vmatrix}
p_1(x_1,x_2) &  p_2(x_1,x_2) &  p_3(x_1,x_2) \\
p_1(y_1,x_2) &  p_2(y_1,x_2) &  p_3(y_1,x_2) \\
p_1(y_1,y_2) &  p_2(y_1,y_2) &  p_3(y_1,y_2) \\
\end {vmatrix}
}
{(x_1-y_1)(x_2-y_2)}
$$
Il est clair que le déterminant est divisible par $(x_1-y_1)(x_2-y_2)$ car la spécialisation 
$y_1 := x_1$ fournit un déterminant ayant ses 2 premières lignes égales tandis que la
spécialisation $y_2 := x_2$ fournit un déterminant avec 2 dernières lignes égales.

\medskip

Afin de comparer cette notion affine et la notion homogène du début du
chapitre, on introduit pour un système \emph{homogène} $\uP =
(P_1, \dots, P_n)$ de $\bfA[\uX]$, un polynôme intermédiaire
$\Psi(\uX,\uY)$ à l'aide d'un déterminant d'ordre~$n$:
$$
\Psi(\uX,\uY) \ = \ 
\dfrac{1}{(X_1 - Y_1)\cdots(X_{n-1} - Y_{n-1})}
\left|
\begin{array}{ccc}
\cdots & P_j(X_1,X_2,\dots,X_n) & \cdots \\
\cdots & P_j(Y_1,X_2,\dots,X_n) & \cdots \\
& \vdots & \\
& P_j(Y_1,\dots, Y_{i-1}, X_i, \dots, X_n) &  \\
& \vdots & \\ 
\cdots & P_j(Y_1,\dots, Y_{n-1}, X_n) & \cdots \\
\end{array}
\right|
$$
C'est bien un polynôme car de la même manière, la spécialisation $Y_i
:= X_i$ conduit à un déterminant dont les lignes $i$ et $i+1$ sont
égales; en conséquence, le déterminant est divisible par les $(X_i-Y_i)_{1 \le
i \le n-1}$ donc par leur produit. Ce polynôme $\Psi$ ne dépend pas de $Y_n$
et sa spécialisation $X_n := 1$ fournit le bezoutien
affine~$\bez(\ux,\uy)$ des $n$ polynômes affines $p_i := (P_i)_{X_n :=
1}$
$$
\bez(\ux,\uy) = \Psi(x_1, \cdots, x_{n-1},1,\  y_1, \cdots, y_{n-1})
$$
On écrit ce polynôme dans $\bfA[\uX][\uY]$ sous la forme:
$$
\Psi(\uX,\uY)  = \sum_{\beta} \Psi_\beta(\uX)\,Y^\beta \qquad
\Psi_\beta(\uX) \in \bfA[\uX]
$$
Le coefficient $\Psi_\beta(\uX)$ en $Y^\beta$ de $\Psi$ est nul si $\beta_n \geqslant 1$
(car $\Psi$ ne dépend pas de $Y_n$).  En notant $\Bez = \Bez(\uX,\uY)$
le bezoutien rigidifié de $\uP$ défini en~\ref{DefBezoutien},
l'objectif est de comparer certains coefficients de $\Bez$ et de~$\Psi$,
coefficients au sens de $\bfA[\uX][\uY]$. Ce qui permettra, sous certaines
hypothèses, de faire le lien entre bezoutien homogène~$\Bez$ et bezoutien affine~$\bez$.

\begin{prop}
Pour tout $\beta \in \bbN^n$ tel que $|\beta| < \min(D)$, on a dans $\bfA[\uX]$:
$$
X_n^{\beta_n+1} \Bez_\beta(\uX) \,=\, X_n \Bez_{\widetilde\beta}(\uX) \,=\, \Psi_{\widetilde\beta}(\uX)
\qquad 
\text{où } 
\quad \widetilde\beta = (\beta_1, \ldots, \beta_{n-1}, 0)
$$
\end{prop}

\begin{proof}

Rappelons quelques informations sur la matrice bezoutienne rigidifiée $\dsV$
dont $\Bez$ est le déterminant. Sa dernière ligne est donnée par 
$$
\dsV_{n,j} \ =\ 
\frac{P_j(Y_1,\dots,Y_{n-1},X_n) - P_j(Y_1,\dots,Y_{n-1},Y_n)} {X_n-Y_n}
$$
Et ses $n-1$ premières lignes (constituées des $\dsV_{i,j}$ pour $i<n$)
ne dépendent pas de $Y_n$.

\noindent
On introduit le déterminant $\psi(T)$ d'ordre $n$
$$
\psi(T) \ = \ 
\left|
\begin {array}{ccccc}
\\
\noalign{\hbox{\hspace{1.4cm} $\dsV_{[1..n-1] \times [1..n]}$}} \\
\\
F_1(T) &\cdots & F_j(T) & \cdots & F_n(T) \\
\end {array}
\right|
\qquad \hbox {où} \qquad
F_j(T) \ =\ P_j(Y_1, \ldots, Y_{n-1}, T)
$$
Vu la dernière ligne de $\dsV$, on a
$$
(X_n-Y_n)\Bez \ =\ \psi(X_n) - \psi(Y_n)
$$
D'autre part, en effectuant sur le déterminant qui intervient dans la définition de~$\Psi$
les opérations élémentaires de lignes $\ell_i \leftarrow \ell_i-\ell_{i+1}$ pour $i =1,\dots,n-1$,
on obtient $\psi(X_n) = \Psi(\uX,\uY)$. Donc:
$$
X_n\Bez -Y_n\Bez \ =\ \Psi(\uX,\uY) - \psi(Y_n)
\leqno (\diamond)
$$
Dans cette égalité, on se concentre sur le coefficient en $Y^\beta$
des polynômes vus dans $\bfA[\uX][\uY]$.  Montrons qu'à droite,
le coefficient en $Y^\beta$ de $\psi(Y_n)$ est nul.
En effet, la dernière ligne du déterminant définissant~$\psi(Y_n)$ est
constituée des polynômes $F_j(Y_n) = P_j(\uY) \in \bfA[\uY]_{d_j}$ et
comme $|\beta|+1 \leqslant d_j$, le polynôme $F_j(Y_n)$ appartient à
l'idéal $\bfA[\uY]_{\geqslant |\beta|+1}$.  En développant le
déterminant $\psi(Y_n)$ par rapport à la dernière ligne, on obtient
l'appartenance $\psi(Y_n) \in \bfA[\uX][\uY]_{\geqslant
|\beta|+1}$ et ainsi le coefficient en $Y^\beta$ de $\psi(Y_n)$ est nul.

\medskip

Si $\beta_n \geqslant 1$, le coefficient en $Y^\beta$
de $\Psi(\uX,\uY)$ (qui ne dépend pas de~$Y_n$) est nul
et l'examen du coefficient en $Y^\beta$ de $(\diamond)$ fournit:
$$
X_n \Bez_\beta(\uX) - \Bez_{\beta'}(\uX) \,=\, 0
\quad \text{où}\quad \beta' = (\beta_1, \ldots, \beta_{n-1}, \beta_{n}-1) 
$$
Comme $|\beta'| < |\beta| < \min(D)$, on peut appliquer l'égalité ci-dessus à $\beta'$
à la place de $\beta$ si toutefois $\beta'_n \ge 1$. En itérant, on obtient:
$$
X_n^{\beta_n} \Bez_\beta(\uX) = \Bez_{\widetilde\beta}(\uX)
\quad \text{où}\quad \widetilde\beta = (\beta_1, \dots, \beta_{n-1}, 0)
$$
On a $|\widetilde\beta| < \min(D)$ et l'exament du coefficient en
$Y^{\widetilde\beta}$ de $(\diamond)$ donne cette fois
$X_n \Bez_{\widetilde\beta}(\uX) = \Psi_{\widetilde\beta}(\uX)$. D'où:
$$
X_n^{\beta_n+1} \Bez_\beta(\uX) \,=\, X_n \Bez_{\widetilde\beta}(\uX) \,=\, \Psi_{\widetilde\beta}(\uX)
$$
ce qui termine la démonstration.
\end{proof}

\medskip

En degré $d$, les opérations de déshomogénéisation et d'homogénisation entre $\uX = (X_1, \cdots, X_n)$
et $\ux= (x_1,\cdots,x_{n-1})$ founissent deux bijections réciproques l'une de l'autre:
$$
\bfA[\uX]_d \xrightarrow {X_i:=x_i\ i <n,\ X_n := 1} \bfA[\ux]_{\leqslant d},
\qquad\quad
\bfA[\ux]_{\leqslant d} \xrightarrow {f\mapsto X_n^df(X_1/X_n,\dots,X_{n-1}/X_n)} 
\bfA[\uX]_d 
$$
Idem en degré $d'$ entre $\uY = (Y_1, \cdots, Y_n)$ et $\uy = (y_1, \cdots, y_{n-1})$.

\medskip

La proposition précédente fournit en particulier $\Bez_\beta(X_n := 1)
= \Psi_{\widetilde\beta}(X_n := 1)$ et conduit au corollaire suivant
qui relie le bezoutien rigidifié homogène $\Bez$ de $\uP$ et le
bezoutien affine $\bez$ du système $X_n$-affinisé.

\begin {coro}

Soit $d$ tel que $d' := \delta-d$ vérifie $d'<\min(D)$.
Modulo les bijections de déshomogénéisation et d'homogénisation,
les deux applications ci-dessous s'identifient :
$$
\Bez_{d,d'}^\dagger :
\begin{array}[t]{rcl}
\bfA[\uY]_{d'} & \longrightarrow & \bfA[\uX]_d \\ [0.1cm]
Y^\beta & \longmapsto & \Bez_\beta(\uX) = \coeff_{Y^\beta}(\Bez)
\end{array}
\qquad \text{et } \qquad 
\begin{array}[t]{rcl}
\bfA[\uy]_{\leqslant d'} & \longrightarrow & \bfA[\ux]_{\leqslant d} \\ [0.1cm]
y^{\beta'} & \longmapsto & \bez_{\beta'}(\ux) = \coeff_{y^{\beta'}}(\bez)
\end{array}
$$
Nous avons commis une entorse au réglement  en définissant $\Bez_{d,d'}^\dagger$
sur $\bfA[\uY]_{d'}$ et non sur son dual $\bfA[\uY]_{d'}^\star$.
\end {coro}

\medskip

Soient $n$ polynômes \og affines \fg{} 
$p_1, \dots, p_n$ en $n-1$ variables $x_1, \dots, x_{n-1}$ 
et $D = (d_1, \dots, d_n)$ vérifiant $\deg p_i \leqslant d_i$.
On définit
$P_i = X_n^{d_i}p_i(\frac{X_1}{X_n}, \dots, \frac{X_{n-1}}{X_n})$, 
polynôme homogène de degré $d_i$ de $\bfA[\uX] = \bfA[X_1, \dots, X_n]$ 
et on pose 
$$
\Res_D(p_1, \dots, p_n) \ = \ 
\Res(P_1, \dots, P_n)
$$
Voici une petite application pour $n=2$ montrant que nous pouvons
être déraisonnables en prenant un canon pour tuer une mouche.

\begin{prop}

Soit $p,q$ deux polynômes {\rm affines} (en une seule variable) de degré $\leqslant e$.
Notons~$\bez$ leur bezoutien affine en deux variables $x,y$ 
$$
\bez(x,y) \ = \ 
\dfrac{1}{x-y}
\begin{vmatrix}
p(x) & q(x) \\
p(y) & q(y) \\
\end{vmatrix}
\ = \ 
\dfrac{p(x)q(y) - p(y)q(x)}{x-y}
$$
C'est un polynôme symétrique de degré $\le d := e-1$ que l'on écrit:
$$
\bez(x,y) = \sum_{0\le i,j \le d} a_{i,j}\,x^i y^j  \ = \ 
\sum_{j = 0}^d \bez_j(x)\,y^{j}  \quad \text{où} \quad
\bez_j(x) = \sum_{i = 0}^{d} a_{i,j}x^i
$$
On lui associe l'endomorphisme $\bezendo(p,q)$ de $\bfA[\ux]_{\le d}$ défini par $x^j \mapsto \bez_{d-j}$.
Alors $\Res_{(e,e)}(p,q)$ est donné par un déterminant d'ordre $e$:
$$
\Res_{(e,e)}(p,q) = \det\big(\bezendo(p,q)\big) =
\det(a_{i,d-j})_{0\leqslant i,j\leqslant d} = \det(a_{d-i,j})_{0\leqslant i,j\leqslant d}
$$
\end{prop}

\label{NOTA17-bezendo}%
\index{matrice!de Bezout}%

\begin{proof}

On va appliquer le corollaire précédent à
$$
P(X_1, X_2) = X_2^e\, p(X_1/X_2), \quad Q(X_1, X_2) = X_2^e\, q(X_1/X_2), \qquad
D = (e,e), \qquad \delta = 2(e-1)
$$
Vérifions l'on peut utiliser la méthode de Sylvester (cf le
chapitre \ref{ChapSylvesterHybride}) en degré $d=e-1$ et qu'elle
permet d'écrire (sans dénominateur):
$$
\Res(P,Q)  = \det\swbsOmega_d(\Bez_{d,d'})
$$
Effectivement, avec $d=e-1$ on a $d'=d < \min(D)$, ce qui permet
d'utiliser d'une part la méthode de Sylvester et d'autre part le
corollaire précédent. Celui-ci affirme que pour $0 \leqslant
j \leqslant d$, la colonne qui exprime $\Bez_{(j, d-j)}(\uX)$ dans
$\bfA[\uX]_d$ est égale à la colonne qui exprime $\bez_{j}(x)$ dans
$\bfA[x]_{\leqslant d}$.

\medskip

Le fait que $d=d' < \min(D)$ a comme conséquence que $\Jex_{1,d} = 0$, a fortiori $\Jex_{2,d}=0$
donc il n'y a pas de dénominateur. Comme $\Smac_{0,d} = \bfA[\uX]_d$,
il ne reste dans la matrice de $\swbsOmega_d(\Bez_{d,d'})$ que le bloc~$\Bez_{d,d'}^\dagger$. 
Par définition, puisque $\emouton - (j,d-j) = (d-j,j)$:
$$
\swbsOmega_d(\Bez_{d,d'}) : \bfA[\uY]_{d'} \to \bfA[\uX]_d
\qquad
Y^\beta = Y_1^jY_2^{d-j} \mapsto \coeff_{Y^{\emouton-\beta}}(\Bez) = \coeff_{Y_1^{d-j}Y_2^j}(\Bez)
$$
L'application ci-dessus s'identifie à l'application linéaire définie sur la base des monômes:
$$
\bfA[y]_{\le d'} \to \bfA[x]_{\le d}, \qquad
y^j \mapsto  \coeff_{d-j}(\bez) \overset{\rm def.}{=} \bez_{d-j} \overset{\rm def.}{=} \sum_{i=0}^d a_{i,d-j}\, x^i
$$
Puisque $d'=d$, on peut identifier la base monomiale de l'espace de
départ et celle de l'espace d'arrivée, assimilant ainsi cette
application linéaire à l'endomorphisme $\bezendo(p,q)$ de
$\bfA[x]_{\le d}$. Son déterminant est égal à
$\det\swbsOmega_d(\Bez_{d,d'})$.  La matrice $(b_{i,j})_{0 \leqslant
i,j\leqslant d}$ de $\bezendo(p,q)$ dans la base $(x^0, \dots, x^d)$
est donnée $b_{i,j} = a_{i,d-j}$ tandis que celle dans la base
$(x^d, \dots, x^0)$ est donnée par $b'_{i,j} = a_{d-i,j}$.

\end{proof}

\begin{rmq} [A propos des deux matrices de Bezout, transposées l'une de l'autre]

Les deux matrices qui interviennent dans la proposition précédente:
$$
B = (a_{i,d-j})_{0 \leqslant i,j\leqslant d}, \qquad\qquad
B' = (a_{d-i,j})_{0 \leqslant i,j\leqslant d}
$$
sont attribuées à Bezout. Elles sont symétriques \emph{par rapport à l'anti-diagonale}
et transposées l'une de l'autre puisque $b_{j,i} = a_{j,d-i} = a_{d-i,j} = b'_{i,j}$.
Comme $B$ (resp. $B'$) est la matrice de $\bezendo(p,q)$ dans la base $(x^0, \dots, x^d)$
(resp. $(x^d, \dots, x^0)$):
$$
[x^0, \dots, x^d]\,B = [\bez_d, \dots, \bez_0], \qquad
[x^d, \cdots, x^0]\,B' = [\bez_0, \dots, \bez_d]
$$
La plupart des auteurs considèrent la matrice
symétrique $A := (a_{i,j})_{0 \leqslant i,j\leqslant d}$ dont le
déterminant n'est pas normalisé pour le jeu étalon. En effet,
en notant $J_e$ la matrice de l'involution renversante de $\fS_e$:
$$
A = B\,J_e = J_e\,B'   \qquad\qquad
J_e = \begin {bmatrix}
       &     &1  \\
       &1           \\
1      &     &  \\
\end {bmatrix}
$$
donc
$$
\det A = (-1)^{\frac{e(e-1)}2} \times \det \bezendo(p,q)
$$
Note: à partir des relations $B = AJ_e$ et $B'= J_eA$, on \og retrouve\fg{}  $\transpose{B} = B' = J_e B J_e$.
Le fait que $J_e$ \og renverse les vecteurs\fg{} confirme l'équivalence:
$$
[x^0, \dots, x^d]\,B = [\bez_d, \dots, \bez_0] \quad \Longleftrightarrow\quad
[x^d, \cdots, x^0]\,B' = [\bez_0, \dots, \bez_d]
$$
\end{rmq}

\subsubsection*{\og Transformation\fg{} de la matrice de Sylvester de $(P,Q)$
en ``la'' matrice de Bezout de $(p,q)$}

On veut fournir une preuve directe de l'égalité $\Res(P,Q) = \det\bezendo(p,q)$ en
procédant à de \og l'élimination dans la matrice de Sylvester\fg{}.
Commençons par quelques rappels.
Soient $P,Q$ deux polynômes homogènes en deux variables $X_1,X_2$, de
degrés respectifs $d_P, d_Q$, de degré critique $\delta = d_P-1 + d_Q-1$.
L'endomorphisme $W_{1,\delta+1}(P,Q)$ opère sur $\bfA[X_1,X_2]_{\delta+1}$ donc
en dimension $\delta+2 = d_P + d_Q$. Et, à condition de considérer l'application
de Sylvester dans des bases ad-hoc: 
$$
W_{1,\delta+1}(P,Q) = \Syl_{\delta+1}(P,Q)
$$
Prenons désormais $d_P=d_Q=e$ de sorte que $\delta+2 = 2e$.
A $p(x) = \sum_{i=0}^e p_ix^i$, en continuant à noter $e=d-1$,
associons deux matrices triangulaires de dimension~$e$,
symétriques par rapport à l'anti-diagonale:
$$
T(p) = (p_{j-i})_{0 \leqslant i,j\leqslant d}, \qquad\qquad
T'(p) = (p_{e+j-i})_{0 \leqslant i,j\leqslant d}
$$
qui se visualisent ainsi:
$$
T(p) =
\begin {bmatrix}
p_0 &p_1 &\cdots  &p_{e-2} &p_{e-1} \\
    &p_0 &\cdots  &       &p_{e-2} \\
    &    &\ddots  &       &\vdots  \\
    &    &        &p_0    &p_1 \\
    &    &        &       &p_0 \\
\end {bmatrix}
\qquad\qquad
T'(p) =
\begin {bmatrix}
p_e \\
p_{e-1}  &p_e \\
\vdots  &     &\ddots   \\
p_2     &     &        &p_e    \\
p_1     &p_2  &\cdots  &p_{e-1} &p_e \\
\end {bmatrix}
$$
Les relations suivantes de commutation vont se révéler fondamentales:
$$
T(p)T(q) = T(q)T(p), \qquad\qquad T'(p)T'(q)=T'(q)T'(p)
$$
Elles s'obtiennent sans calcul en considérant la $\bfA$-algèbre
commutative $\bfA[x]/\langle x^e\rangle$ libre de dimension $e$, munie
de deux bases naturelles $(x^d, \dots, x^0)$ et $(x^0, \dots, x^d)$ où
$d=e-1$.  La matrice $T(p)$ est celle de la multiplication par
$\sum_{i=0}^d p_ix^i$ dans la première base tandis que $T'(p)$ est celle
de la multiplication par $\sum_{i=0}^d p_{e-i}x^i$ dans la seconde.

\medskip
En fixant comme base de $\bfA[X_1,X_2]_{2e-1}$:
$$
(X_1^{2e-1}, X_1^{2e-2}X_2, \dots,  X_1X_2^{2e-2}, X_2^{2e-1})
$$
on dispose d'une structure en 4 blocs de dimension $e$ pour $W_{1,\delta+1}$:
$$
W_{1,\delta+1}(P,Q) = \begin {bmatrix}
T'(p) & T'(q) \\
T(p)  & T(q)  \\
\end {bmatrix}
$$
Or, en présence de 4 matrices carrées $A,B,C,D$ de même dimension $e$
telles que $C,D$ commutent, il est bien connu que
$$
\det \begin {bmatrix} A&B\\ C&D\\ \end{bmatrix} = \det(AD-BC)
$$
Une manière de le montrer consiste à écrire:
$$
\begin {bmatrix} A&B\\ C&D\\ \end{bmatrix} \begin {bmatrix} D&0_e\\ -C&\Id_e\\ \end{bmatrix} =
\begin {bmatrix} AD-BC&B\\ 0_e&D\\ \end{bmatrix}
\qquad \text{d'où} \qquad
\det \begin {bmatrix} A&B\\ C&D\\ \end{bmatrix} \ \det D = \det(AD-BC) \ \det D
$$
En remplaçant $D$ par $D + x\Id_e$ (qui commute encore à $C$), on peut simplifier par $\det(D + x\Id_e)$
qui est régulier puis faire $x := 0$ pour obtenir le résultat qualifié de bien connu.

\medskip

En l'appliquant à $W_{1,\delta+1}(P,Q)$, on obtient:
$$
\det W_{1,\delta+1}(P,Q) =  \det\big(T'(p)T(q) - T'(q)T(p)\big)
$$
On remplace ainsi le déterminant d'ordre $2e$ à gauche par le
déterminant d'ordre $e$ à droite.  Après tout, on \emph {pourrait}
prendre comme définition de matrice de Bezout de $(p,q)$ la matrice de
dimension~$e$ à droite.  Mais on va montrer que cette matrice n'est
autre que la matrice $B' = (a_{d-i,j})_{0 \leqslant i,j\leqslant d}$
de l'endomorphisme~$\bezendo(p,q)$ (cf. la remarque précédente):
$$
T'(p)T(q) - T'(q)T(p) = B'
\qquad \text{d'où} \qquad
\boxed{\det W_{1,\delta+1}(P,Q) = \det \bezendo(p,q)}.
$$
Avant de fournir la preuve de l'égalité matricielle, une illustration
en considérant le cas $e=3$:
$$
W_{1,\delta+1} =
\begin {bmatrix}
p_3 &.   &.   &q_3 &.   &.  \\
p_2 &p_3 &.   &q_2 &q_3 &.  \\
p_1 &p_2 &p_3 &q_1 &q_2 &q_3 \\
p_0 &p_1 &p_2 &q_0 &q_1 &q_2 \\
.   &p_0 &p_1 &.   &q_0 &q_1 \\
.   &.   &p_0 &.   &.   &q_0 \\
\end {bmatrix}
$$
Notons $M$ la matrice définie par:
$$
M =
\begin {bmatrix}
p_3 &.   &.   \\
p_2 &p_3 &.   \\
p_1 &p_2 &p_3 \\
\end {bmatrix}
\begin {bmatrix}
q_0 &q_1 &q_2 \\
.   &q_0 &q_1 \\
.   &.   &q_0 \\
\end {bmatrix}
-
\begin {bmatrix}
q_3 &.   &.   \\
q_2 &q_3 &.   \\
q_1 &q_2 &q_3 \\
\end {bmatrix}
\begin {bmatrix}
p_0 &p_1 &p_2 \\
.   &p_0 &p_1 \\
.   &.   &p_0 \\
\end {bmatrix}
$$
Voici les $\bez_j$:
$$
\begin {array} {ccl}
\bez_0 &=& (-p_0q_3 + p_3q_0)x^2 + (-p_0q_2 + p_2q_0)x - p_0q_1 + p_1q_0
\\
\bez_1 &=& (-p_1q_3 + p_3q_1)x^2 + (-p_0q_3 - p_1q_2 + p_2q_1 + p_3q_0)x - p_0q_2 + p_2q_0
\\
\bez_2 &=& (-p_2q_3 + p_3q_2)x^2 + (-p_1q_3 + p_3q_1)x - p_0q_3 + p_3q_0
\\
\end {array}
$$
A l'aide des $\bez_j$, comment affecter les colonnes de $B'$,
matrice de l'endomorphisme $\bezendo(p,q)$ dans la base $(x^2, x^1, x^0)$?
Est ce cohérent avec l'égalité $M = B'$?

\subsubsection*{Preuve des égalités $B' = T'(p)T(q)- T'(q)T(p) = T(q)T'(p) - T(p)T'(q)$
où $B'=(a_{d-i,j})_{0 \le i,j \le d}$}

$\rhd$
Introduisons l'opérateur de décalage descendant $\tau : \bfA[x] \to \bfA[x]$ défini par
$$
\tau(x^k) = \begin {cases} x^{k-1} &\text{si $k\ge 1$}\\ 0 &\text{sinon} \\ \end{cases}
$$
et montrons pour $0 \le j \le d$:
$$
\bez_j = \tau(p)q_j + \cdots + \tau^{j+1}(p)q_0 -
\big(\tau(q)p_j + \cdots + \tau^{j+1}(q)p_0\big)
$$
A cet effet, on écrit
$$
\frac{p(x)q(y) - p(y)q(x)}{x-y} =
\frac{p(x) - p(y)}{x-y}\,q(y) - \frac{q(x) - q(y)}{x-y}\,p(y)
$$
En utilisant
$$
\frac{p(x) - p(y)}{x-y} = \sum_{i=0}^e p_i \sum_{k=0}^{i-1} x^{i-k-1} y^k =
\sum_{k=0}^d \biggl(\,\sum_{i=k+1}^e p_ix^{i-k-1}\biggr) y^k =
\sum_{k=0}^d \tau^{k+1}(p) y^k
$$
et l'analogue pour $q$, il vient
$$
\frac{p(x)q(y) - p(y)q(x)}{x-y} =
\sum_{k=0}^d \big(\tau^{k+1}(p)q(y) - \tau^{k+1}(q)p(y)\big) y^k
$$
L'égalité convoitée pour $\bez_j$ s'obtient par examen du coefficient de $y^j$ dans la somme de droite.

\medskip
$\rhd$
Remarquons que $\tau$ intervient également dans la définition de $T'$ de la manière suivante
$$
[x^d,\dots,x^0]\,T'(p) = [\tau(p), \dots, \tau^e(p)], \qquad\qquad
[x^d,\dots,x^0]\,T'(q) = [\tau(q), \dots, \tau^e(q)]
$$
Désignons par $T_j(p)$ la colonne $j$ de $T(p)$ (numérotation à partir de $0$)
i.e. $T_j(p) = \transpose{[p_j, \dots, p_0,0,\dots,0]}$.
Idem avec $T_j(q) = \transpose{[q_j, \dots, q_0,0,\dots,0]}$.
L'égalité fournie pour $\bez_j$ peut s'écrire:
$$
\bez_j = [\tau(p), \dots, \tau^e(p)].T_j(q) - [\tau(q), \dots, \tau^e(q)].T_j(p)
$$
ou encore
$$
\bez_j = [x^d,\dots,x^0]\,\big(T'(p).T_j(q) - T'(q).T_j(p) \big)
$$
On a donc:
$$
[x^d, \cdots, x^0]\,\big(T'(p)T(q) - T'(q)T(p)\big)  = [\bez_0, \dots, \bez_d]
$$
Or $B'$ vérifie la même égalité
$$
[x^d, \cdots, x^0]\,B' = [\bez_0, \dots, \bez_d]
$$
donc $B' = T'(p)T(q) - T'(q)T(p)$.

\medskip
La seconde égalité se démontre à partir de la première en remarquant que les 5 matrices
qui interviennent sont symétriques par rapport à l'anti-diagonale.
Pour une matrice carrée $M$ de dimension $e$, posons $\theta(M) = J_e\transpose{M}J_e$.
Alors $\theta$ est additive, anti-multiplicative, $\theta(M_1M_2) =\theta(M_2)\theta(M_1)$, et
$M$ est symétrique par rapport à l'anti-diagonale si et seulement si $\theta(M) = M$.
En appliquant $\theta$ à $B' = T'(p)T(q) - T'(q)T(p)$, on obtient
$B' = T(q)T'(p) - T(p)T'(q)$.

\subsubsection*{Un calcul plus synthétique pour montrer $J_eA = T'(p)T(q) - T'(q)T(p)$}

Il consiste à associer à tout $F \in \bfA[x,y]/\langle x^e,y^e\rangle$ une matrice carrée
$\matrice(F) = (F_{i,j})_{0 \le i,j \le d}$ de dimension $e$ où
$$
F = \sum_{0 \le i,j \le d} F_{i,j}x^i y^j
$$
On vérifie facilement que:
$$
\matrice\big(F.q(y)\big) = \matrice(F)\,T(q), \qquad
J_e\,\matrice\left(\frac{p(x)-p(y)}{x-y}\right) = T'(p)
$$
On en déduit
$$
J_e\,\matrice\left(\frac{p(x)-p(y)}{x-y}.q(y)\right) = T'(p)T(q)
$$
et on termine en utilisant 
$$
\frac{p(x)q(y) - p(y)q(x)}{x-y} =
\frac{p(x) - p(y)}{x-y}\,q(y) - \frac{q(x) - q(y)}{x-y}\,p(y)
$$

\cleardoublepage

\section{Séries de Hilbert-Poincaré}
\label{ChapSeries}

Expliquons de façon informelle comment lister tous les monômes $X^\alpha 
= X_1^{\alpha_1} \cdots X_n^{\alpha_n}$ \textit{de manière efficace, condensée}.
On peut par exemple utiliser l'outil \og série formelle \fg{}.

En considérant le produit des $n$ séries :
$$
(1+X_1 + \cdots + X_1^{\alpha_1} + \cdots)
\ \cdots \ 
(1+X_n + \cdots + X_n^{\alpha_n} + \cdots)
$$
et en développant (il y a donc une infinité de termes), 
chaque monôme $X^\alpha$ de $\bfA[\uX]$ apparaît une et une seule fois !
Ceci est complètement banal. Ce qui l'est moins, c'est que 
ce produit de séries est une \textit{fraction rationnelle}, 
à savoir $\dfrac{1}{(1-X_1) \cdots (1-X_n)}$.

\bigskip

Expliquons, toujours à l'aide des séries, comment obtenir 
tous les monômes vérifiant une condition donnée relative au format $D$.
Par exemple, listons tous les monômes 
$X^\alpha = X_1^\alpha \cdots X_n^{\alpha_n}$ vérifiant $\alpha_j < d_j$
\textbf{pour un $j$ fixé}.
Considérons pour cela le produit 
(le $j$\up{ème} facteur est un \textit{polynôme}, les autres facteurs 
sont de \og vraies séries \fg{})
$$
(1+X_1 + \cdots + X_1^{\alpha_1} + \cdots)
\quad \cdots \quad
(1+X_j + \cdots + X_j^{\alpha_j} + \cdots + X_j^{d_j-1})
\quad \cdots \quad
(1+X_n + \cdots + X_n^{\alpha_n} + \cdots)
$$
produit qui, après développement, 
est égal à la somme des monômes $X_1^{\alpha_1} \cdots X_j^{\alpha_j} \cdots X_n^{\alpha_n}$ vérifiant  
la condition $\alpha_j < d_j$.
Ce qui est remarquable, c'est que ce produit de \textit{séries} est la \textit{fraction rationnelle}
$\dfrac{1-X_j^{d_j}}{(1-X_1) \cdots (1-X_n)}$.

\bigskip

Voici une deuxième passe, toujours très simple.
Cherchons maintenant à lister tous les monômes 
$X^\alpha = X_1^\alpha \cdots X_n^{\alpha_n}$ vérifiant $\alpha_i \geqslant d_i$
\textbf{pour un $i$ fixé}.
Il suffit de considérer le produit des $n$ séries suivantes :
$$
(1+X_1 + \cdots + X_1^{\gamma_1} + \cdots)
\quad \cdots \quad
X_i^{d_i}(1+X_j + \cdots + X_i^{\gamma_i} + \cdots ) 
\quad \cdots \quad
(1+X_n + \cdots + X_n^{\gamma_n} + \cdots)
$$
qui est égal à la fraction rationnelle 
$\dfrac{X_i^{d_i}}{(1-X_1) \cdots (1-X_n)}$.

\bigskip

Fixons un entier $k \ge 0$. Imaginons que nous voulions maintenant énumérer tous 
les monômes \textit{extérieurs} de $\Mmac_k$ à l'aide 
de vrais monômes
via la correspondance $X^\beta e_J \leftrightarrow X^{\beta + D(J)}$ déjà
intervenue (cf. le pont $\psi_J$ dans la proposition \ref{JDecompositionEndo}
et la remarque qui suit cette proposition).

Nous expliquerons plus loin (cf. page~\pageref{Series:graduationKoszul}) 
la raison de ce choix.

On peut commencer par \og faire des paquets \fg{} 
en utilisant la décomposition:
$$
\Mmac_k \ = \ \bigoplus_{i,J}  \Mmac_k^{(i)}[J]
$$
où pour $i \in \{1,\dots, n\}$ et $J \subset \{1,\dots, n\}$ de cardinal $k$, 
le module $\Mmac_k^{(i)}[J]$ désigne le $\bfA$-module de base les monômes extérieurs 
$X^\beta e_J \in \Mmac_k$ vérifiant $\minDiv(X^\beta) = i$ et 
\begin{center}
$J \subset \{i+1, \dots, n\}$ 
\qquad ou encore \qquad 
$i < \min J$
\end{center}
Au passage, remarquons que l'on peut paramétrer ces monômes extérieurs par le couple $(i,J)$ 
ou bien par le couple $(J,i)$. La première option exprime $J$ en fonction de $i$ 
et permet d'avoir des informations sur le module 
$\Mmac_k^{(i)}$, alors que la deuxième permet de récupérer le module $\Mmac_k[J]$ ;
ceci via les décompositions :
$$
\Mmac_k^{(i)} \ = \ \bigoplus_{J\subset\{i+1..n\}\atop\#J=k}  \Mmac_k^{(i)}[J]
\qquad \qquad
\Mmac_k[J] \ = \ \bigoplus_{i < \min J}  \Mmac_k^{(i)}[J]
$$

\noindent
Fixons $i$ et $J$ et regardons d'abord le \og vrai monôme \fg{}
$X^\beta$. La condition $\minDiv(X^\beta) = i$ se traduit par les
conditions :
$$
\forall\, j< i, \ \beta_j < d_j, \qquad \text{ et } \qquad \beta_i \geqslant d_i
$$
Notons $S_{i,J}$ la somme $\sum X^\beta$ de tels monômes.
Le lecteur constatera que l'on a l'égalité suivante 
(faisant intervenir le produit de $n$ séries, qui sont des fractions rationnelles) :
$$
S_{i,J} \ = \ 
\prod\limits_{j < i} 
\dfrac{1-X_j^{d_j}}{1-X_j} \times 
\dfrac{X_i^{d_i}}{1-X_i} \times 
\prod\limits_{k > i} 
\dfrac{1}{1-X_k}
$$
c'est-à-dire 
$$
S_{i,J} \ = \ 
\dfrac{N_{i,J}}{(1-X_1) \cdots (1-X_n)} 
\qquad \text{avec} \qquad 
N_{i,J}
\ = \ 
\prod\limits_{j < i} (1-X_j^{d_j}) \times X_i^{d_i}
\ = \ 
\overline{X}^{D([1...i[)} X_i^{d_i}
$$
On a utilisé la notation suivante :
$\overline{X}^{D([1...i[)} = \prod\limits_{j < i} (1-X_j^{d_j})$, 
notation qui reviendra deux autres fois sous cette forme 
dans les lignes à venir et sous des formes voisines que 
le lecteur pourra, s'il le souhaite, deviner par lui-même, 
ou s'il préfère, consulter la page~\pageref{Series:graduationKoszul}.

\label {NOTA18-XbarDI}%

\noindent

Revenons maintenant aux \textit{monômes extérieurs} $X^\beta e_J$ 
de $\Mmac_k^{(i)}[J]$ en utilisant la correspondance précisée auparavant,
qui réalise $e_J \leftrightarrow X^{D(J)}$.
On obtient alors l'égalité :
$$
\Serie\big(\Mmac_k^{(i)}[J]\big)
\ = \ 
\dfrac{N'_{i,J}}{(1-X_1) \cdots (1-X_n)} 
\qquad \text{avec} \qquad 
N'_{i,J}
\ = \
N_{i,J} X^{D(J)}\overset{\rm def.}{=}
\overline{X}^{D([1...i[)} X_i^{d_i}X^{D(J)}
$$
Enfin, pour obtenir la série initialement convoitée, 
on somme sur $i \in \{1,\dots, n\}$ et $J \in \calP_k(\{1,\dots, n\})$ 
avec la contrainte 
$J \subset \{i+1,\dots, n\}$ (ou encore avec la contrainte $i < \min J$):
$$
\Serie(\Mmac_k) 
\ = \ 
\dfrac{N_k}{(1-X_1) \cdots (1-X_n)} 
\qquad \text{où} \qquad 
N_k
\ = \ 
\displaystyle
\sum_{i,J \atop i < \min J}
\overline{X}^{D([1...i[)} X_i^{d_i}X^{D(J)}  
$$
Ce paramétrage par $(i,J)$ peut être simplifié en remarquant que l'application
$$
\begin{array}[t]{ccc}
\big\{(i,J) \in \{1,\dots, n\} \times \calP_k(\{1,\dots, n\}) 
\mid i< \min J\big\} & \longrightarrow & \calP_{k+1}(\{1,\dots, n\})  \\ [0.3cm]
(i,J) & \longmapsto &  I := i\vee J \\
\end{array}
$$
est bijective, de réciproque $(i,J) := (\min I, I\setminus\min I)
\longleftarrow I$, d'où la ré-écriture du numérateur $N_k$
$$
N_k
\ = \ 
\sum_{\#I \, = \, k+1} \ 
 \overline{X}^{D([1...\min I[)} X^{D(I)} 
$$
Le lecteur particulièrement attentif aura remarqué que 
cette bijection est celle qui porte l'isomorphisme $\varphi : 
\Smac_{k+1} \rightarrow \Mmac_k$ (cf.~\ref{varphiIso}).
Comme celui-ci est gradué de degré $0$, on obtient à peu de frais la série 
de~$\Smac_{k+1}$, écrite ci-dessous en ayant posé $k' = k+1$, entier $\ge 1$:
$$
\Serie(\Smac_{k'})  
\ = \ 
\dfrac{N'(k')}{(1-X_1) \cdots (1-X_n)} 
\qquad \qquad 
N'(k')
\ = \ 
\sum_{\#I \, = \, k'} \ 
 \overline{X}^{D([1...\min I[)} X^{D(I)} 
$$
Cette formule, établie pour $k' \ge 1$, est-elle valide pour $k'=0$?
La réponse est oui car nous avons l'égalité:
$$
\Serie(\Smac_0)  
\ = \ 
\dfrac{N'_0}{(1-X_1) \cdots (1-X_n)}
\qquad \text{où} \qquad
N'_0 = \prod\limits_{j=1}^n (1-X_j^{d_j})
$$
Le lecteur pourra la vérifier mais nous l'informons 
que les séries des modules $\Smac_\sbullet$ seront revisitées en~\ref{SerieSk}.
Grâce à la convention $\min \emptyset = n+1$, le
numérateur $N'(0)$ vaut $\overline{X}^{D([1...n+1[)}X^{D(\emptyset)}$,
ce qui coïncide avec~$N'_0$.

Une fois ce \og petit mécanisme maîtrisé \fg{}, il n'est pas difficile 
de trouver \og de tête \fg{} toutes les séries de nos modules. 
C'est pour cette raison que nous ne donnerons pas toutes les preuves des résultats suivants.
Mais avant cela, posons le cadre formel 
et définissons proprement ce que l'on entend par 
\textit{série} d'un module gradué.

Lorsque l'on parle de $\bfA$-module gradué sur un anneau $\bfA$ a priori non gradué,
il est sous-entendu que~$\bfA$ est muni de la graduation triviale 
(\idest{} $\bfA_0 = \bfA$ où $0$ est le zéro du monoïde de la graduation, les
autres composantes homogènes étant nulles).
Il va justement être question ici de fine graduation (ou $\bbN^n$-graduation) 
pour des $\bfA$-modules~$E$, l'anneau $\bfA$ étant quelconque. Une structure
de $\bfA$-module $\bbN^n$-gradué est la donnée d'une somme directe 
$E = \bigoplus_{\gamma \in \bbN^n} E_\gamma$ 
où chaque $E_\gamma$ est un $\bfA$-module.

Il va être commode de disposer d'une version multiplicative du monoïde~$\bbN^n$ 
en considérant le monoïde multiplicatif engendré par $\uT = (T_1, \ldots, T_n)$ :
$$ 
(0, \dots, \underset{i}{1}, \dots, 0) \in \bbN^n 
\quad \leftrightarrow \quad
T_i \in \bbZ[T_1, \dots, T_n]
$$
Faire intervenir $\bbZ[\uT]$ permet d'associer à $E$ sa série
d'Hilbert-Poincaré fine-graduée définie par :
$$
\Serie(E) \ = \
\sum_{\gamma \in \bbN^n} \dim_{\bfA}(E_\gamma) \, T^\gamma
\ \in\, \bbZ[[\uT]]
$$
étant sous-entendu que chaque $E_\gamma$ est un $\bfA$-module libre de 
dimension finie.

\noindent
Notons qu'une structure de $\bfA[\uX]$-module $\bbN^n$-gradué sur $E$ fournit une
structure de $\bfA$-module $\bbN^n$-gradué (prendre les mêmes composantes).
\`A cette occasion, on procédera à l'abus de notation $X_i \leftrightarrow T_i$.
Si par exemple $E = \bfA[X,Y]$, on écrira:
$$
\Serie\big(\bfA[X,Y]\big) \ = \  1 + X + Y + X^2 + XY + Y^2 + \cdots \ = \  \dfrac{1}{(1-X)(1-Y)}
$$
plutôt que 
$$
\Serie\big(\bfA[X,Y]\big) \ = \  
1 + T_1 + T_2 + T_1^2 + T_1T_2 + T_2^2 + \cdots \ = \  \dfrac{1}{(1-T_1)(1-T_2)}
$$\noindent
\'Etant donné un $\bfA$-module $\bbN^n$-gradué $E$, on peut également
considérer la raw-graduation (ou $\bbN$-graduation) sous-jacente 
$E = \bigoplus_{d \in \bbN} E_d$ où $E_d$ est défini par 
$E_d = \bigoplus_{|\gamma| = d} E_\gamma$. 
Sa série d'Hilbert-Poincaré raw-graduée est la suivante :
$$
\Serie(E) \ = \ 
\sum_{d \in \bbN} \dim_{\bfA}(E_d) \, t^d
\ \in\, \bbZ[[t]]
$$
Elle est obtenue en réalisant $T_i := t$ dans la série fine-graduée de $E$.
Le contexte indiquera clairement de quel type de série on parle.

Dans cette section, $\bfA$ est un anneau quelconque et on fixe un format de degrés
$D = (d_1, \dots, d_n)$.
Pour une partie $I$ de $\llbracket 1,n\rrbracket$, on notera $D(I)$ 
le $n$-uplet dont la $i$-ème composante vaut $d_i$ si $i \in I$ et $0$ sinon.
On utilisera les notations 
$$
X^{D(I)} \ = \ \prod_{i \in I} X_i^{d_i}
\qquad \text{et} \qquad 
\overline{X}^{D(I)} \ = \ \prod_{i \in I} (1 - X_i^{d_i})
$$
\label{NOTA18-XDI}%
La fine graduation sur $\bfA[\uX]$ s'étend naturellement à $\bigwedge(\bfA[\uX]^n)$ en 
posant $\deg(e_I) = X^{D(I)}$. Notons que cette graduation est celle qui rend 
le complexe de Koszul du jeu étalon $\uX^D = (X_1^{d_1}, \dots, X_n^{d_n})$ fine-gradué.
Ainsi, pour $\gamma \in \bbN^n$, on a 
$$
\rmK_{k,\gamma} \ = \ \bigoplus_{\alpha, I} \bfA\, X^\alpha e_I 
\quad \text{avec $\alpha + D(I)  = \gamma$ \ et \ $\#I= k$}
$$
\label{Series:graduationKoszul}
Dit autrement, $\deg(X^\alpha e_I) = \alpha + D(I) \in \bbN^n$.
Pour la raw-graduation, on a $\deg(X^\alpha e_I) = |\alpha | + d_I \in \bbN$.

\medskip

Il n'est pas inutile de préciser, même si c'est un peu formel, que 
\og réaliser $T_i := t$ \fg{} est une manière de parler du morphisme de monoïdes
$$
\bbN^n \to \bbN  , \qquad \alpha \mapsto |\alpha| \overset{\rm def}{=} \sum_i \alpha_i
$$
Pour s'en convaincre, il suffit de contempler le diagramme de morphismes
de monoïdes:
$$
\xymatrix @C=2.5cm{
\bbN^n \ar@{<->}[r]_-{\simeq}^-{\textstyle \alpha\ \leftrightarrow\ T^\alpha}
    \ar[d]         &T_1^\bbN \cdots T_n^\bbN \ar[d]  \\
\bbN \ar@{<->}[r]_-{\simeq}^-{\textstyle d\ \leftrightarrow\ t^d}
                   &t^\bbN   \\
}
$$
L'isomorphisme horizontal en haut réalise  $\varepsilon_i \leftrightarrow T_i$.
La spécialisation $(T_i := t)_{\forall i}$ est le morphisme vertical à droite qui
envoie $T^\alpha$ sur $t^{|\alpha|}$. Si bien que le morphisme vertical à gauche qui
lui correspond est bien $\alpha \mapsto |\alpha|$. On pourra remarquer que
\begin{center}
$T^{D(I)}$, qui s'identifie à $\sum\limits_{i \in I}d_i\varepsilon_i := \alpha$,
a pour image verticale $t^{d_I}$ avec $d_I = |\alpha| = \sum\limits_{i \in I} d_i$
\end{center}

\medskip

Examinons les composantes de nos modules fétiches :
commençons par le $\bfA[\uX]$-module gradué $E = \Mmac_k$.
C'est en particulier un $\bfA$-module fine-gradué avec comme $\gamma$-composante 
$E_\gamma = \bigoplus \bfA X^\beta e_J$
où $\beta$ et $J$ sont tels que $X^\beta e_J \in \Mmac_k$ et $\beta + D(J) = \gamma$.
Les composantes du $\bfA$-module fine-gradué $E = \Smac_k$ s'obtiennent de la même façon.
Et enfin, pour le $\bfA[\uX]$-module gradué $E = \Jex_h$, 
on a $E_\gamma = \bigoplus \bfA X^\alpha$ où $X^\alpha \in \Jex_h$ et $\alpha = \gamma$ ;
autrement dit 
$$
E_\gamma = 
\left\{
\begin{array}{ll}
\bfA X^\gamma & \text{si $\# \DivSeq(X^\gamma) \geqslant h$} \\ 
0 & \text{sinon}
\end{array}
\right.
$$

\bigskip

Les séries de ces modules s'obtiennent naïvement par les sommes 
$$
\Serie(\Mmac_k) = 
\sum_{\beta,J} X^\beta X^{D(J)}
\qquad 
\Serie(\Smac_{k}) = 
\sum_{\alpha,I} X^\alpha X^{D(I)}
\qquad 
\Serie(\Jex_h) = 
\sum_{\gamma} X^\gamma
$$
où la première somme porte sur les $\beta,J$ tels que $X^\beta e_J \in \Mmac_k$, 
la deuxième sur les $\alpha, I$ tels que $X^\alpha e_I \in \Smac_k$ et la troisième sur les $\gamma$ 
tels que $\#\DivSeq(X^\gamma) \ge h$.

\bigskip

En fait, toutes ces séries obéissent à une petite mécanique.
Prenons $E$ un sous-module monomial de l'algèbre extérieure.
Typiquement, $E$ est égal à $\rmK_k$, ou à $\Mmac_k$, $\Smac_k$, ou encore à $\Jex_h$.
La série d'un tel module, disons $E = \bigoplus_{(\alpha, I) \in \mathscr E} \bfA X^\alpha e_I$ 
où $\mathscr E$ est un sous ensemble de $\bbN^n \times \calP(\{1,\dots,n\})$, 
est $\sum_{(\alpha, I) \in \mathscr E} X^\alpha X^{D(I)}$.

La justification en est la suivante.
Par définition, on a 
$$
\Serie(E) \ = \
\sum_{\gamma \in \bbN^n} \dim_{\bfA}(E_\gamma) \, T^\gamma
\ \in\, \bbZ[[\uT]]
\text{ ou encore }
\Serie(E) \ = \
\sum_{\gamma \in \bbN^n} \dim_{\bfA}(E_\gamma) \, X^\gamma
$$
où $\dim (E_\gamma)$ est le nombre de $(\alpha, I) \in \mathscr E$ tels que 
$\alpha + D(I) = \gamma$.
Ainsi :
$$
\Serie(E) \ = \
\sum_{\gamma \in \bbN^n} 
\sum_{(\alpha, I) \in \mathscr E \atop \alpha + D(I) = \gamma} 
\, X^\gamma
\ = \ 
\sum_{\gamma \in \bbN^n} 
\sum_{(\alpha, I) \in \mathscr E \atop \alpha + D(I) = \gamma} 
\, X^{\alpha + D(I)} 
\ = \ 
\sum_{(\alpha, I) \in \mathscr E}  X^{\alpha}X^{D(I)} 
$$

Résumons, on a l'égalité fondamentale suivante 
$$
\Serie\biggl(\bigoplus_{(\alpha, I) \in \mathscr E} \bfA X^\alpha e_I\biggr)
\ = \ 
\sum_{(\alpha, I) \in \mathscr E} X^\alpha X^{D(I)}
\leqno (\spadesuit)
$$

\begin{rmq}
On voit que le $\bfA$-module $\Smac_k$ est un sous-$\bbN^n$-module gradué de
$\rmK_k$ sans en être un \mbox{sous-$\bfA[\uX]$-module} gradué.
\`A noter que l'on dispose de deux $\bfA[\uX]$-modules gradués $\rmK_k/\Mmac_k$ et
$\Mmac_{k-1}$ qui sont isomorphes comme $\bfA$-modules $\bbN^n$-gradués et
également isomorphes au $\bfA$-module $\bbN^n$-gradué $\Smac_k$.
En revanche, les $\bfA[\uX]$-modules
$\rmK_k/\Mmac_k$ et $\Mmac_{k-1}$ ne sont pas isomorphes; par exemple, la multiplication
par $X_1$ est injective sur $\Mmac_{k-1}$ (car $\Mmac_{k-1}$ est contenu dans le
module libre $\rmK_{k-1}$ et que $X_1$ est régulier) ; mais elle ne l'est pas en
général sur $\rmK_k/ \Mmac_k$ car on peut avoir $X_1(X^\alpha e_I) \in \Mmac_k$
sans que $X^\alpha e_I$ ne soit dans~$\Mmac_k$. Prenons par exemple $X^{\alpha} = 1$,
$d_1=1$ et $I$ ne contenant pas 1: on a alors $X_1(X^\alpha e_I) = X_1e_I \in \Mmac_k$
mais $X^\alpha e_I = e_I \notin \Mmac_k$.
\end{rmq}

\subsection{Série du terme $\rmK_k$ (Koszul), du sous-module de Macaulay $\Mmac_k$, de son supplémentaire $\Smac_k$
et de l'idéal $\Jex_h$}

Les dénominateurs qui vont intervenir dans nos séries
d'Hilbert-Poincaré sont toujours les mêmes ; il s'agit de :
$$
\text{fine-grading} : (1 - X_1)\cdots (1-X_n)  \qquad\qquad
\text{raw-grading} : (1-t)^n
$$
De ce fait, nous fournirons le plus souvent uniquement le \og numérateur\fg,
en le notant $\scrN_f$ en fine-grading et $\scrN_r$ en raw-grading.
Remarquons au passage que $\scrN_f(\bfA[\uX]) = \scrN_r(\bfA[\uX]) = 1$ (confer le 
tout premier paragraphe de la section).

\label{NOTA18-fNum}%
\label{NOTA18-rNum}%
%
%

\begin {rmqs}[A propos de la terminologie ``numérateur'']
\leavevmode

\medskip
$\rhd$
Elle ne désigne pas le numérateur de la fraction rationnelle lorsque
l'on écrit celle-ci sous forme irréductible.  Prenons comme exemple le
module gradué $\Jex_h/\Jex_{h+1}$. Sa série $\bbN$-graduée est la
fraction rationnelle suivante \emph {écrite sous forme irréductible}:
$$
\Serie_r(\Jex_h/\Jex_{h+1}) = \dfrac{N_h(t)}{(1-t)^h}
$$
où $N_h(t)$ est un certain polynôme à coefficients entiers positifs, étranger à $(1-t)^h$.
Le fait qu'il soit étranger se traduit par $N_h(1) \ne 0$. Ici, en désignant par $e_\ell$ la
fonction symétrique élémentaire de degré~$\ell$ en $n$ variables, on dispose
d'un certificat du caractère étranger sous la forme suivante:
$$
N_h(1) = e_{n-h}(d_1, \cdots, d_n)
$$
En ce qui concerne les assertions relatives à $\Jex_h/\Jex_{h+1}$, cf.
dans la section~\ref{SectionBriques} (modules-briques), le dernier exemple.

\medskip
$\rhd$
Pour les modules gradués qui nous concernent, la forme irréductible
$N(t)/(1-t)^m$ de leur série $\bbN$-graduée (où~$N$ est un polynôme à
coefficients entiers vérifiant $N(1) \ge 1$) joue un rôle important en
algèbre commutative, car porteuse d'invariants structurels du
module. Par exemple, l'entier $m$, qui est donc l'ordre du pôle $t=1$
de la fraction rationnelle, est la dimension de Krull du module (égale
à la dimension de Krull de son idéal annulateur).  D'autres invariants
(multiplicité, degré) sont attachés au polynôme $N$ par
l'intermédiaire de son degré, son coefficient dominant, son évaluation
au point~1, etc.

\smallskip

Nous n'étudierons pas cet aspect ici. La détermination de la série
d'Hilbert-Poincaré d'un module gradué sera principalement utilisée à
des fins dimensionnelles (dimension des composantes homogènes),
informations susceptibles de fournir dans certains cas (selon le
module gradué) le poids en~$P_i$ de divers scalaires (souvent des
déterminants) attachés à~$\uP$. Par ailleurs, puisque les modules
qui interviennent sont monomiaux, nous nous sommes intéressés à
la combinatoire sous-jacente à la détermination de leurs séries
d'Hilbert-Poincaré.

\end {rmqs}

\medskip

Commençons par déterminer la série du module $\rmK_k$ issu du complexe de Koszul associé au format $D$ (nous
pensons que, parmi tous les modules que nous allons traiter, c'est le plus simple !).

\begin {prop} [Séries d'Hilbert-Poincaré de $\rmK_k$ (Koszul)]
\label{SerieKk}
Pour $k$ fixé, les séries (fine et raw-grading) de $\rmK_k$ ont pour numérateurs:
$$
\scrN_f(\rmK_k) = \sum_{\#I=k} X^{D(I)},
\qquad\qquad
\scrN_r(\rmK_k) = \sum_{\#I=k} t^{d_I}
$$
\end {prop}

\begin {proof}

Commençons par déterminer la série fine grading ; il suffira de réaliser $X_i = t$ pour 
obtenir la série raw-grading.
D'après l'égalité $(\spadesuit)$ de la page précédente, on a 
$$
\Serie(\rmK_k) \ = \ 
\sum_{\alpha \in \bbN^n \atop \# I = k} X^\alpha X^{D(I)}
$$
Comme les conditions sur $\alpha$ et $I$ sont indépendantes, on a :
$$
\Serie(\rmK_k) \ = \ 
\Big(\sum_{\alpha \in \bbN^n} X^\alpha\Big) 
\Big(\sum_{\# I = k} X^{D(I)}\Big)
$$
La première série vaut $\dfrac{1}{(1-X_1) \cdots (1-X_n)}$.
On a donc :
$$
\Serie(\rmK_k) \ = \ 
\dfrac{\sum_{\#I=k} X^{D(I)}}{(1-X_1) \cdots (1-X_n)}
$$
d'où le numérateur annoncé.

\medskip

Voici une autre justification, pour la série raw-grading, utilisant des 
résultats sur les séries de modules twistés, à savoir :
$$
\Serie\Big(\bigoplus_\ell E_\ell\Big) = \sum_\ell \Serie(E_\ell)
\qquad
\Serie\big(E(-m)\big) = t^m \Serie(E)
$$
Ces deux formules proviennent d'une part du fait que la composante homogène de degré $d$ 
d'une somme directe est la somme directe des composantes homogènes de degré $d$, 
et d'autre part de la \textit{venerable formulae} $E(-m)_d = E_{-m+d}$.

Achevons à présent la détermination de la série raw-grading.
Comme $e_I$ est de degré $d_I = \sum_{i\in I} d_i$: 
$$
\rmK_k = \bigoplus_{\#I = k} \bfA[\uX]\,e_I
\ \simeq \ \bigoplus_{\# I = k} \bfA[\uX](-d_I)
$$
on en déduit 
$$
\Serie(\rmK_k) \ = \ 
\sum_{\# I = k} t^{d_I} \, \Serie(\bfA[\uX])
$$
Comme $\Serie(\bfA[\uX]) = \dfrac{1}{(1-t)^n}$, on en déduit le numérateur annoncé.
\end {proof}

\begin{rmq}
Par définition de la série en $t$ raw graduée, l'entier $\dim\rmK_{k,d}$ 
est le coefficient en~$t^d$ de $\Serie_r(\rmK_k)$.
Or, en~\ref{dminKk}, nous avons fourni une formule binomiale pour $\dim\rmK_{k,d}$
qui conduit donc à l'identité suivante:
$$
\dfrac{\sum_{\#I=k} t^{d_I}} {(1-t)^n} 
\ =\ 
\sum_{\#I=k\atop d \in \bbN} \binom{n+d-d_I-1}{d-d_I}\, t^d
$$
Cette identité est étroitement liée à la suivante (pour $m\in \bbN$):
$$
\dfrac{t^m} {(1-t)^n}  \ =\  \sum_{d \in \bbN} \binom{n+d-m-1}{d-m}\,t^d
\qquad \text{en particulier} \qquad
\dfrac{1}{(1-t)^n} \ = \  \sum_{d \in \bbN} \binom{n+d-1}{d}\, t^d
$$  
Il est important de signaler de nouveau que nous utilisons, pour les
coefficients binomiaux, la convention adoptée depuis la proposition
\ref{dminKk}. Ainsi, dans le développement en série figurant à gauche,
on ne peut pas, au prétexte que $(d-m) + (n-1) = n+d-m-1$, remplacer
l'indice bas du coefficient binomial, égal à $d-m$, par $n-1$.
Quitte à insister lourdement, avec la
convention adoptée, pour $m \ge n$: 
$$
S_{n,m} := 
\sum_{d \in \bbN} \binom{n+d-m-1}{n-1}\,t^d
\qquad \text{n'est pas égale à} \qquad
\dfrac{t^m} {(1-t)^n}
\leqno \text{\dbend}
$$
En effet, pour $0 \le d \le m-n$, le coefficient en $t^d$ de $S_{n,m}$ est un coefficient binomial
\emph{non nul} (malgré le fait que son indice haut soit $<0$!).

\end{rmq}

\subsubsection*{Séries du sous-module de Macaulay $\Mmac_k$ et de son supplémentaire
$\Smac_k$ ($k \ge 0$)}

Les deux familles de sous-modules $\bbN^n$-gradués $\Mmac_\sbullet$ et
$\Smac_\sbullet$ ne sont pas indépendantes.  D'une part, $\Mmac_k
\simeq \Smac_{k+1}$ de manière graduée, donc les deux modules ont même
série d'Hilbert-Poincaré; ainsi la détermination des séries des
$\Smac_\sbullet$ fournit celles des $\Mmac_\sbullet$.
D'autre part, $\rmK_k = \Mmac_k \oplus \Smac_k$ et comme nous avons déterminé
la série d'Hilbert-Poincaré de $\rmK_k$ (cf. la proposition~\ref{SerieKk}),
la détermination de la série de l'un des deux parmi
$\Mmac_k$ ou $\Smac_k$ fournit l'autre. 

\medskip

Nous avons choisi ici de traiter d'abord les modules $\Smac_\sbullet$,
modules porteurs des rangs attendus.  Commençons par montrer en quoi
l'interaction entre les deux familles évoquée ci-dessus permet la
détermination des séries en nous appuyant sur~$\Smac_k$.  Notons
provisoirement $s_k = \Serie(\Smac_k)$.  Puisque nous disposons d'un
isomorphisme gradué:
$$
\rmK_k \ =\ 
\Mmac_k \oplus \Smac_k 
\ \simeq \ 
\Smac_{k+1} \oplus \Smac_k
$$
nous avons $\Serie(\rmK_k) = s_{k+1} + s_k$ puis
$$
s_k = \Serie(\rmK_k) - s_{k+1} = \Serie(\rmK_k) - \Serie(\rmK_{k+1}) + s_{k+2}
= \cdots
$$
Comme $s_i = 0$ pour $i > n$, 
nous venons d'exprimer à peu de frais la série de $\Smac_k$ comme une somme alternée
de séries de termes Koszul: \label{FormuleAlterneeSerieSk}
$$
\Serie(\Smac_k) \ =\ 
\sum_{\ell=k}^n (-1)^{\ell-k}\, \Serie(\rmK_{\ell})
$$
En considérant la raw-graduation et le coefficient en $t^d$ de ces séries,
on retrouve l'égalité pour le rang attendu $r_{k,d}$:
$$
r_{k,d} = \dim \Smac_{k,d} = \sum_{\ell=k}^n (-1)^{\ell-k}\, \dim(\rmK_{\ell,d})
$$
De l'égalité sur les séries, pour les numérateurs, en fine-grading ou raw-grading
$$
\scrN(\Smac_k) \ = \ 
\sum_{\ell=k}^n (-1)^{\ell-k}\, \scrN(\rmK_\ell)
$$
Comme nous connaissons $\scrN(\rmK_\ell)$, nous pourrions en rester là.
Mais nous allons fournir d'autres formules plus économiques.

On rappelle, pour une partie $J \subset \{1...n\}$, les notations $X^{D(J)} =
\prod\limits_{j \in J} X_j^{d_j}$ et $\overline{X}^{D(J)} =\prod\limits_{j \in J} (1-X_j^{d_j})$.

\begin{prop}[Série de $\Smac_k$] 
\label{SerieSk}
Pour $k \ge 0$, la série de $\Smac_k$ a pour numérateurs :
$$
\scrN_f(\Smac_{k})  
\ = \ 
\sum_{\#I \, = \, k } \ 
\overline{X}^{D([1...\min I[)} X^{D(I)} 
\qquad \text{et }
\qquad 
\scrN_r(\Smac_{k})  
\ = \ 
\sum_{\# I \, = \, k} \ \ 
t^{d_I} \!\!
\prod_{j < \min I} (1- t^{d_j})
$$
\end{prop}

\medskip

Avant la preuve, une remarque concernant le cas particulier $k= 0$
i.e. $\Smac_0 = \bigoplus_{\alpha \preccurlyeq \emouton} \bfA
X^\alpha$.  Pour cette valeur de $k$, le numérateur fait intervenir la
seule partie de cardinal $0$, à savoir $I = \emptyset$.  Notre convention $\min
\emptyset = n+1$ conduit à $\overline{X}^{D([1...\min I[)} X^{D(I)} =
\prod_{j=1}^n(1-X_j^{d_j})$, si bien que la série de $\Smac_0$ est
le \emph{polynôme}
$$
\Serie(\Smac_0) = \prod_{j=1}^n (1 + X_j + X_j^2 + \cdots + X_j^{d_j-1})
$$

\begin{proof}\leavevmode

D'après l'égalité $(\spadesuit)$, la série de $\Smac_k$ est  
$\sum_{\alpha, I} X^\alpha X^{D(I)}$
où la somme porte sur les $\alpha, I$ tels que 
$X^\alpha e_I \in \Smac_k$, c'est-à-dire avec $\#I = k$ et $\minDiv(X^\alpha) \ge \min I$. 
Cette somme ne se simplifiant pas \textit{a priori}, 
partitionnons ces monômes extérieurs \og suivant $I$ \fg{} ; 
autrement dit, 
notons $\Smac_k[I]$ le $\bfA$-module de base les $X^\alpha e_I \in \Smac_k$ de sorte que :
$$
\Smac_k \ = \ 
\bigoplus_{\#I = k} \Smac_k[I]
\quad \text{d'où } \quad 
\Serie(\Smac_k) \ = \ 
\sum_{\#I = k} \Serie(\Smac_k[I])
$$
Un monôme extérieur est dans $\Smac_k[I]$ s'il s'écrit $X^\alpha e_I$ avec 
$\alpha$ vérifiant $\minDiv(X^\alpha) \ge \min I$, 
c'est-à-dire vérifiant $\alpha_j < d_j$ pour tout $j < \min I$.
En notant $B_I \subset \bbN^n$ l'ensemble de ces $\alpha$, on a donc 
$$
\Smac_k[I] = \bigoplus_{\alpha \in B_I} \bfA X^\alpha e_I
\qquad \text{ puis } \qquad
\Serie\big(\Smac_k[I]\big) \ = \ 
X^{D(I)} \sum_{\alpha \in B_I} X^\alpha 
$$
\`A la page~\pageref{ChapSeries}, on a presque déterminé cette somme portant sur $B_I$.
Reprenons les mêmes idées :
considérons le produit suivant 
(les $j$\up{ers} facteurs avec $j < \min I$ sont des \textit{polynômes}, les autres facteurs 
sont de \og vraies séries \fg{})
$$
\prod_{j < \min I}
(1+X_j + \cdots + X_j^{\alpha_j} + \cdots + X_j^{d_j-1})
\quad \times \quad 
\prod_{j \ge \min I}
(1+X_j + \cdots + X_j^{\alpha_j} + \cdots)
$$
produit qui, après développement, 
est égal à la somme des monômes $X^\alpha$ 
vérifiant la condition $\alpha_j < d_j$ pour tout $j < \min I$. 
Autrement dit, ce produit est égal à la somme $\sum_{\alpha \in B_I} X^\alpha$.
D'où 
$$
\sum_{\alpha \in B_I} X^\alpha \ = \ 
\dfrac{\prod\limits_{j < \min I} (1-X_j^{d_j})}{(1-X_1) \cdots (1-X_n)}
\quad \text{qui s'écrit encore } \quad
\dfrac{\overline{X}^{D([1..\min I[)}}{(1-X_1) \cdots (1-X_n)} 
$$
On a donc déterminé le numérateur de $\Serie(\Smac_{k}[I])$, il s'agit de  :
$$
\scrN_f(\Smac_{k}[I])
\ = \ 
X^{D(I)} \overline{X}^{D([1..\min I[)}
$$
Il ne reste plus qu'à sommer sur $I$, pour obtenir :
$$
\scrN_f(\Smac_{k})
\ = \ 
\sum_{\#I \, = \, k } 
X^{D(I)} 
\scrN_f(\Smac_{k}[I])
\ = \ 
\sum_{\#I \, = \, k } X^{D(I)} \overline{X}^{D([1...\min I[)}
$$
ce qui est bien la formule annoncée.
\end{proof}

\begin{prop}[Série de $\Mmac_k$] 
\label{SerieMk}

Pour $k \ge 0$, la série de $\Mmac_k$ a pour numérateur:
$$
\scrN_f(\Mmac_{k})  
\ = \ 
\sum_{\#I \, = \, k+1} \ 
\overline{X}^{D([1...\min I[)} X^{D(I)} 
\ = \ 
\sum_{i,J}
\overline{X}^{D([1...i[)} X_i^{d_i}X^{D(J)}  
$$
où la double somme porte sur $i \in \{1,\dots, n\}$ et $J \subset
\{1,\dots, n\}$ de cardinal $k$, avec la contrainte $J \subset
\{i+1,\dots, n\}$ (ou encore avec la contrainte $i < \min J$).

\medskip
La double somme peut encore être écrite sous la forme suivante:
$$
\scrN_f(\Mmac_{k})  
\ = \ 
\sum_{\#J=k} X^{D(J)} \sum_{i<\min J} X_i^{d_i} \overline{X}^{D([1...i[)}
\ = \ 
\sum_i X_i^{d_i} \overline{X}^{D([1...i[)} 
\sum_{J\subset\{i+1..n\}\atop\#J=k} X^{D(J)}  
$$
\end{prop}

\begin {proof} \leavevmode

La première égalité résulte de l'existence d'un isomorphisme
$\bbN^n$-gradué $\Mmac_k \simeq \Smac_{k+1}$.
La deuxième formule avec la double somme
s'obtient en posant $i = \min I$ pour une partie $I$ de cardinal $k+1$,  
et en écrivant $I$ sous la forme $\{i\} \vee J$.
\end{proof}

\medskip

Pour en finir avec les $\Smac_\sbullet$ et $\Mmac_\sbullet$, une remarque
a priori surprenante. Nous venons d'établir, pour $k \ge 1$ :
$$
\scrN_r(\Mmac_{k-1}) = \sum_{\#I=k} t^{d_I} \prod_{j < \min I} (1-t^{d_j})
$$
Mais nous affirmons que nous avons également (attention à $k$ versus $k-1$):
$$
\scrN_r(\Mmac_{k}) = \sum_{\#I=k} t^{d_I} \Bigr(1- \prod_{j < \min I}(1-t^{d_j})\Bigr)
$$
En effet, la somme des deux membres droits est:
$$
\sum_{\#I=k} t^{d_I} \text{ qui n'est autre que }  \scrN_r({\rmK_k})
$$
Comme $\Mmac_{k-1} \oplus \Mmac_k \simeq \rmK_k$, on a 
$\scrN_r(\Mmac_{k}) = \scrN_r({\rmK_k}) - \scrN_r(\Mmac_{k-1})$, 
ce qui permet de conclure.

\subsubsection*{Série de l'idéal $\Jex_h$}

\begin{prop}[Série de $\Jex_h$]
\label{SerieJh}
Pour $h \ge 0$, la série de $\Jex_h$ a pour numérateur :
$$
\scrN_f(\Jex_h)  
\ = \ 
\sum_{\#I\,\geqslant\,h} \ 
X^{D(I)}  \overline{X}^{D(\overline I)}
\qquad 
\text{ et } 
\qquad 
\scrN_r(\Jex_h)  
\ = \ 
\sum_{\#I\,\geqslant\,h} \
t^{d_I} \prod_{j \notin I} (1- t^{d_j})
$$
On dispose aussi de la formule \og économique \fg{} suivante :
$$
\scrN_f(\Jex_h)  
\ = \ 
\displaystyle 
\sum_{\# J \, = \, h} \ 
X^{D(J)} \  \overline{X}^{D([1..\max J[ \,\cap\, \overline J)}
\qquad 
\text{ et } 
\qquad 
\scrN_r(\Jex_h)  
\ = \ 
\sum_{\# J \, = \, h} \ 
t^{d_J} \!\!\!\! \prod_{j \notin J \atop j < \max J} (1- t^{d_j})
$$
La qualification \og économique \fg{} est due au fait que l'ensemble d'indices
intervenant dans la somme est plus petit que celui de la somme précédente.

\medskip

Pour $h=1$, voici la formule économique, suivie d'une \emph{autre} formule:
$$
\scrN_f(\Jex_1) \ =\ 
\sum_{i=1}^n X_i^{d_i} \prod_{j<i} (1-X_j^{d_j}) 
\ =\ 
1 - \prod_{i=1}^n (1-X_i^{d_i}) 
$$
Laquelle est vraiment la plus économique?
\end{prop}

\begin{proof} \leavevmode

$\rhd$
En notant $\Jex_h[I]$ le $\bfA$-module de base les
$X^\alpha \in \Jex_h$ vérifiant $\DivSeq(X^\alpha) = I$, on a :
$$
\Jex_h = \bigoplus_{\# I \ge h} \Jex_h[I]
\qquad \text{d'où} \qquad
\Serie(\Jex_h)
\ = \ 
\sum_{\#I \ge h} 
\Serie(\Jex_h[I])
$$
Un monôme est dans $\Jex_h[I]$ si, et seulement si, il peut s'écrire $X^{D(I)}X^\beta$ 
avec, pour tout $j \notin I$, $\beta_j < d_j$. En notant $B_I \subset \bbN^n$ l'ensemble
de ces $\beta$:
$$
\Jex_h[I] = X^{D(I)}\, E_I
\qquad \text{avec} \qquad
E_I = \bigoplus_{\beta \in B_I} \bfA\,X^\beta
$$
Pour déterminer la somme $s_I := \Serie(E_I)$, 
il suffit d'utiliser la même technique que dans la preuve précédente.
Considérons le produit suivant 
(le premier est un produit de \textit{polynômes}) :
$$
\prod_{j \in \overline{I}}
(1+X_j + \cdots + X_j^{\beta_j} + \cdots + X_j^{d_j-1})
\quad \times \quad 
\prod_{j \in I}
(1+X_j + \cdots + X_j^{\alpha_j} + \cdots)
\leqno (\star)
$$
Son développement est égal à la somme des monômes
$X^\beta$ vérifiant la condition $\beta_j < d_j$ pour tout
$j \in \overline I$, c.a.d à la somme convoitée~$s_I$.
Or ce produit $(\star)$ de \textit{séries} est la \textit{fraction rationnelle}
$$
\dfrac{N}{(1-X_1) \cdots (1-X_n)}
\qquad \text{avec} \qquad
N \ = \ \prod\limits_{j \in \overline I} (1-X_j^{d_j}) \ \overset {\rm def}{=}\ 
\overline{X}^{D(\overline{I})}
$$
Comme $N$ est le numérateur de $\Serie(E_I)$, on a :
$$
\scrN_f(E_I) = \overline{X}^{D(\overline{I})}
\qquad \text{puis} \qquad
\scrN_f\big(\Jex_h[I]\big) = X^{D(I)} \scrN_f(E_I) =
X^{D(I)}\, \overline{X}^{D(\overline{I})}
$$
En utilisant que $\Jex_h$ est la somme directe des $\Jex_h[I]$ (en début de preuve),
on obtient la formule annoncée:
$$
\scrN_f(\Jex_h) \ = \  \sum_{\#I\ge h} \scrN_f\big(\Jex_h[I]\big) =
\sum_{\#I\ge h} X^{D(I)}\, \overline{X}^{D(\overline{I})}
$$
Une façon plus directe et légère de raconter ce qui vient de se passer
est de dire que nous avons paramétré les monômes $X^\alpha$ de
$\Jex_h$ par la suite $I = \DivSeq(X^\alpha)$, ce qui a conduit à
la formule ci-dessus.


\bigskip
$\rhd$
Passons à la formule qualifiée \og d'économique \fg{}.  Cette fois, on
paramètre les $X^\alpha \in \Jex_h$ par l'ensemble $J
= \DivSeq(X^\alpha)[1..h]$, de cardinal $h$, de sorte que les $h$
premiers termes de $\DivSeq(X^\alpha)$ sont ceux de $J$.  Ainsi,
$X^\alpha = X^{D(J)}X^\beta$ avec $\beta_j < d_j$ pour tout $j \in
[1..\max J[ \,\cap\,\overline J$. Pour un $J$ fixé de cardinal $h$,
notons $B'_J \subset \bbN^n$ l'ensemble de ces $\beta$ et introduisons
$\Jex'_h[J]$ le $\bfA$-sous-module de $\Jex_h$ de base les
$X^\alpha \in \Jex_h$ vérifiant $\DivSeq(X^\alpha)[1..h] = J$.  Alors,
d'une part:
$$
\Jex_h = \bigoplus\limits_{\#J = h} \Jex'_h[J]
\qquad \text{d'où} \qquad
\Serie(\Jex_h)
\ = \
\sum_{\#J = h}
\Serie(\Jex'_h[J])
$$
Et d'autre part:
$$
\Jex'_h[J] = X^{D(J)}\, E'_J
\qquad \text{avec} \qquad
E'_J = \bigoplus_{\beta \in B'_J} \bfA\,X^\beta
$$
Nous ne détaillons pas la détermination de la somme $s'_J
:= \Serie(E'_J)$, qui est analogue à celle de la somme $s_I$ du point
précédent. Nous obtenons pour son numérateur:
$$
\scrN_f(E'_J) = \prod_{j \in [1..\max J[\,\cap\,\overline J} (1-X_j^{d_j})
\quad \overset{\rm def}{=} \quad
\overline{X}^{D([1..\max J[\,\cap\,\overline J)}
$$
En utilisant $\scrN_f\big(\Jex'_h[J]\big) = X^{D(J)}\scrN_f(E'_J)$ et
le fait que $\Jex_h$ est la somme directe des
$\big(\Jex'_h[J]\big)_{\#J=h}$, nous obtenons la formule convoitée:
$$
\scrN_1(\Jex_h) \ = \ 
\displaystyle 
\sum_{\# J \, = \, h} \ 
X^{D(J)} \  \overline{X}^{D([1..\max J[ \,\cap\, \overline J)}
$$

\bigskip
$\rhd$
Enfin, en ce qui concerne $\Jex_1$, la première égalité est une conséquence directe de la
formule économique. Et la seconde résulte de $\bfA[\uX] = \Jex_1 \oplus \Smac_0$
donc $\scrN_f(\Jex_1) = 1 - \scrN_f(\Smac_0)$.
\end{proof}

\begin{rmqs}
\leavevmode

$\rhd$
Pour $h=0$, puisque $\Jex_0 = \bfA[\uX]$,
on doit avoir $\sum_I X^{D(I)} \overline{X}^{D(\overline I)} = 1$
où la somme porte sur toutes les parties $I$ de $\{1...n\}$.
C'est bien le cas, puisque c'est une somme du type
$\sum_I \prod_{i \in I} a_i \prod_{j \notin I} (1-a_j)$,
qui vaut $\prod_{i =1}^n \big(a_i + (1-a_i)\big)$, c'est-à-dire $1$.

\medskip

$\rhd$
On a justifié l'égalité des 2 formules fournissant~$\scrN_f(\Jex_1)$ via
l'argument structurel $\bfA[\uX] = \Jex_1 \oplus \Smac_0$.
Ne pas oublier pour autant l'efficacité d'un calcul.
Ainsi, le lecteur pourra remarquer qu'en notant $u_i = 1-X_i^{d_i}$, cette
égalité est du type:
$$
\sum_{i=1}^n v_i = 1 - \prod_{i=1}^n u_i
\qquad\text{où}\qquad
v_i = (1-u_i) \prod_{j<i}u_j \ = \ \prod_{j<i}u_j - \prod_{j<i+1}u_j  
$$
A droite, l'expression de $v_i$ fait apparaître une somme télescopique.
Quitte à insister lourdement:
$$
\begin {array}{ccrll}
v_1  &=& 1 &-& u_1 \\
v_2  &=& u_1 &-& u_1u_2 \\
v_3  &=& u_1u_2 &-& u_1u_2u_3 \\
     &\vdots& \\
v_n  &=& u_1\cdots u_{n-1} &-& u_1 u_2\cdots u_n \\
\end {array}
$$

\medskip

$\rhd$
Dans le même genre, nous laissons le soin au lecteur de prouver
simplement:
$$
\sum_{\#I\,\geqslant\,h}\ X^{D(I)}\overline{X}^{D(\overline I)} =
1 - \sum_{\#J\,\leqslant\,h-1}\ X^{D(J)}\overline{X}^{D(\overline J)}
$$
\end{rmqs}

\subsection{Bonus : formules économiques}
\label{BonusFormulesEco}

On a constaté que l'on obtenait facilement les égalités
(cf. page~\pageref{FormuleAlterneeSerieSk} et la
proposition~\ref{SerieKk}) :
$$
\scrN(\Smac_k) = \sum_{\ell\ge k} (-1)^{\ell-k}\, \scrN(\rmK_\ell),
\qquad \text{et } \qquad
\scrN(\rmK_\ell) = \sum_{\#I=\ell} X^{D(I)} 
$$
La question est de savoir si cela ne nous permettrait pas de retrouver
la formule annoncée en~\ref{SerieSk}, que l'on peut qualifiée
d'économique en comparaison de celle utilisant la somme alternée.  La
réponse va être positive. Mais avant, il nous faut dégager des notions
générales.

\medskip

\'Etant donné un ensemble fini  $F$, un anneau commutatif $\bfR$ et
une fonction $f : F \to \bfR$, on définit deux \og extensions multiplicatives\fg{} à l'ensemble
des parties de $F$, l'une utilisant $f$, l'autre  $1-f$.
Pour $I \subset F$, on pose : 
$$
\pi_0(I) = \prod_{i\in I} f(i), \qquad \pi_1(I) = \prod_{i\in I} \big(1-f(i)\big) 
$$
On a bien sûr $\pi_0(\emptyset) = \pi_1(\emptyset) = 1$, $\pi_0(\{x\}) = f(x)$ et
$\pi_1(\{x\}) = 1-f(x)$.

\label{NOTA18-pi0}%
\label{NOTA18-pi1}%

\medskip

Prenons par exemple $F = \{1..n\}$ et pour $f$ la fonction $i \mapsto X_i^{d_i}$. Alors 
on retrouve des expressions déjà croisées, puisque :
$$
\pi_0(I) = X^{D(I)}, \qquad  \pi_1(I) = \overline X^{D(I)}
$$

\medskip

Lorsque $F$ est totalement ordonné, on note $F_{<\min I}$ la partie de
$F$ constituée des éléments~$<\min I$.  Puisque $I_1 \subseteq
I_2 \Rightarrow \min(I_1) \ge \min(I_2)$, on convient, pour $I
= \emptyset$, de $F_{<\min I}= F$ pour le motif suivant:
$\min(\emptyset)$ est un élément fictif strictement plus grand que
tout élément de $F$.  De la même manière, on désigne par $F_{<\max I}$
le sous-ensemble de $F$ constitué des éléments $<\max I$.  Comme
$I_1 \subseteq I_2 \Rightarrow \max(I_1) \le \max(I_2)$, on convient,
pour $I =
\emptyset$, de $F_{<\max I}= \emptyset$ avec l'alibi suivant:
$\max(\emptyset)$ est un élément fictif strictement plus
petit que tout élément de $F$.

Mais la vraie raison de cette convention est de rendre valide dans
tous les cas de figure les égalités des lemmes suivants.

\medskip

Nous allons établir dans ces lemmes un certain nombre d'identités
reliant sommes et produits de $\pi_0$ et $\pi_1$. On peut considérer
que toutes ces identités reposent en partie sur le \emph{truc}
suivant:
$$
\sum_{I\subseteq F} \prod_{i\in I} a_i \prod_{j\notin I} b_j =
\prod_{\ell\in F} \big(a_\ell+b_\ell\big)
\leqno (\heartsuit)
$$
Pour un entier $k$, prouvons par exemple l'identité (dans laquelle
$I,J \subset F$ et $\overline I = F\setminus I$):
$$
\sum_{\#I \ge k} \pi_0(I)\pi_1(\overline I) \ =\ 
1 -\!\!\sum_{\#J \le k-1} \pi_0(J)\pi_1(\overline J)
$$
Elle est équivalente à:
$$
\sum_{I \subseteq F} \pi_0(I)\pi_1(\overline I) = 1
$$
Et cette dernière s'obtient à partir de $(\heartsuit)$ en prenant $a_\ell = f(\ell)$ et $b_\ell = 1-f(\ell)$.

\begin {lem} [Sommes alternées de $\pi_0$]
\label{FirstEcoLemma}  
\leavevmode

\begin {enumerate} [\rm i)]
\item
On dispose de l'identité de base:
$$
\sum_{I \subseteq F} (-1)^{\#I} \pi_0(I) = \pi_1(F)
$$
\item
On suppose $F$ totalement ordonné.  Soit $J \subset F$ fixé de
cardinal $k$.  On note $\calR(J)$ l'ensemble des $I\subset F$ de
cardinal $\ge k$ tels que $J$ soit l'ensemble des $k$ \emph {derniers}
éléments de $I$. En conséquence $I \supseteq J$ et
$$
\calR(J) = \big\{ I \supset J \mid  \{\text {$k$ derniers éléments de $I$}\} = J \big\}
$$
Alors
$$
\sum_{I \in \calR(J)} (-1)^{\#I - k}\, \pi_0(I\setminus J) = \pi_1(F_{<\min J})
$$
\item
Pour tout $k \in \bbN$, on a l'identité (dans laquelle $I,J \subseteq F$):
$$
\sum_{\#I\ge k} (-1)^{\#I - k}\,\pi_0(I) 
\ = \ 
\sum_{\#J=k} \pi_0(J)\ \pi_1(F_{<\min J})
$$
\end {enumerate}
\end {lem}

\begin {proof} \leavevmode

i) Dans $(\heartsuit)$, pour $\ell \in F$, prendre $a_\ell = -f(\ell)$ et $b_\ell = 1$.

\medskip
ii)
Puisque chaque $I \in \calR(J)$ contient $J$, on va réaliser le \og
changement de variables\fg{} $K = I\setminus J
\iff I = K \sqcup J$. Que doit vérifier $K$ en plus d'être contenu dans $F \setminus J$?
Pour que $J$ soit égal à l'ensemble des $k$ derniers éléments de $I = K\sqcup J$, il faut et il suffit
que $K \subset F_{<\min J}$. Et comme $F_{<\min J} \subset F\setminus J$, on voit que $K$ varie
dans $F_{<\min J}$. En utilisant $\#I - k = \#I - \#J = \#K$, la somme $s$ de gauche peut donc être réécrite:
$$
s = \sum_{K\subseteq F_{<\min J}} (-1)^{\#K} \pi_0(K)
$$
Et d'après l'identité de base, $s=\pi_1(F_{<\min J})$.

\medskip
iii)
On partitionne les $I$ de cardinal $\ge k$ par l'ensemble $J$
de leurs $k$ derniers éléments. La somme de gauche s'écrit alors:
$$
s = \sum_{\#J = k} s_J
\qquad \text{avec} \qquad
s_J = \sum_{I\in \calR(J)} (-1)^{\#I -k} \,\pi_0(I)
$$
En écrivant $I = J \sqcup (I\setminus J)$ donc $\pi_0(I) = \pi_0(J) \times \pi_0(I\setminus J)$:
$$
s_J = \pi_0(J) \times \sum_{I\in \calR(J)} (-1)^{\#I - k}\, \pi_0(I \setminus J)
$$
D'après le point ii):
$$
s_J = \pi_0(J)\,\pi_1(F_{<\min J})
$$
ce qui termine la preuve car $s$ est la somme des $(s_J)_{\#J=k}$.
\end {proof}

\subsubsection*{Application à la détermination de $\Serie(\Smac_k)$}

Considérons la fonction $f : i \mapsto X_i^{d_i}$ pour laquelle
$$
\pi_0(I) = X^{D(I)}, \qquad\qquad  \pi_1(\{1...n\}_{<\min I}) = \overline X^{D([1...\min I[)}
$$
En début de section, nous avons rappelé que : 
$$
\scrN(\Smac_k) = \sum_{\ell\ge k} (-1)^{\ell-k}\, \scrN(\rmK_\ell),
\qquad \text{et} \qquad
\scrN(\rmK_\ell) = \sum_{\#I=\ell} X^{D(I)} 
$$
Ainsi, $\scrN(\rmK_\ell)$ s'écrit $\displaystyle \sum_{\#I=\ell} \pi_0(I)$, d'où 
$$
\scrN(\Smac_k) \ = \ 
\sum_{\#I \ge k} (-1)^{\#I-k}\,\pi_0(I)
$$
En utilisant le lemme, il vient donc
$$
\scrN(\Smac_k) \ =\ \sum_{\#J= k} \pi_0(J)\pi_1(\{1...n\}_{<\min J})
\ \overset{\rm def}{=}\ 
\sum_{\#J \, = \, k } \ 
X^{D(J)} \overline{X}^{D([1...\min J[)} 
$$
et on retrouve ainsi la formule annoncée en~\ref{SerieSk}.

\bigskip

L'origine du lemme qui suit réside dans l'analyse de la formule
dite économique de la série de~$\Jex_h$, plus précisément de sa preuve (cf.~la proposition~\ref{SerieJh}).

\begin {lem} \label {SecondEcoLemma}
\leavevmode
  
Pour $J \subset F$, nous utilisons, pour son complémentaire dans $F$,
les deux notations $\overline J$ et $F\setminus J$.
Vu les changements à venir de l'ambiant $F$, la seconde notation
est indispensable.

\begin {enumerate}[\rm i)]
\item
Pour une partie fixée $I_0 \subset F$:
$$
\sum_{I\subseteq I_0} \pi_0(I)\pi_1(F\setminus I) = \pi_1(F\setminus I_0),
\qquad \text {en particulier pour $I_0=F$}, \qquad
\sum_{I\subseteq F} \pi_0(I)\pi_1(F\setminus I) = 1
$$
\item
On suppose $F$ totalement ordonné. Soit $J \subset F$ fixé de cardinal
$k$.  On note $\calL(J)$ l'ensemble des $I\subset F$ de cardinal $\ge
k$ tels que $J$ soit l'ensemble des $k$ \emph{premiers} éléments de
$I$. En conséquence $I \supseteq J$ et
$$
\calL(J) = \big\{ I \supset J \mid  \{\text {$k$ premiers éléments de $I$}\} = J \big\}
$$
Alors:
$$
\sum_{I\in \calL(J)} \pi_0(I\setminus J)\pi_1(\overline I) = \pi_1(\overline J \cap F_{<\max J})
$$

\item
Pour tout $k \in \bbN$: 
$$
\sum_{\#I \ge k} \pi_0(I)\pi_1(\overline I) =
\sum_{\#J= k} \pi_0(J)\pi_1(\overline J \cap F_{<\max J})
$$
Pour $k=0$, le membre droit vaut 1 et on retrouve le fait que la somme de gauche est égale à~1.
\end {enumerate}
\end {lem}

\begin {proof} \leavevmode

i)
Dans le \emph {truc} $(\heartsuit)$, en prenant $a_\ell = f(\ell)$ et
$b_\ell = 1-f(\ell)$, on obtient le cas particulier.
Pour le cas général, écrivons:
$$
F\setminus I = (F\setminus I_0) \sqcup (I_0\setminus I)
\qquad \text {d'où} \qquad
\pi_1(F\setminus I) = \pi_1 (F\setminus I_0)\pi_1(I_0\setminus I)
$$
Sa somme $s$ se déduit alors du cas particulier appliqué à $F = I_0$:
$$
s =  \pi_1 (F\setminus I_0) \times \sum_{I\subseteq I_0} \pi_0(I)\pi_1(I_0\setminus I) =
\pi_1(F\setminus I_0) \times 1
$$

\medskip
ii) 
Nous allons voir que \og la solution se passe dans $F' := F\setminus J$\fg. 
\'Ecrivons toute partie $I \supseteq J$ sous la
forme $I = J\sqcup K$ avec $K \subseteq F'$.
Alors $I\in \calL(J)$ si et seulement si $K \subseteq F_{>\max J}$, inclusion
équivalente au fait que $J$ soit l'ensemble des $k$ premiers éléments de $I$.

Réalisons le \og changement de variables\fg{} $K = I\setminus J \subseteq F'$.
En remarquant que $F\setminus I = F'\setminus K$, 
la somme~$s$ peut être indexée par \emph{certains} $K \subseteq F'$, à savoir
ceux vérifiant $K \subseteq F_{>\max J}$. De manière précise,
en introduisant $I_0 \subseteq F'$ défini par
$$
I_0 = F' \cap F_{>\max J},
$$
la somme $s$ peut être réécrite
$$
s = \sum_{K \subseteq I_0} \pi_0(K) \pi_1(F'\setminus K)
$$
D'après le premier point:
$$
s = \pi_1(F'\setminus I_0)
$$
Il n'y a plus qu'à vérifier l'égalité $F'\setminus I_0 = \overline
J \cap F_{<\max J}$.  Il suffit pour cela de revenir aux définitions
avec les notations ad-hoc: $F'=\overline J$ et $I_0 = \overline J \cap
F_{>\max J}$.

\medskip
iii) On partitionne les $I$ de cardinal $\ge k$ par l'ensemble $J$
de leurs $k$ premiers éléments. La somme de gauche s'écrit alors:
$$
s = \sum_{\#J = k} s_J
\qquad \text{avec} \qquad
s_J = \sum_{I\in \calL(J)} \pi_0(I)\pi_1(\overline I) 
$$
En écrivant $I = J \sqcup (I\setminus J)$, on a donc $\pi_0(I) = \pi_0(J) \times \pi_0(I\setminus J)$, puis :
$$
s_J = \pi_0(J) \times \sum_{I\in \calL(J)} \pi_0(I \setminus J)\pi_1(\overline I)
$$
D'après le point ii):
$$
s_J = \pi_0(J)\, \pi_1(\overline J \cap F_{<\max J})
$$
ce qui termine la preuve car $s$ est la somme des $(s_J)_{\#J=k}$.
\end {proof}

\subsubsection*{Application à la détermination d'une formule économique pour $\Serie(\Jex_h)$}

Puisque les identités du lemme tirent leur origine de l'analyse de cette formule économique,
il est bien moral que nous la retrouvions.
En~\ref{SerieJh}, on a obtenu la formule suivante du numérateur de la
série (fine-grading) d'Hilbert-Poincaré de l'idéal~$\Jex_h$:
$$
\scrN_f(\Jex_h) = \sum_{\#I \ge h} X^{D(I)} \overline X^{D(\overline I)}
$$
En prenant comme fonction $f : i \mapsto X_i^{d_i}$, l'expression n'est autre que
le membre gauche du lemme précédent, d'où :
$$
\scrN_f(\Jex_h) =
\sum_{\#I \ge h} \pi_0(I)\,\pi_1(\overline I) \ =\ 
\sum_{\#J= h} \pi_0(J)\,\pi_1(\overline J \cap F_{< \max J})
$$
Ce qui est exactement la formule suivante, déjà annoncée en~\ref{SerieJh} :
$$
\scrN_f(\Jex_h) = \sum_{\#J=h} X^{D(J)}\, \overline X^{D(\overline J \cap [1..\max J[)}
$$

\subsection {Avec du recul: des modules briques pour nos séries d'Hilbert-Poincaré}
\label {SectionBriques}

Pour $I \subset \{1..n\}$, on introduit l'idéal monomial $\Jex_I$ et
son supplémentaire monomial $\brick_I$:
$$
\bfA[\uX] = \Jex_I \oplus \brick_I \qquad
\Jex_I = \langle X_i^{d_i},\ i \in I\rangle, \qquad
\brick_I = \bigoplus_{X^\beta \notin \Jex_I} \bfA X^\beta
$$
Nous avons opté pour la lettre $\brick$ car ces $\brick_I$ nous
servent de briques pour nos séries d'Hilbert-Poincaré. Nous aurions pu
également considérer le quotient qui lui est isomorphe
dans la catégorie des modules $\bbN^n$-gradués:
$$
\bfA[\uX] / \Jex_I  \  \simeq\ \brick_I
$$
A noter que pour $I = \{1...n\}$, l'idéal $\Jex_I$ n'est autre que $\Jex_1$ et que
$\brick_I$, c'est $\Smac_0$. Lorsque $I = \emptyset$, on a $\Jex_\emptyset = \{0\}$
et $\brick_\emptyset = \bfA[\uX]$.

\label{NOTA18-JexI}%
\label{NOTA18-brickI}%
%

\begin {lem}

L'appartenance $X^\beta \in \brick_I$ est caractérisée par $\beta_i < d_i$ pour tout $i \in I$,
ce qui se répercute sur le numérateur de la série d'Hilbert-Poincaré de $\brick_I$:
$$
\scrN_f(\brick_I) = \prod_{i\in I} (1 - X_i^{d_i}) \overset{\rm def}{=}
\overline X^{D(I)}
$$
\end {lem}

\begin {proof}

C'est clair car l'appartenance en question se traduit par
$X^\beta \notin \Jex_I$.  Pour déterminer la série de~$\brick_I$, on
introduit le produit suivant (le premier est un produit
de \textit{polynômes}) :
$$
\prod_{i\in I}
(1+X_i + \cdots + X_i^{\beta_i} + \cdots + X_i^{d_i-1})
\quad \times \quad 
\prod_{j \notin I}
(1+X_j + \cdots + X_j^{\alpha_j} + \cdots)
$$
Son développement est égal à la somme des monômes
$X^\beta$ vérifiant la condition $\beta_i < d_i$ pour tout
$i \in I$, c.a.d à $\Serie(\brick_I)$.
Or ce produit de \textit{séries} est la \textit{fraction rationnelle}
$$
\dfrac{N}{(1-X_1) \cdots (1-X_n)}
\qquad \text{avec} \qquad
N \ = \ \prod\limits_{i \in I} (1-X_i^{d_i}) \ \overset {\rm def}{=}\ \overline{X}^{D(I)}
$$
D'où le résultat puisque, par définition, $N = \scrN_f(\brick_I)$.
\end {proof}

\medskip

Nous fournissons ci-dessous un certain nombre d'applications. Le lecteur s'il
le souhaite pourra en exhiber d'autres.

\subsubsection*{Divers exemples dont la vérification est laissée au lecteur}
  
Ici, l'utilisation de $E \simeq F$ sous entend que $E,F$ sont des
modules $\bbN^n$-gradués et que l'isomorphisme sous-jacent est gradué
de degré 0, de sorte que $E, F$ ont même série de Hilbert-Poincaré.
Par exemple, pour un vecteur de base $e_I$ du complexe de Koszul $\rmK_\sbullet(\uX^D)$:
$$
\bfA[\uX]e_I \simeq X^{D(I)} \bfA[\uX] \qquad \text{via} \qquad
X^\alpha e_I \leftrightarrow X^{D(I)} X^\alpha
$$
Les égalités ou isomorphismes qui suivent sont tous de
même nature: un certain module gradué est isomorphe (ou égal) à une
somme directe de multiples monomiaux de modules-briques.  Cela permet,
en utilisant
$$
\scrN_f(X^\gamma \brick_I) = X^\gamma \scrN_f(\brick_I) = X^\gamma \overline X^{D(I)}
$$
de calculer automatiquement la série d'Hilbert-Poincaré du dit module.

\medskip
$\bullet$
Pour $\#I \ge h$, en rappelant que $\Jex_h[I]$ est le $\bfA$-sous-module de $\Jex_h$ de base
les $X^\alpha \in \Jex_h$ tels que $\DivSeq(X^\alpha) = I$, on dispose
d'une \emph {égalité}:
$$
\Jex_h[I] = X^{D(I)}\, \brick_{\overline I}
$$
La série de $\Jex_h$ obtenue en sommant les séries de $\Jex_h[I]$ sur les $I$ de cardinal $\ge
h$ coïncide avec la première formule de la proposition~\ref{SerieJh}.

\medskip
$\bullet$
Pour $\#J=h$, soit $\Jex'_h[J]$ le $\bfA$-sous-module de $\Jex_h$ de
base les $X^\alpha \in \Jex_h$ tels que l'ensemble des $h$ premiers
éléments de $\DivSeq(X^\alpha)$ soit égal à~$J$. De manière
formelle: $\DivSeq(X^\alpha)[1..h] = J$.  Alors:
$$
\Jex'_h[J] = X^{D(J)}\,\brick_{[1..\max J[ \,\cap\,\overline J}
$$
En sommant les $\Serie\big(\Jex'_h[J]\big)$ sur les $J$ de cardinal $h$, on obtient la formule
dite économique de la proposition~\ref{SerieJh}.

\smallskip
Que dit l'égalité pour $h=0$? Comme l'ensemble vide est la seule
partie de cardinal $0$, on a $\Jex'_h[J] =
\Jex_0 =\bfA[\uX]$. Et par convention, $\max J = 0$ de sorte
que l'indice du module-brique est vide et ce dernier est $\bfA[\uX]$. L'égalité est donc
une tautologie.

\medskip
$\bullet$
Pour $\#I = k$, en notant $\Smac_k[I]$ le sous-module monomial de
$\Smac_k$ de base les $X^\alpha e_I$:
$$
\Smac_k[I] \simeq X^{D(I)}\, \brick_{\min I-1}
\qquad \text{via} \qquad
X^\alpha e_I  \longmapsto X^{D(I)}\,X^\alpha
$$
En utilisant le fait que $\Smac_k$ est la somme directe des
$\big(\Smac_k[I]\big)_{\#I=k}$, on obtient pour sa série la formule de la
  proposition~\ref{SerieSk}.

\smallskip
En passant, que raconte l'isomorphie ci-dessus dans le cas $k=0$? Comme $I
= \emptyset$ et $\min I-1 = (n+1)-1 =n$, l'isomorphie en question
s'écrit:
$$
\Smac_0 \simeq \brick_{\{1...n\}}
$$
Mais en fait, il y a égalité puisque $\brick_{\{1...n\}}$ est le supplémentaire
monomial de $\Jex_{\{1...n\}} = \Jex_1$, i.e. $\Smac_0$. Et tout est
bien qui finit bien.

\medskip
$\bullet$
Pour $\#J = k$, notons $\Mmac_k[J]$ le sous-module monomial de
$\Mmac_k$ de base les $X^\beta e_J \in \Mmac_k$. Alors:
$$
\Mmac_k[J] \simeq X^{D(J)}\, \bigoplus_{i < \min J} X_i^{d_i} \brick_{[1..i[}
$$
l'isomorphisme étant
$$
X^\beta e_J  \longmapsto X^{D(J)}\,X_i^{d_i}\,\dfrac{X^\beta}{X_i^{d_i}}
\qquad \text{où} \qquad
i = \minDiv(X^\beta)
$$
Quid pour $k=0$? Puisque $J = \emptyset$, le module de gauche $\Mmac_k[J]$ n'est
autre que $\Jex_1$. Et à droite, en utilisant $\min\emptyset = n+1$, on obtient
comme module
$$
F := \bigoplus_{i=1}^n X_i^{d_i} \brick_{[1..i[}
$$
En se concentrant un peu, on constate que $F$ est l'idéal monomial de $\bfA[\uX]$
dont les monômes sont ceux qui sont multiples d'un \emph{certain} $X_i^{d_i}$ i.e.
$F = \langle X_1^{d_1}, \dots, X_n^{d_n}\rangle$, bien connu sous le nom de $\Jex_1$.
L'isomorphie est en fait une égalité et on est bien content.

\smallskip

En écrivant que $\Mmac_k$ est la somme directe des $\big(\Mmac_k[J])_{\#J=k}$,
on obtient pour $\scrN_f(\Mmac_k)$ la seconde formule de la proposition~\ref{SerieMk}.

\medskip
$\bullet$
Est-il utile de rappeler la définition de $\Jex_{1\setminus2}^{(i)}$? Nous
pensons que non et nous laissons le soin au lecteur de vérifier
l'égalité:
$$
\Jex_{1\setminus 2}^{(i)} = X_i^{d_i}\,\brick_{\{1...n\}\setminus \{i\}}
$$
On en déduit $\scrN_f\big(\Jex_{1\setminus 2}^{(i)}\big) = X_i^{d_i} \prod\limits_{j \ne i}
(1 - X_j^{d_j})$.

\medskip
$\bullet$
Le retour de $\Jex_h$.
Comme $\Jex_{h+1} \subset \Jex_h$, on peut considérer
le supplémentaire monomial de $\Jex_{h+1}$ dans~$\Jex_h$,
que l'on note $\Jex_{h\setminus h+1}$ et qui est isomorphe, de manière
$\bbN^n$-graduée, au quotient $\Jex_h/\Jex_{h+1}$. On a alors une égalité:
$$
\Jex_{h\setminus h+1} = \bigoplus_{\#I=h} X^{D(I)}\, \brick_{\overline I}
$$
On en déduit, en notant $\scrN$ pour $\scrN_f$: 
$$
\scrN(\Jex_h) - \scrN(\Jex_{h+1})  = s_h \qquad \text{avec} \qquad
s_h = \sum_{\#I=h} X^{D(I)}\,\overline X^{D(\overline I)}
$$
Un petit coup de télescopie \og en avant\fg:
$$
\left\{
\begin {array}{ccccc}
\scrN(\Jex_h) &-& \scrN(\Jex_{h+1})  &=& s_h \\
\scrN(\Jex_{h+1} &-& \scrN(\Jex_{h+2})  &=& s_{h+1} \\
                 &\vdots &&\vdots \\
\scrN(\Jex_n) &-& \scrN(\Jex_{n+1})  &=& s_n \\
\end {array}
\right.
$$
Et comme $\Jex_{n+1} = 0$, en sommant:
$$
\scrN(\Jex_h) = \sum_{\ell \ge h} s_\ell \overset{\rm def.}{=}
\sum_{\#I \ge h} X^{D(I)}\,\overline X^{D(\overline I)}
$$
Et pourquoi pas une télescopie \og en arrière\fg?
$$
\left\{
\begin {array}{ccccc}
\scrN(\Jex_0) &-& \scrN(\Jex_1)  &=& s_0 \\
\scrN(\Jex_{1} &-& \scrN(\Jex_{2})  &=& s_1 \\
                 &\vdots &&\vdots \\
\scrN(\Jex_{h-1}) &-& \scrN(\Jex_h)  &=& s_{h-1} \\
\end {array}
\right.
$$
Mais $\scrN(\Jex_0)=1$ puisque $\Jex_0 = \bfA[\uX]$. Donc, en sommant:
$$
1 - \scrN(\Jex_h) = \sum_{\ell \le h-1} s_\ell 
$$
c.a.d.
$$
\scrN(\Jex_h) = 1 - \sum_{\ell \le h-1} s_\ell =
1- \sum_{\#J \le h-1} X^{D(J)}\,\overline X^{D(\overline J)}
$$
Exercices: combien de formules pour $\scrN(\Jex_h)$? Peut-on les comparer directement?
En $\bbN$-gradué, vérifier:
$$
N_h(t) := \dfrac{s_h(t)}{(1-t)^{n-h}} =
\sum_{\#I=h} t^{d_I} \prod_{j \in \overline I} (1 + t + \cdots + t^{d_j-1})
$$
On définit ainsi un polynôme $N_h(t)$, à coefficients entiers $\ge 0$, tel que:
$$
\Serie_r(\Jex_{h\setminus h+1}) = \dfrac{N_h(t)}{(1-t)^h}
$$
IL est clair que $N_h(1) \ge 1$, en particulier non nul, de sorte que 
cette écriture est l'écriture irréductible de la fraction rationnelle.
Bonus: montrer l'égalité
$$
N_h(1) = \sum_{\#J = n-h} \prod_{j\in J} d_j  \overset{\rm def.}{=}
e_{n-h}(d_1, \cdots, d_n)
$$
où $e_\ell$ est la fonction symétrique élémentaire de degré $\ell$ en $n$
variables.

\subsection{Séries de $\Mmac^{(i)}_{k-1} \simeq \Smac_k^{(i)}$, de $\Jex_h^{(i)}$ etc.
  permettant l'obtention du poids en~$P_i$}

Ici, nous étudions le poids en $P_i$ de certains déterminants associés
au degré~$d$, définis à partir du mécanisme de sélection $\minDiv$. Ce poids en
$P_i$ va s'obtenir comme la dimension de la composante homogène de
degré $d$ d'un certain module gradué dépendant de~$i$ et piloté par le
mécanisme de sélection $\minDiv$.

\medskip

Avant la proposition~\ref{BkdPoidsPiSerie}, nous avons défini, pour $k \ge 1$, le sous-module
monomial $\Mmac^{(i)}_{k-1}$ de $\Mmac_{k-1}$ dont la série est par définition:
$$
\Serie\big(\Mmac_{k-1}^{(i)}\big) \ \overset{\rm def.}{=}\ 
\sum_{d \in \bbN} \dim \Mmac_{k-1,d}^{(i)}\, t^d  \ =\ 
\sum_{d \in \bbN} \poids_{P_i}(\det B_{k,d})\, t^d 
$$
Voir également la proposition \ref{PoidsDetBkdSum} et l'introduction de
ce chapitre. Nous définissons de
manière analogue le sous-module $\Smac^{(i)}_k$ de~$\Smac_k$
de base les monômes extérieurs $X^\alpha e_I \in \Smac_k$ tels que
$\min I = i$. Ces deux sous-modules gradués se correspondent
par l'isomorphisme $\varphi : \Mmac_{k-1} \to \Smac_k$, gradué de
de degré $0$ et en conséquence, ils ont même série d'Hilbert-Poincaré.
\`A noter que nous ne définissons pas $\Smac^{(i)}_k$
pour $k=0$ (d'ailleurs $B_k$ n'est pas défini pour $k=0$).

Dans la proposition~\ref{BkdPoidsPiSerie}, nous avons fourni
la série commune de ces sous-modules pour la raw-graduation.
En utilisant des notations analogues (en particulier la notation
$\Aik$ que nous avons calquée), voici l'énoncé pour la fine-graduation.

\begin{prop}[Série de $\Mmac_{k-1}^{(i)} \simeq \Smac_k^{(i)}$] 
\label{BkdPoidsPiFineSerie}
Soit $k \ge 1$ et $i \in \{1,\dots, n\}$.

Définissons un polynôme $A_{i,k}(\uX) \in \bbZ[\uX]$ de la manière suivante :
$$
A_{i,k}(\uX) \ = \ 
X_i^{d_i} \sum_{\#J = k-1\atop i < \min J} X^{D(J)} 
\ =\ 
\sum_{\# I \, = \, k \atop \min I = i} X^{D(I)} 
$$
Alors:
$$
\scrN_f\big(\Mmac_{k-1}^{(i)}\big) \ =\ 
\scrN_f\big(\Smac_{k}^{(i)}\big) \ =\ 
\overline{X}^{D([1...i[)}\, A_{i,k}(\uX)
$$
La spécialisation $X_i := t$ conduit à
$$
\Aik(t) 
\ = \ 
t^{d_i}\!\!\! \sum_{\#J = k-1\atop i < \min J} \kern -8pt t^{d_J} \ =\ 
\sum_{\#I=k \atop \min I =i} \kern -4pt t^{d_I}
$$
et fournit la série suivante dont le coefficient en $t^d$ est le poids en $P_i$ de $\det B_{k,d}$
$$
\Serie\big(\Mmac_{k-1}^{(i)}\big) = \dfrac{\scrN_r\big(\Mmac_{k-1}^{(i)}\big)}{(1-t)^n}
\qquad \text{avec} \qquad
\scrN_r\big(\Mmac_{k-1}^{(i)}\big) = \Aik(t) \times \prod_{j<i} (1-t^{d_j}) 
$$
Pour $i > n-k+1$, le sous-module $\Mmac_{k-1}^{(i)}$ est réduit à 0 (a
fortiori sa série est nulle), entraînant le fait que $\det B_{k,d}$ ne
dépend pas des $k-1$ derniers polynômes du système~$\uP$.
\end {prop}

\begin {proof}
  
Le fait que les deux sommes définissant $A_{i,k}(\uX)$ sont les mêmes
résulte du \og changement de variables\fg{}
$$
I = \{i\} \sqcup J \ \leftrightarrow \ J = I\setminus\{i\}
$$
sous le couvert de $\min I = i$, ce qui correspond du côté de $J$, à $i < \min J$ ou
encore à $J \subset \{i+1,\dots, n\}$.

\medskip

Par ailleurs, l'existence d'un $X^\beta e_J \in \Mmac_{k-1}^{(i)}$
entraîne, puisque $i = \minDiv(X^\beta)$, l'inclusion $J \subset
\{i+1,\dots, n\}$, a fortiori $\#J \le \#\{i+1,\dots, n\}$ i.e. $k-1 \le
n-i$ i.e. $i \le n-k+1$.  A contrario, si $i > n-k+1$, le sous-module
$\Mmac_{k-1}^{(i)}$ est nul.

\medskip

Enfin, dans l'introduction de ce chapitre, nous avons mentionné la décomposition
$$
\Mmac_{k-1}^{(i)} \ = \ 
\bigoplus_{\#J = k-1\atop i < \min J}  \Mmac_{k-1}^{(i)}[J]
$$
et obtenu
$$
\scrN_f\big(\Mmac_{k-1}^{(i)}[J]\big) =
\overline{X}^{D([1...i[)} X_i^{d_i}X^{D(J)}
$$
En sommant sur les $J$:
$$
\scrN_f\big(\Mmac_{k-1}^{(i)}\big) = 
\overline{X}^{D([1...i[)} X_i^{d_i} \sum_{\#J = k-1\atop i < \min J} \kern -5pt  X^{D(J)}
\ \overset{\rm def}{=}\
\overline{X}^{D([1...i[)}\,\Aik(\uX)
$$
\end {proof}

Nous venons de déterminer la série d'Hilbert-Poincaré du module
$\Mmac_{k-1}^{(i)}$, dont la dimension de la composante homogène de
degré $d$ est le poids en $P_i$ de $\det B_{k,d}$.  Donnons à
présent un résultat analogue pour le déterminant
de $W_{h,d}$ (défini par $\minDiv$) dont le poids en $P_i$ est égal à $\dim \Jex_{h,d}^{(i)}$
(cf. proposition~\ref{PoidsDetW}). Il s'agit donc de fournir la
série du module gradué $\Jex_h^{(i)}$, piloté par le mécanisme de
sélection $\minDiv$. Une partie du travail a été réalisée
en~\ref{J1iJ2iSeries}, en déterminant la série raw-grading de
$\Jex_1^{(i)}$ et $\Jex_2^{(i)}$.

\begin{prop}[Série d'Hilbert-Poincaré de $\Jex_h^{(i)}$] 
\label{SerieJhi}
Soit $h \ge 1$ et $i \in \{1,\dots, n\}$.
On dispose d'une première formule:
$$
\scrN_f\big(\Jex_h^{(i)}\big)
\ = \ 
\sum_{\#I \, \geqslant \, h \atop \min I = i}
X^{D(I)} \overline{X}^{D(\overline I)}
$$
ainsi que d'une seconde dite \og économique\fg{}:
$$
\scrN_f\big(\Jex_h^{(i)}\big)
\ = \ 
\sum_{\#J \, = \, h \atop \min J \, = \, i} 
{X}^{D(J)} \ \overline{X}^{D([1 \dots \max J[ 
\, \cap \, \overline J)}
$$
La spécialisation $X_i := t$ fournit :
$$
\scrN_r\big(\Jex_h^{(i)}\big) = \sum_{\#I \, \geqslant \, h \atop \min I \, = \, i} 
t^{d_I} \prod_{j \notin I} (1-t^{d_j})
\ = \ 
\sum_{\#J \, = \, h \atop \min J \, = \, i}
t^{d_J} \prod_{j \notin J \atop j < \max J} (1- t^{d_j})
$$
\end{prop}

\begin {proof}\leavevmode

Pour une partie $I$ vérifiant $\#I \ge h$ et $\min I = i$, notons
$\Jex_h^{(i)}[I]$ le sous-module monomial de $\Jex_h^{(i)}$ de base
les $X^\alpha \in \Jex_h^{(i)}$ tels que $\DivSeq(X^\alpha) = I$ et
$\minDiv(X^\alpha) = i$. Le lecteur, en utilisant la notion
de module-brique de la section~\ref{SectionBriques}, vérifiera l'égalité:
$$
\Jex_h^{(i)}[I] = X^{D(I)} \brick_{\overline I}
$$
D'où  $\scrN_f\big(\Jex_h^{(i)}[I]) = X^{D(I)} \overline{X}^{D(\overline I)}$
et par sommation sur les $I$, l'obtention de la première formule.

\medskip

Pour la deuxième formule, on va utiliser le point iii) du lemme
économique~\ref{SecondEcoLemma}.  A cet effet,
on pose $F = \{i+1..n\}$, $G = \{1..i-1\}$ et on écrit toute partie $I$ ci-dessus
sous la forme $I = \{i\} \sqcup I'$ de sorte que
$$
I' \subset F,
\qquad
\#I' \ge h-1,
\qquad
\overline I = G \sqcup (F \setminus I')
$$
Ceci permet, dans la somme de la première formule, de \og sortir le morceau constant\fg{}:
$$
\Pi := X_i^{d_i} \overline X^{D(G)}
$$
De manière précise, $\scrN_f\big(\Jex_h^{(i)}\big) = \Pi \times s$ avec
$$
s = \sum_{I' \subset F\atop \#I' \geqslant\, h-1} X^{D(I')}
\overline{X}^{D(F \setminus I')}
\quad \overset{\rm eco.}{=} \quad
\sum_{J' \subset F\atop \#J'=h-1} X^{D(J')} \overline{X}^{D((F \setminus J') \cap F_{<\max J'})}
$$
On fait \og rentrer $\Pi$ dans la somme $s$\fg{} (formule de droite) en posant
$J = \{i\} \sqcup J'$. On a d'une part $\max J = \max J'$. Et d'autre part
l'égalité a priori rebutante mais en fait anodine:
$$
G \sqcup \big((F \setminus J') \cap F_{<\max J'}\big) =
[1...\max(J)[\ \cap\ \overline J
$$
ce qui conduit à la deuxième formule.
\end {proof}

\medskip

Pour $h=1$, on obtient 
$$
\scrN_r\big(\Jex_1^{(i)}\big) \ = \ 
t^{d_i} \prod\limits_{j < i} (1- t^{d_j})
$$
formule qui coïncide avec celle fournie en~\ref{J1iJ2iSeries}.
Qu'en est-il pour $h=2$? Ici, en utilisant la formule économique,
on dispose d'une somme indexée par les parties $J$ de cardinal 2 vérifiant
$i = \min J$. On a donc $J = \{i,i'\}$ avec $i' > i$ et on peut ré-écrire
la somme en l'indexant par les $i' > i$:
$$
\scrN_r\big(\Jex_2^{(i)}\big) =
t^{d_i} \times \sum_{i'>i} t^{d_{i'}} \prod_{j\ne i \atop j< i'} (1-t^{d_j})
\leqno (A)
$$
Versus la formule donnée en \ref{J1iJ2iSeries} comme produit de 3 termes:
$$
\scrN_r\big(\Jex_2^{(i)}\big) =
t^{d_i} \times \prod_{j<i} (1-t^{d_j}) \times \Big(1 - \prod_{j>i} (1-t^{d_j}) \Big)
\leqno (B)
$$
Pour vérifier l'adéquation entre $A$ et $B$, on écrit, dans la somme
indexée par $i'$ définissant $A$, le produit interne comme produit de
deux sous-produits, le premier $\Pi_1$ étant indépendant de $i'$
$$
\prod_{j\ne i \atop j< i'} (1-t^{d_j}) =  \Pi_1 \times \prod_{i < j < i'} (1-t^{d_j})
\qquad \text{avec} \qquad
\Pi_1 = \prod_{j<i} (1-t^{d_j})
$$
Ceci fait apparaître des quantités communes à $A,B$ que l'on peut écrire:
$$
A = t^{d_i} \times \Pi_1 \times A',
\qquad\qquad
B = t^{d_i} \times \Pi_1 \times B'
$$
avec
$$
A' = \sum_{i'>i} t^{d_{i'}} \prod_{i<j<i'} (1-t^{d_j}),
\qquad\qquad
B' = 1 - \prod_{j>i} (1-t^{d_j})
$$
Reste à voir que $A'=B'$.  Tout se passe maintenant dans l'ensemble
fini totalement ordonné $F$ constitué des indices $> i$. En définissant
$f : F \to \bbZ[t]$ par $f(j) = 1 - t^{d_j}$ et en utilisant les
notations de la section~\ref{BonusFormulesEco}, l'égalité $A'\overset{?}{=}B'$ s'écrit
$$
\sum_{i' \in F} \big(1-f(i')\big) \pi_0(F_{<i'}) \overset{?}{=} 1 - \pi_0(F) 
$$
Mais comme $\big(1-f(i')\big)\pi_0(F_{<i'}) = \pi_0(F_{<i'}) - \pi_0(F_{\le i'})$,
la somme de gauche est télescopique et se réduit au membre droit. Ouf!


Il n'y a rien de vraiment nouveau dans la proposition qui vient. La
détermination de $\dim \Jex_{1\setminus 2, d}^{(i)}$ pour~$d \ge
\delta$ a été réalisée dès le chapitre~\ref{ObjetsSuiteP} (cf. la
proposition~\ref {LemmeDenombrementJ1moins2id}), sans utilisation de
série, résultat important utilisé à maintes reprises.  Nous avons
également fourni la série $\bbN$-graduée de $\Jex_{1\setminus 2}^{(i)}$ en
\ref{SerieJ1minusJ2i} et celle-ci est par exemple intervenue dans la preuve du
lemme~\ref{varpiresPoids}. Ici, nous apportons un petit complément dans
l'énoncé et, dans la preuve, une légère variation concernant la
détermination de ses coefficients.

Quant à la série $\bbN^n$-graduée, elle figure parmi les exemples
de modules-briques de la section~\ref{SectionBriques} et redonne
gratuitement la série $\bbN$-graduée.

Il faut noter que $\Jex_{1\setminus 2}^{(i)}$ est un module monomial
intrinsèque, indépendant de tout mécanisme de sélection, contrairement
aux modules intervenant précédemment dans cette section, pilotés par
le mécanisme de sélection~$\minDiv$.  Ceci explique l'importance de sa série, dont
le coefficient en $t^d$ fournit le poids en~$P_i$ de
$\Delta_{1,d}(\uP)$ (autre objet intrinsèque), comme nous l'avons vu
en~\ref{poidsDelta1}.

\begin{prop}[Série de $\Jex_{1\setminus 2}^{(i)}$ et ses coefficients]
Pour $i \in \{1,\dots, n\}$:
$$
\scrN_f\big(\Jex_{1\setminus 2}^{(i)}\big)  
\ = \ 
X_i^{d_i} 
\prod\limits_{j \neq i} (1- X_j^{d_j}) 
\qquad \text{ et } \qquad 
\scrN_r\big(\Jex_{1\setminus 2}^{(i)}\big)   \ = \ 
t^{d_i} \ \prod_{j \neq i} (1-t^{d_j}) 
$$
La suite des coefficients de la série de $\Jex_{1\setminus 2}^{(i)}$
est croissante (au sens large), stationnaire à partir du
degré~$\delta+1$.  Le coefficient en $t^d$ pour $d \geqslant \delta+1$
est $\widehat d_i := \prod_{j \neq i} d_j$, et pour $d = \delta$, il
vaut $\widehat d_i - 1$.
\end{prop}

\begin{proof}

En conservant les notations de la proposition \ref{SerieJ1minusJ2i},
nous faisons intervenir le \emph{polynôme} $U_i(t)$ dans le
développement de la série de $\Jex_{1\setminus2}^{(i)}$:
$$
\Serie\big(\Jex_{1\setminus2}^{(i)}\big) =
\dfrac{U_i(t)}{1-t} 
\qquad \text{où } \qquad
U_i(t) \ = \ 
t^{d_i} \ \prod_{j \neq i} \dfrac{1-t^{d_j}}{1-t}
\ = \ 
t^{d_i} \, \prod_{j \neq i} 
\big(1+t+\cdots + t^{d_j-1} \big)
$$
C'est l'occasion, pour un polynôme $P(t)$, de rappeler le développement en série de
$P(t) / (1-t)$. Par exemple, pour $P(t) = a + bt + ct^2$, on a
facilement:
$$
(a + bt + ct^2)(1 + t + t^2 + \cdots) =
a + (a+b)t + (a+b+c)(t^2 + t^3 +  \cdots)
$$
Plus généralement, pour un polynôme de degré $r$:
$$
P(t) = a_0 + a_1t + \cdots + a_rt^r
$$
le développement en série de $P(t)/(1-t)$ est stationnaire à partir du degré $r$
$$
\dfrac{P(t)}{1-t} = s_0 + s_1t + \cdots + s_{r-1}t^{r-1} + s_r(t^r + t^{r+1} + \cdots)
\qquad \text{où} \qquad
s_\ell = \sum_{k=0}^\ell a_k
$$
Il est souvent utile d'avoir remarqué que $s_r = P(1)$, $s_{r-1} =
P(1)-a_r$ et $s_0 = a_0$.  Lorsque $P$ est à coefficients dans $\bbN$,
ce développement montre que la suite des coefficients de la série est
croissante au sens large.

\medskip

Retour au polynôme $U_i$. Il est unitaire, à coefficients entiers $\ge
0$, de degré $d_i + \sum_{j \neq i} (d_j-1) = \delta +1$ et $U_i(1) =
\prod_{j \neq i} d_j = \widehat{d_i}$. D'où le résultat concernant les
coefficients de la série de $\Jex_{1\setminus 2}^{(i)}$ en degré $\ge
\delta$.

\smallskip

En ce qui concerne la croissance de la suite
$\big(\dim\Jex_{1\setminus2, d}^{(i)}\big)_{d \ge 0}$, il y a un
autre argument, de nature structurelle, qui réside dans l'injection:
$$
\Jex_{1\setminus2,d}^{(i)} \quad\hookrightarrow\quad \Jex_{1\setminus2,d+1}^{(i)},
\qquad
X^\alpha \mapsto X_i X^\alpha
$$
Et le stationnement de la suite s'explique par le fait que cette
injection est une \emph{bijection} pour $d \ge \delta+1$.  On peut le
vérifier en constatant, pour un $X^\beta \in
\Jex_{1\setminus2,d+1}^{(i)}$, en n'importe quel degré $d$, que l'on a:
$$
\beta_i \ge (d-\delta) + d_i
$$
Si bien que pour $d \ge \delta+1$, en posant $X^\alpha :=
X^\beta/X_i$, on a $\alpha_i \ge d_i$ donc $X^\alpha \in
\Jex_{1\setminus2,d}^{(i)}$.

\smallskip

Enfin, l'égalité dimensionnelle $\dim\Jex_{1\setminus2,\delta+1}^{(i)}
= 1+\dim\Jex_{1\setminus2,\delta}^{(i)}$ peut s'expliquer par celle
des modules:
$$
\Jex_{1\setminus2,\delta+1}^{(i)} = X_i\,\big(\bfA X^\emouton \oplus \Jex_{1\setminus2,\delta}^{(i)}\big)
$$
\end{proof}

\medskip
En guise d'exercice, le lecteur pourra prouver l'égalité:
$$
\dim\Jex_{1\setminus2,\delta-1}^{(i)} = \widehat d_i - 1 - \#I_i
\qquad \text{où} \qquad
I_i = \big\{j \in \{1..n\} \setminus \{i\} \mid d_j\ge 2 \big\}
$$
Une manière d'y parvenir consiste à montrer:
$$
\Jex_{1\setminus2,\delta}^{(i)} = X_i\,E_i
\qquad \text{avec} \qquad
E_i = \Jex_{1\setminus2,\delta-1}^{(i)} \ \oplus \ \bigoplus_{j\in I_i}\bfA(X^\emouton/X_j)
$$
Indication: pour $X^\alpha \in \Jex_{1\setminus2,\delta}^{(i)}$, étudier $X^\alpha/X_i$.

\newpage

\centerline{
\fbox{\parbox{0.95\linewidth}{
J'ai déporté les 2 dernières sections du chapitre Séries consacrées aux relations
binomiales entre séries dans un fichier {\tt 19-SouvenirsBinomiaux.tex} chez moi.
On verra plus tard si on peut en faire quelque chose
}}}

\appendix
\cleardoublepage


\section{Une preuve homologique du théorème de Wiebe}
\label{WiebeProof}

Voici une reformulation du théorème de Wiebe, qui a l'avantage 
de s'énoncer en une seule \og phrase \fg{} : il s'agit de montrer qu'un 
certain morphisme est bijectif.

\begin{quote}
\textit{
Soient $\ux = (x_1,\dots,x_n)$ et $\ua = (a_1,\dots,a_n)$ deux suites
de même longueur d'un anneau commutatif $\bbA$ telles que $\ideala
\subset \idealx$, inclusion certifiée par une matrice de
$\bbM_n(\bbA)$ dont on note~$\nabla$ le déterminant.
Notons $(\ua:\nabla)$ l'idéal transporteur $(\ideala:\langle\nabla\rangle)$ 
et de la même manière $(\ua:\ux)$ pour $(\ideala:\idealx)$.
Si la suite $\ua$ est complètement sécante, alors
on a l'égalité d'idéaux de $\bbA$:
$$
(\ua : \nabla) \ = \ \idealx
\qquad \text{et} \qquad 
(\ua : \ux) \ = \  \langle \nabla,\, \ua \rangle 
$$
De manière équivalente, en désignant par $\Ann(\ux\,;\bbA/\ideala)$
l'idéal de $\bbA/\ideala$ annulateur de $\ux$, 
la multiplication par $\nabla$ induit un isomorphisme
$$
\xymatrix @M=0.4pc @C=3pc{
\bbA/\idealx \ar[r]^-{\times \nabla} & 
\Ann(\ux \,; \bbA/\ideala)
}
$$
De manière plus précise, l'injectivité de $\times\nabla$ équivaut à l'égalité
$(\ua : \nabla)=\idealx$ et sa surjectivité à l'égalité
$(\ua:\ux) = \langle\nabla,\,\ua\rangle$.
}
\end{quote}

\index{théorème!de Wiebe}

\subsection {Le théorème du bicomplexe pour les bébés}

Le théorème du bicomplexe (de bébés) que nous allons utiliser s'inspire de
Matsumura \cite[Appendix~B (Some homological
  algebra)]{Matsumura}. Tout comme lui, nous allons nous placer dans
un contexte très particulier. Voici ce qu'il écrit à propos
de ce cas particulier: ``The basic technique for studying the homology
of double complexes is spectral sequences, but we leave this to
specialist texts, and only consider here the extreme cases which we
will use later''. Nous verrons que ``extreme cases'' est relatif à
une hypothèse ad-hoc d'exactitude dans le bicomplexe (cf. l'énoncé
du prochain théorème).

\smallskip

Tous les détails ne seront pas fournis ici. Pour plus de précisions,
nous renvoyons à l'appendice de Matsumura ou bien à la thèse de
C. Tête en \cite[chap. IV]{Tete}.  Nous allons considérer un
bicomplexe descendant $(C_{\sbullet,\sbullet}, d', d'')$ concentré
dans le premier quadrant \idest{} $C_{\sbullet,\sbullet} =
(C_{p,q})_{p,q}$ avec $C_{p,q} = 0$ si $p < 0$ ou $q < 0$:
$$
d': C_{p,q} \to C_{p-1,q},\qquad d'': C_{p,q} \to C_{p,q-1},\qquad
d' \circ d'' = d'' \circ d'
$$
Nous notons $(T_\sbullet, d)$ son complexe total:
$$
T_n = \bigoplus_{p+q=n} C_{p,q}, \qquad\qquad
d(c_{p,q}) = d'(c_{p,q}) + (-1)^p d''(c_{p,q})
$$
On lui associe son $X_\sbullet$-complexe (resp. $Y_\sbullet$-complexe)
qui a la vocation de boucher l'axe des abscisses (resp. ordonnées) de manière exacte 
via le conoyau des flèches verticales (resp. horizontales) au bord de l'axe.
$$
\vcenter{\hbox{
\begin {tikzpicture} [inner sep = 1pt]
\coordinate (O) at (0,0) ;
\node at (2.4,1.7) {$C_{p,q}$} ;
\path (2.4, 1.4) edge [->] (2.4, 0.4) ;  
\path (2.5, 1.1) [right, ->] node {$d''$} ;
\path (1.9,1.7) edge [->] (0.4,1.7) ;  
\path (1.2,1.77) [above] node {$d'$} ;
\path (O) edge [->] (2.5,0) ; 
\path (0.1,-0.2) edge [below, <-] node {$X_\sbullet$-complexe} (2.2,-0.2) ;
\path (O) edge [->] (0,2.5) ; 
\path (0.1, 0.2) edge [rotate = 90, <-, above, midway] node [rotate=90] {$Y_\sbullet$-complexe} (2.1, 0.2) ;
\end {tikzpicture}
}}
\qquad
X_p = \Coker(d'' : C_{p,1} \to C_{p,0}),
\qquad
Y_q = \Coker(d' : C_{1,q} \to C_{0,q})
$$
On dispose ainsi de deux morphismes de complexes surjectifs (donnés par les projections canoniques) 
qui donnent naissance à des morphismes en homologie
$$
\xymatrix@M=0.4pc{
Y_\sbullet & T_\sbullet \ar@{->>}[l] \ar@{->>}[d] \\
& X_\sbullet
}
\hspace{4cm}
\xymatrix@M=0.4pc{
\rmH(Y_\sbullet) & \rmH(T_\sbullet) \ar[l] \ar[d] \\
& \rmH(X_\sbullet)
}
$$
Voici, sans preuve, le théorème que nous allons utiliser.

\begin{theo}[Du bicomplexe pour les bébés] \leavevmode
\label{BabyBicomplexTh}
  
\begin {enumerate}[\rm i)]  
\item
Si les colonnes sont exactes, sauf éventuellement sur l'axe des
abscisses, alors $\rmH(T_\sbullet) \rightarrow \rmH(X_\sbullet)$ est
un isomorphisme.

\item
Si les lignes sont exactes, sauf éventuellement sur l'axe des
ordonnées, alors $\rmH(T_\sbullet) \rightarrow \rmH(Y_\sbullet)$ est
un isomorphisme.
\end {enumerate}  
\end{theo}

\index{théorème!du bicomplexe}

Pour une preuve, voir \cite[Theorem B1]{Matsumura} ou \cite[chap. IV,
section 3, propositions 3.1 \& 3.2]{Tete} chez qui l'exactitude
demandée est \og plus localisée\fg{}. Par exemple, si les colonnes
sont exactes en les points de la diagonale $n$, sauf éventuellement
sur l'axe des $x$, alors $\rmH_n(T_\sbullet) \to \rmH_n(X_\sbullet)$
est surjectif.  Tandis qu'en cas d'exactitude des colonnes en les
points de la diagonale $n+1$, sauf éventuellement sur l'axe des $x$,
le morphisme $\rmH_n(T_\sbullet) \to \rmH_n(X_\sbullet)$ est injectif.

\medskip

\noindent
\begin{minipage}[c]{0.7\textwidth}
Supposons la condition i) vérifiée de sorte que $\rmH(T_\sbullet)
\rightarrow \rmH(X_\sbullet)$ est un isomorphisme.  La composition de
$\rmH(Y_\sbullet) \leftarrow \rmH(T_\sbullet)$ avec l'isomorphisme
inverse donne naissance à une flèche pointillée comme ci-contre.
Si de plus la condition ii) est vérifiée alors cette flèche pointillée
est un isomorphisme. C'est cet isomorphisme en pointillé, très précis,
qui va intervenir dans la suite. Enoncer seulement $\rmH(X_\sbullet)
\simeq \rmH(Y_\sbullet)$ représente une perte considérable d'information.
\end{minipage}
\begin{minipage}[c]{0.3\textwidth}
$$
\xymatrix@M=0.4pc{
\rmH(Y_\sbullet) & \rmH(T_\sbullet) \ar[l] \ar[d]|-{\simeq} \\
& \rmH(X_\sbullet) \ar@{-->}[ul]\ar@/_10pt/[u]
}
$$
\end{minipage}

\subsubsection*{Fonctorialité en $C_{\sbullet,\sbullet}$ des complexes $T_\sbullet$, $X_\sbullet$ et $Y_\sbullet$}

\noindent
\begin{minipage}[c]{0.7\textwidth}
Un morphisme 
$\xymatrix{
C'_{\sbullet, \sbullet} \ar@[blue][r] & C_{\sbullet, \sbullet}
}$ 
entre deux bicomplexes induit un morphisme au niveau des complexes totaux 
ainsi qu'au niveau des complexes $X_\sbullet$ et $Y_\sbullet$ comme ci-contre.
Si les colonnes des deux bicomplexes sont exactes sauf sur l'axe des abscisses,
on a alors un diagramme commutatif 
$$
\xymatrix@M=0.5pc{ 
\rmH(X'_\sbullet) \ar@[blue][d] \ar@{-->}[r] 
& \rmH(Y'_\sbullet) \ar@[blue][d] \\
\rmH(X_\sbullet)  \ar@{-->}[r] & \rmH(Y_\sbullet)
}
$$
\end{minipage}
\begin{minipage}[c]{0.3\textwidth}
$$
\xymatrix @M=0.5pc @R=1.7pc{
Y'_\sbullet & & T'_\sbullet \ar@{->>}[ll] \ar@{->>}[dl] \\
& X'_\sbullet & \\
Y_\sbullet \ar@{<-}@[blue][uu] & & 
T_\sbullet \ar@{->>}[ll] \ar@{->>}[dl] \ar@{<-}@[blue][uu] \\
& X_\sbullet \ar@{<-}@[blue][uu] & \\
}
$$
\end{minipage}

\noindent
\fbox{\parbox{0.95\linewidth}{%
{\bf Précision}
\label{EncadrePrecision}

Dans cette annexe, nous allons faire un usage intensif du complexe de Koszul montant
d'une suite de scalaires et nous utiliserons un certain nombre d'isomorphismes dits
canoniques dans la catégorie des algèbres graduées alternées ou des complexes (différentiels).
En particulier, pour deux suites de scalaires $\ub$ et~$\uc$ de longueurs
quelconques, l'isomorphisme (canonique) suivant va jouer un rôle fondamental:
$$
\rmK^\sbullet(\ub,\uc) \overset{\rm can.}{\simeq}\rmK^\sbullet(\ub)\otimes\rmK^\sbullet(\uc)
\leqno (\heartsuit)
$$
En fin d'annexe, nous avons prévu en section~\ref{AlgGradAlt} une
description relativement basique de ces notions, parfois informelle, pensant qu'elle
pourra être utile aux lecteurs (et aux auteurs!).  Dans notre traitement
exploitant ces notions, nous n'hésiterons pas à fournir un
certain nombre de détails.  Nous pensons que les outils homologiques utilisés sont
peut-être plus importants que le théorème de Wiebe proprement dit.
Comme complément, nous suggérons la lecture de \cite [section
  I.6. The Koszul Complex]{BrunsHerzog} dans l'ouvrage de Bruns \& Herzog
où l'énoncé de l'isomorphie $(\heartsuit)$ figure en proposition~1.6.6.
}}

\subsubsection*{A propos de l'abondance à venir des isomorphismes qualifiés de canoniques}

C'est en fait un des problèmes principaux que nous allons devoir affronter!
Dans notre histoire, il y a des identifications inoffensives et
d'autres qui le sont moins. Pour une suite de scalaires $\ua$ de
longueur~$n$, écrire $\rmK^0(\ua) = \bbA$ est licite car c'est une
vraie égalité.  En revanche écrire $\rmK^n(\ua) \overset{\rm
  can.}{\simeq} \bbA$ puis faire le glissement $\rmK^n(\ua) = \bbA$
sous prétexte qu'il s'agit d'un isomorphisme canonique masque une
certaine vérité. Idem pour $\rmH^n(\ua) \simeq \bbA/\ideala$ et le
glissement $\rmH^n(\ua) = \bbA/\ideala$.  Il est parfois bien
préférable d'écrire:
$$
\rmK^n(\ua) = \BW^n(\bbA^n) 
\qquad \text{et pourquoi pas} \qquad
\rmK^n(\ua) = \BW^n\rmK^1(\ua) 
$$
L'écriture de droite permet d'attribuer la base canonique
$(e_1, \dots, e_n)$ de $\bbA^n$ à $\rmK^1(\ua)$ et d'écrire:
$$
\rmK^n(\ua) = \BW^n\rmK^1(\ua) = \bbA.e_1\wedge\cdots \wedge e_n
$$
On ne peut s'empêcher de citer J.-P. Jouanolou, dans Aspects invariants de
l'élimination~\cite{J5}, après l'exemple~2.5.4, bas de la page~43: {\sl Bien
entendu, si j'insiste sur ces questions en substances faciles, c'est
pour diminuer au maximum dans les applications le nombre
d'isomorphismes canoniques, qui sont, comme on le sait, la plaie de
l'algèbre homologique.}

Dans le même ordre d'idées, ce même auteur, dans cet article,
entre la proposition~3.6.2.5 et sa preuve (p.~133), \og avoue\fg{}:
{\sl On va montrer qu'il existe $\varepsilon = \pm 1$ tel que ...
Il est fort probable que $\varepsilon = +1$ mais la manière
détournée de prouver (3.6.2.5) ne permet pas de l'affirmer, et
l'accumulation d'isomorphismes ``canoniques'' mais peu
explicites dans la théorie de la dualité globale est peu
propice à une vérification.}

\bigskip

Nous donnons un premier aperçu de l'isomorphisme fondamental figurant dans l'encadré,
cas particulier d'un isomorphisme canonique très général $\BW^\sbullet(M\oplus N) \simeq
\BW^\sbullet(M) \gotimes \BW^\sbullet(N)$ entre algèbres graduées alternées
(très général car $M,N$ sont des modules quelconques). Nous en reparlerons
en section~\ref{AlgGradAlt}.

\subsubsection*{Les termes des complexes dans $(\heartsuit)$}

L'isomorphisme $(\heartsuit)$ réalise au niveau des termes des complexes:
$$
\rmK^k(\ub,\uc) \simeq \bigoplus_{i+j=k} \rmK^i(\ub) \otimes \rmK^j(\uc)
$$
Afin de décrire cette correspondance, nous écrivons:
$$
\rmK^i(\ub) = \BW^i\rmK^1(\ub),
\qquad
\rmK^j(\uc) = \BW^j\rmK^1(\uc),
\qquad
\rmK^k(\ub,\uc) = \BW^k\big(\rmK^1(\ub)\oplus\rmK^1(\uc)\big)
$$
Soit $\bfu\in \rmK^i(\ub)$. Via (la puissance extérieure $\BW^i$ de) l'injection canonique $\rmK^1(\ub)
\to\rmK^1(\ub)\oplus\rmK^1(\uc)$, nous voyons $\bfu$ dans
$\BW^i(\rmK^1(\ub)\oplus\rmK^1(\uc)\big)$.  Idem pour $\bfv\in
\rmK^j(\uc)$ que nous voyons dans
$\BW^j(\rmK^1(\ub)\oplus\rmK^1(\uc)\big)$. En notant $k=i+j$, leur
produit extérieur $\bfu\wedge\bfv$ est dans
$\BW^k(\rmK^1(\ub)\oplus\rmK^1(\uc)\big)$ et la correspondance est la
suivante
$$
\bfu\wedge\bfv  \quad\longleftrightarrow\quad \bfu\otimes\bfv
$$

\subsubsection*{Les différentielles des complexes dans $(\heartsuit)$}

\index{produit tensoriel!de complexes}%

Le produit tensoriel $B^\sbullet \otimes C^\sbullet$ de
deux complexes montants $(B^\sbullet,\delta^B)$ et
$(C^\sbullet,\delta^C)$ est le complexe montant équipé de la
différentielle $\delta$ définie sur $B^i \otimes C^j$ par:
$$
\delta_{|B^i\otimes C^j} = \delta^B_{|B^i}\otimes\Id_{C^j} + (-1)^i\,\Id_{B^i}\otimes\delta^C_{|C^j}
$$
On suppose $\ub$ de longueur $m$, $\uc$ de longueur $n$, 
$\ub = (b_1, \dots, b_m)$, $\uc = (c_1, \dots, c_n)$.
On note $(e_1,\dots,e_m)$ la base canonique de $\rmK^1(\ub)$
et  $(f_1,\dots,f_n)$ celle de $\rmK^1(\uc)$.
Les termes de $\rmK^\sbullet(\ub,\uc)$ sont les
$$
\BW^k\big(\rmK^1(\ub) \oplus \rmK^1(\uc)\big)
$$
Les différentielles des complexes de Koszul montants
$\rmK^\sbullet(\ub), \rmK^\sbullet(\uc), \rmK^\sbullet(\ub,\uc)$
sont définies
à l'aide du produit extérieur par les vecteurs 
$$
b = \sum_{i=1}^m b_ie_i \in \rmK^1(\ub), \qquad\quad
c = \sum_{j=1}^n c_jf_j \in \rmK^1(\uc), \qquad\quad
b+c \in \rmK^1(\ub)\oplus \rmK^1(\uc)
$$
ceux-ci étant placés \emph {à gauche.}\footnote{Curieusement, dans \cite[Exercice 1.6.28]{BrunsHerzog},
les auteurs définissent, à partir d'un $R$-module $L'$ et $x \in L'$,  un complexe montant
$\BW^\sbullet(L')$ ayant pour différentielle   ``the \emph{right} multiplication by $x$''.
Ils demandent ensuite de montrer, lorsque $L' = (R^n)^\star$, que ce complexe est
isomorphe au complexe de Koszul montant de~$x$, défini par dualité à partir du complexe de Koszul
descendant de $x$. Ce dernier, défini pour tout module $L$ et toute forme linéaire $\mu\in L^\star$,
a pour différentielle de degré~$-1$ le produit intérieur droit $\sbullet\intd\mu$, unique
anti-dérivation à gauche de degré~$-1$ prolongeant $\mu$.}
Par exemple, en ce qui concerne $\rmK^\sbullet(\ub)$, nous avons choisi pour différentielle (montante)
de degré $+1$:
$$
\delta^\ub = b \wedge \sbullet
$$
\label{AdequationDifferentielles}
Nous reportons en section~\ref{AlgGradAlt} la vérification de
l'adéquation entre la différentielle du produit tensoriel, définie sur
$\rmK^i(\ub) \otimes \rmK^j(\uc)$ par:
$$
\delta^\ub \otimes \Id + (-1)^i \Id\otimes \delta^\uc
$$
et la différentielle $\delta^{\ub,\uc}$ de $\rmK^{\ub,\uc}$ donnée par $(b+c)\wedge\sbullet$

\subsection {Le bicomplexe Koszul-Koszul
  $\rmK^{n-\sbullet}(\protect\ua)\otimes\rmK^{n-\sbullet}(\protect\ux)$ : étape I}

Les suites de scalaires $\ua$ et $\ux$ sont de même longueur~$n$ et
pour l'instant, on ne suppose rien d'autre. I.e. on n'impose pas la
relation d'inclusion $\ideala \subset \idealx$, celle-ci interviendra
en temps utile.  Considérons le bicomplexe bâti à partir du complexe
de Koszul montant de $\ua$ et de celui de $\ux$, que l'on installe, de
manière à être en conformité avec la section précédente, dans le
premier quadrant:
$$
\vcenter{\hbox{%
\begin {tikzpicture} [inner sep = 1pt]
\coordinate (O) at (0,0) ;
\node at (2.4,1.7) {$C_{p,q}$} ;
\path (2.4, 1.4) edge [->] (2.4, 0.6) ;  
\path (2.5, 1.1) [right, ->] node {$\ux$-Koszul} ;
\path (1.9,1.7) edge [->] (0.5,1.7) ;  
\path (1.2,1.77) [above] node {$\ua$-Koszul} ;
\path (O) edge [->] (2.5,0) ; 
\path (0.1,-0.2) edge [below, <-] node {$X_\sbullet$-complexe} (2.2,-0.2) ;
\path (O) edge [->] (0,2.5) ; 
\path (0.1, 0.2) edge [rotate = 90, <-, above, midway] node [rotate=90] {$Y_\sbullet$-complexe} (2.1, 0.2) ;
\end {tikzpicture}
}}
\qquad \qquad \qquad
\begin {array}{l}
C_{p,q} = \rmK^{n-p}(\ua\,;\bbA) \otimes \rmK^{n-q}(\ux\,;\bbA)
\\[2mm]
d' = \delta^\ua\otimes\Id
\\[2mm]
d'' = \Id\otimes \delta^\ux
\\[2mm]
(d' \circ d'' = d''\circ d')
\end{array}  
$$
Dans la suite, il faudra veiller au reversement $n-\sbullet$ que l'on
observe dans la définition de $C_{\sbullet,\sbullet}$, renversement qui fait des
complexes $X_\sbullet$ et $Y_\sbullet$ des complexes
\emph{descendants} \og plaqués\fg{} aux axes comme l'indique le
dessin.

\medskip

Par définition de $X_p=\Coker(d'': C_{p,1} \to C_{p,0})$:
$$
X_p = \Coker\big(\rmK^{n-p}(\ua) \otimes \rmK^{n-1}(\ux)
\xrightarrow{\ \Id\otimes\delta^\ux\ }
\rmK^{n-p}(\ua) \otimes \rmK^{n}(\ux)\big)
$$
Montrons que $X_p \overset{\rm can.}{\simeq}
\rmK^{n-p}\big(\ua\,;\bbA/\idealx\big)$.
Par définition $\rmK^{n}(\ux) = \BW^n(\bbA^n) \simeq \bbA$
et $\delta^\ux : \rmK^{n-1}(\ux) \to
\rmK^{n}(\ux)$ a pour image $\idealx\rmK^{n}(\ux)$.
En conséquence
$$
\Im\Big(\rmK^{n-p}(\ua) \otimes \rmK^{n-1}(\ux) \xrightarrow{\ \Id\otimes\delta^\ux\ }
\rmK^{n-p}(\ua) \otimes \rmK^{n}(\ux)\Big) =
\idealx\,\rmK^{n-p}(\ua) \otimes \rmK^{n}(\ux) \simeq
\idealx\,\rmK^{n-p}(\ua)
$$
de sorte que le conoyau $X_p$ \emph{s'identifie} à
$\rmK^{n-p}(\ua)/\idealx\rmK^{n-p}(\ua) = \rmK^{n-p}(\ua)
\otimes\bbA/\idealx = \rmK^{n-p}(\ua; \bbA/\idealx)$.  Il est
important de signaler que, 2 lignes plus haut, nous avons identifié
$\rmK^n(\ux) = \BW^n(\bbA^n)$ à $\bbA$. Il faudra se souvenir qu'en
notant \boxed{(f_1,\dots,f_n)} la base canonique 
de \boxed{\bbA^n=\rmK^1(\ux)}, nous avons
$$
\rmK^n(\ux) = \BW^n \rmK^1(\ux) = \bbA.(f_1\wedge\cdots\wedge f_n)
$$
De manière analogue, en identifiant $\rmK^n(\ua)$ à
$\bbA$, on obtient l'identification de $Y_p$ à
$\rmK^{n-p}\big(\ux\,;\bbA/\ideala\big)$. Nous aurons besoin d'un \emph
{autre nom} pour désigner la base canonique 
de \boxed{\bbA^n = \rmK^1(\ua)}. En choisissant de la nommer
\boxed{(e_1,\dots,e_n)}, nous avons
$$
\rmK^n(\ua) = \BW^n \rmK^1(\ua) = \bbA.(e_1 \wedge\cdots\wedge e_n)
$$
En guise de résumé, on dispose donc, pour les projections $T_\sbullet
\to X_\sbullet$ et $T_\sbullet \to Y_\sbullet$, du schéma suivant dans
lequel on entrevoit le passage au quotient de $C_{k,0}$ à $X_k$
via l'idéal $\idealx$ (resp. de $C_{0,k}$ à $Y_k$ via l'idéal $\ideala$):
$$
\xymatrix @R=1.0cm @C=1.0cm{
Y_k = C_{0,k}/\ua.C_{0,k} & C_{0,k}\ar@{->>}[l] &
     T_k = \bigoplus\limits_{p+q=k} C_{p,q}\ar[l]\ar[d] 
\\  
                  &&C_{k,0} \ar@{->>}[d]
\\
                  &&X_k=C_{k,0}/\ux.C_{k,0}
\\
}
$$

\begin {fact} [Les complexes $X_\sbullet$ et $Y_\sbullet$]
\label {XYcomplexes}
On dispose d'isomorphismes canoniques:  
$$
X_\sbullet \simeq \rmK^{n-\sbullet}\big(\ua\,;\bbA/\idealx\big)
\qquad\qquad
Y_\sbullet \simeq \rmK^{n-\sbullet}\big(\ux \,;\bbA/\ideala\big)
$$
\end {fact}

De manière un peu abusive i.e. en négligeant les identifications,
on peut ainsi visualiser le $X$-complexe, qui, à l'indexation
$n-\sbullet$ près, n'est autre que le
complexe de Koszul montant de la suite $\ua$ sur $\bbA/\idealx$
$$
\newcommand \Aquoa {\bbA/\ideala}
\newcommand \Aquox {\bbA/\idealx}
\newcommand \egal{\rotatebox{90}{=}}
\xymatrix @R = 5pt @C=0.5cm{
0 &X_0\ar[l] &X_1 \ar[l]
 &\quad\cdots\ar[l]  &X_{n-1}\ar[l]  &X_n\ar[l] &0\ar[l]
\\
& \egal &\egal &\quad\cdots  &\egal  &\egal 
\\
0 &\rmK^n(\ua;\Aquox)\ar[l]  &\rmK^{n-1}(\ua;\Aquox) \ar[l]
 &\quad\cdots\ar[l]  &\rmK^1(\ua;\Aquox)\ar[l]  &\rmK^0(\ua;\Aquox)\ar[l] &0\ar[l]
}
$$
Quant au $Y$-complexe, c'est, modulo l'indexation $n-\sbullet$,
le complexe de Koszul montant de $\ux$ sur~$\bbA/\ideala$:
$$
\newcommand \Aquoa {\bbA/\ideala}
\newcommand \Aquox {\bbA/\idealx}
\newcommand \egal{\rotatebox{90}{=}}
\xymatrix @R = 0.1cm @C=0.5cm{
0 &\rmK^n(\ux;\Aquoa)\ar[l] &\rmK^{n-1}(\ux;\Aquoa)\ar[l] &\quad\cdots\ar[l]
  &\rmK^{1}(\ux;\Aquoa)\ar[l] &\rmK^0(\ux;\Aquoa)\ar[l] & 0\ar[l]
\\
& \egal &\egal &\quad\cdots  &\egal  &\egal 
\\
0 &Y_0\ar[l] &Y_1 \ar[l]
 &\quad\cdots\ar[l]  &Y_{n-1}\ar[l]  &Y_n\ar[l] &0\ar[l]
\\
}
$$
Lorsque $\ux$ est complètement sécante, le théorème du bicomplexe
fournit un morphisme de \emph {l'homologie} du $X$-complexe vers \emph
{l'homologie} du $Y$-complexe, qui est un isomorphisme si
$\ua$ est complètement sécante.

\subsubsection*{Au niveau homologique, en degré homologique $n$}

Examinons $\rmH(X_\sbullet)$ et $\rmH(Y_\sbullet)$ en degré
homologique $n$ (à droite sur les dessins ci-dessus)
\footnote{Le fait qui suit explique pourquoi le degré homologique~$n$
nous concerne tout particulièrement.}.
Les modules $\rmH_n(X_\sbullet) = \Ker(X_n \to X_{n-1})$
et $\rmH_n(Y_\sbullet) = \Ker(Y_n \to Y_{n-1})$ sont donc les
noyaux suivants:
$$
\Ker \Big(
\xymatrix@M=0.2pc{%
\rmK^0\big(\ua \,; \bbA/\idealx\big) \ar[r]^-{\left[
\begin{smallmatrix}
a_1 \\ \vdots \\ a_n \\
\end{smallmatrix}
\right]} 
& \rmK^1\big(\ua \,; \bbA/\idealx\big)
}
\Big) 
\qquad \text{et} \qquad 
\Ker \Big(
\xymatrix@M=0.2pc{%
\rmK^0\big(\ux \,; \bbA/\ideala\big) \ar[r]^-{\left[
\begin{smallmatrix}
x_1 \\ \vdots \\ x_n \\
\end{smallmatrix}
\right]} 
& \rmK^1\big(\ux \,; \bbA/\ideala\big)
}
\Big) 
$$
c'est-à-dire (en négligeant les identifications):
$$
\rmH_n(X_\sbullet) = \Ann\big(\ua \, ; \bbA/\idealx\big) 
\qquad \text{et} \qquad 
\rmH_n(Y_\sbullet) = \Ann\big(\ux \, ; \bbA/\ideala\big) 
$$
Le théorème du bicomplexe~\ref{BabyBicomplexTh} conduit donc au résultat suivant.

\begin{fact}[Ce que fournit le bicomplexe en degré homologique $n$]
\leavevmode
\label {XYhomologieDEGn}

Lorsque la suite $\ux$ est complètement sécante, on dispose d'une flèche 
$$
\xymatrix @M=0.4pc{
\Ann\big(\ua\,;\bbA/\idealx\big) \ar@{-->}[r]
& 
\Ann\big(\ux\,;\bbA/\ideala\big)}
$$
qui est un isomorphisme si $\ua$ est complètement sécante.
\end{fact}

\medskip

On notera l'utilisation d'une flèche pointillée sans nom qui
interviendra à plusieurs reprises, impossible de la rater: c'est celle
qui provient du bicomplexe!  De plus, ce pointillé est censé témoigner
d'un certain mystère dans l'obtention de cette flèche, mystère que
nous allons devoir lever en cas d'inclusion $\ideala \subset \idealx$
certifiée par une matrice de~$\bbM_n(\bbA)$ de déterminant $\nabla$.

\smallskip

Supposons $\ideala \subset\idealx$. Alors $\ua.(\bbA/\idealx) = 0$.
Puisque $X_\sbullet \simeq
\rmK^{n-\sbullet}\big(\ua\,;\bbA/\idealx\big)$ d'après le
fait~\ref{XYcomplexes}, nous allons pouvoir utiliser la remarque qui
suit.  Dans le contexte $\ideala\subset\idealx$ et $\ux$ complètement
sécante, on verra également comment déduire de cette remarque le fait
que $T_\sbullet \to X_\sbullet$ induit un isomorphisme en homologie
\emph{sans} utiliser le théorème du bicomplexe.  Nous aurons
cependant besoin du théorème du bicomplexe pour affirmer que
$T_\sbullet \to Y_\sbullet$ induit un isomorphisme en homologie
lorsque $\ua$ est complètement sécante.

\begin {fact}[Quand $\ua.E = 0$]
\label {aKillsE}
Soit $\ua$ une suite de scalaires vérifiant $\ua.E = 0$.  Alors le
complexe de Koszul montant $\rmK^\sbullet(\ua; E)$ est à
différentielles nulles.  En conséquence:
$$
\rmH^\sbullet(\ua;E) = \rmK^\sbullet(\ua;E)
$$
Si $\ua$ est de longueur~$n$:
$$
\rmH^k(\ua;E) = \BW^k(\bbA^n) \otimes_\bbA E \simeq E^{\binom{n}{k}}
$$
\end {fact}

Ainsi le complexe $X_\sbullet$ est à différentielles
nulles donc $\rmH(X_\sbullet) = X_\sbullet$.  En particulier,
$\rmH_n(X_\sbullet)$ est monogène engendré par $f_1 \wedge \cdots
\wedge f_n \bmod \idealx$ et $\rmH_n(X_\sbullet) \simeq
\Ann\big(\ua\,;\bbA/\idealx\big) = \bbA/\idealx$.

D'autre part, en raison de l'inclusion $\nabla
\idealx \subset \ideala$, on dispose d'une flèche $\bbA/\idealx \xrightarrow{\times
  \nabla} \bbA /\ideala$ dont l'image est incluse dans $\Ann(\ux \,;
\bbA /\ideala)$, donc d'un morphisme
$$
\bbA/\idealx \xrightarrow{\times\nabla} \Ann(\ux\,;\bbA/\ideala)
$$
Dans l'étude qui suit, figure le résultat suivant:

\begin {theo} [Identification du morphisme pour $\ideala\subset\idealx$ et
$\ux$ complètement sécante]\leavevmode
\label{PointilleIdentification}
  
Sous les hypothèses du titre, la flèche (en pointillé) donnée par le théorème du
bicomplexe s'identifie\footnote{Pour parvenir à la multiplication par
$\nabla$, il faut préciser les différentielles des divers
complexes qui interviennent. C'est ce que nous avons fait depuis le début!
Si avions obtenu $-\nabla$ au lieu de
$\nabla$, en ce qui concerne le théorème de Wiebe, cela serait sans
importance, n'est-ce-pas? Il y a des auteurs pas très sérieux que nous
ne nommerons pas. A contrario, on peut se fier par exemple à Bourbaki,
A.10, Algèbre homologique. On y voit par exemple dans la section \S4.1,
une définition très précise du produit tensoriel $C\otimes C'$ de deux
complexes (celle que nous avons adoptée)
et l'énoncé (proposition 2) suivant: $C\otimes C' \simeq C'\otimes C$
via $x \otimes x' \mapsto (-1)^{pq}\,x'\otimes x$ pour
$x \in C_p$, $x' \in C'_q$.} au morphisme ci-dessus noté~$\times\nabla$,
en rappellant que $\nabla = \det(V)$ où $V$ certifie
l'inclusion $\ideala\subset\idealx$.
\end {theo}

\subsubsection*{L'isomorphisme $T_\sbullet(\ua,\ux)\simeq\rmK^{2n-\sbullet}(\ua,\ux)$,
les suites $\ua,\ux$ étant quelconques de même longueur~$n$}

Nous allons utiliser l'isomorphisme de complexes
$\rmK^\sbullet(\ua)\otimes\rmK^\sbullet(\ux)\simeq\rmK^\sbullet(\ua,\ux)$
mentionné en \ref{EncadrePrecision} (page~\pageref{EncadrePrecision}).
Le second modèle, qui ne fait plus référence au produit tensoriel, est
le brave complexe de Koszul montant de la suite~$(\ua,\ux)$ de longueur
$2n$, complexe directement basé sur l'algèbre extérieure. Par
exemple, les différentielles (de degré $1$) sont les multiplications à gauche
$(a+x) \wedge\sbullet$ par le vecteur $a+x \in \rmK^1(\ua)\oplus\rmK^1(\ux)$.
Certains calculs sont parfois plus simples à réaliser dans ce second
modèle.

\medskip

Cet isomorphisme s'applique au complexe total
$T_\sbullet=T_\sbullet(\ua,\ux)$ de $C_{\sbullet,\sbullet}$.  Mais eu
égard au renversement initial $n-\sbullet$ dans la définition de
$C_{\sbullet,\sbullet}$, le complexe $T_\sbullet$ s'identifie au
complexe $\rmK^{2n-\sbullet}(\ua,\ux)$:
$$
T_\sbullet \overset{\rm can.}{\simeq} \rmK^{2n-\sbullet}(\ua,\ux)
$$
Il est indispensable de maitriser l'isomorphisme et
de ne pas trop privilégier un modèle à l'autre
car certaines primitives s'expriment plus facilement d'un côté
que de l'autre. Par exemple les projections de~$T_\sbullet$
sur le $X$-complexe ou le $Y$-complexe sont aisées
à définir sur le modèle initial par l'intermédiaire
des projections de $T_k=\bigoplus\limits_{p+q=k} C_{p,q}$
sur $C_{k,0}$ ou $C_{0,k}$.

\bigskip

Prenons l'exemple du degré homologique $n$ qui nous concerne tout particulièrement.
$$
T_n \overset{\rm def.}{=} \bigoplus_{p+q=n} C_{p,q}
\overset{\rm def.}{=} \bigoplus_{p+q=n} \rmK^{n-p}(\ua)\otimes \rmK^{n-q}(\ux)
\quad \overset{\rm can.}{\simeq}\quad
\rmK^n(\ua,\ux) = \BW^n\big(\rmK^1(\ua)\oplus \rmK^1(\ux)\big) 
$$
La séparation des $(p,q)$ en $(0,n)$, $(n,0)$ et autres
fournit une décomposition $T_n = C_{0,n}\oplus S\oplus C_{n,0}$. Il lui
correspond dans $\rmK^n(\ua,\ux)$ la somme directe des 3 sous-modules libres:
$$
\rmK^n(\ua) = \BW^n\rmK^1(\ua)
\qquad\quad
S' = \bigoplus_{p+q=n\atop p,q\ne 0} \BW^p\rmK^1(\ua) \ \wedge\ \BW^q\rmK^1(\ux)
\qquad\quad
\rmK^n(\ux) = \BW^n\rmK^1(\ux)
$$
Si nous notons $(e_1,\dots,e_n)$ la base canonique de $\rmK^1(\ua)$ et
$(f_1,\dots,f_n)$ celle de $\rmK^1(\ux)$:
$$
\rmK^n(\ua) = \bbA\,e_1\wedge\cdots\wedge e_n,
\qquad\quad
S' = \bigoplus_{\#I+\#J=n\atop I,J\ne\emptyset} \bbA\,e_I \wedge f_J,
\qquad\quad
\rmK^n(\ux) = \bbA\,f_1\wedge\cdots\wedge f_n
$$
Les deux projections de $T_n$ sur $C_{0,n}$ et sur $C_{n,0}$ s'identifient au 
deux projections:
$$
\xymatrix @C=0.4cm {
\BW^n\big(\rmK^1(\ua)\oplus\rmK^1(\ux)\big)
\ar@/^15pt/[r]^{p_\ua} \ar@/_15pt/[rrr]_{p_\ux}
\quad = &
\BW^n\rmK^1(\ua) &\oplus\quad S'\quad \oplus &
\BW^n\rmK^1(\ux)
}
$$
Avec un peu d'habitude, nous naviguerons librement entre le modèle
initial de $T_\sbullet$ et le modèle $\rmK^{2n-\sbullet}(\ua,\ux)$.
En guise de résumé, voici les projections $p_\ua : T_n \to C_{0,n}$
et $p_\ux : T_n \to C_{n,0}$ dans le second modèle de~$T_n$:
$$
C_{0,n} \overset{\rm def.}{=} \rmK^n(\ua) = \BW^n\rmK^1(\ua)
\qquad
C_{n,0} \overset{\rm def.}{=} \rmK^n(\ux) = \BW^n\rmK^1(\ux)
\qquad
T_n = \BW^n\big(\rmK^1(\ua)\oplus\rmK^1(\ux)\big)
$$
Pour $r_1, \dots, r_n \in \rmK^1(\ua)$ et $s_1, \dots, s_n \in \rmK^1(\ux)$, l'élément
$\bfu = (r_1+s_1) \wedge \cdots\wedge(r_n+s_n)$ de $T_n$ a pour projections:
$$
p_\ua(\bfu) = r_1 \wedge\cdots\wedge r_n, \qquad\qquad
p_\ux(\bfu) = s_1 \wedge\cdots\wedge s_n
$$

\subsubsection*{Stratégies pour la preuve du théorème~\ref{PointilleIdentification}
et du théorème de Wiebe}

Pour venir à bout de ce théorème, nous allons étudier deux approches
présentant un certain nombre de points communs.

\begin {enumerate}
\item
Une approche relativement concrète et précise, qui fait l'objet de la proposition suivante. 

\item
Une approche plus abstraite du type ``reculer pour mieux sauter'' qui
repose sur l'aspect fonctoriel d'une certaine transformation. Cette
approche, en certain sens plus générale, fait intervenir trois suites
$\ua', \ua, \ux$ de même longueur $n$, la \emph {seule} hypothèse
étant $\langle\ua\rangle \subset\langle\ua'\rangle$ certifiée par une
matrice $V \in \bbM_n(\bbA)$ transformant $\ua'$ en $\ua$.  La
transformation fonctorielle en question, portée par $(\ua',V)$, est
un morphisme de bicomplexes:
$$
C_{\sbullet,\sbullet}(V) :
C_{\sbullet,\sbullet}(\ua',\ux) \to  C_{\sbullet,\sbullet}(\ua,\ux)
\qquad\qquad
\text{où $\ua := \ua'. \transpose{V}$}
$$

Elle induit des morphismes au niveau des complexes totaux et des
$X$-complexes et $Y$-complexes que nous étudierons en degré
homologique~$n$.  Ce n'est qu'à la fin de l'étude que nous récolterons
une \fbox {version affaiblie} du théorème~\ref{PointilleIdentification}
en prenant $\ua' = \ux$ et en supposant $\ux$ complètement sécante.
Cette version affaiblie nous suffira à montrer le théorème de Wiebe.
\end {enumerate}

\medskip

Voici l'énoncé du résultat concret qui précise le théorème~\ref{PointilleIdentification}.
La preuve est reportée en page~\pageref{ConcreteSolutionProof}.

\begin {prop} [Approche dite concrète]
\leavevmode
\label {ConcreteSolution}

Le contexte est le suivant: une inclusion $\ideala\subset\idealx$
certifiée par $V = (v_{ij})_{i,j} \in \bbM_n(\bbA)$ vérifiant:
$$
\begin{bmatrix}a_1\\ \vdots\\ a_n\\ \end{bmatrix} =
V\begin {bmatrix} x_1\\ \vdots\\ x_n\end{bmatrix}
$$
On note $\bfe = (e_1, \dots, e_n)$ la base canonique de $\rmK^1(\ua)$
et $\bff = (f_1, \dots, f_n)$ celle de $\rmK^1(\ux)$. En posant
$$
a = \sum_i a_ie_i \in \rmK^1(\ua), \qquad\qquad  x = \sum_j x_jf_j \in \rmK^1(\ux)
$$
on voit $V$ comme la matrice d'une application linéaire $v : \big(\rmK^1(\ux),\bff)
\to \big(\rmK^1(\ua),\bfe\big)$ vérifiant $v(x) = a$.

\begin {enumerate} [\rm i)]
\item    
Il existe $\bfw \in T_n $, cycle de $T_\sbullet$, dont les projections
sur $C_{0,n}$ et $C_{n,0}$ sont les suivantes:
$$
\xymatrix {
C_{0,n} &T_n \ar[l]\ar[d]\\
       &C_{n,0}\\
}
\qquad\qquad
\xymatrix @C = 0.2cm{
\det(V) \times e_1\wedge\cdots\wedge e_n&\bfw\ar[l]\ar[d] \\
                                          &f_1\wedge\cdots\wedge f_n \\
}                                               
$$
Le ``Il existe'' est tout ce qu'il y a de plus explicite puisque dans
le modèle $\rmK^{2n-\sbullet}(\ua,\ux)$ de $T_\sbullet$:
$$
\bfw = (v(f_1)+f_1)\wedge \cdots \wedge (v(f_n)+f_n) \in \BW^n\big(\rmK^1(\ua)\oplus\rmK^1(\ux)\big)
$$
On suppose de surcroît \fbox{$\ux$ complètement sécante}.

\item
La projection $T_\sbullet \to X_\sbullet$ induit\footnote{Insistons encore
sur le fait que la preuve fournie
exploite de manière essentielle l'inclusion $\ideala\subset\idealx$
et n'utilise pas le théorème du bicomplexe.
Cependant, nous solliciterons ce dernier
pour justifier le fait que $T_\sbullet \to Y_\sbullet$
induit un isomorphisme en homologie lorsque $\ua$ est
complètement sécante.} un isomorphisme en homologie.  

\item
La flèche en pointillé transforme le générateur 
$f_1\wedge\cdots\wedge f_n \bmod\idealx$ de $\rmH_n(X_\sbullet)
\simeq \bbA/\idealx$ en
l'élément $\det(V)\times e_1\wedge\cdots\wedge e_n\bmod\ideala$
de $\rmH_n(Y_\sbullet) \simeq \Ann(\ux;\bbA/\ideala)$:
$$
\xymatrix {
\rmH_n(Y_\sbullet)  &\rmH_n(T_\sbullet) \ar[l]\ar[d]\\
                 &\rmH_n(X_\sbullet)\ar@{-->}[ul] \\
}
\qquad\qquad
\xymatrix {
\det(V)\times e_1\wedge\cdots\wedge e_n\bmod\ideala   &\bfw\ar[l]\ar[d] \\
                                           &f_1\wedge\cdots\wedge f_n \bmod\idealx \ar@{-->}[ul]
}                                               
$$
\end {enumerate}
\end {prop}

\subsubsection*{L'homologie en tout degré des complexes $X_\sbullet$ et $Y_\sbullet$
($\ua$, $\ux$ quelconques de même longueur~$n$)}

Malgré le fait que nous soyons essentiellement intéressés par les
complexes~$X_\sbullet$ et $Y_\sbullet$ en degré homologique~$n$, il
n'est pas difficile, puisque $X_\sbullet \simeq
\rmK^{n-\sbullet}\big(\ua\,;\bbA/\idealx\big)$ et $Y_\sbullet \simeq
\rmK^{n-\sbullet}\big(\ux \,;\bbA/\ideala\big)$, de préciser, sans
aucune hypothèse sur $\ua$ et $\ux$, le statut de leur homologie en
tout degré homologique~$k$.  Nous l'utiliserons pour montrer, lorsque
$\ideala \subset \idealx$ et $\ux$ est complètement sécante, que
$T_\sbullet \to X_\sbullet$ induit un isomorphisme en homologie en
nous passant du théorème du bicomplexe.

\begin{fact}[L'homologie du $X$-complexe et $Y$-complexe en tout degré homologique]
\leavevmode
\label {XYhomologie}

En tout degré homologique $k$, on dispose d'isomorphismes canoniques:
$$
\rmH_k(Y_\sbullet) \simeq \rmH^{n-k}(\ux;\bbA/\ideala),
\qquad
\rmH_k(X_\sbullet) \simeq \rmH^{n-k}(\ua;\bbA/\idealx)
$$
Puisque de manière générale, $\rmH^0(\ua;E) = \Ann(\ua;E)$, on
retrouve dans le cas $k=n$ pour lequel $n-k=0$, les égalités/isomorphies qui
précédent le fait \ref{XYhomologieDEGn}.
\end{fact}

Nous aurons besoin du lemme classique suivant relatif au complexe 
de Koszul montant.

\begin {lem} \leavevmode
\label {uaSUBux}
Soient deux suites de scalaires $\ua = (a_1, \dots, a_m)$ et
$\ux = (x_1, \dots, x_n)$ telles que $\ideala \subset \idealx$.
On dispose alors d'un isomorphisme canonique de complexes:
$$
\rmK^\sbullet(\ua,\ux) \simeq \BW^\sbullet(\bbA^m) \otimes \rmK^\sbullet(\ux)
$$
où $\BW^\sbullet(\bbA^m)$ est considéré comme un complexe à différentielles
nulles. En particulier, pour les termes des complexes:
$$
\rmK^k(\ua,\ux) \simeq \bigoplus_{i+j=k} \BW^i(\bbA^m) \otimes \rmK^j(\ux)
$$
Cet isomorphisme de complexes induit un isomorphisme de
$\bfA$-modules gradués:
$$
\rmH^\sbullet(\ua,\ux) \simeq \BW^\sbullet(\bbA^m) \otimes \rmH^\sbullet(\ux)
\qquad \text{qui se traduit en} \qquad
\rmH^k(\ua,\ux) \simeq \bigoplus_{i+j=k} \BW^i(\bbA^m) \otimes \rmH^j(\ux)
$$
\end {lem}  

On pourra en trouver une preuve dans Bruns-Herzog \cite [section I.6,
  proposition I.6.21]{BrunsHerzog} ou la thèse de C.~Tête en
\cite[chap. I, propositions 4.1 et 4.2]{Tete}.  Nous en indiquons le
principe.

\begin {proof} [Preuve (sketch)]

Elle consiste à remplacer la suite $\ua$ par la suite $\uzero_m$ constituée de $m$ zéros.
On introduit $a \in \bbA^m$, $x \in \bbA^n$,   
$W \in \bbM_{m,n}(\bbA)$ traduisant l'inclusion $\ideala\subset\idealx$
et $W' \in \bbM_{m+n}(\bbA)$:  
$$
a = \begin {bmatrix} a_1\\ \vdots\\ a_m \end{bmatrix},
\qquad
x = \begin {bmatrix} x_1\\ \vdots\\ x_n \end{bmatrix},
\qquad
a = W x,
\qquad
W' = \begin {bmatrix} \Id_m & W\\ 0_{n\times m} & \Id_n \end {bmatrix},
\qquad
W' \begin {bmatrix} 0_m\\ x \end {bmatrix} =  \begin {bmatrix} a\\ x \end {bmatrix}
$$
Cette matrice inversible $W'$ induit un isomorphisme de complexes
$\rmK^\sbullet(\uzero_m,\ux) \simeq \rmK^\sbullet(\ua,\ux)$. On termine
en utilisant les isomorphismes de complexes:
$$
\rmK^\sbullet(\uzero_m,\ux) \simeq \rmK^\sbullet(\uzero_m) \otimes \rmK^\sbullet(\ux)
\qquad\qquad
\rmK^\sbullet(\uzero_m) \simeq \BW^\sbullet(\bbA^m)
$$
\end {proof}

\begin {proof} [Preuve de la proposition \ref{ConcreteSolution}]  
\leavevmode
\label{ConcreteSolutionProof}

\noindent
Point i). Rappelons que
$$
C_{n,0} \overset{\rm def.}{=} \rmK^n(\ux) = \bbA.f_1 \wedge\cdots\wedge f_n,
\qquad
C_{0,n} \overset{\rm def.}{=} \rmK^n(\ua) = \bbA.e_1 \wedge\cdots\wedge e_n,
\qquad
T_n = \BW^n\big(\rmK^1(\ua)\oplus\rmK^1(\ux)\big)
$$
Notons $p_\ux : T_n \to C_{n,0}$ et $p_\ua : T_n \to C_{0,n}$ les projections.
Pour $r_1, \dots, r_n \in \rmK^1(\ua)$ et $s_1, \dots, s_n \in \rmK^1(\ux)$, l'élément
$\bfu = (r_1+s_1) \wedge \cdots\wedge(r_n+s_n)$ de $T_n$ a pour projections:
$$
p_\ua(\bfu) = r_1 \wedge\cdots\wedge r_n, \qquad\qquad
p_\ux(\bfu) = s_1 \wedge\cdots\wedge s_n
$$
En posant $\bfw = (v(f_1)+f_1)\wedge \cdots \wedge (v(f_n)+f_n)$, on a donc:
$$
p_\ua(\bfw) = v(f_1) \wedge\cdots\wedge v(f_n) = \det(V)\, e_1 \wedge\cdots\wedge e_n,
\qquad\qquad
p_\ux(\bfw) = f_1 \wedge\cdots\wedge f_n
$$
Reste à montrer que $\bfw$ est un cycle de $T_\sbullet = \rmK^\sbullet(\ua,\ux)$
i.e., par définition du complexe de Koszul montant
de la suite $(\ua\,\ux)$, que $(a+x) \wedge \bfw = 0$. Introduisons
l'endomorphisme~$v'$ de $\rmK^1(\ua) \oplus\rmK^1(\ux)$ défini par:
$$
v' =
\NorthEastBordermatrix {
\bfe   & \bff  &  \\
\Id_n  &  V    & \bfe \\
0      & \Id_n & \bff \\
}
$$
On a en particulier $v'(f_j) = v(f_j) + f_j$ de sorte que $v'(x) =
v(x) + x = a + x$. On peut exprimer $\bfw$ à l'aide de $v'$:
$$
\bfw = v'(f_1) \wedge \cdots \wedge v'(f_n)
$$
Puisque $v'(x) = a+x$, on doit démontrer que $v'(x) \wedge \bfw = 0$ c'est-à-dire:
$$
v'(x) \wedge v'(f_1) \wedge \cdots \wedge v'(f_n) = 0
\qquad \text{ou encore} \qquad
\BW^{n+1}(v')\,(x\wedge f_1\wedge\cdots\wedge f_n) = 0
$$
Mais $x$ est une combinaison linéaire des $f_j$ donc $x\wedge f_1\wedge\cdots\wedge f_n= 0$,
a fortiori l'égalité ci-dessus.

\bigskip
\noindent
Point ii)
Il s'agit de montrer que $\rmH(T_\sbullet) \to \rmH(X_\sbullet)$ est un isomorphisme.
Nous allons démontrer que, canoniquement\footnote{Un adverbe très en vogue dans cette
annexe.}, on a $\rmH_k(T_\sbullet) \simeq \rmH_k(X_\sbullet)$, 
en nous dispensant de vérifier que cet isomorphisme canonique est celui
induit par la projection.

$\blacktriangleright$
On s'occupe d'abord de $\rmH(T_\sbullet)$.
Puisque $\ideala\subset\idealx$, on dispose d'après le lemme~\ref{uaSUBux}
d'un isomorphisme de complexes:
$$
T_\sbullet = \rmK^{2n-\sbullet}(\ua,\ux) \simeq
\BW^{n-\sbullet}(\bbA^n) \otimes \rmK^{n-\sbullet}(\ux)
$$
où $\BW^{n-\sbullet}(\bbA^n)$ est considéré comme un complexe à différentielles
nulles. En homologie, il vient
$$
\rmH(T_\sbullet) = \rmH^{2n-\sbullet}(\ua,\ux) \simeq
\BW^{n-\sbullet}(\bbA^n) \otimes \rmH^{n-\sbullet}(\ux)
$$
En conséquence:
$$
\rmH_k(T_\sbullet) = \rmH^{2n-k}(\ua,\ux) \simeq
\bigoplus_{p+q=k}\BW^{n-p}(\bbA^n) \otimes \rmH^{n-q}(\ux)
$$
On utilise maintenant l'information $\ux$ complètement sécante.
On a donc $\rmH^{n-q}(\ux)=0$ pour $q\ne 0$ et
$\rmH^{n}(\ux) \simeq \bbA/\idealx$. Ci-dessus, il reste donc
le terme en $(p=k,q=0)$:
$$
\rmH_k(T_\sbullet) \simeq \BW^{n-k}(\bbA^n) \otimes \rmH^{n}(\ux) \simeq
\BW^{n-k}(\bbA^n) \otimes \bbA/\idealx
\leqno(\star)
$$

$\blacktriangleright$
On s'occupe de $\rmH(X_\sbullet)$. D'après le fait~\ref{XYcomplexes},
on a $X_\sbullet \simeq \rmK^{n-\sbullet}(\ua;\bbA/\idealx)$. Mais
$\ua.(\bbA/\idealx) = 0$ puisque $\ideala\subset\idealx$. D'après le
fait~\ref{aKillsE}, le complexe $X_\sbullet$ est à différentielles nulles donc
$$
\rmH_k(X_\sbullet) = \rmK_k(X_\sbullet) \simeq \rmK^{n-k}(\ua;\bbA/\idealx)
= \BW^{n-k}(\bbA^n) \otimes \bbA/\idealx
\leqno(\star\star)
$$
Il ne reste plus au lecteur qu'à constater que $(\star)$ et $(\star\star)$ coïncident.

\bigskip
\noindent
Point iii)
Découle des deux points précédents.
\end {proof}


\subsection {Transformations fonctorielles attachées à
$\rmK^\sbullet(\protect\ub,\protect\uc)\simeq\rmK^\sbullet(\protect\ub)\otimes\rmK^\sbullet(\protect\uc)$}

Nous rappelons que cet isomorphisme, noté $(\heartsuit)$ dans la
section~\ref{EncadrePrecision}, est un isomorphisme canonique de
complexes différentiels (ou d'algèbres graduées alternées) et que
$\ub, \uc$ sont deux suites de scalaires de longueurs quelconques).
Nous fournissons ici sans justification un certain nombre de transformations
fonctorielles attachées à cet isomorphisme.  Nous utilisons les
notations ci-dessous qui nous semblent adaptées au traitement en
cours, bien que plus générales. Dans la section suivante, nous
utiliserons ces transformations dans le cas d'égalités $n=n'=m=m'$;
elles seront parfois un tantinet obscurcies à cause du désagrément
provoqué par le renversement $n-\sbullet$.

\medskip

Soient $\ua$ (resp. $\ua'$) une suite de longueur~$m$ (resp. $m'$)
et $\ux$ (resp. $\ux'$) une suite de longueur~$n$ (resp.~$n'$).
Les isomorphismes canoniques sur la sellette sont:
$$
\rmK^\sbullet(\ua',\ux') \simeq \rmK^\sbullet(\ua')\otimes\rmK^\sbullet(\ux'),
\qquad\qquad
\rmK^\sbullet(\ua,\ux) \simeq \rmK^\sbullet(\ua)\otimes\rmK^\sbullet(\ux)
$$
Soient des matrices $V \in \bbM_{m,m'}(\bbA)$, $W \in \bbM_{n,n'}(\bbA)$ telles que
$$
\begin{bmatrix} a_1 \\ \vdots \\ a_m \end{bmatrix} =
V \begin{bmatrix} a'_1 \\ \vdots \\ a'_{m'} \end{bmatrix}
\qquad\qquad
\begin{bmatrix} x_1 \\ \vdots \\ x_n \end{bmatrix} =
W \begin{bmatrix} x'_1 \\ \vdots \\ x'_{n'} \end{bmatrix}
$$
On dispose ainsi d'un certain nombre de vecteurs dans les
divers $\rmK^1(\quad)$:
$$
a' \in \rmK^1(\ua'),\  a \in \rmK^1(\ua), 
\qquad
x' \in \rmK^1(\ux'),\  x \in \rmK^1(\ux),
\qquad
x' + a' \in \rmK^1(\ux',\ua') = \rmK^1(\ux')\oplus\rmK^1(\ua')
\quad \text{etc.}
$$
et d'applications linéaires:
$$
v : \rmK^1(\ua') \to \rmK^1(\ua), \ v(a') = a,
\qquad
w : \rmK^1(\ux') \to \rmK^1(\ux), \ w(x') = x,
\qquad
v \oplus w  : \rmK^1(\ua',\ux') \to \rmK^1(\ua,\ux)
$$
L'application linéaire $v$ se prolonge en un morphisme de complexes
$\BW^\sbullet : \rmK^\sbullet(\ua') \to \rmK^\sbullet(\ua)$. Idem
pour les autres. On dispose alors d'une identification
$$
\BW^\sbullet(v) \otimes \BW^\sbullet(w) \simeq \BW^\sbullet(v \oplus w) 
$$
Ce qui signifie que le diagramme suivant est commutatif:
$$
\xymatrix @C = 3.5cm {
\rmK^r(\ua',\ux')\ar[d]|\simeq\ar[r]^-{\BW^r(v \oplus w)}
      &\rmK^r(\ua,\ux) \ar[d]|\simeq
\\
\big(\rmK^\sbullet(\ua')\otimes\rmK^\sbullet(\ux')\big)_r
\ar[r]_{\big(\BW^\sbullet(v) \otimes \BW^\sbullet(w)\big)_r} &
\big(\rmK^\sbullet(\ua)\otimes\rmK^\sbullet(\ux)\big)_r
\\
}
$$
où la flèche horizontale basse est bien entendu, en posant
$u_{p,q} = \BW^p(v) \otimes \BW^q(w)$:
$$
\big(\BW^\sbullet(v) \otimes \BW^\sbullet(w)\big)_r =
\bigoplus_{p+q=r} u_{p,q} :
\xymatrix {
\displaystyle \bigoplus_{p+q=r}\rmK^p(\ua')\otimes\rmK^q(\ux') \ar[r] &
\displaystyle \bigoplus_{p+q=r}\rmK^p(\ua)\otimes\rmK^q(\ux)
}
$$
Nous utiliserons le cas particulier où $\ux' = \ux$ et $w$ est
l'identité que nous écrivons $w = \Id_\ux$.
Le diagramme précédent s'écrit alors:
$$
\xymatrix @C = 3.5cm {
\rmK^r(\ua',\ux)\ar[d]|\simeq\ar[r]^-{\BW^r(v \oplus \Id_\ux)}
       &\rmK^r(\ua,\ux) \ar[d]|\simeq
\\
\big(\rmK^\sbullet(\ua')\otimes\rmK^\sbullet(\ux)\big)_r
\ar[r]_{\big(\BW^\sbullet(v) \otimes \BW^\sbullet(\Id_\ux)\big)_r} &
\big(\rmK^\sbullet(\ua)\otimes\rmK^\sbullet(\ux)\big)_r
\\
}
$$

\subsection {La transformation $C_{\sbullet,\sbullet}(\protect\ua',\protect\ux) \to
C_{\sbullet,\sbullet}(\protect\ua,\protect\ux)$ associée à $V \in \bbM_n(\bbA)$
où $\protect\ua=\protect\ua'.\transpose{V}$}

Le contexte de cette section est le suivant: trois suites de scalaires de même longueur $n$
$$
\ua' = (a'_1, \dots, a'_n), \quad
\ua = (a_1, \dots, a_n), \quad
\ux = (x_1, \dots, x_n), \quad
\text{avec la relation d'inclusion $\ideala \subset \idealap$}
$$
Pour des raisons structurelles, nous introduisons les vecteurs $a$ et $a'$ et
nous certifions l'inclusion $\ideala \subset \idealap$ en colonnes via
une matrice $V \in \bbM_n(\bbA)$:
$$
a = \begin{bmatrix} a_1 \\ \vdots \\ a_n  \end{bmatrix} \in \rmK^1(\ua),
\qquad\quad
a' = \begin{bmatrix} a'_1 \\ \vdots \\ a'_n  \end{bmatrix} \in \rmK^1(\ua'),
\qquad\quad
a = V.a'
$$
Cela nous permet de voir $V$ comme une application linéaire $v :
\rmK^1(\ua') \to \rmK^1(\ua)$ vérifiant $v(a') = a$.  Nous constatons
ainsi que $a$ est porté par $(a',v)$ puisque $a = v(a')$.  Nous allons
voir que ce sont les propriétés fonctorielles induites par $v$ qui
vont nous permettre de montrer une \fbox {version affaiblie} du
théorème~\ref{PointilleIdentification} en enrichisant le
résultat~\ref{XYhomologieDEGn} de la manière suivante:

\begin {theo}
\label {Vfonctorialite}
Lorsque la suite $\ux$ est complètement sécante, le théorème du bicomplexe 
fournit un diagramme commutatif en cohomologie Koszul :
$$
\xymatrix@M=0.5pc @C=4pc{  
\Ann\big(\ua' \, ; \bbA/\idealx \big)\ar@[blue][d]_-{\times 1}   \ar@{-->}[r] 
&  \Ann\big(\ux \, ; \bbA/\idealap \big)\ar@[blue][d]^-{\times \det(V)} \\
\Ann\big(\ua \, ; \bbA/\idealx \big)   \ar@{-->}[r] & \Ann\big(\ux \, ; \bbA/\ideala \big) 
}
$$
Si de plus $\ua'$ (resp. $\ua$) est complètement sécante, la flèche horizontale pointillée
haute (resp. basse) est un isomorphisme.
\end {theo}

En apparence, nous avons doublé le mystère des flèches en pointillé:
l'horizontale basse (initiale) et et l'horizontale haute de même
nature. Mais elles sont reliées par la commutativité du diagramme et
par la connaissance des flèches verticales, ce qui change tout!  Nous
en tirons tout de suite le bénéfice et reportons la preuve de ce théorème
à la suite.

\subsubsection*{Preuve d'une version affaiblie du théorème \ref{PointilleIdentification} et
preuve du théorème de Wiebe.}

$\blacktriangleright$
Reprenons le contexte du théorème \ref{PointilleIdentification},
c'est-à-dire une inclusion $\ideala \subset \idealx$ certifiée par une
matrice $V \in \bbM_n(\bbA)$ avec $\ux$ complètement sécante.  Faisons
$\ua' = \ux$ dans le diagramme du théorème~\ref{Vfonctorialite} si
bien que \emph {l'horizontale haute} dans le diagramme ci-dessous est
un isomorphisme.
$$
\xymatrix @M=0.4pc @C=5pc{
\bbA /\idealx  \ar@{-->}[r]^-{\simeq}
\ar@<-2pt>[d]_{\times 1} 
& 
\bbA /\idealx
\ar@<-2pt>[d]^-{\times\det(V)} 
\\
\bbA/\idealx \ar@{-->}[r]
& 
\Ann\big(\ux \, ; \bbA/\ideala\big) 
}
$$
La commutativité du diagramme donne comme information le fait que la flèche pointillée
basse est $\times\det(V)$ à \fbox {multiplication près par un inversible}
de l'anneau $\bbA/\idealx$ de départ.

\medskip 
$\blacktriangleright$
Revenons au contexte du théorème de Wiebe, c'est-à-dire à une inclusion 
$\ideala \subset \idealx$ certifiée une matrice de déterminant~$\nabla$  avec 
$\ua$ complètement sécante.
Faisons $\ua' = \ux$ dans le diagramme du théorème~\ref{Vfonctorialite}.
Comme $\ua$ est supposée complètement sécante, la suite~$\ux$ l'est également 
si bien que les horizontales dans le diagramme ci-dessous sont des isomorphismes.
$$
\xymatrix @M=0.4pc @C=5pc{
\bbA /\idealx  \ar@{-->}[r]^-{\simeq}
\ar@<-2pt>[d]_{\times 1}^-{\simeq} 
& 
\bbA /\idealx
\ar@<-2pt>[d]^-{\times \nabla} 
\\
\bbA/\idealx \ar@{-->}[r]^-{\simeq}
& 
\Ann\big(\ux \, ; \bbA/\ideala\big) 
}
$$
Il ne reste plus qu'à récolter : la flèche verticale de droite 
$\bbA/\idealx \xrightarrow{\times \nabla} \Ann\big(\ux \,; \bbA/\ideala)$ est un isomorphisme. 
C'est exactement la reformulation du théorème de Wiebe.

\subsubsection*{Ce que l'application linéaire $v$ induit au niveau des
                complexes de Koszul de $\ua'$ et $\ua$}

Elle se prolonge à l'algèbre extérieure et, grâce à l'égalité $a' =
v(a)$, fournit le morphisme de complexes $\bigwedge^\sbullet v :
\rmK^\sbullet(\ua') \rightarrow \rmK^\sbullet(\ua)$
$$
\xymatrix @M=0.5pc @C=3pc{
\bbA/\idealap \ar[d]^-{\times\det(V)} & \rmK^n(\ua') \ar@{->>}[l] \ar[d]^-{\bigwedge^n v}
& \qquad \cdots \qquad \ar[l] & \rmK^1(\ua') \ar[l] \ar[d]^-{v} & \rmK^0(\ua') \ar[l] \ar[d]^-{\Id}\\
\bbA/\ideala & \rmK^n(\ua) \ar@{->>}[l] 
& \qquad \cdots \qquad \ar[l] & \rmK^1(\ua) \ar[l] & \rmK^0(\ua) \ar[l] \\
}
$$

\subsubsection*{Ce que l'application linéaire $v$ induit au niveau des bicomplexes $C_{\sbullet,\sbullet}$}

Nous définissons les bicomplexes $C'_{\sbullet, \sbullet} =
\rmK^{n-\sbullet}(\ua') \otimes \rmK^{n-\sbullet}(\ux)$ et
$C_{\sbullet, \sbullet} = \rmK^{n-\sbullet}(\ua) \otimes
\rmK^{n-\sbullet}(\ux)$ et nous identifions leurs complexes totaux
respectivement à $T'_\sbullet = \rmK^{2n-\sbullet}(\ua', \ux)$ et
$T_\sbullet = \rmK^{2n-\sbullet}(\ua, \ux)$.

\smallskip

En tenant compte du renversement $n-\sbullet$ (c'est un peu pénible),
le morphisme de complexes $\bigwedge^\sbullet v$ permet de définir un
morphisme au niveau des bicomplexes, morphisme que nous notons
$C_{\sbullet,\sbullet}(v)$:
$$
C_{\sbullet,\sbullet}(v) := \BW^{n-\sbullet}(v) \otimes \Id :
\xymatrix{
C'_{\sbullet, \sbullet} \ar@[blue][r] & C_{\sbullet, \sbullet}
} 
$$
Nous allons cerner les diverses projections de sources les complexes totaux. Nous introduisons:
$$
v\oplus\Id_\ux : \rmK^1(\ua')\oplus\rmK^1(\ux) \longrightarrow \rmK^1(\ua)\oplus\rmK^1(\ux)
$$
Et nous notons $T_k(v) : T'_k \to T_k$ le morphisme induit par $v$ au niveau
des complexes totaux:
$$
T'_k = \rmK^{2n-k}(\ua',\ux),\qquad   T_k = \rmK^{2n-k}(\ua,\ux), \qquad
T_k(v) : \xymatrix @C=2.5cm {T'_k \ar[r]^{\BW^{2n-k}(v\oplus\Id_\ux)} & T_k}
$$
Bien entendu, il y a eu identification
$$
\bigoplus_{p+q=k} \BW^{n-p}(v) \otimes \BW^{n-q}(\Id_\ux)
\qquad \longleftrightarrow \qquad
\BW^{2n-k}(v\oplus\Id_\ux)
$$
Nous obtenons ainsi, en préparation des projections sur le
$Y$-complexe (à gauche) et sur le $X$-complexe (à droite), des diagrammes
commutatifs:
$$
\xymatrix @R=1cm @C=3cm{
T'_k \ar[d]|{\text{projection}}\ar[r]^{T_k(v)} &T_k\ar[d]|{\text{projection}}
\\  
C'_{0,k} \ar[r]_{\BW^{n}(v)\otimes\BW^{n-k}(\Id_\ux)}       &C_{0,k}
}
\qquad\qquad
\xymatrix @R=1cm @C=3cm{
T'_k \ar[d]|{\text{projection}}\ar[r]^{T_k(v)} &T_k\ar[d]|{\text{projection}}
\\  
C'_{k,0} \ar[r]_{\BW^{n-k}(v)\otimes\BW^n(\Id_\ux)}       &C_{k,0}
}
$$
Il est instructif d'y faire $k=n$, c'est le cas qui nous concerne tout particulièrement.
A gauche, l'horizontale basse \og sent\fg{} le déterminant $\det(V)$ puisqu'elle est définie par
$$
C'_{0,n}=\rmK^n(\ua'),\qquad C_{0,n}=\rmK^n(\ua),\qquad\qquad
\BW^n(v) : \rmK^n(\ua') \to \rmK^n(\ua)
$$
Tandis qu'à droite, il s'agit de l'identité qui \og sent\fg{} le 1:
$$
C'_{n,0} = C_{n,0} = \rmK^n(\ux), \qquad\qquad
\BW^n(\Id_\ux) : \rmK^n(\ux) \to \rmK^n(\ux)
$$
Revenons en degré homologique quelconque $k$.
On a vu (cf le fait~\ref{XYcomplexes} et avant)
que l'on passait de $C_{0,k}$ à $Y_k$ en divisant par l'idéal $\ideala$
et de $C_{k,0}$ à $X_k$ en divisant par l'idéal $\idealx$. Idem avec
les prime. Et que l'on dispose d'identifications canoniques
$$
Y'_k = \rmK^{n-k}(\ux;\bbA/\idealap),\quad Y_k = \rmK^{n-k}(\ux;\bbA/\ideala),
\qquad\qquad
X'_k = \rmK^{n-k}(\ua';\bbA/\idealx),\quad X_k = \rmK^{n-k}(\ua;\bbA/\idealx)
$$
Un schéma en guise de résumé:
$$
\xymatrix @M=0.5pc {
Y'_\sbullet = \rmK^{n-\sbullet}\big(\ux \,; \bbA/\idealap \big) & & 
T'_\sbullet \ar@{->>}[ll] \ar@{->>}[dl] \\
& X'_\sbullet = \rmK^{n-\sbullet}\big(\ua' \,; \bbA/\idealx \big) & \\
&& \\ 
Y_\sbullet = \rmK^{n-\sbullet}\big(\ux\,;\bbA/\ideala\big) \ar@{<-}@[blue][uuu]_{\times\det(V)} 
& & 
T_\sbullet \ar@{->>}[ll] \ar@{->>}[dl]
          \ar@{<-}@[blue][uuu]_{T_\sbullet(v) = \bigwedge (v^{2n-\sbullet}\oplus\Id)} \\
& X_\sbullet = \rmK^{n-\sbullet}\big(\ua \,; \bbA/\idealx \big)
          \ar@{<-}@[blue][uuu]_{\bigwedge^{n-\sbullet}(v)\,\otimes\,\Id_{\bbA/\idealx}} & \\
}
$$
On ne retient que le degré homologique $n$ (c'est-à-dire le degré cohomologique Koszul $0$). 
Voici ce qui se passe au niveau des axes des abscisses à gauche, et au niveau des ordonnées
à droite:
$$
\xymatrix@M=0.5pc @C=4pc{ 
X'_n = \rmK^0(\ua'\,;\bbA/\idealx)  \ar@[blue][d]_-{\times 1}  
& Y'_n = \rmK^0(\ux\,;\bbA/\idealap)  \ar@[blue][d]^-{\times\det(V)}  \\
X_n = \rmK^0(\ua\,;\bbA/\idealx)  & Y_n = \rmK^0(\ux\,;\bbA/\ideala) 
}
$$
Lorsque la suite $\ux$ est complètement sécante, le théorème du
bicomplexe fournit un diagramme commutatif au \emph{niveau homologique}:
$$
\xymatrix@M=0.5pc @C=4pc{  
\rmH_n(X'_\sbullet)=\Ann\big(\ua'\,;\bbA/\idealx\big)  \ar@[blue][d]_-{\times 1}\ar@{-->}[r] 
 &\rmH_n(Y'_\sbullet) = \Ann\big(\ux\,;\bbA/\idealap\big)  \ar@[blue][d]^-{\times\det(V)}
\\
\rmH_n(X_\sbullet) =
\Ann\big(\ua \, ; \bbA/\idealx \big)   \ar@{-->}[r] & \rmH_n(Y_\sbullet) =\Ann\big(\ux\,;\bbA/\ideala\big) 
}
$$
Si de plus $\ua$ et $\ua'$ sont complètement sécantes, les horizontales pointillées sont 
des isomorphismes. C'est l'énoncé du théorème \ref{Vfonctorialite}.

\subsection {Algèbres graduées alternées et produit tensoriel gauche}
\label{AlgGradAlt}

Nous avons mentionné en page~\pageref{EncadrePrecision} le complexe de Koszul $\rmK^\sbullet(\ub,\uc)$
comme autre modèle du produit tensoriel $\rmK^\sbullet(\ub)\otimes\rmK^\sbullet(\uc)$, les
deux étant reliés par un isomorphisme canonique noté~$(\heartsuit)$.

\smallskip

Si on veut faire savant, cet autre modèle est lié, pour deux
$\bbA$-modules quelconques $M,M'$, à l'isomorphisme canonique dans la
catégorie des algèbres graduées alternées, isomorphisme que nous
notons de la même manière:
$$
\BW^\sbullet(M\oplus M')  \overset{\rm can.}{\simeq}
\BW^\sbullet(M) \ \gotimes\ \BW^\sbullet(M')
\leqno(\heartsuit)
$$
Le produit tensoriel $\gotimes$ qui intervient est nommé ``produit
tensoriel gauche''\footnote{La traduction anglaise de Bourbaki utilise
``skew tensor product''. On peut se demander si gauche à la
signification ``contraire de droite'' ou bien la signification
``tordu''? Dit autrement, est-ce qu'il existe un produit tensoriel
droit dans une catégorie adéquate d'algèbres? } par Bourbaki
(Alg. III, \S4, n$^\circ$7, p. A~III.49).  La
définition d'algèbre graduée alternée (\S4, n$^\circ$9, définition 7,
p. A~III.53) est moulée sur celle de l'algèbre extérieure (toute
algèbre extérieure en est donc une!). Nous la donnons ci-après.

\smallskip

Comme l'algèbre extérieure est l'objet de base sur lequel est monté le
complexe de Koszul, il est bon de rappeler la propriété universelle de
$\BW^\sbullet(M)$ où $M$ est un $\bbA$-module quelconque. Pour
n'importe quelle $\bbA$-algèbre $E$ et application $\bbA$-linéaire
$\varphi : M \to E$ vérifiant $\varphi(x)^2 = 0$ pour tout $x\in M$,
il existe un unique morphisme de $\bbA$-algèbres $\widetilde\varphi :
\BW^\sbullet(M) \to E$ qui prolonge $\varphi$. C'est essentiellement
sur cette propriété que repose, pour une application linéaire $v :
M_1\to M_2$, le fondement de $\BW^\sbullet(v) : \BW^\sbullet(M_1) \to
\BW^\sbullet(M_2)$.

\begin {defn} [Algèbre graduée alternée] \leavevmode

Une $\bfA$-algèbre graduée alternée $E$ est une $\bfA$-algèbre $\bbN$-graduée
vérifiant les deux propriétés
$$
\left\{
\begin {array}{ll}
x.y = (-1)^{\deg x\deg y}\, y.x &\text{pour tous éléments homogènes $x,y$}
\\
x^2 = 0  &\text{pour tout élément homogène $x$ de degré \emph{impair}}
\\
\end {array}
\right.
$$
\end {defn}

Le produit tensoriel gauche~$E \gotimes E'$ de deux telles algèbres
est défini de la manière suivante pour des éléments homogènes $x,y \in
E$, $x',y' \in E'$
$$
(x\otimes x'). (y \otimes y') = (-1)^{\deg x'\deg y}\, xy\otimes x'y'
$$
Pour disposer de cette définition écrite noir sur blanc, il est préférable
d'aller voir à l'intérieur de la preuve de la proposition 14: cela
dispense de lire la totale sur les facteurs de commutation. Cette
proposition~14 est celle qui prouve que le produit tensoriel gauche
de deux algèbres graduées alternées est une algèbre graduée alternée:
normal, tout a été élaboré pour qu'il en soit ainsi.

\index{produit tensoriel!gauche (algèbres graduées alternées)}%

\smallskip

Notons que dans la définition, la clause ``degré \emph{impair}'' est
capitale comme on le voit sur l'exemple suivant: $(e_1 \wedge e_2 + e_3\wedge
e_4)^2 = 2 e_1\wedge e_2\wedge e_3\wedge e_4$ où les $(e_i)$ forment
la base canonique de $\bbA^4$.

\medskip

Si on veut y comprendre quelque chose, il faut penser à l'algèbre
extérieure $\BW^\sbullet(M\oplus M')$ et au fait que $M,M'$ se
plongent dans $M\oplus M'$.  Supposons un instant, histoire de montrer
que la structure $\BW^\sbullet(M\oplus M')$ est \og pilotée\fg{} par
celles de $\BW^\sbullet(M)$ et $\BW^\sbullet(M')$, disposer d'éléments
homogènes $\bfu, \bfv$ dans $\BW^\sbullet(M)$ et $\bfu', \bfv'$ dans
$\BW^\sbullet(M')$ et qu'on veuille ramener le produit $(\bfu \wedge
\bfu') \wedge (\bfv \wedge \bfv')$ à un produit de la forme
$\text{truc} \wedge \text{truc'}$ avec $\text{truc} \in
\BW^\sbullet(M)$ et $\text{truc'} \in \BW^\sbullet(M')$.  Il suffit
pour cela d'utiliser la règle de commutation:
$$
(\bfu \wedge \bfu') \wedge (\bfv \wedge \bfv') =
(-1)^{\deg\bfu'\deg\bfv}
(\bfu \wedge \bfv) \wedge (\bfu' \wedge \bfv') 
$$
C'est sans doute ce petit calcul extérieur qui est à l'origine du produit
tensoriel gauche de deux algèbres graduées alternées.

\subsubsection*{Propriété universelle du produit tensoriel gauche}

C'est le troisième point de la proposition 10 de Bourbaki
(Alg. III, \S4, n$^\circ$7, p. A~III.47).  Soit $F$ une $\bbA$-algèbre
et deux morphismes d'algèbres $f : E \to F$, $f' : E' \to F'$ vérifiant:
$$
f(x)f'(x') = (-1)^{\deg x \deg x'} f'(x')f(x)
\qquad
\text{pour $x\in E$, $x'\in E'$ homogènes}
$$
Alors, en définissant $\iota : E \to E\gotimes E'$ par $x \mapsto x\otimes 1$ et
$\iota' : E' \to E\gotimes E'$ par $x' \mapsto 1\otimes x'$, il existe un et
un seul morphisme $E\gotimes E' \to F$ qui fait commuter le diagramme suivant:
$$
\xymatrix {
E\ar[dr]_\iota\ar[drrr]^f \\
   &E\gotimes E' \ar@{-->}[rr] && F \\  
E'\ar[ur]^{\iota'}\ar[urrr]_{f'} \\
}
$$

\subsubsection*{Quelques considérations informelles}

Nous allons d'abord examiner de manière informelle
la composante homogène de degré~$r$ en changeant provisoirement le
nom du module $M'$ en $N$:
$$
\BW^r(M \oplus N) \overset{\rm can.}{\simeq}
\bigoplus_{i+j=r} \BW^i(M) \otimes \BW^j(N)
\leqno(\heartsuit_r)
$$
Il est indispensable de rendre opérationnel ce $\overset{\rm
  can.}{\simeq}$.  
Histoire de se
rassurer, lorsque $M,N$ sont libres de dimensions respectives $m,n$,
en égalant les dimensions des modules libres de part et d'autre de
l'isomorphisme, on en tire:
$$
\binom{m+n}{r} = \sum_{i+j=r} \binom{m}{i}\binom{n}{j}
\qquad\text{que l'on peut obtenir via}\qquad
(X+Y)^{m+n} = (X+Y)^m (X+Y)^n
$$
D'ailleurs, l'isomorphisme canonique $(\heartsuit_r)$
peut être rapproché de la structure de l'ensemble des parties de cardinal donné
d'un ensemble. Ci-dessous, $\bfm, \bfn$ sont deux ensembles disjoints,
$\bfm \sqcup \bfn$ désigne leur réunion (disjointe) et 
$\calP_i(\bfm) \veebar \calP_j(\bfn)$ l'ensemble des
$X \sqcup Y$ avec $X \in \calP_i(\bfm)$ et $Y\in \calP_j(\bfn)$:
$$
\calP_r(\bfm \sqcup \bfn) = \bigsqcup_{i+j=r} \calP_i(\bfm)\veebar\calP_j(\bfn)
$$
Revenons aux modules en supposant $M$ libre de dimension $m$ avec une base $(e_1, \dots, e_m)$
et $N$ libre de dimension $n$ avec une base $(f_1, \dots, f_n)$.  Pour
$I \subset \{1..m\}$ de cardinal $i$ et $J \subset \{1..n\}$ de cardinal
$j$ avec $i+j = r$, l'isomorphisme canonique $(\heartsuit_r)$ fait correspondre
$e_I\wedge f_J$ et $e_I\otimes f_J$.  On obtient ainsi une base
$(e_I \wedge f_J)_{\#I+\#J=r}$ de $\BW^r(M \oplus N)$ adaptée à la
somme directe intervenant dans cet isomorphisme canonique.
D'ailleurs, en posant $\bfm = \{1..m\}$, en indexant la base de $N$
par $\bfn := \{m+1..m+n\}$ au lieu de $\{1..n\}$ et en la notant
$(e_{m+1}, \dots, e_{m+n})$, on obtient $e_I \wedge e_J = e_{I \sqcup J}$.
D'où le rapprochement avec l'ensemble des parties.

\subsubsection*{Une manière formelle de définir $(\heartsuit)$ et son inverse.}

$\blacktriangleright$
Dans le sens $\BW^\sbullet(M\oplus M') \leftarrow \BW^\sbullet(M)\gotimes\BW^\sbullet(M')$.
Notons $\iota : M \to M\oplus M'$ et $\iota' : M \to M\oplus M'$ les injections
canoniques. Elles donnent naissance aux deux morphismes:
$$
\BW^\sbullet(\iota) : \BW^\sbullet(M) \to \BW^\sbullet(M\oplus M')
\qquad\qquad
\BW^\sbullet(\iota') : \BW^\sbullet(M') \to \BW^\sbullet(M\oplus M')
$$
En appliquant la propriété universelle du produit tensoriel gauche à ces deux morphismes,
on obtient un morphisme dans le sens désiré. Il réalise:
$$
\BW^\sbullet(\iota)(\bfu)\ \wedge\ \BW^\sbullet(\iota')(\bfv)\
\longleftarrow
\bfu \otimes \bfv 
$$
De manière un peu plus légère, dans l'isomorphisme~$(\heartsuit_r)$,
pour $i+j=r$, la flèche $\BW^r(M\oplus M') \leftarrow \BW^i(M)\otimes\BW^j(M')$
est fournie par:
$$
u_1 \wedge\cdots\wedge u_i\ \wedge\ v_1 \wedge\cdots\wedge v_j
\longleftarrow
(u_1 \wedge\cdots\wedge u_i) \otimes (v_1 \wedge\cdots\wedge v_j) 
$$
I.e. pour $\bfu\in\BW^i(M)$ et $\bfv \in \BW^j(M')$, l'isomorphisme
$(\heartsuit_r)$ réalise $\bfu\wedge\bfv \longleftrightarrow \bfu\otimes\bfv$.

\medskip

$\blacktriangleright$
Dans le sens $\BW^\sbullet(M\oplus M') \to
\BW^\sbullet(M)\gotimes\BW^\sbullet(M')$.  La composante homogène de
degré 1 de l'algèbre graduée alternée à l'arrivée est $(M\otimes \bbA)
\oplus (\bbA\otimes M')$, \emph {canoniquement} isomorphe à $M\oplus
M'$. Notons $\varphi : M \oplus M' \to (M\otimes \bbA) \oplus
(\bbA\otimes M')$ cet isomorphisme canonique. Pour $x+ x' \in M\oplus
M'$, comme $\varphi(x + x')$ est homogène de degré 1, on a $\varphi(x
+ x')^2 = 0$.  D'après la propriété universelle de l'algèbre
extérieure, $\varphi$ se prolonge de manière unique en un morphisme
d'algèbres $\widetilde\varphi : \BW^\sbullet(M\oplus M') \to
\BW^\sbullet(M)\gotimes\BW^\sbullet(M')$.

\smallskip

Il resterait bien sûr à vérifier que ces deux morphismes d'algèbres sont réciproques
l'un de l'autre.

\medskip

Il ne nous semble pas inutile d'apporter quelques détails sur l'injection linéaire $\varphi$
et son prolongement~$\widetilde\varphi$ à $\BW^\sbullet(M\oplus M')$.
$$
\varphi :
\begin {array}[t]{rcl}
M\oplus M' &\to & (M\otimes\bbA) \oplus (\bbA\otimes M') \ \subset\ \BW^\sbullet(M)\gotimes\BW^\sbullet(M')
\\ [2mm]  
\qquad
x + x' &\mapsto &\quad x\otimes 1 \ +\  1\otimes x'
\\
\end {array}
$$
Amusons-nous (sic) à (re)-vérifier $\varphi(x + x')^2 = 0$ pour tout $x+ x' \in M\oplus M'$.
$$
\begin {array}{ccl}
(x\otimes 1 + 1\otimes x')^2
&=& 
  (x\otimes 1).(x\otimes 1) + (x\otimes 1).(1\otimes x') +
  (1\otimes x').(x\otimes 1) + (1\otimes x').(1\otimes x')
 \\
&=& (x\wedge x)\otimes 1 + (x\otimes x' - x\otimes x') + 1\otimes(x'\wedge x')
\\
\end {array}
$$
Cela fait bien 0! Voici un exemple de ce que réalise le prolongement $\widetilde\varphi$ de $\varphi$:
$$
(x_1 + x'_1)\wedge (x_2+x'_2) \mapsto
(x_1\wedge x_2)\otimes 1 \ +\  (x_1\otimes x'_2 - x_2\otimes x'_1) \ +\  1\otimes (x'_1 \wedge x'_2)
$$
L'image par $\widetilde \varphi$ est une somme $c_{2,0} + c_{1,1} + c_{0,2}$ où chaque $c_{p,q}$ est bihomogène
de bidegré $(p,q)$. Plus généralement:
$$
\widetilde\varphi : \bigwedge_{i=1}^r (x_i + x'_i) \mapsto \sum_{p+q=r} c_{p,q}
$$
où chaque $c_{p,q}$ est bihomogène de bidegré $(p,q)$. En notant
$\fS'_p\subset \fS_r$ le sous-ensemble de cardinal $\binom{r}{p}$
constitué des permutations croissantes sur $\{1..p\}$ et sur $\{p+1..r\}$
(permutation $(p,q)$-shuffle):
$$
c_{p,q} = \sum_{\sigma\in\fS'_p} \varepsilon(\sigma)\,
(x_{\sigma(1)}\wedge\cdots\wedge x_{\sigma(p)}) \otimes (x'_{\sigma(p+1)}\wedge\cdots\wedge x'_{\sigma(r)}) 
$$
  
\subsubsection*{Back to Koszul}

Ce sont toutes ces considérations qui conduisent, en ce qui concerne
le complexe de Koszul montant $\rmK^\sbullet(a_1,\dots,a_n)$, à
prendre, en notant $a = \sum_i a_i e_i \in \bbA^n$, comme
différentielle $a\wedge\sbullet$ et non $\sbullet\wedge a$. Disons
plutôt que c'est ce côté \emph {gauche} que nous avons retenu et qui
nous semble présenter une certaine compatiblité avec la définition du
produit tensoriel de deux complexes (cf ci-après l'adéquation entre
différentielles).  En remarquant que cette différentielle est définie
à partir de la structure d'algèbre de $\BW^\sbullet(\bbA^n)$ (le
produit extérieur), lorsque l'on écrit par exemple que le complexe de
Koszul montant est canoniquement isomorphe au produit extérieur de
complexes de Koszul montants élémentaires:
$$
\rmK^\sbullet (a_1,\dots,a_n) \simeq \rmK^\sbullet(a_1)\otimes\cdots\otimes\rmK^\sbullet (a_n) 
$$
c'est sous-entendu que l'isomorphisme est un isomorphisme dans la catégorie des algèbres graduées
alternées et qu'à droite $\otimes$ désigne $\gotimes$. Et de ce fait, la \og structure différentielle montante\fg{}
suivra. I.e. l'isomorphisme canonique est de surcroît un isomorphisme de complexes (montants).

\smallskip

Une fois que l'on a opté pour $a\wedge\sbullet$ (produit
extérieur à gauche par $a$) comme différentielle sur le montant, on
est quasiment obligé (pour des raisons de dualité) de prendre
$\sbullet\intd a$ (produit intérieur droit par $a$) comme
différentielle sur le descendant.

\subsubsection*{Vérification de l'adéquation des différentielles énoncée
en page~\pageref{AdequationDifferentielles}}

\index{produit tensoriel!de complexes}%

Il faut vérifier l'adéquation entre la différentielle du produit tensoriel,
définie sur $\rmK^i(\ub) \otimes \rmK^j(\uc)$ par:
$$
\delta^\ub \otimes \Id + (-1)^i \Id\otimes \delta^\uc
$$
et la différentielle $\delta^{\ub,\uc}$ de $\rmK^{\ub,\uc}$ donnée par $(b+c)\wedge\sbullet$.

Soient $\bfu \in \BW^i \rmK^1(\ub)$ et $\bfv \in \BW^j\rmK^1(\uc)$,
en rappelant que $\bfu \otimes \bfv$ et $\bfu \wedge \bfv$ se
correspondent par l'isomorphisme $(\heartsuit)$:
$$
\begin {array} {rcl}
(b+c) \wedge (\bfu \wedge \bfv)
&=& b\wedge\bfu \wedge \bfv + c\wedge \bfu \wedge \bfv
\\
&=&
(b\wedge\bfu)\wedge\bfv + (-1)^i \bfu \wedge (c \wedge \bfv)
\\
&=&
\delta^\ub(\bfu) \wedge\bfv + (-1)^i \bfu \wedge \delta^\uc(\bfv)
\\
&=&
(\delta^\ub\otimes\Id)(\bfu \otimes \bfv) + (-1)^i (\Id\otimes\delta^\uc)(\bfu\otimes\bfv)
\\
\end {array}
$$
Bilan: dans~$(\heartsuit)$, la différentielle $\delta^{\ub,\uc}$ correspond au produit tensoriel
des différentielles $\delta^\ub$ et $\delta^\uc$.


\section{Annexe: le sous-module de Fitting $\FittVect_1(M) \subset M^\star$}

En~\ref{1FittingVectoriel}, nous avons énoncé le résultat suivant que
nous nous proposons de prouver. On rappelle que pour une application
linéaire $u : E \to F$ entre deux modules libres, on a défini (cf la
section~\ref {SectionDetCoRang1}), en posant $r = \dim F -1$, le
sous-module $\DVect_r(u) \subset F^\star$ comme l'image de
$\Im\BW^r(u)$ par un isomorphisme déterminantal
$\BW^r(F)\overset{\simeq}{\longrightarrow}F^\star$.  Ici, en notant
$\bff$ une orientation de~$F$, nous aurons un petit faible pour
l'isomorphisme déterminantal $\bfw \mapsto [\bfw \wedge \sbullet]_\bff$.

\index{isomorphisme déterminantal}%
\begin{theo}[Définition du sous-module de Fitting en \og co-rang\fg{} 1] 
\leavevmode

Soit $M$ un module de présentation finie vérifiant $\calF_0(M) = 0$. 
On considère $u : E \to F$ et $u' : E' \to F'$ deux présentations de $M$.
On pose $r = \dim F - 1$ et $r'= \dim F' - 1$.
Alors $\DVect_r(u)$ et $\DVect_{r'}(u')$ vus dans~$M^\star$ sont égaux. 

De manière plus formelle, 
on a l'égalité des images réciproques :
$(\transpose {\pi})^{-1} \big(\DVect_r(u)\big) 
\ = \ 
(\transpose {\pi'})^{-1}\big(\DVect_{r'}(u')\big)$.
$$
\vcenter 
{\xymatrix @R=2pt{
E\ar[r]^u    & F\ar@{->>}[dr]^{\pi}  \\
             &            &M \\
E'\ar[r]_{u'} & F'\ar@{->>}[ur]_{\pi'}  \\
}}
\hspace{3cm}
\vcenter 
{\xymatrix @R=2pt{
\llap{$\DVect_r(u) \ \subset \ $} F^\star \ar@{<-}[dr]^-{\transpose \pi} & \\
  & M^\star \\
\llap{$\DVect_{r'}(u') \ \subset \ $} F'^\star  \ar@{<-}[ur]_-{\transpose \pi'} & \\
}}
$$
Ce sous-module de $M^\star$, indépendant de la présentation, est noté $\FittVect_1(M)$.
\end{theo}


\noindent
\subsubsection*{Deux faits utilisés dans la preuve du théorème}

\noindent
$\bullet$
Fait 1 (lemme ensembliste dont la preuve ne pose pas de problème).

\begin{quote}
\it 
Supposons disposer du diagramme commutatif  \qquad 
$
\xymatrix @R=2pt @M=0.5pc{
\llap{$\mathscr A \ \subset \ $} A \ar@{^{(}->}[dd]_{\Phi} \ar@{<-}[dr]^-{\alpha} & \\
  & X \\
\llap{$\mathscr B \ \subset \ $} B \ar@{<-}[ur]_-{\beta} & \\
}
$

Si $\Phi$ est injective et $\mathscr B = \Phi(\mathscr A)$, 
alors ${\alpha}^{-1}(\mathscr A) \ = \ {\beta}^{-1}(\mathscr B)$.
\end{quote}

\bigskip

\noindent
$\bullet$ Fait 2

\begin{quote}
\it 
Soient $u : E \to F$ et $u' : E' \to F$ deux applications linéaires
vérifiant $\Im u' \subset \Im u$. 
Alors pour tout~$r$, on a $\DVect_r(u') \subset \DVect_r(u)$.
En particulier, si $u,u'$ ont même image, alors $\DVect_r(u') = \DVect_r(u)$.
\end{quote}

Justification. Soit $\bfw^\sharp$ une forme linéaire  de $\DVect_r(u')$. 
Le vecteur $\bfw$ est une combinaison linéaire de $y'_1 \wedge \cdots \wedge y'_r$ 
avec $y'_i \in \Im u'$.
Par hypothèse $\Im u' \subset \Im u$, donc $\bfw$ s'écrit comme combinaison linéaire 
de $y_1 \wedge \cdots y_r$ avec $y_i \in \Im u$, donc appartient à $\Im \bigwedge^r(u)$.
Ainsi $\bfw^\sharp$ est dans $\DVect_r(u)$.

\subsubsection*{La preuve du théorème}
\label{Preuve1FittingVectorielInvariance}

\noindent
L'hypothèse $\calF_0(M)$ se traduit par $\BW^{r+1}(u) = 0$ (idem avec prime).
On commence par prouver le théorème dans deux cas particuliers. 

\medskip

\textbf{\'Etape 1.} Modification élémentaire de la présentation par un module $L$ 
libre de dimension $\ell$.
$$
\vcenter 
{\xymatrix @C=1.2cm @R=2pt @M=0.5pc{
E\ar[r]^u    & F\ar@{->>}[dr]^{\pi}  \\
             &            & M \\
E \oplus L \ar[r]_{\widetilde u = u \oplus \id_L} & 
F \oplus L \ar[uu]_{\pi_F} \ar@{->>}[ur]_{0 \oplus \pi}  \\
}}
\hspace{3cm}
\vcenter 
{\xymatrix @C=1.2cm @R=2pt @M=0.5pc{
\llap{$\DVect_r(u) \ \subset \ $} F^\star \ar@{<-}[dr]^-{\transpose \pi} 
\ar@{^(->}[dd]_{\transpose \pi_F} & \\
  & M^\star \\
\llap{$\DVect_{r+\ell}(\widetilde u) \ \subset\ $}(F\oplus L)^\star
\ar@{<-}[ur]_-{\transpose (0 \oplus \pi)} & \\
}}
$$
La flèche descendante dans le diagramme de droite $\transpose \pi_F :
F^\star \to (F \oplus L)^\star$ est l'application qui consiste à
prolonger une forme linéaire sur $F$ par $0$ sur $L$ et est donc
injective.

\smallskip

Dans ces conditions, pour montrer que les modules $\DVect_r(u)$ et
$\DVect_{r+\ell}(\widetilde u)$ vus dans $M^\star$ sont égaux, il
suffit, d'après le fait~1, de montrer que $\transpose
\pi_F\big(\DVect_r(u)\big) = \DVect_{r+\ell}(\widetilde u)$.

Soit $[\bfw \wedge \sbullet]_\bff \in \DVect_r(u)$.
Prenons son image par $\transpose \pi_F$, ce qui revient à prolonger par $0$ sur $L$ cette forme linéaire.
Alors cette forme linéaire prolongée s'écrit $[\bfw \wedge \sbullet\wedge \bfl]_{\bff \wedge \bfl}$
où $\bfl$ est une orientation de $L$.
Cette dernière forme linéaire est donc dans $\DVect_{r+\ell}(\widetilde u)$.

Réciproquement, prenons une forme linéaire du type $\widetilde \bfw^\sharp$ 
avec $\widetilde \bfw = \bigwedge^{r+\ell}(\widetilde u)(x \oplus \bfl)$.
Ici on utilise l'hypothèse $\calF_0(M) = 0$ qui se traduit par
$\bigwedge^{r+1}(u) = 0$.
Comme de plus $\bigwedge^{\ell+1}(\id_L) = 0$, 
cette forme linéaire vaut $[\bfw \wedge \bfl \wedge \sbullet]_{\bff \wedge \bfl}$
où $\bfw = \bigwedge^{r}(u)(x)$.
Elle est donc égale au prolongement de $\pm [\bfw \wedge \sbullet]_\bff$.

\medskip

\textbf{\'Etape 2.}
On suppose disposer de deux présentations \og concordantes \fg{} dans le sens 
où il existe deux isomorphismes $\Phi$ et $\Phi'$ réciproques l'un de l'autre 
rendant commutatif le diagramme de gauche :
$$
\vcenter 
{\xymatrix @R=2pt @M=0.5pc{
G \ar[r]^v & H \ar@<-1ex>[dd]_{\Phi} \ar@{->>}[dr]^{\pi}  \\
             &            & M \\
G' \ar[r]_{v'} & H' \ar[uu]_{\Phi'} \ar@{->>}[ur]_{\pi'}  \\
}}
\hspace{3cm}
\vcenter 
{\xymatrix @R=2pt @M=0.5pc{
\llap{$\DVect_s(v) \ \subset \ $} H^\star \ar@{<-}[dr]^-{\transpose \pi} 
\ar[dd]_{\transpose \Phi'} & \\
  & M^\star \\
\llap{$\DVect_{s'}(v') \ \subset\ $} H'^\star
\ar@{<-}[ur]_-{\transpose \pi'} & \\
}}
\qquad
\begin {array}{c}
s = \dim H - 1 \\
\\
\\
s' = \dim H' - 1 \\
\end {array}
$$
Bien entendu, il suffit que $\Phi$ soit un isomorphisme et que
$\pi=\pi'\circ\Phi$. Nous allons montrer
$$
\DVect_s(v) = \DVect_{s'}(v') \qquad \text{vus dans $M^\star$}
$$
Il suffit, d'après le fait~1, de montrer que 
$\transpose \Phi'\big(\DVect_{s}(v)\big) = \DVect_{s'}(v')$
et pour cela, de prouver:
$$
\DVect_{s'}(\Phi \circ v) \ = \  \DVect_{s'}(v'),
\qquad\qquad
\transpose \Phi'\big(\DVect_s(v)\big)  \ = \  \DVect_{s'}(\Phi \circ v)
$$
L'égalité de gauche s'obtient en utilisant le fait~2 et $\Im(\Phi
\circ v) = \Im(v')$, égalité que nous justifions de la manière
suivante. Par commutation, on a $\Phi(\Ker \pi) \subset \Ker \pi'$ et
$\Phi'(\Ker \pi') \subset \Ker \pi$; puisque $\Phi$, $\Phi'$ sont
réciproques, il vient $\Phi(\Ker \pi) = \Ker \pi'$ c'est-à-dire
$\Phi(\Im v) = \Im v'$.

\smallskip
Pour vérifier l'égalité de droite, prenons une forme linéaire dans
l'ensemble de gauche de la forme $[\bfw \wedge
  \Phi'(\sbullet)]_{\bfh}$ où $\bfw \in \Im \bigwedge^s(v)$ et $\bfh$
une orientation de $H$ et prouvons qu'elle est dans l'ensemble de
droite (l'inclusion opposée s'en déduit en remplaçant $v$ par $\Phi'
\circ v$ et en appliquant $\transpose \Phi$ qui est bijectif).  Soit
$\bfh'$ une orientation de $H'$.  En utilisant $\bfw =
\big(\bigwedge^s \Phi' \circ \bigwedge^s \Phi \big)(\bfw)$, on obtient:
$$
\big[\bfw \wedge \Phi'(\sbullet)\big]_{\bfh}  \ = \
\det\nolimits_{\bfh',\bfh}(\Phi') \,\cdot\,
\big[\textstyle \bigwedge^s(\Phi)(\bfw) \wedge \sbullet\big]_{\bfh'}
$$
Comme $\Phi'$ est un isomorphisme, le déterminant ci-dessus est inversible, 
et donc la forme linéaire dont on est parti 
s'écrit bien $[\bfw' \wedge \sbullet]_{\bfh'}$ où $\bfw' \in \Im \bigwedge^s (\Phi \circ v)$.

\medskip

\textbf{Conclusion.}
Une fois ces deux cas particuliers assurés,
le lemme de Schanuel dans sa version améliorée (cf l'énoncé
au début du chapitre~\ref{ChapFittingVectoriel}) va nous
permettre de conclure en fournissant deux présentations
\og concordantes\fg{} au sens de l'étape~2.

Reprenons le contexte général du théorème à prouver. En posant
$\widetilde u = u \oplus \id_{F'}$ et $\widetilde u' = \id_F \oplus
u'$, le lemme de Schanuel fournit deux isomorphismes $\Phi,
\Phi'$ réciproques l'un de l'autre rendant commutatif:
$$
\xymatrix @R=2pt @M=0.5pc{
E \oplus F' \ar[r]^{\widetilde u} & F \oplus F' \ar@<-1ex>[dd]_{\Phi} \ar@{->>}[dr]^{\pi \oplus 0}  \\
             &            & M \\
F \oplus E' \ar[r]_{\widetilde u'} & F \oplus F' \ar[uu]_{\Phi'} \ar@{->>}[ur]_{0\oplus\pi'}  \\
}
$$
Avec $H = H' = F \oplus F'$, on peut appliquer l'étape 2 qui affirme
(en notant $f = \dim F$ et $f' = \dim F'$) :
$$
\DVect_{r+f'}(\widetilde u) \ = \ \DVect_{r' + f}(\widetilde u') 
\qquad \text{vus dans $M^\star$}
$$ 
Et maintenant, on utilise l'étape 1 pour les deux membres, d'où 
$$
\DVect_r(u) \ = \ \DVect_{r'}(u') 
\qquad \text{vus dans $M^\star$}
$$ 
ce qu'il fallait démontrer.


\section{Algèbre linéaire déterminantale}

\noindent
\fbox{\parbox{0.97\linewidth}{%
Dans cette annexe, on considère une application linéaire $u : E \to F$
où $E$, $F$ désignent des modules libres de rang fini.  On notera
$(e_j)$ une base ordonnée de $E$ et $(f_i)$ une base ordonnée de $F$.

\smallskip

Interviendra également un entier $r$.  On propagera les bases ci-dessus
aux puissances extérieures $\BW^r(E)$ et $\BW^r(F)$ en
$(e_J)_{\# J = r}$ et $(f_I)_{\# I = r}$
$$
e_J = \bigwedge_{j \in J} e_j,
\qquad\qquad
f_I = \bigwedge_{i \in I} f_i
$$
On réservera les lettres $J$ et $I$: $J$ désignera une partie de
cardinal $r$ de l'ensemble des indices de la base de $E$.  Idem pour
$I$ avec la base de $F$.

\medskip
Pour une telle partie $I$, on note:
$$
[\, \dots \ ]\strut_I : \overbrace{F \times \cdots \times F}^{r} \to \bfA
$$ 
la forme $r$-linéaire alternée qui à $(y_1, \dots, y_r)$ associe le
déterminant $[y_1, \dots, y_r ]\strut_I$ sur les lignes d'indices~$I$
des vecteurs $y_1, \dots, y_r$ de~$F$ exprimés dans la base $(f_i)$.
Autrement dit, $[y_1, \dots, y_r ]\strut_I$ est la composante de $y_1
\wedge \cdots \wedge y_r$ sur $f_I$.
}}

\bigskip

Dans cette annexe, nous fournissons, via l'algèbre extérieure, quelques
compléments d'algèbre linéaire qui mettent en jeu une application
linéaire $u : E \to F$ entre deux modules libres et font intervenir
ses mineurs et la factorisation de sa puissance extérieure ad-hoc.
Nous fournirons une généralisation de la formule de Cramer permettant
de retrouver certains ingrédients de~\cite[II, section
  5]{LombardiQuitte}, en particulier la fameuse \og navette
matricielle\fg{} $ABA = A$.  Nous compléterons également les résultats
de structure locale (au sens profondeur $\ge 2$) des modules de MacRae
de rang $c$ étudiés au chapitre~\ref{ChapFittingVectoriel}.  Nous
verrons comment l'angle de la factorisation, objet de
la proposition~\ref{FactorisationEtPgcdFort}, permet un traitement
efficace.

Enfin, comme corollaires de cette étude, nous fournirons les résultats
nécessaires au traitement des idéaux de factorisation d'un complexe
exact libre, celui concernant le ``next unexpected result'' de
Northcott (cf. le th.~\ref{RacineIdealFactorisation} de l'avant-dernière
section du chapitre~\ref{ChapStructureMultiplicative}).

\bigskip

Nous commençons par l'énoncé suivant ne faisant intervenir
ni mineurs ni factorisation.

\begin{prop}[Caractérisation des applications linéaires d'image facteur direct]\label{ImFacteurDirect}
On dispose de l'équivalence
\begin {enumerate}[\rm i)]
\item  
$\Im u$ est facteur direct dans $F$.

\item  
Il existe $v : F \to E$ tel que $u = u \circ v \circ u$. 
\end {enumerate}

\smallskip
Auquel cas, $\Ker u$ est facteur direct dans $E$.
\end {prop}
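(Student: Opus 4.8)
Pour $u : E \to F$ entre modules libres de rang fini, on a l'équivalence entre « $\Im u$ est facteur direct dans $F$ » et « il existe $v : F \to E$ tel que $u = u \circ v \circ u$ », avec comme conséquence que $\Ker u$ est alors facteur direct dans $E$.

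Je réfléchis à cet énoncé. C'est un fait classique d'algèbre linéaire : l'existence d'un "inverse généralisé" (ou pseudo-inverse au sens algébrique) $v$ vérifiant $uvu = u$ équivaut à ce que l'image soit facteur direct. En fait, la condition $uvu=u$ signifie que $uv : F \to F$ est un idempotent (car $(uv)(uv)u = u(vuv)u$... non, vérifions : $(uv)^2 = uvuv$ et $uvu = u$ donc $uvuv = uv$, oui $uv$ est idempotent) dont l'image est exactement $\Im u$.

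Voici le plan.

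Pour le sens ii) $\Rightarrow$ i), je poserais $p = u \circ v : F \to F$ et je vérifierais que $p$ est un projecteur. En effet $p^2 = uvuv = (uvu)v = uv = p$ en utilisant l'hypothèse $uvu = u$. Comme tout projecteur d'un module scinde ce module en somme directe de son image et de son noyau, on a $F = \Im p \oplus \Ker p$. Il reste à identifier $\Im p = \Im u$ : l'inclusion $\Im p = \Im(uv) \subset \Im u$ est immédiate, et l'inclusion opposée vient de ce que pour $x \in E$, $u(x) = u(v(u(x))) = p(u(x)) \in \Im p$, toujours grâce à $uvu = u$. Donc $\Im u = \Im p$ est facteur direct, complété par $\Ker p$.

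Pour le sens i) $\Rightarrow$ ii), je partirais d'une décomposition $F = \Im u \oplus G$ et je noterais $\pi : F \to \Im u$ la projection associée. Le point délicat, et c'est là que je m'attends à devoir travailler un peu, est de construire $v$ : il faut une section de $u$ au-dessus de son image. L'application $\overline u : E/\Ker u \to \Im u$ n'est pas a priori un isomorphisme de modules libres sans information supplémentaire. L'astuce est que $\Im u$ étant facteur direct dans le module libre $F$, il est projectif ; l'épimorphisme $E \twoheadrightarrow \Im u$ induit par $u$ admet donc une section $s : \Im u \to E$ vérifiant $u \circ s = \mathrm{id}_{\Im u}$. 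Je poserais alors $v = s \circ \pi : F \to E$. On vérifie $u \circ v \circ u = u \circ s \circ \pi \circ u = u \circ s \circ u$ (car $\pi$ est l'identité sur $\Im u \supset \Im u$, donc $\pi \circ u = u$) $= (u \circ s)\circ u$, et comme $u\circ s = \mathrm{id}$ sur $\Im u$, appliqué à $u(x) \in \Im u$ cela donne $u(x)$, d'où $u \circ v \circ u = u$.

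Enfin, pour la conséquence « $\Ker u$ facteur direct dans $E$ », je réutiliserais $v$ construit en ii). L'endomorphisme $q = v \circ u : E \to E$ est un projecteur : $q^2 = vuvu = v(uvu) = vu = q$. Son noyau $\Ker q$ contient $\Ker u$, et réciproquement si $q(x)=0$ alors $u(x) = uvu(x) = u(q(x)) = 0$, donc $\Ker q = \Ker u$. Ainsi $E = \Im q \oplus \Ker q = \Im q \oplus \Ker u$, ce qui établit que $\Ker u$ est facteur direct. Le seul véritable obstacle conceptuel est la projectivité de $\Im u$ dans le sens i) $\Rightarrow$ ii) ; tout le reste se ramène à la manipulation formelle de l'identité $uvu = u$ et à la correspondance standard entre projecteurs et décompositions en somme directe, que je considère comme acquise.
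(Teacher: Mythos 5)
Your proof is correct, and for ii)$\Rightarrow$i) and the final assertion it coincides with the paper's argument: $u\circ v$ and $v\circ u$ are idempotents whose image and kernel are exactly $\Im u$ and $\Ker u$. The one place where you diverge is the construction of $v$ in i)$\Rightarrow$ii). The paper exploits the freeness of $F$ directly: writing $\pi$ for a projector of $F$ with image $\Im u$, it chooses for each basis vector $f_i$ an element $x_i \in E$ with $u(x_i) = \pi(f_i)$ and defines $v$ by $v(f_i) = x_i$, so that $u \circ v = \pi$ by construction. You instead invoke the projectivity of $\Im u$ (as a direct summand of the free module $F$) to split the surjection $u \colon E \twoheadrightarrow \Im u$ by a section $s$, and set $v = s \circ \pi$, which again yields $u \circ v = \pi$. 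These are the same lifting argument applied at different spots: the paper lifts $\pi$ through $u$ (which needs $F$ projective), you split $u$ over its image (which needs $\Im u$ projective). The paper's version is marginally more elementary, since it never needs the notion of projective module or the splitting lemma, only a choice of preimages on a basis; yours is the formulation that survives verbatim when $F$ is merely projective rather than free of finite rank.
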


\begin {proof} \leavevmode

i) $\Rightarrow$ ii)
Le sous-module $\Im u$ étant facteur direct dans $F$, il existe un
projecteur $\pi$ de $F$ d'image~$\Im u$.  En conséquence, pour chaque
$f_i$, il existe $x_i \in E$ tel que $\pi(f_i) = u(x_i)$.  On définit
$v : F \to E$ par $v(f_i) = x_i$.  On a alors $u \circ v = \pi$. Donc
$u \circ v \circ u = \pi \circ u = u$.

ii) $\Rightarrow$ i)
L'endomorphisme $u \circ v : F \to F$ est un projecteur de $F$ de même
image que $u$.  Donc $\Im u$ est facteur direct dans $F$.

\smallskip
En ce qui concerne le noyau: de manière analogue à $\Im(u\circ v) = \Im(u)$,
on a $\Ker(u) = \Ker(v\circ u)$ et $v\circ u$
est un projecteur de $E$ de sorte que $\Ker(u)$ est facteur direct dans $E$.
\end {proof}

\subsubsection*{Structuration/organisation des mineurs: Cramer a encore frappé}

Nous avons commencé la première section du chapitre~\ref{ChapAC1} en
rappelant la formule dite de Cramer. Celle-ci fait intervenir une
matrice carrée en dimension $n$ et un vecteur additionnel de
$\bfA^n$. Si on y regarde de près, on voit que la matrice est de peu
d'importance, ce qui compte surtout ce sont ses $n$ colonnes. On peut
donc considérer qu'en donnée on dispose de $n+1$ vecteurs de $\bfA^n$
et que la formule de Cramer s'énonce ainsi, par exemple pour $n=3$
(en désignant par $\det$ le déterminant dans la base canonique):
$$
\det(y_1,y_2,y_3)\,y = \det(y,y_2,y_3)\,y_1 + \det(y_1,y,y_3)\,y_2 + \det(y_1,y_2,y)\,y_3
$$
Si on y regarde d'encore plus près, on constate que la raison profonde de
cette égalité réside dans la nullité du produit extérieur des $n+1$
vecteurs donnés de $\bfA^n$. Ce qui nous conduit à
l'énoncé\footnote{%
Selon le dictionnaire de l'Académie Française, ``scholie'', que l'on écrit aussi ``scolie'', désigne
en mathématiques ou logique, une remarque complémentaire
faite à propos d'un théorème, d'une proposition.
Pourquoi avoir utilisé ce mot dans un contexte qui n'est probablement pas approprié?
La raison en est simple: cela permet à l'auteur de le localiser parmi
les théorèmes, propositions, lemmes, faits: ici, c'est le
seul endroit où il est utilisé.
}
plus général suivant.

\begin {scholie} [Encore une formule de Cramer] \label {CramerScholie}

Etant donné un module libre $L$ et $r+1$ vecteurs $y, y_1, \ldots,
y_r$ de $L$, on dispose des équivalences:

\begin {enumerate}[\rm i)]
\item
$y \wedge y_1 \wedge \cdots \wedge y_r = 0$

\item
Pour toute forme $(r+1)$-linéaire alternée $g$ sur $L$, 
on a $g(y,y_1,\ldots,y_r) = 0$.

\item
Pour toute forme $r$-linéaire alternée $f$ sur $L$:
$$
f(y_1,\ldots,y_r)\,y = \sum_{i=1}^r\ f(y_1, \ldots, y_{i-1},\ y,\ y_{i+1}, \ldots, y_r)\, y_i
$$
\end {enumerate}
\end {scholie}

\begin {proof} \leavevmode

Via la correspondance biunivoque canonique entre 
formes linéaires sur~$\BW^m(L)$ et formes $m$-linéaires
alternées sur~$L$, on se permettra, dans la preuve de
la dernière implication, d'identifier une forme du
premier espace à une forme du second.

\medskip  
$\rm i) \Longleftrightarrow ii)$
Puisque $\BW^{r+1}(L)$ est un module libre, on a l'équivalence
$$
y \wedge y_1 \wedge \cdots \wedge y_r = 0  \iff
\big[\ \forall\nu \in \BW^{r+1}(L)^\star \quad \nu(y \wedge y_1 \wedge \cdots \wedge y_r) = 0\ \big]
$$
D'où le résultat en utilisant la correspondance biunivoque canonique évoquée ci-dessus.

\medskip  
$\rm ii) \implies iii)$
Soit $f$ une forme $r$-linéaire alternée sur $L$. Etant donnée une forme linéaire $\mu$ sur $L$,
on définit $\mu\wedge f : L^{r+1} \to \bfA$ via:
$$
\begin {array} {rcl}
(\mu\wedge f)(z_0, z_1,\ldots,z_r)
&=&
\sum_{i=0}^r (-1)^i \mu(z_i)\,f(z_1,\ldots,z_{i-1},z_{i+1},\ldots, z_r)
\\[0.3cm]
&=&
\mu(z_0)\,f(z_1,\ldots,z_r) - \sum_{i=1}^r \mu(z_i)\,f(z_1,\ldots,z_{i-1},\ z_0,\ z_{i+1},\ldots,z_r)
\\[0.3cm]
&=&
\mu(v) \quad\text{où}\quad
v = f(z_1,\ldots,z_r)\,z_0 - \sum_{i=1}^r f(z_1,\ldots,z_{i-1},\ z_0,\ z_{i+1},\ldots,z_r)\,z_i
\\
\end {array}
$$
On démontre classiquement, via la première expression, que $\mu\wedge f$ est une forme
$(r+1)$-linéaire alternée. Donc, par hypothèse, $(\mu\wedge f)(y, y_1,\ldots,y_r) =0$.
Ainsi, en notant
$$
w = f(y_1,\ldots,y_r)\,y - \sum_{i=1}^r\ f(y_1, \ldots, y_{i-1},\ y,\ y_{i+1}, \ldots, y_r)\,y_i
$$
on a $\mu(w) = 0$ pour tout $\mu \in L^\star$ d'où $w=0$, ce qui est le résultat à montrer.

\medskip  
$\rm iii) \implies ii)$
Par hypothèse, on a $(\mu\wedge f)(y,y_1,\ldots,y_r) = 0$ pour toute
forme $r$-linéaire alternée $f$ et toute forme linéaire $\mu$. Il
suffit donc de montrer que toute forme $(r+1)$-linéaire alternée est
une combinaison linéaire de formes $\mu\wedge f$. Modulo
l'identification mentionnée en début de preuve, il suffit de le voir
pour une base $(e^\star_I)_{\#I=r+1}$ de
$\BW^{r+1}(L^\star)\overset{\rm can.}{\simeq} \BW^{r+1}(L)^\star$ déduite
d'une base $(e_i)$ de $L$. Or, chaque~$e^\star_I$ est déjà de la forme
$\mu\wedge f$; par exemple pour $r=3$: $e^\star_{1234} = \mu\wedge f$
avec $\mu = e^\star_1$, $f=e^\star_{234}$.
\end {proof}

Venons-en à l'égalité $\calD_{r+1}(u) = 0$ qui évoque la nullité d'une
floppée de mineurs.  Il est parfois préférable de la structurer de la
manière suivante: on l'écrit de manière équivalente
$\BW^{r+1}(u) = 0$ et on la traduit en termes de relations
linéaires entre les colonnes d'une matrice $U$ de~$u$.  Par exemple
l'égalité $\BW^2(u) = 0$ est équivalente aux relations
$u_{i,j} U_k = u_{i,k} U_j$ pour toute paire $U_j$, $U_k$ de colonnes
et tout indice de ligne $i$.

Une conséquence de la formule générale de Cramer précédente est le corollaire
suivant qui explicite ces relations linéaires.

\begin{coro}[Une variante de Cramer]\label{ExtPowerNullity}
Supposons $\BW^{r+1}(u) = 0$.
Alors pour toute partie $I$ de cardinal $r$,
tous ${x_1, \dots, x_r \in E}$ et $x \in E$, on a :
$$
\big[u(x_1),\dots, u(x_r)\big]\strut_I \ 
u(x) 
\ =\ 
\sum_{\ell=1}^r \, 
\big[u(x_1), \dots, u(x_{\ell-1}), u(x), u(x_{\ell+1}), \dots, u(x_r)\big]\strut_I \ 
u(x_\ell)
$$
\end{coro}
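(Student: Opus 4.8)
Le r\'esultat \`a \'etablir est une cons\'equence directe de la scholie~\ref{CramerScholie} (la formule de Cramer g\'en\'erale), appliqu\'ee au module libre $L = F$ et \`a une forme $r$-lin\'eaire altern\'ee bien choisie. L'id\'ee est la suivante : je dispose de l'hypoth\`ese $\BW^{r+1}(u) = 0$, qui entra\^ine, pour tout $(r+1)$-uplet de vecteurs de la forme $\big(u(x), u(x_1), \dots, u(x_r)\big)$, la nullit\'e du produit ext\'erieur
$$
u(x) \wedge u(x_1) \wedge \cdots \wedge u(x_r) = 0.
$$
En effet, ce produit ext\'erieur n'est autre que l'image de $x \wedge x_1 \wedge \cdots \wedge x_r$ par $\BW^{r+1}(u)$, application qui est nulle par hypoth\`ese.

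\textbf{\'Etape principale.} Une fois cette nullit\'e acquise, je me place dans le cadre de la scholie~\ref{CramerScholie} avec les $r+1$ vecteurs $y = u(x)$ et $y_\ell = u(x_\ell)$ pour $1 \le \ell \le r$. Le point~i) de la scholie est exactement la relation $y \wedge y_1 \wedge \cdots \wedge y_r = 0$ que je viens d'\'etablir. J'applique alors l'\'equivalence i)~$\Longleftrightarrow$~iii) de la scholie, qui me fournit, pour \emph{toute} forme $r$-lin\'eaire altern\'ee $f$ sur $F$ :
$$
f\big(u(x_1), \dots, u(x_r)\big)\, u(x) = \sum_{\ell=1}^r f\big(u(x_1), \dots, u(x), \dots, u(x_r)\big)\, u(x_\ell),
$$
o\`u dans le $\ell$-i\`eme terme de la somme, $u(x)$ remplace $u(x_\ell)$ en position $\ell$.

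\textbf{Choix de la forme altern\'ee.} Il ne reste qu'\`a choisir $f = [\ \dots\ ]\strut_I$, la forme $r$-lin\'eaire altern\'ee qui \`a $(y_1, \dots, y_r)$ associe le mineur $[y_1, \dots, y_r]\strut_I$ (composante de $y_1 \wedge \cdots \wedge y_r$ sur $f_I$), comme d\'efini dans l'encadr\'e en t\^ete de cette annexe. Cette forme est bien $r$-lin\'eaire altern\'ee sur $F$, donc l'\'egalit\'e ci-dessus s'applique mot pour mot et donne pr\'ecis\'ement l'\'enonc\'e du corollaire. Aucune difficult\'e substantielle n'appara\^it ici : le seul point \`a v\'erifier soigneusement est la compatibilit\'e des notations, \`a savoir que $\BW^{r+1}(u) = 0$ \'equivaut bien \`a $\calD_{r+1}(u) = 0$ et que l'image d'un produit ext\'erieur de vecteurs de $E$ par $\BW^{r+1}(u)$ est le produit ext\'erieur des images, ce qui est la fonctorialit\'e de la puissance ext\'erieure. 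L'obstacle principal, s'il en est un, est purement notationnel : il faut s'assurer que le joker occupe la bonne position dans chaque terme de la somme et que les signes de la scholie (qui n'en fait pas appara\^itre, l'argument \'etant ins\'er\'e en position $\ell$ sans permutation suppl\'ementaire) co\"incident avec ceux attendus dans l'\'enonc\'e. Comme la scholie est formul\'ee \emph{sans} signe dans le membre de droite (l'argument $y$ est ins\'er\'e directement \`a la place de $y_\ell$), la transcription est imm\'ediate.
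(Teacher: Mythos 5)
Votre preuve est correcte et suit exactement la d\'emarche du texte : le corollaire y est pr\'esent\'e comme une cons\'equence imm\'ediate de la scholie~\ref{CramerScholie}, obtenue pr\'ecis\'ement comme vous le faites, en appliquant l'implication i)~$\Rightarrow$~iii) aux vecteurs $u(x), u(x_1), \dots, u(x_r)$ de $F$ (la nullit\'e du produit ext\'erieur r\'esultant de la fonctorialit\'e de $\BW^{r+1}$ et de l'hypoth\`ese $\BW^{r+1}(u)=0$) puis en sp\'ecialisant la forme $r$-lin\'eaire altern\'ee en $[\ \dots\ ]\strut_I$. Rien \`a ajouter.
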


\begin{prop}[Rang fort et image facteur direct]\label{RangFort}
On suppose $u$ de rang fort égal à~$r$ \idest{} $1 \in \calD_r(u)$ et
$\calD_{r+1}(u) = 0$.  Alors $\Im u$ est facteur direct dans $F$ (et
donc $\Ker u$ est facteur direct dans~$E$).
\end{prop}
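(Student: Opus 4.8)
The plan is to build a linear map $v : F \to E$ with $u \circ v \circ u = u$; by Proposition~\ref{ImFacteurDirect} this will show that $\Im u$ is a direct summand of $F$, and hence that $\Ker u$ is a direct summand of $E$. First I would record that the hypothesis $\calD_{r+1}(u) = 0$ is equivalent to $\BW^{r+1}(u) = 0$, since the $(r+1)$-minors of $u$ are exactly the matrix entries of $\BW^{r+1}(u)$ in the bases $(e_J)$ and $(f_I)$; this is precisely what makes the Cramer variant of Corollary~\ref{ExtPowerNullity} available.

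Next I would manufacture a ``local generalized inverse'' attached to each full-rank minor. Fix two subsets $I$ (rows) and $J = \{j_1 < \cdots < j_r\}$ (columns) of cardinality $r$, and write $\Delta_{I,J} = [u(e_{j_1}), \dots, u(e_{j_r})]_I$ for the corresponding minor. Let $\pi_I : F \to \bfA^I$ be the projection onto the coordinates indexed by $I$, let $\iota_J : \bfA^J \to E$ be the inclusion of the coordinates indexed by $J$, and let $U_{I,J} = \pi_I \circ u \circ \iota_J$ be the associated square submatrix. I would set
$$
V_{I,J} = \iota_J \circ \widetilde{U_{I,J}} \circ \pi_I : F \to E,
$$
where $\widetilde{U_{I,J}}$ is the cotranspose of $U_{I,J}$. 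The key claim is then $u \circ V_{I,J} \circ u = \Delta_{I,J}\, u$.

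To check this claim on a basis vector $e_k$ of $E$, I would compute $\widetilde{U_{I,J}}\, \pi_I u(e_k)$ by Cramer's rule: its component on $e_{j_\ell}$ is the determinant of $U_{I,J}$ with column $j_\ell$ replaced by $U_{I,k} = \pi_I u(e_k)$, that is, exactly $[u(e_{j_1}), \dots, u(e_k), \dots, u(e_{j_r})]_I$ with $u(e_k)$ in position $\ell$. Applying $u \circ \iota_J$ then yields
$$
u \circ V_{I,J} \circ u(e_k) = \sum_{\ell=1}^r [u(e_{j_1}), \dots, u(e_k), \dots, u(e_{j_r})]_I\, u(e_{j_\ell}).
$$
By Corollary~\ref{ExtPowerNullity}, applicable because $\BW^{r+1}(u) = 0$, taking $x_i = e_{j_i}$ and $x = e_k$, the right-hand side equals $\Delta_{I,J}\, u(e_k)$. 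Hence $u \circ V_{I,J} \circ u = \Delta_{I,J}\, u$ with no restriction on $I$, $J$, $k$, the cases $k \in J$ simply producing determinants with a repeated column, hence zero.

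Finally I would glue. Since $1 \in \calD_r(u)$, there is an expression $1 = \sum_{(I,J)} a_{I,J}\, \Delta_{I,J}$, a finite sum over pairs of $r$-subsets. Setting $v = \sum_{(I,J)} a_{I,J}\, V_{I,J}$, linearity gives
$$
u \circ v \circ u = \sum_{(I,J)} a_{I,J}\, \Delta_{I,J}\, u = u,
$$
which produces the desired $v$, and Proposition~\ref{ImFacteurDirect} then concludes. The only step requiring care is the local identity $u \circ V_{I,J} \circ u = \Delta_{I,J}\, u$, more precisely the exact sign-matching between the components of $\widetilde{U_{I,J}}\, U_{I,k}$ furnished by Cramer's rule and the minors $[u(e_{j_1}), \dots, u(e_k), \dots, u(e_{j_r})]_I$ on the right-hand side of Corollary~\ref{ExtPowerNullity}; once the cotranspose convention and the convention for the form $[\,\cdots\,]_I$ are aligned, the obstacle disappears.
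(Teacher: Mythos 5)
Votre preuve est correcte et suit essentiellement la m�me voie que celle du texte : votre application $V_{I,J} = \iota_J \circ \widetilde{U_{I,J}} \circ \pi_I$ co�ncide avec l'application $v_{I,J}$ du texte (la r�gle de Cramer pour la cotranspos�e redonne exactement la formule d�terminantale explicite), la v�rification de $u \circ V_{I,J} \circ u = \Delta_{I,J}\,u$ repose sur le m�me corollaire~\ref{ExtPowerNullity}, et le recollement par $1 \in \calD_r(u)$ puis l'appel � la proposition~\ref{ImFacteurDirect} sont identiques. Aucune lacune : les signes s'accordent bien, et le corollaire s'applique uniform�ment � tout $e_k$ (y compris $k \in J$), sans distinction de cas.
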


\begin{proof}

On va construire $v : F \to E$ comme dans la proposition~\ref{ImFacteurDirect}
i.e. vérifiant $u \circ v \circ u = u$.

\smallskip
\noindent
$\rhd$
Ici, on utilise uniquement l'égalité $\BW^{r+1}(u) = 0$.
Soit $I$ de cardinal $r$ et $x_1, \ldots, x_r \in E$. On définit
$w : F \to E$ (sans mention de la dépendance en $I$ et $x_1, \ldots, x_r$) par 
$$
\forall\, y \in F, \qquad 
w(y) \ =\  
\sum_{\ell=1}^r\, 
\big[u(x_1), \dots, u(x_{\ell-1}),\ y,\ u(x_{\ell+1}), \dots, u(x_r)\big]\strut_I\  x_\ell
$$
Grâce au corollaire~\ref{ExtPowerNullity} on obtient une expression de $u \circ w \circ u$
$$
\forall\, x \in E, \qquad 
\begin {array}[t]{ccl}
(u \circ w \circ u)(x) &=&
   \displaystyle\sum_{\ell=1}^r\, 
   \big[u(x_1), \dots, u(x_{\ell-1}),\ u(x),\ u(x_{\ell+1}), \dots, u(x_r)\big]\strut_I\, u(x_\ell) 
\\ [0.4cm]
& = & 
\big[u(x_1), \dots, u(x_{\ell-1}),\ u(x_\ell),\ u(x_{\ell+1}), \dots, u(x_r)\big]\strut_I\, u(x)
\\
\end {array}
$$
On a donc $u \circ w \circ u =  [u(x_1),\dots, u(x_r)]\strut_I\, u$.

\smallskip
\noindent
Soit $J = \{j_1 < \ldots < j_r\}$ une partie de cardinal $r$.
Appliquons ce qui précède à $(x_1, \ldots, x_r) = (e_{j_1}, \ldots,
e_{j_r})$ et notons $v_{I,J} : F \to E$ l'application $w$. Alors
$$
u \circ v_{I,J} \circ u \ =\  \det\nolimits_{I\times J}(u)\, u
$$
$\rhd$
On utilise maintenant $1 \in \calD_r(u)$. Il y a une famille $(\lambda_{I,J})_{\#I=r,\#J=r}$
de scalaires telle que
$$
1 = \sum_{I,J}\, \lambda_{I,J}\, \det\nolimits_{I\times J}(u)
$$
On pose alors $v = \sum_{I,J} \lambda_{I,J} \, v_{I,J}$
pour obtenir $u \circ v \circ u = v$. 
\end{proof}

\subsubsection*{En présence d'une factorisation $\BW^r(u) = \Theta\, \nu$}

Nous schématisons encore une fois ce type de factorisation:
$$
\xymatrix @M= 0.4pc{ 
\BW^{r}(E) \ar[rd]_-{\nu} \ar[rr]^-{\BW^{r}(u)} & & 
\BW^{r}(F) \\
& \bfA \ar[ru]_-{\times \Theta} & 
}
$$
En désignant par $\Theta_I$ la composante de $\Theta$ sur $f_I$, nous utiliserons pour $x_1, \ldots, x_r \in E$:
$$
[u(x_1), \ldots, u(x_r)]\strut_I = \Theta_I\,\nu(x_1 \wedge \ldots \wedge x_r)
$$

\subsubsection*{Explication de texte du contenu du lemme qui suit}

Le lemme suivant prend en donnée une factorisation $\BW^r(u) =
\Theta\,\nu$ où $\Gr(\Theta) \geqslant 1$.  En conséquence,
$\calD_r(u) = \rmc(\nu)\rmc(\Theta)$ et, d'après la proposition~\ref
{StableRankFromFactorization}, $\calD_{r+1}(u) = 0$.
Son but est de fournir une \og sorte de présentation locale de $\Coker(u)$\fg{} au sens
suivant. \emph {Supposons $\nu(e_J)$ inversible} donc
$\DVect_r(u) = \bfA\Theta^\sharp$ et
$\calD_r(u) = \rmc(\Theta)$ est fidèle,
si bien que $u$ est de rang~$r$, donc $\Coker(u)$ est de rang $c := \dim F - r$.
L'apport du lemme (sous la clause $\nu(e_J)$ inversible) réside dans la somme directe:
$$
E = \Ker(u) \oplus \bigoplus_{j\in J} \bfA e_j
$$
La restriction $u'$ de $u$ à la somme directe de droite est donc injective de même
image que $u$:
$$
u' : \bigoplus_{j\in J} \bfA e_j \simeq  \bfA^r  \overset{\simeq}{\longrightarrow}
\Im u \subset F \simeq \bfA^{r+c}
$$
Ainsi $M := \Coker(u) = \Coker(u')$ est présenté par une \emph{injection} $\bfA^r \hookrightarrow \bfA^{r+c}$
et le sous-module de Fitting $\FittVect_c(M) \subset \BW^c(M)^\star$ est libre de rang 1.

\bigskip

On a vu en \ref{FactorisationEtPgcdFort} qu'un module $M$ de MacRae de
rang $c$ pouvait être présenté par $u : E \to F$ telle qu'en notant $r
= \dim F -c$, on ait une factorisation $\BW^r(u) = \Theta\nu$ avec
$\Gr(\nu) \ge 2$ et $\Gr(\Theta) \ge 1$.  En appliquant ce qui précède
au module $M := \Coker(u)$, on obtient localement (au sens $\Gr \ge
2$) une présentation de~$M$ par une \emph{injection} $\bfA^r
\hookrightarrow \bfA^{r+c}$ comme ci-dessus.

Ceci constitue une généralisation du cas $c=0$ énoncé
en~\ref{LocalSquarePresentation} (présentation locale carrée d'un
module de MacRae de rang 0) et du cas $c=1$
en~\ref{1ElementaryPresentation} (présentation locale $1$-élémentaire
d'un module de MacRae de rang $1$).

\bigskip

Tout le sel de l'affaire réside dans le quasi-projecteur $\pi_J$, objet du lemme
suivant. Essayons d'expliquer d'où il sort en prenant $r=2$. Comme on veut
$\Im\pi_J \subset \Ker u$, il est impératif de \emph {produire}
des vecteurs pertinents appartenant à $\Ker u$. Cette production est basée
sur le principe suivant (merci Cramer)
$$
\forall\ x_1,x_2,x \in E : \qquad\qquad
\nu(x_1\wedge x_2)x - \big(\nu(x\wedge x_2)x_1 + \nu(x_1\wedge x)x_2 \big)  \in \Ker u
\leqno (\star)
$$
Pour le voir, on utilise la proposition~\ref {StableRankFromFactorization} qui donne $\BW^{r+1}(u) = 0$,
ici $\BW^3(u) = 0$, ce qui permet d'appliquer le corollaire~\ref{ExtPowerNullity}, pour n'importe quelle partie $I$
de cardinal $r=2$
$$
[u(x_1),u(x_2)]\strut_I u(x) = [u(x),u(x_2)]\strut_I u(x_1) + [u(x_1),u(x)]\strut_I u(x_2) 
$$
Soit, en utilisant $\BW^2(u) = \Theta\,\nu$:
$$
\Theta_I\,\nu(x_1\wedge x_2) u(x) = \Theta_I\,\nu(x\wedge x_2) u(x_1)  +  \Theta_I\,\nu(x_1\wedge x) u(x_2)
$$
Cette égalité étant vérifiée pour toute composante $\Theta_I$, on peut, puisque $\Gr(\Theta) \ge 1$, simplifier
par $\Theta_I$:
$$
\nu(x_1\wedge x_2) u(x) = \nu(x\wedge x_2) u(x_1)  +  \nu(x_1\wedge x) u(x_2)
$$
Ce qui est exactement $(\star)$.

\begin{lem}[Le quasi-projecteur $\pi_J$ sous-jacent à une factorisation de $\BW^r(u)$]\label{ProjecteurLocal}
\leavevmode

Soit $u : E \to F$ et une factorisation $\BW^r(u) = \Theta\,\nu$ où
$\Gr(\Theta) \geqslant 1$. A une partie $J$ de cardinal~$r$ de
l'ensemble des indices de la base de $E$
$$
J = \{j_1 < \cdots < j_r\}
$$
on associe l'endomorphisme $\pi_J$ de $E$ défini par:
$$
\pi_J(x) \ = \ \nu(e_J)x \, -\,  
\sum_{\ell=1}^r \, 
\nu\big(e_{j_1} \wedge \cdots \wedge e_{j_{\ell-1}} \wedge x \wedge e_{j_{\ell+1}} \cdots \wedge e_{j_r} \big)
\, e_{j_\ell}
$$
On a alors les propriétés suivantes.

\begin {enumerate} [\rm i)]
\item  
\og Encadrement\fg{} de $\Ker u$:
$$
\Im \pi_J 
\ \subset \ 
\Ker u
\ \subset \ 
\Ker\big(\nu(e_J)\Id_E - \pi_J\big)
$$

\item
\og Encadrement\fg{} de $\displaystyle \bigoplus_{j \in J} \bfA e_j$:
$$
\Im\big(\nu(e_J)\Id_E - \pi_J\big)
\ \subset \ 
\bigoplus_{j \in J} \bfA e_j
\ \subset \ 
\Ker \pi_J
$$
\item
Si $\nu(e_J) = 1$, alors $\pi_J$ est un projecteur de $E$, d'image $\Ker(u)$,
de noyau $\displaystyle \bigoplus_{j \in J} \bfA e_j$.

\item
Si $\nu(e_J)$ est seulement inversible, alors
$$  
E = \Ker(u) \oplus \bigoplus_{j\in J} \bfA e_j
$$
\end {enumerate}  
\end{lem}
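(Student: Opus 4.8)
Le plan est de tout faire reposer sur une identit\'e de Cramer dans $F$ exploitant simultan\'ement la factorisation $\BW^r(u) = \Theta\,\nu$ et l'in\'egalit\'e $\Gr(\Theta) \geqslant 1$. Comme $\Gr(\Theta) \geqslant 1$ entra\^ine $\calD_{r+1}(u) = 0$ (proposition~\ref{StableRankFromFactorization}), le corollaire~\ref{ExtPowerNullity} s'applique~: pour toute partie $I$ de cardinal $r$ et tous $x_1, \dots, x_r, x \in E$,
$$
\big[u(x_1), \dots, u(x_r)\big]_I\,u(x) = \sum_{\ell=1}^r \big[u(x_1), \dots, u(x), \dots, u(x_r)\big]_I\,u(x_\ell)
$$
o\`u $u(x)$ prend la place de $u(x_\ell)$ dans le $\ell$-i\`eme terme. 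En substituant $\big[u(x_1), \dots, u(x_r)\big]_I = \Theta_I\,\nu(x_1 \wedge \cdots \wedge x_r)$ (valeur fournie par la factorisation) et en notant que l'\'egalit\'e obtenue vaut pour chaque composante $\Theta_I$, la fid\'elit\'e de $\Theta$ (\idest{} $\Ann(\rmc(\Theta)) = 0$) permet de simplifier par $\Theta$ coordonn\'ee par coordonn\'ee dans $F$, ce qui donne l'identit\'e cl\'e
$$
\nu(x_1 \wedge \cdots \wedge x_r)\,u(x) = \sum_{\ell=1}^r \nu(x_1 \wedge \cdots \wedge x \wedge \cdots \wedge x_r)\,u(x_\ell) \qquad \text{dans } F
\leqno(\star)
$$
En y prenant $x_\ell = e_{j_\ell}$, le membre de droite devient exactement la partie soustraite dans la d\'efinition de $\pi_J$, de sorte que $u\big(\pi_J(x)\big) = 0$, \idest{} $\Im \pi_J \subset \Ker u$.

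Ensuite je traiterais les inclusions restantes de i) et de ii). Pour $\Ker u \subset \Ker\big(\nu(e_J)\Id_E - \pi_J\big)$, il suffit de montrer que $\nu(e_{j_1} \wedge \cdots \wedge x \wedge \cdots \wedge e_{j_r}) = 0$ pour chaque $\ell$ lorsque $x \in \Ker u$~: en appliquant $\BW^r(u) = \Theta\,\nu$ \`a ce $r$-vecteur, le membre de gauche $u(e_{j_1}) \wedge \cdots \wedge u(x) \wedge \cdots \wedge u(e_{j_r})$ est nul puisque $u(x) = 0$, donc $\Theta\,\nu(e_{j_1} \wedge \cdots \wedge x \wedge \cdots \wedge e_{j_r}) = 0$, et la fid\'elit\'e de $\Theta$ force l'annulation du scalaire $\nu(\cdots)$~; il vient $\pi_J(x) = \nu(e_J)\,x$. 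La premi\`ere inclusion de ii) est imm\'ediate sur la d\'efinition, car $\nu(e_J)\,x - \pi_J(x)$ est une combinaison lin\'eaire des $e_{j_\ell}$. Pour la seconde, $\bigoplus_{j\in J}\bfA e_j \subset \Ker\pi_J$, je calculerais $\pi_J(e_{j_k})$~: le terme $\ell = k$ redonne $\nu(e_J)\,e_{j_k}$, tandis que pour $\ell \neq k$ le $r$-vecteur $e_{j_1} \wedge \cdots \wedge e_{j_k} \wedge \cdots \wedge e_{j_r}$ contient deux fois le facteur $e_{j_k}$ et est donc nul~; d'o\`u $\pi_J(e_{j_k}) = \nu(e_J)\,e_{j_k} - \nu(e_J)\,e_{j_k} = 0$.

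Enfin, les points iii) et iv) d\'ecoulent formellement des encadrements i) et ii). Sous l'hypoth\`ese $\nu(e_J) = 1$, i) exprime que $\pi_J$ agit comme l'identit\'e sur $\Ker u \supset \Im\pi_J$, d'o\`u $\pi_J^2 = \pi_J$~: c'est un projecteur. Les deux encadrements donnent alors $\Im\pi_J = \Ker u$ et $\Ker\pi_J = \Im(\Id_E - \pi_J) = \bigoplus_{j\in J}\bfA e_j$, puis la d\'ecomposition $E = \Ker u \oplus \bigoplus_{j\in J}\bfA e_j$. Pour iv), en posant $s := \nu(e_J)$ (seulement inversible), je consid\'ererais $p = s^{-1}\pi_J$~: l'\'egalit\'e $\pi_J^2 = s\,\pi_J$ (cons\'equence de $\Im\pi_J \subset \Ker u$ et de $\pi_J = s\,\Id_E$ sur $\Ker u$) montre que $p$ est un projecteur, d'image $\Im\pi_J = \Ker u$ et de noyau $\Im(s\,\Id_E - \pi_J) = \bigoplus_{j\in J}\bfA e_j$, ce qui fournit la somme directe cherch\'ee.

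Le c\oe ur de la preuve, et son seul point vraiment d\'elicat, est le passage des identit\'es sur les mineurs aux identit\'es vectorielles $(\star)$ et $\nu(e_{j_1} \wedge \cdots \wedge x \wedge \cdots \wedge e_{j_r}) = 0$ via la simplification par $\Theta$~; c'est l\`a que l'hypoth\`ese $\Gr(\Theta) \geqslant 1$ est r\'eellement utilis\'ee, tout le reste n'\'etant qu'une manipulation formelle de projecteurs.
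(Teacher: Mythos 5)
Ta preuve est correcte et suit essentiellement la m\^eme d\'emarche que celle du texte~: m\^eme usage de la proposition~\ref{StableRankFromFactorization} et du corollaire~\ref{ExtPowerNullity} pour obtenir l'identit\'e de Cramer, m\^eme simplification par les composantes de $\Theta$ via $\Gr(\Theta) \geqslant 1$ pour les deux inclusions de i), m\^eme calcul direct pour ii), et les points iii) et iv) en d\'ecoulent formellement (le texte r\`egle iv) par la m\^eme normalisation $\pi_J/\nu(e_J)$, que tu d\'etailles davantage). La seule nuance est que tu \'etablis l'identit\'e $(\star)$ pour des $x_1, \dots, x_r$ quelconques avant de sp\'ecialiser en les $e_{j_\ell}$, l\`a o\`u le texte travaille directement avec ces vecteurs de base~; cela ne change rien au fond.
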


\begin{proof} \leavevmode

i)
$\rhd$
Inclusion $\Im \pi_J \subset \Ker u$.
Montrons pour $x\in E$ que $u\big(\pi_J(x)\big) = 0$.
D'après la proposition~\ref {StableRankFromFactorization}, on a $\BW^{r+1}(u) = 0$,
ce qui permet d'appliquer le corollaire~\ref{ExtPowerNullity}:
$$
\det\nolimits_{I \times J}(u) \  u(x) 
\ =\ 
\sum_{\ell=1}^r \, 
\big[u(e_{j_1}), \dots, u(e_{j_{\ell-1}}), u(x), u(e_{j_{\ell+1}}), \dots, u(e_{j_r}) \big]\strut_I \ 
u(e_{j_\ell})
$$
En utilisant $\BW^r(u) = \Theta \, \nu$, on obtient 
$$
\Theta_I \, \nu(e_J) \, u(x) 
\ =\ 
\Theta_I\, 
\sum_{\ell=1}^r \, 
\nu\big(e_{j_1} \wedge \cdots \wedge e_{j_{\ell-1}} \wedge x \wedge e_{j_{\ell+1}} \cdots \wedge e_{j_r} \big) \, 
u(e_{j_\ell})
$$
Ceci a lieu pour toute partie $I$ de cardinal $r$ et 
$\Gr(\Theta) \geqslant 1$, d'où 
$$
\nu(e_J) u(x)\ = \ \sum_{\ell=1}^r \, 
\nu\big(e_{j_1} \wedge \cdots \wedge e_{j_{\ell-1}} \wedge x \wedge e_{j_{\ell+1}} \cdots \wedge e_{j_r} \big)
\, u(e_{j_\ell})
$$
ce qui est une traduction de l'égalité $u\big(\pi_J(x)\big) = 0$ à montrer.

\smallskip
\noindent
$\rhd$
Inclusion $\Ker u \subset \Ker\big(\nu(e_J)\Id_E - \pi_J\big)$.  Soit
$x \in \Ker u$. Alors $\BW^{r}(u)(x \wedge z) = 0$ pour tout $z
\in \BW^{r-1}(E)$.  Comme $\Gr(\Theta) \geqslant 1$, on en
déduit $\nu(x \wedge z) = 0$.  Ainsi, dans la somme figurant dans
l'expression de~$\pi_J(x)$, chaque terme est nul. D'où $\pi_J(x) =
\nu(e_J) x$.

\medskip
ii)
L'inclusion $\Im\big(\nu(e_J)\Id_E - \pi_J\big) \subset \displaystyle\bigoplus_{j\in J} \bfA e_j$
résulte directement de la définition de $\pi_J$ puisque
$\pi_J(x) - \nu(e_J)x \in \displaystyle\bigoplus_{j\in J} \bfA e_j$.

\smallskip
\noindent
Pour montrer l'inclusion $\displaystyle \bigoplus_{j\in J} \bfA e_j \subset \Ker \pi_J$
i.e. $\pi_J(e_j) = 0$ pour $j \in J$, on réécrit la définition:
$$
\pi_J(e_j) \ = \ \nu(e_J)e_j \, -\,  
\sum_{k \in J} \, 
\pm \nu\big(e_j \wedge e_{J \setminus k}\big)\, e_k
$$
Dans la somme, chaque terme est nul, sauf celui pour $k = j$ égal à
$\nu(e_J) e_j$. En définitive $\pi_J(e_j) = 0$.

\medskip
iii)
Résulte des deux \og encadrements\fg.

\medskip
iv)
Remplacer $\pi_J$ par $\pi_J/\nu(e_J)$.
\end{proof}

\medskip

Lorsqu'on \og globalise\fg{} l'hypothèse ``$\nu(e_J)$ inversible'' en ``$\nu$ surjective'',
un certain nombre de résultats du lemme ne sont plus valides. Demeure cependant
le résultat suivant utilisé dans le th.~\ref{RacineIdealFactorisation}

\begin{prop} \label{FactorisationKerFacteurDirect}
Soit $u : E \to F$ et une factorisation $\BW^r(u) = \Theta\, \nu$ vérifiant
$\Gr(\Theta) \geqslant 1$ et $\nu$ surjective.
Alors $\Ker u$ est facteur direct dans $E$.
\end{prop}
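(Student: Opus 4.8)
Le plan est d'exploiter directement le quasi-projecteur $\pi_J$ du lemme~\ref{ProjecteurLocal}, puis de recoller les informations locales � l'aide d'une partition de l'unit� fournie par la surjectivit� de $\nu$. D'abord, je remarquerais que $\nu : \BW^r(E) \to \bfA$ est une forme lin�aire sur un module libre de base les $e_J$ (avec $\#J = r$), de sorte que $\Im\nu = \langle \nu(e_J),\ \#J = r\rangle$. Dire que $\nu$ est surjective signifie donc $1 \in \langle \nu(e_J)\rangle$, c'est-�-dire qu'il existe une famille $(\lambda_J)$ de scalaires telle que $\sum_J \lambda_J\,\nu(e_J) = 1$.

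Ensuite, pour chaque partie $J$ de cardinal $r$, le lemme~\ref{ProjecteurLocal} (point i, applicable car $\Gr(\Theta) \geqslant 1$) fournit l'endomorphisme $\pi_J$ v�rifiant $\Im\pi_J \subset \Ker u$ et $\Ker u \subset \Ker\big(\nu(e_J)\Id_E - \pi_J\big)$. La seconde inclusion se relit ainsi: pour $x \in \Ker u$, on a $\pi_J(x) = \nu(e_J)\,x$, autrement dit $\pi_J$ op�re sur $\Ker u$ comme la multiplication par le scalaire $\nu(e_J)$.

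Je poserais alors $p = \sum_J \lambda_J\,\pi_J \in \End_\bfA(E)$. D'une part $\Im p \subset \sum_J \Im\pi_J \subset \Ker u$. D'autre part, pour $x \in \Ker u$, on calcule $p(x) = \sum_J \lambda_J\,\pi_J(x) = \big(\sum_J \lambda_J\,\nu(e_J)\big)\,x = x$. Ainsi $p$ est une r�traction de $E$ sur $\Ker u$: son image est contenue dans $\Ker u$ et sa restriction � $\Ker u$ est l'identit�. Il en r�sulte imm�diatement que $p$ est idempotent (pour $y \in E$, le vecteur $p(y)$ appartient � $\Ker u$, donc $p(p(y)) = p(y)$) et que $\Im p = \Ker u$. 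Par cons�quent $E = \Ker u \oplus \Ker p$, et $\Ker u$ est bien facteur direct dans $E$.

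La seule subtilit�, qui n'en est d'ailleurs plus vraiment une une fois le lemme~\ref{ProjecteurLocal} acquis, r�side dans le passage du local au global: les $\pi_J$ ne sont individuellement des projecteurs que lorsque $\nu(e_J)$ est inversible (point iii du lemme), ce qui n'a aucune raison d'�tre ici. C'est la combinaison $\sum_J \lambda_J\,\pi_J$, pond�r�e par une partition de l'unit� issue de la surjectivit� de $\nu$, qui r�tablit un v�ritable projecteur, en tirant parti du fait que tous les $\pi_J$ agissent par homoth�tie sur $\Ker u$.
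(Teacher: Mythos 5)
Votre preuve est correcte et suit essentiellement la m�me voie que celle du papier : m�me recours au quasi-projecteur $\pi_J$ du lemme~\ref{ProjecteurLocal}, m�me partition de l'unit� $\sum_J \lambda_J\,\nu(e_J) = 1$ tir�e de la surjectivit� de $\nu$, et m�me op�rateur $\sum_J \lambda_J\,\pi_J$ dont on v�rifie qu'il est un projecteur d'image $\Ker u$. La seule diff�rence est cosm�tique : vous justifiez l'idempotence par l'argument de r�traction ($p(y) \in \Ker u$ donc $p(p(y)) = p(y)$), l� o� le papier la d�duit de la double inclusion $\Im\pi \subset \Ker u \subset \Ker(\Id_E - \pi)$, ce qui revient au m�me.
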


\begin{proof}\leavevmode 
  
Comme $\nu$ est surjective, la famille $\big(\nu(e_J)\big)_{\#J=r}$
est comaximale.  Ecrivons ${\sum_{\#J = r} \alpha_J \, \nu(e_J) = 1}$
avec des $\alpha\strut_J \in \bfA$ et posons $\pi = \sum \alpha_J \,
\pi_J$ où $\pi_J$ est l'endomorphisme de $E$ défini dans le lemme
précédent~\ref{ProjecteurLocal} .

Vérifions la double inclusion 
$$
\Im \pi 
\ \subset \ 
\Ker u
\ \subset \ 
\Ker(\Id_E - \pi)
$$

\noindent
$\rhd$ D'après le lemme, on a $\Im \pi_J \subset \Ker u$, donc $\Im \pi \subset \Ker u$ par définition de $\pi$.

\noindent 
$\rhd$
Pour la deuxième inclusion, soit $x \in \Ker u$.
D'après le lemme, on a $\Ker u \subset \Ker\big(\nu(e_J)\Id_E - \pi_J\big)$, 
donc $\nu(e_J) x = \pi_J(x)$. En multipliant par $\alpha_J$ et en sommant, on obtient 
$$
\overbrace{\big(\sum_J \alpha_J \nu(e_J) \big)}^{1} \, x 
\ = \ 
\overbrace{\sum_J \alpha_J \pi_J(x)}^{\pi(x)}
\qquad\quad \text{\idest} \qquad\quad x = \pi(x)
$$
De la double inclusion, on tire $\Im \pi \subset \Ker
(\Id_E- \pi)$, prouvant que $\pi$ est un projecteur.  La double
inclusion se réduit alors à $\Im \pi = \Ker u = \Ker (\Id_E- \pi)$. En
conséquence, $\Ker u$ est facteur direct dans $E$.
\end{proof}

\medskip

Le th.~\ref{RacineIdealFactorisation} a également sollicité un autre résultat, conséquence
de la proposition suivante:

\begin{prop}\label{FactorisationImFacteurDirect}

Soit $u : E \to F$ et une factorisation $\BW^r(u) = \Theta\,\nu$ telle que
$\Gr(\Theta) \geqslant 1$ et $\Gr(\nu) \geqslant 1$. 
Alors les conditions suivantes sont équivalentes :

\begin{enumerate} [\rm i)]
\item
$\Im u$ est facteur direct dans $F$.

\item
$\nu$ est surjective et $\Theta$ est unimodulaire.

\item
$1 \in \calD_r(u)$. 
\end{enumerate}
Dans ce cas, $\Theta$ est une base de $\Im\BW^r(u)$.
\end{prop}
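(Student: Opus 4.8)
The plan is to run the cycle (ii) $\Leftrightarrow$ (iii), (iii) $\Rightarrow$ (i), (i) $\Rightarrow$ (ii), built around the content relation $\calD_r(u) = \rmc(\Theta)\,\rmc(\nu)$. This relation is available from the factorization $\BW^r(u) = \Theta\,\nu$ (the $r\times r$ minors of $u$ are exactly the entries $\Theta_I\,\nu(e_J)$ of $\BW^r(u)$), and the same discussion around \ref{StableRankFromFactorization} also supplies $\calD_{r+1}(u) = 0$ from $\Gr(\Theta)\geqslant 1$, which I will need below.

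First I would dispatch the cheap equivalence (ii) $\Leftrightarrow$ (iii). Since $\calD_r(u) = \rmc(\Theta)\,\rmc(\nu)$ and a product of two ideals equals $\bfA$ if and only if each factor does (because $\rmc(\Theta)\,\rmc(\nu)\subseteq\rmc(\Theta)\cap\rmc(\nu)$), the condition $1\in\calD_r(u)$ is equivalent to $\rmc(\Theta)=\bfA$ and $\rmc(\nu)=\bfA$, i.e.\ to $\Theta$ unimodular and $\nu$ surjective. Then (iii) $\Rightarrow$ (i) is immediate: $1\in\calD_r(u)$ together with $\calD_{r+1}(u)=0$ says that $u$ is of strong rank $r$, so \ref{RangFort} yields that $\Im u$ is a direct factor in $F$.

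The heart of the matter is (i) $\Rightarrow$ (ii). By \ref{ImFacteurDirect} there is $v:F\to E$ with $u = u\circ v\circ u$. Applying $\BW^r$ and substituting $\BW^r(u)=\Theta\,\nu$, I evaluate $\BW^r(u)\circ\BW^r(v)\circ\BW^r(u)$ on an arbitrary $x\in\BW^r(E)$ and get $\nu(x)\,\lambda\,\Theta$ with $\lambda = \nu\big(\BW^r(v)(\Theta)\big)\in\bfA$; comparing with $\BW^r(u)(x)=\nu(x)\,\Theta$ gives $\nu(x)(\lambda-1)\Theta = 0$ for all $x$. Read componentwise, $(\lambda-1)\Theta_I\,\nu(x)=0$ for every $x$ and every $I$, so $(\lambda-1)\Theta_I\in\Ann(\rmc(\nu))=0$ by $\Gr(\nu)\geqslant 1$; hence $(\lambda-1)\in\Ann(\rmc(\Theta))=0$ by $\Gr(\Theta)\geqslant 1$, so $\lambda=1$. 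This one equality delivers both halves of (ii): on the one hand $1=\lambda=\nu\big(\BW^r(v)(\Theta)\big)\in\Im\nu$, so $\nu$ is surjective; on the other hand the linear form $\phi=\nu\circ\BW^r(v):\BW^r(F)\to\bfA$ satisfies $\phi(\Theta)=\lambda=1$, so $\Theta$ is unimodular. For the closing assertion, $\nu$ surjective gives $\Im\BW^r(u)=\bfA\Theta$ (pick $z_0$ with $\nu(z_0)=1$, whence $\Theta=\BW^r(u)(z_0)$ and $\BW^r(u)(x)=\nu(x)\Theta$ for all $x$), and the existence of $\phi$ with $\phi(\Theta)=1$ forces $\Ann(\Theta)=0$, so $\Theta$ is a basis of $\Im\BW^r(u)$.

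The main obstacle is exactly the implication (i) $\Rightarrow$ (ii): one must push $uvu=u$ through $\BW^r$, isolate the scalar $\lambda=\nu\big(\BW^r(v)(\Theta)\big)$, and then consume the two faithfulness hypotheses in the right order to force $\lambda=1$; the nonobvious ingredient is the auxiliary form $\phi=\nu\circ\BW^r(v)$, which simultaneously witnesses the surjectivity of $\nu$ and the unimodularity of $\Theta$. Everything else is bookkeeping with contents plus the appeal to \ref{RangFort}.
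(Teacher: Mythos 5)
Your proof is correct, and its skeleton --- (ii)$\Leftrightarrow$(iii) via $\calD_r(u)=\rmc(\Theta)\,\rmc(\nu)$, then (iii)$\Rightarrow$(i) via strong rank and \ref{RangFort}, then (i)$\Rightarrow$(ii) starting from a pseudo-inverse $v$ with $u\circ v\circ u=u$ furnished by \ref{ImFacteurDirect} --- coincides with the paper's up to the last implication, where you take a genuinely different route. The paper passes from functoriality to the statement that $\Im\BW^r(u)$ is a direct summand of $\BW^r(F)$, then invokes an abstract lemma, valid for \emph{arbitrary} modules, about a map $f=y\,\mu$ with $\Ann(y)=\Ann(\mu)=0$ and split image; its proof decomposes $y=f(x_0)+s$ along $\BW^r(F)=\Im f\oplus S$ and kills $s$. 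You instead work directly with the identity $\BW^r(u)\circ\BW^r(v)\circ\BW^r(u)=\BW^r(u)$, isolate the scalar $\lambda=\nu\bigl(\BW^r(v)(\Theta)\bigr)$, and force $\lambda=1$ by consuming $\Gr(\nu)\geqslant 1$ and then $\Gr(\Theta)\geqslant 1$ --- the same two faithfulness hypotheses the paper's lemma uses, but organized around a scalar rather than a complement. The single form $\phi=\nu\circ\BW^r(v)$ then witnesses simultaneously the surjectivity of $\nu$ and the unimodularity of $\Theta$, and you rederive the closing assertion ($\Theta$ a basis of $\Im\BW^r(u)$) from $\nu$ surjective and $\phi(\Theta)=1$, where the paper obtains it inside the lemma itself. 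What you lose is the extra generality of that lemma (no freeness assumption, conclusion stated for arbitrary $M$, $N$); what you gain is a shorter, purely computational argument that never constructs the complement $S$ and exhibits explicitly the linear form certifying unimodularity.
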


\begin{proof}\leavevmode

Comme $\BW^r(u) = \Theta\,\nu$, on a $\calD_r(u) = \rmc(\Theta)\rmc(\nu)$ donc
$1 \in \calD_r(u) \iff 1 \in \rmc(\Theta) \text{ et } 1 \in \rmc(\nu)$, d'où
l'équivalence $\rm iii)  \Longleftrightarrow ii)$.

\medskip

Montrons $\rm iii) \Rightarrow i)$. D'après la proposition~\ref{StableRankFromFactorization}, on a
$\calD_{r+1}(u) = 0$. Ainsi $u$ est de rang fort égal à $r$ et le fait que $\Im u$ soit
facteur direct dans $F$ découle de \ref{RangFort}.

\medskip

Terminons par $\rm i) \Rightarrow ii)$.
Montrons tout d'abord que $\Im\BW^{r}(u)$ est facteur direct dans $\BW^r(F)$. 
Pour le voir, on utilise la proposition~\ref{ImFacteurDirect} en considérant
un pseudo-inverse~$v$ pour~$u$ (\idest{} une application telle que $u\circ v \circ u = u$), 
ce qui fournit $\BW^{r}(v)$ comme pseudo-inverse de $\BW^{r}(u)$.
On termine en appliquant le lemme ci-dessous à
$$
M = \BW^r(E), \qquad N = \BW^r(F), \qquad f = \BW^r(u), \qquad
y = \Theta, \qquad \mu = \nu
$$

\begin{quote}
\it
Soit $f : M \rightarrow N$ une application linéaire entre deux modules
quelconques avec une factorisation $f = y \mu$ où $y \in N$, $\mu \in
M^\star$ et $\Ann(y) = 0$, $\Ann(\mu) = 0$.  Si $\Im f$ est facteur
direct dans~$N$, alors $\mu$ est surjective et $y$ est une base de
$\Im f$.  En particulier, si les modules $M$ et~$N$ sont libres, alors
$\mu$ et $y$ sont unimodulaires.
\end{quote}

Ecrivons $N = \Im f \oplus S$. 
Il existe $x_0 \in M$ et $s \in S$ tels que $y = f(x_0) + s = \mu(x_0)y + s$.
Nous allons montrer $s = 0$ donc $y=f(x_0)$ et $\mu(x_0)=1$.
L'expression $s = (1-\mu(x_0))\,y$ montre que $s \in \bfA y$
donc, pour tout $x \in M$, $\mu(x)s \in \bfA \mu(x)y = \bfA f(x)$.
Il s'ensuit que $\mu(x)s \in S \cap \Im f = \{0\}$
et donc $s=0$ puisque $\Ann(\mu) = 0$.

\smallskip
Une fois obtenu $s=0$, on en
déduit $y = f(x_0)$ et puisque $\Ann(y) = 0$, que $1 = \mu(x_0)$.
Bilan: $\mu$ est surjective, $\Im f = \bfA y$ et $y$ est une base de
$\Im f$.  Ainsi $N = \bfA y \oplus S$; la projection de $N$ sur $\bfA
y \simeq \bfA$ fournit une forme linéaire $\phi : N \to \bfA$ telle
que $\phi(y) = 1$, ce qui fait que $y$ est unimodulaire lorsque $N$
est libre.
\end{proof}


\section{Le théorème de proportionnalité et la structure multiplicative}
\label {sectionAnnexeStructureMultiplicative}

\subsection {Le théorème de proportionnalité}

Deux cadres (mathématiquement équivalents) sont possibles: celui de $\bfA^n$ muni de sa
base canonique ou celui un peu plus abstrait d'un module libre orienté
$(L, \bfe)$ de rang~$n$, qui permet la distinction entre~$L$ et son
dual~$L^\star$.  On a privilégié le produit intérieur droit et
l'isomorphisme Hodge droit (confer la section~\ref{ExteriorAlgebraDuality}),
mais le choix du côté droit n'a aucune vertu particulière par rapport à la gauche.

\medskip
Reprenons quelques éléments de la section~\ref{ExteriorAlgebraDuality}.
Dans le premier cadre, $\BW^\sbullet(\bfA^n)$ est muni de sa base
canonique $(e_I)$ ce qui définit un \og produit scalaire\fg{}
noté $\scp{\ }{\ }$ pour lequel cette base est orthonormale.
On désigne par $\Hd : \BW^\sbullet(\bfA^n) \to \BW^{n-\sbullet}(\bfA^n)$
l'isomorphisme de Hodge droit;
pour des dimensions $p,q$ complémentaires à $n$, $p+q = n$, il est
caractérisé par:
$$
\scp{\Hd(\bfx)}{\bfx'} = [\bfx\wedge\bfx'] \qquad
\bfx \in \BW^q(\bfA^n),\qquad \bfx' \in \BW^p(\bfA^n)
$$
où l'on rappelle que $[\bfx\wedge\bfx']$ désigne la coordonnée de $\bfx\wedge\bfx'$
dans la base $e_1 \wedge\cdots\wedge e_n$ de $\BW^n(\bfA^n) \simeq\bfA$.
Cet isomorphisme de Hodge vérifie $\Hd(e_I) = \varepsilon(I,\overline I)\, e_{\overline I}$.

\medskip
Dans le cadre d'un module libre orienté $(L,\bfe)$ de rang $n$, la forme
bilinéaire $\scp{\ }{\ } : \bfA^n \times \bfA^n \to \bfA$ est remplacée par
la dualité intrinsèque $L \times L^\star \to \bfA$ définie par $\scp{x}{\alpha} = \alpha(x)$.
Dualité qui s'étend à $\BW^d(L) \times \BW^d(L^\star)$ par:
$$
(x_1 \wedge \cdots \wedge x_d,\ \alpha_1 \wedge \cdots \wedge \alpha_d) \mapsto
\det(\alpha_i(x_j))
$$
L'orientation intervient seulement pour définir l'isomorphisme de Hodge droit:
$$
\Hd = \Hd^{\,\llcorner}_{\bfe^*} : \BW^{\sbullet}(L) \to \BW^{n-\sbullet}(L^\star)
\overset{\rm can.}{\simeq} \Big(\BW^{n-\sbullet}L\Big)^\star
\qquad
\text {qui réalise }
\bfx \mapsto \bfe^\star \intd \bfx = [\bfx\wedge\sbullet]_{\bfe}
$$
Pour alléger dans la suite, on utilisera $\bfx^\sharp$ au lieu de $\Hd(\bfx)$.

\begin{theo}[de proportionnalité] \label{ThProportionnalite}
On garde le contexte ci-dessus, en particulier $p+q = n$.

\smallskip
\noindent
(i) Enoncé \og détypé \fg 

Soient deux familles $u_1, \ldots, u_p$ et $v_1, \ldots, v_q$ de
vecteurs de~$\bfA^n$ qui sont orthogonales i.e. $\scp{u_i}{v_j} = 0$.
On pose $\bfu = u_1 \wedge \cdots \wedge u_p$ et $\bfv =
v_1 \wedge \cdots \wedge v_q$. Alors les deux vecteurs
$\bfu^\sharp$ et $\bfv$ de $\BW^q(\bfA^n)$ sont proportionnels.

\smallskip
\noindent
(ii) Enoncé (équivalent) plus abstrait.

Soit d'une part $p$ vecteurs $u_1, \ldots, u_p \in L$ et d'autre part
$q$ formes linéaires $\alpha_1, \ldots, \alpha_q : L \to \bfA$, le tout
vérifiant $\alpha_j(u_i) = 0$. On note $\bfu = u_1 \wedge \cdots \wedge u_p$.
Alors dans $\BW^q(L^\star)$, les deux $q$-vecteurs $\bfu^\sharp$ et
$\BW_{j=1}^q \alpha_j$ sont proportionnels.
\end{theo}

\index{théorème!de proportionnalité}

La preuve figure après les deux résultats auxiliaires suivants:
une formule de Plücker-Sylvester et son corollaire.
Dans le théorème ci-dessous, on énonce le résultat dans $\bfA^n$.
Pour $n$ vecteurs $z_1, \cdots, z_n$ de $\bfA^n$, la notation
$[z_1, \cdots, z_n]$ désigne le déterminant de $z_1, \cdots, z_n$ dans
la base canonique de $\bfA^n$.  On pourrait l'énoncer pour un module
libre orienté $(L, \bfe)$ de rang $n$, $[z_1, \cdots, z_n]$ désignant
cette fois $[z_1 \wedge \cdots \wedge z_n]_\bfe$.

\begin {theo}[Plücker-Sylvester] \label{PluckerSylvester} \leavevmode

Pour deux suites $\uz = (z_1, \cdots, z_n)$ et $\ux = (x_1, \cdots, x_q)$
de vecteurs de $\bfA^n$, on a l'égalité:
$$
[z_1, \dots, z_n]\,x_1\wedge\dots\wedge x_q =
\sum_{\#K=q}\, [\uz\xleftarrow{K}\ux]\,\BW_{k\in K}z_k
$$
où la somme porte sur les parties $K = \{k_1 < \ldots < k_q\} \subset \{1..n\}$ de cardinal $q$
et $\uz\xleftarrow{K}\ux$ désigne la suite de $n$ vecteurs de $\bfA^n$ obtenue
à partir de $(z_1, \ldots, z_n)$ en y remplaçant $z_{k_1}$ par $x_1$,
$z_{k_2}$ par $x_2$ etc. Par exemple, pour $n=3$, $q=2$, l'égalité
ci-dessus est la suivante:
$$
[z_1, z_2, z_3]\ x_1 \wedge x_2 =
[x_1, x_2, z_3]\ z_1 \wedge z_2 + [x_1, z_2, x_3]\ z_1 \wedge z_3 +
[z_1, x_1, x_2]\ z_2 \wedge z_3 
$$
\end {theo}

Remarque: pour $q=1$, le membre gauche est
$[z_1, z_2, \dots, z_n]\,x$ et la formule ci-dessus s'\'ecrit
$$
[z_1, z_2, \dots, z_n]\,x =
[x, z_2,\dots, z_n]\,z_1 \ +\ [z_1, x,\dots, z_n]\,z_2 
\ +\ \cdots \ +\ [z_1, z_2,\dots, x]\,z_n
$$
Ce n'est rien d'autre qu'une formule habituelle de Cramer.

\index{théorème!de Plücker-Sylvester}
\begin {proof} \leavevmode

Commençons par remarquer que l'égalité à démontrer est une identité algébrique en les
coordonnées des vecteurs qui y interviennent et que chaque membre est linéaire en chacun
des $z_i$ et $x_j$.

\medskip
\noindent
\emph{Cas particulier}. Supposons d'abord que $(z_1, \cdots, z_n)$ est une base de $\bfA^n$. Par linéarité
sur les $x_j$, il suffit de traiter le cas où chaque $x_j$ est pris parmi les
vecteurs de cette base:
$$
x_j = z_{k_j}  \qquad  1 \le j \le q
$$
Supposons deux des $k_j$ égaux. Alors les deux membres sont nuls: en effet, pour le
membre gauche c'est immédiat et pour le membre droit, chaque $[\uz\xleftarrow{K}\ux]$
est nul.

\smallskip
\noindent
Supposons les $k_j$ deux à deux distincts et posons $K' = \{k_1 < \cdots < k_q\}$.
Alors les termes de la somme intervenant dans le second membre sont 
nuls à l'exception de celui correspondant à $K = K'$, qui vaut
$$
[\uz\xleftarrow{K'} \ux]\, z_{k_1} \wedge \cdots \wedge z_{k_q} =
[\uz\xleftarrow{K'} \ux]\, x_1 \wedge\cdots\wedge x_q
$$
Mais comme on a l'égalité de suites
$(\uz\xleftarrow{K'} \ux) = (z_1, \cdots, z_n)$ quel que soit l'ordre des
$k_j$, le $q$-vecteur ci-dessus n'est autre que le premier membre
de l'égalité du théorème.

\smallskip
\noindent
\emph{Cas général}.
Puisqu'il s'agit d'une identité algébrique, on peut supposer que les
coordonnées $z_{i,i'}$ des~$z_i$ et $x_{j,j'}$ des $x_j$ sont des
indéterminées sur $\bbZ$.  On prend pour $\bfA$ le corps de fractions
rationnelles $\bbQ(z_{i,i'}, x_{j,j'})$, auquel cas $(z_1, \cdots,
z_n)$ est une base de $\bfA^n$ puisque $\det(z_1, \cdots, z_n) \ne 0$,
ce qui nous ramène au premier~cas.
\end {proof}

\begin {coro}
Soient $p,q$ avec $p+q = n$ et $u_1, \cdots, u_p$ des vecteurs de $\bfA^n$. On pose
$\bfu = u_1\wedge \cdots\wedge u_p$.
Alors pour deux $q$-vecteurs $\bfx, \bfy \in \BW^q(\bfA^n)$, le $q$-vecteur 
$[\bfu\wedge\bfx]\,\bfy - [\bfu\wedge\bfy]\,\bfx$ est combinaison linéaire de
$q$-vecteurs de la forme $w_1\wedge\cdots\wedge w_q$ avec $w_1 \in \{u_1, \cdots, u_p\}$:
$$
[\bfu\wedge\bfx]\,\bfy - [\bfu\wedge\bfy]\,\bfx \quad\in\quad
\sum_{j=1}^p u_j\,\wedge\,\BW^{q-1}(\bfA^n)
$$
\end {coro}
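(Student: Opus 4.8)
The plan is to apply directly the Plücker--Sylvester formula (théorème~\ref{PluckerSylvester}), \emph{without} invoking the proportionality theorem, since this corollary serves precisely as a lemma towards it. First I would reduce to the decomposable case. The map $(\bfx,\bfy) \mapsto [\bfu\wedge\bfx]\,\bfy - [\bfu\wedge\bfy]\,\bfx$ is bilinear from $\BW^q(\bfA^n) \times \BW^q(\bfA^n)$ into $\BW^q(\bfA^n)$, and the target set $N := \sum_{j=1}^p u_j \wedge \BW^{q-1}(\bfA^n)$ is an $\bfA$-submodule; since decomposable $q$-vecteurs engendrent $\BW^q(\bfA^n)$, it suffices to establish the membership when $\bfx = x_1 \wedge \cdots \wedge x_q$ and $\bfy = y_1 \wedge \cdots \wedge y_q$ are both decomposable.

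Then I would introduce the length-$n$ sequence $\uz = (u_1, \ldots, u_p, y_1, \ldots, y_q)$, for which $[\uz] = [\bfu \wedge \bfy]$, and apply Plücker--Sylvester with this $\uz$ and $\ux = (x_1, \ldots, x_q)$:
$$
[\bfu\wedge\bfy]\ x_1\wedge\cdots\wedge x_q \ = \ \sum_{\#K=q} [\uz\xleftarrow{K}\ux]\ \BW_{k\in K} z_k .
$$
The core of the argument is the bookkeeping over the subsets $K \subset \{1..n\}$ of cardinal $q = n-p$. The distinguished subset $K_0 = \{p+1, \ldots, n\}$ produces the term $[\bfu\wedge\bfx]\,\bfy$: indeed $\BW_{k\in K_0} z_k = y_1 \wedge \cdots \wedge y_q = \bfy$, while replacing $z_{p+1}, \ldots, z_n$ by $x_1, \ldots, x_q$ turns $\uz$ into $(u_1, \ldots, u_p, x_1, \ldots, x_q)$, so that $[\uz\xleftarrow{K_0}\ux] = [\bfu\wedge\bfx]$. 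For every other $K$, since $\#K = \#\{p+1,\ldots,n\}$, the assumption $K \neq K_0$ forces $\min K \leqslant p$; hence $z_{\min K} = u_{\min K}$ and, les indices de $K$ étant rangés de manière croissante, $\BW_{k\in K} z_k \in u_{\min K} \wedge \BW^{q-1}(\bfA^n) \subseteq N$.

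Isolating the $K_0$ term then yields
$$
[\bfu\wedge\bfx]\,\bfy - [\bfu\wedge\bfy]\,\bfx \ = \ -\!\!\sum_{K \neq K_0} [\uz\xleftarrow{K}\ux]\ \BW_{k\in K} z_k ,
$$
and each summand on the right lies in $N$, which is exactly the claimed membership. I expect the only genuinely delicate point to be the correct identification of the $K_0$ term, which rests entirely on the ordering convention built into the symbol $\uz\xleftarrow{K}\ux$: one must check that the replacement keeps the $u$-block fixed and substitutes $x_1, \ldots, x_q$ for $y_1, \ldots, y_q$, producing $[\bfu\wedge\bfx]$ with the right sign. In fact no sign intervenes here, since the whole identity is read off from a single fixed decomposition (the bracket $[\cdot]$ being relative to the canonical basis throughout); everything else is routine multilinear bookkeeping. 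As a sanity check, specialising $p = n-1$, $q = 1$ recovers the Cramer membership of la proposition~\ref{CramerSymetrie} and de la remarque qui la suit.
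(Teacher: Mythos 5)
Votre preuve est correcte et suit essentiellement la m\^eme route que celle du papier : r\'eduction au cas d\'ecomposable par bilin\'earit\'e, puis Pl\"ucker--Sylvester appliqu\'e \`a la suite concat\'en\'ee, isolation du terme $K_0=\{p+1,\dots,n\}$ et observation que tout autre $K$ rencontre $\{1..p\}$. La seule diff\'erence (sans incidence, vu la sym\'etrie de l'\'enonc\'e en $(\bfx,\bfy)$) est que vous prenez $\uz=(u_1,\dots,u_p,y_1,\dots,y_q)$ en substituant les $x_i$ l\`a o\`u le papier fait le choix sym\'etrique ; votre remarque que $\min K\le p$ place d\'ej\`a le facteur $u_{\min K}$ en t\^ete vous \'epargne m\^eme la permutation sign\'ee que le papier effectue pour ramener un $u_j$ en premi\`ere position.
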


\begin {proof}
Puisque $[\bfu\wedge\bfx]\,\bfy - [\bfu\wedge\bfy]\,\bfx$ est bilinéaire en $(\bfx,\bfy)$,
on peut supposer $\bfx = x_1\wedge\cdots\wedge x_q$ et $\bfy = y_1\wedge\cdots\wedge y_q$
avec les $x_i,y_j \in \bfA^n$. Posons
$$
\bfw = [\bfu\wedge \bfx]\,\bfy, \qquad \bfw' = [\bfu\wedge \bfy]\,\bfx
$$
D'après la formule de Sylvester-Plücker (\ref{PluckerSylvester}) appliquée à
$$
(z_1, \cdots, z_p,\ z_{p+1}, \cdots, z_{n}) = (u_1, \cdots, u_p,\ x_1, \cdots, x_q)
$$
le $q$-vecteur
$\bfw = [\bfu\wedge\bfx]\,\bfy$ est une somme de termes de la forme
$a\,t_1 \wedge \cdots \wedge t_q$ avec $a \in \bfA$, les $t_k$ étant pris
dans $\{u_1, \cdots, u_p\} \cup \{x_1, \cdots, x_q\}$.
Dans chacun des termes $a\,t_1 \wedge \cdots \wedge t_q$, il y a un
$t_k$ égal à l'un des~$u_j$, à l'exception du terme correspondant dans
la formule (\ref{PluckerSylvester}) à la partie $K = \{p+1, \cdots, p+q\}$
qui sélectionne $(x_1, \cdots, x_q)$.
Ce terme exceptionnel est $[\bfu \wedge\bfy]\,\bfx = \bfw'$.  Ainsi la différence $\bfw
- \bfw'$ est une somme de $q$-vecteurs de la forme
$a\,t_1 \wedge \cdots \wedge t_q$, et dans chacun d'entre eux, l'un des
$t_k$ est un $u_j$ variable dans $\{ u_1, \cdots, u_p\}$.  Dans
$t_1 \wedge \cdots \wedge t_q$, on fait alors passer ce $t_k=u_j$ en tête
pour obtenir $\pm w_1 \wedge \cdots \wedge w_q$
avec $w_1 = u_j \in \{u_1, \cdots, u_p\}$.
\end {proof}



\begin {proof} [Preuve du théorème de proportionnalité \ref{ThProportionnalite}] \leavevmode

On doit démontrer que les deux $q$-vecteurs $\bfu^\sharp$ et $\bfv$ sont proportionnels
i.e. que pour tous $I,J \subset \{1..n\}$ vérifiant $\#I=\#J=q$, on a 
$\scp{\bfu^\sharp}{e_I}\scp{\bfv}{e_J} = \scp{\bfu^\sharp}{e_J}\scp{\bfv}{e_I}$.
Ou encore par définition de l'isomorphisme de Hodge droit
$$
[\bfu \wedge e_I] \scp{\bfv}{e_J} = [\bfu \wedge e_J] \scp{\bfv}{e_I}
$$
En posant
$$
\bfw = [\bfu \wedge e_I] e_J, \qquad \bfw' = [\bfu \wedge e_J] e_I
$$
nous devons démontrer que $\scp{\bfw-\bfw'}{\bfv} = 0$ sous le couvert
de l'hypothèse que chaque vecteur de $\{u_1, \cdots, u_p\}$ est orthogonal à
chaque vecteur de $\{v_1, \cdots, v_q\}$.

\smallskip
Nous allons faire mieux en prouvant que 
$\scp{\bfw-\bfw'}{\bfv}$ est combinaison linéaire des produits scalaires
$\scp{u_k}{v_j}$ pour $1\le k\le p$, $1 \le j \le q$. La technique
est donc analogue à celle de la remarque~\ref{rmqProportionnalite}
dans le cas particulier où $q=1$.

D'après le corollaire précédent, $\bfw - \bfw'$ est une combinaison
linéaire de $q$-vecteurs $w_1\wedge\cdots\wedge w_q$ avec
$w_1 \in \{u_1, \cdots, u_p\}$.
En conséquence, $\scp{\bfw-\bfw'}{\bfv}$ est une combinaison linéaire
de déterminants $\det\big(\scp{w_i}{v_j}_{1\le i,j\le q}\big)$.
Mais comme $w_1 \in \{u_1, \cdots, u_p\}$, le développement de ce déterminant
par rapport à la première ligne fournit le fait qu'il est combinaison
linéaire des $\scp{u_k}{v_j}$ pour $1\le k\le p$, $1 \le j \le q$.
En définitive, on obtient que  $\scp{\bfw-\bfw'}{\bfv}$ est une combinaison linéaire
des produits scalaires $\scp{u_k}{v_j}$.
\end {proof}

\subsection {L'existence de la structure multiplicative : le théorème de propagation}

\index{structure multiplicative d'un complexe de Cayley}%
\index{factorisation $\text{colonne}\times\text{ligne}$ (puissance extérieure d'une application linéaire)}%

Il s'agit ici de donner une preuve du théorème~\ref{Factorisation}. Nous devons
donc montrer, dans le contexte de ce théorème, l'existence de vecteurs 
$\Theta_k \in \bigwedge^{r_k}(F_{k-1})$ factorisant $\bigwedge^{r_k}(u_k)$,
le vecteur $\Theta_{n+1} = 1 \in \bigwedge^0(F_n)$ étant imposé. 
La preuve a lieu par récurrence sur $k$ en descendant jusqu'au degré homologique~1.
Vu la définition des rangs attendus d'un complexe, la preuve se ramène au
théorème de propagation ci-dessous dont nous précisons le contexte.

\medskip
\noindent
{\bf Contexte}:
On considère un petit complexe de modules libres dont on a indiqué les
dimensions en dessous:
$$
\begin {array} {ccccc}
E    &\xrightarrow{\quad u\quad} & F   & \xrightarrow{\quad v\quad} & G
\\
p+q  &                      & q+r &                      & r+s
\\
\end {array}
$$
On suppose seulement $v \circ u = 0$ sans demander l'exactitude (au milieu). Et on suppose disposer
de $\Theta_F \in \BW^q(F)$ et d'une forme linéaire $\nu_E$ sur $\BW^q(E)$
factorisant $\BW^q(u)$ i.e. $\BW^q(u) = \Theta_F\ \nu_E$:
$$
\xymatrix @R=0.5cm{
\BW^q(E) \ar[dr]_{\nu_E}\ar[rr]^{\BW^q(u)} && \BW^q(F) \\
             & \bfA\ar[ur]_{\times \Theta_F} \\
}
$$
En prenant l'idéal contenu dans l'égalité $\BW^q(u) = \Theta_F\ \nu_E$, on obtient
$\calD_q(u)  = \rmc(\Theta_F)\,\rmc(\nu_E)$ d'où
$$
\Gr(\Theta_F) \ge \Gr \calD_q(u), \qquad
\Gr(\nu_E) \ge \Gr \calD_q(u)
$$
Afin d'assurer le théorème de propagation qui suit, on impose $\Gr(\nu_E) \ge 1$ et
$\Gr(\Theta_F) \ge 2$.  Ceci est en accord avec le contexte du théorème~\ref{Factorisation}.

\noindent
Enfin, on fixe une orientation $\bff$ sur $F$ via une base de $\BW^{q+r}(F)$
et on note $\sharp = \sharp_\bff$ l'isomorphisme (de Hodge droit) associé:
$$
\sharp : \BW^{\sbullet}(F) \to \Big(\BW^{\dim F-\sbullet} F\Big)^\star
\qquad \hbox {défini par} \qquad
\Theta \mapsto \Theta^\sharp = [\Theta \wedge \sbullet]_\bff
$$


\begin {theo} [Propagation de la factorisation]\leavevmode
Dans le contexte précédent :
\begin {enumerate}[\rm i)]
\item
Il y a un unique $\Theta_G \in \BW^r(G)$ factorisant $\BW^r(v)$:
$$
\xymatrix @R=0.5cm{
\BW^r(F) \ar[dr]_{\Theta_F^\sharp}\ar[rr]^{\BW^r(v)} && \BW^r(G) \\
             & \bfA\ar[ur]_{\times \Theta_G} \\
}
$$
\item
On dispose de l'inégalité $\Gr(\Theta_G) \ge \Gr\calD_r(v)$.
\end{enumerate}
\end {theo}

\index{théorème!de propagation de la factorisation}%

\begin {proof}\leavevmode

i) Existence. \og Détypons les modules\fg{} en considérant des bases de $E, F, G$,
ce qui nous ramène à
$$
E = \bfA^{p+q}, \qquad F = \bfA^{q+r}, \qquad G = \bfA^{r+s}, \qquad
\hbox {
dont on note $(e_j)$, $(f_i)$ et $(g_k)$ les bases respectives}
$$
On choisit pour $\bff$ l'orientation associée à la base de $F$ si bien
que l'application $\sharp$ est
$$
\BW^q F\to \BW^r F, \qquad
\bfy \mapsto \bfy^\sharp = \sum_{\#I = r} [\bfy \wedge f_I] f_I 
$$
On note $U,V$ les matrices de $u,v$; on écrit l'hypothèse en disant
que $\BW^q U$ est le produit d'un vecteur-colonne et d'un vecteur-ligne:
$$
\BW^q U = \bfy \transpose{\bfx}, \qquad
\bfy \in \bfA^{\binom{q+r}{q}} \simeq \BW^q F, \quad
\bfx \in \bfA^{\binom{p+q}{q}} \simeq \BW^q E, \qquad
\Gr(\bfy) \ge 2, \quad \Gr(\bfx) \ge 1
$$
On doit montrer l'existence et l'unicité de $\bfz \in \bfA^{\binom{r+s}{r}} \simeq \BW^r G$
vérifiant $\BW^r V = \bfz \transpose{(\bfy^\sharp)}$.

Dans la suite, on réserve le symbole $J$ pour une partie
$J \subset \{1..p+q\}$ de cardinal $q$; et de même le symbole $I$
pour $I \subset \{1..r+s\}$ avec $\#I = r$.  On note $(U_j)$ les
\emph{colonnes} de $U$ et $(V_i)$ les \emph{lignes} de $V$.

\noindent
L'hypothèse $V \circ U = 0$ se traduit en disant, que dans $F
= \bfA^{q+r}$, chaque vecteur $U_j$ est \og orthogonal\fg{} à chaque
vecteur $V_i$.  D'après le théorème de
proportionnalité (\ref{ThProportionnalite}), 
dans $\BW^r F$, les $r$-vecteurs $\big(\BW_{j \in J} U_j\big)^\sharp$
et $\BW_{i \in I} V_i$ sont proportionnels.

\noindent
Quant à l'hypothèse $\BW^q U = \bfy \transpose{\bfx}$, elle se traduit
par des égalités $\BW_{j \in J} U_j = x_{\!_J}\,\bfy$. 

\noindent
Bilan: dans $\BW^rF$, les $r$-vecteurs $x_{\!_J}\,\bfy^\sharp$ et
$\BW_{i \in I} V_i$ sont proportionnels.  Comme $\Gr(\bfx) \ge 1$, on
en déduit que les $r$-vecteurs $\bfy^\sharp$ et $\BW_{i \in I} V_i$
sont proportionnels.  Puisque $\Gr(\bfy^\sharp) = \Gr(\bfy) \ge 2$, il
y a un unique scalaire $z\strut_I \in \bfA$ vérifiant
$$
\BW_{i \in I} V_i = z\strut_I\, \bfy^\sharp
$$
Ceci fournit un $\bfz = \sum_I z\strut_I g\strut_I \in \BW^r G$ tel que $\BW^r V
= \bfz \transpose{(\bfy^\sharp)}$.

\medskip
L'unicité est laissée au lecteur.

\medskip
\noindent
ii) On a $\BW^r v = \Theta_G \Theta_F^\sharp$. En prenant l'idéal contenu,  on
obtient $\calD_r(v) = \rmc(\Theta_G) \rmc(\Theta_F)$, a fortiori l'inclusion
$\calD_r(v) \subset \rmc(\Theta_G)$; l'inégalité sur les profondeurs s'ensuit.
\end {proof}

\newpage



\newpage

\newbox\toto
\newbox\tata
\newlength\largeurtoto
\newlength\largeurtata

\newcommand \ttt[3]{\setbox\tata=\hbox{#1}%
\ifdim\wd\tata<.15\textwidth\relax%
\setlength{\largeurtoto}{.78\textwidth}%
\setlength{\largeurtata}{.15\textwidth}%
\else%
\setlength{\largeurtoto}{.93\textwidth}%
\addtolength{\largeurtoto}{-\wd\tata}%
\setlength{\largeurtata}{\wd\tata}%
\fi%
\setbox\toto=\hbox{\parbox[b]{\largeurtoto}{\leftskip10pt\parindent-\leftskip\strut#2\dotfill\par}}%
\smallskip\noindent\mbox{%
\parbox[b][\ht\toto][t]{\largeurtata}{#1}%
\parbox[b]{\largeurtoto}{\leftskip10pt\parindent-\leftskip\strut#2\dotfill\par}%
\hspace{.02\textwidth}%
\parbox[b]{.05\textwidth}{\hfill#3}%
}%
\par}

\newcommand\NOTA[3]{\ttt{#2$\phantom{_{S}^{1}}$}{#3}{\pageref{#1}}}

\def\mou{plus 0.6pt minus 0.3pt}
\newcommand\CHAP[1]{\goodbreak\vskip4mm\mou\noindent{\bf #1}\par\nobreak\vskip1mm \mou\nobreak}


\section*{Index des notations de NotreResultant (\today)}
\addcontentsline{toc}{section}{\bf Index des notations}

Certaines notations identiques apparaissent dans des chapitres différents. Ceci est dû
au contexte particulier dans lequel elles sont définies et permet 
de faciliter une lecture indépendante.

\CHAP{\nameref{ChapAC1}}

\NOTA {NOTA01-muProduitInterieur} {$\sbullet\intd\mu$}
      {anti-dérivation à gauche $\BW^\sbullet(L)\to\BW^{\sbullet-1}(L)$ définie par $\mu\in L^\star$}

\NOTA {NOTA01-DualiteCanonique} {$\scp{\quad}{\quad}$}
      {dualité canonique en algèbre extérieure : $\scp{(x_1\wedge\cdots\wedge x_d}{\mu_1\wedge\cdots\wedge\mu_d}= \det\mu_i(x_j)$}
\NOTA {NOTA01-eI} {$e_I$} {$\bigwedge_{i\in I}e_i$}
\NOTA {NOTA01-OrientationDuale} {$\bfe^\star$} {orientation sur $L^\star$ déduite d'une orientation $\bfe$ sur $L$ libre de rang fini}

\NOTA {NOTA01-sharp} {$\sharp_\bfe$}
      {isomorphisme déterminantal pour $(L,\bfe)$, $\BW^d(L)\to\big(\BW^{n-d}L\big)^\star, \bfx\mapsto\oriented{\bfx\wedge\sbullet}_\bfe$}
\NOTA {NOTA01-epsilon} {$\varepsilon(i,j),\varepsilon(I,J)$} {$\varepsilon(i,j)$: signe étendu à valeurs dans $\{0,\pm1\}$,
                        $\varepsilon(I,J) = \prod_{i\in I, j \in J} \varepsilon(i,j)$}
      
\NOTA {NOTA01-ProduitInterieurDroit} {$\intd$}
      {produit intérieur droit $\BW^{q-p}(L) \xleftarrow {\intd} \BW^q(L)\times\BW^p(L^*)$}
\NOTA {NOTA01-contenu} {$\rmc(\cdots)$} {idéal contenu d'un polynôme, d'un vecteur}
\NOTA {NOTA01-Ann} {$\Ann(\cdots)$} {idéal annulateur d'un module, d'un élément d'un module, d'une suite}
      
\NOTA {NOTA01-KoszulDown} {$\rmK_\sbullet(\ua),\rmK_\sbullet(\ua\,;M)$} {complexe de Koszul descendant de la suite $\ua$}

\NOTA {NOTA01-varepsiloniI} {$\varepsilon_i(I)$} {nombre d'éléments de $I$ strictement plus petits que $i$}
\NOTA {NOTA01-HKoszulDown} {$\rmH_\sbullet(\ua),\rmH_\sbullet(\ua\,;M)$} {homologie Koszul de la suite $\ua$}
\NOTA {NOTA01-KoszulUp} {$\rmK^\sbullet(\ua),\rmK^\sbullet(\ua\,;M)$} {complexe de Koszul montant de la suite $\ua$}
\NOTA {NOTA01-HKoszulUp} {$\rmH^\sbullet(\ua),\rmH^\sbullet(\ua\,;M)$} {cohomologie Koszul de la suite $\ua$}

\NOTA {NOTA01-Sqrtua} {$\sqrt{\mathstrut{\ua}}$} {idéal racine de l'idéal $\langle\ua\rangle$ engendré par la suite $\ua$}
\NOTA {NOTA01-GrSuite} {$\Gr(\ua\,;M)\ge k$} {nullité des $k$ premiers groupes de cohomologie: $\rmH^j(\ua\,;M)=0$, $0\le j< k$}
\NOTA {NOTA01-GrIdeal} {$\Gr(\fa\,;M)\ge k$} {pour $\ua$ système de générateurs de l'idéal $\fa$: $\Gr(\ua\,;M)\ge k$}
\NOTA {NOTA01-Gr2} {$\Gr(\ua)\ge 2$} {profondeur $\ge 2$ via le critère $a_ib_j = a_jb_i$}
\NOTA {NOTA01-ExpSuite} {$\ub^e$} {$(b_1^e,\cdots,b_n^e)$ lorsque $\ub= (b_1,\cdots,b_n)$}

\NOTA {NOTA01-calD} {$\calD_r(u)$} {idéal déterminantiel d'ordre $r$ de l'application linéaire $u$}
\NOTA {NOTA01-VeroneseIdeal} {$\scrV_r$} {idéal de Veronese engendré par les monômes de degré $r$ sans facteur carré}

\CHAP{\nameref{ObjetsSuiteP}}

\NOTA {NOTA02-D} {$D=(d_1,\cdots,d_n)$} {format de degrés} 
\NOTA {NOTA02-uP} {$\uP = (P_1,\cdots,P_n)$} {système polynomial homogène de format $D$} 
\NOTA {NOTA02-bfAX} {$\bfA[\uX]$} {anneau de polynômes $\bfA[X_1,\cdots,X_n]$} 
\NOTA {NOTA02-uXD} {$\uX^D$} {jeu étalon $(X_1^{d_1}, \cdots, X_n^{d_n})$}
\NOTA {NOTA02-pXD} {$\pXD$} {jeu étalon généralisé $(p_1 X_1^{d_1},\dots, p_n X_n^{d_n})$}

\NOTA {NOTA02-bfkindetsPi} {$\bfk[\indetsPi]$} {anneau des coefficients du système générique au dessus de $\bfk$} 
\NOTA {NOTA02-delta} {$\delta$} {degré critique $\sum_i (d_i-1)$ attaché à $D$} 

\NOTA {NOTA02-Xemouton} {$X^{\emouton}$} {mouton-noir $X_1^{d_1-1}\cdots X_n^{d_n-1}$ symbolisé par \mouton} 
\NOTA {NOTA02-nabla} {$\nabla=\nabla_\uP$} {déterminant bezoutien $\det(\dsV)$ d'une matrice bezoutienne $\dsV$ de $\uP$} 
\NOTA {NOTA02-uPsat} {$\uPsat$} {idéal $\{F\in\bfA[\uX]\mid\exists e\, X_i^eF\in\langle\uP\rangle\ \forall i\}$,
                                 saturé de l'idéal $\langle\uP\rangle$}
\NOTA {NOTA02-hatdi} {$\widehat d_i$} {$\prod_{j \ne i} d_j$, attaché à $D = (d_1,\cdots,d_n)$} 

\NOTA {NOTA02-bfB} {$\bfB,\bfB_d$} {anneau-quotient $\bfA[\uX]/\langle\uP\rangle$ et sa composante homogène de degré $d$} 
\NOTA {NOTA02-Syl} {$\Syl=\Syl(\uP)$} {application de Sylvester de $\uP$ définie par $(U_1,\dots,U_n) \mapsto
                                       U_1P_1+\cdots+U_nP_n$}
\NOTA {NOTA02-Syld} {$\Syl_d=\Syl_d(\uP)$} {composante homogène de degré $d$ de $\Syl = \Syl(\uP)$}
\NOTA {NOTA02-bfBprime} {$\bfB',\bfB'_d$} {anneau-quotient $\bfA[\uX]/\langle\uP,\nabla\rangle$ et sa
                                         composante homogène de degré $d$}

\NOTA {NOTA02-ElimIdeal} {$\uPsat\cap\bfA$} {idéal d'élimination de $\uP$}
\NOTA {NOTA02-H0Cech} {$\vH^0_\uX(M)$} {module de cohomologie de \Cech{} du $\bfA[\uX]$-module $M$, en degré homologique $0$}
\NOTA {NOTA02-H0CechB} {$\vH^0_\uX(\bfB)$} {sous $\bfA[\uX]$-module gradué de $\bfB=\bfA[\uX]/\langle\uP\rangle$,
                        égal à $\uPsat/\langle\uP\rangle$}
\NOTA {NOTA02-H0CechB} {$\vH^0_\uX(\bfB)_d$} {sous $\bfA$-module de $\bfB_d=\bfA[\uX]_d/\langle\uP\rangle_d$,
                        égal à $\uPsat_d/\langle\uP\rangle_d$}

\NOTA {NOTA02-tau0} {$\tau_0$} {$\tau_0:(\bfB_\delta)^\star \to \ElimIdeal$, morphisme défini par $\mu \mapsto\mu(\nabla)$}
\NOTA {NOTA02-Div} {$\DivSeq(X^\alpha)$} {ensemble des indices de $D$-divisibilité du monôme $X^\alpha$}
\NOTA {NOTA02-Jex} {$\Jex_1,\Jex_2$} {idéaux monomiaux excédentaires (attachés au format $D$), $\Jex_1=\langle\uX^D\rangle$}
\NOTA {NOTA02-Jex12} {$\Jex_{1\setminus 2}$} {supplémentaire monomial de $\Jex_2$ dans $\Jex_1$ (attaché au format $D$)}
\NOTA {NOTA02-Jex12i} {$\Jex_{1\setminus2}^{(i)}$}
                    {sous-module monomial de $\bfA[\uX]$ de base les $X^\alpha$ divisibles uniquement par $X_i^{d_i}$}

\NOTA {NOTA02-rmKP} {$\rmK_\sbullet(\uP)$} {complexe de Koszul de $\uP=(P_1,\dots,P_n)$,
                       de terme $\rmK_k(\uP)=\BW^k(\bfA[\uX]^n)$}
\NOTA {NOTA02-partialkP} {$\partial_k(\uP)$} {différentielle $\rmK_k(\uP)\to\rmK_{k-1}(\uP)$ du complexe de Koszul de $\uP$}
\NOTA {NOTA02-eI} {$e_I$} {$\bigwedge_{i\in I}e_i$}
\NOTA {NOTA02-dI} {$d_I$} {$\sum_{i\in I} d_i$ (attaché au format $D$)}                       
\NOTA {NOTA02-rmKdP} {$\rmK_{\sbullet,d}(\uP)$} {composante homogène de degré $d$ du complexe de Koszul de $\uP$}

\NOTA {NOTA02-chid} {$\chi_d = \chi_d(D)$} {caractéristique d'Euler-Poincaré du complexe $\rmK_{\sbullet,d}(X^D)$,
                                            $\chi_d=\dim\bfA[\uX]_d/\langle\uX^D\rangle_d$}
\NOTA {NOTA02-Jex01} {$\Jex_{0\setminus 1}$} {supplémentaire monomial de $\Jex_1$ dans $\Jex_0=\bfA[\uX]$ (attaché au format $D$)}
\NOTA {NOTA02-sd} {$s_d=s_d(D)$} {$\dim\Jex_{1,d}$ (rang de l'application de Sylvester de $\uX^D$ en degré $d$)}
\NOTA {NOTA02-calD} {$\calD_r(u)$} {idéal déterminantiel d'ordre $r$ de l'application linéaire $u$}
\NOTA {NOTA02-rk} {$r_k = r_k(F_\sbullet)$}
                {rang attendu de la différentielle numéro $k$ du complexe $F_\sbullet$}
\NOTA {NOTA02-rkd} {$r_{k,d} = r_{k,d}(D)$}
                 {rang attendu de la différentielle numéro $k$ du complexe $\rmK_{\sbullet,d}(X^D)$}
\NOTA {NOTA02-sdelta} {$s_\delta$} {$\dim\Jex_{1,\delta}=\dim\bfA[\uX]_\delta-1$ (rang de $\Syl_\delta$)}

\CHAP{\nameref{DeuxCasEcole}}

\NOTA {NOTA03-omegares} {$\omegares=\omegaresP$} {forme linéaire fondamentale $\bfA[\uX]_\delta \to \bfA$
                                                  (aperçu de)}
\NOTA {NOTA03-mu} {$\mu$} {forme linéaire $\mu:\bfA[\uX]_\delta\to\bfA$ qualifiée de mirifique (en attendant $\omegares$)}

\NOTA {NOTA03-uxi} {$\uxi=(\xi_1,\cdots,\xi_n)$} {$\xi_i=\det(L_1,\cdots,L_{n-1},X_i)$ où chaque $L_j\in\bfA[\uX]$ est linéaire}
\NOTA {NOTA03-evalxi} {$\evalxi$} {forme linéaire $\bfA[\uX]_\delta \to \bfA$ d'évaluation en $\uxi$}

\NOTA {NOTA03-Omega} {$\Omega=\Omega_{P,Q}$} {constructeur $\Omega : \bfA[X,Y]_\delta\to\End(\bfA[X,Y]_\delta)$
                     (pour 2 polynômes en 2 variables)} 
\NOTA {NOTA03-omega} {$\omega=\omega_{P,Q}$} {forme linéaire $\omega : \bfA[X,Y]_\delta\to\bfA$
                                              (forme des mineurs signés de $\Syl_\delta$ car  $n=2$)}

\NOTA {NOTA03-W1delta+1} {$W_{1,\delta+1}$} {endomorphisme de $\bfA[X,Y]_{\delta+1}$
                         (pour 2 polynômes en 2 variables)}
\NOTA {NOTA03-epsilon} {$\varepsilon(i,j),\varepsilon(I,J)$} {$\varepsilon(i,j)$: signe étendu à valeurs dans $\{0,\pm1\}$,
                        $\varepsilon(I,J) = \prod_{i\in I, j \in J} \varepsilon(i,j)$}

\CHAP{\nameref{ChapJeuEtalonGeneralise}}

\NOTA {NOTA04-ResJeuEtalonGen} {$p_1^{\widehat d_1}\cdots p_n^{\widehat d_n}$}
      {résultant du jeu étalon généralisé}
      
\NOTA {NOTA04-omegaresJeuEtalonGen} {$p_1^{\widehat d_1 -1}\cdots p_n^{\widehat d_n -1} (X^\emouton)^\star$}
      {forme linéaire $\omegares$ du jeu étalon généralisé}

\NOTA {NOTA04-MacRaeBd} {$\MacRae(\bfB_d)$} {idéal de $\bfA$, invariant de MacRae
                                             de $\bfB_d=\bfA[\uX]_d/\langle\uP\rangle_d$ pour $d\ge\delta+1$}
\NOTA {NOTA04-MacRaeBdelta} {$\MacRaeVect(\bfB_\delta)$} 
                            {sous-module du $\bfA$-dual $\bfB_\delta^\star$, invariant de MacRae
                              de $\bfB_\delta=\bfA[\uX]_\delta/\langle\uP\rangle_\delta$}

\NOTA {NOTA04-Nnd} {$\bbN^n_d$} {$\{\alpha\in \bbN^n \mid |\alpha|=d\}$}
\NOTA {NOTA04-kappa} {$\kappa$} {mécanisme de sélection $\kappa:\bbN^n_d\rightarrowtail\{1..n\}$}  

\NOTA {NOTA04-ordreTordu}{$<_\sigma$} {ordre sur $\{1..n\}$ tordu par $\sigma\in\mathfrak S_n$:
                                      $\sigma(1)<_\sigma\cdots<_\sigma \sigma(n)$}

\NOTA {NOTA04-minDiv} {$\minDiv(X^\alpha)$}
                  {plus petit indice de $D$-divisibilité du monôme $X^\alpha$, cf aussi p.~\pageref{NOTA05-minDiv}}

\NOTA {NOTA04-sminDiv} {$\sminDiv(X^\alpha)$}
                        {plus petit indice au sens $<_\sigma$ de $D$-divisibilité du monôme $X^\alpha$}

\NOTA {NOTA04-detW1d} {$\det W_{1,d}(\uP)$} {mineur plein de $\Syl_d(\uP)$ pour $d\ge\delta+1$ (sélection $\minDiv$)}
\NOTA {NOTA04-detW1dsigma} {$\det W^\sigma_{1,d}(\uP)$}
                        {analogue à $\det W_{1,d}$ pour le mécanisme de sélection $\sminDiv$}

\NOTA {NOTA04-Rd} {$\calR_d=\calR_d(\uP)$} {générateur normalisé de $\MacRae(\bfB_d)$ pour $d\ge\delta+1$}
\NOTA {NOTA04-bd} {$b_d=b_d(\uP)$} {cofacteur de $\calR_d$ dans $\det W_{1,d}(\uP)$ pour $d\ge\delta+1$}
\NOTA {NOTA04-bdsigma} {$b_d^\sigma=b_d^\sigma(\uP)$} {cofacteur de $\calR_d$ dans $\det W^\sigma_{1,d}(\uP)$ pour $d\ge\delta+1$}

\NOTA {NOTA04-Dvect} {$\DVect_{\ell-1}(u)$}
              {sous-module de $F^\star$ engendré par les formes déterminantales de $u:E\to F\simeq \bfA^\ell$}

\NOTA {NOTA04-DvectSyldelta} {$\DVect_{s_\delta}(\Syl_\delta(\uP))$} 
               {sous-module de $\bfA[\uX]_\delta^\star$ engendré par les formes déterminantales de $\Syl_\delta(\uP)$}
               
\NOTA {NOTA04-omega} {$\omega=\omega_\uP$} {forme déterminantale de $\Syl_\delta(\uP)$ (mécanisme de sélection $\minDiv$)}

\NOTA {NOTA04-omegasigma} {$\omega^\sigma=\omega^\sigma_\uP$}
                          {forme déterminantale de $\Syl_\delta(\uP)$ (mécanisme de sélection $\sminDiv$)}

\NOTA {NOTA04-bdelta} {$b_\delta=b_\delta(\uP)$} {scalaire cofacteur dans $\omega = b_\delta\,\omegares$} 
\NOTA {NOTA04-bdeltasigma} {$b_\delta=b_\delta^\sigma(\uP)$} {scalaire cofacteur dans $\omega^\sigma = b_\delta^\sigma\,\omegares$} 

\CHAP{\nameref{ChapObjetsSylvester}}

\NOTA {NOTA05-minDiv} {$\minDiv(X^\alpha)$}
                  {plus petit indice de $D$-divisibilité du monôme $X^\alpha$, cf aussi p.~\pageref{NOTA04-minDiv}}

\NOTA {NOTA05-piJ1} {$\pi_{\Jex_1}$} {projection $\bfA[\uX]\twoheadrightarrow\Jex_1$
                                     parallèlement au supplémentaire monomial de $\Jex_1$}
\NOTA {NOTA05-W1d} {$W_{1,d}(\uP)$} {endomorphisme de $\Jex_{1,d}$ attaché à la sélection $\minDiv$}

\NOTA {NOTA05-maxDiv} {$\maxDiv(X^\alpha)$} {plus grand indice de $D$-divisibilité du monôme $X^\alpha$}

\NOTA {NOTA05-Jex1di} {$\Jex_{1,d}^{(i)}$} {sous-module de $\Jex_{1,d}$ de base les $X^\alpha$ 
                                           vérifiant $\minDiv(X^\alpha) = i$} 

\NOTA {NOTA05-piM} {$\pi_\calM$} {projection $\bfA[\uX]\twoheadrightarrow\calM$
                                 parallèlement au supplémentaire monomial de $\calM$}
\NOTA {NOTA05-WM} {$W_\calM(\uP)$} {endomorphisme induit-projeté sur $\calM\subset\Jex_1$ de $W_1(\uP)$ }
                                       
\NOTA {NOTA05-W2d} {$W_{2,d}(\uP)$} {endomorphisme de $\Jex_{2,d}$ attaché à la sélection $\minDiv$}

\NOTA {NOTA05-Jexh} {$\Jex_h$} {idéal monomial engendré par les $\prod_{i\in I} X_i^{d_i}$ pour $\#I=h$}
\NOTA {NOTA05-XDI} {$X^{D(I)}$} {$\prod_{i\in I} X_i^{d_i}$}

\NOTA {NOTA05-Wh} {$W_{h,d}(\uP)$} {endomorphisme induit-projeté sur $\Jex_{h,d}$ de $W_1(\uP)$}

\NOTA {NOTA05-Omega} {$\Omega=\Omega_\uP$}
                     {$\Omega:\bfA[\uX]_\delta\to\End(\bfA[\uX]_\delta)$ pilotée par $\minDiv$}
                     
\NOTA {NOTA05-omega} {$\omega=\omega_\uP$} {$\omega:\bfA[\uX]_\delta\to\bfA$, $F \mapsto \det(\Omega(F))$}

\NOTA {NOTA05-WMsigma} {$W_\calM^\sigma(\uP)$} {analogue à $W_M(\uP)$ pour la sélection $\sminDiv$}

\NOTA {NOTA05-sminDiv} {$\sminDiv(X^\alpha)$}
                        {plus petit indice au sens $<_\sigma$ de $D$-divisibilité du monôme $X^\alpha$}

\NOTA {NOTA05-Omegasigma} {$\Omega^\sigma=\Omega^\sigma_\uP$}
                          {$\Omega^\sigma:\bfA[\uX]_\delta\to\End(\bfA[\uX]_\delta)$ pilotée par $\sminDiv$}

\NOTA {NOTA05-omegasigma} {$\omega^\sigma=\omega^\sigma_\uP$}
                          {$\omega^\sigma:\bfA[\uX]_\delta\to\bfA$, $F \mapsto \det(\Omega^\sigma(F))$}

\NOTA {NOTA05-M^{(i)}} {$\calM^{(i)}$} {sous-module de $\calM$ de base les $X^\alpha$
                                       vérifiant $\minDiv(X^\alpha) = i$}

\NOTA {NOTA05-Jexhdi} {$\Jex_{h,d}^{(i)}$} {sous-module de $\Jex_{h,d}$ de base les $X^\alpha$ 
                                           vérifiant $\minDiv(X^\alpha) = i$} 

\NOTA {NOTA05-Sigma2} {$\Sigma_2$} {toute partie de $\fS_n$ telle que $\{\sigma(n)\mid\sigma\in\Sigma_2\}=\{1..n\}$}
\NOTA {NOTA05-Sigma3} {$\Sigma_3$} {toute partie de $\fS_n$ telle que $\{\sigma(\{n-1,n\})\mid\sigma\in\Sigma_3\}=\calP_2(\{1..n\})$}
\NOTA {NOTA05-Sigmah} {$\Sigma_h$} {définition analogue à celles de $\Sigma_2,\Sigma_3$}

\CHAP{\nameref{ChapJeuCirculaire}}

\NOTA {NOTA06-UnitPoint}{$\Un$} {$\Un = (1:1:\cdots:1)$, point projectif unité}

\NOTA {NOTA06-bsp-bsq}{$\bsp, \bsq$} {$\bsp=(p_1,\cdots,p_n)$, $\bsq=(q_1,\cdots,q_n)$ dans
                                      le contexte du jeu $\pXD + \bsq\,\uR$}

\NOTA {NOTA06-wMiterateur}{$w_\calM(\uR)$}{itérateur sur Monômes($\bfA[\uX]_d$), piloté par $\minDiv$,
                  défini par $\calM\subset\Jex_{1,d}$ et le jeu~$\uR$}

\NOTA {NOTA06-wMiterateur}{$w_{1,\delta}$} {$w_\calM(\uR)$ pour $\calM = \Jex_{1,\delta}$ et $\uR$ le jeu circulaire}

\NOTA {NOTA06-bbNhmg}{$\bbN^n_d,\ \bbZ^n_d$}{$\bbN^n_d = \{\alpha\in\bbN^n \mid|\alpha| = d\}$
                      avec $|\alpha| = \sum_i\alpha_i$. Idem pour $\bbZ^n_d$}
\NOTA {NOTA06-emouton}{$\emouton$}{$\emouton=(d_1-1,\cdots,d_n-1)$, exposant du mouton-noir}

\NOTA {NOTA06-strate}{$\calS_t$} {strate horizontale des monômes à hauteur $t$ du puits
                                  (graphe d'un itérateur)}

\NOTA {NOTA06-Oalpha}{$\calO_\alpha,\overline{\calO_\alpha}$}{\og orbite\fg{} sous $w_{1,\delta}$ de
                      $X^\alpha\in\bfA[\uX]_\delta$ et son complémentaire dans $\bfA[\uX]_\delta$}

\NOTA {NOTA06-w2diterateur} {$w_{2,d}$} {$w_\calM(\uR)$ pour $\calM = \Jex_{2,d}$ et $\uR$ le jeu circulaire}

\NOTA {NOTA06-MatJac1Q}{$\uR$}{$(R_1,\cdots,R_n)$ jeu monomial de format $D$}
\NOTA {NOTA06-MatJac1Q}{$\Jac_\Un(\uX^D-\uR)$}{matrice de $\bbM_n(\bbZ)$ dite attachée à $\uR$, souvent notée $A$}
\NOTA {NOTA06-MatJac1Q}{$\ell_1,\dots,\ell_n$}{les $n$ lignes $\ell_i=\exposant(X_i^{d_i})-\exposant(R_i)$ de la matrice
                                               $\Jac_\Un(\uX^D-\uR)$}
\NOTA {NOTA06-MatJac1Q}{$r_1,\dots,r_n$}{les $n$ mineurs principaux (d'ordre $n-1$) de la matrice attachée à $\uR$}

\NOTA {NOTA06-jeuR1} {$\uR^{(1)}$} {jeu $(X_1^{d_1-1}X_2, X_1^{d_2-1}X_3, \dots, X_1^{d_n-1}X_1)$. 
                               Egalement $\uR^{(2)},\dots,\uR^{(n)}$ sur ce modèle}

\NOTA {NOTA06-ovq} {$\overline {q}_i$} {$\overline{q}_i=-q_i$ dans le contexte d'un jeu $\pXD+\bsq\,\uR$}

\CHAP{\nameref{ChapBgenerique}}

\NOTA {NOTA07-Bdelta}{$\bfB_\delta$}{$\bfA$-module, composante homogène de
          $\bfB=\bfA[\uX]/\langle\uP\rangle$ de degré (critique) $\delta$}

\NOTA {NOTA07-omegaBdelta}{$\overline\omega$}{$\overline\omega : \bfB_\delta\to\bfA$
            forme linéaire $\omega$ passée au quotient modulo $\langle\uP\rangle_\delta$}

\NOTA {NOTA07-Bxj}{$\bfB_{x_j}$}{localisé de $\bfB=\bfA[\uX]/\langle\uP\rangle$ en $x_j$,
                                 classe de $X_j$ modulo $\langle\uP\rangle$}

\NOTA {NOTA07-pij}{$p_{ij}$}{coefficient en $X_j^{d_i}$ du polynôme $P_i$}
\NOTA {NOTA07-pij}{$\bfA'_j$}{sous-$\bfk$-algèbre de $\bfA$ engendrée
                              par les coefficients autres que les $(p_{i,j})_{i=1,\cdots,n}$}

\CHAP{\nameref{ChapFittingVectoriel}}

\noindent
$E,F$ sont des modules libres, $u:E\to F$ une application linéaire,
et $M$ un module présenté par $u$
\smallskip

\NOTA {NOTA08-Dru} {$\calD_r(u)$} {idéal déterminantiel d'ordre $r$ de $u$ (matrice ou
                                  application linéaire)}
                                  
\NOTA {NOTA08-FkM} {$\calF_k(M)$} {idéal de Fitting d'ordre $k$ de $M$}

\NOTA {NOTA08-MacRaeScalaire} {$\MacRae(M)$} {invariant de MacRae de $M$, où $M$ est de rang $0$\par 
                               idéal de $\bfA$ engendré par un pgcd fort de $\calF_0(M)$}

\NOTA {NOTA08-DVect} {$\DVect_r(u)$}{sous-module de $\BW^c(F)^\star$ où $c=\dim F-r$\par 
   image de $\Im\BW^r(u)$ par un isomorphisme déterminantal
   $\BW^r(F)\overset{\simeq}{\longrightarrow}\BW^c(F)^\star$}

\NOTA {NOTA08-FittVect1} {$\FittVect_1(M)$}{sous-module de $M^\star$ où $M$ est de rang $1$\par 
            passage au quotient de $\DVect_{\dim F-1}(u) \subset F^\star$}

\NOTA {NOTA08-FittVectc} {$\FittVect_c(M)$}{sous-module de $\BW^c(M)^\star$ où $M$ est de rang $c$\par 
      passage au quotient de $\DVect_{\dim F-c}(u)\subset\BW^c(F)^\star$}

\NOTA {NOTA08-MacRaeVect}{$\MacRaeVect(M)$}{sous-module monogène de $\BW^c(M)^\star$ où $M$ est de rang $c$\par
                          module engendré par $\vartheta$ pgcd fort de $\FittVect_c(M)$ dans $\BW^c(M)^\star$}

\NOTA {NOTA08-Cramerm0M} {$\Cramer_{m_0}(M)$}
        {sous-module de $M^\star$\par
        constitué des $\alpha \in M^\star$ telles que $\alpha(m_0)\sbullet=\alpha(\sbullet)m_0$}

\CHAP{\nameref{ChapStructureMultiplicative}}

\NOTA {NOTA09-FreeComplex} {$(F_\sbullet,u_\sbullet)$} {complexe libre avec $u_i:F_i\to F_{i-1}$ pour $1\le i\le n$
                           et $F_{n+1}=0$ }
\NOTA {NOTA09-rk} {$(r_0,\ldots,r_{n+1})$} {rangs attendus du complexe $F_\sbullet$ avec $r_{n+1}=0$ et
                                           $r_k + r_{k+1}=0$}
\NOTA {NOTA09-r0} {$c=r_0$} {caractéristique d'Euler-Poincaré du complexe $F_\sbullet$}

\NOTA {NOTA09-bfe} {$\bfe_k$}{orientation sur le terme $F_k$}
\NOTA {NOTA09-sharp} {$\sharp_k$}{isomorphisme déterminantal $\sharp_k:\BW^{r_{k+1}}(F_k) \simeq\BW^{r_k}(F_k)^\star$}
\NOTA {NOTA09-Theta} {$(\Theta_k)_{1\le k\le n+1}$} {système de factorisation $\Theta_k\in\BW^{r_k}(F_{k-1})$
                        avec $\BW^{r_k}(u_k)=\Theta_k\Theta_{k+1}^\sharp$ où $\sharp=\sharp_k$}
\NOTA {NOTA09-mu} {$\mu$} {forme $c$-linéaire alternée $\mu\in\BW^c(F_0)^\star$
                            (générateur du déterminant de Cayley de~$F_\sbullet$)}

\NOTA {NOTA09-CayleyVect} {$\CayleyVect(F_\sbullet)$}{déterminant de Cayley du complexe $F_\sbullet$ (sous-module
                           libre de rang 1 de $\BW^c(F_0)^\star$)}

\NOTA {NOTA09-Det} {$\Det(F_\sbullet),\ \rho$} {déterminant de $F_\sbullet$ et l'isomorphisme $\rho:\Det(F_\sbullet)\simeq\CayleyVect(F_\sbullet)$}

\NOTA {NOTA09-fDk} {$\fD_k$} {idéal déterminantiel $\calD_{r_k}(u_k)$ dans un complexe libre $(F_\sbullet,u_\sbullet)$}
\NOTA {NOTA09-fBk} {$\fB_k$} {idéal $\rmc(\Theta_k)$, contenu du $r_k$-vecteur $\Theta_k$}

\CHAP{\nameref{ChapMacRaeForP}}

\NOTA {NOTA10-omegares-Rd} {$\omegares=\omegaresP$} {forme linéaire fondamentale normalisée $\bfA[\uX]_\delta\to\bfA$
                         (définition générale)}

\NOTA {NOTA10-omegares-Rd} {$\calR_d=\calR_d(\uP)$} {générateur normalisé de $\MacRae(\bfB_d)$ pour $d\ge\delta+1$
                                           (définition générale)}

\NOTA {NOTA10-fbd} {$\fb_d=\fb_d(\uP)$}{idéal cofacteur de $\calR_d(\uP)$ dans $\calF_0(\bfB_d)=\calD_{s_d}(\Syl_d)$
                                       pour $d\ge\delta+1$}
\NOTA {NOTA10-fbd} {$\fb_\delta=\fb_\delta(\uP)$}{idéal cofacteur de $\omegaRes{\uP}$ dans $\DVect_{s_\delta}(\Syl_\delta)$}

\NOTA {NOTA10-bdsigma} {$b_d=b_d(\uP)$} {cofacteur de $\calR_d$ dans $\det W_{1,d}(\uP)$ pour $d\ge\delta+1$
                                    et de $\omegares$ dans $\omega$ pour $d=\delta$}

\NOTA {NOTA10-bdsigma} {$b^\sigma_d=b^\sigma_d(\uP)$} {cofacteur tordu par $\sigma\in\fS_n$}

\NOTA {NOTA10-MonomialBezMat} {$\dsV_\uR$} {Matrice bezoutienne du jeu monomial $\uR$
                                            (un seul coefficient non nul par colonne)}

\NOTA {NOTA10-produitXJRJ} {$X_J,\ R_J$} {$X_J=\prod_{j\in J}X_j$, $R_J=\prod_{j\in J}R_j$ pour $J\subset\{1..n\}$}

\NOTA {NOTA10-jeuPred} {$\uPred$} {jeu creux (peu de coefficients) vérifiant $\Gr(\omegaRes{\uPred})\ge 2$}

\NOTA {NOTA10-jeuRj} {$\uR^{(j)}$} {jeu monomial $(X_j^{d_1-1}X_2,X_j^{d_2-1}X_3,\dots,X_j^{d_n-1}X_1)$}

\CHAP{\nameref{ChapMacRaeDefResultant}}

\NOTA {NOTA11-Psigma}{$\uP_\sigma$}{$\uP_\sigma=(P_{\sigma(1)}, \dots, P_{\sigma(n)})$ où $\sigma\in\fS_n$}
\NOTA {NOTA11-Fou}{$F\circ u$}{$F\circ u=F(Y_1,\ldots,Y_n)$ avec $Y_i = \sum_ju_{ij} X_j$ où
                   $F=F(\uX)\in\bfA[\uX]$ et $u\in\bbM_n(\bfA)$}
\NOTA {NOTA11-Pou}{$\uP^u$}{$\uP^u = (P_1\circ u, \dots, P_n\circ u)$ où $u\in\bbM_n(\bfA)$}
\NOTA {NOTA11-Discr}{$\Discr(F)$}{$\Res(\partial_1F, \dots, \partial_nF)$ où
                     $F$ est un polynôme homogène en $n$ variables}

\NOTA {NOTA11-Disc}{$\Disc(F)$}{$\Discr(F)/d^\sbullet$ avec $d^\sbullet$ puissance ad-hoc de $d=\deg(F)$}

\CHAP{\nameref{ChapElimination}}

\NOTA {NOTA12-FoQ}{$F\circ\uQ$}{$F\circ\uQ=F(Q_1,\ldots,Q_n)$ où $F, Q_i\in\bfA[\uX]$ et les $Q_i$ de même degré}
\NOTA {NOTA12-FXq}{$F(\uX^q)$}{$F(\uX^q) = F(X_1^q,\ldots, X_n^q)$}

\CHAP{\nameref{ChapWW}}

\NOTA {NOTA13-hderniers}{$\hderniers$}{suite $(n-h+2, \ldots, n)$ des $h-1$ derniers éléments de $\{1..n\}$}

\NOTA {NOTA13-maxDiv} {$\maxDiv(X^\alpha)$} {plus grand indice de $D$-divisibilité du monôme $X^\alpha$}
\NOTA {NOTA13-M[[j]]} {$\calM\double{j}$}
               {sous-module de $\calM$ de base les $X^\alpha \in \calM$ tels que $\maxDiv(X^\alpha) = j$}
\NOTA {NOTA13-Mssup} {$\calM^\ssup$}
               {sous-module de $\calM$ de base les $X^\alpha \in \calM$ tels que $\alpha_j>d_j$ où $j=\maxDiv(X^\alpha)$}
\NOTA {NOTA13-Meq} {$\calM^\eq$}
               {sous-module de $\calM$ de base les $X^\alpha \in \calM$ tels que $\alpha_j=d_j$ où $j=\maxDiv(X^\alpha)$}
\NOTA {NOTA13-Whdeq} {$W^\eq_{h,d}(\uP)$} {endomorphisme $W_\calM(\uP)$ où $\calM=\Jex_{h,d}^\eq$}
\NOTA {NOTA13-Whdssup} {$W^\ssup_{h,d}(\uP)$} {endomorphisme $W_\calM(\uP)$ où $\calM=\Jex_{h,d}^\ssup$}

\NOTA {NOTA13-Lasth}{$\Last_h(X^\alpha)$}{suite $(\alpha_{m+1},\dots,\alpha_n)$ des $h-1$ derniers
                           exposants de $X^\alpha$ où $m=n-(h-1)$}
\NOTA {NOTA13-LasthM}{$\Last_h(\calM)$}{$\{\Last_h(X^\alpha) \mid X^\alpha \in \calM\}$}

\NOTA {NOTA13-McalL}{$\calM^{(\calL)}$}{sous-module de $\calM$ de base les
                       $X^\alpha\in\calM \mid \Last_h(X^\alpha)=\calL$ ($\calL\in\bbN^\hderniers$)}

\NOTA {NOTA13-End}{$\End(X^\alpha)$}{$(\alpha_j, \dots, \alpha_n)$ où $j = \maxDiv(X^\alpha)$}
\NOTA {NOTA13-EndM}{$\End(\calM)$}{$\{ \End(X^\alpha) \mid X^\alpha \in \calM \}$ où $\calM \subset \Jex_2$}
\NOTA {NOTA13-McalE} {$\calME$}{sous-module de $\calM$ de base les
                       $X^\alpha\in\calM \mid \End(X^\alpha)=\calE$}

\NOTA {NOTA13-Jex1o} {$\Jex_1^\circ$}{sous-module de $\Jex_1$ de base les $X^\alpha$ avec $\alpha_n=0$}
\NOTA {NOTA13-Jex2=} {$\Jex_2^\eq$}{sous-module de $\Jex_2$ de base les $X^\alpha$ avec $\alpha_j=d_j$
                                 où $j=\maxDiv(X^\alpha)$}
\NOTA {NOTA13-W1o} {$W_{1,d}^\circ(\uP)$}{endomorphisme $W_\calM(\uP)$ où $\calM=\Jex^\circ_{1,d}$}
\NOTA {NOTA13-W2=} {$W_{2,d}^\eq(\uP)$}{endomorphisme $W_\calM(\uP)$ où $\calM=\Jex^\eq_{2,d}$}

\NOTA {NOTA13-Shift}{$\Shift(X^\alpha)$}{$X_1^{\alpha_1} \cdots X_{j-2}^{\alpha_{j-2}}
                     X_{j-1}^{\alpha_{j-1}+\alpha_j}X_j^{\alpha_{j+1}} \cdots X_{n-1}^{\alpha_n}$
                     où $j = \maxDiv(X^\alpha)$}

\NOTA {NOTA13-maxDivSh}{$\maxDivSh(X^\beta)$}
                       {$\max \big(k \in \{2..n\} \mid \beta_{k-1} \geqslant d_k \big)$} 

\NOTA {NOTA13-Jex1sh} {$\Jex_1^\sh$}{un certain sous-module monomial de $\Jex_1^\circ$, cf. la définition}
\NOTA {NOTA13-W1sh} {$W_1^\sh(\uP)$}{induit-projeté de $W_1(\uP)$ sur $\Jex_1^\sh$}

\CHAP{\nameref{ComplexeDecompose}}


$\varphi$ isomorphisme monomial normalisateur, guide sur lequel on s'aligne
$\xymatrix @R = 0.2pt{
\Mmac_{1+\bullet} &\Mmac_{\bullet}\ar@{-->}[ldd]|{\simeq}_{\varphi} \\
   \oplus       &\oplus \\
\Smac_{1+\bullet} &\Smac_{\bullet} \\
}$

\smallskip

\NOTA {NOTA14-partialkP}{$\partial_k = \partial_k(\uP)$}
      {différentielle $\rmK_k\to\rmK_{k-1}$ de degré homologique $k$ du complexe de Koszul de $\uP$}

\NOTA {NOTA14-partialkP}{$\partial_{k,d}=\partial_{k,d}(\uP)$}
      {composante homogène de degré $d$ de $\partial_k(\uP)$}

\NOTA {NOTA14-MmacSmac}{$\Mmac_\sbullet,\Smac_\sbullet$}
      {décomposition de Macaulay $\rmK_k = \Mmac_k\oplus\Smac_k$ du complexe de Koszul $\rmK_\sbullet(\uX^D)$}

\NOTA {NOTA14-betak}{$\beta_k=\beta_k(\uP)$}
      {induit-projeté de $\partial_k(\uP)$, $\beta_k:\Smac_k \to \Mmac_{k-1}$, cf. la fenêtre d'observation
       page \pageref{FenetreObservation}}
\NOTA {NOTA14-betak}{$\beta_{k,d} = \beta_{k,d}(\uP)$}
      {composante homogène de degré $d$ de $\beta_k$}

\NOTA {NOTA14-varphi}{$\varphi$} {$\varphi=\beta_\sbullet(\uX^D)^{-1} : \Mmac_\sbullet\to\Smac_{\sbullet+1}$,
                                  $X^\beta e_J \mapsto (X^\beta/X_i^{d_i}) e_i\wedge e_J$ où $i=\minDiv(X^\beta)$}

\NOTA {NOTA14-Bk}{$B_k = B_k(\uP)$}
      {endomorphisme de $\Mmac_{k-1}$ défini par $\beta_k(\uP)\circ\beta_k(\uX^D)^{-1}$}
      
\NOTA {NOTA14-Bk}{$B_{k,d}=B_{k,d}(\uP)$} {composante homogène de degré $d$ de $B_k(\uP)$}
      
\NOTA {NOTA14-DeltakdP}{$\Delta_{k,d}=\Delta_{k,d}(\uP)$}
      {$\det B_{k,d} = \Delta_{k,d} \Delta_{k+1,d}$}

\NOTA {NOTA14-MmacSmacsigma}{$\Mmac^\sigma_\sbullet,\Smac^\sigma_\sbullet$}
      {décomposition de Macaulay $\rmK_k = \Mmac^\sigma_k\oplus\Smac^\sigma_k$ de $\rmK_\sbullet(\uX^D)$
      tordue par $\sigma\in\fS_n$}

\NOTA {NOTA14-varphisigma}{$\varphi^\sigma$} {$\beta^\sigma_\sbullet(\uX^D)^{-1} : \Mmac^\sigma_\sbullet\to\Smac^\sigma_{\sbullet+1}$,
                                  $X^\beta e_J \mapsto (X^\beta/X_i^{d_i}) e_i\wedge e_J$ où $i=\sminDiv(X^\beta)$}

\NOTA {NOTA14-S0d}{$\Smac_{0,d}$}{supplémentaire monomial de $\Jex_{1,d}$, de base les $X^\alpha$
                                 avec $\alpha_i<d_i$ pour tout $i$}
\NOTA {NOTA14-S0d}{$\calS_{0,d}$}{base monomiale de $\Smac_{0,d}$, de cardinal $\chi_d$,
                                  caractéristique d'Euler-Poincaré de $\rmK_{\sbullet,d}$}

\NOTA {NOTA14-Detd}{$\Det_d=\Det_{d,\uP}$}
      {$\Det_d : \AXdSod \to \bfA$ ou $\bfB_d^{\calS_{0,d}} \to \bfA$, forme $\chi_d$-linéaire alternée} 

\NOTA {NOTA14-Omegad} {$\bsOmega_d = \bsOmega_{d,\uP}$}
                {$\bsOmega_d : \AXdSod \to \End(\bfA[\uX\rbrack_d)$}

\NOTA {NOTA14-bsomega}{$\bsomega_d=\bsomega_{d,\uP}$}{$\bsomega_d=\det \bsOmega_d$}

\NOTA {NOTA14-Mk-1i}{$\Mmac^{(i)}_{k-1}\subset \Mmac_{k-1}$}{base: $\{X^\beta e_J\mid\minDiv(X^\beta)=i\}$.
                     Usage: $\poids_{P_i}(\det B_{k,d}) = \dim\Mmac^{(i)}_{k-1,d}$}

\CHAP{\nameref{ChapBW}}

\NOTA {NOTA15-ekh}{$e_{k,h}$}{$e_{k,h}=\binom{h-2}{h-k}$, défini pour tous $h,k$,
       utilisé avec $\text{second indice}\ge\text{premier}\ge 1$}
\NOTA {NOTA15-Pontpsi}{Pont $\psi$}{$\bfA[\uX]$-morphisme $\psi:\rmK_\sbullet(\uX^D)\to \bfA[\uX]$
                       réalisant $e_J \mapsto X^{D(J)}$}
\NOTA {NOTA15-MJ}{$\calM[J]\subset\calM$}{sous-module monomial de base les $X^\beta e_J\in\calM$}
\NOTA {NOTA15-MmacJ}{$\Mmac[J]\subset\Mmac_{\#J}$}{sous-module monomial du module de Macaulay $\Mmac_{\#J}$,
                     de base les $X^\beta e_J\in\Mmac_{\#J}$}

\NOTA {NOTA15-BPcalM}{$B_\calM(\uP)$}{induit-projeté de $B(\uP)$ sur $\calM$
      (sous-module monomial d'un module de Macaulay)}

\NOTA {NOTA15-RevE}{$\RevE$}{suite obtenue en ordonnant l'ensemble $E$ de manière décroissante}
\NOTA {NOTA15-Ind}{$\Ind(J;E)$}{ensemble des indices de $J$ (contenu dans $E$) dans la suite $\RevE$}
\NOTA {NOTA15-Elt}{$\Elt(R;E)$}{partie de $E$ constituée des éléments d'indices $R$ dans $\RevE$
                               où $R\subset\{1...\#E\}$}

\NOTA {NOTA15-MmacR}{$\Mmac^R\subset \Mmac_{\#R}$}{sous-module de base les $X^\beta e_J$
                      tels que $\#J=\#R$ et $\Ind(J;\DivSeq(X^\beta X^{D(J)}))=R$}

\NOTA {NOTA15-JexR}{$\Jex^R$}{synonyme de $\Jex_{1+\max(R)}$}
\NOTA {NOTA15-psiR}{$\psi^R$}{$\psi^R:\Mmac^R \overset{\simeq}{\longrightarrow}\Jex^R\overset{\rm def.}{=}\Jex_{1+\max(R)}$,
                              isomorphisme restriction du pont $\psi$}         

\NOTA {NOTA15-JexRJ}{$\Jex^R[J]$}{sous-module de l'idéal $\Jex_{1+\max(R)}$, image de $\Mmac^R[J]$
                     par le pont $\psi$ (où $\#J=\#R$)}

\NOTA {NOTA15-BPR}{$B^R(\uP)$}{endomorphisme induit-projeté de $B(\uP)$ sur $\Mmac^R$}

\NOTA {NOTA15-fkh}{$f_{k,h}$}{$f_{k,h}=\binom{h-3}{h-k}$, défini pour tous $h,k$,
       utilisé avec $\text{second indice}\ge\text{premier}\ge1$}

\CHAP{\nameref{ChapSylvesterHybride}}

\smallskip
\noindent
Ne pas oublier que la plupart du temps le système $\uP$ est implicite.
\medskip

\NOTA {NOTA16-XbetaPlusUn} {$\uX^{\beta+\Un}$} {$(X_1^{\beta_1+1}, \cdots, X_n^{\beta_n+1})$}
\NOTA {NOTA16-Vbeta} {$\dsV_\beta$} {$\dsV_\beta\in\bbM_n(\bfA[\uX])$, homogène, vérifiant
                                    $[\uP]=[\uX^{\beta+\Un} ].\dsV_\beta$ pour $|\beta| < \min(D)$}
\NOTA {NOTA16-nablabeta} {$\nabla_\beta$} {$\nabla_\beta = \det(\dsV_\beta)$ pour $|\beta| < \min(D)$}

\NOTA {NOTA16-sw-fonction} {$f^{\rm sw}$} {$f^{\rm sw}\in\bfB^{\calS_{0,d}}_d$ est défini, pour $f\in\BdSodprime$,
       par $X^\alpha\mapsto f(X^{\emouton-\alpha})$ où $d'=\delta-d$}

\NOTA {NOTA16-swDetd} {$\swDet_{d,\uP}$} {$\BdSodprime\to \bfA$ défini par $f\mapsto\Det_d(f^{\rm sw})$ où $d'=\delta-d$}
\NOTA {NOTA16-swbsOmegad} {$\swbsOmega_{d,\uP}$} {$\bfA[\uX]_d^{\calS_{0,d'}}\to \End(\bfA[\uX]_d)$ défini par
                   $f\mapsto\bsOmega_d(f^{\rm sw})$ où $d'=\delta-d$}
\NOTA {NOTA16-swbsOmegasigmad} {$\bsOmega^{\sigma\,\rm sw}_{d,\uP}$} {$\bfA[\uX]_d^{\calS_{0,d'}}\to\End(\bfA[\uX]_d)$
       défini par $f\mapsto\bsOmega^\sigma_d(f^{\rm sw})$ où $d'=\delta-d$, $\sigma\in\fS_n$}
\NOTA {NOTA16-tau0} {$\tau_0$} {$\tau_0:(\bfB_\delta)^\star \to \ElimIdeal$ définie par $\mu \mapsto\mu(\nabla)$}
\NOTA {NOTA16-Evee} {$E^\vee$}{dual gradué du $\bfA[\uX]$-module gradué $E$}
\NOTA {NOTA16-HnCechuX} {$\vH^n_\uX(\bfA[\uX])$} {$n$-ème module de cohomologie de {\Cech} de $\uX$ sur $\bfA[\uX]$}

\CHAP{\nameref{SectionBezoutMorley}}

\smallskip
\noindent
Ci-dessous, le système $\uP$ est implicite et $d,d'$ sont liés par $d+d'=\delta$ (degré critique).
\medskip

\NOTA {NOTA17-AXY} {$\bfA[\uX,\uY]$}{$\bfA[X_1,\dots,X_n, Y_1,\dots, Y_n]$}
\NOTA {NOTA17-AXYpq} {$\bfA[\uX,\uY]_{p,q}$}{composante bihomogène de bidegré $(p,q)$ de $\bfA[\uX,\uY]$}
\NOTA {NOTA17-Bez} {$\Bez(\uX,\uY)$}{déterminant bezoutien du système (implicite) $\uP$}
\NOTA {NOTA17-mho}{$\mho_{d,\protect\uP}$}{$\mho_{d,\protect\uP} : \bfA[\uX,\uY]_{d,d'}
      \rightarrow\End\big(\bfA[\uX]_d\oplus\Jex_{1,d'}^\star\big)$, cf la définition \og géante\fg{} \ref{DefGeante}}
\NOTA {NOTA17-varpi}{$\varpi_{d,\uP}(F)$} {$\varpi_d(F) = \det\big(\mho_d(F)\big)$}
\NOTA {NOTA17-varpidres}{$\varpidRes{\uP}$}{$\bfA[\uX,\uY]_{d,d'} \to \bfA$,
                         $F\mapsto \varpi(F)/(\det W_{2,d}\det W_{2,d'})$}

\NOTA {NOTA17-bezendo}{$\bezendo(p,q)$}{$\bezendo(p,q)\in\End(\bfA[x]_{e-1})$ (``matrice'' de Bezout)
                       où $p,q\in\bfA[x]_{\le e}$}

\CHAP{\nameref{ChapSeries}}

\NOTA {NOTA18-XbarDI} {$\overline X^{D(I)}$} {$\prod_{i\in I}(1-X_i^{d_i})$}
\NOTA {NOTA18-XDI} {$X^{D(I)}$} {$\prod_{i\in I} X_i^{d_i}$}
\NOTA {NOTA18-fNum} {$\scrN_f$} {$\scrN_f(S) = (1-X_1)\cdots(1-X_n)\times S$
                     (\og numérateur fine-grading\fg{} de la série $S$)}
\NOTA {NOTA18-rNum} {$\scrN_r$} {$\scrN_f(S)= (1-t)^n\times S$
                    (\og numérateur raw-grading\fg{} de la série $S$)}
\NOTA {NOTA18-pi0} {$\pi_0(I)$}{$\pi_0(f,I)=\prod_{i\in I}f(i)$ où $f:F\to\bfR$, $I\subset F$
                   ($F$ ensemble fini, $\bfR$ anneau)}
\NOTA {NOTA18-pi1} {$\pi_1(I)$}{$\pi_1(f,I)=\prod_{i\in I}\big(1-f(i)\big)$}
\NOTA {NOTA18-JexI} {$\Jex_I$} {idéal monomial $\langle X_i^{d_i},\ i\in I\rangle$ de $\bfA[\uX]$}
\NOTA {NOTA18-brickI} {$\brick_I$}{$\bigoplus_{X^\beta \notin \Jex_I} \bfA X^\beta$ (supplémentaire
                       monomial de $\Jex_I$)}

\renewcommand\indexname{En guise d'index des termes, théorèmes, etc.
\protect\addcontentsline{toc}{section}{\bf Index terminologique (en cours).}}
\printindex
\end{document}